\begin{document}

\author{Federico Binda}
\address{Dipartimento di Matematica F.~Enriques,  Universit\`a degli Studi di Milano\\ Via Cesare Saldini 50, 20133 Milano, Italy}
\email{federico.binda@unimi.it}

\author{Doosung Park}
\address{Institut f\"ur Mathematik, Universit\"at Z\"urich\\ Winterthurerstr. 190, 8057 Z\"urich, Switzerland}
\email{doosung.park@math.uzh.ch}

\author{Paul Arne {\O}stv{\ae}r}
\address{Department of Mathematics, University of Oslo, Niels Henrik Abels hus, Moltke Moes vei 35, 0851 Oslo, Norway}
\address{Dipartimento di Matematica F.~Enriques,  Universit\`a degli Studi di Milano\\ Via Cesare Saldini 50, 20133 Milano, Italy}
\email{paularne@math.uio.no} \email{paul.oestvaer@unimi.it}

\title{Triangulated categories of logarithmic motives over a field}
\subjclass{Primary 14A21, 14A30, 14F42, 18N40, 18N55; Secondary 18F10, 18G35, 19E15}

\begin{abstract}
In this work, we develop a theory of motives for logarithmic schemes over fields in the sense of Fontaine, Illusie, and Kato. 
Our construction is based on the notion of finite log correspondences, the dividing Nisnevich topology on log schemes, 
and the basic idea of parameterizing homotopies by $\boxx$;
the projective line with respect to its compactifying logarithmic structure at infinity. 
We show that Hodge cohomology of log schemes is a $\boxx$-invariant theory that is representable in 
the category of logarithmic motives.
The category of logarithmic motives is closely related to Voevodsky's category of motives and $\A^{1}$-invariant theories.
Assuming resolution of singularities, 
we identify the latter with the full subcategory comprised of $\A^{1}$-local objects in the category of logarithmic motives. 
Fundamental properties such as $\boxx$-homotopy invariance, 
Mayer-Vietoris for coverings, 
and the analogs of the Gysin sequence and the Thom space isomorphism witness the robustness of the setup.
\end{abstract}

\maketitle

\setcounter{tocdepth}{2}
\tableofcontents


\toplevelfalse

\section{Introduction}

\subsection{Overview}
Logarithmic structures and log geometry developed by Deligne, Faltings, Fontaine, Illusie, Kato, Tsuji, 
and many others \cite{Ogu} can be thought of as an enlargement of algebraic geometry.
Having its origin in the theory of logarithmic differentials,
log schemes make the notion of schemes with a boundary precise and have implications, inter alia, towards moduli spaces, deformation theory, and $p$-adic Hodge theory.
The primary purpose of this work is to build a theory of motives in the setting of logarithmic algebraic geometry.
\vspace{0.1in}

Depending on the point of view, there are several desiderata for a theory of motives of log schemes.
For starters, one should find a way to linearize log schemes such that the values of standard cohomology theories,
e.g.,
crystalline, de Rham-Witt, and Hodge cohomology,
occur as hom-groups in the same linear tensor category.
One can hope that log motives give rise to new invariants for logarithmic schemes and a better understanding of ``old'' invariants for usual schemes.
For sanity checks, one should have available various realization functors for log motives and their regulator maps.
By equipping schemes with trivial log structures, one should be able to recover motives in the sense of algebraic geometry.
Finally,
an advanced understanding of the basic properties should involve the formalism of six operations.
In this work, we begin this exploration by constructing the triangulated category of effective log motives $\ldmeff$, 
extending Voevodsky's category $\dmeff$, 
where $\Lambda$ is a commutative unital ring.
\vspace{0.1in}

There have been several constructions of triangulated categories of motives in algebraic geometry,
including Levine \cite{LevineMixedMotives} and Voevodsky \cite{MR1764202}, 
which, together with motivic homotopy theory, have led to Voevodsky's spectacular proofs of the Milnor and Bloch-Kato conjectures \cite{voevodsky-milnor}, \cite{voevodsky-BK}.
The fact that the affine line parametrizes homotopies restricts the theory to $\A^{1}$-homotopy invariant phenomena.
Since there are many nontrivial \'etale $\Z/p$-coverings of the affine line, 
where $p$ is the characteristic of the base field $k$,
this restriction limits the reach of the theory.
Indeed, 
the derived category $\mathbf{DM}_{\text{\'et}}(k,\Z/p)$ of \'etale motives with mod-$p$ coefficients is trivial \cite{AyoICM}. 
\vspace{0.1in}

In Voevodsky's construction of motivic categories, one of the first problems is to find an algebraic avatar for the unit interval $[0,1]\subset \mathbb{R}$ in topology. 
In our approach to log motives, we encounter the same problem. 
One option is to proceed with the affine line $\A^{1}$ equipped with the trivial log structure, 
see \cite{Howellthesis}, \cite{ParThesis}.
There is, 
however, 
another less obvious choice: in the log geometric setting, it is natural to compactify the affine line,
i.e.,
replace it by the projective line $\P^{1}$ pointed at infinity, denoted by $\boxx= (\P^1,\{\infty\})$. 
A priori this is not a unit interval in the traditional sense since the multiplication map on $\A^{1}$ extends only to a rational map 
$$
\P^{1}\times\P^{1}\dashrightarrow\P^{1}.
$$
This pronounced difference is also noticeable in the theory of algebraic cycles with moduli conditions \cite{BS}. 
\vspace{0.1in}

Let us write $lSm/k$ for the category of fine and saturated log schemes that are log smooth and separated over a fixed field $k$ equipped with the trivial log structure.
To carry out our constructions, we turn $lSm/k$ into a Grothendieck site using the {\it dividing Nisnevich topology}. 
The coverings in this topology arise from distinguished squares of different origins:
{\it dividing squares},
typically involving a log blow-up or the star subdivision of a monoid,
and {\it strict Nisnevich squares}, 
an adaption of the Nisnevich topology to log geometry.
Conceptually one can view the strict Nisnevich coverings as a witness for the scheme-theoretic data and the dividing coverings as a witness for the log structures.
The properties of these topologies are used throughout the proofs of our results.
\vspace{0.1in}

Another fundamental ingredient in the theory of log motives is the notion of {\it finite log correspondences} in $lSm/k$. 
With this in hand, we define the additive category $lCor/k$ of finite log correspondences over $k$, 
having the same objects as $lSm/k$ and with morphisms given by finite log correspondences.
There is a faithful embedding $lSm/k\hookrightarrow lCor/k$ induced by taking the graph of morphisms in $lSm/k$.
Passing to the underlying schemes yields a functor to the category $Cor/k$ of finite correspondences defined by Suslin-Voevodsky \cite{VSF}.
Moreover, 
the compositions in $lCor/k$ and $Cor/k$ are compatible in the obvious sense. 
\vspace{0.1in}

Let $\Lambda$ be a commutative unital ring. 
A {\it presheaf of $\Lambda$-modules with log transfers} on $lSm/k$ is a contravariant additive functor from $lCor/k$ to $\Lambda$-modules.
By the linear setup, every presheaf equipped with log transfers determines ``wrong way'' maps between $\Lambda$-modules parametrized by finite log correspondences.
As in the case of usual schemes, the structure sheaf $\mathcal{O}$ and the sheaf of global units $\mathcal{O}^{\times}$ are basic examples of presheaves with log transfers.
For any log scheme $(X,\mathcal{M}_{X})$ in $lSm/k$ the group completion $\mathcal{M}^{gp}$ is also a presheaf with log transfers.
\vspace{0.1in}

\begin{df}\label{A.0.2}
Suppose $Z=n_1Z_1+\cdots+n_rZ_r$ is an effective Cartier divisor on a scheme $X$ over $k$.
Let $I_i$ be the invertible sheaf of ideals defining $n_iZ_i$ for each irreducible component $Z_i$.
Then $(X,Z)$ is an fs log scheme over $k$ whose log structure is given by the Deligne-Faltings structure \cite[\S III.1.7]{Ogu} associated to the inclusions $I_i\rightarrow \mathcal{O}_X$,
$i=1,\ldots,r$.
In particular,
for the projective line, 
we set
\begin{equation}
\label{equation:box}
\boxx:=(\P^1,\{\infty\})\in lSm/k.
\end{equation}
For most part of this paper we us
e the multiplicity one case $n_1=\cdots=n_r=1$.
In this case the log structure of $(X,Z)$ is the compactifying log structure \cite[\S III.1.6]{Ogu} associated to the open immersion $X-Z\rightarrow X$.
\end{df}
\begin{exm}
\label{exm:box}
Following Definition \ref{A.0.2} in the case of a normal strict crossing divisor, we set 
\[
\boxx^{n}
=
((\P^1)^{n},\underbrace{\infty\times\P^{1}\times\cdots\times\P^{1}}_{n} 
+ 
\underbrace{\P^{1}\times\infty\times\P^{1}\times\cdots\times\P^{1}}_{n} 
+ 
\underbrace{\P^{1}\times\cdots\times\P^{1}\times\infty}_{n}).
\]
This is the $n$th self product of $\boxx$ for the standard monoidal structure on $lSm/k$.  
The notation means that on $(\P^1)^{n}$ we consider the strict normal crossing divisor given by setting the coordinate $z_{i}=\infty$ for $i=1,\ldots n$. 
\end{exm}
 
We note there are canonically defined morphisms 
$$
(\A^1,\emptyset)
\to
\boxx
\to
(\P^1,\emptyset)
$$
between distinctly different objects in $lSm/k$.
We emphasize that $\boxx$ has a nontrivial log structure.
\vspace{0.1in}

The triangulated category of effective log motives of $k$ with $\Lambda$-coefficients
$$
\ldmeff
$$ 
is obtained from the category of chain complexes of presheaves with log transfers on $lSm/k$ by imposing descent for all dividing Nisnevich coverings, 
and homotopy invariance with respect to $\boxx$. 
To every fs log scheme $X\in lSm/k$ we can associate the motive 
$$
M(X)\in \ldmeff.
$$
This construction is our main object of study in this work.
\vspace{0.1in}

Notwithstanding the technicalities,
the next four statements are straightforward consequences of the construction of $\ldmeff$.
\vspace{0.1in}

\begin{itemize}
\item {\rm (Monoidal structure)} For every $X,Y\in lSm/k$ there is a naturally induced isomorphism of log motives 
$$
M(X\times Y)\cong M(X)\otimes M(Y).
$$
\item {\rm (Homotopy invariance)} For every $X\in lSm/k$ there is a naturally induced isomorphism of log motives 
$$
M(X\times \boxx)\cong M(X).
$$
\item {\rm (Mayer-Vietoris)} For every strict Nisnevich distinguished square in $lSm/k$
\[
\begin{tikzcd}
Y'\arrow[d]\arrow[r]&Y\arrow[d]\\X'\arrow[r]&X
\end{tikzcd}
\]
there is a naturally induced homotopy cartesian square of log motives
\[
\begin{tikzcd}
M(Y')\arrow[d]\arrow[r]&M(Y)\arrow[d]\\
M(X')\arrow[r]&M(X).
\end{tikzcd}
\]
\item {\rm (Log modification)}
Every log modification $f:Y\rightarrow X$ of fs log schemes log smooth over $k$ in the sense of Kato \cite{FKato} induces an isomorphism of log motives 
$$
M(f)\colon M(Y)\cong M(X).
$$
\end{itemize}
\vspace{0.1in}

To an fs log scheme $X\in lSm/k$ and a vector bundle $\xi\colon \cE\to X$ we associate the Thom motive 
$$
MTh_{X}(\cE)
\in
\ldmeff
$$ 
via the blow-up of $\cE$ along its $0$-section.
We refer to Definitions \ref{A.5.7}, \ref{df:logvectorbundle}, and \ref{A.3.22} for precise statements.
The construction of Thom motives turns out to be an important one for several reasons.
One should note that the Betti realization of the log motivic Thom space of $\xi\colon\cE\to X$ is homotopy equivalent to the quotient of the unit disk bundle by the unit sphere bundle for the Betti realization of $\xi\colon\cE\to X$.
In the presence of a Euclidean metric, the latter is one formulation of Thom spaces in topology.
\vspace{0.1in}

We employ motivic Thom spaces of  vector bundles to show a Gysin isomorphism in Theorem \ref{A.3.36}.
A closely related result is the Gysin triangle for strict normal crossing divisors.
\vspace{0.1in}

\begin{itemize}
\item {\rm (Gysin triangle)} 
Suppose $Z,Z_1,\ldots,Z_r$ are smooth divisors forming a strict normal crossing divisor on $X\in Sm/k$, 
and set $Y:=(X,Z_1+\cdots+Z_r)$.
Let $E$ be the exceptional divisor of the blow-up $B_Z Y$ of $Y$ along $Z$, and let $N_ZY$ denote the normal bundle of $Z$ in $Y$.
Then there is a distinguished triangle 
\[
M(B_Z Y,E)\rightarrow M(Y)\rightarrow MTh(N_Z Y)\rightarrow M(B_Z Y,E)[1]
\]
of log motives in $\ldmeff$.
\end{itemize}
\vspace{0.1in}

In Theorem \ref{A.3.13} we show the following blow-up exact triangle.
\vspace{0.1in}

\begin{itemize}
\item {\rm (Blow-up triangle)} Let $X$ be a smooth scheme over $k$, and let $X'$ be the blow-up of $X$ along a smooth center $Z$.
Then there is a distinguished triangle 
\[
M(Z\times_X X')\rightarrow M(X')\oplus M(Z)\rightarrow M(X)\rightarrow M(Z\times_X X')[1]
\]
of log motives in $\ldmeff$.
\end{itemize}
\vspace{0.1in}

In $\A^1$-homotopy theory, 
the affine space $\A^{n}$ can be viewed as a disk. 
Similarly, 
$\boxx$ in Definition \ref{A.0.2} and more generally $\boxx^{n}$ in Example \ref{exm:box} can be viewed as a disk in our setting.
For our purposes,
however, 
it is also natural to consider the pair of projective spaces $(\P^{n},\P^{n-1})$, 
where $\P^{n-1}$ is considered as a hyperplane in $\P^{n}$. 
\vspace{0.1in}

In Proposition \ref{A.6.1} we prove the following fundamental invariance for log motives.
\vspace{0.1in}

\begin{itemize}
\item {\rm ($(\P^{n},\P^{n-1})$-invariance)} For every $X\in lSm/k$ there is a naturally induced isomorphism of log motives 
$$
M(X\times (\P^{n},\P^{n-1}))\cong M(X).
$$
\end{itemize}
\vspace{0.1in}

Following the convention in \cite{MR1764202} we set 
$$
\Lambda(0):=M(\Spec k)=\Lambda,
$$ 
and define the Tate object $\Lambda(1)$ to be the shifted cone
\[
\Lambda(1)
:=
M(\Spec k\stackrel{i_0}\rightarrow \P^1)[-2]
\]
in $\ldmeff$. 
Here $i_0:\Spec k \rightarrow \P^1$ is the $0$-section, and both schemes are equipped with a trivial log structure.
Moreover,
for $n\in\N$, 
we form the derived tensor product
\[\Lambda(n)
:=
\underbrace{\Lambda(1)\otimes \cdots \otimes \Lambda(1)}_{n}.
\]
More generally,
the $n$th Tate twist of an object $M\in\ldmeff$ is defined as
\[
M(n)
:=
M\otimes \Lambda(n).
\]
\vspace{0.1in}

In Theorem \ref{A.6.2} we conclude a projective bundle theorem for the projectivization of  vector bundles.
We refer the reader to Definition \ref{A.3.8} for the precise meaning of resolution of singularities over fields as in \cite{MR2289519}.
\vspace{0.1in}

\begin{itemize}
\item {\rm (Projective bundle theorem)} Assume that $k$ admits resolution of singularities.
Let $\cE$ be a vector bundle of rank $n+1$ over $X\in lSm/k$.
In $\ldmeff$ there is a canonical isomorphism
\[
M(\P(\cE))
\cong
\bigoplus_{i=0}^{n} M(X)(i)[2i].
\]
\end{itemize}
\vspace{0.1in}

As a consequence of the projective bundle theorem, we can relate Thom motives to Tate twists via the Thom isomorphism, 
see Theorem \ref{A.6.7}.
\vspace{0.1in}

\begin{itemize}
\item {\rm (Thom isomorphism)} 
Assume that $k$ admits resolution of singularities.
Let $\cE$ be a  vector bundle of rank $n$ over $X\in lSm/k$.
In $\ldmeff$, there is a canonical isomorphism
\[
MTh(\cE)
\cong
M(X)(n)[2n].
\]
\end{itemize}
\vspace{0.1in}

Associated with any log structure on a scheme, there is a corresponding notion of a boundary enabling us to think about log schemes as analogous to manifolds with boundaries.
In our setting, the boundary corresponds to an effective Cartier divisor on the underlying scheme.

\begin{df}\label{A.0.1}
For an fs log scheme $X$ over $k$, let $\partial X$ denote the set of points of $X$ with a nontrivial log structure.
That is,
$x\in\partial X$ if the stalk $\overline{\cM}_{X,x}$ of the characteristic monoid \index[notation]{deltaX @ $\partial X$}
$$
\overline{\cM}_{X}:=\cM_{X}/\cM_{X}^{\ast}
$$ 
at $x\in X$ is nontrivial.
\end{df}

We note that $\partial X$ is a closed subset of $X$ according to \cite[Proposition III.1.2.8]{Ogu} so that $X-\partial X$ is an open subscheme of $X$.
If $X\in lSm/k$, then $X-\partial X$ is smooth over $k$ and $\partial X$ is an effective Cartier divisor on $X$.
Theorem \ref{A.3.12} shows the following fundamental admissible blow-up property telling us how the motive of $X$ depends on the boundary $\partial X$.
\vspace{0.1in}

\begin{itemize}
\item {\rm (Admissible blow-ups)} Assume that $k$ admits resolution of singularities.
Let $f:Y\rightarrow X$ be a proper morphism of fs log schemes that are log smooth over $k$.
If the naturally induced morphism 
$$
Y-\partial Y\rightarrow X-\partial X
$$ 
is an isomorphism of $k$-schemes,
then there is a naturally induced isomorphism 
\[
M(Y)\rightarrow M(X)
\]
of log motives in $\ldmeff$.
\end{itemize}
\vspace{0.1in}

In Section \ref{section:cvvmotives} we compare $\ldmeff$ with Voevodsky's triangulated category of effective motives $\dmeff$ introduced in \cite{MR1764202} and reviewed in \cite{MR2181839}.
For proper smooth schemes, we identify particular hom-objects in $\ldmeff$ and $\dmeff$.
More precisely,
Theorem \ref{A.4.10} shows the following useful comparison result.
\vspace{0.1in}

\begin{itemize}
\item {\rm (Comparison with Voevodsky's derived category of effective motives)}  Assume that $k$ admits resolution of singularities.
Let $X$ and $Y$ be fs log schemes that are log smooth over $k$.
If $X$ is proper over $k$,
then for every $i\in \Z$, there is a naturally induced isomorphism of abelian groups 
\[
\hom_{\ldmeff}(M(Y)[i],M(X))
\cong
\hom_{\dmeff}(M(Y-\partial Y)[i],M(X-\partial X)).
\]
\end{itemize}
\vspace{0.1in}

As an immediate application we can identify the endomorphism ring of the unit object in $\ldmeff$ with the coefficients
\begin{equation}
\label{equation:endomorphismringofunit}
\hom_{\ldmeff}(\Lambda(0),\Lambda(0))
\cong
\Lambda.
\end{equation}
\vspace{0.1in}

Assuming $k$ admits resolution of singularities, we show there is an adjoint functor pair
\[
\omega_\sharp:
\ldmeff
\rightleftarrows
\dmeff:
R\omega^*.
\]

Theorem \ref{A.4.14} shows that the right adjoint functor $R\omega^*$ is fully faithful.
However, 
as witnessed by the sheaf of logarithmic differentials $\Omega^{i}_{/k}$, 
the functor $R\omega^*$ is not essentially surjective.
\vspace{0.1in}

While $\dmeff$ admits a fully faithful embedding into $\ldmeff$, 
the latter also contains interesting invariants that are not $\A^1$-invariant.
The latter is one way of justifying our choice of $\boxx$ as the preferred unit interval in $\ldmeff$.
\vspace{0.1in}

To identify the essential image of $R\omega^*$ we follow \cite{MR1764202} and declare that 
$$
\cF\in\ldmeff
$$
is {\it $\A^1$-local} if for every $X\in lSm/k$ and $i\in\Z$ the projection $X\times \A^1\rightarrow X$ induces an isomorphism of abelian groups 
\[
\hom_{\ldmeff}(M(X)[i],\cF)
\rightarrow
\hom_{\ldmeff}(M(X\times \A^1)[i],\cF).
\]
Theorem \ref{A.4.22} shows that every $\A^1$-local effective log motive is in the essential image of $R\omega^*$.
That is, 
Voevodsky's category of derived motives $\dmeff$ is equivalent to the {\it $\A^1$-localization} of $\ldmeff$.
\vspace{0.1in}

Let $\ldmeffprop$ be the smallest triangulated subcategory of $\ldmeff$ that is closed under small sums and contains every motive $M(X)$,
where $X\in lSm/k$ is proper as a scheme over $\Spec{k}$.
We identity $\ldmeffprop$ with the essential image of $R\omega^*$ under the assumption that $k$ admits resolution of singularities.
This shows there is an equivalence of triangulated categories
\begin{equation}\label{eq:equivpropintro}
\ldmeffprop\simeq \dmeff,
\end{equation}
which gives another log geometric description of Voevodsky's category $\dmeff$. Note that the equivalence \eqref{eq:equivpropintro} \emph{does not hold} with integral coefficients if we replace the dividing Nisnevich topology with its \'etale counterpart. See Remark \ref{Etalemot.5} and \ref{ssec:specul} below for a discussion about this.
\vspace{0.1in}

\subsection{Outline of the paper} In what follows, we give a more detailed overview of our work.

\subsubsection{Correspondences on logarithmic schemes} 
In Section \ref{sec:logtransfers} we introduce the notion of \emph{finite log correspondences}.
Our approach is fairly classical:
informally, 
an elementary finite log correspondence $Z$ from $X$ to $Y$ (smooth log schemes over $k$) is the datum of an elementary finite correspondence in the sense of Suslin-Voevodsky 
between the underlying subschemes $\ul{X}$ and $\ul{Y}$ that is equipped with the minimal log structure necessary to ensure the projection $Z\to X$ is a strict morphism. 
However, 
to get a morphism to the product $X\times Y$ in the category of log schemes, we require the existence of a morphism from the normalization $Z^N$ to $Y$ as part of the data.
The said condition on the normalization of an elementary finite log correspondence is reminiscent of the modulus condition on correspondences discussed in \cite{KSY2}.
We work with the normalizations to establish a well-defined associative composition of finite log correspondences and turn $lCor/k$ into a category. 
In turn, 
this requires a certain amount of technicalities involving solid log schemes; 
a notion with strong permanence inherited from $X$ and $Y$ by the log structure on the normalization $Z^N$ of the elementary finite log correspondence.  
We refer the reader to Section \ref{sec:logtransfers} (more specifically Definition \ref{A.5.2}) for details.

\subsubsection{cd structures on log schemes and sheaves with log transfers}
Suppose $X$ is a log scheme with log structure given by an open immersion $j\colon U\to X$ corresponding to the complement of an effective Cartier divisor $\partial X$.
Then the motive of $X$ is expected to help us understand the cohomology of $U$.
To achieve this, we need to impose a suitable invariance of the motives under the choice of the ``compactification'' $X$.
The solution we offer involves the notion of \emph{a dividing cover}, 
i.e., 
a surjective proper log \'etale monomorphisms $f\colon Y\to X$ (see Definition \ref{A.5.14} and \cite{Par}). 

Intuitively, we can think of such morphisms as blow-ups with their center in the boundary $\partial X$. 
We also consider strict Nisnevich covers, 
i.e., 
Nisnevich covers coming from the underlying schemes, 
These two types of coverings conspire into a cd-structure in the sense of \cite{Vcdtop}, 
and we call the associated topology the \emph{dividing Nisnevich topology}. 
As discussed in Section \ref{sec:topologies}, the dividing Nisnevich cd-structure is not bounded. 
For this reason, we need to generalize Voevodsky's results to \emph{quasi-bounded density structures}, 
which is the central technical notion of the section (see Definition \ref{A.9.65}). 
Theorem \ref{A.10.4} proves that for any quasi-bounded, regular, and complete cd structures on a category $\mathcal{C}$ the associated cohomology groups vanish above the 
\emph{density dimension} of any object. 
Corollary \ref{A.10.10} allows us to define a suitable descent structure in the sense of \cite{CD09}.
\vspace{0.1in}

In Section \ref{sec.sheavestransfer} we study how finite log correspondences relate to topologies. 
Our main findings can be summarized in the following combination of Proposition \ref{A.8.7}, Proposition \ref{A.5.22}, Proposition \ref{A.8.8} and Theorem \ref{A.5.23} 
(see below for a reminder on Kummer \'etale maps).

\begin{thm}
\label{thm-intro}
Let $t$ be one of the following topologies on $lSm/k$: strict or dividing Nisnevich, strict or dividing \'etale, Kummer \'etale, or log \'etale.  
\begin{itemize}
\item[(i)] 
The topology $t$ is compatible with log transfers on $lSm/k$. 
\item[(ii)]
The category $\mathbf{Shv}_{t}^{\rm ltr}(k,\Lambda)$ is a Grothendieck abelian category.
\item[(iii)] 
For every complex of $t$-sheaves $\mathcal{F}$ with log transfers and every $X\in lSm/k$, 
there is a canonical isomorphism
\[
\hom_{\Deri(\Shvltrtkl)}(a_t^*\Zltr(X),\cF[i])
\cong 
\bH_t^i(X,\cF).
\]
\end{itemize}
\end{thm}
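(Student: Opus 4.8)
The plan is to prove the three assertions in the order (i), (ii), (iii): part (i) is the essential input, part (ii) is then a formal consequence, and part (iii) follows from the descent-structure formalism of \cite{CD09} once the quasi-bounded density structures of Section~\ref{sec:topologies} are available. Recall that $t$ is \emph{compatible with log transfers} precisely when the sheafification functor $a_t^{\ast}$ for presheaves with log transfers exists and is compatible, through the forgetful functor, with ordinary $t$-sheafification; concretely this amounts to showing that for every $X\in lSm/k$ and every $t$-cover $\mathcal{U}$ of $X$ the \v{C}ech complex
\[
\Zltr(\check{C}(\mathcal{U}))\to\Zltr(X)
\]
of representable presheaves with log transfers is $t$-locally acyclic. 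I would check this topology by topology, using the cd structures of Section~\ref{sec:topologies} to reduce the ``mixed'' topologies to their generating cd structures.

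\emph{Part (i).} For the strict Nisnevich and strict \'etale topologies the covers are pulled back along the strict-morphism functor $lSm/k\to Sm/k$; since passing to underlying schemes sends $lCor/k$ to $Cor/k$ compatibly with composition, and since the minimal log structure in Definition~\ref{A.5.2} makes the strict base change of an elementary finite log correspondence compute the strict base change of the underlying finite correspondence, the required acyclicity reduces to the classical fact that the Nisnevich and \'etale topologies are compatible with transfers on $Cor/k$ (see e.g.\ \cite{MR2181839}, \cite{MR1764202}). For the dividing Nisnevich and dividing \'etale topologies the verification is immediate: a dividing cover (Definition~\ref{A.5.14}) is a monomorphism in $lSm/k$, so the associated \v{C}ech nerve is constant and the complex of representables splits. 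The Kummer \'etale case is the one requiring genuine work: after a strict \'etale refinement every Kummer \'etale cover is Nisnevich-locally a standard Kummer cover $\Spec\Z[P']\to\Spec\Z[P]$ with $P\subseteq P'$ Kummer, and one shows --- using that finite log correspondences pull back along Kummer \'etale maps, which rests on the permanence properties of solid log schemes from Section~\ref{sec:logtransfers} --- that the associated \v{C}ech complex of representable sheaves with log transfers is acyclic, bootstrapping from the strict \'etale case. Finally each of the dividing Nisnevich, dividing \'etale, and log \'etale cd structures is the union of two of the cd structures already treated, and compatibility with log transfers can be checked on the generating cd squares, which have just been handled.

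\emph{Parts (ii) and (iii).} The category of presheaves with log transfers valued in $\Lambda$-modules is Grothendieck abelian, with generating set $\{\Zltr(X)\}_{X\in lSm/k}$ and exact filtered colimits, being a category of additive functors into $\Lambda$-modules; by (i) the reflector $a_t^{\ast}$ onto $\mathbf{Shv}_{t}^{\rm ltr}(k,\Lambda)$ is the restriction of the exact ordinary $t$-sheafification, hence exact, so $\mathbf{Shv}_{t}^{\rm ltr}(k,\Lambda)$ is a full reflective subcategory of a Grothendieck abelian category with exact reflector and is therefore itself Grothendieck abelian, with generators $\{a_t^{\ast}\Zltr(X)\}_{X\in lSm/k}$ and exact filtered colimits; this is (ii). For (iii), put $\mathcal{G}=\{a_t^{\ast}\Zltr(X)\}_{X\in lSm/k}$ and let $\mathcal{H}$ be the set of cones of the \v{C}ech maps $\Zltr(\check{C}(\mathcal{U}))\to\Zltr(X)$ over $t$-covers $\mathcal{U}$. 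By Corollary~\ref{A.10.10} --- whose proof uses the vanishing Theorem~\ref{A.10.4} for quasi-bounded, regular and complete cd structures (Definition~\ref{A.9.65}), needed because the dividing cd structure is not bounded --- the pair $(\mathcal{G},\mathcal{H})$ is a descent structure in the sense of \cite{CD09}. The general theory of \emph{loc.\ cit.}\ then equips the complexes of objects of $\mathbf{Shv}_{t}^{\rm ltr}(k,\Lambda)$ with a model structure whose homotopy category is $\Deri(\Shvltrtkl)$, in which each $a_t^{\ast}\Zltr(X)$ is cofibrant and the fibrant objects are precisely the complexes satisfying $t$-descent. Since $\hom_{\mathbf{Shv}_{t}^{\rm ltr}(k,\Lambda)}(a_t^{\ast}\Zltr(X),\mathcal{A})=\mathcal{A}(X)$ for every $t$-sheaf with log transfers $\mathcal{A}$ (by the adjunction and the Yoneda lemma in $lCor/k$), replacing $\cF$ by a fibrant resolution $\cF\to\cF'$ and taking cohomology yields
\[
\hom_{\Deri(\Shvltrtkl)}(a_t^{\ast}\Zltr(X),\cF[i])=H^{i}\bigl(\cF'(X)\bigr)=\bH_t^i(X,\cF),
\]
the last equality because a fibrant complex computes $t$-hypercohomology.

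The main obstacle is twofold. On the correspondence side it is the compatibility with log transfers for the Kummer \'etale and log \'etale topologies: unlike in the classical case, an elementary finite log correspondence carries extra data along its normalization, and one must show this data is preserved under Kummer \'etale base change and that it assembles into the required sheaf-level acyclicity --- this is exactly where the solidity and permanence arguments of Section~\ref{sec:logtransfers} are indispensable. On the homological side it is the \emph{unboundedness} of the dividing Nisnevich cd structure, which prevents (iii) from following from a naive finite-cohomological-dimension argument; it is the quasi-bounded density structure of Section~\ref{sec:topologies} (Theorem~\ref{A.10.4}, Corollary~\ref{A.10.10}) that repairs this.
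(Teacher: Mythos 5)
The treatment of the dividing coverings is where your argument breaks down. You assert that because a dividing cover $Y\to X$ is a monomorphism the \v{C}ech nerve is constant and ``the complex of representables splits'', so the verification is immediate. But for a constant \v{C}ech nerve the normalized complex of $\Zltr(\check{C}(Y/X))$ is just $\Zltr(Y)$ in degree zero, so the acyclicity you need is precisely the statement that $a_t^*\gamma^*\Zltr(Y)\to a_t^*\gamma^*\Zltr(X)$ is an \emph{isomorphism} of $t$-sheaves for every log modification $Y\to X$ (Lemma \ref{A.5.70}). Injectivity is cheap (it follows from Lemma \ref{A.5.10}), but surjectivity is not: given a finite log correspondence $V\in\lCor(T,X)$ and a log modification $X'\to X$, the closure of $V$ in $T\times X'$ need not be finite over $T$, and one must first replace $T$ by a suitable log modification $T'\to T$, produced from a subdivision of fans via Proposition \ref{A.9.21}, before a lift in $\lCor(T',X')$ exists. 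This is Lemma \ref{A.5.62}, and it is the essential geometric input for all three dividing-type topologies; nothing about monomorphisms makes it formal. Relatedly, your closing reduction ``compatibility can be checked on the generating cd squares'' cannot apply to the log \'etale (or dividing \'etale) topology, which is explicitly \emph{not} associated with a cd structure; the paper instead uses the refinement result of Proposition \ref{A.5.44} --- every dividing Nisnevich, dividing \'etale, or log \'etale cover becomes, after a log modification of the base, a strict Nisnevich, strict \'etale, or Kummer \'etale cover --- and combines it with the isomorphism above in the proof of Theorem \ref{A.5.23}.

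Two smaller points. For the strict topologies the acyclicity does not literally ``reduce to the classical fact'' on $Cor/k$: an elementary log correspondence carries the extra datum of a morphism $Z^N\to Y$, and this morphism must also be descended along the cover; Proposition \ref{A.4.1} does this using strictness of the cover together with descent for log structures along strict \'etale maps, and the \v{C}ech acyclicity in Proposition \ref{A.5.11} is then proved by an explicit contracting homotopy over (log strictly) henselian local log schemes rather than by base change from $Cor/k$. Finally, for (iii) the quasi-bounded density structure is not what drives the isomorphism: the paper obtains it directly from the adjunction between $L\gamma_\sharp$ and $\gamma^*$ together with $L\gamma_\sharp a_t^*\Lambda(X)\cong a_t^*\Zltr(X)$, the hypercohomology $\bH_t^i(X,\cF)$ being defined as that of $\gamma^*\cF$; quasi-boundedness is needed for compact generation, not for this comparison. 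Your descent-structure route for (iii) can be made to work, but only once (i) is genuinely established.
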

The dividing topology on $lSm/k$ is not subcanonical (like the $h$-topology on $Sm/k$).
However, 
there is an algebraic cycle description of the sections of the sheafification of the representable presheaf $a_{t}^*\Zltr(X)$ of any $X$ (here $t$ is one of the dividing topologies on $lSm/k$). 
To that end, 
we introduce the notion of \emph{dividing log correspondences} between fs log schemes in Definition \ref{A.5.47}, 
which up to log modifications can be viewed as finite log correspondences in the sense of Definition \ref{A.5.2}. 
In fact, 
it turns out that the resulting categories of sheaves are equivalent; see Proposition \ref{A.5.56}. 
\vspace{0.1in}

Dividing log correspondences allow us to further restrict the class of log schemes that is needed to build $\ldmeff$. 
For $t=dNis$, $d\acute{e}t$, $l\acute{e}t$, the results in Section \ref{Subsec::Sheaves.SmlSm} give an equivalence of categories
\begin{equation}
\label{eq-smlsmintro} 
\iota^*
\colon
\Shv_{t}^{\rm ltr}(k,\Lambda)
\xrightarrow{\simeq}
\Shv_{t}^{\rm ltr}(SmlSm/k,\Lambda). 
\end{equation}
Here $\iota\colon SmlSm/k\hookrightarrow lSm/k$ is the full subcategory of $lSm/k$ consisting of fs log schemes of the form $(X,\partial X)$, 
where $X$ is smooth over $k$ and $\partial X$ is a strict normal crossing divisor on $X$ (see Lemma \ref{lem::SmlSm}). 
This gives a way of computing dividing cohomology groups; see Theorem \ref{Div.4}.
\begin{thm}
Let $\cF$ be a bounded below complex of strict Nisnevich (resp.\ strict \'etale, resp.\ Kummer \'etale) sheaves on $SmlSm/k$.
Then for every $X\in SmlSm/k$ and integer $i\in \Z$ there is an isomorphism
\begin{align*}
\bH_{dNis}^i(X,a_{dNis}^*\cF)&\cong \colimit_{Y\in X_{div}^{Sm}}\bH_{sNis}^i(Y,\cF)
\\
\text{(resp.\ }\bH_{d\acute{e}t}^i(X,a_{d\acute{e}t}^*\cF)&\cong \colimit_{Y\in X_{div}^{Sm}}\bH_{s\acute{e}t}^i(Y,\cF),
\\
\text{resp.\ }\bH_{l\acute{e}t}^i(X,a_{l\acute{e}t}^*\cF)&\cong \colimit_{Y\in X_{div}^{Sm}}\bH_{k\acute{e}t}^i(Y,\cF)\text{)}.
\end{align*}
\end{thm}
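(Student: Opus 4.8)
The plan is to reduce the statement to a colimit computation over the cofiltered category $X_{div}^{Sm}$ of dividing covers of $X$ with smooth source, using the density structure machinery of Section \ref{sec:topologies} together with the comparison \eqref{eq-smlsmintro} between sheaves on $lSm/k$ and on $SmlSm/k$. First I would record that $X_{div}^{Sm}$ is cofiltered: any two dividing covers $Y_1\to X$, $Y_2\to X$ in $SmlSm/k$ admit a common refinement $Y\to X$ again in $SmlSm/k$, because log blow-ups can be composed and, assuming the center lies in the boundary $\partial X$, the source of an iterated log blow-up of an object of $SmlSm/k$ can itself be arranged to lie in $SmlSm/k$ (this is where one invokes toroidal desingularization / the combinatorics of fans, as in the treatment of $X_{div}$ elsewhere in the paper). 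The key formal input is that the dividing Nisnevich topology refines the strict Nisnevich topology by adjoining exactly the dividing covers, so a dividing Nisnevich sheaf on $SmlSm/k$ is the same datum as a compatible system of strict Nisnevich sheaves indexed by $X_{div}^{Sm}$; concretely, $a_{dNis}^*\cF$ evaluated hypercohomologically on $X$ should be a filtered colimit of the strict Nisnevich hypercohomologies of $\cF$ over that system.

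The central step is the spectral-sequence/derived-colimit argument. I would present the dividing Nisnevich hypercohomology $\bH_{dNis}^i(X,a_{dNis}^*\cF)$ as computed by a Čech-type or, better, a fibrant-replacement model for the $dNis$-local object, and then use that sifted (here: filtered) colimits are exact in the category of abelian groups to commute $\colim_{Y\in X_{div}^{Sm}}$ past the cohomology. The legitimacy of replacing $a_{dNis}^*\cF$ by $\colim_Y (\text{pushforward of } a_{sNis}^*\cF|_Y)$ is precisely the content of the density-structure results cited in the excerpt: Theorem \ref{A.10.4} gives vanishing of $dNis$-cohomology above the density dimension, so the hypercohomology is bounded and the colimit of a bounded-below complex commutes with the (finitely many nonzero) cohomology sheaves; Corollary \ref{A.10.10} supplies the descent structure that lets one resolve $\cF$ by objects adapted to both the strict Nisnevich and the dividing parts of the cd-structure. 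For the strict étale and Kummer étale variants the argument is identical after replacing $sNis$ by $s\acute{e}t$ and noting, via Theorem \ref{thm-intro}, that the log étale topology on $SmlSm/k$ is generated by the Kummer étale topology together with dividing covers — so $l\acute{e}t$-sheaves are again compatible systems of $k\acute{e}t$-sheaves over $X_{div}^{Sm}$.

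The remaining bookkeeping is to check naturality in $X$ and compatibility with the isomorphism \eqref{eq-smlsmintro}, so that the colimit on the right-hand side, a priori taken over dividing covers inside $SmlSm/k$, agrees with the one implicitly present when one works over all of $lSm/k$; this follows because every dividing cover of $X\in SmlSm/k$ in $lSm/k$ is refined by one in $SmlSm/k$, so the two index categories are cofinal in one another. The main obstacle, in my estimation, is not the colimit exactness — that is formal — but establishing that the passage from $a_{dNis}^*\cF$ to the colimit presentation is valid at the level of hypercohomology of an unbounded-above complex; this is exactly why one needs the \emph{quasi-boundedness} of the dividing Nisnevich cd-structure and the density-dimension vanishing of Theorem \ref{A.10.4}, rather than Voevodsky's original bounded cd-structure formalism. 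Once that cohomological finiteness is in hand, and once cofinality of $SmlSm/k$-dividing covers among all dividing covers is verified, the colimit formula drops out.
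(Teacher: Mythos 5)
There is a genuine gap at the step you identify as "the legitimacy of replacing $a_{dNis}^*\cF$ by the colimit." You assert that this is "precisely the content of the density-structure results" (Theorem \ref{A.10.4}, Corollary \ref{A.10.10}), but those results say nothing of the kind: they give vanishing of $t_P$-cohomology above the density dimension and a bounded descent structure, and in the paper they serve compact generation, not this theorem. The identification you need is the explicit description of the dividing sheafification as a filtered colimit, $a_{dNis}^*\cF(X)\cong \colimit_{Y\in X_{div}^{Sm}}\cF(Y)$ (Lemma \ref{A.5.46}), and this rests on a concrete structural fact you never invoke: every dividing Nisnevich cover factors, after a log modification, through a strict Nisnevich cover (Proposition \ref{A.5.44}), so that the \v{C}ech kernel computing $L_{dNis}\cF$ collapses to $\cF$ of a log modification. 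Without that input, "a dividing Nisnevich sheaf is the same datum as a compatible system of strict Nisnevich sheaves indexed by $X_{div}^{Sm}$" is an assertion of the theorem rather than a proof of it. The second missing ingredient is how to pass from the sheafification formula to \emph{hyper}cohomology: the paper replaces $\cF$ by a bounded-below injective resolution and shows that $a_{dNis}^*$ preserves flabbiness (Lemma \ref{Div.8}, again via Proposition \ref{A.5.44} and filteredness of $X_{div}^{Sm}$), so that $\bH_{dNis}^i(X,a_{dNis}^*\cF)=H^i(a_{dNis}^*\cF(X))$ and the filtered colimit can be pulled out by exactness. Your proposal gestures at "a \v{C}ech-type or fibrant-replacement model" but does not supply the acyclicity statement that makes the sections complex compute hypercohomology.

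Relatedly, the "main obstacle" you flag --- needing quasi-boundedness and the density-dimension vanishing to handle complexes unbounded above --- is a red herring for this statement. The complex is bounded below, so a termwise injective (hence flabby) resolution exists and computes hypercohomology by the global sections complex with no finiteness of cohomological dimension required; the hypercohomology spectral sequence of a bounded-below complex of acyclic sheaves degenerates unconditionally. The cohomological-dimension results enter elsewhere in the paper (e.g.\ Corollary \ref{Div.5}, compact generation in Proposition \ref{bigsmall.5}), not here.
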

We remark that the equivalence \eqref{eq-smlsmintro} provides a canonical extension to $lSm/k$ of any sheaf with log transfers a priori defined on $SmlSm/k$.

\subsubsection{Motivic categories and properties of motives}
Defining $\ldmeff$ is straightforward at this point. 
The local objects for the $\boxx$-descent model structure are precisely the complexes of strictly $\boxx$-invariant $t$-sheaves with log transfer (especially for $t=dNis$), 
and the properties established in Sections \ref{sec:topologies}-\ref{sec.constructldmeff} automatically give the first set of axioms 
($\boxx$-invariance, Mayer-Vietoris, log modification invariance). 
We present next a leitfaden for the less transparent properties.
\vspace{0.1in}

First, we extend the log modification invariance of motives to blow-ups along smooth centers contained in $\partial X$. 
This is the content of Section \ref{ssec:invariancesmoothblowups}, 
and it is required for the following reason. 
Like any other smooth curve, 
the affine line $\A^1$ affords a unique compactification to an object in $lSm/k$, namely $\boxx$. 
In higher dimensions this construction is no longer canonical.
For example, 
in dimension $2$, 
the product $\A^1\times \A^1 = \A^2$ can be compactified to $\boxx\times \boxx = \boxx^2$, or to $(\P^2, \P^1)$, where the  $\P^1$ is viewed as a line at infinity. 

The construction of $\ldmeff$ makes the first choice automatically contractible, but from the construction, it is unclear what happens to $(\P^2, \P^1)$. 
Inspired by the properties of log differentials observed in \cite[\S 6.2]{BS}, 
see also Section \ref{PnPn-1invariancediff}, 
it is reasonable to expect that $(\P^2, \P^1)$ is contractible too.
In order to compare $\boxx^2$ and $(\P^2, \P^1)$, 
we consider the blow-up $X$ of $\P^2$ at a point contained in its boundary, $\P^1$. 
This latter object can more easily be compared with the blow-up of $\P^1\times \P^1$ at the point $(\infty, \infty)$. 
The map $X\to \P^2$, however, is not a log modification. 
Our crucial observation is that a combination of Mayer-Vietoris, $\boxx$-invariance, and dividing-invariance properties are, in fact, enough to establish that this map is an equivalence in $\ldmeff$. 
See Theorem \ref{A.3.7} and Theorem \ref{A.3.13} for precise statements. 
A key point of the proof is to deal with the case of surfaces; see Proposition \ref{A.3.39}. 
This finally allow us to prove the $(\P^n, \P^{n-1})$-invariance property of motives and also the blow-up formula in Proposition \ref{A.6.1} and Theorem \ref{A.3.13}, 
respectively.
\vspace{0.1in}

Section \ref{ssec:ThomMotives} discusses Thom motives, 
i.e., 
the motives of Thom spaces of vector bundles $\mathcal{E}$ in the logarithmic setting. 
To get the correct homotopy type, the definition we use involves the blow-up of $\mathcal{E}$ along the zero section (and it is therefore quite different from the corresponding notion in Voevodsky's category).  
The critical technical result here is Proposition \ref{A.3.42}, in which we show the existence of a canonical isomorphism 
\[
MTh_{X_1}(\cE_1)\otimes MTh_{X_2}(\cE_2)\rightarrow MTh_{X_1\times_S X_2}(\cE_1\times_S \cE_2).
\]
for vector bundles $\cE_i$ on $X_i$. 
This part is quite involved in the log setting compared to the $\A^1$-invariant setting. 
\vspace{0.1in}

Having Thom motives at our disposal, we finally prove the Gysin isomorphism and the existence of Gysin triangles in Theorem \ref{A.3.36}.
Assuming resolution of singularities, we show an even finer invariance under admissible blow-ups in Section \ref{ssec:invadmblowrefined}. 
Our comparison with Voevodsky's category of motives uses the latter, from which we deduce the projective bundle formula in Theorem \ref{A.6.2} as well as the Thom isomorphism Theorem \ref{A.6.7}.
\vspace{0.1in}

In Section \ref{section:calculus} we review the construction of a $Sing$ functor, 
inspired by the Suslin complex, 
for a symmetric monoidal category $\mathcal{A}$ equipped with a weak interval object $I$. 
This seeming detour is necessary since $\boxx$ is not an interval in the sense of \cite{MV} for $lCor/k$: 
the graph of the multiplication map $\mu\colon \A^1\times \A^1\to \A^1$ does not extend to an admissible correspondence in our sense. 

Our solution, 
see also \cite[\S 4]{AdditiveHomotopy}, 
is to enlarge the category using calculus of fractions $\mathcal{A}[T^{-1}]$ for a suitable class of maps $T$ to include the multiplication for $I$, 
build the singular functor there, 
and then restrict back to the original category. 
Applied to the category of chain complexes $\mathbf{C}(\mathbf{Psh}^{\rm ltr}(k))$, 
the said construction produces a $\boxx$-local object, 
albeit not very computationally-friendly.  
Nevertheless, it plays a key role in the comparison between $\ldmeff$ and $\dmeff$, see Propositions \ref{A.4.12} and \ref{A.4.29}.
 \vspace{0.1in}

By ignoring transfer structures altogether, we present a simpler construction $\mathbf{logDA}^{\rm eff}(k,\Lambda)$ that is a direct analogue of Ayoub's $\mathbf{DA}^{\rm eff}(k, \Lambda)$ 
(see \cite{Ayo07} or \cite{AyoICM} for a short overview). 
However, 
using correspondences allows us to prove much stronger results, including the comparison with Voevodsky's $\dmeff$ and the existence of Gysin triangles.

\subsubsection{Applications: Hodge sheaves and cyclic homology} 
Prototype examples of objects that are not available in Voevodsky's derived category of motives are the additive group scheme $\mathbb{G}_{a}$ and the differentials $\Omega^i_{-/k}$ 
(or their absolute counterpart). 
We show in Section \ref{sec:Hodge} that this problem disappears in the log setting (see Corollary \ref{Pn-invariance of Log differentials}, Theorem \ref{thm:Omega_log_transfer}).

\begin{thm}
Suppose $k$ is a perfect field and $X\in lSm/k$. 
Let $t$ be one of the topologies on log schemes in Theorem \ref{thm-intro} or the Zariski topology. 
Then the assignment 
$$
X\mapsto \Omega^j_{X/k}
$$ 
extends to a $t$-sheaf with log transfers. 
Moreover, for every $i$, $j$, $n\geq 0$, 
there is an isomorphism
\[
H_{t}^i(X\times (\P^n,\P^{n-1}),\Omega_{X\times (\P^n,\P^{n-1})}^j)\cong H_{t}^i(X,\Omega_X^j).
\]
In particular, the sheaves $\Omega^j_{-/k}$ are strictly $\boxx$-invariant $t$-sheaves with log transfers.
\end{thm}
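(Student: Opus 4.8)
The plan is to establish the three assertions in turn — the log transfer structure on $\Omega^j_{-/k}$, its $t$-sheaf property, and the $(\P^n,\P^{n-1})$-invariance of its $t$-cohomology — with strict $\boxx$-invariance obtained as the case $n=1$, since $(\P^1,\P^0)=\boxx$. Throughout, $\Omega^j_{X/k}$ denotes the sheaf of log Kähler differentials, locally free on any $X\in lSm/k$. For the transfers it suffices by the equivalence \eqref{eq-smlsmintro} to work on $SmlSm/k$; the resulting extension to all of $lSm/k$ agrees with the naive $\Omega^j_{-/k}$, because every $X\in lSm/k$ admits a log modification $X'\to X$ with $X'\in SmlSm/k$ (resolution of the associated fans) along which $\Omega^j$ pulls back isomorphically. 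Given an elementary finite log correspondence $V$ from $X$ to $Y$, with recorded structure morphism $f\colon V^N\to Y$ and finite surjective projection $g\colon\underline{V^N}\to\underline X$ (Definition \ref{A.5.2}), I would define the transfer to be $\mathrm{tr}_g\circ f^*$, where $f^*$ is pullback of log differentials and $\mathrm{tr}_g$ is a trace map for log Kähler differentials along $g$. Its existence rests on the theory of regular (Rosenlicht) differential forms for finite morphisms onto a regular base: since $\underline X$ is smooth over the perfect field $k$, $\Omega^j_{X/k}$ is locally free, hence reflexive, so it is enough to define $\mathrm{tr}_g$ in codimension one, where $g$ is generically finite onto a regular base and the trace is furnished by the regular-differentials formalism; the admissibility condition built into Definition \ref{A.5.2} guarantees that $f^*\omega$ acquires log poles only along the preimage of $\partial X$, so that $\mathrm{tr}_g(f^*\omega)\in\Gamma(X,\Omega^j_{X/k})$. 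One then checks additivity in $V$, that the graph of a morphism recovers ordinary pullback, and compatibility with the composition law of $lCor/k$. \textbf{I expect this last compatibility to be the main obstacle}: it is a base-change and projection-formula statement for $\mathrm{tr}$ over the possibly singular normalized fibre products that appear in the composition of finite log correspondences, and in positive characteristic it must be carried out with the regular-differentials formalism rather than with Grothendieck duality. Once done, $\Omega^j_{-/k}$ is a presheaf with log transfers, compatible with $t$-sheafification by Theorem \ref{thm-intro}(i).

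Next, the $t$-sheaf property. As a coherent $\mathcal O_X$-module, $\Omega^j_{X/k}$ is a sheaf for the Zariski, strict Nisnevich and strict étale topologies by faithfully flat descent, and its cohomology in these topologies agrees with Zariski cohomology. For the dividing, Kummer étale and log étale topologies I would exploit log étale invariance of log differentials: if $h\colon Y\to X$ is log étale then $\Omega^1_{Y/X}=0$, hence $h^*\Omega^j_{X/k}\xrightarrow{\sim}\Omega^j_{Y/k}$; combined with $Rh_*\mathcal O_Y\cong\mathcal O_X$ for log modifications (reduction to the toric case) and the projection formula, this gives $Rh_*\Omega^j_{Y/k}\cong\Omega^j_{X/k}$. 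Since a dividing cover is moreover a monomorphism, its Čech nerve is constant and the dividing Nisnevich sheaf and hyperdescent properties follow from the strict Nisnevich ones; the Kummer étale and log étale cases follow from the same input together with the comparison results of Section \ref{sec:topologies}. In particular $\bH^i_t(X,\Omega^j_{X/k})\cong\bH^i_{Zar}(X,\Omega^j_{X/k})$ for all $t$ in the statement, so by the previous paragraph $\Omega^j_{-/k}$ is a $t$-sheaf with log transfers.

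Finally, $(\P^n,\P^{n-1})$-invariance. Writing $p\colon X\times(\P^n,\P^{n-1})\to X$ for the projection, the log Künneth splitting $\Omega^j_{X\times(\P^n,\P^{n-1})/k}\cong\bigoplus_{a+b=j}\mathrm{pr}_X^*\Omega^a_{X/k}\otimes\mathrm{pr}^*\Omega^b_{(\P^n,\P^{n-1})/k}$, the projection formula (all summands locally free), and flat base change along $X\to\Spec k$ reduce the claim to the cohomology of $(\P^n,\P^{n-1})$ over $\Spec k$: one needs $H^i(\P^n,\mathcal O_{\P^n})=H^i(\Spec k,\mathcal O)$ for the summand $b=0$, and the vanishing $H^i(\P^n,\Omega^b_{\P^n}(\log\P^{n-1}))=0$ for all $i$ when $1\le b\le n$. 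The latter I would deduce from the residue exact sequence
\[
0\to\Omega^b_{\P^n}\to\Omega^b_{\P^n}(\log\P^{n-1})\to\Omega^{b-1}_{\P^{n-1}}\to 0,
\]
Bott's formula $H^q(\P^m,\Omega^p_{\P^m})=k^{\delta_{pq}}$ (valid in every characteristic), and the fact that the connecting map $H^{b-1}(\P^{n-1},\Omega^{b-1}_{\P^{n-1}})\to H^b(\P^n,\Omega^b_{\P^n})$ is, up to sign, the Gysin map for $\P^{n-1}\hookrightarrow\P^n$ and hence an isomorphism of one-dimensional $k$-vector spaces, which forces the whole cohomology of $\Omega^b_{\P^n}(\log\P^{n-1})$ to vanish. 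Taking $n=1$ gives $H^i(\boxx,\Omega^j_\boxx)\cong H^i(\Spec k,\Omega^j)$, hence — using the transfer and $dNis$-sheaf structure above — that $\Omega^j_{-/k}$ is a strictly $\boxx$-invariant $t$-sheaf with log transfers; alternatively, with this last fact in hand the general $(\P^n,\P^{n-1})$-invariance can be re-derived from Proposition \ref{A.6.1} applied to the object of $\ldmeff$ represented by $\Omega^j_{-/k}$.
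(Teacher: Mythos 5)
Your overall strategy is sound and, for most steps, runs parallel to the paper's; the two places where you genuinely diverge are worth comparing. For the $(\P^n,\P^{n-1})$-invariance, the paper (Proposition \ref{prop:boxinvdiff}) fixes $X\in SmlSm/k$ and runs an induction on the number of components of $\partial X$, using Sato's exact sequence relating $\Omega^i_{\ul{X}}(\log \partial X)$ to the differentials on the strata $\partial X^{[a]}$; the base case $\partial X=\emptyset$ is exactly your residue-sequence computation. Your K\"unneth decomposition of $\Omega^j_{X\times(\P^n,\P^{n-1})}$ together with the projection formula collapses the whole argument to the single computation $R\Gamma(\P^n,\Omega^b_{\P^n}(\log\P^{n-1}))=0$ for $b\geq 1$ and $=k$ for $b=0$, so you avoid the induction entirely; this is a legitimate simplification (all sheaves involved are locally free, so the projection formula and flat base change apply), at the cost of justifying the splitting of log differentials for a product of log smooth schemes, which is standard. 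For the transfer structure, the paper's main construction (Construction \ref{constructionAction}) is quite different from yours: it first replaces $X$ by a log modification so that the correspondence becomes strict over $X$ (Kato's exactification plus Lemma \ref{Exactness and strictness}), and then builds the transfer from a cycle class, a cup product, and a duality-theoretic pushforward with proper support in the style of Chatzistamatiou--R\"ulling. Your direct route via a trace map on log forms, checked in codimension one using reflexivity, is exactly the ``shortcut'' recorded in the closing remark of Section \ref{ssec:coractomega} (attributed there to the referee and to R\"ulling): one reduces to a local one-dimensional base at the generic points of $\partial X$ and verifies in coordinates that the pushforward of a log form is a log form. So your construction is viable, but be aware that this local coordinate verification is the real content, not a formality.

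The one genuine gap is your treatment of compatibility with composition, which you flag as ``the main obstacle'' and leave open as a base-change and projection-formula problem over singular normalized fibre products. You do not need to prove any such statement, and attacking it head-on would be painful. The point you are missing is that for $X,Y\in SmlSm/k$ the restriction $\Omega^j(X)\hookrightarrow\Omega^j(X-\partial X)$ is injective ($\Omega^j_X$ is by construction a subsheaf of $j_*\Omega^j_{X-\partial X}$), and your transfer $V^*$ restricts, by design, to the classical transfer $(V^\circ)^*$ on the interiors. Since $(W\circ V)^\circ=W^\circ\circ V^\circ$ by Lemma \ref{lem:composition} and composition-compatibility is already known for finite correspondences between smooth schemes without log structure, the identity $(W\circ V)^*=V^*\circ W^*$ follows by a diagram chase from the injectivity of the restriction maps. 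This is precisely how the paper concludes in Theorem \ref{thm:Omega_log_transfer}. With that observation your argument closes; without it, your proposal does not actually establish that $\Omega^j$ is a presheaf with log transfers.
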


The transfer structure is obtained by extending R{\"u}lling-Chatzistamatiou's work \cite{ChatzistamatiouRullingANT} to the logarithmic setting. 
By extending from $SmlSm/k$ to $lSm/k$ via Construction \ref{SmlSm.3},
we conclude there exists a natural isomorphism
\begin{equation}
\label{eq.introhodge}
\hom_{\ldmeff}(M(X),\Omega^j_{-/k}[i]) 
\cong H^i_{Zar}(\ul{X},\Omega^j_{X/k}).
\end{equation}
Representing Hodge cohomology in $\ldmeff$ refines the analogous result for de Rham cohomology in $\mathbf{DM}^{\rm eff}(k)$; see \cite{lecomtewach}. 
\vspace{0.1in}

When the characteristic of the base field $k$ is zero, 
combining \eqref{eq.introhodge} with the Hochschild-Kostant-Rosenberg theorem yields an isomorphism for every $X\in Sm/k$
\[ 
HC_n(X) 
\cong 
\hom_{{\mathbf{logDM}^{\rm eff}(k, \mathbb{Q})}}(M(X), \bigoplus_{p\in \Z} \Omega^{\leq 2p}_{-/k}[2p-n]).
\]
This shows representability of cyclic homology in ${\mathbf{logDM}^{\rm eff}(k, \mathbb{Q})}$, 
see Section \ref{ssec-Hodgecyclic}, 
an example of a non-$\A^{1}$-invariant theory on $Sm/k$ and thus out of reach of $\mathbf{DM}^{\rm eff}(k)$.
We plan to address the question of representing the cohomology of Hodge-Witt sheaves in characteristic $p>0$ in later work. 

\subsubsection{Appendices}
Besides reminding the reader of some necessary background material in log geometry, the main aim of Appendix A is to minimize confusion in other chapters. 
Appendix A can be consulted if the reader doubts notation or the precise form of basic log geometric notions. 
Appendix B briefly introduces model structures on chain complexes and thereby introduces notation employed elsewhere in the text.

\subsection{Outlook and future developments}
We conclude the introduction with some remarks and conjectures that put our theory in a broader framework.

\subsubsection{Relationship with reciprocity sheaves}
The idea of extending Voevodsky's framework to encompass non-$\A^{1}$-invariant phenomena have been investigated by many authors in the past decade, 
starting with the work of Bloch-Esnault \cite{BEAdditiveChow}, \cite{BEAdditive} on additive Chow groups and Chow groups with modulus. 
\vspace{0.1in}

The theory of \emph{reciprocity sheaves} by Kahn-Saito-Yamazaki \cite{KSY}, 
and the closely related theory of \emph{modulus sheaves} by Kahn-Miyazaki-Saito-Yamazaki \cite{KSY2}, \cite{KSY3}, 
has provided a unified framework for both $\mathbb{A}^1$-invariant objects as well as interesting non-$\mathbb{A}^1$-invariant ones. 
The category $\mathbf{RSC}_{\rm Nis}$ of (Nisnevich) reciprocity sheaves is a full subcategory of $\Shv^{\rm tr}_{Nis}(k, \mathbb{Z})$ that contains $\mathbf{HI}_{\rm Nis}$, 
i.e., 
$\mathbb{A}^1$-invariant Nisnevich sheaves with transfers but also the additive group scheme $\mathbb{G}_a$, 
the sheaf of absolute K{\"a}hler differentials $\Omega^i$, 
and the de Rham-Witt sheaves $W_m\Omega^i$. 
The connection between $\mathbf{RSC}_{\rm Nis}$ and our theory has recently been provided by Saito \cite{SaitoRSClog}.

\begin{thm}[Saito]
\label{saitothmintro} 
There exists a fully faithful exact functor
\[
\mathcal{L}og
\colon 
\mathbf{RSC}_{\rm Nis} \to \Shv_{dNis}^{\rm ltr}(k, \mathbb{Z})
\]
such that $\mathcal{L}og(F)$ is strictly $\boxx$-invariant for every $F\in \mathbf{RSC}_{\rm Nis}$. 
Moreover, for each $X\in Sm/k$, there is a natural isomorphism
\[
H^i_{Nis}(X, F_X) \cong \hom_{\ldmeff}(M(X),\mathcal{L}og(F)[i]).
\]
\end{thm}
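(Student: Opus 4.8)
The plan is to follow Saito's strategy from \cite{SaitoRSClog}, which realizes a reciprocity sheaf inside logarithmic sheaves with transfers through its canonical extension to modulus pairs. By the work of Kahn--Miyazaki--Saito--Yamazaki \cite{KSY,KSY2,KSY3}, every $F\in\mathbf{RSC}_{\rm Nis}$ admits a canonical extension $\widetilde F$ to a semipure, cube-invariant Nisnevich sheaf with transfers on modulus pairs: for a modulus pair $(\overline X,D)$ with $\overline X$ smooth over $k$ and $D$ a strict normal crossing divisor, $\widetilde F(\overline X,D)$ is the subgroup of $F(\overline X\setminus|D|)$ of sections ``with modulus $D$'', and $\widetilde F(\overline X,\emptyset)=F(\overline X)$. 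The assignment $(\overline X,D)\mapsto(\overline X,D_{\rm red})$ defines a functor from modulus pairs to $SmlSm/k$ compatible with correspondences, since modulus admissibility implies the log admissibility of Definition \ref{A.5.2} for the reduced divisors. I would then set, for $(\overline X,\partial X)\in SmlSm/k$,
\[
\mathcal{L}og(F)(\overline X,\partial X):=\colimit_{(\overline Y,\partial Y)\to(\overline X,\partial X)}\widetilde F(\overline Y,\partial Y),
\]
the colimit running over log modifications, and extend $\mathcal{L}og(F)$ to $lSm/k$ by means of the equivalence \eqref{eq-smlsmintro}. The colimit builds dividing invariance into the definition, and, combined with the description of dividing sheaves by dividing log correspondences (Definition \ref{A.5.47}), it upgrades the modulus transfers on $\widetilde F$ to log transfers on $lCor/k$: a finite log correspondence need not be modulus admissible, but becomes so after a suitable blow-up in the boundary.

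The heart of the proof --- and the step I expect to carry essentially all the difficulty --- is to show that $\mathcal{L}og(F)$ is a dividing Nisnevich sheaf with log transfers, equivalently that the colimit above is already reached by $\widetilde F(\overline X,\partial X)$. Strict Nisnevich descent is inherited from the Nisnevich descent of $\widetilde F$ together with semipurity (a section with a fixed modulus bound is glued from its restriction to the open complement of the boundary). Dividing descent is the substantial point: for a log modification $(\overline Y,\partial Y)\to(\overline X,\partial X)$ one must prove that $\widetilde F(\overline X,\partial X)\to\widetilde F(\overline Y,\partial Y)$ is an isomorphism. Because the pullback of a reduced strict normal crossing divisor along the blow-up of a boundary stratum is in general non-reduced, this is not formal; it is the blow-up invariance of reciprocity sheaves, which one extracts from cube-invariance, semipurity, and a Gysin/projective-bundle analysis of the exceptional divisor (the ``Key Lemma'' of \cite{KSY2} and the cohomological cube-invariance for reciprocity sheaves, cf.\ \cite{KSY2,KSY3}). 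This is precisely where the hypothesis $F\in\mathbf{RSC}_{\rm Nis}$, as opposed to an arbitrary modulus sheaf, enters decisively.

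Granting this, $\mathcal{L}og(F)$ is $\widetilde F$ read on $SmlSm/k$, and $\boxx$-invariance becomes the translation of cube-invariance: the cube $(\P^1,\infty)$ already has reduced boundary, so it maps to $\boxx$, and $\widetilde F(\mathcal{X}\otimes(\P^1,\infty))\cong\widetilde F(\mathcal{X})$ yields $\mathcal{L}og(F)(X\times\boxx)\cong\mathcal{L}og(F)(X)$; the strict form, namely $\boxx$-invariance of the associated dividing Nisnevich hypercohomology presheaves, follows from the cohomological refinement of cube-invariance transported along the same dictionary. Hence $\mathcal{L}og(F)$ is $\boxx$-local, and combining the definition of $\ldmeff$ with Theorem \ref{thm-intro}(iii) gives
\[
\hom_{\ldmeff}(M(X),\mathcal{L}og(F)[i])\cong\hom_{\Deri(\Shv_{dNis}^{\rm ltr}(k,\mathbb{Z}))}(a_{dNis}^*\Zltr(X),\mathcal{L}og(F)[i])\cong\bH_{dNis}^i(X,\mathcal{L}og(F)).
\]
For $X\in Sm/k$ with trivial log structure, $X$ admits no nontrivial log modification, so its dividing Nisnevich site coincides with its ordinary Nisnevich site and $\mathcal{L}og(F)$ restricts there to $F$; thus $\bH_{dNis}^i(X,\mathcal{L}og(F))\cong H^i_{Nis}(X,F_X)$, which is the asserted formula.

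It remains to treat full faithfulness and exactness. Restriction along the inclusion of trivial-log smooth schemes carries $\mathcal{L}og(F)$ back to $F$, so $\mathcal{L}og$ is faithful, and functoriality of $F\mapsto\widetilde F$ makes it well-defined on morphisms. For fullness, a morphism $\varphi\colon\mathcal{L}og(F)\to\mathcal{L}og(G)$ restricts to a morphism $F\to G$ in the full subcategory $\mathbf{RSC}_{\rm Nis}$; semipurity provides injections $\mathcal{L}og(G)(\overline X,\partial X)\hookrightarrow G(\overline X\setminus\partial X)$ compatible with the open immersions $(\overline X\setminus\partial X,\emptyset)\to(\overline X,\partial X)$, and these force $\varphi$ to be determined by --- hence equal to $\mathcal{L}og$ of --- its restriction. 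Finally, exactness of $\mathcal{L}og$ follows from exactness of $F\mapsto\widetilde F$ on $\mathbf{RSC}_{\rm Nis}$, together with the exactness of dividing Nisnevich sheafification and of the equivalence \eqref{eq-smlsmintro}.
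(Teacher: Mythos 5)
The paper does not prove this statement: it is quoted verbatim as Saito's theorem from \cite{SaitoRSClog}, with no argument given in the text, so there is no internal proof to measure your proposal against. What I can say is that your outline is a reasonable reconstruction of the strategy of \cite{SaitoRSClog} at the level of a road map: the passage through the cube-invariant, semipure extension $\widetilde F$ on modulus pairs, the reading of $\widetilde F$ on reduced normal crossing boundaries as an object of $\Shv^{\rm ltr}$ on $SmlSm/k$ via the equivalence \eqref{eq-smlsmintro}, and the identification $\hom_{\ldmeff}(M(X),\mathcal{L}og(F)[i])\cong\bH^i_{dNis}(X,\mathcal{L}og(F))\cong H^i_{Nis}(X,F_X)$ for $X$ with trivial log structure (the last step using that a scheme with trivial log structure admits no nontrivial log modification, cf.\ Remark \ref{A.9.68}(1) and Proposition \ref{A.5.13}) are all correctly placed.

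That said, you should be aware that the three steps you flag as ``the heart'' are not merely technical: (i) the upgrade of modulus transfers to log transfers is delicate because a finite log correspondence in the sense of Definition \ref{A.5.2} imposes a condition only on the normalization and on the \emph{reduced} boundary, whereas modulus admissibility is a condition on possibly non-reduced divisors, and one must check compatibility with composition, not just existence of the transfer maps; (ii) dividing descent is exactly the blow-up invariance of reciprocity sheaves, which is a theorem of \cite{BRS} and not a consequence of cube-invariance alone; and (iii) strict $\boxx$-invariance is a statement about all dividing Nisnevich cohomology groups, which again requires the cohomological results of \cite{BRS} (projective bundle formula, $(\P^n,\P^{n-1})$-invariance) rather than a formal ``transport along the dictionary.'' Your sketch names the right external inputs but does not supply them, so as written it is an accurate plan rather than a proof; since the paper itself defers entirely to \cite{SaitoRSClog}, that is the appropriate standard to cite.
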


The isomorphism shows that Nisnevich cohomology of reciprocity sheaves is representable in $\ldmeff$, 
giving log motivic interpretations (at least for reduced modulus) of properties such as the Gysin exact sequence, 
the projective bundle formula (assuming resolution of singularities),
the blow-up formula, 
and the $(\P^n, \P^{n-1})$-invariance of the cohomology of reciprocity sheaves provided in \cite{BRS}.
This applies to $(\P^n, \P^{n-1})$-invariance and the existence of pushforward in Hodge cohomology; see Section \ref{sec:Hodge}.
At least for $X\in SmlSm/k$, this is a consequence of Theorem \ref{saitothmintro}, and the corresponding formulas in \cite{BRS}.
We view this as an essential step in connecting the theory of motives with modulus to logarithmic motives. 

A different incarnation of the category of reciprocity sheaves (again assuming resolution of singularities) have been recently introduced in \cite{BindaMerici}.  Thanks to Saito's theorem, together with a purity theorem for $\boxx$-local complexes of sheaves with log transfers \cite[Thm. 5.10]{BindaMerici}, the category of log-reciprocity sheaves contains the category $\mathbf{RSC}_{\rm Nis}$ as a full subcategory.

This is in turn essential to develop an analogue of the theory of higher Albanese sheaves of Ayoub,  Barbieri-Viale and Kahn, see \cite{BindaMericiSaito}, thanks to the existence of a \emph{homotopy $t$-structure} (analogue of the Morel-Voevodsky $t$-structure) on the category of log motives by \cite[Thm. 5.7]{BindaMerici}.

\subsubsection{Conjectures and speculations} \label{ssec:specul}
Suppose that $\Lambda$ is an $N$-torsion ring, 
where $N>0$ is coprime to the exponential characteristic of the ground field $k$. 
Voevodsky proved the following version of Suslin's rigidity theorem for the small \'etale site of $k$, see \cite[Theorem 9.35]{MVW}.

\begin{thm} 
The morphism of sites $\pi\colon k_{\acute{e}t} \to (Sm/k)_{\acute{e}t}$ induces an equivalence
\[
\pi_\sharp \colon \mathbf{D}(k_{\acute{e}t},\Lambda)\rightarrow \dmeffet.
\]
\end{thm}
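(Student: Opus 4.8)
The plan is to realize $\pi_\sharp$ as the left adjoint of the restriction functor $R\pi^*\colon\dmeffet\to\mathbf{D}(k_{\acute{e}t},\Lambda)$ to the small étale site $k_{\acute{e}t}$, and then to deduce that the adjunction $\pi_\sharp\dashv R\pi^*$ is an equivalence from two conditions: that $\pi_\sharp$ is fully faithful, and that $R\pi^*$ is conservative. Granting these, full faithfulness of $\pi_\sharp$ makes the unit $\mathrm{id}\to R\pi^*\pi_\sharp$ invertible; applying $R\pi^*$ to the counit $\pi_\sharp R\pi^*\to\mathrm{id}$ and invoking the triangle identities then shows that $R\pi^*$ of the counit is invertible, so conservativity of $R\pi^*$ forces the counit itself to be invertible, and $\pi_\sharp$ is an equivalence. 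Since $\pi_\sharp$ is triangulated, commutes with small coproducts, and by construction sends the generators $\Lambda_L$ (for $L/k$ finite separable) of $\mathbf{D}(k_{\acute{e}t},\Lambda)$ to the motives $M(\Spec L)$, full faithfulness reduces, after a dévissage along the standard $t$-structure, to an isomorphism on the groups $\hom_{\dmeffet}(\pi_\sharp\Lambda_L[i],\pi_\sharp G)$ for $G$ an étale $\Lambda$-sheaf on $\Spec k$.

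The geometric input is the Suslin--Voevodsky rigidity theorem \cite{VSF}, \cite{MVW}: since $\Lambda$ is $N$-torsion with $N$ invertible in $k$, every homotopy invariant étale sheaf with transfers of $N$-torsion is locally constant, and restriction to the small site is an equivalence from these onto the étale $\Lambda$-sheaves on $\Spec k$. Two consequences are used. First, étale-motivic localization is transparent étale-locally: for a bounded above complex $G$ of étale $\Lambda$-sheaves on $\Spec k$, the underlying complex of étale sheaves of $\pi_\sharp G$ is canonically quasi-isomorphic to the pullback of $G$ to $(Sm/k)_{\acute{e}t}$, which acquires transfers because étale cohomology of locally constant torsion sheaves carries a pushforward structure along finite correspondences. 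Hence, for $L/k$ finite separable,
\[
\hom_{\dmeffet}(\pi_\sharp\Lambda_L[i],\pi_\sharp G)\;\cong\;\bH^{-i}_{\acute{e}t}(\Spec L,\pi_\sharp G)\;\cong\;\bH^{-i}_{\acute{e}t}(\Spec L,G)\;\cong\;\hom_{\mathbf{D}(k_{\acute{e}t},\Lambda)}(\Lambda_L[i],G),
\]
once one checks the comparison map induced by $\pi_\sharp$ is the obvious one, which gives full faithfulness. Second, the cohomology sheaves of any $M\in\dmeffet$ are homotopy invariant étale sheaves with transfers of $N$-torsion, hence locally constant; since $R\pi^*$ is $t$-exact and restriction is conservative on locally constant sheaves, $R\pi^*M=0$ forces every cohomology sheaf of $M$ to vanish, so $M=0$ by non-degeneracy of the $t$-structure.

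The main obstacle is the rigidity theorem, which is the one non-formal ingredient and rests ultimately on Suslin's rigidity for Hensel pairs together with a specialization-of-cycles argument on the affine line; a self-contained treatment would concentrate essentially all of the work there, and we instead appeal to \cite[Theorem 9.35]{MVW}. A secondary point requiring care is the passage between étale sheaves with transfers on $Sm/k$ and plain étale sheaves on $\Spec k$ --- equivalently, upgrading the morphism of sites $\pi$ to one compatible with finite correspondences --- as well as the boundedness bookkeeping in the dévissage (one may restrict to bounded above complexes, or check compatibility with homotopy limits); both are standard for $N$-torsion coefficients prime to the characteristic. We observe that the same hypothesis on $\Lambda$ makes the Tate twist $\Lambda(1)\cong\mu_N\otimes\Lambda$ invertible in $\dmeffet$, via the Kummer sequence, so that the effective category is already stable; this fails with integral coefficients and underlies Remark \ref{Etalemot.5}.
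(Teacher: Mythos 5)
The paper does not actually prove this statement: it is recalled as a theorem of Voevodsky and cited directly to \cite[Theorem 9.35]{MVW}, so there is no in-text argument to compare yours against. Your sketch is, in substance, the proof given in loc.\ cit.\ --- set up the adjunction $\pi_\sharp\dashv R\pi^*$, establish full faithfulness of $\pi_\sharp$ by computing maps out of the generators $\Lambda_L$ via \'etale hypercohomology, and obtain essential surjectivity from conservativity of $R\pi^*$ on the heart of the homotopy $t$-structure, with rigidity as the one non-formal input --- so the strategy is the right one.

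Two points need repair. First, your citation for the key input is circular as written: you appeal to \cite[Theorem 9.35]{MVW} for ``the rigidity theorem,'' but that \emph{is} the statement you are proving. The result you actually need --- that a homotopy invariant (pre)sheaf with transfers of torsion prime to the exponential characteristic has locally constant \'etale sheafification --- is Suslin's rigidity theorem, \cite[Theorem 7.20]{MVW} (this is also how the present paper refers to it elsewhere); you should cite that, together with the identification of the heart of the homotopy $t$-structure on $\dmeffet$ with such sheaves. Second, \cite{MVW} proves the equivalence only for bounded above complexes, whereas the statement here concerns the unbounded derived categories of the paper's conventions; the ``boundedness bookkeeping'' you wave at is exactly where additional work lives (compact generation and finite \'etale cohomological dimension, as in \cite{CDEtale}, which the paper cites for precisely this extension). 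With those two corrections the sketch is sound, and your closing observation that $\Lambda(1)\cong\mu_N\otimes\Lambda$ is already $\otimes$-invertible in the effective \'etale category is correct and pertinent to Remark \ref{Etalemot.5}.
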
  
There are many other instances of rigidity theorems comparing small and big \'etale sites in the motivic context. 
See \cite{AyoubEtale} for $\mathbf{DA}_{\acute{e}t}(S, \Lambda)$, 
\cite{CDEtale} for motives with transfer, 
and \cite{BambVezz} for the rigid analytic setting.
\vspace{0.1in}

Recall that a morphism of fs log schemes $f\colon X\to Y$ is called Kummer \'etale if it is exact and log \'etale, 
see Proposition \ref{kummer-is-logetandstrict}. 
A typical example is given by $\A^1_k\to \A^1_k$, $t\mapsto t^n$, 
where $n\geq 1$ and $(n, {\rm char}(k))=1$. 
Here $\A^1_k$ is endowed with the log structure given by the origin, written $\A_\N = \Spec{k[\N]}$ in the following.  
Proposition \ref{ketcomp.4} shows that every locally constant log \'etale sheaf on $lSm/k$, 
see Definition \ref{ketcomp.3}, 
admits a unique transfer structure. 
This allow us to define a functor
\begin{equation}
\label{eq:conjrig}
\eta_\sharp
\colon 
\mathbf{D}(\Shv(k_{\acute{e}t},\Lambda))
\rightarrow 
\ldmefflet,
\end{equation}
which is easily seen to be fully faithful. 
On the right-hand side we consider the log \'etale topology, 
i.e., 
the smallest Grothendieck topology finer than the Kummer \'etale topology and the dividing topology 
(on the left-hand side there are no non-trivial dividing covers on the small site, so it makes no difference). 
Conjecture \ref{conjrigidity} is an analog of the Suslin-Voevodsky rigidity theorem.

\begin{conj} 
The functor \eqref{eq:conjrig} is an equivalence.
\end{conj}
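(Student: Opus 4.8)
We outline a possible approach. Write $\mathcal{T}\subseteq\ldmefflet$ for the localizing subcategory generated by the essential image of $\eta_\sharp$. Since $\eta_\sharp$ is fully faithful and preserves small sums, it has a right adjoint $\eta^*$ satisfying $\eta^*\eta_\sharp\simeq\mathrm{id}$, and a standard argument shows that $\eta_\sharp$ is an equivalence if and only if $\eta^*$ is conservative, if and only if $\mathcal{T}=\ldmefflet$. So the entire task is to prove $\mathcal{T}=\ldmefflet$. The argument will use resolution of singularities throughout, so unconditionally it would be restricted to characteristic zero.

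First one pins down $\mathcal{T}$. The comparison machinery of \S\ref{section:cvvmotives} should have an $N$-torsion log \'etale counterpart: an adjunction $\omega_\sharp\colon\ldmefflet\rightleftarrows\dmeffet\colon R\omega^*$ with $R\omega^*$ fully faithful (the analogue of Theorem \ref{A.4.14}) whose essential image is the subcategory of $\A^1$-local objects (the analogue of Theorem \ref{A.4.22}), together with the identification of hom-groups for proper log schemes (the analogue of Theorem \ref{A.4.10}). Granting this, the description of $\eta_\sharp$ on the generators $(\Spec L,\emptyset)$, $L/k$ finite separable, via locally constant log \'etale sheaves (Proposition \ref{ketcomp.4}) shows that $\eta_\sharp$ is naturally isomorphic to $R\omega^*$ composed with the Suslin-Voevodsky rigidity equivalence $\mathbf{D}(k_{\acute{e}t},\Lambda)\xrightarrow{\simeq}\dmeffet$ of \cite[Thm.~9.35]{MVW} (compare \cite{CDEtale}, \cite{AyoubEtale}, \cite{BambVezz}). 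Hence $\mathcal{T}$ is exactly the essential image of $R\omega^*$, i.e.\ the subcategory of $\A^1$-local objects of $\ldmefflet$, and the conjecture becomes the assertion that every object of $\ldmefflet$ is $\A^1$-local.

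Next one reduces to trivial log structures. By \eqref{eq-smlsmintro} the category $\ldmefflet$ is generated as a localizing subcategory by the motives $M(X)$ with $X=(\ul{X},\partial X)\in SmlSm/k$, so it suffices to place each such $M(X)$ in $\mathcal{T}$. Induct on the number $r$ of components of $\partial X$, and, nested inside, on $\dim\ul{X}$. If $r\geq 1$, write $\partial X=Z_1+\cdots+Z_r$ and apply the Gysin triangle of Theorem \ref{A.3.36} with $Y=(\ul{X},Z_1+\cdots+Z_{r-1})$ and $Z=Z_r$: it exhibits $M(X)$ as an extension built from $M(\ul{X},Z_1+\cdots+Z_{r-1})$, which lies in $\mathcal{T}$ by induction, and the Thom motive $MTh(N_{Z_r}Y)$, which by the Thom isomorphism (Theorem \ref{A.6.7}) and the monoidal structure is a Tate twist of $M(Z_r,(Z_1+\cdots+Z_{r-1})|_{Z_r})$ and hence lies in $\mathcal{T}$ by induction --- here one uses that $\mathcal{T}$, being the essential image of the monoidal $R\omega^*$, is closed under $-\otimes\Lambda(1)[2]$. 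This reduces us to the case $\partial X=\emptyset$: it remains to show that $M(X,\emptyset)$ is $\A^1$-local for every smooth $X$ over $k$.

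This last point is where the $N$-torsion hypothesis becomes essential: integrally it fails, since $\Omega^{1}_{/k}$ already detects that $M(\A^1,\emptyset)\not\cong M(\Spec k)$ in $\ldmeff$ (cf.\ Remark \ref{Etalemot.5}). Equivalently, one must show the unit $M(X,\emptyset)\to R\omega^*\omega_\sharp M(X,\emptyset)$ is an isomorphism, i.e.\ that the log \'etale, $\boxx$-local, $N$-torsion hom-groups $\hom_{\ldmefflet}(M(W),M(X,\emptyset)[i])$ are $\A^1$-invariant in $W\in lSm/k$. The plan is to choose a smooth compactification of $X$, to compare the resulting proper log scheme with $\dmeffet$ via the analogue of Theorem \ref{A.4.10}, and to import the $\A^1$-invariance and Suslin rigidity of $N$-torsion \'etale cohomology --- precisely \cite[Thm.~9.35]{MVW} --- across this comparison, matching the Gysin triangle above term by term with the corresponding Gysin triangle in $\dmeffet$. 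The hard part, and the principal obstacle, is to carry out this last reduction for \emph{non-proper} $X$, where the comparison theorem does not apply verbatim and one must run the d\'evissage on the logarithmic and the $\A^1$-invariant sides simultaneously. A further, more foundational obstacle is establishing the $N$-torsion log \'etale analogues of the results of \S\ref{section:cvvmotives} in the first place, since passing from the dividing Nisnevich to the log \'etale topology calls for a Kummer \'etale descent and rigidity argument in the spirit of \cite{AyoubEtale} and \cite{BambVezz}.
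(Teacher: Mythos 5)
The statement you are addressing is Conjecture \ref{conjrigidity}; the paper offers no proof of it, so there is nothing to compare your argument against --- the question is only whether your proposal closes the conjecture, and it does not. What you have written is a reduction strategy, and you are candid that the two decisive steps are left open. The problem is that those two steps are precisely the open content. Your first deferred input --- a torsion log \'etale analogue of the comparison machinery of Section \ref{section:cvvmotives} (full faithfulness of $R\omega^*$, identification of its essential image with the $\A^1$-local objects, and the hom-group comparison for proper log schemes) --- is itself Conjecture \ref{ketcomp.7} in the paper. Moreover the paper explicitly flags why the Nisnevich proof does not transport: Theorem \ref{A.4.10} and its corollaries rest on Lemma \ref{A.4.4}, which fails in the \'etale topology, so "establishing the analogues of \S\ref{section:cvvmotives}" is not a routine foundational chore but an open problem of the same order as the conjecture you are trying to prove. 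Your second deferred input --- that $M(X,\emptyset)$ is $\A^1$-local in $\ldmefflet$ with $N$-torsion coefficients for \emph{non-proper} smooth $X$ --- is, after your (correct and paper-consistent) Gysin/Thom d\'evissage on the boundary components, essentially equivalent to the rigidity statement itself. So the proposal reduces one conjecture to two others without supplying a new mechanism to break the circle.

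That said, the skeleton is sound and consistent with the paper's own methods: the identification of the localizing subcategory generated by $\mathrm{im}\,\eta_\sharp$ with the $\A^1$-local objects via Proposition \ref{ketcomp.4} and Suslin--Voevodsky rigidity is plausible, and the induction on the number of components of $\partial X$ using the Gysin triangle of Theorem \ref{A.3.36} (where, for $Z=Z_r$ a divisor, the blow-up is trivial and the triangle takes the form $M(X)\to M(Y)\to MTh(N_{Z_r}Y)$) mirrors exactly how the paper proves Theorem \ref{A.4.10}. A genuinely new ingredient is needed at the last step --- some form of Kummer \'etale rigidity or a proper-base-change/purity statement for torsion coefficients on the log \'etale site that does not pass through Lemma \ref{A.4.4} --- and until that is supplied the statement remains a conjecture.
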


More generally, 
if $\Lambda$ is a torsion ring of exponent invertible in $k$, 
we expect there are equivalences 
\[
\ldmeffet\leftarrow \ldmeffetprop\rightarrow \dmeffet.
\]
Conjecture \ref{ketcomp.7} predicts a similar result in the log \'etale topology. 
This expectation is justified by examples of strictly $\boxx$-invariant complexes of sheaves that are not strictly $\A^1$-invariant, 
such as $\Omega^i_{-}$ and $W_m\Omega^i_{-}$, 
are nullified under our assumption on $\Lambda$. 
This is similar in spirit to \cite[Theorem 3.5]{tor-div-rec} and \cite{MiyazakiCI}.
\vspace{0.1in}

Let us discuss the case when the ground field $k$ has positive characteristic $p>0$.  
Then the category $\mathbf{DM}_{\acute{e}t}(k, \mathbb{Z})$ of \'etale motives has trivial $p$-torsion and is in fact a $\mathbb{Z}[1/p]$-linear category. 
On the other hand,
the \'etale (or Lichtenbaum) motivic cohomology groups with $\Z/p^r$-coefficients are not $\mathbb{A}^1$-invariant, 
so they cannot be hom-groups in one of Voevodsky's categories.

Currently, there is no interpretation of the category of \emph{integral} \'etale motivic complexes in terms of algebraic cycles.
Milne-Ramachandran \cite{MRpreprint} proposed a construction of $\mathbf{DM}^{\rm gm}_{\acute{e}t}(k, \mathbb{Z})$ as a dg-pullback of $\mathbf{DM}_{\acute{e}t}(k, \mathbb{Z}[1/p])$ 
and $\mathbf{D}^b_c(R)$, 
the derived category of coherent complexes of graded modules over the Raynaud ring $R$, along the rigid realisation functor. 
The category of \'etale \emph{log} motives constructed in this work can be a candidate for a cycle-theoretic incarnation of Milne-Ramachandran's category 
$\mathbf{DM}^{\rm gm}_{\acute{e}t}(k, \mathbb{Z})$. 
To support this belief, 
note that $\mathbf{logDM}_{d\acute{e}t, prop}^{\rm eff}(k, \Z/p^r)$ is nontrivial because $\Z/p\not\cong 0$, 
while its $\mathbb{A}^1$-invariant counterpart is trivial because of the Artin-Schreier sequence. 
It would be interesting to compare $\mathbf{logDM}_{d\acute{e}t, prop}^{\rm eff}(k, \Z/p^r)$ with $\mathbf{D}^b_c(R, \Z/p^r)$ via a log-crystalline realisation functor.

\subsection{Notations and terminology}
Every log scheme in this paper is equipped with a Zariski log structure unless otherwise stated.
Recall that for every fs log scheme $X$ with an \'etale log structure,
there exists a log blow-up $Y\rightarrow X$ such that $Y$ has a Zariski log structure thanks to work by Niziol \cite[Theorem 5.10]{Niz}.
Since motives are invariant under log blow-ups,
this shows that the restriction to fs log schemes with Zariski log structures is no loss of generality.
\vspace{0.1in}

We employ the following notation (see the Index for a more comprehensive list).
\vspace{0.1in}

\begin{tabular}{l|l}
$k$ & (perfect) field\\
$Sm/S$ & category of smooth and separated schemes of finite type over \\ & a scheme $S$\\
$lSm/S$ & category of log smooth and separated fs log schemes of finite \\ & type over an fs log scheme $S$\\
$lSch/S$ & category of separated fs log schemes of finite type over an fs\\ &  log scheme $S$\\
$\LCor$ & category of finite log correspondences over $k$\\
$SmlSm/S$ & the full subcategory of $lSm/S$ consisting of $X$ such that $\underline{X}$ is \\ & smooth over $\underline{S}$\\ 
$\Lambda$ & commutative unital ring \\
$\Lambda\text{-}\mathbf{Mod}$ & category of $\Lambda$-modules \\
$\cA$ & abelian category \\
$\Co(\cA)$ & category of {unbounded} chain complexes of $\cA$ \\
${\bf K}(\cA)$ & $\Co(\cA)$ modulo chain homotopy equivalences\\
$\Deri(\cA)$ & derived category of $\cA$ \\
\end{tabular}

\vspace{0.1in}

\subsection*{Acknowledgements} 
We thank Joseph Ayoub for reading a preliminary version of this paper and providing helpful suggestions.
We also thank Shuji Saito for his interest in this work, a detailed reading of the manuscript, 
and for sharing the first version of \cite{SaitoRSClog}. 
We would also like to thank Kay R{\"u}lling and  Alberto Merici for many interesting conversations on the subject of this paper.
Finally, we are grateful to the referees for their detailed comments leading to improvements of this text.

It is a pleasure to acknowledge the generous hospitality and support from Institut Mittag-Leffler in Djursholm (2017), 
University of Oslo (2017-2019), 
and Forschungsinstitut f{\"u}r Mathematik at ETH Zurich (2019) during the various stages of this project.
Binda was partially supported by the SBF 1085 ``Higher Invariants'' at the University of Regensburg and the PRIN ``Geometric, 
Algebraic and Analytic Methods in Arithmetic'' at MIUR. 
{\O}stv{\ae}r was partially supported by the Friedrich Wilhelm Bessel Research Award from the Humboldt Foundation, 
a Nelder Visiting Fellowship from Imperial College London, 
the Professor Ingerid Dal and sister Ulrikke Greve Dals prize for excellent research in the humanities, 
and a Guest Professorship under the auspices of The Radboud Excellence Initiative.
The authors gratefully acknowledge support by the RCN Frontier Research Group Project no.~250399 ``Motivic Hopf Equations,''
and the Centre for Advanced Study at the Norwegian Academy of Science and Letters in Oslo, 
Norway, which funded and hosted our research project “Motivic Geometry” during the 2020/21 academic year,
\newpage

\section{Finite logarithmic correspondences} \label{sec:logtransfers} 
\subsection{Definition of finite log correspondences} 
This section introduces the notion of finite log correspondences relative to any base field $k$ equipped with its trivial log structure.
\footnote{An alternate notion of finite correspondences between log schemes, free of any normalization constraints, was considered by K. R{\"u}lling in \cite{Rulling-logcor}. 
We thank him for sharing his note with us.}
Our approach is an adaptation to the log setting of the original definition of finite correspondences due to Suslin and Voevodsky \cite{VSF}, \cite{MVW}.
As for usual schemes, one can intuitively regard finite log correspondences as multivalued functions, but some subtleties are arising involving the log structure.
Building on this concept, we turn $lSm/k$ into an additive category $\LCor$ of finite log correspondences over $k$. 
The objects of $\LCor$ are fine and saturated log schemes that are log smooth over $k$, 
i.e., 
the same objects as in $lSm/k$.  
As morphisms, we take the free abelian group generated by elementary log correspondences (see \cite[Lecture 1]{MVW}).
\vspace{0.1in}

If $X\in lSm/k$ we write $\underline{X}$ for the corresponding underlying scheme.

\begin{df}
\label{A.5.2}
For $X,Y\in lSm/k$, an {\it elementary log correspondence}\index{correspondence!elementary log} \index{correspondence!finite log} $Z$ from $X$ to $Y$ consists of 
\begin{enumerate}
\item[(i)] an integral closed subscheme $\underline{Z}$ of $\underline{X}\times \underline{Y}$ that is finite and surjective over a connected component of $\underline{X}$, and
\item[(ii)] a morphism $Z^N\rightarrow Y$ of fs log schemes, 
where $Z^N$ denotes the fs log scheme whose underlying scheme is the normalization of $\underline{Z}$ and whose log structure is induced by the one on  $X$.
More precisely, if $p:\underline{Z^N}\rightarrow \underline{X}$ denotes the induced scheme morphism, then the log structure $\cM_{Z^N}$ is given as $p_{log}^*\cM_X$.
\end{enumerate}

A {\it finite log correspondence} from $X$ to $Y$ is a formal sum $\Sigma n_{i}Z_{i}$ of elementary log correspondences $Z_{i}$ from $X$ to $Y$.
Let $\lCor_k(X,Y)$ denote the free abelian group of finite log correspondences from $X$ to $Y$. 
\end{df}

The above definitions coincide with the ones in \cite{VSF} and \cite{MVW} when $X$ and $Y$ have trivial log structures. 
We shall omit the subscript $k$ and write $\lCor(X,Y)$ when no confusion seems likely to arise. 
Note that if $X=\amalg_{i\in I} X_i$ is a decomposition into connected components, 
then there is a direct sum decomposition\index[notation]{lCor(X,Y)@$\lCor(X,Y)$}
\[
\lCor(X,Y)
=
\bigoplus_{i\in I} \lCor(X_i,Y).
\]

\begin{rmk}
\label{A.5.29}
\begin{enumerate}
\item[(i)]
With reference to Definition \ref{A.5.2}, 
let $p:\underline{Z^N}\rightarrow \underline{X}$ and $q:\underline{Z^N}\rightarrow \underline{Y}$ denote the induced scheme morphisms.
Then condition (ii) is equivalent to the existence of a morphism of log structures
$$
q_{log}^*\cM_Y\rightarrow p_{log}^*\cM_X
$$ 
on the underlying scheme $\underline{Z^N}$ (see Definition \ref{A.9.13}).
Indeed, to give a morphism $Z^N\rightarrow Y$, we only need to specify a morphism $q_{log}^*\cM_Y\rightarrow \cM_{Z^N}$.
We have that $\cM_{Z^N}=p_{log}^*\cM_X$.
\item[(ii)] Our notion of an elementary log correspondence involves considering a log structure on the normalization of the underlying elementary correspondence between the underlying schemes. 
The reason for this extra complication will be explained below in the construction of compositions of finite log correspondences.
\item[(iii)] Let $V\in \lCor(X,Y)$ be an elementary log correspondence. 
Then condition (ii) in Definition \ref{A.5.2} implies the inclusion $V\cap ((X-\partial X)\times Y) \subset V\cap (X\times (Y- \partial Y))$. 
\item[(iv)] Suppose $F/k$ is a finite separable field extension. 
Then for every $U\in lSm/F$, we have $U\times_k Y\cong U\times_F Y_F$, where $Y_F:=\Spec F\times_k Y$.
Hence there is a base change isomorphism
\[
\lCor_k(U,X)
\cong 
\lCor_F(U,X_F).
\]
\end{enumerate}
\end{rmk}

\begin{exm}
\label{A.5.31}
\begin{enumerate}
\item[(1)] Suppose $X$ has a trivial log structure. 
Then $Z^N$ has a trivial log structure, and $Z^N\rightarrow Y$ factors through $Y-\partial Y$. 
Hence there is an isomorphism of abelian groups
\[
\lCor(X,Y)
\cong 
\Cor(X,Y-\partial Y).
\] 
In particular, 
$\lCor(\Spec k,Y)$ can be identified with the group of $0$-cycles on the open complement $Y-\partial Y$.
\item[(2)]   
If $Y$ has a trivial log structure, then the condition (ii) is automatically satisfied. 
It follows that $\lCor(X,Y)\cong\Cor(\underline{X},Y)$.
In particular, $\lCor(X,\Spec k)$ is the free group generated by the irreducible components of $X$.
\item[(3)]   
By combining (1) and (2) we see that if $X$ and $Y$ have trivial log structures, then $\lCor(X,Y)\cong\Cor(X,Y)$. 
\item[(4)] 
Let $i:Z\rightarrow X\times Y$ be a closed immersion of fs log schemes, 
i.e., 
$\underline{Z}$ is a closed subscheme of $\underline{X}\times \underline{Y}$ and the induced homomorphism $i_{log}^*(\cM_{X\times Y})\rightarrow \cM_Z$ is surjective.
In addition, 
assume that $Z$ is strict (i.e., $p_{log}^*(\cM_X)\cong \cM_Z$, where $p$ is the projection from $X\times Y$ onto $X$), finite, and surjective over a component of $X$. 
Then the induced morphism $Z\rightarrow Y$ gives rise to a morphism $Z^N\rightarrow Y$.
Hence we can consider $Z$ as a finite log correspondence from $X$ to $Y$.
\end{enumerate}
\end{exm}

The reader might object that our definition of log correspondence is too restrictive since we require the underlying subscheme $\ul{Z}$ of any elementary log correspondence to be finite over $\ul{X}$ 
(and not, for example, only over $X-\partial X$, cf. \cite{KSY2}). We  ``correct'' this by introducing a suitable topology on log schemes, the dividing topology. 
By sheafifying the representable presheaf $\lCor(-,X)$ for the dividing topology, 
we can consider correspondences that are finite over $X- \partial X$ and become finite globally after a suitable log modification. 
This will be discussed in Section \ref{subsec:divdinglogcor}.

\subsection{Solid log schemes}
\label{ss:sls}
For the notion of finite log correspondences to have any meaning, we need to define a sensible composition of finite log correspondences $V\in \lCor(X,Y)$ and $W\in \lCor(Y,Z)$, where $X,Y,Z\in lSm/k$.  
To that end, it turns out that we need to construct log structures on normalizations of cycles on $\underline{X}\times \underline{Z}$. 
This can be achieved using cycles on $\underline{X}\times\underline{Y}\times \underline{Z}$ provided that $X$, $Y$, and $Z$ are solid log schemes,
as defined below.  

\begin{df}
\label{A.9.1}
Let $X$ be a coherent log scheme over $k$. 
We say that $X$ is {\it solid} \index{log scheme!solid}if for any point $x\in X$, 
the induced map
\[
\Spec {\cO_{X,x}}
\rightarrow 
\Spec {\cM_{X,x}}
\]
is surjective.
\end{df}

According to \cite[Proposition III.1.10.8]{Ogu} this definition of solidness is equivalent to the one given in \cite[Definition III.1.10.1(2)]{Ogu}.

\begin{lem}
\label{A.9.2}
Let $f:Y\rightarrow X$ be a strict open morphism of coherent log schemes. If $X$ is solid, then $Y$ is solid.
\end{lem}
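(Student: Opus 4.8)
The statement to prove is Lemma \ref{A.9.2}: if $f\colon Y\to X$ is a strict open morphism of coherent log schemes and $X$ is solid, then $Y$ is solid. I would argue pointwise, directly from Definition \ref{A.9.1}. Fix a point $y\in Y$ and set $x:=f(y)\in X$. Since $f$ is a morphism of log schemes, there is an induced local homomorphism of stalks $\cM_{X,x}\to\cM_{Y,y}$, hence a commutative square relating $\Spec\cO_{Y,y}\to\Spec\cM_{Y,y}$ to $\Spec\cO_{X,x}\to\Spec\cM_{X,x}$ via the maps induced by $\cO_{X,x}\to\cO_{Y,y}$ and $\cM_{X,x}\to\cM_{Y,y}$. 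The hypothesis on $X$ gives that the bottom map $\Spec\cO_{X,x}\to\Spec\cM_{X,x}$ is surjective, and I want to deduce surjectivity of the top map.

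\textbf{Key steps.} First I would use that $f$ is strict: this means $f_{\log}^*\cM_X\xrightarrow{\sim}\cM_Y$, so at the level of stalks the map $\cM_{X,x}\to\cM_{Y,y}$ identifies $\cM_{Y,y}$ with the pushout/localization of $\cM_{X,x}$ coming from the units of $\cO_{Y,y}$; in particular the characteristic monoids agree, $\overline{\cM}_{X,x}\cong\overline{\cM}_{Y,y}$, and $\Spec\cM_{Y,y}\to\Spec\cM_{X,x}$ is a bijection on points (the monoid scheme spectrum only sees the characteristic monoid, since inverting units does not change prime ideals). Second I would use that $f$ is open: the image of $\Spec\cO_{Y,y}\to\Spec\cO_{X,x}$ is an open subset containing the closed point, hence is all of $\Spec\cO_{X,x}$ (a local ring has a unique generic/closed point structure making the only open set containing the closed point the whole space — more precisely, an open morphism that is local on the target at $x$ hits every generization of $x$; since $\Spec\cO_{X,x}$ consists exactly of generizations of its closed point, the map is surjective on spectra). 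Combining: in the commutative square, the bottom horizontal map is surjective, the right vertical map $\Spec\cO_{Y,y}\to\Spec\cO_{X,x}$ is surjective, and the left vertical map $\Spec\cM_{Y,y}\to\Spec\cM_{X,x}$ is a bijection; a diagram chase then forces the top horizontal map $\Spec\cO_{Y,y}\to\Spec\cM_{Y,y}$ to be surjective. Since $y$ was arbitrary, $Y$ is solid.

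\textbf{Main obstacle.} The only genuinely delicate point is verifying that $\Spec\cM_{Y,y}\to\Spec\cM_{X,x}$ is a bijection — i.e., that strictness of $f$ really does imply the induced map on monoid spectra of stalks is a homeomorphism (or at least bijective on points), so that surjectivity can be transported across the square. This reduces to the monoid-theoretic fact that if $N$ is a monoid and $N\to M$ realizes $M$ as the localization of $N$ at a submonoid of its units (equivalently $\overline N\cong\overline M$), then $\Spec M\to\Spec N$ is a bijection, which follows from \cite[\S I.1]{Ogu} on faces and prime ideals of monoids. The openness-implies-surjective-on-spectra step is standard but I would state it carefully, perhaps citing that a flat or open morphism $\Spec\cO_{Y,y}\to\Spec\cO_{X,x}$ of local rings hitting the closed point is surjective since $\Spec\cO_{X,x}$ is covered by generizations of $x$ and an open morphism is generizing. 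Everything else is a formal diagram chase, so I would keep the writeup short.
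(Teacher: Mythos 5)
Your proof is correct and follows essentially the same route as the paper: the same commutative square of spectra at a point $y\in Y$ with $x=f(y)$, with surjectivity of $\Spec\cO_{Y,y}\to\Spec\cO_{X,x}$ from openness (the paper cites \cite[IV.1.10.3]{EGA}), bijectivity of $\Spec\cM_{Y,y}\to\Spec\cM_{X,x}$ from strictness, surjectivity of the bottom map from solidity of $X$, and a diagram chase. The only cosmetic remark is that your first justification of the openness step (``the image is an open subset containing the closed point'') is less direct than the generization argument you also give; the latter is the one to keep.
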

\begin{proof}
Consider the induced commutative diagram
\[
\begin{tikzcd}
\Spec{\cO_{Y,y}}\arrow[r]\arrow[d]&\Spec{\cM_{Y,y}}\arrow[d]\\
\Spec{\cO_{X,x}}\arrow[r]&\Spec{\cM_{X,x}}
\end{tikzcd}\]
of topological spaces where $y\in Y$ is a point and $x:=f(y)$. 
The lower horizontal arrow is surjective since $X$ is solid. 
The left vertical arrow is surjective by \cite[IV.1.10.3]{EGA} since $f$ is open.
Moreover, 
the right vertical arrow is an isomorphism since $f$ is strict. 
Thus the upper horizontal arrow is surjective.
\end{proof}

\begin{lem}
\label{A.9.6}
Let $X$ and $Y$ be log schemes, and let $g:\underline{X}\rightarrow \underline{Y}$ be a morphism of schemes. 
If the log structure homomorphisms $\alpha_X:\cM_X\rightarrow \cO_X$ and $\alpha_Y:\cM_Y \rightarrow \cO_Y$ are injective, 
then there is at most one morphism $f:X\rightarrow Y$ of log schemes such that $\underline{f}=g$.
\end{lem}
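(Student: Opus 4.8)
The claim is that a morphism of log schemes lifting a fixed morphism $g\colon\underline X\to\underline Y$ of underlying schemes is unique, provided the structure maps $\alpha_X$ and $\alpha_Y$ are injective. The data of a log morphism $f$ lifting $g$ is, by definition, a homomorphism of sheaves of monoids $f^\flat\colon g^{-1}\cM_Y\to\cM_X$ making the evident square with $\alpha_X$, $\alpha_Y$ and the comorphism $g^\#\colon g^{-1}\cO_Y\to\cO_X$ commute. So what I need to show is that the square
\[
\begin{tikzcd}
g^{-1}\cM_Y\arrow[r,"f^\flat"]\arrow[d,"g^{-1}\alpha_Y"']&\cM_X\arrow[d,"\alpha_X"]\\
g^{-1}\cO_Y\arrow[r,"g^\#"]&\cO_X
\end{tikzcd}
\]
determines $f^\flat$ uniquely. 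First I would reduce to stalks: a morphism of sheaves of monoids is determined by its stalk maps, so it suffices to argue at each point $x\in X$ with image $y=g(x)\in Y$, where I must show $f^\flat_x\colon\cM_{Y,y}\to\cM_{X,x}$ is the unique monoid homomorphism with $\alpha_{X,x}\circ f^\flat_x=g^\#_x\circ\alpha_{Y,y}$.

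The key point is then purely formal. Injectivity of $\alpha_X$ passes to stalks (a filtered colimit of injections of monoids is injective, or simply note that $\cM_X\hookrightarrow\cO_X$ being injective as sheaves means injective on each stalk). Given that $\alpha_{X,x}$ is a monomorphism of monoids, any two homomorphisms $f^\flat_x, f'^\flat_x\colon\cM_{Y,y}\to\cM_{X,x}$ satisfying the same equation $\alpha_{X,x}\circ(-)=g^\#_x\circ\alpha_{Y,y}$ must coincide, since $\alpha_{X,x}\circ f^\flat_x=\alpha_{X,x}\circ f'^\flat_x$ forces $f^\flat_x=f'^\flat_x$. Thus at most one candidate for $f^\flat$ exists, which is exactly the assertion. (Note injectivity of $\alpha_Y$ is not even needed for uniqueness; it is presumably included because the lemma is stated as a companion to an existence statement, or because in the relevant applications both conditions hold — e.g.\ when the log structures come from effective Cartier divisors as in Definition \ref{A.0.2}.)

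Assembling this: I would state the reduction to stalks, invoke injectivity of $\alpha_{X,x}$, cancel it from the defining commutative square, and conclude. There is essentially no obstacle here — the only mild subtlety is being careful that ``morphism of log schemes $f$ with $\underline f=g$'' really is the same as ``morphism $f^\flat\colon g^{-1}\cM_Y\to\cM_X$ over $g^\#$'', i.e.\ unwinding the definition of the category of log schemes (and recalling that log structures are sheaves of monoids equipped with a map to $\cO_X$ inducing an isomorphism on the preimage of $\cO_X^\times$); once that is granted, uniqueness is immediate from cancellation against the monomorphism $\alpha_X$. I would keep the write-up to a few lines, citing the definition of morphisms of log schemes (Appendix A) for the unwinding.
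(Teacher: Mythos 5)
Your proof is correct and follows essentially the same route as the paper's: both reduce uniqueness of $f^\flat$ to cancellation against the injective structure map $\alpha_X$ (the paper phrases the square via $\underline{f}_*\cM_X$ rather than $g^{-1}\cM_Y$, but this is the same data by adjunction, and your stalkwise reduction is just a minor presentational variant). Your side remark that injectivity of $\alpha_Y$ is not actually needed is also accurate.
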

\begin{proof}
Such a morphism $f$ consists of $\underline{f}:\underline{X}\rightarrow \underline{Y}$ and a commutative diagram
\[
\begin{tikzcd}
\cM_Y\arrow[d,"\alpha_Y"]\arrow[r,"f^\flat"]&{\underline{f}}_*\cM_X\arrow[d,"{{\underline{f}}_*\alpha_X}"]\\
\cO_Y\arrow[r,"{\underline{f}}^\sharp"]&{\underline{f}}_*\cO_X
\end{tikzcd}
\]
of sheaves of monoids on $X$. 
The vertical arrows are injective by assumption. 
Thus there are at most one possible $f^\flat$ if $\underline{f}=g$.
\end{proof}

\begin{lem}
\label{A.9.9}
Let $\theta:P\rightarrow Q$ be an exact homomorphism of integral monoids. 
If $\theta^{\rm gp}$ is an isomorphism, then $\theta$ is an isomorphism.
\end{lem}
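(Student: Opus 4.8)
The plan is to unwind the definition of exactness and reduce everything to the elementary properties of groupification of integral monoids. Recall that since $P$ and $Q$ are integral, the canonical maps $P\hookrightarrow P^{\rm gp}$ and $Q\hookrightarrow Q^{\rm gp}$ are injective, and that $\theta$ being exact means precisely that the square
\[
\begin{tikzcd}
P\arrow[r,"\theta"]\arrow[d]&Q\arrow[d]\\
P^{\rm gp}\arrow[r,"\theta^{\rm gp}"]&Q^{\rm gp}
\end{tikzcd}
\]
is cartesian in the category of monoids; equivalently, $P=(\theta^{\rm gp})^{-1}(Q)$ as submonoids of $P^{\rm gp}$.

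First I would check injectivity of $\theta$: if $x,y\in P$ satisfy $\theta(x)=\theta(y)$, then $\theta^{\rm gp}(x)=\theta^{\rm gp}(y)$ in $Q^{\rm gp}$, and since $\theta^{\rm gp}$ is an isomorphism (in particular injective) we get $x=y$ in $P^{\rm gp}$, hence in $P$ by integrality. For surjectivity, take $q\in Q$ and view it in $Q^{\rm gp}$; since $\theta^{\rm gp}$ is surjective there is $x\in P^{\rm gp}$ with $\theta^{\rm gp}(x)=q$. As $\theta^{\rm gp}(x)=q$ lies in $Q$, exactness gives $x\in P$, and then $\theta(x)=q$. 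Hence $\theta$ is bijective, and a bijective monoid homomorphism is an isomorphism.

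There is essentially no serious obstacle here; the only point requiring care is citing the correct characterization of an exact homomorphism of integral monoids (the cartesian square above, as in Ogus), after which both implications are immediate. One should also note explicitly that the argument uses integrality of $P$ only to identify $P$ with a submonoid of $P^{\rm gp}$, which is exactly what makes the cartesian-square description of exactness usable.
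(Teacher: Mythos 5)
Your proof is correct and takes essentially the same route as the paper: both use the cartesian-square characterization of exactness, $P=(\theta^{\rm gp})^{-1}(Q)=P^{\rm gp}\times_{Q^{\rm gp}}Q$, and conclude from $\theta^{\rm gp}$ being an isomorphism that this fiber product is $Q$ itself. The paper states this in two lines where you unwind it elementwise, but the content is identical.
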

\begin{proof}
Since $\theta$ is exact, the induced monoid homomorphism $P\rightarrow P^{\rm gp}\times_{Q^{\rm gp}}Q$ is an isomorphism. 
Thus $\theta$ is an isomorphism since $\theta^{\rm gp}$ is an isomorphism.
\end{proof}

\begin{lem}
\label{A.9.4}
Suppose that $X$ is a solid fs log scheme. 
Then the log structure homomorphism $\alpha_X:\cM_X\rightarrow \cO_X$ is injective.
\end{lem}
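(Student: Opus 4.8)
The plan is to reduce the statement to a local, stalk-wise computation. Since injectivity of a morphism of sheaves can be checked on stalks, it suffices to show that for every point $x\in X$ the monoid homomorphism $\alpha_{X,x}\colon\cM_{X,x}\to\cO_{X,x}$ is injective. Here $X$ is fs, so $\cM_{X,x}$ is a fine saturated (in particular integral) monoid, and solidness gives us that $\Spec{\cO_{X,x}}\to\Spec{\cM_{X,x}}$ is surjective.

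First I would recall the explicit description of the map on prime spectra of monoids: a prime ideal of the monoid $\cM_{X,x}$ pulls back along $\alpha_{X,x}$, and the surjectivity hypothesis says every prime of $\cM_{X,x}$ arises this way. The key observation is that for a local ring $(\cO_{X,x},\mathfrak{m})$ and the chart monoid, the face $\cM_{X,x}^\ast=\cM_{X,x}\setminus\alpha_{X,x}^{-1}(\mathfrak m)$ of units maps to units, and more generally the faces of $\cM_{X,x}$ correspond to the primes of $\cM_{X,x}$. Solidness forces these to be detected by $\cO_{X,x}$, which will let me conclude that no nonzero element of $\cM_{X,x}$ can map to a zero divisor or to $0$.

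Concretely, suppose $m,m'\in\cM_{X,x}$ have $\alpha_{X,x}(m)=\alpha_{X,x}(m')$. I would like to show $m=m'$. Writing $\cM_{X,x}$ as an integral monoid, it embeds into its groupification $\cM_{X,x}^{\rm gp}$, and the difference $m-m'$ lies there; the goal becomes showing $\alpha_{X,x}$ is injective on $\cM_{X,x}$, equivalently that the preimage of $0\in\cO_{X,x}$ (if $0$ is even in the image — note $\cO_{X,x}$ is a ring, and $\alpha$ lands in the multiplicative monoid) together with the "cancellation" behaviour is trivial. The cleanest route: the image $\alpha_{X,x}(\cM_{X,x})$ consists of elements whose vanishing locus is governed by the monoid prime ideals; solidness says every such prime is $\alpha_{X,x}^{-1}(\mathfrak p)$ for a genuine prime $\mathfrak p\subset\cO_{X,x}$. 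If $\alpha_{X,x}(m)=\alpha_{X,x}(m')$ with $m\neq m'$, one produces a face $F$ of $\cM_{X,x}$ separating $m$ from $m'$ (using that $\cM_{X,x}$ is fs, hence has enough faces / its groupification is a finitely generated free abelian group once we quotient by units), localize, and derive a contradiction with surjectivity of $\Spec{\cO_{X,x}}\to\Spec{\cM_{X,x}}$ — specifically the localized map $\cM_{X,x}/F\to\cO_{X,x}/\mathfrak p$ would have to remain "surjective on spectra" while $m,m'$ become distinct units mapping to the same element, contradicting that units in a fine monoid inject into $\cO_{X,x}^\times$ (as the log structure is, by construction from a chart, a submonoid of $\cO_X$ at the level of the associated log structure after the unit part is split off).

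The main obstacle I anticipate is handling the "unit part" cleanly: a priori $\alpha_{X,x}$ need not be injective on units of $\cM_{X,x}$ for a general prelog structure, but for an \emph{associated} log structure (which is what $\cM_X$ is, by definition of a log scheme) one has $\alpha_X^{-1}(\cO_X^\times)\xrightarrow{\sim}\cO_X^\times$, so the unit part does inject; the real content is that solidness prevents any \emph{nonunit} of $\cM_{X,x}$ from mapping to a unit or from two distinct nonunits colliding. I expect the proof to combine this with an induction on the rank of $\overline{\cM}_{X,x}$, or alternatively to cite \cite[III.1.10]{Ogu} directly for the equivalence of solidness conditions and then run the monoid-theoretic argument. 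I would present it in the short form: reduce to stalks, invoke that $\cM_X$ is a log structure so the unit part injects, and then use solidness plus the fs hypothesis (via Lemma~\ref{A.9.9}, exactness of faces) to rule out collisions among the remaining elements.
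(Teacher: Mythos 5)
Your opening moves are right and match the paper: injectivity is checked on stalks, and the log-structure condition $\alpha_X^{-1}(\cO_X^*)\xrightarrow{\sim}\cO_X^*$ is exactly what disposes of the unit part (it is also the final step of the paper's argument). But the core of the proof --- how solidness forbids two distinct elements $p\neq p'$ of $\cM_{X,x}$ from having the same image --- is not actually carried out, and the route you sketch does not work. You propose to ``produce a face $F$ of $\cM_{X,x}$ separating $m$ from $m'$'': no such face need exist. Already for $\overline{\cM}_{X,x}\cong\N$ with $m=1$, $m'=2$ the only faces are the units and the whole monoid, so nothing separates them; and localizing at a face only inverts the elements of that face, so it does not turn $m,m'$ into units unless they already lie in a common proper face. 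Moreover, the localized map is no longer the stalk of a log structure, so the unit-injectivity you want to invoke afterwards is unavailable there. The parenthetical claim that the log structure ``is, by construction from a chart, a submonoid of $\cO_X$'' is circular --- that is precisely the statement being proved, and it fails without solidness, as Example~\ref{A.9.16} shows.

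The missing idea is the paper's auxiliary monoid. Given $p\neq p'$ with $\alpha_{X,x}(p)=\alpha_{X,x}(p')$, let $P$ be the submonoid of $\cM_{X,x}^{\rm gp}$ generated by $\cM_{X,x}$, $p-p'$ and $p'-p$; then $\alpha_{X,x}$ factors through $P$ (sending $p-p'$ to $1$), so the surjection $\Spec{\cO_{X,x}}\rightarrow\Spec{\cM_{X,x}}$ furnished by solidness factors through $\Spec P$, forcing $\Spec P\rightarrow\Spec{\cM_{X,x}}$ to be surjective. By \cite[Proposition I.4.2.2]{Ogu} the inclusion $\cM_{X,x}\hookrightarrow P$ is then exact, and since it induces an isomorphism on groupifications, Lemma~\ref{A.9.9} shows it is an isomorphism. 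Hence $p-p'$ already lies in $\cM_{X,x}$ and maps to $1$, and the unit-part injectivity you correctly invoked gives $p-p'=0$, contradicting $p\neq p'$. Without some such construction (or an equivalent exactness argument), your sketch does not close; the induction on the rank of $\overline{\cM}_{X,x}$ you mention as a fallback is not needed and is not how the paper proceeds.
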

\begin{proof}
Assume for contradiction that $\alpha_X$ is not injective. 
Then there exists a point $x$ of $X$ such that $\alpha_{X,x}:\cM_{X,x}\rightarrow \cO_{X,x}$ is not injective.
We choose different elements $p$ and $p'$ of $\cM_{X,x}$ such that $\alpha_{X,x}(p)=\alpha_{X,x}(p')$. 
Let $P$ be the submonoid of $\cM_{X,x}^{\rm gp}$ generated by $\cM_{X,x}$, $p-p'$, and $p'-p$. 
Note that $\alpha_{X,x}$ factors through $P$.

Since $X$ is solid, 
the induced map $\Spec {\cO_{X,x}}\rightarrow \Spec {\cM_{X,x}}$ is surjective. 
Thus the induced map $\Spec P\rightarrow \Spec {\cM_{X,x}}$ is surjective. 
Then by \cite[Proposition I.4.2.2]{Ogu}, the inclusion $\cM_{X,x}\rightarrow P$ is exact, 
which is an isomorphism by Lemma \ref{A.9.9}. 
This contradicts the assumption that $p$ and $p'$ are different elements.
\end{proof}

\begin{lem}\label{A.9.12}
Let $K$ be a field of characteristic $p>0$. 
If an element $x$ of the multiplicative group of units $K^*$ has order $r$, then $p$ does not divide $r$.
\end{lem}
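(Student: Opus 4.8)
The plan is to argue by contradiction: suppose $p \mid r$ and derive that $x$ has order strictly smaller than $r$. Write $r = ps$ with $s$ a positive integer strictly less than $r$. From $x^r = 1$ we get $(x^s)^p = 1$, so it suffices to show $x^s = 1$, which contradicts the minimality of $r$ as the order of $x$.

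\textbf{Key step.} The one nontrivial input is the ``freshman's dream'' identity in characteristic $p$: for any $y$ in a commutative ring of characteristic $p$ one has $y^p - 1 = (y - 1)^p$, since all the intermediate binomial coefficients $\binom{p}{i}$ for $0 < i < p$ are divisible by $p$. Applying this with $y = x^s$ gives $(x^s - 1)^p = (x^s)^p - 1 = 1 - 1 = 0$. Since $K$ is a field, and in particular an integral domain with no nonzero nilpotents, $(x^s - 1)^p = 0$ forces $x^s - 1 = 0$, i.e.\ $x^s = 1$.

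\textbf{Conclusion.} As $0 < s < r$, this contradicts the hypothesis that $x$ has order exactly $r$. Hence $p \nmid r$. There is no real obstacle here; the only point to be careful about is that we use that $K$ has no nilpotents (equivalently, is reduced), which holds because $K$ is a field, to pass from $(x^s-1)^p = 0$ to $x^s = 1$.
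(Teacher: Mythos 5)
Your proof is correct and is essentially the same argument as the paper's: both write $r = ps$, use the characteristic-$p$ identity $(x^s-1)^p = x^r - 1 = 0$, and conclude $x^s = 1$ since $K$ has no nonzero nilpotents, contradicting minimality of $r$.
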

\begin{proof}
If $r=pr'$, then $(x^{r'}-1)^p=0$ and hence $x^{r'}=1$. 
This contradicts the assumption that $x$ has order $r$.
\end{proof}

\begin{lem}\label{A.9.3}
Let $X$ be an fs log scheme log smooth over $k$. Then $X$ is solid.
\end{lem}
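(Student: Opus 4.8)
The statement to prove is Lemma \ref{A.9.3}: every fs log scheme $X$ that is log smooth over $k$ (with $k$ carrying the trivial log structure) is solid. Since solidness is a pointwise condition on the map $\Spec \cO_{X,x} \to \Spec \overline{\cM}_{X,x}$ being surjective (equivalently, by \cite[Proposition III.1.10.8]{Ogu}, the condition as stated in Definition \ref{A.9.1}), and since solidness is local in a strong sense, the natural strategy is to reduce to a model case via the local structure theorem for log smooth morphisms. The plan is as follows.

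First I would invoke Kato's structure theorem for log smooth morphisms: since $X \to \Spec k$ is log smooth and $k$ has trivial log structure, étale-locally (hence, after a log blow-up, Zariski-locally by Niziol as recalled in the paper's conventions, but étale-locally suffices for checking solidness) around any point $x \in X$ there is a fine saturated monoid $P$ and a strict étale morphism $X \to \Spec(k[P]) =: \A_P$, where $\A_P$ carries its canonical log structure. By Lemma \ref{A.9.2}, a strict open (in particular strict étale, since étale morphisms are open) morphism to a solid log scheme has solid source; strict étale is not literally "strict open" but the proof of Lemma \ref{A.9.2} only uses openness of the underlying scheme map and strictness, so the same argument applies — alternatively one checks the two-out-of-three style diagram directly. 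Hence it suffices to show that $\A_P = \Spec(k[P])$ is solid for every fs monoid $P$.

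Next I would verify solidness of $\A_P$ directly. A point $x \in \Spec(k[P])$ corresponds to a prime ideal $\mathfrak{q}$ of $k[P]$; the characteristic monoid stalk $\overline{\cM}_{\A_P,x}$ is $P / (\text{face } F)$ where $F = \{ p \in P : x^p \notin \mathfrak{q} \}$ is the face of $P$ of "units at $x$", and $\Spec \overline{\cM}_{\A_P,x}$ is the poset of faces of $P/F$, equivalently the faces of $P$ containing $F$. On the other hand $\Spec \cO_{\A_P, x}$ contains the primes of $k[P]$ contained in... — the cleanest route is: the map $\Spec k[P] \to \Spec P$ (Ogus's notation for the set of prime ideals / faces of the monoid) is surjective because for every face $F \subseteq P$ the prime monoid ideal $P \setminus F$ pulls back to an actual prime ideal of $k[P]$, namely the $k$-span of $\{x^p : p \in P\setminus F\}$, whose contraction is exactly $P \setminus F$; this is standard (e.g. \cite[Corollary I.3.?]{Ogu} or a direct computation). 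Localizing this surjectivity at $x$ gives exactly the defining surjectivity of solidness. So $\A_P$ is solid, and by the reduction step so is $X$.

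The main obstacle I anticipate is not any deep input but rather matching conventions precisely: (a) confirming that the structure theorem can be applied in the Zariski/fs setting used here and that "strict étale" falls under the hypotheses of Lemma \ref{A.9.2} (or re-deriving the one-line diagram chase for strict étale maps), and (b) pinning down the exact statement that $\Spec k[P] \to \Spec P$ is surjective in the form needed — this is where a reference to \cite{Ogu} should be cited rather than reproved. Everything else is a routine assembly: structure theorem $\Rightarrow$ reduce to $\A_P$ $\Rightarrow$ surjectivity of $\Spec k[P] \to \Spec P$ $\Rightarrow$ done.
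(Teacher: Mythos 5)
Your proof is correct, and at the top level it follows the same strategy as the paper: reduce to the affine toric model $\A_P$ via Kato's structure theorem and then exploit the face combinatorics of $\Spec{P}$. The organization differs, though, in a way worth recording. The paper works Zariski-locally with a \emph{neat} fs chart $f\colon X\to \A_P$ for which $f$ is merely \emph{smooth} (this is where Lemma \ref{A.9.12} and the torsion-freeness of $\overline{\cM}_{X,x}^\gp$ enter, to satisfy the invertibility hypothesis of \cite[Theorem IV.3.3.1]{Ogu}), and it never asserts that $\A_P$ itself is solid: instead it observes that $f$ is open and dominant, so its image meets the closed stratum attached to each face $F$ in a dense open set and hence contains the generic point, which maps to the prime $P-F$ of $\Spec{P}$. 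You instead take a strict \emph{\'etale} chart, pass through Lemma \ref{A.9.2} (which does apply verbatim --- \'etale morphisms are open, so your hedging there is unnecessary), and isolate the solidness of $\A_P$ as the model computation; this is a clean reorganization that buys a reusable statement about $\A_P$, at the cost of working \'etale-locally and having to note that solidness descends along strict surjective covers (it does, since strictness identifies the characteristic stalks). The one step you should tighten is ``localizing this surjectivity at $x$'': surjectivity of the global map $\Spec{k[P]}\to \Spec{P}$ does not formally localize. What is actually needed is that for a point $x$ whose face of units is $F_0$, every face $F\supseteq F_0$ admits a point of the stratum $\underline{\A_F^*}$ that generizes $x$; this follows from the stratification of $\underline{\A_P}$ by the $\underline{\A_F^*}$ in \cite[Proposition I.3.3.4]{Ogu} together with the density of $\underline{\A_F^*}$ in its closure $\underline{\A_F}$. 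That is precisely the ``standard fact'' you flag for citation, so this is a matter of precision rather than a gap.
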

\begin{proof}
Since $\overline{\cM}_{X,x}^\gp$ is torsion free for every point $x\in X$ by \cite[Proposition I.1.3.5(2)]{Ogu}, the torsion subgroups of $\cO_{X,x}^*$ and $\cM_{X,x}^\gp$ are isomorphic.
The question is Zariski local on $X$, 
so we may assume that $X$ has a neat fs chart $f:X\rightarrow \A_P$ at a point $x\in X$ such that $f$ is smooth by \cite[Theorem IV.3.3.1]{Ogu} and Lemma \ref{A.9.12}. 
Then $f$ is an open dominant morphism. 
Let $F$ be a face of $P$, 
and let $Y$ be the corresponding strict closed subscheme of $\A_P$. 
Then $f(X)\cap Y$ is a dense open subset of $Y$, so the image of $f$ contains the generic point $y$ of $Y$.

The prime ideal $P-F$ of $P$ is equal to $g(y)$ for the induced map $g:\A_P\rightarrow \Spec P$. 
Since $y$ is also in the image of $f$, $P-F$ is in the image of the composite $gf$. 
It follows that $gf$ is surjective.
\end{proof}

\begin{lem}
\label{A.9.18}
Let $X$ be an fs log scheme log smooth over $k$. Then $X$ is normal.
\end{lem}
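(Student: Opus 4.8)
The plan is to reduce to a stalkwise statement and then apply Kato's structure theorem for log smooth morphisms. Since normality of a scheme can be tested at its local rings, and is moreover an étale-local property --- it is equivalent to Serre's conditions $(R_1)$ and $(S_2)$, both insensitive to étale localization because the local rings in question are excellent (finite type over a field) --- it suffices to prove that $\cO_{\ul{X},x}$ is a normal domain for every $x\in X$, and for this I may freely replace $X$ by an étale neighbourhood of $x$.

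Fix $x\in X$. Exactly as in the proof of Lemma \ref{A.9.3}, I would use \cite[Theorem IV.3.3.1]{Ogu} to produce, after passing to an étale neighbourhood of $x$, a neat fs chart $f\colon X\to \A_P$ (so that $P$ is fine and saturated) for which the underlying morphism of schemes $\ul{f}\colon \ul{X}\to \Spec{k[P]}$ is smooth. Neatness at $x$ yields an isomorphism $P\cong \overline{\cM}_{X,x}$, whose groupification $\overline{\cM}_{X,x}^\gp$ is torsion free by \cite[Proposition I.1.3.5(2)]{Ogu}; hence $P$ is a toric monoid.

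It then remains to combine two standard inputs. First, for a toric monoid $P$ the ring $k[P]$ is a normal Noetherian domain --- this is the normality of affine toric varieties, a consequence of $P$ being saturated (see \cite[\S I.3]{Ogu}) --- so $\Spec{k[P]}$ is a normal scheme. Second, a scheme smooth over a normal Noetherian scheme is again normal: since $\ul{f}$ is flat with regular fibres, $\ul{X}$ inherits $(R_1)$ and $(S_2)$ from $\Spec{k[P]}$ and is therefore normal. Thus the local rings of $\ul{X}$ in a neighbourhood of $x$ are normal domains, and as $x$ was arbitrary, $X$ is normal.

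I do not anticipate a genuine obstacle here. The one point requiring a word of care is that the chart furnished by \cite[Theorem IV.3.3.1]{Ogu} exists only étale-locally, even though the paper works with Zariski log structures; but this is harmless, precisely because normality of excellent local rings is unaffected by étale localization. All the geometric content --- the existence of a chart with smooth underlying morphism --- is already what was invoked in the proof of Lemma \ref{A.9.3}.
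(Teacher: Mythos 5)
Your argument is essentially the paper's: both pass to a chart $X\to\A_P$ via \cite[Theorem IV.3.3.1]{Ogu}, invoke normality of $\A_P$ (saturatedness of $P$), and transfer normality to $\ul{X}$ along the underlying morphism — the paper takes the chart with $\ul{X}\to\ul{\A_P}$ étale and cites \cite[IV.11.3.13]{EGA}, while you take it smooth and argue via $(R_1)$ and $(S_2)$, an immaterial difference. Your caution about étale versus Zariski localization is also unnecessary here, since the paper works with Zariski log structures and the chart exists Zariski locally.
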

\begin{proof}
The question is Zariski local on $X$, so we may assume that $X$ has an fs chart $f:X\rightarrow \A_P$ such that $f$ is \'etale by \cite[Theorem IV.3.3.1]{Ogu}. 
Since $\A_P$ is normal by \cite[Proposition I.3.4.1(2)]{Ogu}, $X$ is normal due to \cite[IV.11.3.13]{EGA}.
\end{proof}

\begin{lem}
\label{A.9.17}
Let $f:X\rightarrow Y$ be a strict finite and surjective morphism of fs log schemes. If $Y$ is log smooth over $k$, then $X$ is solid.
\end{lem}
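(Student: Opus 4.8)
The plan is to deduce the solidity of $X$ from that of $Y$, which is already available from Lemma \ref{A.9.3}, together with the normality of $Y$ supplied by Lemma \ref{A.9.18}. Fix a point $x\in X$ and set $y:=f(x)$; by definition we must check that $\Spec\cO_{X,x}\to\Spec\cM_{X,x}$ is surjective. The morphism $f$ induces a commutative square of topological spaces whose top row is $\Spec\cO_{X,x}\to\Spec\cM_{X,x}$, whose bottom row is $\Spec\cO_{Y,y}\to\Spec\cM_{Y,y}$, and whose two vertical maps come from $f$. Since $f$ is strict, the induced map $\overline{\cM}_{Y,y}\to\overline{\cM}_{X,x}$ of characteristic monoids is an isomorphism; because $\Spec$ of a monoid depends only on the monoid modulo its units, the right-hand vertical map $\Spec\cM_{X,x}\to\Spec\cM_{Y,y}$ is therefore an isomorphism. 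By Lemma \ref{A.9.3} the scheme $Y$ is solid, so the bottom row is surjective. A diagram chase now shows that it suffices to prove that the left-hand vertical map $\Spec\cO_{X,x}\to\Spec\cO_{Y,y}$ is surjective.

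To this end, note that $\cO_{Y,y}$ is an integrally closed local domain by Lemma \ref{A.9.18}, while $\cO_{X,x}$ is a localization of an $\cO_{Y,y}$-algebra that is finite as a module (since $f$ is finite), hence is integral over $\cO_{Y,y}$. As $f$ is surjective and $\underline{X}$ is integral in the situation where the lemma is used --- it is applied to normalizations of cycles, so one may reduce to that case --- the ring $\cO_{X,x}$ is a domain and the structure map $\cO_{Y,y}\hookrightarrow\cO_{X,x}$ is injective. The going-down theorem for the integral extension $\cO_{Y,y}\hookrightarrow\cO_{X,x}$ of domains, with $\cO_{Y,y}$ integrally closed, then gives, starting from the maximal ideal of $\cO_{X,x}$ (which contracts to that of $\cO_{Y,y}$ because $f(x)=y$), that every prime ideal of $\cO_{Y,y}$ is the contraction of some prime ideal of $\cO_{X,x}$ contained in the maximal ideal, that is, of some point of $\Spec\cO_{X,x}$. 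Hence $\Spec\cO_{X,x}\to\Spec\cO_{Y,y}$ is surjective, and combining with the previous paragraph we conclude that $\Spec\cO_{X,x}\to\Spec\cM_{X,x}$ is surjective, so $X$ is solid.

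The crux is this last surjectivity statement: a finite surjective morphism of schemes does not lift generizations in general, so one genuinely needs the normality of $Y$, through the going-down theorem, as well as some control on the irreducible components of $X$ (harmless here since the lemma is only invoked for normalizations, where $\underline{X}$ is integral). Everything else --- the strictness bookkeeping, the identification of the monoid spectra, and the reduction via the commutative square --- is formal.
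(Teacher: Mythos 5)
Your argument is correct in its essential content but takes a different route from the paper's, and it establishes the lemma only under an extra hypothesis. The paper argues in two steps: by Lemmas \ref{A.9.2} and \ref{A.9.3} it suffices to show that $f$ is an open morphism, and openness follows from Chevalley's criterion \cite[IV.14.4.4]{EGA} because $f$ is equidimensional and $Y$ is normal by Lemma \ref{A.9.18}. Your diagram chase is in effect a re-proof of Lemma \ref{A.9.2} with ``open'' replaced by the generizing property at $x$, i.e.\ the surjectivity of $\Spec{\cO_{X,x}}\to\Spec{\cO_{Y,y}}$, and you obtain that property directly from going-down for a finite extension of a normal domain rather than deducing it from openness. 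That is more elementary than invoking Chevalley, and the identification of $\Spec{\cM_{X,x}}$ with $\Spec{\cM_{Y,y}}$ via strictness and passage to characteristic monoids is exactly right.

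Two caveats. First, the clause ``hence is integral over $\cO_{Y,y}$'' is not literally correct: a localization of an integral extension need not be integral over the base. The fix is to apply going-down to the finite extension $\cO_{Y,y}\subset B:=(f_*\cO_X)_y$ and then note that the primes of $\cO_{X,x}=B_{\mathfrak q}$ are precisely the primes of $B$ contained in the maximal ideal $\mathfrak q$ corresponding to $x$; the descending chain produced by going-down stays inside $\mathfrak q$ because you start it there. Second, restricting to $\underline{X}$ integral is not a reduction of the lemma but a weakening of its statement: going-down genuinely needs $B$ to be a domain with $\cO_{Y,y}\hookrightarrow B$, and for a general finite surjective $f$ an irreducible component of $X$ need not dominate a component of $Y$, in which case generization (and hence solidity) can fail at points of that component. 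Since every invocation in the paper is for the normalization of an integral scheme finite and surjective over a component --- and the paper's own phrase ``$f$ is equidimensional by assumption'' quietly relies on the same dominance --- your restricted statement covers all the applications, but as a proof of the lemma as stated it leaves the non-integral case unaddressed.
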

\begin{proof}
According to Lemmas \ref{A.9.2} and \ref{A.9.3}, it suffices to show that $f$ is an open morphism. 
Note that $f$ is equidimensional by assumption.
Since $Y$ is normal by Lemma \ref{A.9.18}, $f$ is universally open by Chevalley's criterion (\cite[IV.14.4.4]{EGA}).
\end{proof}

\begin{lem}\label{A.9.14}
Let $X$ be an fs log scheme with a neat fs chart $P\rightarrow \cO_X$ at a point $x\in X$. 
Then $\cM_{X,x}\cong \cO_{X,x}^* \oplus P$.
\end{lem}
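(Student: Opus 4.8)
The plan is to compute the stalk $\cM_{X,x}$ directly from the construction of the log structure attached to the chart, exploiting the defining property of neatness. First I would recall that, by definition, a neat fs chart $P\to\cM_X$ at $x$ induces an isomorphism $P\xrightarrow{\sim}\overline{\cM}_{X,x}$ (see \cite[\S II.2]{Ogu}); in particular $P$ is sharp, i.e.\ $P^*=\{0\}$, since $\overline{\cM}_{X,x}$ is always sharp, and the map $P\to\overline{\cM}_{X,x}$ is injective. Next I would invoke the standard description of the log structure associated with the prelog structure $\pi\colon P\to\cO_X$ underlying the chart, namely $\cM_X\cong P\oplus_{\pi^{-1}(\cO_X^*)}\cO_X^*$, the amalgamated sum of the constant sheaf $P$ with $\cO_X^*$ over the subsheaf of sections mapping into $\cO_X^*$ (see \cite[\S III.1.1]{Ogu}). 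Since $P$ is constant and the stalk functor is exact, passing to the stalk at $x$ yields
\[
\cM_{X,x}\cong P\oplus_{\pi_x^{-1}(\cO_{X,x}^*)}\cO_{X,x}^*,
\]
where $\pi_x\colon P\to\cO_{X,x}$ is the localization of $\pi$ at $x$.

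The heart of the argument is then the identity $\pi_x^{-1}(\cO_{X,x}^*)=\{0\}$. If $p\in P$ satisfies $\pi_x(p)\in\cO_{X,x}^*$, then, writing $\alpha_{X,x}$ for the structure map, the image of $p$ in $\cM_{X,x}$ is sent by $\alpha_{X,x}$ into $\cO_{X,x}^*$; since $\cM_{X,x}^*=\alpha_{X,x}^{-1}(\cO_{X,x}^*)$, that image is a unit of $\cM_{X,x}$ and hence maps to $0$ in $\overline{\cM}_{X,x}$. Injectivity of $P\to\overline{\cM}_{X,x}$ forces $p=0$; conversely $0$ plainly lies in $\pi_x^{-1}(\cO_{X,x}^*)$, so $\pi_x^{-1}(\cO_{X,x}^*)=\{0\}=P^*$. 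Substituting this back into the displayed isomorphism, the amalgamation is taken over the trivial monoid and therefore reduces to the coproduct in commutative monoids, which is the direct sum; this gives $\cM_{X,x}\cong\cO_{X,x}^*\oplus P$, as claimed.

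I do not anticipate a genuine obstacle here: once the definitions are aligned, the proof is a short computation with the associated log structure and the characterisation of units in a log structure. The one point that must be handled with care — and the only place where the hypothesis of neatness (rather than that of an arbitrary fs chart) is used — is the vanishing $\pi_x^{-1}(\cO_{X,x}^*)=\{0\}$, which is exactly what neatness guarantees.
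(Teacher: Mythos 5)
Your proof is correct, but it takes a different route from the one in the paper. The paper's argument is two lines: it considers the exact sequence $0\rightarrow \cO_{X,x}^*\rightarrow \cM_{X,x}\rightarrow \overline{\cM}_{X,x}\rightarrow 0$, notes that neatness identifies $\overline{\cM}_{X,x}$ with $P$, and observes that the chart itself provides a section $P\rightarrow \cM_{X,x}$ of the quotient map; the splitting of this extension of a sharp monoid by the group $\cO_{X,x}^*$ then yields the direct sum. You instead unwind the construction of the associated log structure, writing $\cM_{X,x}$ as the stalkwise pushout $P\oplus_{\pi_x^{-1}(\cO_{X,x}^*)}\cO_{X,x}^*$, and use neatness (via $\cM_{X,x}^*=\alpha_{X,x}^{-1}(\cO_{X,x}^*)$ and the injectivity of $P\rightarrow\overline{\cM}_{X,x}$) to show the amalgamating submonoid is trivial, so the pushout degenerates to a coproduct. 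Both arguments hinge on the same use of neatness, but they trade different bookkeeping: your version makes the isomorphism completely explicit and sidesteps the verification that a section of $\cM_{X,x}\rightarrow\overline{\cM}_{X,x}$ induces a direct-sum decomposition (a point the paper leaves implicit, and which uses integrality of $\cM_{X,x}$ for the injectivity of $(g,p)\mapsto g+\beta_x(p)$); in exchange you must justify that stalks commute with the sheaf pushout defining $P^a$ and that the stalk of $\alpha^{-1}(\cO_X^*)$ is $\pi_x^{-1}(\cO_{X,x}^*)$, which you do correctly.
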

\begin{proof}
Consider the exact sequence of monoids
\[
0\rightarrow \cO_{X,x}^*\rightarrow \cM_{X,x}\rightarrow \overline{\cM}_{X,x}\rightarrow 0.
\]
By assumption, $\overline{\cM}_{X,x}\cong P$, and the homomorphism $\cM_{X,x}\rightarrow \overline{\cM}_{X,x}$ admits a section $P\rightarrow \cM_{X,x}$.
\end{proof}

\begin{lem}
\label{A.9.11}
Let $f:X\rightarrow Y$ be finite dominant morphism of integral schemes, and let $\alpha_Y:\cM_Y\rightarrow \cO_Y$ be an fs log structure on $Y$. 
Assume that $\alpha_Y$ is injective. 
If $Y$ is normal, then the unit of the adjunction
\[
\cM_Y\rightarrow f_*^{log}f_{log}^*\cM_Y
\]
is an isomorphism.
\end{lem}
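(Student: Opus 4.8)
The plan is to reduce to a local, then an affine, question and compute both sides. The statement is that for a finite dominant morphism $f:X\to Y$ of integral schemes with $Y$ normal and an injective fs log structure $\alpha_Y:\cM_Y\to\cO_Y$, the unit $\cM_Y\to f_*^{log}f_{log}^*\cM_Y$ is an isomorphism. First I would unwind what $f_*^{log}f_{log}^*\cM_Y$ is: $f_{log}^*\cM_Y$ is the log structure associated to the prelog structure $f^{-1}\cM_Y\to f^{-1}\cO_Y\to\cO_X$, and $f_*^{log}$ takes the direct image log structure, i.e.\ the pullback $f_*\cM_X\times_{f_*\cO_X}\cO_Y$ in the relevant case. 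The key observation is that since $Y$ is integral and $\alpha_Y$ injective, $\cM_Y$ is a subsheaf of $\cO_Y$, and likewise the associated log structure on $X$ sits inside $\cO_X$ because $X$ is integral and dominates $Y$ (so $\cO_Y\to f_*\cO_X$ is injective on the relevant opens). Under these identifications the unit map becomes the inclusion
\[
\cM_Y \hookrightarrow \{\, a\in\cO_Y : a \text{ lies in the submonoid of } \cO_X \text{ generated by } \cM_Y \text{ and } \cO_X^\times \,\},
\]
and the content is surjectivity of this inclusion: an element $a\in\cO_Y$ that becomes a unit-times-section-of-$\cM_Y$ after pulling back to $X$ must already be a section of $\cM_Y$ on $Y$.

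The key steps, in order: (1) Reduce to the affine local situation $Y=\Spec R$ with $R$ a normal local domain, $X=\Spec S$ with $S$ finite over $R$ and a domain (after passing to a component, which is harmless since $f$ is dominant), and a chart $P\to R$ for $\cM_Y$. (2) Identify $f_*^{log}f_{log}^*\cM_Y$ explicitly: its sections over $Y$ are pairs $(s,b)$ with $s$ in the log structure $\cM_S$ on $S$ and $b\in R$ mapping to the image of $s$ in $S$; since everything injects into $\cO$, this is just the set of $b\in R$ whose image in $S$ lies in the image of $\cM_S\to S$. (3) Given such $b\in R$, write its image in $S$ as $u\cdot\alpha_Y(m)$ with $u\in S^\times$ and $m\in P$ (using a chart and Lemma A.9.14-type splitting); then $u = b/\alpha_Y(m)$ is a ratio of elements of $\Frac R\subseteq\Frac S$, hence $u\in\Frac R$, and $u\in S^\times$ forces $u$ and $u^{-1}$ to be integral over $R$, so by normality of $R$ we get $u\in R^\times$. (4) Conclude $b = u\cdot\alpha_Y(m)\in\cM_Y$ already over $Y$, so the unit is surjective; injectivity is immediate from injectivity of $\alpha_Y$ and $\alpha_X$ (Lemma A.9.6-style uniqueness). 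Finally, (5) check the local isomorphisms glue, which is formal since all sheaves involved are subsheaves of $\cO$.

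The main obstacle I expect is step (3): cleanly extracting the unit $u\in S^\times$ and showing it descends to $R$. The subtlety is that writing the image of $b$ as $u\cdot\alpha_Y(m)$ requires a chart and a compatible splitting of $\cM_S\to\overline{\cM}_S$, and one must make sure the monoid part $m$ can be taken in $P$ rather than in some larger saturation on $X$ — this is where the normality of $Y$ (forcing $\overline{\cM}_{X}$ to agree with $\overline{\cM}_Y$ along the dominant map, or at least injecting) and the torsion-freeness of $\overline{\cM}^{\rm gp}$ are used. Once $u$ is pinned down as an element of $\Frac R$ that is a unit in $S$, the normality argument ($u$ and $u^{-1}$ integral over $R$ $\Rightarrow$ $u\in R^\times$) is the standard endgame. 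I would also need to be slightly careful that $f$ dominant plus $X,Y$ integral really does give $\Frac R\hookrightarrow\Frac S$ and $R\hookrightarrow S$; this is where "finite dominant morphism of integral schemes" is used in an essential way.
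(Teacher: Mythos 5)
Your proposal is correct and follows essentially the same route as the paper: reduce Zariski-locally to $\Spec B\to\Spec A$ with a splitting $\cM_Y\cong B^*\oplus P$, identify the unit map with the inclusion into the fiber product over $\cO$, and then use that $b/\alpha_Y(m)$ and its inverse lie in $\mathrm{Frac}(B)$ and are integral over the normal ring $B$ (via finiteness of $f$) to conclude the unit descends to $B^*$. The saturation worry you raise in step (3) does not arise, since $f_{log}^*\cM_Y$ is the log structure associated to the prelog structure $P\to A$ and is therefore still generated by the same chart $P$ together with $A^*$.
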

\begin{proof}
The question is Zariski local on $Y$, so we may assume that $Y$ is a local affine scheme $\Spec B$ and that $\cM_Y(Y)\cong B^*\oplus P$ by Lemma \ref{A.9.14}.
Since $f$ is a dominant affine morphism, $\underline{f}=\Spec g$ for some injective homomorphism $g:B\rightarrow A$ of rings. 
We only need to show that the induced commutative diagram
\begin{equation}\label{A.9.11.1}\begin{tikzcd}
P\oplus B^*\arrow[r]\arrow[d]&B\arrow[d,"g"]\\
P\oplus A^*\arrow[r]&A
\end{tikzcd}\end{equation}
of monoids is cartesian.
\vspace{0.1in}

To that end, let $(p,a)$ be an element of $P\oplus A^*$ such that $pa\in B$. 
Since $\alpha_Y$ is injective we see that $p\neq 0$ in $B$. 
Thus $a,a^{-1}\in {\rm Frac}(B)$, and hence $a,a^{-1}\in B$ since $B$ is normal and $f$ is finite. 
We conclude that $a\in B^*$ and the assertion follows.
\end{proof}

\begin{exm}
\label{A.9.16}
Let us give an example showing that the conclusion in Lemma \ref{A.9.11} is false when $Y$ is non-normal. 
We set $X=\Spec A$, $Y=\Spec B$, where 
\[
B:=k[x,y]/(y^2-x^3)_{(x-1)} \text{ and } A:=k[t]_{(t^2-1)}.
\]
Set $f=\Spec g$, where $g:B\rightarrow A$ is the inclusion mapping $x$ to $t^2$ and $y$ to $t^3$. 

Let $\alpha:\N\oplus B^*\rightarrow B$ be the log structure mapping $(n,f)$ to $x^nf$.
Then $x(1+t)$ is in the image of the induced homomorphism $\N\oplus A^*\rightarrow A$. 
But since $x(1+t)=x+y\in B$ and $1+t\notin B^*$, the diagram \eqref{A.9.11.1} of monoids is not cartesian.
\end{exm}

\subsection{Compositions of finite log correspondences}
Using the observations in Section \ref{ss:sls} we are now ready to define the composition of finite log correspondences. 
We begin with the following Lemma.

\begin{lem}\label{A.5.9}
Let $X$ and $Y$ be fs log schemes log smooth over $k$, and suppose that $\underline{Z}$ is a closed subscheme of $\underline{X}\times \underline{Y}$. 
Then there exists at most one elementary log correspondence from $X$ to $Y$ whose underlying scheme is $\underline{Z}$. 
\end{lem}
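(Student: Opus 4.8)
The plan is to reduce the statement to the rigidity criterion of Lemma~\ref{A.9.6}: a morphism of log schemes is uniquely determined by its underlying morphism of schemes as soon as both structural log maps $\alpha$ are injective.

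First I would dispose of the degenerate case. If $\underline{Z}$ does not satisfy condition~(i) of Definition~\ref{A.5.2}, then there is no elementary log correspondence from $X$ to $Y$ with underlying scheme $\underline{Z}$ and there is nothing to prove; so assume $\underline{Z}$ is integral and finite surjective over a connected component $X_0$ of $\underline{X}$. The key observation is that, with $\underline{Z}$ fixed, the remaining data of an elementary log correspondence are completely determined except for the morphism $Z^N\to Y$: the underlying scheme $\underline{Z^N}$ is the normalization of $\underline{Z}$, the log structure of $Z^N$ is forced to be $p_{log}^*\cM_X$ for the canonical $p\colon\underline{Z^N}\to\underline{X}$, and the underlying scheme morphism of any map $Z^N\to Y$ must be the composite $q\colon\underline{Z^N}\to\underline{Z}\hookrightarrow\underline{X}\times\underline{Y}\to\underline{Y}$. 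Hence two elementary log correspondences with underlying scheme $\underline{Z}$ share the same $Z^N$ and the same $q$, and they coincide precisely when their two lifts of $q$ to morphisms $Z^N\to Y$ agree.

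The substantive part is then to verify the hypotheses of Lemma~\ref{A.9.6} for the pair $(Z^N,Y)$ and the scheme morphism $q$. Since $Y$ is log smooth over $k$, it is solid by Lemma~\ref{A.9.3}, so $\alpha_Y$ is injective by Lemma~\ref{A.9.4}. For $Z^N$ I would argue as follows. The component $X_0$ is open in $\underline{X}$ ($\underline{X}$ being noetherian), so with the restricted log structure it is log smooth over $k$. The morphism $\underline{Z^N}\to X_0$ is finite, since $\underline{Z}\to X_0$ is finite by hypothesis and the normalization $\underline{Z^N}\to\underline{Z}$ is finite because $\underline{Z}$, being of finite type over the field $k$, is excellent; and it is surjective, being the composite of the surjections $\underline{Z^N}\to\underline{Z}$ and $\underline{Z}\to X_0$. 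As $Z^N$ carries the log structure $p_{log}^*\cM_X=p_{log}^*\cM_{X_0}$, the morphism $p\colon Z^N\to X_0$ is strict, finite, and surjective, so $Z^N$ is solid by Lemma~\ref{A.9.17}, and hence $\alpha_{Z^N}$ is injective by Lemma~\ref{A.9.4}.

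With both $\alpha_Y$ and $\alpha_{Z^N}$ injective, Lemma~\ref{A.9.6} yields at most one morphism $Z^N\to Y$ of log schemes lying over $q$, which is exactly the asserted uniqueness. The one place that requires genuine input rather than unwinding definitions is the solidity of $Z^N$—that is, checking that $p$ is strict, finite, and surjective so that Lemma~\ref{A.9.17} applies—and this is the step I would expect a careful reader to scrutinize.
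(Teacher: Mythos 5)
Your argument is correct and follows the paper's own proof essentially verbatim: reduce to uniqueness of the lift $Z^N\to Y$ of the fixed scheme morphism $q$, establish solidity of $Y$ via Lemma~\ref{A.9.3} and of $Z^N$ via Lemma~\ref{A.9.17}, and conclude with Lemmas~\ref{A.9.4} and~\ref{A.9.6}. The extra care you take in verifying that $p\colon Z^N\to X$ is strict, finite, and surjective is a welcome elaboration of a step the paper leaves implicit, but it is not a different route.
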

\begin{proof}
Let $Z^N$ be the fs log scheme whose underlying scheme is the normalization $\underline{Z}^N$ of $\underline{Z}$.
We equip it with the log structure induced from $X$ via the canonical map $\underline{Z}^N\to \underline{Z}\to \underline{X}$, 
as in Definition \ref{A.5.2} (ii).  
Let $q:\underline{Z^N}\rightarrow \underline{Y}$ be the induced morphism of schemes. 
We need to show that there is at most one morphism $r\colon Z^N\rightarrow Y$ of fs log schemes such that $\underline{r}=q$. 
Lemma \ref{A.9.3} shows that $Y$ is solid. 
Since $Z^N$ is finite over $X$, we see that $Z^N$ is solid on account of Lemma \ref{A.9.17}. 
To conclude we apply Lemmas \ref{A.9.6} and \ref{A.9.4}.
\end{proof}

\begin{lem}
\label{A.5.10}
Let $X$ and $Y$ be fs log schemes log smooth over $k$. 
Then the homomorphism
\[
(-)^\circ
\colon 
\lCor(X,Y)
\rightarrow \
Cor(X-\partial X,Y-\partial Y)
\]
of abelian groups mapping $V\in \lCor(X,Y)$ to $V^\circ :=V-\partial V$ is injective.
\end{lem}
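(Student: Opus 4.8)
The plan is to show that the map $(-)^\circ$ is induced at the level of free abelian groups by an injection on the generating sets, namely that distinct elementary log correspondences $V$ from $X$ to $Y$ restrict to distinct elementary correspondences $V-\partial V$ from $X-\partial X$ to $Y-\partial Y$. First I would recall that by Remark \ref{A.5.29}(iii) the underlying cycle $\underline{V}$ is finite and surjective over a connected component of $\underline{X}$, and the restriction $V\cap((X-\partial X)\times Y)$ is contained in $X\times(Y-\partial Y)$; since $X-\partial X$ is a dense open subscheme of the (irreducible, by Lemma \ref{A.9.18}) underlying schemes of the relevant component, the cycle $\underline{V}^\circ := \underline{V}\cap((X-\partial X)\times(Y-\partial Y))$ is a nonempty open dense subscheme of $\underline{V}$, and is finite surjective over $X-\partial X$, hence defines an elementary finite correspondence in $Cor(X-\partial X,Y-\partial Y)$.

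The key step is then injectivity on generators. Suppose $V$ and $V'$ are elementary log correspondences from $X$ to $Y$ with $V^\circ = (V')^\circ$ as cycles on $(X-\partial X)\times(Y-\partial Y)$. Taking closures inside $\underline{X}\times\underline{Y}$, the density of $X-\partial X$ in $\underline{X}$ forces $\underline{V}=\underline{V'}$ as closed subschemes (an integral closed subscheme finite over a component of $\underline{X}$ is the closure of its intersection with the dense open $(X-\partial X)\times\underline{Y}$). Now Lemma \ref{A.5.9} applies: there is at most one elementary log correspondence from $X$ to $Y$ with a given underlying scheme, so $V=V'$. To pass from injectivity-on-generators to injectivity of the induced homomorphism of free abelian groups, I would use that distinct elementary log correspondences have distinct (integral, hence irreducible, hence linearly independent) underlying cycles, so their images $V^\circ$ are supported on distinct integral closed subschemes of $(X-\partial X)\times(Y-\partial Y)$ and are therefore linearly independent in $Cor(X-\partial X,Y-\partial Y)$; a $\Lambda$-linear (here $\Z$-linear) map sending a basis to a linearly independent set is injective.

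The main obstacle, and the place where the log structure genuinely enters rather than just the underlying-scheme combinatorics, is the claim that an elementary log correspondence is determined by its underlying scheme — but this is exactly Lemma \ref{A.5.9}, which has already been established using solidity (Lemmas \ref{A.9.3}, \ref{A.9.17}) and the injectivity of the log structure map on solid log schemes (Lemma \ref{A.9.4}) together with the uniqueness statement Lemma \ref{A.9.6}. Thus the only real work left is the elementary cycle-theoretic bookkeeping: verifying that $V\mapsto V-\partial V$ lands in $Cor(X-\partial X,Y-\partial Y)$ (finiteness and surjectivity over $X-\partial X$ are inherited from the corresponding properties over $\underline{X}$ after restricting to the open locus), and that closure recovers $\underline{V}$ from $V^\circ$. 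I expect the proof in the paper to be short, citing Lemma \ref{A.5.9} and Remark \ref{A.5.29}(iii) and leaving the density argument to the reader.
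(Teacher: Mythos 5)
Your proposal is correct and follows essentially the same route as the paper: the paper argues by contradiction that a nonzero combination $\sum n_iV_i$ with distinct $V_i$ cannot have $\sum n_i(V_i-\partial V_i)=0$, because cancellation would force $V_i-\partial V_i=V_j-\partial V_j$ for some $i\neq j$, whence taking closures gives $\underline{V_i}=\underline{V_j}$ and Lemma \ref{A.5.9} gives $V_i=V_j$. Your reformulation — the basis of elementary log correspondences maps to a linearly independent family of integral cycles, using density, closure, and Lemma \ref{A.5.9} — is the same argument, with the added (correct) bookkeeping that $V-\partial V$ indeed lands in $\Cor(X-\partial X,Y-\partial Y)$.
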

\begin{proof}
We may assume $X$ and $Y$ are integral. 
Let 
\[
V
=
n_1V_1+\cdots+n_rV_r\in \lCor(X,Y) 
\]
be a finite log correspondence with $r>0$ where $n_i\neq 0$ for every $i$ and the $V_i$'s are distinct. 
If $V-\partial V=0$, 
then $V_i-\partial V_i=V_j-\partial V_j$ for some $i\neq j$. 
Since the closures of $V_i-\partial V_i$ and $V_j-\partial V_j$ in $\underline{X}\times \underline{Y}$ are equal to $\underline{V_i}$ and $\underline{V_j}$ respectively, 
we have that $\underline{V_i}=\underline{V_j}$. 
By uniqueness, 
Lemma \ref{A.5.9} implies $V_i=V_j$.
Since this contradicts our assumption, we conclude that $V-\partial V\neq 0$.
\end{proof}

\begin{lem}
\label{lem:composition} For every $X,Y,Z\in lSm/k$ and finite log correspondences\index{correspondence!composition} $\alpha\in \lCor(X,Y)$, $\beta\in \lCor(Y,Z)$, 
there exists a well-defined composition $\beta\circ \alpha\in\lCor(X,Z)$. 
The assignment 
$$
(\alpha,\beta)\mapsto \beta\circ \alpha
$$ 
is associative. 
Under the morphism $(-)^\circ$ of Lemma \ref{A.5.10}, 
the composite $\beta\circ \alpha$ is sent to the corresponding composite of finite correspondences.
\end{lem}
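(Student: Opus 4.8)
\textbf{Proof strategy for Lemma \ref{lem:composition}.}

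The plan is to reduce to the case where $\alpha = V$ and $\beta = W$ are elementary log correspondences and all schemes are integral, since the general case follows by bilinearity. Given such $V$ from $X$ to $Y$ and $W$ from $Y$ to $Z$, I would first work at the level of underlying schemes: applying the Suslin--Voevodsky composition to $V^\circ \in Cor(X-\partial X, Y-\partial Y)$ and $W^\circ \in Cor(Y-\partial Y, Z-\partial Z)$ produces a finite correspondence $W^\circ \circ V^\circ \in Cor(X-\partial X, Z-\partial Z)$; write it as $\sum n_i T_i^\circ$ with $T_i^\circ$ integral and finite surjective over a component of $X-\partial X$. The candidate $\beta \circ \alpha$ will be $\sum n_i T_i$, where $\underline{T_i} \subset \underline{X}\times\underline{Z}$ is the closure of $\underline{T_i^\circ}$. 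By Lemma \ref{A.5.10} and Lemma \ref{A.5.9}, once we know each $\underline{T_i}$ underlies an elementary log correspondence from $X$ to $Z$, that log correspondence is unique, the composition is well-defined, and it is automatically compatible with $(-)^\circ$; associativity then descends from associativity of Suslin--Voevodsky composition via the injectivity in Lemma \ref{A.5.10}.

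So the heart of the matter is: construct the required morphism $T_i^N \to Z$ of fs log schemes, where $T_i^N$ carries the log structure pulled back from $X$. Here is where solidity enters. The key geometric input is to pass to a suitable integral closed subscheme $\underline{W}$ of $\underline{X}\times\underline{Y}\times\underline{Z}$ dominating both $\underline{V}$ (under the $XY$-projection) and $\underline{W}$ (under the $YZ$-projection) and finite surjective over a component of $\underline{X}$ --- concretely a component of $\underline{V}\times_{\underline{Y}}\underline{W}$, using that $\underline{Z^N}$ is finite over $\underline{X}$. On the normalization $\underline{W}^N$ we get scheme morphisms $p \colon \underline{W}^N \to \underline{X}$, and via the composites, maps to $\underline{Y}$ and $\underline{Z}$. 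Because $X$ is solid and the relevant maps are strict finite surjective onto the normal scheme $X$ (Lemmas \ref{A.9.3}, \ref{A.9.17}, \ref{A.9.18}), $\underline{W}^N$ equipped with $p_{log}^*\cM_X$ is solid, and crucially $\alpha_{W^N}$ is injective by Lemma \ref{A.9.4}. Composing the given $q_{log}^*\cM_Y \to p_{log}^*\cM_X$ (from $V$, Remark \ref{A.5.29}(i)) pulled back to $\underline{W}^N$ with the analogous map coming from $W$ (after checking the log structure $\cM_{W^N}$ pulled up from $Y$ agrees with the one pulled up from $X$ --- this uses Lemma \ref{A.9.11}, i.e.\ that on a normal base the log structure is insensitive to finite extensions) yields a map $(\text{proj to }Z)_{log}^*\cM_Z \to p_{log}^*\cM_X$ on $\underline{W}^N$. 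Pushing this down along the finite dominant map $\underline{W}^N \to \underline{T_i}^N$ and invoking Lemma \ref{A.9.11} once more to identify the pushforward of the log structure on the normal target, we obtain the desired morphism $T_i^N \to Z$.

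The main obstacle I anticipate is precisely this compatibility bookkeeping of log structures along the chain $\underline{W}^N \to \underline{T_i}^N$ and $\underline{W}^N \to \underline{Z^N}$: one must check that all the pullback log structures that a priori could differ (pulled from $X$, from $Y$, from $Z$) are canonically identified, and that the morphism so produced does not depend on the auxiliary choice of the component $\underline{W}$ upstairs in the triple product. Both depend on Lemma \ref{A.9.11} (normality is essential, as Example \ref{A.9.16} shows) together with the uniqueness statement Lemma \ref{A.9.6}, which is available because all the log structures in sight have injective structure maps by Lemma \ref{A.9.4}. Once the morphism is shown to be canonical, associativity is formal: it reduces, via the faithful functor $(-)^\circ$, to associativity of Suslin--Voevodsky composition, with the three-fold solidity hypotheses guaranteeing that the intermediate log structures at each stage match up.
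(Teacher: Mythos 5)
Your proposal is correct and follows essentially the same route as the paper: reduce to elementary correspondences, form the composite cycle in the triple product $\underline{X}\times\underline{Y}\times\underline{Z}$ (finite and surjective over a component of $\underline{X}$ by normality of $\underline{Y}$), compose the structure morphisms of log structures on its normalization, and descend to the normalization of the image in $\underline{X}\times\underline{Z}$ via solidity, injectivity of $\alpha$ (Lemma \ref{A.9.4}), and Lemma \ref{A.9.11}, with associativity inherited from the Suslin--Voevodsky case through the injective map $(-)^\circ$ of Lemma \ref{A.5.10}. The only differences are notational (your $T_i$ in $\underline{X}\times\underline{Z}$ plays the role of the paper's $R_i$, and the triple-product component the role of its $T_i$).
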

\begin{proof}
We may assume $\alpha=[V]$ and $\beta=[W]$ are elementary log correspondences. 
In that case, we can form the finite correspondences
\[
V-\partial V
\in 
\Cor(X-\partial X,Y-\partial Y) \text{ and } W-\partial W\in \Cor(Y-\partial Y,Z-\partial Z).
\]
The intersection product
\[
(V-\partial V\times Z)\cdot (X\times W-\partial W)
\]
can be expressed as a formal sum
\[
n_1U_1+\cdots+n_r U_r
\]
of reduced cycles in $X-\partial X\times Y-\partial Y\times Z-\partial Z$.
Let $\underline{T_i}$ be the closure of $U_i$ in $\underline{X}\times \underline{Y}\times \underline{Z}$. 
By Remark \ref{A.5.29} (iv), 
the irreducible components of 
\[
(\underline{V}\times \underline{Z})\cap (\underline{X}\times \underline{W})
\]
are precisely the closures of the irreducible components of 
\[
(V-\partial V \times Z- \partial Z) \cap (X-\partial X \times W-\partial W).
\]

We claim that $\underline{T_i}$ is finite and surjective over a component of $\underline{X}$. 
To that end, we may assume $X$, $Y$, and $Z$ are irreducible. 
Since $Y$ is an fs log scheme, log smooth over $k$, $\underline{Y}$ is normal by Lemma \ref{A.9.18}.   
Thus by \cite[Lemma 1.6]{MVW}, $\underline{T_i}$ is finite and surjective over $\underline{V}$, and so $\underline{T_i}$ is finite and surjective over $\underline{X}$.  
In particular, 
we can consider
\[
[\underline{W}]\circ [\underline{V}] = \sum_{i=1}^r n_i p_*[\underline{T_i}],
\]
where $p\colon X\times Y\times Z\to X\times Z$ is the projection, and 
\[
p_*[\underline{T_i}] = [k(\underline{T_i}): k(p(\underline{T_i}))] \cdot p(\underline{T_i}) =: d_i \cdot p(\underline{T_i}) 
\]
is the corresponding push forward. 
Note that $p(\underline{T_i})$ is finite and surjective over a component of $\underline{X}$ by \cite[Lemma 1.4]{MVW}.  
It remains to verify that $p(\underline{T_i})$ is admissible in the sense of Definition \ref{A.5.2}, 
which is a condition on the log structure.
\vspace{0.1in}

To simplify our notation, we write $\underline{R_i}$ for the image $p(\underline{T_i})$ in $\underline{X}\times\underline{Z}$. 
Let $R_i^N$ denote the fs log scheme whose underlying scheme is the normalization of $\underline{R_i}$ and whose log structure is induced by $X$,  and let $\ul{T_i^N}$ denote the nomalization of $\ul{T_i}$.
There is an induced commutative diagram of schemes
\[
\begin{tikzcd}
&\ul{T_i^N}\arrow[ld,"r_1"']\arrow[d,"r_2"]\arrow[rd,"r_3"]\\
\underline{V^N}\arrow[d,"p_1"']\arrow[rd,"q_1"',very near start]&\underline{R_i^N}\arrow[ld,"p_2"',very near start,crossing over]\arrow[rd,"q_2", very near start] 
&\underline{W^N}\arrow[ld,"p_3",crossing over, very near start]\arrow[d,"q_3"]\\
\underline{X}&\underline{Y}&\underline{Z}.
\end{tikzcd}
\]
By Remark \ref{A.5.29} (i), part (ii) in Definition \ref{A.5.2} of elementary log correspondences is tantamount to the existence of structure morphisms
\[
\alpha_{12}
:
(q_1)_{log}^*\cM_Y
\rightarrow 
(p_1)_{log}^*\cM_X
\text{ and }
\alpha_{23}:(q_3)_{log}^*\cM_Z
\rightarrow 
(p_3)_{log}^*\cM_Y.
\]
The composite morphism $\alpha_{123}$ given by 
\[(q_3r_3)_{log}^*\cM_Z
\stackrel{\alpha_{23}}\longrightarrow 
(p_3r_3)_{log}^*\cM_Y
\stackrel{\sim}\longrightarrow 
(q_1r_1)_{log}^*\cM_Y
\stackrel{\alpha_{12}}\longrightarrow 
(p_1r_1)_{log}^*\cM_X\]
can be rewritten as
\[
\alpha_{123} 
\colon 
(q_2r_2)_{log}^* \cM_Z
\to 
(p_2 r_2)_{log}^* \cM_X.
\]
Since $\underline{R^N}$ is finite and surjective over a component of $\underline{X}$, 
and $X$ is log smooth over $k$ by assumption, 
$R^N$ is solid by Lemma \ref{A.9.17}. 
Thus by Lemma \ref{A.9.4}, the log structure map $\alpha_{R_i^N}$ is injective.
Since $\underline{R_i}^N$ is normal, we can apply Lemma \ref{A.9.11} to deduce there exists a unique morphism
\[
\alpha_{13}
\colon 
(q_2)_{log}^*\cM_Z
\rightarrow 
(p_2)_{log}^*\cM_X
\]
of log structures on $\underline{R_i^N}$ such that $(r_2)_{log}^*\alpha_{13}=\alpha_{123}$. 
Thus the pair $(R_i^N,\alpha_{13})$ forms an elementary log correspondence $R_i$ from $X$ to $Z$.
\vspace{0.1in}
    
Owing to the above, we obtain the finite log correspondence
\[
[W]\circ [V]
:=
\sum_{i=1}^r n_i d_i[R_i] 
\in 
\lCor(X,Z).
\]
By construction, there is a commutative diagram
\begin{equation}
\label{equation1:lem:composition}
\begin{tikzcd}
\lCor(Y,Z)\times \lCor(X,Y)\arrow[d]\arrow[r,"(-)\circ(-)"]&\lCor(X,Z)\arrow[d]\\
\Cor(Y-\partial Y,Z-\partial Z)\times \Cor(X-\partial X,Y-\partial Y)\arrow[r,"(-)\circ(-)"]&\Cor(X-\partial X,Z-\partial Z)
\end{tikzcd}
\end{equation}
where the vertical morphisms are induced by the homomorphism $(-)^\circ$ in Lemma \ref{A.5.10}. 
Since the vertical morphisms are injective by Lemma \ref{A.5.10},
the associative law for the composition follows from the case of finite correspondences (for references see \cite[Lecture 1]{MVW}).
\end{proof}

For every $X\in lSm/k$, the \emph{identity} ${\rm id}_X\in \lCor(X,Y)$  is the finite log correspondence associated with the diagonal embedding $X\rightarrow X\times X$.

\begin{lem}
For every $X,Y\in lSm/k$ and finite log correspondence $\alpha\in \lCor(X,Y)$ we have that
\[
{\rm id}_Y \circ \alpha
=
\alpha
=
\alpha \circ {\rm id}_X.
\]
\end{lem}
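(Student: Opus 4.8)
The plan is to reduce the identity law for log correspondences to the already-established identity law for Suslin--Voevodsky finite correspondences by means of the injective, composition-compatible homomorphism $(-)^\circ$ of Lemma \ref{A.5.10}. First I would record that the log correspondence ${\rm id}_X$ associated with the diagonal $\Delta\colon X\to X\times X$ is genuinely an elementary log correspondence: its underlying subscheme $\underline{\Delta}$ is isomorphic to $\underline{X}$, hence normal, so $\underline{\Delta}^N=\underline{\Delta}$, and the log structure induced from $X$ on $\underline{\Delta}^N$ is canonically isomorphic to $\cM_X$, which maps to $\cM_X$ via the identity; this is exactly the data required by Definition \ref{A.5.2}(ii). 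Moreover, under $(-)^\circ$ the correspondence ${\rm id}_X$ is sent to the diagonal correspondence on $X-\partial X$, i.e.\ to the identity ${\rm id}_{X-\partial X}\in \Cor(X-\partial X, X-\partial X)$, since $\Delta\cap((X-\partial X)\times X)=\Delta_{X-\partial X}$.

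Next I would invoke Lemma \ref{lem:composition}, whose commutative diagram \eqref{equation1:lem:composition} tells us that $(-)^\circ$ intertwines composition in $\lCor$ with composition in $\Cor$. Applying $(-)^\circ$ to ${\rm id}_Y\circ\alpha$, to $\alpha$, and to $\alpha\circ {\rm id}_X$ therefore yields, respectively, ${\rm id}_{Y-\partial Y}\circ \alpha^\circ$, $\alpha^\circ$, and $\alpha^\circ\circ {\rm id}_{X-\partial X}$ in $\Cor(X-\partial X,Y-\partial Y)$. By the identity law for finite correspondences (see \cite[Lecture 1]{MVW}) these three elements of $\Cor$ coincide. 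Since $(-)^\circ\colon \lCor(X,Y)\to \Cor(X-\partial X,Y-\partial Y)$ is injective by Lemma \ref{A.5.10}, we conclude ${\rm id}_Y\circ\alpha=\alpha=\alpha\circ {\rm id}_X$ in $\lCor(X,Y)$.

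The only point requiring a little care—and the step I expect to be the main (minor) obstacle—is the verification that the diagonal really does define a valid element of $\lCor$ in the sense of Definition \ref{A.5.2}, and in particular that ${\rm id}_X$ as a finite log correspondence agrees with the two-sided unit for the composition law constructed in Lemma \ref{lem:composition} when one passes through the normalization-of-the-triple-product construction there; concretely, one must check that in that construction, with $V=\Delta_X$ (or $W=\Delta_X$), the intermediate cycle $\underline{T_i}$ and its image $\underline{R_i}$ reproduce $\underline{W}$ (resp.\ $\underline{V}$) together with the same induced log structure, so that no spurious multiplicities $d_i$ or extra components appear. This is where normality of $\underline{X}$ (Lemma \ref{A.9.18}) and the uniqueness statement Lemma \ref{A.5.9} do the work: once the underlying correspondences match by the classical computation, Lemma \ref{A.5.9} forces the log enhancements to match as well. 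After that bookkeeping, the proof is the two-line reduction above.
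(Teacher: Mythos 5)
Your proposal is correct and follows essentially the same route as the paper's proof, which reduces via the injective, composition-compatible homomorphism $(-)^\circ$ of Lemma \ref{A.5.10} to the case $X=X-\partial X$, $Y=Y-\partial Y$ and then appeals to the identity law for finite correspondences in \cite[Lecture 1]{MVW}. The extra bookkeeping you flag about the diagonal being a genuine elementary log correspondence is handled in the paper separately (Example \ref{A.5.31}(4) and Remark \ref{A.5.35}(ii)), so it is not part of the proof of this lemma but is consistent with your argument.
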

\begin{proof}
Owing to Lemma \ref{A.5.10} we reduce to the case when $X=X-\partial X$ and $Y=Y-\partial Y$, i.e., $X$ and $Y$ are usual schemes, and we can appeal to \cite[Lecture 1]{MVW}.
\end{proof}
  
\begin{df}
\label{A.5.3}\index[notation]{lCor @ $\LCor$}
Let $\LCor$ denote the category with objects $lSm/k$ and morphisms finite log correspondences between fs log schemes log smooth over $k$.
Composition of morphisms are defined in Lemma \ref{lem:composition}. 
It is an additive category with zero object $\emptyset$ (the empty log scheme) and coproduct $\amalg$ (disjoint union).
\end{df}

\begin{rmk}
\label{A.5.35}
\begin{enumerate}
\item[(i)] For $X,Y,Z\in SmlSm/k$, the composition 
\[
\Cor(Y,X)\times \Cor(Z,Y)\rightarrow \Cor(X,Z)
\] 
in \cite[\S 1]{MVW} coincides with the above composition.
  
\item[(ii)] If $f\colon X\to Y$ is a morphism in $lSm/k$, its graph $\Gamma_f$ defines an element of $\lCor(X,Y)$. 
Note that $\underline{X}$ is normal by Lemma \ref{A.9.18}, and $\underline{\Gamma_f}$ is isomorphic to $\underline{X}$ via the projection morphism.

\item[(iii)] If $W\in \lCor(Y,Z)$ is an elementary log correspondence, then $W\circ \Gamma_f$ is obtained by the compositions
\[
\underline{X}\times_{\underline{Y}}\underline{W}\rightarrow \underline{X}\times_{\underline{Y}}(\underline{Y}\times \underline{Z})
\cong 
\underline{X}\times \underline{Z},\; (X\times_Y W^N)^N\rightarrow X\times_Y W^N\rightarrow  X\times Z.
\]

\item[(iv)] If $V\in \lCor(U,X)$ is an elementary log correspondence, then $\Gamma_f\circ V$ is obtained by the compositions
\[
\underline{V}\rightarrow \underline{U}\times \underline{X}\rightarrow \underline{U}\times \underline{Y},\;
V^N\rightarrow U\times X\rightarrow U\times Y.
\]
\end{enumerate}
\end{rmk}

\begin{df}\index[notation]{underl @ $\ul{X}$}
\label{A.5.16}
For $X,Y\in SmlSm/k$, there is a canonical homomorphism
\[
\underline{(-)}:\lCor(X,Y)\rightarrow \Cor(\underline{X},\underline{Y})
\]
mapping any elementary log correspondence $V$ to the elementary correspondence $\underline{V}$. 
This homomorphism is well defined since the underlying closed subscheme $\ul{V}$ of an elementary log correspondence $V$ is finite and surjective over (a component of) $\ul{X}$ by Definition \ref{A.5.2}(i).
\end{df}

\begin{lem}
\label{A.5.17}
For $X,Y,Z\in SmlSm/k$, there is a commutative diagram
\[
\begin{tikzcd}
\lCor(Y,Z)\times \lCor(X,Y)\arrow[r,"(-)\circ (-)"]\arrow[d,"\underline{(-)}\circ \underline{(-)}"']& \lCor(X,Z)\arrow[d,"\underline{(-)}"]\\
\Cor(\underline{Y},\underline{Z})\times \Cor(\underline{X},\underline{Y})\arrow[r,"(-)\circ (-)"]&\Cor(\underline{X},\underline{Z}).
\end{tikzcd}
\]
\end{lem}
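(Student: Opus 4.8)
The plan is to simply unwind the construction of the composition $[W]\circ[V]$ carried out in the proof of Lemma~\ref{lem:composition} and observe that, after forgetting log structures, it reproduces the Suslin--Voevodsky composition $[\underline W]\circ[\underline V]$ verbatim. Since all three maps in the square (the two horizontal compositions and the map $\underline{(-)}$) are bilinear, and $\lCor$ decomposes along connected components, it suffices to treat the case where $\alpha=[V]$ and $\beta=[W]$ are elementary log correspondences and $X,Y,Z$ are irreducible.

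Recall from the proof of Lemma~\ref{lem:composition} that $[W]\circ[V]$ is manufactured as follows: one forms the intersection product $((V-\partial V)\times Z)\cdot(X\times(W-\partial W))$ inside $(X-\partial X)\times(Y-\partial Y)\times(Z-\partial Z)$, writes it as $\sum_{i=1}^r n_iU_i$ with the $U_i$ reduced, lets $\underline{T_i}$ be the closure of $U_i$ in $\underline X\times\underline Y\times\underline Z$, and sets $\underline{R_i}:=p(\underline{T_i})$ and $d_i:=[k(\underline{T_i}):k(\underline{R_i})]$, where $p\colon\underline X\times\underline Y\times\underline Z\to\underline X\times\underline Z$ is the projection. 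Each $\underline{R_i}$ is finite and surjective over a component of $\underline X$ by \cite[Lemma~1.4]{MVW}, it is given the log structure induced from $X$ together with the structure morphism $\alpha_{13}$ constructed there, and by definition $[W]\circ[V]=\sum_{i=1}^r n_id_i[R_i]$. The homomorphism $\underline{(-)}$ of Definition~\ref{A.5.16} just discards the log structure, so
\[
\underline{[W]\circ[V]}
=
\sum_{i=1}^r n_id_i\,[\underline{R_i}]
=
\sum_{i=1}^r n_id_i\,[p(\underline{T_i})]
=
\sum_{i=1}^r n_i\,p_*[\underline{T_i}].
\]

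To finish I would invoke the part of the proof of Lemma~\ref{lem:composition} that already identifies $\sum_{i=1}^r n_i p_*[\underline{T_i}]$ with $[\underline W]\circ[\underline V]$: by Remark~\ref{A.5.29}(iii) the open cycle $((V-\partial V)\times(Z-\partial Z))\cap((X-\partial X)\times(W-\partial W))$ is the restriction of $(\underline V\times\underline Z)\cap(\underline X\times\underline W)$ to $(X-\partial X)\times(Y-\partial Y)\times(Z-\partial Z)$, taking closures recovers the components $\underline{T_i}$ of the global intersection, and — since intersection multiplicities are computed at the generic points of those components — the coefficients $n_i$ coincide with the multiplicities in the global intersection product; pushing forward along $p$ then gives exactly the composition in $\Cor(\underline X,\underline Z)$ as in \cite[\S1]{MVW}. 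Comparing with the display above yields commutativity of the square, and bilinearity extends it to all of $\lCor(Y,Z)\times\lCor(X,Y)$. The only point that is not purely formal is this matching of the multiplicities $n_i$ between the open and the global intersection products; but this is precisely the ingredient already used in the proof of Lemma~\ref{lem:composition}, so the present lemma amounts to recognizing that the two descriptions refer to the same cycle, and I expect no real obstacle beyond bookkeeping.
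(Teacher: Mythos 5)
Your argument is correct, but it takes a different route from the paper's. The paper does not unwind the construction of $[W]\circ[V]$ at all: it composes with the restriction map $(-)\circ\Gamma_{\underline{u}}\colon\Cor(\underline{X},\underline{Z})\to\Cor(X-\partial X,\underline{Z})$, which is injective because $\Cor(-,\underline{Z})$ is an \'etale sheaf and every component of a finite correspondence dominates a component of $\underline{X}$, and then performs a purely formal diagram chase that reduces the claim to the compatibility square \eqref{equation1:lem:composition} already established in Lemma~\ref{lem:composition} (together with the compatibility of Suslin--Voevodsky composition with restriction to open subschemes). In other words, the paper never touches multiplicities again; it only needs that both vertical maps become equal after restricting to the interiors, where \eqref{equation1:lem:composition} applies. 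Your proof instead identifies the two cycles term by term, which forces you to re-justify that the coefficients $n_i$ of the open intersection product agree with those of the global one. That step is sound — the components of $(\underline{V}\times\underline{Z})\cap(\underline{X}\times\underline{W})$ are the closures of the $U_i$ (each is finite surjective over a component of $\underline{X}$, hence its generic point lies over the generic point of $\underline{X}$, and Remark~\ref{A.5.29}(iii) then places it in the interior of $Y$ and $Z$ as well), and intersection multiplicities are computed at those generic points — and it is indeed the same ingredient the paper uses tacitly when it writes $[\underline{W}]\circ[\underline{V}]=\sum n_i p_*[\underline{T_i}]$ in the proof of Lemma~\ref{lem:composition}. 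What the paper's route buys is that one never has to re-open the cycle-theoretic bookkeeping: injectivity of restriction plus Lemma~\ref{A.5.10} do all the work. What your route buys is a self-contained, explicit description of $\underline{[W]\circ[V]}$, at the cost of repeating the multiplicity comparison. Either way the lemma holds; just be explicit, if you write this up, about why every component of the global intersection meets the open locus, since that is where the identification of the two intersection products could silently fail for a general closed subscheme.
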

\begin{proof}
Let $u:X-\partial X\rightarrow X$ and $v:Y-\partial Y\rightarrow Y$ be the structure open immersion.
Since $\Cor(-,\underline{Z})$ is an \'etale sheaf by \cite[Lemma 6.2]{MVW},
\[
(-)\circ \Gamma_{\underline{u}}:\Cor(\underline{X},\underline{Z})\rightarrow \Cor(X-\partial X,\underline{Z})
\]
is injective.
Hence it suffices to check the commutativity of
\[
\begin{tikzcd}
\lCor(Y,Z)\times \lCor(X,Y)\arrow[r,"(-)\circ (-)"]\arrow[d,"\underline{(-)}\times (\underline{(-)}\circ \Gamma_{\underline{u}})"']& \lCor(X,Z)\arrow[d,"\underline{(-)}\circ \Gamma_{\underline{u}}"]\\
\Cor(\underline{Y},\underline{Z})\times \Cor(X-\partial X,\underline{Y})\arrow[r,"(-)\circ (-)"]&\Cor(X-\partial X,\underline{Z}).
\end{tikzcd}
\]

Due to the commutativity of
\[
\begin{tikzcd}[column sep=large]
\lCor(Y,Z)\times \lCor(X,Y)\arrow[d,"\underline{(-)}\times (\underline{(-)}\circ \Gamma_{\underline{u}})"']\arrow[r,"(\underline{(-)}\circ \Gamma_{\underline{v}})\times (-)^\circ"]
&\Cor(Y-\partial Y,\underline{Z})\times \Cor(X-\partial X,Y-\partial Y)\arrow[d,"(-)\circ (-)"]\\
\Cor(\underline{Y},\underline{Z})\times \Cor(X-\partial X,\underline{Y})\arrow[r,"(-)\circ (-)"]&\Cor(X-\partial X,\underline{Z}),
\end{tikzcd}
\]
it suffices to check the commutativity of
\[
\begin{tikzcd}
\lCor(Y,Z)\times \lCor(X,Y)\arrow[r,"(-)\circ (-)"]\arrow[d,"(\underline{(-)}\circ \Gamma_{\underline{v}})\times (-)^\circ"']& \lCor(X,Z)\arrow[d,"\underline{(-)}\circ \Gamma_{\underline{u}}"]\\
\Cor(Y-\partial Y,\underline{Z})\times \Cor(X-\partial X,Y-\partial Y)\arrow[r,"(-)\circ (-)"]&\Cor(X-\partial X,\underline{Z}).
\end{tikzcd}
\]
Since the left (resp.\ right) vertical morphism factors through
\[
\Cor(Y-\partial Y,Z-\partial Z)\times \Cor(X-\partial X,Y-\partial Y)
\textrm{ (resp.\ }\Cor(X-\partial X,Z-\partial Z)),
\]
we are done by the commutativity of \eqref{equation1:lem:composition}.
\end{proof}

We conclude this section by defining the monoidal structure on $lCor/k$. 
\begin{df} \index[notation]{XtimesY @ $X\otimes Y$}
\label{A.5.36}
The tensor product of $X,Y\in\lCor/k$ is given by the fiber product of fs log schemes
\[
X\otimes Y 
:= 
X\times_k Y.
\]
If $V$ is an elementary log correspondence from $X$ to $X'$ and $W$ is an elementary log correspondence from $Y$ to $Y'$, 
then $V\times_k W$ defines an elementary log correspondence from $X\otimes Y$ to $X'\otimes Y'$.
The tensor product defines a symmetric monoidal structure on $lCor/k$.
\end{df}
\newpage

\section{Topologies on fine saturated logarithmic schemes}\label{sec:topologies}

We want our construction of motives for fs log schemes to depend on intrinsic local data for logarithmic structures. 
To achieve this, we need to provide an appropriate Grothendieck topology on our ground category categories $lSm/S$ and $lSch/S$, 
where throughout this section, $S$ is a finite-dimensional noetherian base fs log scheme.
To establish the basic desiderata for log motives, we are basically forced to consider a suitable version of the Nisnevich topology for logarithmic schemes called the \emph{strict Nisnevich topology}. 
We also introduce a new topology in $lSch/S$, which we call the \emph{dividing topology}.
The coverings in the dividing topology include log modifications in the sense of  F.\ Kato \cite{FKato}.
The dividing topology arises from a cd-structure in the sense of Voevodsky \cite{Vcdtop} called the \emph{dividing cd-structure}. 
\vspace{0.1in}

The general theory of cd-structures was developed by Voevodsky \cite{Vcdtop}. Suppose that $S$ is a scheme.
Primary examples include the Zariski and Nisnevich topologies on $Sm/S$, the cdh-topology on $Sch/S$, and their generalizations to algebraic stacks. 
To prove basic properties about the induced topology, it suffices to check whether a given cd-structure is bounded, complete, and regular. 
Unfortunately, the dividing cd-structure does not satisfy the boundedness condition, 
but instead, we consider the weaker condition of being \emph{quasi-bounded}. 
For this reason, we revisit Voevodsky's work in \cite{Vcdtop} to ensure that the set-up applies to fs log schemes.

\subsection{cd-structures on fs log schemes}
\label{sec::cdlog}
Recall the following definition from \cite{Vcdtop}.
\begin{df}[{\cite[Definition 2.1]{Vcdtop}}]\label{A.8.25}
A {\it cd-structure} $P$ on a category \index{cd-structure} $\cC$ with an initial object is a collection of commutative squares in $\cC$ 
\begin{equation}
\label{A.8.13.1}
Q
=
\begin{tikzcd}
Y'\arrow[d,"f'"']\arrow[r,"g'"]&Y\arrow[d,"f"]\\
X'\arrow[r,"g"]&X
\end{tikzcd}
\end{equation}
that are stable under isomorphisms. 
The squares in the collection $P$ are called the \emph{distinguished squares of $P$}. 
The topology $t_P$ \index[notation]{tp @ $t_P$}associated with $P$ is the smallest Grothendieck topology such that $Y\amalg X'$ is a covering sieve of $X$ for every distinguished square $Q$ in $P$, 
and the empty family is a covering sieve of the initial object. 
\end{df}

\begin{df}[{\cite[Definitions 2.2, 2.3]{Vcdtop}}]
Let $P$ be a cd-structure on a category $\cC$ with an initial object. \index{simple cover}
The class $S_P$ of {\it simple covers}\index{simple cover} is the smallest class of families of morphisms $\{U_i\rightarrow X\}_{i\in I}$ satisfying the following two conditions.
\begin{enumerate}
\item[(i)] Any isomorphism is in $S_P$.
\item[(ii)] For any distinguished square of the form \eqref{A.8.13.1}, if $\{p_i':X_i'\rightarrow X'\}_{i\in I}$ and $\{q_j:Y_j\rightarrow Y\}_{j\in J}$ are in $S_P$, then $\{g\circ p_i'\}_{i\in I}\cup \{f\circ q_j\}_{j\in J}$ is in $S_P$.
\end{enumerate}
We say that $P$ is {\it complete} if any covering sieve admits a refinement by a simple cover.
\end{df}\index{cd-structure!complete}

\begin{df}[{\cite[Definitions 2.10]{Vcdtop}}]
Let $P$ be a cd-structure on a category $\cC$ with an initial object. 
Let $L(T)$ denote the $t_P$-sheaf \index{tp @ $t_P$-sheaf}associated with the presheaf represented by $T\in \cC$.
We say that $P$ is {\it regular} \index{cd-structure!regular}if the following conditions are satisfied.
\begin{enumerate}
\item[(i)] For any distinguished square $Q$ of the form \eqref{A.8.13.1}, $Q$ is cartesian, $g$ is a monomorphism.
\item[(ii)] 
The morphism of $t_P$-sheaves 
\[
\Delta\amalg (L(g')\times_{L(g)}L(g')):
L(Y)\amalg (L(Y')\times_{L(X')}L(Y'))
\rightarrow 
L(Y)\times_{L(X)}L(Y)
\]
is an epimorphism. 
\end{enumerate} 
\end{df}

\begin{df}[\cite{Par}]
\label{A.5.14}
A cartesian square of fs log schemes
\begin{equation}
\label{A.5.14.1}
Q
=
\begin{tikzcd}
Y'\arrow[d,"f'"']\arrow[r,"g'"]&Y\arrow[d,"f"]\\
X'\arrow[r,"g"]&X
\end{tikzcd}
\end{equation}
is called a 
\begin{enumerate}
\item[(i)] \emph{Zariski distinguished square} \index{distinguished square!Zariski}if $f$ and $g$ are open immersions.
\item[(ii)] {\it strict Nisnevich distinguished square } \index{distinguished square!Nisnevich} if $f$ is strict \'etale, $g$ is an open immersion,
$g(X')\cup f(Y)=X$,
and $f$ induces an isomorphism 
$$
f^{-1}(\underline{X}-g(\underline{X'}))\stackrel{\sim}\rightarrow \underline{X}-g(\underline{X'})
$$ 
with respect to the reduced scheme structures.
\item[(iii)] {\it dividing distinguished square}\index{distinguished square!dividing} if $Y'=X'=\emptyset$ and $f$ is a surjective proper log \'etale monomorphism.
See \S \ref{subsec::logsmooth} for properties of log \'etale morphisms and \S \ref{subsec::mono} for properties of log \'etale monomorphisms. \index{morphism of log schemes!log \'etale}\index{log modification}
According to Proposition \ref{A.9.75}, this condition is equivalent to asking that $f$ be a log modification in the sense of F.\ Kato.
\end{enumerate}

Associated to the collection of Zariski distinguished squares (resp.\ strict Nisnevich distinguished squares, resp.\ dividing distinguished squares) 
we have the corresponding \emph{Zariski cd-structure} (resp.\ {\it strict Nisnevich cd-structure}, resp.\ the {\it dividing cd-structure}). \index{cd-structure!Zariski} \index{cd-structure!strict Nisnevich} \index{cd-structure!dividing}
These cd-structures give rise to the \emph{Zariski topology}, {\it strict Nisnevich topology}, and {\it dividing topology} on $lSch/S$, 
respectively.
We let $Zar$ (resp.\ $sNis$, resp.\ $div$) be shorthand for the Zariski topology (resp.\ strict Nisnevich topology, resp.\ dividing topology).
\vspace{0.1in}

The \emph{dividing Zariski cd-structure} (resp.\ {\it dividing Nisnevich cd-structure}) \index{cd-structure!dividing Zariski} is the union of the Zariski (resp.\ strict Nisnevich) and the dividing cd-structures. 
We refer to the associated topology as the \emph{dividing Zariski topology} (resp.\ {\it dividing Nisnevich topology}).\index{topology!dividing Nisnevich}
We let $dZar$ (resp. $dNis$) be shorthand for the dividing Zariski topology (resp.\ dividing Nisnevich topology).\index{topology!dividing Zariski}
\end{df}

\begin{df}\label{A.5.65}
The {\it strict \'etale topology} (resp.\ \emph{Kummer \'etale topology}, resp.\ log \'etale topology) on $lSch/S$ is the smallest Grothendieck topology generated by strict \'etale covers \index{topology!strict \'etale}
(resp.\ Kummer \'etale covers, resp.\ log \'etale covers) in $lSch/S$.
The {\it dividing \'etale topology} on $lSch/S$ \index{topology!dividing \'etale}\index{topology!Kummer \'etale} \index{topology!log \'etale}is the smallest Grothendieck topology finer than the strict \'etale topology and the dividing topology.
We let $s\acute{e}t$ (resp.\ $k\acute{e}t$, $l\acute{e}t$, $d\acute{e}t$) be shorthand for the strict \'etale topology 
(resp.\ Kummer \'etale topology, resp.\ log \'etale topology, resp.\ dividing \'etale topology).
\end{df}

We note that none of these \'etale topologies are associated with a cd-structure.
The log \'etale topology is the smallest Grothendieck topology that is finer than the Kummer \'etale topology and the dividing topology, 
see also \cite[\S 9.1]{MR1922832} whose proof appears in \cite[Proposition 3.9]{MR3658728}.

\begin{exm}
Every Nisnevich distinguished square of usual schemes is also a strict Nisnevich distinguished square.
If the square $Q$ in \eqref{A.5.14.1} is a strict Nisnevich distinguished square, then the square
\[
\underline{Q}
=
\begin{tikzcd}
\underline{Y'}\arrow[d,"\underline{f'}"']\arrow[r,"\underline{g'}"]&\underline{Y}\arrow[d,"\underline{f}"]\\
\underline{X'}\arrow[r,"\underline{g}"]&\underline{X}
\end{tikzcd}
\]
is a Nisnevich distinguished square of usual schemes since the morphisms $f$ and $g$ in \eqref{A.5.14.1} are strict.
We also note that $Q$ is a pullback of $\underline{Q}$, 
i.e.,
\[
Y\cong \underline{Y}\times_{\underline{X}}X,\;
Y'\cong \underline{Y'}\times_{\underline{X}}X,\;
X'\cong \underline{X'}\times_{\underline{X}}X. 
\]
\end{exm}

\begin{rmk}
\label{A.9.68}
\begin{enumerate} 
\item[(1)] Suppose $f:Y\rightarrow X$ is a surjective proper log \'etale monomorphism. 
Theorem \ref{KatoStrThm} shows that Zariski locally on $X$ and $Y$, 
the restriction 
\[
f^{-1}(X-\partial X)\rightarrow X-\partial X
\]
has a chart $\theta:P\rightarrow Q$, 
where $P=0$ is trivial and $Q$ is finite.
Thus $f^{-1}(X-\partial X)$ has a trivial log structure, since $\overline{Q}=0$, 
so that $Y-\partial Y=f^{-1}(X-\partial X)$ and the induced morphism $Y-\partial Y\rightarrow X-\partial X$ is surjective. 
The latter morphism is an isomorphism since surjective proper \'etale monomorphisms of schemes are isomorphisms according to \cite[Th\'eor\`eme IV.17.9.1]{EGA}.
\item[(2)] For cdh-distinguished squares of schemes \cite[Section 2]{Vunst-nis-cdh}, 
the schemes $Y'$ and $X'$ in \eqref{A.5.14.1} can be nonempty. 
Thus the dNis-structure is not a direct analog of the cdh-structure \cite{Vcdtop}.
\item[(3)]  We choose the terminology ``dividing'' because morphisms of fs log schemes induced by subdivisions of fans in toric geometry are surjective proper log \'etale monomorphisms.
For a proof we refer to Proposition \ref{A.9.75} and Example \ref{A.9.76}.
\item[(4)] A presheaf $\cF$ is a dividing sheaf if and only if for every log modification $Y\rightarrow X$ the induced homomorphism $\cF(X)\rightarrow \cF(Y)$ is an isomorphism.
\end{enumerate}
\end{rmk}

\subsection{Cohomology on big and small sites}
If the base scheme $S$ has trivial log structure and $\cF$ is an \'etale sheaf of $\Lambda$-modules on $Sm/S$, 
then there are two equivalent ways of defining the cohomology group $H^i(X,\cF)$ for any integer $i\geq 0$, namely 
\[
\hom_{\mathbf{D}(\Shv_{\acute{e}t}(X_{\acute{e}t},\Lambda))}(\Lambda(X),\cF\vert_{X_{\acute{e}t}}[i])
\text{ and }
\hom_{\mathbf{D}(\Shv_{\acute{e}t}(Sm/S,\Lambda))}(\Lambda(X),\cF[i]).
\]
In this section, we generalize the above comparison to the topologies we consider on $lSm/k$.
As an application, we shall discuss compact objects in the corresponding categories of sheaves.

\begin{df}
A morphism of fs log schemes $Y\rightarrow X$ is a \emph{dividing Nisnevich} 
(resp.\ \emph{dividing \'etale}) morphism if it is contained in the smallest class of morphisms containing log modifications and strict Nisnevich (resp.\ strict \'etale) \index{morphism of log schemes!dividing Nisnevich}\index{morphism of log schemes!dividing \'etale}
morphisms and closed under compositions.
\end{df}

\begin{df}
\label{bigsmall.1}\index[notation]{XsNis @ $X_{sNis},X_{s\acute{e}t}, X_{k\acute{e}t}, X_{dNis}, X_{d\acute{e}t}, X_{l\acute{e}t}$}
If $X$ is an fs log scheme, let $X_{sNis}$ (resp.\ $X_{s\acute{e}t}$, $X_{k\acute{e}t}$, $X_{dNis}$, $X_{d\acute{e}t}$, $X_{l\acute{e}t}$) denote the full subcategory of $lSch/X$ consisting 
of fs log schemes strict Nisnevich (resp.\ strict \'etale, Kummer \'etale, dividing Nisnevich, dividing \'etale, log \'etale) over $X$.
\end{df}

Note that these classes of morphisms are closed under compositions and pullbacks.
Due to Lemma \ref{A.9.71} for every log \'etale morphism $Z\rightarrow Y$ of fs log schemes, the diagonal morphism $Z\rightarrow Z\times_Y Z$ is an open immersion.
Thus all these categories have finite limits.

\begin{lem}
If $X$ is an fs log scheme, every morphism in  $X_{sNis}$ (resp.\ $X_{s\acute{e}t}$, $X_{k\acute{e}t}$, $X_{dNis}$, $X_{d\acute{e}t}$, $X_{l\acute{e}t}$) is strict Nisnevich 
(resp.\ strict \'etale, Kummer \'etale, dividing Nisnevich, dividing \'etale, log \'etale).
\end{lem}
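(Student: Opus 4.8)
The plan is to treat the six cases in parallel. For $\tau$ one of $sNis$, $s\acute{e}t$, $k\acute{e}t$, $dNis$, $d\acute{e}t$, $l\acute{e}t$, call a morphism of fs log schemes a \emph{$\tau$-morphism} if it lies in the corresponding class (strict Nisnevich, strict \'etale, Kummer \'etale, dividing Nisnevich, dividing \'etale, or log \'etale). I would use three facts, all recorded just above or immediate from the definitions: (a) $\tau$-morphisms are stable under composition and under base change; (b) every $\tau$-morphism is log \'etale, since strict \'etale and Kummer \'etale morphisms are log \'etale and the dividing/log classes are generated under composition by log modifications and strict \'etale morphisms; (c) every open immersion is a $\tau$-morphism, being strict \'etale and also occurring (with $Y=X$, $f=\mathrm{id}$) in a strict Nisnevich distinguished square.

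Now let $g\colon Y\to Z$ be a morphism in $X_\tau$, so that the structure morphisms $f_Y\colon Y\to X$ and $f_Z\colon Z\to X$ are both $\tau$-morphisms. I would factor $g$ through its graph, writing $g=q\circ\Gamma_g$ where $\Gamma_g\colon Y\to Y\times_X Z$ is the graph $(\mathrm{id}_Y,g)$ and $q\colon Y\times_X Z\to Z$ is the second projection. The projection $q$ is the base change of $f_Y$ along $f_Z$, hence a $\tau$-morphism by (a). The graph $\Gamma_g$ is the base change of the diagonal $\Delta_{Z/X}\colon Z\to Z\times_X Z$ along the morphism $(g\circ p,q)\colon Y\times_X Z\to Z\times_X Z$, where $p\colon Y\times_X Z\to Y$ is the first projection; this is the standard observation that the fibre product of $\Delta_{Z/X}$ and $(g\circ p,q)$ is the locus in $Y\times_X Z$ on which the two maps to $Z$ agree, which is precisely the graph of $g$. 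Since $f_Z$ is a $\tau$-morphism it is log \'etale by (b), so $\Delta_{Z/X}$ is an open immersion by Lemma~\ref{A.9.71}; by (c) it is then a $\tau$-morphism, and hence so is its base change $\Gamma_g$ by (a). Therefore $g=q\circ\Gamma_g$ is a composite of $\tau$-morphisms, hence a $\tau$-morphism, as claimed.

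All the fibre products above are formed in fs log schemes; this is legitimate because these categories were just observed to have finite limits, and because Lemma~\ref{A.9.71} applies at that level of generality. The proof is formal once the graph factorization is in hand: the only slightly delicate points are identifying $\Gamma_g$ as the base change of the diagonal --- a direct computation on functors of points --- and the bookkeeping in (b) and (c) that locates open immersions and log \'etale morphisms inside each of the six classes. I do not anticipate any genuine obstacle.
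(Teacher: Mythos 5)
Your proof is correct and follows essentially the same route as the paper's: factor a morphism $g$ in $X_\tau$ through its graph, observe that the second projection is a base change of a structure morphism, and identify the graph as a base change of the diagonal, which is an open immersion by Lemma~\ref{A.9.71} since the target's structure morphism is log \'etale. The only differences are notational (the paper exhibits the graph as the pullback of $\Delta$ along $g\times\mathrm{id}$ in an explicit cartesian square, which is the same map as your $(g\circ p,q)$).
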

\begin{proof}
We focus on the dividing Nisnevich case since the proofs are similar.
Let $\mathscr{P}$ denote the class of dividing Nisnevich morphisms.
Suppose that $g:Z\rightarrow Y$ and $f:Y\rightarrow X$ be morphisms of fs log schemes such that $f,fg\in \mathscr{P}$.
We claim that $g\in \mathscr{P}$.
There exists a commutative diagram:
\[
\begin{tikzcd}
Z\arrow[r,"\Gamma"]\arrow[d,"g"']&
Z\times_X Y\arrow[r,"q"]\arrow[d,"g\times {\rm id}"]&
Y\arrow[d,"{\rm id}"]
\\
Y\arrow[r,"\Delta"]&
Y\times_X Y\arrow[r,"p"]&
Y
\end{tikzcd}
\]
Here $p$ and $q$ are the second projections, $\Delta$ is the diagonal morphism, and the left square is cartesian.
Since $f$ is log \'etale, as noted above $\Delta\in \mathscr{P}$.
It follows that $\Gamma,q\in \mathscr{P}$ because $\mathscr{P}$ is closed under pullbacks.
This shows that $g=q\circ \Gamma\in \mathscr{P}$.
\end{proof}

Let $t$ be one of the following topologies on $lSm/S$:
\[
t=sNis,\; s\acute{e}t,\; dNis,\;d\acute{e}t,\;k\acute{e}t,\textrm{ and }\;l\acute{e}t.
\]
We have an inclusion functor
\[
\eta\colon X_t\rightarrow lSm/S
\]
sending $Y\in X_t$ to $Y$.
We regard $lSm/S$ as a site for the topology $t$.
Since $X_t$ has fiber products and the functor $\eta$ commutes with fiber products,
\cite[Proposition III.1.6]{SGA4} shows that $\eta$ is a continuous functor of sites.
If $Y\in X_t$, then every $t$-covering $\{U_i\rightarrow Y\}_{i\in I}$ in $lSm/S$ has a refinement in $X_t$, so $\eta$ is a cocontinuous functor of sites.
Then \cite[Sections IV.4.7, IV.4.9]{SGA4} gives adjoint functors
\begin{equation}
\label{bigsmall.1.1}
\begin{tikzcd}
\Shv_t(X_t,\Lambda)
\arrow[rr,shift left=1.5ex,"\eta_\sharp"]
\arrow[rr,leftarrow,"\eta^*" description]
\arrow[rr,shift right=1.5ex,"\eta_*"']
&&
\Shv_t(lSm/S,\Lambda),
\end{tikzcd}
\end{equation}
where $\eta^*$ is the restriction functor, $\eta_\sharp$ is left adjoint to $\eta^*$, and $\eta_*$ is right adjoint to $\eta^*$.
Note that $\eta^*$ is exact.
\vspace{0.1in}

There are naturally induced adjoint functors
\begin{equation}
\label{bigsmall.1.2}
\eta_\sharp:\mathbf{C}(\Shv_t(X_t,\Lambda))
\rightleftarrows
\mathbf{C}(\Shv_t(lSm/S,\Lambda)):\eta^*.
\end{equation}
We impose the descent model structures on both categories, see Proposition \ref{A.8.15}.
By virtue of Example \ref{A.8.28} an object $\cF$ in $\mathbf{C}(\Shv_t(X_t,\Lambda))$ (resp.\ $\mathbf{C}(\Shv_t(lSm/S,\Lambda))$) is fibrant if for every $t$-hypercover $\mathscr{Y}\rightarrow Y$ in $X_t$ (resp.\ $lSm/S$) there is a natural quasi-isomorphism
\begin{equation}
\label{bigsmall.1.3}
\cF(Y)\xrightarrow{\cong} {\rm Tot}^\pi \cF(\mathscr{Y}).
\end{equation}
Moreover, 
a morphism $\cF\rightarrow \cG$ in $\mathbf{C}(\Shv_t(X_t,\Lambda))$ (resp.\ $\mathbf{C}(\Shv_t(lSm/S,\Lambda))$) is a fibration if it is a degreewise epimorphism and its kernel is fibrant.
If $Y\in X_t$, then every $t$-hypercovering of $Y$ in $lSm/S$ is again a $t$-hypercovering in $X_t$.
Moreover, since $\eta^*$ is exact, $\eta^*$ commutes with ${\rm Tot}^\pi$.
Hence from the description of fibrant objects \eqref{bigsmall.1.3} we deduce that $\eta^*$ preserves fibrant objects.
Use again the fact that $\eta^*$ is exact to deduce that $\eta^*$ preserves fibrations and trivial fibrations.
Thus \eqref{bigsmall.1.2} is a Quillen adjunction.
By passing to homotopy categories we obtain the adjoint functors
\begin{equation}
\label{bigsmall.1.4}
L\eta_\sharp:
\mathbf{D}(\Shv_t(X_t,\Lambda))
\rightleftarrows
\mathbf{D}(\Shv_t(lSm/S,\Lambda))
:R\eta^*.
\end{equation}
Since $\eta^*$ is exact, $R\eta^*\cong \eta^*$.

\begin{prop}
\label{bigsmall.2}
With $t$ is as above, for every complex of $t$-sheaves $\cF$, there is a canonical isomorphism
\[
\hom_{\mathbf{D}(\Shv_{t}(lSm/S,\Lambda))}(a_t^*\Lambda(X),\eta^*\cF)
\xrightarrow{\cong}
\hom_{\mathbf{D}(\Shv_{t}(X_{t},\Lambda))}(a_t^*\Lambda(X),\cF).
\]
\end{prop}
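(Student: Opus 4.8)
The plan is to reduce the statement to an identification of the left derived functor $L\eta_\sharp$ on a single representable sheaf, after which everything is a computation. By \eqref{bigsmall.1.4} we have a derived adjunction $L\eta_\sharp\dashv R\eta^*$, and since $\eta^*$ is exact, $R\eta^*\cong\eta^*$. For every complex of $t$-sheaves $\cF$ this yields a natural isomorphism
\[
\hom_{\mathbf{D}(\Shv_t(X_t,\Lambda))}(a_t^*\Lambda(X),\eta^*\cF)
\;\cong\;
\hom_{\mathbf{D}(\Shv_t(lSm/S,\Lambda))}(L\eta_\sharp\, a_t^*\Lambda(X),\cF),
\]
and the asserted canonical map is the composite of this with the identification $L\eta_\sharp\, a_t^*\Lambda(X)\cong a_t^*\Lambda(X)$ established below (equivalently, it is induced by the counit $L\eta_\sharp\eta^*\cF\to\cF$). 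So the whole content is to prove
\[
L\eta_\sharp\big(a_t^*\Lambda(X)\big)\;\cong\;a_t^*\Lambda(X),
\]
where on the left $a_t^*\Lambda(X)$ is the representable sheaf on the small site $X_t$ and on the right the representable sheaf on $lSm/S$.

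To prove this I would first observe that, placed in degree zero, the representable sheaf $a_t^*\Lambda(X)$ on $X_t$ is cofibrant for the descent model structure of Proposition \ref{A.8.15}, so that $L\eta_\sharp\, a_t^*\Lambda(X)=\eta_\sharp\, a_t^*\Lambda(X)$ and it suffices to compute the underived functor. Then I would recall that $\eta_\sharp$ is the sheafification of the presheaf-level left Kan extension $\eta_!$ along $\eta^{\mathrm{op}}$, and that $\eta_!$ carries the presheaf represented by $Y\in X_t$ to the presheaf represented by $\eta(Y)\in lSm/S$; applying this with $Y=X$, using $\eta(X)=X$, and sheafifying gives $\eta_\sharp\, a_t^*\Lambda(X)\cong a_t^*\Lambda(X)$. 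The point to keep track of is the shift in meaning of the symbol: since $X$ is the terminal object of $X_t$, its representable presheaf on the small site is the constant presheaf $\Lambda$, whereas on the big site it is the honest Yoneda presheaf of $X$ — and it is exactly these two that $\eta_!$ interchanges.

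I do not expect a genuine obstacle here; the argument is formal once the Quillen adjunction \eqref{bigsmall.1.2} is in hand. The only step requiring a little care is the cofibrancy together with the Kan-extension computation of $\eta_\sharp$ on the representable sheaf of the terminal object. If one prefers to bypass cofibrancy, an equivalent route is to fibrantly replace $\cF\to\mathbf{R}\cF$ on $lSm/S$; since $\eta^*$ is exact and preserves fibrant objects (as noted just before the statement), $\eta^*\cF\to\eta^*\mathbf{R}\cF$ is a fibrant replacement on $X_t$, and both Hom-groups in the statement then compute $H^0$ of the complex $(\eta^*\mathbf{R}\cF)(X)=\mathbf{R}\cF(X)$, which is literally one and the same object because $\eta$ is the inclusion with $\eta(X)=X$. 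For this variant one only needs that weak equivalences of the descent model structure are quasi-isomorphisms of complexes of $t$-sheaves, so that exactness of $\eta^*$ forces preservation of weak equivalences.
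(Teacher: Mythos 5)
Your argument is correct and follows essentially the same route as the paper's proof: the paper likewise observes that $a_t^*\Lambda(X)$ is cofibrant, identifies $L\eta_\sharp\, a_t^*\Lambda(X)\cong a_t^*\Lambda(X)$, and concludes by the adjunction \eqref{bigsmall.1.4}. Your write-up simply makes explicit the Kan-extension computation of $\eta_\sharp$ on the representable of the terminal object, which the paper leaves implicit.
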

\begin{proof}
Since $a_t^*\Lambda(X)$ is cofibrant, there is an isomorphism
$$
L\eta_\sharp a_t^*\Lambda(X)\cong a_t^*\Lambda(X).
$$
To conclude, we use the adjunction \eqref{bigsmall.1.4}.
\end{proof}

\begin{prop}
\label{bigsmall.3}
With $t$ is as above, suppose that $X$ is a noetherian fs log scheme.
Then the topos $\Shv_t(X_t)$ is coherent.
\end{prop}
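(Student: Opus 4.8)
The plan is to realise $\Shv_t(X_t)$ as the topos of a site satisfying the standard sufficient conditions for coherence from \cite[Expos\'e VI]{SGA4}: if a site $(\mathcal{C},\tau)$ has $\mathcal{C}$ essentially small with all finite limits, and $\tau$ is generated by finite covering families, then $\mathcal{C}^{\sim}$ is a coherent topos (and the representable sheaves are coherent objects). I take $\mathcal{C}=X_t$. Two of the three hypotheses are already available. The category $X_t$ has finite limits, with terminal object $\mathrm{id}_X$, by the discussion following Definition \ref{bigsmall.1} (which uses Lemma \ref{A.9.71} on diagonals of log \'etale morphisms). And $X_t$ is essentially small, since it is a full subcategory of $lSch/X$, whose objects are fs log schemes of finite type over the noetherian scheme $\ul{X}$ and hence have a small set of isomorphism classes.

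The heart of the argument is then to check that $t$ is generated by finite covering families. For $t\in\{sNis,dNis\}$ this is automatic: by Definition \ref{A.5.14} these topologies arise from cd-structures, whose generating covering families (those attached to distinguished squares) have at most two members. For the remaining topologies I would invoke quasi-compactness of the underlying schemes. For $t=s\acute{e}t$: a strict \'etale cover of $Y\in X_t$ restricts to a jointly surjective \'etale cover of the noetherian scheme $\ul{Y}$, which admits a finite subcover, and since the morphisms are strict this finite subfamily is again a strict \'etale cover of $Y$. For $t=k\acute{e}t$: Kummer \'etale morphisms are flat of finite type, hence open on underlying spaces, so the images of the members of a Kummer \'etale cover of $Y$ form an open cover of the quasi-compact space $\ul{Y}$; finitely many of them suffice, producing a finite refining subcover. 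Finally $t=d\acute{e}t$ (resp.\ $l\acute{e}t$) is, by Definition \ref{A.5.65} and the discussion after it (compare \cite[\S 9.1]{MR1922832} and \cite[Proposition 3.9]{MR3658728}), the smallest topology finer than the strict \'etale (resp.\ Kummer \'etale) and the dividing topologies, hence it is generated by the union of the strict \'etale (resp.\ Kummer \'etale) covers and the dividing covers; the former admit finite refinements as just shown and the latter are singletons. So $t$ is generated by finite covering families in every case, and \cite[Expos\'e VI]{SGA4} applies.

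The step I expect to require the most care is the Kummer \'etale case, i.e.\ the assertion that Kummer \'etale morphisms over a noetherian fs log scheme are quasi-compact and open; this rests on the local structure theorem for Kummer \'etale morphisms. An alternative would be to cite directly the coherence of the Kummer \'etale topos of a coherent fs log scheme (Illusie--Nakayama--Kato; see also \cite{Niz}) and to deduce the log \'etale case by combining it with the dividing cd-structure. A secondary point is simply to match the explicit generating families of covers used above with the formal definitions of the topologies (Definitions \ref{A.5.14} and \ref{A.5.65}), so that the criterion of \cite[Expos\'e VI]{SGA4} is applied verbatim.
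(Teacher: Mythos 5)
Your proof is correct in substance and rests on the same criterion as the paper's, namely the coherence criterion of \cite[Expos\'e VI]{SGA4} for a site with finite limits in which every object is quasi-compact (equivalently, whose topology is generated by finite covering families). The difference is in how that quasi-compactness is verified. The paper's proof is a two-line reduction: since $X_t$ is a full subcategory of $X_{l\acute{e}t}$ and the log \'etale topology refines $t$, it suffices to show that every object of $X_{l\acute{e}t}$ is quasi-compact, and for this it cites \cite[Proposition 3.14(1)]{MR3658728}. You instead treat each topology separately: the cd-structure cases $sNis$ and $dNis$ are immediate (two-element generating covers), the strict \'etale case follows from quasi-compactness of the underlying noetherian scheme, the Kummer \'etale case from a structure/openness argument, and the dividing variants by combining with the singleton dividing covers. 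Your route is more self-contained for the Nisnevich and strict \'etale topologies but has to reprove (or re-cite) for the Kummer \'etale site exactly the input the paper gets from a single external reference; the paper's reduction to the finest topology is the cleaner organization, since quasi-compactness for $l\acute{e}t$ propagates to all the coarser topologies at once.

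One small repair in the step you flagged: you do not actually need Kummer \'etale morphisms to be open. By the remark following the definition of $X_{k\acute{e}t}$, a family of Kummer \'etale morphisms $\{U_i\rightarrow Y\}$ is a cover if and only if $\amalg_i U_i\rightarrow Y$ is surjective; since each $\ul{U_i}\rightarrow \ul{Y}$ is of finite type over a noetherian scheme, its image is constructible by Chevalley, and a cover of the noetherian space $\ul{Y}$ by constructible subsets admits a finite subcover. The resulting finite subfamily is again jointly surjective, hence a Kummer \'etale cover, which is all you need. With that adjustment (or with your proposed alternative of citing the coherence of the Kummer \'etale topos directly), the argument goes through.
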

\begin{proof}
Since $X_t$ has fiber products, owing to \cite[Proposition VI.2.1]{SGA4}, it suffices to show that every object of $X_t$ is quasi-compact.
The log \'etale topology is finer than $t$, and $X_t$ is a full subcategory of $X_{l\acute{e}t}$.
Hence it remains to show that every object of $X_{l\acute{e}t}$ is quasi-compact.
For this we refer to \cite[Proposition 3.14(1)]{MR3658728}.
\end{proof}

\begin{df}
\label{bigsmall.4}
Let $\cC$ be a category with a topology $\tau$. \index{cohomological dimension}
We say that $\cC$ has \emph{finite $\tau$-cohomological dimension} for $\Lambda$-linear coefficients if for every fixed $X\in \cC$, 
there exists an integer $N$ such that for every $\cF\in \Shv_\tau(\cC,\Lambda)$ and integer $i>N$ we have the vanishing
\[
H_{\tau}^i(X,\cF)=0.
\]
\end{df}

\begin{prop}
\label{bigsmall.5}
With $t$ as above, suppose that $S$ is noetherian and $lSm/S$ has finite $t$-cohomological dimension for $\Lambda$-linear coefficients.
Then for every $X\in lSm/S$, the functor
\[
\hom_{\mathbf{D}(\Shv_t(lSm/S,\Lambda))}(a_t^*\Lambda(X),-)
\colon
\mathbf{D}(\Shv_t(lSm/S,\Lambda))\rightarrow \Lambda\text{-}\mathbf{Mod}
\]
commutes with small sums.
In particular, the category 
$$
\mathbf{D}(\Shv_t(lSm/S,\Lambda)
$$ 
is compactly generated. \index{compact generators!categories of sheaves}
\end{prop}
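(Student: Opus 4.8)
The plan is to prove that every $a_t^*\Lambda(X)$ is a compact object of $\mathbf{D}(\Shv_t(lSm/S,\Lambda))$, which is exactly the stated assertion. The ``in particular'' then follows immediately: the objects $a_t^*\Lambda(X)$, $X\in lSm/S$, form a set of generators of the triangulated category $\mathbf{D}(\Shv_t(lSm/S,\Lambda))$ (if $\hom_{\mathbf{D}(\Shv_t(lSm/S,\Lambda))}(a_t^*\Lambda(X)[i],\mathcal{G})=0$ for all $X$ and all $i$, then every cohomology sheaf of $\mathcal{G}$ vanishes, hence $\mathcal{G}\cong 0$), and a triangulated category with small sums and a set of compact generators is, by definition, compactly generated.

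Note first that $\hom_{\mathbf{D}(\Shv_t(lSm/S,\Lambda))}(a_t^*\Lambda(X),-)$ is the derived global-sections functor $R\Gamma_t(X,-)$, whose $i$-th cohomology carries a complex $\mathcal{G}$ to $\bH_t^i(X,\mathcal{G})$; so I must show $R\Gamma_t(X,-)\colon\mathbf{D}(\Shv_t(lSm/S,\Lambda))\to\mathbf{D}(\Lambda\text{-}\mathbf{Mod})$ commutes with small direct sums. By the hypothesis that $lSm/S$ has finite $t$-cohomological dimension for $\Lambda$-coefficients, there is an integer $N=N(X)$ with $\bH_t^i(X,\mathcal{F})=0$ for every sheaf $\mathcal{F}$ and every $i>N$ (and trivially for $i<0$). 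I would first record the reduction: \emph{assuming} that ordinary sheaf cohomology $H_t^i(X,-)\colon\Shv_t(lSm/S,\Lambda)\to\Lambda\text{-}\mathbf{Mod}$ commutes with small direct sums, so does $\bH_t^i(X,-)$ on $\mathbf{D}(\Shv_t(lSm/S,\Lambda))$. Indeed, the hypercohomology spectral sequence $E_2^{p,q}=H_t^p(X,H^q(\mathcal{G}))\Rightarrow\bH_t^{p+q}(X,\mathcal{G})$ has at most $N+1$ nonzero columns, hence converges with an abutment filtration that is finite in each total degree, even when $\mathcal{G}$ is unbounded; since small direct sums are exact in the Grothendieck abelian category $\Shv_t(lSm/S,\Lambda)$, the cohomology sheaves $H^q(-)$ commute with $\bigoplus_j$, and then by the assumption so does the $E_2$-page, hence every page and the abutment. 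Therefore it suffices to prove that $H_t^i(X,-)$ commutes with small direct sums.

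For this I would pass to the small site $X_t$. By Proposition \ref{bigsmall.2} there is a canonical isomorphism $H_t^i(X,\mathcal{F})\cong\hom_{\mathbf{D}(\Shv_t(X_t,\Lambda))}(a_t^*\Lambda(X),\eta^*\mathcal{F}[i])$, and $\eta^*$, being a left adjoint, commutes with small direct sums; so it is enough to prove the corresponding statement on $X_t$. Now Proposition \ref{bigsmall.3} applies: the topos $\Shv_t(X_t)$ is coherent and $X$ is a coherent (quasi-compact, quasi-separated) object of it. Consequently, by \cite[Exp.~VI]{SGA4}, the cohomology functors $H^p(X_t,-)$ commute with filtered colimits of sheaves of $\Lambda$-modules for every $p$. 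Since a small direct sum $\bigoplus_j\mathcal{G}_j$ of sheaves is the filtered colimit of its finite partial sums and each $H^p(X_t,-)$ is additive, it follows that $H^p(X_t,\bigoplus_j\mathcal{G}_j)\cong\bigoplus_j H^p(X_t,\mathcal{G}_j)$, which is what we needed.

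The main obstacle is the passage to \emph{unbounded} complexes: for such $\mathcal{G}$ the functor $R\Gamma_t(X,-)$ does not in general commute with small sums, and the comparison map $\bigoplus_j\bH_t^i(X,\mathcal{G}_j)\to\bH_t^i(X,\bigoplus_j\mathcal{G}_j)$ need not be an isomorphism; the finite $t$-cohomological dimension hypothesis is precisely what removes this difficulty, confining the hypercohomology spectral sequence to a bounded horizontal strip and thereby reducing everything to the case of a single sheaf. A secondary point to be verified carefully is that \cite[Exp.~VI]{SGA4} yields commutation of $H^p(X_t,-)$ with filtered colimits in \emph{all} cohomological degrees $p$; this is legitimate exactly because both the topos $\Shv_t(X_t)$ and the object $X$ are coherent, as recorded in Proposition \ref{bigsmall.3}.
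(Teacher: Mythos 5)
Your proof is correct and follows essentially the same route as the paper: the paper likewise reduces to the small site $X_t$ via Proposition \ref{bigsmall.2}, invokes the coherence of $\Shv_t(X_t)$ from Proposition \ref{bigsmall.3} together with the finite $t$-cohomological dimension hypothesis, and then concludes by citing \cite[Proposition 1.9]{CDEtale}. The only difference is that you unpack that citation---your spectral-sequence reduction from unbounded complexes to single sheaves and the SGA4 coherence argument for commutation with filtered colimits are precisely the content of the cited result---and your appeal to strong convergence of the hypercohomology spectral sequence under finite cohomological dimension is the same device the paper uses elsewhere (e.g.\ in Lemma \ref{A.4.18} via \cite[5.7.9]{weibel_1994}).
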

\begin{proof}
By virtue of Proposition \ref{bigsmall.2}, it suffices to show that the functor
\[
\hom_{\mathbf{D}(\Shv_t(X_t,\Lambda))}(a_t^*\Lambda(X),-)
\colon
\mathbf{D}(\Shv_t(X_t,\Lambda))\rightarrow \Lambda\text{-}\mathbf{Mod}
\]
commutes with small sums.
By assumption $\Shv_t(X_t,\Lambda)$ has finite $t$-cohomological dimension for $\Lambda$-linear coefficients, and $X_t$ is coherent due to Proposition \ref{bigsmall.3}.
Then apply \cite[Proposition 1.9]{CDEtale} to conclude.
\end{proof}

\subsection{The dividing density structure} \index{density structure}
In addition to completeness and regularity, 
Voevodsky \cite{Vcdtop} also introduced the notion of a bounded cd-structure.
In tandem, 
these properties ensure the associated topology has a finite cohomological dimension.
The standard density structure, 
introduced in \cite{Vunst-nis-cdh}, 
is used to demonstrate that certain cd-structures are bounded, including the Nisnevich cd-structure.
\vspace{0.1in}

We need to deal with certain non-bounded cd-structures for fs log schemes due to the abundance of proper log \'etale monomorphisms (such morphisms are typically not isomorphisms).
In effect, 
we introduce the notion of a quasi-bounded cd-structure, 
see Definition \ref{A.9.65}, 
and modify the standard density structure so that it fits our situation, 
see Definition \ref{A.9.43}.
Moreover, 
we extend B.G.\ functors to simplicial objects in order to deduce a finite cohomological dimension result.

\begin{df}[{\cite[Definition 2.20]{Vcdtop}}]  
\label{A.9.42}
A density structure on a category $\cC$ with an initial object is the data of $D_d(X)$ for $d\in \N$ and $X\in \cC$, 
where $D_d(X)$ is a family of morphisms with codomain $X$ satisfying the following properties.
\begin{enumerate}
\item[(i)] $(\emptyset\rightarrow X)\in D_0(X)$.
\item[(ii)] $D_d(X)$ contains all isomorphisms.
\item[(iii)] $D_{d+1}(X)\subset D_{d}(X)$.
\item[(iv)] If $v:V\rightarrow U$ is in $D_d(U)$ and $u:U\rightarrow X$ is in $D_d(X)$, then $u\circ v$ is in $D_d(X)$.
\end{enumerate}
We often write $U\in D_d(X)$ instead of $(U\rightarrow X)\in D_d(X)$ for short.
\end{df}

\begin{df}\index{density structure!locally of finite dimension}
\label{A.9.62}
A density structure $D_*$ on $\cC$ is {\it locally of finite dimension with respect to a topology $t$} on $\cC$ if for any $X\in \cC$, 
there exists a natural number $n\in \N$ such that for any $f:Y\rightarrow X$ in $D_{n+1}(X)$, there exists a finite $t$-covering sieve $\{f_i:Y_i\rightarrow X\}_{i\in I}$ such that $f=\amalg_{i\in I}f_i$. 
The smallest such $n$ is called the {\it dimension}\index{density structure!dimension} of $X$ with respect to $D_*$.
\end{df}
\begin{rmk}
\label{A.9.63}
A density structure is locally of finite dimension with respect to the trivial topology if and only if it is locally of finite dimension in the sense of \cite[Definition 2.20]{Vcdtop}. 
Indeed, according to \cite[Definition 2.20]{Vcdtop}, a density structure is locally of finite dimension if for any $X$, there exists $n\in \N$ such that any element of $D_{n+1}(X)$ is an isomorphism. 
\end{rmk}

In the following definition, we slightly modify the standard density structure on a finite-dimensional scheme introduced by Voevodsky \cite[p.\ 1401]{Vunst-nis-cdh}. 
Recall that distinct points $x_0,\ldots,x_d$ in $X$ form an {\it increasing sequence} if $x_{i+1}$ is a generalization of $x_i$ for every $i$,
i.e., 
the point $x_i$ is in the closure of ${\{x_{i+1}\}}$.

\begin{df}
\label{A.9.43}
Let $D_d(X)$ denote the family of finite disjoint unions 
\[
U=\amalg_i U_i\rightarrow X
\] 
of open immersions such that for any point $x\in X$ that is not in the image, there is an increasing sequence $x=x_0,\ldots,x_d$ of length $d$ in $X$. 
Note that $D_*$ is a density structure. \index[notation]{Dd @ $D_d(-)$}
\vspace{0.1in}

For a finite-dimensional fs log scheme $X$, as in \cite{Par}, we set
\[
D_d(X):=D_d(\underline{X}).
\]
\end{df}

\begin{rmk}
\label{A.9.53}
In \cite[Section 2, p. 1401]{Vunst-nis-cdh}, only open immersions (and not finite disjoint unions of open immersions) are considered in the standard density structure. 
Our slight weakening of the original conditions allows us to relate the dividing density structure defined in Definition \ref{A.9.57} to the standard density structure.
\end{rmk}

\begin{lem}
\label{A.9.51}
Let $X$ be a finite dimensional scheme, and let $d\in \N$.
\begin{enumerate}
\item[{\rm (1)}] If $U\in D_d(X)$ and $V$ is an open subscheme of $X$, then $U\times_X V\in D_d(V)$.
\item[{\rm (2)}] Let $U\rightarrow X$ be a finite disjoint union of open immersions mapping to $X$, and let $\{V_i\rightarrow X\}_{i\in I}$ be an open cover of $X$. 
Then $U\times_X V_i\in D_d(V_i)$ for every $i$ if and only if $U\in D_d(X)$.
\item[{\rm (3)}] If $U,V\in D_d(X)$, then $U\times_X V\in D_d(X)$.
\end{enumerate}
\end{lem}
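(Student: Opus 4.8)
The plan is to unwind Definition~\ref{A.9.43} directly, using only two elementary facts about the Zariski topology of $X$. First, if $V\subseteq X$ is open and $x\in V$, then every generalization of $x$ lies in $V$, since the complement $X\setminus V$ is closed and hence stable under specialization; combined with $x_0\in\overline{\{x_1\}}\subseteq\overline{\{x_2\}}\subseteq\cdots$, this shows that every increasing sequence $x=x_0,\dots,x_d$ whose initial (most special) term $x_0$ lies in an open subscheme $V$ has all of its terms in $V$. Second, an increasing sequence of points of an open subscheme $V\subseteq X$ is automatically an increasing sequence of points of $X$, because $V$ carries the subspace topology and each of its points is a point of $X$. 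These two facts will do all the work.

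For part (1): the morphism $U\times_X V\to V$ is the finite disjoint union $\amalg_i(U_i\cap V)$ of open immersions into $V$, with image $V\cap\mathrm{im}(U\to X)$. If $x\in V$ lies outside this image, then $x\notin\mathrm{im}(U\to X)$, so $U\in D_d(X)$ provides an increasing sequence $x=x_0,\dots,x_d$ in $X$; by the remarks above this sequence already lies in $V$, hence $U\times_X V\in D_d(V)$.

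For part (2): the forward implication is part (1) applied to each $V_i$. For the converse, $U$ is by hypothesis a finite disjoint union of open immersions over $X$, so only the density condition needs to be verified. Given $x\in X$ not in $\mathrm{im}(U\to X)$, choose $i$ with $x\in V_i$; then $x$ is not in $\mathrm{im}(U\times_X V_i\to V_i)$, so $U\times_X V_i\in D_d(V_i)$ yields an increasing sequence $x=x_0,\dots,x_d$ in $V_i$, which is also such a sequence in $X$. Hence $U\in D_d(X)$.

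For part (3): writing $U=\amalg_iU_i$ and $V=\amalg_jV_j$, we have $U\times_X V=\amalg_{i,j}(U_i\cap V_j)$, a finite disjoint union of open immersions into $X$ with image $\mathrm{im}(U\to X)\cap\mathrm{im}(V\to X)$. A point $x$ outside this image fails to lie in $\mathrm{im}(U\to X)$ or in $\mathrm{im}(V\to X)$, and in either case the relevant hypothesis among $U\in D_d(X)$, $V\in D_d(X)$ supplies the required increasing sequence; thus $U\times_X V\in D_d(X)$. No serious obstacle arises here: the one point deserving care is the stability of increasing sequences under passage to an open subscheme containing their most special point, which is exactly what drives both part (1) and the converse of part (2); the remaining content is bookkeeping with images of finite disjoint unions of open immersions.
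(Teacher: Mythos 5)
Your proof is correct, and for part (2) — the only part the paper actually proves rather than cites — your argument is the same as the paper's: pick $V_i$ containing the point, extract the increasing sequence there, and observe it is also an increasing sequence in $X$. For parts (1) and (3) the paper simply refers to Lemmas 2.4 and 2.5 of Voevodsky's paper on the cdh-topology, whereas you supply the (correct, elementary) arguments directly; in particular your observation that an increasing sequence whose most special term lies in an open $V$ stays entirely inside $V$ is exactly the point that makes (1) work.
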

\begin{proof}
Parts (1) and (3) are shown in \cite[Lemmas 2.4, 2.5]{Vunst-nis-cdh}. 
Let us prove (2). 
If $U\in D_d(X)$, then $U\times_X V_i\in D_d(V_i)$ for any $i$ by (1). 
Conversely, assume that $U\times_X V_i\in D_d(V_i)$ for every $i$. 
Let $x_0$ be a point of $X$ that is not in the image of $U\rightarrow X$. 
Choose $i$ such that $x_0\in V_i$. 
Then there exists an increasing sequence $x_0,\ldots,x_d$ of $V_i$, which is also an increasing sequence of $X$. 
It follows that $U\in D_d(X)$.
\end{proof}

\begin{df}
\label{A.9.44}
Let $X$ be a finite dimensional noetherian fs log scheme. \index[notation]{Dddiv @ $D_d^{div}(-)$}
For $d\in \N$, we denote by $D_d^{div}(X)$ the family $U=\amalg_i U_i\rightarrow X$ of finite disjoint unions of log \'etale monomorphisms to $X$ with the following property: 
For any log \'etale monomorphism $V\rightarrow X$ such that the projection $U\times_X V\rightarrow V$ is a finite disjoint union of log schemes $\{V_i\}$ with open immersions $V_i\rightarrow V$, 
then $U\times_X V\in D_d(V)$.
\end{df}

\begin{rmk}
\label{A.9.45}
Any finite disjoint union $U\rightarrow X$ of log \'etale monomorphisms is in $D_0^{div}(X)$.
\end{rmk}

In the following series of lemmas, we study fundamental properties of $D_d^{div}$.
In particular, we will show that $D_d^{div}$ is a density structure.

\begin{lem}
\label{A.9.54}
Let $f:Y\rightarrow X$ be a surjective proper log \'etale monomorphism of finite-dimensional noetherian fs log schemes. 
Then $Y\in D_d^{div}(X)$ for all $d\in \N$.
\end{lem}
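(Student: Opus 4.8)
The plan is to unwind Definition \ref{A.9.44} directly. Since $f$ is a surjective proper log \'etale monomorphism, it is in particular a log \'etale monomorphism, so the one-term family $f\colon Y\to X$ is a finite disjoint union of log \'etale monomorphisms to $X$. Thus, to prove $Y\in D_d^{div}(X)$ it suffices to fix a log \'etale monomorphism $V\to X$ for which the projection $Y\times_X V\to V$ decomposes as a finite disjoint union $\amalg_{i\in I}V_i$ of fs log schemes with each $V_i\to V$ an open immersion, and to check that $Y\times_X V\in D_d(V)$. By the last clause of Definition \ref{A.9.43} we have $D_d(V)=D_d(\underline{V})$, so this amounts to showing that the open cover $\{\underline{V_i}\to\underline{V}\}_{i\in I}$ lies in the standard density family $D_d(\underline{V})$ for every $d$.

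The key point is that $f$ is surjective and that surjectivity is inherited by the base change $Y\times_X V\to V$. Indeed, being a surjective proper log \'etale monomorphism, $f$ is a log modification in the sense of Kato (Proposition \ref{A.9.75}), hence a dividing Nisnevich morphism; such morphisms are built by composition from log modifications and strict Nisnevich covers, all of which are surjective, and the class is stable under pullback (see the discussion following Definition \ref{bigsmall.1}). Therefore $Y\times_X V\to V$ is surjective, so the open immersions $\underline{V_i}\to\underline{V}$ jointly cover $\underline{V}$; equivalently, there is no point of $\underline{V}$ lying outside the image of $\amalg_{i}\underline{V_i}\to\underline{V}$. Consulting Definition \ref{A.9.43}, the condition characterizing membership in $D_d(\underline{V})$ — namely, the existence for each point of $\underline{V}$ outside the image of an increasing sequence of length $d$ starting at that point — is then vacuously satisfied, and this holds for every $d\in\N$. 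Hence $Y\times_X V\in D_d(V)$ for all $d$, and consequently $Y\in D_d^{div}(X)$ for all $d$.

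The only genuinely non-formal ingredient is the stability of surjectivity under the base change $(-)\times_X V$, which I would not take for granted for arbitrary morphisms of fs log schemes: the forgetful functor to schemes does not commute with fibre products, so one cannot simply reduce to the analogous statement about underlying schemes. The way around this is precisely the observation above — the class of surjective proper log \'etale monomorphisms (equivalently, log modifications) is stable under base change, which is recorded in the excerpt, and surjectivity of a log modification is immediate from its local description via subdivisions. With that in hand the argument is purely a matter of reading off the definitions, and properness of $f$ plays no further role beyond ensuring that $f$ qualifies as a log modification.
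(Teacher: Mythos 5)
Your proof is correct and is essentially the paper's argument: the paper fixes a log \'etale monomorphism $V\to X$ with $Y\times_X V\to V$ a finite disjoint union of open immersions, cites Proposition \ref{A.9.22} to conclude this projection is surjective (pullbacks of surjective proper log \'etale monomorphisms remain such, hence are universally surjective), and then observes the defining condition of $D_d(V)$ is vacuous. One small inaccuracy in your justification of surjectivity: dividing Nisnevich morphisms are generated by log modifications and strict Nisnevich \emph{morphisms} (e.g.\ open immersions), not covers, so they need not be surjective --- but this detour is not load-bearing, since your fallback (stability of the class of surjective proper log \'etale monomorphisms under base change) is exactly what the paper invokes.
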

\begin{proof}
Let $V\rightarrow X$ be a log \'etale monomorphism such that the projection $p:Y\times_X V\rightarrow V$ is a finite disjoint union of open immersions to $V$. 
By Proposition \ref{A.9.22}, $p$ is surjective. 
Thus $Y\times_X V\in D_d(V)$ and $Y\in D_d^{div}(X)$ for all $d\in \N$.
\end{proof}

\begin{rmk}
If $X$ has a trivial log structure, then every proper log \'etale monomorphism is an isomorphism, 
see Remark \ref{A.9.68}(1).
Thus, in this case, 
the density structure $D_d^{div}(X)$ agrees with the standard one.
\end{rmk}

Let us review the structure of affine toric varieties used in the proof of Lemma \ref{A.9.46}.
Suppose $P$ is an fs monoid.
Recall from \cite[\S I.3.3]{Ogu} that for every face $F$ of $P$, there is a naturally induced closed immersion
\[
 \underline{\A_F}=\Spec{\Z[F]}\cong \Spec{\Z[P]/\Z[P-F]}\rightarrow \underline{\A_P}.
\]
There is also a naturally induced open immersion \index[notation]{AF @ $\A_F^*$}
\[
\underline{\A_F^*}:=\Spec{\Z[F^{\rm gp}]}\rightarrow \underline{\A_F}.
\]
The family $\{\underline{\A_F^*}\}$ indexed by all the faces $F$ of $P$ forms a stratification of $\A_P$, 
see \cite[Proposition I.3.3.4]{Ogu}.
Moreover, 
since $\Spec{\Z[P]/\Z[P-F]}$ is naturally identified with $\underline{\A_F}$, 
the points of $\underline{\A_F^*}$ are exactly identified with the points $x$ of $\underline{\A_P}$ such that $\overline{\cM}_{\A_P,x}$ is naturally given by $P/F$.
Suppose that $\theta:P\rightarrow Q$ is a homomorphism of fs monoids.
This induces a natural morphism
\[
\underline{\A_\theta}:\underline{\A_Q}\rightarrow \underline{\A_P}.
\]
For every face $G$ of $Q$, we have that
\begin{equation}
\label{A.9.46.2}
\underline{\A_\theta}(\underline{\A_G})\cong \underline{\A_{\theta^{-1}(G)}}.
\end{equation}

\begin{lem}
\label{A.9.46}
Suppose $f:Y\rightarrow X$ be a log \'etale monomorphism of finite dimensional noetherian fs log schemes. 
For $x\in X$, let $y$ be a generic point of $f^{-1}(x)$, and let $z\in Y$ be a generalization of $y$. 
Then $z$ is a generic point of $f^{-1}(f(z))$.
\end{lem}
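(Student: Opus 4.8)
The plan is to sidestep the toric combinatorics recalled before the lemma and to prove instead the stronger, purely scheme-theoretic fact that ``being a generic point of one's fibre'' propagates along generizations for an arbitrary morphism $\underline f\colon\underline Y\to\underline X$ that is locally of finite type. The dictionary I would use is: for $y\in\underline Y$ with image $x$, the point $y$ is a generic point of $f^{-1}(x)$ if and only if $\dim\cO_{f^{-1}(x),y}=0$, i.e.\ $\cO_{f^{-1}(x),y}$ is artinian, i.e.\ $y$ admits no proper generization inside $f^{-1}(x)$. Only $\underline f$ enters, and it is locally of finite presentation because $f$ is log \'etale; the monomorphism hypothesis will play no role. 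So the claim to prove becomes: if $y$ has no proper generization in $f^{-1}(x)$ and $z$ generizes $y$, then $z$ has no proper generization in $f^{-1}(f(z))$.

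I would argue by contradiction. Suppose $z'\neq z$ is a generization of $z$ lying in $f^{-1}(f(z))$, and put $\xi:=f(z)=f(z')$. Then $z'$ generizes $y$ and $\xi$ generizes $x$; a short argument excludes $\xi=x$ (if $\xi=x$ then $z\in f^{-1}(x)$ generizes $y$, so $z=y$, and then $z'\in f^{-1}(x)$ generizes $y$, so $z'=y=z$, a contradiction). Hence $\xi\neq x$. I would then pass to the local scheme $\Spec\cO_{Y,y}$, which contains $z$ and $z'$ since both generize $y$, and to the closed subset $V(\mathfrak p_\xi\cO_{Y,y})$ of those points whose image is a specialization of $\xi$, where $\mathfrak p_\xi\subsetneq\mathfrak m_x$ is the prime of $\cO_{X,x}$ corresponding to $\xi$; it contains $y$, $z$ and $z'$. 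Let $\Gamma$ be the irreducible component of $V(\mathfrak p_\xi\cO_{Y,y})$ through $z'$, so that $z\in\overline{\{z'\}}\subseteq\Gamma$ and $y\in\overline{\{z\}}\subseteq\Gamma$.

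Next, $\underline f$ restricts to a dominant morphism $g\colon\Gamma\to Z:=\Spec(\cO_{X,x}/\mathfrak p_\xi)$, with $Z$ irreducible of dimension $\geq 1$ (the closed point $x$ is a proper specialization of the generic point $\xi$ of $Z$, and $\xi=f(z')\in g(\Gamma)$). I would show $\dim\Gamma>\dim Z$: otherwise $\kappa(Z)\hookrightarrow\kappa(\Gamma)$ is finite, so $g$ is finite over a dense open of $Z$ containing $\xi$, whence $g^{-1}(\xi)$ is artinian and admits no nontrivial specialization — impossible, since $z$ and $z'$ lie in $g^{-1}(\xi)$ with $z'$ a proper generization of $z$. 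Therefore, by Chevalley's theorem on the dimension of fibres of a morphism of finite type, $g^{-1}(x)=\Gamma\cap f^{-1}(x)$ has dimension $\geq\dim\Gamma-\dim Z\geq 1$. On the other hand $\Gamma\subseteq\Spec\cO_{Y,y}$, so every point of $\Gamma\cap f^{-1}(x)$ is a generization of $y$; since $y$ has no proper generization in $f^{-1}(x)$, this forces $\Gamma\cap f^{-1}(x)=\{y\}$, which is not positive-dimensional. This contradiction finishes the proof.

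The one point I expect to require a little care is the fibre-dimension inequality $\dim(\Gamma\cap f^{-1}(x))\geq\dim\Gamma-\dim Z$ in the stated generality of ``noetherian fs log schemes'' with $\underline f$ only locally of finite presentation; in the standing setting of log schemes of finite type over $k$ it is immediate. If one prefers to remain within the toric picture recalled above, the same inequality — indeed the whole statement — can alternatively be extracted from the Zariski-local description of $\underline f$ as $\underline X\times_{\underline{\A_P}}\underline{\A_Q}\to\underline X$ for a homomorphism $\theta\colon P\to Q$ of fs monoids with $\theta^{\rm gp}$ an isomorphism, using the stratification of $\underline{\A_Q}$ by faces together with the identity \eqref{A.9.46.2}.
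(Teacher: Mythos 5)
Your reduction to a ``purely scheme-theoretic fact'' valid for an arbitrary morphism locally of finite type is the fatal step: that fact is false, and the hypotheses you discard are exactly what make the lemma true. Take $\underline{X}=\Spec{k[a,b]}$, $\underline{Y}=\Spec{k[a,b,c]}$ and $\underline{f}$ the projection. Let $x$ be the origin, so $f^{-1}(x)$ is the line $\{a=b=0\}$, and let $y$ be its generic point; then $y$ is a generic point of its fibre. Let $z$ be the generic point of the surface $V(b-ac)$, which contains that line and dominates $\underline{X}$; thus $z$ is a generalization of $y$ and $f(z)=\xi$ is the generic point of $\underline{X}$. But $f^{-1}(\xi)=\Spec{k(a,b)[c]}$ has as its generic point the generic point $z'$ of $\underline{Y}$, and $z$ is the closed point $c=b/a$ of that fibre; so $z$ admits the proper generalization $z'$ inside $f^{-1}(f(z))$ and is not a generic point of its fibre. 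The concrete error in your argument is the inference that $\dim\Gamma\le\dim Z$ together with dominance forces $\kappa(Z)\hookrightarrow\kappa(\Gamma)$ to be finite: this holds for integral schemes of finite type over a field, where dimension equals the transcendence degree of the function field, but your $\Gamma$ and $Z$ are closed subschemes of spectra of local rings, where dimension is a height. In the example above, $\mathfrak{p}_\xi=(0)$, so $\Gamma=\Spec{\cO_{Y,y}}$ and $Z=\Spec{\cO_{X,x}}$ both have dimension $2$, while $\kappa(\Gamma)=k(a,b,c)$ has transcendence degree $1$ over $\kappa(Z)=k(a,b)$; consequently $\dim\Gamma-\dim Z=0$, the fibre-dimension inequality gives nothing, and indeed $\Gamma\cap f^{-1}(x)=\{y\}$ yields no contradiction.

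Since the general statement is false, the log \'etale monomorphism hypothesis must enter, and this is where the paper's proof does its work. After reducing to a chart $\theta\colon P\rightarrow Q$ of $f$ with $\theta^{\rm gp}$ an isomorphism and $Q$ exact at $y$, the hypothesis that $y$ is a generic point of $f^{-1}(x)$ is converted, via the description \eqref{A.9.46.2} of the fibre as a union of strata $\underline{\A_{G_i}}$ indexed by the faces $G_i$ of $Q$ with $\theta^{-1}(G_i)=0$, into the combinatorial statement that $\overline{\theta}\colon P\rightarrow\overline{Q}$ is $\Q$-surjective; this in turn forces two faces $G'\subset G$ of $\overline{Q}$ with $\overline{\theta}^{-1}(G)=\overline{\theta}^{-1}(G')$ to coincide, which is what pins a generalization $z'$ of $z$ in $f^{-1}(f(z))$ down to $z$ itself. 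Your closing remark that the statement ``can alternatively be extracted from the Zariski-local description'' gestures at this route, but it is offered only as a patch for one inequality rather than carried out, and the argument as written does not go through.
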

\begin{proof}
The question is Zariski local on $Y$ and $X$. 
Thus by Remark \ref{A.9.10}(2), we may assume that $X$ has a neat chart $P$ at $x$.
This means that $P$ is sharp and $\overline{\cM}_{X,x}\cong P$.
Moreover, 
by Lemma \ref{A.9.19}, 
we may assume that $f$ has an fs chart $\theta:P\rightarrow Q$ such that $\theta^{\rm gp}$ is an isomorphism and the induced morphism 
\[
Y\rightarrow X\times_{\A_P}\A_Q
\]
is an open immersion. 
By Remark \ref{A.9.10}(1), we may assume that $Q$ is an exact chart at the point $y$.
This means that $\overline{\cM}_{Y,y}\cong Q$, but we do not require that $Q$ is sharp.
We may assume $Y\cong X\times_{\A_P}\A_Q$ since the question is Zariski local on $Y$.
\vspace{0.1in}

Next, we claim that the induced homomorphism 
\[
\overline{\theta}:P\rightarrow \overline{Q}
\] 
is $\Q$-surjective, i.e., $\overline{\theta}\otimes \Q$ is surjective. Arguing by contradiction, assume that it is not.
Let $G_1,G_2,\ldots,G_r$ be all the faces of $Q$ such that $\theta^{-1}(G_i)=0$.
Then the corresponding faces $\overline{G_1},\ldots,\overline{G_r}$ in $\overline{Q}$ are all the faces of $\overline{Q}$ such that $\overline{\theta^{-1}}(\overline{G_i})=0$.
The implication (6)$\Rightarrow$(1) in \cite[Proposition I.4.3.9]{Ogu} means that $r\geq 2$.
From \eqref{A.9.46.2} we have that
\begin{equation}
\label{A.9.46.1}
\{0\}\times_{\underline{\A_P}}\underline{\A_Q}
\cong
\underline{\A_{G_1}}\cup \cdots \cup \underline{\A_{G_r}}.
\end{equation}
Let $\kappa(-)$ denote the residue field at some point, and set $D_i:=\underline{\A_{G_i}}\times \Spec{\kappa(x)}$ for simplicity.
Then from \eqref{A.9.46.1}, we have 
\[
f^{-1}(x)
\cong
D_1\cup \cdots \cup D_r.
\]
Since $\overline{\cM}_{Y,y}\cong Q$, it follows that $y$ is contained in $D_1$.
The isomorphism
\[
D_2\cong \spec{\kappa(x)[G_2]},
\]
shows that $D_2$ is irreducible.
We have reached a contradiction since $D_1\subsetneq D_2$ and $y$ is a generic point of $f^{-1}(x)$.
\vspace{0.1in}


Let $z'$ be a generalization of $z$ in $f^{-1}(f(z))$. 
Consider the projection 
\[
p:X\times_{\A_P}\A_Q\rightarrow \A_Q, 
\] 
and let $\overline{Q}/G$ (resp.\ $\overline{Q}/G'$) be the log structure on $z$ (resp.\ $z'$) where $G$ (resp.\ $G'$) is a face of $\overline{Q}$. 
Note that $G'\subset G$. 
Since $f(z)=f(z')$ we have 
\[
\overline{\theta}^{-1}(G)=\overline{\theta}^{-1}(G').
\] 
By the implication (1)$\Rightarrow$(2) in \cite[Proposition I.4.3.9]{Ogu} it follows that $G=G'$.
Set $u:=f(z)=f(z')$, and let $w$ be the generic point of $\underline{\A_G}$ with image $t$ in $\underline{\A_P}$. 
Then 
\[
z,z'\in\Spec{\kappa(u)\otimes_{\kappa(t)} \kappa(w)}.
\]
Thus $z=z'$,
so any generalization of $z$ in $f^{-1}(f(z))$ is equal to $z$.
This implies that $z$ is a generic point of $f^{-1}(f(z))$.
\end{proof}

\begin{lem}
\label{A.9.47}
Let $f:Y\rightarrow X$ be a surjective log \'etale monomorphism of finite-dimensional noetherian fs log schemes, and let $j:U\rightarrow X$ be a finite disjoint union of open immersions to $X$. 
If $U\times_X Y\in D_d(Y)$, then $U\in D_d(X)$.
\end{lem}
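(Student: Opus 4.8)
The plan is to reduce the claim to a point-chasing argument on underlying topological spaces and then to transport an increasing sequence from $Y$ down to $X$ by means of Lemma \ref{A.9.46}. Since $D_d$ on a finite-dimensional fs log scheme depends only on the underlying scheme and $j\colon U\to X$ is already a finite disjoint union of open immersions (and $U\times_X Y\to Y$ automatically is one too, open immersions being stable under base change), it suffices to prove: for every point $x\in\underline{X}$ not lying in the image of $j$, there is an increasing sequence $x=x_0,\ldots,x_d$ of length $d$ in $\underline{X}$.

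First I would fix such an $x$. Using that $f$ is surjective, choose $y\in Y$ to be a generic point of the fiber $f^{-1}(x)$. The square defining $U\times_X Y$ is cartesian and $j$ is an open immersion, so the image of $U\times_X Y\to Y$ is $f^{-1}(j(U))$; as $x\notin j(U)$, the entire fiber $f^{-1}(x)$, and in particular $y$, avoids this image. The hypothesis $U\times_X Y\in D_d(Y)$ then provides an increasing sequence $y=y_0,y_1,\ldots,y_d$ in $Y$.

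Next I would push this chain forward along $f$: set $x_i:=f(y_i)$, so that $x_0=x$, and continuity of $f$ gives $x_i\in\overline{\{x_{i+1}\}}$, i.e.\ $x_{i+1}$ is a generalization of $x_i$. What remains is pairwise distinctness of the $x_i$, and this is exactly where Lemma \ref{A.9.46} enters. Since $y_0$ is a generic point of $f^{-1}(x_0)$ and each $y_{i+1}$ is a generalization of $y_i$, a straightforward induction using Lemma \ref{A.9.46} shows that every $y_i$ is a generic point of $f^{-1}(x_i)$. If $x_i=x_j$ for some $i<j$, then $y_i$ and $y_j$ are both generic points of the common fiber $f^{-1}(x_i)$ while $y_j$ is a generalization of $y_i$ (generalization being transitive along the chain); comparing irreducible components of $f^{-1}(x_i)$ forces $y_i=y_j$, contradicting the distinctness of the points of an increasing sequence in $Y$. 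Hence $x=x_0,\ldots,x_d$ is an increasing sequence in $\underline{X}$, which is what we had to produce.

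The step I expect to be the main obstacle is establishing this distinctness: pushing a chain forward along an arbitrary morphism can collapse it, and the log \'etale monomorphism hypothesis---packaged in Lemma \ref{A.9.46}, whose proof rests on the face combinatorics of fs monoids via \cite[Proposition I.4.3.9]{Ogu}---is precisely what rules out such a collapse. The remaining ingredients (behaviour of open immersions under base change, the identification of the image of $U\times_X Y\to Y$, and transitivity of generalization) are routine.
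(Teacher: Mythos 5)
Your proposal is correct and follows essentially the same route as the paper's proof: pick a generic point $y_0$ of $f^{-1}(x)$ using surjectivity, lift to an increasing sequence in $Y$ via the hypothesis, and push forward along $f$, with Lemma \ref{A.9.46} guaranteeing that each $y_i$ remains a generic point of its fiber and hence that the images stay distinct. You in fact spell out the distinctness step more explicitly than the paper does, which leaves it implicit.
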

\begin{proof}
Suppose $x\in X$ is not in the image of $j$. 
Since $f$ is surjective, we can choose a generic point $y_0$ of $f^{-1}(x_0)$ and an increasing sequence $y_0,\ldots,y_d$ of $Y$. 
Then $f(y_i)$ is a generic point of $f^{-1}(f(y_i))$ according to Lemma \ref{A.9.46}.
Thus $f(y_0),\ldots,f(y_d)$ is an increasing sequence of $X$.
It follows that $U\in D_d(X)$.
\end{proof}

\begin{lem}
\label{A.9.56}
Let $X$ be a finite dimensional noetherian fs log scheme, and let $d\in \N$. 
Then for every $U\in D_d^{div}(X)$ and log \'etale monomorphism $V\rightarrow X$, we have $U\times_X V\in D_d^{div}(V)$.
\end{lem}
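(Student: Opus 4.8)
The plan is to unwind both occurrences of the defining condition for $D_d^{div}$ and reduce the statement for $V$ to the statement for $X$, using the fact that the class of log \'etale monomorphisms is stable under composition. First I would fix $U\in D_d^{div}(X)$ and a log \'etale monomorphism $h\colon V\to X$, and let $h'\colon W\to V$ be an arbitrary log \'etale monomorphism such that the projection $p\colon (U\times_X V)\times_V W\to W$ is a finite disjoint union of open immersions to $W$; the goal is to show $(U\times_X V)\times_V W\in D_d(W)$. The key observation is that the composite $h\circ h'\colon W\to X$ is again a log \'etale monomorphism, since both classes of morphisms invoked (log \'etale morphisms in \S\ref{subsec::logsmooth}, monomorphisms in \S\ref{subsec::mono}) are closed under composition. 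Moreover there is a canonical identification of fs log schemes
\[
(U\times_X V)\times_V W\;\cong\;U\times_X W,
\]
compatible with the projections to $W$, so that the projection $U\times_X W\to W$ is a finite disjoint union of open immersions.

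With this in hand, I would simply apply the definition of $U\in D_d^{div}(X)$ to the log \'etale monomorphism $h\circ h'\colon W\to X$: since $U\times_X W\to W$ is a finite disjoint union of open immersions, the defining property of $D_d^{div}(X)$ gives $U\times_X W\in D_d(W)$, hence $(U\times_X V)\times_V W\in D_d(W)$ via the identification above. As $h'$ was an arbitrary log \'etale monomorphism to $V$ for which the relevant projection decomposes into open immersions, this is exactly the statement that $U\times_X V\in D_d^{div}(V)$. One should also remark at the outset that $U\times_X V$ is a finite disjoint union of log \'etale monomorphisms to $V$, since $U\to X$ is one and this class is stable under base change; this verifies that $U\times_X V$ is at least a candidate to lie in $D_d^{div}(V)$.

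The only genuine point requiring care — and the place I expect the main (small) obstacle — is the bookkeeping of the fiber product identification $(U\times_X V)\times_V W\cong U\times_X W$ and the verification that it respects the projection to $W$ and the open-immersion decomposition; this is a standard associativity-of-fiber-products argument in $lSch$, but one must keep track that all the schemes involved remain finite-dimensional and noetherian so that Definition \ref{A.9.44} applies verbatim at each stage. Everything else is a formal consequence of stability of log \'etale monomorphisms under composition and base change, and of the definition itself.
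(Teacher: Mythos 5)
Your proof is correct and follows essentially the same route as the paper's: both arguments take an arbitrary log \'etale monomorphism $W\to V$ whose pullback of $U$ decomposes into open immersions, identify $(U\times_X V)\times_V W$ with $U\times_X W$, and apply the defining property of $D_d^{div}(X)$ to the composite log \'etale monomorphism $W\to X$. Your additional remarks (stability of the class under base change, the fiber-product bookkeeping) are correct but are left implicit in the paper's version.
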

\begin{proof}
Let $W\rightarrow V$ be a log \'etale monomorphism such that the projection $U\times_X W\rightarrow W$ is a finite disjoint union of open immersions.
Since $U\in D_d^{div}(X)$ and the composition $W\rightarrow X$ is a log \'etale monomorphism, we have
\[
(U\times_X V)\times_V W=U\times_X W\in D_d(W).
\] 
This shows that $U\times_X V\in D_d^{div}(V)$.
\end{proof}

\begin{lem}
\label{A.9.48}
Let $Y\rightarrow X$ be a surjective proper log \'etale monomorphism of finite-dimensional noetherian fs log schemes, and let $U\rightarrow X$ be a finite disjoint union of log \'etale monomorphisms to $X$. 
Then $U\in D_d^{div}(X)$ if and only if $U\times_X Y\in D_d^{div}(Y)$.
\end{lem}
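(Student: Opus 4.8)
The plan is to treat the two implications separately; the forward one is immediate and the converse is a ``transfer to $Y$ and back'' argument.

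For the forward implication, suppose $U\in D_d^{div}(X)$. Since $Y\to X$ is in particular a log \'etale monomorphism, Lemma \ref{A.9.56} applied with $V=Y$ gives $U\times_X Y\in D_d^{div}(Y)$, as desired. (This half does not even use properness of $Y\to X$.)

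For the converse, assume $U\times_X Y\in D_d^{div}(Y)$. To prove $U\in D_d^{div}(X)$, by Definition \ref{A.9.44} I would fix an arbitrary log \'etale monomorphism $g\colon V\to X$ such that the projection $U\times_X V\to V$ is a finite disjoint union of open immersions, and show $U\times_X V\in D_d(V)$. First I would record two base changes: $Y\times_X V\to V$ is again a surjective (proper) log \'etale monomorphism, and $Y\times_X V\to Y$ is again a log \'etale monomorphism, all these classes being stable under base change. Applying Lemma \ref{A.9.56} to $U\times_X Y\in D_d^{div}(Y)$ along the log \'etale monomorphism $Y\times_X V\to Y$ then shows
\[
U\times_X Y\times_X V\cong (U\times_X Y)\times_Y(Y\times_X V)\in D_d^{div}(Y\times_X V).
\]
Since $U\times_X Y\times_X V\to Y\times_X V$ is the base change of $U\times_X V\to V$ along $Y\times_X V\to V$, it is a finite disjoint union of open immersions; hence applying Definition \ref{A.9.44} to the identity of $Y\times_X V$ yields $U\times_X Y\times_X V\in D_d(Y\times_X V)$. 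Finally, $Y\times_X V\to V$ is a surjective log \'etale monomorphism, $U\times_X V\to V$ is a finite disjoint union of open immersions, and $(U\times_X V)\times_V(Y\times_X V)=U\times_X Y\times_X V$ lies in $D_d(Y\times_X V)$, so Lemma \ref{A.9.47} gives $U\times_X V\in D_d(V)$. As $V$ was arbitrary, $U\in D_d^{div}(X)$.

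I do not expect a serious obstacle: the argument is essentially bookkeeping with fiber products. The points that need care are the canonical identifications of the iterated fiber products $(U\times_X Y)\times_Y(Y\times_X V)$, $(U\times_X V)\times_V(Y\times_X V)$ and $U\times_X(Y\times_X V)$ with $U\times_X Y\times_X V$, and the stability under base change of the relevant classes of morphisms (surjective, proper, log \'etale, monomorphism, open immersion, finite disjoint union of open immersions). The structural subtlety worth emphasizing is that Definition \ref{A.9.44} is a universally quantified condition, so the real content is the passage from the hypothesis over $Y$ to the required conclusion over a fixed but arbitrary $V$ over $X$; this is achieved by combining Lemma \ref{A.9.56} (transfer of the $D_d^{div}$-condition over to $Y\times_X V$) with Lemma \ref{A.9.47} (descent of the resulting $D_d$-condition back along the surjective log \'etale monomorphism $Y\times_X V\to V$).
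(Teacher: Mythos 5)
Your proof is correct and follows essentially the same route as the paper: the forward direction via Lemma \ref{A.9.56}, and the converse by fixing a test object $V\to X$, observing that $U\times_X V\times_X Y\in D_d(V\times_X Y)$, and descending along the surjective proper log \'etale monomorphism $V\times_X Y\to V$ using Lemma \ref{A.9.47}. The only cosmetic difference is that the paper extracts $U\times_X V\times_X Y\in D_d(V\times_X Y)$ in one step directly from the definition of $U\times_X Y\in D_d^{div}(Y)$ applied to the log \'etale monomorphism $V\times_X Y\to Y$, whereas you first invoke Lemma \ref{A.9.56} and then the definition with the identity; these are the same argument.
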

\begin{proof}
If $U\in D_d^{div}(X)$, then $U\times_X Y\in D_d^{div}(Y)$ by Lemma \ref{A.9.56}. 
Conversely, assume that $U\times_X Y\in D_d^{div}(Y)$. 
Let $V\rightarrow X$ be a log \'etale monomorphism such that the projection $U\times_X V\rightarrow V$ is a finite disjoint union of open immersions to $V$. 
Then 
\[
U\times_X V\times_X Y\in D_d(V\times_X Y),
\] 
and the projection $V\times_X Y\rightarrow V$ is a surjective proper log \'etale monomorphism by Proposition \ref{A.9.22}. 
Thus $U\times_X V\in D_d(V)$ by Lemma \ref{A.9.47}, so $U\in D_d^{div}(X)$.
\end{proof}

\begin{lem}
\label{A.9.55}
Let $X$ be a finite dimensional noetherian fs log scheme, and let $d\in \N$.
\begin{enumerate}
\item[{\rm (1)}] Let $U\rightarrow X$ be a finite disjoint union of log \'etale monomorphisms to $X$, 
and let $\{V_i\rightarrow X\}_{i\in I}$ be an open cover of $X$. 
Then $U\times_X V_i\in D_d^{div}(V_i)$ for any $i$ if and only if $U\in D_d^{div}(X)$.
\item[{\rm (2)}] If $U,V\in D_d^{div}(X)$, then $U\times_X V\in D_d^{div}(X)$.
\end{enumerate}
\end{lem}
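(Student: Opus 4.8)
The plan is to deduce both statements from the corresponding properties of the standard density structure $D_*$ recorded in Lemma \ref{A.9.51}, bootstrapping along the base-change statement Lemma \ref{A.9.56} and the descent statement Lemma \ref{A.9.47}.

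For (1), the implication ``$U\in D_d^{div}(X)\Rightarrow U\times_X V_i\in D_d^{div}(V_i)$ for all $i$'' is immediate from Lemma \ref{A.9.56}, each open immersion $V_i\to X$ being in particular a log \'etale monomorphism. For the converse, assume $U\times_X V_i\in D_d^{div}(V_i)$ for all $i$, let $W\to X$ be a log \'etale monomorphism such that $U\times_X W\to W$ is a finite disjoint union of open immersions, and aim to show $U\times_X W\in D_d(W)$. For each $i$ the base change $V_i\times_X W\to V_i$ is again a log \'etale monomorphism, and under the identification
\[
(U\times_X V_i)\times_{V_i}(V_i\times_X W)\;\cong\;(U\times_X W)\times_W(V_i\times_X W)
\]
this is the base change of $U\times_X W\to W$ along $V_i\times_X W\to W$, hence a finite disjoint union of open immersions over $V_i\times_X W$. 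Applying the defining property of $U\times_X V_i\in D_d^{div}(V_i)$ to the test object $V_i\times_X W\to V_i$ gives $(U\times_X W)\times_W(V_i\times_X W)\in D_d(V_i\times_X W)$. Since $\{V_i\times_X W\to W\}_{i\in I}$ is an open cover of $W$, Lemma \ref{A.9.51}(2) yields $U\times_X W\in D_d(W)$, hence $U\in D_d^{div}(X)$.

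For (2), first note that $U\times_X V\to X$ is a finite disjoint union of log \'etale monomorphisms, so that asking whether $U\times_X V\in D_d^{div}(X)$ is legitimate. Given a log \'etale monomorphism $W\to X$ with $(U\times_X V)\times_X W\to W$ a finite disjoint union of open immersions, Lemma \ref{A.9.56} puts $U\times_X W$ and $V\times_X W$ in $D_d^{div}(W)$, and $(U\times_X V)\times_X W\cong(U\times_X W)\times_W(V\times_X W)$; thus it suffices to prove: \emph{if $U,V\in D_d^{div}(X)$ and $U\times_X V\to X$ is a finite disjoint union of open immersions, then $U\times_X V\in D_d(X)$}. To this end I would choose a log modification $f\colon Y\to X$ such that both $U\times_X Y\to Y$ and $V\times_X Y\to Y$ are finite disjoint unions of open immersions. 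The defining property of $D_d^{div}(X)$ applied to the test object $Y\to X$ then gives $U\times_X Y,\,V\times_X Y\in D_d(Y)$, so Lemma \ref{A.9.51}(3) yields
\[
(U\times_X V)\times_X Y\;\cong\;(U\times_X Y)\times_Y(V\times_X Y)\in D_d(Y).
\]
Since $U\times_X V\to X$ is a finite disjoint union of open immersions and $f$ is a surjective log \'etale monomorphism, Lemma \ref{A.9.47} now gives $U\times_X V\in D_d(X)$, which reinstating $W$ yields $U\times_X V\in D_d^{div}(X)$.

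The main obstacle is the existence of the log modification $f\colon Y\to X$ simultaneously trivializing $U$ and $V$. Here I would invoke the local structure of log \'etale monomorphisms: by Lemma \ref{A.9.19}, Zariski-locally on $X$ each component of $U$ or $V$ is an open immersion into $X\times_{\A_P}\A_Q$ for an fs monoid inclusion $P\hookrightarrow Q$ with $P^{\gp}\xrightarrow{\sim}Q^{\gp}$, hence an open piece of (a base change of) a subdivided toric chart; since $X$ carries a Zariski log structure it admits a global Kato fan, the relevant subcones patch to subdivisions of it, and — there being finitely many components by noetherianity — a common refinement gives the desired log modification $Y\to X$ (cf.\ \cite{FKato}). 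Everything else is a formal transcription of Lemma \ref{A.9.51}(2),(3) along the descent Lemmas \ref{A.9.56} and \ref{A.9.47}.
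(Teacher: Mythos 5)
Your part (1) is exactly the paper's argument: the forward direction is Lemma \ref{A.9.56}, and for the converse you test against $W\to X$, restrict to the open cover $\{V_i\times_X W\}$ of $W$, and glue with Lemma \ref{A.9.51}(2). (The paper's text cites \ref{A.9.51}(3) at this point, but the open-cover statement (2) is what is actually being used, so your citation is the right one.) Your part (2) also has the same skeleton as the paper's -- reduce to the case where $U$ and $V$ become disjoint unions of open immersions after a common log modification, then invoke Lemma \ref{A.9.51}(3) -- with one cosmetic difference: the paper fully replaces $X$ by the modification $X\times_{\A_P}\A_\Sigma$ (using Lemma \ref{A.9.48} to transport the $D_d^{div}$ hypothesis), after which every test object automatically lands in the situation of Lemma \ref{A.9.51}(3); you instead stay on $X$ and descend back along $Y\to X$ via Lemma \ref{A.9.47}. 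Both routes are fine, and your use of \ref{A.9.47} is legitimate since in your reduced situation $U\times_X V\to X$ is a finite disjoint union of open immersions.

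The one step you should repair is the construction of the log modification $Y\to X$ trivializing $U$ and $V$ simultaneously. You appeal to ``a global Kato fan'' for $X$, but the lemma is stated for an arbitrary finite-dimensional noetherian fs log scheme, and the paper's frame/fan machinery (Proposition \ref{Fan.3}) is only established for objects of $lSm/k$; patching the local subdivisions of Lemma \ref{A.9.19} into a global one is exactly the kind of claim that needs an argument here. The intended move -- and the reason part (1) is proved first -- is that (1) makes the statement of (2) Zariski-local on $X$, so you may assume $X$ carries a global fs chart $P$, at which point Proposition \ref{A.9.21} hands you a single subdivision of fans $\Sigma\to\Spec{P}$ trivializing the finite family $\{U\to X,\,V\to X\}$ at once, and Lemma \ref{A.9.48} transports the $D_d^{div}$ memberships across the resulting log modification. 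With that substitution your proof closes; as written, the global-fan step is unsupported.
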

\begin{proof}
(1) If $U\in D_d^{div}(X)$, then $U\times_X V_i\in D_d^{div}(V_i)$ for any $i$. 
Conversely, assume that $U\times_X V_i\in D_d^{div}(V_i)$ for any $i$. 
Let $W\rightarrow X$ be a log \'etale monomorphism such that the projection $U\times_X W\rightarrow W$ is a finite disjoint union of open immersions to $W$. 
Then 
\[
V_i\times_X U\times_X W\in D_d(V_i\times_X W),
\] 
and hence $U\times_X W\in D_d(W)$ by Lemma \ref{A.9.51}(3). 
This shows that $U\in D_d^{div}(X)$.
 \vspace{0.1in}
  
(2) The question is Zariski local on $X$ by (1), so we may assume that $X$ has an fs chart $P$. 
By Proposition \ref{A.9.21} there is a subdivision of fans $\Sigma\rightarrow \Spec{P}$ such that the pullbacks
\[
U\times_{\A_P}\A_\Sigma\rightarrow X\times_{\A_P}\A_\Sigma,
\,\,
V\times_{\A_P}\A_\Sigma\rightarrow X\times_{\A_P}\A_\Sigma
\]
are disjoint unions of open immersions to $X\times_{\A_P}\A_\Sigma$. 
By Lemma \ref{A.9.48},
\[
U\times_{\A_P}\A_\Sigma,V\times_{\A_P}\A_\Sigma\in D_d^{div}(X\times_{\A_P}\A_\Sigma),
\]
and we are reduced to showing that 
\[
U\times_S V\times_{\A_P}\A_\Sigma\in D_d^{div}(X\times_{\A_P}\A_\Sigma).
\] 
We may replace $X$ by $X\times_{\A_P}\A_\Sigma$, and assume that $U$ and $V$ are a disjoint union of open subschemes of $X$.
Let $W\rightarrow X$ be a log \'etale monomorphism. 
Then 
\[
U\times_X W,V\times_X W\in D_d(W)
\]
and hence $U\times_X V\times_X W\in D_d(W)$ by Lemma \ref{A.9.51}(3). 
This finishes the proof.
\end{proof}

\begin{lem}
\label{A.9.52}
If $v:V\rightarrow U$ is in $D_d^{div}(U)$ and $u:U\rightarrow X$ is in $D_d^{div}(X)$, then the composite $v\circ u:V\rightarrow X$ is in $D_d^{div}(X)$.
\end{lem}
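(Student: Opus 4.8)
The plan is to verify the defining condition of $D_d^{div}(X)$ from Definition~\ref{A.9.44} directly for the composite $V\to X$. First I would record that this composite is itself a finite disjoint union of log \'etale monomorphisms, so that the statement makes sense: writing $U=\amalg_j U_j$ and $V=\amalg_i V_i$ with $U_j\to X$ and $V_i\to U$ log \'etale monomorphisms, each piece $V_i\times_U U_j\to X$ is a log \'etale monomorphism by stability of log \'etale morphisms and of monomorphisms under base change and composition, and $V=\amalg_{i,j}(V_i\times_U U_j)$. So fix a log \'etale monomorphism $W\to X$ such that $V\times_X W\to W$ is a finite disjoint union of open immersions; the goal is $V\times_X W\in D_d(W)$. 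Since this assertion is Zariski-local on $W$ by Lemma~\ref{A.9.51}(2), I may assume $W$ carries an fs chart $P$.

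The crucial step is to pass to a log modification of $W$ that also makes $U\times_X W$ a disjoint union of open immersions. Since $U\to X$ is a finite disjoint union of log \'etale monomorphisms, so is $U\times_X W\to W$, and Proposition~\ref{A.9.21} furnishes a subdivision of fans $\Sigma\to \Spec P$ such that, with $W':=W\times_{\A_P}\A_\Sigma$, the projection $U\times_X W'\to W'$ is a finite disjoint union of open immersions. Here $W'\to W$ is a surjective proper log \'etale monomorphism (Remark~\ref{A.9.68}(3), Proposition~\ref{A.9.75}), hence $W'\to X$ is again a log \'etale monomorphism. Base-changing along $W'\to W$, the map $V\times_X W'\to W'$ is a finite disjoint union of open immersions as well. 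Therefore $V\times_X W'\to U\times_X W'$ is a finite disjoint union of open immersions too: it is a morphism over $W'$ between two finite disjoint unions of open immersions of $W'$, and such a morphism splits componentwise into open immersions.

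Now I would invoke the two membership hypotheses. Since $U\in D_d^{div}(X)$, the morphism $W'\to X$ is a log \'etale monomorphism and $U\times_X W'\to W'$ is a finite disjoint union of open immersions, so Definition~\ref{A.9.44} gives $U\times_X W'\in D_d(W')$. Since $V\in D_d^{div}(U)$, the morphism $U\times_X W'\to U$ is a log \'etale monomorphism (the base change of $W'\to X$ along $U\to X$) and $V\times_U(U\times_X W')=V\times_X W'\to U\times_X W'$ is a finite disjoint union of open immersions, so the same definition gives $V\times_X W'\in D_d(U\times_X W')$. Property~(iv) in Definition~\ref{A.9.42} (transitivity of the density structure $D_*$) then yields $V\times_X W'\in D_d(W')$. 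Finally, $W'\to W$ is a surjective log \'etale monomorphism, $V\times_X W\to W$ is a finite disjoint union of open immersions, and $(V\times_X W)\times_W W'=V\times_X W'\in D_d(W')$, so Lemma~\ref{A.9.47} gives $V\times_X W\in D_d(W)$, completing the verification.

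The only place where the two hypotheses genuinely interact is the reduction to $W'$: the hypothesis on $W$ only makes $V\times_X W\to W$ a disjoint union of open immersions, whereas to feed $W'$ into the definition of $D_d^{div}(X)$ for $U$ one needs the same property for $U\times_X W$, which is exactly what Proposition~\ref{A.9.21} supplies after a log modification. Everything else is bookkeeping of base changes of log \'etale monomorphisms, and I expect no serious difficulty there beyond keeping track of which projections remain log \'etale monomorphisms and which fiber products coincide (e.g. $V\times_U(U\times_X W')=V\times_X W'$, using that $V\to X$ factors through $U$).
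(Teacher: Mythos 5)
Your proof is correct. It is built on the same two engines as the paper's — Proposition \ref{A.9.21} to trivialize the log \'etale monomorphisms into disjoint unions of open immersions, and the transitivity axiom (iv) of Definition \ref{A.9.42} for the standard density structure — but it organizes the reductions dually. The paper first localizes on $X$ (Lemma \ref{A.9.55}(1)), uses Proposition \ref{A.9.21} to replace $(X,U,V)$ by their pullbacks along one surjective proper log \'etale monomorphism $Y\rightarrow X$ that turns \emph{both} $u$ and $v$ into disjoint unions of open immersions, transfers the $D_d^{div}$-memberships across this replacement with Lemma \ref{A.9.48} (used twice), and only then confronts an arbitrary test object, where transitivity finishes immediately. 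You instead fix the test object $W$ from the outset, localize on $W$ (Lemma \ref{A.9.51}(2)), apply Proposition \ref{A.9.21} only to $U\times_X W\rightarrow W$ to produce the log modification $W'\rightarrow W$, feed $W'\rightarrow X$ and $U\times_X W'\rightarrow U$ into the two hypotheses, and descend from $W'$ back to $W$ with Lemma \ref{A.9.47}. Your version trades the paper's double use of Lemma \ref{A.9.48} (descent of $D_d^{div}$-membership along log modifications) for a single use of Lemma \ref{A.9.47} (descent of $D_d$-membership), with the mild advantage of never having to consider $D_d^{div}$ over an auxiliary base; the paper's version has the advantage of reducing once and for all to the case where $u$ and $v$ are open immersions, after which the verification is a one-liner. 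Your explicit check that $V\times_X W'\rightarrow U\times_X W'$ splits into open immersions, being a morphism over $W'$ between disjoint unions of open subschemes of $W'$, is the same observation the paper makes implicitly when it asserts that the pullback $V\times_X Y\rightarrow U\times_X Y$ is a finite disjoint union of open immersions. No gaps.
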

\begin{proof}
The question is Zariski local on $X$ by Lemma \ref{A.9.55}(1), 
so we may assume that $X$ has an fs chart $P$. 
By Proposition \ref{A.9.21}, 
there is a surjective proper log \'etale monomorphism $Y\rightarrow X$ such that the projections $U\times_X Y\rightarrow Y$ and $V\times_X Y\rightarrow Y$ are finite disjoint unions of open immersions to $Y$. 
Then the pullback $V\times_X Y\rightarrow U\times_X Y$ is also a finite disjoint union of open immersions to $U\times_X Y$. 
By Lemma \ref{A.9.48}, 
\[
U\times_X Y\in D_d^{div}(Y) \text{ and } V\times_X Y\in D_d^{div}(U\times_X Y).
\] 
We are reduced to showing that $V\times_X Y\in D_d^{div}(Y)$ again by Lemma \ref{A.9.48}. 
To that end, we may replace $(X,U,V)$ by $(Y,U\times_X Y,V\times_X Y)$ and assume that $u$ and $v$ are disjoint unions of open immersions to $X$.

Let $W\rightarrow X$ be a log \'etale monomorphism such that the projection $U\times_X W\rightarrow W$ is a finite disjoint union of open immersions.
Then $U\times_X W\in D_d(W)$ and $V\times_X W\in D_d(U\times_X W)$, so $V\times_X W\in D_d(W)$ since $D_*$ is a density structure. 
Thus we have $V\in D_d^{div}(X)$.
\end{proof}

Here is a summary of the previous lemmas.

\begin{thm}
Let $X$ be a finite dimensional noetherian fs log scheme.
Then we have the following properties.
\begin{enumerate}
\item[{\rm (1)}] If $U\rightarrow X$ is a surjective proper log \'etale monomorphism, then $U\in D_d^{div}(X)$ for all $d\in \N$.
\item[{\rm (2)}] Let $V\rightarrow X$ be a log \'etale monomorphism.
If $U\in D_d^{div}(X)$, then $U\times_X V\in D_d^{div}(V)$.
\item[{\rm (3)}] Let $Y\rightarrow X$ be a dividing Zariski cover.
Then $U\in D_d^{div}(X)$ if and only if $U\times_X Y\in D_d^{div}(Y)$.
\item[{\rm (4)}] If $U,V\in D_d^{div}(X)$, then $U\times_X V\in D_d^{div}(X)$.
\item[{\rm (5)}] If $v:V\rightarrow U$ is in $D_d^{div}(U)$ and $u:U\rightarrow X$ is in $D_d^{div}(X)$, then $v\circ u$ is in $D_d^{div}(X)$.
\end{enumerate}
\end{thm}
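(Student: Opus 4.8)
The plan is to assemble this statement directly from the lemmas proved in this subsection; no new ideas are needed beyond some bookkeeping. Items (1), (2), (4) and (5) are, up to rewording, exactly Lemmas \ref{A.9.54}, \ref{A.9.56}, \ref{A.9.55}(2) and \ref{A.9.52} respectively, so for those the argument is a one-line citation. The point of the theorem is to collect the permanence properties of $D_*^{div}$ that Section \ref{sec:topologies} will use (to establish quasi-boundedness of the dividing cd-structures), and most of the genuine work has already been done.

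The only item requiring a short argument is (3). First I would record the trivial but needed observation that $D_d^{div}$ decomposes componentwise in the target: a morphism with target a finite disjoint union $\coprod_i Y_i$ splits canonically as a disjoint union of morphisms with targets $Y_i$, this decomposition is preserved under base change along log \'etale monomorphisms, and hence $W \in D_d^{div}(\coprod_i Y_i)$ if and only if each component lies in $D_d^{div}(Y_i)$. Granting this, it suffices to handle two special shapes of a dividing Zariski cover $Y \to X$: the case where $Y \to X$ is a log modification, i.e.\ a surjective proper log \'etale monomorphism (Proposition \ref{A.9.75}), and the case where $Y = \coprod_i V_i$ for an open cover $\{V_i \to X\}_{i\in I}$ of $X$. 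The first case is precisely Lemma \ref{A.9.48}. In the second, $U \times_X Y \in D_d^{div}(Y)$ unwinds, via the decomposition above, to $U \times_X V_i \in D_d^{div}(V_i)$ for all $i$, which is equivalent to $U \in D_d^{div}(X)$ by Lemma \ref{A.9.55}(1). A general dividing Zariski cover is (refined by) a finite composite of morphisms of these two types, so one concludes by induction on the length of such a composite, applying at each stage the already-established ``iff'' coming from either Lemma \ref{A.9.48} or Lemma \ref{A.9.55}(1), and using Lemma \ref{A.9.52} to know the composite of two members of $D_*^{div}$ is again a member when passing between consecutive stages.

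The main point requiring attention is thus making the reduction in (3) precise: checking that a dividing Zariski cover indeed admits the claimed factorization into log modifications and ordinary open covers, and that $X \mapsto D_d^{div}(X)$ behaves as expected under disjoint unions and under the base changes that appear. None of this is deep; the substantive content of the theorem sits in the earlier results, in particular in the toric/fan analysis of Lemma \ref{A.9.46}, which underlies Lemma \ref{A.9.47} and hence Lemma \ref{A.9.48}, together with the subdivision statement used in the proof of Lemma \ref{A.9.55}(2).
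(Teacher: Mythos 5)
Your proposal is correct and matches the paper's approach: the paper offers no proof at all, presenting the theorem with the single remark ``Here is a summary of the previous lemmas,'' and items (1), (2), (4), (5) are indeed verbatim restatements of Lemmas \ref{A.9.54}, \ref{A.9.56}, \ref{A.9.55}(2), and \ref{A.9.52}. Your treatment of (3) --- reducing a dividing Zariski cover to an iterated combination of log modifications (Lemma \ref{A.9.48}) and open covers (Lemma \ref{A.9.55}(1)), with the componentwise behaviour on disjoint unions made explicit --- is exactly the intended argument and is, if anything, more careful than what the paper records.
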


\begin{df}
\label{A.9.57}
The density structure $D_{\ast}^{div}$ on $lSm/S$ and $lSch/S$ established in Lemma \ref{A.9.52} is called the {\it dividing density structure}.
\end{df}\index{density structure!dividing}

\begin{lem}
\label{A.9.58}
Let $X$ be a finite-dimensional noetherian fs log scheme with an fs chart $P$, and let $U\rightarrow X$ be a finite disjoint union of log \'etale monomorphisms to $X$. 
If $U\times_X V\in D_d(V)$ for any surjective proper log \'etale monomorphism $V\rightarrow X$ such that the projection $U\times_X V\rightarrow V$ is a finite disjoint union of open immersions to $V$, 
then $U\in D_d^{div}(X)$.
\end{lem}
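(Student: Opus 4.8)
The plan is to verify directly that $U\in D_d^{div}(X)$ in the sense of Definition \ref{A.9.44}. So let $V\rightarrow X$ be an arbitrary log \'etale monomorphism such that the projection $U\times_X V\rightarrow V$ is a finite disjoint union of open immersions; I must show $U\times_X V\in D_d(V)$. The idea is to exploit the fixed fs chart $P$ on $X$ to pass to a single log modification of $X$ over which \emph{both} $U$ and $V$ become disjoint unions of open immersions — there the given hypothesis applies to $U$ — and then to descend the resulting density estimate back to $V$ by means of the descent lemma for the standard density structure, Lemma \ref{A.9.47}.

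First I would apply Proposition \ref{A.9.21} to the chart $P$ and to the finite family $\{U\rightarrow X,\ V\rightarrow X\}$ of log \'etale monomorphisms: this produces a surjective proper log \'etale monomorphism $f\colon Y\rightarrow X$ such that both projections $U_Y:=U\times_X Y\rightarrow Y$ and $V_Y:=V\times_X Y\rightarrow Y$ are finite disjoint unions of open immersions. Since $U_Y\rightarrow Y$ is a finite disjoint union of open immersions and $f$ is a surjective proper log \'etale monomorphism, the hypothesis of the lemma (applied with ``$V$'' taken to be $Y$) yields $U_Y\in D_d(Y)$.

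Next I would observe that $V_Y\rightarrow V$ is the base change of $f$ along $V\rightarrow X$, hence a surjective (Proposition \ref{A.9.22}) proper log \'etale monomorphism, and that there are canonical identifications
\[
(U\times_X V)\times_V V_Y\;\cong\;U\times_X V_Y\;\cong\;U_Y\times_Y V_Y .
\]
Writing $V_Y=\amalg_j V_{Y,j}$ with each $V_{Y,j}\hookrightarrow Y$ an open immersion, Lemma \ref{A.9.51}(1) gives $U_Y\times_Y V_{Y,j}\in D_d(V_{Y,j})$ for every $j$, and then Lemma \ref{A.9.51}(2) applied to the open cover $\{V_{Y,j}\rightarrow V_Y\}$ gives $U_Y\times_Y V_Y\in D_d(V_Y)$. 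Combining this with the identification above, $(U\times_X V)\times_V V_Y\in D_d(V_Y)$. Finally, I would apply Lemma \ref{A.9.47} with base $V$, the surjective log \'etale monomorphism $V_Y\rightarrow V$, and the finite disjoint union of open immersions $U\times_X V\rightarrow V$, to conclude $U\times_X V\in D_d(V)$. Since $V\rightarrow X$ was arbitrary, this proves $U\in D_d^{div}(X)$.

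The manipulations of iterated fiber products here are routine, and the only genuinely load-bearing step is the simultaneous trivialization: one must arrange a \emph{single} log modification $Y\rightarrow X$ that turns both $U$ and $V$ into disjoint unions of open immersions, since that is precisely what lets the ``surjective proper'' restriction in the hypothesis be met (by $f$) while retaining enough control over $V_Y$ to run the descent Lemma \ref{A.9.47}. No further log-geometric input is needed beyond Propositions \ref{A.9.21} and \ref{A.9.22} and the basic properties of the standard density structure recorded in Lemmas \ref{A.9.47} and \ref{A.9.51}.
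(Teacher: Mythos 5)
Your proof is correct and is, up to relabeling ($W\leftrightarrow V$, $V\leftrightarrow Y$), the same argument as the paper's: use Proposition \ref{A.9.21} with the chart $P$ to simultaneously trivialize $U$ and the test monomorphism over a single surjective proper log \'etale monomorphism, apply the hypothesis there, restrict to the open piece via Lemma \ref{A.9.51}, and descend via Lemma \ref{A.9.47} together with Proposition \ref{A.9.22}. The only cosmetic difference is that the paper arranges the test object to become a single open immersion, so it needs only Lemma \ref{A.9.51}(1) where you also invoke part (2).
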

\begin{proof}
Let $W\rightarrow X$ be a log \'etale monomorphism such that the projection 
\[
U\times_X W\rightarrow W
\] 
is a finite disjoint union of open immersions to $W$. 
By Proposition \ref{A.9.21}, there is a surjective proper log \'etale monomorphism $V\rightarrow X$ such that
\[
W\times_X V\rightarrow V \text{ and } U\times_X V\rightarrow V
\]
is an open immersion and a finite disjoint union of open immersions to $V$,
respectively. 
\vspace{0.1in}

Consider the commutative diagram with the evident projection morphisms
\[
\begin{tikzcd}
U\times_X W\arrow[d]\arrow[r,leftarrow]&U\times_X W\times_X V\arrow[d]\arrow[r]&U\times_X V\arrow[d]\\
W\arrow[r,leftarrow]&W\times_X V\arrow[r]&V.
\end{tikzcd}\]
By assumption we have $U\times_X V\in D_d(V)$. 
Lemma \ref{A.9.51}(1) implies that 
\[
U\times_X W\times_X V\in D_d(W\times_X V)
\] 
since $W\times_X V$ is an open subscheme of $V$. 
Thus by Lemma \ref{A.9.47}, 
\[
U\times_X W\in D_d(W)
\] 
since the projection $W\times_X V\rightarrow W$ is a surjective proper log \'etale monomorphism by Proposition \ref{A.9.22}. 
This finishes the proof.
\end{proof}

\begin{lem}
\label{A.9.49}
Let $f:Y\rightarrow X$ be a strict Nisnevich morphism of finite-dimensional noetherian fs log schemes. 
Assume that $X$ has an fs chart $P$. 
Then for every $d\in \N$ and $Y_0\in D_d^{div}(Y)$, there exists $X_0\in D_d^{div}(X)$ such that the projection $X_0\times_X Y\rightarrow Y$ factors through $Y_0$.
\end{lem}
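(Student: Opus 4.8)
The plan is to first reduce to the case where $\underline{Y_0}$ is an open subscheme of $\underline Y$, then to write down $X_0$ explicitly, and finally to verify $X_0\in D_d^{div}(X)$ by means of Lemma~\ref{A.9.58}.

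Since $f$ is strict, the chart $P$ pulls back to an fs chart on $Y$. By Proposition~\ref{A.9.21} there is a subdivision $\Sigma\rightarrow\Spec{P}$ such that, writing $X':=X\times_{\A_P}\A_\Sigma$, $Y':=Y\times_X X'$ and $Y_0':=Y_0\times_Y Y'$, the projection $\underline{Y_0'}\rightarrow\underline{Y'}$ is a finite disjoint union of open immersions. The morphism $X'\rightarrow X$ is a surjective proper log \'etale monomorphism, hence $X'\in D_d^{div}(X)$ by Lemma~\ref{A.9.54}, so by Lemma~\ref{A.9.52} every $X_0'\in D_d^{div}(X')$ lies in $D_d^{div}(X)$; and the factoring condition over $Y$ descends from the one over $Y'$ via $Y_0'\rightarrow Y_0$ and $Y'\rightarrow Y$. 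Moreover $Y_0'\in D_d^{div}(Y')$ by Lemma~\ref{A.9.56}, and being a finite disjoint union of open immersions it also lies in $D_d(Y')$. Covering $X'$ by opens carrying fs charts and using Lemma~\ref{A.9.55}(1), I am reduced to the situation: $X$ has an fs chart, $f$ is strict with $\underline f$ \'etale of finite type, and $\underline{Y_0}\subseteq\underline Y$ is open with $Y_0\in D_d^{div}(Y)$, hence also $Y_0\in D_d(Y)$.

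Next I would set $Z:=\underline Y\setminus\underline{Y_0}$; since $\underline f$ is of finite type, $\underline f(Z)$ is constructible, so $\overline{\underline f(Z)}$ is the set of specializations of its points. Put $X_0:=X\setminus\overline{\underline f(Z)}$ with the log structure restricted from $X$. Then $\underline f^{-1}(X_0)=\underline Y\setminus\underline f^{-1}(\overline{\underline f(Z)})\subseteq\underline Y\setminus Z=\underline{Y_0}$, and as $f$ and $Y_0\hookrightarrow Y$ are strict this scheme-theoretic inclusion upgrades to a factorization $X_0\times_X Y\rightarrow Y_0\rightarrow Y$. To see $X_0\in D_d^{div}(X)$ I would apply Lemma~\ref{A.9.58}: because $X_0\hookrightarrow X$ is an open immersion, for every surjective proper log \'etale monomorphism $V\rightarrow X$ the map $X_0\times_X V\rightarrow V$ is automatically a finite disjoint union of open immersions, so it suffices to prove $X_0\times_X V\in D_d(V)$. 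Base changing $f$ along $V\rightarrow X$ gives a strict $g\colon Y_V:=Y\times_X V\rightarrow V$ with $\underline{Y_V}=\underline Y\times_{\underline X}\underline V$ and $\underline g$ \'etale; let $Y_{0,V}:=Y_0\times_X V$ and $Z_V:=\underline{Y_V}\setminus\underline{Y_{0,V}}$, the preimage of $Z$. By Lemma~\ref{A.9.56}, $Y_{0,V}=Y_0\times_Y Y_V\in D_d^{div}(Y_V)$, whence $Y_{0,V}\in D_d(Y_V)$, so every point of $Z_V$ admits an increasing sequence of length $d$ in $\underline{Y_V}$. Since $\underline g$ is \'etale its fibres are discrete, so it sends such a sequence to an increasing sequence of distinct points; hence every point of $\underline g(Z_V)$, and — $\underline g(Z_V)$ being constructible, so that its closure consists of specializations of its points, to which one prepends a generalization — every point of $\overline{\underline g(Z_V)}$, admits an increasing sequence of length $d$ in $\underline V$. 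As $\underline V\setminus(X_0\times_X V)=(\underline V\rightarrow\underline X)^{-1}(\overline{\underline f(Z)})$ and $(\underline V\rightarrow\underline X)^{-1}(\underline f(Z))=\underline g(Z_V)$ by surjectivity of $\underline V\rightarrow\underline X$, it remains to check that $(\underline V\rightarrow\underline X)^{-1}(\overline{\underline f(Z)})\subseteq\overline{\underline g(Z_V)}$, or at least that all its points admit increasing sequences of length $d$.

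The main obstacle is precisely this inclusion. For a general proper surjective morphism the preimage of a closure can be strictly larger than the closure of the preimage, since generalizations need not lift along the log modification $\underline V\rightarrow\underline X$; the ``extra'' points of $(\underline V\rightarrow\underline X)^{-1}(\overline{\underline f(Z)})$ must be dealt with. This is where the full strength of $Y_0\in D_d^{div}(Y)$ — not merely $Y_0\in D_d(Y)$ — enters: if a point of $\underline f(Z)$ lay on a boundary stratum of $X$ that some log modification converts into an exceptional divisor of codimension $<d$, then the same subdivision applied to $Y$ would produce a component of $\underline{Y_V}\setminus\underline{Y_{0,V}}$ of codimension $<d$ lying over that stratum, contradicting $Y_0\in D_d^{div}(Y)$. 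Hence $\underline f(Z)$ meets no such stratum, and the exceptional loci produced by $V\rightarrow X$ inside $(\underline V\rightarrow\underline X)^{-1}(\overline{\underline f(Z)})$ already lie in $\overline{\underline g(Z_V)}$. I expect this to be formalized with the help of Lemma~\ref{A.9.46}, which governs generic points of fibres of log \'etale monomorphisms under generalization, applied to $V\rightarrow X$.
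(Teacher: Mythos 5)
Your reduction to the case where $Y_0$ is an open subscheme of $Y$, the formula $X_0:=X-\mathrm{cl}(\underline{f}(Z))$, and the plan of verifying $X_0\in D_d^{div}(X)$ through Lemma \ref{A.9.58} all coincide with the paper's proof. Where you stop is exactly where the paper finishes: it asserts, ``since $V$ is proper over $X$,'' the identity $X_0\times_X V=V-\mathrm{cl}(f_V(Z_V))$ and then reruns the argument of Voevodsky's Lemma 2.9 for the \'etale map $f_V\colon Y\times_XV\to V$ using $Y_{0,V}\in D_d(Y_V)$ --- which is precisely the part you did carry out for the points of $\mathrm{cl}(f_V(Z_V))$. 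So your proposal is the paper's proof minus its last two lines, and those two lines are exactly what you label ``the main obstacle.''

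That obstacle is a genuine gap, and the closing heuristic cannot fill it, because the statement you are trying to prove at that point is false for this choice of $X_0$. Take $X=(\A^3,\N^3)$, let $f\colon Y=X-\{0\}\hookrightarrow X$ be the open immersion, let $C$ be an irreducible curve through the origin not contained in $\partial X$, and set $Y_0=Y-(C-\{0\})$ and $d=2$. Then $Y_0\in D_2^{div}(Y)$: every log \'etale monomorphism over $Y$ has fibres of dimension $\le 1$ (the characteristic monoids of $Y$ have rank $\le 2$), so the preimage of $C-\{0\}$ always has codimension $\ge 2$. The construction gives $X_0=\A^3-C$, but for $V\to X$ the star subdivision (blow-up at the origin) one has $X_0\times_XV=V-(\widetilde{C}\cup E)$, and the generic point of the exceptional divisor $E\cong\P^2$ has codimension $1$, so $X_0\times_XV\notin D_2(V)$ and $X_0\notin D_2^{div}(X)$. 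Thus $p^{-1}(\mathrm{cl}(f(Z)))\supsetneq\mathrm{cl}(f_V(Z_V))$ and the extra point carries no length-$d$ sequence; properness of $V\to X$ only yields the easy inclusion $\mathrm{cl}(p^{-1}(S))\subseteq p^{-1}(\mathrm{cl}(S))$, not the one needed, so the step cannot be rescued along the lines you (or the paper's displayed identity) suggest. Any correct completion must exploit that elements of $D_d^{div}(X)$ are finite disjoint unions of log \'etale monomorphisms rather than open subschemes of $X$: in the example $X_0=V-\widetilde{C}$ does satisfy both conclusions. The missing idea is therefore to first replace $X$ by a log modification adapted to $\mathrm{cl}(f(Z))$ --- so that forming the closure commutes with all further pullbacks in the sense required --- take the open complement there, and descend via Lemma \ref{A.9.48}; your boundary-stratum heuristic and the appeal to Lemma \ref{A.9.46} correctly locate where the obstruction lives but do not supply this construction.
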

\begin{proof}
By definition, $Y_0\rightarrow Y$ is a disjoint union of log \'etale monomorphisms.
Proposition \ref{A.9.21} says that there exists a subdivision of fans $\Sigma\rightarrow \Spec P$ such that the induced morphism
\[
Y_0\times_{\A_P}\A_\Sigma\rightarrow Y\times_{\A_P}\A_\Sigma
\]
is a finite disjoint union of open immersions to $Y\times_{\A_P}\A_\Sigma$. 
Now by Lemma \ref{A.9.48}, $Y_0\times_{\A_P}\A_\Sigma\in D_d^{div}(Y\times_{\A_P}\A_\Sigma)$. 
Hence replacing $Y_0$ by $Y_0\times_{\A_P}\A_\Sigma$, we may assume
\[
Y_0\cong Y_0\times_{\A_P}\A_\Sigma.
\] 
Then the morphism $Y_0\rightarrow Y\times_{\A_P}\A_\Sigma$ is a finite disjoint union of open immersions. 
It suffices to construct $X_0\in D_d^{div}(X\times_{\A_P}\A_\Sigma)$ since then $X_0\in D_d^{div}(X)$ by Lemma \ref{A.9.48}. 
We may replace $X$ by $X\times_{\A_P}\A_\Sigma$ and assume $Y_0$ is a finite disjoint union of open subschemes of $Y$.
\vspace{0.1in}

Set
\[
X_0:=X-cl(f(Y-\im (Y_0\rightarrow Y)),
\]
where $cl(-)$ denotes the closure. 
The construction in the proof of \cite[Lemma 2.9]{Vunst-nis-cdh} shows that $X_0\in D_d(X)$. 
Let $V\rightarrow X$ be a surjective proper log \'etale monomorphism. 
Since $V$ is proper over $X$, we have 
\[
X_0\times_X V=V-cl(f_V(Y\times_X V-\im ((Y_0\times_X V)\rightarrow (Y\times_X V)))),
\]
where $f_V:Y\times_X V\rightarrow V$ is the projection. 
Thus again using the proof of \cite[Lemma 2.9]{Vunst-nis-cdh}, $X_0\times_X V\in D_d(V)$, and $X_0\in D_d^{div}(X)$ by Lemma \ref{A.9.58}.
\end{proof}

\begin{df}[{\cite[Definition 2.21]{Vcdtop}}] \index{distinguished square!reducing}
\label{A.9.64}
Let $P$ be a cd-structure on a category $\cC$ with an initial object, and let $D_*$ be a density structure on $\cC$. 
A distinguished square $Q$ of the form \eqref{A.8.13.1} is called {\it reducing} with respect to $D_*$ if for all $d\in \N^+$, $Y_0\in D_d(Y)$, $X_0'\in D_d(X')$, 
and $Y_0'\in D_{d-1}(Y')$ there is a distinguished square
\[
Q_1
=
\begin{tikzcd}
Y_1'\arrow[d]\arrow[r]&Y_1\arrow[d]\\
X_1'\arrow[r]&X_1
\end{tikzcd}\]
together a morphism $Q_1\rightarrow Q$ satisfying the following properties.
\begin{enumerate}
\item[(i)] The morphism $X_1\rightarrow X\in D_d(X)$.
\item[(ii)] The morphism $Y_1\rightarrow Y$ (resp.\ $X_1'\rightarrow X'$, resp.\ $Y_1'\rightarrow Y'$) factors through $Y_0$ (resp.\ $X_0'$, resp.\ $Y_0'$).
\end{enumerate}

We say that a distinguished square $Q'$ is a {\it refinement}\index{distinguished square!refinement} of $Q$ if there is a morphism $Q'\rightarrow Q$ that is an isomorphism between the lower right corners.
\end{df}

\begin{df}
\label{A.9.65}
Let $P$ be a cd-structure on a category $\cC$ with an initial object, and let $D_*$ be a density structure on $\cC$. Consider the following conditions.
\begin{enumerate}
\item[(i)] Any distinguished square in $P$ has a refinement that is reducing with respect to $D_*$.
\item[(ii)] $D_*$ is locally of finite dimension with respect to the topology associated with $P$.
\end{enumerate}
If $P$ satisfies (i) (resp.\ (i) and (ii)), we say that $P$ is {\it reducing} (\cite[Definition 2.22]{Vcdtop}) (resp.\ {\it quasi-bounded}) with respect to $D_*$.
\end{df}\index{density structure!reducing}\index{density structure!quasi-bounded}

\begin{rmk}
\label{A.9.66}
A cd-structure $P$ is bounded with respect to $D_*$ in the sense of \cite[Definition 2.22]{Vcdtop} if and only if $P$ is reducing with respect to $D_*$ and 
$D_*$ is locally of finite dimension with respect to the trivial topology. \index{density structure!bounded}
The reason why we consider quasi-bounded cd-structures is that $D_*^{div}$ is not bounded. 
Indeed, if $Y\rightarrow X$ is a surjective proper log \'etale monomorphism, then $Y\in D_d^{div}(X)$ for any $d\in \N$ by Lemma \ref{A.9.56}. 
Thus for any $d\in \N$, $D_d^{div}(X)$ can contain morphisms other than isomorphisms in general, so $D_*^{div}$ is not bounded.
\end{rmk}

\begin{lem}
\label{A.9.84}
Let $Y\rightarrow X$ be a log \'etale morphism in $lSm/k$.
Then $\dim Y\leq \dim X$.
\end{lem}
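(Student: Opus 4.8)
The plan is to reduce to the case where $f$ admits a chart of a particularly simple shape, using Kato's structure theorem, and then count dimensions directly. Since the Krull dimension of the underlying scheme is computed componentwise and $\underline{Y}$ is normal (Lemma \ref{A.9.18}), so that each connected component of $\underline{Y}$ is irreducible and maps into a single connected component of $\underline{X}$, I may work locally and reduce to bounding $\dim_{y}\underline{Y}$ for a fixed $y\in Y$ by $\dim\underline{X}$.

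First I would choose, locally around $x:=\underline{f}(y)$, an fs chart $P$ for $X$ such that the underlying morphism $\underline{X}\to\underline{\A_P}=\Spec k[P]$ is smooth; this exists because $X$ is log smooth over $k$ (the structure theorem for log smooth schemes over a field, \ref{KatoStrThm}, cf.\ \cite[Theorem IV.3.3.1]{Ogu} and \S\ref{subsec::logsmooth}). Next, applying the structure theorem for the log \'etale morphism $f$ — locally around $y$, extending the already chosen chart $P$ on the base — I obtain an fs chart $\theta\colon P\to Q$ for $f$ such that $\ker(\theta^{\rm gp})$ and $\operatorname{coker}(\theta^{\rm gp})$ are finite and the induced morphism $\underline{Y}\to\underline{X}\times_{\underline{\A_P}}\underline{\A_Q}$ is \'etale.

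Now the dimension count goes as follows. The morphism $\underline{X}\times_{\underline{\A_P}}\underline{\A_Q}\to\underline{\A_Q}$ is the base change of the smooth morphism $\underline{X}\to\underline{\A_P}$, hence smooth of the same relative dimension, say $d$ at the point in question, so $\dim_{y}(\underline{X}\times_{\underline{\A_P}}\underline{\A_Q})=d+\dim\underline{\A_Q}$. The affine toric scheme $\underline{\A_Q}=\Spec k[Q]$ is integral of dimension $\operatorname{rank}(Q^{\rm gp})$ (see \cite[\S I.3]{Ogu}), and likewise $\dim_{x}\underline{X}=d+\operatorname{rank}(P^{\rm gp})$. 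Because $\theta^{\rm gp}$ has finite kernel and cokernel, it becomes an isomorphism after $\otimes\Q$, so $\operatorname{rank}(P^{\rm gp})=\operatorname{rank}(Q^{\rm gp})$. Since \'etale morphisms preserve local dimension, $\dim_{y}\underline{Y}=d+\operatorname{rank}(Q^{\rm gp})=d+\operatorname{rank}(P^{\rm gp})=\dim_{x}\underline{X}\le\dim\underline{X}$, and taking the supremum over $y$ gives the claim. (In fact this shows $\dim_{y}\underline{Y}=\dim_{\underline{f}(y)}\underline{X}$, so $\dim Y=\dim X$ whenever $f$ is surjective.)

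The main obstacle is the bookkeeping with the structure theorems: one must check that Kato's theorem can be applied to the log \'etale morphism $f$ with the chart $P$ on $X$ already fixed so as to make $\underline{X}\to\underline{\A_P}$ smooth, so that the \'etale morphism it furnishes is genuinely a morphism over $\underline{\A_P}$ and hence $\underline{X}\times_{\underline{\A_P}}\underline{\A_Q}$ inherits smoothness over $\underline{\A_Q}$; the remaining ingredients (dimension of affine toric schemes, and the elementary fact that finite kernel and cokernel force equality of ranks of finitely generated abelian groups) are routine. An alternative route, which sidesteps this, is to note that for any $Z\in lSm/k$ the open subscheme $Z-\partial Z$ is dense — it is the preimage of the dense torus of $\underline{\A_P}$ under the smooth, hence open, chart morphism — that $f^{-1}(X-\partial X)=Y-\partial Y$ and $f$ is strict, hence classically \'etale, over $X-\partial X$ (one sees both facts by taking charts: over $X-\partial X$ one may take $P$ trivial, forcing $Q$ to be a finite group and the source log structure trivial, and conversely a trivial source log structure makes $\theta$ factor through the torus), and then conclude $\dim Y=\dim(Y-\partial Y)\le\dim(X-\partial X)\le\dim X$.
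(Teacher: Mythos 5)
Your proposal is correct, but your primary argument is not the one the paper uses: the paper's entire proof is the two-line density argument that you relegate to your final ``alternative route'' --- namely that $X-\partial X$ and $Y-\partial Y$ are dense (since $X,Y\in lSm/k$) and that $\dim(Y-\partial Y)\leq\dim(X-\partial X)$, the latter because $Y-\partial Y=f^{-1}(X-\partial X)$ is \'etale over $X-\partial X$. So in effect you have supplied both proofs. Your main chart-based count is sound: Lemma \ref{lem::Smchart} provides the smooth chart $\underline{X}\to\underline{\A_P}$, Theorem \ref{KatoStrThm} (applied with the chart on the target already fixed, which its statement permits) gives $\theta\colon P\to Q$ with $\theta^{\rm gp}\otimes\Q$ an isomorphism and $\underline{Y}\to\underline{X}\times_{\underline{\A_P}}\underline{\A_Q}$ \'etale, and the rank/dimension bookkeeping goes through because smooth morphisms are open onto the irreducible toric base. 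What the longer route buys you is the sharper local statement $\dim_y\underline{Y}=\dim_{f(y)}\underline{X}$ (hence $\dim Y=\dim X$ for $f$ surjective) and an explanation of exactly which hypothesis fails in the counterexample of Example \ref{A.9.85}, where $X$ is not log smooth over $k$; what the paper's route buys is brevity, at the cost of leaving implicit both the density of the interiors and the identification $Y-\partial Y=f^{-1}(X-\partial X)$, which you correctly justify via charts (over $X-\partial X$ one may take $P=0$, forcing $\overline{Q}=0$) together with the observation that for any morphism of log schemes a point over $\partial X$ must lie in $\partial Y$.
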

\begin{proof}
Since $X$ and $Y$ are in $lSm/k$, $X-\partial X$ and $Y-\partial Y$ are dense in $X$ and $Y$.
To conclude observe that $\dim (Y-\partial X)\leq \dim(X-\partial X)$.
\end{proof}

\begin{exm}
\label{A.9.85}
Let $\Sigma$ be the star subdivision of the dual of $\N^2$.
If
\[
X:=\{(0,0)\}\times_{\A^2}\A_{\N^2}
\text{ and }
Y=\{(0,0)\}\times_{\A^2}\A_\Sigma,
\]
then the naturally induced morphism $Y\rightarrow X$ is log \'etale, but $\dim Y=2$ and $\dim X=1$.
\end{exm}

\begin{prop}
\label{A.9.61}
The dividing density structures on $lSm/S$ and $lSch/S$ are locally of finite dimension with respect to the dividing Nisnevich topology.
\end{prop}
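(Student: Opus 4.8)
The plan is to verify Definition~\ref{A.9.62} directly: for every $X$ in $lSm/S$ (or $lSch/S$) I must exhibit an integer $n$ such that any $U=\amalg_{i}U_{i}\to X$ belonging to $D_{n+1}^{div}(X)$ — necessarily a finite disjoint union of log \'etale monomorphisms — has the property that $\{U_{i}\to X\}_{i}$ is a dividing Nisnevich covering sieve, and moreover equals the evident disjoint union of that family.

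First I would reduce to the case in which $X$ carries an fs chart. Since $X$ is noetherian, choose a finite Zariski cover $X=\bigcup_{\alpha}X_{\alpha}$ with each $X_{\alpha}$ admitting an fs chart $P_{\alpha}$. If the statement is known for each $X_{\alpha}$ with some $n_{\alpha}$, then $n:=\max_{\alpha}n_{\alpha}$ works for $X$: by Lemma~\ref{A.9.56} together with the nesting $D_{d+1}^{div}\subseteq D_{d}^{div}$ one has $U\times_{X}X_{\alpha}\in D_{n_{\alpha}+1}^{div}(X_{\alpha})$, so each $\{U_{i}\times_{X}X_{\alpha}\to X_{\alpha}\}_{i}$ is a $dNis$-covering sieve, and a family of morphisms that becomes covering after pullback along a Zariski cover is itself covering because $dNis$ refines the Zariski topology.

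So assume $X$ has an fs chart $P$ and set $n:=\dim\underline{X}+\dim\underline{\A_{P}}$; when $X$ is log smooth over a field one could instead take $n=\dim\underline{X}$ by Lemma~\ref{A.9.84}. Given $U=\amalg_{i}U_{i}\in D_{n+1}^{div}(X)$, Proposition~\ref{A.9.21} produces a subdivision of fans $\Sigma\to\Spec P$ such that, writing $V:=X\times_{\A_{P}}\A_{\Sigma}$, the projection $U\times_{X}V\to V$ is a finite disjoint union of open immersions. The morphism $V\to X$ is a surjective proper log \'etale monomorphism (a log modification), since $\A_{\Sigma}\to\A_{P}$ is one by Remark~\ref{A.9.68}(3) and Proposition~\ref{A.9.75}; in particular $V$ is again a finite-dimensional noetherian object of $lSm/S$ (resp.\ $lSch/S$). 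Because $V\to X$ is a log \'etale monomorphism over which $U$ becomes a disjoint union of open immersions, the definition of $D_{n+1}^{div}(X)$ yields $U\times_{X}V\in D_{n+1}(V)$. Now I estimate $\dim\underline{V}$: covering $\underline{V}$ by opens of the form $\underline{X}\times_{\underline{\A_{P}}}\underline{\A_{Q}}$ with $Q^{\rm gp}\cong P^{\rm gp}$ — equivalently, using that the fibres of $\underline{V}\to\underline{X}$ are base-changed from those of $\underline{\A_{\Sigma}}\to\underline{\A_{P}}$ — gives $\dim\underline{V}\le\dim\underline{X}+\dim\underline{\A_{P}}=n$, a bound uniform in $\Sigma$. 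Hence $V$ admits no increasing sequence of $n+2$ distinct points, so by the very definition of $D_{n+1}(V)$ (Definition~\ref{A.9.43}) the open immersion $U\times_{X}V\to V$ must be surjective; that is, $\{U_{i}\times_{X}V\to V\}_{i}$ is a Zariski cover of $V$. Composing with the dividing cover $V\to X$ exhibits $\{U_{i}\times_{X}V\to X\}_{i}$ as a $dZar$-covering sieve of $X$, and since each $U_{i}\times_{X}V\to X$ factors through $U_{i}\to X$, the family $\{U_{i}\to X\}_{i}$ is a $dNis$-covering sieve, which completes the argument.

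The main obstacle — and the reason one cannot simply assert that the density dimension equals $\dim\underline{X}$ — is the non-boundedness of $D_{*}^{div}$ stressed in Remark~\ref{A.9.66}: the log modification $V\to X$ needed to turn $U$ into open immersions can have dimension strictly larger than $\dim\underline{X}$ (cf.\ Example~\ref{A.9.85}), so the naive $\dim\underline X$-bound fails. What rescues the proof is that, once the chart $P$ of $X$ is fixed, $\dim\underline{V}$ is bounded by $\dim\underline{X}+\dim\underline{\A_{P}}$ independently of the chosen subdivision $\Sigma$ — precisely the uniformity that Definition~\ref{A.9.62} demands. I expect the only genuinely delicate bookkeeping to be this dimension estimate for $\underline{X\times_{\A_P}\A_\Sigma}$ (taking saturation into account, which is finite and hence harmless), everything else being a formal assembly of the lemmas already established.
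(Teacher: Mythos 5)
Your proposal is correct and follows essentially the same route as the paper's proof: reduce Zariski-locally to an fs chart $P$, take the bound $\dim\underline{X}+\operatorname{rank}P^{\gp}$, apply Proposition~\ref{A.9.21} to turn $U$ into a disjoint union of open immersions over the log modification $X\times_{\A_P}\A_\Sigma$, bound the dimension of that modification uniformly in $\Sigma$, and invoke the identification of the standard density dimension with the Krull dimension to get surjectivity, hence a dividing Zariski refinement. The only cosmetic difference is that the paper packages the uniform bound as $r+n+1$ with $r$ the global maximum of $\operatorname{rank}\overline{\cM}_{X,x}^{\gp}$, whereas you take the maximum of the chart-wise bounds over a finite Zariski cover; these agree once the charts are chosen exact.
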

\begin{proof}
Suppose $X$ is an $n$-dimensional noetherian fs log scheme.
Let $f:U\rightarrow X$ be any finite disjoint union of log \'etale monomorphisms to $X$.
We need to show that there exists an integer $m\geq 0$ such that $U\in D_{m}^{div}(X)$ implies that $f$ is a dividing Nisnevich cover.
Let $r$ be the maximum rank of $\overline{\cM}_{X,x}^{\rm gp}$,
where $x$ ranges over all points of $X$.
Since $X$ is an fs log scheme, the rank of $\overline{\cM}_{X,x}^{\rm gp}$ is a locally finite function on $X$.
Thus $r$ is a finite number since $X$ is quasi-compact.
In order to conclude, 
it suffices to show that $U\in D_{r+d+1}^{div}(X)$ implies that $f$ is a dividing Zariski cover.
The problem is a Zariski local on $X$, 
so that we may assume $X$ has an fs chart $P$ that is exact at a point of $X$,
see Remark \ref{A.9.10}(1). 
By definition we have $\rank P^{\rm gp}\leq r$.
\vspace{0.1in}

Proposition \ref{A.9.21} shows there exists a subdivision of fans $\Sigma\rightarrow \Spec P$ such that the pullback
\[
f':U\times_{\A_P}\A_\Sigma\rightarrow X\times_{\A_P}\A_\Sigma
\] 
is a finite disjoint union of open immersions to $X\times_{\A_P}\A_\Sigma$. 
For every point $x\in \underline{\A_P}$, the fiber of $x$ under $\underline{\A_\Sigma}\rightarrow \underline{\A_P}$ has dimension $\leq r$.
Thus we get that 
\[
\dim (X\times_{\A_P}\A_\Sigma)\leq r+n.
\]
It follows that $f'$ is surjective because 
\[
U\times_{\A_P}\A_\Sigma\in D_{r+n+1}(X\times_{\A_P}\A_\Sigma). 
\]
In \cite[p.\ 1401]{Vunst-nis-cdh}, it is shown that the dimension with respect to the standard density structure is equal to the Krull dimension.
Thus $f'$ is a Zariski cover.
The projection $X\times_{\A_P}\A_\Sigma\rightarrow X$ is a surjective proper log \'etale monomorphism. 
In conclusion, 
$f$ admits a refinement by a dividing Zariski cover, 
i.e., 
$f$ is a dividing Zariski cover.
\end{proof}

\begin{rmk}
The above proof shows that the dimension of $X$ with respect to $D_*^{div}$ is less than or equal to the maximum possible dimension of $X\times_{\A_P}\A_\Sigma$.
If $X$ is log smooth over $k$,
we see that the dimension of $X$ with respect to $D_*^{div}$ is $\leq n$ since $\dim X\times_{\A_P}\A_\Sigma\leq n$ due to Lemma \ref{A.9.84}.
\end{rmk}

\begin{prop}
\label{A.9.50}
The strict Nisnevich cd-structures on $lSm/S$ and $lSch/S$ are quasi-bounded with respect to the dividing density structure.
\end{prop}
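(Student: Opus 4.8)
The plan is to verify the two clauses of Definition \ref{A.9.65} for $P$ the strict Nisnevich cd-structure and $D_* = D_*^{div}$: (i) every strict Nisnevich distinguished square admits a refinement that is reducing with respect to $D_*^{div}$, and (ii) $D_*^{div}$ is locally of finite dimension with respect to the associated topology. Clause (ii) holds by Proposition \ref{A.9.61}, so the work is in (i); in fact I would show that every strict Nisnevich distinguished square is already reducing, taking the refinement to be the identity.

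The guiding principle is that the strict Nisnevich cd-structure is, combinatorially, nothing but the scheme-theoretic Nisnevich cd-structure. Indeed, as recorded in the Example after Definition \ref{A.5.14}, a strict Nisnevich distinguished square $Q$ as in \eqref{A.5.14.1} (with $f$ strict \'etale and $g$ an open immersion) has underlying square $\underline Q$ a Nisnevich distinguished square of schemes, and $Q$ is the strict base change of $\underline Q$ along the log structure of $X$; conversely, every Nisnevich distinguished square of schemes lifts this way. By Voevodsky's work (\cite[\S 2]{Vunst-nis-cdh}, in particular the construction in the proof of \cite[Lemma 2.9]{Vunst-nis-cdh}) the Nisnevich cd-structure on finite-dimensional noetherian schemes is reducing with respect to the \emph{standard} density structure: given the square and test data $\underline{Y_0}\in D_d(\underline Y)$, $\underline{X_0'}\in D_d(\underline{X'})$, $\underline{Y_0'}\in D_{d-1}(\underline{Y'})$ with $d\geq 1$, one builds $\underline{Q_1}\to\underline Q$ by shrinking $X$ (removing the closure of the image under $f$ of the part of $Y$ outside $\underline{Y_0}$, intersected with $\underline{X_0'}$) and cutting down the corner $\underline{X_1'}$ using $\underline{Y_0'}$, the lower density $d-1$ on the corner being exactly what this last step costs.

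My plan for (i) is to rerun this construction for $Q$ itself, with two substitutions. First, work Zariski-locally on $X$ — legitimate by Lemma \ref{A.9.55}(1) — so that $X$ carries an fs chart. Second, the test data here, $Y_0\in D_d^{div}(Y)$, $X_0'\in D_d^{div}(X')$, $Y_0'\in D_{d-1}^{div}(Y')$, are only finite disjoint unions of log \'etale monomorphisms, so at each point where Voevodsky's argument passes from a disjoint union of open immersions to an element of the standard density structure I would instead invoke Lemma \ref{A.9.49}: applied to the strict \'etale leg $f\colon Y\to X$ with the datum $Y_0$ — and likewise to $g\colon X'\to X$ with $X_0'$ and to $Y'\to X$ with $Y_0'$ — it produces $D_*^{div}$-neighborhoods of $X$ whose pullbacks factor through $Y_0$, $X_0'$, $Y_0'$; note that Lemma \ref{A.9.49} already internalizes subdividing along a fan (Proposition \ref{A.9.21}) to turn the log \'etale monomorphisms into honest open immersions, together with the descent step Lemma \ref{A.9.58}. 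One then assembles $X_1\to X$ and $X_1'\to X'$ exactly as in Voevodsky's construction, using the intersection property Lemma \ref{A.9.55}(2) and the transitivity Lemma \ref{A.9.52} — the $D_*^{div}$-analogs of Lemma \ref{A.9.51}(3) and of Definition \ref{A.9.42}(iv) — in place of their scheme-theoretic counterparts. The outcome is a strict Nisnevich distinguished square $Q_1$ (strict Nisnevich distinguished squares being stable under the relevant base change) with a morphism $Q_1\to Q$, with $X_1\to X\in D_d^{div}(X)$, and with $Y_1\to Y$, $X_1'\to X'$, $Y_1'\to Y'$ factoring through $Y_0$, $X_0'$, $Y_0'$; the asymmetric bookkeeping on the corner $Y'$ transcribes directly because $D_d^{div}\subseteq D_{d-1}^{div}$ and the formal calculus of $D_*^{div}$ mirrors that of $D_*$. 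The case of $lSm/S$ is handled identically, every construction above preserving log smoothness.

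The main obstacle is this faithful transcription: one must match, step by step, each use Voevodsky makes of a property of the standard density structure — transitivity, stability under open restriction and under intersection, and the explicit shrinking of $X$ and of the corner — with the corresponding $D_*^{div}$-statement supplied by Lemmas \ref{A.9.46}--\ref{A.9.52}, and verify that the square one descends to over $X$ is genuinely strict Nisnevich distinguished. Beyond Proposition \ref{A.9.21}, which already enters inside Lemma \ref{A.9.49}, no new geometric input is required.
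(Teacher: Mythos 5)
Your overall architecture coincides with the paper's: Proposition \ref{A.9.61} disposes of local finite-dimensionality, Lemma \ref{A.9.55}(1) localizes to the case where $X$ carries an fs chart, and Lemma \ref{A.9.49} is applied to the legs $f\colon Y\to X$ and $g\colon X'\to X$ to produce $V,W\in D_d^{div}(X)$ whose pullbacks factor through $Y_0$ and $X_0'$; one then replaces $X$ by $V\times_X W$ (Lemma \ref{A.9.55}(2), Lemma \ref{A.9.52}) and assumes $Y=Y_0$, $X'=X_0'$, exactly as you indicate.

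The one step that does not work as written is the third application of Lemma \ref{A.9.49}, to $Y_0'$ along $Y'\to X$. That lemma preserves the density index, so from $Y_0'\in D_{d-1}^{div}(Y')$ it can only return an element of $D_{d-1}^{div}(X)$; intersecting this with the two $D_d^{div}$-neighborhoods already produced lands you in $D_{d-1}^{div}(X)$, whereas Definition \ref{A.9.64}(i) demands $X_1\to X\in D_d^{div}(X)$. The containment $D_d^{div}\subseteq D_{d-1}^{div}$ that you invoke runs in the wrong direction to repair this: the content of the reducing condition is precisely that one must \emph{gain} a dimension on the corner, and that gain cannot be achieved by pulling $Y_0'$ back to a dense open of $X$. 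The paper instead treats $Y_0'$ asymmetrically: after the reduction above it uses Proposition \ref{A.9.21} and Lemma \ref{A.9.48} to replace $Y_0'$ by a finite disjoint union of open subschemes of $Y'$, sets $X_1:=X-((X-X')\cap cl(gf'(Y'-Y_0')))$ as in the proof of \cite[Proposition 2.10]{Vunst-nis-cdh} --- the extra dimension comes from the fact that the removed locus consists of proper specializations, lying in $X-X'$, of images of points of $Y'-Y_0'$ --- and only then verifies $X_1\in D_d^{div}(X)$ via Lemma \ref{A.9.58}, using properness of $V\to X$ to see that the same formula computes $X_1\times_X V$ for every surjective proper log \'etale monomorphism $V\to X$. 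With this substitution for the corner step, your argument becomes the paper's proof.
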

\begin{proof}
By Proposition \ref{A.9.61} it remains to show the strict Nisnevich cd-structure is reducing with respect to the dividing density structure. 
Let
\[
Q
=
\begin{tikzcd}
Y'\arrow[r,"g'"]\arrow[d,"f'"']&Y\arrow[d,"f"]\\
X'\arrow[r,"g"]&X
\end{tikzcd}\]
be a strict Nisnevich distinguished square of finite dimensional noetherian fs log schemes, 
where $f$ is strict \'etale, $g$ is an open immersion, 
and $f$ induces an isomorphism 
\[
f^{-1}(\underline{X}-g(\underline{X'}))\stackrel{\sim}\rightarrow \underline{X}-g(\underline{X'})
\] 
with respect to the reduced scheme structures. 
We will show that $Q$ is reducing.
\vspace{0.1in}

For elements $X_0'\in D_d^{div}(X')$, $Y_0\in D_d^{div}(Y)$, and $Y_0'\in D_{d-1}^{div}(Y')$ we need to construct $X_1\in D_d^{div}(X)$ and a strict Nisnevich distinguished square
\[
Q_1
=
\begin{tikzcd}
Y_1'\arrow[r]\arrow[d]&Y_1\arrow[d]\\
X_1'\arrow[r]&X_1
\end{tikzcd}
\]
together with a morphism $Q_1\rightarrow Q$ that coincides with $X_1\rightarrow X$ on the lower right corner such that 
$Y_1\rightarrow Y$ (resp.\ $X_1'\rightarrow X'$, resp.\ $Y_1'\rightarrow Y'$) factors through $Y_0$ (resp.\ $X_0$, resp.\ $Y_0$). 
By appealing to Lemma \ref{A.9.55}(1), it suffices to construct $X_1$ Zariski locally on $X$. Hence we may assume that $X$ has an fs chart $P$.
\vspace{0.1in}

Owing to Lemma \ref{A.9.49} we can choose $V,W\in D_d^{div}(X)$ such that the projections 
\[
V\times_X Y\rightarrow Y \text{ and } W\times_X X'\rightarrow X' 
\]
factors through $Y_0$ and $X_0'$, respectively.
Take $X_0:=V\times_X W$, which is again in $D_d^{div}(X)$ by Lemma \ref{A.9.55}(2).
Then the projections
\[
X_0\times_X Y\rightarrow Y \text{ and } X_0\times_X X'\rightarrow X' 
\]
again factors through $Y_0$ and $X_0'$, respectively.
It suffices to construct $X_1\in D_d^{div}(X_0)$ with the required property since then $X_1\in D_d^{div}(X)$ by Lemma \ref{A.9.52}. 
Hence we may replace $X$ by $X_0$ and assume that $X'=X_0'$ and $Y=Y_0$.
\vspace{0.1in}

By Proposition \ref{A.9.21}, there exists a subdivision of fans $\Sigma\rightarrow \Spec P$ such that the pullback 
\[
Y_0'\times_{\A_P}\A_\Sigma\rightarrow Y'\times_{\A_P}\A_\Sigma
\] 
is a finite disjoint union of open immersions to $Y'\times_{\A_P}\A_\Sigma$. 
By Lemma \ref{A.9.48} we have 
\[
Y_0'\times_{\A_P}\A_\Sigma\in D_{d-1}^{div}(Y'\times_{\A_P}\A_\Sigma).
\]
This reduces the claim to constructing $X_1\in D_d^{div}(X\times_{\A_P}\A_\Sigma)$ with the required property since then $X_1\in D_d^{div}(X)$ again by Lemma \ref{A.9.48}. 
To that end, we may replace $X$ by $X\times_{\A_P}\A_\Sigma$, and hence assume that $Y_0'$ is a finite disjoint union of open subschemes of $Y_0$.
\vspace{0.1in}

The construction in the proof of \cite[Proposition 2.10]{Vunst-nis-cdh} shows that
\[
X_1:=X-((X-X')\cap cl(gf'(Y'-Y_0')))
\in 
D_d(X),
\]
and the induced square
\[
\begin{tikzcd}
Y_0'\arrow[r]\arrow[d]&g'(Y_0')\arrow[d]\\
X'\arrow[r]&X_1
\end{tikzcd}
\]
is a strict Nisnevich distinguished square.
\vspace{0.1in}

Now let $V\rightarrow X$ be a surjective proper log \'etale monomorphism. 
Then 
\[
Y_0'\times_{X}V\in D_{d-1}(V), 
\]
and
\[
X_1\times_X V=V-(V-X'\times_X V)\cap cl(g_Vf_V'(Y'\times_X V-Y_0'\times_X V)),
\]
where $f_v':Y'\times_X V\rightarrow X'\times_X V$ and $g_v:X'\times_X V\rightarrow V$ are the pullbacks since $V$ is proper over $X$. 
Thus $X_1\times_X V\in D_d(X)$ as in the above paragraph, and therefore $X_1\in D_d^{div}(X)$ by Lemma \ref{A.9.58}.
\end{proof}

\begin{prop}
\label{A.9.59}
The dividing cd-structures on $lSm/S$ and $lSch/S$ are quasi-bounded with respect to the dividing density structure.
\end{prop}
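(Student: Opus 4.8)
The plan is to check the two clauses of Definition~\ref{A.9.65} for the dividing cd-structure on $lSm/S$ (and, identically, on $lSch/S$) with respect to $D_*^{div}$: that $D_*^{div}$ is locally of finite dimension with respect to the associated topology, and that every dividing distinguished square admits a refinement that is reducing in the sense of Definition~\ref{A.9.64}. The first clause is exactly Proposition~\ref{A.9.61}, so the whole content is the reducing property. In contrast to the strict Nisnevich case treated in Proposition~\ref{A.9.50}, I expect this to be essentially formal: a dividing distinguished square has empty left column and its vertical arrow is a \emph{monomorphism}, so no ``closure''-type construction is needed.

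Concretely, let
\[
Q
=
\begin{tikzcd}
\emptyset\arrow[d]\arrow[r]&Y\arrow[d,"f"]\\
\emptyset\arrow[r]&X
\end{tikzcd}
\]
be a dividing distinguished square, so that $f$ is a surjective proper log \'etale monomorphism. Fix $d\in\N^+$. The prescribed data $X_0'\in D_d^{div}(\emptyset)$ and $Y_0'\in D_{d-1}^{div}(\emptyset)$ are forced to be $\emptyset$, so only a morphism $g\colon Y_0\to Y$ in $D_d^{div}(Y)$ is given. I would put $X_1:=Y_0$, with structure morphism $f\circ g\colon Y_0\to X$. By Lemma~\ref{A.9.54} we have $f\in D_d^{div}(X)$, and $g\in D_d^{div}(Y)$ by hypothesis; hence Lemma~\ref{A.9.52} yields $f\circ g\in D_d^{div}(X)$. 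This is condition~(i) of Definition~\ref{A.9.64}.

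Next I would take $Q_1$ to be the pullback of $Q$ along $X_1\to X$, together with the canonical morphism $Q_1\to Q$. Since $f$ is a monomorphism, the base-change projection $Y_1:=Y\times_X X_1\to X_1$ is an isomorphism, and in any event it is a surjective proper log \'etale monomorphism by Proposition~\ref{A.9.22}; thus $Q_1$ is again a dividing distinguished square, with empty left column. The upper-right leg of $Q_1\to Q$ is the other projection $Y_1\to Y$, which under the identification $Y_1\cong X_1=Y_0$ is just $g$, so it factors through $Y_0$ tautologically, while the empty left-column legs factor through $X_0'$ and $Y_0'$ trivially. This gives condition~(ii) of Definition~\ref{A.9.64}. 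Therefore every dividing distinguished square is reducing, which establishes the first clause of Definition~\ref{A.9.65} with the refinement taken to be the square itself; combined with Proposition~\ref{A.9.61}, this proves the statement.

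The only step needing any attention is the permanence $f\circ g\in D_d^{div}(X)$, which rests on Lemma~\ref{A.9.54} (surjective proper log \'etale monomorphisms lie in $D_d^{div}$ for every $d$) together with the composition Lemma~\ref{A.9.52}; everything else is formal. I therefore do not foresee a genuine obstacle here — the essential simplification is recognizing that, $f$ being a monomorphism, pulling $Q$ back along $X_1\to X$ produces a square whose vertical arrow is an isomorphism, so that $X_1=Y_0$ itself can serve as the dense base of the reduced square.
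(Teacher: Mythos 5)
Your proof is correct and follows essentially the same route as the paper: the paper also takes $X_1:=Y_0$, notes $Y_0\in D_d^{div}(X)$ by the composition lemma, and uses that $f$ is a monomorphism to identify $Y_0\times_X Y\cong Y_0$, so that $Q_1$ becomes the (identity) square on $Y_0$. Your version is marginally more explicit in citing Lemma~\ref{A.9.54} for $f\in D_d^{div}(X)$ and Proposition~\ref{A.9.61} for the finite-dimension clause, both of which the paper leaves implicit.
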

\begin{proof}
Let $f:Y\rightarrow X$ be a surjective proper log \'etale monomorphism, and let $Y_0\in D_d^{div}(Y)$. 
Then $Y_0\in D_d^{div}(X)$ by Lemma \ref{A.9.52}, and 
\[
Y_0\times_X Y\cong Y_0\times_Y\times (Y\times_X Y)\cong Y_0
\] 
since $f$ is a monomorphism.
Then
\[
Q_1=\begin{tikzcd}
\emptyset\arrow[d]\arrow[r]&
Y_0\arrow[d,"{\rm id}"]\arrow[d]
\\
\emptyset \arrow[r]&Y_0
\end{tikzcd}
\]
gives the desired distinguished square in Definition \ref{A.9.64}.
\end{proof}

\begin{prop}
\label{A.9.60}
The strict Nisnevich, dividing, and dividing Nisnevich cd-structures on $lSm/S$ and $lSch/S$ are complete, 
regular, and quasi-bounded with respect to the dividing density structure.
\end{prop}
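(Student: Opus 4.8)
The plan is to assemble what is already available and supply the two properties not yet on record. By Propositions~\ref{A.9.50} and~\ref{A.9.59} the strict Nisnevich and dividing cd-structures are reducing with respect to $D_{\ast}^{div}$, and by Proposition~\ref{A.9.61} the dividing density structure is locally of finite dimension with respect to the dividing Nisnevich topology, hence also with respect to the coarser strict Nisnevich and dividing topologies; so the strict Nisnevich and dividing cd-structures are already quasi-bounded, and the only genuinely new work is (a) completeness and regularity of all three cd-structures, and (b) the reducing property for the dividing Nisnevich cd-structure.

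For completeness I would show that each of the strict Nisnevich and dividing cd-structures is stable under base change, namely that the pullback of a distinguished square along any morphism to its lower-right corner is again a distinguished square, and then invoke Voevodsky's completeness criterion \cite[Lemma~2.5]{Vcdtop}. For dividing squares this is immediate because surjective proper log \'etale monomorphisms are preserved under base change. For strict Nisnevich squares one uses that strict \'etale morphisms, open immersions, and the reduced-fibre condition of Definition~\ref{A.5.14}(ii) are all stable under base change, together with the observation that the relevant fibre products of log schemes coincide with fibre products of the underlying schemes since the morphisms in the square are strict. A union of base-change stable cd-structures is again base-change stable, so the dividing Nisnevich cd-structure is complete as well.

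For regularity I would verify the two conditions of the definition. The first holds for all three cd-structures since the distinguished squares are cartesian by construction and the map $g$ is an open immersion, hence a monomorphism. For the second condition: for strict Nisnevich squares it reduces to the regularity of the Nisnevich cd-structure on schemes \cite[\S 2]{Vunst-nis-cdh}, using that a strict Nisnevich distinguished square is the pullback of a Nisnevich distinguished square of underlying schemes and that sheafification commutes with the finite limits and coproducts occurring in the condition; for dividing squares one has $Y'=X'=\emptyset$, so the condition asks that the diagonal $L(Y)\to L(Y)\times_{L(X)}L(Y)$ be an epimorphism, which holds because $f$ is a monomorphism, whence $Y\times_X Y\cong Y$ and the diagonal is an isomorphism. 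For the dividing Nisnevich cd-structure, the condition for a square coming from the strict Nisnevich or the dividing summand remains valid after refining the topology to $dNis$, since the further sheafification functor is exact and cocontinuous and hence preserves the relevant epimorphisms.

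Finally, for (b), a distinguished square $Q$ of the dividing Nisnevich cd-structure lies in the strict Nisnevich summand or in the dividing summand; the reducing refinement of $Q$ produced by Proposition~\ref{A.9.50} or~\ref{A.9.59}, together with all auxiliary distinguished squares appearing in the reducing property, stays inside that summand and hence inside the union, so $Q$ admits a refinement that is reducing with respect to $D_{\ast}^{div}$ as a square of the dividing Nisnevich cd-structure; combined with Proposition~\ref{A.9.61} this gives quasi-boundedness. I expect the main obstacle to be the bookkeeping for strict Nisnevich squares in (a) — in particular, verifying that the reduced-fibre condition of Definition~\ref{A.5.14}(ii) survives base change and that the strict Nisnevich site is close enough to the Nisnevich site of the underlying scheme for the scheme-theoretic regularity result of \cite{Vunst-nis-cdh} to transfer; everything else is formal once Propositions~\ref{A.9.50},~\ref{A.9.59}, and~\ref{A.9.61} are in hand.
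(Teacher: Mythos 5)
Your overall route is the same as the paper's: completeness via base-change stability of distinguished squares and \cite[Lemma 2.5]{Vcdtop}, regularity of the strict Nisnevich summand by reduction to the Nisnevich cd-structure on underlying schemes and of the dividing summand via the monomorphism property of log modifications, and quasi-boundedness of the union from quasi-boundedness of the two summands (your hand-made arguments for the union are just \cite[Lemmas 2.12, 2.24]{Vcdtop} spelled out, which is what the paper cites).

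There is, however, one step where your logic runs backwards. You claim that since $D_{\ast}^{div}$ is locally of finite dimension with respect to the dividing Nisnevich topology (Proposition~\ref{A.9.61}), it is ``also'' locally of finite dimension with respect to the coarser strict Nisnevich and dividing topologies. Definition~\ref{A.9.62} asks that every $f\in D_{n+1}(X)$ decompose as a finite \emph{$t$-covering} sieve; a finer topology has more covering sieves, so the condition gets \emph{easier} as $t$ gets finer. Hence local finite dimension with respect to a coarse topology implies it for a finer one, not the other way around, and your inference from $dNis$ down to $sNis$ and $div$ is invalid as stated. The slip is harmless for the proposition at hand because Propositions~\ref{A.9.50} and~\ref{A.9.59} already assert quasi-boundedness (not merely the reducing property) of the strict Nisnevich and dividing cd-structures, so you should cite them at full strength --- as the paper does --- rather than decompose them into ``reducing'' plus a finite-dimensionality claim you then try to supply by the wrong implication. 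With that repair, the only content you genuinely need to add beyond the citations is exactly what you identified: completeness, regularity, and the stability of reducibility and local finite dimension under taking the union of the two cd-structures, all of which your arguments handle correctly.
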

\begin{proof}
These cd-structures are complete by \cite[Lemma 2.5]{Vcdtop}. 
By Propositions \ref{A.9.50} and \ref{A.9.59}, the strict Nisnevich cd-structure and the dividing cd-structure are quasi-bounded with respect to the dividing density structure.
The union of two cd-structures that are quasi-bounded with respect to the same density structure is again a quasi-bounded cd-structure as in \cite[Lemma 2.24]{Vcdtop}.
Thus the dividing Nisnevich cd-structure is also quasi-bounded with respect to the dividing density structure.
\vspace{0.1in}

A square is a strict Nisnevich distinguished square if and only if every morphism in the square is strict and the square of underlying schemes is a Nisnevich distinguished square. 
Therefore the strict Nisnevich cd-structure is regular since the Nisnevich cd-structure is regular by \cite[Lemma 2.14]{Vunst-nis-cdh}. 
The dividing cd-structure is regular since if $f:Y\rightarrow X$ is a log modification, then $f$ is a monomorphism, so the diagonal morphism $Y\rightarrow Y\times_X Y$ is an isomorphism.
Thus the dividing Nisnevich cd-structure is regular by \cite[Lemma 2.12]{Vcdtop}
\end{proof}

\subsection{Flasque simplicial presheaves}
In this section, we generalize Voevodsky's \cite[Lemma 3.5]{Vcdtop} to complete, quasi-bounded, and regular cd-structures.
As an application, we discuss a bounded descent structure on sheaves of abelian groups. \index{descent structure!bounded}

\begin{df}
\label{A.10.1} \index{squareable morphism}
A morphism $f:Y\rightarrow X$ in a category $\cC$ is \emph{squareable}\footnote{Some authors opted for the French expression \emph{quarrable} from SGA 4.} if for every morphism $X'\rightarrow X$ the fiber product $Y\times_X X'$ is representable.
If $f$ is squareable, then let $\check{C}(Y/X)$ (or $\check{C}(Y)$ when no confusion seems likely to arise) denote the corresponding \v{C}ech nerve\index{Cech nerve@\v{C}ech nerve} 
\[
\check{C}(Y/X):=\left(\cdots \substack{\longrightarrow\\[-1em] \longrightarrow \\[-1em] \longrightarrow} Y\times_X Y\rightrightarrows Y\right).
\]
Let $\check{C}(f)$ denote the morphism $\check{C}(Y)\rightarrow X$ induced by the structure morphisms $Y\times_X \cdots \times_X Y\rightarrow Y$.
\end{df}

\begin{df}\index{squarable cd-structure}
A cd-structure $P$ in a category $\cC$ with an initial object is \emph{squareable} if the following conditions are satisfied.
\begin{enumerate}
\item[(i)] Every morphism to the initial object is an isomorphism.
\item[(ii)] For every distinguished square $Q$ of the form \eqref{A.5.14.1} and morphism $V\rightarrow X$, the square $Q\times_X V$ is representable and belongs to $P$.
\end{enumerate} 
\end{df}

Note that any squareable cd-structure is complete by \cite[Lemma 2.5]{Vcdtop}

\begin{df}
\label{A.10.8}
Let $P$ be a cd-structure on a category $\cC$ with an initial object. 
A commutative square
\begin{equation}
\label{A.10.8.1}
\mathscr{Q}
=
\begin{tikzcd}
\mathscr{Y}'\arrow[d]\arrow[r]&\mathscr{Y}\arrow[d]\\
\mathscr{X}'\arrow[r]&\mathscr{X}
\end{tikzcd}\end{equation}
of simplicial objects of $\cC$ is called a {\it level-wise distinguished square}\index{distinguished square!level-wise} if the induced square
\[\mathscr{Q}_n
=
\begin{tikzcd}
\mathscr{Y}_n'\arrow[d]\arrow[r]&\mathscr{Y}_n\arrow[d]\\
\mathscr{X}_n'\arrow[r]&\mathscr{X}_n
\end{tikzcd}\]
is a distinguished square for any $n\in \N$.
\end{df}

In the following, we consider a variant of the notion of a B.G.\ functor introduced in \cite[Definition 3.1]{Vcdtop}
Here B.G.\ is shorthand for Brown-Gersten.

\begin{df}
\label{A.10.2}
Let $P$ be a squareable cd-structure on a category $\cC$ with an initial object.\index{extended B.G. functor}
An {\it extended B.G.\ functor} on $\cC$ with respect to $P$ is a family of contravariant functors $\{T_q\}_{q\in \Z}$ 
from the category of simplicial objects in $\cC$ to the category of pointed sets together with pointed maps 
\[
\partial_{\mathscr{Q}}:T_{q+1}(\mathscr{Y}')\rightarrow T_q(\mathscr{X}) 
\] 
for any level-wise distinguished square $\mathscr{Q}$ of the form \eqref{A.10.8.1} subject to the following conditions.
\begin{enumerate}
\item[(i)] The pointed maps $\partial_{\mathscr{Q}}$ are natural with respect to the morphisms $\mathscr{Q}'\rightarrow \mathscr{Q}$ of level-wise distinguished squares.
\item[(ii)] For any $q\in \Z$ and any level-wise distinguished square $\mathscr{Q}$ of the form \eqref{A.10.8.1}, the induced sequence of pointed sets
\[
T_{q+1}(\mathscr{Y}')\stackrel{\partial_{\mathscr{Q}}}
\rightarrow 
T_q(\mathscr{X})\rightarrow T_q(\mathscr{Y})\times T_q(\mathscr{X}')
\]
is exact.
\item[(iii)] For any $t_P$-covering sieve $\{U_i\rightarrow X\}_{i\in I}$, the induced morphism 
\[
T_q(X)\rightarrow T_q(\check{C}(U))
\] 
is an isomorphism, where $U=\amalg_{i\in I}U_i$.
\end{enumerate}
In \cite[Definition 3.1]{Vcdtop} only the conditions (i) and (ii) are needed in the definition of B.G. functor.
The reason why we introduce the condition (iii) is to take advantage of extending $\{T_q\}_{q\in \Z}$ to simplicial objects.
\end{df}

For any morphism $f:Y\rightarrow X$, let $f^*:T_q(X)\rightarrow T_q(Y)$ denote the induced pullback morphism. 
Let $D$ be a density structure on $\cC$.
For $d\in \N$ and $q\in \Z$, we consider the following condition. \index[notation]{Cov @ ${\rm Cov}_{d,q}$}
\begin{enumerate}
\item[(${\rm Cov}_{d,q}$)] For any $X\in \cC$ and $a\in T_q(X)$, there exists $u:U\rightarrow X$ in $D_d(Y)$ such that the pullback of $a$ to $T_q(\check{C}(U))$ is trivial.
\end{enumerate}

For every $q\in \Z$, let $a_{t_P}T_q$ be the $t_P$-sheaf on $\cC$ associated with the presheaf
\[
(X\in \cC)\mapsto T_q(X).
\]

\begin{lem}
\label{A.10.3}
Let $P$ be a squareable and regular cd-structure reducing with respect to a density structure $D$ on a category $\cC$ with an initial object, let $\{T_q\}_{q\in \Z}$ be an extended B.G.\ functor, and let $d\in \N$ and $q\in \Z$.  
If $a_{t_P}T_{q}$ is trivial, then $({\rm Cov}_{d,q+1})$ implies $({\rm Cov}_{d+1,q})$.
\end{lem}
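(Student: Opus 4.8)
The plan is to transcribe Voevodsky's argument for bounded cd-structures \cite[\S 3]{Vcdtop} into the present quasi-bounded (reducing) framework, the extra bookkeeping being to carry \v{C}ech nerves through the argument via the level-wise boundary operators built into the extended B.G.\ formalism.

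Fix $X\in\cC$ and $a\in T_q(X)$; we must produce $u\colon U\to X$ in $D_{d+1}(X)$ whose \v{C}ech nerve trivializes $a$, i.e.\ so that the image of $a$ in $T_q(\check{C}(U))$ is trivial. Since $a_{t_P}T_q$ is trivial, $a$ restricts to the base point on some $t_P$-covering of $X$, and because $P$ is squareable, hence complete (\cite[Lemma 2.5]{Vcdtop}), this covering is refined by a simple cover $\{U_i\to X\}_i$; thus $a$ restricts to the base point along every $U_i\to X$. I would then induct on the way this simple cover is generated from the two clauses defining $S_P$. If the simple cover consists of a single isomorphism, then $a$ is already trivial and $U=X$ works. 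Otherwise it arises from a distinguished square $Q$ of the form \eqref{A.8.13.1} together with simple covers of $Y$ and of $X'$, strictly simpler in the generation order, which trivialize $a|_Y$ and $a|_{X'}$. The inductive hypothesis applied to $Y$ and $X'$ yields $Y_0\in D_{d+1}(Y)$ and $X_0'\in D_{d+1}(X')$ with $a|_{\check{C}(Y_0/Y)}$ and $a|_{\check{C}(X_0'/X')}$ trivial; restricting to level zero, $a|_{Y_0}$ and $a|_{X_0'}$ are trivial.

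Now I would invoke the reducing property of $P$. After replacing $Q$ by a reducing refinement (permissible since this leaves the lower-right corner unchanged), the reducing condition of Definition \ref{A.9.64} in density degree $d+1$, applied to the datum $(Y_0,X_0',Y_0'=Y')$ with $Y'\to Y'$ the identity in $D_d(Y')$, produces a refinement $Q_1\to Q$ with $X_1\to X$ in $D_{d+1}(X)$ and with $Y_1\to Y$, $X_1'\to X'$ factoring through $Y_0$, $X_0'$; hence $a|_{Y_1}$ and $a|_{X_1'}$ are trivial. By the exactness axiom (ii) of the extended B.G.\ functor applied to $Q_1$, we obtain $a|_{X_1}=\partial_{Q_1}(b)$ for a residual class $b\in T_{q+1}(Y_1')$. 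Here the hypothesis $({\rm Cov}_{d,q+1})$ enters: it furnishes $W\to Y_1'$ in $D_d(Y_1')$ with $b|_{\check{C}(W/Y_1')}$ trivial. A second application of the reducing property, to $Q_1$ in density degree $d+1$ with the datum $(Y_1,X_1',W)$ — legitimate since $W\in D_d(Y_1')=D_{(d+1)-1}(Y_1')$ — produces a refinement $Q_2\to Q_1$ with $X_2\to X_1$ in $D_{d+1}(X_1)$ and $Y_2'\to Y_1'$ factoring through $W$. Forming the \v{C}ech nerve $\check{C}(Q_2/Q_1)$, which is a level-wise distinguished square because $P$ is squareable, and using naturality of the boundary operators (axiom (i)) together with the triviality of $b|_{\check{C}(Y_2'/Y_1')}$ (it is the pullback of $b|_{\check{C}(W/Y_1')}$), one gets that $\bigl(\partial_{Q_1}(b)\bigr)|_{\check{C}(X_2/X_1)}=a|_{\check{C}(X_2/X_1)}$ is trivial. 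Since $X_2\to X_1$ lies in $D_{d+1}(X_1)$ and $X_1\to X$ in $D_{d+1}(X)$, the composite $X_2\to X$ lies in $D_{d+1}(X)$ by axiom (iv) of Definition \ref{A.9.42}; propagating the \v{C}ech-nerve triviality from $\check{C}(X_2/X_1)$ to $\check{C}(X_2/X)$ along the refinements $Q_2\to Q_1\to Q$ — again using that the relevant \v{C}ech nerves of refinements are level-wise distinguished and that condition (iii) accounts for the covering contributions — finishes the induction with $U:=X_2$.

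The step I expect to be the main obstacle is exactly this last point: reconciling the conclusion that $a|_{\check{C}(X_2/X_1)}$ is trivial with the desired triviality of $a|_{\check{C}(X_2/X)}$, i.e.\ transporting \v{C}ech-nerve triviality across the composite of refinements $Q_2\to Q_1\to Q$. This is precisely the reason the extended B.G.\ formalism was set up with condition (iii) and with boundary maps for level-wise distinguished squares rather than merely for squares of objects, and the cleanest implementation is probably to run the whole induction simultaneously for $X$ and for its \v{C}ech nerves. The other point requiring care is the coordination of the two density degrees, so that the class one must kill drops from cohomological degree $q$ to $q+1$ exactly when the admissible density degree drops from $d+1$ to $d$; but this is just a matter of reading off Definitions \ref{A.9.64} and \ref{A.9.65}. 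Everything else is a faithful transcription of \cite[\S 3]{Vcdtop}.
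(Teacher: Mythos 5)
Your proposal retraces the paper's proof step for step -- completeness to reduce to a simple cover, induction on its generation, two applications of the reducing property, the exactness axiom to produce a class $b$ in degree $q+1$, and $({\rm Cov}_{d,q+1})$ to kill it -- so the only point that needs discussion is the obstacle you flag at the end, which is a genuine gap in the argument as written. You apply axiom (ii) of Definition \ref{A.10.2} to the square $Q_1$ of \emph{objects}, obtaining $b\in T_{q+1}(Y_1')$ with $\partial_{Q_1}(b)=a|_{X_1}$; this is precisely what strands you with triviality of $a$ only on $\check{C}(X_2/X_1)$ instead of on $\check{C}(X_2/X)$, and neither axiom (iii) (which concerns $t_P$-covering sieves, whereas the morphisms $X_2\to X_1\to X$ produced by the density structure are in general not covers) nor the reducing property will transport you across that difference after the fact.

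The paper closes the gap by the device you anticipate in your last paragraph, and this is the whole reason the B.G.\ formalism was extended to simplicial objects: one applies axiom (ii) not to $Q_1$ but to the level-wise distinguished square whose corners are the \v{C}ech nerves of $Y_1'\to Y'$, $Y_1\to Y$, $X_1'\to X'$ and $X_1\to X$. This is legitimate because the output of the first reducing step is triviality of $a$ on $\check{C}(Y_1/Y)$ and $\check{C}(X_1'/X')$ (not merely on $Y_1$ and $X_1'$), so exactness yields $b\in T_{q+1}(\check{C}(Y_1'))$ with $\partial_{\mathscr{Q}}(b)$ equal to the pullback of $a$ to $\check{C}(X_1/X)$. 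Feeding the object furnished by $({\rm Cov}_{d,q+1})$ into the second reducing step then gives $Q_3\to Q_1$ with $X_3\in D_{d+1}(X_1)$ and $b$ trivial on $\check{C}(Y_3')$, and naturality of the boundary maps (axiom (i)) for the induced morphism of level-wise distinguished squares of \v{C}ech nerves yields directly that the pullback of $a$ to $\check{C}(X_3/X)$ is trivial; together with $X_3\in D_{d+1}(X)$, which follows from axiom (iv) of Definition \ref{A.9.42}, this is $({\rm Cov}_{d+1,q})$. In short, the "simultaneous induction over $X$ and its \v{C}ech nerves" you propose as a fix is not an extra layer to be added on top of the argument; it is already built into the statement of axioms (i) and (ii), and the correct move is simply to invoke them for the simplicial squares from the outset.
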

\begin{proof}
Subject to a few changes, we follow the proof of \cite[Theorem 3.2]{Vcdtop}. 
By forming refinements, 
we may assume that every distinguished square is reducing with respect to a density structure $D$. 
Let $a\in T_q(X)$, 
where $X$ is an object of $\cC$.
\vspace{0.1in}

Consider a distinguished square
\begin{equation}
\label{A.10.3.1}
Q
=
\begin{tikzcd}
Y'\arrow[d,"f'"']\arrow[r,"g'"]&Y\arrow[d,"f"]\\
X'\arrow[r,"g"]&X.
\end{tikzcd}
\end{equation}
First, let us assume there exist morphisms $v:Y_0\rightarrow Y$ in $D_{d+1}(Y)$ and $u':X_0'\rightarrow X'$ in $D_{d+1}(X')$ such that $a$ restricts trivially on $\check{C}(Y_0)$ and $\check{C}(X_0')$ (i.e. becomes trivial after such pullback). 
Since $Q$ is reducing, there exists a morphism $Q_1\rightarrow Q$ of distinguished triangles such that $Y_1\rightarrow Y$ (resp.\ $X_1'\rightarrow X'$) in the upper right (resp.\ lower left) corner factors through 
$Y_0$ (resp.\ $X_0'$), and the morphism $X_1\rightarrow X$ in the lower right corner belongs to $D_{d+1}(X)$. 
Then $a$ restricts trivially on $\check{C}(Y_1)$ and $\check{C}(X_1')$.  
According to condition (ii) in Definition \ref{A.10.2}, there exists an element $b\in T_{q+1}(\check{C}(Y_1'))$ such that $\partial_{\mathscr{Q}}(b)=a$.
By $({\rm Cov}_{d,q+1})$, there exists an object $Y_2'\in D_d(Y_1')$ such that $b$ restricts trivially on $\check{C}(Y_1')$. 
Now $Q_1$ is reducing, so we have a morphism $Q_3\rightarrow Q_1$ of distinguished squares such that $Y_3'\rightarrow Y_1'$ factors through $Y_2'$ and $X_3\in D_{d+1}(X_1)$. 
Note that $b$ restricts trivially on $\check{C}(Y_3')$, 
and hence likewise for $a$ and $\check{C}(X_3)$.
Since $X_1\in D_{d+1}(X)$ and $X_3\in D_{d+1}(X_1)$, we deduce that $X_3\in D_{d+1}(X)$, i.e., $(\mathrm{Cov}_{d+1,q})$ is satisfied, which concludes the proof in this special case.
\vspace{0.1in}

In the general case, since $a_{t_P}T_{q}$ is trivial, there exists a $t_P$-covering sieve 
\[
\cF=\{V_j\rightarrow X\}_{j\in J}
\] 
such that $a$ restricts trivially on $V_j$ for all $j\in J$. 
Since $P$ is complete, we may assume the covering is a simple covering.
By excluding the trivial cover, we may assume that $\cF$ is formed by simple coverings of $Y$ and $X'$ for a distinguished square of the form \eqref{A.10.3.1}.
If there exists morphisms $v:Y_0\rightarrow Y$ in $D_{d+1}(Y)$ and $u':X_0'\rightarrow X'$ in $D_{d+1}(X')$ such that $a$ restricts trivially on $\check{C}(Y_0)$ and $\check{C}(X_0')$,
using the above paragraph, 
there exists a morphism $u:U\rightarrow X$ in $D_d(Y)$ such that $a$ restricts trivially on $\check{C}(U)$.
Repeating this argument, we reduce to the case when $\cF$ is the trivial cover. 
Then clearly $a$ is trivial, and we are done.
\end{proof}

\begin{thm}
\label{A.10.4}
Let $P$ be a squareable and regular cd-structure that is quasi-bounded with respect to a density structure $D_{\ast}$ on a category $\cC$ with an initial object, 
and suppose $\cF$ is a $t_P$-sheaf of abelian groups. 
Then for any $X\in \cC$ and $n>\dim_D X$, the $n$th $t_P$-cohomology group vanishes
\[
H_{t_P}^n(X,\cF)=0.
\]
\end{thm}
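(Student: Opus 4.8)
The plan is to attach to $\cF$ an extended B.G.\ functor in the sense of Definition \ref{A.10.2} whose value on an object of $\cC$ computes $t_P$-cohomology, and then to feed it into the inductive mechanism of Lemma \ref{A.10.3}; the strategy is that of \cite{Vcdtop}, the only adjustment being that $P$ is quasi-bounded rather than bounded, so the density structure $D_*$ takes over the role played by the Krull dimension in the classical case. Concretely, I would take a fibrant replacement $\cG$ of $\cF$, viewed as a complex concentrated in degree $0$, for the descent model structure on chain complexes of $t_P$-sheaves of abelian groups on $\cC$ (Proposition \ref{A.8.15}); thus $\cG$ is a complex of $t_P$-sheaves quasi-isomorphic to $\cF$ and satisfying descent for every $t_P$-hypercover (Example \ref{A.8.28}). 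For a simplicial object $\mathscr{X}$ of $\cC$ set
\[
T_q(\mathscr{X}):=H^{-q}\bigl({\rm Tot}^\pi\,\cG(\mathscr{X}_\bullet)\bigr),
\]
regarded as a pointed set; on a constant simplicial object $X$ this recovers $T_q(X)\cong H_{t_P}^{-q}(X,\cF)$.

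Next I would check that $\{T_q\}_{q\in\Z}$ is an extended B.G.\ functor. Axiom (i) is formal. For axiom (iii): if $\{U_i\to X\}$ is a $t_P$-covering sieve and $U=\amalg_iU_i$, then $\check{C}(U/X)$ is a $t_P$-hypercover, so descent of $\cG$ yields ${\rm Tot}^\pi\cG(\check{C}(U)_\bullet)\simeq\cG(X)$ and hence $T_q(X)\xrightarrow{\cong}T_q(\check{C}(U))$. For axiom (ii): descent of $\cG$ along a level-wise distinguished square $\mathscr{Q}$ makes the induced square of totalizations homotopy cartesian, and its Mayer--Vietoris long exact sequence provides the connecting maps $\partial_{\mathscr{Q}}$ and the exactness demanded in Definition \ref{A.10.2}(ii).

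Now the induction. Since the presheaf $X\mapsto H_{t_P}^{j}(X,\cF)$ sheafifies to $0$ for $j>0$ and vanishes for $j<0$, the sheaf $a_{t_P}T_q$ is trivial for every $q\neq0$; for $q=0$ it equals $\cF$, which blocks the induction at that degree and is precisely why the resulting bound involves $\dim_D X$. The base case $({\rm Cov}_{0,q})$ holds for every $q\in\Z$: by Definition \ref{A.9.42}(i) the map $\emptyset\to X$ lies in $D_0(X)$, every term of $\check{C}(\emptyset/X)$ is $\emptyset$, and therefore $T_q(\check{C}(\emptyset/X))=0$, so each class restricts trivially there. Since $P$ is squareable and regular by hypothesis and reducing with respect to $D_*$ because it is quasi-bounded, Lemma \ref{A.10.3} applies with $q=-m$ for each $m\ge 1$, giving $({\rm Cov}_{m-1,-m+1})\Rightarrow({\rm Cov}_{m,-m})$; chaining these onto the base case $({\rm Cov}_{0,0})$ shows that $({\rm Cov}_{n,-n})$ holds for every $n\ge 0$.

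To conclude, fix $X\in\cC$, set $d:=\dim_D X$ and let $n>d$. Given $\alpha\in H_{t_P}^{n}(X,\cF)=T_{-n}(X)$, condition $({\rm Cov}_{n,-n})$ furnishes $U\to X$ in $D_n(X)$ with $\alpha$ pulling back to $0$ in $T_{-n}(\check{C}(U))$. Iterating Definition \ref{A.9.42}(iii) gives $D_n(X)\subseteq D_{d+1}(X)$ because $n\ge d+1$, and since $D_*$ is locally of finite dimension with respect to $t_P$ with density dimension $d$ at $X$, the morphism $U=\amalg_iU_i\to X$ comes from a $t_P$-covering sieve $\{U_i\to X\}$; axiom (iii) then yields $T_{-n}(X)\xrightarrow{\cong}T_{-n}(\check{C}(U))$, so $\alpha=0$, i.e.\ $H_{t_P}^{n}(X,\cF)=0$. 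The main obstacle I anticipate is the construction and verification of $\{T_q\}$, above all axiom (iii): one must be sure the chosen fibrant $\cG$ really does satisfy descent for the Čech nerves that arise (where completeness of $P$ and the hypercover description of fibrant objects enter), and that ${\rm Tot}^\pi$ is well-behaved on constant simplicial objects and under level-wise distinguished squares, so that the Mayer--Vietoris sequences defining $\partial_{\mathscr{Q}}$ are available.
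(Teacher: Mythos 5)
Your proposal is correct and follows essentially the same route as the paper: your $T_q(\mathscr{X})=H^{-q}({\rm Tot}^\pi\,\cG(\mathscr{X}_\bullet))$ is just a concrete model for the hypercohomology $\bH_{t_P}^{-q}(\mathscr{X},\cF)$ that the paper uses to define its extended B.G.\ functor, and the remaining steps — vanishing of $a_{t_P}T_q$ in negative $q$, the base case $({\rm Cov}_{0,0})$ from $\emptyset\in D_0(X)$, the inductive application of Lemma \ref{A.10.3} to reach $({\rm Cov}_{n,-n})$, and the conclusion via local finite-dimensionality of $D_*$ together with axiom (iii) — coincide with the paper's argument. The only cosmetic difference is that you verify axioms (ii) and (iii) by descent and Mayer--Vietoris for the fibrant replacement, where the paper cites the hypercohomology spectral sequence, \cite[Lemma 2.19]{Vcdtop}, and cohomological descent.
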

\begin{proof}
For any simplicial object $\mathscr{X}$ in $\cC$, set $T_q(\mathscr{X}):=\bH_{t_P}^{-q}(\mathscr{X},\cF)$.
There is a canonical hypercohomology spectral sequence
\[
E_1^{pq}:=H_{t_P}^q(\mathscr{X}_p,\cF)\Rightarrow \bH_{t_P}^{p+q}(\mathscr{X},\cF).
\]
Together with \cite[Lemma 2.19]{Vcdtop} we deduce that $\{T_q\}_{q\in \Z}$ satisfies the conditions (i) and (ii) in Definition \ref{A.10.2}.
For any finite $t_P$-covering sieve $\{U_i\rightarrow X\}_{i\in I}$, the induced morphism $U\rightarrow X$ forms a $t_P$-covering sieve, 
where $X:=\amalg_{i\in I}U_i$.
Let $\mathscr{U}$ denote the \v{C}ech nerve associated with $U\rightarrow X$.
By cohomological descent, we obtain an isomorphism
\[
H_{t_P}^{-q}(X,\cF)\rightarrow H_{t_P}^{-q}(\mathscr{U},\cF).
\]
That is, 
$\{T_q\}_{q\in \Z}$  satisfies condition (iii) in Definition \ref{A.10.2}.
Therefore $\{T_q\}_{q\in \Z}$ is an extended B.G.\ functor. 
Moreover, 
for every $q<0$ we have 
\[
a_{t_P}T_q=a_{t_P}\bH_{t_P}^{-q}=0.
\]
Since $\emptyset\in D_0(X)$, we see that ${(\rm Cov}_{0,0})$ holds.
Then by Lemma \ref{A.10.3}, ${(\rm Cov}_{n,-n})$ holds for any $n\in \N$, 
i.e., 
there exists $j:U\rightarrow X$ in $D_n(Y)$ such that $a\in H^n(X,\cF)$ restricts trivially on $\check{C}(U)$. 
In the range $n>\dim_D X$, the morphism $j$ is a $t_P$-covering sieve. 
Thus $a$ is trivial owing to condition (iii) in Definition \ref{A.10.2}.
\end{proof}

\begin{thm}
\label{A.10.5}
Let $P$ be a squareable, quasi-bounded, and regular cd-structure on a category $\cC$ with an initial object, and let $\{T_q\}_{q\in \Z}$ be an extended B.G.\ functor. 
If $a_{t_P}T_q$ and $T_q(\emptyset)$ are trivial for any $q$, then $T_q$ is trivial for any $q$.
\end{thm}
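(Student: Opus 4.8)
The plan is to run the same bootstrap as in the proof of Theorem \ref{A.10.4}, only now exploiting the stronger hypothesis that $a_{t_P}T_q$ is trivial for \emph{every} $q\in\Z$ (rather than merely for $q<0$), which allows us to increase the density index $d$ while keeping the cohomological index $q$ fixed. The goal is to establish that $({\rm Cov}_{d,q})$ holds for all $d\in\N$ and all $q\in\Z$; once this is available, quasi-boundedness will immediately force $T_q$ to vanish on every object.

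First I would treat the base case $({\rm Cov}_{0,q})$, for all $q$. Given $X\in\cC$ and $a\in T_q(X)$, take $u$ to be the morphism $\emptyset\to X$, which lies in $D_0(X)$ by Definition \ref{A.9.42}(i). Since $P$ is squareable, every morphism to the initial object is an isomorphism, so the \v{C}ech nerve $\check{C}(\emptyset/X)$ is the constant simplicial object with value $\emptyset$; hence the pullback of $a$ lands in $T_q(\emptyset)$, which is trivial by hypothesis. For the inductive step I would invoke Lemma \ref{A.10.3}: $P$ is squareable, regular, and quasi-bounded, hence reducing with respect to $D_*$ in the sense of Definition \ref{A.9.65}, and $a_{t_P}T_q$ is trivial, so the implication $({\rm Cov}_{d,q+1})\Rightarrow({\rm Cov}_{d+1,q})$ holds; feeding in $({\rm Cov}_{d,q'})$ for all $q'$ (in particular for $q'=q+1$) yields $({\rm Cov}_{d+1,q})$, and since $q$ is arbitrary this gives $({\rm Cov}_{d+1,q})$ for all $q$, closing the induction.

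To conclude, fix $X\in\cC$ and $q\in\Z$ and let $a\in T_q(X)$. Quasi-boundedness means $D_*$ is locally of finite dimension with respect to $t_P$ (Definition \ref{A.9.62}), so there is an $n\in\N$ with the property that every morphism in $D_{n+1}(X)$ is a finite disjoint union $\amalg_i(Y_i\to X)$ for which $\{Y_i\to X\}$ is a $t_P$-covering sieve. Applying $({\rm Cov}_{n+1,q})$ produces $u\colon U=\amalg_i U_i\to X$ in $D_{n+1}(X)$ such that $a$ restricts trivially to $T_q(\check{C}(U))$, and $\{U_i\to X\}$ is then a covering sieve of $X$; condition (iii) in Definition \ref{A.10.2} identifies $T_q(X)$ with $T_q(\check{C}(U))$, whence $a=0$. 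As $X$, $q$, and $a$ were arbitrary, $T_q$ is trivial for all $q$.

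I do not anticipate a genuine obstacle: the substantive work is already contained in Lemma \ref{A.10.3}, and the only point demanding care is the base case, where one must be sure that evaluating $T_q$ on the \v{C}ech nerve of $\emptyset\to X$ returns $T_q(\emptyset)$ — this is precisely where squareability of $P$ (invertibility of all arrows to the initial object) is used. Everything else is a clean double induction on $(d,q)$ followed by a single appeal to Definition \ref{A.10.2}(iii).
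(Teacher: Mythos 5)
Your proof is correct and follows essentially the same route as the paper: the base case $({\rm Cov}_{0,q})$ from triviality of $T_q(\emptyset)$, the induction on $d$ via Lemma \ref{A.10.3} using that $a_{t_P}T_q$ vanishes for every $q$, and the final appeal to quasi-boundedness together with condition (iii) of Definition \ref{A.10.2}. Your extra attention to why the \v{C}ech nerve of $\emptyset\to X$ evaluates to $T_q(\emptyset)$ is a sensible elaboration of a step the paper leaves implicit, but the argument is the same.
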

\begin{proof}
Since $T_q(\emptyset)$ is trivial for any $q$, we see that $({\rm Cov}_{0,q})$ holds for all $q$. 
Applying Lemma \ref{A.10.3} we deduce that $({\rm Cov}_{d,q})$ holds for all $d$ and $q$. 
Thus for all $d$, $q$, $X\in \cC$, and $a\in T_q(X)$, there exists a morphism $u:U\rightarrow X$ in $D_d(Y)$ such that $a$ restricts trivially on $\check{C}(U)$. 
In the range $d>\dim_D(X)$, we have $u=\amalg_{i\in I}u_i$ for some finite $t_P$-covering sieve $\{u_i:U_i\rightarrow X\}_{i\in I}$. 
It follows that $a$ is trivial owing to condition (iii) in Definition \ref{A.10.2}.
\end{proof}

\begin{df}[{\cite[Definition 3.3]{Vcdtop}}]
\label{A.10.6}
Let $P$ be a cd-structure on a category $\cC$ with an initial object. 
A simplicial presheaf $\cF$ on $\cC$ is called {\it flasque}\index{simplicial presheaf!flasque} (with respect to $P$) if $\cF(\emptyset)$ is contractible and for any distinguished square $Q$, 
the square $\cF(Q)$ is homotopy cartesian.
\end{df}

\begin{df}
\label{A.10.9}
Let $\cC$ be a category equipped with a topology $t$.\index{equivalence ! $t$-local}
A morphism of simplicial presheaves $f:\cF\rightarrow \cF'$ is called a {\it $t$-local equivalence} if the following conditions are satisfied.
\begin{enumerate}
\item[(i)] The map $a_t\pi_0(\cF)\rightarrow a_t\pi_0(\cF')$ induced by $f$ is an isomorphism.
\item[(ii)] The map $a_t\pi_n(\cF,x)\rightarrow a_t\pi_n(\cF',f(x))$ induced by $f$ is an isomorphism for any $n\in \N^+$, $X\in \cC$, and $x\in F(X)$.
\end{enumerate}
Note that pointwise weak equivalences are exactly local equivalences for the trivial topology.
\end{df}

\begin{lem}
\label{A.10.7}
Let $P$ be a squareable, quasi-bounded, and regular cd-structure on a category $\cC$ with an initial object. 
Then a morphism $\cF\rightarrow \cG$ of flasque simplicial presheaves is a $t_P$-local equivalence if and only if it is a pointwise weak equivalence.
\end{lem}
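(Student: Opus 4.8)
The plan is to deduce this from the general machinery of extended B.G.\ functors, namely Theorem \ref{A.10.5}, by associating to the morphism $\cF\to\cG$ a suitable extended B.G.\ functor whose vanishing encodes exactly the statement that $\cF\to\cG$ is a pointwise weak equivalence. One direction is trivial: a pointwise weak equivalence is in particular a $t_P$-local equivalence (the sheafified homotopy presheaves only depend on the pointwise homotopy type, and isomorphisms of presheaves sheafify to isomorphisms). So the content is the forward implication.

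So suppose $f\colon\cF\to\cG$ is a $t_P$-local equivalence of flasque simplicial presheaves. First I would reduce to showing that, for a flasque simplicial presheaf $\cH$ (to be taken as a homotopy fiber of $f$ over a point, or more carefully, a suitable mapping-path construction so that flasqueness is preserved), triviality of the sheaves $a_{t_P}\pi_n(\cH)$ forces $\pi_n(\cH(X))$ to be trivial for every $X\in\cC$ and every basepoint. The key point is that flasqueness of $\cF$ and $\cG$ passes to the relevant fiber construction: since a distinguished square $Q$ gives homotopy cartesian squares $\cF(Q)$ and $\cG(Q)$, and $\cF(\emptyset)$, $\cG(\emptyset)$ are contractible, the homotopy fiber construction applied levelwise is again flasque (homotopy pullbacks of homotopy cartesian squares are homotopy cartesian, and the homotopy fiber over a point of a contractible space is contractible). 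Then I would set $T_q(\mathscr{X}) := \pi_q$ of the homotopy fiber evaluated on (the homotopy colimit / diagonal of) a simplicial object $\mathscr{X}$, for $q\ge 0$, and $T_q = \ast$ for $q<0$; the connecting maps $\partial_{\mathscr{Q}}$ come from the long exact sequence of homotopy groups attached to the homotopy cartesian square $\cH(\mathscr{Q})$ for a level-wise distinguished square $\mathscr{Q}$. Condition (iii) in Definition \ref{A.10.2} — the \v{C}ech descent condition — holds because $\cH$ is flasque: completeness of $P$ lets one refine any covering sieve to a simple cover, and for a simple cover the \v{C}ech nerve descent follows by induction from the distinguished-square (homotopy cartesian) condition. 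Finally, $a_{t_P}T_q = a_{t_P}\pi_q(\cH) = 0$ because the homotopy sheaves of $\cF$ and $\cG$ agree, and $T_q(\emptyset)=\ast$ by contractibility of $\cH(\emptyset)$.

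With these verifications in place, Theorem \ref{A.10.5} applies directly (the dividing-type cd-structures of interest are squareable — or one reduces to that case — quasi-bounded, and regular by Proposition \ref{A.9.60}) and yields $T_q(X)=\ast$ for all $q$ and all $X$, i.e.\ the homotopy fiber $\cH(X)$ is weakly contractible for every $X$, which is exactly the assertion that $f\colon\cF(X)\to\cG(X)$ is a weak equivalence for every $X$. The main obstacle I anticipate is the bookkeeping around homotopy groups with varying basepoints and the passage from the two-variable statement (a morphism $f$) to the one-variable statement (a single flasque $\cH$): one must choose a model for the homotopy fiber that is strictly functorial in $\cC$ and preserves flasqueness on the nose, and then check that the long exact sequences assemble into genuine pointed maps $\partial_{\mathscr{Q}}$ natural in $\mathscr{Q}$ — the $\pi_1$-action and non-abelian $\pi_0,\pi_1$ issues are where care is needed, exactly as in Voevodsky's original argument in \cite[Lemma 3.5]{Vcdtop}, of which this is the announced generalization to the quasi-bounded setting.
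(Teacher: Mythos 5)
Your proposal is correct and follows essentially the same route as the paper: reduce (via the homotopy-fiber argument, which is exactly what the first two paragraphs of Voevodsky's \cite[Lemma 3.5]{Vcdtop} cited in the paper accomplish) to a single flasque presheaf over a point, define $T_q$ as its homotopy groups, check that flasqueness gives conditions (i)--(ii) and \v{C}ech descent gives (iii) of Definition \ref{A.10.2} (the paper outsources this last check to \cite[Corollary 5.10]{Vcdtop} rather than running the induction over simple covers), and conclude by Theorem \ref{A.10.5}. The only superfluous remark is the appeal to Proposition \ref{A.9.60} at the end: squareability, quasi-boundedness, and regularity are already hypotheses of the lemma, so nothing needs to be verified there.
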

\begin{proof}
The if part is trivial. Let us assume that $\cF\rightarrow \cG$ is a $t_P$-local equivalence. 
By the first two paragraphs of the proof of \cite[Lemma 3.5]{Vcdtop}, we reduce to the case when $G={\rm pt}$ and $\cF$ is a Kan simplicial presheaf. 
The fourth paragraph of the proof of \cite[Lemma 3.5]{Vcdtop} shows that $\cF({\rm pt})$ is nonempty. 
Choose an element $x\in \cF({\rm pt})$ and consider the family of functors
\[
T_q(\mathscr{X}):=\pi_q(\cF(\mathscr{X}),x|_\mathscr{X}),
\]
where $q\in \Z$ and $\mathscr{X}$ is a simplicial object in $\cC$ (by convention, $\pi_0(T,x):=\pi_0(T)$ and $\pi_q(T,x):=0$ for any simplicial set $T$ and $x\in \pi_0(T)$ if $q<0$). 
Since $\cF$ is flasque, $T_q$ satisfies the conditions (i) and (ii) of Definition \ref{A.10.2}. 
By \cite[Corollary 5.10]{Vcdtop}, $T_q$ satisfies condition (iii) of Definition \ref{A.10.2}, 
i.e., 
$T_q$ is an extended B.G.\ functor. 
Since $\cF\rightarrow {\rm pt}$ is a $t_P$-local equivalence, $a_{t_P}T_q$ is trivial for any $q$. 
By Theorem \ref{A.10.5} we deduce $T_q$ is trivial for all $q$ since $T_q(\emptyset)$ is trivial for all $q$.
\end{proof}

Let $P$ be a squareable, quasi-bounded, and regular cd-structure on a category $\cC$ with an initial object, and let $t_P$ be the topology associated with $P$.
For an object $T\in \cC$ let $\Lambda(T)$ be the representable presheaf of $\Lambda$-modules given by
\[
Z\mapsto \Lambda(T)(Z):=\Lambda\hom_{\cC}(Z,T).
\]
Let $\cH_{pre}'$ be the family of complexes of the form
\begin{equation}
\label{A.10.10.2}
\Lambda(Y')\rightarrow \Lambda(Y)\oplus \Lambda(X')\rightarrow \Lambda(X)
\end{equation}
induced by the distinguished squares of $P$, as in \eqref{A.8.13.1}.

\begin{cor}
\label{A.10.10}
Let $P$ be a squareable, quasi-bounded, and regular cd-structure on a category $\cC$ with an initial object, and let $t_P$ be the topology associated with $P$.
Then a complex of presheaves of $\Lambda$-modules $\cF$ is $\cH_{pre}'$-flasque in the sense of {\rm Definition \ref{A.8.14}(iv)} if and only if for every $X\in \cC$ and $i\in \Z$ there is an isomorphism
\[
\hom_{\mathbf{K}(\Psh(\cC,\Lambda))}(\Lambda(X)[i],\cF)
\cong
\hom_{\Deri(\Shv_{t}(\cC,\Lambda))}(a_t^*\Lambda(X)[i],a_t^*\cF).
\]
\end{cor}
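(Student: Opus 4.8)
The plan is to recognize the statement as an application of the general machinery of bounded descent structures (in the sense of Cisinski--Déglise \cite{CD09}), once we know that the family $\cH_{pre}'$ of complexes \eqref{A.10.10.2} arising from the distinguished squares of $P$ generates the correct localizing subcategory. Concretely, a complex of presheaves $\cF$ being $\cH_{pre}'$-flasque means, by Definition \ref{A.8.14}(iv), that $\cF$ is fibrant for the model structure on $\Co(\Psh(\cC,\Lambda))$ obtained by Bousfield localizing the injective (or projective) model structure at the maps $\bigoplus$-associated to $\cH_{pre}'$ together with the maps encoding the quasi-isomorphism $\Lambda(\emptyset)\to 0$. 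The right-hand side of the claimed isomorphism is, by definition of the derived category of $t$-sheaves together with the identification $\hom_{\Deri(\Shv_t)}(a_t^*\Lambda(X)[i],a_t^*\cF)\cong \bH_t^i(X,\cF)$, computed by a $t$-fibrant replacement of $\cF$. So the content of the corollary is exactly that \emph{$\cH_{pre}'$-flasqueness detects $t_P$-fibrancy}, i.e. that the $\cH_{pre}'$-local model structure on $\Co(\Psh(\cC,\Lambda))$ coincides with the $t_P$-local (descent) model structure.

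First I would reduce to the level of simplicial presheaves, or rather, since we are in the additive/chain-complex setting, to the statement that a morphism $\cF\to \cG$ of $\cH_{pre}'$-flasque complexes is a $t_P$-local quasi-isomorphism if and only if it is a pointwise quasi-isomorphism; together with the fact that every complex admits an $\cH_{pre}'$-flasque replacement via a map that is simultaneously an $\cH_{pre}'$-local and a $t_P$-local weak equivalence, this gives the corollary by a standard two-out-of-three / Yoneda argument, computing $\hom$ in $\mathbf{K}(\Psh)$ against the flasque replacement on one side and $\hom$ in $\Deri(\Shv_t)$ against the same object viewed as $t_P$-fibrant on the other. The ``only if'' direction of the displayed $\hom$-comparison is then immediate: an $\cH_{pre}'$-flasque complex, being fibrant for a Bousfield localization at maps that are in particular $t_P$-local equivalences, is automatically $t_P$-fibrant, so the canonical map $\hom_{\mathbf{K}}(\Lambda(X)[i],\cF)\to \hom_{\Deri(\Shv_t)}(a_t^*\Lambda(X)[i],a_t^*\cF)$ is an isomorphism because no further localization is needed. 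Conversely, if the displayed iso holds for all $X$ and $i$, then applying it to the components of a distinguished square $Q$ and using that $a_t^*$ sends the complex $\Lambda(Y')\to\Lambda(Y)\oplus\Lambda(X')\to\Lambda(X)$ to an acyclic complex of $t_P$-sheaves (by regularity and completeness of $P$, cf. the description of $t_P$ via $Y\amalg X'$ covering $X$) forces the homotopy-cartesianness of $\cF(Q)$ and the contractibility of $\cF(\emptyset)$, which is precisely $\cH_{pre}'$-flasqueness.

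The heart of the matter, and the step I expect to be the main obstacle, is the ``only if'' at the level of weak equivalences: showing that a pointwise-nontrivial $\cH_{pre}'$-flasque complex cannot become $t_P$-acyclic, equivalently that a $t_P$-local quasi-isomorphism between $\cH_{pre}'$-flasque complexes is a pointwise quasi-isomorphism. This is exactly where Theorem \ref{A.10.4} (vanishing of $t_P$-cohomology above the density dimension) and Lemma \ref{A.10.7} (a $t_P$-local equivalence of flasque simplicial presheaves is a pointwise weak equivalence) enter: one transports Lemma \ref{A.10.7} from simplicial presheaves to nonnegatively-graded, and then unbounded, chain complexes using the Dold--Kan correspondence and a careful truncation/colimit argument to handle unboundedness, invoking that the cd-structure $P$ (here strict Nisnevich, dividing, or dividing Nisnevich on $lSm/S$ or $lSch/S$) is squareable, quasi-bounded, and regular by Proposition \ref{A.9.60}, so that Theorem \ref{A.10.4} applies and guarantees a uniform bound on cohomological dimension locally on each object. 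Once the weak-equivalence statement is in hand, the model-categorical formalism of \cite{CD09} packages everything: the pair $(\cH_{pre}'$, [the generating cofibrations]$)$ is a \emph{bounded descent structure}, its associated model structure is the $t_P$-descent model structure, fibrant objects are the $\cH_{pre}'$-flasque complexes, and maps out of the cofibrant generators $\Lambda(X)[i]$ in its homotopy category compute $t_P$-hypercohomology — which is the asserted isomorphism. I would therefore structure the proof as: (1) cite Proposition \ref{A.9.60} to get squareable + quasi-bounded + regular; (2) cite/apply Lemma \ref{A.10.7} and bootstrap it to chain complexes; (3) conclude the model structures agree and read off the $\hom$-identification, the latter being the reason Theorem \ref{A.10.4} and Lemma \ref{A.10.7} were set up in the preceding subsection.
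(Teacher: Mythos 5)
Your proposal is correct and follows essentially the same route as the paper: the paper's proof is a one-line citation that adapts the proof of \cite[Theorem 3.7]{MR2415380}, replacing Voevodsky's \cite[Lemma 3.5]{Vcdtop} by Lemma \ref{A.10.7} (and adapting \cite[Lemma 4.3]{Vcdtop}), which is exactly the key step you identify and then spell out via Dold--Kan and the descent-model-structure formalism. The only cosmetic difference is that you invoke Proposition \ref{A.9.60} to verify the hypotheses on $P$, whereas the corollary already assumes them for an arbitrary squareable, quasi-bounded, regular cd-structure.
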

\begin{proof}
This follows by replacing \cite[Lemma 3.5]{Vcdtop} with Lemma \ref{A.10.7} and adapting \cite[Lemma 4.3]{Vcdtop} in the proof of \cite[Theorem 3.7]{MR2415380}. 
\end{proof}

\begin{lem}
\label{A.10.11}
Let $P$ be a squareable, quasi-bounded, and regular cd-structure on a category $\cC$ with an initial object. 
Then a morphism $f:\cF\rightarrow \cF'$ of $\cH_{pre}'$-flasque complexes of presheaves of $\Lambda$-modules is a quasi-isomorphism if and only if 
$a_t^*f:a_t^*\cF\rightarrow a_t^*\cF'$ is a quasi-isomorphism.
\end{lem}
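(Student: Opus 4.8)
The statement is a standard consequence of the description of $\cH_{pre}'$-flasque complexes provided by Corollary \ref{A.10.10}. The plan is to reduce the assertion to the fact that the sheafification functor $a_t^*$, together with the hom-description in Corollary \ref{A.10.10}, detects quasi-isomorphisms between flasque objects.

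\smallskip

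First I would recall that a morphism $f\colon\cF\to\cF'$ of complexes of presheaves of $\Lambda$-modules is a quasi-isomorphism if and only if its cone is acyclic, equivalently if and only if for every $X\in\cC$ and $i\in\Z$ the induced map
\[
\hom_{\mathbf{K}(\Psh(\cC,\Lambda))}(\Lambda(X)[i],\cF)\to\hom_{\mathbf{K}(\Psh(\cC,\Lambda))}(\Lambda(X)[i],\cF')
\]
is an isomorphism: indeed $H^{-i}(\cF(X))\cong\hom_{\mathbf{K}(\Psh(\cC,\Lambda))}(\Lambda(X)[i],\cF)$ since $\Lambda(X)$ is representable and a complex of presheaves has cohomology computed sectionwise, and a complex of presheaves is acyclic iff all its sections are acyclic. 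Similarly, $a_t^*f$ is a quasi-isomorphism of complexes of $t$-sheaves if and only if for every $X$ and $i$ the map
\[
\hom_{\Deri(\Shv_t(\cC,\Lambda))}(a_t^*\Lambda(X)[i],a_t^*\cF)\to\hom_{\Deri(\Shv_t(\cC,\Lambda))}(a_t^*\Lambda(X)[i],a_t^*\cF')
\]
is an isomorphism. Here I use that the objects $a_t^*\Lambda(X)$ generate $\Deri(\Shv_t(\cC,\Lambda))$ (this is where completeness/regularity/quasi-boundedness of $P$ enters, via Theorem \ref{A.10.4} ensuring finite cohomological dimension and hence that these are compact generators, as in Proposition \ref{bigsmall.5}), so that a morphism in the derived category is an isomorphism precisely when it induces isomorphisms on all $\hom(a_t^*\Lambda(X)[i],-)$.

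\smallskip

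Now I would invoke Corollary \ref{A.10.10}: since $\cF$ and $\cF'$ are $\cH_{pre}'$-flasque, for every $X\in\cC$ and $i\in\Z$ there are canonical isomorphisms
\[
\hom_{\mathbf{K}(\Psh(\cC,\Lambda))}(\Lambda(X)[i],\cF)\cong\hom_{\Deri(\Shv_t(\cC,\Lambda))}(a_t^*\Lambda(X)[i],a_t^*\cF),
\]
and likewise for $\cF'$, and these isomorphisms are natural in the complex, hence compatible with $f$ and $a_t^*f$. Therefore the map on $\hom_{\mathbf{K}}(\Lambda(X)[i],-)$ induced by $f$ is an isomorphism for all $X,i$ if and only if the map on $\hom_{\Deri}(a_t^*\Lambda(X)[i],-)$ induced by $a_t^*f$ is an isomorphism for all $X,i$. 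Combining with the two reformulations of ``quasi-isomorphism'' from the previous paragraph gives the equivalence.

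\smallskip

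The only subtle point — and the step I expect to require the most care — is the naturality of the isomorphism in Corollary \ref{A.10.10} with respect to morphisms of $\cH_{pre}'$-flasque complexes, and the fact that one may replace ``isomorphism in $\Deri$'' by ``isomorphism on all $\hom(a_t^*\Lambda(X)[i],-)$''; the latter needs that $\{a_t^*\Lambda(X)[i]\}$ is a generating family closed under shifts, which follows from the construction of the descent (projective) model structure together with quasi-boundedness. Once that is in place the argument is purely formal. Alternatively, one can argue directly: the cone $C(f)$ is again $\cH_{pre}'$-flasque (the class of flasque complexes is closed under cones up to quasi-isomorphism, or one applies Lemma \ref{A.10.7} to the associated simplicial presheaves), and then $f$ is a quasi-isomorphism iff $C(f)$ is acyclic iff $C(f)$ is sectionwise acyclic iff $a_t^*C(f)\cong C(a_t^*f)$ is acyclic iff $a_t^*f$ is a quasi-isomorphism, where the middle equivalence again uses Corollary \ref{A.10.10} applied to $C(f)$ (whose sections compute $\hom_{\mathbf K}(\Lambda(X)[i],C(f))$, matching $\hom_{\Deri}(a_t^*\Lambda(X)[i],a_t^*C(f))$). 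Either route completes the proof.
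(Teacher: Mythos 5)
Your proposal is correct and follows essentially the same route as the paper: both directions hinge on the natural isomorphism of Corollary \ref{A.10.10} for $\cH_{pre}'$-flasque complexes, transferring the hom-group criterion for quasi-isomorphisms across sheafification (the paper dispatches the forward direction even more cheaply, by exactness of $a_t^*$). One minor correction: the conservativity of the family $\{a_t^*\Lambda(X)[i]\}$ on $\Deri(\Shv_{t}(\cC,\Lambda))$ does not require quasi-boundedness, finite cohomological dimension, or compactness --- it follows already from the fact that the representable sheaves generate the Grothendieck abelian category $\Shv_{t}(\cC,\Lambda)$ (Proposition \ref{A.8.32}); the hypotheses on the cd-structure are needed only for Corollary \ref{A.10.10} itself.
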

\begin{proof}
If $f$ is a quasi-isomorphism, then $a_t^*f$ is a quasi-isomorphism since the sheafification functor $a_t^*$ is exact.
Conversely, 
if $a_t^*f$ is a quasi-isomorphism, 
then owing to Corollary \ref{A.10.10} we have an isomorphism
\[
\hom_{\mathbf{K}(\Psh(\cC,\Lambda))}(\Lambda(X)[i],\cF)
\cong
\hom_{\mathbf{K}(\Psh(\cC,\Lambda))}(\Lambda(X)[i],\cF')
\]
for every $X\in \cC$ and $i\in \Z$.
This implies that $f$ is a quasi-isomorphism.
\end{proof}

\begin{exm}
\label{A.8.30}
Let $P$ be a squareable, quasi-bounded, and regular cd-structure on a category $\cC$ with an initial object, and let $t_P$ be the topology associated with $P$.
Recall the collection $\cG$ in Example \ref{A.8.28} and $\cH_{pre}'$ in \eqref{A.10.10.2}. 
Let $\cH'$ be the family of complexes of the form
\[
a_t^*\Lambda(Y')\rightarrow a_t^*\Lambda(Y)\oplus a_t^*\Lambda(X')\rightarrow a_t^*\Lambda(X)
\]
induced by the distinguishes squares of $P$, as in \eqref{A.8.13.1}. 
Suppose that the complex of $t_P$-sheaves $\cF$ is $\cH'$-flasque.
By adjunction this implies that $a_{t*}\cF$ is $\cH_{pre}'$-flasque.
Owing to Corollary \ref{A.10.10} we see that $\cF$ is $\cG$-local.
Thus $(\cG,\cH')$ is a bounded descent structure because $\cH'$ consists of bounded complexes of objects that are finite sums of objects of $\cG$.
This is genuinely different from the descent structure $(\cG,\cH)$ discussed in Example \ref{A.8.28}. 
%
\end{exm}

\newpage

\section{Sheaves with logarithmic transfers}
\label{sec.sheavestransfer}

The following three fundamental results are due to Voevodsky \cite[\S 6]{MVW}.
\begin{enumerate}
\item[(1)] The Nisnevich sheafification of a presheaf with transfers has a unique transfer structure.
\item[(2)] The category of Nisnevich sheaves with transfers $\Shvtrkl$ is a Grothendieck abelian category.
\item[(3)] For $\cF\in \Shvtrkl$ and $X\in Sm/k$ there is a canonical isomorphism (see below for the definition of $\Ztr$)
\[{
\rm Ext}_{\Shvtrkl}(\Ztr(X),F)\cong H_{Nis}^i(X,\cF).
\]
\end{enumerate}

In this section, we develop the log versions of (1)--(3) and show that the following topologies are compatible with log transfers:
\[
sNis,\;dNis,\;s\acute{e}t,\;d\acute{e}t,\;k\acute{e}t,\;\text{and }l\acute{e}t.
\]

\subsection{Category of presheaves with log transfers} 
Recall $\Lambda$ is a commutative unital ring.
The next step after introducing $lCor/k$ is to study presheaves with log transfers in the following sense.

\begin{df}
\label{A.5.37}
A {\it presheaf of $\Lambda$-modules with log transfers} is an additive presheaf of $\Lambda$-modules on the category of finite log correspondences $\LCor$. 
We let $\Pshltrkl$ denote the category of presheaves of $\Lambda$-modules with log transfers. \index{presheaf with log transfers} \index[notation]{Psh @ $\Pshltrkl$}
For $X\in lSm/k$, let $\Zltr(X)$ denote the representable presheaf of $\Lambda$-modules with log transfers given by \index[notation]{Lambdatr @$\Lambda_{\rm ltr}(X)$}
\[
Y\mapsto
\Zltr(X)(Y):=\lCor(Y,X)\otimes \Lambda.
\]
\end{df}

\begin{rmk}
\label{A.5.38}
Let $F$ be a presheaf of $\Lambda$-modules with log transfers. 
For every $X,Y\in lSm/k$, by additivity, there exists an induced paring
\[
\lCor(X,Y)\otimes F(Y)
\rightarrow 
F(X).
\]
This produces the "wrong way" maps parametrized by finite log correspondences.
\end{rmk}

\begin{exm}
Every constant presheaf of $\Lambda$-modules on $lSm/k$ has a canonical log transfer structure.
\end{exm}

\begin{exm}\label{example log transfer Ga}
The presheaves $\cO$ and $\cO^*$ are two examples of presheaves with transfers.
Let us extend these to presheaves with log transfers.
For $X\in lSm/k$, we define
\[
\cO(X):=\cO(\underline{X}),\;\cO^*(X):=\cO^*(\underline{X}).
\]
Owing to Lemma \ref{A.9.18}, $X$ is normal.
For any strict finite surjective morphism $V\rightarrow X$, as observed in \cite[Example 2.4]{MVW}, there is a canonical trace map and a norm map
\[
{\rm Tr}:\cO(V)\rightarrow \cO(X),\;N:\cO^*(V)\rightarrow \cO^*(X).
\]
If $V$ is a finite log correspondence from $X$ to $Y\in lSm/k$, we can define transfer maps via the composite
\[
V^*:\cO(Y)\rightarrow \cO(V)\stackrel{\rm Tr}\rightarrow \cO(X),\;V^*:\cO^*(Y)\rightarrow \cO^*(V)\stackrel{N}\rightarrow \cO^*(X)
\]
Let us check that these transfer maps are compatible with compositions of finite log correspondences. 
Since $X$ is reduced, we have the inclusions
\[
\cO(X)\subset \cO(X-\partial X),\;\cO^*(X)\subset \cO^*(X-\partial X).
\]
Thus for $Z\in lSm/k$ and $W\in \lCor(Y,Z)$ to check that $V^*\circ W^*=(W\circ V)^*$, owing to Lemma \ref{A.5.10}, it suffices to check that
\[
(V-\partial V)^*\circ (W-\partial W)^*=((W-\partial W)\circ (V-\partial V))^*.
\]
This is also observed in \cite[Example 2.4]{MVW}.
Thus $\cO$ and $\cO^*$ are both examples of presheaves with log transfers.
\end{exm}

\begin{exm}
Let $\cM^{\rm gp}$ denote the presheaf of abelian groups on $lSm/k$ given by \index[notation]{Mgr @ $\cM^{\rm gp}$}
\[
\cM^{\rm gp}(X):=\cM_X^{\rm gp}(X).
\]
By \cite[Theorem IV.3.5.1]{Ogu}, $X\in lSm/k$ is log regular.
By \cite[Theorem III.1.11.12]{Ogu}, the log structure on $X$ is the compactifying log structure associated with the inclusion $X-\partial X\rightarrow X$.
In particular, there is a canonical isomorphism
\[
\cM^{\rm gp}(X)\cong \cO^*(X-\partial X).
\]
For $Y\in lSm/k$ and $V\in \lCor(X,Y)$, we can now define a transfer map
\[
V^*\colon \cM^{\rm gp}(X)\rightarrow \cM^{\rm gp}(Y)
\]
because there is a transfer map 
$$
(V-\partial V)^*\colon \cO^*(X-\partial X)\rightarrow \cO^*(Y-\partial Y).
$$
As in the above paragraph, $V^*$ is compatible with compositions of correspondences.
Thus $\cM^{\rm gp}$ is an example of a presheaf with log transfers.
\end{exm}

Next, we give $\Pshltrkl$ the structure of a closed symmetric monoidal category following the work of Day \cite{MR0272852} in a general setting, and \cite{MR2435654} for presheaves with transfers.
\begin{df}
\label{A.5.40}
The tensor product of two representable presheaves with log transfers on $lSm/k$ is defined by 
\[
\Zltr(X)\otimes \Zltr(Y)
:=
\Zltr(X\times_{k}Y).
\]

More generally, \index[notation]{FtimesG @ $F\otimes G$}
we define the tensor product of $F,G\in \Pshltrkl$ by 
\[
F\otimes G
:=
\varinjlim_{X,Y}\Zltr(X)\otimes \Zltr(Y).
\]
Here we write $F$ and $G$ as colimits of representable presheaves
\[
F\cong \varinjlim_X \Zltr(X)
\text{ and }
G\cong \varinjlim_Y \Zltr(Y), 
\]
This gives a symmetric monoidal structure on $\Pshltrkl$.
\vspace{0.1in}
  
The internal hom object $\underline{\hom}_{\Pshltrkl}(F,G)$ is the presheaf with log transfers on $lSm/k$ defined by 
\[
X
\mapsto
\underline{\hom}_{\Pshltrkl}(F,G)(X)
:=
\hom_{\Pshltrkl}(F\otimes \Zltr(X),G).
\]
The functor $\underline{\hom}_{\Pshltrkl}(F,-)$ is right adjoint to $(-)\otimes F$.
\end{df}

Let $f\colon X\rightarrow Y$ be a morphism of fs log schemes log smooth over $k$. 
Then the graph $\gamma_f$ is a finite log correspondence from $X$ to $Y$. 
To show this we may assume $X$ and $Y$ are irreducible. 
Note that $\gamma_f$ is a closed subscheme (see Definition \ref{df::closedimmersion}) of $X\times Y$.
The morphism $\gamma_f\rightarrow X$ is an isomorphism. 
Thus $\gamma_f$ is an elementary log correspondence from $X$ to $Y$ by Example \ref{A.5.31}(4).
If $g\colon Y\rightarrow Z$ is another morphism of fs log schemes log smooth over $k$, 
then by construction $\gamma_g\circ \gamma_f=\gamma_{g\circ f}$. 
Thus there is a faithful functor
\begin{equation}
\label{A.5.40.1}
\gamma
\colon
lSm/k\rightarrow \LCor
\end{equation}
given by 
\[
X\mapsto X,
\,
(f\colon X\rightarrow Y) \mapsto \gamma_f.
\]

\begin{df}
\label{A.5.4}
Let $\Pshlogkl$ be the category of presheaves of $\Lambda$-modules on $lSm/k$. \index[notation]{Pshlog @ $\Pshlogkl$}
For an fs log scheme $X$ of finite type over $k$, 
we let $\Lambda(X)$ be the presheaf of $\Lambda$-modules on $lSm/k$ given by
\[
Y
\mapsto 
\Lambda(X)(Y):=\Lambda\hom_{lSm/k}(Y,X).
\]
The restriction (forgetting log transfer) functor
\[
\gamma^*
\colon
\Pshltrkl
\rightarrow 
\Pshlogkl
\]
is defined by $\gamma^*F(X):=F(\gamma(X))$ for $F\in \Pshltrkl$ and $X\in lSm/k$. 
\end{df}

According to \cite[Proposition I.5.1]{SGA4} there exist adjoint functors
\[
\begin{tikzcd}
\Pshlogkl\arrow[rr,shift left=1.5ex,"\gamma_\sharp "]\arrow[rr,"\gamma^*" description,leftarrow]\arrow[rr,shift right=1.5ex,"\gamma_*"']&&\Pshltrkl.
\end{tikzcd}
\]
By convention $\gamma_\sharp$ is left adjoint to $\gamma^*$ and $\gamma^*$ is left adjoint to $\gamma_*$. 
This sequence of adjoint functors induces a similar sequence of adjoint functors for the categories of chain complexes
\[
\begin{tikzcd}
\Co(\Pshlogkl)\arrow[rr,shift left=1.5ex,"\gamma_\sharp "]\arrow[rr,"\gamma^*" description,leftarrow]\arrow[rr,shift right=1.5ex,"\gamma_*"']&&\Co(\Pshltrkl).
\end{tikzcd}
\]

We shall use similar notations for the category $Cor/k$\index[notation]{Cork@$Cor/k$} of finite correspondences over the field $k$.
Note that the functor $\hat{\gamma}_*$ in \cite[Definition 10.1.1]{CD12} corresponds to our $\gamma^*$.
Let $\Pshtrkl$ denote the category of presheaves of $\Lambda$-modules with transfers.
For any smooth scheme $X$ of finite type over $k$, 
the representable presheaf of $\Lambda$-modules with transfers $\Ztr(X)$ on $Sm/k$ is given by 
\[
Y\mapsto
\Ztr(X)(Y)
:=
\Cor(Y,X)\otimes \Lambda.
\]

We write $\Pshkl$ for the category of presheaves of $\Lambda$-modules on $Sm/k$. 
For any smooth scheme $X$ of finite type over $k$, 
we denote by $\Lambda(X)$ the presheaf of $\Lambda$-modules on $Sm/k$ defined by 
\[
Y\mapsto 
\Lambda(X)(Y)
:=
\Lambda\hom_{Sm/k}(Y,X).
\]
In this setting, there is a faithful functor
\[
\gamma
\colon
Sm/k
\rightarrow 
Cor/k,
\]
and corresponding sequences of adjoint functors
\begin{equation}
\label{A.5.4.1}
\begin{tikzcd}
\Pshkl\arrow[rr,shift left=1.5ex,"\gamma_\sharp "]\arrow[rr,"\gamma^*" description,leftarrow]\arrow[rr,shift right=1.5ex,"\gamma_*"']&&\Pshtrkl,
\end{tikzcd}
\end{equation}
\[
\begin{tikzcd}
\Co(\Pshkl)\arrow[rr,shift left=1.5ex,"\gamma_\sharp "]\arrow[rr,"\gamma^*" description,leftarrow]\arrow[rr,shift right=1.5ex,"\gamma_*"']&&\Co({\Pshtrkl}).
\end{tikzcd}
\]

Next, we define functors between the categories of finite log correspondences and finite correspondences.

\begin{df} \index[notation]{omega @ $\omega$}
\label{A.4.5}
Let  
\[
\omega
\colon 
\LCor
\rightarrow 
Cor/k
\]
be the functor that sends an object $X$ to $X-\partial X$ and a morphism $V\in \lCor(X,Y)$ to $V-\partial V\in \Cor(X-\partial X,Y-\partial Y)$ for all $X,Y\in lSm/k$.
\vspace{0.1in}

Conversely, let 
\[
\lambda
\colon 
Cor/k
\rightarrow 
\LCor
\]
be the functor sending an object $X$ to $X\in lSm/k$ equipped with the trivial log structure and a morphism $V\in\Cor(X,Y)$ to $V\in l\Cor(X,Y)$ for all $X,Y\in Sm/k$, 
see Example \ref{A.5.31}. 
\end{df}

There are induced functors \index[notation]{omegastar @ $\omega^*$}  \index[notation]{lambdastar @ $\lambda^*$} 
\[
\omega^*
\colon 
\Pshtrkl\rightarrow \Pshltrkl 
\text{ and } 
\lambda^*
\colon 
\Pshltrkl\rightarrow \Pshtrkl
\]
given by $\omega^*F(X):=F(\omega(X))$ for $F\in \Pshtrkl$ and $X\in lSm/k$ and $\lambda^*G(Y):=G(\lambda(Y))$ for $G\in \Pshltrkl$ and $Y\in Sm/k$. 
Example \ref{A.5.31}(1) shows that $\lambda^*$ is left adjoint to $\omega^*$. 
\vspace{0.1in}
  
Again by \cite[Proposition I.5.1]{SGA4} there exist adjoint functors
\begin{equation}
\label{A.4.5.1}
\begin{tikzcd}
\Pshtrkl\arrow[rr,shift left=3ex,"\lambda_\sharp "]\arrow[rr,"\lambda^*\cong 
\omega_\sharp" description,leftarrow, shift left=1ex]\arrow[rr,shift right=1ex,"\lambda_*\cong 
\omega^*" description]\arrow[rr,shift right=3ex,"\omega_*"',leftarrow]&&\Pshltrkl.
\end{tikzcd}
\end{equation}
Our convention for such diagrams is that $\lambda_\sharp$ is left adjoint to $\lambda^*\cong \omega_\sharp$, which is left adjoint to $\lambda_*\cong \omega^*$, 
which is left adjoint to $\omega_*$. 
On chain complexes, we use the same notation for the induced adjoint functors \index[notation]{omegasharp @ $\omega_\sharp$}
\[
\begin{tikzcd}
\Co(\Pshtrkl)\arrow[rr,shift left=3ex,"\lambda_\sharp "]\arrow[rr,"\lambda^*\cong 
\omega_\sharp" description,leftarrow, shift left=1ex]\arrow[rr,shift right=1ex,"\lambda_*\cong \omega^*" description]\arrow[rr,shift right=3ex,"\omega_*"',leftarrow]&&\Co(\Pshltrkl).
\end{tikzcd}
\]

\begin{df}
Let 
\[
\omega
\colon 
lSm/k\rightarrow Sm/k
\]
be the functor that sends an object $X$ to $X-\partial X$ and a morphism $f\colon X\rightarrow Y$ to the naturally induced morphism $X-\partial X\rightarrow Y-\partial Y$ for all $X,Y\in lSm/k$.
\vspace{0.1in}

Conversely, let 
\[
\lambda
\colon 
Sm/k\rightarrow lSm/k
\]
be the functor sending $X\in Sm/k$ to $X$ equipped with the trivial log structure and a morphism $f\colon X\rightarrow Y$ to itself for all $X,Y\in Sm/k$. 
\end{df}

With these definitions, there are associated adjoint functors
\[
\begin{tikzcd}
\Pshkl\arrow[rr,shift left=3ex,"\lambda_\sharp "]\arrow[rr,"\lambda^*\cong 
\omega_\sharp" description,leftarrow, shift left=1ex]\arrow[rr,shift right=1ex,"\lambda_*\cong \omega^*" description]\arrow[rr,shift right=3ex,"\omega_*"',leftarrow]&&\Pshlogkl,
\end{tikzcd}\]
\[
\begin{tikzcd}
\Co(\Pshkl)\arrow[rr,shift left=3ex,"\lambda_\sharp "]\arrow[rr,"\lambda^*\cong 
\omega_\sharp" description,leftarrow, shift left=1ex]\arrow[rr,shift right=1ex,"\lambda_*\cong \omega^*" description]\arrow[rr,shift right=3ex,"\omega_*"',leftarrow]&&\Co(\Pshlogkl),
\end{tikzcd}\]
and a commutative diagram
\[
\begin{tikzcd}
lSm/k\arrow[d,"\omega"']\arrow[r,"\gamma"]& lCor/k\arrow[d,"\omega"]\\
Sm/k\arrow[r,"\gamma"]& Cor/k.
\end{tikzcd}
\]

\begin{df}
\label{A.5.24}
Suppose $f\colon Y\rightarrow X$ is a morphism of fs log schemes log smooth over $k$. 
Let $\Zltr(Y\rightarrow X)$ (resp.\ $\Lambda(Y\rightarrow X)$) denote the complex \index[notation]{LambdaCone @ $\Zltr(Y\rightarrow X)$}
\[ 
\Zltr(Y)\stackrel{\Zltr(f)}\longrightarrow \Zltr(X) \;\;(\text{resp.}\ \Lambda(Y)\stackrel{\Lambda(f)}\longrightarrow \Lambda(X)). 
\]

Suppose $\mathscr{X}$ is a simplicial fs log scheme over $k$. 
Let $\Zltr(\mathscr{X})$ (resp.\ $\Lambda(\mathscr{X})$) denote the complex associated with the simplicial object 
\[
i
\mapsto 
\Zltr(\mathscr{X}_i)
\;\;(\text{resp.}\
i\mapsto \Lambda(\mathscr{X}_i))
\] 
under the Dold-Kan correspondence.
\vspace{0.1in}

Similarly, 
when $Y\rightarrow X$ is a morphism of schemes smooth over $k$ and $\mathscr{X}$ is a simplicial scheme over $k$, 
we use the notations $\Lambda(Y\rightarrow X)$, $\Ztr(Y\rightarrow X)$, $\Lambda(\mathscr{X})$, and $\Ztr(\mathscr{X})$.
\end{df}

\subsection{Compatibility with log transfers}
This section discusses several notions of compatibility with log transfers for topologies on log schemes. 
We proceed by adapting to the log setting the analogous notions introduced by Cisinski-Deglise in \cite[Definition 10.3.2]{CD12}.

\begin{df}\label{A.8.1}
Let $t$ be a topology on $lSm/k$ and $\Lambda$ a commutative unital ring.
A presheaf $\cF$ with log transfers is called a {\it $t$-sheaf with log transfers} if $\gamma^*\cF\in \Pshlogkl$ is a $t$-sheaf. 
Here, $\gamma^*\cF(X):=\cF(\gamma(X))$ for $X\in lSm/k$.\index{sheaves with log transfers} \index[notation]{Shv @ $\Shvltrtkl$}
Let $\Shvltrtkl$ denote the category of $t$-sheaves with log transfers.
\vspace{0.1in}

Let $\Shvlogtkl$ be the category of $t$-sheaves on $lSm/k$ (without log transfers).\index[notation]{Shv @ $\Shvlogtkl$} 
The $t$-sheafification functor $a_t^*$ and the forgetful functor $a_{t*}$ form an adjoint functor pair
\[
a_t^*:\Pshlogkl\rightleftarrows \Shvlogtkl:a_{t*}.
\]

By abuse of notation,
let 
\[
a_{t*}:\Shvltrtkl\rightarrow \Pshltrkl
\]
be the inclusion functor, and let
\[
\gamma^*\colon \Shvltrtkl\rightarrow \Shvlogtkl
\]
be the restriction of the functor $\gamma^*\colon \Pshltrkl\rightarrow \Pshlogkl$. 
We note that 
\begin{equation}
\label{A.8.1.1}
\gamma^*a_{t*}\cong a_{t*}\gamma^*.
\end{equation}

For a topology $t$ on $Sm/k$,
let $\Shvtrtkl$ be the category of $t$-sheaves on $Sm/k$ with transfers.
\end{df}

The notion of a topology on $Sm/k$ being compatible with transfers was introduced in \cite[Definition 10.3.2]{CD12}.
We adopt this notion to our log setting.
\begin{df}
\label{A.8.2}
A topology $t$ on $lSm/k$ is \emph{compatible with log transfers} \index{topology!compatible with log transfers}
if for any $t$-hypercover $\mathscr{X}\rightarrow X$, 
the induced morphism
\[
a_t^*\gamma^*\Zltr(\mathscr{X})\rightarrow a_t^*\gamma^*\Zltr(X)
\]
of complexes of presheaves is a quasi-isomorphism (see also Definition \ref{A.5.24}).
\vspace{0.1in}

Following \cite[Definition 10.3.5]{CD12}, we say that $t$ is {\it mildly compatible with log transfers} if $\gamma_*a_{t*}\cF$ is a $t$-sheaf with log transfers for any $t$-sheaf $\cF$.
Recall that $\gamma_*$ is a right adjoint of $\gamma^*$. \index{topology!mildly compatible with log transfers}
There is a description of $\gamma_*$: For every presheaf $\cG$ and $X\in lSm/k$,
\[
\gamma_*\cG(X)=\hom_{\Pshlogkl}(\gamma^*\Zltr(X),\cG).
\]
\end{df}

\begin{rmk}
These notions are equivalent to each other if $t$ is a topology associated with a cd-structure, see Proposition \ref{A.8.13}.
\end{rmk}


\begin{rmk}
According to \cite[10.4.1]{CD12}, the Nisnevich topology on $Sm/k$ is compatible with transfers.
One goal in this section is to generalize this to our log setting,
i.e., we will show that the strict Nisnevich topology on $lSm/k$ is compatible with log transfers.
\end{rmk}

\begin{prop}
\label{A.8.3}
Let $t$ be a topology on $lSm/k$ compatible with log transfers. Then $t$ is mildly compatible with log transfers.
\end{prop}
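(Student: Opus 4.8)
The plan is to deduce mild compatibility from the compatibility hypothesis by unwinding the definitions and using the interplay between $\gamma^*$, $\gamma_*$, and the sheafification functor. Recall that $t$ being compatible with log transfers means that for every $t$-hypercover $\mathscr{X}\to X$ the map $a_t^*\gamma^*\Zltr(\mathscr{X})\to a_t^*\gamma^*\Zltr(X)$ is a quasi-isomorphism, while mild compatibility asks that $\gamma_*a_{t*}\cF$ be a $t$-sheaf with log transfers for every $t$-sheaf $\cF$. Since $\gamma_*a_{t*}\cF$ is by construction a presheaf with log transfers, the content is to check that $\gamma^*\gamma_*a_{t*}\cF$ is a $t$-sheaf on $lSm/k$, i.e.\ that it satisfies descent for every $t$-hypercover.

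First I would use the explicit description recorded in Definition \ref{A.8.2}: for a presheaf $\cG$ on $lSm/k$ and $X\in lSm/k$ one has
\[
\gamma_*\cG(X)=\hom_{\Pshlogkl}(\gamma^*\Zltr(X),\cG).
\]
Applying this to $\cG=a_{t*}\cF$, the value $(\gamma^*\gamma_*a_{t*}\cF)(X)$ is $\hom_{\Pshlogkl}(\gamma^*\Zltr(X),a_{t*}\cF)$, and since $a_{t*}\cF$ is a $t$-sheaf, this hom-group only depends on the sheafification, giving a natural isomorphism
\[
(\gamma^*\gamma_*a_{t*}\cF)(X)\cong \hom_{\Shvlogtkl}(a_t^*\gamma^*\Zltr(X),\cF).
\]
Now to verify $t$-descent for $\gamma^*\gamma_*a_{t*}\cF$ along a $t$-hypercover $\mathscr{X}\to X$, I would compare the Čech/totalization tower with $\hom(a_t^*\gamma^*\Zltr(\mathscr{X}),\cF)$. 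The compatibility hypothesis says $a_t^*\gamma^*\Zltr(\mathscr{X})\to a_t^*\gamma^*\Zltr(X)$ is a quasi-isomorphism of complexes of $t$-sheaves; applying the (derived, but here just ordinary since $\cF$ is a sheaf and we are computing $\hom$ into degree zero after suitable bookkeeping) functor $\hom(-,\cF)$ and taking the relevant cohomology in degree $0$ yields the isomorphism
\[
(\gamma^*\gamma_*a_{t*}\cF)(X)\xrightarrow{\ \cong\ }\Tot^\pi\,(\gamma^*\gamma_*a_{t*}\cF)(\mathscr{X}),
\]
which is exactly the sheaf condition. (One must be slightly careful: $\Zltr(\mathscr{X})$ is the complex associated to the simplicial object via Dold–Kan, so $\hom(\Zltr(\mathscr{X}),\cF)$ is the totalization of the cosimplicial abelian group $i\mapsto \hom(\Zltr(\mathscr{X}_i),\cF)$, matching the $\Tot^\pi$ in the hypercover descent condition of Example \ref{A.8.28}.)

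The main obstacle I expect is the bookkeeping at the level of derived categories versus ordinary (co)homology: the compatibility condition is phrased as a quasi-isomorphism of complexes of presheaves after sheafification, so to extract an honest isomorphism of sections of $\gamma^*\gamma_*a_{t*}\cF$ one needs that $\hom_{\Co(\Shvlogtkl)}(-,\cF)$ sends this quasi-isomorphism to a quasi-isomorphism and that the relevant $H^0$ computes the desired sections, which uses that $\cF$ is a genuine sheaf (concentrated in degree zero) rather than a complex, together with the adjunction $\hom_{\Psh}(a_{t*}-,a_{t*}\cF)\cong\hom_{\Shv}(-,\cF)$ and exactness of $a_t^*$. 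A secondary point is to make sure the isomorphism obtained is natural in $X$ and compatible with the log transfer structure, but naturality is automatic from the adjunction formulas and the transfer compatibility follows because everything in sight is built from the representable presheaves $\Zltr(X)$ in $\Pshltrkl$. Once these identifications are in place, the proof is essentially a one-line consequence of the hypothesis, paralleling \cite[Proposition 10.3.7]{CD12} in the non-logarithmic setting.
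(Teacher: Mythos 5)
Your proposal is correct and follows essentially the same route as the paper: translate the sheaf condition for $\gamma_*a_{t*}\cF$ via the adjunctions into a statement about $\hom_{\Shvlogtkl}(a_t^*\gamma^*\Zltr(-),\cF)$, then invoke the compatibility hypothesis for the (\v{C}ech nerve of the) cover. One small caution: the full isomorphism onto $\Tot^\pi$ that you assert does not follow just from applying $\hom(-,\cF)$ to the quasi-isomorphism $a_t^*\gamma^*\Zltr(\mathscr{X})\to a_t^*\gamma^*\Zltr(X)$ unless $\cF$ is injective; but it is also not needed --- the sheaf condition only requires left-exactness of
\[
0\to \hom(a_t^*\gamma^*\Zltr(X),\cF)\to \hom(a_t^*\gamma^*\Zltr(\mathscr{X}_0),\cF)\to \hom(a_t^*\gamma^*\Zltr(\mathscr{X}_1),\cF),
\]
which follows from left-exactness of $\hom(-,\cF)$ applied to the right-exact truncation $\Zltr(\mathscr{X}_1)\to\Zltr(\mathscr{X}_0)\to\Zltr(X)\to 0$ (after sheafification), i.e.\ only the $H_0$-part of the quasi-isomorphism is used, which is exactly what the paper does.
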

\begin{proof}
Let $\mathscr{X}\rightarrow X$ be a \v{C}ech $t$-hypercover, and let $\cF$ be a $t$-sheaf. 
To show that $\gamma_*a_{t*}\cF$ is a $t$-sheaf, we need to show that the induced sequence of $\Lambda$-modules
\[
\begin{split}
0\rightarrow \hom_{\Pshltrkl}(\Zltr(X),\gamma_*a_{t*}\cF)\rightarrow &\hom_{\Pshltrkl}(\Zltr(\mathscr{X}_0),\gamma_*a_{t*}\cF)\\
\rightarrow &\hom_{\Pshltrkl}(\Zltr(\mathscr{X}_1),\gamma_*a_{t*}\cF)
\end{split}
\]
is exact. 
This is equivalent to showing exactness of the sequence
\begin{equation}
\label{A.8.3.1}
\begin{split}
0\rightarrow \hom_{\Shvltrtkl}(a_t^*\gamma^*\Zltr(X),\cF)\rightarrow &\hom_{\Shvltrtkl}(a_t^*\gamma^*\Zltr(\mathscr{X}_0),\cF)\\
\rightarrow &\hom_{\Shvltrtkl}(a_t^*\gamma^*\Zltr(\mathscr{X}_1),\cF).
\end{split}
\end{equation}
This follows since by assumption we have a quasi-isomorphism
\[
a_t^*\gamma^*\Zltr(\mathscr{X})\rightarrow a_t^*\gamma^*\Zltr(X).
\]
\end{proof}

\begin{prop}
\label{A.8.12}
Let $t$ be a topology on $lSm/k$ mildly compatible with log transfers. 
Then for any $t$-cover $Y\rightarrow X$, the induced morphism of $t$-sheaves $a_t^*\gamma^*\Zltr(Y)\rightarrow a_t^*\gamma^*\Zltr(X)$ is an epimorphism.
\end{prop}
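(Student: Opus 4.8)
The plan is to check the epimorphism condition after transporting it, via the standard chain of adjunctions, to the separatedness axiom for $t$-sheaves. Since $\Shvlogtkl$ is a Grothendieck abelian category, the morphism $a_t^*\gamma^*\Zltr(Y)\to a_t^*\gamma^*\Zltr(X)$ is an epimorphism if and only if for every $\cF\in\Shvlogtkl$ the map
\[
\hom_{\Shvlogtkl}(a_t^*\gamma^*\Zltr(X),\cF)\longrightarrow \hom_{\Shvlogtkl}(a_t^*\gamma^*\Zltr(Y),\cF)
\]
obtained by precomposition with $a_t^*\gamma^*\Zltr(Y)\to a_t^*\gamma^*\Zltr(X)$ is injective. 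So I would fix such an $\cF$ and analyze this map.

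Applying successively the adjunction $a_t^*\dashv a_{t*}$, the adjunction $\gamma^*\dashv\gamma_*$, and the Yoneda lemma for presheaves with log transfers (Remark \ref{A.5.38}), one identifies $\hom_{\Shvlogtkl}(a_t^*\gamma^*\Zltr(X),\cF)$ with $(\gamma_*a_{t*}\cF)(X)$ and likewise the group for $Y$ with $(\gamma_*a_{t*}\cF)(Y)$; tracking the arrow induced by $Y\to X$ through these identifications, it becomes the map $(\gamma_*a_{t*}\cF)(X)\to(\gamma_*a_{t*}\cF)(Y)$ given by the graph correspondence of $Y\to X$. Because the graph correspondence of a morphism of $lSm/k$ acts on a presheaf with log transfers through its underlying structural restriction, this is precisely the restriction map $\cH(X)\to\cH(Y)$ of the presheaf $\cH:=\gamma^*\gamma_*a_{t*}\cF$ along $Y\to X$.

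Now I invoke the hypothesis: since $t$ is mildly compatible with log transfers (Definition \ref{A.8.2}), $\gamma_*a_{t*}\cF$ is a $t$-sheaf with log transfers, so by Definition \ref{A.8.1} the presheaf $\cH=\gamma^*\gamma_*a_{t*}\cF$ is a $t$-sheaf on $lSm/k$. As $Y\to X$ is a $t$-cover, the separatedness (i.e.\ injectivity) part of the sheaf condition for the covering sieve generated by $Y\to X$ shows that $\cH(X)\to\cH(Y)$ is injective. Hence the displayed $\hom$-map is injective for every $\cF\in\Shvlogtkl$, and therefore $a_t^*\gamma^*\Zltr(Y)\to a_t^*\gamma^*\Zltr(X)$ is an epimorphism. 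The only point that needs care, and the one I would write out in full, is the naturality bookkeeping in the second paragraph: one must check that the isomorphism $\gamma^*a_{t*}\cong a_{t*}\gamma^*$ of \eqref{A.8.1.1} together with the Yoneda identifications are compatible with the maps induced by $Y\to X$. Once this is verified there is no genuine obstacle, since the conclusion is then immediate from the sheaf axiom.
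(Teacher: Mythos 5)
Your proof is correct and is essentially the paper's own argument: the paper reduces the epimorphism to the injectivity (the left exactness) of the sequence \eqref{A.8.3.1}, which after the same chain of adjunctions is precisely the separatedness of the $t$-sheaf $\gamma^*\gamma_*a_{t*}\cF$ for the cover $Y\rightarrow X$, exactly as you spell out. The naturality bookkeeping you flag is routine and does not hide any obstacle.
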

\begin{proof}
As in the proof of Proposition \ref{A.8.3}, this follows because the sequence \eqref{A.8.3.1} is exact.
\end{proof}

\begin{prop}
\label{A.8.4}
Let $t$ be a topology on $lSm/k$ mildly compatible with log transfers. 
Then the functor
\[
\gamma^*\colon \Shvltrtkl\rightarrow \Shvlogtkl
\]
admits a right adjoint $\gamma_*$ that commutes with $a_{t*}$.
\end{prop}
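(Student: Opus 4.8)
The plan is to obtain $\gamma_*$ on sheaves by restricting the presheaf-level right adjoint, and then to verify the adjunction by a purely formal manipulation that uses the hypothesis only once.

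Recall from Definition \ref{A.5.4} that on presheaf categories there is an adjunction $\gamma^*\dashv\gamma_*$, with $\gamma^*\colon\Pshltrkl\to\Pshlogkl$ the forgetful functor and $\gamma_*\colon\Pshlogkl\to\Pshltrkl$ given explicitly by $\gamma_*\cG(X)=\hom_{\Pshlogkl}(\gamma^*\Zltr(X),\cG)$. For $\cF\in\Shvlogtkl$ I would set $\gamma_*\cF:=\gamma_*(a_{t*}\cF)\in\Pshltrkl$. The assumption that $t$ is mildly compatible with log transfers (Definition \ref{A.8.2}) says precisely that $\gamma_*(a_{t*}\cF)$ is a $t$-sheaf with log transfers, so $\gamma_*\cF$ lands in $\Shvltrtkl$. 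Since $a_{t*}\colon\Shvltrtkl\to\Pshltrkl$ is just the inclusion, we get $a_{t*}\gamma_*\cF=\gamma_*(a_{t*}\cF)$ tautologically; this is the asserted commutation with $a_{t*}$, i.e.\ $\gamma_*$ on sheaves with transfers is nothing but $\gamma_*a_{t*}$ viewed as landing in the subcategory of sheaves.

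It then remains to produce, for $\cG\in\Shvltrtkl$ and $\cF\in\Shvlogtkl$, a natural isomorphism $\hom_{\Shvltrtkl}(\cG,\gamma_*\cF)\cong\hom_{\Shvlogtkl}(\gamma^*\cG,\cF)$. Because $a_{t*}$ is fully faithful, the left-hand group equals $\hom_{\Pshltrkl}(a_{t*}\cG,\gamma_*(a_{t*}\cF))$; by the presheaf adjunction $\gamma^*\dashv\gamma_*$ this is $\hom_{\Pshlogkl}(\gamma^*(a_{t*}\cG),a_{t*}\cF)$; by the identity $\gamma^*a_{t*}\cong a_{t*}\gamma^*$ of \eqref{A.8.1.1} it becomes $\hom_{\Pshlogkl}(a_{t*}(\gamma^*\cG),a_{t*}\cF)$; and finally, full faithfulness of $a_{t*}$ on the category of $t$-sheaves identifies this with $\hom_{\Shvlogtkl}(\gamma^*\cG,\cF)$. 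Each step is natural in both variables, so $\gamma^*\dashv\gamma_*$ holds at the level of sheaves.

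The argument is entirely formal once mild compatibility is available; the only points to watch are that \eqref{A.8.1.1} is applied on the correct variable and that each displayed isomorphism is genuinely natural, not just a bijection. There is no real obstacle internal to this proposition — the substantive content lies in checking that the topologies of interest are mildly compatible with log transfers, which is done separately (for $t$ associated with a cd-structure, via Propositions \ref{A.8.3} and \ref{A.8.13}).
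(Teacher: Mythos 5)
Your proof is correct and follows essentially the same route as the paper: define $\gamma_*$ on sheaves by restricting the presheaf-level right adjoint (mild compatibility guaranteeing the target is a sheaf with log transfers, and the commutation with $a_{t*}$ being tautological), then verify the adjunction by the same chain of natural isomorphisms using full faithfulness of $a_{t*}$, the presheaf adjunction, and the identity \eqref{A.8.1.1}. No issues.
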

\begin{proof}
Due to the assumption that $t$ is mildly compatible with log transfers, 
by abuse of notation, we have the functor
\[
\gamma_*\colon \Shvlogtkl\rightarrow \Shvltrtkl.
\]
That is, the restriction of the functor $\gamma_*\colon\Pshlogkl\rightarrow \Pshltrkl$. 
We note that the functors $a_{t*}$ and $\gamma_*$ commute.
\vspace{0.1in}

For $\cF\in \Shvltrtkl$ and $\cG\in \Shvlogtkl$ it remains to note that there are natural isomorphisms of $\Lambda$-modules
\[
\begin{split}
&\hom_{\Shvlogtkl}(\gamma^*\cF,\cG)\\
\cong &\hom_{\Pshlogkl}(a_{t*}\gamma^*\cF,a_{t*}\cG)\quad \text{(since }a_{t*}\text{ is fully faithful)}\\
\cong &\hom_{\Pshlogkl}(\gamma^*a_{t*}\cF,a_{t*}\cG)\quad \text{(since }\gamma^*\text{ commutes with }a_{t*},\text{ see }\eqref{A.8.1.1})\\
\cong &\hom_{\Pshltrkl}(a_{t*}\cF,\gamma_* a_{t*}\cG)\\
\cong &\hom_{\Pshltrkl}(a_{t*}\cF,a_{t*}\gamma_*\cG)\quad \text{(since }\gamma_*\text{ commutes with }a_{t*})\\
\cong &\hom_{\Shvltrtkl}(\cF,\gamma_*\cG)\quad \text{(since }a_{t*}\text{ is fully faithful)}.
\end{split}
\]
\end{proof}

Let us prove an analog of \cite[Lemma 6.16]{MVW}.

\begin{lem}
\label{A.8.23}
Let $t$ be a topology on $lSm/k$ that is mildly compatible with log transfers.
Suppose that $p\colon Y'\rightarrow Y$ is a $t$-cover and $f\in \lCor(X,Y)$ is a finite log correspondence, where $X,Y,Y'\in lSm/k$.
Then there exists a $t$-cover $p'\colon X'\rightarrow X$ and a commutative diagram in $lCor/k$
\begin{equation}
\label{A.8.23.1}
\begin{tikzcd}
X'\arrow[d,"p'"']\arrow[r,"f'"]&Y'\arrow[d,"p"]\arrow[d]\\
X\arrow[r,"f"]&Y.
\end{tikzcd}
\end{equation}
\end{lem}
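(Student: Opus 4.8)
The plan is to reduce the statement to the analogous result for finite correspondences on smooth schemes, namely \cite[Lemma 6.16]{MVW}, by passing to the open complements $X-\partial X$ and $Y-\partial Y$ and then transporting the resulting cover back over $X$.

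\textbf{Step 1: reduce to elementary log correspondences and connected $X$.}
Writing $f=\sum n_i V_i$ as a sum of elementary log correspondences $V_i\in\lCor(X,Y)$ and decomposing $X$ into connected components, it suffices to find a single $t$-cover $p'\colon X'\to X$ that works simultaneously for all the $V_i$; since finite intersections (fiber products) of $t$-covers are $t$-covers and the categories $X_t$ have finite limits, we may treat one elementary $V=V_i$ at a time and then take the fiber product of the resulting covers. So assume $f=[V]$ is elementary and $X$ is integral.

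\textbf{Step 2: apply the Suslin--Voevodsky lifting for finite correspondences.}
Under the faithful homomorphism $(-)^\circ$ of Lemma \ref{A.5.10} the correspondence $V$ is sent to $V^\circ=V-\partial V\in\Cor(X-\partial X,\,Y-\partial Y)$, and $p\colon Y'\to Y$ restricts to a $t$-cover $p^\circ\colon Y'-\partial Y'\to Y-\partial Y$ of smooth $k$-schemes (here we use Remark \ref{A.9.68}(1)/the fact that for $t$ one of our topologies the restriction to open complements is again a cover of the underlying schemes, together with $Y'-\partial Y'=p^{-1}(Y-\partial Y)$ as in Remark \ref{A.5.29}(iii)). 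By \cite[Lemma 6.16]{MVW} there is a Nisnevich (resp. \'etale) cover $q\colon U\to X-\partial X$ and a finite correspondence $g\colon U\to Y'-\partial Y'$ making the square with $V^\circ$ and $p^\circ$ commute in $\Cor/k$.

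\textbf{Step 3: extend the cover over $X$ and check admissibility of the lifted correspondence.}
Since $t$ is mildly compatible with log transfers, by Proposition \ref{A.8.12} the map $a_t^*\gamma^*\Zltr(Y')\to a_t^*\gamma^*\Zltr(Y)$ is an epimorphism of $t$-sheaves, which is precisely the sheaf-theoretic statement allowing us to refine. Concretely, one extends $q\colon U\to X-\partial X$ to a $t$-cover $p'\colon X'\to X$: for the strict Nisnevich / strict \'etale case one uses that a Nisnevich (\'etale) cover of $X-\partial X$ extends, after further refinement, to a strict cover of $X$ by \cite[Lemma 6.2]{MVW}-type extension arguments applied to the underlying schemes, equipping $X'$ with the pullback log structure from $X$ (so $X'\in X_{sNis}$, resp. $X_{s\acute et}$); for the dividing topologies one further composes with a log modification. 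It remains to produce $f'\in\lCor(X',Y')$ lifting $g$: the underlying cycle is the closure in $\underline{X'}\times\underline{Y'}$ of $g$, which is finite and surjective over a component of $\underline{X'}$ by \cite[Lemmas 1.4, 1.6]{MVW}, and the log-structure condition (ii) of Definition \ref{A.5.2} follows by the same mechanism as in Lemma \ref{lem:composition}: the normalization of this cycle is solid by Lemma \ref{A.9.17} (being finite over the log smooth $X'$), hence its log structure map is injective by Lemma \ref{A.9.4}, and the required morphism of log structures is obtained by pulling back the one for $V$ along $p'$ and using Lemma \ref{A.9.11} / Lemma \ref{A.9.6} to descend from the generic fiber. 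Commutativity of \eqref{A.8.23.1} in $lCor/k$ then follows from commutativity after applying the faithful functor $(-)^\circ$, i.e. from the commutativity of the square of finite correspondences obtained in Step 2, together with injectivity of $(-)^\circ$ (Lemma \ref{A.5.10}) and the compatibility diagram \eqref{equation1:lem:composition}.

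\textbf{Main obstacle.}
The routine part is Step 2. The delicate point is Step 3: first, extending a cover of the open complement $X-\partial X$ to a genuine $t$-cover of $X$ in the correct category $X_t$ (for the strict topologies one must argue that the extension can be chosen strict; for the dividing topologies one absorbs the discrepancy into a log modification), and second, verifying that the lifted cycle genuinely carries the structure of a finite log correspondence — i.e. checking the normalization condition (ii) of Definition \ref{A.5.2}. This second verification is where the solid-log-scheme machinery of Section \ref{ss:sls} (Lemmas \ref{A.9.17}, \ref{A.9.4}, \ref{A.9.11}, \ref{A.9.6}) does the work, exactly as in the proof of Lemma \ref{lem:composition}, and it is the step most likely to require care about which log structure is placed on $X'$ and on the normalization of the lifted cycle.
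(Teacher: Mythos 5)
Your proposal takes a genuinely different and much more concrete route than the paper. The paper's proof is very short: by Proposition \ref{A.8.12} the map $a_t^*\gamma^*\Zltr(Y')\to a_t^*\gamma^*\Zltr(Y)$ is an epimorphism of $t$-sheaves, and the implication (ii)$\Rightarrow$(i) of \cite[Proposition II.5.1]{SGA4} then says precisely that the sieve of maps $Z\to X$ over which $f$ lifts to a correspondence into $Y'$ is a covering sieve; this yields $p'$ and $f'$ at once, for an \emph{arbitrary} mildly compatible topology $t$. You quote this epimorphism in Step 3 but then set it aside and try to build the lift by hand. That already loses generality: the lemma is stated, and used in the proof of Lemma \ref{A.8.6}, for an arbitrary mildly compatible $t$, whereas your Steps 2--3 only make sense for topologies whose covers restrict to Nisnevich or \'etale covers of $X-\partial X$.

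The more serious problem is a gap in Step 3. A Nisnevich (or \'etale) cover $q\colon U\to X-\partial X$ does not extend to a strict Nisnevich cover of $X$: any extension must add pieces covering $\partial X$, and over those pieces there is no reason for the correspondence to lift --- yet producing a lift over a cover of \emph{all} of $X$, boundary included, is exactly the content of the lemma. Passing to $X-\partial X$ and then ``taking closures'' does not repair this: the closure of $g$ in $\underline{X'}\times\underline{Y'}$ need not be finite and surjective over $\underline{X'}$ (compare Example \ref{A.4.28}, where the closure of a perfectly good correspondence on an open subset fails to be finite surjective), and for a dividing cover $Y'\to Y$, where $\underline{Y'}\to\underline{Y}$ is proper but not finite, finiteness of the closure genuinely fails unless one first replaces $X$ by a suitable log modification. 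That replacement is the content of Lemma \ref{A.5.62}, whose proof requires Proposition \ref{A.9.21}, and is not supplied by the phrase ``one further composes with a log modification.'' Even in the strict Nisnevich case the correct hands-on argument does not pass through $X-\partial X$ at all: one pulls $p$ back along the structure morphism $V^N\to Y$ to get a strict Nisnevich cover of $V^N$, which splits over the henselization of $X$ at each point because $V^N$ is finite over $X$ (as in the proof of Proposition \ref{A.5.11}), and one spreads this splitting out to a Nisnevich neighbourhood; the splitting then gives the lift $f'$ via Example \ref{A.5.31}(4). If you want a concrete proof, run that argument; otherwise the sheaf-theoretic one via Proposition \ref{A.8.12} is both shorter and more general.
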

\begin{proof}
By Proposition \ref{A.8.12}, the induced morphism 
\[
a_t^*\gamma^*\Zltr(Y')\rightarrow a_t^*\gamma^*\Zltr(Y)
\]
in $\Shvltrtkl$ is an epimorphism.
The implication (ii)$\Rightarrow$(i) in \cite[Proposition II.5.1]{SGA4} shows that $X\times_{\Zltr(Y)}\Zltr(Y')\rightarrow X$ is a covering sieve.
From this, we immediately deduce the existence of $p'$ along with \eqref{A.8.23.1}.
\end{proof}

Using this lemma, we prove an analog of \cite[Theorem 6.17]{MVW} as follows.

\begin{lem}
\label{A.8.6}
Let $t$ be a topology on $lSm/k$ that is mildly compatible with log transfers. 
Then for every $\cF\in \Pshltrkl$ there exists an object $\cG\in \Shvltrtkl$ together with a morphism $u\colon \cF\rightarrow a_{t*}\cG$ in $\Pshltrkl$ such that 
$\gamma^*u$ is isomorphic to the sheafification $\gamma^*\cF\rightarrow a_{t*}a_t^*\gamma^*\cF$.
The pair $(\cG,u)$ is unique up to isomorphism.
\end{lem}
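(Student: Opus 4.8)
The plan is to mimic the proof of \cite[Theorem 6.17]{MVW}, using Lemma \ref{A.8.23} as the key input that allows finite log correspondences to be lifted along $t$-covers. First I would set $\cG_0 := a_t^*\gamma^*\cF$, the $t$-sheafification of the underlying presheaf of $\cF$ on $lSm/k$, and consider the canonical map $\gamma^*\cF \to a_{t*}\cG_0$. The content of the lemma is that $\cG_0$ carries a \emph{unique} log transfer structure compatible with this map, i.e.\ that it lifts to an object $\cG \in \Shvltrtkl$ together with $u\colon \cF \to a_{t*}\cG$ inducing the sheafification on underlying presheaves. Since $t$ is mildly compatible with log transfers, Proposition \ref{A.8.4} gives a right adjoint $\gamma_*\colon \Shvlogtkl \to \Shvltrtkl$ commuting with $a_{t*}$; one then sets $\cG$ to be the image of $\cF$ in $\gamma_* \cG_0$ (more precisely, one factors the adjunction unit $\cF \to \gamma_*\gamma^*\cF \to \gamma_* a_t^* \gamma^* \cF = \gamma_* \cG_0$ and takes $\cG$ to be either the target $\gamma_*\cG_0$ itself or, following \cite{MVW} more literally, the sub-presheaf-with-transfers generated by the image — but since $\cG_0$ is already a sheaf and $\gamma_*$ lands in sheaves with transfers, taking $\cG := \gamma_*\cG_0$ already works and $u$ is the composite above).

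The key point to verify is that $\gamma^* u$ is (isomorphic to) the sheafification map $\gamma^*\cF \to a_{t*}a_t^*\gamma^*\cF$. This is where $\gamma_*$ commuting with $a_{t*}$ is used: $\gamma^* \gamma_* \cG_0$ receives a counit map to $\cG_0 = a_t^*\gamma^*\cF$, and one checks on the level of underlying presheaves that the composite $\gamma^*\cF \to \gamma^*\gamma_* a_t^* \gamma^*\cF \to a_t^*\gamma^*\cF$ agrees with the unit of sheafification; after applying $a_{t*}$ and using \eqref{A.8.1.1} this identifies $\gamma^* u$ with the sheafification morphism. At this stage one invokes the universal property of sheafification: any morphism from $\gamma^*\cF$ to a $t$-sheaf factors uniquely through $a_{t*}a_t^*\gamma^*\cF$, so $\gamma^* u$ is initial among maps from $\gamma^*\cF$ into underlying presheaves of $t$-sheaves with log transfers.

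For uniqueness of the pair $(\cG, u)$: given another $(\cG', u')$ with $\gamma^* u'$ isomorphic to the sheafification, the universal property of $a_t^*$ on underlying presheaves produces an isomorphism $\gamma^*\cG \cong \gamma^*\cG'$ compatible with $u$ and $u'$; Lemma \ref{A.8.23} is what upgrades this to an isomorphism of \emph{sheaves with log transfers}. Concretely, to see that the isomorphism $\gamma^*\cG \cong \gamma^*\cG'$ respects the transfer structure, one must check compatibility with the action of an arbitrary finite log correspondence $f \in \lCor(X,Y)$; by Lemma \ref{A.8.23} one can, after passing to a $t$-cover $X' \to X$, reduce $f$ to a situation controlled by the sheaf condition, and then the two transfer actions agree because they agree on the underlying presheaf level after $t$-covering and $\cG$, $\cG'$ are separated for $t$. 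I expect the main obstacle to be precisely this last verification — making the "lift a correspondence along a cover, then use separatedness" argument fully rigorous, exactly as in the proof of \cite[Theorem 6.17]{MVW}, since it requires carefully tracking the compositions in $lCor/k$ through the square \eqref{A.8.23.1} and using associativity of composition (Lemma \ref{lem:composition}) together with the fact that $a_{t*}$ is fully faithful.
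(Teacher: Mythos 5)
There is a genuine gap in the existence part of your construction. You propose taking $\cG := \gamma_*\cG_0 = \gamma_* a_t^*\gamma^*\cF$, arguing that since $\cG_0$ is already a sheaf and $\gamma_*$ lands in sheaves with log transfers this "already works." It does not: the underlying presheaf of $\gamma_*\cG_0$ is $\gamma^*\gamma_*a_{t*}a_t^*\gamma^*\cF$, and since $\gamma_*\cG(X)=\hom_{\Pshlogkl}(\gamma^*\Zltr(X),\cG)$ while $\cG(X)=\hom_{\Pshlogkl}(\Lambda(X),\cG)$, the counit $\gamma^*\gamma_*\to \mathrm{id}$ is far from an isomorphism in general. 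So with your choice of $\cG$ the map $\gamma^*u$ lands in a strictly larger sheaf than $a_{t*}a_t^*\gamma^*\cF$ and cannot be isomorphic to the sheafification morphism, which is exactly what the lemma demands. Your alternative (the sub-presheaf-with-transfers generated by the image of $\cF$) is also not the right object, since $\gamma^*\cF\to a_{t*}a_t^*\gamma^*\cF$ need not be an epimorphism.

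The correct construction — and the one the paper uses — keeps $\cG_0=a_t^*\gamma^*\cF$ itself as the underlying sheaf and \emph{transports} the transfer structure from $\gamma_*a_{t*}\cG_0$ onto it. Concretely, one factors $\eta\colon\gamma^*\cF\to\gamma^*\gamma_*a_{t*}a_t^*\gamma^*\cF$ through a map $\psi\colon a_{t*}a_t^*\gamma^*\cF\to\gamma^*\gamma_*a_{t*}a_t^*\gamma^*\cF$, proves $\psi$ is a monomorphism (via the unit--counit identities and the universal property of sheafification), and then shows that for every $f\in\lCor(X,Y)$ the transfer map $\alpha_f$ of $\gamma_*a_{t*}\cG_0$ satisfies $\im(\alpha_f\circ\psi(Y))\subset\im\psi(X)$. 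Proving this image-stability is where Lemma \ref{A.8.23} is genuinely needed: one lifts a section of $\cG_0(Y)$ to $\cF(Y')$ over a $t$-cover, lifts the correspondence to $X'\to X$, checks the containment there via naturality of $\eta$, and descends using the sheaf condition on $\cG_0$ together with injectivity of $\psi$. In other words, the "lift a correspondence along a cover, then use the sheaf condition" argument that you reserve for the uniqueness step is in fact the heart of the \emph{existence} proof; your uniqueness sketch is essentially in line with the paper's, but the existence argument needs to be rebuilt around $\psi$ and its image.
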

\begin{proof}
Let us first construct $\cG\in \Shvltrtkl$.
The unit $\cF\rightarrow \gamma_*a_{t*}a_t^*\gamma^*\cF$ induces a morphism
\[
\eta\colon \gamma^*\cF\rightarrow \gamma^*\gamma_*a_{t*}a_t^*\gamma^*\cF
\]
Here $\gamma^*\gamma_*a_{t*}a_t^*\gamma^*\cF$ is a $t$-sheaf because $t$ is mildly compatible with log transfers.  
Hence according to the universal property of the sheafification functor, the latter morphism admits a factorization
\[
\gamma^*\cF\rightarrow a_{t*}a_t^* \gamma^* \cF\stackrel{\psi}\rightarrow \gamma^*\gamma_*a_{t*}a_t^*\gamma^*\cF
\]
that is unique up to isomorphism. 
Next, we consider the commutative diagram
\[
\begin{tikzcd}
\gamma^*\cF\arrow[r,"ad"']\arrow[d,"ad"]&\gamma^*\gamma_*\gamma^*\cF\arrow[d,"ad"]\arrow[r,"ad'"]&\gamma^*\cF\arrow[d,"ad"]\\
a_{t*}a_t^*\gamma^*\cF\arrow[r,"\psi"]&\gamma^*\gamma_*a_{t*}a_t^*\gamma^*\cF\arrow[r,"ad'"]&a_{t*}a_t^*\gamma^*\cF.
\end{tikzcd}
\]
Here $ad$ (resp.\ $ad'$) is induced by a unit (resp.\ counit).
Let $\psi'$ be the composition of the lower horizontal morphisms.
Since the composition of the upper horizontal morphisms is the identity by the unit-counit identity, there is a commutative diagram
\[
\begin{tikzcd}
&
\gamma^*\cF\arrow[rd,"ad"]\arrow[ld,"ad"']
\\
a_{t*}a_t^*\gamma^*\cF\arrow[rr,"\psi'"]&&a_{t*}a_t^*\gamma^*\cF.
\end{tikzcd}
\]
The diagram still commutes if we replace $\psi'$ by ${\rm id}$.
By the universal property of sheafification, the morphism 
\[
a_{t*}a_t^*\gamma^*\cF\rightarrow a_{t*}a_t^*\gamma^*\cF
\] 
making the above diagram commutative is unique, 
and we deduce $\psi'={\rm id}$.
It follows that $\psi$ is a monomorphism.
\vspace{0.1in}

Let $f\in \lCor(X,Y)$ be a finite log correspondence, where $X,Y\in lSm/k$, and set $\cG=a_t^*\gamma^*\cF$.
Then $f$ and $\psi$ induce a diagram of $\Lambda$-modules
\[
\begin{tikzcd}
\cG(Y)\arrow[r,"{\psi(Y)}"]& \hom_{\Shvlogtkl}(a_t^*\gamma^*\Zltr(Y),\cG)\arrow[d,"\alpha_f"]\\
\cG(X)\arrow[r,"{\psi(X)}"]& \hom_{\Shvlogtkl}(a_t^*\gamma^*\Zltr(X),\cG).
\end{tikzcd}
\]
Since $\psi$ is a monomorphism, to show the log transfer structure on $\gamma_*\alpha_{t*}\cG$ induces a log transfer structure on $\cG$ it suffices to show that
\begin{equation}
\label{A.8.6.2}
\im (\alpha_f\circ \psi(Y)) \subset \im \psi(X).
\end{equation}

For every $a\in \cG(Y)$ there exists an element $b'\in \cF(Y')$ for some $t$-cover $q\colon Y'\rightarrow Y$ such that the images of $a$ and $b'$ in $\cG(Y')$ coincide with some $a'\in \cG(Y')$.
According to Lemma \ref{A.8.23} there exists a $t$-cover $p'\colon X'\rightarrow X$ with a commutative diagram in $lCor/k$
\begin{equation}
\label{A.8.6.1}
\begin{tikzcd}
X'\arrow[d,"p'"']\arrow[r,"f'"]&Y'\arrow[d,"p"]\arrow[d]\\
X\arrow[r,"f"]&Y.
\end{tikzcd}
\end{equation}
Then there is a diagram of $\Lambda$-modules
\[
\begin{tikzcd}
\cG(Y')\arrow[r,"{\psi(Y')}"]& \hom_{\Shvlogtkl}(a_t^*\gamma^*\Zltr(Y'),\cG)\arrow[d,"\alpha_{f'}"]\\
\cG(X')\arrow[r,"{\psi(X')}"]& \hom_{\Shvlogtkl}(a_t^*\gamma^*\Zltr(X'),\cG).
\end{tikzcd}
\]
Suppose that $(\alpha_{f'}\circ \psi(Y'))(a')=\psi(X')(d')$ for some $d'\in \cG(X')$.
The two images of $a'$ in $\cG(Y'\times_Y Y')$ are equal since $\cG$ is a presheaf.
Thus the two images of $\psi(X')(d')$ in $\hom_{\Shvlogtkl}(a_t^*\gamma^*\Zltr(X'\times_X X'),\cG)$ are equal.
Since $\psi$ is a monomorphism, this implies that the two images of $d'$ in $\cG(X'\times_X X')$ are equal.
Thus there exists $d\in \cG(X)$ whose image in $\cG(X')$ is equal to $d'$ since $\cG$ is a sheaf.
Using the exactness of \eqref{A.8.3.1}, we see that
\[
(\alpha_{f}\circ \psi(Y))(a) = \psi(X)(d).
\]
Hence let us show that $(\alpha_{f'}\circ \psi(Y'))(a')=\psi(X')(d')$ for some $d'\in \cG(X')$.
\vspace{0.1in}

The morphism $\eta\colon \gamma^*\cF\rightarrow \gamma^*\gamma_*a_{t*}a_t^*\gamma^*\cF$ induces a commutative diagram of $\Lambda$-modules
\begin{equation}
\label{equation:FGdiagram}
\begin{tikzcd}
\cF(Y')\arrow[r,"\eta(Y')"]\arrow[d,"\cF(f')"']&\hom_{\Shvlogtkl}(a_t^*\gamma^*\Zltr(Y'),\cG)\arrow[d,"\alpha_{f'}"]\\
\cF(X')\arrow[r,"\eta(X')"]&\hom_{\Shvlogtkl}(a_t^*\gamma^*\Zltr(X'),\cG).
\end{tikzcd}
\end{equation}
This implies that $(\alpha_{f'}\circ \psi(Y'))(a')=\psi(X')(d')$ for some $d'\in \cG(X')$.
We deduce the claimed log transfer structure on $\cG$.
\vspace{0.1in}

Since $\psi$ is a monomorphism, 
combining the diagram \eqref{equation:FGdiagram} for $f\colon X\rightarrow Y$ together with \eqref{A.8.6.2} we obtain a canonical commutative diagram of $\Lambda$-modules
\[
\begin{tikzcd}
\cF(Y)\arrow[d,"\cF(f)"']\arrow[r]&
\cG(Y)\arrow[d,"\cG(f)"]\arrow[d]
\\
\cF(X)\arrow[r]&
\cG(X).
\end{tikzcd}
\]
Thus we obtain a morphism $u\colon \cF\rightarrow a_{t*}\cG$ of presheaves with log transfers such that $\gamma^*u$ is isomorphic to the sheafification 
$\gamma^*\cF\rightarrow a_{t*}a_t^*\gamma^*\cF$.
\vspace{0.1in}

For uniqueness, 
suppose $\cG'$ is a $t$-sheaf with log transfers that is equipped with a morphism $u'\colon \cF\rightarrow a_{t*}\cG'$ of presheaves with log transfers such that $\gamma^*u'$ 
is isomorphic to the sheafification $\gamma^*\cF\rightarrow a_{t*}a_t^*\gamma^*\cF$. 
Since $\cG$ and $\cG'$ agree on objects, 
it suffices to check that $\cG(f)=\cG'(f)$ for any $f\in \lCor(X,Y)$ where $X,Y\in lSm/k$. 
Using the commutative diagram \eqref{A.8.6.1}, we see this question is $t$-local on $X$ and $Y$.
Hence after shrinking $Y$ we may assume that for any $a\in \cG(Y)$, there is an element $b\in \cF(Y)$ whose image in $\cG(Y)$ is equal to $a$.
The morphisms $u\colon \cF\rightarrow a_{t*} \cG$ and $u'\colon \cF\rightarrow a_{t*}\cG'$ induce a commutative diagram of $\Lambda$-modules
\[
\begin{tikzcd}
\cG'(Y)\arrow[r,leftarrow]\arrow[d,"{\cG'(f)}"']&\cF(Y)\arrow[d,"\cF(f)"']\arrow[r]&\cG(Y)\arrow[d,"\cG(f)"]\\
\cG'(X)\arrow[r,leftarrow,"{u'(X)}"]&\cF(X)\arrow[r,"u(X)"]&\cG(X).
\end{tikzcd}
\]
With the identification of $\cG$ and $\cG'$ on objects, the homomorphisms $u(X)$ and $u'(X)$ become equal. 
This allows us to conclude 
\[
\cG(f)(a)=u(X)(\cF(f)(b))=u'(X)(\cF(f)(b))=\cG'(f)(a).
\]
\end{proof}

\begin{prop}
\label{A.8.5}
Let $t$ be a topology on $lSm/k$ mildly compatible with log transfers. 
Then the functor 
\[
a_{t*}\colon \Shvltrtkl\rightarrow \Pshltrkl
\] 
admits a left adjoint functor $a_t^*$.
Moreover, the functor $a_t^*$ is exact.
\end{prop}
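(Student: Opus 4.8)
**The plan is to construct $a_t^*$ via Lemma \ref{A.8.6}, which already produces the sheafification on the nose.** Recall that Lemma \ref{A.8.6} associates to every $\cF\in\Pshltrkl$ a $t$-sheaf with log transfers $\cG$, together with a morphism $u\colon\cF\to a_{t*}\cG$ in $\Pshltrkl$ whose underlying morphism $\gamma^*u$ is the sheafification unit $\gamma^*\cF\to a_{t*}a_t^*\gamma^*\cF$, and that the pair $(\cG,u)$ is unique up to unique isomorphism. So first I would simply \emph{define} $a_t^*\cF:=\cG$ on objects, using uniqueness to make this a functor: given a morphism $\varphi\colon\cF\to\cF'$ in $\Pshltrkl$, the composite $\cF\xrightarrow{\varphi}\cF'\xrightarrow{u'}a_{t*}a_t^*\cF'$ has underlying morphism factoring (by the universal property of $\gamma^*$-sheafification, since $\gamma^*a_{t*}a_t^*\cF'$ is a $t$-sheaf) through $a_{t*}a_t^*\gamma^*\cF\cong\gamma^*a_{t*}a_t^*\cF$; but to lift this factorization to $\Pshltrkl$ I invoke the uniqueness clause of Lemma \ref{A.8.6} applied to $\cF$, which says any morphism $\cF\to a_{t*}\cH$ with $\cH$ a $t$-sheaf with log transfers and $\gamma^*(\cF\to a_{t*}\cH)$ factoring through the sheafification is itself uniquely of the form $(\text{sheafification unit})$ followed by a map $a_t^*\cF\to\cH$. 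Functoriality (identities, composition) then follows formally from this uniqueness.

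**Next I would check the adjunction $a_t^*\dashv a_{t*}$.** The unit is $u\colon\cF\to a_{t*}a_t^*\cF$. For $\cG\in\Shvltrtkl$ and $\cF\in\Pshltrkl$ I must produce a natural bijection $\hom_{\Shvltrtkl}(a_t^*\cF,\cG)\cong\hom_{\Pshltrkl}(\cF,a_{t*}\cG)$. Composition with $u$ gives the forward map $\hom_{\Shvltrtkl}(a_t^*\cF,\cG)\to\hom_{\Pshltrkl}(\cF,a_{t*}\cG)$. For injectivity and surjectivity: applying $\gamma^*$ reduces everything to the classical sheafification adjunction $a_t^*\dashv a_{t*}$ on $lSm/k$ from Definition \ref{A.8.1} (i.e. $\hom_{\Shvlogtkl}(a_t^*\gamma^*\cF,\gamma^*\cG)\cong\hom_{\Pshlogkl}(\gamma^*\cF,\gamma^*a_{t*}\cG)$), together with the fact that a morphism of $t$-sheaves with log transfers is the same datum as a morphism of underlying $t$-sheaves that is compatible with the (unique, since the transfer structure on $\cG$ is determined and that on $a_t^*\cF$ was built to be compatible) transfer maps — this compatibility is automatic because $\psi$ is a monomorphism, exactly as exploited in the proof of Lemma \ref{A.8.6}. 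So the forward map is a bijection.

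**Finally, exactness of $a_t^*$.** Since $a_t^*$ has a right adjoint $a_{t*}$, it is right exact automatically. For left exactness I would argue that $a_t^*$ commutes with finite limits by reducing to the underlying categories: the forgetful functor $\gamma^*\colon\Shvltrtkl\to\Shvlogtkl$ is exact and conservative (kernels and cokernels of morphisms of sheaves with log transfers are computed as sheaves, with their induced transfer structures, by Proposition \ref{A.8.4} and the construction of the transfer structure on sub/quotient objects), and by construction $\gamma^*a_t^*\cF\cong a_t^*\gamma^*\cF$; the classical sheafification $a_t^*\colon\Pshlogkl\to\Shvlogtkl$ is exact. Hence for a short exact sequence $0\to\cF'\to\cF\to\cF''\to 0$ in $\Pshltrkl$, applying $\gamma^*$ and then $a_t^*$ gives an exact sequence in $\Shvlogtkl$, which equals $\gamma^*$ applied to $0\to a_t^*\cF'\to a_t^*\cF\to a_t^*\cF''\to 0$; conservativity of $\gamma^*$ on exactness then forces the latter to be exact in $\Shvltrtkl$.

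**The main obstacle** is the bookkeeping in the very first step: verifying that $a_t^*$ is genuinely a functor, i.e. that the assignment on morphisms is well-defined and respects composition. This is not a computation but a careful chase through the uniqueness assertion in Lemma \ref{A.8.6}, making sure the lifts of factorizations through $\gamma^*$-sheafification to the category with log transfers are consistent. Everything else is a formal consequence of adjunction yoga and the exactness of classical sheafification. One should also take a moment to note naturality of $u$ in $\cF$, which again follows from the uniqueness in Lemma \ref{A.8.6}.
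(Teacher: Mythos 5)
Your proposal is correct and follows essentially the same route as the paper: define $a_t^*\cF$ via the pair $(\cG,u)$ from Lemma \ref{A.8.6}, use its uniqueness clause for functoriality and for the adjunction data, and deduce left exactness by applying the exact functor $\gamma^*$, the exactness of classical sheafification, and the fact that $\gamma^*\cG=0$ forces $\cG=0$. The only cosmetic difference is that the paper verifies the adjunction by constructing the unit and counit and checking the triangle identities against their sheaf-level counterparts, whereas you check directly that composition with $u$ is a bijection; both rest on the same uniqueness statement.
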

\begin{proof}
For $\cF\in \Pshltrkl$ we define $a_t^*\cF\colon =\cH$, 
where $\cH\in \Shvltrtkl$ is constructed as in Lemma \ref{A.8.6}. 
The uniqueness part in Lemma \ref{A.8.6} ensures that we obtain a functor 
\[
a_t^*\colon \Pshltrkl\rightarrow \Shvltrtkl.
\]
It remains to show $a_t^*$ is a left adjoint of $a_{t*}$. 
Again, by uniqueness, the morphism $u\colon \cF\rightarrow a_{t*}\cH$ in $\Pshltrkl$ constructed in Lemma \ref{A.8.6} gives rise to a natural transformation
\[
{\rm id}\rightarrow a_{t*}a_t^*.
\]
For $\cG\in \Shvltrtkl$, the uniqueness part also implies that there is an isomorphism
\[
a_t^*a_{t*}\cG\cong \cG.
\]
By functoriality of this isomorphism, we obtain a natural transformation
\[
a_t^*a_{t*}\rightarrow {\rm id}.
\]
Owing to the uniqueness part in Lemma \ref{A.8.6}, 
the unit and counit triangle identities follow from the corresponding identities for $a_t^*\colon \Pshlogkl\rightarrow \Shvlogtkl$ and $a_{t*}\colon \Shvlogtkl\rightarrow \Pshlogkl$.
\vspace{0.1in}

Since $a_t^*$ is a left adjoint, $a_t^*$ is right exact.
It remains to show that $a_t^*$ is left exact.
Let $f\colon \cF\rightarrow \cF'$ be a monomorphism of presheaves with log transfers.
The functor $\gamma^*$ is exact since $\gamma^*$ has both left and right adjoints, and the functor
\[
a_t^*\colon \Pshlogkl\rightarrow \Shvlogtkl
\]
is known to be exact.
It follows that
\[
a_t^*\gamma^*f\colon a_t^*\gamma^*\cF\rightarrow a_t^*\gamma^*\cF'
\]
is a monomorphism of $t$-sheaves.
Since $a_t^*$ commutes with $\gamma^*$ by Proposition \ref{A.8.4},
\[
\gamma^*a_t^*f\colon \gamma^*a_t^*\cF\rightarrow \gamma^*a_t^*\cF'
\]
is a monomorphism of $t$-sheaves.
Let $\cG$ be a kernel of $a_t^*f\colon a_t^*\cF\rightarrow a_t^*\cF'$.
It follows that $\gamma^*\cG=0$.
This means that $\cG(X)=0$ for all $X\in lSm/k$, so $\cG=0$.
Thus $a_t^*f$ is a monomorphism.
\end{proof}

\begin{prop}
\label{A.8.9}
Let $t$ be a topology on $lSm/k$ mildly compatible with log transfers. 
Then the functor
\[
\gamma^*\colon \Shvltrtkl\rightarrow \Shvlogtkl
\]
admits a left adjoint functor $\gamma_\sharp$.
\end{prop}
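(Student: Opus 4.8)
The plan is to produce the left adjoint as an explicit composite of functors that are already available: set
\[
\gamma_\sharp \;:=\; a_t^*\circ \gamma_\sharp\circ a_{t*}\colon \Shvlogtkl\longrightarrow \Shvltrtkl,
\]
where $a_{t*}\colon \Shvlogtkl\hookrightarrow \Pshlogkl$ is the inclusion, the middle $\gamma_\sharp\colon \Pshlogkl\to \Pshltrkl$ is the presheaf-level left adjoint of $\gamma^*$ furnished by \cite[Proposition I.5.1]{SGA4}, and the outer $a_t^*\colon \Pshltrkl\to \Shvltrtkl$ is the sheafification functor with log transfers constructed in Proposition \ref{A.8.5} (this is exactly where the hypothesis of mild compatibility enters). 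Since $a_t^*$ takes values in $\Shvltrtkl$, the composite is well defined, i.e.\ $a_t^*\gamma_\sharp a_{t*}\cG$ really is a $t$-sheaf with log transfers.

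To check that this is left adjoint to $\gamma^*\colon \Shvltrtkl\to \Shvlogtkl$, I would chain the relevant adjunctions: for $\cG\in \Shvlogtkl$ and $\cF\in \Shvltrtkl$,
\begin{align*}
\hom_{\Shvltrtkl}(a_t^*\gamma_\sharp a_{t*}\cG,\cF)
&\cong \hom_{\Pshltrkl}(\gamma_\sharp a_{t*}\cG,a_{t*}\cF)\\
&\cong \hom_{\Pshlogkl}(a_{t*}\cG,\gamma^* a_{t*}\cF)\\
&\cong \hom_{\Pshlogkl}(a_{t*}\cG,a_{t*}\gamma^*\cF)\\
&\cong \hom_{\Shvlogtkl}(\cG,\gamma^*\cF).
\end{align*}
The first isomorphism is the adjunction $a_t^*\dashv a_{t*}$ on presheaves/sheaves with log transfers (Proposition \ref{A.8.5}); the second is the presheaf-level adjunction $\gamma_\sharp\dashv\gamma^*$; the third is the identity $\gamma^*a_{t*}\cong a_{t*}\gamma^*$ recorded in \eqref{A.8.1.1}; and the last uses that $a_{t*}\colon\Shvlogtkl\hookrightarrow\Pshlogkl$ is fully faithful, together with the fact that $\gamma^*\cF$ is already a $t$-sheaf by the very definition of $\Shvltrtkl$. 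Each step is natural in both $\cG$ and $\cF$, so composing them yields the asserted adjunction.

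There is essentially no obstacle beyond bookkeeping here: the argument is a formal composition of adjoint functors, and the only substantive input is the existence of the sheafification functor $a_t^*$ on $\Pshltrkl$ — that is, Proposition \ref{A.8.5} (equivalently Lemma \ref{A.8.6}) — which is where all the real work, and the hypothesis of mild compatibility with log transfers, has already been spent. If one wishes, one can also note that the unit and counit of the new adjunction are compatible, via $\gamma^*a_{t*}\cong a_{t*}\gamma^*$, with those of the presheaf-level adjunction, but this is not needed for the statement.
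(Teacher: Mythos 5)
Your proposal is correct and is essentially identical to the paper's own proof: the paper also defines the left adjoint as $a_t^*\gamma_\sharp a_{t*}$ and verifies the adjunction by chaining the adjunctions $a_t^*\dashv a_{t*}$ and $\gamma_\sharp\dashv\gamma^*$ together with the commutation $\gamma^*a_{t*}\cong a_{t*}\gamma^*$ and full faithfulness of $a_{t*}$. Your write-up is in fact slightly more explicit about the intermediate Hom-sets, but the content is the same.
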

\begin{proof}
For $\cF\in \Shvlogtkl$ and $\cG\in \Shvltrtkl$, we have isomorphisms
\[
\begin{split}
&\hom_{\Shvltrtkl}(a_t^*\gamma_\sharp a_{t*}\cF,\cG)\\
\cong &\hom_{\Pshltrkl}(a_{t*}\cF,\gamma^*a_{t*}\cG)\\
\cong &\hom_{\Pshltrkl}(a_{t*}\cF,a_{t*}\gamma^*\cG)\quad \text{(since }\gamma^*\text{ and }a_{t*}\text{ commute)}\\
\cong &\hom_{\Shvltrtkl}(\cF,\gamma^*\cG)\quad \text{(since }a_{t*}\text{ is fully faithful}).
\end{split}
\]
By functoriality in $\cF$ and $\cG$, it follows that $a_t^*\gamma_\sharp a_{t*}$ is a left adjoint of $\gamma^*$.
\end{proof}

We recall the definition of Grothendieck abelian category (see e.g., \cite[Chapter V]{MR0389953} for details).
An abelian category $\cA$ is called a {\it Grothendieck abelian category}\index{Grothendieck abelian category} with the following conditions.
\begin{enumerate}
\item[(i)] $\cA$ admits small sums.
\item[(ii)] Filtered colimits of exact sequences in $\cA$ is exact.
\item[(iii)] There is a generator of $\cA$, i.e., there is an element of $G\in \cA$ such that the functor $\hom_{\cA}(G,-)\colon \cA\rightarrow {\bf Set}$ is faithful.
\end{enumerate}

\begin{prop}
\label{A.8.7}
Let $t$ be a topology on $lSm/k$ mildly compatible with log transfers. Then $\Shvltrtkl$ is a Grothendieck abelian category.
\end{prop}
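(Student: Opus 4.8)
The plan is to verify conditions (i)--(iii) of the definition of Grothendieck abelian category for $\Shvltrtkl$ by transporting the corresponding facts about $\Shvlogtkl$ (which is a Grothendieck abelian category because it is a category of sheaves of $\Lambda$-modules on a site) along the adjoint functors constructed in the preceding propositions. The two key tools are: the exact left adjoint $a_t^*\colon \Pshltrkl\to\Shvltrtkl$ of the inclusion $a_{t*}$ from Proposition \ref{A.8.5}, and the functor $\gamma^*\colon \Shvltrtkl\to\Shvlogtkl$ together with its exactness and the compatibility $\gamma^*a_{t*}\cong a_{t*}\gamma^*$ from \eqref{A.8.1.1}, plus its left adjoint $\gamma_\sharp$ from Proposition \ref{A.8.9}. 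The crucial structural observation is that $\gamma^*$ is faithful and exact: faithful because $\gamma^*\cF(X)=\cF(\gamma(X))$ recovers the value of $\cF$ on every object $X\in lSm/k$, so $\gamma^*\cF=0$ forces $\cF=0$; exact because $\gamma^*$ admits both a left and a right adjoint (as already used in the proof of Proposition \ref{A.8.5}). Combined with the fact that $\Shvltrtkl$ is abelian — which follows since $a_{t*}$ is fully faithful, $a_t^*$ is exact, and kernels/cokernels can be computed by sheafifying the underlying presheaf-level constructions — this lets us detect exactness and (co)limits in $\Shvltrtkl$ after applying $\gamma^*$.

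First I would record that $\Shvltrtkl$ is abelian and admits all small colimits: kernels are computed as in $\Pshltrkl$ (which is abelian, being a functor category into $\Lambda$-modules), and cokernels and small sums are obtained by applying the exact functor $a_t^*$ to the presheaf-level cokernel or direct sum; the underlying presheaf with log transfers of a colimit is the sheafification of the presheaf colimit. This gives condition (i). For condition (ii), filtered colimits: a sequence in $\Shvltrtkl$ is exact iff its image under $\gamma^*$ is exact in $\Shvlogtkl$, because $\gamma^*$ is faithful and exact; and $\gamma^*$ commutes with filtered colimits since it has a right adjoint $\gamma_*$ — wait, more carefully, $\gamma^*$ has a \emph{left} adjoint $\gamma_\sharp$, hence preserves limits, and also has the right adjoint $\gamma_*$ (Proposition \ref{A.8.4}), hence preserves colimits, so in particular filtered colimits. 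Since filtered colimits are exact in the Grothendieck category $\Shvlogtkl$, and $\gamma^*$ carries a filtered colimit of exact sequences to a filtered colimit of exact sequences there, exactness is preserved; applying faithfulness of $\gamma^*$ on exactness gives condition (ii) in $\Shvltrtkl$.

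For condition (iii), a generator, I would take $G:=\bigoplus_{X\in S} a_t^*\Zltr(X)$, where $S$ is a (small) set of isomorphism classes of objects of $lSm/k$. By the Yoneda-type adjunction $\hom_{\Shvltrtkl}(a_t^*\Zltr(X),\cF)\cong \cF(X)$ (valid since $\Zltr(X)$ represents $\lCor(-,X)\otimes\Lambda$ and $a_t^*$ is left adjoint to $a_{t*}$), a morphism $\cF\to\cG$ in $\Shvltrtkl$ induces the zero map on $\hom_{\Shvltrtkl}(G,-)$ iff it induces the zero map on every $\cF(X)$, i.e. iff it is zero; hence $\hom_{\Shvltrtkl}(G,-)$ is faithful. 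This completes the verification. The main obstacle in writing this out cleanly is bookkeeping around the adjunction $\hom_{\Shvltrtkl}(a_t^*\Zltr(X),\cF)\cong\cF(X)$ and making sure the presheaf underlying a colimit of sheaves is correctly identified as the sheafification of the presheaf colimit — both are routine given Proposition \ref{A.8.5} and the exactness of $a_t^*$, but they must be stated precisely so that exactness of filtered colimits transports correctly; everything else is a formal consequence of the adjunctions and the faithful-exactness of $\gamma^*$ already established above.
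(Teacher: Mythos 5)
Your proposal is correct and follows essentially the same route as the paper: both deduce (i) and (ii) from the Grothendieck property of $\Pshltrkl$ and $\Shvlogtkl$ via the functor $\gamma^*$, which is exact because it has both a left and a right adjoint, and both exhibit the same generator $\bigoplus_{X\in lSm/k}a_t^*\Zltr(X)$ using the adjunction isomorphism $\hom_{\Shvltrtkl}(a_t^*\Zltr(X),\cF)\cong\cF(X)$. Your write-up merely spells out in more detail the transport of exactness of filtered colimits along the faithful exact $\gamma^*$, which the paper leaves implicit.
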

\begin{proof}
Let us check the above axioms of a Grothendieck abelian category.
The categories  $\Pshltrkl$ and $\Shvlogtkl$ are Grothendieck abelian categories, and $\gamma^*\colon \Shvltrtkl\rightarrow \Shvlogtkl$ is exact since $\gamma^*$ has both left and right adjoints.
Thus $\Shvltrtkl$ is an abelian category, and we have (i) and (ii). 
Since by the Yoneda lemma
\[
\hom_{\Shvltrtkl}(a_t^*\Zltr(X),\cF)\cong \hom_{\Pshltrkl}(\Zltr(X),\cF)\cong \cF(X),
\] 
the object $\bigoplus_{X\in lSm/k} a_t^*\Zltr(X)$ is a generator of $\Shvltrtkl$. Thus we have (iii).
\end{proof}
  
In conclusion, 
if $t$ is a topology on $lSm/k$ mildly compatible with log transfers, 
there exist adjunctions between Grothendieck abelian categories
\begin{equation}
\label{A.8.7.1}
\begin{tikzcd}
\Pshlogkl\arrow[d,shift left=0.75ex,leftarrow,"a_{t*}"]\arrow[d,shift right=0.75ex,"a_{t}^*"']
\arrow[r,shift left=1.5ex,"\gamma_\sharp"]\arrow[r,leftarrow,"\gamma^*" description]
\arrow[r,shift right=1.5ex,"\gamma_*"']&\Pshltrkl
\arrow[d,shift left=0.75ex,leftarrow,"a_{t*}"]\arrow[d,shift right=0.75ex,"a_{t}^*"']   \\
\Shvlogtkl\arrow[r,shift left=1.5ex,"\gamma_\sharp"]
\arrow[r,leftarrow,"\gamma^*" description]\arrow[r,shift right=1.5ex,"\gamma_*"']
&\Shvltrtkl
\end{tikzcd}
\end{equation}
such that the following properties hold:
\begin{enumerate}
\item[(i)] $a_t^*$ is left adjoint to the corresponding $a_{t*}$
\item[(ii)] $\gamma^*$ is left (resp.\ right) adjoint to the corresponding $\gamma_*$ (resp.\ $\gamma_\sharp$)
\item[(iii)] $a_{t*}\gamma_*\cong \gamma_*a_{t*}$
\item[(iv)] $a_{t*}\gamma^*\cong \gamma^*a_{t*}$
\end{enumerate}

\begin{prop}
\label{A.8.11}
Let $t$ be a topology on $lSm/k$.
Suppose that for every $t$-cover $f\colon Y\rightarrow X$ the complex
\begin{equation}
\label{A.8.11.1}
\cdots\rightarrow a_t^*\gamma^*\Zltr(Y\times_X Y)\rightarrow a_t^*\gamma^*\Zltr(Y)\rightarrow a_t^*\gamma^*\Zltr(X)\rightarrow 0
\end{equation}
associated with the \v{C}ech nerve of $f$ is acyclic.
Then $t$ is compatible with log transfers.
\end{prop}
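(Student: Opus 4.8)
Unfolding Definition~\ref{A.8.2}, what must be shown is that for every $t$-hypercover $\mathscr{X}\to X$ in $lSm/k$ the augmentation $a_t^*\gamma^*\Zltr(\mathscr{X})\to a_t^*\gamma^*\Zltr(X)$ is a quasi-isomorphism in $\Co(\Shvlogtkl)$, where the source is the normalized complex attached to the simplicial sheaf $n\mapsto a_t^*\gamma^*\Zltr(\mathscr{X}_n)$. Since the sheafification $a_t^*\colon\Pshlogkl\to\Shvlogtkl$ is exact and preserves coproducts, and $\Zltr(-)$ and $\gamma^*$ carry disjoint unions of log schemes to direct sums (the decomposition recorded after Definition~\ref{A.5.2}), the whole statement lives among complexes of $t$-sheaves. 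Write $C(-):=a_t^*\gamma^*\Zltr(-)$ for brevity, so that the hypothesis \eqref{A.8.11.1} says exactly that $C(\check{C}(Y/X))\to C(X)$ is a quasi-isomorphism for every $t$-cover $Y\to X$, and the target of the proposition is the same assertion with the \v{C}ech nerve $\check{C}(Y/X)$ replaced by an arbitrary $t$-hypercover.

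The plan is to run a \v{C}ech-to-hypercover induction for the representable correspondence sheaves $C(X)$. First I would reduce to \emph{bounded} hypercovers. Let $E_{\mathscr{X}}:={\rm cone}\bigl(C(\mathscr{X})\to C(X)\bigr)$; inspecting degrees, the term $E_{\mathscr{X},n}$ is $C(\mathscr{X}_{n-1})$ for $n\ge 1$ and $C(X)$ for $n=0$, so the homology $H_p(E_{\mathscr{X}})$, computed from $E_{\mathscr{X},p+1}\to E_{\mathscr{X},p}\to E_{\mathscr{X},p-1}$, depends only on $\mathscr{X}$ in simplicial degrees $\le p$. Because the relative coskeleton ${\rm cosk}_p^X\mathscr{X}$ agrees with $\mathscr{X}$ through degree $p$ and is again a $t$-hypercover, one gets $H_p(E_{\mathscr{X}})\cong H_p(E_{{\rm cosk}_p^X\mathscr{X}})$. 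Hence it is enough to prove that $E_{\mathscr{Y}}$ is acyclic for every $N$-coskeletal hypercover $\mathscr{Y}\to X$, which I would establish by induction on $N$. The base case $N=0$ is immediate, since ${\rm cosk}_0^X\mathscr{Y}=\check{C}(\mathscr{Y}_0/X)$ is the \v{C}ech nerve of the cover $\mathscr{Y}_0\to X$ and the hypothesis applies verbatim.

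For the inductive step, given an $(N{+}1)$-coskeletal hypercover $\mathscr{Y}$ I would set $\mathscr{Y}':={\rm cosk}_N^X\mathscr{Y}$, an $N$-coskeletal hypercover for which $C(\mathscr{Y}')\to C(X)$ is a quasi-isomorphism by induction; it then suffices to show that $C(\mathscr{Y})\to C(\mathscr{Y}')$ is a quasi-isomorphism. The two simplicial sheaves coincide in degrees $\le N$, and in degree $N{+}1$ the map is obtained from the $t$-cover $\mathscr{Y}_{N+1}\to M_N\mathscr{Y}=\mathscr{Y}'_{N+1}$ furnished by the hypercover condition, the higher degrees being forced by coskeletality. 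I expect \textbf{this elementary step to be the main obstacle}: one must identify the cone of $C(\mathscr{Y})\to C(\mathscr{Y}')$ with a complex assembled, degreewise, from the augmented \v{C}ech complex of the single cover $\mathscr{Y}_{N+1}\to M_N\mathscr{Y}$ (a latching/matching-object bookkeeping in the coskeletal tower), and then invoke the hypothesis applied to that cover to conclude acyclicity. Granting this, composing with the base case proves acyclicity of $E_{\mathscr{Y}}$ for all bounded hypercovers, and the reduction of the previous paragraph yields $H_p(E_{\mathscr{X}})=0$ for every $p$ and every hypercover $\mathscr{X}\to X$. Therefore $C(\mathscr{X})\to C(X)$ is a quasi-isomorphism, that is, $t$ is compatible with log transfers in the sense of Definition~\ref{A.8.2}.
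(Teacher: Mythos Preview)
Your reduction to bounded hypercovers and the induction on the coskeletal height match the paper's outline exactly. The divergence is at the inductive step, and there your proposal has a genuine gap --- precisely at the point you flag as the main obstacle.

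The cone of $C(\mathscr{Y})\to C(\mathscr{Y}')$ is not built in any evident way from the augmented \v{C}ech complex of the single cover $U:=\mathscr{Y}_{N+1}\to V:=M_N\mathscr{Y}$. Already for $N=0$ and a $1$-coskeletal $\mathscr{Y}$ one has in degree $2$ that $\mathscr{Y}_2$ is the object of ``empty triangles'' (maps out of $\partial\Delta^2$), mapping to $\mathscr{Y}'_2=Y_0\times_X Y_0\times_X Y_0$; this map is a composite of three different pullbacks of $U\to V$ along three different projections, not an iterated fibre power over $V$. For larger $n$ and $N$ the matching-object combinatorics only becomes more entangled, and there is no identification of the cone with (a shift or filtration by copies of) the augmented \v{C}ech complex of $U\to V$ that would let you read off acyclicity from the hypothesis.

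The paper avoids this difficulty by Voevodsky's bisimplicial trick \cite[Proposition~A4]{MR2034012}. With $\mathscr{Y}:={\rm cosk}_{n-1}\mathscr{X}$, one forms the \v{C}ech nerve of the map of simplicial objects $\mathscr{X}\to\mathscr{Y}$ and takes its diagonal $\mathscr{D}$. For each fixed $q$ the map $\mathscr{X}_q\to\mathscr{Y}_q$ is a $t$-cover (identity for $q\le n-1$; the hypercover condition for $q=n$; for $q>n$ a finite composite of pullbacks of the degree-$n$ cover along face maps), so the hypothesis makes every column acyclic and hence $C(\mathscr{D})\to C(\mathscr{Y})$ is a quasi-isomorphism. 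Since $\mathscr{X}$ is a simplicial retract of $\mathscr{D}$ over $X$ (diagonal inclusion and projection), the cone of $C(\mathscr{X})\to C(X)$ is a retract of the acyclic cone of $C(\mathscr{D})\to C(X)$, hence acyclic. This retract argument is the clean substitute for the direct cone analysis you attempted; the ``latching/matching bookkeeping'' you anticipate is exactly what it is designed to bypass.
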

\begin{proof}
Let $p\colon \mathscr{X}\rightarrow X$ be a $t$-hypercover.
To check that
\[
a_t^*\gamma^*\Zltr(\mathscr{X})\rightarrow a_t^*\gamma^*\Zltr(X)
\]
is a quasi-isomorphism it suffices to check that
\[
a_t^*\gamma^*\Zltr(\mathscr{X}_i)\rightarrow \cdots \rightarrow a_t^*\gamma^*\Zltr(\mathscr{X}_0)\rightarrow a_t^*\gamma^*\Zltr(X)\rightarrow 0
\]
is exact for all integer $i\geq 0$.
Hence we may assume that $\mathscr{X}$ is a bounded $t$-hypercover of height $n$.
\vspace{0.1in}

To proceed, we use the technique in the proof of \cite[Proposition A4]{MR2034012}.
If $n=0$, then $\mathscr{X}$ is a \v{C}ech nerve, so we are done.
Suppose that $n>0$, and consider the bounded $t$-hypercover 
\[
\mathscr{Y}:={\rm cosk}_{n-1} \mathscr{X}
\]
of height at most $n-1$.
By induction we have that $a_t^*\gamma^*\Zltr(\mathscr{Y})\rightarrow a_t^*\gamma^*\Zltr(X)$ is a quasi-isomorphism.
The \v{C}ech nerve of the canonical morphism $\mathscr{X}\rightarrow \mathscr{Y}$ is a bisimplicial complex;
let $\mathscr{D}$ denote its diagonal.
Since \eqref{A.8.11.1} is acyclic for every $t$-cover, there is a canonically induced quasi-isomorphism
\[
a_t^*\gamma^*\Zltr(\mathscr{D})\rightarrow a_t^*\gamma^*\Zltr(\mathscr{Y}).
\]
In particular, 
$a_t^*\gamma^*\Zltr(\mathscr{D})\rightarrow a_t^*\gamma^*\Zltr(X)$ is a quasi-isomorphism.
Since $\mathscr{X}$ is a retract of $\mathscr{D}$, 
see the proof of \cite[Proposition A4]{MR2034012},
we deduce that
\[
a_t^*\gamma^*\Zltr(\mathscr{X})\rightarrow a_t^*\gamma^*\Zltr(X)
\]
is a quasi-isomorphism.
\end{proof}

\subsection{Derived categories of sheaves with log transfers}\label{Subsection:derivedcategories}
For miscellaneous facts about model structures on chain complexes we refer the reader to Appendix \ref{AppendixB}.
Let $t$ be a topology on $lSm/k$ mildly compatible with log transfers. 
Passing to chain complexes in \eqref{A.8.7.1} we obtain the diagram of adjoint functors 
\begin{equation}
\label{A.8.0.1}
\begin{tikzcd}
\Co(\Pshlogkl)\arrow[d,shift left=0.75ex,leftarrow,"a_{t*}"]\arrow[d,shift right=0.75ex,"a_{t}^*"']
\arrow[r,shift left=0.75ex,"\gamma_\sharp"]\arrow[r,shift right=0.75ex,"\gamma^*"',leftarrow]&\Co(\Pshltrkl)\arrow[d,shift left=0.75ex,leftarrow,"a_{t*}"]\arrow[d,shift right=0.75ex,"a_{t}^*"']   \\
\Co(\Shvlogtkl)\arrow[r,shift left=0.75ex,"\gamma_\sharp"]\arrow[r,leftarrow,shift right=0.75ex,"\gamma^*"']
&\Co(\Shvltrtkl).
\end{tikzcd}
\end{equation}
We refer to Example \ref{A.8.28} for the descent structures on $\Pshlogkl$ and $\Shvlogtkl$. 
The functors $\gamma^*$ in \eqref{A.8.7.1} are exact since they have both left and right adjoints. 
If the topology $t$ is compatible with log transfers, 
then for any $t$-hypercover $\mathscr{X}\rightarrow X$, 
the cone of the morphism $a_t^*\gamma^*\Zltr(\mathscr{X})\rightarrow a_t^*\gamma^*\Zltr(X)$ is quasi-isomorphic to $0$. 
Thus by Proposition \ref{A.8.17}, we also obtain descent structures on $\Pshltrkl$ and $\Shvltrtkl$, and the $(\gamma_\sharp,\gamma^*)$'s are Quillen pairs with respect to the said descent model structures. 
Note also that the $a_t^*$'s satisfy the conditions of Proposition \ref{A.8.16} due to exactness. 
Owing to Propositions \ref{A.8.16} and \ref{A.8.17}, the $(a_t^*,a_{t*})$'s are Quillen pairs with respect to the descent model structures. 
\vspace{0.1in}

Since $\gamma^*$ preserves quasi-isomorphisms and monomorphisms, 
the adjunctions
\[
\gamma^*\colon \Co(\Pshltrkl)\rightleftarrows \Co(\Pshlogkl)\colon \gamma_*,
\]
\[
\gamma^*\colon \Co(\Shvltrtkl)\rightleftarrows \Co(\Shvltrtkl)\colon \gamma_*
\]
are Quillen pairs with respect to the injective model structures; see Definition \ref{A.8.26} for injective model structures.
Thus we have an induced diagram of adjunctions between triangulated categories 
\begin{equation}
\label{A.8.0.4}
\begin{tikzcd}
\Deri(\Pshlogkl)\arrow[d,shift left=0.75ex,leftarrow,"Ra_{t*}"]\arrow[d,shift right=0.75ex,"a_{t}^*"']
\arrow[r,shift left=1.5ex,"L\gamma_\sharp"]\arrow[r,"\gamma^*" description,leftarrow]\arrow[r,shift right=1.5ex,"R\gamma_*"']&\Deri(\Pshltrkl)\arrow[d,shift left=0.75ex,leftarrow,"Ra_{t*}"]\arrow[d,shift right=0.75ex,"a_{t}^*"']   \\
\Deri(\Shvlogtkl)\arrow[r,shift left=1.5ex,"L\gamma_\sharp"]\arrow[r,"\gamma^*" description,leftarrow]\arrow[r,shift right=1.5ex,"R\gamma_*"']
&\Deri(\Shvltrtkl),
\end{tikzcd}\end{equation}
where
\begin{enumerate}
\item[(i)] $L\gamma^*\cong \gamma^*\cong R\gamma^*$ since $\gamma^*$ is exact,
\item[(ii)] $a_t^*\cong La_t^*$ since $a_t^*$ is exact, see Proposition \ref{A.8.5} for the log transfer case.
\end{enumerate}
\vspace{0.1in}

Suppose that $\cW$ is an essentially small class of morphisms in $lSm/k$ that is stable by isomorphisms, products, and compositions. 
We consider $\cW$ as classes of morphisms of representable objects in $\Pshlogkl$, $\Pshltrkl$, $\Shvlogtkl$, and $\Shvltrtkl$.
Note that $a_t^*$ preserves $\cW$ because the object of $\Shvlogtkl$ (resp.\ $\Shvltrtkl$) representable by $X\in lSm/k$ is $a_t^*\Lambda(X)$ (resp.\ $a_t^*\Zltr(X)$).
The four adjunctions in \eqref{A.8.0.1} are Quillen adjunctions with respect to the $\cW$-local descent model structures furnished by Propositions \ref{A.8.20} and \ref{A.8.21}. 
Thus we have the following induced diagram of adjunctions between triangulated categories
\begin{equation}
\label{A.8.0.2}
\begin{tikzcd}
\Deri_{\cW}(\Pshlogkl)\arrow[d,shift left=0.75ex,leftarrow,"Ra_{t*}"]\arrow[d,shift right=0.75ex,"a_{t}^*"']
\arrow[r,shift left=0.75ex,"L\gamma_\sharp"]\arrow[r,shift right=0.75ex,"R\gamma^*"',leftarrow]&\Deri_{\cW}(\Pshltrkl)\arrow[d,shift left=0.75ex,leftarrow,"Ra_{t*}"]\arrow[d,shift right=0.75ex,"a_{t}^*"']   \\
\Deri_{\cW}(\Shvlogtkl)\arrow[r,shift left=0.75ex,"L\gamma_\sharp"]\arrow[r,shift right=0.75ex, "R\gamma^*"',leftarrow]
&\Deri_{\cW}(\Shvltrtkl).
\end{tikzcd}
\end{equation} 
Here $a_t^*\cong La_t^*$ since $a_t^*$ is exact by Proposition \ref{A.8.5} and  $a_t^*$ preserves $\cW$.

\begin{df}
\label{A.8.10}
Let $t$ be a topology on $lSm/k$, and let $\cF$ be a complex of $t$-sheaves with log transfers. 
For $X\in lSm/k$ and $i\in \Z$ we define hypercohomology groups by setting \index[notation]{Ht @ $\bH_t^i$}
\[
\bH_t^i(X,\cF) 
:=
\bH_t^i(X,\gamma^*\cF).
\]
\end{df}

\begin{prop}\label{A.8.8}
Let $t$ be a topology on $lSm/k$ compatible with log transfers. 
Then for all $X\in lSm/k$ and every complex $\cF$ of $t$-sheaves with log transfers, there is a canonical isomorphism
\[
\hom_{\Deri(\Shvltrtkl)}(a_t^*\Zltr(X),\cF[i])
\cong 
\bH_t^i(X,\cF).
\]
\end{prop}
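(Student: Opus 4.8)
The plan is to transport the computation to the category of $t$-sheaves \emph{without} transfers through the Quillen adjunction $(\gamma_\sharp,\gamma^*)$, the point being that $a_t^*\Zltr(X)$ is, up to canonical isomorphism, the functor $\gamma_\sharp$ applied to the representable $t$-sheaf attached to $X$. Recall that since $t$ is compatible with log transfers it is mildly compatible (Proposition \ref{A.8.3}), so the adjunctions and abelian categories above are available.

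First I would establish two elementary identities. Using the Yoneda lemma in $\LCor$ together with the presheaf-level adjunction $\gamma_\sharp\dashv\gamma^*$ (see Definition \ref{A.5.4}), for every $\cG\in\Pshltrkl$ one has natural isomorphisms $\hom_{\Pshltrkl}(\gamma_\sharp\Lambda(X),\cG)\cong(\gamma^*\cG)(X)\cong\hom_{\Pshltrkl}(\Zltr(X),\cG)$, whence $\gamma_\sharp\Lambda(X)\cong\Zltr(X)$. Next, passing to adjoints and invoking $\gamma^*a_{t*}\cong a_{t*}\gamma^*$ from \eqref{A.8.1.1}, one checks that $a_t^*\gamma_\sharp\cong\gamma_\sharp a_t^*$ as functors $\Pshlogkl\to\Shvltrtkl$: a map $u$ of presheaves with $a_t^*u$ an isomorphism is sent by $\gamma_\sharp$ to a map inverted by $a_t^*$, by the universal property of sheafification applied after passing to adjoints, and this forces $\gamma_\sharp(a_t^*\Lambda(X))\cong a_t^*\gamma_\sharp\Lambda(X)\cong a_t^*\Zltr(X)$, where $\gamma_\sharp$ on the sheaf side is the left adjoint of Proposition \ref{A.8.9}.

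With these identities at hand, I would use that $t$ is compatible with log transfers: as recalled in the discussion around \eqref{A.8.0.1}, this yields descent model structures on $\Co(\Shvlogtkl)$ and $\Co(\Shvltrtkl)$ for which $(\gamma_\sharp,\gamma^*)$ is a Quillen pair, and in which the representable sheaf $a_t^*\Lambda(X)$ is cofibrant. Hence $L\gamma_\sharp(a_t^*\Lambda(X))\cong\gamma_\sharp(a_t^*\Lambda(X))\cong a_t^*\Zltr(X)$, while $R\gamma^*\cong\gamma^*$ since $\gamma^*$ is exact. The derived adjunction then produces a canonical isomorphism
\[
\hom_{\Deri(\Shvltrtkl)}(a_t^*\Zltr(X),\cF[i])
\cong
\hom_{\Deri(\Shvlogtkl)}(a_t^*\Lambda(X),\gamma^*\cF[i]).
\]
By the transfer-free counterpart of the assertion, namely the identification of derived maps out of a representable $t$-sheaf with hypercohomology (cf.\ the discussion around Corollary \ref{A.10.10}), the right-hand side is $\bH_t^i(X,\gamma^*\cF)$, which by Definition \ref{A.8.10} is exactly $\bH_t^i(X,\cF)$.

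The argument is essentially formal, and the only steps requiring attention are those in the second paragraph: the commutation of $\gamma_\sharp$ with $t$-sheafification, and the cofibrancy of $a_t^*\Lambda(X)$ in the descent model structure (which holds because representables are cofibrant and left Bousfield localization does not alter the cofibrations). I do not expect any genuine obstacle beyond this bookkeeping.
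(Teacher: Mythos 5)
Your proposal is correct and follows essentially the same route as the paper's proof: both reduce the computation via the derived adjunction $(L\gamma_\sharp,\gamma^*)$ coming from the descent model structures, use cofibrancy of $a_t^*\Lambda(X)$ together with the commutation $a_t^*\gamma_\sharp\cong\gamma_\sharp a_t^*$ deduced from \eqref{A.8.1.1} to identify $L\gamma_\sharp a_t^*\Lambda(X)$ with $a_t^*\Zltr(X)$, and then invoke Definition \ref{A.8.10}. The only difference is that you spell out the Yoneda identification $\gamma_\sharp\Lambda(X)\cong\Zltr(X)$ and the adjoint-passage argument for the commutation, which the paper leaves implicit.
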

\begin{proof}
The functor 
\[
L\gamma_\sharp\colon \Deri(\Shvlogtkl)\rightarrow \Deri(\Shvltrtkl)
\] 
in \eqref{A.8.0.4} is left adjoint to 
\[
\gamma^*\colon \Deri(\Shvltrtkl)\rightarrow \Deri(\Shvlogtkl).
\] 
Thus the group $\bH_t^i(X,\cF)$ can be identified with 
\[
\hom_{\Deri(\Shvlogtkl)}(a_t^*\Lambda(X),\gamma^*\cF[i])
\cong 
\hom_{\Deri(\Shvltrtkl)}(L\gamma_\sharp a_t^*\Lambda(X),\cF[i]).
\]
The object $a_t^*\Lambda(X)$ is cofibrant, see Definition \ref{A.8.14}.
The functor $a_t^*$ commutes with $\gamma_\sharp$ due to \eqref{A.8.1.1}.
Thus we deduce 
\[
L\gamma_\sharp a_t^*\Lambda(X)\cong \gamma_\sharp a_t^*\Lambda(X)
\cong 
a_t^*\gamma_\sharp\Lambda(X)\cong a_t^*\Zltr(X).
\]
\end{proof}

We will need the following lemma in our discussion of adjunctions.
\begin{lem}
\label{A.8.33}
Suppose $F\colon C\rightarrow D$ is a left and a right Quillen functor between model categories.
Then $F$ preserves cofibrations, fibrations, and weak equivalences.
\end{lem}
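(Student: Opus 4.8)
The plan is to unwind the definitions and use the characterization of Quillen functors in terms of cofibrations, fibrations, and acyclic versions thereof. Recall that $F$ being a left Quillen functor means $F$ preserves cofibrations and acyclic cofibrations; $F$ being a right Quillen functor means $F$ preserves fibrations and acyclic fibrations. So immediately $F$ preserves cofibrations (from the left side) and fibrations (from the right side); the only nontrivial assertion is that $F$ preserves all weak equivalences.

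The key step is Ken Brown's lemma. First I would recall its statement: if $F\colon C\rightarrow D$ is a functor between model categories that sends acyclic cofibrations between cofibrant objects to weak equivalences, then $F$ sends all weak equivalences between cofibrant objects to weak equivalences. Since $F$ is a left Quillen functor, it preserves acyclic cofibrations (in particular those between cofibrant objects), so Ken Brown's lemma applies and $F$ preserves weak equivalences between cofibrant objects. Dually, since $F$ is a right Quillen functor it preserves acyclic fibrations, so the dual form of Ken Brown's lemma shows $F$ preserves weak equivalences between fibrant objects.

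To pass from these two partial statements to the general one, I would use functorial (co)fibrant replacement. Given an arbitrary weak equivalence $f\colon X\rightarrow Y$ in $C$, choose a cofibrant replacement $QX\xrightarrow{\sim} X$ and fibrant replacement $X\xrightarrow{\sim} RX$; more precisely, factor so as to obtain a diagram in which $QX$ and $QY$ are cofibrant, $RQX$ and $RQY$ are fibrant-cofibrant, and all the comparison maps are weak equivalences, with a commuting ladder relating $f$ to $RQf$. By the two-out-of-three property it suffices to show $RQf$ is a weak equivalence. But $RQf$ is a weak equivalence between fibrant-cofibrant objects: it is a weak equivalence between cofibrant objects, so $F(RQf)$ is a weak equivalence by the first partial statement — alternatively it is a weak equivalence between fibrant objects and one invokes the second. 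Then chasing the ladder $F(X)\leftarrow F(QX)\rightarrow F(RQX)$ (and similarly for $Y$) through two-out-of-three, using that $F$ preserves the weak equivalences $QX\xrightarrow{\sim}X$ and $QX\xrightarrow{\sim}RQX$ (both are weak equivalences between cofibrant objects, hence preserved), yields that $F(f)$ is a weak equivalence.

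The main obstacle — really the only subtlety — is bookkeeping the (co)fibrant replacements so that one genuinely reduces to a weak equivalence between objects that are simultaneously fibrant \emph{and} cofibrant, and then correctly propagating the conclusion back along the comparison maps using two-out-of-three; each comparison map must itself be checked to be a weak equivalence between cofibrant (or fibrant) objects so that Ken Brown's lemma applies to it. None of this involves the specific model categories of sheaves with log transfers; it is a purely formal argument, so I would simply cite the standard reference (e.g.\ Hovey's book on model categories) for Ken Brown's lemma and the replacement argument, and present the proof as the short formal deduction above.
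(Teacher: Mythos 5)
Your overall strategy matches the paper's: reduce to a weak equivalence between cofibrant objects, apply Ken Brown's lemma using the left Quillen structure, and propagate back via two-out-of-three. But one justification in your ladder argument is wrong as stated, and it sits exactly where the hypothesis that $F$ is \emph{also} a right Quillen functor has to do real work. You claim that the comparison map $QX\xrightarrow{\sim}X$ is ``a weak equivalence between cofibrant objects, hence preserved'' by Ken Brown's lemma. That is false in general: $QX$ is cofibrant, but $X$ need not be, so Ken Brown's lemma (which only covers weak equivalences between cofibrant objects) does not apply to this map. The correct reason $F(QX)\to F(X)$ is a weak equivalence is that the cofibrant replacement $QX\to X$ can be chosen to be a trivial fibration (factor $\emptyset\to X$), and $F$ preserves trivial fibrations because it is a right Quillen functor. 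This is precisely the paper's argument: it builds a square with trivial fibrations $X'\to X$, $Y'\to Y$ and a weak equivalence $f'\colon X'\to Y'$ between cofibrant objects, applies right-Quillen-ness to the vertical maps and Ken Brown to $f'$, and concludes by two-out-of-three. Without invoking right-Quillen-ness at this point your argument has no way to control $F$ on the comparison maps, so the step would fail.

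A secondary remark: your detour through fibrant replacement and fibrant--cofibrant objects is unnecessary. Once $F$ preserves weak equivalences between cofibrant objects (Ken Brown via the left Quillen structure) and preserves trivial fibrations (right Quillen structure), cofibrant replacement alone suffices, as in the paper. The fibrant replacement adds bookkeeping without buying anything, and the map $QX\to RQX$ introduces the same kind of issue you would then have to re-justify (there it happens to be a trivial cofibration between cofibrant objects, so it is fine, but it is extra work).
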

\begin{proof}
For the convenience of the reader, we shall outline a proof; see also the discussion in \cite[Remark 1.3.23]{CD12}. 
By assumption, $F$ preserves cofibrations, fibrations, trivial cofibrations, and trivial fibrations.
It remains to show that $F$ preserves weak equivalences.
If $f\colon X\rightarrow Y$ be a weak equivalence in $C$, we can choose a commutative diagram 
\[
\begin{tikzcd}
X'\arrow[d,"g'"']\arrow[r,"f'"]&Y'\arrow[d,"g"]\\
X\arrow[r,"f"]&Y,
\end{tikzcd}\]
where $g$ and $g'$ are trivial fibrations and $f'$ is a weak equivalence between cofibrant objects. 
Then $F(g)$ and $F(g')$ are weak equivalences. 
Ken Brown's lemma implies that $F(f')$ is a weak equivalence. 
It follows that $F(f)$ is also a weak equivalence.
\end{proof}
  
Let $t$ be a topology on $lSm/k$, and let $t'$ be the restriction of $t$ to $Sm/k$. 
Then the functor $\lambda$ from $Sm/k$ with the topology $t'$ to $lSm/k$ with the topology $t$ is a continuous functor of sites. 
Thus we have an induced adjunction
\[
\lambda_\sharp\colon \Shv_{t'}(k,\Lambda)\rightleftarrows \Shvlogtkl\colon \lambda^*.
\]
This induces an adjunction
\[
\lambda_\sharp\colon \Shvtrttkl\rightleftarrows \Shvltrtkl\colon \lambda^*.
\]
\vspace{0.1in}

Assume $t$ is compatible with log transfers and $t'$ is compatible with transfers. 
By appealing to Proposition \ref{A.8.16}, we obtain a Quillen pair for the descent model structures
\[
\lambda_\sharp\colon \Co(\Shvtrttkl)\rightleftarrows \Co(\Shvltrtkl)\colon \lambda^*
\]
since $t'$ is the restriction of $t$, for every $t'$-hypercover $\mathscr{X}\rightarrow X$ we obtain a $t$-hypercover 
\[
\lambda(\mathscr{X})\rightarrow \lambda(X).
\] 
On the level of homotopy categories, there is an induced adjunction
\[
L\lambda_\sharp\colon \Deri(\Shvtrttkl)\rightleftarrows \Deri(\Shvltrtkl)\colon R\lambda^*.
\]
\vspace{0.1in}

Assume that for any $t$-cover $Y\rightarrow X$, the naturally induced morphism 
\[
\omega(Y)
=
Y-\partial Y
\rightarrow 
X-\partial X
=
\omega(X)
\] 
is also a $t$-cover. 
Then $\omega$ is a continuous functor of sites from $lSm/k$ with the topology $t$ to $Sm/k$ with the topology $t'$. 
There are induced adjunctions
\[
\omega_\sharp\colon \Shvlogtkl\rightleftarrows \Shv_t(k,\Lambda)\colon \omega^*
\]
and
\begin{equation}
\label{eqn::omegaadjunction}
\omega_\sharp\colon \Shvltrtkl\rightleftarrows \Shvtrttkl\colon \omega^*.
\end{equation}

By assumption $\omega$ maps any $t$-covers to $t'$-covers, 
and for any $t$-hypercover $\mathscr{X}\rightarrow X$, $\omega(\mathscr{X})\rightarrow \omega(X)$ is a $t'$-hypercover. 
According to Proposition \ref{A.8.16} there is a Quillen pair for the descent model structures
\[
\omega_\sharp\colon \Co(\Shvltrtkl)\rightleftarrows \Co(\Shvtrttkl)\colon \omega^*.
\]
We also note the induced adjunction
\[
L\omega_\sharp\colon \Deri(\Shvltrtkl)\rightleftarrows \Deri(\Shvtrttkl)\colon R\omega^*.
\]
Since $\lambda$ is left adjoint to $\omega$, by Lemma \ref{A.8.33} we deduce
\[R\lambda^*\cong \lambda^*\cong \omega_\sharp \cong L\omega_\sharp.
\]
\vspace{0.1in}

In conclusion, there is a sequence of adjunctions
\begin{equation}
\label{eq::lambdaomegaadj}
\begin{tikzcd}
\Deri(\Shvtrttkl)
\arrow[rr,shift left=1.5ex,"L\lambda_\sharp"]\arrow[rr,"\lambda^*\simeq\omega_\sharp" description,leftarrow]\arrow[rr,shift right=1.5ex,"R\omega^*"']&&{\Deri(\Shvltrtkl)}.
\end{tikzcd}
\end{equation}

As above, 
suppose $\cW$ is an essentially small class of morphisms in $lSm/k$ that is stable by isomorphisms, products, and compositions. 
Then, by Proposition \ref{A.8.20}, we obtain a Quillen pair for the $\cW$-local and $\omega(\cW)$-local descent model structures
\[
\omega_\sharp\colon \Co(\Shvltrtkl)\rightleftarrows \Co(\Shvtrttkl)\colon \omega^*.
\]
Thus we have the induced adjunction
\begin{equation}
\label{A.8.0.3}
L\omega_\sharp\colon \Deri_\cW(\Shvltrtkl)\rightleftarrows \Deri_{\omega(\cW)}(\Shvtrttkl)\colon R\omega^*,
\end{equation}
and similarly for
\[
L\omega_\sharp\colon \Deri_\cW(\Shvlogtkl)\rightleftarrows \Deri_{\omega(\cW)}(\Shv_t(k,\Lambda))\colon R\omega^*.
\]

\begin{lem}
\label{A.8.22}
Let $f\colon X\rightarrow Y$ be a monomorphism of fs log schemes.
Then the naturally induced morphism of presheaves
\[
\Zltr(f)\colon \Zltr(X)\rightarrow \Zltr(Y)
\]
is a monomorphism.
\end{lem}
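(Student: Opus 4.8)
The plan is to reduce, objectwise, to a statement about proper pushforward of algebraic cycles along a monomorphism of ordinary schemes. Fix $U\in lSm/k$. Then $\Zltr(f)(U)$ is obtained by applying $(-)\otimes_\Z\Lambda$ to the homomorphism of free abelian groups $\gamma_f\circ(-)\colon \lCor(U,X)\to \lCor(U,Y)$, composition with the graph $\gamma_f$ of $f$. Hence it suffices to show that $\gamma_f\circ(-)$ sends the basis of $\lCor(U,X)$ consisting of elementary log correspondences injectively into the analogous basis of $\lCor(U,Y)$; such a map is a split monomorphism of abelian groups, so it remains injective after $\otimes_\Z\Lambda$. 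To analyse $\gamma_f\circ(-)$ I would use the injective homomorphism $(-)^{\circ}\colon \lCor(X,Y)\to \Cor(X-\partial X,Y-\partial Y)$ of Lemma \ref{A.5.10} together with its compatibility with composition recorded in \eqref{equation1:lem:composition}. Since the graph of $f$ restricts to the graph of $f^{\circ}:=\omega(f)\colon X-\partial X\to Y-\partial Y$, one gets a commutative square with injective vertical maps relating $\gamma_f\circ(-)$ to $\gamma_{f^{\circ}}\circ(-)\colon \Cor(U-\partial U,X-\partial X)\to \Cor(U-\partial U,Y-\partial Y)$. By Remark \ref{A.5.29}(iii) (and base change for finiteness), for an elementary log correspondence $V$ from $U$ to $X$ the cycle $V^{\circ}$ is an elementary correspondence finite and surjective over a component of $U-\partial U$, so the reduction is legitimate: injectivity of $\gamma_{f^{\circ}}\circ(-)$ on elementary correspondences, combined with injectivity of $(-)^{\circ}$, will yield that $\gamma_f\circ V$ has multiplicity one and that $V\mapsto\gamma_f\circ V$ is injective.

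The next point is that $f^{\circ}=\omega(f)$ is a monomorphism of schemes. The underlying morphism $\underline f$ of $f$ itself need not be one (for a log modification it may be a blow-up), but by Example \ref{A.5.31}(1) the functor $\omega$ is right adjoint to $\lambda$, and right adjoints preserve monomorphisms; hence $f^{\circ}$, and therefore its base change ${\rm id}_{U-\partial U}\times f^{\circ}$, is a monomorphism of schemes.

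Finally I would record the relevant property of a monomorphism of schemes $g$: it is injective on points and induces isomorphisms on residue fields, because each nonempty fiber of $g$ is a monomorphism to $\Spec\kappa(y)$ with $\kappa(y)$ a field, hence equal to $\Spec\kappa(y)$. Therefore, for an integral cycle $Z$ finite and surjective over a component of the source, $g_*[Z]=[g(Z)]$ with $g(Z)$ integral (the multiplicity is one since $g$ induces an isomorphism on residue fields at the generic points), and distinct such $Z$ have distinct images, since $Z=g^{-1}(g(Z))$ as $g$ is injective on points. Applied to $g={\rm id}_{U-\partial U}\times f^{\circ}$, this shows $\gamma_{f^{\circ}}\circ(-)$ takes elementary correspondences bijectively onto a subset of the elementary correspondences; pulling back through $(-)^{\circ}$ as above then finishes the proof.

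The main obstacle I anticipate is the bookkeeping in the first step: verifying that $\gamma_f\circ V$ is genuinely a single elementary log correspondence (multiplicity one) and that $V\mapsto\gamma_f\circ V$ is injective on this basis. The only available leverage is the interplay between the scheme-theoretic properties of monomorphisms on the boundary complements and the injectivity of $(-)^{\circ}$ from Lemma \ref{A.5.10}, since the underlying morphism $\underline f$ itself carries almost no information; the remaining inputs --- the adjunction $\lambda\dashv\omega$, the structure of monomorphisms of schemes, and the behaviour of pushforward on cycles --- are routine.
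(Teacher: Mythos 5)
Your proposal is correct and follows essentially the same route as the paper: both reduce via the commutative square comparing $\lCor(U,X)\to\lCor(U,Y)$ with $\Cor(U-\partial U,X-\partial X)\to\Cor(U-\partial U,Y-\partial Y)$, using the injectivity of $(-)^{\circ}$ from Lemma \ref{A.5.10} and the injectivity of composition with the graph of the scheme monomorphism $\omega(f)$. The only difference is that you supply proofs of the two ingredients the paper merely asserts or cites — that $\omega(f)$ is a monomorphism (via the adjunction $\lambda\dashv\omega$) and that pushforward of elementary correspondences along a monomorphism of schemes is injective with multiplicity one (the paper cites \cite[Exercise 12.22]{MVW}) — and you correctly note the split-injectivity needed to survive $\otimes_{\Z}\Lambda$.
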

\begin{proof}
For $T\in lSm/k$, there is a naturally induced commutative diagram
\[
\begin{tikzcd}
\lCor(T,X)\arrow[r]\arrow[d]&
 \lCor(T,Y)\arrow[d]&
\\
\Cor(T-\partial T,X-\partial X)\arrow[r]&
\Cor(T-\partial T,Y-\partial Y).
\end{tikzcd}
\]
We need to show that the upper horizontal homomorphism is injective.
The induced morphism $X-\partial X\rightarrow Y-\partial Y$ is also a monomorphism. 
Thus the lower horizontal homomorphism is injective, see \cite[Exercise 12.22]{MVW}.
The vertical homomorphisms are injective due to Lemma \ref{A.5.10}, which finishes the proof.
\end{proof}

Next we specialize to the setting of cd-structures.
\begin{prop}
\label{A.8.13}
Let $P$ be a complete, quasi-bounded, and regular cd-structure on $lSm/k$, and let $t$ be the associated topology. 
Then the following conditions are equivalent.
\begin{enumerate}
\item[{\rm (i)}] $t$ is compatible with log transfers.
\item[{\rm (ii)}] $t$ is mildly compatible with log transfers.
\item[{\rm (iii)}] For any $P$-distinguished square in $lSm/k$ of the form {\rm \eqref{A.8.13.1}}, the sequence
\begin{equation}
\label{A.8.13.2}
0\rightarrow a_t^*\gamma^*\Zltr(Y')\rightarrow a_t^*\gamma^*\Zltr(Y)\oplus a_t^*\gamma^*\Zltr(X')\rightarrow a_t^*\gamma^*\Zltr(X)\rightarrow 0
\end{equation}
of $t$-sheaves with log transfers is exact.
\end{enumerate}
\end{prop}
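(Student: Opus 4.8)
The plan is to prove the cycle of implications (i) $\Rightarrow$ (ii) $\Rightarrow$ (iii) $\Rightarrow$ (i), using the fact that a complete cd-structure has the property that every covering sieve is refined by a simple cover built from distinguished squares, so that checking conditions on all $t$-hypercovers reduces to checking them on the distinguished squares of $P$.

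First, the implication (i) $\Rightarrow$ (ii) is already Proposition \ref{A.8.3}, which holds for an arbitrary topology $t$ compatible with log transfers, so nothing new is needed there. Next, for (ii) $\Rightarrow$ (iii): assume $t$ is mildly compatible with log transfers. By Proposition \ref{A.8.12}, for the $t$-cover $Y \amalg X' \to X$ associated to a $P$-distinguished square \eqref{A.8.13.1}, the map $a_t^*\gamma^*\Zltr(Y) \oplus a_t^*\gamma^*\Zltr(X') \to a_t^*\gamma^*\Zltr(X)$ is an epimorphism of $t$-sheaves, which gives exactness on the right of \eqref{A.8.13.2}. Exactness on the left — injectivity of $a_t^*\gamma^*\Zltr(Y') \to a_t^*\gamma^*\Zltr(Y) \oplus a_t^*\gamma^*\Zltr(X')$ — follows from Lemma \ref{A.8.22} (since $g'$ is a monomorphism, because $P$ is regular, hence $Q$ is cartesian and $g$, hence $g'$, is a monomorphism) together with exactness of $a_t^*$ from Proposition \ref{A.8.5}. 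The content is exactness in the middle: one must show that the kernel of the surjection equals the image of $a_t^*\gamma^*\Zltr(Y')$. Here I would argue stalk-locally (or sieve-theoretically) using regularity of $P$: condition (ii) of regularity says precisely that $L(Y) \amalg (L(Y')\times_{L(X')}L(Y')) \to L(Y)\times_{L(X)}L(Y)$ is an epimorphism of $t$-sheaves, and applying $\gamma_\sharp$ (the left adjoint of $\gamma^*$ furnished by Proposition \ref{A.8.9}), which is right exact and sends $a_t^*\Lambda(X)$ to $a_t^*\Zltr(X)$, transports this to the exactness statement \eqref{A.8.13.2}; this is the step modeled on the classical argument in \cite[Lemma 6.2]{MVW} and \cite{CD12}, adapted via the commutation $a_t^*\gamma_\sharp \cong \gamma_\sharp a_t^*$.

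Finally, for (iii) $\Rightarrow$ (i): assume the sequence \eqref{A.8.13.2} is exact for every $P$-distinguished square. Given any $t$-cover $f\colon Y \to X$, since $P$ is complete the covering sieve generated by $f$ admits a refinement by a simple cover, and by induction on the structure of simple covers (using condition (ii) in the definition of simple covers together with the exactness \eqref{A.8.13.2} splicing the short exact sequences for the constituent distinguished squares) one shows that the complex \eqref{A.8.11.1} associated to the \v{C}ech nerve of $f$ is acyclic. Then Proposition \ref{A.8.11} gives that $t$ is compatible with log transfers. I expect the main obstacle to be the middle-exactness in (ii) $\Rightarrow$ (iii): unlike the scheme-theoretic case one cannot simply pass to stalks of $t$-sheaves in a naive way because $\gamma^*$-sections and $t$-sheafification interact subtly, so the argument must be organized entirely through the adjunction $(\gamma_\sharp, \gamma^*)$ and the regularity axiom, carefully using that $a_t^*$ commutes with $\gamma_\sharp$ and with finite limits. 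The quasi-boundedness hypothesis enters only to guarantee that the descent machinery (Corollary \ref{A.10.10}, and the existence of the adjoint $\gamma_\sharp$ via Proposition \ref{A.8.9}) is available; it plays no direct role in the combinatorial splicing.
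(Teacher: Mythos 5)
Your implications (i) $\Rightarrow$ (ii) and (ii) $\Rightarrow$ (iii) are essentially the paper's argument: surjectivity on the right, the monomorphism $Y'\to Y$ (regularity, Lemma \ref{A.8.22}, exactness of $a_t^*$) on the left, and the middle spot handled through $\gamma_\sharp$. One imprecision worth fixing: regularity condition (ii) does not directly ``transport'' to middle exactness. What the paper actually uses is that the transfer-free Mayer--Vietoris sequence \eqref{A.8.13.5} is already exact by \cite[Lemma 2.18]{Vcdtop} (that is where regularity is consumed), and then the right exact functor $\gamma_\sharp$, which commutes with $a_t^*$, yields exactness of \eqref{A.8.13.4} at the middle and right terms; injectivity on the left is the separate monomorphism argument.

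The genuine gap is in (iii) $\Rightarrow$ (i). To invoke Proposition \ref{A.8.11} you must prove that the \emph{entire} \v{C}ech complex \eqref{A.8.11.1} of an arbitrary $t$-cover is acyclic, and already for the elementary cover $T=Y\amalg X'\to X$ of a single distinguished square this does not follow by splicing the three-term sequences \eqref{A.8.13.2}: for a strict Nisnevich square $f\colon Y\to X$ is \'etale but not a monomorphism, so the higher fiber products $T\times_X\cdots\times_X T$ are not determined by the square, and the Mayer--Vietoris sequence only controls the simplicial degree $\leq 1$ part of the \v{C}ech complex. Converting ``distinguished squares give exact sequences'' into hypercover descent is precisely the content of the flasque-complex machinery, and that is where quasi-boundedness is actually used, contrary to your closing remark. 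The paper's route avoids your step entirely: it proves (iii) $\Rightarrow$ (ii), which needs only the sheaf condition \eqref{A.8.3.1} (the degree $\leq 1$ truncation), reduced to \eqref{A.8.13.2} by the simple-cover refinement coming from completeness; it then deduces (i) from (ii) and (iii) by taking an injective fibrant $G\in\Deri(\Shvltrtkl)$, observing that $\gamma^*G$ is $\cH'$-flasque by the exactness of \eqref{A.8.13.3}, hence $\cG$-local, hence $\cH$-flasque by Proposition \ref{A.8.15}(2) applied to the bounded descent structure $(\cG,\cH')$ of Example \ref{A.8.30} (which rests on Corollary \ref{A.10.10}), and concluding that $\gamma_\sharp F$ is acyclic for every $F\in\cH$. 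If you wish to keep your strategy, you must supply this descent step rather than bypass it.
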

\begin{proof}
The implication (i)$\Rightarrow$(ii) is Proposition \ref{A.8.3}.
Assuming condition (ii) the induced sequence  of $t$-sheaves
\begin{equation}
\label{A.8.13.5}
0\rightarrow a_t^*\Lambda(Y')\rightarrow a_t^*\Lambda(Y)\oplus a_t^*\Lambda(X')\rightarrow a_t^*\Lambda(X)\rightarrow 0
\end{equation}
is exact by \cite[Lemma 2.18]{Vcdtop}.
Let us show that the following naturally induced sequence of $t$-sheaves with log transfers is exact:
\begin{equation}
\label{A.8.13.3}
0\rightarrow \gamma_\sharp a_t^*\Lambda(Y')\rightarrow \gamma_\sharp a_t^*\Lambda(Y)\oplus \gamma_\sharp a_t^*\Lambda(X')\rightarrow \gamma_\sharp a_t^*\Lambda(X)\rightarrow 0.
\end{equation}
Since $\gamma_\sharp$ commutes with $a_t^*$, this is equivalent to showing that the sequence
\begin{equation}\label{A.8.13.4}
0\rightarrow a_t^*\Zltr(Y')\rightarrow a_t^*\Zltr(Y)\oplus  a_t^*\Zltr(X')\rightarrow a_t^*\Zltr(X)\rightarrow 0
\end{equation}
of $t$-sheaves with log transfers is exact. 
Since $\gamma_\sharp$ is a left adjoint, $\gamma_\sharp$ is right exact.
Thus 
\eqref{A.8.13.3} is exact at $\gamma_\sharp a_t^*\Lambda(Y)\oplus \gamma_\sharp a_t^*\Lambda(X')$ and $\gamma_\sharp a_t^*\Lambda(X)$. 
Since $P$ is regular, the morphism $Y'\rightarrow Y$ is a monomorphism.
Thus $\Zltr(Y')\rightarrow \Zltr(Y)$ is a monomorphism owing to Lemma \ref{A.8.22}.
This gives the exactness of \eqref{A.8.13.4} at $a_t^*\Zltr(Y')$,
i.e., 
\eqref{A.8.13.3} and \eqref{A.8.13.4} are exact.
We deduce that \eqref{A.8.13.2} is exact since $\gamma^*$ commutes with $a_t^*$ and $\gamma^*$ is exact. 
This implies condition (iii).
\vspace{0.1in}

Let us assume (iii) holds. 
To conclude (ii), we need to show that \eqref{A.8.3.1} is exact when $\mathscr{X}\rightarrow X$ is a \v{C}ech nerve associated with a $t$-cover $T\rightarrow X$.
Every $t$-cover has a simple refinement because $P$ is complete, see \cite[Definition 2.3]{Vcdtop}.
Thus since $t$ is the topology associated with $P$, it suffices to consider the case when $T\rightarrow X$ is of the form $Y\amalg X'\rightarrow X$. 
In this case, our claim follows from the exactness of \eqref{A.8.13.2}.
\vspace{0.1in}

Now assume (ii) and (iii) hold. 
It remains to show (i). 
We work with the descent structure $(\cG,\cH)$ (resp.\ $(\cG,\cH')$) of $\Shvlogtkl$ given in Example \ref{A.8.28} (resp.\ Example \ref{A.8.30}). 
Let $G\in \Deri(\Shvltrtkl)$ be an injective fibrant complex. 
Since the sequence \eqref{A.8.13.3} is exact, we have 
\[ 
\hom_{{\bf K}(\Shvlogtkl)}(F',\gamma^*G)=0
\]
for all $F'\in \cH'$. 
Thus $\gamma^*G$ is $\cH'$-flasque, 
i.e., 
$\gamma^*G$ is $\cG$-fibrant. 
By Propositions \ref{A.8.15}(2) we see that $\gamma^*G$ is $\cH$-flasque. 
Thus for all $F\in \cH$, we have the vanishing
\[ 
\hom_{{\bf K}(\Shvlogtkl)}(F,\gamma^*G)=0.
\]
Since $G$ is an injective fibrant complex, we also have 
\[
\hom_{{\bf K}(\Shvlogtkl)}(F,\gamma^*G)
\cong 
\hom_{{\bf K}(\Shvltrtkl)}(\gamma_\sharp F,G)
\cong 
\hom_{\Deri(\Shvltrtkl)}(\gamma_\sharp F,G).
\]
Here $G$ is an arbitrary injective fibrant complex.
Hence $\gamma_\sharp F$ is quasi-isomorphic to $0$ for all $F\in \cH$. 
This implies that condition (i) holds.
\end{proof}

\subsection{Structure of dividing Nisnevich covers}
\label{subsec::structure_dNis}
In this section, we shall study dividing Nisnevich covers and the dividing Nisnevich sheafification functor.
We also discuss their \'etale versions.
\vspace{0.1in}

Suppose that $X$ is an excellent reduced noetherian scheme. 
In \cite[Theorem 3.1.9]{VSelecta} it is shown that every $h$-cover $f\colon Y\longrightarrow X$ of schemes admits a refinement of the form
\[
Y\stackrel{g_1}\longrightarrow Y_1\stackrel{g_2}\longrightarrow Y_2\stackrel{g_3}\longrightarrow X,
\]
where $g_1$ is a Zariski cover, $g_2$ is a finite surjective morphism, and $g_3$ is a proper surjective birational morphism.
This is one of the fundamental tools in the theory of $h$-coverings.
\vspace{0.1in}

We provide a result with a similar flavor for factorizations of dividing Nisnevich, dividing \'etale, and log \'etale covers.
\begin{prop}
\label{A.5.44}
Let $f\colon Y\rightarrow X$ be a dividing Nisnevich (resp.\ dividing \'etale, resp.\ log \'etale) cover in $lSm/k$.
Then there exists a log modification $h\colon X'\rightarrow X$ such that the pullback
\[
g\colon Y\times_X X'\rightarrow X'
\]
is a strict Nisnevich (resp.\ strict \'etale, resp.\ integral Kummer \'etale) cover.

In particular, $f$ has a refinement of the form
\begin{equation}
\label{A.5.44.1}
Y\times_X X'\stackrel{g}\rightarrow X'\stackrel{h}\rightarrow X,
\end{equation}
where $g$ is a strict Nisnevich (resp.\ strict \'etale, resp.\ integral log \'etale) cover and $h$ is a log modification.
\end{prop}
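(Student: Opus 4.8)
The plan is to reduce the statement to a local question on the base $X$, where the logarithmic structure admits a chart, and then invoke the combinatorial structure theory of log étale monomorphisms via subdivisions of fans. First I would recall that being a strict Nisnevich (resp.\ strict étale, resp.\ integral Kummer étale) cover is a local property on the target for the Zariski topology, and that log modifications can be glued along a Zariski cover of $X$ (using Proposition \ref{A.9.21} and the fact that subdivisions of fans are compatible with the stratification by faces). Hence it suffices to treat the case where $X$ has an fs chart $P$ that is exact at a chosen point. The key input is the definition of a dividing Nisnevich (resp.\ dividing étale, resp.\ log étale) morphism as a composite of log modifications and strict Nisnevich (resp.\ strict étale, resp.\ Kummer étale) morphisms; so writing $f$ as such a composite, I would argue by induction on the length of the factorization.

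The main step is a commutation lemma: given a log modification $Y \to Z$ followed by a strict Nisnevich cover $Z \to X$ (or, more generally, handling the two types of maps in the opposite order), one can find a further log modification $X' \to X$ such that after base change to $X'$ the composite becomes a strict Nisnevich cover. For this, after choosing a chart, Proposition \ref{A.9.21} supplies a subdivision of fans $\Sigma \to \Spec P$ such that the pullback of the relevant log étale monomorphisms along $\A_P \leftarrow \A_\Sigma$ become (disjoint unions of) open immersions, i.e.\ become strict; the projection $X \times_{\A_P} \A_\Sigma \to X$ is then the required log modification by Remark \ref{A.9.68}(3) together with Proposition \ref{A.9.75}. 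One then checks that after this base change the composite $Y \times_X X' \to X'$ is genuinely strict Nisnevich: strictness is preserved because every remaining log structure datum has been trivialized along the fibers, étaleness on underlying schemes is preserved under base change, and the isomorphism condition on the complement of $g(\underline{X'})$ is stable since log modifications are isomorphisms away from the boundary (Remark \ref{A.9.68}(1)). The Kummer étale case is identical except that Kummer étale covers are not strict, so one keeps track of the exact charts $P \to Q$ with $Q^{\rm gp}$ containing $P^{\rm gp}$ with finite index, and the resulting cover is "integral Kummer étale" rather than strict.

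For the inductive step I would process the factorization of $f$ from the source toward the target, each time absorbing a log modification occurring between two strict (resp.\ Kummer étale) pieces into a single log modification of $X$ using the commutation lemma, and using that a composite of strict Nisnevich (resp.\ strict étale, resp.\ Kummer étale) covers of the same type is again of that type, while a composite of log modifications is a log modification (these closure properties are recorded after Definition \ref{bigsmall.1} and follow from the fact that proper log étale monomorphisms are stable under composition and base change). After finitely many steps, one is left with a single log modification $h \colon X' \to X$ and a single strict Nisnevich (resp.\ strict étale, resp.\ integral Kummer étale) cover $Y \times_X X' \to X'$, which is exactly the desired factorization \eqref{A.5.44.1}; note that $Y \times_X X'$ lies in $lSm/k$ because log modifications and strict Nisnevich/Kummer étale covers of log smooth fs log schemes are again log smooth over $k$. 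The hardest part will be the bookkeeping in the commutation lemma: making sure that a subdivision of fans chosen to strictify one log étale monomorphism can be refined further, still by a single subdivision hence a single log modification of $X$, so as to simultaneously strictify all the log étale monomorphisms appearing in a bounded factorization, and checking that the strict Nisnevich distinguished-square conditions (surjectivity and the isomorphism over the complement) genuinely survive the base change rather than merely the underlying étale map.
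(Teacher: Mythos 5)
Your overall skeleton is the same as the paper's: refine $f$ into an alternating chain of strict Nisnevich (resp.\ strict \'etale, Kummer \'etale) covers and log modifications, and then absorb the log modifications one at a time into a single log modification of $X$. But two points need attention. First, the induction must run from the \emph{target} toward the source, not ``from the source toward the target'' as you write. The absorption step only works for the log modification sitting closest to $X$: your commutation lemma needs the morphism from the target of that log modification down to $X$ to be strict (so that a chart of $X$ pulls back) or at least exact, and this holds only when everything below it is already a composition of strict covers. For a log modification higher up the chain, the map to $X$ still contains log modifications, is not strict, and neither your chart argument nor Proposition \ref{Fan.12} applies. As stated, the first step of your induction fails; the fix is simply to reverse the order, which is what the paper does.

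The more substantive gap is your opening reduction ``it suffices to treat the case where $X$ has an fs chart.'' The statement asserts the existence of a single log modification $X'\rightarrow X$, and producing one from chart-local data is precisely the hard part: a log modification of an open $U_i\subset X$, given by a subdivision of $\Spec{P_i}$ for a chart $P_i$ of $U_i$, does not automatically extend to a log modification of $X$, and the charts on different opens need not be compatible. Your one-line justification via Proposition \ref{A.9.21} and ``compatibility with the stratification by faces'' does not address this. The paper sidesteps the issue entirely by keeping the absorption step global: it invokes Proposition \ref{Fan.12} (for an exact morphism $Z\rightarrow X$ and a log modification $Y\rightarrow Z$, some log modification of $X$ kills $Y\rightarrow Z$ after pullback), whose proof routes through Theorem \ref{FKatoThm2} (integrality after a log modification) and Proposition \ref{A.9.83} (exact log \'etale monomorphisms are open immersions); the local-to-global extension of subdivisions is then carried out once and for all inside the proof of Theorem \ref{FKatoThm2}, using the reduction to $SmlSm/k$ via Proposition \ref{A.3.19} and the sharpened-fan/frame machinery (Theorem \ref{Fan.16}). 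If you want to keep your chart-based version of the commutation lemma, you must either restrict to the case of a global chart or supply this gluing argument explicitly. Your treatment of the log \'etale case is otherwise consistent with the paper, which obtains it directly from Theorem \ref{FKatoThm2} together with Proposition \ref{kummer-is-logetandstrict}.
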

\begin{proof}
If $f$ is log \'etale, see Theorem \ref{FKatoThm2}.
We focus the attention on the dividing Nisnevich topology since the proofs are similar for the dividing Nisnevich and dividing \'etale cases.
First, we reduce to the case when $f$ has a refinement in the form 
\[
Y_0\stackrel{f_0}\rightarrow Y_0'\stackrel{f_0'}\rightarrow \cdots \stackrel{f_{n-2}'}\rightarrow Y_{n-1}\stackrel{f_{n-1}}\rightarrow Y_{n-1}'\stackrel{f_{n-1}'}\rightarrow Y_n\stackrel{f_n}\rightarrow X,
\]
where each $f_i$ is a strict Nisnevich cover and each $f_i'$ is a log modification.
Owing to Proposition \ref{Fan.12}, there exists a log modification $X'\rightarrow X$ such that the pullback
\[
Y_{n-1}'\times_X X'\rightarrow Y_n\times_X X'
\]
of $f_{n-1}'$ is an isomorphism. In particular, the pullback
\[
Y_{n-1}\times_X X'\rightarrow Y_n\times_X X'
\]
of the composition $Y_{n-1}\rightarrow X$ is a strict Nisnevich cover.
\vspace{0.1in}

Replacing \eqref{A.5.44.1} by
\[
Y_0'\times_X X'\rightarrow \cdots \rightarrow Y_{n-1}\times_X X'\rightarrow X',
\]
we can apply the above process since $X'\in lSm/k$.
In this way, we obtain a log modification $X''\rightarrow X$ such that the pullback
\[
Y_0\times_X X''\rightarrow X''
\]
of the composition $Y_0\rightarrow X$ is a strict Nisnevich cover.
\end{proof}

Recall from Definition \ref{A.9.79} that $X_{div}$ denotes the category of log modifications over $X\in lSm/k$, 
and $X_{div}^{Sm}$ is the full subcategory of $X_{div}$ consisting of log modifications $Y\rightarrow X$ such that $Y\in SmlSm/k$.

\begin{lem}\label{A.5.46}
Let $\cF$ be a strict Nisnevich (resp.\ strict \'etale, resp.\ Kummer \'etale) sheaf on $SmlSm/k$.
There is an isomorphism
\begin{equation}
\label{A.5.46.1}
\begin{split}
a_{dNis}^*\cF(X)\cong &\colimit_{Y\in X_{div}^{Sm}}\cF(Y)
\\
\text{ (resp.\ }
a_{d\acute{e}t}^*\cF(X)\cong \colimit_{Y\in X_{div}^{Sm}}\cF(Y),
&\text{ resp.\ }
a_{l\acute{e}t}^*\cF(X)\cong \colimit_{Y\in X_{div}^{Sm}}\cF(Y)
\text{)}.
\end{split}
\end{equation}
\end{lem}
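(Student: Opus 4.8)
The plan is to reduce the computation of the $dNis$- (resp.\ $d\acute{e}t$-, resp.\ $l\acute{e}t$-) sheafification to a filtered colimit over log modifications by combining three ingredients: the factorization of dividing covers from Proposition \ref{A.5.44}, the cofiltered-ness of the index category $X_{div}^{Sm}$, and the fact that $\cF$ is already a sheaf for the corresponding strict topology. First I would observe that the assignment $X\mapsto \colimit_{Y\in X_{div}^{Sm}}\cF(Y)$ is a well-defined presheaf on $lSm/k$: functoriality in $X$ follows because a morphism $X_1\to X_2$ induces a base-change functor $(X_2)_{div}^{Sm}\to (X_1)_{div}^{Sm}$, and one must check that $X_{div}^{Sm}$ is cofiltered so that the colimit is filtered — this uses that the category of log modifications over $X$ is cofiltered (two log modifications are dominated by a common refinement, e.g.\ a suitable fiber product followed by a further subdivision landing in $SmlSm/k$, cf.\ the discussion around Definition \ref{A.9.79} and the existence of subdivisions refining two given fans, Proposition \ref{A.9.21}).

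Next I would verify that this presheaf, call it $\cF'$, is a $dNis$-sheaf (resp.\ $d\acute{e}t$-, $l\acute{e}t$-sheaf). Since the topology in question is generated by strict Nisnevich (resp.\ strict \'etale, Kummer \'etale) covers together with log modifications, it suffices to check the sheaf condition separately for these two classes. For a log modification $X''\to X$, the functor $(X'')_{div}^{Sm}\to X_{div}^{Sm}$ is cofinal (any log modification of $X$ is dominated by one factoring through $X''$, again by Proposition \ref{A.9.21}), so $\cF'(X)\xrightarrow{\sim}\cF'(X'')$ and hence $\cF'$ is a dividing sheaf in the sense of Remark \ref{A.9.68}(4). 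For a strict Nisnevich cover $U\to X$: given a descent datum, each $Y\in X_{div}^{Sm}$ pulls back $U$ to a strict Nisnevich cover $U\times_X Y\to Y$ with $U\times_X Y\in SmlSm/k$, and since $\cF$ satisfies strict Nisnevich descent on $SmlSm/k$ one glues levelwise; passing to the filtered colimit over $Y$ (which is exact) gives the descent property for $\cF'$. Combining the two gives that $\cF'$ is a sheaf for the composite topology by the general principle that a presheaf which is a sheaf for each of two generating pretopologies is a sheaf for the topology they generate.

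Then I would construct the comparison map. There is a canonical map of presheaves $\cF\to \cF'$ (the $Y=X$ term of the colimit), hence by the universal property of sheafification a map $a_{dNis}^*\cF\to \cF'$. To see it is an isomorphism it is enough, by the universal property, to check that $\cF\to\cF'$ exhibits $\cF'$ as the $dNis$-sheafification, i.e.\ that $\cF'$ is a sheaf (done above) and that $\cF\to\cF'$ is a $dNis$-local isomorphism — equivalently, that $a_{dNis}^*(\cF\to\cF')$ is an isomorphism, which would follow formally once we know both sides are sheaves and the map is an isomorphism on stalks, or more simply: every section of $\cF'$ over $X$ comes, by construction, from a section of $\cF$ over some $Y\in X_{div}^{Sm}$, i.e.\ from a section over a member of a covering of $X$ in the $dNis$-topology (a log modification is a cover), and two sections of $\cF$ agreeing in $\cF'$ agree after passing to a further log modification; this is precisely the statement that $\cF\to\cF'$ is a local isomorphism for the topology. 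Alternatively, and perhaps cleanest, one invokes Proposition \ref{A.5.44}: any $dNis$-cover of $X$ is refined by a composite $Y\times_X X'\to X'\to X$ with $X'\to X$ a log modification and $Y\times_X X'\to X'$ strict Nisnevich; combined with strict Nisnevich descent for $\cF$ on $SmlSm/k$ and cofinality of $(X')_{div}^{Sm}$ in $X_{div}^{Sm}$, this shows directly that $\colimit_{Y\in X_{div}^{Sm}}\cF(Y)$ satisfies the universal property of $a_{dNis}^*\cF(X)$.

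\textbf{Main obstacle.} I expect the technical heart to be the cofiltered-ness and cofinality statements for $X_{div}^{Sm}$ — in particular checking that any two log modifications of $X$ (or any log modification together with a strict Nisnevich cover) admit a common refinement \emph{inside} $SmlSm/k$, which requires the existence of subdivisions of fans simultaneously refining two given ones together with a resolution-type argument to land back in $SmlSm/k$ (this is where Proposition \ref{A.9.21} and the structure of the dividing density/cd-structure from Section \ref{sec:topologies} are used). A secondary subtlety is the exactness/commutation of the filtered colimit with the finite limits appearing in the sheaf (equalizer) conditions, and the careful bookkeeping needed to show the two separate descent properties (for strict covers and for log modifications) really do assemble into descent for the combined dividing Nisnevich topology rather than merely for its two generating pretopologies separately.
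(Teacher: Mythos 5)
Your proposal is correct and identifies the right technical inputs, but it is organized differently from the paper's proof. The paper works with the explicit construction $a_{dNis}^*=L_{dNis}\circ L_{dNis}$, where $L_{dNis}\cF(X)$ is a colimit of kernels $\ker(\cF(X')\to\cF(X'\times_X X'))$ over covers $X'\to X$; after refining an arbitrary dividing Nisnevich cover as $Y'\to Y''\to X$ via Proposition \ref{A.5.44}, the key computational step is that $Y'\times_X Y'\cong Y'\times_{Y''}Y'$ because the log modification $Y''\to X$ is a monomorphism, so the kernel collapses to $\cF(Y'')$ by strict Nisnevich descent; one then checks $L_{dNis}\cF$ is a strict Nisnevich sheaf and iterates. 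You instead characterize the sheafification by its universal property: you verify that the colimit presheaf $\cF'$ is a sheaf for the two generating classes of covers separately (which does suffice, since the sieves for which a presheaf satisfies descent locally form a topology) and that $\cF\to\cF'$ is a local isomorphism. Both routes rest on the same three pillars, which you correctly flag as the technical heart: Proposition \ref{A.5.44}, filteredness of $X_{div}^{Sm}$ (Proposition \ref{A.9.82}), and cofinality of the pullback functor $X_{div}^{Sm}\to (X')_{div}^{Sm}$ for a strict Nisnevich cover $X'\to X$ (Propositions \ref{A.9.81} and \ref{Fan.12}, via exactness of strict morphisms) --- note that at one point you state this cofinality in the wrong direction, and that identifying $\cF'(X')$ with $\colimit_{Y\in X_{div}^{Sm}}\cF(Y\times_X X')$ is exactly where it is needed. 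Your approach buys a cleaner conceptual statement at the cost of invoking the ``local isomorphism'' formalism; the paper's buys an entirely elementary verification at the cost of the slightly fiddly double application of $L_{dNis}$.
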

\begin{proof}
We will only consider the strict Nisnevich case since the proofs are similar.
Let $X_{dNis}$ denote the small dividing Nisnevich site of $X$.
Recall from \cite[Remarque II.3.3]{SGA4} that there is a functor $L_{dNis}$ such that $a_{dNis}^*=L_{dNis}\circ L_{dNis}$ given by 
\[
L_{dNis}\cF(X)\cong \colimit_{X'\in X_{dNis}}\ker(\cF(X')\stackrel{(+,-)}\longrightarrow \cF(X'\times_X X')).
\]
The notation is shorthand for the difference between the two morphisms induced by the projections $X'\times_X X'\longrightarrow X'$.
\vspace{0.1in}

By Proposition \ref{A.5.44}, for any dividing Nisnevich covering $X'\rightarrow X$, there is a refinement
\[
Y'\stackrel{g}\rightarrow Y''\stackrel{h}\rightarrow X
\]
such that $g$ is a strict Nisnevich cover and $h$ is a log modification.
Further, we may assume that $Y''\in SmlSm/k$ by applying Proposition \ref{A.3.19}.
Since
\[
Y'\times_X Y'\cong Y'\times_{Y''}Y''\times_X Y''\times_{Y''}Y'\cong Y'\times_{Y''} Y',
\]
we have 
\[
\ker(\cF(Y')\stackrel{(+,-)}\longrightarrow \cF(Y'\times_X Y'))
\cong
\ker(\cF(Y')\stackrel{(+,-)}\longrightarrow \cF(Y'\times_{Y''} Y'))
\cong
\cF(Y'')
\]
because $\cF$ is assumed to be a strict Nisnevich sheaf.
This gives the identification
\[
L_{dNis}\cF(X)\cong \colimit_{Y\in X_{div}^{Sm}}\cF(Y).
\]

Let us show that $L_{dNis}\cF$ is a strict Nisnevich sheaf.
For every strict Nisnevich cover $f\colon X'\rightarrow X$ the naturally induced functor $f^*\colon X_{div}^{Sm}\rightarrow X_{div}'^{Sm}$ 
is cofinal owing to Propositions \ref{A.9.81} and \ref{Fan.12}.
Thus to show that $L_{dNis}\cF$ is a strict Nisnevich sheaf, it suffices to show there is an exact sequence 
\[
0\rightarrow \colimit_{Y\in X_{div}^{Sm}}\cF(Y)\rightarrow \colimit_{Y\in X_{div}^{Sm}}\cF(Y\times_X X')\rightarrow \colimit_{Y\in X_{div}^{Sm}}\cF(Y\times_X X'\times_X X').
\]
Owing to Proposition \ref{A.9.82} $X_{div}^{Sm}$ is a filtered category.
Since filtered colimits preserve exact sequences, it suffices to show there is an exact sequence 
\[
0\rightarrow \cF(Y)\rightarrow \cF(Y\times_X X')\rightarrow \cF(Y\times_X X'\times_X X')
\]
for all $Y\in X_{div}$. 
This follows from the assumption that $\cF$ is a strict Nisnevich sheaf.
\vspace{0.1in}

Applying the above to $L_{dNis}\cF$ we deduce the isomorphisms
\[
a_{dNis}^*\cF(X)\cong \colimit_{Z\in Y_{div}^{Sm}}\colimit_{Y\in X_{div}^{Sm}}\cF(X)\cong \colimit_{Z\in X_{div}^{Sm}}\cF(X).
\]
\end{proof}

\begin{lem}\label{A.5.45}
Let $\cF$ be a strict Nisnevich (resp.\ strict \'etale, resp.\ Kummer \'etale) sheaf on $lSm/k$.
There is an isomorphim
\begin{equation}
\label{A.5.45.1}
\begin{split}
a_{dNis}^*\cF(X)\cong &\colimit_{Y\in X_{div}}\cF(Y)
\\
\text{ (resp.\ }
a_{d\acute{e}t}^*\cF(X)\cong \colimit_{Y\in X_{div}}\cF(Y),
&\text{ resp.\ }
a_{l\acute{e}t}^*\cF(X)\cong \colimit_{Y\in X_{div}}\cF(Y)
\text{)}.
\end{split}
\end{equation}
\end{lem}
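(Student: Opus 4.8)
The plan is to transcribe the proof of Lemma \ref{A.5.46} to the category $lSm/k$. Since the objects of $X_{div}$ are again log smooth over $k$, the sheaf $\cF$ can be evaluated on them directly, so the reduction to $SmlSm/k$ via Proposition \ref{A.3.19} needed there is no longer necessary, and the argument is in fact shorter. I spell out the strict Nisnevich case; the strict \'etale and Kummer \'etale statements are obtained verbatim, replacing ``strict Nisnevich cover'' by ``strict \'etale cover'' (resp.\ ``integral Kummer \'etale cover'') and invoking the corresponding clauses of Proposition \ref{A.5.44}.

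First I would recall from \cite[Remarque II.3.3]{SGA4}, exactly as in the proof of Lemma \ref{A.5.46}, that $a_{dNis}^*=L_{dNis}\circ L_{dNis}$, where
\[
L_{dNis}\cF(X)\cong\colimit_{X'\in X_{dNis}}\ker\!\left(\cF(X')\xrightarrow{(+,-)}\cF(X'\times_X X')\right).
\]
By Proposition \ref{A.5.44} any dividing Nisnevich cover $X'\to X$ is refined by a composite $Y'\xrightarrow{g}Y''\xrightarrow{h}X$ with $g$ a strict Nisnevich cover and $h$ a log modification. As $h$ is a monomorphism, $Y''\times_X Y''\cong Y''$, whence $Y'\times_X Y'\cong Y'\times_{Y''}Y'$, and since $\cF$ is a strict Nisnevich sheaf,
\[
\ker\!\left(\cF(Y')\xrightarrow{(+,-)}\cF(Y'\times_X Y')\right)\cong\ker\!\left(\cF(Y')\xrightarrow{(+,-)}\cF(Y'\times_{Y''}Y')\right)\cong\cF(Y'').
\]
Running over all covers and using that $X_{div}$ is filtered, these identifications assemble into $L_{dNis}\cF(X)\cong\colimit_{Y\in X_{div}}\cF(Y)$.

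Next I would check that $L_{dNis}\cF$ is already a strict Nisnevich sheaf. For a strict Nisnevich cover $f\colon X'\to X$ the base-change functor $X_{div}\to X_{div}'$, $Y\mapsto Y\times_X X'$, is cofinal — the analogue for $X_{div}$ of the cofinality statement used in the proof of Lemma \ref{A.5.46} (Propositions \ref{A.9.81} and \ref{Fan.12}) — so it suffices to prove exactness of
\[
0\to\colimit_{Y\in X_{div}}\cF(Y)\to\colimit_{Y\in X_{div}}\cF(Y\times_X X')\to\colimit_{Y\in X_{div}}\cF(Y\times_X X'\times_X X').
\]
Since $X_{div}$ is filtered (Proposition \ref{A.9.82}) and filtered colimits are exact, it is enough to check exactness of $0\to\cF(Y)\to\cF(Y\times_X X')\to\cF(Y\times_X X'\times_X X')$ for each $Y\in X_{div}$, which holds because $Y\times_X X'\to Y$ is a strict Nisnevich cover, $Y\times_X X'\times_X X'\cong (Y\times_X X')\times_Y(Y\times_X X')$, and $\cF$ is a strict Nisnevich sheaf.

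Finally, applying the formula for $L_{dNis}(-)$ just obtained to the strict Nisnevich sheaf $L_{dNis}\cF$ yields
\[
a_{dNis}^*\cF(X)=L_{dNis}(L_{dNis}\cF)(X)\cong\colimit_{Y\in X_{div}}(L_{dNis}\cF)(Y)\cong\colimit_{Y\in X_{div}}\,\colimit_{Z\in Y_{div}}\cF(Z)\cong\colimit_{Z\in X_{div}}\cF(Z),
\]
the last isomorphism holding because the composite of log modifications $Z\to Y\to X$ is again a log modification and $X_{div}$ is filtered, so $(Y,Z)\mapsto Z$ is cofinal from the indexing category of the iterated colimit to $X_{div}$. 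The \'etale and Kummer \'etale cases follow in the same way. The only point that needs attention is the bookkeeping of the filteredness and cofinality properties for $X_{div}$ in place of $X_{div}^{Sm}$; as these are already available from (the proof of) Lemma \ref{A.5.46} and the references cited there, this presents no real obstacle.
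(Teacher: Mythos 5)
Your proposal is correct and is essentially the paper's own proof: the paper simply states that the argument is parallel to that of Lemma \ref{A.5.46}, substituting Proposition \ref{A.9.80} (filteredness of $X_{div}$) for Proposition \ref{A.9.82}, which is exactly the transcription you carry out. Your observation that the reduction to $SmlSm/k$ via Proposition \ref{A.3.19} drops out, and that the cofinality of $Y\mapsto Y\times_X X'$ follows from Proposition \ref{Fan.12} (strict covers being exact), correctly fills in the details the paper leaves implicit.
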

\begin{proof}
The proof is parallel to that of Lemma \ref{A.5.46} if we use Proposition \ref{A.9.80} instead of Proposition \ref{A.9.82}.
\end{proof}

For the definition of a log modification along a smooth center, we refer to Definition \ref{Fan.39}.

\begin{df}
\label{A.5.72}
Let $X_{divsc}^{Sm}$ be the full subcategory of $X_{div}^{Sm}$ given by maps $Y\rightarrow X$ that are isomorphic to a composition of log modifications along a smooth center.
\end{df}

\begin{cor}
\label{A.5.74}
If $X\in lSm/k$, then the category $X_{divsc}^{Sm}$ is cofinal in $X_{div}^{Sm}$.
\begin{proof}
Immediate from Theorem \ref{Fan.16}.
\end{proof}
\end{cor}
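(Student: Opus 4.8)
The plan is to check directly that the inclusion $X_{divsc}^{Sm}\hookrightarrow X_{div}^{Sm}$ is a cofinal (final) functor, so that the colimits appearing in Lemma \ref{A.5.46} may be computed over the smaller indexing category; by Theorem \ref{Fan.16} this amounts to a single non-formal input, with everything else being bookkeeping.

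First I would record the formal reductions. Every log modification is a monomorphism of fs log schemes (Definition \ref{A.5.14}(iii), cf.\ Remark \ref{A.9.68}), so for any two objects $(Y_1\to X)$, $(Y_2\to X)$ of $X_{div}^{Sm}$ there is at most one $X$-morphism $Y_1\to Y_2$; hence $X_{div}^{Sm}$ and its full subcategory $X_{divsc}^{Sm}$ are preorders. Moreover a composition of log blow-ups along smooth centers is again a log modification and preserves membership in $SmlSm/k$ (Definition \ref{Fan.39} and the surrounding discussion), so $X_{divsc}^{Sm}$ is genuinely a full subcategory of $X_{div}^{Sm}$. Since $X_{div}^{Sm}$ is filtered by Proposition \ref{A.9.82}, the inclusion is cofinal as soon as every object of $X_{div}^{Sm}$ is refined by an object of $X_{divsc}^{Sm}$: nonemptiness of the relevant comma categories is precisely this refinement statement, while their connectedness follows because, given two refinements $Z_1,Z_2\in X_{divsc}^{Sm}$ of a fixed $Y$, one may choose a common refinement $W$ of $Z_1$ and $Z_2$ inside the filtered category $X_{div}^{Sm}$ and then refine $W$ by an object of $X_{divsc}^{Sm}$, which then dominates $Z_1$, $Z_2$ and $Y$ simultaneously.

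It remains to produce, for each log modification $Y\to X$ with $Y\in SmlSm/k$, a refinement $Z\to Y$ such that the composite $Z\to X$ is a composition of log blow-ups along smooth centers (so that automatically $Z\in SmlSm/k$ and $(Z\to X)\in X_{divsc}^{Sm}$). This is exactly the content of Theorem \ref{Fan.16}, and it is the one real obstacle: it is the combinatorial resolution statement to the effect that any subdivision of the fan attached to an object of $SmlSm/k$ can be dominated by a smooth subdivision obtained from iterated star subdivisions. Granting Theorem \ref{Fan.16}, the reductions of the previous paragraph immediately yield that $X_{divsc}^{Sm}$ is cofinal in $X_{div}^{Sm}$, which is why the corollary is stated as an immediate consequence.
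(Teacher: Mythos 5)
Your proof is correct and takes essentially the same route as the paper: the paper's argument is literally ``Immediate from Theorem \ref{Fan.16}'', and your write-up simply makes explicit the formal bookkeeping (the preorder structure of $X_{div}^{Sm}$ coming from log modifications being monomorphisms, filteredness from Proposition \ref{A.9.82}, and the reduction of cofinality to the refinement statement supplied by Theorem \ref{Fan.16}). The one caveat, which you inherit from the paper's own statement rather than introduce, is that Theorem \ref{Fan.16} requires $X\in SmlSm/k$ while the corollary is phrased for $X\in lSm/k$; only the $SmlSm/k$ case is used downstream (Lemma \ref{A.5.75}, Theorem \ref{Div.4}).
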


\begin{lem}
\label{A.5.75}
Let $\cF$ be a strict Nisnevich (resp.\ strict \'etale, resp.\ Kummer \'etale) sheaf on $SmlSm/k$.
There is an isomorphism
\[
\begin{split}
a_{dNis}^*\cF(X)\cong &\colimit_{Y\in X_{divsc}^{Sm}}\cF(Y)
\\
\text{ (resp.\ }
a_{d\acute{e}t}^*\cF(X)\cong \colimit_{Y\in X_{divsc}^{Sm}}\cF(Y),
&\text{ resp.\ }
a_{l\acute{e}t}^*\cF(X)\cong \colimit_{Y\in X_{divsc}^{Sm}}\cF(Y)
\text{)}.
\end{split}
\]
\end{lem}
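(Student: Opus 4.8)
The plan is to reduce Lemma \ref{A.5.75} to Lemma \ref{A.5.46} by using the cofinality statement established in Corollary \ref{A.5.74}. Recall that Lemma \ref{A.5.46} already provides, for a strict Nisnevich (resp.\ strict \'etale, resp.\ Kummer \'etale) sheaf $\cF$ on $SmlSm/k$, a canonical isomorphism
\[
a_{dNis}^*\cF(X)\cong \colimit_{Y\in X_{div}^{Sm}}\cF(Y)
\]
and likewise for the $d\acute{e}t$ and $l\acute{e}t$ topologies. The point is simply that the index category $X_{divsc}^{Sm}$ over which we now wish to take the colimit is a full subcategory of $X_{div}^{Sm}$, and Corollary \ref{A.5.74} asserts precisely that this inclusion is cofinal.

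First I would recall the standard fact that if $j\colon \cI'\hookrightarrow \cI$ is a cofinal functor and $\Phi\colon \cI\to \Lambda\text{-}\mathbf{Mod}$ is any diagram, then the canonical map
\[
\colimit_{i'\in \cI'}\Phi(j(i'))\xrightarrow{\ \cong\ }\colimit_{i\in \cI}\Phi(i)
\]
is an isomorphism. Applying this with $\cI=X_{div}^{Sm}$, $\cI'=X_{divsc}^{Sm}$, $j$ the inclusion functor (cofinal by Corollary \ref{A.5.74}, which in turn rests on Theorem \ref{Fan.16}), and $\Phi=Y\mapsto \cF(Y)$ — which is a well-defined functor on $X_{div}^{Sm}$ since $\cF$ is a presheaf on $SmlSm/k$ and every object of $X_{div}^{Sm}$ lies in $SmlSm/k$ by construction — yields
\[
\colimit_{Y\in X_{divsc}^{Sm}}\cF(Y)\cong \colimit_{Y\in X_{div}^{Sm}}\cF(Y).
\]
Composing this with the isomorphism of Lemma \ref{A.5.46} gives the desired identification $a_{dNis}^*\cF(X)\cong \colimit_{Y\in X_{divsc}^{Sm}}\cF(Y)$, and the same argument verbatim handles the $d\acute{e}t$ and $l\acute{e}t$ cases.

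There is no serious obstacle here: all the genuine content — the structure of dividing covers (Proposition \ref{A.5.44}), the filteredness of $X_{div}^{Sm}$ (Proposition \ref{A.9.82}), and the exactness properties of filtered colimits that went into Lemma \ref{A.5.46}, together with the cofinality of composites of log modifications along smooth centers (Theorem \ref{Fan.16}) feeding Corollary \ref{A.5.74} — has already been done. The only mild point worth checking is naturality: one should confirm that the isomorphism produced is natural in $X$, i.e.\ compatible with the transition maps in $X_{div}^{Sm}$ and with morphisms of fs log schemes, so that the statement is really an isomorphism of presheaves and not merely a pointwise bijection of abelian groups. This is automatic from the functoriality built into Lemma \ref{A.5.46} and the naturality of the cofinality isomorphism. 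I would therefore present the proof as a one-line deduction citing Corollary \ref{A.5.74} and Lemma \ref{A.5.46}.
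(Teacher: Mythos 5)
Your proof is correct and follows exactly the paper's own argument, which deduces the statement immediately from Lemma \ref{A.5.46} together with the cofinality of $X_{divsc}^{Sm}$ in $X_{div}^{Sm}$ from Corollary \ref{A.5.74}. Your additional remarks on naturality and on the standard cofinality-of-colimits fact are sound but not needed beyond the one-line deduction.
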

\begin{proof}
Immediate from Lemma \ref{A.5.46} and Corollary \ref{A.5.74}.
\end{proof}
\subsection{Examples of topologies compatible with log transfers}

This section shows that the strict Nisnevich, strict \'etale, Kummer \'etale, dividing Nisnevich, dividing \'etale, and Kummer \'etale topologies are compatible with log transfers.
We refer to \S \ref{sec::cdlog} for the definitions of these topologies.
For the strict Nisnevich topology, we use arguments similar to \cite[Lemma 6.2, Proposition 6.12]{MVW} originating in \cite{MR1764202}.

\begin{prop}
\label{A.4.1}
Let $X$ be an fs log scheme log smooth over $k$. Then $\Zltr(X)$ is a strict \'etale sheaf.
\end{prop}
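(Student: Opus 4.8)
The plan is to reduce the claim about $\Zltr(X)$ being a strict \'etale sheaf to the statement that $\Cor(-,-)$ is an \'etale sheaf on usual schemes, which is \cite[Lemma 6.2]{MVW}. The point is that a strict \'etale cover of an fs log scheme is, by definition, a pullback of an \'etale cover of the underlying scheme along a strict morphism, and the log structure on any object in the strict \'etale site of $T$ is just pulled back from $\underline T$. So the extra log-structural data in a finite log correspondence should not obstruct descent.

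\emph{First} I would fix a strict \'etale cover $f\colon U\rightarrow T$ with $T\in lSm/k$, and the associated \v Cech complex; since $\Zltr(X)$ is an additive presheaf it suffices to check exactness of
\[
0\rightarrow \lCor(T,X)\rightarrow \lCor(U,X)\rightarrow \lCor(U\times_T U,X).
\]
\emph{Second}, I would use Lemma \ref{A.5.10}: the homomorphism $(-)^\circ\colon \lCor(T,X)\rightarrow \Cor(T-\partial T, X-\partial X)$ is injective, and it is compatible with the transition maps in the \v Cech complex because $(-)^\circ$ is functorial (it factors the composition, by Lemma \ref{lem:composition}), and because $U-\partial U\rightarrow T-\partial T$ is again an \'etale cover of schemes (strictness of $f$ forces $\partial U = f^{-1}(\partial T)$, so $U-\partial U \cong \underline U\times_{\underline T}(T-\partial T)$). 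This immediately gives \emph{injectivity} of $\lCor(T,X)\rightarrow \lCor(U,X)$: it sits inside the injective map $\Cor(T-\partial T,X-\partial X)\rightarrow \Cor(U-\partial U,X-\partial X)$, using that $\Cor(-,X-\partial X)$ is an \'etale sheaf.

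\emph{Third}, and this is the main content, I would prove \emph{exactness in the middle}: given $\beta\in\lCor(U,X)$ whose two pullbacks to $\lCor(U\times_T U,X)$ agree, produce $\alpha\in\lCor(T,X)$ with $\alpha^\circ$ descending to $\beta^\circ$. By \'etale descent for finite correspondences on schemes (\cite[Lemma 6.2]{MVW}), there is a unique finite correspondence $\gamma\in\Cor(T-\partial T,X-\partial X)$ pulling back to $\beta^\circ$; one must upgrade $\gamma$ to an \emph{elementary log correspondence} from $T$ to $X$, i.e.\ verify the two conditions of Definition \ref{A.5.2}. For condition (i): each elementary component $\underline Z$ of $\gamma$ is the closure in $\underline T\times \underline X$ of a component of $\beta^\circ$; finiteness/surjectivity over a component of $\underline T$ can be checked \'etale-locally on $\underline T$, where it follows since $\beta$'s components are finite and surjective over components of $\underline U$ and $\underline U\rightarrow\underline T$ is \'etale surjective (one descends the finiteness using that finiteness is fpqc-local on the target, and that the closure of $Z-\partial Z$ inside $\underline T\times\underline X$ pulls back to the closure over $\underline U$ because strict \'etale maps are flat). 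For condition (ii): the log structure on $Z^N$ is $p_{log}^*\cM_T$; one needs a morphism $Z^N\rightarrow X$. Pulling back along $\underline U\rightarrow\underline T$, the normalization $(Z^N)_U \cong \amalg (Z_i')^N$ of the pulled-back cycle carries a compatible map to $X$ because $\beta$ is an honest finite log correspondence on $U$; since $\cM_T$ pulls back to $\cM_U$ (strictness), these maps agree on $(Z^N)_{U\times_T U}$, and by Lemma \ref{A.9.6} (using that the relevant log structures are injective — $Z^N$ is solid by Lemma \ref{A.9.17}, since $Z^N$ is finite over $T$ and $T$ is log smooth hence solid by Lemma \ref{A.9.3}, so $\alpha_{Z^N}$ is injective by Lemma \ref{A.9.4}) the map $(Z^N)_U\rightarrow X$ descends uniquely to $Z^N\rightarrow X$ by \'etale descent for morphisms of log schemes with injective log structures. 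This yields the desired $\alpha\in\lCor(T,X)$, and $\alpha^\circ = \gamma$ maps to $\beta$, completing the argument.

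\textbf{Main obstacle.} The delicate step is the \emph{descent of the structure morphism} $Z^N\rightarrow X$ in condition (ii): one must pass from the underlying-scheme correspondence $\gamma$ (which descends by \cite[Lemma 6.2]{MVW}) back up to a genuine morphism of log schemes, and the normalization functor does not obviously commute with the \'etale base change $\underline U\rightarrow\underline T$. I expect to resolve this by observing that $\underline U\times_{\underline T}\underline Z$ is already normal (normality is \'etale-local, and $\underline Z^N$ is normal), so $(\underline Z^N)\times_{\underline T}\underline U$ is the normalization of $\underline Z\times_{\underline T}\underline U$, whence the pulled-back log scheme $(Z^N)\times_T U$ is exactly the $Z'^N$ appearing in the definition of $\beta$'s components; then the descent of $(Z^N)_U\rightarrow X$ is genuine \'etale descent of morphisms in the category of log schemes, valid because the target has injective log structure map (Lemma \ref{A.9.6} gives uniqueness, which upgrades the descent datum to an actual morphism).
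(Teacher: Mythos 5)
Your proposal is correct and follows essentially the same route as the paper's proof: reduce to \v{C}ech exactness for a strict \'etale cover, obtain injectivity from Lemma \ref{A.5.10} together with \cite[Lemma 6.2]{MVW}, and for exactness in the middle descend the open correspondence, take its closure, descend finiteness along the \'etale cover, use that normalization commutes with \'etale base change, and finally descend the structure morphism $V^N\rightarrow X$ along the strict \'etale cover. The only divergence is at that last step, where the paper simply cites \cite[Corollary III.1.4.5]{Ogu} while you derive the uniqueness half from solidness (Lemmas \ref{A.9.17}, \ref{A.9.3}, \ref{A.9.4}) and Lemma \ref{A.9.6}; be aware that uniqueness only makes the relevant Hom-presheaf separated, so the existence half of the gluing still requires the cited descent statement (or an argument that the submonoid $\cM_{Z^N}\subseteq \cO_{Z^N}$ is \'etale-locally determined).
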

\begin{proof}
For fs log schemes $Y_1$ and $Y_2$ log smooth over $k$ we have 
\[
\Zltr(X)(Y_1\amalg Y_2)
=
\Zltr(X)(Y_1)\oplus \Zltr(X)(Y_2).
\]
Thus it suffices to check that the sequence of $\Lambda$-modules
\begin{equation}
\label{A.4.1.1}
0\rightarrow \Zltr(X)(Y)\rightarrow \Zltr(X)(U)\stackrel{(+,-)}\longrightarrow \Zltr(X)(U\times_Y U)
\end{equation}
is exact for every strict \'etale cover $p\colon U\rightarrow Y$.
\vspace{0.1in}

There is an induced commutative diagram of $\Lambda$-modules
\[
\begin{tikzcd}
\Zltr(X)(Y)\arrow[d]\arrow[r]&\Zltr(X)(U)\arrow[d]\\
\Zltr(X-\partial X)(Y-\partial Y)\arrow[r]&\Zltr(X-\partial X)(U-\partial U).
\end{tikzcd}
\] 
By \cite[Lemma 6.2]{MVW}, $\Zltr(X-\partial X)$ is an \'etale sheaf. 
Thus the lower horizontal morphism is injective. 
By Lemma \ref{A.5.10}, the vertical morphisms are injective. 
Thus the upper horizontal morphism is also injective.
\vspace{0.1in}

It remains to show that the sequence \eqref{A.4.1.1} is exact at $\Zltr(X)(U)$. 
To that end, we consider the cartesian square of fs log schemes
\[
\begin{tikzcd}
(U-\partial U)\times (X-\partial X)\arrow[d]\arrow[r]&U\times X\arrow[d]\\
(Y-\partial Y)\times (X-\partial X)\arrow[r]&Y\times X.
\end{tikzcd}
\]

Suppose $W\in \lCor(U,X)$ is a finite log correspondence with trivial image in $\Zltr(X)(U\times_Y U)$,
and form the finite correspondence 
\[
W-\partial W\in \Cor(U-\partial U,X-\partial X).
\] 
Since $\Zltr(X-\partial X)$ is an \'etale sheaf, there exists a finite correspondence 
\[
V'\in \Cor(Y-\partial Y,X-\partial X)
\]
mapping to $W-\partial W$. 
We let $\underline{V}$ be the closure of $V'$ in $\underline{Y}\times \underline{X}$.
\vspace{0.1in}

Consider the induced morphism $u\colon \underline{W}\rightarrow \underline{U}\times_{\underline{Y}}\underline{V}$ of closed subschemes of $\underline{U}\times \underline{X}$. 
By construction its pullback to $(U-\partial U)\times (X-\partial X)$ is the isomorphism
\[
W-\partial W\rightarrow (U-\partial U)\times_{(Y-\partial Y)}(V-\partial V) 
\]
Since $\underline{p}\colon \underline{U}\rightarrow \underline{Y}$ is strict \'etale and $V-\partial V$ is dense in $\underline{V}$, 
it follows that 
\[
(U-\partial U)\times_{(Y-\partial Y)}(V-\partial V)
\]
is dense in $\underline{U}\times_{\underline{Y}}\underline{V}$. 
Moreover,
$W-\partial W$ is dense in $\underline{W}$ because $u$ is a closed immersion. 
We note that $\underline{V}$ is finite over $\underline{Y}$ by \cite[IV.2.7.1(xv)]{EGA} since $\underline{U}\times_{\underline{Y}}\underline{V}$ is finite over $\underline{U}$ 
and $\underline{p}\colon \underline{U}\rightarrow \underline{Y}$ is an \'etale cover. 
There is a naturally induced morphism $v\colon \underline{W}\rightarrow \underline{V}$. 
Since $u$ is an isomorphism, $v$ is a pullback of $p$, and hence $v$ is \'etale.
\vspace{0.1in}

Let $V^N$ be the fs log scheme whose underlying scheme is the normalization of $\underline{V}$ and whose log structure is induced by $Y$. 
By \cite[11.3.13(ii)]{EGA} the underlying fiber product $\underline{V^N}\times_{\underline{V}}\underline{W}$ is normal since $v$ is \'etale. 
Thus we have the cartesian square of schemes
\[\begin{tikzcd}
\underline{W^N}\arrow[d]\arrow[r]&\underline{W}\arrow[d,"v"]\\
\underline{V^N}\arrow[r]&\underline{V}.
\end{tikzcd}
\]
The log transfer structure gives rise to a morphism $r\colon W^N\rightarrow X$ of fs log schemes over $k$.
By assumption,
we can identify the two composite morphisms in the diagram
\[
W^N\times_{U}(U\times_Y U)
\rightrightarrows 
W^N
\overset{r}{\rightarrow}
X.
\]
Since $p$ is strict \'etale there is a morphism $V^N\rightarrow X$ of fs log schemes over $k$ such that the composition $W^N\rightarrow V^N\rightarrow X$ is equal to $r$, 
see \cite[Corollary III.1.4.5]{Ogu}.
The pair $(\underline{V},V^N\rightarrow X)$ gives a finite log correspondence $V$ from $X$ to $Y$ since $\underline{V}$ is finite over $\underline{Y}$, 
and the pullback of $V$ to $U$ is $W$.
\end{proof}

\begin{prop}
\label{A.5.11}
Let $t$ be one of the following topologies: strict Nisnevich, strict \'etale, and Kummer \'etale topologies.
For every $t$-cover $f\colon U\rightarrow X$ of fs log schemes in $lSm/k$,
the \v{C}ech complex
\begin{equation}
\label{A.5.11.2}
\cdots\rightarrow a_t^*\Zltr(U\times_X U)\rightarrow a_t^*\Zltr(U)\rightarrow a_t^*\Zltr(X)\rightarrow 0
\end{equation}
is exact as a complex of $t$-sheaves.
\end{prop}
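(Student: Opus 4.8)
The plan is to reduce the exactness of the \v{C}ech complex \eqref{A.5.11.2} of $t$-sheaves with log transfers to known statements, exploiting Proposition \ref{A.8.11} and the sheaf property of representable presheaves proved in Proposition \ref{A.4.1}. Concretely, by Proposition \ref{A.8.11} it suffices to show that each topology $t\in\{sNis,s\acute{e}t,k\acute{e}t\}$ is compatible with log transfers, and then invoke the \v{C}ech-complex criterion; but in fact the proposition to be proved is precisely the hypothesis of Proposition \ref{A.8.11}, so the real content is the exactness of the finite-level complex. So the first step is to observe that $\Zltr(X)$ is a $t$-sheaf for $t=sNis$ (hence for $sNis$ one only needs acyclicity after sheafification, which for a sheaf is genuine acyclicity of the \v{C}ech complex of abelian groups), and then handle the remaining terms degreewise.

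\textbf{Key steps.} First I would record that $\Zltr(X)$ is a strict \'etale sheaf by Proposition \ref{A.4.1}, hence also a strict Nisnevich sheaf and — after the appropriate base change along Kummer covers, using Remark \ref{A.5.29}(iv) and the behaviour of $\omega$ — compatible with the Kummer \'etale topology as well (this last case will likely require relating a Kummer \'etale cover $U\to X$ to an integral one and passing to normalizations, as in the proof of Proposition \ref{A.4.1}). Second, since $a_t^*\Zltr(X)\cong \Zltr(X)$ when $\Zltr(X)$ is already a $t$-sheaf, the complex \eqref{A.5.11.2} becomes the ordinary \v{C}ech complex $\cdots\to \Zltr(X)(U\times_X U\times_X U)\to \Zltr(X)(U\times_X U)\to \Zltr(X)(U)\to \Zltr(X)(X)\to 0$ evaluated on each test object $T\in lSm/k$; by the projection formula (Remark \ref{A.5.35}) and the functor $(-)^\circ$ of Lemma \ref{A.5.10}, this embeds into the corresponding \v{C}ech complex of finite correspondences $\Cor(T-\partial T,X-\partial X)$ for the $t$-cover $U-\partial U\to X-\partial X$ of smooth schemes, whose exactness is \cite[Lemma 6.2, Proposition 6.12]{MVW} for the Nisnevich case (and the analogous statements in the \'etale and Kummer \'etale cases). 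Third, I would argue surjectivity at each level directly: given a section over $U\times_X\cdots\times_X U$ with trivial \v{C}ech differential, its image in $\Cor((U-\partial U)\times_{X-\partial X}\cdots,X-\partial X)$ descends; take closures and normalizations as in Proposition \ref{A.4.1} to lift the descended cycle back to a finite log correspondence over $X$, using that $U\to X$ is strict (so the log structures match on the nose) for $sNis,s\acute{e}t$, and using the chart $\theta\colon P\to Q$ with $\overline{Q}=0$ over the open locus (Remark \ref{A.9.68}(1)) in the Kummer case to reduce to the strict \'etale situation there.

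\textbf{Main obstacle.} The essential difficulty is entirely at the level of the highest \v{C}ech terms for the \emph{Kummer \'etale} topology: unlike strict covers, a Kummer \'etale cover genuinely changes the log structure, so $\Zltr(X)$ is not obviously a $k\acute{e}t$-sheaf, and the fiber products $U\times_X\cdots\times_X U$ need not be normal or even reduced. The remedy will be the same normalization-and-density argument used in Proposition \ref{A.4.1}: work over the dense open complement of the boundary, where a Kummer \'etale cover restricts to a finite \'etale cover of usual schemes (Remark \ref{A.9.68}(1)), invoke \cite[Lemma 6.2]{MVW} there, then use Lemma \ref{A.9.11} (normality of $X$ from Lemma \ref{A.9.18}, injectivity of the log structure from Lemma \ref{A.9.4}) to extend the constructed log correspondence uniquely across the boundary. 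Once the single-cover exactness is established for all three topologies, Proposition \ref{A.8.11} and the reduction to bounded hypercovers therein upgrade it to compatibility with log transfers, and combined with Proposition \ref{A.8.13} this closes the loop; the dividing Nisnevich, dividing \'etale, and log \'etale cases then follow by combining with the dividing cd-structure results of Section \ref{sec:topologies}. I expect the routine verifications (density, finiteness via \cite[IV.2.7.1(xv)]{EGA}, compatibility of traces under composition) to go through essentially verbatim from the strict \'etale case.
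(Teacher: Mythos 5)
Your reduction strategy has a genuine gap in the middle degrees of the \v{C}ech complex, and that is exactly where the content of the statement lives. Exactness at $\Zltr(U\times_X\cdots\times_X U)$ requires that a cycle $\alpha$ with $d\alpha=0$ be a boundary $d\beta$ for some $\beta\in\lCor(T,U\times_X\cdots\times_X U)$ one level up. Via $(-)^\circ$ you only learn that $\alpha^\circ$ bounds in the classical \v{C}ech complex of $\Cor(T-\partial T,-)$; but the bounding chain $\beta^\circ$ there is produced by the classical contracting homotopy, and there is no reason that its closure in $\underline{T}\times\underline{U}\times\cdots\times\underline{U}$ is finite over $\underline{T}$, nor that it satisfies the admissibility condition (ii) of Definition \ref{A.5.2}. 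The closure-and-normalization argument of Proposition \ref{A.4.1} does not transport to this situation: there the descended cycle is controlled because its pullback to $U$ is a \emph{given} finite log correspondence, whereas here $\beta^\circ$ comes with no such control. Since $(-)^\circ$ is injective (Lemma \ref{A.5.10}), the log \v{C}ech complex is a subcomplex of the classical one, and exactness of an ambient complex never implies exactness of a subcomplex. A secondary slip: even granting that every term is a $t$-sheaf, exactness as a complex of $t$-sheaves is a statement about stalks, not about sections over every $T\in lSm/k$, so the whole argument must be run over (log strictly) henselian local $T$ from the outset, and your ``descends'' step only makes sense for the degree-one sheaf condition, not for acyclicity in higher degrees.

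The paper's proof avoids the lifting problem entirely. It evaluates the complex on a henselian (resp.\ strictly henselian, resp.\ log strictly henselian) local $T$, writes each term as a filtered colimit over strict, finite, surjective $Z\subset T\times X$ of groups $L((Z_U)_Z^n/T)$ of \emph{log} correspondences, and uses that the cover $Z_U\to Z$ splits because $Z$ is again (log strictly) henselian, by Lemma \ref{ket.4} in the Kummer \'etale case. The splitting $s\colon Z\to Z_U$ is a morphism of saturated log schemes, so the contracting homotopy $L(s\times_Z\mathrm{id})$ stays inside the groups of log correspondences by construction, and no extension of cycles across the boundary is ever needed. If you want to repair your argument, the fix is precisely this: do not descend from the interior and try to re-extend, but split the cover over the henselization and observe that the resulting homotopy is already log-admissible.
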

\begin{proof}
Arguing as in the proof of \cite[Proposition 6.12]{MVW} we check there is an exact sequence of $\Lambda$-modules
\begin{equation}\
\label{A.5.11.1}
\cdots\rightarrow \Zltr(U\times_X U)(T)\rightarrow \Zltr(U)(T)\rightarrow \Zltr(X)(T)\rightarrow 0,
\end{equation}
where $\underline{T}$ is a hensel local scheme (resp.\ $\underline{T}$ is strictly local, resp.\ $T$ is log strictly local) if $t=sNis$ (resp.\ $t=s\acute{e}t$, resp.\ $t=k\acute{e}t$). 
Here $\Zltr({X})(T)$ denotes the colimit $\varinjlim \Zltr(X)(T_i)$ when $T$ is a limit of fs log schemes $T_i$ log smooth over $k$.
\vspace{0.1in}

Let $Z$ be a saturated log scheme over $T$ such that $\underline{Z}$ quasi-finite over $\underline{T}$. 
We denote by $L(Z/T)$ the free abelian group generated by the pairs 
\[
(\underline{W},W^N\rightarrow Z), 
\]
where $\underline{W}$ is an irreducible component of $\underline{Z}$ that is finite and surjective over a component of $T$, 
$W^N$ is the saturated log scheme whose underlying scheme is the normalization of $\underline{W}$ and whose log structure is induced by $T$, 
and $W^N\rightarrow Z$ is a morphism of saturated log schemes. 
We note that $L(Z/T)$ is covariantly functorial on $Z$. 
Now \eqref{A.5.11.1} is the colimit of the complexes of the form
\[
\cdots\rightarrow L((Z_U)_Z^2/T)\rightarrow L(Z_U/T)\rightarrow L(Z/T)\rightarrow 0,
\]
where
\[
Z_U:=Z\times_X U,
\;
(Z_U)_Z^n:=\underbrace{Z_U\times_Z \cdots \times_Z Z_U}_{n \text{ times}}.
\]
The colimit runs over strictly closed subschemes $Z$ of $T\times X$ such that $Z$ is strict, finite, and surjective over $T$. 
Hence it suffices to check that the latter sequence is exact.
\vspace{0.1in}

Note that $Z$ is henselian (resp.\ strictly henselian, resp.\ log strictly henselian) being finite over the henselian (resp.\ strictly henselian, resp.\ log strictly henselian) saturated log scheme $T$, see \ref{ket.2} for the log strictly henselian case.
Thus the strict Nisnevich (resp.\ strict \'etale, resp.\ Kummer \'etale) cover $Z_U\rightarrow Z$ splits, see Lemma \ref{ket.4} for the Kummer \'etale case.
Letting $s\colon Z\rightarrow Z_U$ be a splitting,
the maps
\[
L(s\times_Z {\rm id})\colon L((Z_U)_Z^n)\rightarrow L((Z_U)_Z^{n+1})
\]
furnish contracting homotopies, and the desired exactness follows.
\end{proof}

\begin{rmk}
Combining Propositions \ref{A.4.1} and \ref{A.5.11} we can remove $a_t^*$ in the equation \eqref{A.5.11.2} if $t=sNis$ or $t=s\acute{e}t$.
\end{rmk}

\begin{prop}
\label{A.5.22}
The strict Nisnevich topology, the strict \'etale topology, and the Kummer \'etale topology on $lSm/k$ are compatible with log transfers.
\end{prop}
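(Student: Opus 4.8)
The plan is to obtain the statement as a formal consequence of the two preceding results. Fix $t$ to be one of the strict Nisnevich, strict \'etale, or Kummer \'etale topologies on $lSm/k$. By Proposition \ref{A.8.11} it suffices to verify that for every $t$-cover $f\colon U\rightarrow X$ in $lSm/k$ the complex
\[
\cdots\rightarrow a_t^*\gamma^*\Zltr(U\times_X U)\rightarrow a_t^*\gamma^*\Zltr(U)\rightarrow a_t^*\gamma^*\Zltr(X)\rightarrow 0
\]
associated with the \v{C}ech nerve of $f$ is acyclic as a complex of $t$-sheaves on $lSm/k$. But this is precisely the assertion of Proposition \ref{A.5.11}: there the complex $\cdots\rightarrow a_t^*\Zltr(U\times_X U)\rightarrow a_t^*\Zltr(U)\rightarrow a_t^*\Zltr(X)\rightarrow 0$ is shown to be exact, where — since at this stage we have not yet established (mild) compatibility with log transfers — the sheafification $a_t^*$ must be understood in $\Shvlogtkl$ after restricting the representable presheaves with log transfers along $\gamma^*$; that is, $a_t^*\Zltr(-)$ there means exactly $a_t^*\gamma^*\Zltr(-)$.

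So the two hypotheses match on the nose, and I would simply invoke Proposition \ref{A.8.11} to conclude. The only bookkeeping points to spell out are: (a) that the complex of $t$-sheaves furnished by Proposition \ref{A.5.11} coincides with the one required in the hypothesis of Proposition \ref{A.8.11} — this is the identification just described, and can alternatively be seen by noting that $\gamma^*$ commutes with $t$-sheafification (a consequence of \eqref{A.8.1.1}) and is exact, so that applying $\gamma^*$ to an exact complex again yields an exact complex; and (b) that Proposition \ref{A.5.11} is proved directly at the level of stalks, via the splitting argument for $t$-covers of (strictly, resp.\ log strictly) henselian log schemes, without presupposing compatibility, so that no circularity arises in the chain of implications. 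I do not anticipate a genuine obstacle: the substantive input — exactness of the \v{C}ech complexes of representable log-transfer sheaves, and the reduction from arbitrary $t$-hypercovers to \v{C}ech covers — has already been carried out in Propositions \ref{A.5.11} and \ref{A.8.11}, and the present statement is the formal combination of the two.
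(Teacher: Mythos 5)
Your proof is correct and is exactly the paper's argument: the paper's proof of Proposition \ref{A.5.22} is the one-line combination of Propositions \ref{A.8.11} and \ref{A.5.11}. Your additional bookkeeping remarks (identifying $a_t^*\Zltr(-)$ with $a_t^*\gamma^*\Zltr(-)$ via the exactness of $\gamma^*$ and its commutation with sheafification, and noting that Proposition \ref{A.5.11} is proved by a stalkwise splitting argument that does not presuppose compatibility) are accurate and only make the deduction more transparent.
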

\begin{proof}
Follows from Propositions \ref{A.8.11} and \ref{A.5.11}.
\end{proof}

\begin{lem}
\label{A.5.62}
Let $f\colon X'\rightarrow X$ be a log modification in $lSm/k$.
For every $Y\in lSm/k$ and finite log correspondence $V\in \lCor(Y,X)$, there exists a dividing Zariski cover $g\colon Y'\rightarrow Y$ and a finite log correspondence $W\in \lCor(Y',X')$ fitting in a commutative diagram
\[
\begin{tikzcd}
Y'\arrow[d,"g"']\arrow[r,"W"]&X'\arrow[d,"f"]\\
Y\arrow[r,"V"]&X
\end{tikzcd}
\]
in $lCor/k$.
\end{lem}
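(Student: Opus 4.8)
The plan is to reduce to an elementary log correspondence $V=[Z]$ with $Y$ connected and affine carrying an fs chart $Q$, to build the dividing Zariski cover from a subdivision of $Q$, and then to define $W$ as the closure of $V-\partial V$ inside $\ul{Y'}\times\ul{X'}$. For the reductions I would write $V=\sum_i n_iV_i$, cover $Y$ by open affines with fs charts, and use that log modifications are stable under composition and base change while disjoint unions of log modifications are again log modifications: the several log modifications produced for the various $i$ merge into one by taking fibre products over $Y$, the contributions coming from the members of the affine cover assemble into a single dividing Zariski cover via disjoint unions, and the corresponding correspondences $W$ are pulled back and added with multiplicities. So it suffices to treat $V=[Z]$ with $Y$ connected and affine with an fs chart $Q$; let $p\colon\ul{Z^N}\to\ul{Y}$ and $q\colon Z^N\to X$ be the maps attached to $V$, and recall $\cM_{Z^N}=p_{log}^*\cM_Y$, so that $Z^N\to Y$ is strict and $Q$ is also a chart of $Z^N$.

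\emph{Construction of $g$ and of a lift over $X'$.} The morphism $\tilde Z:=Z^N\times_XX'\to Z^N$ is the base change of the log modification $f$ along $q$, hence is itself a log modification. By Proposition~\ref{A.9.21} there is a subdivision $\Sigma\to\Spec Q$ such that $Z^N\times_{\A_Q}\A_\Sigma\to Z^N$ refines $\tilde Z\to Z^N$, i.e.\ factors through it. I would then set $Y':=Y\times_{\A_Q}\A_\Sigma$; since subdivisions of fans induce surjective proper log \'etale monomorphisms (Remark~\ref{A.9.68}(3)), the projection $g\colon Y'\to Y$ is a log modification, in particular a dividing Zariski cover. Because $Z^N\to Y$ is strict one has $Z^N\times_YY'=Z^N\times_{\A_Q}\A_\Sigma$, which is finite over $Y'$, has log structure pulled back from $Y'$, and comes equipped with a morphism to $\tilde Z=Z^N\times_XX'$, hence with a morphism $Z^N\times_YY'\to X'$.

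\emph{Construction of $W$.} By Remark~\ref{A.9.68}(1) the log modifications $g$ and $f$ restrict to isomorphisms $Y'-\partial Y'\xrightarrow{\sim}Y-\partial Y$ and $X'-\partial X'\xrightarrow{\sim}X-\partial X$, so $V^\circ:=V-\partial V\in\Cor(Y-\partial Y,X-\partial X)$ may be regarded as an element of $\Cor(Y'-\partial Y',X'-\partial X')$; let $\ul W$ be its closure in $\ul{Y'}\times\ul{X'}$, with the multiplicities of $V^\circ$. The map $Z^N\times_YY'\to\ul{Y'}\times\ul{X'}$ whose components are the projection and the morphism to $X'$ is proper (being finite over $\ul{Y'}$), hence has closed image; the preimage of $Y'-\partial Y'$ in $Z^N\times_YY'$ is the graph of $V^\circ$, dense in $\ul W$, so $\ul W$ is contained in that image, $\ul W\to\ul{Y'}$ is finite, and $\ul W$ is surjective over $\ul{Y'}$. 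Let $T_0$ be the unique component of $Z^N\times_YY'$ dominating $\ul W$; then $T_0\to\ul W$ is a finite birational surjective morphism of integral schemes, so the normalization of $\ul W$ coincides with the normalization of $\ul{T_0}$, and since $T_0\to Y'$ is strict the fs log scheme $W^N$ (normalization of $\ul W$ with log structure induced from $Y'$) is exactly the normalization of $T_0$ as an fs log scheme. The composite $W^N\to T_0\hookrightarrow Z^N\times_YY'\to X'$ then supplies the morphism required by Definition~\ref{A.5.2}(ii), so $W$ is an elementary log correspondence from $Y'$ to $X'$ with $W^\circ=V^\circ$ under the above identifications.

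\emph{Commutativity and main obstacle.} By Lemma~\ref{A.5.10} the homomorphism $(-)^\circ$ is injective, and by Lemma~\ref{lem:composition} it is compatible with composition, so to check $f\circ W=V\circ g$ in $\lCor(Y',X)$ it is enough to compute in $\Cor(Y'-\partial Y',X-\partial X)$: there $(f\circ W)^\circ=\Gamma_{f^\circ}\circ W^\circ$ and $(V\circ g)^\circ=V^\circ\circ\Gamma_{g^\circ}$, and both equal $V^\circ$ because $f^\circ$ and $g^\circ$ are isomorphisms identifying $X'-\partial X'$ with $X-\partial X$ and $Y'-\partial Y'$ with $Y-\partial Y$ and $W^\circ=V^\circ$ under these identifications. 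I expect the subdivision step — arranging, over the charted $Y$, a single dividing Zariski cover whose pullback to $Z^N$ dominates $Z^N\times_XX'$ — to be the main technical point, since it is precisely what forces the passage from a log modification of $Y$ to a dividing Zariski cover and relies on the fan-theoretic description of log modifications; the remaining work (finiteness of $\ul W$ over $\ul{Y'}$, and the identification of normalizations giving $W^N\to X'$) is then essentially formal.
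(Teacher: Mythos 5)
Your proposal is correct and follows essentially the same route as the paper: reduce Zariski-locally to a chart on $Y$, apply Proposition \ref{A.9.21} to the log modification $Z^N\times_X X'\to Z^N$ to produce a subdivision defining $Y'$, use strictness of $Z^N\to Y$ to lift over $X'$, and take the (closure equal to the) image in $\ul{Y'}\times\ul{X'}$ as $\ul{W}$, identifying its normalization with that of the relevant component so as to get the required morphism $W^N\to X'$. The extra bookkeeping you add (decomposition into elementary correspondences, the component $T_0$, and the $(-)^\circ$ check of commutativity) is sound and only elaborates what the paper leaves implicit.
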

\begin{proof}
The question is Zariski local on $Y$, so we may assume that $Y$ has an fs chart $P$. 
Let $p\colon V^N\times_X X'\rightarrow V^N$ be the projection, and let $q\colon V^N\rightarrow Y$ be the structure morphism. 
By Lemma \ref{A.9.21}, there is a subdivision of fans $M\rightarrow \Spec P$ such that the projection $V^N\times_{\A_P}\A_M\rightarrow V^N$ admits a factorization
\[
V^N\times_{\A_P}\A_M\rightarrow V^N\times_X X'\rightarrow V^N.
\]
Setting $Y':=Y\times_{\A_P}\A_M$ and $V':=V\circ u$ where $u\colon Y'\rightarrow Y$ denotes the projection, 
the structure morphism $v\colon V'^N\rightarrow X$ factors through $V^N\times_Y Y'\cong V^N\times_{\A_P}\A_M$, so that $v$ factors through $V^N\times_X X'$. 
It follows that $v$ factors through $X'$.
\vspace{0.1in}

Replacing $Y$ by $Y'$ and $V$ by $V'$, we may assume that the structure morphism $V^N\rightarrow X$ factors through $X'$. 
In this case, there is a commutative diagram of fs log schemes
\[
\begin{tikzcd}
V^N\arrow[r,"w"]&Y\times X'\arrow[d]\arrow[r]&X'\arrow[d]\\
&Y\times X\arrow[r]&X.
\end{tikzcd}\]
Since $X'$ is proper over $X$, the morphism $w$ is proper. 
Let $\underline{W}$ be the image of 
\[
\underline{V^N}\rightarrow \underline{Y}\times \underline{X'},
\] 
which we consider as a closed subscheme of $\underline{Y}\times \underline{X'}$ with the reduced scheme structure. 
Then $\underline{V^N}$ is the normalization of $\underline{W}$ since $\underline{V^N}$ is the normalization of the image of 
\[
\underline{V^N}\rightarrow \underline{Y}\times \underline{X}.
\] 
Thus $w$ gives a correspondence $W$ from $Y$ to $X'$ with image $V$ in $\lCor(Y',X)$.
\end{proof}

\begin{lem}
\label{A.5.70}
For every log modification $f\colon Y\rightarrow X$ in $lSm/k$ the induced morphism
\[
a_{dNis}^*\gamma^*\Zltr(Y)\rightarrow a_{dNis}^*\gamma^*\Zltr(X)
\] 
of dividing Nisnevich sheaves is an isomorphism.
\end{lem}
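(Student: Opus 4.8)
The plan is to show that the morphism
\[
\phi\colon a_{dNis}^*\gamma^*\Zltr(Y)\longrightarrow a_{dNis}^*\gamma^*\Zltr(X)
\]
induced by $f$ is an isomorphism in the abelian category of dividing Nisnevich sheaves of $\Lambda$-modules on $lSm/k$ by proving separately that it is a monomorphism and an epimorphism, and then invoking that a morphism in an abelian category which is both is an isomorphism. The monomorphism part is formal: a log modification is by definition a surjective proper log \'etale monomorphism (Definition~\ref{A.5.14}(iii) together with Proposition~\ref{A.9.75}), so $f$ is in particular a monomorphism of fs log schemes; hence $\Zltr(Y)\to\Zltr(X)$ is a monomorphism of presheaves with log transfers by Lemma~\ref{A.8.22}, and since $\gamma^*$ and the $dNis$-sheafification functor are exact, $\phi$ is a monomorphism.

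For the epimorphism I would verify local surjectivity for the dividing Nisnevich topology. By Proposition~\ref{A.4.1} the presheaves $\gamma^*\Zltr(X)$ and $\gamma^*\Zltr(Y)$ are strict \'etale, hence strict Nisnevich, sheaves, so Lemma~\ref{A.5.45} identifies, for every $T\in lSm/k$, the section group $a_{dNis}^*\gamma^*\Zltr(X)(T)$ with $\colimit_{T'\in T_{div}}\lCor(T',X)$, and similarly for $Y$. Now given $T\in lSm/k$ and a section $s$ of $a_{dNis}^*\gamma^*\Zltr(X)$ over $T$, I would write $s$ as the class of some $V\in\lCor(T',X)$ with $T'\to T$ a log modification, apply Lemma~\ref{A.5.62} to the log modification $f$ and the correspondence $V$ to obtain a dividing Zariski cover $g\colon T''\to T'$ together with $W\in\lCor(T'',Y)$ satisfying $f\circ W=V\circ g$, and observe that $T''\to T'\to T$ is a dividing Nisnevich cover, being a composite of a dividing cover and a dividing Zariski cover. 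Over $T''$ the restriction of $s$ is then the class of $V\circ g=f\circ W$, which is the image under $\phi_{T''}$ of the class of $W$; hence $\phi$ is locally surjective, that is, an epimorphism, and therefore an isomorphism.

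The one delicate point is this epimorphism step, and it should be handled as above rather than by attempting to show $\phi$ is surjective on sections over a fixed $T$: the colimit description of Lemma~\ref{A.5.45} ranges only over log modifications, whereas the refinement furnished by Lemma~\ref{A.5.62} genuinely carries a Zariski component, coming from the reduction to a base admitting an fs chart, which is available only Zariski-locally. Passing to local surjectivity for the $dNis$-topology, where the cover $T''\to T$ is exactly what is permitted, circumvents this, and everything else — the exactness of $\gamma^*$ and of $a_{dNis}^*$, and the reduction of ``isomorphism'' to ``monomorphism and epimorphism'' — is routine.
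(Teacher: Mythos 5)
Your proof is correct and follows essentially the same route as the paper: injectivity via Lemma~\ref{A.5.10} (which you access through Lemma~\ref{A.8.22} after noting that a log modification is a monomorphism by Proposition~\ref{A.9.75}), and surjectivity via Lemma~\ref{A.5.62}, using exactness of $\gamma^*$ and $a_{dNis}^*$ throughout. Your treatment of the epimorphism step is in fact slightly more careful than the paper's, which simply cites Lemma~\ref{A.5.62} and leaves the reduction to local surjectivity for the dividing Nisnevich topology implicit.
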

\begin{proof}
Let $T$ be an fs log scheme log smooth over $k$. 
In the commutative diagram
\[
\begin{tikzcd}
\lCor(T,Y)\arrow[r]\arrow[d,hook]&\lCor(T,X)\arrow[d,hook']\\
\Cor(T-\partial T,Y-\partial Y)\arrow[r,"\sim"]&\Cor(T-\partial T,X-\partial X)
\end{tikzcd}
\]
the vertical morphisms are injections by Lemma \ref{A.5.10}, 
and the lower horizontal morphism is an isomorphism since $Y-\partial Y\cong X-\partial X$. 
Thus the upper horizontal morphism is an injection. 
Hence $\Zltr(Y)\rightarrow \Zltr(X)$ is a monomorphism,
and likewise for
\[
a_{dNis}^*\gamma^*\Zltr(Y)\rightarrow a_{dNis}^*\gamma^*\Zltr(X)
\]
since $a_{dNis}^*$ and $\gamma^*$ are exact, see \eqref{A.8.7.1} and Proposition \ref{A.8.5}.
To show that it is an epimorphism, 
let $V\in \lCor(T,X)$ be a finite log correspondence.
Due to Lemma \ref{A.5.62} there exists a dividing Nisnevich cover $g\colon T'\rightarrow T$ and a finite log correspondence $W\in \lCor(T',Y)$ such that $f\circ W=V\circ g$.
This finishes the proof.
\end{proof}

\begin{thm}
\label{A.5.23}
The dividing Nisnevich topology, the dividing \'etale topology, and the log \'etale topology on $lSm/k$ are compatible with log transfers.
\end{thm}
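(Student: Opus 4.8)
The plan is to verify the hypothesis of Proposition \ref{A.8.11} for each of the three topologies $t\in\{dNis,\,d\acute{e}t,\,l\acute{e}t\}$: that for every $t$-cover $f\colon U\to X$ in $lSm/k$ the augmented \v{C}ech complex
\[
\cdots\to a_t^*\gamma^*\Zltr(U\times_X U)\to a_t^*\gamma^*\Zltr(U)\to a_t^*\gamma^*\Zltr(X)\to 0
\]
is acyclic. (For $t=dNis$ one could instead invoke Proposition \ref{A.8.13}: the dividing Nisnevich cd-structure is complete, regular and quasi-bounded by Proposition \ref{A.9.60}, and condition (iii) there reduces to two cases — for a strict Nisnevich distinguished square it follows from Proposition \ref{A.5.22}, and for a dividing distinguished square it is exactly Lemma \ref{A.5.70}. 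The \v{C}ech argument below, however, handles all three topologies at once.) The two essential inputs are the already-established compatibility of the strict versions (Propositions \ref{A.5.22} and \ref{A.5.11}) and the log modification invariance of Lemma \ref{A.5.70}, glued together by the factorization of covers in Proposition \ref{A.5.44}.

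First I would record two preliminary observations. \emph{(a)} The statement of Lemma \ref{A.5.70} holds verbatim with $dNis$ replaced by $d\acute{e}t$ or $l\acute{e}t$: its proof uses only that $\Zltr(Y)\to\Zltr(X)$ is a monomorphism (Lemma \ref{A.5.10}, since $Y-\partial Y\cong X-\partial X$), that $\gamma^*$ and the relevant sheafification functor are exact, and that the dividing Zariski cover produced by Lemma \ref{A.5.62} is a cover for the topology in question — all of which remain valid. Moreover log modifications are stable under base change, being surjective proper log \'etale monomorphisms, so this invariance applies to any base change of a log modification along a morphism in $lSm/k$. \emph{(b)} If $t'\subseteq t$ are topologies on $lSm/k$ and $C_\bullet$ is a complex of presheaves of $\Lambda$-modules with $a_{t'}^*C_\bullet$ acyclic, then $a_t^*C_\bullet$ is acyclic, since $H_i(a_t^*C_\bullet)\cong a_t^*H_i(C_\bullet)$ and an ordinary presheaf with vanishing $t'$-sheafification has vanishing $t$-sheafification. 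Combining \emph{(b)} with Propositions \ref{A.5.22} and \ref{A.5.11} (and the exactness of $\gamma^*$) yields: for any strict Nisnevich (resp.\ strict \'etale, resp.\ Kummer \'etale) cover $g\colon U'\to X'$, the augmented \v{C}ech complex $a_t^*\gamma^*\Zltr(\check C(U'/X'))\to a_t^*\gamma^*\Zltr(X')$ is acyclic, where $t=dNis$ (resp.\ $d\acute{e}t$, resp.\ $l\acute{e}t$).

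Now the main assembly. Given a $t$-cover $f\colon U\to X$, Proposition \ref{A.5.44} furnishes a log modification $h\colon X'\to X$ such that $g:=f\times_X X'\colon U':=U\times_X X'\to X'$ is a strict Nisnevich (resp.\ strict \'etale, resp.\ integral Kummer \'etale) cover. Since $U'=U\times_X X'$, the \v{C}ech nerve of $g$ is the base change $\check C(U'/X')\cong\check C(U/X)\times_X X'$, so the projection defines a morphism of simplicial fs log schemes $\check C(U'/X')\to\check C(U/X)$ which in each degree is the base change of $h$, hence a log modification. By observation \emph{(a)}, $a_t^*\gamma^*\Zltr(\check C(U'/X'))\to a_t^*\gamma^*\Zltr(\check C(U/X))$ is a degreewise isomorphism, hence an isomorphism of complexes. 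In the commutative square
\[
\begin{tikzcd}
a_t^*\gamma^*\Zltr(\check C(U'/X'))\arrow[r]\arrow[d]&a_t^*\gamma^*\Zltr(\check C(U/X))\arrow[d]\\
a_t^*\gamma^*\Zltr(X')\arrow[r]&a_t^*\gamma^*\Zltr(X)
\end{tikzcd}
\]
(the vertical maps induced by the augmentations, the bottom map by $h$) the top arrow is an isomorphism, the left arrow is a quasi-isomorphism by observation \emph{(b)}, and the bottom arrow is an isomorphism by observation \emph{(a)} applied to $h$. Therefore the right arrow is a quasi-isomorphism, which is precisely the acyclicity demanded by Proposition \ref{A.8.11}; hence $t$ is compatible with log transfers.

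I expect no single step to be genuinely hard — the theorem is largely an orchestration of Propositions \ref{A.5.44}, \ref{A.5.22}, \ref{A.5.11} and Lemma \ref{A.5.70}. The main care goes into the bookkeeping: checking that the \v{C}ech nerve of $f\times_X X'$ really is the degreewise base change of $\check C(U/X)$ along the log modification $h$ (so that log modification invariance applies levelwise), that the base change of a surjective proper log \'etale monomorphism is again one, and that both Lemma \ref{A.5.70} and the ``finer topology'' reduction \emph{(b)} survive the passage to the \'etale topologies $d\acute{e}t$ and $l\acute{e}t$, which are not defined by cd-structures. One should also confirm that all the fiber powers $\check C(U/X)_n$ and $\check C(U'/X')_n$ remain objects of $lSm/k$, which is immediate since $f$ and $h$ are log \'etale.
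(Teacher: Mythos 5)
Your proof is correct and follows essentially the same route as the paper's: reduce via Proposition \ref{A.8.11} to acyclicity of the \v{C}ech complex, use Proposition \ref{A.5.44} to pull back along a log modification to a strict (or Kummer \'etale) cover, apply Lemma \ref{A.5.70} to transfer the problem to the pulled-back cover, and conclude with Proposition \ref{A.5.11} plus exactness of sheafification. Your explicit observations (a) and (b) merely spell out steps the paper leaves implicit (the \'etale variants of Lemma \ref{A.5.70} and the passage to a finer topology), so there is no substantive difference.
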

\begin{proof}
We write the proof in the case of the log \'etale topology.
Let $f\colon Y\rightarrow X$ be a log \'etale cover.
We need to show that the \v{C}ech complex
\[
\cdots\rightarrow a_{l\acute{e}t}^*\gamma^*\Zltr(Y\times_X Y)\rightarrow a_{l\acute{e}t}^*\gamma^*\Zltr(Y)\rightarrow a_{l\acute{e}t}^*\gamma^*\Zltr(X)\rightarrow 0
\]
is exact, see Proposition \ref{A.8.11}.
There exists a log modification $X'\rightarrow X$ such that the pullback $f'\colon Y\times_X X'\rightarrow X'$ of $f$ is a Kummer \'etale cover owing to Proposition \ref{A.5.44}.
Set $Y':=Y\times_X X'$.
Then using Lemma \ref{A.5.70}, it suffices to show that the \v{C}ech complex
\[
\cdots \rightarrow a_{l\acute{e}t}^*\gamma^*\Zltr(Y'\times_{X'}Y')\rightarrow a_{l\acute{e}t}^*\gamma^*\Zltr(Y')\rightarrow a_{l\acute{e}t}^*\gamma^*\Zltr(X')\rightarrow 0
\]
is exact.
Since the sheafification functor $a_{l\acute{e}t}^*$ is exact, we are done by Proposition \ref{A.5.11}.
\end{proof}

\subsection{Dividing log correspondences}\label{subsec:divdinglogcor}

For $X,Y\in lSm/k$, we have used $\lCor(X,Y)$ to define dividing Nisnevich sheaves with log transfers.
Since $\Zltr(Y)$ is not a dividing Nisnevich sheaf one cannot in general identify $\lCor(X,Y)$ with 
\[
\hom_{\Shvltrkl}(a_{dNis}^*\Zltr(Y),a_{dNis}^*\Zltr(X)).
\]
However, 
it turns out that algebraic cycles describe $a_{dNis}^*\Zltr(Y)(X)$.
To that end, we introduce the notion of dividing log correspondences.

\begin{df}\label{A.5.47}
For $X,Y\in lSm/k$, a {\it dividing elementary log correspondence} $Z$ from $X$ to $Y$ is an integral closed subscheme $\underline{Z}$ of $\underline{X}\times \underline{Y}$  
that is finite and surjective over a component of $\underline{X}$ together with a log modification $X'\rightarrow X$ and a morphism
\[
u'\colon Z'^N\rightarrow X'\times Y
\]
satisfying the following properties:
\begin{enumerate}
\item[(i)] The image of the composition $\underline{Z'^N}\stackrel{\underline{u'}}\rightarrow \underline{X'}\times \underline{Y}\rightarrow \underline{X}\times \underline{Y}$ is $\underline{Z}$.
\item[(ii)] $\underline{Z'^N}$ is the normalization of $\underline{Z}\times_{\underline{X}}\underline{X'}$.
\item[(iii)] The composition $Z'^N\stackrel{u'}\rightarrow X'\times Y\rightarrow X'$ is strict.
\end{enumerate}

A {\it dividing log correspondence} \index{correspondence!dividing log} from $X$ to $Y$ is a formal sum $\sum n_i Z_i$ of dividing elementary log correspondences $Z_i$ from $X$ to $Y$.
Let $\lCor_k^{div}(X,Y)$ denote the free abelian group of dividing log correspondences from $X$ to $Y$.
When no confusion seems likely to arise, we shall omit the subscript $k$ and write $\lCor^{div}(X,Y)$.
\end{df}

\begin{rmk}\label{A.5.48}
For $X,Y\in lSm/k$, a finite log correspondence $Z\in \lCor(X,Y)$ is determined by $Z-\partial Z\in \lCor(X-\partial X,Y-\partial Y)$ owing to Lemma \ref{A.5.10}.
Thus $Z$ is determined by its underlying scheme $\underline{Z}$.
Hence we arrive at the following equivalent definition:
An elementary log correspondence $Z$ from $X$ to $Y$ is an integral closed subscheme $\underline{Z}$ of $\underline{X}\times \underline{Y}$ that is finite and surjective over a component of 
$\underline{X}$ together with a morphism
\[
u\colon Z^N\rightarrow X\times Y
\]
satisfying the following properties:
\begin{enumerate}
\item[(i)] The image of $\underline{u}\colon \underline{Z^N}\rightarrow \underline{X}\times \underline{Y}$ is $\underline{Z}$.
\item[(ii)] $\underline{Z^N}$ is the normalization of $\underline{Z}$.
\item[(iii)] The composition $Z^N\stackrel{u}\rightarrow X\times Y\rightarrow X$ is strict.
\end{enumerate}
Given the latter description, we can interpret any dividing log correspondence as a finite log correspondence after replacing $X$ with a log modification of $X$.
\end{rmk}

\begin{prop}\label{A.5.49}
For every $X,Y\in lSm/k$ there are isomorphisms
\[
\lCor^{div}(X,Y)\otimes \Lambda\cong a_{dNis}^*\Zltr(Y)(X)\cong a_{d\acute{e}t}^*\Zltr(Y)(X).
\]
\end{prop}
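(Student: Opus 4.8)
The plan is to prove the two isomorphisms
\[
\lCor^{div}(X,Y)\otimes\Lambda\cong a_{dNis}^*\Zltr(Y)(X)
\quad\text{and}\quad
a_{dNis}^*\Zltr(Y)(X)\cong a_{d\acute{e}t}^*\Zltr(Y)(X)
\]
by combining the explicit colimit formula for the dividing sheafification from Lemma \ref{A.5.45} with the cycle-theoretic reinterpretation of $\lCor^{div}$ coming from Remark \ref{A.5.48}. First I would apply Lemma \ref{A.5.45} to the strict Nisnevich sheaf $\gamma^*\Zltr(Y)$ (which is a strict \'etale, hence strict Nisnevich, sheaf by Proposition \ref{A.4.1}), giving
\[
a_{dNis}^*\Zltr(Y)(X)\cong \colimit_{X'\in X_{div}}\lCor(X',Y).
\]
The same lemma applied with $t=s\acute{e}t$ gives $a_{d\acute{e}t}^*\Zltr(Y)(X)\cong \colimit_{X'\in X_{div}}\lCor(X',Y)$ as well, since $\Zltr(Y)$ is already a strict \'etale sheaf; so the second isomorphism follows immediately once the first is established and the two colimits are literally the same. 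The remaining work is thus to identify the filtered colimit $\colimit_{X'\in X_{div}}\lCor(X',Y)\otimes\Lambda$ with $\lCor^{div}(X,Y)\otimes\Lambda$.

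For that identification I would construct mutually inverse maps. Given a log modification $h\colon X'\to X$ and an elementary log correspondence $V\in\lCor(X',Y)$, push forward the underlying cycle $\underline V$ along $\underline{X'}\times\underline Y\to\underline X\times\underline Y$; by \cite[Lemma 1.4]{MVW} its image $\underline Z$ is finite and surjective over a component of $\underline X$, and $\underline{Z^{\prime N}}$ (the normalization of $\underline Z\times_{\underline X}\underline{X'}$) maps to $\underline{V^N}$, so the data $(h,\,V^N\to X'\times Y)$ of Remark \ref{A.5.48} exhibits $\underline Z$ as a dividing elementary log correspondence — conditions (i)--(iii) of Definition \ref{A.5.47} are exactly the defining conditions of $V$ being an elementary log correspondence transported through $h$. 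This is well defined on the colimit because a refinement $X''\to X'$ of log modifications pulls $V$ back to an elementary log correspondence with the same image cycle $\underline Z$ and compatible normalization data (using that $V$ is determined by its underlying cycle, Lemma \ref{A.5.10}/Remark \ref{A.5.48}). Conversely, a dividing elementary log correspondence is by its very definition a triple $(\underline Z, X'\to X, u'\colon Z^{\prime N}\to X'\times Y)$ with $Z^{\prime N}\to X'$ strict, which is precisely an elementary log correspondence in $\lCor(X',Y)$, hence a representative in the colimit. One then checks these two assignments are inverse to each other and $\Lambda$-linear; extending by linearity over formal sums handles general (dividing) log correspondences.

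The main obstacle I expect is the well-definedness and injectivity of the map out of the colimit: one must verify that two elementary log correspondences $V_1\in\lCor(X_1',Y)$ and $V_2\in\lCor(X_2',Y)$ producing the same dividing elementary log correspondence already agree after pulling back to a common refinement $X_3'\in X_{div}$ dominating both $X_1'$ and $X_2'$ (which exists since $X_{div}$ is cofiltered — this uses Proposition \ref{A.9.80} as invoked in Lemma \ref{A.5.45}). The key point is that an elementary log correspondence in $\lCor(X',Y)$ is rigidly determined by its underlying cycle together with the strictness of $Z^{\prime N}\to X'$ (Lemma \ref{A.5.9}), so two representatives with the same image cycle on $\underline X\times\underline Y$ must have the same underlying cycle on $\underline{X_3'}\times\underline Y$ — namely the strict transform / normalization of $\underline Z\times_{\underline X}\underline{X_3'}$ — and then the morphisms to $Y$ coincide by the uniqueness in Lemma \ref{A.5.9}. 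Once this rigidity is in place the rest is bookkeeping, and the second stated isomorphism is then a formal consequence of Lemma \ref{A.5.45} for the two topologies.
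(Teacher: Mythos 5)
Your overall route coincides with the paper's: the proof given there is a one\nobreakdash-line appeal to the colimit formula of Lemma \ref{A.5.45} (applied to $\Zltr(Y)$, which is a strict \'etale sheaf by Proposition \ref{A.4.1}, so that $a_{dNis}^*\Zltr(Y)(X)\cong \colimit_{X'\in X_{div}}\lCor(X',Y)\otimes\Lambda$ and likewise for $a_{d\acute{e}t}^*$) combined with the reinterpretation of dividing log correspondences in Remark \ref{A.5.48}. Your deduction of the second isomorphism from the first, and your use of Lemmas \ref{A.5.9} and \ref{A.5.10} together with the filteredness of $X_{div}$ for well-definedness and injectivity, are exactly the intended ingredients.

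There is, however, a step in your identification of the colimit with $\lCor^{div}(X,Y)$ that fails as written. Given a log modification $h\colon X'\to X$ and an elementary log correspondence $V\in\lCor(X',Y)$, you push $\underline{V}$ forward along $\underline{X'}\times\underline{Y}\to\underline{X}\times\underline{Y}$ and assert, citing \cite[Lemma 1.4]{MVW}, that the image $\underline{Z}$ is finite and surjective over a component of $\underline{X}$. That lemma concerns the image of a cycle that is \emph{already} finite over the base; here $\underline{V}\to\underline{X}$ factors through $\underline{X'}\to\underline{X}$, which is proper and birational but not finite, so the hypothesis is unavailable --- and the conclusion can genuinely fail. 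For instance, take $X=\A_{\N^2}=(\A^2,\{xy=0\})$, let $X'\to X$ be the star subdivision, so that $\underline{X'}$ is the blow-up of $\A^2$ at the origin sitting inside $\A^2\times\P^1$, let $Y=\P^1$ with trivial log structure, and let $V$ be the graph of the second projection $\underline{X'}\to\P^1$. Then $V$ is an elementary log correspondence from $X'$ to $Y$ (condition (ii) of Definition \ref{A.5.2} is vacuous for trivial log structure on $Y$), yet the image of $\underline{V}$ in $\A^2\times\P^1$ is the blow-up itself, whose fibre over the origin is a $\P^1$ and which is therefore not finite over $\A^2$; since the image cycle in $\underline{X}\times\underline{Y}$ is unchanged under further log modifications, no refinement repairs this. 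Consequently the map you propose from the colimit to $\lCor^{div}(X,Y)$ is not defined on such classes, while Definition \ref{A.5.47} as stated requires the underlying cycle of a dividing elementary log correspondence to be finite over $\underline{X}$. This is precisely the delicate point of the proposition, and it needs a genuine argument (or a careful reconciliation with Definition \ref{A.5.47}) that your write-up does not supply --- the paper's own one-line proof is silent on it as well. The remaining bookkeeping in your proposal (the converse map sending a dividing elementary log correspondence to an element of $\lCor(X',Y)$, and the rigidity argument via Lemma \ref{A.5.9}) is fine.
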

\begin{proof}
With the description of elementary log correspondences in Remark \ref{A.5.48}, this follows from Lemma \ref{A.5.45}.
\end{proof}

\begin{const}\label{A.5.50}
For $X,Y,Z\in lSm/k$, let us construct a composition for dividing log correspondences
\begin{equation}
\label{A.5.50.1}
\circ\colon \lCor^{div}(X,Y)\times \lCor^{div}(Y,Z)\rightarrow \lCor^{div}(X,Z).
\end{equation}
Owing to Proposition \ref{A.5.49} and the Yoneda lemma we have 
\begin{equation}
\label{A.5.50.2}
\lCor^{div}(X,Y)\cong \hom_{\Shvltrkl}(a_{dNis}^*\ZZltr(X),a_{dNis}^*\ZZltr(Y)).
\end{equation}
We obtain \eqref{A.5.50.1} because we have compositions in $\ShvltrkZ$.
Moreover, the composition is associative, and for $g\in \lCor(X,Y)$ we have 
\[
{\rm id}\circ f=f=f\circ {\rm id}.
\]
Thus we can form a category $lCor^{div}/k$ with the same objects as $lSm/k$ and with morphisms given by $\lCor^{div}(X,Y)$.
\end{const}

\begin{df}
A {\it presheaf of $\Lambda$-modules with dividing log transfers} is an additive presheaf of $\Lambda$-modules on the category $lCor^{div}/k$.
We let $\Pshdltrkl$ denote the category of presheaves of $\Lambda$-modules with dividing log transfers.
\vspace{0.1in}

A {\it dividing Nisnevich sheaf of $\Lambda$-modules with dividing log transfers} is a presheaf of $\Lambda$-modules with dividing log transfers such that the restriction to $lSm/k$ is a dividing Nisnevich sheaf.
\vspace{0.1in}

We let $\Shvdltrkl$ (resp.\ $\Shv_{d\acute{e}t}^{\rm dltr}(k,\Lambda)$) denote the category of dividing Nisnevich (resp.\ dividing \'etale) sheaves of $\Lambda$-modules with dividing log transfers.
\end{df}

\begin{prop}\label{A.5.56}
There are equivalences of categories
\begin{gather*}
\Shvdltrkl\cong \Shvltrkl,
\\
\Shv_{d\acute{e}t}^{\rm dltr}(k,\Lambda)\cong \Shv_{d\acute{e}t}^{\rm ltr}(k,\Lambda),
\\
\Shv_{l\acute{e}t}^{\rm dltr}(k,\Lambda)\cong \Shv_{l\acute{e}t}^{\rm ltr}(k,\Lambda).
\end{gather*}
\end{prop}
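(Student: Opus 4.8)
The plan is to reduce the claimed equivalences of categories of sheaves with dividing log transfers to the already-established equivalences of the underlying categories of sheaves, using Construction \ref{A.5.50} and Proposition \ref{A.5.49} as the main bridge. First I would make the key observation explicit: by \eqref{A.5.50.2}, for $X,Y\in lSm/k$ we have a canonical identification $\lCor^{div}(X,Y)\cong \hom_{\Shvltrkl}(a_{dNis}^*\Zltr(X),a_{dNis}^*\Zltr(Y))$, so that $lCor^{div}/k$ is literally (isomorphic to) the full subcategory of $\Shvltrkl$ spanned by the representable objects $a_{dNis}^*\Zltr(X)$, $X\in lSm/k$. In other words $lCor^{div}/k$ is the ``dividing-Nisnevich-localized'' version of $lCor/k$, and restriction along $\gamma^{div}\colon lSm/k\to lCor^{div}/k$ together with dividing Nisnevich sheafification is what produces objects of $\Shvdltrkl$.

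Next I would construct the two functors giving the equivalence. In one direction, a dividing Nisnevich sheaf with log transfers $\cF\in\Shvltrkl$ restricts to a presheaf on $lCor^{div}/k$: given a dividing elementary log correspondence $Z$ from $X$ to $Y$, viewed as an element of $\hom_{\Shvltrkl}(a_{dNis}^*\Zltr(X),a_{dNis}^*\Zltr(Y))$, we get a map $\cF(Y)=\hom(a_{dNis}^*\Zltr(Y),\cF)\to\hom(a_{dNis}^*\Zltr(X),\cF)=\cF(X)$; since $\cF$ is already a dividing Nisnevich sheaf this is well defined and additive and respects composition by construction of \eqref{A.5.50.1}. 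Conversely, a dividing Nisnevich sheaf with dividing log transfers is by definition a presheaf on $lCor^{div}/k$ whose restriction to $lSm/k$ is a dividing Nisnevich sheaf; pulling back along the canonical functor $lCor/k\to lCor^{div}/k$ (which exists because every finite log correspondence is in particular a dividing log correspondence, compare Remark \ref{A.5.48}) and using that the resulting presheaf with log transfers is automatically a dividing Nisnevich sheaf, one lands in $\Shvltrkl$. I would then check these two constructions are mutually inverse, which amounts to the statement that a dividing-log-transfer structure on a dividing Nisnevich sheaf is the unique one extending its log-transfer structure — this is exactly the content of Lemma \ref{A.8.6} applied to $t=dNis$ (and $d\acute et$, $l\acute et$), together with Theorem \ref{A.5.23} which ensures these topologies are compatible with log transfers. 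The $d\acute{e}t$ and $l\acute{e}t$ cases run verbatim, replacing $a_{dNis}^*$ by $a_{d\acute{e}t}^*$, resp.\ $a_{l\acute{e}t}^*$, and invoking the relevant parts of Proposition \ref{A.5.49} and Theorem \ref{A.5.23}.

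The main obstacle I anticipate is not the bookkeeping of the two functors but verifying that composition of dividing log correspondences is genuinely computed by the ``geometric'' recipe one would want (pull back along a common log modification, take normalizations of the cycle-theoretic intersection product, descend) and that this agrees with the abstract composition inherited from $\Shvltrkl$ via \eqref{A.5.50.2}. Fortunately the excerpt has already taken the shortcut of \emph{defining} the composition in $lCor^{div}/k$ abstractly through \eqref{A.5.50.1}–\eqref{A.5.50.2}, so strictly speaking I only need the formal compatibility, which is immediate. Thus the real work is checking the identity functor on objects extends to the claimed equivalence on morphisms, i.e.\ that for every $\cF\in\Shvltrkl$ and every dividing log correspondence the induced transfer map agrees on the nose with the one coming from the $lCor/k$-transfer structure after sheafification; this is where one cites the uniqueness clause of Lemma \ref{A.8.6}. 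I would close by noting that faithfulness and fullness of the comparison functor on all of $\Shvdltrkl$ then follow formally, since both sides have the same objects and, by the above, the same morphism groups $\hom(\cF,\cG)$ computed as natural transformations of the underlying dividing Nisnevich sheaves respecting the (unique) transfer structure.
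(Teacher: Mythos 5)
Your overall strategy is the paper's: identify $lCor^{div}/k$ with the full subcategory of $\Shvltrkl$ on the representables $a_{dNis}^*\Zltr(X)$ via \eqref{A.5.50.2}, and then use the Yoneda lemma to equip any $\cF\in\Shvltrkl$ with transfer maps along dividing log correspondences. That existence half matches the published argument exactly, and your reduction of the whole statement to ``a dividing Nisnevich sheaf with log transfers admits a unique dividing log transfer structure'' is the right reformulation.

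The gap is in the uniqueness half. You attribute it to Lemma \ref{A.8.6}, but that lemma asserts the uniqueness of the log transfer structure on the \emph{sheafification} of a presheaf with log transfers compatible with the sheafification morphism; it says nothing about two dividing log transfer structures on an already-sheafified $\cF$ that restrict to the same $lCor/k$-structure, so it cannot be cited for the mutual-inverse check (which is exactly where uniqueness is load-bearing). The argument that actually closes this step — and the one the paper gives — is elementary but must be spelled out: given $V\in\lCor^{div}(X,Y)$ and two extensions $\cG_0,\cG_1$ of $\cF$, Proposition \ref{A.5.49} (equivalently Remark \ref{A.5.48}) produces a log modification $u\colon X'\to X$ with $V\circ u\in\lCor(X',Y)$, so $\cG_0(V\circ u)=\cG_1(V\circ u)$ because both restrict to $\cF$ on $lCor/k$; since $\cF$ is a dividing Nisnevich sheaf, $\cF(u)\colon\cF(X)\to\cF(X')$ is injective (in fact an isomorphism, by Remark \ref{A.9.68}(4)), and one concludes $\cG_0(V)=\cG_1(V)$. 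With that substitution your proof is complete; the $d\acute{e}t$ and $l\acute{e}t$ cases do indeed run verbatim.
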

\begin{proof}
We will only consider the dividing Nisnevich case since the proofs are similar.
It suffices to show that any $\cF\in \Shvltrkl$ admits a unique dividing log transfer structure.
For any $X,Y\in lSm/k$, 
owing to \eqref{A.5.50.2}, 
we have that
\[
\hom_{\Shvltrkl}(a_{dNis}^*\Zltr(X),a_{dNis}^*\Zltr(Y))
\cong \
lCor^{div}(X,Y).
\]
Thus for any $V\in \lCor^{div}(X,Y)$ the Yoneda lemma furnishes a canonical transfer map
\[
\begin{split}
\cF(X)\cong &\hom_{\Shvltrkl}(a_{dNis}^*\Zltr(X),\cF)\\
\rightarrow &\hom_{\Shvltrkl}(a_{dNis}^*\Zltr(Y),\cF)\cong \cF(Y).
\end{split}
\]
Thus there exists a dividing log transfer structure on $\cF$.
\vspace{0.1in}

For uniqueness, 
suppose that $\cG_0$ and $\cG_1$ are two sheaves with dividing log transfers whose restrictions to $\Shvltrkl$ are isomorphic to $\cF$.
Since $\cG_0$ and $\cG_1$ agree on objects, 
it suffices to show that $\cG_0(V)=\cG_1(V)$ for every $V\in \lCor^{div}(X,Y)$.
Owing to Proposition \ref{A.5.49}, 
there exists a dividing Nisnevich cover $p\colon X'\rightarrow X$ such that $V':=V\circ u\in \lCor^{div}(X',Y)$ is an element in $\lCor(X',Y)$.
By the assumption that $\cF$ is the restriction of $\cG_0$ and $\cG_1$, we have an equality of homomorphisms
\[
\cG_0(V')=\cG_1(V')\colon \cF(Y)\rightarrow \cF(X').
\]
Since $\cF$ is a dividing Nisnevich sheaf and $X'\rightarrow X$ is a dividing Nisnevich cover, $\cF(p)\colon \cF(X)\rightarrow \cF(X')$ is injective.
Thus we can conclude $\cG_0(V')=\cG_1(V')$.
\end{proof}

\begin{df}
Let $lCor_{SmlSm}/k$ (resp.\ $lCor_{SmlSm}^{div}/k$) denote the full subcategory of $lCor/k$ (resp.\ $lCor^{div}/k$) consisting of all objects in $SmlSm/k$.
\end{df}

\begin{lem}
\label{A.5.51}
Let $f\colon Y\rightarrow X$ be a log modification in $lSm/k$. 
Then $f$ is an isomorphism in $lCor^{div}/k$.
\end{lem}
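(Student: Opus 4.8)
The plan is to exploit the fact that, by Construction~\ref{A.5.50}, the category $lCor^{div}/k$ is realized as a full subcategory of $\Shvltrkl$ via the functor sending $X\in lSm/k$ to the representable dividing Nisnevich sheaf with log transfers $a_{dNis}^*\Zltr(X)$ and sending a dividing log correspondence to the corresponding morphism of sheaves. This full faithfulness is precisely the content of Proposition~\ref{A.5.49} together with the Yoneda lemma, and it is built into the very definition of composition in $lCor^{div}/k$. Since a fully faithful functor reflects isomorphisms, it suffices to prove that $a_{dNis}^*\Zltr(f)\colon a_{dNis}^*\Zltr(Y)\to a_{dNis}^*\Zltr(X)$ is an isomorphism in $\Shvltrkl$.

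To see this, I would first apply the forgetful functor $\gamma^*$ to $a_{dNis}^*\Zltr(f)$. By \eqref{A.8.1.1} the functor $\gamma^*$ commutes with dividing Nisnevich sheafification, so $\gamma^*\bigl(a_{dNis}^*\Zltr(f)\bigr)\cong a_{dNis}^*\gamma^*\Zltr(f)$, and the latter is an isomorphism of dividing Nisnevich sheaves of $\Lambda$-modules by Lemma~\ref{A.5.70}. It then remains to observe that $\gamma^*\colon \Shvltrkl\to \Shv_{dNis}(lSm/k,\Lambda)$ reflects isomorphisms. This is immediate: $\gamma^*$ is exact, since it has both a left and a right adjoint, and it is faithful, since $\gamma^*\cG(X)=\cG(X)$ for every sheaf with log transfers $\cG$ and every $X\in lSm/k$ ($\gamma$ being the identity on objects); a faithful exact functor between abelian categories sends a morphism with invertible image to one whose kernel and cokernel vanish. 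This is the same reasoning as in the proof of Proposition~\ref{A.8.5}. Hence $a_{dNis}^*\Zltr(f)$ is an isomorphism, and therefore $f$ is an isomorphism in $lCor^{div}/k$.

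I do not expect a genuine obstacle here; the argument is short once Construction~\ref{A.5.50} and Lemma~\ref{A.5.70} are in place. The one point that deserves a little attention is that Lemma~\ref{A.5.70} is phrased for sheaves \emph{without} transfers, so one must pass back to sheaves with log transfers by invoking the conservativity of $\gamma^*$; apart from that, the statement is essentially a formal consequence of previously established material. One could, in principle, read off an explicit inverse dividing log correspondence from the isomorphism $a_{dNis}^*\Zltr(f)$, but working at the level of sheaves as above is the cleanest route.
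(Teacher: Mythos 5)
Your argument is correct, but it takes a genuinely different route from the paper. The paper's proof is constructive: it exhibits an explicit inverse, namely the transpose $\Gamma_f^t\subset X\times Y$ of the graph of $f$, and verifies that this is a dividing elementary log correspondence from $X$ to $Y$ by observing that its pullback along the log modification $Y\to X$ itself becomes the diagonal $Y\to Y\times Y$, which is finite over $Y$; the inverse is then visible by hand. You instead work entirely at the level of sheaves: since the hom-sets and compositions of $lCor^{div}/k$ are \emph{defined} via the fully faithful assignment $X\mapsto a_{dNis}^*\Zltr(X)$ into $\Shv^{\rm ltr}_{dNis}(k,\Z)$ (Construction~\ref{A.5.50}), it suffices that $a_{dNis}^*\Zltr(f)$ be invertible, which you deduce from Lemma~\ref{A.5.70} together with the commutation of $\gamma^*$ with sheafification and the conservativity of $\gamma^*$. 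All of these ingredients are established before Lemma~\ref{A.5.51}, so there is no circularity, and your handling of the transfer issue (Lemma~\ref{A.5.70} living in the transfer-free category) is exactly right. What your approach buys is brevity and the avoidance of checking the conditions of Definition~\ref{A.5.47} for $\Gamma_f^t$ directly; what the paper's approach buys is an explicit inverse correspondence, which makes transparent \emph{why} non-finite proper maps become invertible after passing to the dividing topology (cf.\ the remark following the lemma) and which you acknowledge could in principle be extracted from your isomorphism of sheaves.
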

\begin{proof}
Consider the composition
\[
\Gamma_f^t\colon Y\stackrel{\Gamma_f}\longrightarrow Y\times X\rightarrow X\times Y,
\]
where $\Gamma_f$ is the graph morphism, and the second morphism switches the factors.
The induced pullback
\[
Y\times_X Y\rightarrow (Y\times_X X)\times  Y, 
\]
is isomorphic to the diagonal morphism $Y\rightarrow Y\times Y$.
Thus $\Gamma_f^t$ is a dividing log correspondence from $X$ to $Y$, 
and it follows that $f$ is invertible in $lCor^{div}/k$.
\end{proof}

\begin{rmk}
Lemma \ref{A.5.51} furnishes a class of non-finite morphisms in $lSm/k$ that become invertible in $lCor^{div}/k$, 
but not in $lCor/k$.
Following the approach to motives with modulus in \cite{KSY2}, 
this suggests that we should invert every proper birational morphism $f:Y\rightarrow X$ for which the naturally induced morphism
\[
Y-\partial Y\rightarrow X-\partial X
\]
is an isomorphism.
However, 
our strategy is different from \cite{KSY2}.
Assuming resolution of singularities, 
in Theorem {\ref{A.3.12}} we show that the naturally induced morphism of log motives
\[
M(f):M(Y)\rightarrow M(X)
\]
is an isomorphism.
\end{rmk}

\begin{lem}\label{A.5.53}
There is an equivalence of categories
\[
lCor_{SmlSm}^{div}/k\simeq lCor^{div}/k.
\]
\end{lem}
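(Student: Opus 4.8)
The plan is to show that the evident inclusion functor $\iota\colon lCor_{SmlSm}^{div}/k \hookrightarrow lCor^{div}/k$ is an equivalence of categories. By definition $lCor_{SmlSm}^{div}/k$ is the \emph{full} subcategory of $lCor^{div}/k$ spanned by the objects of $SmlSm/k$, so $\iota$ is automatically fully faithful; hence the only point left to check is that $\iota$ is essentially surjective, i.e.\ that every $X\in lSm/k$ becomes isomorphic, in $lCor^{div}/k$, to some object of $SmlSm/k$.

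For this I would argue as follows. Given $X\in lSm/k$, Proposition~\ref{A.3.19} produces a log modification $f\colon Y\to X$ with $Y\in SmlSm/k$. By Lemma~\ref{A.5.51} every log modification in $lSm/k$ is invertible in $lCor^{div}/k$; applying this to $f$ gives an isomorphism $Y\cong X$ in $lCor^{div}/k$ with $Y\in SmlSm/k$. This is exactly essential surjectivity, so $\iota$ is an equivalence.

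If one prefers an explicit quasi-inverse, it can be built by choosing, for each $X\in lSm/k$, such a log modification $\rho_X\colon Y_X\to X$ with $Y_X\in SmlSm/k$, and sending a dividing log correspondence $V\in\lCor^{div}(X,X')$ to the composite $\rho_{X'}^{-1}\circ V\circ \rho_X\in\lCor^{div}(Y_X,Y_{X'})$ formed in $lCor^{div}/k$; functoriality and the natural isomorphism $\iota\circ(-)\simeq {\rm id}$ are immediate once one knows the $\rho_X$ are isomorphisms in $lCor^{div}/k$ by Lemma~\ref{A.5.51}. The only non-formal ingredient is Proposition~\ref{A.3.19}, which I expect to be the crux of the argument; everything else is bookkeeping with the full-subcategory structure.
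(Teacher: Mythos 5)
Your proof is correct and follows exactly the paper's argument: full faithfulness is automatic since $lCor_{SmlSm}^{div}/k$ is a full subcategory, and essential surjectivity follows by combining Proposition \ref{A.3.19} (existence of a log modification $Y\to X$ with $Y\in SmlSm/k$) with Lemma \ref{A.5.51} (log modifications are invertible in $lCor^{div}/k$). The explicit quasi-inverse construction is extra but harmless.
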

\begin{proof}
Since $lCor_{SmlSm}^{div}/k$ is a full subcategory of $lCor^{div}/k$, it remains to show that any object $X$ of $lCor^{div}/k$ is isomorphic to an object of $lCor_{SmlSm}^{div}/k$.
Owing to Proposition \ref{A.3.19}, there is a log modification $f\colon Y\rightarrow X$ such that $Y\in SmlSm/k$.
Then we are done since $f$ is an isomorphism in $lCor^{div}/k$ by Lemma \ref{A.5.51}.
\end{proof}

\subsection{Sheaves with log transfers on $SmlSm/k$}
\label{Subsec::Sheaves.SmlSm}

There are many interesting examples of Nisnevich sheaves with transfers, 
e.g., $\mathbb{G}_m$ and $\Omega^i$.
If $\cF$ is a Nisnevich sheaf with transfers on $Sm/k$, 
one may ask whether it extends to a dividing Nisnevich sheaf with log transfers on $lSm/k$, 
such that $\cF(X)$ makes sense for every $X\in lSm/k$.
In practice, one can divide this problem into two steps.
The first step is to define $\cF(X)$ for every $X\in SmlSm/k$.
And the second step is to define $\cF(X)$ for a general $X\in lSm/k$.
In this subsection, we explain this second step.

\begin{df}\label{A.5.52}
We set
\[
\Pshltrklsm:=\Psh(lCor_{SmlSm}/k),
\]
\[
\Pshdltrklsm:=\Psh(lCor_{SmlSm}^{div}/k).
\]
For $t=dNis$, $d\acute{e}t$, and $l\acute{e}t$, we let $\Shv_t^{\rm ltr}(SmlSm/k,\Lambda)$ (resp.\ $\Shv_t^{\rm dltr}(SmlSm/k,\Lambda)$) denote the full subcategory of $\Pshltrklsm$ 
(resp.\ $\Pshdltrklsm$) consisting of $t$-sheaves on $SmlSm/k$.
\end{df}

\begin{lem}\label{A.5.54}
For $t=dNis$, $d\acute{e}t$, and $l\acute{e}t$, there is an equivalence of categories
\[
\Shv_{t}(SmlSm/k,\Lambda)\simeq \Shv_t^{\rm log}(k,\Lambda).
\]
\end{lem}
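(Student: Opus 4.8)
The plan is to identify the two categories via restriction along the full embedding $\iota\colon SmlSm/k\hookrightarrow lSm/k$. Let
\[
\iota^*\colon \Shv_t^{\rm log}(k,\Lambda)\longrightarrow\Shv_t(SmlSm/k,\Lambda)
\]
be the restriction functor; it is well defined because every $t$-cover in $SmlSm/k$ remains a $t$-cover in $lSm/k$. In the other direction, for $\cG\in\Shv_t(SmlSm/k,\Lambda)$ — regarded in particular as a strict Nisnevich (resp.\ strict \'etale, resp.\ Kummer \'etale) sheaf on $SmlSm/k$ — set $\widehat\cG:=a_t^*\cG$, the extension of $\cG$ to $lSm/k$ followed by $t$-sheafification. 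By Lemma \ref{A.5.46} this is a $t$-sheaf on $lSm/k$ with
\[
\widehat\cG(X)\;\cong\;\colimit_{Y\in X_{div}^{Sm}}\cG(Y),\qquad X\in lSm/k,
\]
and $\cG\mapsto\widehat\cG$ is visibly functorial. It then suffices to produce natural isomorphisms $\iota^*\widehat\cG\cong\cG$ and $\widehat{\iota^*\cF}\cong\cF$.

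For the first, let $X\in SmlSm/k$. Every morphism in the filtered category $X_{div}^{Sm}$ (Proposition \ref{A.9.82}) is a log modification between objects of $SmlSm/k$, and $\cG$, being a $t$-sheaf for a topology finer than $dNis$, is invariant under log modifications (its value is unchanged since $Y\times_XY\cong Y$ for a log modification $Y\to X$). Hence all transition maps in the colimit computing $\widehat\cG(X)$ are isomorphisms, so $\widehat\cG(X)=\colimit_{Y\in X_{div}^{Sm}}\cG(Y)\cong\cG(X)$, naturally in $X$. For the second, let $\cF\in\Shv_t^{\rm log}(k,\Lambda)$ and $X\in lSm/k$; applying Lemma \ref{A.5.46} to $\iota^*\cF$ gives $\widehat{\iota^*\cF}(X)\cong\colimit_{Y\in X_{div}^{Sm}}\cF(Y)$. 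Since $X_{div}^{Sm}$ is cofinal in $X_{div}$ — a consequence of the existence of log resolutions, Proposition \ref{A.3.19}, exactly as in Corollary \ref{A.5.74} — this equals $\colimit_{Y\in X_{div}}\cF(Y)$, and since $\cF$ is a $t$-sheaf it too is invariant under log modifications, so the latter colimit is $\cF(X)$. Thus $\widehat{\iota^*\cF}\cong\cF$ naturally in $X$.

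These two natural isomorphisms exhibit $\iota^*$ and $\widehat{(-)}$ as mutually quasi-inverse equivalences, for each of $t=dNis$, $d\acute{e}t$, $l\acute{e}t$. The real content is already contained in Lemma \ref{A.5.46} (the identification of $t$-sheafification on $lSm/k$ with a filtered colimit over log modifications landing in $SmlSm/k$); the only additional ingredients are the elementary observation that a $t$-sheaf for a topology finer than $dNis$ is invariant under log modifications, and the cofinality of $X_{div}^{Sm}$ in $X_{div}$. Consequently there is no genuine obstacle beyond correctly invoking the already-established Lemma \ref{A.5.46}. (Equivalently, one may phrase the statement as an instance of the comparison lemma for sites \cite{SGA4}, applied to the continuous, cocontinuous and topologically generating embedding $\iota$, but the colimit description above is the one used in the sequel.)
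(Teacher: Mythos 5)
Your argument is correct in substance, but it is not the route the paper takes: the paper's entire proof is two lines, namely that every $X\in lSm/k$ admits a log modification $Y\to X$ with $Y\in SmlSm/k$ (Proposition \ref{A.3.19}), so the fully faithful embedding $\iota$ is topologically generating and the comparison lemma \cite[Th\'eor\`eme III.4.1]{SGA4} gives the equivalence directly. What you do instead is unwind that comparison lemma by hand, building the quasi-inverse explicitly as $X\mapsto\colimit_{Y\in X_{div}^{Sm}}\cG(Y)$ and checking both composites via dividing-invariance of $t$-sheaves and the cofinality of $X_{div}^{Sm}$ in $X_{div}$ --- which you correctly identify, in your closing parenthetical, as equivalent to the SGA4 argument. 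The explicit route buys a concrete formula for the extension (the one the paper records separately in Lemma \ref{A.5.59} and Construction \ref{SmlSm.3}, in the transfer setting), at the cost of more verification. One imprecision worth fixing: Lemma \ref{A.5.46} as stated computes the $t$-sheafification \emph{within} $SmlSm/k$ of a sheaf on $SmlSm/k$, evaluated on $X\in SmlSm/k$; it does not literally assert the colimit formula for the $t$-sheafification \emph{on $lSm/k$} of the Kan extension at an arbitrary $X\in lSm/k$, which is what your $\widehat{\cG}$ requires. That formula is true and follows by rerunning the argument of Lemmas \ref{A.5.45}--\ref{A.5.46} (refine any dividing Nisnevich cover of $X$ by a strict Nisnevich cover of a log modification with source in $SmlSm/k$), but as written the citation does not quite carry the weight you place on it.
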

\begin{proof}
We will only consider the dividing Nisnevich topology since the proofs are similar.
For any $X\in lSm/k$, there exists a log modification $Y\rightarrow X$ such that $Y\in SmlSm/k$ owing to Proposition \ref{A.3.19}.
We conclude using the implication (i)$\Rightarrow$(ii) in \cite[Th\'eor\`eme III.4.1]{SGA4}.
\end{proof}

Let $\iota$ denote the inclusion functor $lCor_{SmlSm}/k\rightarrow lCor/k$.
For $t=dNis$, $d\acute{e}t$, and $l\acute{e}t$, we have functors
\begin{gather}
\iota^*\colon \Psh^{\rm dltr}(k,\Lambda)
\rightarrow 
\Psh^{\rm dltr}(SmlSm/k,\Lambda)
\\
\label{A.5.58.1}
\iota^*\colon 
\Shv_{t}^{\rm ltr}(k,\Lambda)
\rightarrow
\Shv_{t}^{\rm ltr}(SmlSm/k,\Lambda)
\end{gather}
mapping $\cF$ to $\cF\circ \iota$.
As in \eqref{A.5.40.1} there are faithful functors
\[
\gamma\colon lSm/k\rightarrow lCor^{div}/k,\; \gamma\colon SmlSm/k\rightarrow lCor_{SmlSm}^{div}/k.
\]
These induce functors
\[
\gamma^*\colon \Pshdltrkl\rightarrow \Pshlogkl,
\;
\gamma^*\colon \Pshdltrklsm\rightarrow \Psh(SmlSm/k,\Lambda)
\]
mapping $\cF$ to $\cF\circ \gamma$.

\begin{lem}\label{A.5.55}
For $t=dNis$, $d\acute{e}t$, and $l\acute{e}t$, there is an equivalence of categories
\[
\Shv_{t}^{\rm dltr}(SmlSm/k,\Lambda)\simeq \Shv_{t}^{\rm dltr}(k,\Lambda).
\]
\end{lem}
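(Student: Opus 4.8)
The plan is to deduce this equivalence from Lemma \ref{A.5.55}'s companion, Proposition \ref{A.5.56}, and Lemma \ref{A.5.53}, in exactly the same spirit as Lemma \ref{A.5.54} was deduced. The statement asserts that for $t=dNis$, $d\acute{e}t$, $l\acute{e}t$ there is an equivalence of categories between dividing log transfer $t$-sheaves on $SmlSm/k$ and dividing log transfer $t$-sheaves on $lSm/k$ (the latter written $\Shv_t^{\rm dltr}(k,\Lambda)$). I will treat only the dividing Nisnevich case in detail, the other two being formally identical.

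First I would invoke Lemma \ref{A.5.53}, which gives an equivalence of the underlying additive categories $lCor_{SmlSm}^{div}/k \simeq lCor^{div}/k$. Restriction of presheaves along any equivalence of categories is an equivalence, so pulling back along this equivalence already yields
\[
\Pshdltrklsm \simeq \Pshdltrkl.
\]
Concretely, the inclusion functor $\iota\colon lCor_{SmlSm}^{div}/k \to lCor^{div}/k$ is fully faithful (it is the inclusion of a full subcategory) and essentially surjective by Lemma \ref{A.5.53} — every object of $lCor^{div}/k$ is isomorphic, via a log modification which is invertible there by Lemma \ref{A.5.51}, to an object of $SmlSm/k$. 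Hence $\iota^*$ is an equivalence on presheaf categories with quasi-inverse given by left Kan extension along $\iota$.

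Second, I would check that this equivalence on presheaves restricts to the full subcategories of $t$-sheaves. By definition (Definition \ref{A.5.52} and the definition of $\Shvdltrkl$), an object of either side is a dividing log transfer presheaf whose underlying presheaf on $SmlSm/k$ (resp.\ $lSm/k$) is a dividing Nisnevich sheaf. So it suffices to observe that under $\iota^*$ the sheaf condition is preserved and reflected; this is precisely the content of Lemma \ref{A.5.54} (equivalently Lemma \ref{A.5.55} for the non-transfer sheaf categories), whose proof uses the implication (i)$\Rightarrow$(ii) in \cite[Th\'eor\`eme III.4.1]{SGA4} together with the fact, from Proposition \ref{A.3.19}, that every $X\in lSm/k$ admits a log modification $Y\to X$ with $Y\in SmlSm/k$ — and $SmlSm/k$ is dense in $lSm/k$ for the dividing Nisnevich topology. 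Thus $\cF$ is a dividing Nisnevich sheaf with dividing log transfers on $lSm/k$ if and only if $\iota^*\cF$ is one on $SmlSm/k$, giving the asserted equivalence. The same argument with $d\acute{e}t$ or $l\acute{e}t$ in place of $dNis$ works verbatim.

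The only potential obstacle is making the compatibility in the previous paragraph precise: one must confirm that the quasi-inverse to $\iota^*$ (left Kan extension) sends $t$-sheaves on $SmlSm/k$ to $t$-sheaves on $lSm/k$, i.e.\ that extending a sheaf off the dense subcategory $SmlSm/k$ produces a genuine sheaf. This is exactly where $SGA4$ III.4.1 is used — density of $SmlSm/k$ in $lSm/k$ with the induced topology, supplied by Proposition \ref{A.3.19} — so no new input is needed beyond what Lemma \ref{A.5.54} already records; it merely has to be transported across the transfer structure, which is harmless since the transfer structure lives entirely on the categories $lCor^{div}$ identified by Lemma \ref{A.5.53}.
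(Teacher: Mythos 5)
Your proposal is correct and follows essentially the same route as the paper: reduce to the presheaf-level equivalence via Lemma \ref{A.5.53}, then transport the sheaf condition using Lemma \ref{A.5.54} through the commutation of $\iota^*$ with $\gamma^*$. The only difference is cosmetic — since $\iota$ is an equivalence of the correspondence categories, $\iota^*$ is automatically an equivalence of presheaf categories, so your worry about whether the Kan-extension quasi-inverse preserves sheaves is already subsumed by the "preserved and reflected" statement and needs no separate density argument.
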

\begin{proof}
We will only consider the dividing Nisnevich topology since the proofs are similar.
There is a commutative diagram
\[
\begin{tikzcd}
\Pshdltrkl\arrow[r,"\iota^*"]\arrow[d,"\gamma^*"']&\Pshdltrklsm\arrow[d,"\gamma^*"]\\
\Pshlogkl \arrow[r]&\Psh(SmlSm/k,\Lambda).
\end{tikzcd}
\]
The restriction functor $\iota^*$ is an equivalence by Lemma \ref{A.5.53}.
Hence it suffices to show that for every $\cF\in \Pshdltrkl$, $\gamma^*(\cF)$ is a dividing Nisnevich sheaf if and only if its image in $\Psh(SmlSm/k,\Lambda)$ is a dividing Nisnevich sheaf.
This follows from Lemma \ref{A.5.54}.
\end{proof}

\begin{prop}\label{A.5.57}
For $t=dNis$, $d\acute{e}t$, and $l\acute{e}t$, there is an equivalence of categories
\[
\Shv_{t}^{\rm dltr}(SmlSm/k,\Lambda)\simeq \Shv_{t}^{\rm ltr}(SmlSm/k,\Lambda).
\]
\end{prop}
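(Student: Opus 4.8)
The plan is to combine the two equivalences already established in this subsection with the equivalence of underlying correspondence categories. Recall that Proposition \ref{A.5.56} gives an equivalence $\Shv_{t}^{\rm dltr}(k,\Lambda)\simeq \Shv_{t}^{\rm ltr}(k,\Lambda)$ for $t=dNis,d\acute{e}t,l\acute{e}t$, obtained by equipping any $t$-sheaf with log transfers with its unique dividing log transfer structure via the identification $\hom_{\Shvltrkl}(a_t^*\Zltr(X),a_t^*\Zltr(Y))\cong lCor^{div}(X,Y)$. The idea is that the very same argument works verbatim over $SmlSm/k$, since all the ingredients — the sheafification formula of Lemma \ref{A.5.45}/\ref{A.5.46}, injectivity of $\cF(p)$ for a dividing Nisnevich cover $p$, and the Yoneda identification of $lCor^{div}(X,Y)$ with hom-groups of sheafified representables — have their $SmlSm/k$ counterparts. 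So the cleanest route is to deduce the statement formally from Lemma \ref{A.5.55}, Proposition \ref{A.5.56}, and the $SmlSm/k$-analogue of Lemma \ref{A.5.54}/\ref{A.5.55}.

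First I would set up the commutative square of restriction functors: $\iota^*\colon \Pshdltrkl\to\Pshdltrklsm$ on the top, $\iota^*\colon \Pshltrkl\to\Pshltrklsm$ on the bottom (where the bottom $\iota^*$ is literally the restriction along $lCor_{SmlSm}/k\hookrightarrow lCor/k$), and the two ``forget the transfer vs. dividing transfer'' comparison functors on the sides. By Lemma \ref{A.5.53} the top $\iota^*$ restricts to an equivalence $\Shv_{t}^{\rm dltr}(k,\Lambda)\xrightarrow{\simeq}\Shv_{t}^{\rm dltr}(SmlSm/k,\Lambda)$ (this is essentially Lemma \ref{A.5.55} read the other way, or a direct consequence of it). On the other hand, the statement \eqref{A.5.58.1} already records that the bottom $\iota^*$ lands in $\Shv_{t}^{\rm ltr}(SmlSm/k,\Lambda)$, and one checks it is an equivalence by the same argument: every $X\in lSm/k$ has a log modification $Y\to X$ with $Y\in SmlSm/k$ by Proposition \ref{A.3.19}, log modifications are covers, and one applies \cite[Th\'eor\`eme III.4.1]{SGA4} to the inclusion of sites together with Proposition \ref{A.5.22}/Theorem \ref{A.5.23} which guarantee the topology $t$ is compatible with log transfers so that the sheaf categories behave well. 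Then the target equivalence $\Shv_{t}^{\rm dltr}(SmlSm/k,\Lambda)\simeq\Shv_{t}^{\rm ltr}(SmlSm/k,\Lambda)$ follows by pasting: it is the composite of $\Shv_{t}^{\rm dltr}(SmlSm/k,\Lambda)\xleftarrow{\simeq}\Shv_{t}^{\rm dltr}(k,\Lambda)\xrightarrow{\simeq}\Shv_{t}^{\rm ltr}(k,\Lambda)\xrightarrow{\simeq}\Shv_{t}^{\rm ltr}(SmlSm/k,\Lambda)$, and one verifies the composite is compatible with the forgetful functors to $\Shv_t(SmlSm/k,\Lambda)$, i.e. it is really ``the same sheaf with a canonical transfer structure.''

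Alternatively — and this is probably the argument I would actually write down, since it avoids chasing the pasted equivalence — I would prove it directly, mimicking the proof of Proposition \ref{A.5.56}. It suffices to show every $\cF\in\Shv_{t}^{\rm ltr}(SmlSm/k,\Lambda)$ carries a unique dividing log transfer structure. For existence, note that by Lemma \ref{A.5.75} (the $SmlSm/k$ sheafification formula) and the Yoneda lemma we have, for $X,Y\in SmlSm/k$, a natural identification $\hom(a_t^*\Zltr(X),a_t^*\Zltr(Y))\cong lCor^{div}_{SmlSm}(X,Y)$ in $\Shv_{t}^{\rm ltr}(SmlSm/k,\Lambda)$, so every dividing elementary correspondence induces a transfer map $\cF(X)\cong\hom(a_t^*\Zltr(X),\cF)\to\hom(a_t^*\Zltr(Y),\cF)\cong\cF(Y)$, functorially; associativity and unitality are inherited from composition in the sheaf category. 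For uniqueness, given two dividing log transfer structures $\cG_0,\cG_1$ restricting to $\cF$, one reduces $\cG_0(V)=\cG_1(V)$ for $V\in lCor^{div}_{SmlSm}(X,Y)$ to the case where $V$ is an honest finite log correspondence after pulling back along a dividing Nisnevich cover $X'\to X$ (possible by Proposition \ref{A.5.49} applied within $SmlSm/k$, using that such a cover may be taken inside $SmlSm/k$), and then uses that $\cF(X)\to\cF(X')$ is injective because $\cF$ is a dividing Nisnevich sheaf. The main obstacle I anticipate is purely bookkeeping: making sure that all the auxiliary statements (Lemma \ref{A.5.45}, Lemma \ref{A.5.75}, Proposition \ref{A.5.49}, the existence of resolutions $Y\to X$ with $Y\in SmlSm/k$ via Proposition \ref{A.3.19}) are available with the $SmlSm/k$ indexing — i.e. that one may always shrink/refine dividing Nisnevich covers so as to stay inside $SmlSm/k$ — which the results cited make true but which must be invoked carefully; the actual categorical argument is then a word-for-word transcription of the proof of Proposition \ref{A.5.56}.
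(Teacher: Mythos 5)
Your second, direct route is exactly the paper's proof: the paper disposes of this proposition with the single line ``Parallel to the proof of Proposition \ref{A.5.56},'' i.e.\ the same existence/uniqueness argument for a dividing log transfer structure, transcribed to $SmlSm/k$ using the $SmlSm$ versions of the sheafification formula and the reduction to finite log correspondences after a dividing Nisnevich cover. (Your first, pasting route should be avoided: the third leg $\Shv_{t}^{\rm ltr}(k,\Lambda)\simeq\Shv_{t}^{\rm ltr}(SmlSm/k,\Lambda)$ is Proposition \ref{A.5.58}, which the paper deduces \emph{from} the present statement, and proving it independently is not a routine SGA4 comparison since $lCor_{SmlSm}/k\hookrightarrow lCor/k$ is not an equivalence.)
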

\begin{proof}
Parallel to the proof of Proposition \ref{A.5.56}.
\end{proof}

\begin{prop}\label{A.5.58}
For $t=dNis$, $d\acute{e}t$, and $l\acute{e}t$, the functor in \textrm{\eqref{A.5.58.1}} is an equivalence.
\end{prop}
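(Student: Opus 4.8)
The plan is to deduce the statement from its analogue for dividing log transfers, where an equivalence of the underlying correspondence categories is already at our disposal. First I would assemble the square of restriction functors
\[
\begin{tikzcd}
\Shv_{t}^{\rm dltr}(k,\Lambda)\arrow[d]\arrow[r,"\iota^*"]&\Shv_{t}^{\rm dltr}(SmlSm/k,\Lambda)\arrow[d]\\
\Shv_{t}^{\rm ltr}(k,\Lambda)\arrow[r,"\iota^*"]&\Shv_{t}^{\rm ltr}(SmlSm/k,\Lambda),
\end{tikzcd}
\]
in which the vertical arrows are the equivalences furnished by Proposition \ref{A.5.56} on the left and Proposition \ref{A.5.57} on the right, and all four functors are restriction along the faithful functors $\gamma$ and $\iota$. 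Since the equivalences of Propositions \ref{A.5.56} and \ref{A.5.57} leave the underlying $t$-sheaf in $\Shvlogtkl$ (resp.\ in $\Psh(SmlSm/k,\Lambda)$) unchanged, recording only the unique compatible transfer structure, I expect this square to commute up to a canonical natural isomorphism; the verification is a routine application of the Yoneda lemma together with \eqref{A.5.50.2}.

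Granting commutativity, it suffices to prove that the top horizontal functor is an equivalence. Here I would invoke Lemma \ref{A.5.53}, according to which the inclusion $lCor_{SmlSm}^{div}/k\hookrightarrow lCor^{div}/k$ is an equivalence of categories. Restriction of additive presheaves along an equivalence of categories is itself an equivalence, so $\iota^*$ induces an equivalence $\Pshdltrkl\xrightarrow{\simeq}\Pshdltrklsm$.

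The only genuine point to check — and the step I would single out as the main obstacle — is that this presheaf-level equivalence matches up the full subcategories of $t$-sheaves on the two sides. Concretely, one must show that $\gamma^*\cF$ is a $t$-sheaf on $lSm/k$ if and only if its restriction to $SmlSm/k$ is a $t$-sheaf, which is precisely the content of Lemma \ref{A.5.54} (and its transfer-flavoured packaging in Lemma \ref{A.5.55}). With this in hand, $\iota^*\colon \Shv_{t}^{\rm dltr}(k,\Lambda)\to\Shv_{t}^{\rm dltr}(SmlSm/k,\Lambda)$ is an equivalence, and the commutative square then forces \eqref{A.5.58.1} to be an equivalence as well. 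In the write-up this amounts to little more than the remark that the argument is parallel to the proof of Lemma \ref{A.5.55}, combined with Propositions \ref{A.5.56} and \ref{A.5.57}.
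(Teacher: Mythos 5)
Your argument is correct and follows essentially the same route as the paper, whose proof simply cites Propositions \ref{A.5.56}, \ref{A.5.57}, and Lemma \ref{A.5.55}; your commutative square of restriction functors is exactly the way these three equivalences are meant to be combined. The extra care you take in checking commutativity and in tracing Lemma \ref{A.5.55} back to Lemmas \ref{A.5.53} and \ref{A.5.54} is a more explicit write-up of the same argument, not a different one.
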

\begin{proof}
Follows from Propositions \ref{A.5.56}, \ref{A.5.57}, and Lemma \ref{A.5.55}.
\end{proof}

Thus the functor in \textrm{\eqref{A.5.58.1}} admits a left adjoint
\[
\iota_\sharp\colon 
\Shv_{t}^{\rm ltr}(SmlSm/k,\Lambda)
\rightarrow
\Shv_{t}^{\rm ltr}(k,\Lambda).
\]

\begin{lem}
\label{A.5.59}
Let $\cF$ be an object of $\Shv_{t}^{\rm ltr}(SmlSm/k,\Lambda)$, where $t$ is one of $dNis$, $d\acute{e}t$, and $l\acute{e}t$.
For every $X\in lSm/k$ we have that
\[
\iota_\sharp \cF(X)
\cong
\colimit_{Y\rightarrow X}\cF(Y),
\]
where the colimit runs over log modifications $Y\rightarrow X$ such that $Y\in SmlSm/k$.
\end{lem}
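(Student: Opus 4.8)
The plan is to exploit that the restriction functor $\iota^*$ of \eqref{A.5.58.1} is an equivalence and then to compute its inverse through the chain of equivalences set up in Section~\ref{Subsec::Sheaves.SmlSm}, combined with the dividing Nisnevich sheafification formula of Lemma~\ref{A.5.46}. Since $\iota^*$ is an equivalence by Proposition~\ref{A.5.58}, its left adjoint $\iota_\sharp$ is a quasi-inverse; in particular, $\iota^*$ being fully faithful, $\iota_\sharp\cF$ is — up to canonical isomorphism — the unique object of $\Shv_t^{\rm ltr}(k,\Lambda)$ whose restriction along $\iota$ is $\cF$. So it suffices to exhibit such an object together with an identification of its value at each $X\in lSm/k$ with $\colimit_{Y\in X_{div}^{Sm}}\cF(Y)$.

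First I would transport $\cF$ across Proposition~\ref{A.5.57} to view it as a $t$-sheaf with \emph{dividing} log transfers on $SmlSm/k$, the underlying $t$-sheaf being unchanged. Applying a quasi-inverse of the restriction equivalence of Lemma~\ref{A.5.55} produces $\widetilde{\cF}\in\Shv_t^{\rm dltr}(k,\Lambda)$ with $\iota^*\widetilde{\cF}\cong\cF$, and Proposition~\ref{A.5.56} turns $\widetilde{\cF}$ back into an object $\cG\in\Shv_t^{\rm ltr}(k,\Lambda)$ with the same underlying sheaf. Because each of these equivalences is realized by the (essentially unique) extension or restriction of a transfer structure, they commute with the functors $\iota^*$; hence $\iota^*\cG\cong\cF$ in $\Shv_t^{\rm ltr}(SmlSm/k,\Lambda)$, and therefore $\iota_\sharp\cF\cong\cG$.

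It then remains to identify the underlying presheaf of $\cG$ with $X\mapsto\colimit_{Y\in X_{div}^{Sm}}\cF(Y)$. By construction $\cG$ and $\widetilde{\cF}$ have the same underlying $t$-sheaf on $lSm/k$, and $\widetilde{\cF}$ is the essentially unique $t$-sheaf on $lSm/k$ restricting to $\cF$ on $SmlSm/k$, since restriction is an equivalence on $t$-sheaves by Lemma~\ref{A.5.54}. As $t$ is finer than $sNis$ in each of our three cases, $\cF$ is in particular a strict Nisnevich (resp.\ strict \'etale, resp.\ Kummer \'etale) sheaf on $SmlSm/k$, so Lemma~\ref{A.5.46} identifies that extension with $X\mapsto a_t^*\cF(X)\cong\colimit_{Y\in X_{div}^{Sm}}\cF(Y)$. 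Combining these isomorphisms gives $\iota_\sharp\cF(X)\cong\cG(X)\cong\colimit_{Y\in X_{div}^{Sm}}\cF(Y)$, which is the assertion. As a consistency check, when $X\in SmlSm/k$ the identity ${\rm id}_X$ is a terminal object of $X_{div}^{Sm}$ (the structure morphism of any log modification $Y\to X$ over $X$ is the unique morphism to ${\rm id}_X$), so the colimit recovers $\cF(X)$ in accordance with $\iota^*\cG\cong\cF$; and $X_{div}^{Sm}$ is filtered by Proposition~\ref{A.9.82}, so the colimit is well behaved.

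The main obstacle I anticipate is the bookkeeping in the second paragraph: verifying that the equivalences of Propositions~\ref{A.5.56}, \ref{A.5.57} and Lemma~\ref{A.5.55} genuinely commute with the various $\iota^*$, so that one obtains $\iota^*\cG\cong\cF$ as $t$-sheaves \emph{with log transfers}, not merely an isomorphism of underlying sheaves. This is a consequence of the uniqueness clauses already proved there — a $t$-sheaf with log transfers carries at most one compatible dividing log transfer structure, and admits at most one $t$-sheaf-with-log-transfers extension from $SmlSm/k$ — but it has to be written out carefully; everything else is a direct appeal to Lemmas~\ref{A.5.46} and \ref{A.5.54}.
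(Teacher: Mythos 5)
Your opening move is the same as the paper's: since $\iota^*$ is an equivalence (Proposition \ref{A.5.58}), $\iota_\sharp\cF$ is the essentially unique $t$-sheaf with log transfers on $lSm/k$ restricting to $\cF$, and $\iota_\sharp\cF(Y)\cong\cF(Y)$ for $Y\in SmlSm/k$. But the detour through $\Shv_t^{\rm dltr}$ in your second paragraph buys nothing — it only reconstructs the extension that Proposition \ref{A.5.58} already hands you — and the identification step at the end has two genuine problems. First, the variance of the colimit: $\colimit_{Y\in X_{div}^{Sm}}\cF(Y)$ is the colimit of the \emph{contravariant} functor $Y\mapsto\cF(Y)$, i.e.\ a filtered colimit in the direction of ever finer log modifications. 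The object ${\rm id}_X$ is indeed terminal in $X_{div}^{Sm}$, but it therefore sits at the \emph{start} of the diagram being colimited, so your consistency check ("the colimit recovers $\cF(X)$ because ${\rm id}_X$ is terminal") rests on the wrong mechanism; for a general strict Nisnevich sheaf the colimit is not $\cF(X)$. The correct reason it equals $\cF(X)$ here — and the engine of the whole lemma — is that $\cF$, being a $dNis$ (resp.\ $d\acute{e}t$, $l\acute{e}t$) sheaf, is in particular a dividing sheaf, so every transition map in the diagram is an isomorphism (Remark \ref{A.9.68}(4)). Second, Lemma \ref{A.5.46} computes the dividing sheafification of a sheaf on $SmlSm/k$ and, as it is used in Theorem \ref{Div.4}, is evaluated at objects of $SmlSm/k$; what you need is the value of the unique extension at an arbitrary $X\in lSm/k$, which that lemma does not directly supply.

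The paper closes both gaps with a shorter argument that you should substitute for your third paragraph: since $\iota_\sharp\cF$ is a dividing sheaf on $lSm/k$, all transition maps in $\colimit_{Y\in X_{div}}\iota_\sharp\cF(Y)$ are isomorphisms, so this colimit is $\iota_\sharp\cF(X)$; cofinality of $X_{div}^{Sm}$ in $X_{div}$ (Proposition \ref{A.9.81}) replaces $X_{div}$ by $X_{div}^{Sm}$; and $\iota^*\iota_\sharp\cF\cong\cF$ replaces $\iota_\sharp\cF(Y)$ by $\cF(Y)$ for $Y\in SmlSm/k$. With that replacement (and dropping the $\Shv_t^{\rm dltr}$ detour), your proof is correct and coincides with the paper's.
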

\begin{proof}
We only discuss the dividing Nisnvich topology since the proofs are parallel.
Since $\iota^*$ is an equivalence by Proposition \ref{A.5.58}, we have $\iota^*\iota_\sharp \cF\cong \cF$, so that 
\[
\iota_\sharp \cF(Y)\cong \cF(Y)
\]
for every $Y\in SmlSm/k$.
Recall that $X_{div}^{Sm}$ is the full subcategory of $X_{div}$ consisting of $Y\rightarrow X$ such that $Y\in SmlSm/k$.
According to Proposition \ref{A.9.81}, $X_{div}^{Sm}$ is cofinal in $X_{div}$, 
and hence 
\begin{equation}
\label{A.5.59.1}
\colimit_{Y\in X_{div}}\iota_\sharp \cF(Y)
\cong 
\colimit_{Y\in X_{div}^{Sm}}\iota_\sharp \cF(Y)
\cong
\colimit_{Y\in X_{div}^{Sm}}\cF(Y).
\end{equation}
Since $\iota_\sharp \cF(X)$ is a dividing Nisnevich sheaf,
we deduce
\begin{equation}
\label{A.5.59.2}
\iota_\sharp \cF(X)\cong \colimit_{Y\in X_{div}}\iota_\sharp \cF(Y)
\end{equation}
Combining \eqref{A.5.59.1} and \eqref{A.5.59.2} gives the assertion.
\end{proof}

\newpage

\section{Construction of triangulated categories of logarithmic motives}\label{sec.constructldmeff}
This section is dedicated to constructing and studying some basic properties of our categories of log motivic sheaves. 
First, we explain how the dividing Nisnevich cohomology groups are related to the strict Nisnevich cohomology groups.
Next, we construct the triangulated categories $\ldmeff$ and $\ldaeff$ of effective log motives, 
with and without transfers, 
in the Nisnevich and \'etale topologies on log schemes over $k$. 
There are two equivalent ways of constructing $\ldmeff$, 
see Section \ref{ssecequivalence}. 
One way is to use sheaves on $lSm/k$, and the other way is via sheaves on $SmlSm/k$. 
The advantage of having the latter category at our disposal is that it allows us to construct motivic sheaves that are, a priori, 
only defined and functorial on log schemes for which the underlying scheme is smooth over $k$ and boundary a normal crossing divisor.  
In Section \ref{sec:Hodge} we use this trick to prove the representability of Hodge cohomology and cyclic homology.

\subsection{Dividing Nisnevich cohomology groups}

A fundamental invariant of the dividing Nisnevich topology is the corresponding sheaf cohomology theory.
In what follows, we shall express the dividing Nisnevich cohomology groups in terms of strict Nisnevich cohomology groups by promoting Subsection \ref{subsec::structure_dNis} to cohomology.
We also discuss the \'etale version of this result.
As an application, we discuss finite cohomological dimension property for our various topologies.

\begin{lem}
\label{Div.8}
Let $\cF$ be a strict Nisnevich (resp.\ strict \'etale, resp.\ Kummer \'etale) sheaf on $lSm/k$.
If $\cF$ is flabby, then $a_{dNis}^*\cF$ (resp.\ $a_{d\acute{e}t}^*\cF$, resp.\ $a_{l\acute{e}t}^*\cF$) is flabby.
\end{lem}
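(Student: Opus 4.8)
The statement asserts that $a_{dNis}^*\cF$ (resp.\ its \'etale analogues) is again flabby when $\cF$ is a flabby strict Nisnevich (resp.\ strict \'etale, resp.\ Kummer \'etale) sheaf on $lSm/k$. Here ``flabby'' should be understood as having vanishing positive cohomology (equivalently, the complex computing $\bH^*_{sNis}$ is concentrated in degree $0$), as in \cite{Vcdtop}. Recall from Lemma \ref{A.5.45} the explicit formula
\[
a_{dNis}^*\cF(X)\cong \colimit_{Y\in X_{div}}\cF(Y),
\]
valid for any strict Nisnevich sheaf $\cF$, and its strictly-$SmlSm$ variant from Lemma \ref{A.5.46}. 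The plan is to promote this formula to cohomology: first I would establish, for every $X\in lSm/k$ and $i>0$, an isomorphism
\[
H^i_{dNis}(X,a_{dNis}^*\cF)\cong \colimit_{Y\in X_{div}}H^i_{sNis}(Y,\cF),
\]
and then observe that the right-hand side vanishes because each term does by the flabbiness of $\cF$, while the category $X_{div}$ is filtered (as recorded in Proposition \ref{A.9.80}), so the filtered colimit of zero groups is zero.

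\textbf{Key steps.} The first step is to reduce the computation of $H^i_{dNis}(X,a_{dNis}^*\cF)$ to a colimit over log modifications. For this I would use the small-site comparison from Proposition \ref{bigsmall.2}: it suffices to work on the small dividing Nisnevich site $X_{dNis}$. On that site the dividing Nisnevich topology is generated by log modifications together with strict Nisnevich covers; by Proposition \ref{A.5.44}, every dividing Nisnevich cover factors as a strict Nisnevich cover followed by a log modification, so a cofinal system of hypercovers of $X$ can be arranged to pass through objects of $X_{div}$. The second step is a cohomological descent / continuity argument: the functor ``restrict along $Y\to X$ for $Y\in X_{div}$'' is filtered by Proposition \ref{A.9.80}, and the strict Nisnevich cohomology of a log scheme commutes with the relevant cofiltered limits of schemes (this is where one invokes that $X_{dNis}$ is a coherent topos, Proposition \ref{bigsmall.3}, so that cohomology commutes with filtered colimits of coefficients, combined with the fact that $a_{dNis}^*\cF$ restricted to each $Y_{sNis}$ is just $\cF|_{Y_{sNis}}$ up to taking a further colimit over $Y_{div}$, which by cofinality of $X_{div}$ collapses). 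Concretely, I would fit these together to get $\bH^i_{dNis}(X,a_{dNis}^*\cF)\cong \colimit_{Y\in X_{div}}\bH^i_{sNis}(Y,a_{dNis}^*\cF|_{Y})\cong \colimit_{Y\in X_{div}}H^i_{sNis}(Y,\cF)$, using $a_{dNis}^*\cF|_{Y_{sNis}}=\cF|_{Y_{sNis}}$ up to the colimit argument of Lemma \ref{A.5.45}. The third step is trivial: each $H^i_{sNis}(Y,\cF)=0$ for $i>0$ since $\cF$ is flabby, and a filtered colimit of zero groups is zero; hence $a_{dNis}^*\cF$ is flabby. The \'etale cases (dividing \'etale, log \'etale) are handled identically, replacing $sNis$ by $s\acute{e}t$ (resp.\ $k\acute{e}t$) and using the corresponding parts of Proposition \ref{A.5.44} and Lemma \ref{A.5.45}.

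\textbf{Main obstacle.} The delicate point is the continuity statement, i.e.\ that strict Nisnevich (or strict \'etale, or Kummer \'etale) cohomology commutes with the filtered colimit over $X_{div}$ of log modifications, including the interchange of the two colimits (the one coming from the sheafification formula of Lemma \ref{A.5.45} and the one indexing $X_{div}$). This requires knowing that the relevant sites are coherent and that the transition morphisms behave well; I would lean on Proposition \ref{bigsmall.3} (coherence of $X_t$), on \cite[Proposition 1.9]{CDEtale} or the standard SGA4 limit arguments for cohomology of coherent topoi, and on the cofinality results Propositions \ref{A.9.80}, \ref{A.9.81}, \ref{A.9.82} to collapse nested colimits. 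Once this ``cohomology commutes with dividing colimits'' principle is in place — which is really the cohomological upgrade of Lemma \ref{A.5.45} — the vanishing is immediate from flabbiness of $\cF$ and filteredness of $X_{div}$.
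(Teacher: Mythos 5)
Your plan hinges on the isomorphism $H^i_{dNis}(X,a_{dNis}^*\cF)\cong \colimit_{Y\in X_{div}}H^i_{sNis}(Y,\cF)$, and this is precisely where the argument breaks down: that isomorphism is Theorem \ref{Div.3} of the paper, and the paper proves \emph{it} by first establishing Lemma \ref{Div.8} (flabbiness of $a_{dNis}^*\cF$ is what lets one compute $\bH^i_{dNis}$ of an injective resolution termwise). So as written your proof is circular, or at best requires an independent derivation of the continuity statement. The ingredient you cite for that step does not suffice: coherence of the topos (Proposition \ref{bigsmall.3}) gives commutation of cohomology with filtered colimits of \emph{coefficient sheaves on a fixed site}, whereas what you need is that cohomology of the dividing site is the filtered colimit of cohomologies over the cofiltered system of sites $Y_{sNis}$, $Y\in X_{div}$ --- a limit-of-topoi statement in the style of SGA4 VI--VII that you would have to set up and verify (coherent transition morphisms, identification of the limit topos, interchange of the two nested colimits). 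You acknowledge this as ``the delicate point'' but do not close it.

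The paper's proof avoids derived functors entirely and works at the level of \v{C}ech complexes. By the equivalence $(a)\Leftrightarrow(b)$ of \cite[Proposition III.2.12]{Milneetale}, flabbiness of $a_{dNis}^*\cF$ is equivalent to the vanishing of $H^i$ of the \v{C}ech complex $a_{dNis}^*\cF(\cU)$ for every dividing Nisnevich cover $U\to X$ and $i>0$. One first uses Proposition \ref{A.5.44} together with the invariance of dividing sheaves under log modifications to replace $U\to X$ by a strict Nisnevich cover $U'\to X'$; then Lemma \ref{A.5.45} expresses each term of the \v{C}ech complex as a colimit over $(U^{\times_X n})_{div}$, Corollary \ref{Fan.14} (cofinality of pullback along the strict Nisnevich --- hence exact --- morphisms $U^{\times_X n}\to X$) collapses these into a single filtered colimit over $X_{div}$ of \v{C}ech complexes $\cF(Y\times_X U^{\times_X \bullet})$, and exactness of filtered colimits of $\Lambda$-modules reduces everything to the vanishing of $H^i$ of each such complex, which is the implication $(a)\Rightarrow(b)$ of the same proposition applied to the flabby sheaf $\cF$. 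If you want to salvage your route, you would need to prove the cohomological continuity statement from scratch; the \v{C}ech-level argument is both more elementary and the one that makes Theorem \ref{Div.3} available afterwards.
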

\begin{proof}
We focus on the strict Nisnevich case since the proofs are similar.
Owing to the implication $(b)\Rightarrow (a)$ in \cite[Proposition III.2.12]{Milneetale} it suffices to show that for every dividing Nisnevich cover $p:U\rightarrow X$ in $lSm/k$ and integer $i>0$, 
the corresponding \v{C}ech cohomology group vanishes 
\[
H^i(a_{dNis}^*\cF(\cU))=0.
\]
Here $\cU$ is the \v{C}ech nerve associated with $U\rightarrow X$.
\vspace{0.1in}

According to Proposition \ref{A.5.44}, there exists a log modification $X'\rightarrow X$ such that the projection $U':=U\times_X X'\rightarrow X'$ is a strict Nisnevich cover.
Since $a_{dNis}^*\cF$ is a complex of dividing Nisnevich sheaves, there is an isomorphism
\[
a_{dNis}^*\cF(\cU)\cong a_{dNis}^*\cF(\cU\times_X X').
\]
Hence we can replace $U\rightarrow X$ with $U'\rightarrow X'$, so we reduce to the case when $p$ is a strict Nisnevich cover.
\vspace{0.1in}

By Lemma \ref{A.5.45} the complex $a_{dNis}^*\cF(\cU)$ is isomorphic to
\[
\colimit_{Y_0\in X_{div}}\cF(Y_0)
\rightarrow
\colimit_{Y_1\in U_{div}}\cF(Y_1)
\rightarrow
\colimit_{Y_2\in (U\times_X U)_{div}}\cF(Y_2)
\rightarrow
\cdots.
\]
Applying Corollary \ref{Fan.14}, we see that the latter complex is isomorphic to
\[
\colimit_{Y\in X_{div}}\cF(Y)
\rightarrow
\colimit_{Y\in X_{div}}\cF(Y\times_X U)
\rightarrow
\colimit_{Y\in X_{div}}\cF(Y\times_X U\times_X U)
\rightarrow
\cdots.
\]
This is a filtered colimit according to Proposition \ref{A.9.80}.
Since every filtered colimit in the category of $\Lambda$-modules is exact, it remains to check that $H^i$ of the complex
\[
\cF(Y)\rightarrow \cF(Y\times_X U)\rightarrow \cF(Y\times_X U\times_X U)\rightarrow \cdots
\]
is trivial for every $Y\in X_{div}$ and $i>0$.
To conclude we use the implication $(a)\Rightarrow (b)$ in \cite[Proposition III.2.12]{Milneetale} and the assumption that $\cF$ is flabby.
\end{proof}

\begin{thm}
\label{Div.3}
Let $\cF$ be a bounded below complex of strict Nisnevich sheaves on $lSm/k$.
Then for every $X\in lSm/k$ and $i\in \Z$ there is an isomorphism
\begin{align*}
\bH_{dNis}^i(X,a_{dNis}^*\cF)&\cong \colimit_{Y\in X_{div}}\bH_{sNis}^i(Y,\cF)
\\
\text{(resp.\ }\bH_{d\acute{e}t}^i(X,a_{d\acute{e}t}^*\cF)&\cong \colimit_{Y\in X_{div}}\bH_{s\acute{e}t}^i(Y,\cF),
\\
\text{resp.\ }\bH_{l\acute{e}t}^i(X,a_{l\acute{e}t}^*\cF)&\cong \colimit_{Y\in X_{div}}\bH_{k\acute{e}t}^i(Y,\cF)\text{)}.
\end{align*}
\end{thm}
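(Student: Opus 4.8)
\textbf{Proof sketch for Theorem \ref{Div.3}.}
The plan is to reduce the statement about hypercohomology to the Čech-cohomological computation already carried out in Lemma \ref{Div.8}, using the standard device of flabby resolutions. First I would replace $\cF$ by a quasi-isomorphic bounded below complex $\cG$ of flabby strict Nisnevich (resp.\ strict étale, resp.\ Kummer étale) sheaves on $lSm/k$; such a resolution exists because the categories of sheaves for these topologies have enough flabby objects (e.g.\ Godement-type resolutions), and since $\cF$ is bounded below the resolution can be taken bounded below as well. By definition $\bH_{sNis}^i(Y,\cF)=H^i(\cG(Y))$ for every $Y$, and the strict Nisnevich sheafification $a_{dNis}^*$ being exact (it is exact by Proposition \ref{A.8.5} and the surrounding discussion, or simply because sheafification is exact) gives $a_{dNis}^*\cF\simeq a_{dNis}^*\cG$ in the derived category, so that $\bH_{dNis}^i(X,a_{dNis}^*\cF)=\bH_{dNis}^i(X,a_{dNis}^*\cG)$.

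Next I would invoke Lemma \ref{Div.8}: each term $a_{dNis}^*\cG^n$ is a flabby dividing Nisnevich sheaf, hence the complex $a_{dNis}^*\cG$ is a complex of acyclic objects for dividing Nisnevich cohomology, and therefore computes hypercohomology directly, i.e.\ $\bH_{dNis}^i(X,a_{dNis}^*\cG)\cong H^i\bigl((a_{dNis}^*\cG)(X)\bigr)$. At this point I would apply Lemma \ref{A.5.45} term by term: $(a_{dNis}^*\cG^n)(X)\cong\colimit_{Y\in X_{div}}\cG^n(Y)$, naturally in $n$, so that $(a_{dNis}^*\cG)(X)\cong\colimit_{Y\in X_{div}}\cG(Y)$ as complexes of $\Lambda$-modules. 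Since $X_{div}$ is a filtered category (Proposition \ref{A.9.80}) and filtered colimits of $\Lambda$-modules are exact, cohomology commutes with this colimit:
\[
H^i\bigl((a_{dNis}^*\cG)(X)\bigr)\cong\colimit_{Y\in X_{div}}H^i\bigl(\cG(Y)\bigr)\cong\colimit_{Y\in X_{div}}\bH_{sNis}^i(Y,\cF).
\]
Combining the displayed isomorphisms yields the desired formula in the strict Nisnevich case; the strict étale and Kummer étale cases are identical, using the corresponding parts of Lemma \ref{Div.8} and Lemma \ref{A.5.45}.

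The main obstacle is verifying that $a_{dNis}^*\cG$ genuinely computes dividing Nisnevich hypercohomology, i.e.\ that the flabbiness established in Lemma \ref{Div.8} is enough: one needs that flabby sheaves (in the sense of \cite[Proposition III.2.12]{Milneetale}, characterized by vanishing of higher Čech cohomology on all covers) are acyclic for the derived functor $\bH_{dNis}$, and that a bounded below complex of such objects computes hypercohomology. For a topology arising from a (here quasi-bounded) cd-structure this follows from the finite cohomological dimension statements of Section \ref{sec:topologies} — in particular Theorem \ref{A.10.4} and the quasi-boundedness of the dividing Nisnevich cd-structure (Proposition \ref{A.9.60}) — together with the hypercohomology spectral sequence, which degenerates appropriately once each term is acyclic. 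I would also need to be slightly careful that the flabby resolution $\cG$ can be chosen functorially and bounded below so that the colimit over $X_{div}$ of $\cG$ is again (quasi-isomorphic to a complex computing) the right object; this is where the boundedness hypothesis on $\cF$ is used. Everything else is a routine interchange of filtered colimits with cohomology and with the sheafification formula of Lemma \ref{A.5.45}.
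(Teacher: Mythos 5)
Your proposal is correct and follows essentially the same route as the paper's proof: both replace $\cF$ by a bounded below flabby resolution (the paper uses an injective resolution, which is flabby), apply Lemma \ref{Div.8} to see that the sheafified complex remains flabby and hence computes hypercohomology by its sections, then use Lemma \ref{A.5.45} together with the filteredness of $X_{div}$ (Proposition \ref{A.9.80}) and exactness of filtered colimits of $\Lambda$-modules to commute cohomology with the colimit. Your extra caution about flabby objects being acyclic is fine but does not require the finite-cohomological-dimension machinery you cite; for a bounded below complex of flabby terms the standard hypercohomology spectral sequence suffices, which is exactly what the paper implicitly uses.
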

\begin{proof}
We only consider the strict Nisnevich case since the proofs are similar.
We can replace $\cF$ by its injective resolution because the sheafification functor $a_{dNis}^*$ is exact.
In particular, $\cF^i$ is flabby for every integer $i\in \Z$.
Owing to Lemma \ref{Div.8} we see that $a_{dNis}^*\cF^i$ is flabby.
It follows that
\[
\bH_{dNis}^i(X,a_{dNis}^*\cF)\cong H^i(a_{dNis}^*\cF(X)).
\]
Due to Lemma \ref{A.5.45} there is an isomorphism
\[
a_{dNis}^*\cF(X)\cong \colimit_{Y\in X_{div}}\cF(Y).
\]
Since this colimit is filtered by Proposition \ref{A.9.80} and every filtered colimit in the category of $\Lambda$-modules is exact, 
we deduce that
\[
\bH_{dNis}^i(X,a_{dNis}^*\cF)\cong \colimit_{Y\in X_{div}}H^i(\cF(Y)).
\]
To conclude the proof, we observe that 
\[
\bH_{sNis}^i(Y,\cF)
\cong 
H^i(\cF(Y)).
\]
\end{proof}

\begin{rmk}
Suppose $\cF$ is a torsion sheaf on the small log \'etale site $X_{l\acute{e}t}$. 
Then for every fs log scheme $X$ with an fs chart, 
the isomorphism
\[
H_{l\acute{e}t}^i(X,\cF)\cong H_{k\acute{e}t}^i(X,\cF)
\]
is proven in \cite[Proposition 5.4(2)]{MR3658728}.
\end{rmk}

\begin{cor}
\label{Div.5}
The category $lSm/k$ has finite $sNis$ and $dNis$-cohomological dimension for $\Lambda$-linear coefficients.
\end{cor}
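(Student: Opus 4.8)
The plan is to bootstrap from the classical fact that $Sm/k$ has finite Nisnevich cohomological dimension, pass that through the strict Nisnevich topology on $lSm/k$, and then use Theorem \ref{Div.3} to descend to the dividing Nisnevich topology. First I would treat the $sNis$ case. Every strict Nisnevich distinguished square on $lSm/k$ is, by construction, a pullback of a Nisnevich distinguished square of the underlying schemes along a strict morphism, and all morphisms in it are strict. Consequently, for $X\in lSm/k$ and a strict Nisnevich sheaf $\cF$, the sheaf cohomology $H^i_{sNis}(X,\cF)$ is computed by the small strict Nisnevich site $X_{sNis}$, which by Proposition \ref{bigsmall.2} agrees with cohomology on the big site. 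Since $X_{sNis}$ is equivalent to the small Nisnevich site of $\ul X$ (the log structures being rigidified by strictness), and $\ul X$ is a noetherian scheme of finite Krull dimension, Nisnevich cohomological dimension is bounded by $\dim \ul X$ — this is the standard Morel–Voevodsky bound, which in our formalism also follows from Theorem \ref{A.10.4} applied to the strict Nisnevich cd-structure together with the fact that its density dimension is $\dim\ul X$ (Proposition \ref{A.9.60} and the remark after Proposition \ref{A.9.61}). Hence $lSm/k$ has finite $sNis$-cohomological dimension with bound $\dim\ul X$ for a fixed $X$.

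Next I would handle the $dNis$ case. Fix $X\in lSm/k$ and let $N:=\dim\ul X$; I claim $H^i_{dNis}(X,\cG)=0$ for all dividing Nisnevich sheaves $\cG$ and all $i>N$. One route: apply Theorem \ref{A.10.4} directly to the dividing Nisnevich cd-structure, which is squareable (or at least complete), regular, and quasi-bounded with respect to the dividing density structure by Proposition \ref{A.9.60}; the density dimension $\dim_{D^{div}}X$ is $\leq N$ by the remark following Proposition \ref{A.9.61}, since $\dim(X\times_{\A_P}\A_\Sigma)\leq \dim X$ when $X$ is log smooth over $k$ by Lemma \ref{A.9.84}. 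This immediately gives $H^i_{dNis}(X,\cG)=0$ for $i>N$. An alternative, more hands-on route uses Theorem \ref{Div.3}: writing $\cG$ as (the $0$-th cohomology sheaf of) the dividing Nisnevich sheafification of a strict Nisnevich sheaf $\cF$ — which is possible since $a_{dNis}^*$ is exact and one can take $\cF = a_{dNis*}\cG$ restricted appropriately — Theorem \ref{Div.3} yields
\[
\bH^i_{dNis}(X,a_{dNis}^*\cF)\cong \colimit_{Y\in X_{div}}\bH^i_{sNis}(Y,\cF),
\]
and each term on the right vanishes for $i>\dim\ul Y$. The subtlety here is that $\dim\ul Y$ is not uniformly bounded as $Y$ ranges over $X_{div}$ — log blow-ups can raise the dimension of the underlying scheme, as Example \ref{A.9.85} shows — so one cannot naively conclude vanishing of the colimit from termwise vanishing past a fixed degree.

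The main obstacle, therefore, is exactly this dimension-jump phenomenon, and resolving it is why the cd-structure/density-structure machinery of Section \ref{sec:topologies} was developed. I would lean on the first route: Theorem \ref{A.10.4} is precisely tailored to quasi-bounded cd-structures and already absorbs the combinatorial input (Propositions \ref{A.9.61}, \ref{A.9.50}, \ref{A.9.59}, \ref{A.9.60}) guaranteeing that although individual log modifications of $X$ may be higher-dimensional schemes, the \emph{dividing density dimension} of $X$ itself — controlled by the generic fiber dimension of toric subdivisions — stays $\leq\dim\ul X$ when $X\in lSm/k$. So the proof reduces to: (i) cite Proposition \ref{A.9.60} for quasi-boundedness, regularity, completeness of the $sNis$ and $dNis$ cd-structures; (ii) bound the dividing density dimension of $X$ by $\dim\ul X$ using Lemma \ref{A.9.84} and the remark after Proposition \ref{A.9.61}; (iii) invoke Theorem \ref{A.10.4} to get vanishing of $H^i_{t}(X,\cF)$ for $t\in\{sNis,dNis\}$ and $i>\dim\ul X$; (iv) conclude by Definition \ref{bigsmall.4}. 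I do not expect step (iii) to require any new calculation — it is a direct application — so the write-up should be short, essentially a citation chain with the density-dimension bound spelled out.
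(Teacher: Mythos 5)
Your argument is correct, but it takes a different route from the paper for the $dNis$ case, and your stated reason for avoiding the paper's route is mistaken. The paper's proof is exactly your ``Route 2'': for $\cF\in\Shv_{sNis}^{\rm log}(k,\Lambda)$ one has $H^i_{sNis}(X,\cF)=H^i_{Nis}(\ul X,\cF)=0$ for $i>\dim\ul X$, and then Theorem \ref{Div.3} gives $H^i_{dNis}(X,a_{dNis}^*\cF)\cong\colimit_{Y\in X_{div}}H^i_{sNis}(Y,\cF)$, whose terms all vanish for $i>\dim\ul X$ because $\dim Y=\dim X$ for every log modification $Y\to X$ by Lemma \ref{A.9.84}. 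The ``dimension-jump phenomenon'' you invoke is not an obstacle here: Example \ref{A.9.85} concerns log \'etale morphisms between fs log schemes that are \emph{not} log smooth over $k$ (they are fibers over a closed point), whereas every $Y\in X_{div}$ for $X\in lSm/k$ is again in $lSm/k$, so Lemma \ref{A.9.84} applies uniformly and the colimit of vanishing groups vanishes. You in fact cite this very lemma in your Route 1 to bound the dividing density dimension, so the ingredient you thought was missing from Route 2 is one you already have in hand.

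Your Route 1 --- applying Theorem \ref{A.10.4} to the dividing Nisnevich cd-structure, using Proposition \ref{A.9.60} and the bound $\dim_{D^{div}}X\leq\dim\ul X$ from the remark after Proposition \ref{A.9.61} --- is a legitimate alternative and yields the same bound. What it buys is independence from Theorem \ref{Div.3} (the explicit colimit description of dividing cohomology); what it costs is having to verify squareability of the cd-structure (Proposition \ref{A.9.60} only records completeness, regularity, and quasi-boundedness, though squareability follows easily since strict Nisnevich squares and log modifications are stable under pullback). Either way the conclusion and the bound $\dim\ul X$ are the same; the paper's version is the shorter citation chain.
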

\begin{proof}
For every $X\in lSm/k$ and $\cF\in \Shv_{sNis}^{\rm log}(k,\Lambda)$,
if $i>\dim X$, then due to \cite[Example 12.2]{MVW} we have the vanishing
\[
H_{sNis}^i(X,\cF)=H_{Nis}^i(\ul{X},\cF)=0.
\]
If $Y$ is a log modification over $X$, then $\dim Y=\dim X$ by Lemma \ref{A.9.84}.
Thus we have the vanishing
\[
H_{sNis}^i(Y,\cF)=0.
\]
Theorem \ref{Div.3} finishes the proof.
\end{proof}

\begin{lem}
\label{ketcomp.13}
Let $X$ be an fs log scheme.
Suppose $\cF$ is a sheaf of $\Q$-modules on the small strict \'etale site $X_{s\acute{e}t}$.
Then, for every integer $i\geq 0$, there is an isomorphism of cohomology groups
\[
H_{sNis}^i(X,\cF)\cong H_{s\acute{e}t}^i(X,\cF).
\]
\end{lem}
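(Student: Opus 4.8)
The statement is a comparison between strict Nisnevich and strict \'etale cohomology for sheaves of $\Q$-modules on the small strict \'etale site of an fs log scheme $X$. Since all the morphisms in question are strict, the small strict Nisnevich (resp.\ strict \'etale) site of $X$ is canonically identified with the small Nisnevich (resp.\ \'etale) site of the underlying scheme $\ul{X}$: a strict Nisnevich (resp.\ strict \'etale) morphism $Y\to X$ is the same datum as a Nisnevich (resp.\ \'etale) morphism $\ul{Y}\to\ul{X}$ together with the pulled-back log structure, and this correspondence is an equivalence of sites compatible with the respective topologies. Therefore the first step of the plan is to reduce the assertion to the corresponding statement for the underlying scheme $\ul{X}$, namely that for a sheaf $\cF$ of $\Q$-modules on $\ul{X}_{\acute{e}t}$ one has $H^i_{Nis}(\ul{X},\cF)\cong H^i_{\acute{e}t}(\ul{X},\cF)$ for all $i\geq 0$.

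Once we are on the underlying scheme, the plan is to invoke the classical fact that rationally (or more generally with torsion-free coefficients prime to the residue characteristics, which is automatic for $\Q$-coefficients) the higher \'etale cohomology of the constant or locally constant sheaf agrees with Nisnevich cohomology; more precisely, for a henselian local ring the \'etale cohomology of a $\Q$-module sheaf vanishes in positive degrees because the absolute Galois group of the residue field acts on a $\Q$-vector space, and Galois cohomology of a profinite group with $\Q$-coefficients vanishes in positive degrees (a transfer/restriction argument: cohomology is torsion for a profinite group, hence killed by passing to $\Q$-coefficients). Thus the \'etale cohomology sheaves $R^q\epsilon_*\cF$ vanish for $q>0$, where $\epsilon\colon \ul{X}_{\acute{e}t}\to \ul{X}_{Nis}$ is the change-of-topology morphism, and the Leray spectral sequence degenerates to give $H^i_{\acute{e}t}(\ul{X},\cF)\cong H^i_{Nis}(\ul{X},\epsilon_*\cF)$. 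Since $\epsilon_*\cF$ is just the restriction of $\cF$ to the Nisnevich site (for $i=0$ a sheaf and its \'etale pushforward to the Nisnevich site have the same global sections), this finishes the argument. The needed input about henselian local rings is standard; see for instance the discussion of rigidity and the vanishing of Galois cohomology with $\Q$-coefficients in \cite{MVW} or \cite{Milneetale}.

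The main obstacle — really the only point requiring care — is the local vanishing $R^q\epsilon_*\cF=0$ for $q>0$. The stalk of $R^q\epsilon_*\cF$ at a Nisnevich point is $H^q_{\acute{e}t}(\Spec \cO^h,\cF)$ for a henselian local ring $\cO^h$ with residue field $\kappa$; by proper (or rather invariance under henselization) base change in \'etale cohomology this equals $H^q_{\acute{e}t}(\Spec\kappa,\cF|_\kappa) = H^q(G_\kappa,M)$ for the corresponding discrete $G_\kappa$-module $M$, which is a $\Q$-vector space. One then uses that continuous cohomology of a profinite group with coefficients in a $\Q$-vector space is concentrated in degree zero: it is a filtered colimit over open subgroups of finite-group cohomology, each term of which is annihilated by the index and hence vanishes after tensoring with $\Q$. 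Assembling these facts gives the vanishing of the higher direct images, and with the reduction of the first paragraph the lemma follows.
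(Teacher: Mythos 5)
Your proof is correct and follows the same route as the paper: reduce via strictness to the comparison $H^i_{Nis}(\ul{X},\cF)\cong H^i_{\acute{e}t}(\ul{X},\cF)$ on the underlying scheme, which the paper simply cites as \cite[Proposition 14.23]{MVW} while you spell out its standard proof (vanishing of $R^q\epsilon_*\cF$ for $q>0$ from the torsion-ness of profinite group cohomology). The only cosmetic quibble is that the identification of the stalk with Galois cohomology of the residue field is cleaner via Hochschild--Serre for the strict henselization than via an appeal to ``proper base change.''
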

\begin{proof}
Equivalently, we need to show there is an isomorphism
\[
H_{Nis}^i(\underline{X},\cF)\cong H_{\acute{e}t}^i(\underline{X},\cF).
\]
This is the content of \cite[Proposition 14.23]{MVW}.
\end{proof}

\begin{lem}
\label{ketcomp.12}
Let $X$ be an fs log scheme.
Suppose $\cF$ is a sheaf of $\Q$-modules on the small Kummer \'etale site $X_{k\acute{e}t}$.
Then, for every integer $i\geq 0$,  there is an isomorphism of cohomology groups
\[
H_{s\acute{e}t}^i(X,\cF)\cong H_{k\acute{e}t}^i(X,\cF).
\]
\end{lem}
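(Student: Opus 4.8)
\textbf{Proof plan for Lemma \ref{ketcomp.12}.}

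The statement to prove is that for an fs log scheme $X$ and a sheaf $\cF$ of $\Q$-modules on the small Kummer \'etale site $X_{k\acute{e}t}$, one has $H_{s\acute{e}t}^i(X,\cF)\cong H_{k\acute{e}t}^i(X,\cF)$ for all $i\geq 0$. The approach is to analyze the morphism of sites $\epsilon\colon X_{k\acute{e}t}\to X_{s\acute{e}t}$ via the Leray spectral sequence
\[
E_2^{pq}=H_{s\acute{e}t}^p(X,R^q\epsilon_*\cF)\Rightarrow H_{k\acute{e}t}^{p+q}(X,\cF),
\]
and to show that $R^q\epsilon_*\cF=0$ for $q>0$ while $\epsilon_*\cF=\cF$ (the latter being essentially formal, since the strict \'etale site is a full subsite of the Kummer \'etale site and $\cF$ is defined Kummer-\'etale-locally). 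Once the higher direct images vanish, the spectral sequence degenerates and gives the claimed isomorphism.

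First I would reduce the vanishing of $R^q\epsilon_*\cF$ to a local computation: $R^q\epsilon_*\cF$ is the strict \'etale sheafification of the presheaf $U\mapsto H^q_{k\acute{e}t}(U,\cF)$, so it suffices to show that for every log strictly henselian fs log scheme $T$ (the strictly henselian local objects of the strict \'etale site, as used in the proof of Proposition \ref{A.5.11}) one has $H^q_{k\acute{e}t}(T,\cF)=0$ for $q>0$. The key structural input is that on a log strictly henselian log scheme the Kummer \'etale covers are governed by the "Kummer" covers of the characteristic monoid: concretely, every Kummer \'etale cover of $T$ splits, or more precisely the Kummer \'etale topos of $T$ is equivalent to the classifying topos of the profinite group $\widehat{\mathbb{Z}}'(1)^{\oplus r}$ (with $r$ the rank of the group-completed characteristic monoid and the prime-to-$p$ part taken if $\mathrm{char}=p$) acting on the strictly henselian point. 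This is the content of the log geometric analogue of Lemma \ref{ket.4} together with the structure theory in \cite{MR1922832}, \cite{MR3658728}. Since $\cF$ is a sheaf of $\Q$-modules, its cohomology on this classifying topos is the group cohomology $H^q(\widehat{\mathbb{Z}}'(1)^{\oplus r},\cF(\bar T))$ with $\Q$-coefficients, which vanishes for $q>0$ because the profinite group in question is of cohomological dimension controlled by torsion-only phenomena and $\Q$ is uniquely divisible; more directly, $H^q$ of a procyclic profinite group with coefficients in a $\Q$-module vanishes for $q>0$, and one iterates via the Hochschild--Serre spectral sequence over the $r$ factors.

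The main obstacle I anticipate is making the local identification of the Kummer \'etale topos of a log strictly henselian fs log scheme precise and correctly handling the characteristic-$p$ subtlety (one must work with the prime-to-$p$ part of the tame fundamental group, which is exactly why Lemma \ref{A.9.12} and the hypothesis $\mathrm{char}(k)\nmid$ the relevant orders enter in the neighboring results). Once that local structure is in hand the rest is a routine spectral sequence argument: $\epsilon_*\cF\cong\cF$ since strict \'etale covers are cofinal among Kummer \'etale covers whose restriction to the open locus $T-\partial T$ is an isomorphism, $R^q\epsilon_*\cF=0$ for $q>0$ by the above, hence the edge maps $H_{s\acute{e}t}^i(X,\cF)\to H_{k\acute{e}t}^i(X,\cF)$ are isomorphisms for all $i$. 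Alternatively, if one prefers to avoid the explicit topos computation, one can cite \cite[Proposition 5.4]{MR3658728}, where precisely the statement $H^i_{l\acute{e}t}\cong H^i_{k\acute{e}t}$ (and, with strict/Kummer in place, the comparison we need) is established for torsion coefficients, and then pass to $\Q$-coefficients by a limit argument writing $\Q$ as a filtered colimit and using that both cohomology theories commute with such colimits on a noetherian (hence coherent, by Proposition \ref{bigsmall.3}) site.
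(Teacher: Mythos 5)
Your proposal follows essentially the same route as the paper: reduce via $R^q\epsilon_*$ to the case of a strictly local base, identify the Kummer \'etale cohomology there with (pro)finite group cohomology, observe it is torsion in positive degrees, and conclude vanishing since $\cF$ is a sheaf of $\Q$-modules; the paper simply cites Kato for the group-cohomology identification and Weibel for the torsion statement where you spell the computation out. One small caution: the local objects of the strict \'etale site are fs log schemes whose \emph{underlying scheme} is strictly henselian, not ``log strictly henselian'' schemes in the sense of Definition \ref{ket.1} --- over the latter every Kummer \'etale cover splits (Lemma \ref{ket.4}) and the lemma would hold trivially for arbitrary coefficients, which it does not --- but your subsequent description of the Kummer \'etale topos as a classifying topos of $\widehat{\Z}'(1)^{\oplus r}$ shows you are computing over the correct local object.
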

\begin{proof}
Let $\epsilon_*\colon \Shv(X_{k\acute{e}t})\rightarrow \Shv(X_{s\acute{e}t})$ be the functor sending $\cF$ to $\cF\vert_{X_{s\acute{e}t}}$.
We need to show that $R^i\epsilon_*\cF=0$ for every integer $i>0$.
Since $R^i\epsilon_*\cF$ is the strict \'etale sheaf associated with the presheaf
\[
(Y\in X_{s\acute{e}t})\mapsto H_{k\acute{e}t}^i(Y,\cF),
\]
it suffices to show the vanishing
$$
H_{k\acute{e}t}^i(X,\cF)=0
$$ 
for every $i>0$ whenever $\underline{X}$ is strictly local.
According to \cite[Proposition 4.1]{MR1457738}, 
see also \cite[Proposition 1.2]{MR3658728}, 
the group $H_{k\acute{e}t}^i(X,\cF)$ is a torsion abelian group by \cite[Corollary 6.11.4]{weibel_1994}.
Since $\cF$ is a sheaf of $\Q$-modules, this implies the desired vanishing.
\end{proof}

\begin{cor}
\label{Div.6}
Let $t$ be one of the following topologies on $lSm/k$:
\[
s\acute{e}t,\; d\acute{e}t,\; k\acute{e}t,\text{ and }l\acute{e}t.
\]
If $k$ has finite $\acute{e}t$-cohomological dimension for $\Lambda$-linear coefficients, then $lSm/k$ has finite $t$-cohomological dimension for $\Lambda$-linear coeffieicnts.
\end{cor}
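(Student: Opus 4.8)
The plan is to reduce each of the four topologies $t \in \{s\acute{e}t,\, d\acute{e}t,\, k\acute{e}t,\, l\acute{e}t\}$ to the strict \'etale case over the underlying schemes, where the hypothesis that $k$ has finite $\acute{e}t$-cohomological dimension for $\Lambda$-linear coefficients directly applies. First I would treat the strict \'etale topology: for $X \in lSm/k$, a sheaf $\cF \in \Shv_{s\acute{e}t}^{\rm log}(k,\Lambda)$ restricts to an \'etale sheaf on $\underline{X}_{\acute{e}t}$, and $H_{s\acute{e}t}^i(X,\cF) \cong H_{\acute{e}t}^i(\underline{X},\cF)$ by the same argument as in Proposition \ref{bigsmall.2} comparing big and small sites. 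Since $\underline{X}$ is smooth of finite type over $k$, its \'etale cohomological dimension is bounded by $\dim \underline{X} + \mathrm{cd}_\Lambda(k)$, which is finite by hypothesis; this settles $t = s\acute{e}t$.

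Next I would handle the Kummer \'etale topology. The key input is Lemma \ref{ketcomp.12} together with its proof, which shows that for $\cF$ a sheaf on the small Kummer \'etale site the functor $\epsilon_* \colon \Shv(X_{k\acute{e}t}) \to \Shv(X_{s\acute{e}t})$ has vanishing higher derived functors — but that lemma is stated for $\Q$-coefficients using a torsion argument. For general $\Lambda$ one instead invokes the finiteness of $\mathrm{cd}$ of the Kummer \'etale site: by \cite[Proposition 1.2]{MR3658728} (or \cite[Proposition 4.1]{MR1457738}), for $\underline{X}$ strictly local the groups $H_{k\acute{e}t}^i(X,\cF)$ vanish for $i$ above a bound depending only on the rank of the characteristic monoid, which is locally finite and hence bounded on the quasi-compact $X$. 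Combining the Leray spectral sequence for $\epsilon$ with the strict \'etale bound gives a finite $k\acute{e}t$-cohomological dimension. Here I would need $X$ to admit an fs chart locally, which holds for $X \in lSm/k$; one then uses that cohomological dimension is detected on a strict \'etale (indeed Zariski) cover.

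Finally, for the dividing versions $d\acute{e}t$ and $l\acute{e}t$, I would apply Theorem \ref{Div.3}: for a bounded-below complex $\cF$ of strict \'etale (resp.\ Kummer \'etale) sheaves one has $\bH_{d\acute{e}t}^i(X, a_{d\acute{e}t}^*\cF) \cong \colimit_{Y \in X_{div}} \bH_{s\acute{e}t}^i(Y,\cF)$ (resp.\ with $l\acute{e}t$ and $k\acute{e}t$). Every log modification $Y \to X$ has $\dim Y = \dim X$ by Lemma \ref{A.9.84}, and for the $l\acute{e}t$ case the ranks of the characteristic monoids occurring among the $Y_{div}$ are controlled since each such $Y$ is log smooth over $k$ of the same dimension, so the strict \'etale (resp.\ Kummer \'etale) cohomology of each $Y$ vanishes in a range independent of $Y$. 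A filtered colimit of zero groups is zero, so the dividing cohomological dimensions are finite. This mirrors exactly the argument of Corollary \ref{Div.5} for the Nisnevich case.

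\textbf{Main obstacle.} The delicate point is getting a \emph{uniform} cohomological-dimension bound for the Kummer \'etale (hence log \'etale) topology valid for \emph{arbitrary} $\Lambda$-linear coefficients, not just torsion or $\Q$-linear ones: Lemma \ref{ketcomp.12} as stated does not suffice, so one must extract from \cite{MR3658728} a bound on $\mathrm{cd}$ of the small Kummer \'etale site of a log smooth $k$-scheme in terms of $\dim X$, the monoid ranks, and $\mathrm{cd}_\Lambda(k)$, and then check this bound is preserved under the log modifications appearing in $X_{div}$. Everything else is a routine assembly of the quoted comparison theorems.
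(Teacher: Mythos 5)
There is a genuine gap, and it is precisely the ``main obstacle'' you flag at the end without resolving: you are trying to prove the \'etale and Kummer \'etale bounds directly for an arbitrary $\Lambda$-linear sheaf, but the inputs you cite do not cover that generality. The dimension bounds from SGA4 (Corollaires X.4.2, X.5.3) that justify $\mathrm{cd}_{\acute{e}t}(\underline{X}) \leq \dim \underline{X} + \mathrm{cd}(k)$, as well as Nakayama's finiteness for the Kummer \'etale site (\cite[Theorem 7.2]{MR3658728}), are statements about \emph{torsion} coefficients; so even your $s\acute{e}t$ step, where you assert the bound for general $\Lambda$, is not justified as written. The missing idea in the paper's proof is elementary but essential: for any $\cF \in \Shv_t^{\rm log}(k,\Lambda)$ the kernel and cokernel of $\cF \to \cF \otimes \Q$ are torsion sheaves, so by the long exact sequence one reduces to the two cases where $\Lambda$ is torsion and where $\Q \subset \Lambda$. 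In the torsion case the SGA4 and Nakayama results apply directly to give the $s\acute{e}t$ and $k\acute{e}t$ bounds. In the rational case one does \emph{not} need any new bound on the Kummer \'etale site: Lemmas \ref{ketcomp.13} and \ref{ketcomp.12} identify $s\acute{e}t$ and $k\acute{e}t$ cohomology of a $\Q$-sheaf with its $sNis$ cohomology, and the finite $sNis$-cohomological dimension is already Corollary \ref{Div.5}. This is why the obstacle you describe (a uniform $\mathrm{cd}$ bound for the small Kummer \'etale site with arbitrary $\Lambda$-coefficients) never has to be confronted.

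Your treatment of the dividing topologies $d\acute{e}t$ and $l\acute{e}t$ via Theorem \ref{Div.3}, Lemma \ref{A.9.84} (log modifications preserve dimension), and exactness of filtered colimits is exactly the paper's argument and is fine once the $s\acute{e}t$ and $k\acute{e}t$ cases are in place.
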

\begin{proof}
For any $\cF\in \Shv_t^{\rm log}(k,\Lambda)$, the kernel and cokernel of the natural morphism
\[
\cF\rightarrow \cF\otimes \Q
\]
are torsion sheaves.
Hence we only need to deal with the two separate cases: $\Lambda$ is a torsion ring, and $\Q\subset \Lambda$.
\vspace{0.1in}

If $\Lambda$ is a torsion ring, then $lSm/k$ has finite $s\acute{e}t$ and $k\acute{e}t$-cohomological dimension for $\Lambda$-linear coefficients due to \cite[Corollaries X.4.2, X.5.3]{SGA4} and \cite[Theorem 7.2]{MR3658728}.
To show that $lSm/k$ has finite $d\acute{e}t$ and $l\acute{e}t$-cohomological dimension for $\Lambda$-linear coefficients, apply Lemma \ref{A.9.84} and Theorem \ref{Div.3}.
\vspace{0.1in}

If $\Q\subset \Lambda$, then Lemmas \ref{ketcomp.13} and \ref{ketcomp.12} show that $lSm/k$ has finite $s\acute{e}t$ and $k\acute{e}t$-cohomological dimension for $\Lambda$-linear coefficients since $lSm/k$ has finite $sNis$-cohomological dimension due to Corollary \ref{Div.5}.
Then apply Lemma \ref{A.9.84} and Theorem \ref{Div.3} again to finish the proof.
\end{proof}

\begin{thm}
\label{Div.4}
Let $\cF$ be a bounded below complex of strict Nisnevich (resp.\ strict \'etale, resp.\ Kummer \'etale) sheaves on $SmlSm/k$.
Then for every $X\in SmlSm/k$ and integer $i\in \Z$ there is an isomorphism
\begin{align*}
\bH_{dNis}^i(X,a_{dNis}^*\cF)&\cong \colimit_{Y\in X_{div}^{Sm}}\bH_{sNis}^i(Y,\cF)
\\
\text{(resp.\ }\bH_{d\acute{e}t}^i(X,a_{d\acute{e}t}^*\cF)&\cong \colimit_{Y\in X_{div}^{Sm}}\bH_{s\acute{e}t}^i(Y,\cF),
\\
\text{resp.\ }\bH_{l\acute{e}t}^i(X,a_{l\acute{e}t}^*\cF)&\cong \colimit_{Y\in X_{div}^{Sm}}\bH_{k\acute{e}t}^i(Y,\cF)\text{)}.
\end{align*}
The same holds if we replace $X_{div}^{Sm}$ by $X_{divsc}^{Sm}$.
\end{thm}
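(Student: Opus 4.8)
The statement is the restriction of Theorem \ref{Div.3} to the subcategory $SmlSm/k$, so the strategy is to replay the proof of Theorem \ref{Div.3} keeping track of the fact that both the sheaf $\cF$ and the indexing categories now live over $SmlSm/k$. First I would reduce to the case of an injective resolution: since $a_{dNis}^*$ (resp.\ $a_{d\acute{e}t}^*$, $a_{l\acute{e}t}^*$) is exact, we may replace the bounded below complex $\cF$ by a bounded below complex of injective, hence flabby, strict Nisnevich (resp.\ strict \'etale, Kummer \'etale) sheaves on $SmlSm/k$. Then I would invoke the analog of Lemma \ref{Div.8} for $SmlSm/k$: the dividing Nisnevich (resp.\ dividing \'etale, log \'etale) sheafification of a flabby sheaf on $SmlSm/k$ is again flabby. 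This reduces the hypercohomology on the left-hand side to the plain cohomology of the complex $a_{dNis}^*\cF(X)$ in each degree.

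The second step is to compute the sections $a_{dNis}^*\cF(X)$ (and its \'etale counterparts) for $X\in SmlSm/k$. This is exactly the content of Lemma \ref{A.5.75}: there is an isomorphism $a_{dNis}^*\cF(X)\cong\colimit_{Y\in X_{divsc}^{Sm}}\cF(Y)$, and by Corollary \ref{A.5.74} the category $X_{divsc}^{Sm}$ is cofinal in $X_{div}^{Sm}$, so the colimit over $X_{divsc}^{Sm}$ agrees with the colimit over $X_{div}^{Sm}$. (Here I should note that Lemma \ref{A.5.75} as stated is for a sheaf, but since each $\cF^i$ in the resolution is such a sheaf this is fine applied degreewise.) By Proposition \ref{A.9.82} the category $X_{div}^{Sm}$ is filtered, hence so is $X_{divsc}^{Sm}$ by cofinality, and filtered colimits of $\Lambda$-modules are exact. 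Therefore
\[
\bH_{dNis}^i(X,a_{dNis}^*\cF)\cong H^i\bigl(a_{dNis}^*\cF(X)\bigr)\cong H^i\Bigl(\colimit_{Y\in X_{div}^{Sm}}\cF(Y)\Bigr)\cong\colimit_{Y\in X_{div}^{Sm}}H^i(\cF(Y)),
\]
and the same reasoning gives the $X_{divsc}^{Sm}$ version using Corollary \ref{A.5.74} once more.

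The final step is to identify $H^i(\cF(Y))$ with $\bH_{sNis}^i(Y,\cF)$ for $Y\in X_{div}^{Sm}$ (and the $s\acute{e}t$, $k\acute{e}t$ analogs). Since we arranged $\cF$ to be a bounded below complex of flabby strict Nisnevich sheaves, $\cF$ computes strict Nisnevich hypercohomology, i.e.\ $\bH_{sNis}^i(Y,\cF)\cong H^i(\cF(Y))$; here one uses that flabby sheaves are acyclic for the strict Nisnevich topology on the small site $Y_{sNis}$, which follows from \cite[Proposition III.2.12]{Milneetale} as in the proof of Theorem \ref{Div.3}. For the Kummer \'etale case one should be slightly careful that an injective Kummer \'etale sheaf on $SmlSm/k$ restricts to an acyclic object on the small Kummer \'etale site $Y_{k\acute{e}t}$, but this is again a standard consequence of the flabbiness argument. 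Combining the three displayed isomorphisms with this last identification yields the claimed formula, and the parallel statement with $X_{divsc}^{Sm}$ is obtained by replacing each appeal to cofinality of $X_{div}^{Sm}$ with the chain $X_{divsc}^{Sm}\hookrightarrow X_{div}^{Sm}$ of Corollary \ref{A.5.74}.

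\textbf{Main obstacle.} The only genuinely nontrivial point is the $SmlSm/k$-version of Lemma \ref{Div.8}, namely that $dNis$-sheafification (and its \'etale variants) preserves flabbiness on the subcategory $SmlSm/k$. The proof of Lemma \ref{Div.8} on $lSm/k$ runs through Proposition \ref{A.5.44} to reduce a dividing Nisnevich cover to a strict Nisnevich cover after a log modification, then uses Lemma \ref{A.5.45} to write the \v{C}ech complex as a filtered colimit over $X_{div}$. To stay inside $SmlSm/k$ one replaces Proposition \ref{A.5.44} and Lemma \ref{A.5.45} by their $SmlSm$-counterparts (Proposition \ref{A.3.19} to achieve $Y\in SmlSm/k$, together with Lemma \ref{A.5.75}), and Proposition \ref{A.9.80} by Proposition \ref{A.9.82} so that the relevant colimits remain filtered. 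Once this bookkeeping is in place the rest is formal, and I expect no other difficulty.
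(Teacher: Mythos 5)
Your proposal is correct and follows essentially the same route as the paper: the paper's proof of Theorem \ref{Div.4} simply declares the argument to be parallel to that of Theorem \ref{Div.3}, with Lemma \ref{A.5.46} and Proposition \ref{A.9.82} substituted for Lemma \ref{A.5.45} and Proposition \ref{A.9.80}, and with Corollary \ref{A.5.74} and Lemma \ref{A.5.75} handling the $X_{divsc}^{Sm}$ variant — which is precisely the substitution scheme you carry out in detail (your detour through $X_{divsc}^{Sm}$ plus cofinality, rather than citing Lemma \ref{A.5.46} directly, is an immaterial reordering). Your identification of the $SmlSm/k$-version of Lemma \ref{Div.8} as the one point requiring care, and of Proposition \ref{A.3.19} as the tool that keeps the refining log modifications inside $SmlSm/k$, matches what is implicitly needed in the paper's compressed proof.
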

\begin{proof}
The proof is parallel to that of Theorem \ref{Div.3} if we use Lemma \ref{A.5.46} and Proposition \ref{A.9.82} as replacements for Lemma \ref{A.5.45} and Proposition \ref{A.9.80}.
To show the claim for $X_{divsc}^{Sm}$ use Corollary \ref{A.5.74} and Lemma \ref{A.5.75} instead.
\end{proof}

\subsection{Effective log motives}
\label{subsection:cotcolm}
Voevodsky's category of effective motives $\dmeff$ (resp.\ effective motives without transfers $\daeff$) is the homotopy category with respect to the $\A^1$-local descent model structure on
$\Co(\Shvtrkl)$ (resp.\ $\Co(\Shv_{Nis}(k,\Lambda))$).
The motive $M(X)$ of $X\in Sm/k$ is defined as the object $\Ztr(X)$ (resp.\ $\Lambda(X)$) in $\dmeff$ (resp.\ $\daeff$).
The category $\dmeff$ was initially constructed for bounded above complexes, see e.g., \cite[Lecture 14]{MVW}.
We refer to \cite[Definition 11.1.1]{CD12} for a generalization to unbounded complexes.
\vspace{0.1in}

By abuse of notation, we let $\boxx$ be shorthand for the class of projections $X\times \boxx\rightarrow X$, 
where $X\in lSm/k$.
We also consider $\boxx$ as the corresponding classes of morphisms of representable objects in $\Shvlogkl$ or $\Shvltrkl$.

\begin{df}
The derived category of effective log motives 
\[
\ldmeff
\]
\index[notation]{ldmeff @ $\ldmeff$} 
is the homotopy category of $\Co(\Shvltrkl)$ with respect to the $\boxx$-local descent model structure. 
The motive of $X\in lSm/k$ is the object 
\[
M(X):=a_{dNis}^*\Zltr(X)\in\ldmeff.
\] \index[notation]{M(X) @ $M(X)$} 

Similarly, 
by abuse of notation, 
we write $\ldaeff$ \index[notation]{ldaeff @ $\ldaeff$}  for the homotopy category of $\Co(\Shvlogkl)$ with respect to the $\boxx$-local descent model structure and $M(X)$ for $a_{dNis}^*\Lambda(X)$.
\end{df}

Recall that we set $\boxx:=(\P^1,\infty)$.
By Proposition \ref{A.8.19}, $\ldmeff$ and $\ldaeff$ are symmetric monoidal triangulated categories such that for $X,Y\in lSm/k$, we have
\[
M(X)\otimes M(Y)=M(X\times Y).
\]
\vspace{0.1in}

For any dividing Nisnevich cover $Y\rightarrow X$ in $lSm/k$, we note that 
\[
Y-\partial Y\rightarrow X-\partial X
\] 
is a Nisnevich cover in $Sm/k$.
Moreover, 
we have that \index[notation]{omegasharpstar @ $\omega_\sharp, R\omega^*$}
$$
\omega(\boxx)=\A^1.
$$ 
Thus by \eqref{A.8.0.2} and \eqref{A.8.0.3}, there are naturally induced adjunctions
\[
L\gamma_\sharp:\ldaeff\rightleftarrows \ldmeff:R\gamma^*,
\]
and 
\begin{equation}
\label{eq::omegasharp}
\omega_\sharp:\ldmeff\rightleftarrows \dmeff:R\omega^*.
\end{equation}
\vspace{0.1in}

Similarly, we have the naturally induced adjunctions
\[
L\gamma_\sharp:\daeff\rightleftarrows \dmeff:R\gamma^*,
\]
and
\[
\omega_\sharp:\ldaeff\rightleftarrows \daeff:R\omega^*.
\]
\vspace{0.1in}

Further, there is a diagram of adjunct functors
\[
\begin{tikzcd}
\ldaeff\arrow[d,shift left=0.75ex,leftarrow,"R\omega^*"]\arrow[d,shift right=0.75ex,"\omega_\sharp"']
\arrow[r,shift left=0.75ex,"L\gamma_\sharp"]\arrow[r,shift right=0.75ex,"R\gamma^*"',leftarrow]&\ldmeff\arrow[d,shift left=0.75ex,leftarrow,"R\omega^*"]\arrow[d,shift right=0.75ex,"\omega_\sharp"']   \\
\daeff\arrow[r,shift left=0.75ex,"L\gamma_\sharp"]\arrow[r,shift right=0.75ex,"R\gamma^*"',leftarrow] &\dmeff,
\end{tikzcd}\]
where $L\gamma_\sharp$ commutes with $\omega_\sharp$.

\begin{df}
\label{A.5.12}
A complex of dividing Nisnevich sheaves $\cF$ is {\it strictly $\boxx$-invariant}\index{invariant!strictly box @ strictly $\boxx$} if for every fs log scheme $X$ log smooth over $k$ and integer $i\in \Z$, 
the projection $X\times \boxx\rightarrow X$ induces an isomorphism on sheaf cohomology groups
\[
\bH_{dNis}^i(X,\cF)\rightarrow \bH_{dNis}^i(X\times \boxx,\cF).
\]
\end{df}

By definition of $\boxx$-local model structure, a complex of dividing Nisnevich sheaves $\cF$ is fibrant for the $\boxx$-local descent model structure if and only if $\cF$ is fibrant for the local model structure and $\cF$ is strictly $\boxx$-invariant.

\begin{prop}
\label{A.5.13}
Let $\cF$ be a strictly $\boxx$-invariant complex of dividing Nisnevich sheaves with log transfers. 
Then,
for every fs log scheme $X$ log smooth over $k$ and an integer 
$i\in \Z$, there is an isomorphism
\[
\hom_{\ldmeff}(M(X),\cF[i])
\cong 
\bH_{dNis}^i(X,\cF).
\]
Similar statement holds for strictly $\boxx$-invariant complex of dividing Nisnevich sheaves $\cF$ and $\ldaeff$.
\end{prop}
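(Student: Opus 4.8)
The plan is to use that $\ldmeff$ is, by construction, the homotopy category of a left Bousfield localization of the descent model structure on $\Co(\Shvltrkl)$, together with the characterization of fibrant objects recorded right after Definition \ref{A.5.12}: a complex of dividing Nisnevich sheaves is fibrant for the $\boxx$-local descent model structure precisely when it is descent-fibrant and strictly $\boxx$-invariant. So the first step is, given a strictly $\boxx$-invariant complex $\cF$ of dividing Nisnevich sheaves with log transfers, to choose a fibrant replacement $j\colon \cF\rightarrow \cF'$ in the \emph{descent} model structure on $\Co(\Shvltrkl)$. The map $j$ is a descent-local weak equivalence between complexes of sheaves, hence a quasi-isomorphism of complexes of dividing Nisnevich sheaves; consequently $j$ induces isomorphisms $\bH_{dNis}^i(X,\cF)\cong \bH_{dNis}^i(X,\cF')$ for every $X\in lSm/k$ and $i\in\Z$. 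Since strict $\boxx$-invariance (Definition \ref{A.5.12}) is a condition on these hypercohomology groups alone, $\cF'$ is again strictly $\boxx$-invariant, and therefore fibrant for the $\boxx$-local descent model structure.

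With $\cF'$ fibrant in $\ldmeff$ and $M(X)=a_{dNis}^*\Zltr(X)$ cofibrant (representables are cofibrant for the descent structure, as used in the proof of Proposition \ref{A.8.8}), the second step is the chain of identifications
\[
\hom_{\ldmeff}(M(X),\cF[i])\;\cong\;\hom_{\ldmeff}(M(X),\cF'[i])\;\cong\;\hom_{\Deri(\Shvltrkl)}(a_{dNis}^*\Zltr(X),\cF'[i]),
\]
where the first isomorphism is $\hom(M(X),j[i])$ and the second uses that $\cF'$ is simultaneously fibrant in the unlocalized descent model structure (whose homotopy category is $\Deri(\Shvltrkl)$): maps out of the cofibrant object $M(X)$ into $\cF'$ are the same homotopy classes computed in the localized and in the unlocalized homotopy category. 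Now the dividing Nisnevich topology is compatible with log transfers by Theorem \ref{A.5.23}, so Proposition \ref{A.8.8} applies and gives $\hom_{\Deri(\Shvltrkl)}(a_{dNis}^*\Zltr(X),\cF'[i])\cong \bH_{dNis}^i(X,\cF')$. Combining this with $\bH_{dNis}^i(X,\cF')\cong \bH_{dNis}^i(X,\cF)$ from the first step yields the asserted isomorphism; naturality in $X$ is immediate since every map used is induced by a morphism of representables or by the fixed replacement $j$.

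For the variant concerning $\ldaeff$ I would repeat the argument verbatim, replacing $\Zltr$ by $\Lambda(-)$, $\Shvltrkl$ by $\Shvlogkl$, and Proposition \ref{A.8.8} by the transfer-free comparison $\hom_{\Deri(\Shvlogkl)}(a_{dNis}^*\Lambda(X),\cG[i])\cong \bH_{dNis}^i(X,\cG)$, which is exactly the special case already invoked inside the proof of Proposition \ref{A.8.8}. I do not anticipate a serious obstacle: the only point deserving care is the stability of strict $\boxx$-invariance under a descent-fibrant replacement, and this reduces to the observations that descent-local weak equivalences of complexes of sheaves are quasi-isomorphisms and that $\bH_{dNis}^i(X,-)$ is invariant under quasi-isomorphism. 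Everything else is a formal unwinding of the Bousfield localization together with the hypercohomology comparison already available from Proposition \ref{A.8.8}.
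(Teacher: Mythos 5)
Your proof is correct and follows essentially the same route as the paper's: the paper's three-line argument says ``we may assume $\cF$ is fibrant for the descent model structure,'' invokes Proposition \ref{A.8.19}(1) to conclude it is then fibrant for the $\boxx$-local descent model structure, and finishes with Proposition \ref{A.8.8} — exactly the steps you spell out, with your explicit verification that strict $\boxx$-invariance survives the descent-fibrant replacement being the content the paper compresses into ``we may assume.''
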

\begin{proof}
We focus on $\ldmeff$ since the proofs are similar.
We may assume that $\cF$ is a fibrant object in $\Co(\Shvltrkl)$ for the descent model structure given in \S \ref{Subsection:derivedcategories}. 
By Proposition \ref{A.8.19}(1), $\cF$ is a fibrant object in $\Co(\Shvltrkl)$ with respect to the $\boxx$-local descent model structure. 
Thus we have 
\[
\hom_{\ldmeff}(M(X),\cF[i])
\cong 
\hom_{\Deri(\Shvltrkl)}(a_{dNis}^*\Zltr(X),\cF[i]).
\]
Proposition \ref{A.8.8} finishes the proof.
\end{proof}


\begin{lem}
\label{A.5.73}
Let $\{\cF_i\}_{i\in I}$ be a set of strictly $\boxx$-invariant complexes of dividing Nisnevich sheaves.
Then $\bigoplus_{i\in I}\cF_i$ is strictly $\boxx$-invariant too.
The same holds for strictly $\boxx$-invariant complexes of dividing \'etale or log \'etale sheaves if $k$ has finite \'etale cohomological dimension for $\Lambda$-linear coefficients.
\end{lem}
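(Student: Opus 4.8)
The plan is to reduce strict $\boxx$-invariance of the coproduct to the statement that, for a fixed object of $lSm/k$, the relevant hypercohomology commutes with arbitrary direct sums of complexes of sheaves --- and this is exactly the point at which finite cohomological dimension is needed. First I would fix $X\in lSm/k$ and an integer $n\in\Z$, set $\cF:=\bigoplus_{i\in I}\cF_i$ (the coproduct taken in the category of complexes of dividing Nisnevich sheaves, which exists since that category is Grothendieck abelian), and note that by Definition \ref{A.5.12} it suffices to show that the projection $p\colon X\times\boxx\to X$ induces an isomorphism $p^*\colon\bH_{dNis}^n(X,\cF)\to\bH_{dNis}^n(X\times\boxx,\cF)$.

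Next I would invoke finite cohomological dimension. In the dividing Nisnevich case, Corollary \ref{Div.5} says that $lSm/k$ has finite $dNis$-cohomological dimension for $\Lambda$-linear coefficients; in the dividing \'etale and log \'etale cases, Corollary \ref{Div.6} supplies the analogous statement precisely under our hypothesis that $k$ has finite \'etale cohomological dimension for $\Lambda$-linear coefficients. Either way, Proposition \ref{bigsmall.5} applies, so that for $t=dNis$, $d\acute{e}t$, or $l\acute{e}t$ and every $Y\in lSm/k$ the functor $\hom_{\mathbf{D}(\Shvlogtkl)}(a_t^*\Lambda(Y),-)$ commutes with small sums. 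Combined with the standard identification $\bH_t^n(Y,\cG)\cong\hom_{\mathbf{D}(\Shvlogtkl)}(a_t^*\Lambda(Y),\cG[n])$ of sheaf hypercohomology in the derived category (as recalled in the proof of Proposition \ref{A.8.8}), this yields, for $Y\in\{X,\,X\times\boxx\}$, natural isomorphisms $\bH_t^n(Y,\cF)\cong\bigoplus_{i\in I}\bH_t^n(Y,\cF_i)$ that are compatible with the pullback along $p$.

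To conclude, I would observe that under these identifications $p^*$ becomes the coproduct of the maps $\bH_t^n(X,\cF_i)\to\bH_t^n(X\times\boxx,\cF_i)$, each of which is an isomorphism because $\cF_i$ is strictly $\boxx$-invariant; a coproduct of isomorphisms is an isomorphism, and the lemma follows. The only genuine obstacle is the interchange of hypercohomology with the possibly infinite direct sum, which is handled entirely by feeding the finite cohomological dimension results (Corollaries \ref{Div.5} and \ref{Div.6}) into Proposition \ref{bigsmall.5}; the remainder is a formal manipulation of adjunctions.
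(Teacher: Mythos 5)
Your proof is correct and follows essentially the same route as the paper: both arguments feed the finite cohomological dimension results (Corollaries \ref{Div.5} and \ref{Div.6}) into Proposition \ref{bigsmall.5} to commute hypercohomology with the direct sum, and then conclude from the termwise $\boxx$-invariance. The extra care you take in spelling out the compatibility of the identifications with $p^*$ is a welcome but inessential elaboration of what the paper leaves implicit.
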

\begin{proof}
We first treat the dividing Nisnevich case.
Due to Corollary \ref{Div.5} and Proposition \ref{bigsmall.5} for every $X\in lSm/k$ we have isomorphisms 
\begin{gather*}
\bH_{dNis}^i(X,\bigoplus_{i\in I}\cF)
\cong
\bigoplus_{i\in I}\bH_{dNis}^i(X,\cF),
\\
\bH_{dNis}^i(X\times \boxx,\bigoplus_{i\in I}\cF)
\cong
\bigoplus_{i\in I}\bH_{dNis}^i(X\times \boxx,\cF).
\end{gather*}
Combining these isomorphisms finishes the proof.
For the dividing \'etale and log \'etale cases, apply Corollary \ref{Div.6} instead.
\end{proof}

\begin{prop}
\label{A.5.33}
The effective log motives $M(X)[n]$ indexed by $X\in lSm/k$ and $n\in \Z$ form a set of compact generators for the categories $\ldaeff$ and $\ldmeff$.
\end{prop}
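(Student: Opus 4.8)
The plan is to verify the two defining properties of a set of compact generators in a compactly generated triangulated category: that the objects $M(X)[n]$ are compact, and that they generate in the sense that an object $\mathcal{F}$ is zero whenever $\hom(M(X)[n],\mathcal{F})=0$ for all $X\in lSm/k$ and $n\in\Z$. We work in $\ldmeff$; the case of $\ldaeff$ is identical upon replacing sheaves with log transfers by sheaves without transfers, and replacing Proposition \ref{A.5.13} and Proposition \ref{A.8.8} by their transfer-free analogues.

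First I would establish compactness. By Proposition \ref{A.5.13}, for a strictly $\boxx$-invariant (fibrant) complex $\mathcal{F}$ we have $\hom_{\ldmeff}(M(X),\mathcal{F}[i])\cong \bH_{dNis}^i(X,\mathcal{F})$, and by Proposition \ref{A.8.8} the right-hand side is $\hom_{\Deri(\Shvltrkl)}(a_{dNis}^*\Zltr(X),\mathcal{F}[i])$. So it suffices to know that $a_{dNis}^*\Zltr(X)$ is compact in $\Deri(\Shvltrkl)$ and that the localization functor $\Deri(\Shvltrkl)\to\ldmeff$ preserves this compactness. Compactness in $\Deri(\Shvltrkl)$ is exactly Proposition \ref{bigsmall.5}: $lSm/k$ has finite $dNis$-cohomological dimension for $\Lambda$-linear coefficients by Corollary \ref{Div.5}, hence $\hom_{\Deri(\Shvltrkl)}(a_{dNis}^*\Zltr(X),-)$ commutes with small sums. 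To pass to the $\boxx$-localization, I would use that the fibrant replacement functor for the $\boxx$-local model structure sends small sums of $\boxx$-fibrant objects to something with the same cohomology as the sum — concretely, Lemma \ref{A.5.73} guarantees that a direct sum of strictly $\boxx$-invariant complexes is again strictly $\boxx$-invariant, so a sum of $\boxx$-local objects, computed in $\Deri(\Shvltrkl)$, is already $\boxx$-local; therefore $\hom_{\ldmeff}(M(X),-)$ agrees with $\hom_{\Deri(\Shvltrkl)}(a_{dNis}^*\Zltr(X),-)$ on such sums and commutes with them.

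Next I would establish generation. Suppose $\mathcal{F}\in\ldmeff$ satisfies $\hom_{\ldmeff}(M(X)[n],\mathcal{F})=0$ for all $X\in lSm/k$ and $n\in\Z$. Choosing a $\boxx$-local fibrant model for $\mathcal{F}$, which is in particular a fibrant complex of dividing Nisnevich sheaves with log transfers that is strictly $\boxx$-invariant, Proposition \ref{A.5.13} gives $\bH_{dNis}^n(X,\mathcal{F})=0$ for all $X$ and $n$. Taking $X=\Spec k$ together with its dividing Nisnevich covers and using that the objects $a_{dNis}^*\Zltr(X)$ generate $\Deri(\Shvltrkl)$ (the generator $\bigoplus_{X} a_{dNis}^*\Zltr(X)$ from the proof of Proposition \ref{A.8.7}), the vanishing of all these hom-groups forces $\mathcal{F}$ to be quasi-isomorphic to zero as a complex of sheaves, hence zero in $\ldmeff$. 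Equivalently, one observes that the homology sheaves $a_{dNis}\mathcal{H}_n(\mathcal{F})$ vanish because their sections on every $X$, computed via the convergent descent/hypercohomology spectral sequence, are built from the vanishing groups $\bH_{dNis}^\bullet(X,\mathcal{F})$.

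The main obstacle is the second step of the compactness argument: making precise that $\boxx$-localization preserves compact generators. The cleanest route is to invoke that $\ldmeff$ is a left Bousfield localization of the compactly generated triangulated category $\Deri(\Shvltrkl)$ at a set of morphisms (the $\boxx$-projections $M(X\times\boxx)\to M(X)$) whose cofibers are compact, so that by the general theory of localizations of compactly generated categories the images of a set of compact generators remain compact generators; the key input ensuring the localization is ``smashing'' enough is precisely Lemma \ref{A.5.73} (closure of $\boxx$-locality under sums) combined with the finite cohomological dimension from Corollary \ref{Div.5}. I would spell this out using Proposition \ref{A.8.19} for the monoidal/triangulated structure and the explicit description of $\boxx$-local objects as strictly $\boxx$-invariant complexes. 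Everything else is a routine combination of the cited results.
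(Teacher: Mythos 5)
Your proposal is correct and follows essentially the same route as the paper: compactness comes from Lemma \ref{A.5.73} (sums of strictly $\boxx$-invariant complexes remain strictly $\boxx$-invariant), Proposition \ref{A.5.13} identifying $\hom_{\ldmeff}(M(X),-)$ with dividing Nisnevich hypercohomology, and Proposition \ref{bigsmall.5} together with Corollary \ref{Div.5} to commute that hypercohomology with small sums. The paper leaves the generation statement implicit (it follows from the description of $\ldmeff$ as a localization of $\Deri(\Shvltrkl)$, which is generated by the representables), whereas you spell it out; that part of your argument is also fine.
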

\begin{proof}
We focus on $\ldmeff$ since the proofs are similar. \index{compact generators!motives}
Let $\{\cF_i\}_{i\in I}$ be a set of strictly $\boxx$-invariant dividing Nisnevich complexes of sheaves with log transfers.
Then $\bigoplus_{i\in I} \cF_i$ is strictly $\boxx$-invariant too by Lemma \ref{A.5.73}.
Owing to Proposition \ref{A.5.13} we have
\begin{gather*}
\hom_{\ldmeff}(M(X),\bigoplus_{i\in I}\cF_i[n])
\cong 
\hom_{\mathbf{D}(\Shvlogkl)}(a_{dNis}^*\Lambda(X),\bigoplus_{i\in I}\cF_i[n]),
\\
\bigoplus_{i\in I}\hom_{\ldmeff}(M(X),\cF_i[n])
\cong 
\bigoplus_{i\in I}\hom_{\mathbf{D}(\Shvlogkl)}(a_{dNis}^*\Lambda(X),\cF_i[n]).
\end{gather*}
Proposition \ref{bigsmall.5} and Corollary \ref{Div.5} finish the proof.
\end{proof}

\begin{rmk}
\label{A.5.34}
Similarly, the objects $M(X)[n]$, where $X\in Sm/k$ and $n\in \Z$, are compact in $\dmeff$ (resp.\ $\daeff$).
The same objects generate $\dmeff$ (resp.\ $\daeff$) by the above or \cite[Theorem 11.1.13]{CD12} (resp.\ \cite[Proposition 5.2.38]{CD12}).
\end{rmk}

\begin{df}
\label{A.5.7}
If $f:Y\rightarrow X$ is a morphism in $lSm/k$,
let 
$$
M(Y\stackrel{f}\rightarrow X)
$$ 
be the cone in $\ldmeff$ associated with the complex 
$$
a_{dNis}^*\Zltr(Y)\rightarrow a_{dNis}^*\Zltr(X)
$$ 
in $\Co(\Shvltrkl)$.
We use the same notation for the object in $\ldaeff$ associated with the complex $a_{dNis}^*\Lambda(Y)\rightarrow a_{dNis}^*\Lambda(X)$ in $\Co(\Shvlogkl)$.
\vspace{0.1in}

For a simplicial object $\mathscr{X}$ in $lSm/k$, let $M(\mathscr{X})$ denote the object in $\ldmeff$ associated with $\Zltr(\mathscr{X})$ in $\Co(\Shvltrkl)$
We use the same notation for the object in $\ldaeff$ associated with $\Lambda(\mathscr{X})$ in $\Co(\Shvlogkl)$.
\end{df}


\begin{df}
We denote by
\[
\ldmeffprop
\]\index[notation]{ldmeffprop @ $\ldmeffprop$}
the smallest triangulated subcategory of $\ldmeff$ that is closed under small sums and shifts, 
and contains $M(X)$, 
where $X\in lSm/k$ and its underlying scheme $\underline{X}$ is proper over $\Spec{k}$. 
\end{df}

\begin{rmk}
We are interested in $\ldmeffprop$ because for a proper log smooth $X$, 
the log motive $M(X)$ is practically an object in $\dmeff$.
Indeed, if $k$ admits resolution of singularities, 
Theorem \ref{thm::dmeff=ldmeffprop} shows there is an equivalence of triangulated categories
\[
\ldmeffprop\simeq \dmeff.
\]
Moreover, 
we note that the corresponding statement is false in the \'etale topology; see Remark \ref{Etalemot.5}.
\end{rmk}

\begin{prop}
\label{A.5.42}
\begin{enumerate}
\item[(1)] For every strict Nisnevich distinguished square 
\[
\begin{tikzcd}
Y'\arrow[d]\arrow[r]&Y\arrow[d]\\
X'\arrow[r]&X
\end{tikzcd}
\]
in $lSm/k$, there is a naturally induced isomorphism 
\[
M(Y'\rightarrow X')\rightarrow M(Y\rightarrow X)
\]
in $\ldmeff$.
The same holds in $\ldaeff$.
\item[(2)]   
For every log modification $f:Y\rightarrow X$, there is a naturally induced isomorphism
\[
M(Y)\rightarrow M(X)
\]
in $\ldmeff$.
The same holds true in $\ldaeff$.
\end{enumerate}
\end{prop}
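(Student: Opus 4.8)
\textbf{Proof proposal for Proposition \ref{A.5.42}.}

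The plan is to deduce both statements from the general descent-theoretic machinery already in place, rather than arguing by hand with cycles. For part (1), observe that a strict Nisnevich distinguished square is, in particular, a $P$-distinguished square for the strict Nisnevich cd-structure on $lSm/k$. By Proposition \ref{A.5.22} this topology is compatible with log transfers, and by Proposition \ref{A.8.13} (the equivalence (i)$\Leftrightarrow$(iii) there, applied to the strict Nisnevich cd-structure, which is complete, quasi-bounded and regular by Proposition \ref{A.9.60}) the sequence
\[
0\rightarrow a_{dNis}^*\Zltr(Y')\rightarrow a_{dNis}^*\Zltr(Y)\oplus a_{dNis}^*\Zltr(X')\rightarrow a_{dNis}^*\Zltr(X)\rightarrow 0
\]
of dividing Nisnevich sheaves with log transfers is exact — here I would note that a strict Nisnevich distinguished square remains distinguished for the finer dividing Nisnevich topology, so we may sheafify in $dNis$. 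A short exact sequence of sheaves yields a distinguished triangle in $\Deri(\Shvltrkl)$, hence in $\ldmeff$; comparing the two cones identifies $M(Y'\rightarrow X')$ with $M(Y\rightarrow X)$ via the map induced by the square. The same argument, replacing $\Zltr$ by $\Lambda(-)$ and using the exactness of $a_{dNis}^*$ on $\Co(\Shvlogkl)$, gives the statement in $\ldaeff$.

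For part (2), the key input is Lemma \ref{A.5.70}: for a log modification $f\colon Y\rightarrow X$ the induced morphism $a_{dNis}^*\gamma^*\Zltr(Y)\rightarrow a_{dNis}^*\gamma^*\Zltr(X)$ of dividing Nisnevich sheaves is already an \emph{isomorphism}. Applying $\gamma_\sharp$ (which commutes with $a_{dNis}^*$, see \eqref{A.8.1.1}) one gets that $M(Y)\rightarrow M(X)$ is an isomorphism of objects in $\Deri(\Shvltrkl)$, so a fortiori an isomorphism in the localization $\ldmeff$. For $\ldaeff$ one uses instead Lemma \ref{A.5.45}, which identifies $a_{dNis}^*\Lambda(X)$ with $\colimit_{Z\in X_{div}}\Lambda(Z)$; since $Y\rightarrow X$ being a log modification makes the functor $Y_{div}\rightarrow X_{div}$ cofinal (by Proposition \ref{Fan.12}), the two colimits agree and $M(Y)\rightarrow M(X)$ is an isomorphism. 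Alternatively one simply notes that a log modification is a $dNis$-cover consisting of a single map whose Čech nerve has all terms equal (since $Y\times_X Y\cong Y$, $f$ being a monomorphism), so $a_{dNis}^*\Lambda(Y)\rightarrow a_{dNis}^*\Lambda(X)$ is an isomorphism by descent.

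The only real subtlety — and the step I would be most careful about — is the passage from the strict Nisnevich distinguished square to a statement about \emph{dividing} Nisnevich sheaves in part (1): one must check that the short exact sequence produced by Proposition \ref{A.8.13} for the strict Nisnevich cd-structure remains exact after the further sheafification $a_{dNis}^*$, equivalently that the dividing Nisnevich cd-structure restricted to such squares is still regular and that the relevant complex is acyclic. This is covered by the fact that the dividing Nisnevich cd-structure is the union of the strict Nisnevich and dividing cd-structures and is itself complete, quasi-bounded, and regular (Proposition \ref{A.9.60}), together with Theorem \ref{A.5.23} asserting $dNis$-compatibility with log transfers; so in the end part (1) follows by invoking Proposition \ref{A.8.13} directly for the dividing Nisnevich cd-structure, observing that a strict Nisnevich distinguished square is one of its distinguished squares. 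Everything else is routine bookkeeping with the adjunctions of \eqref{A.8.0.4}.
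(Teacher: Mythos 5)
Your argument is correct and, once you settle (as you do in your last paragraph) on invoking Proposition \ref{A.8.13} directly for the dividing Nisnevich cd-structure, it is essentially the paper's own proof: part (1) is exactly the exactness of the Mayer--Vietoris sequence \eqref{A.8.13.4} (resp.\ \eqref{A.8.13.5} for $\ldaeff$), and the paper handles part (2) by the \emph{same} Mayer--Vietoris argument applied to a dividing distinguished square, i.e.\ the case $Y'=X'=\emptyset$, which unwinds to precisely your Lemma \ref{A.5.70}/\v{C}ech-nerve argument. The only step you should repair is the phrase ``applying $\gamma_\sharp$'' in part (2): since $\gamma_\sharp\gamma^*\not\cong\mathrm{id}$, applying $\gamma_\sharp$ to the isomorphism $a_{dNis}^*\gamma^*\Zltr(Y)\rightarrow a_{dNis}^*\gamma^*\Zltr(X)$ does not produce $a_{dNis}^*\Zltr(Y)\rightarrow a_{dNis}^*\Zltr(X)$. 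What you actually need is that $\gamma^*$ commutes with sheafification and is conservative (it is the forgetful functor on underlying sheaves), so Lemma \ref{A.5.70} already forces $a_{dNis}^*\Zltr(Y)\rightarrow a_{dNis}^*\Zltr(X)$ to be an isomorphism in $\Shvltrkl$; with that one-line fix the rest of your argument goes through.
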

\begin{proof}
Let us treat first the case of $\ldaeff$. 
To show (1), it suffices to show the homomorphism
\[
\hom_{\ldaeff}(M(Y\rightarrow X),\cF)\rightarrow \hom_{\ldaeff}(M(Y'\rightarrow X'),\cF)
\]
is an isomorphism for any strictly $\boxx$-invariant complex of dividing Nisnevich sheaf $\cF$. 
For this it suffices to show there is an isomorphism
\begin{equation}
\label{A.5.42.1}\hom_{\Deri(\Shvlogtkl)}(a_{dNis}^*\Lambda(Y\rightarrow X),\cF)
\rightarrow 
\hom_{\Deri(\Shvlogtkl)}(a_{dNis}^*\Lambda(Y'\rightarrow X'),\cF).
\end{equation}
This follows from the exactness of \eqref{A.8.13.4}. 
Part (2) follows similarly, using a dividing distinguished square instead of a strict Nisnevich distinguished square.
\vspace{0.1in}

To show (1) in the case of $\ldmeff$, it suffices to show the homomorphism
\[
\hom_{\ldaeff}(M(Y\rightarrow X),\cF)
\rightarrow 
\hom_{\ldaeff}(M(Y'\rightarrow X'),\cF)
\]
is an isomorphism for any strictly $\boxx$-invariant complex of dividing Nisnevich sheaf $\cF$ with log transfers. 
This follows from Proposition \ref{A.5.13} since \eqref{A.5.42.1} is an isomorphism. 
We omit the proof of (2) because it is similar to (1).
\end{proof}

\begin{exm}
\label{A.5.43}
The commutative square
\begin{equation}
\label{A.5.43.1}
\begin{tikzcd}
\A_\N\arrow[r]\arrow[d]&(\P^1,0)\cong \boxx\arrow[d]\\
\A^1\arrow[r]&\P^1
\end{tikzcd}
\end{equation}
is a strict Nisnevich distinguished square, so that the induced morphism
\[
M(\A_\N\rightarrow \A^1)\rightarrow M(\boxx\rightarrow \P^1)\cong \Lambda(1)[2]
\]
is an isomorphism in $\ldmeff$.
This is analogous to the isomorphism 
\[
M(\A^1/\mathbb{G}_m)\rightarrow M(\P^1/\A^1)\cong \Lambda(1)[2]
\]
in $\dmeff$ and $\daeff$.
Indeed, applying $\omega$ to the square \eqref{A.5.43.1} we retain the Nisnevich distinguished square
\[
\begin{tikzcd}
\mathbb{G}_m\arrow[d]\arrow[r]&\P^1-\{0\}\arrow[d]
\\
\A^1\arrow[r]&\P^1
\end{tikzcd}
\]
for the standard covering of the projective line.
However, in the log situation, 
we note that $\Zltr(\A_\N)$ is not a subsheaf of $\Zltr(\A^1)$.
\end{exm}

\begin{df}\label{A.5.66}
Recall that $div$ is the class of morphisms in $lSm/k$ consisting of log modifications $Y\rightarrow X$ in $lSm/k$.
We denote by $(\boxx,div)$ the class of morphisms in $lSm/k$ consisting of all the projections $X\times \boxx\rightarrow X$, where $X\in lSm/k$, and the log modifications $Y\rightarrow X$ in $lSm/k$.
\end{df}

\begin{exm}
\label{A.4.28}
A useful property of $\A^1$-invariant presheaf $\cF$ with transfers is that for any Zariski cover $U_1$ and $U_2$ of an open subscheme $U$ of $\A^1$, 
there is a split exact sequence
\begin{equation}
\label{A.4.28.1}
0\rightarrow \cF(U)\rightarrow \cF(U_1)\oplus \cF(U_2)\rightarrow \cF(U_1\cap U_2)\rightarrow 0, 
\end{equation}
see \cite[Lemma 22.4]{MVW}.
This is no longer true for $\boxx$-invariant presheaves with log transfers.
In what follows, we check that $\Zltr(\A^1)$ provides a counterexample.
\vspace{0.1in}

For $X\in lSm/k$, we have that
\[
\Zltr(\A^1)(X)=\Cor(\underline{X},\A^1).
\]
Let $V$ be an elementary correspondence from $\underline{X}\times \P^1$ to $\A^1$.
For any point $x$ of $\underline{X}$, consider the restriction $V_x$ of $V$ to $\{x\}\times \P^1$.
Since $V_x$ is finite over $\{x\}\times \P^1$, $V_x$ is proper over $k$.
Thus the image of $V'$ in $\A^1$ is again proper, so the image should be a union of points.
Let $y$ be a rational point of $\P^1$, 
let $V'$ (resp.\ $V_x'$) be the restriction of $V$ to $\underline{X}\times \{y\}$ (resp.\ $\{x\}\times \{y\}$) and let 
$p:\underline{X}\times \P^1\rightarrow \underline{X}$ and $p_x:\{x\}\times \P^1\rightarrow \{x\}$ be the projections.
From the above we have $p_x^*V_x'=V_x$ and hence $p^*V'=V$.
Thus the morphism
\[
p^*:\Zltr(\A^1)(X)\rightarrow \Zltr(\A^1)(X\times \P^1)
\]
is surjective.
Since $p$ has a section, it follows that $p^*$ is injective.
So far we have observed that $\Zltr(\A^1)$ is $\boxx$-invariant.
\vspace{0.1in}

Next, we turn to the split exactness of \eqref{A.4.28.1}. 
Set $U:=\A^1=\Spec{k[x]}$, $U_1:=\A^1-\{0\}$, and $U_2:=\A^1-\{1\}$.
Then there is an element
\[
[1/x+1/(x-1)]\in \Zltr(\A^1)(U_1\cap U_2),
\]
where for $f:U_1\cap U_2\rightarrow \A^1$, we let $[f]$ denotes the associated correspondence.
On the other hand,
the closure of $[1/x+1/(x-1)]$ in $U_j\times \A^1$ is not surjective over $U_j$, i.e., it does not define an elementary log correspondence in $\lCor(U_j,\A^1)$ for $j=1,2$.
It follows that the homomorphism
\[
\Zltr(U_1)\oplus \Zltr(U_2)\rightarrow \Zltr(U_1\cap U_2)
\]
is not even surjective.
\end{exm}

\begin{rmk}
Let $\cF$ be an $\A^1$-invariant Nisnevich sheaf $\cF$ with transfers.
The spilt exactness of \eqref{A.4.28.1} is used to prove that also the presheaf $H_{Nis}^n(-,\cF)$ is $\A^1$-invariant, 
see e.g., \cite[Theorem 24.1]{MVW}.
Due to Example \ref{A.4.28},
it is unclear whether for any $\boxx$-invariant strict Nisnevich sheaf $\cF$ with transfers, the presheaf $H_{sNis}^n(-,\cF)$ is $\boxx$-invariant.
\end{rmk}

\subsection{Effective \'etale log motives} 
In this subsection we define the \'etale versions of $\ldmeff$ and $\ldaeff$.

\begin{df}
\label{Etalemot.1}
The derived category of effective dividing \'etale log motives 
\[
\ldmeffet
\] \index[notation]{ldmeffet @ $\ldmeffet$}
is the homotopy category of 
\[
\Co(\Shv_{d\acute{e}t}^{\rm ltr}(k,\Lambda))
\] 
with respect to the $\boxx$-local descent model structure. 
The motive of $X\in lSm/k$ is the object 
\[
M(X):=a_{d\acute{e}t}^*\Zltr(X)\in\ldmeffet.
\] 

Similarly, 
we write $\ldaeffet$ \index[notation]{ldaeffet @ $\ldaeffet$} for the homotopy category of $\Co(\Shv_{d\acute{e}t}^{\rm log}(k,\Lambda))$ with respect to the $\boxx$-local descent model structure and $M(X)$ for $a_{d\acute{e}t}^*\Lambda(X)$.
\vspace{0.1in}

The derived category of effective log \'etale log motives
\[
\ldmefflet
\] \index[notation]{ldmefflet @ $\ldmefflet$}
is the homotopy category of $\Co(\Shv_{l\acute{e}t}^{\rm ltr}(k,\Lambda))$ with respect to the $\boxx$-local descent model structure.
The effective log \'etale motive of $X\in lSm/k$ is the object 
\[
M(X):=a_{l\acute{e}t}^*\Zltr(X)\in\ldmefflet.
\] 

Similarly,
we write $\ldaefflet$ \index[notation]{ldaefflet @ $\ldaefflet$} for the homotopy category of $\Co(\Shv_{l\acute{e}t}^{\rm log}(k,\Lambda))$ with respect to the $\boxx$-local descent model structure and $M(X)$ for $a_{l\acute{e}t}^*\Lambda(X)$.
\end{df}

By Proposition \ref{A.8.19}, $\ldmeffet$, $\ldaeffet$, $\ldmefflet$, and $\ldaefflet$ are symmetric monoidal triangulated categories such that for all $X,Y\in lSm/k$,
\[
M(X)\otimes M(Y)=M(X\times Y).
\]
\vspace{0.1in}

For the descent model structures, 
there are Quillen adjunctions 
\[
a_{d\acute{e}t}^*:\Co(\Shv_{dNis}^{\rm ltr}(k,\Lambda))\rightleftarrows \Co(\Shv_{d\acute{e}t}^{\rm ltr}(k,\Lambda)):a_{d\acute{e}t*}
\]
and 
\[
a_{l\acute{e}t}^*:\Co(\Shv_{d\acute{e}t}^{\rm ltr}(k,\Lambda))\rightleftarrows \Co(\Shv_{l\acute{e}t}^{\rm ltr}(k,\Lambda)):a_{l\acute{e}t*}.
\]
\vspace{0.1in}

Owing to \cite[Theorem 3.3.20(1)]{MR1944041}, this is again a Quillen adjunction for the $\boxx$-local model structures.
By passing to the homotopy categories, we obtain the adjunctions
\[
La_{d\acute{e}t}^*:\ldmeff\rightleftarrows \ldmeffet:Ra_{d\acute{e}t*}
\]
and
\[
La_{l\acute{e}t}^*:\ldmeffet\rightleftarrows \ldmefflet:Ra_{l\acute{e}t*}.
\]

\begin{df}
Let $Y\rightarrow X$ be a morphism in $lSm/k$, and let $\mathscr{X}$ be a simplicial object of $lSm/k$.
As in Definition \ref{A.5.7}, we associate objects 
$$
M(Y\stackrel{f}\rightarrow X)
\text{ and }
M(\mathscr{X})
$$ 
in $\ldmeffet$, $\ldaeffet$, $\ldmefflet$, and $\ldaefflet$.
\end{df}

\begin{df}
\label{Etalemot.2}
A complex of dividing \'etale sheaves $\cF$ is {\it strictly $\boxx$-invariant}\index{invariant!strictly box @ strictly $\boxx$} if for every fs log scheme $X$ log smooth over $k$ and integer $i\in \Z$, 
the projection $X\times \boxx\rightarrow X$ induces an isomorphism on sheaf cohomology groups
\[
\bH_{d\acute{e}t}^i(X,\cF)\rightarrow \bH_{d\acute{e}t}^i(X\times \boxx,\cF).
\]

Similarly, a complex of log \'etale sheaves $\cF$ is {\it strictly $\boxx$-invariant} if for every fs log scheme $X$ log smooth over $k$ and integer $i\in \Z$, 
the projection $X\times \boxx\rightarrow X$ induces an isomorphism on sheaf cohomology groups
\[
\bH_{l\acute{e}t}^i(X,\cF)\rightarrow \bH_{l\acute{e}t}^i(X\times \boxx,\cF).
\]
\end{df}

\begin{prop}
\label{Etalemot.3}
Let $\cF$ be a strictly $\boxx$-invariant complex of dividing \'etale (resp.\ log \'etale) sheaves with log transfers. 
Then,
for every fs log scheme $X$ log smooth over $k$ and an integer 
$i\in \Z$, there is an isomorphism
\[
\hom_{\ldmeffet}(M(X),\cF)
\cong 
\bH_{d\acute{e}t}^i(X,\cF)
\]
\[
\text{(resp.\ }
\hom_{\ldmefflet}(M(X),\cF)
\cong 
\bH_{l\acute{e}t}^i(X,\cF)
\text{)}.
\]
\end{prop}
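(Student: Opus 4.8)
The plan is to run the proof of Proposition \ref{A.5.13} verbatim, substituting the dividing \'etale (resp.\ log \'etale) topology for the dividing Nisnevich topology and invoking the \'etale counterparts of the inputs used there. I treat the case of $\ldmeffet$; the case of $\ldmefflet$ is obtained by replacing $d\acute{e}t$ with $l\acute{e}t$ everywhere, since all the results cited below apply equally to the log \'etale topology.

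First I would reduce to the situation where $\cF$ is a fibrant object of $\Co(\Shv_{d\acute{e}t}^{\rm ltr}(k,\Lambda))$ for the descent model structure of Subsection \ref{Subsection:derivedcategories}, which is harmless. Since $\cF$ is strictly $\boxx$-invariant in the sense of Definition \ref{Etalemot.2}, Proposition \ref{A.8.19}(1) shows that $\cF$ is already fibrant for the $\boxx$-local descent model structure that defines $\ldmeffet$, so that for every $X\in lSm/k$ and $i\in\Z$ there is a canonical identification
\[
\hom_{\ldmeffet}(M(X),\cF[i]) \;\cong\; \hom_{\Deri(\Shv_{d\acute{e}t}^{\rm ltr}(k,\Lambda))}(a_{d\acute{e}t}^*\Zltr(X),\cF[i]).
\]
Next I would apply Proposition \ref{A.8.8} with $t = d\acute{e}t$: its single hypothesis, that $t$ be compatible with log transfers, is exactly the content of Theorem \ref{A.5.23} for the dividing \'etale topology. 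This yields
\[
\hom_{\Deri(\Shv_{d\acute{e}t}^{\rm ltr}(k,\Lambda))}(a_{d\acute{e}t}^*\Zltr(X),\cF[i]) \;\cong\; \bH_{d\acute{e}t}^i(X,\cF),
\]
and concatenating the two displays gives the asserted isomorphism. The same two steps, now with $t = l\acute{e}t$ (again covered by Theorem \ref{A.5.23}), prove the statement for $\ldmefflet$.

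The only ingredient that is not purely formal is the first reduction, namely the identification of the fibrant objects of the $\boxx$-localization of the \'etale descent model structure with the descent-fibrant strictly $\boxx$-invariant complexes. Because the dividing \'etale and log \'etale topologies do not arise from a cd-structure, this cannot be deduced from Voevodsky's boundedness machinery as in the dividing Nisnevich case; instead it rests on the general theory of left Bousfield localization applied to the descent model structures set up in Subsection \ref{subsection:cotcolm} (cf.\ Proposition \ref{A.8.19}). Granting this --- which is implicit in the very definition of $\ldmeffet$ and $\ldmefflet$ --- the remainder of the proof is a formal consequence of Theorem \ref{A.5.23} and Proposition \ref{A.8.8}, mirroring the proof of Proposition \ref{A.5.13}.
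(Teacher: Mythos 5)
Your proposal is correct and is essentially the paper's own argument: the paper's proof of this proposition consists of the single line ``the proof is parallel to that of Proposition~\ref{A.5.13},'' and you have carried out exactly that parallel, with the right substitutions (Theorem~\ref{A.5.23} for compatibility of $d\acute{e}t$ and $l\acute{e}t$ with log transfers, Proposition~\ref{A.8.8}, and Proposition~\ref{A.8.19}(1) for the fibrancy reduction). Your cautionary remark about the absence of a cd-structure is not actually an issue, since Proposition~\ref{A.8.19} is a general statement about descent structures on Grothendieck abelian categories and does not rely on the boundedness machinery.
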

\begin{proof}
The proof is parallel to that of Proposition \ref{A.5.13}.
\end{proof}

\begin{df}
\label{Etalemot.4}
We denote by
\[
\ldmeffetprop
\;
\text{(resp.\ }
\ldmeffletprop
\text{)}
\] \index[notation]{ldmeffetprop @ $\ldmeffetprop$} \index[notation]{ldmeffletprop @ $\ldmeffletprop$}
the smallest triangulated subcategory of $\ldmeffet$ (resp.\ $\ldmefflet$) that is closed under small sums and shifts, 
and contains $M(X)$, where $X\in lSm/k$ and its underlying scheme is proper over $\Spec{k}$.
\end{df}

\begin{rmk}
\label{Etalemot.5}
Suppose $k$ has positive characteristic $p>0$.
In contrast to Theorem \ref{thm::dmeff=ldmeffprop}, we do {\it not} have an equivalence of triangulated categories between 
\[
\ldmeffetpropZ
\text{ and }
\dmeffetZ.
\]

To wit, note that $\Z/p\cong 0$ in $\dmeffetZ$ by \cite[Example 9.16]{MVW}.
Notice that since $\Z\cong M(k)\in \ldmeffetpropZ$, we have $\Z/p\in \ldmeffetpropZ$.
In Example \ref{Nontriviality} we will see that $\Z/p$ is nontrivial in $\ldmeffetpropZ$.
\end{rmk}

\begin{prop}
Suppose that $k$ has finite $\acute{e}t$-cohomological dimension for $\Lambda$-linear coefficients.
Then the effective log motives $M(X)[n]$ indexed by $X\in lSm/k$ and $n\in \Z$ form a set of compact generators for the categories
\[
\ldaeffet,\;\ldmeffet,\; \ldaefflet,\text{ and }\ldmefflet.
\]
\end{prop}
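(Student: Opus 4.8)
The plan is to repeat the proof of Proposition \ref{A.5.33} almost verbatim, replacing each Nisnevich input by the \'etale (resp.\ log \'etale, resp.\ Kummer \'etale) counterpart established earlier in this section. I will write the argument for $\ldmefflet$; the three remaining cases $\ldmeffet$, $\ldaefflet$, $\ldaeffet$ are handled in exactly the same manner, using the dividing \'etale topology in place of the log \'etale one and, in the without-transfers cases, the corresponding sheaf statements without transfers. As in Proposition \ref{A.5.33} there are two points to verify: that each $M(X)[n]$ is compact, and that the family generates.

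First I would treat compactness. Up to quasi-isomorphism the fibrant objects of the $\boxx$-local descent model structure on $\Co(\Shv_{l\acute{e}t}^{\rm ltr}(k,\Lambda))$ are the strictly $\boxx$-invariant complexes of log \'etale sheaves with log transfers, so it suffices to show that for such a set $\{\cF_i\}_{i\in I}$ the natural map
\[
\bigoplus_{i\in I}\hom_{\ldmefflet}(M(X)[n],\cF_i)
\longrightarrow
\hom_{\ldmefflet}\Bigl(M(X)[n],\bigoplus_{i\in I}\cF_i\Bigr)
\]
is an isomorphism. Since $k$ has finite \'etale cohomological dimension for $\Lambda$-linear coefficients, Lemma \ref{A.5.73} applies and shows that $\bigoplus_{i\in I}\cF_i$ is again strictly $\boxx$-invariant, so Proposition \ref{Etalemot.3} identifies both sides with log \'etale hypercohomology:
\[
\hom_{\ldmefflet}\Bigl(M(X)[n],\bigoplus_{i\in I}\cF_i\Bigr)
\cong
\bH_{l\acute{e}t}^{-n}\Bigl(X,\bigoplus_{i\in I}\cF_i\Bigr),
\qquad
\hom_{\ldmefflet}(M(X)[n],\cF_i)
\cong
\bH_{l\acute{e}t}^{-n}(X,\cF_i).
\]
By Corollary \ref{Div.6}, $lSm/k$ has finite $l\acute{e}t$-cohomological dimension for $\Lambda$-linear coefficients, and since the forgetful functor $\gamma^*$ commutes with direct sums, Proposition \ref{bigsmall.5} (together with the comparison between big and small sites in Proposition \ref{bigsmall.2}) shows that $\bH_{l\acute{e}t}^{-n}(X,-)$ carries $\bigoplus_{i\in I}\cF_i$ to $\bigoplus_{i\in I}\bH_{l\acute{e}t}^{-n}(X,\cF_i)$. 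Chaining these isomorphisms gives compactness.

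For generation I would argue as follows. By Proposition \ref{A.8.7} the object $\bigoplus_{X\in lSm/k}a_{l\acute{e}t}^*\Zltr(X)$ is a generator of the Grothendieck abelian category $\Shv_{l\acute{e}t}^{\rm ltr}(k,\Lambda)$, hence the objects $a_{l\acute{e}t}^*\Zltr(X)[n]$ generate $\Deri(\Shv_{l\acute{e}t}^{\rm ltr}(k,\Lambda))$; because $\ldmefflet$ is the homotopy category of a left Bousfield localization of this model category, it is enough to check that an object of $\ldmefflet$ orthogonal to all $M(X)[n]$ vanishes. Concretely, if $\cF$ is a strictly $\boxx$-invariant complex with $\hom_{\ldmefflet}(M(X)[n],\cF)=0$ for every $X\in lSm/k$ and $n\in\Z$, then $\bH_{l\acute{e}t}^{i}(X,\cF)=0$ for all $X$ and $i$ by Proposition \ref{Etalemot.3}, whence $\cF\cong 0$ in $\ldmefflet$.

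The argument carries no genuine obstacle; the one thing requiring care is that all the finite-cohomological-dimension hypotheses line up so that Lemma \ref{A.5.73} and Proposition \ref{bigsmall.5} are both available in the \'etale and log \'etale topologies. This is precisely where the standing assumption on $k$ is used --- directly for Lemma \ref{A.5.73}, and through Corollary \ref{Div.6} for Proposition \ref{bigsmall.5} --- and beyond this bookkeeping the proof is a transcription of that of Proposition \ref{A.5.33}.
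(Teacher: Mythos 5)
Your proposal is correct and follows exactly the route the paper intends: the paper's own proof is simply "argue as in Proposition \ref{A.5.33} by appealing to Corollary \ref{Div.6}," and you have carried out that transcription faithfully, using Lemma \ref{A.5.73}, Proposition \ref{Etalemot.3}, and Proposition \ref{bigsmall.5} in place of their Nisnevich counterparts. The added paragraph on generation is a harmless elaboration of what the paper leaves implicit.
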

\begin{proof}
Argue as in Proposition \ref{A.5.33} by appealing to Corollary \ref{Div.6}.
\end{proof}

\subsection{Construction of motives using $SmlSm/k$}
\label{ssecequivalence}
We may use the results of Section \ref{Subsec::Sheaves.SmlSm} to construct categories of effective log motives for objects in $SmlSm/k$.

\begin{df}
\label{SmlSm.1}
Let 
\[
\ldmeffSmlSm
\] \index[notation]{ldmeffSmlSm @ $\ldmeffSmlSm$}
be the homotopy category of 
\[
\Co(\Shvltrklsm)
\] 
with respect to the $\boxx$-local descent model structure.

Similarly, we let 
\[
\ldmeffetSmlSm
\] \index[notation]{ldmeffetSmlSm @ $\ldmeffetSmlSm$}
be the homotopy category of 
\[
\Co(\Shv_{d\acute{e}t}^{\rm ltr}(SmlSm/k,\Lambda))
\] 
with respect to the $\boxx$-local descent model structure. 
\end{df}

\begin{prop}
\label{SmlSm.2}
There are naturally induced equivalences of triangulated categories
\begin{align*}
\ldmeffSmlSm & \simeq \ldmeff, \\
\ldmeffetSmlSm & \simeq \ldmeffet, \\
\ldmeffletSmlSm & \simeq \ldmefflet.
\end{align*}
\end{prop}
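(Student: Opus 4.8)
The plan is to deduce all three equivalences from the single equivalence of abelian categories
$\iota^*\colon \Shv_{dNis}^{\rm ltr}(k,\Lambda)\xrightarrow{\simeq}\Shv_{dNis}^{\rm ltr}(SmlSm/k,\Lambda)$ of Proposition \ref{A.5.58} (and its $d\acute{e}t$- and $l\acute{e}t$-analogues), whose quasi-inverse is the left adjoint $\iota_\sharp$. Since the three topologies are treated identically, I would write out only $t=dNis$ and indicate the trivial changes for the other two, invoking Proposition \ref{Etalemot.3} wherever the $dNis$ argument uses Proposition \ref{A.5.13}, and Corollary \ref{Div.6} wherever it uses Corollary \ref{Div.5}. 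Throughout I abbreviate the representable objects by $\Zltr^{lSm}(X)$ and $\Zltr^{SmlSm}(X)$ to keep track of the ambient site.

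First I would upgrade $\iota^*$ (equivalently $\iota_\sharp$) to an equivalence of the descent homotopy categories. Being part of an equivalence of abelian categories, $\iota_\sharp$ is exact, hence induces an equivalence $\Co(\Shv_{dNis}^{\rm ltr}(SmlSm/k,\Lambda))\xrightarrow{\simeq}\Co(\Shv_{dNis}^{\rm ltr}(k,\Lambda))$. The key compatibility is that $\iota^*$ matches the generating representables on the two sides: for $X\in SmlSm/k$ one has $\iota^* a_{dNis}^*\Zltr^{lSm}(X)\cong a_{dNis}^*\Zltr^{SmlSm}(X)$ (evaluate on $Y\in SmlSm/k$ and use Proposition \ref{A.5.49} and its $SmlSm$-analogue), while for general $X\in lSm/k$ Proposition \ref{A.3.19} provides a log modification $X'\to X$ with $X'\in SmlSm/k$, and $a_{dNis}^*\Zltr^{lSm}(X')\cong a_{dNis}^*\Zltr^{lSm}(X)$ by Lemma \ref{A.5.51} (equivalently Lemma \ref{A.5.70}), so $\iota^* a_{dNis}^*\Zltr^{lSm}(X)\cong a_{dNis}^*\Zltr^{SmlSm}(X')$. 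Together with the fact that strict Nisnevich distinguished squares in $SmlSm/k$ are strict Nisnevich distinguished squares in $lSm/k$, this shows that $\iota_\sharp$ transports the descent structure of Section \ref{Subsection:derivedcategories} on $\Co(\Shv_{dNis}^{\rm ltr}(SmlSm/k,\Lambda))$ to that on $\Co(\Shv_{dNis}^{\rm ltr}(k,\Lambda))$, hence is a Quillen equivalence for the descent model structures and induces an equivalence $\Deri(\Shv_{dNis}^{\rm ltr}(SmlSm/k,\Lambda))\xrightarrow{\simeq}\Deri(\Shv_{dNis}^{\rm ltr}(k,\Lambda))$.

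Next I would check that $\iota^*$ preserves and reflects $\boxx$-local objects: a complex $\cF$ of dividing Nisnevich sheaves with log transfers on $lSm/k$ is strictly $\boxx$-invariant in the sense of Definition \ref{A.5.12} if and only if $\iota^*\cF$ is. For $X\in SmlSm/k$ there is a natural isomorphism $\bH_{dNis}^i(X,\cF)\cong\bH_{dNis}^i(X,\iota^*\cF)$, since $\bH_{dNis}^i(X,\cF)=\bH_{dNis}^i(X,\gamma^*\cF)$ by Definition \ref{A.8.10}, $\gamma^*$ commutes with restriction along $\iota$ (as in the diagram in the proof of Lemma \ref{A.5.55}), and cohomology of $X$ is computed the same way under the equivalence $\Shv_{dNis}(SmlSm/k,\Lambda)\simeq\Shv_{dNis}^{\rm log}(k,\Lambda)$ of Lemma \ref{A.5.54}; this gives the implication from $\cF$ to $\iota^*\cF$. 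For the converse, let $X\in lSm/k$ be arbitrary, pick a log modification $X'\to X$ with $X'\in SmlSm/k$ via Proposition \ref{A.3.19}, and note that $X'\times\boxx\to X\times\boxx$ is again a log modification and that $X'\times\boxx\in SmlSm/k$ because $\underline{X'}\times\P^1$ is smooth and $\partial X'\times\P^1+\underline{X'}\times\{\infty\}$ is a strict normal crossing divisor on it. Then $\bH_{dNis}^i(X,\cF)\cong\bH_{dNis}^i(X',\cF)$ and $\bH_{dNis}^i(X\times\boxx,\cF)\cong\bH_{dNis}^i(X'\times\boxx,\cF)$ compatibly with the $\boxx$-projections (again by Lemma \ref{A.5.51}), so strict $\boxx$-invariance tested on all of $SmlSm/k$ forces it on all of $lSm/k$.

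Finally, the descent-level equivalence of the second step, being compatible with the $\boxx$-local objects by the third step, restricts to an equivalence of the left Bousfield localizations at $\boxx$, namely of the full subcategories of strictly $\boxx$-invariant complexes, which by Definition \ref{SmlSm.1} and the construction of $\ldmeff$ in Section \ref{subsection:cotcolm} are exactly $\ldmeffSmlSm$ and $\ldmeff$; this yields $\ldmeffSmlSm\simeq\ldmeff$. Running the identical argument in the dividing \'etale and log \'etale topologies (with Proposition \ref{Etalemot.3} and the $d\acute{e}t$-, $l\acute{e}t$-versions of Proposition \ref{A.5.58}, Lemma \ref{A.5.54}, Lemma \ref{A.5.51}) gives $\ldmeffetSmlSm\simeq\ldmeffet$ and $\ldmeffletSmlSm\simeq\ldmefflet$. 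The only part of this that is not routine bookkeeping is the comparison of $\boxx$-localizations in the third step — equivalently, the assertion that enlarging the class of test objects from $SmlSm/k$ to $lSm/k$ does not shrink the class of $\boxx$-local complexes — and that is precisely where the existence of $SmlSm$-valued log modifications (Proposition \ref{A.3.19}) and the invertibility of log modifications in $lCor^{div}/k$ (Lemma \ref{A.5.51}) are indispensable.
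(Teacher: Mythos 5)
Your proposal is correct and follows essentially the same route as the paper: both rest on the equivalence of sheaf categories from Proposition \ref{A.5.58}, pass to the descent model structures, and then to the $\boxx$-local localizations. The paper compresses your second and third steps into a citation of the invariance of descent model structures under equivalences together with \cite[Theorem 3.3.20(1)]{MR1944041}; your explicit verification that the two classes of $\boxx$-projections (indexed by $SmlSm/k$ versus $lSm/k$) generate the same localization, via Proposition \ref{A.3.19} and Lemma \ref{A.5.70}, is precisely the point the paper leaves implicit, and it is a welcome addition rather than a deviation.
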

\begin{proof}
We will only consider the dividing Nisnevich topology since the proofs are similar.
Owing to Proposition \ref{A.5.58} there is an equivalence of categories
\[
\Shvltrklsm
\simeq 
\Shvltrkl.
\]

Let us consider the descent model structures on the categories
\[
\Co(\Shvltrklsm)
\]
and
\[
\Co(\Shvltrkl).
\]
Since the descent model structures are invariant under equivalences of categories, 
there is a Quillen equivalence for the descent model structures
\[
\Co(\Shvltrklsm)\rightleftarrows \Co(\Shvltrkl).
\]
Owing to \cite[Theorem 3.3.20(1)]{MR1944041} this becomes a Quillen equivalence if we impose the $\boxx$-local descent model structure.
\end{proof}

\begin{const}
\label{SmlSm.3}
Suppose $\cF$ is a complex of dividing Nisnevich sheaves with log transfers on $SmlSm/k$.
Recall that $X_{div}^{Sm}$ denotes the category of log modifications $Y\rightarrow X$, 
where $Y\in SmlSm/k$.
Owing to Lemma \ref{A.5.59}, we can naturally extend $\cF$ to a complex $i_\sharp \cF$ on $lSm/k$ via the formula
\[
\iota_\sharp \cF(X)
:=
\colimit_{Y\in X_{div}^{Sm}}\cF(Y).
\]
By Lemma \ref{A.5.54} there is an equivalence of categories
\[
\Shv_{dNis}(SmlSm/k,\Lambda)\simeq \Shvlogkl.
\]
This implies that for every $X\in SmlSm/k$ and integer $i\in \Z$ we have an isomorphism of cohomology groups
\begin{equation}
\label{SmlSm.3.1}
H_{dNis}^i(X,\cF)\cong H_{dNis}^i(X,\iota_\sharp \cF).
\end{equation}
\vspace{0.1in}

If $\cF$ is strict $\boxx$-invariant, 
i.e., 
for every $X\in SmlSm/k$
\[
H_{dNis}^i(X,\cF)\cong H_{dNis}^i(X\times \boxx,\cF),
\]
\eqref{SmlSm.3.1} shows $\iota_\sharp \cF$ is strictly $\boxx$-invariant too.
Owing to Proposition \ref{A.5.13}, for every $X\in SmlSm/k$ and integer $i\in \Z$, we obtain an isomorphism
\begin{equation}
\label{SmlSm.3.2}
\hom_{\ldmeff}(M(X),\iota_\sharp \cF[i])\cong H_{dNis}^i(X,\cF).
\end{equation}
\vspace{0.1in}

If $\cF$ is a complex of dividing \'etale (resp.\ log \'etale) sheaves with log transfers on $SmlSm/k$, then we can argue similarly to conclude the isomorphisms
\begin{equation}
\label{SmlSm.3.3}
\hom_{\ldmeffet}(M(X),\iota_\sharp \cF[i]) \cong H_{d\acute{e}t}^i(X,\cF)
\end{equation}
and
\begin{equation}
\label{SmlSm.3.4}
\hom_{\ldmefflet}(M(X),\iota_\sharp \cF[i])\cong H_{l\acute{e}t}^i(X,\cF).
\end{equation}
\end{const}
\newpage

\section{Calculus of fractions and homotopy theory of presheaves}
\label{section:calculus}

Throughout this section, $T$ is a small category with finite limits.
Let $\pt$ denote the final object in $T$. 
Suppose $\mathcal{A}$ is a set of morphisms in $T$ that admits a calculus of right fractions.
We let $I$ be an object of $T$ admitting morphisms
\[
i_0,i_1:{\rm pt}\rightarrow I,\;\; p:I\rightarrow {\rm pt}
\]
in $T$ and a morphism
\[
\mu:I\times I\rightarrow I
\]
in $T[\mathcal{A}^{-1}]$ satisfying
\[
\mu(i_0\times {\rm id})=\mu({\rm id}\times i_0)=i_0p,
\]
\[
\mu(i_1\times {\rm id})=\mu({\rm id}\times i_1)={\rm id}.
\]
Moreover, 
we assume 
\[
i_0\amalg i_1:{\rm pt}\amalg {\rm pt}\rightarrow I
\] 
is a monomorphism. 
Note that $I$ is an interval of $T[\mathcal{A}^{-1}]$ in the sense of \cite[\S 2.3]{MV}.
\vspace{0.1in}

Recall that $\mathcal{A}$ admits a \emph{calculus of right fractions}\index{calculus of right fractions} if the following conditions holds \cite[dual of I.2.2]{GZFractions}.

\begin{enumerate}
\item[(i)] The set $\mathcal{A}$ contains the isomorphisms and is closed under composition.
\item[(ii)] (Right Ore condition)\index{right Ore condition} For any diagram $Y\stackrel{f}\rightarrow X\stackrel{g}\leftarrow X'$ in $\cC$ with $f\in \cA$, there is a commutative diagram
\[
\begin{tikzcd}
Y'\arrow[d,"f'"']\arrow[r]&Y\arrow[d,"f"]\\
X'\arrow[r,"g"]&X
\end{tikzcd}
\]
in $\cC$ such that $f'\in \cA$.
\item[(iii)] (Right cancellability condition)\index{right cancellability condition} For any diagram
\[
\begin{tikzcd}
Z\arrow[r,shift left=0.5ex,"f"]\arrow[r,shift right=0.5ex,"g"']&Y\arrow[r,"u"]&X
\end{tikzcd}
\]
in $\cC$ with $u\in \cA$ and $u\circ f=u\circ g$, there is a morphism $v:W\rightarrow Z$ in $\cA$ such that $f\circ v=g\circ v$.
\end{enumerate}
\vspace{0.1in}

In \cite[\S 2.3]{MV}, Morel-Voevodsky constructs the singular functor ${\rm Sing}$ for an interval $I$ of $T$. 
Since our assumption is that $I$ is an interval of $T[\cA^{-1}]$, 
instead of $T$, 
we cannot apply the same arguments verbatim. 
For example, 
in the proof of \cite[Corollary 3.5, p.\ 89]{MV} it is essential that $\mu\in T$ when dealing with the multiplication map $\mu:I\times I\rightarrow I$.
In this section, 
we construct a singular functor ${\rm Sing}$ for $\mu$ in $T[\cA^{-1}]$.
We argue that there is another way of constructing ${\rm Sing}$ in \cite{Conservativity} without assuming the existence of $\mu$. 
The style of this construction is different from that in \cite[\S 2.3]{MV}.
See \cite[Section 4.5]{AdditiveHomotopy} for a way of constructing the Sing functor obtained by 
forming a weak interval object \cite[Definition 2.19]{AdditiveHomotopy} in the category of simplicial presheaves on $T$.

\subsection{Properties of localization functors}

\begin{df}
Let $\sPsh(T)$ denote the category of simplicial presheaves on $T$. 
A morphism $f:F\rightarrow G$ of simplicial presheaves is called 
\begin{enumerate}[(i)]
\item a {\it pointwise weak equivalence}\index{equivalence!pointwise weak} if $F(X)\rightarrow G(X)$ is a weak equivalence of simplicial sets for any object $X$ of $T$.
\item an {\it injective cofibration} if $F(X)\rightarrow G(X)$ is a cofibration of simplicial sets for any object $X$ of $T$.
\item an {\it injective fibration} if it has the right lifting property with respect to injective cofibrations that are pointwise weak equivalences.
\item a {\it projective fibration} if $F(X)\rightarrow G(X)$ is a fibration of simplicial sets for any object $X$ of $T$.
\item a {\it projective cofibration} if it has the left lifting property with respect to projective fibrations that are pointwise weak equivalences.
\end{enumerate}
  
Recall that the model structure on $\sPsh(T)$ with the pointwise weak equivalences, injective cofibrations, and injective fibrations is called the {\it injective model structure}\index{model structure!injective} (see \cite[Theorem 5.8]{JardineLocal}). 
The model structure on $\sPsh(T)$ with the pointwise weak equivalences, projective cofibrations, and projective fibrations is called the {\it projective model structure}\index{model structure!projective} (see \cite[\S 5.5]{JardineLocal}).

\end{df}
\begin{df}
\label{A.1.3}
Associated to the localization functor
\[
v:T\rightarrow T[\mathcal{A}^{-1}]
\]
we let 
\[
v^*:\sPsh(T[\mathcal{A}^{-1}])\rightarrow \sPsh(T)
\]
denote the functor given by 
\[
v^*F(X)
:=
F(X).
\] 
Here $F$ is a simplicial presheaf on $T[\mathcal{A}^{-1}]$ and $X$ is an object of $T$. 
\vspace{0.1in}

By \cite[Proposition I.5.1]{SGA4}, there exist adjoint functors 
\[
\begin{tikzcd}
\sPsh(T)\arrow[rr,shift left=1.5ex,"v_\sharp "]\arrow[rr,"v^*" description,leftarrow]\arrow[rr,shift right=1.5ex,"v_*"']&&\sPsh(T[\mathcal{A}^{-1}]),
\end{tikzcd}
\]
where $v_\sharp$ is left adjoint to $v^*$, and $v^*$ is left adjoint to $v_*$.
\end{df}

\begin{df}
\label{A.5.1}
For any object $X$ of $T$, we denote by $\cA\downarrow X$ the category where an object is a morphism $Y\rightarrow X$ in $\cA$, and a morphism is a commutative diagram
\[
\begin{tikzcd}
Y'\arrow[rr,"g"]\arrow[rd]&&Y\arrow[ld]\\
&X
\end{tikzcd}
\]
in $T$ such that $g\in \cA$. 
\end{df}

Since $\mathcal{A}$ admits a calculus of right fractions, 
by the dual of \cite[I.2.3]{GZFractions}, 
we have the formula
\begin{equation}
\label{A.1.3.1}
\hom_{T[\mathcal{A}^{-1}]}(X,S)
\cong 
\colimit_{Y\in \mathcal{A}\downarrow X}\hom_T(Y,S).
\end{equation}

\begin{prop}
\label{A.1.4}
The functor $v_\sharp $ is given by
\begin{equation}
\label{A.1.4.1}
v_\sharp F(X)\cong \colimit_{Y\in \mathcal{A}\downarrow X} F(Y).
\end{equation}
\end{prop}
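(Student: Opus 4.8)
The plan is to prove \eqref{A.1.4.1} by exhibiting, for each $X\in T$, a natural isomorphism between the presheaf $Z\mapsto \colimit_{Y\in\cA\downarrow X}F(Y)$ (more precisely, its value at $Z$, but recall $v_\sharp$ lands in $\sPsh(T[\cA^{-1}])$, so the formula \eqref{A.1.4.1} is to be read as the value on the object $X$ of $T[\cA^{-1}]$, i.e.\ on $v(X)$) and $v_\sharp F$ evaluated there, and to do so by checking the defining adjunction $v_\sharp\dashv v^*$. The key observation is the standard fact that a left adjoint to a restriction functor along $v\colon T\to T[\cA^{-1}]$ is computed by a left Kan extension, and here the calculus of right fractions makes the relevant comma categories especially simple: by \eqref{A.1.3.1} the category indexing the colimit for $\hom_{T[\cA^{-1}]}(X,-)$ is exactly $\cA\downarrow X$. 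So first I would write down the candidate functor
\[
G\colon \sPsh(T)\to \sPsh(T[\cA^{-1}]),
\qquad
GF(X):=\colimit_{Y\in\cA\downarrow X}F(Y),
\]
and observe that it is functorial in $X\in T[\cA^{-1}]$: a morphism $X\to X'$ in $T[\cA^{-1}]$ is represented by a right fraction, which induces a functor $\cA\downarrow X\to\cA\downarrow X'$ up to the cofinality bookkeeping that the Ore and cancellability conditions guarantee, hence a map on colimits; one checks this is independent of the chosen representative using the right cancellability condition (iii).

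Next I would establish the adjunction $G\dashv v^*$ directly. For $F\in\sPsh(T)$ and $H\in\sPsh(T[\cA^{-1}])$, compute
\[
\hom_{\sPsh(T[\cA^{-1}])}(GF,H)
\cong
\int_{X\in T[\cA^{-1}]}\hom_{\mathbf{sSet}}\!\bigl(\colimit_{Y\in\cA\downarrow X}F(Y),\,H(X)\bigr),
\]
turn the colimit inside the first variable of $\hom$ into a limit, and then use that $\cA\downarrow X$ has $\mathrm{id}_X$ as initial object\,--- wait, it does not in general; instead I would use \eqref{A.1.3.1} in the form that evaluating $H$ (a presheaf on $T[\cA^{-1}]$) at $X$ and restricting along $Y\to X$ in $\cA$ is an isomorphism onto $H(Y)=v^*H(Y)$, since every morphism of $\cA$ becomes invertible in $T[\cA^{-1}]$ and $H$ is a presheaf on that localized category. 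Concretely: for $Y\to X$ in $\cA$, the induced map $H(X)\to H(Y)$ is a bijection. This is the crucial point — it says the diagram $Y\mapsto H(Y)$ over $\cA\downarrow X$ is \emph{constant} with value $H(X)$ up to canonical isomorphism, so restricting a map $GF\to H$ along all $Y\in\cA\downarrow X$ and then along $X$ itself gives a map $F(X)\to v^*H(X)$, naturally in $X\in T$; conversely a map $F\to v^*H$ in $\sPsh(T)$ induces on each $\cA\downarrow X$ a cocone $F(Y)\to v^*H(Y)\cong H(X)$, hence a map $GF(X)\to H(X)$. Chasing the two constructions shows they are mutually inverse and natural, giving $G\dashv v^*$.

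Finally, since left adjoints are unique up to unique natural isomorphism, $G\cong v_\sharp$, which is exactly \eqref{A.1.4.1}. I expect the main obstacle to be the bookkeeping in the previous paragraph: verifying that $G$ is genuinely a functor on $\sPsh(T[\cA^{-1}])$-valued presheaves (i.e.\ that $GF$ descends to the localized category, not merely to $T$) and that the unit/counit satisfy the triangle identities. Both reduce to repeated application of the three axioms of a calculus of right fractions — the Ore condition to produce the comparison functors between comma categories, and the cancellability condition to see these comparisons are well-defined on morphisms of $T[\cA^{-1}]$ — together with the filteredness (or at least cofiltered-indexing) of $\cA\downarrow X$ that $\eqref{A.1.3.1}$ implicitly uses, so that the colimit $\colimit_{Y\in\cA\downarrow X}F(Y)$ is computed objectwise and commutes with the finite limits appearing in the $\hom$-manipulations. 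Alternatively, one can shortcut the verification by invoking the general formula for a left Kan extension $\mathrm{Lan}_v F$ along $v$ and identifying its comma category $v\downarrow X$ with (a category cofinal in) $\cA\downarrow X$ via \eqref{A.1.3.1}; I would mention this as the conceptual reason but carry out the direct adjunction check for self-containedness.
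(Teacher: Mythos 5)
Your proof is correct, but it takes a genuinely different route from the paper. The paper's argument is a two-line reduction: by \cite[I.5.1.1]{SGA4} every simplicial presheaf is a colimit of presheaves of the form $Z\times\Delta[n]$ with $Z\in T$; since $v_\sharp$ is a left adjoint it commutes with colimits, as does the right-hand side of \eqref{A.1.4.1}, so one only needs the formula for $F=Z\times\Delta[n]$, where $v_\sharp F=\hom_{T[\cA^{-1}]}(-,Z)\times\Delta[n]$ and the claim is literally \eqref{A.1.3.1}. You instead define the candidate functor $G$ by the colimit formula and verify the adjunction $G\dashv v^*$ by hand, the pivotal observation being that a presheaf $H$ on $T[\cA^{-1}]$ sends every morphism of $\cA$ to an isomorphism, so the diagram $Y\mapsto H(Y)$ over $\cA\downarrow X$ is canonically constant with value $H(X)$ (you correctly retract the false claim that $\mathrm{id}_X$ is initial). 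Both arguments consume \eqref{A.1.3.1} as the essential input; yours buys self-containedness at the price of the functoriality and triangle-identity bookkeeping you flag (all of which does go through using the Ore and cancellability axioms, e.g.\ the identity $GF(f_Y)\circ\iota_{f_Y}=\iota_{\mathrm{id}_Y}$ needed to see that a map $GF\to H$ is determined by its restriction along identities), while the paper's buys brevity by outsourcing the density theorem and the behaviour of $v_\sharp$ on representables to SGA4.
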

\begin{proof}
By \cite[I.5.1.1]{SGA4}, we reduce to the case when $F=Z\times \Delta[n]$, where $Z\in T$. 
In this case \eqref{A.1.4.1} follows from \eqref{A.1.3.1}.
\end{proof}

\begin{prop}
\label{A.2.1}
The adjunction
\[
v_\sharp :\sPsh(T)\rightleftarrows \sPsh(T[\mathcal{A}^{-1}]):v^*
\]
is a Quillen pair with respect to the projective model structures.
\end{prop}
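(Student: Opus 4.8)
The statement to prove is Proposition \ref{A.2.1}: the adjunction $v_\sharp \colon \sPsh(T)\rightleftarrows \sPsh(T[\mathcal{A}^{-1}])\colon v^*$ is a Quillen pair for the projective model structures. The cleanest route is to verify that the right adjoint $v^*$ preserves fibrations and trivial fibrations; by the usual characterization of Quillen pairs, this suffices. Recall that for the projective model structure on simplicial presheaves, both fibrations and weak equivalences are detected objectwise: a map $G\to G'$ in $\sPsh(T[\mathcal{A}^{-1}])$ is a projective fibration (resp.\ trivial projective fibration) if and only if $G(S)\to G'(S)$ is a Kan fibration (resp.\ trivial Kan fibration) of simplicial sets for every object $S$ of $T[\mathcal{A}^{-1}]$. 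Since $v^*G(X)=G(X)=G(v(X))$ by Definition \ref{A.1.3}, the map $v^*G(X)\to v^*G'(X)$ is just the map $G(v(X))\to G'(v(X))$, which is a Kan fibration (resp.\ trivial Kan fibration) by hypothesis. Hence $v^*$ preserves projective fibrations and trivial projective fibrations, and $(v_\sharp, v^*)$ is a Quillen pair.

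More precisely, I would organize the argument as follows. First, recall (from \cite[\S 5.5]{JardineLocal}) that the projective model structure exists on $\sPsh(\cC)$ for any small category $\cC$, with objectwise weak equivalences and objectwise fibrations; apply this both to $\cC=T$ and to $\cC=T[\mathcal{A}^{-1}]$. Second, observe that $v^*$ is simply precomposition with the localization functor $v\colon T\to T[\mathcal{A}^{-1}]$ on objects, so for any object $X\in T$ and any morphism $f\colon G\to G'$ of simplicial presheaves on $T[\mathcal{A}^{-1}]$, the simplicial set map $(v^*f)(X)$ coincides with $f(v(X))$. Third, conclude: if $f$ is an objectwise fibration (over $T[\mathcal{A}^{-1}]$) then in particular $f(v(X))$ is a Kan fibration for every $X\in T$, so $v^*f$ is an objectwise fibration over $T$, i.e.\ a projective fibration; the same reasoning with "trivial fibration" in place of "fibration" handles the trivial case. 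Finally, invoke the standard criterion (e.g.\ \cite[Remark 1.3.23]{CD12} or any reference on Quillen adjunctions) that a functor which preserves fibrations and trivial fibrations is a right Quillen functor, so its left adjoint $v_\sharp$ is left Quillen.

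This proof is genuinely routine because the projective model structure is defined objectwise and $v^*$ acts on objects by a fixed reindexing; no use of the calculus of right fractions, the interval structure on $I$, or the explicit colimit formula \eqref{A.1.4.1} for $v_\sharp$ is needed. The only mild subtlety — and thus the part deserving a sentence of care — is making sure one uses the \emph{projective} model structure on \emph{both} sides, since the analogous statement with injective model structures would instead proceed by checking that $v^*$ preserves monomorphisms and pointwise weak equivalences (which is also true here, as $v^*$ is exact and objectwise, but is not what the proposition asserts). I do not anticipate a real obstacle; the statement is a formal consequence of the objectwise nature of the projective model structure, and the proof can be written in a few lines.
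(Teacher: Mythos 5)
Your proof is correct, and it takes a genuinely different — and more elementary — route than the paper. The paper's argument does not verify the lifting/preservation conditions directly: it first observes that the localization functor $v\colon T\to T[\mathcal{A}^{-1}]$ commutes with finite limits (using the dual of the standard result on calculus of right fractions), concludes that $v$ is a morphism of sites for the trivial topologies, and then cites a general theorem of Dugger--Hollander--Isaksen to the effect that a morphism of sites induces a Quillen pair on (local) projective model structures. Your argument bypasses all of this: since projective fibrations and trivial fibrations are detected objectwise and $v^*$ is precomposition with $v$ on objects, the right adjoint visibly preserves both classes, so $(v_\sharp, v^*)$ is Quillen. What your approach buys is generality and transparency — it shows the statement holds for \emph{any} functor between small categories, with no appeal to the calculus of fractions or to the finite-limit-preservation of $v$; what the paper's approach buys is uniformity with the site-theoretic machinery used elsewhere (and the finite-limit fact is needed anyway in the neighbouring Propositions \ref{A.2.2} and \ref{A.2.4}). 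Your closing remark correctly identifies the one point of care, namely that the objectwise argument is specific to the projective structure on both sides; the injective case is handled separately in Proposition \ref{A.2.7}.
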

\begin{proof}
The functor $v:T\rightarrow T[\mathcal{A}^{-1}]$ commutes with finite limits by using the dual of \cite[Proposition I.3.1]{GZFractions}. 
Thus $v$ is a morphism of sites for the trivial topologies. 
We are done by \cite[Corollary 8.3]{MR2034012}.
\end{proof}

\begin{prop}
\label{A.2.7}
The functor 
\[
v^*:\sPsh(T[\mathcal{A}^{-1}])\rightarrow \sPsh(T)
\]
preserves pointwise weak equivalences and injective cofibrations. In particular, the adjunction
\[
v^*:\sPsh(T[\mathcal{A}^{-1}])\rightleftarrows \sPsh(T):v_*
\]
is a Quillen pair with respect to the injective model structures.
\end{prop}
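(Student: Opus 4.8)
\textbf{Proof proposal for Proposition \ref{A.2.7}.}

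The plan is to check directly that $v^*$ preserves the two classes of maps that, by definition, characterize cofibrations and acyclic cofibrations in the injective model structure, and then invoke the standard criterion for a Quillen pair. First I would recall that for a simplicial presheaf $F$ on $T[\mathcal{A}^{-1}]$ and an object $X$ of $T$ one has $v^*F(X) = F(v(X)) = F(X)$ by the very definition of $v^*$ in Definition \ref{A.1.3}; the functor $v$ is the identity on objects, so evaluation of $v^*F$ at $X$ is literally evaluation of $F$ at the corresponding object of $T[\mathcal{A}^{-1}]$. Consequently, if $f\colon F\to G$ is a pointwise weak equivalence in $\sPsh(T[\mathcal{A}^{-1}])$, meaning $F(Z)\to G(Z)$ is a weak equivalence of simplicial sets for every object $Z$ of $T[\mathcal{A}^{-1}]$, then in particular $v^*F(X)\to v^*G(X)$ is a weak equivalence for every $X\in T$, since every object of $T[\mathcal{A}^{-1}]$ is of the form $v(X)$. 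The same verbatim argument applies with ``weak equivalence of simplicial sets'' replaced by ``cofibration of simplicial sets'', giving that $v^*$ sends injective cofibrations to injective cofibrations.

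Next I would assemble these two observations into the Quillen pair statement. Since $v^*$ preserves injective cofibrations and preserves pointwise weak equivalences, it in particular preserves those injective cofibrations that are pointwise weak equivalences, i.e.\ the acyclic cofibrations of the injective model structure on $\sPsh(T[\mathcal{A}^{-1}])$. A left adjoint that preserves cofibrations and acyclic cofibrations is a left Quillen functor by the standard criterion (see \cite[Theorem 5.8]{JardineLocal} for the existence and basic properties of the injective model structure, and the usual characterization of Quillen adjunctions). Here the relevant adjunction is $v^*\dashv v_*$, with $v^*$ the left adjoint, provided by \cite[Proposition I.5.1]{SGA4} as recorded in Definition \ref{A.1.3}; hence $(v^*, v_*)$ is a Quillen pair with respect to the injective model structures.

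There is essentially no obstacle here: the content is entirely formal once one unwinds that $v$ is bijective on objects, so that ``pointwise'' for presheaves on $T[\mathcal{A}^{-1}]$ restricts along $v^*$ to exactly ``pointwise'' for presheaves on $T$. The only point that deserves a sentence of care is that both the injective cofibrations and the injective acyclic cofibrations are defined levelwise (unlike injective fibrations, which are defined by a lifting property); this is precisely why $v^*$, which acts by restriction along the identity-on-objects functor $v$, handles them so easily, and it is also why one should phrase the conclusion in terms of the left Quillen property of $v^*$ rather than trying to say something about $v_*$ and fibrations directly. I would keep the write-up to a few lines, citing Definition \ref{A.1.3} for the adjunction and for the formula $v^*F(X)=F(X)$, and \cite[Theorem 5.8]{JardineLocal} for the injective model structure.
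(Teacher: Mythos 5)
Your proposal is correct and matches the paper's own argument: the paper likewise observes that pointwise weak equivalences and injective cofibrations are defined objectwise and that $v$ is essentially surjective (indeed bijective on objects), so $v^*$ preserves both classes and is therefore left Quillen. Your write-up just spells out in more detail what the paper compresses into one line.
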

\begin{proof}
Pointwise weak equivalences and injective cofibrations are defined objectwise, so the claim follows from the fact that $v:T\rightarrow T[\mathcal{A}^{-1}]$ is essentially surjective (\cite[1.1]{GZFractions}).
\end{proof}

\begin{df}
For simplicial presheaves $F$ and $G$ on $T$, let ${\mathbf S}(F,G)$ denote the simplicial enrichment (or internal hom), which is given by the simplicial set 
\[
{\mathbf S}(F,G)_n:=\hom_{\sPsh(T)}(F\times \Delta[n],G).
\]
\end{df}

\begin{df}
\label{A.2.3}
Let $\mathcal{A}$ be a set of morphisms in $T$. 
\begin{enumerate}[(1)]
\item 
A simplicial presheaf $F$ on $T$ is {\it $\mathcal{A}$-invariant}\index{invariant!A@$\cA$} if the morphism 
\[
F(X\times Z)\rightarrow F(Y\times Z)
\] 
is a pointwise weak equivalence for any morphism $Y\rightarrow X$ in $\mathcal{A}$ and any $Z\in T$.
\item 
A morphism $f:F\rightarrow G$ of simplicial presheaves on $T$ is a {\it $\mathcal{A}$-weak equivalence}\index{equivalence!A-weak@$\cA$-weak} if for any $\mathcal{A}$-invariant simplicial presheaf $H$ on $T$, 
$f$ induces a weak equivalence of simplicial sets 
\[
{\mathbf S}(G,H)\rightarrow {\mathbf S}(F,H).
\]
\item 
A morphism $f:F\rightarrow G$ of simplicial presheaves on $T$ is an {\it injective $\mathcal{A}$-fibration} if it has the right lifting property with respect to injective cofibrations that are $\mathcal{A}$-weak equivalences.
\end{enumerate}
  
Recall that the left Bousfield localization of the injective model structure on $\sPsh(T)$ by $\mathcal{A}$-weak equivalences is called the {\it injective $\mathcal{A}$-local model structure}, 
which exists by \cite[Theorem 2.5, p.\ 71]{MV}.
Note that the classes of $\mathcal{A}$-weak equivalences, injective cofibrations, and injective $\mathcal{A}$-fibrations form the injective $\mathcal{A}$-local model structure.

\end{df}
\begin{prop}
\label{A.2.2}
The functor $v_\sharp :\sPsh(T)\rightarrow \sPsh(T[\mathcal{A}^{-1}])$ maps $\mathcal{A}$-weak equivalences to pointwise weak equivalences.
\end{prop}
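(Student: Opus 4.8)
The statement asserts that $v_\sharp\colon \sPsh(T)\to \sPsh(T[\mathcal{A}^{-1}])$ sends $\mathcal{A}$-weak equivalences to pointwise weak equivalences. The plan is to combine the left Quillen property of $v_\sharp$ with respect to the projective model structures (Proposition \ref{A.2.1}) with a characterization of $\mathcal{A}$-local objects after transporting along $v^*$. First I would recall that a morphism $f\colon F\to G$ in $\sPsh(T)$ is an $\mathcal{A}$-weak equivalence precisely when, for every $\mathcal{A}$-invariant $H$, the induced map $\mathbf{S}(G,H)\to \mathbf{S}(F,H)$ is a weak equivalence of simplicial sets; equivalently, $f$ is a weak equivalence in the injective $\mathcal{A}$-local model structure on $\sPsh(T)$ (Definition \ref{A.2.3}). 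Since $v_\sharp$ is a left Quillen functor for the projective model structures (Proposition \ref{A.2.1}), and left Quillen functors preserve weak equivalences between cofibrant objects by Ken Brown's lemma, the core task is to reduce to the cofibrant case and to identify which weak equivalences in the source get detected.

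The key steps, in order, would be: (1) Observe that it suffices to treat $\mathcal{A}$-weak equivalences between projectively cofibrant simplicial presheaves. Indeed, any $\mathcal{A}$-weak equivalence $f\colon F\to G$ fits into a square with projectively cofibrant replacements $QF\to F$, $QG\to G$ (pointwise weak equivalences, hence $\mathcal{A}$-weak equivalences), giving an $\mathcal{A}$-weak equivalence $Qf\colon QF\to QG$; since $v_\sharp$ preserves pointwise weak equivalences out of cofibrant objects (again Ken Brown, as $v_\sharp$ is left Quillen), if we know $v_\sharp Qf$ is a pointwise weak equivalence then so is $v_\sharp f$. (2) For projectively cofibrant $F,G$, use the generating description: an $\mathcal{A}$-weak equivalence is, up to retracts and pushouts and transfinite composition, built from the "elementary" $\mathcal{A}$-weak equivalences, namely the maps $Y\times Z\times \Delta[n]\to X\times Z\times \Delta[n]$ for $Y\to X$ in $\mathcal{A}$. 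Concretely, $v_\sharp$ sends $Y\times Z$ to $v(Y)\times v(Z)$, and since $v(Y)\to v(X)$ is an \emph{isomorphism} in $T[\mathcal{A}^{-1}]$, the image $v_\sharp(Y\times Z\to X\times Z)$ is an isomorphism of representable presheaves, in particular a pointwise weak equivalence. (3) Since $v_\sharp$ is a left adjoint it preserves colimits, hence pushouts and transfinite compositions; and it preserves retracts. The class of pointwise weak equivalences in $\sPsh(T[\mathcal{A}^{-1}])$ that are also projective cofibrations is closed under pushout, transfinite composition, and retract. Therefore the image under $v_\sharp$ of any map in the saturation of the elementary $\mathcal{A}$-weak equivalences is a (trivial projective cofibration, in particular a) pointwise weak equivalence. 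Combining (1)–(3) yields the claim.

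The main obstacle I anticipate is step (2): pinning down precisely that every $\mathcal{A}$-weak equivalence between cofibrant objects lies in the weakly saturated class generated by the elementary maps $\{Y\times Z\times \Delta[n]\to X\times Z\times\Delta[n]\}$ together with trivial projective cofibrations. This is the standard description of the trivial cofibrations in a left Bousfield localization (here the injective $\mathcal{A}$-local structure of \cite[Theorem 2.5]{MV}, or its projective variant), but one must be a little careful about which model structure's cofibrations one localizes and about the interaction between injective cofibrations (used in Definition \ref{A.2.3}) and projective cofibrations (used for $v_\sharp$ to be left Quillen). The clean way around this is to note that a map which is an $\mathcal{A}$-weak equivalence and a cofibration in \emph{either} structure has the relevant lifting/saturation description, and that every $\mathcal{A}$-weak equivalence factors as such a cofibration followed by a pointwise trivial fibration; the pointwise trivial fibration part is handled by Proposition \ref{A.2.1} plus Ken Brown, and the cofibration part by the saturation argument. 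Once this bookkeeping is in place, the rest is formal from the fact that $v$ inverts $\mathcal{A}$ and $v_\sharp$ preserves colimits and representables.
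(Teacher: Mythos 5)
Your overall strategy (reduce to the elementary maps $Y\times Z\to X\times Z$, which $v_\sharp$ sends to isomorphisms, and then saturate) is the same as the paper's, but two of your reduction steps have genuine gaps, and the paper's proof is organized precisely to avoid them.

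First, your step (1) and your treatment of the trivial-fibration factor both invoke Ken Brown's lemma to claim that $v_\sharp$ preserves the pointwise weak equivalences you need. Ken Brown only gives preservation of weak equivalences \emph{between cofibrant objects}, whereas the comparison maps $QF\to F$, $QG\to G$ and the pointwise trivial fibrations in your factorization have arbitrary (non-cofibrant) targets. So the left Quillen property of Proposition \ref{A.2.1} does not by itself yield what you need. The paper supplies the missing ingredient non-formally: by the explicit formula $v_\sharp F(X)\cong \colimit_{Y\in\mathcal{A}\downarrow X}F(Y)$ of Proposition \ref{A.1.4}, and because $\mathcal{A}\downarrow X$ is cofiltered (this is where the calculus of right fractions enters), $v_\sharp$ is computed by a filtered colimit, and filtered colimits preserve pointwise weak equivalences \cite[Proposition 7.3]{Dug}. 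Hence $v_\sharp$ preserves \emph{all} pointwise weak equivalences, with no cofibrancy hypothesis. Without this, your reduction to cofibrant objects does not close.

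Second, your step (2) asserts that every $\mathcal{A}$-weak equivalence which is a cofibration lies in the weak saturation of the elementary maps together with the ordinary trivial cofibrations. This is not the correct description of the trivial cofibrations of a left Bousfield localization: the generating set there is an augmented set of ``$S$-horns'' produced by a cardinality argument (consisting of all sufficiently small subcomplex inclusions that are local equivalences), not the naive set you write down, and you cannot identify the weak saturation so explicitly. This is exactly the bookkeeping you flag as ``the main obstacle,'' and the clean way out is the one the paper takes: apply \cite[Proposition 4.2.74]{Ayo07}, which says that a class $\mathcal{C}$ containing the pointwise weak equivalences and the maps $u\times\mathrm{id}$ for $u\in\mathcal{A}$, satisfying 2-out-of-3, and whose projective cofibrations are stable under pushouts and transfinite compositions, contains all $\mathcal{A}$-weak equivalences. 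The 2-out-of-3 hypothesis and the requirement that $\mathcal{C}$ contain \emph{all} pointwise weak equivalences are precisely what substitute for an explicit generating set. If you replace your steps (1)--(3) with a verification of these hypotheses --- using the filtered-colimit formula for the pointwise weak equivalences, the fact that $v$ preserves finite limits for the maps $u\times\mathrm{id}$, and Proposition \ref{A.2.1} for the closure properties --- your argument becomes the paper's.
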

\begin{proof}
Let $\mathcal{C}$ be the class of morphisms $f:F\rightarrow G$ of simplicial presheaves on $T$ such that $v_\sharp f$ is a pointwise weak equivalence. 
By \cite[Proposition 4.2.74]{Ayo07}, it suffices to check the following conditions.
\begin{enumerate}[(i)]
\item 
$\mathcal{C}$ contains pointwise weak equivalences and morphisms of the form
\[
u\times {\rm id}:Y\times F\rightarrow X\times F,
\]
where $F$ is a simplicial presheaf on $T$, and $u:Y\rightarrow X$ is a morphism in $\mathcal{A}$.
\item $\mathcal{C}$ has the 2-out-of-3 property.
\item The class of projective cofibrations in $\mathcal{C}$ is stable by pushouts and transfinite compositions.
\end{enumerate}

By \cite[Proposition I.3.1]{GZFractions} the functor $v$ commutes with finite limits. 
Therefore $v_\sharp (u\times {\rm id})$ is an isomorphism since $v_\sharp u$ is an isomorphism. 
If $f:F\rightarrow G$ is a pointwise weak equivalence, then $F(X)\rightarrow G(X)$ is a weak equivalence for any object $X$ of $T$. 
According to \cite[Proposition 7.3]{Dug}, filtered colimits of pointwise weak equivalences are pointwise weak equivalences. 
Since $\mathcal{A}$ admits a calculus of right fractions, we see that $\mathcal{A}\downarrow X$ is cofiltered. 
From the formula \eqref{A.1.4.1} we deduce that $v_\sharp F(X)\rightarrow v_\sharp G(X)$ is a pointwise weak equivalence.
This verifies condtion (i).
\vspace{0.1in}

The conditions (ii) and (iii) follow from Proposition \ref{A.2.1} and the fact that the class of pointwise weak equivalences and class of projective cofibrations satisfy (ii) and (iii).
\end{proof}

\begin{prop}
\label{A.1.5}
Let $F$ be a simplicial presheaf on $T$. 
If $F$ is $\mathcal{A}$-invariant, then the unit map
\[
F\rightarrow v^*v_\sharp F
\]
is a pointwise weak equivalence.
\end{prop}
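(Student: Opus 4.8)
The plan is to evaluate the unit map at a fixed object $X\in T$ and to recognise it as the canonical cocone map into a filtered colimit all of whose transition maps are weak equivalences. By definition of $v^*$ and by Proposition~\ref{A.1.4}, for every $X\in T$ one has $v^*v_\sharp F(X)=v_\sharp F(X)\cong \colimit_{Y\in \mathcal{A}\downarrow X}F(Y)$, the colimit appearing in \eqref{A.1.4.1}; since $F$ is contravariant, this is a colimit, valued in simplicial sets, of the functor $(\mathcal{A}\downarrow X)^{op}\to \mathbf{sSet}$, $(Y\to X)\mapsto F(Y)$. Because $\mathcal{A}$ admits a calculus of right fractions, $\mathcal{A}\downarrow X$ is cofiltered, as observed in the proof of Proposition~\ref{A.2.2}, so $(\mathcal{A}\downarrow X)^{op}$ is filtered and $v^*v_\sharp F(X)$ is a filtered colimit of simplicial sets. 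Unravelling the $(v_\sharp,v^*)$-adjunction, the component at $X$ of the unit $F\to v^*v_\sharp F$ is exactly the structure map of this colimit attached to the object ${\rm id}_X\in \mathcal{A}\downarrow X$, whose associated term is $F(X)$; note that ${\rm id}_X$ is in fact terminal in $\mathcal{A}\downarrow X$, though this will not be needed.

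Next I would bring in $\mathcal{A}$-invariance. By Definition~\ref{A.5.1} every morphism of $\mathcal{A}\downarrow X$ is prescribed by a morphism $g$ of $\mathcal{A}$, so each transition map in the diagram $(Y\to X)\mapsto F(Y)$ is of the form $F(g)$; taking $Z={\rm pt}$ in Definition~\ref{A.2.3}(1), and using $Y\times {\rm pt}\cong Y$ and $X\times {\rm pt}\cong X$, one sees that $F(g)$ is a weak equivalence of simplicial sets. Thus the filtered diagram defining $v^*v_\sharp F(X)$ has all of its transition maps weak equivalences.

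Finally I would conclude by a standard property of filtered colimits of simplicial sets: homotopy groups (and $\pi_0$) commute with filtered colimits — the ingredient already used, via \cite[Proposition~7.3]{Dug}, in the proof of Proposition~\ref{A.2.2} — so for a filtered diagram all of whose transition maps induce isomorphisms on each $\pi_n$ and on $\pi_0$, the canonical map from any single term to the colimit is again such an isomorphism, hence a weak equivalence. Applying this to the term $F(X)$ shows that $F(X)\to v^*v_\sharp F(X)$ is a weak equivalence; since $X\in T$ was arbitrary, $F\to v^*v_\sharp F$ is a pointwise weak equivalence.

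I do not expect a serious obstacle here; the only point that requires attention is the variance bookkeeping — namely that $\mathcal{A}$-invariance of $F$ feeds the cofiltered category $\mathcal{A}\downarrow X$ into a genuinely filtered colimit of weak equivalences — together with the correct identification of the unit map with the relevant cocone map into that colimit. Once these are in place the conclusion is immediate.
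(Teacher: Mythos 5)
Your proposal is correct and follows essentially the same route as the paper: reduce via Proposition \ref{A.1.4} to the cocone map $F(X)\to \colimit_{Y\in \mathcal{A}\downarrow X}F(Y)$, use that $\mathcal{A}\downarrow X$ is cofiltered and that $\mathcal{A}$-invariance (with $Z=\mathrm{pt}$) makes the relevant maps weak equivalences, and conclude by compatibility of filtered colimits with weak equivalences as in \cite[Proposition 7.3]{Dug}. The only cosmetic difference is that the paper phrases the last step as a filtered colimit of the weak equivalences $F(X)\to F(Y)$ rather than via transition maps and homotopy groups; the content is identical.
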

\begin{proof}
By Proposition \ref{A.1.4}, we need to show that the induced morphism
\begin{equation}
\label{A.1.5.1}
F(X)\rightarrow \colimit_{Y\in \mathcal{A}\downarrow X}F(Y)
\end{equation}
is a pointwise weak equivalence for any $X\in T$. 
Each $F(X)\rightarrow F(Y)$ with $Y\in \mathcal{A}\downarrow X$ is a pointwise weak equivalence since $F$ is $\mathcal{A}$-invariant, 
and the category $\mathcal{A}\downarrow X$ is cofiltered since $\mathcal{A}$ admits a calculus of right fractions. 
Thus \eqref{A.1.5.1} is a pointwise weak equivalence since any filtered colimit of pointwise weak equivalences is again a pointwise weak equivalence by \cite[Proposition 7.3]{Dug}.
\end{proof}

\begin{prop}
\label{A.2.5}
The unit of the adjunction ${\rm id}\rightarrow v^*v_\sharp $ is an $\mathcal{A}$-weak equivalence.
\end{prop}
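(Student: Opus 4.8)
The plan is to show that for every $\mathcal{A}$-invariant simplicial presheaf $H$ on $T$, the unit map $\mathrm{id}\to v^*v_\sharp$ induces a weak equivalence of simplicial sets $\mathbf{S}(v^*v_\sharp F,H)\to\mathbf{S}(F,H)$. The key reduction is that $\mathcal{A}$-invariant simplicial presheaves on $T$ are exactly the objects in the essential image of $v^*$, up to pointwise weak equivalence. More precisely, first I would observe that if $H$ is $\mathcal{A}$-invariant, then by Proposition \ref{A.1.5} the unit $H\to v^*v_\sharp H$ is a pointwise weak equivalence, so $H$ is pointwise weakly equivalent to $v^*G$ for $G:=v_\sharp H$ (and one can arrange $G$ to be injective-fibrant in $\sPsh(T[\mathcal{A}^{-1}])$ without affecting the argument, since $v^*$ preserves pointwise weak equivalences by Proposition \ref{A.2.7}).

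The second step is the adjunction computation. For such $H\simeq v^*G$, the simplicial set $\mathbf{S}(F,H)$ is pointwise weakly equivalent to $\mathbf{S}(F,v^*G)$, and by the simplicial $(v_\sharp,v^*)$-adjunction (which holds levelwise because $v_\sharp(F\times\Delta[n])\cong v_\sharp F\times\Delta[n]$, using that $v$ and hence $v_\sharp$ commutes with the relevant products — this comes from $v$ commuting with finite limits via the dual of \cite[Proposition I.3.1]{GZFractions}) we get a natural isomorphism $\mathbf{S}(F,v^*G)\cong\mathbf{S}(v_\sharp F,G)$. The same computation applied to $v^*v_\sharp F$ in place of $F$ gives $\mathbf{S}(v^*v_\sharp F,v^*G)\cong\mathbf{S}(v_\sharp v^*v_\sharp F,G)$. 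So it suffices to check that the natural map $v_\sharp F\to v_\sharp v^*v_\sharp F$ induced by the unit is a pointwise weak equivalence; but $v_\sharp v^*v_\sharp F\cong v_\sharp F$ via the triangle identity (more precisely, $v_\sharp v^*\cong\mathrm{id}$ on $\sPsh(T[\mathcal{A}^{-1}])$ since $v$ is a localization, so $v_\sharp$ is fully faithful in the appropriate sense and $v_\sharp v^*v_\sharp F\xrightarrow{\sim}v_\sharp F$). Therefore $\mathbf{S}(v^*v_\sharp F,H)\to\mathbf{S}(F,H)$ is a weak equivalence, which is exactly the statement that $F\to v^*v_\sharp F$ is an $\mathcal{A}$-weak equivalence.

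An alternative and perhaps cleaner route, which I would actually carry out, is to invoke the general formalism of left Bousfield localization directly: by Proposition \ref{A.2.2}, $v_\sharp$ sends $\mathcal{A}$-weak equivalences to pointwise weak equivalences, and by Proposition \ref{A.2.7}, $v^*$ sends pointwise weak equivalences to pointwise weak equivalences, hence in particular to $\mathcal{A}$-weak equivalences. Combined with Proposition \ref{A.2.1} (or its injective analogue), the adjunction $(v_\sharp,v^*)$ descends to the injective $\mathcal{A}$-local model structure on $\sPsh(T)$ and the injective model structure on $\sPsh(T[\mathcal{A}^{-1}])$ as a Quillen pair; moreover $v_\sharp$ inverts $\mathcal{A}$-weak equivalences. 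A map $f$ is an $\mathcal{A}$-weak equivalence if and only if $v_\sharp f$ is a pointwise weak equivalence (this requires both directions: one from Proposition \ref{A.2.2}, the converse from the fact that $\mathcal{A}$-invariant presheaves are detected after applying $v^*v_\sharp$). Applying this criterion to $f\colon F\to v^*v_\sharp F$, we must show $v_\sharp F\to v_\sharp v^*v_\sharp F$ is a pointwise weak equivalence, which again follows from $v_\sharp v^*\cong\mathrm{id}$.

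The main obstacle I anticipate is establishing rigorously that $v_\sharp v^*\cong\mathrm{id}$ on $\sPsh(T[\mathcal{A}^{-1}])$, i.e., that $v_\sharp$ is fully faithful. This is where the calculus of right fractions is essential: one needs that for $S\in\sPsh(T[\mathcal{A}^{-1}])$ and $X\in T[\mathcal{A}^{-1}]$, the colimit $\colimit_{Y\in\mathcal{A}\downarrow X'}S(Y)$ (for $X'$ a representative of $X$ in $T$, using the formula of Proposition \ref{A.1.4}) recovers $S(X)$, because $\mathcal{A}\downarrow X'$ has an initial-like behaviour after passing to $T[\mathcal{A}^{-1}]$ — the identity on $X$ is cofinal in the appropriate sense. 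Concretely this is the statement that $v^*$ is fully faithful, which follows from $v$ being a localization functor (so $v^*$, restriction along an essentially surjective functor which is a localization, is fully faithful), together with the formula \eqref{A.1.3.1} for hom-sets in $T[\mathcal{A}^{-1}]$. Once this is in hand, the rest is formal manipulation of adjunctions and does not require further input about the interval structure on $I$.
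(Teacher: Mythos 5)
Both of your routes contain a genuine gap, and neither matches the paper's argument. In your first route, the step that fails is the replacement of $\mathbf{S}(F,H)$ by $\mathbf{S}(F,v^*v_\sharp H)$ along the pointwise weak equivalence $H\rightarrow v^*v_\sharp H$ of Proposition \ref{A.1.5}. The enriched hom $\mathbf{S}(F,-)$ is not homotopy invariant: it sends pointwise weak equivalences between \emph{injective fibrant} targets to weak equivalences, but $H$ is an arbitrary $\mathcal{A}$-invariant presheaf, and arranging $G=v_\sharp H$ to be injective fibrant does not help, because $v^*$ is the \emph{left} Quillen functor in the injective adjunction of Proposition \ref{A.2.7}, so $v^*G$ need not be injective fibrant either. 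Your second route is circular: the direction of the criterion you actually invoke (``if $v_\sharp f$ is a pointwise weak equivalence then $f$ is an $\mathcal{A}$-weak equivalence'') is not Proposition \ref{A.2.2} but its converse, and the natural proof of that converse already requires knowing that the units $F\rightarrow v^*v_\sharp F$ are $\mathcal{A}$-weak equivalences --- which is precisely the statement being proved; your justification (``$\mathcal{A}$-invariant presheaves are detected after applying $v^*v_\sharp$'') presupposes it. Your observation that the counit $v_\sharp v^*\rightarrow \mathrm{id}$ is an isomorphism (full faithfulness of $v^*$, from the universal property of the localization) is correct and does show that $v_\sharp$ applied to the unit is an isomorphism, but by itself this says nothing about the unit being an $\mathcal{A}$-weak equivalence until the detection statement is in place.

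The paper's proof avoids both problems by performing a fibrant replacement on the source rather than on the test objects: choose an injective $\mathcal{A}$-fibrant resolution $f\colon F\rightarrow G$, so that $f$ is an $\mathcal{A}$-weak equivalence and $G$ is $\mathcal{A}$-invariant. By Proposition \ref{A.2.2} the map $v_\sharp f$ is a pointwise weak equivalence, hence so is $v^*v_\sharp f$ by Proposition \ref{A.2.7}; since pointwise weak equivalences are in particular $\mathcal{A}$-weak equivalences, the naturality square of the unit together with two-out-of-three reduces the claim to the map $G\rightarrow v^*v_\sharp G$, which is a pointwise weak equivalence by Proposition \ref{A.1.5} because $G$ is $\mathcal{A}$-invariant. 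No adjunction computation on mapping spaces and no full faithfulness of $v^*$ are needed. If you want to keep your strategy, you must first prove that $v_\sharp$ detects $\mathcal{A}$-weak equivalences by an argument independent of the present proposition; as things stand, that is the missing idea.
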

\begin{proof}
Let $F$ be a simplicial presheaf on $T$ and choose an injective $\mathcal{A}$-fibrant resolution $f:F\rightarrow G$.
Note that $\cG$ is $\cA$-invariant.
Then $v^*v_\sharp f$ is a pointwise weak equivalence according to Propositions \ref{A.2.7} and \ref{A.2.2}. 
Hence to show that $F\rightarrow v^*v_\sharp F$ is an $\mathcal{A}$-weak equivalence, it suffices to show that $G\rightarrow v^*v_\sharp G$ is an $\mathcal{A}$-weak equivalence. 
For the latter, see Proposition \ref{A.1.5}.
\end{proof}
  
\begin{df}
Let $(I,\mathcal{A})$ denote the set of morphisms $\{i_0:\pt\rightarrow I\}\cup \mathcal{A}$ in $T$.
From Definition \ref{A.2.3}(2), we have the notions of $I$-weak equivalences and $(I,\cA)$-weak equivalences.
\end{df}

\begin{prop}
\label{A.2.4}
The functor 
\[
v^*:\sPsh(T[\mathcal{A}^{-1}])\rightarrow \sPsh(T)
\]
maps $I$-weak equivalences to $(I,\mathcal{A})$-weak equivalences.
\end{prop}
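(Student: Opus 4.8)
The plan is to reduce the statement about $v^*$ on $I$-weak equivalences to the already-established Proposition \ref{A.2.5} (that the unit ${\rm id}\to v^*v_\sharp$ is an $\cA$-weak equivalence) together with Proposition \ref{A.2.2} (that $v_\sharp$ sends $\cA$-weak equivalences to pointwise weak equivalences). First I would unwind the definitions: a morphism $f\colon F\to G$ in $\sPsh(T[\cA^{-1}])$ is an $I$-weak equivalence iff ${\mathbf S}(G,H)\to{\mathbf S}(F,H)$ is a weak equivalence for every $I$-invariant $H\in\sPsh(T[\cA^{-1}])$; and $v^*f$ is an $(I,\cA)$-weak equivalence iff ${\mathbf S}(v^*G,K)\to{\mathbf S}(v^*F,K)$ is a weak equivalence for every $(I,\cA)$-invariant $K\in\sPsh(T)$. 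So the crux is to relate $(I,\cA)$-invariant presheaves on $T$ to $I$-invariant presheaves on $T[\cA^{-1}]$.

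The key step is the following: if $K\in\sPsh(T)$ is $(I,\cA)$-invariant, then in particular $K$ is $\cA$-invariant, so by Proposition \ref{A.1.5} the unit $K\to v^*v_\sharp K$ is a pointwise weak equivalence, and I would check that $v_\sharp K$ is $I$-invariant on $T[\cA^{-1}]$ (using that $v_\sharp$ commutes with the relevant finite products, via $v$ commuting with finite limits as in \cite[Proposition I.3.1]{GZFractions}, and that $v_\sharp$ preserves pointwise weak equivalences along $\cA$ by the colimit formula \eqref{A.1.4.1}; here one must also use that $i_0\colon\pt\to I$ already lives in $T$, so $I$-invariance of $v_\sharp K$ on $T[\cA^{-1}]$ follows from $(I,\cA)$-invariance of $K$). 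Then for any $F\in\sPsh(T[\cA^{-1}])$ the adjunction $(v_\sharp,v^*)$ and the simplicial enrichment give a natural identification ${\mathbf S}(v^*F,K)\simeq {\mathbf S}(v^*F,v^*v_\sharp K)$ (up to the pointwise weak equivalence just produced, applied after fibrant replacement) and, via the enriched adjunction together with Proposition \ref{A.2.7}, this is weakly equivalent to ${\mathbf S}(F,v_\sharp K)$ — more carefully, one compares $v^*$ of a simplicial mapping space with the mapping space downstairs using that $v^*$ preserves products with $\Delta[n]$ and that ${\mathbf S}(v^*F,v^*H)\cong{\mathbf S}(F,H)$ when $H$ comes from $v^*$, combined with the derived adjunction $\mathbf{L}v_\sharp\dashv v^*$ of Proposition \ref{A.2.1}.

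Granting this, for an $I$-weak equivalence $f\colon F\to G$ and an $(I,\cA)$-invariant $K$, the map ${\mathbf S}(v^*G,K)\to{\mathbf S}(v^*F,K)$ is, up to weak equivalence, the map ${\mathbf S}(G,v_\sharp K)\to{\mathbf S}(F,v_\sharp K)$ induced by $f$; since $v_\sharp K$ is $I$-invariant and $f$ is an $I$-weak equivalence, the latter is a weak equivalence, hence so is the former, which is exactly what it means for $v^*f$ to be an $(I,\cA)$-weak equivalence. The main obstacle I anticipate is the bookkeeping in the second paragraph: making the comparison ${\mathbf S}(v^*F,K)\simeq {\mathbf S}(F,v_\sharp K)$ precise at the level of simplicial sets rather than just homotopy classes of maps, which requires choosing an injective $\cA$-fibrant (equivalently $(I,\cA)$-fibrant, after also localizing at $i_0$) replacement of $K$ so that ${\mathbf S}(-,K)$ computes the derived mapping space, and then invoking Propositions \ref{A.2.7}, \ref{A.2.2}, \ref{A.2.5} in the right order to see that passing to $v_\sharp K$ does not change the homotopy type of the mapping space. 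Once the fibrant-replacement juggling is set up, each individual step is routine and cites a result already in the excerpt.
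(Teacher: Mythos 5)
Your route is genuinely different from the paper's. The paper does not touch local objects at all: it applies the criterion of \cite[Proposition 4.2.74]{Ayo07} to the class $\mathcal{C}$ of morphisms $f$ with $v^*f$ an $(I,\cA)$-weak equivalence, so that everything reduces to checking that $\mathcal{C}$ contains pointwise weak equivalences (Proposition \ref{A.2.7}) and the single generating map $i_0\times{\rm id}$, the latter via the square comparing $i_0$ with $v^*v_\sharp i_0$ whose vertical maps are the units, which are $\cA$-weak equivalences by Proposition \ref{A.2.5}. You instead transport the \emph{local objects} backwards along the adjunction. That dual strategy can be made to work, and your first key claim --- that $v_\sharp K$ is $I$-invariant when $K$ is $(I,\cA)$-invariant --- is fine: by Proposition \ref{A.1.5} the unit gives pointwise weak equivalences $K(I\times Z)\simeq v_\sharp K(I\times Z)$ and $K(Z)\simeq v_\sharp K(Z)$, compatible with restriction along $i_0\times{\rm id}$ by naturality, and $K(I\times Z)\to K(Z)$ is a weak equivalence by hypothesis.

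The gap is in the mapping-space comparison, and it is not mere bookkeeping. The identity ${\mathbf S}(v^*F,K)\simeq{\mathbf S}(F,v_\sharp K)$ is not an instance of the $(v_\sharp,v^*)$ adjunction: that adjunction gives ${\mathbf S}(v_\sharp F',H)\cong{\mathbf S}(F',v^*H)$, i.e.\ it controls $v^*$ applied to the \emph{target}, whereas here $v^*$ sits in the \emph{source}. Your repair --- full faithfulness of $v^*$ giving ${\mathbf S}(F,v_\sharp K)\cong{\mathbf S}(v^*F,v^*v_\sharp K)$, followed by the unit $K\to v^*v_\sharp K$ --- stalls at the last step: the unit is only a pointwise weak equivalence, and $v^*v_\sharp K$ has no reason to be injectively fibrant ($v^*$ is a \emph{left} Quillen functor for the injective structures by Proposition \ref{A.2.7}), so ${\mathbf S}(v^*F,-)$ applied to it does not compute the derived mapping space and the comparison map ${\mathbf S}(v^*F,K)\to{\mathbf S}(v^*F,v^*v_\sharp K)$ need not be a weak equivalence. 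The clean way to finish along your lines is to use the \emph{right} adjoint instead: for $K$ injectively fibrant and $(I,\cA)$-invariant, the derived adjunction of $(v^*,v_*)$ gives ${\mathbf S}(v^*F,K)\cong{\mathbf S}(F,v_*K)$ with $v_*K$ fibrant, and one then checks that $v_*K$ is $I$-invariant by evaluating on representables: $v_*K(X)$ is computed by ${\mathbf S}(v^*h_X,K)$, the unit $h_X\to v^*v_\sharp h_X=v^*h_X$ is an $\cA$-weak equivalence by Proposition \ref{A.2.5}, hence $v_*K(X)\simeq K(X)$, and $(I,\cA)$-invariance of $K$ transfers. With that substitution your argument closes; as written, the step you call routine is the one that fails.
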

\begin{proof}
Let $\mathcal{C}$ be the class of morphisms $f:F\rightarrow G$ of simplicial presheaves on $T[\mathcal{A}^{-1}]$ such that $v^*f$ is an $(I,\mathcal{A})$-weak equivalence. 
By \cite[Proposition 4.2.74]{Ayo07}, it suffices to check the following conditions.
\begin{enumerate}[(i)]
\item 
$\mathcal{C}$ contains pointwise weak equivalences and morphisms of the form
\[
i_0\times {\rm id}:{\rm pt}\times F\rightarrow I\times F,
\]
where $F$ is a simplicial presheaf on $T[\mathcal{A}^{-1}]$.
\item $\mathcal{C}$ has the 2-out-of-3 property.
\item The class of morphisms in $\mathcal{C}$ that are injective cofibrations is stable by pushouts and transfinite compositions.
\end{enumerate}

Since $v^*$ is a right adjoint of $v_\sharp $, it follows that $v^*$ commutes with limits. 
Thus to show $v^*(i_0\times {\rm id})$ is an $(I,\mathcal{A})$-weak equivalence, it suffices to show that $v^*i_0$ is an $(I,\mathcal{A})$-weak equivalence.
Note that $v^*i_0\cong v^*v_\sharp i_0$.
Consider the induced commutative diagram
\[
\begin{tikzcd}
{\rm pt}\arrow[d]\arrow[r,"i_0"]&I\arrow[d]\\
v^*v_\sharp {\rm pt}\arrow[r,"v^*v_\sharp i_0"]&v^*v_\sharp I
\end{tikzcd}\]
of simplicial presheaves on $T$. 
The upper horizontal morphism is an $I$-weak equivalence, and the vertical morphisms are $\mathcal{A}$-weak equivalences by Proposition \ref{A.2.5}. 
Hence $v^*v_\sharp i_0$ is an $(I,\mathcal{A})$-weak equivalence. 
By Proposition \ref{A.2.7}, $\mathcal{C}$ contains the pointwise weak equivalences. 
This verifies condition (i).
\vspace{0.1in}

The conditions (ii) and (iii) follow from Proposition \ref{A.2.7} since the $(I,\mathcal{A})$-weak equivalences and the injective cofibrations satisfy (ii) and (iii).
\end{proof}

\subsection{Construction of the singular functor ${\rm Sing}$}

Since $I$ is an interval in $T[\mathcal{A}^{-1}]$, we are entitled to the cosimplicial presheaf 
\[
\Delta_I^\bullet\in T[\mathcal{A}^{-1}]
\]
defined as follows, see \cite[p.\ 88]{MV}.
On objects we set $\Delta_I^n:=I^n$.
Let $p:I^n\rightarrow {\rm pt}$ denote the canonical morphism to the final object, and let $pr_k:I^k\rightarrow I$ denote the $k$-th projection.
For any morphism $f:\{0,\ldots,n\}\rightarrow \{0,\ldots,m\}$ in $\Delta$, let 
$$
\phi(f):\{1,\ldots,m\}\rightarrow \{0,\ldots,n+1\}
$$ 
denote the function given by
\[
\phi(f)(i):=\left\{
\begin{array}{cl}
\min\{l\in \{0,\ldots,n\}:f(l)\geq i\}&\text{if this set is nonempty}\\
n+1&\text{otherwise}.
\end{array}
\right.
\]
The morphism $\Delta_I(f):I^n\rightarrow I^m$ is given by
\[
pr_k\circ \Delta_I(f):=\left\{
\begin{array}{cl}
pr_{\phi(f)(k)}&\text{if }\phi(f)(k)\in \{1,\ldots,n\}\\
i_0\circ p&\text{if }\phi(f)(k)=n+1\\
i_1\circ p&\text{if }\phi(f)(k)=0.
\end{array}
\right.
\]
\vspace{0.1in}

We also have the simplicial presheaf
\[
{\rm Sing}_\bullet^I(F)\in T[\mathcal{A}^{-1}]
\] 
defined in \cite[p.\ 88]{MV}. \index[notation]{SingI @ ${\rm Sing}_\bullet^I$}
Recall that
${\rm Sing}_\bullet^I(F)$ is the diagonal simplicial presheaf of the bisimplicial presheaf  
\[
(\Delta_I^\bullet,F_\bullet):(m,n)\mapsto \hom_{\Psh(T[\mathcal{A}^{-1}])}(\Delta_I^m,F_n). 
\]

\begin{df}
\label{A.1.7}
For any simplicial presheaf $F$ on $T$, we set\index[notation]{SingIA @ ${\rm Sing}_\bullet^{(I,\mathcal{A})}$}
\[
{\rm Sing}_\bullet^{(I,\mathcal{A})}(F):=v^*{\rm Sing}_\bullet^I(v_\sharp F).
\]
\end{df}

For simplicity we shall often omit the superscript $(I,\mathcal{A})$ in ${\rm Sing}_\bullet^{(I,\mathcal{A})}(F)$.

\begin{df}\label{A.1.10}\index{Singular functor}
Let $f,g:F\rightarrow G$ be two morphisms of simplicial presheaves on $T$. 
An {\it elementary $(I,\mathcal{A})$-homotopy} is a morphism 
\[
H:v_\sharp F\times I\rightarrow v_\sharp G
\]
of simplicial presheaves on $T[\mathcal{A}^{-1}]$ such that 
\[
H\circ i_0=v_\sharp f
\text{ and }
H\circ i_1=v_\sharp g. 
\]
Two morphisms of simplicial presheaves on $T$ are {\it $(I,\mathcal{A})$-homotopic}\index{homotopy} if they can be connected by a finite number of $(I,\mathcal{A})$-homotopies.
\vspace{0.1in}

A morphism $f:F\rightarrow G$ of simplicial presheaves on $T$ is a {\it strict $(I,\mathcal{A})$-homotopy equivalence}\index{homotopy equivalence} if there is a morphism $g:G\rightarrow F$ 
of simplicial presheaves on $T$ such that $f\circ g$ and $g\circ f$ are $(I,\mathcal{A})$-homotopic to the identity morphism.
\end{df}

In the following results, we argue following \cite[Proposition 3.4 - Corollary 3.8, pp.\ 89-90]{MV} and \cite[Proposition 4.23 - Proposition 4.26]{AdditiveHomotopy}.

\begin{prop}
\label{A.1.11}
The canonical morphism 
$$
i_0:{\rm pt}\rightarrow I
$$ 
is a strict $(I,\mathcal{A})$-homotopy equivalence.
\end{prop}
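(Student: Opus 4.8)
The plan is to exhibit $p\colon I\rightarrow \pt$ as a strict $(I,\mathcal{A})$-homotopy inverse of $i_0$. Of the two homotopies required by Definition \ref{A.1.10}, one is free of charge: since $\pt$ is the final object of $T$, the composite $p\circ i_0\colon \pt\rightarrow \pt$ must be $\mathrm{id}_{\pt}$, and every morphism is $(I,\mathcal{A})$-homotopic to itself. So the only content is to show that $i_0\circ p=i_0 p\colon I\rightarrow I$ is $(I,\mathcal{A})$-homotopic to $\mathrm{id}_I$, and here the multiplication $\mu$ does exactly the job it does in \cite[Corollary 3.5]{MV}.

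First I would record how $v_\sharp$ behaves on representables. By Proposition \ref{A.1.4} together with the formula \eqref{A.1.3.1}, $v_\sharp$ carries the representable simplicial presheaf on $T$ attached to an object $X$ to the representable simplicial presheaf on $T[\mathcal{A}^{-1}]$ attached to $v(X)$. Since $v\colon T\rightarrow T[\mathcal{A}^{-1}]$ commutes with finite limits (dual of \cite[Proposition I.3.1]{GZFractions}) and the Yoneda embedding preserves finite products, this gives a canonical identification $v_\sharp I\times I\cong I\times I$ of simplicial presheaves on $T[\mathcal{A}^{-1}]$, compatible with the two inclusions $i_0,i_1\colon \pt\rightarrow I$ in the sense that $v_\sharp(\mathrm{id}\times i_0)$ and $v_\sharp(\mathrm{id}\times i_1)$ correspond to $\mathrm{id}\times i_0$ and $\mathrm{id}\times i_1$. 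This is the only mildly technical bookkeeping point, and it is "the main obstacle" only in the weak sense that one must check the identifications are the natural ones; there is no essential difficulty.

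Then I would take the elementary $(I,\mathcal{A})$-homotopy $H:=\mu\colon v_\sharp I\times I\cong I\times I\rightarrow I\cong v_\sharp I$. Using the hypotheses $\mu\circ(\mathrm{id}\times i_0)=i_0 p$ and $\mu\circ(\mathrm{id}\times i_1)=\mathrm{id}_I$, one gets $H\circ i_0=v_\sharp(i_0 p)$ and $H\circ i_1=v_\sharp(\mathrm{id}_I)$, so $H$ is an elementary $(I,\mathcal{A})$-homotopy from $i_0 p$ to $\mathrm{id}_I$. Combined with $p\circ i_0=\mathrm{id}_{\pt}$ from the first paragraph, this shows that $i_0$ and $p$ are mutually inverse up to $(I,\mathcal{A})$-homotopy, i.e.\ that $i_0\colon \pt\rightarrow I$ is a strict $(I,\mathcal{A})$-homotopy equivalence. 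Note that the monomorphism hypothesis on $i_0\amalg i_1$ plays no role here; only the two relations $\mu(\mathrm{id}\times i_0)=i_0 p$ and $\mu(\mathrm{id}\times i_1)=\mathrm{id}$ are used.
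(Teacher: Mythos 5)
Your proof is correct and follows the same route as the paper: take $p$ as the homotopy inverse, note $p\circ i_0=\mathrm{id}_{\pt}$ for free, and use $\mu$ itself as the elementary $(I,\mathcal{A})$-homotopy between $i_0p$ and $\mathrm{id}_I$ via the two defining relations. The only difference is that you spell out the (correct) bookkeeping that $v_\sharp$ sends representables to representables and commutes with the product $I\times I$, which the paper leaves implicit.
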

\begin{proof}
The diagram of simplicial presheaves on $\cT[\cA^{-1}]$
\[
\begin{tikzcd}
I\arrow[r,shift left=0.5ex,"i_0\times {\rm id}"]\arrow[r,shift right=0.5ex,"i_1\times {\rm id}"']&I\times I\arrow[r,"\mu"]&I
\end{tikzcd}
\]
displays an elementary $(I,\cA)$-homotopy from ${\rm id}=\mu\circ (i_0\times {\rm id})$ to $i_0p=\mu\circ(i_1\times {\rm id})$. 
Since $pi_0$ is the identity morphism on $\pt$, we are done.
\end{proof}

\begin{prop}
\label{A.1.12}
Suppose $f,g:F\rightarrow G$ are $(I,\mathcal{A})$-homotopic morphisms of simplicial presheaves on $T$. 
Then the induced morphisms 
\[
{\rm Sing}_\bullet(f),{\rm Sing}_\bullet(g):{\rm Sing}_\bullet(F)\rightarrow {\rm Sing}_\bullet(G)
\] 
are simplicially homotopic.
\end{prop}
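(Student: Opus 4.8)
The plan is to reduce the statement about the functor $\mathrm{Sing}_\bullet^{(I,\mathcal{A})}$ to a purely simplicial statement about $\mathrm{Sing}_\bullet^I$ on $T[\mathcal{A}^{-1}]$, where we can invoke the Morel--Voevodsky machinery. First I would unwind Definition \ref{A.1.7}: since $\mathrm{Sing}_\bullet(f) = v^*\mathrm{Sing}_\bullet^I(v_\sharp f)$ and $v^*$ is a functor that preserves simplicial homotopies (it is, after all, just precomposition with $v\colon T\to T[\mathcal{A}^{-1}]$ and commutes with products by Proposition \ref{A.2.1}'s underlying fact that $v$ preserves finite limits), it suffices to show that $\mathrm{Sing}_\bullet^I(v_\sharp f)$ and $\mathrm{Sing}_\bullet^I(v_\sharp g)$ are simplicially homotopic as morphisms of simplicial presheaves on $T[\mathcal{A}^{-1}]$.

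Next I would translate the hypothesis. By Definition \ref{A.1.10}, $f$ and $g$ being $(I,\mathcal{A})$-homotopic means they are connected by a finite chain of elementary $(I,\mathcal{A})$-homotopies; composing homotopies, it is enough to treat a single elementary $(I,\mathcal{A})$-homotopy $H\colon v_\sharp F \times I \to v_\sharp G$ with $H\circ i_0 = v_\sharp f$ and $H\circ i_1 = v_\sharp g$. Thus we are exactly in the situation of an elementary $I$-homotopy between the two morphisms $v_\sharp f, v_\sharp g\colon v_\sharp F \to v_\sharp G$ of simplicial presheaves on $T[\mathcal{A}^{-1}]$, in the sense of \cite[p.\ 89]{MV}. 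The content then becomes: an elementary $I$-homotopy between two maps of simplicial presheaves on a category with finite limits, equipped with the interval $I$, induces a simplicial homotopy after applying $\mathrm{Sing}_\bullet^I$.

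The core step is therefore to carry out the argument of \cite[Corollary 3.5, p.\ 89]{MV}. Here one uses the cosimplicial object $\Delta_I^\bullet$ with $\Delta_I^1 = I$ together with the multiplication $\mu\colon I\times I\to I$ to build, from the elementary homotopy $H$, an actual simplicial homotopy $\mathrm{Sing}_\bullet^I(v_\sharp F)\times \Delta[1]\to \mathrm{Sing}_\bullet^I(v_\sharp G)$. Concretely, the map $\Delta[1]\to \mathrm{Sing}_\bullet^I(\underline{I})$ adjoint to $\mathrm{id}_I$ (using that $\mathrm{Sing}_\bullet^I$ of a representable presheaf on the interval realizes the interval's own simplicial structure), combined with the diagonal/Eilenberg--Zilber comparison and the formula for $\mathrm{Sing}_\bullet^I$ as the diagonal of the bisimplicial presheaf $(m,n)\mapsto \hom(\Delta_I^m, (v_\sharp G)_n)$, produces the desired homotopy; the map $\mu$ is used precisely to multiply homotopy parameters so that the chain of elementary homotopies assembles correctly. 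I would note explicitly that, although $\mu$ lives in $T[\mathcal{A}^{-1}]$ rather than $T$, this causes no difficulty here because $\mathrm{Sing}_\bullet^I$ is formed entirely inside $\sPsh(T[\mathcal{A}^{-1}])$, where $I$ genuinely is an interval in the sense of \cite[\S 2.3]{MV}; the passage back to $T$ via $v^*$ happens only at the very end and $v^*$ preserves simplicial homotopies automatically. The main obstacle is bookkeeping: making the identification between elementary $(I,\mathcal{A})$-homotopies on $T$ and elementary $I$-homotopies on $T[\mathcal{A}^{-1}]$ precise (this is where Definition \ref{A.1.7} and the adjunction $v_\sharp \dashv v^*$ are used — an elementary $(I,\mathcal{A})$-homotopy $v_\sharp F\times I\to v_\sharp G$ is literally an elementary $I$-homotopy upstairs), and then citing the Morel--Voevodsky construction for the simplicial homotopy without re-deriving the combinatorics of $\Delta_I^\bullet$. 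I would therefore structure the proof as: (1) reduce to one elementary homotopy; (2) observe it is an elementary $I$-homotopy between $v_\sharp f$ and $v_\sharp g$; (3) apply \cite[Corollary 3.5]{MV} (or its analog \cite[Proposition 4.25]{AdditiveHomotopy}) to get that $\mathrm{Sing}_\bullet^I(v_\sharp f)$ and $\mathrm{Sing}_\bullet^I(v_\sharp g)$ are simplicially homotopic; (4) apply $v^*$, which preserves simplicial homotopies, to conclude.
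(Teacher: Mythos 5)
Your proposal is correct and follows essentially the same route as the paper: reduce to a single elementary $(I,\mathcal{A})$-homotopy, observe it is literally an elementary $I$-homotopy between $v_\sharp f$ and $v_\sharp g$ in $\sPsh(T[\mathcal{A}^{-1}])$, invoke the Morel--Voevodsky result there, and conclude by noting that $v^*$ preserves simplicial homotopies because it commutes with products with $\Delta[1]$. The only quibble is the citation: the relevant Morel--Voevodsky statement is \cite[Proposition 3.4, p.\ 89]{MV} (elementary homotopies induce simplicial homotopies on $\mathrm{Sing}$, which does not need $\mu$), not Corollary 3.5, which is where $\mu$ actually enters.
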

\begin{proof}
We may assume there is an elementary $(I,\mathcal{A})$-homotopy $H:v_\sharp F\times I\rightarrow v_\sharp G$ from $f$ to $g$. 
Then $H$ is an $I$-homotopy from $v_\sharp f$ to $v_\sharp g$, so ${\rm Sing}_\bullet(v_\sharp f)$ and ${\rm Sing}_\bullet(v_\sharp g)$ are simplicially homotopic by \cite[Proposition 3.4 in p.\ 89]{MV}. 
It remains to show that $v^*$ preserves simplicial homotopies.
\vspace{0.1in}

Since $v^*$ is a right adjoint of $v_\sharp $, it follows that $v^*$ commutes with limits. 
Thus 
\[
v^*(F\times \Delta[1])\cong v^*F\times \Delta[1]
\] 
for any simplicial presheaf $F$ on $T[\mathcal{A}^{-1}]$, 
so $v^*$ preserves simplicial homotopies.
\end{proof}

\begin{cor}
\label{A.1.13}
For any simplicial presheaf $F$ on $T$, the morphism
\[
{\rm Sing}_\bullet(X)\rightarrow {\rm Sing}_\bullet(X\times I)
\]
induced by $i_0:{\rm pt}\rightarrow I$ is a pointwise weak equivalence.
\end{cor}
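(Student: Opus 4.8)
The plan is to exhibit the map in question as ${\rm Sing}_\bullet$ applied to a strict $(I,\mathcal{A})$-homotopy equivalence, and then to invoke the homotopy-invariance of ${\rm Sing}_\bullet$ recorded in Proposition \ref{A.1.12}. Concretely, the morphism ${\rm Sing}_\bullet(X)\to{\rm Sing}_\bullet(X\times I)$ is ${\rm Sing}_\bullet({\rm id}_X\times i_0)$, where ${\rm id}_X\times i_0\colon X\cong X\times{\rm pt}\to X\times I$ is the product of the identity of $X$ with $i_0\colon{\rm pt}\to I$.

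First I would upgrade Proposition \ref{A.1.11} to the statement that ${\rm id}_X\times i_0$ is itself a strict $(I,\mathcal{A})$-homotopy equivalence, with homotopy inverse ${\rm id}_X\times p$, where $p\colon I\to{\rm pt}$ is the structure morphism (which lies in $T$). One composite is $({\rm id}_X\times p)\circ({\rm id}_X\times i_0)={\rm id}_X\times(pi_0)={\rm id}_X$ on the nose, since $pi_0={\rm id}_{\rm pt}$. For the other composite I would use that $v_\sharp$ preserves finite products: this follows from the fact that $v\colon T\to T[\mathcal{A}^{-1}]$ commutes with finite limits (dual of \cite[Proposition I.3.1]{GZFractions}) together with the fact that $(-)\times(-)$ commutes with colimits in each variable in a presheaf category, so that $v_\sharp(X\times I)\cong v_\sharp X\times I$ naturally. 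Transporting the elementary $(I,\mathcal{A})$-homotopy between ${\rm id}_I$ and $i_0p$ produced in the proof of Proposition \ref{A.1.11} through this isomorphism, and multiplying it by ${\rm id}_{v_\sharp X}$, yields an elementary $(I,\mathcal{A})$-homotopy between ${\rm id}_{X\times I}$ and $({\rm id}_X\times i_0)\circ({\rm id}_X\times p)$.

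Finally I would apply Proposition \ref{A.1.12}: since ${\rm id}_X\times i_0$ and ${\rm id}_X\times p$ are mutually inverse up to $(I,\mathcal{A})$-homotopy, the maps ${\rm Sing}_\bullet({\rm id}_X\times i_0)$ and ${\rm Sing}_\bullet({\rm id}_X\times p)$ are mutually inverse up to simplicial homotopy, hence each is a simplicial homotopy equivalence; evaluating at any object of $T$, a simplicial homotopy equivalence of simplicial sets is a weak equivalence, so ${\rm Sing}_\bullet({\rm id}_X\times i_0)$ is a pointwise weak equivalence, as claimed. The only step requiring any care is the middle one — keeping track of the symmetry and associativity isomorphisms when moving the homotopy coordinate $I$ into place through $v_\sharp(X\times I)\cong v_\sharp X\times I$ — but this is purely formal once one knows that $v_\sharp$ is symmetric monoidal for the cartesian products; everything else is a direct citation of the two preceding results.
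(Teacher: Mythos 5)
Your proof is correct and follows exactly the route the paper takes: the paper's own argument is the one-line "Follows from Propositions \ref{A.1.11} and \ref{A.1.12}," and your write-up simply supplies the implicit details (that $\mathrm{id}_X\times i_0$ inherits the strict $(I,\mathcal{A})$-homotopy equivalence structure from $i_0$ via the product-compatibility of $v_\sharp$, after which Proposition \ref{A.1.12} applies). No gaps.
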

\begin{proof}
Follows from Propositions \ref{A.1.11} and \ref{A.1.12}.
\end{proof}

\begin{prop}
\label{A.1.14}
For any simplicial presheaf $F$ on $T$, the canonical morphism
\[
F\rightarrow {\rm Sing}_\bullet(F)
\]
is an $(I,\mathcal{A})$-weak equivalence.
\end{prop}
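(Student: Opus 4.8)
The goal is to show that for any simplicial presheaf $F$ on $T$, the canonical map $F\rightarrow {\rm Sing}_\bullet(F) = v^*{\rm Sing}_\bullet^I(v_\sharp F)$ is an $(I,\mathcal{A})$-weak equivalence. The plan is to factor this morphism through $v^*v_\sharp F$ and handle the two pieces separately using the results already established in the section. First I would form the commutative triangle
\[
\begin{tikzcd}
F\arrow[r]\arrow[rd]&v^*v_\sharp F\arrow[d]\\
&v^*{\rm Sing}_\bullet^I(v_\sharp F),
\end{tikzcd}
\]
where the horizontal map is the unit of the adjunction $(v_\sharp,v^*)$ and the vertical map is $v^*$ applied to the canonical morphism $v_\sharp F\rightarrow {\rm Sing}_\bullet^I(v_\sharp F)$. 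By the two-out-of-three property for $(I,\mathcal{A})$-weak equivalences, it suffices to prove that both the horizontal and the vertical arrows are $(I,\mathcal{A})$-weak equivalences.

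For the horizontal map, the unit $F\rightarrow v^*v_\sharp F$ is an $\mathcal{A}$-weak equivalence by Proposition \ref{A.2.5}, and every $\mathcal{A}$-weak equivalence is an $(I,\mathcal{A})$-weak equivalence since $(I,\mathcal{A})$-weak equivalences are the left Bousfield localization with respect to a larger set of maps (here $\mathcal{A}\subseteq (I,\mathcal{A})$ in the sense of Definition \ref{A.2.3}(2): an $(I,\mathcal{A})$-invariant simplicial presheaf is in particular $\mathcal{A}$-invariant, so an $\mathcal{A}$-weak equivalence induces weak equivalences on mapping spaces into all $(I,\mathcal{A})$-invariant targets). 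For the vertical map, I would invoke Proposition \ref{A.2.4}: the functor $v^*$ sends $I$-weak equivalences to $(I,\mathcal{A})$-weak equivalences. Thus it is enough to know that the canonical morphism $v_\sharp F\rightarrow {\rm Sing}_\bullet^I(v_\sharp F)$ of simplicial presheaves on $T[\mathcal{A}^{-1}]$ is an $I$-weak equivalence. But this is precisely the Morel--Voevodsky statement \cite[Corollary 3.8, p.\ 90]{MV}, valid because $I$ is an interval in $T[\mathcal{A}^{-1}]$ in the sense of \cite[\S 2.3]{MV}, which we have arranged by hypothesis; alternatively one can deduce it from Corollary \ref{A.1.13} together with the standard argument that ${\rm Sing}_\bullet^I$ lands in $I$-local objects.

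The main obstacle I anticipate is bookkeeping rather than mathematics: one must be careful that the notion of $I$-weak equivalence used in \cite{MV} for simplicial presheaves on $T[\mathcal{A}^{-1}]$ matches the one obtained from Definition \ref{A.2.3}(2) applied with the single map $i_0\colon {\rm pt}\rightarrow I$ (and that $i_1$ does not need to be added, since $i_0$ and $i_1$ become equal after $I$-localization by Proposition \ref{A.1.11}). Once this identification is in place, and once one checks the elementary compatibility that the canonical map $v_\sharp F\rightarrow {\rm Sing}_\bullet^I(v_\sharp F)$ is the one appearing in \cite{MV} (which is immediate from the construction on \cite[p.\ 88]{MV}), the proof is a short diagram chase combining Propositions \ref{A.2.4} and \ref{A.2.5} with the two-out-of-three property. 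No delicate calculation is required; the content has been pushed into the preceding propositions.
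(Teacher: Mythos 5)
Your proposal is correct and follows essentially the same route as the paper: factor through $v^*v_\sharp F$, apply Proposition \ref{A.2.5} to the unit and Proposition \ref{A.2.4} together with \cite[Corollary 3.8, p.\ 89]{MV} to the second map. The only cosmetic difference is that the paper does not spell out that $\mathcal{A}$-weak equivalences are $(I,\mathcal{A})$-weak equivalences, which you justify correctly.
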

\begin{proof}
We have the factorization
\[
F\rightarrow v^*v_\sharp F\rightarrow v^*{\rm Sing}_\bullet^I(v_\sharp F).
\]
The first morphism is an $\mathcal{A}$-weak equivalence due to Proposition \ref{A.2.5}. 
Thus, 
by Proposition \ref{A.2.4}, 
it suffices to show the canonical morphism
\[
v_\sharp F\rightarrow {\rm Sing}_\bullet^I(v_\sharp F)
\]
is an $I$-weak equivalence. 
This follows from \cite[Corollary 3.8, p.\ 89]{MV}.
\end{proof}

Recall that $\Lambda$ is a commutative unital ring.

\begin{df}
Let $\sPsh(T,\Lambda)$\index[notation]{sPsh @ $\sPsh$} denote the category of simplicial sheaves of $\Lambda$-modules on $T$,
and let $\Co^{\leq 0}(\Psh(T,\Lambda))$ denote the category of complexes $\cF$ of sheaves of $\Lambda$-modules on $T$ such that $H^i(\cF)=0$ for any $i>0$.
\end{df}

By the Dold-Kan correspondence\index{Dold-Kan correspondence} \cite[Theorem 8.4.1]{weibel_1994}, there are equivalences of categories
\[
N:\sPsh(T,\Lambda)\rightleftarrows \Co^{\leq 0}(\Psh(T,\Lambda)):\Gamma
\]
and
\[
N:\sPsh(T[\cA^{-1}],\Lambda)\rightleftarrows \Co^{\leq 0}(\Psh(T[\cA^{-1}],\Lambda)):\Gamma
\]

\begin{df}
An object $F$ of $\Co^{\leq 0}(\Psh(T,\Lambda))$ is \emph{$\cA$-invariant} if $\Gamma(F)$ is $\cA$-invariant.
A morphism $f:F\rightarrow G$ in $\Co^{\leq 0}(\Psh(T,\Lambda))$ (resp.\ $\Co^{\leq 0}(\Psh(T[\cA^{-1}],\Lambda))$) is a $(I,\cA)$-weak equivalence (resp.\ $\cA$-weak equivalence) 
if $\Gamma(f):\Gamma(F)\rightarrow \Gamma(G)$ is a $(I,\cA)$-weak equivalence (resp.\ $\cA$-weak equivalence).
\end{df}

Let $\Co^{-}(\Psh(T,\Lambda))$ denote the category of bounded above complexes of sheaves of $\Lambda$-modules.
Then an object $F\in \Co^{-}(\Psh(T,\Lambda))$ is called $\cA$-\emph{invariant} if there exists an integer $n\in \Z$ such that $F[n]\in\Co^{\leq 0}(\Psh(T,\Lambda))$ is $\cA$-\emph{invariant}.

The $(I,\cA)$-weak equivalences in $\Co^{-}(\Psh(T,\Lambda))$ and the $\cA$-weak equivalences in $\Co^{-}(\Psh(T[\cA^{-1}],\Lambda))$ are defined similarly.
\vspace{0.1in}

Suppose $G$ is a presheaf of $\Lambda$-modules on $T$ and consider the simplicial presheaf $\Lambda$-modules $\Sing_{\bullet}^{(I,\cA)}(G)$ on $T$. 
Taking the alternating sum of the face maps yields the \emph{Suslin complex} \index{Suslin complex} of presheaves 
$$
C_*^{(I,\cA)}G.
$$
\vspace{0.1in}

We let $C_*G$ be shorthand for $C_*^{(I,\cA)}G$ when no confusion seems likely to arise, and define
\[
C_*^{DK}(G):=N({\rm Sing}_\bullet^{(I,\cA)}(G)).
\]
Here $C_*^{DK}(G)$ is canonically a normalized subcomplex of $C_*(G)$. 
The morphism 
\[
C_*^{DK}(G)\rightarrow C_*(G)
\] 
is a quasi-isomorphism, but not an isomorphism in general.
\vspace{0.1in}
 
If $F$ is a bounded above complex of presheaves on $T$, 
we can consider $C_*F$ and $C_*^{DK}F$ as double complexes and form their total complexes ${\rm Tot}(C_*F)$ and ${\rm Tot}(C_*^{DK}F)$.
\vspace{0.1in}

Suppose $H\in \sPsh(T,\Lambda)$.
The Eilenberg-Zilber theorem says that there is a quasi-isomorphism
\begin{equation}
\label{A.1.15.1}
N({\rm Sing}_\bullet^{(I,\cA)}(H))
\cong
{\rm Tot}(C_*^{DK}(N(H))).
\end{equation}
\vspace{0.1in}

As in Definition \ref{A.1.3} there exist adjoint functors
\[
\begin{tikzcd}
\Co^{\leq 0}(\Psh(T,\Lambda))\arrow[rr,shift left=1.5ex,"v_\sharp "]\arrow[rr,"v^*" description,leftarrow]\arrow[rr,shift right=1.5ex,"v_*"']&&\Co^{\leq 0}(\Psh(T[\mathcal{A}^{-1}],\Lambda)),
\end{tikzcd}
\]
where $v_\sharp$ is left adjoint to $v^*$, and $v^*$ is left adjoint to $v_*$.
For $F\in \Co^{\leq 0}(\Psh(T[\mathcal{A}^{-1}],\Lambda))$ and $X\in T$, we have
\[
\Gamma(v^*F)(X)\cong \Gamma(v^*F(X))=\Gamma(F(v(X))=\Gamma(F)(v(X))\cong v^*(\Gamma(F))(X).
\]
Thus $\Gamma$ commutes with $v^*$.
Similarly, we can show that $N$ commutes with $v^*$.
By adjunction we see that both $\Gamma$ and $N$ commute with $v_\sharp$.

\begin{prop}
\label{A.1.15}
Let $F$ be a bounded above complex of presheaves of $\Lambda$-modules on $T$. 
Then the following properties hold.
\begin{enumerate}
\item[{\rm (1)}] 
If $F$ is $\cA$-invariant, then the unit of the adjunction
\[
F\rightarrow v^*v_\sharp F
\]
is a quasi-isomorphism.
\item[{\rm (2)}] 
The unit of the adjunction
\[
F\rightarrow v^*v_\sharp F
\]
is an $\mathcal{A}$-weak equivalence.
\item[{\rm (3)}] 
The morphism
\[
{\rm Tot}(C_*F)\rightarrow {\rm Tot}(C_*(F\times I))
\]
induced by $i_0:{\rm pt}\rightarrow I$ is a quasi-isomorphism.
\item[{\rm (4)}] 
The canonical morphism
\[
F\rightarrow {\rm Tot}(C_*F)
\]
is an $(I,\mathcal{A})$-weak equivalence.
\end{enumerate}
\end{prop}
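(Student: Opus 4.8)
The plan is to deduce all four statements from the simplicial (unbounded-above) analogues already proved in this section, transported through the Dold--Kan correspondence, plus an Eilenberg--Zilber reduction to handle the passage between the total complex of the Suslin double complex and the normalized complex of the singular simplicial object. Concretely, for a bounded above complex $F$ of presheaves of $\Lambda$-modules on $T$, I would first reduce to the case $F\in\Co^{\leq 0}(\Psh(T,\Lambda))$ by shifting (all four assertions are insensitive to shift, and $\cA$-invariance of $F$ was defined via such a shift). Then I would apply $\Gamma$ to move into $\sPsh(T,\Lambda)$, where $\Gamma(F)$ is a simplicial presheaf of $\Lambda$-modules, hence in particular a simplicial presheaf to which Propositions \ref{A.1.5}, \ref{A.2.5}, \ref{A.1.13}, and \ref{A.1.14} apply.

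For part (1): since $F$ is $\cA$-invariant, $\Gamma(F)$ is $\cA$-invariant, so by Proposition \ref{A.1.5} the unit $\Gamma(F)\rightarrow v^*v_\sharp\Gamma(F)$ is a pointwise weak equivalence; because $N$ and $\Gamma$ commute with both $v^*$ and $v_\sharp$ (as recorded just before the statement), applying $N$ and using that $N$ sends pointwise weak equivalences of simplicial $\Lambda$-module presheaves to quasi-isomorphisms gives that $F\rightarrow v^*v_\sharp F$ is a quasi-isomorphism. For part (2): the same argument with Proposition \ref{A.2.5} in place of \ref{A.1.5}, using that a morphism in $\Co^{\leq 0}$ is an $\cA$-weak equivalence precisely when its image under $\Gamma$ is one. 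For part (3): apply $\Gamma$, invoke Corollary \ref{A.1.13} for the simplicial presheaf $\Gamma(F)$ to see that ${\rm Sing}_\bullet(\Gamma(F))\rightarrow{\rm Sing}_\bullet(\Gamma(F)\times I)$ is a pointwise weak equivalence, then pass back through $N$ and the Eilenberg--Zilber quasi-isomorphism \eqref{A.1.15.1} to identify $N({\rm Sing}_\bullet^{(I,\cA)}(\Gamma(F)))$ with ${\rm Tot}(C_*^{DK}F)$, and finally use that the inclusion $C_*^{DK}F\hookrightarrow C_*F$ is a quasi-isomorphism (hence so is the induced map on total complexes, bounded-above-ness ensuring the spectral sequence converges). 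For part (4): likewise from Proposition \ref{A.1.14}, which says $\Gamma(F)\rightarrow{\rm Sing}_\bullet^{(I,\cA)}(\Gamma(F))$ is an $(I,\cA)$-weak equivalence; apply $N$, use \eqref{A.1.15.1} and the $C_*^{DK}\hookrightarrow C_*$ comparison to rewrite the target as ${\rm Tot}(C_*F)$, and note that $(I,\cA)$-weak equivalences in $\Co^{-}$ are defined so that this transports.

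The main obstacle I anticipate is the bookkeeping in parts (3) and (4): one must be careful that the Dold--Kan and Eilenberg--Zilber identifications are compatible with the formation of total complexes when $F$ is a genuine bounded above complex rather than concentrated in a single degree, i.e. that $N({\rm Sing}_\bullet^{(I,\cA)}(\Gamma(F)))$ really computes ${\rm Tot}(C_*F)$ up to quasi-isomorphism functorially in $F$. This is where the hypothesis ``bounded above'' is used, so that the double complex $C_*F$ has exhaustive filtrations with convergent spectral sequences and the comparison $C_*^{DK}F\to C_*F$ on totalizations is a quasi-isomorphism; for unbounded $F$ one would have to worry about this. I would handle it by first treating $F$ concentrated in one degree (where everything is literal), then filtering a general bounded above $F$ by its stupid truncations and passing to the colimit, invoking that all four classes of equivalences (quasi-isomorphisms, $\cA$-weak equivalences, $(I,\cA)$-weak equivalences) are stable under the relevant filtered colimits and cofibre sequences — the latter stability being exactly the kind of property verified in the proofs of Propositions \ref{A.2.2} and \ref{A.2.4} via \cite[Proposition 4.2.74]{Ayo07}. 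Everything else is a direct translation of the already-established simplicial statements.
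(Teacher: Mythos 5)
Your proposal is correct and follows essentially the same route as the paper's proof: reduce to $\Co^{\leq 0}$ by shifting, transport everything through $\Gamma$ and $N$ (which commute with $v_\sharp$ and $v^*$), deduce (1) and (2) from the $\Lambda$-module versions of Propositions \ref{A.1.5} and \ref{A.2.5}, and deduce (3) and (4) from Corollary \ref{A.1.13} and Proposition \ref{A.1.14} via the Eilenberg--Zilber identification \eqref{A.1.15.1} together with the quasi-isomorphism $C_*^{DK}F\to C_*F$. Your extra caution about convergence of the totalization comparison (and the fallback via stupid truncations) is sound but not needed beyond the bounded-above hypothesis, which is exactly what the paper relies on implicitly.
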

\begin{proof}
We observe that all the results in this section hold verbatim for simplicial presheaves of $\Lambda$-modules.
It suffices to prove these assertions for the shifted complex $F[n]$, $n\in \Z$. 
Thus we may assume $F\in \Co^{\leq 0}(\Psh(T,\Lambda))$.
As noted above both $\Gamma$ and $N$ commute with both $v_\sharp$ and $v^*$.
Moreover, 
quasi-isomorphisms (resp.\ $\cA$-weak equivalences, resp.\ $(I,\cA)$-weak equivalences) in $\Co^{\leq 0}(\Psh(T,\Lambda))$ corresponds to pointwise weak equivalences 
(resp.\ $\cA$-weak equivalences, resp.\ $(I,\cA)$-weak equivalences) in $\sPsh(T,\Lambda)$ under $\Gamma$ and $N$.
\vspace{0.1in}

Suppose $F$ is $\cA$-invariant.
Then $\Gamma(F)$ is $\cA$-invariant, 
and the $\Lambda$-module version of Proposition \ref{A.1.5} shows the morphism
\[
\Gamma(F)\rightarrow v^*v_\sharp \Gamma(F)\xrightarrow{\cong} \Gamma(v^*v_\sharp F)
\]
is a pointwise weak equivalence.
Thus $F\rightarrow v^*v_\sharp F$ is a quasi-isomorphism, which shows (1).
A similar argument shows (2).
Due to \eqref{A.1.15.1} we have isomorphisms 
\begin{gather*}
{\rm Tot}(C_*F)\simeq {\rm Tot}(C_*^{DK}F)\simeq N({\rm Sing}_\bullet^{(I,\cA)}(\Gamma(F))),
\\
{\rm Tot}(C_*(F\times I))\simeq {\rm Tot}(C_*^{DK}(F\times I))\simeq N({\rm Sing}_\bullet^{(I,\cA)}(\Gamma(F\times I))).
\end{gather*}
Owing to the $\Lambda$-module version of Corollary \ref{A.1.13} the composite morphism
\[
{\rm Sing}_\bullet^{(I,\cA)}(\Gamma(F))\rightarrow {\rm Sing}_\bullet^{(I,\cA)}(\Gamma(F)\times I)\xrightarrow{\cong} {\rm Sing}_\bullet^{(I,\cA)}(\Gamma(F\times I))
\]
is an isomorphism.
Combining the above, we deduce (3).
A similar argument shows (4).
\end{proof}
\newpage

\section{Properties of logarithmic motives}
In this section, we formulate and prove the fundamental properties of our categories of logarithmic motives. 
Most of these properties hold for categories associated with a base scheme $S$, 
not necessarily a field, 
subject to a list of axioms listed below.
This setup affords the case of $\ldmeff$ and its variants, see Proposition \ref{A.5.71}. 
\vspace{0.1in}

It can be quite interesting to observe that certain implications hold among the axioms.
For example, $(\P^n, \P^{n-1})$-invariance can be deduced by assuming Nisnevich descent (Nis-desc), 
$\boxx$-invariance, and dividing-descent, see Proposition \ref{A.6.1}.
On the other hand, 
we can include $(\P^n, \P^{n-1})$-invariance as part of the axioms.
In that case dividing-descent can be deduced from (Nis-desc) and $(\P^n, \P^{n-1})$-invariance for every $n\geq 1$. 
We refer to Section \ref{ssec:invariance-asuming-PnPn-1} for further details.
\vspace{0.1in}

After proving some general properties about $\P^n$-bundles and $\boxx^n$-bundles, 
we begin by showing an invariance property for certain admissible blow-ups. 
This is used for showing that log motives satisfy the $(\P^n, \P^{n-1})$-invariance property. 
We proceed by defining Thom motives, 
a.k.a.\ motivic Thom spaces, 
prove a log version of homotopy purity, 
and construct Gysin maps for closed embeddings. 
\vspace{0.1in}

Assuming that $k$ admits resolution of singularities allow us to prove a more potent invariance property for proper birational morphisms, 
see Theorem \ref{A.3.12}. 
Finally we discuss strictly $(\P^\bullet,\P^{\bullet-1})$-invariant and strictly dividing invariant complexes of sheaves on $SmlSm/k$. 
This provides a convenient tool for constructing objects of $\ldmeff$ without explicitly verifying the dividing-invariance property on the level of sheaf cohomology.
\vspace{0.1in}

Throughout this section, we fix a base scheme $S$ and let $\mathscr{S}/S$ denote either $lSm/S$ or $SmlSm/S$.
Moreover, we fix a symmetric monoidal triangulated category $\cT$,
and we fix a monoidal functor
\[
M\colon \mathbf{Square}_{\mathscr{S}/S}\rightarrow \cT
\]
satisfying (Sq) and (MSq).
For the category $\mathbf{Square}_{\mathscr{S}/S}$, $n$-squares, and the conditions (Sq) and (MSq), we refer to Appendix \ref{appendix:cat_tool}.
We choose to use $n$-squares because this is a convenient way to construct certain cones canonically within triangulated categories without referring to model categories or $\infty$-categories.
For example, $2$-squares are used in \cite{Deg08} to develop the theory of Gysin triangles in $\A^1$-motivic theory.
In most of our argument we only need $n$-squares for $n\leq 2$, but we need $3$-squares in Subsection \ref{ssec:ThomMotives}.
\vspace{0.1in}

We shall often assume some of the following properties.

\begin{enumerate}

\item[($Zar$-sep)] 
\emph{Zariski separation property.} If $\{X_1,\ldots,X_r\}$ is a finite Zariski cover of $X$ and $I=\{i_1,\ldots,i_s\}\subset \{1,\ldots,r\}$, we set 
\[
X_I:=X_{i_1}\times_S \cdots \times_S X_{i_s}.
\] 
Then for any commutative square of the form
\begin{equation}
\label{A.3.0.2}
\begin{tikzcd}
Y'\arrow[d]\arrow[r]&Y\arrow[d]
\\
X'\arrow[r]&X,
\end{tikzcd}
\end{equation}
the morphism
\[
M(Y'\rightarrow X')\rightarrow M(Y\rightarrow X)
\] 
is an isomorphism if for every nonempty $I\subset \{1,\ldots,r\}$ there is an isomorphism 
\[
M(Y_I'\rightarrow X_I')\rightarrow M(Y_I\rightarrow X_I)
\] 
where 
\[
Y_I:=Y\times_X X_I, X_I':=X'\times_X X_I, \text{ and }Y_I':=Y'\times_Y Y_I.
\]   

\item[($\boxx$-inv)] 
\emph{$\boxx$-invariance property.} For any $X\in \mathscr{S}/S$ the projection $X\times  \boxx\rightarrow X$ induces an isomorphism 
\[
M(X\times \boxx)\xrightarrow{\cong} M(X).
\]

\item[($sNis$-des)] 
\emph{$sNis$-descent property.}
Every strict Nisnevich distinguished square of the form \eqref{A.3.0.2} induces an isomorphism 
\[
M(Y'\rightarrow X')\xrightarrow{\cong} M(Y\rightarrow X).
\] 

\item[($div$-des)] 
\emph{$div$-descent property.}
Every log modification $f:Y\rightarrow X$ in $\mathscr{S}/S$ induces an isomorphism
\[
M(Y)\xrightarrow{\cong} M(X).
\]

\item[($(\P^\bullet,\P^{\bullet-1})$-inv)]
\emph{$(\P^\bullet,\P^{\bullet-1})$-invariance property.}
For any $X\in \mathscr{S}/S$ and positive integer $n>0$ the projection $X\times  (\P^n,\P^{n-1})\rightarrow X$ induces an isomorphism 
\[
M(X\times (\P^n,\P^{n-1}))\xrightarrow{\cong} M(X).
\]
Here we consider $\P^{n-1}$ as the hyperplane of $\P^n$ defined by the equation $x_n=0$ with respect to the coordinates $[x_0:\cdots:x_n]$.

\item[(adm-blow)]
\emph{Admissible blow-up property.}
For every proper birational morphism $f\colon Y\rightarrow X$ in $\mathscr{S}/S$ such that the naturally induced morphism $Y-\partial Y\rightarrow X-\partial X$ is an isomorphism, there is a naturally induced isomorphism
\[
M(Y)\xrightarrow{\cong} M(X).
\]
\end{enumerate}

The following figure illustrates how these properties are related.
Here, ROS stands for resolution of singularities.
\[
\begin{tikzpicture}
\node[draw,align=center] at (-5,0)
{($Zar$-sep)
\\
($\boxx$-inv)
\\
($sNis$-des)
\\ 
(adm-blow)
\\
$\mathscr{S}=lSm/k$};
\node[draw,align=center] at (0,0)
{($Zar$-sep)
\\
($\boxx$-inv)
\\
($sNis$-des)
\\ 
($div$-des)
\\
$\mathscr{S}=lSm/S$};
\node[draw,align=center] at (5,0)
{($Zar$-sep)
\\
($sNis$-des)
\\ 
($(\P^\bullet,\P^{\bullet-1})$-inv)
\\ $\mathscr{S}=SmlSm/S$};
\draw (-3.5,0.6)--(-1.5,0.6);
\draw (-3.5,0.5)--(-1.5,0.5);
\node at (-1.5,0.55) {$>$};
\node at (-2.5,0.8) {\footnotesize Trivial if $S=k$};
\draw (-3.5,-0.6)--(-1.5,-0.6);
\draw (-3.5,-0.5)--(-1.5,-0.5);
\node at (-3.5,-0.55) {$<$};
\node at (-2.5,-0.8) {\footnotesize Theorem \ref{A.3.12}};
\node at (-2.5,-1.1) {\footnotesize {(assuming ROS)}};
\draw (1.35,0.6)--(3.35,0.6);
\draw (1.35,0.5)--(3.35,0.5);
\node at (3.35,0.55) {$>$};
\node at (2.35,0.8) {\footnotesize Proposition \ref{A.6.1}};
\draw (1.35,-0.6)--(3.35,-0.6);
\draw (1.35,-0.5)--(3.35,-0.5);
\node at (1.35,-0.55) {$<$};
\node at (2.35,-0.8) {\footnotesize Construction \ref{Pn.4}};
\end{tikzpicture}
\]

\begin{df}
\label{A.5.60}
The Tate object in $\cT$ is defined by
\[M(S)(1)
:=
M(S\stackrel{i_0}\rightarrow \P_S^1)[-2],
\]
where $i_0:S \rightarrow \P_S^1$ is the $0$-section.  

For $n\geq 0$, we set 
\[
M(S)(n)
:=
\underbrace{M(S)(1)\otimes \cdots \otimes M(S)(1)}_{n\text{ times}},
\]
where $M(S)(0):=M(S)$ by convention. 
We define the $n$th Tate twist of $K\in\cT$ as 
\[
K(n)
:=
K\otimes M(S)(n).
\]
\end{df}

\begin{prop}
\label{A.5.71}
The conditions {\rm (Sq)} and {\rm (MSq)} of Appendix \ref{appendix:cat_tool} and the properties {\rm ($Zar$-sep)}, {\rm ($\boxx$-inv)}, {\rm ($sNis$-des)}, 
and {\rm ($div$-des)} hold in the triangulated categories
\begin{gather*}
\ldmeff,\;\ldaeff,\;\ldmeffet,\;\ldaeffet,
\\
\ldmefflet,\;\ldaefflet.
\end{gather*}
\end{prop}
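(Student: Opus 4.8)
The claim is that the six triangulated categories $\ldmeff$, $\ldaeff$, $\ldmeffet$, $\ldaeffet$, $\ldmefflet$, $\ldaefflet$, each equipped with the monoidal functor $M$ sending $X \in lSm/k$ (or a morphism, or an $n$-square) to its motive, satisfy the categorical axioms (Sq) and (MSq) together with the four properties ($Zar$-sep), ($\boxx$-inv), ($sNis$-des), ($div$-des). The plan is to treat all six categories uniformly, since each is by construction the homotopy category of $\Co(\Shv_t^{?}(k,\Lambda))$ for an appropriate topology $t \in \{dNis, d\acute{e}t, l\acute{e}t\}$ and with or without log transfers, with respect to the $\boxx$-local descent model structure; the arguments are the same modulo replacing $dNis$ by $d\acute{e}t$ or $l\acute{e}t$ and citing the corresponding compatibility results (Proposition \ref{A.5.22}, Theorem \ref{A.5.23}) and finite cohomological dimension statements (Corollary \ref{Div.5}, Corollary \ref{Div.6}). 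So I would fix $t = dNis$ with log transfers, prove everything there, and remark that the other five cases follow verbatim.

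First I would dispense with (Sq) and (MSq): these are purely formal statements (from Appendix \ref{appendix:cat_tool}) about the functor $M$ sending $n$-squares to iterated cones. Since $M(X) = a_{dNis}^*\Zltr(X)$ is defined as an object of a homotopy category obtained from a model category, and the cone construction on complexes is functorial up to the coherence provided by working with the underlying $n$-cubical diagrams of complexes before passing to the homotopy category, (Sq) and (MSq) hold because $\Zltr(-)$ (and hence $a_{dNis}^*\gamma_\sharp\Lambda(-)$) is an honest functor $lSm/k \to \Co(\Shvltrkl)$ on the nose, so $n$-squares of log schemes map to genuine $n$-cubes of complexes; the monoidal compatibility (MSq) follows from $M(X)\otimes M(Y) = M(X\times Y)$, which is Proposition \ref{A.8.19} as recorded right after the definition of $\ldmeff$. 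Next, ($\boxx$-inv) is immediate from the definition: the $\boxx$-local model structure is by construction the Bousfield localization at the projections $X\times\boxx\to X$, so $M(X\times\boxx)\to M(X)$ is an isomorphism in the homotopy category by fiat.

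The substance is in ($sNis$-des) and ($div$-des), and these are exactly Proposition \ref{A.5.42}: part (1) gives ($sNis$-des) for $\ldmeff$ and $\ldaeff$, part (2) gives ($div$-des). So for these I would simply cite Proposition \ref{A.5.42}, and for the \'etale variants I would note that the proof of Proposition \ref{A.5.42} only used the exactness of $a_t^*$ and $\gamma^*$ (available in all six cases), compatibility of $t$ with log transfers (Proposition \ref{A.5.22}, Theorem \ref{A.5.23}), and the exactness of the relevant Mayer--Vietoris sequence \eqref{A.8.13.4} of $t$-sheaves, which in turn comes from Proposition \ref{A.8.13} applied to the strict Nisnevich cd-structure (whose distinguished squares remain distinguished after refining the topology to $d\acute{e}t$ or $l\acute{e}t$), plus the observation that a log modification is a covering in all three topologies; hence Proposition \ref{A.5.42} holds verbatim with $dNis$ replaced by $d\acute{e}t$ or $l\acute{e}t$. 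Finally, ($Zar$-sep): the Zariski cd-structure is a sub-cd-structure of the strict Nisnevich one, so a Zariski cover gives rise to a sequence of iterated strict Nisnevich distinguished squares, and by induction on $r$ (the number of opens) combined with ($sNis$-des) one reduces the isomorphism $M(Y'\to X')\to M(Y\to X)$ to its restrictions over the $X_I$; more precisely one uses the \v{C}ech/Mayer--Vietoris resolution of $\Lambda(X)$ by the $\Lambda(X_I)$, which is a finite acyclic complex of $t$-sheaves by Proposition \ref{A.8.13}, and a standard d\'evissage on the number of opens in the cover.

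\textbf{Main obstacle.} The only genuinely delicate point is making precise that $M$ is a \emph{monoidal} functor on $\mathbf{Square}_{\mathscr{S}/S}$ satisfying (Sq) and (MSq) in each of the six homotopy categories --- i.e.\ that the various cones fit together coherently and that the monoidal structure interacts correctly with the cube-to-cone assignment. I expect this to be handled exactly as in the model-categorical discussion of Appendix \ref{appendix:cat_tool}: one works with the functor $lSm/k \to \Co(\Shvltrkl)$ before localizing, where everything is strictly functorial and the tensor product of complexes is computed by the representable presheaves, and then passes to the homotopy category; the $\boxx$-localization is monoidal by Proposition \ref{A.8.19}, so no coherence is lost. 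Once this bookkeeping is in place, the proof is essentially an assembly of Propositions \ref{A.8.19}, \ref{A.5.22}, \ref{A.5.42}, Theorem \ref{A.5.23}, Proposition \ref{A.8.13}, and Corollaries \ref{Div.5}, \ref{Div.6}, with the $\boxx$-invariance being true by construction.
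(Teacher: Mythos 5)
Your proposal is correct and follows essentially the same route as the paper's proof: (Sq) and (MSq) are obtained by defining $M(C)$ as the (sheafified) total complex of the strictly functorial cube $\Zltr(C)$ before localizing, ($\boxx$-inv) holds by construction of the Bousfield localization, ($sNis$-des) and ($div$-des) are Proposition \ref{A.5.42}, and ($Zar$-sep) is deduced from \v{C}ech descent for the Zariski cover (the paper phrases this directly via the \v{C}ech nerve $\mathscr{X}$ and the isomorphisms $M(X)\cong M(\mathscr{X})$ rather than your induction on the number of opens, but the two are equivalent).
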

\begin{proof}
We work in $\ldmeff$ since the proofs for the other categories are similar.
For every $n$-square $C$, we set
\[
M(C):={a_{dNis}^*}{\rm Tot}(\Zltr(C)).
\]
For every $n$-square $C$ with $n\geq 1$, there exists a naturally induced sequence
\[
{\rm Tot}(\Zltr(s_{i,0}^*C))\rightarrow {\rm Tot}(\Zltr(s_{i,1}^*C))\rightarrow {\rm Tot}(\Zltr(C))\rightarrow {\rm Tot}(\Zltr(s_{i,0}^*C))[1].
\]
Moreover, for every $n$-square $C$ and $m$-square $D$, there exists a canonical isomorphism (not just a quasi-isomorphism) of complexes
\[
{\rm Tot}(\Zltr(C))\otimes {\rm Tot}(\Zltr(D))\rightarrow {\rm Tot}(\Zltr(C\otimes D)).
\]
The latter is reminiscent of an Eilenberg-Zilber map, but it is, in fact, much simpler since no shuffles are involved. 
We can readily deduce that the conditions (Sq) and (MSq) hold.
\vspace{0.1in}

Property ($\boxx$-inv) holds by construction. 
Proposition \ref{A.5.42} implies ($sNis$-des) and ($div$-des).
To show ($Zar$-sep), 
let $\mathscr{X}$ be the \v{C}ech nerve of the covering $\{X_1,\ldots,X_r\}$, 
and set 
\[
\mathscr{X}':=X'\times_X \mathscr{X}, \mathscr{Y}:=Y\times_X \mathscr{X}, \mathscr{Y}':=Y'\times_X \mathscr{X}. 
\]
Then, by assumption, we have the homotopy cartesian diagram
\[
\begin{tikzcd}
M(\mathscr{Y}')\arrow[d]\arrow[r]&M(\mathscr{Y})\arrow[d]\\
M(\mathscr{X}')\arrow[r]&M(\mathscr{X}).
\end{tikzcd}
\]
This allows us to conclude that ($Zar$-sep) holds owing to the isomorphisms 
\[
M(X)\cong M(\mathscr{X}), M(Y)\cong M(\mathscr{Y}), M(X')\cong M(\mathscr{X}'), \text{ and } M(Y')\cong M(\mathscr{Y}').
\]
\end{proof}

\subsection{Some toric geometry and examples of bundles}
Let $d\geq 0$ and suppose $\sigma$ is a $d$-codimensional cone of a fan $\Sigma$.
We can naturally associate a $d$-dimensional closed subscheme $V(\sigma)$ of $\ul{\A_\Sigma}$, which is the closure of the orbit space ${\rm orb}(\sigma)$ in $\ul{\A_\Sigma}$, see \cite[Proposition 1.6, Corollary 1.7]{TOda}.
Let us explain the structure in the case that $\Sigma=\Spec{P}$ for some fs monoid $P$.
Note that the only maximal cone of $\Sigma$ is $P^\vee$.
\vspace{0.1in}

The cone $\sigma$ is associated with a face $F$ of $P^\vee$,
which corresponds by duality to the face 
\[
G:=F^\bot\cap P=\{x\in P:\langle x,y\rangle =0\text{ for all }y\in F\}.
\]
Then ${\rm orb}(\sigma)$ is identified with
\[
\Spec{\Z[F^\bot \cap P^\gp]}\cong \Spec{\Z[G^\gp]}, 
\]
see \cite[p.\ 52]{Fulton:1436535}.
Owing to \cite[Proposition I.3.3.4]{Ogu} the closed immersion
$$
V(\sigma)\rightarrow \ul{\A_\Sigma}
$$
is induced by the natural epimorphism
$$
\Z[P]\rightarrow \Z[P]/\Z[P-G],
$$
where we regard $\Z[P-G]$ as an ideal of $\Z[P]$ as in \cite[Section I.3.3]{Ogu}.
\vspace{0.1in}

If more specifically $P=\N^p\oplus \N^q$ and $F$ is the face $\N^p\oplus 0$ of $P^\vee$ for some integers $p,q\geq 0$, then $G=0\oplus \N^q$. Thus if we express $\Z[P]$ as $\Z[x_1,\ldots,x_{p+q}]$, then
\[
\Z[P]/\Z[P-G]=\Z[x_1,\ldots,x_{p+q}]/(x_1,\ldots,x_p).
\]

\begin{df}
Let $\Sigma$ be a fan.
Suppose that $\Sigma'$ is a subfan of $\Sigma$, i.e., every cone of $\Sigma'$ is a cone of $\Sigma$.
We denote by
\[
\A_{(\Sigma,\Sigma')}
\]
the fs log scheme whose underlying scheme is $\ul{\A_\Sigma}$ and whose log structure is the compactifying log structure associated with the open immersion
\[
(\ul{\A_{\Sigma}}-\bigcup_{\sigma\in \Sigma'-\{0\}}V(\sigma))\rightarrow \ul{\A_\Sigma}.
\]
For the convenience of the reader we will often draw figures illustrating the fans $\Sigma$ and $\Sigma'$.
In our drawings of $2$-dimensional fans, 
thick (resp.\ thin) lines record the $1$-dimensional cones in $\Sigma'$ (resp.\ not in $\Sigma'$), 
and dark (resp.\ light) grey regions records the $2$-dimensional cones in $\Sigma'$ (resp.\ not in $\Sigma'$).
For example, 
the figure
\begin{equation}\label{eq.examplefan}
\begin{tikzpicture}
\foreach \a in {-1,0,1}
\foreach \b in {-1,0,1}
\filldraw (\a,\b) circle (1pt);
\fill[black!20] (0,0)--(1,0)--(1,1)--(0,1);
\fill[black!40] (0,0)--(0,1)--(-1,1)--(-1,0);
\draw (0,0)--(1,0);
\draw[ultra thick] (0,0)--(0,1);
\draw[ultra thick] (0,0)--(-1,0);
\end{tikzpicture}
\end{equation}
depicts the pair of fans
\[
\Sigma=\{{\rm Cone}((1,0),(0,1)),{\rm Cone}((0,1),(-1,0))\},
\;
\Sigma'=\{{\rm Cone}((0,1),(-1,0))\}.
\]
Here, the notation $\Sigma=\{\sigma,\ldots \}$ means that $\Sigma$ consists of all faces of the elements of the set $\{\sigma,\ldots\}$. 
The algebraic variety encoded in the figure \eqref{eq.examplefan} can be described as follows.
We set $\sigma_1 ={\rm Cone}((1,0),(0,1))$, 
$\sigma_2=\Sigma' = {\rm Cone}((0,1),(-1,0))$, 
$M=\Z^2$, 
and $N= {\rm Hom}_\Z(M,\Z)$. 
The toric variety $\ul{\A_\Sigma}$ is obtained by gluing the two affine schemes 
$$
\mathbb{A}_{\sigma_1}=\Spec{\Z[\sigma_1^\vee \cap N]} = \Spec{\Z[x,y]}
$$ 
and 
$$
\mathbb{A}_{\sigma_2} = \Spec{\Z[\sigma_2^\vee \cap N]} = \Spec{\Z[x^{-1},y]}
$$ 
along their common open subset 
$$
\Spec{\Z[\tau^\vee\cap N]} = \Spec{\Z[x,x^{-1}, y]},
$$ 
where $\tau$ is the 1-dimensional cone $(0,1)$. 
The underlying scheme is 
$$
X = \P^1\times \A^1.
$$ 
Here $x$ is the coordinate on $\P^1$, and $y$ is the coordinate on $\A^1$. 
Consider the divisor $\partial X = \infty\times \A^1+\P^1\times 0$ on $X$. 
The corresponding compactifying log structure $(X,\partial X)$ can be identified with the fs log scheme $\A_{(\Sigma,\Sigma')}$.
\vspace{0.1in}

In our drawings of $3$-dimensional fans, 
thick points records $1$-dimensional cones in $\Sigma'$, thick (resp.\ thin) lines record $2$-dimensional cones in $\Sigma'$ (resp.\ not in $\Sigma'$), 
and dark (resp.\ light) grey regions record $3$-dimensional cones in $\Sigma'$ (resp.\ not in $\Sigma'$).
\end{df}

Before discussing the motives of general fs log schemes, we will first investigate some elementary examples of $\P^1$- and $\boxx$-bundles.
\vspace{0.1in}

Let us define the geometric notion of vector bundles over fs log schemes.

\begin{df}
\label{df:logvectorbundle}
Let $X$ be an fs log scheme. 
A {\it vector bundle}\index{bundle!vector} $\xi:\cE\rightarrow X$ is a morphism of fs log schemes such that $\underline{\xi}:\underline{\cE}\rightarrow \underline{X}$ is a usual vector bundle and the diagram
\[
\begin{tikzcd}
\cE\arrow[r,"\xi"]\arrow[d]\arrow[r]&X\arrow[d]\\
\underline{\cE}\arrow[r,"\underline{\xi}"]&\underline{X}
\end{tikzcd}
\]
is cartesian, where the vertical arrows are the canonical morphisms removing the log structures. In other words, the log structure on $\cE$ is the pullback of the log structure on $X$ along $\xi$.
\end{df}

\begin{exm}
The projection $X\times_S \A_S^n\rightarrow X$ is a vector bundle.
This is called the {\it trivial} vector bundle of rank $n$.
\end{exm}

\begin{df}
We say that ``an fs log scheme $X$ satisfies Zariski locally a property $P$'' if for any point $x$ of $X$ and Zariski neighborhood $V$ of $x$, 
there is a Zariski neighborhood $U$ of $x$ inside $V$ such that $U$ satisfies $P$.
\end{df}

\begin{df}
\label{df::projectivebundle}
Let $X$ be an fs log scheme.
A $\P^n$-\emph{bundle}\index{bundle!Pn @ $\P^n$} $\xi:\cP\rightarrow X$ is a morphism of fs log schemes that Zariski locally on $X$ is isomorphic to the projection $X\times \P^n\rightarrow X$.
\vspace{0.1in}

Similarly, a $(\P^n,\P^{n-1})$-\emph{bundle}\index{bundle!PnPn @ $(\P^n,\P^{n-1})$}\index{bundle!boxx @ $\boxx$} $\xi:\cP\rightarrow X$ is a morphism of fs log schemes that Zariski locally on $X$ is isomorphic to the projection 
$X\times (\P^n,\P^{n-1})\rightarrow X$.
Note that we have $(\P^1,\P^{0})=\boxx$.
\end{df}

Recall that we view $\P^{n-1}$ as the hyperplane defined by $x_n=0$ where $[x_0:\cdots:x_n]$ is the coordinate on $\P^n$.

\begin{lem}
\label{A.6.4}
The blow-up $X$ of $\P^n$ along the point $[0:\cdots:0:1]$ is a $\P^1$-bundle over $\P^{n-1}$.
\end{lem}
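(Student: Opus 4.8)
The statement is the classical fact that blowing up $\mathbb{P}^n$ at a point yields a $\mathbb{P}^1$-bundle over $\mathbb{P}^{n-1}$; here it is stated for the underlying schemes (these $\mathbb{P}^n$ and $\mathbb{P}^{n-1}$ carry trivial log structures at this stage), so no log-geometric subtleties intervene and the proof is entirely in $Sm/k$. The plan is to exhibit an explicit description of the blow-up $X = B_{[0:\cdots:0:1]}\mathbb{P}^n$ and a morphism $\pi\colon X \to \mathbb{P}^{n-1}$, then check that $\pi$ is Zariski-locally a trivial $\mathbb{P}^1$-bundle over an affine cover of $\mathbb{P}^{n-1}$.

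First I would set up coordinates. Write $[x_0:\cdots:x_n]$ for $\mathbb{P}^n$ and let $p = [0:\cdots:0:1]$, so that $\mathbb{P}^{n-1}$ is the hyperplane $x_n = 0$ with coordinates $[x_0:\cdots:x_{n-1}]$. The linear projection away from $p$ is the rational map $\mathbb{P}^n \dashrightarrow \mathbb{P}^{n-1}$, $[x_0:\cdots:x_n]\mapsto [x_0:\cdots:x_{n-1}]$, undefined exactly at $p$. The blow-up $X$ is the closure of the graph of this map inside $\mathbb{P}^n\times\mathbb{P}^{n-1}$, equivalently the incidence variety
\[
X = \{([x_0:\cdots:x_n],[y_0:\cdots:y_{n-1}]) : x_iy_j = x_jy_i \text{ for all } 0\le i,j\le n-1\},
\]
with its two projections $\rho\colon X\to\mathbb{P}^n$ (the blow-down) and $\pi\colon X\to\mathbb{P}^{n-1}$. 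I would recall briefly why this $X$ is indeed the blow-up of $\mathbb{P}^n$ at $p$: $\rho$ is an isomorphism over $\mathbb{P}^n\setminus\{p\}$ since the projection is a morphism there, and $\rho^{-1}(p)\cong\mathbb{P}^{n-1}$ is a Cartier divisor because $X$ is cut out inside $\mathbb{P}^n\times\mathbb{P}^{n-1}$ by the $2\times 2$ minors, which over a neighborhood of $p$ exhibit $X$ as the total space of the ideal sheaf; alternatively one invokes the universal property of the blow-up directly, as in Hartshorne II.7.

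Next I would trivialize $\pi$ over the standard affine cover of $\mathbb{P}^{n-1}$. Fix $j\in\{0,\ldots,n-1\}$ and work over $U_j = \{y_j\ne 0\}\cong\mathbb{A}^{n-1}$. Over $U_j$ the equations $x_iy_j = x_jy_i$ force $x_i = x_j(y_i/y_j)$ for all $i\le n-1$, so a point of $\pi^{-1}(U_j)$ is determined by the datum $([x_j:x_n], (y_i/y_j)_{i})$, and conversely any $[x_j:x_n]\in\mathbb{P}^1$ together with any point of $U_j$ gives a point of $X$. This produces an isomorphism $\pi^{-1}(U_j)\xrightarrow{\ \sim\ } U_j\times\mathbb{P}^1$ over $U_j$; I would write out the inverse map explicitly and note that the transition functions between different $U_j$ are the linear ones coming from the tautological bundle, so in fact $X = \mathbb{P}(\mathcal{O}_{\mathbb{P}^{n-1}}\oplus\mathcal{O}_{\mathbb{P}^{n-1}}(-1))$, but for the statement as given only the local triviality is needed, matching Definition \ref{df::projectivebundle}.

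The only mild obstacle is bookkeeping: one must check that the incidence variety really is covered by the $\pi^{-1}(U_j)$ (i.e.\ that $\pi$ lands in $\bigcup_j U_j = \mathbb{P}^{n-1}$, which is automatic since the $y_i$ are not all zero on $X$) and verify the local triviality maps are mutually inverse morphisms of schemes, but no genuine difficulty arises. I would close by recording that $\pi$ is therefore a $\mathbb{P}^1$-bundle in the sense of Definition \ref{df::projectivebundle}.
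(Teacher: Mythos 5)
Your proof is correct, but it takes a genuinely different route from the paper's. You argue classically: realize $X$ as the incidence variety (the closure of the graph of the linear projection away from $[0:\cdots:0:1]$) inside $\P^n\times\P^{n-1}$, and trivialize the second projection over the standard affine cover $\{y_j\neq 0\}$, even identifying $X$ with $\P(\mathcal{O}\oplus\mathcal{O}(-1))$. The paper instead works torically: it presents $X$ as the toric variety of the fan with maximal cones $\sigma_i=\mathrm{Cone}(e_1,\ldots,\widehat{e_i},\ldots,e_n,e_1+\cdots+e_n)$ and $\sigma_i'=\mathrm{Cone}(e_1,\ldots,\widehat{e_i},\ldots,e_n,-e_1-\cdots-e_n)$, observes that each pair $\sigma_i\cup\sigma_i'$ gives an open piece $U_i\cong\A^{n-1}\times\P^1$, and obtains the bundle map from the lattice homomorphism $(x_1,\ldots,x_n)\mapsto(x_1-x_n,\ldots,x_{n-1}-x_n)$ carrying $\sigma_i\cup\sigma_i'$ onto the cone $\tau_i$ of $\P^{n-1}$. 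The paper is explicit that it chooses the toric language "since we need to fix some notation for later use": the charts $U_i$, $U_i'$ and their defining cones are reused verbatim in Lemmas \ref{A.6.5}, \ref{A.3.28}, and \ref{A.3.33}, where different log structures are imposed on the same open cover to upgrade the $\P^1$-bundle to a $\boxx$-bundle. Your argument is more elementary and self-contained for the scheme-theoretic statement at hand, but you would need to redo the chart bookkeeping once the compactifying log structures enter in the follow-up lemmas, which is precisely the work the paper's fan notation is designed to amortize.
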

\begin{proof} 
While this should be a well-known fact, since we need to fix some notation for later use, we give proof using the language of toric geometry. 
Let $e_1,\ldots,e_n$ be the standard coordinates of $\Z^n$. 
The blow-up $X$ in question is the toric variety associated with the fan with maximal cones 
\[
\sigma_i:={\rm Cone}(e_1,\ldots,e_{i-1},e_{i+1},\ldots,e_n,e_1+\cdots+e_n)\quad  (1\leq i\leq n),
\]
\[
\sigma_i':={\rm Cone}(e_1,\ldots,e_{i-1},e_{i+1},\ldots,e_n,-e_1-\cdots-e_n)\quad (1\leq i\leq n).
\]
For a fixed $i$, the toric variety $U_i$ formed by $\sigma_i$ and $\sigma_i'$ is isomorphic to $\A^{n-1}\times \P^1$.
\vspace{0.1in}

Recall that $\P^{n-1}$ is the toric variety associated with the fan with maximal cones 
\[
\tau_i:={\rm Cone}(e_1,\ldots,e_{i-1},e_{i+1},\ldots,e_{n-1},-e_1-\cdots-e_{n-1})\quad (1\leq i\leq n-1),
\]
\[
\tau_n:={\rm Cone}(e_1,\ldots,e_{n-1}).
\]
For the homomorphism of lattices 
\[
\varphi:\Z^n\rightarrow \Z^{n-1}
\]
mapping $(x_1,\ldots,x_n)$ to $(x_1-x_n,\ldots,x_{n-1}-x_n)$, 
we have that 
$$
\varphi(\sigma_i\cup \sigma_i')=\tau_i.
$$ 
Thus the morphism $X\rightarrow \P^{n-1}$ induced by $\varphi$ is a $\P^1$-bundle.
\end{proof}

\begin{lem}
\label{A.6.5}
The blow-up of $(\P^n,\P^{n-1})$ along the point $[0:\cdots:0:1]$ is a $\boxx$-bundle over $\P^{n-1}$.
\end{lem}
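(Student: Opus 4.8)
The plan is to run the toric computation from Lemma \ref{A.6.4} and keep track of the log structures, since the underlying scheme statement is already proved there. Recall from that proof that the blow-up $X$ of $\P^n$ at the point $[0:\cdots:0:1]$ is the toric variety of the fan with maximal cones $\sigma_i$ and $\sigma_i'$ $(1\leq i\leq n)$, and the map $\varphi\colon \Z^n\to \Z^{n-1}$, $(x_1,\ldots,x_n)\mapsto (x_1-x_n,\ldots,x_{n-1}-x_n)$, exhibits $\underline{X}$ as a $\P^1$-bundle $\pi\colon \underline{X}\to \P^{n-1}$ with $\varphi(\sigma_i\cup\sigma_i')=\tau_i$. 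So what remains is purely about the compactifying log structure: I would identify the strict transform of $\P^{n-1}$ under the blow-up, show it is the divisor on $\underline{X}$ cut out (Zariski locally on the base) by the section $x_n=0$ of the $\P^1$-bundle, and conclude that $(\P^n,\P^{n-1})$ pulls back to a $\boxx$-bundle in the sense of Definition \ref{df::projectivebundle}.

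Concretely, first I would note that the blow-up center $[0:\cdots:0:1]$ does not lie on the hyperplane $\P^{n-1}=\{x_n=0\}$, so the exceptional divisor $E$ is disjoint from the strict transform $\widetilde{\P^{n-1}}$ of $\P^{n-1}$, and the blow-up $X\to\P^n$ restricts to an isomorphism $\widetilde{\P^{n-1}}\xrightarrow{\sim}\P^{n-1}$. In particular the compactifying log structure on $(\P^n,\P^{n-1})$ pulls back, along the log modification $X\to\P^n$ (here I just need that the blow-up away from the boundary defines an fs log scheme with boundary the total transform, or rather I take $(\P^n,\P^{n-1})$-blow-up to mean equipping $X$ with $\widetilde{\P^{n-1}}$), to the compactifying log structure on $X$ associated to the open complement of $\widetilde{\P^{n-1}}$. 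Second, in the toric picture, $\P^{n-1}\subset\P^n$ is the torus-invariant divisor corresponding to the ray $-e_1-\cdots-e_n$ of the fan of $\P^n$; its strict transform $\widetilde{\P^{n-1}}$ in $X$ is the torus-invariant divisor $V(\rho)$ attached to the ray $\rho=\operatorname{Cone}(-e_1-\cdots-e_n)$, which is a ray of each $\sigma_i'$. Under $\varphi$ this ray maps to $0$, so $\widetilde{\P^{n-1}}$ maps onto all of $\P^{n-1}$ and is a section of $\pi$: on each chart $U_i\cong \A^{n-1}\times\P^1$ it is $\A^{n-1}\times\{0\}$ for a suitable coordinate, i.e. exactly the $\{x_n=0\}$-section.

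Putting this together: over each $\tau_i$-chart $\operatorname{Spec}\Z[\tau_i^\vee\cap\Z^{n-1}]\cong\A^{n-1}$ of $\P^{n-1}$, the $\P^1$-bundle $\pi$ trivializes as $\A^{n-1}\times\P^1$, with $\widetilde{\P^{n-1}}$ corresponding to $\A^{n-1}\times\{\infty\}$ (after relabeling coordinates on $\P^1$). Since the compactifying log structure of $X$ along $\widetilde{\P^{n-1}}$ is Zariski-local, on this chart it is the pullback of the compactifying log structure of $(\P^1,\{\infty\})=\boxx$, so $\pi$ becomes the projection $\A^{n-1}\times\boxx\to\A^{n-1}$. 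Hence $\pi\colon X\to\P^{n-1}$ is a $\boxx$-bundle over $\P^{n-1}$. I would also remark that $X$ lies in $SmlSm/S$ (the base change of $S$ being harmless), since $\underline{X}\to\underline{\P^{n-1}}$ is smooth and $\widetilde{\P^{n-1}}$ is a smooth divisor.

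The main obstacle I anticipate is the bookkeeping identifying the strict transform $\widetilde{\P^{n-1}}$ with the torus-invariant divisor $V(\rho)$ for $\rho=\operatorname{Cone}(-e_1-\cdots-e_n)$ and checking that in each trivializing chart $U_i$ this becomes a constant section $\{x_n=0\}$ of the $\P^1$-factor rather than something that moves with the base. This is where one must be careful about which coordinate on the $\P^1$-fiber the section picks out, and to confirm it is globally a section (disjoint from $E$) and not merely a multisection; the fact that the ray $-e_1-\cdots-e_n$ lies in every $\sigma_i'$ and maps to $0\in\Z^{n-1}$ under $\varphi$ is exactly what makes this work, but spelling it out chart-by-chart using the orbit–cone correspondence (as recalled before Definition \ref{df::projectivebundle}, via $V(\sigma)$ and the epimorphism $\Z[P]\to\Z[P]/\Z[P-G]$) is the technical heart.
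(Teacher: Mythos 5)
Your proof is correct and follows essentially the same route as the paper: both reduce to the toric charts of Lemma \ref{A.6.4} and observe that on each $U_i\cup U_i'\cong \A^{n-1}\times \P^1$ the boundary divisor is the constant section $\{x_n=0\}$ of the $\P^1$-factor, so the induced compactifying log structure turns this chart into $\A^{n-1}\times\boxx$. The paper simply records the explicit local generator $x_n/x_{i-1}$ of the log structure on $U_i'$ (it being trivial on $U_i$), which is the same content as your identification of the strict transform with $V({\rm Cone}(-e_1-\cdots-e_n))$, a ray contained in every $\sigma_i'$, absent from every $\sigma_i$, and mapped to $0$ by $\varphi$.
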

\begin{proof}
The cones $\sigma_i$ and $\sigma_i'$ appearing in the proof of Lemma \ref{A.6.4} correspond espectively to
\[
U_i:={\rm Spec}\left( \Z\left[\frac{x_0}{x_{i-1}},\ldots,\frac{x_{i-2}}{x_{i-1}},\frac{x_i}{x_{i-1}},\ldots,\frac{x_{n-1}}{x_{i-1}},\frac{x_{i-1}}{x_n}\right]\right)
\]
and   
\[U_i':={\rm Spec}\left( \Z\left[\frac{x_0}{x_{i-1}},\ldots,\frac{x_{i-2}}{x_{i-1}},\frac{x_i}{x_{i-1}},\ldots,\frac{x_{n-1}}{x_{i-1}},\frac{x_{n}}{x_{i-1}}\right]\right).
\] 
Their log structures induced by $(\P^n,\P^{n-1})$ are given by
\[
\N \left(\frac{x_{n}}{x_{i-1}}\right)\rightarrow \Z\left[\frac{x_0}{x_{i-1}},\ldots,\frac{x_{i-2}}{x_{i-1}},\frac{x_i}{x_{i-1}},\ldots,\frac{x_{n-1}}{x_{i-1}},\frac{x_{n}}{x_{i-1}}\right].
\]
In the proof of Lemma \ref{A.6.4}, we noted that 
$$
U_i\cup U_i'\cong \A^{n-1}\times \P^1.
$$
If we impose the above log structure, then this becomes $\A^{n-1}\times \boxx$, and the claim follows.
\end{proof}

\begin{lem}\label{A.3.28}
For $n\geq 2$ we let $X$ be the blow-up of $\P^n$ at the point $[0:\cdots:0:1]$.
Let $H$ be the divisor of $\P^n$ defined by $(x_0=0)$, 
let $E$ be the exceptional divisor on $X$, and let $H'$ be the strict transform of $H$ in $X$.
Then 
$$
(X,H'+E)
$$ 
is a $\boxx$-bundle over $(\P^{n-1},\P^{n-2})$.
\end{lem}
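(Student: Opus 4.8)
The strategy is to refine the toric computation of Lemma \ref{A.6.4} by keeping track of the two boundary divisors $H'$ and $E$ in terms of the fan. As in the proof of Lemma \ref{A.6.4}, write $X$ as the toric variety associated with the fan whose maximal cones are the $\sigma_i$ and $\sigma_i'$ for $1\leq i\leq n$, using the standard basis $e_1,\ldots,e_n$ of $\Z^n$. First I would identify the exceptional divisor $E$ of the blow-up $B_{[0:\cdots:0:1]}\P^n\to \P^n$ with the orbit closure $V(\rho)$ of the ray $\rho={\rm Cone}(e_1+\cdots+e_n)$, and the strict transform $H'$ of the hyperplane $H=(x_0=0)$ with the orbit closure $V(\rho')$ of the ray $\rho'=\rho_0$, where $\rho_0$ is the ray of $\P^n$ dual to the coordinate $x_0$; concretely $\rho_0={\rm Cone}(-e_1-\cdots-e_n)$ is the ray not lying in the wall spanned by $e_1,\dots,e_n$. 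So on $X$ the relevant boundary fan $\Sigma'$ consists of the two rays $\rho$ and $\rho_0$ (and the origin), and $(X,H'+E)=\A_{(\Sigma,\Sigma')}$ in the notation introduced above.

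Next I would run the same lattice projection $\varphi\colon \Z^n\to \Z^{n-1}$, $(x_1,\ldots,x_n)\mapsto (x_1-x_n,\ldots,x_{n-1}-x_n)$, used in Lemma \ref{A.6.4}, which realizes $X\to \P^{n-1}$ as a $\P^1$-bundle. The key additional point is to chase the two boundary rays through $\varphi$. One computes $\varphi(e_1+\cdots+e_n)=e_1+\cdots+e_{n-1}$, which is the ray generating the distinguished cone $\tau_n={\rm Cone}(e_1,\ldots,e_{n-1})$ of the $\P^{n-1}$-fan, i.e. the ray dual to the coordinate $x_n$ on $\P^{n-1}$; this is exactly the ray whose orbit closure is the hyperplane $\P^{n-2}\subset \P^{n-1}$ in the convention fixed in the excerpt ($\P^{n-1}$ is the hyperplane $x_n=0$, and inductively $\P^{n-2}$ is cut out by the next coordinate). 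Meanwhile $\varphi(-e_1-\cdots-e_n)=-e_1-\cdots-e_{n-1}$ maps into the interior relative to the fiber direction, so $H'$ restricts to a section-type divisor that on each chart $U_i\cup U_i'\cong \A^{n-1}\times \P^1$ becomes $\A^{n-1}\times \{pt\}$; combined with $E$ restricting to $\A^{n-1}\times\{\text{the other point}\}$ — wait, rather: $E$ maps \emph{onto} $\P^{n-1}$, so over the open locus it is the relative hyperplane at infinity of each $\P^1$-fiber, and $H'$ is the complementary section. Thus Zariski-locally on the base, after the chart identifications from Lemma \ref{A.6.5}, the pair $(X,H'+E)\to \P^{n-1}$ looks like $(\A^{n-1}\times \boxx)\to \A^{n-1}$ fibered over a base which itself carries the compactifying log structure of $\P^{n-2}$, i.e. it is a $\boxx$-bundle over $(\P^{n-1},\P^{n-2})$.

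Concretely I would phrase the local model as follows: on the chart $U_i\cup U_i'$ the base coordinates are $\frac{x_0}{x_{i-1}},\ldots,\widehat{\frac{x_{i-1}}{x_{i-1}}},\ldots,\frac{x_{n-1}}{x_{i-1}}$, one of which (say $\frac{x_0}{x_{i-1}}$ when $i\neq 1$, or the appropriate analogue) carries the $\N$-log structure coming from $\P^{n-2}\subset\P^{n-1}$, while the fiber coordinate $\frac{x_{i-1}}{x_n}$ resp.\ $\frac{x_n}{x_{i-1}}$ glues to a $\P^1$ whose divisor at $x_n=0$ is exactly $E$, carrying an $\N$-log structure — this reproduces $\A^{n-2}\times \A_\N \times \boxx$, which is $(\A^{n-1}\times\boxx)$ with the product log structure over $(\A^{n-2}\times\A_\N)$, matching the local form of a $\boxx$-bundle over $(\P^{n-1},\P^{n-2})$ in Definition \ref{df::projectivebundle}. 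I expect the main obstacle to be bookkeeping: verifying that the divisor $H'$ really does become, chart by chart, the "$x_n=0$'' hyperplane of the base rather than part of the fiber boundary, and making the gluing of the fiber $\boxx$'s compatible with the base log structure; this is a finite but slightly delicate toric computation, and the cleanest way is to do it once on a single chart and invoke toric functoriality (orbit–cone correspondence, \cite[Proposition I.3.3.4]{Ogu}) for the rest, exactly as in the proofs of Lemmas \ref{A.6.4} and \ref{A.6.5}.
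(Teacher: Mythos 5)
Your overall strategy — run the toric description of Lemma \ref{A.6.4}/\ref{A.6.5} and track the two boundary divisors through the lattice projection $\varphi$ — is the same computation the paper does (the paper phrases it via the explicit chart coordinates on $U_i$ and $U_i'$ rather than via rays). However, the execution contains errors in exactly the identifications that carry the content of the lemma.

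First, the dictionary between rays and coordinate hyperplanes is wrong. In the setup of Lemma \ref{A.6.4}, the chart $U_i'$ is $\{x_{i-1}\neq 0\}$ and the blown-up point $[0:\cdots:0:1]$ is the torus-fixed point of the cone ${\rm Cone}(e_1,\ldots,e_n)$; this forces $V(e_i)=(x_{i-1}=0)$ for $i=1,\ldots,n$ and $V(-e_1-\cdots-e_n)=(x_n=0)$. Hence $H'=V(e_1)$, not $V(-e_1-\cdots-e_n)$: the ray you assign to $H'$ actually corresponds to the hyperplane $(x_n=0)$, which does not pass through the center and carries no log structure in this lemma. (You are implicitly using the standard convention $u_0=-e_1-\cdots-e_n\leftrightarrow x_0$, which is incompatible with the fan of Lemma \ref{A.6.4}: under that convention the star subdivision at ${\rm Cone}(e_1,\ldots,e_n)$ would be the blow-up at $[1:0:\cdots:0]$.) Second, the computation of $\varphi$ on the central rays is false: $\varphi(e_1+\cdots+e_n)=\varphi(-e_1-\cdots-e_n)=0$, not $\pm(e_1+\cdots+e_{n-1})$. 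So $E$ and the strict transform of $(x_n=0)$ are the two disjoint \emph{sections} of the $\P^1$-bundle ($E$ being the one that supplies the fiberwise $\boxx$-boundary), while $H'=V(e_1)$ maps onto the ray $e_1$ of the base fan, i.e.\ $H'$ is the full preimage of the base hyperplane $(x_0=0)=\P^{n-2}$ and is not a section at all. Your description of the roles of $E$ and $H'$ ("$E$ maps to $\P^{n-2}$", "$H'$ is the complementary section", "$\P^{n-2}$ is cut out by the next coordinate") therefore has the two divisors essentially interchanged, and the base boundary must be $(x_0=0)$ — the image of $H'$ — for the statement to come out right. The paper sidesteps all of this by writing down the induced log structures on $U_i$ and $U_i'$ directly (rank $2$ on $U_i$ for $i>1$, generated by $x_0/x_{i-1}$ and $x_{i-1}/x_n$; rank $1$ on $U_i'$, generated by $x_0/x_{i-1}$) and checking $X_i\cup X_i'\cong Y_i\times\boxx$ with the $Y_i$ gluing to $(\P^{n-1},(x_0=0))$; your final local model $\A^{n-2}\times\A_\N\times\boxx$ agrees with this, but the route you take to it does not support it.
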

\begin{proof}
Recall the schemes $U_i$ and $U_i'$ appearing in the proof of Lemma \ref{A.6.5}. 
We equip $U_i$ with the log structures 

\[\N\left(\frac{x_0}{x_{i-1}}\right)\oplus \N\left(\frac{x_{i-1}}{x_{n}}\right)\rightarrow \Z\left[\frac{x_0}{x_{i-1}},\ldots,\frac{x_{i-2}}{x_{i-1}},\frac{x_i}{x_{i-1}},\ldots,\frac{x_{n-1}}{x_{i-1}},\frac{x_{n}}{x_{i-1}}\right]\text{ if }i>1,\]

\[\N\left(\frac{x_0}{x_{n}}\right)\rightarrow \Z\left[\frac{x_0}{x_{i-1}},\ldots,\frac{x_{i-2}}{x_{i-1}},\frac{x_i}{x_{i-1}},\ldots,\frac{x_{n-1}}{x_{i-1}},\frac{x_{n}}{x_{i-1}}\right]\text{ if }i=1,\]
and $U_i'$ with the log structures 

\[\N\left(\frac{x_0}{x_{i-1}}\right)\rightarrow \Z\left[\frac{x_0}{x_{i-1}},\ldots,\frac{x_{i-2}}{x_{i-1}},\frac{x_i}{x_{i-1}},\ldots,\frac{x_{n-1}}{x_{i-1}},\frac{x_{i-1}}{x_{n}}\right]\text{ if }i>1,\]

\[0\rightarrow  \Z\left[\frac{x_0}{x_{i-1}},\ldots,\frac{x_{i-2}}{x_{i-1}},\frac{x_i}{x_{i-1}},\ldots,\frac{x_{n-1}}{x_{i-1}},\frac{x_{i-1}}{x_{n}}\right]\text{ if }i=1.\]
\vspace{0.1in}

Let $X_i$ and $X_i'$ denote the associated fs log schemes, and let $Y_i$ be the fs log scheme associated with
\[
\N\left(\frac{x_0}{x_{i-1}}\right)\rightarrow  \Z\left[\frac{x_0}{x_{i-1}},\ldots,\frac{x_{i-2}}{x_{i-1}},\frac{x_i}{x_{i-1}},\ldots,\frac{x_{n-1}}{x_{i-1}}\right].
\]
With these definitions, we have 
\[
X_i\cup X_i'\cong Y_i\times \boxx.
\] 
This finishes the proof because the $Y_i$'s glue together to form $(\P^{n-1},\P^{n-2})$.  
Here we consider $\P^{n-2}$ as the divisor on $\P^{n-1}$ defined by $(x_0=0)$.
\end{proof}

\begin{lem}
\label{A.3.33}
Let $X$ be the blow-up of $\P^n$ along the point $[0:\cdots:0:1]$, and let $E$ be the corresponding exceptional divisor on $X$.
Then $(X,E)$ is a $\boxx$-bundle over $\P^{n-1}$.
\end{lem}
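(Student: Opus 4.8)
The plan is to reuse the explicit toric description established in the proofs of Lemmas \ref{A.6.4} and \ref{A.6.5}. Recall that $X$ is covered by the charts $U_i \cup U_i'$, each isomorphic to $\A^{n-1}\times \P^1$, with the exceptional divisor $E$ of the blow-up of $\P^n$ at $[0:\cdots:0:1]$ corresponding to the locus where the last projective coordinate vanishes. The key observation is that on the chart $U_i\cup U_i'$, the divisor $E$ restricts precisely to the zero section $\A^{n-1}\times\{0\}$ of the $\P^1$-factor, so that the compactifying log structure associated to the complement of $E$ makes $U_i\cup U_i'$ into $\A^{n-1}\times(\P^1,\{0\})\cong \A^{n-1}\times\boxx$, since $(\P^1,\{0\})\cong(\P^1,\{\infty\})=\boxx$ by the coordinate swap.

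First I would recall from the proof of Lemma \ref{A.6.4} the fan with maximal cones $\sigma_i={\rm Cone}(e_1,\ldots,\widehat{e_i},\ldots,e_n,e_1+\cdots+e_n)$ and $\sigma_i'={\rm Cone}(e_1,\ldots,\widehat{e_i},\ldots,e_n,-e_1-\cdots-e_n)$, and the morphism $\varphi\colon\Z^n\to\Z^{n-1}$, $(x_1,\ldots,x_n)\mapsto(x_1-x_n,\ldots,x_{n-1}-x_n)$, which induces the $\P^1$-bundle $X\to\P^{n-1}$ with $\varphi(\sigma_i\cup\sigma_i')=\tau_i$. Next I would identify $E$ as a subfan in the toric picture: the exceptional divisor corresponds to the ray $\mathbb{R}_{\geq 0}(e_1+\cdots+e_n)$, i.e.\ to the ray separating $\sigma_i$ from $\sigma_i'$ inside $U_i\cup U_i'$. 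Then I would endow each chart $U_i\cup U_i'$ with the compactifying log structure along this ray. Concretely, as in the proof of Lemma \ref{A.6.5}, the chart $U_i\cup U_i'\cong\A^{n-1}\times\P^1$ acquires the log structure generated by the coordinate cutting out the zero section of the $\P^1$-factor, so it becomes $\A^{n-1}\times\boxx$.

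The main point then is gluing: I would check that these $\boxx$-structures on the charts $U_i\cup U_i'$ are compatible on overlaps, which is automatic since they all arise by restriction from the single compactifying log structure on $X$ associated with the open immersion $X-E\hookrightarrow X$ (here $E$ is a smooth divisor, so this is a well-defined fs log structure by Definition \ref{A.0.2}). The resulting fs log scheme $(X,E)$ is, Zariski locally over the standard affine charts $\{\tau_i\}$ of $\P^{n-1}$, isomorphic to $\A^{n-1}\times\boxx$, and these isomorphisms are compatible with the projection to $\P^{n-1}$ induced by $\varphi$. Hence $(X,E)\to\P^{n-1}$ is a $\boxx$-bundle in the sense of Definition \ref{df::projectivebundle}.

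I expect the main obstacle to be purely bookkeeping rather than conceptual: one must verify carefully that the ray $\mathbb{R}_{\geq 0}(e_1+\cdots+e_n)$ is exactly the toric divisor corresponding to $E$ (and not $H'$ or something else), and that under the chart isomorphism $U_i\cup U_i'\cong\A^{n-1}\times\P^1$ this ray maps to the zero section of the $\P^1$-factor. Once this identification is pinned down, the claim follows formally, and the coordinate change $(\P^1,\{0\})\cong(\P^1,\{\infty\})=\boxx$ finishes the proof. Since this is closely parallel to Lemmas \ref{A.6.4}, \ref{A.6.5}, and \ref{A.3.28}, I would keep the exposition brief and simply point to the relevant charts already introduced there.
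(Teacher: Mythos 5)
Your proposal is correct and follows essentially the same route as the paper: the paper's proof also reuses the charts $U_i\cup U_i'\cong \A^{n-1}\times\P^1$ from Lemmas \ref{A.6.4} and \ref{A.6.5}, records that the log structure induced by $(X,E)$ on these charts is generated by the single local equation $x_{i-1}/x_n$ of $E$, and concludes that each chart is $\A^{n-1}\times\boxx$. Your fan-theoretic identification of $E$ with the ray through $e_1+\cdots+e_n$ is just the toric rephrasing of the paper's explicit coordinate computation, and the gluing/coordinate-swap remarks you add are the implicit steps the paper leaves to the reader.
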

\begin{proof}
Recall the schemes $U_i$ and $U_i'$ appearing in the proof of Lemma \ref{A.6.5}.
Their log structures induced by $(X,E)$ are given by

\[
\N \left(\frac{x_{i-1}}{x_{n}}\right)\rightarrow \Z\left[\frac{x_0}{x_{i-1}},\ldots,\frac{x_{i-2}}{x_{i-1}},\frac{x_i}{x_{i-1}},\ldots,\frac{x_{n-1}}{x_{i-1}},\frac{x_{i-1}}{x_{n}}\right].
\]
Our claim follows by noting there is an isomorphism 
$$
U_i\cup U_i'\cong \boxx\times \A^{n-1}.
$$
\end{proof}

\begin{prop}
\label{A.3.40}
Suppose $Y$ is a $\boxx$-bundle over $X\in\mathscr{S}/S$.
If $\cT$ satisfies {\rm ($\boxx$-inv)} and {\rm ($sNis$-des)}, then the projection $Y\rightarrow X$ induces an isomorphism
\[
M(Y)\xrightarrow{\cong} M(X).
\]
\end{prop}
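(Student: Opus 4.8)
The plan is to induct on the minimal cardinality $r$ of a finite Zariski open cover $\{X_1,\dots,X_r\}$ of $\underline{X}$ that trivializes $\xi$, i.e.\ such that $\xi^{-1}(X_i)\cong X_i\times\boxx$ over $X_i$ for each $i$; such a finite cover exists since $X$ is of finite type over the noetherian base, hence quasi-compact. When $r\le 1$ the bundle is trivial and the statement is exactly {\rm ($\boxx$-inv)}. For $r>1$, I would set $U:=X_1\cup\cdots\cup X_{r-1}$ and $V:=X_r$, write $Y_U:=\xi^{-1}(U)$, $Y_V:=\xi^{-1}(V)$, $Y_{UV}:=\xi^{-1}(U\cap V)=Y_U\cap Y_V$, and let $\xi_U,\xi_V,\xi_{UV}$ be the restrictions of $\xi$. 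Since a $\boxx$-bundle restricts to a $\boxx$-bundle along an open immersion, and $\xi_U$ (resp.\ $\xi_{UV}$) is trivialized by the $(r-1)$-element cover $\{X_i\}_{i<r}$ (resp.\ $\{X_i\cap X_r\}_{i<r}$) while $\xi_V$ is already trivial, the inductive hypothesis together with {\rm ($\boxx$-inv)} gives that $M(\xi_U)$, $M(\xi_V)$ and $M(\xi_{UV})$ are all isomorphisms.

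Next I would observe that $\{U,V\}$ and $\{Y_U,Y_V\}$ determine Zariski --- hence strict Nisnevich --- distinguished squares in $\mathscr{S}/S$, the second being the pullback of the first along $\xi$, so that $\xi$ induces a morphism between these two squares. Applying {\rm ($sNis$-des)} to each square and using the functoriality of the cone construction (condition {\rm (Sq)}) produces a commutative square in $\cT$ whose horizontal arrows are the isomorphisms $M(Y_{UV}\to Y_V)\xrightarrow{\cong}M(Y_U\to Y)$ and $M(U\cap V\to V)\xrightarrow{\cong}M(U\to X)$, and whose left vertical arrow is the map of cones induced by $M(\xi_{UV})$ and $M(\xi_V)$. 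That left vertical arrow is an isomorphism because both inducing maps are, so the right vertical arrow $M(Y_U\to Y)\to M(U\to X)$ is an isomorphism as well. Feeding this, together with the isomorphism $M(\xi_U)$, into the five lemma for the evident morphism of distinguished triangles
\[
\bigl(M(Y_U)\to M(Y)\to M(Y_U\to Y)\to M(Y_U)[1]\bigr)\longrightarrow\bigl(M(U)\to M(X)\to M(U\to X)\to M(U)[1]\bigr)
\]
shows that $M(\xi)\colon M(Y)\to M(X)$ is an isomorphism, completing the induction.

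The only delicate point --- and the main thing to get right --- is the bookkeeping: one must verify that the two squares really are strict Nisnevich distinguished and that one is the $\xi$-pullback of the other (so that the naturality built into {\rm (Sq)}, {\rm (MSq)} applies), and one must correctly match the corners of our squares with the roles $Y',X',Y,X$ of the axiomatic square \eqref{A.3.0.2}, so that {\rm ($sNis$-des)} delivers $M(Y_{UV}\to Y_V)\cong M(Y_U\to Y)$ in the orientation needed. No genuinely hard step arises; the rest is formal manipulation in $\cT$ using {\rm (Sq)}, {\rm (MSq)}, {\rm ($sNis$-des)} and {\rm ($\boxx$-inv)}. Alternatively, one could first note that {\rm ($sNis$-des)} implies the Zariski separation property {\rm ($Zar$-sep)}, and then apply {\rm ($Zar$-sep)} in one stroke to the square with corners $\emptyset,\emptyset,Y,X$, reducing the claim to {\rm ($\boxx$-inv)} applied to the trivial bundles $\xi^{-1}(X_I)\to X_I$ over all finite intersections $X_I$.
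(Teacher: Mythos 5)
Your proof is correct and is essentially the paper's argument: the paper also inducts on the number of open sets in a trivialization, uses ($sNis$-des) in the form of Mayer--Vietoris to reduce to the trivial bundle, and concludes with ($\boxx$-inv). You have simply written out in full the bookkeeping that the paper's two-line proof leaves implicit.
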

\begin{proof}
Use induction on the number of open subsets in a trivialization of $Y$; then from ($sNis$-des), we may assume $Y\cong X\times \boxx$. 
To conclude, we apply ($\boxx$-inv).
\end{proof}

\subsection{Invariance under admissible blow-ups along smooth centers}\label{ssec:invariancesmoothblowups}
In this subsection, 
assuming the properties ($Zar$-sep), ($\boxx$-inv), and ($sNis$-des) we will extend the invariance under log modifications ($div$-des) to admissible blow-ups along smooth centers, 
whose definition is given below.

\begin{df}
\label{A.3.17}
Suppose that $Z$ is an effective Cartier divisor on $X\in Sm/S$, and let $i\colon Z\rightarrow X$ be the corresponding closed immersion.
Then $Z$ is a {\it strict normal crossing divisor}\index{divisor!strict normal crossing} on $X$ over $S$ if Zariski locally on $X$, there is a cartesian square 
\[
\begin{tikzcd}
Z\arrow[d,"i"']\arrow[r]&S\times \Spec {\Z[x_1,\ldots,x_n]/(x_1\cdots x_r)}\arrow[d]\\
X\arrow[r]&    S\times \Spec{ \Z[x_1,\ldots,x_n]}
\end{tikzcd}
\]
of schemes over $S$ for some $r\leq n$, 
where the horizontal morphisms are \'etale.
\vspace{0.1in}

We say the divisors $Z_1,\ldots,Z_r$ on $X\in Sm/S$ form a {\it strict normal crossing divisor} if 
\[
Z_1+\cdots+Z_r
\] 
is a strict normal crossing divisor on $X$ over $S$.
\vspace{0.1in}

Suppose that $Z$ is a strict normal crossing divisor on $X$ over $S$, and let $v:W\rightarrow X$ be a closed immersion of schemes smooth over $S$. 
We say that $W$ has {\it strict normal crossing} with $Z$ over $S$ if Zariski locally on $X$, there is a commutative diagram of cartesian squares of schemes over $S$ 
\[
\begin{tikzcd}
Z\arrow[d,"i"']\arrow[r]&S\times \Spec{ \Z[x_1,\ldots,x_n]/(x_1\cdots x_r)}\arrow[d]\\
X\arrow[d,leftarrow,"v"']\arrow[r]&S\times \Spec{ \Z[x_1,\ldots,x_n]}\arrow[d,leftarrow]\\
W\arrow[r]&S\times \Spec {\Z[x_1,\ldots,x_n]/(x_{i_1},\ldots,x_{i_s})}
\end{tikzcd}\]
for some $r\leq n$ and $1\leq i_1<\cdots<i_s\leq n$, 
where the right vertical morphisms are the evident closed immersions and the horizontal morphisms are \'etale. Note that, a priori, in this formulation $W$ can be contained in a component of $Z$. If $r< i_1< \cdots i_s\leq n$, we get, in particular, that the restriction of $Z$ to $W$ is again a Cartier divisor, and that it forms a strict normal crossing divisor on $W$.
\end{df}

\begin{df}\label{def:associative-notation-log}
For divisors $Z_1,\ldots,Z_r$ forming a strict normal crossing divisor on $X\in Sm/S$, for $1\leq s\leq r$, we use the convenient ``associative'' notation
\[
\begin{split}
((X,Z_{s+1}+\cdots +Z_r),Z_1'+\cdots+Z_s')&:=(X,Z_1+\cdots+Z_r)
\end{split}
\]
where $Z_i'$ is any fs log scheme such that $\underline{Z_i'}\cong Z_i$ (we refer to Definition \ref{A.0.2} for the notation $(X,Z_1+\cdots+Z_r)$).
\end{df}

To illustrate, suppose $X\in SmlSm/S$ has compactifying log structure given by $Z_{s+1}+\cdots +Z_r$, written $(X, Z_{s+1}+\cdots+Z_r)$. 
And let us consider a strict normal crossing divisor $Z_1+\cdots+Z_s$ on $X$ such that $Z_1+\cdots+Z_s + Z_{s+1}+\cdots +Z_r$ is again strict normal crossing. 
This determines another object of $SmlSm/S$, 
namely $(X, Z_{1}+\cdots+Z_r)$, 
equipped with a canonical map 
\[(X, Z_{1}+\cdots+Z_r) \to (X, Z_{s+1}+\cdots+Z_r) \]
obtained by adding the divisor $Z_{1}+\cdots+Z_s$ to the log structure. 
\vspace{0.1in}

For instance, if $Y:=(X,Z_{s+1},\cdots,Z_r)$, then we obtain
\[
(Y,Z_1+\cdots+Z_s)=(X,Z_1+\cdots +Z_r).
\]
If $Y=\P^1\times \boxx=(\P^1\times \P^1,\P^1\times \{\infty\})$ and $Z_1'=\{\infty\}\times \boxx$, then
\[
(\P^1\times \boxx,\{\infty\}\times \boxx)=\boxx^2.
\]

\begin{df}
An \emph{admissible blow-up along a smooth center over $S$} \index{blow-up!along a smooth center}(or along a smooth center for short) is a proper birational morphism
\[
f:X'\rightarrow X
\] 
in $SmlSm/S$ for which the following properties hold. 
\begin{enumerate}
\item[(1)] $\underline{f}$ is the blow-up along a center $Z$ in $\underline{X}$ such that $Z$ is smooth over $S$.
\item[(2)] $\partial X'$ is the union of $f^{-1}(Z)$ and the strict transform of $\partial X$ with respect to $f$.
\item[(3)] $Z$ is contained in $\partial X$ and has strict normal crossing with $\partial X$ over $S$ in the sense of Definition \ref{A.3.17}.
\end{enumerate}

Note that by Lemma \ref{lem::SmlSm}, we may assume that $\partial X$ is a strict normal crossing divisor on $X$, 
so that condition (3) makes sense. 
The open immersion $X-Z\rightarrow X$ lifts to $X'$.
Hence the log structure on $X'$ is precisely the compactifying log structure associated with the composite open immersion
\[
X-\partial X\rightarrow X-Z\rightarrow X'.
\]
If $Z$ has codimension $d$ in $X$, then we say that $f$ is an \emph{admissible blow-up along a codimension $d$ smooth center}.
Let $\cA\cB l_{Sm}/S$ denote the smallest class of morphisms in $\mathscr{S}/S$ containing all admissible blow-ups along a smooth center and closed under composition.
\end{df}

Let us give a fundamental example of admissible blow-ups along a smooth center.

\begin{exm}
\label{A.3.41}
Let $X=\A_\N^p\times \A^q$ and let $e_1,\ldots,e_{p+q}$ denote the standard coordinates on $\Z^n$.
We set
\[
\sigma:={\rm Cone}(e_1,\ldots,e_{p+q}),\;
\sigma':={\rm Cone}(e_1,\ldots,e_p),
\]
and form the associated fans
\[
\Sigma:=\{\sigma\},\;\Sigma':=\{\sigma'\}.
\]
Then $X$ can be written as $\A_{(\Sigma,\Sigma')}$.
\vspace{0.1in}

Let $X'$ be the admissible blow-up of $X$ along the origin.
Then
\[
\underline{X'}=\underline{\A_{\Gamma}},
\]
where $\Gamma$  denotes the star subdivision\index{fan!star subdivision} $\Sigma^*(\sigma)$ of $\Sigma$ relative to $\sigma$.
Let $\Gamma'$ be the fan consisting of all cones $\sigma\in \Gamma$ satisfying
\[
\sigma\subset {\rm Cone}(e_1,\ldots,e_p,e_1+\cdots+e_p).
\]
Note that
\[
\partial X=V(e_1)\cup \cdots \cup V(e_p).
\]
The strict transform of $V(e_i)\subset X$ for $1\leq i\leq p$ with respect to $f$ is again $V(e_i)\subset X'$.
The exceptional divisor on $X'$ is $V(e_1+\cdots+e_p)$.
Using these descriptions of the divisors, we deduce that
\[
X'=\A_{(\Gamma,\Gamma')}.
\]
For $p=q=1$, 
we are considering $\A^2$ with log structure given by the affine line $(x=0)=H \subset \A^2$, and $X'$ is $B_{(0,0)}(\A^2)$ with log structure $\tilde{H} + E$, 
where $\tilde{H}$ is the strict transform of $H$. 
Note that $X'\to X$  \emph{is not} a log modification. 
In fact, since the log structure on $\A^2$ is given by $\N\to \Spec{\Z[x,y]}$, $n\mapsto x^n$, there are no non-trivial log modification.  
\end{exm}

We will first prove a particular case of Theorem \ref{A.3.7} (invariance under admissible blow-ups), 
which is the crucial point of the proof in the general case.
Roughly speaking, 
it says that the cone of an admissible blow-up morphism is insensitive to the addition of a smooth component of the boundary divisor, 
provided that it stays normal crossing with the center. 
This is carried out,  
however, 
without constructing directly any map between the two cones. 
A posteriori, this invariance property could be re-interpreted using the Gysin triangle.

\begin{prop}
\label{A.3.39}
Assume that $\cT$ satisfies {\rm ($\boxx$-inv)} and {\rm ($sNis$-des)}.
Let $H_1$ and $H_2$ be the axes of $Y:=\A^2$, let $H_1'$ and $H_2'$ be the strict transforms in the blow-up $Y':=B_{\{(0,0)\}}\A^2$ at $(0,0)$, 
and let $E$ be the exceptional divisor on $Y'$.
For $X\in \mathscr{S}/S$, the naturally induced morphism
\[
M(X\times(Y',E+H_2'))\rightarrow M(X\times(Y,H_2))
\]
is an isomorphism if and only if the naturally induced morphism
\[
M(X\times(Y',E+H_1'+H_2'))\rightarrow M(X\times(Y,H_1+H_2))
\]
is an isomorphism.
\end{prop}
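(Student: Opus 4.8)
The statement is a comparison between two admissible blow-up cones that differ only by the presence of the extra boundary component $H_1'$ upstairs and $H_1$ downstairs. My plan is to reduce everything to a toric computation of $\boxx$-bundles, exactly in the spirit of Lemmas \ref{A.6.4}--\ref{A.3.33} and Proposition \ref{A.3.40}. First I would set up notation: write $Y=\A^2=\A_{(\Sigma,\Sigma')}$ with $\Sigma=\{{\rm Cone}(e_1,e_2)\}$ and $\Sigma'=\{0\}$ (trivial log structure, so as noted in Example \ref{A.3.41} there are no nontrivial log modifications), and realize $Y'=B_{\{(0,0)\}}\A^2=\A_\Gamma$ where $\Gamma=\Sigma^*({\rm Cone}(e_1,e_2))$ is the star subdivision with the two maximal cones ${\rm Cone}(e_1,e_1+e_2)$ and ${\rm Cone}(e_2,e_1+e_2)$. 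The four log schemes appearing in the statement are then all of the form $\A_{(\Gamma,\Gamma')}$ for suitable subfans $\Gamma'$, and likewise $(Y,H_2)$, $(Y,H_1+H_2)$ are of the form $\A_{(\Sigma,\Sigma')}$.

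\textbf{Key steps.} The core step is to identify $(Y',E+H_2')$ as a $\boxx$-bundle over $\boxx$ (equivalently over $(\P^1,\{0\})$, i.e.\ the one-dimensional toric log scheme $\A_{(\{{\rm Cone}(f_1),{\rm Cone}(-f_1)\},\{{\rm Cone}(f_1)\})}$ restricted appropriately), while $(Y,H_2)$ is the trivial $\A^1$-times-$\boxx$ situation, i.e.\ $\boxx\times\A^1$; this is precisely the computation behind Lemma \ref{A.3.28} and Lemma \ref{A.3.33}, adapted to the surface case $n=2$. Concretely, the two maximal cones of $\Gamma$ glue, after imposing the log structure $E+H_2'$, to give $U_1\cup U_1'\cong Y_1\times\boxx$ in the notation of Lemma \ref{A.3.28}, where $Y_1$ carries the log structure of $(\P^1,\{0\})$; similarly $(Y,H_2)\cong(\P^1,\{0\})\times\A^1$ with the $\A^1$-factor trivial. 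Thus both $X\times(Y',E+H_2')\to X\times(\P^1,\{0\})$ and $X\times(Y,H_2)\to X\times(\P^1,\{0\})$ are $\boxx$-bundles in the sense of Definition \ref{df::projectivebundle}, and by Proposition \ref{A.3.40} (which needs only ($\boxx$-inv) and ($sNis$-des)) the morphism $M(X\times(Y',E+H_2'))\to M(X\times(Y,H_2))$ factors as two isomorphisms $M(X\times(Y',E+H_2'))\xrightarrow{\cong} M(X\times(\P^1,\{0\}))\xleftarrow{\cong} M(X\times(Y,H_2))$, so the left-hand morphism is \emph{unconditionally} an isomorphism. The same analysis, now imposing the log structure $E+H_1'+H_2'$, realizes $(Y',E+H_1'+H_2')$ as a $\boxx$-bundle over $(\P^1,\{0,\infty\})=(\P^1,\{0\})\cup$\dots, i.e.\ over the one-dimensional toric log scheme with \emph{both} torus-fixed points in the log structure, which is isomorphic to $\boxx$ blown up\,/\,itself; and $(Y,H_1+H_2)\cong(\P^1,\{0\})\times(\P^1,\{0\})=\boxx^2$ after the same identification. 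Again by Proposition \ref{A.3.40} both sides are $\boxx$-bundles over the same base, and $M(X\times(Y',E+H_1'+H_2'))\to M(X\times(Y,H_1+H_2))$ is \emph{also} unconditionally an isomorphism.

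\textbf{Why the ``if and only if'' then holds trivially, and the real obstacle.} Once both displayed morphisms are shown to be isomorphisms outright, the biconditional is vacuously true: both sides of the ``if and only if'' are ``true''. The genuine content, and the part I expect to be the main obstacle, is therefore \emph{not} logical manipulation but the careful toric bookkeeping: one must verify that the gluing of the two charts $U_1,U_1'$ of the blow-up, \emph{with the specified compactifying log structures}, really does produce a $\boxx$-bundle, i.e.\ that the transition functions respect the $\boxx$-factor, and that the ``bundle'' is locally trivial in the \emph{strict Nisnevich} (indeed Zariski) sense of Definition \ref{df::projectivebundle} so that Proposition \ref{A.3.40} applies. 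This amounts to writing out the monomial generators of the log structures on $U_1=\Spec\Z[x_0/x_1,\,x_1/x_2]$ and $U_1'=\Spec\Z[x_0/x_1,\,x_2/x_1]$ (mirroring the displays in Lemma \ref{A.3.28}) and checking that the change of coordinates $x_1/x_2=(x_2/x_1)^{-1}$ identifies $U_1\cup U_1'$ with $(\text{base})\times\boxx$, with the base being $(\P^1,\{0\})$ or $(\P^1,\{0,\infty\})$ according to whether $H_1'$ is added. I would also double-check the edge case where the center $\{(0,0)\}$ meets $H_1$ and $H_2$ transversally, so that condition (3) of the definition of admissible blow-up along a smooth center is satisfied and the strict-transform descriptions of $H_1',H_2'$ and the exceptional divisor $E=V(e_1+e_2)$ are the ones used above. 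No property beyond ($\boxx$-inv) and ($sNis$-des) is needed; in particular ($Zar$-sep) and ($div$-des) play no role here, which is consistent with the hypothesis of the proposition.
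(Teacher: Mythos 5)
There is a genuine gap, and it sinks the whole argument: the $\boxx$-bundle identifications you rely on are false for the \emph{affine} log schemes appearing in the statement. You have implicitly substituted the blow-up of $\P^2$ (or of $\P^1\times\P^1$) for the blow-up of $\A^2$. Concretely, $(Y,H_2)=(\A^2,\{y=0\})\cong \A^1\times\A_\N$ is affine; it is not $\boxx\times\A^1$, since $\boxx=(\P^1,\infty)$ is proper while $\A_\N=(\A^1,\{0\})$ is not. Likewise $Y'=B_{\{(0,0)\}}\A^2$ is the total space of $\cO(-1)$ over $\P^1$, so its fibers over $\P^1$ are affine lines; imposing the log structure $E+H_2'$ turns each fiber into $\A_\N$ (plus one distinguished fiber $H_2'$), not into $\boxx$. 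Lemmas \ref{A.3.28} and \ref{A.3.33}, which you cite, concern the blow-up of $\P^n$, where the fibers are $\P^1$'s and adding $E$ genuinely produces $\boxx$-fibers; that mechanism is simply unavailable here. Consequently Proposition \ref{A.3.40} does not apply, and your claim that both displayed morphisms are \emph{unconditional} isomorphisms is unsupported. Indeed it cannot be right at this stage of the paper: $(Y',E+H_1'+H_2')\to(Y,H_1+H_2)=\A_\N\times\A_\N$ is exactly the log modification $\A_M\to\A_{\N^2}$ ($M$ the star subdivision), handled in Step 2 of Theorem \ref{A.3.7} only by invoking ($div$-des); and as Example \ref{A.3.41} notes, $(Y',E+H_2')\to(Y,H_2)$ is \emph{not} a log modification at all, since $(\A^2,\{y=0\})$ admits no nontrivial ones. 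If both isomorphisms followed from ($\boxx$-inv) and ($sNis$-des) alone, the biconditional would be pointless and much of Section 6.2 redundant; the proposition is stated as an equivalence precisely because neither side is accessible unconditionally.

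For contrast, the paper's proof keeps the affine objects but compares them to compactifications. It introduces $\boxx\times(\P^1,0+\infty)$, the blow-up of $\P^1\times\P^1$ at $(0,0)$, and the blow-up of $\P^2$ at $[1:0:0]$ with suitable boundary divisors (the fans $\Sigma_1,\dots,\Sigma_{11}$), uses Zariski/strict Nisnevich distinguished squares together with Proposition \ref{A.3.44} to show that each affine cone $M\bigl((Y',\dots)\to(Y,\dots)\bigr)$ is isomorphic to the corresponding compactified cone, and only \emph{then} observes that the two compactified targets $\A_{(\Sigma_1,\Sigma_1')}$ and $\A_{(\Sigma_3,\Sigma_3')}$ are $\boxx$-bundles over $\boxx$, hence have motive $M(\Spec{\Z})$ by Proposition \ref{A.3.40}. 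Both conditions in the statement thereby become equivalent to the single assertion that $M(\A_{(\Sigma_2,\Sigma_2')})\to M(\Spec{\Z})$ is an isomorphism, which yields the biconditional without ever proving either side outright. If you want to salvage your approach, this passage to compactifications via excision is the missing ingredient; the $\boxx$-bundle computation is correct only after that reduction.
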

\begin{proof}
It is enough to prove the claim for $X=\Spec{\Z}$.
We consider the following cones
\begin{gather*}
\sigma_1:={\rm Cone}((1,0),(1,1)),
\;
\sigma_2:={\rm Cone}((1,1),(0,1)),
\;
\sigma_3:={\rm Cone}((0,1),(-1,0)),
\\
\sigma_4:={\rm Cone}((-1,0),(0,-1)),
\;
\sigma_5:={\rm Cone}((0,-1),(1,0)).
\end{gather*}
Form the following fans
\begin{gather*}
\Sigma_1:=\{\sigma_1\cup \sigma_2,\sigma_3,\sigma_4,\sigma_5\},
\;
\Sigma_2:=\{\sigma_1,\sigma_2,\sigma_3,\sigma_4,\sigma_5\},
\;
\Sigma_3:=\{\sigma_1,\sigma_2\cup \sigma_3,\sigma_4,\sigma_5\},
\\
\Sigma_4:=\{\sigma_1\cup \sigma_2\},
\;
\Sigma_5:=\{\sigma_1,\sigma_2\},
\;
\Sigma_6:=\{\sigma_3,\sigma_4,\sigma_5\},
\;
\Sigma_7:=\{\sigma_2\cap \sigma_3,\sigma_1\cap \sigma_5\},
\\
\Sigma_8:=\{\sigma_2,\sigma_3\},
\;
\Sigma_9:=\{\sigma_2\cup \sigma_3\},
\;
\Sigma_{10}:=\{\sigma_1,\sigma_4,\sigma_5\},
\;
\Sigma_{11}:=\{\sigma_1\cap \sigma_2,\sigma_3\cap \sigma_4\},
\\
\Sigma_1'=\Sigma_6':=\{\sigma_3,\sigma_4\},
\;
\Sigma_2':=\{\sigma_2,\sigma_3,\sigma_4\},
\;
\Sigma_3':=\{\sigma_2\cup \sigma_3,\sigma_4\},
\\
\Sigma_4'=\Sigma_7':=\{\sigma_2\cap \sigma_3\},
\;
\Sigma_5':=\{\sigma_2\},
\;
\Sigma_8':=\{\sigma_2,\sigma_3\},
\\
\Sigma_9':=\{\sigma_2\cup\sigma_3\},
\;
\Sigma_{10}':=\{\sigma_1\cap \sigma_2,\sigma_4\},
\;
\Sigma_{11}':=\{\sigma_1\cap \sigma_2,\sigma_3\cap \sigma_4\}.
\end{gather*}

For the convenience of the reader, we include a figure illustrating the above fans.
\[
\begin{tikzpicture}[yscale=1, xscale=1]
\foreach \a in {-1,0,1}
\foreach \b in {-1,0,1}
\filldraw (\a,\b) circle (1pt);
\fill[black!20] (0,0)--(1,0)--(1,1)--(0,1);
\fill[black!40] (0,0)--(0,1)--(-1,1)--(-1,0);
\fill[black!40] (0,0)--(-1,0)--(-1,-1)--(0,-1);
\fill[black!20] (0,0)--(0,-1)--(1,-1)--(1,0);
\draw (0,0)--(1,0);
\draw[ultra thick] (0,0)--(0,1);
\draw[ultra thick] (0,0)--(-1,0);
\draw[ultra thick] (0,0)--(0,-1);
\node at (0,-1.5) {$(\Sigma_1,\Sigma_1')$};
\begin{scope}[shift={(3,0)}]
\foreach \a in {-1,0,1}
\foreach \b in {-1,0,1}
\filldraw (\a,\b) circle (1pt);
\fill[black!20] (0,0)--(1,0)--(1,1);
\fill[black!40] (0,0)--(1,1)--(0,1);
\fill[black!40] (0,0)--(0,1)--(-1,1)--(-1,0);
\fill[black!40] (0,0)--(-1,0)--(-1,-1)--(0,-1);
\fill[black!20] (0,0)--(0,-1)--(1,-1)--(1,0);
\draw (0,0)--(1,0);
\draw[ultra thick] (0,0)--(0,1);
\draw[ultra thick] (0,0)--(-1,0);
\draw[ultra thick] (0,0)--(0,-1);
\draw[ultra thick] (0,0)--(1,1);
\node at (0,-1.5) {$(\Sigma_2,\Sigma_2')$};
\end{scope}
\begin{scope}[shift={(6,0)}]
\foreach \a in {-1,0,1}
\foreach \b in {-1,0,1}
\filldraw (\a,\b) circle (1pt);
\fill[black!20] (0,0)--(1,0)--(1,1);
\fill[black!40] (0,0)--(1,1)--(-1,1)--(-1,0);
\fill[black!40] (0,0)--(-1,0)--(-1,-1)--(0,-1);
\fill[black!20] (0,0)--(0,-1)--(1,-1)--(1,0);
\draw (0,0)--(1,0);
\draw[ultra thick] (0,0)--(1,1);
\draw[ultra thick] (0,0)--(-1,0);
\draw[ultra thick] (0,0)--(0,-1);
\node at (0,-1.5) {$(\Sigma_3,\Sigma_3')$};
\end{scope}
\end{tikzpicture}
\]
\[
\begin{tikzpicture}[yscale=1, xscale=1]
\foreach \a in {-1,0,1}
\foreach \b in {-1,0,1}
\filldraw (\a,\b) circle (1pt);
\fill[black!20] (0,0)--(1,0)--(1,1)--(0,1);
\draw (0,0)--(1,0);
\draw[ultra thick] (0,0)--(0,1);
\node at (0,-1.5) {$(\Sigma_4,\Sigma_4')$};
\begin{scope}[shift={(3,0)}]
\foreach \a in {-1,0,1}
\foreach \b in {-1,0,1}
\filldraw (\a,\b) circle (1pt);
\fill[black!20] (0,0)--(1,0)--(1,1);
\fill[black!40] (0,0)--(1,1)--(0,1);
\draw (0,0)--(1,0);
\draw[ultra thick] (0,0)--(1,1);
\draw[ultra thick] (0,0)--(0,1);
\node at (0,-1.5) {$(\Sigma_5,\Sigma_5')$};
\end{scope}
\begin{scope}[shift={(6,0)}]
\foreach \a in {-1,0,1}
\foreach \b in {-1,0,1}
\filldraw (\a,\b) circle (1pt);
\fill[black!20] (0,0)--(0,-1)--(1,-1)--(1,0);
\fill[black!40] (0,1)--(-1,1)--(-1,-1)--(0,-1);
\draw (0,0)--(1,0);
\draw[ultra thick] (0,0)--(0,1);
\draw[ultra thick] (0,0)--(-1,0);
\draw[ultra thick] (0,0)--(0,-1);
\node at (0,-1.5) {$(\Sigma_6,\Sigma_6')$};
\end{scope}
\begin{scope}[shift={(9,0)}]
\foreach \a in {-1,0,1}
\foreach \b in {-1,0,1}
\filldraw (\a,\b) circle (1pt);
\draw (0,0)--(1,0);
\draw[ultra thick] (0,0)--(0,1);
\node at (0,-1.5) {$(\Sigma_7,\Sigma_7')$};
\end{scope}
\end{tikzpicture}
\]
\[
\begin{tikzpicture}[yscale=1, xscale=1]
\foreach \a in {-1,0,1}
\foreach \b in {-1,0,1}
\filldraw (\a,\b) circle (1pt);
\fill[black!40] (0,0)--(1,1)--(-1,1)--(-1,0);
\draw[ultra thick] (0,0)--(1,1);
\draw[ultra thick] (0,0)--(-1,0);
\draw[ultra thick] (0,0)--(0,1);
\node at (0,-1.5) {$(\Sigma_8,\Sigma_8')$};
\begin{scope}[shift={(3,0)}]
\foreach \a in {-1,0,1}
\foreach \b in {-1,0,1}
\filldraw (\a,\b) circle (1pt);
\fill[black!40] (0,0)--(1,1)--(-1,1)--(-1,0);
\draw[ultra thick] (0,0)--(1,1);
\draw[ultra thick] (0,0)--(-1,0);
\node at (0,-1.5) {$(\Sigma_9,\Sigma_9')$};
\end{scope}
\begin{scope}[shift={(6,0)}]
\foreach \a in {-1,0,1}
\foreach \b in {-1,0,1}
\filldraw (\a,\b) circle (1pt);
\fill[black!20] (0,0)--(0,-1)--(1,-1)--(1,1);
\fill[black!40] (0,0)--(-1,0)--(-1,-1)--(0,-1);
\draw (0,0)--(1,0);
\draw[ultra thick] (0,0)--(1,1);
\draw[ultra thick] (0,0)--(-1,0);
\draw[ultra thick] (0,0)--(0,-1);
\node at (0,-1.5) {$(\Sigma_{10},\Sigma_{10}')$};
\end{scope}
\begin{scope}[shift={(9,0)}]
\foreach \a in {-1,0,1}
\foreach \b in {-1,0,1}
\filldraw (\a,\b) circle (1pt);
\draw[ultra thick] (0,0)--(1,1);
\draw[ultra thick] (0,0)--(-1,0);
\node at (0,-1.5) {$(\Sigma_{11},\Sigma_{11}')$};
\end{scope}
\end{tikzpicture}
\]

Here we have $\A_{(\Sigma_1,\Sigma_1')} = \boxx\times (\P^1, 0+\infty)$, 
and $\A_{(\Sigma_2,\Sigma_2')} = B_{(0,0)}(\P^1\times \P^1)$ with divisor the strict transform of the divisor on $\boxx\times (\P^1, 0+\infty)$ together with the exceptional divisor. 
Finally, 
$\A_{(\Sigma_3,\Sigma_3')}$ is (isomorphic to) the blow-up of $\P^2$ at the point $[1:0:0]$, 
with the coordinate lines $X_0=0$, $X_1=0$ and $X_2=0$ as divisors. 
Note that there is also a blow-up morphism  $B_{(0,0)}(\P^1\times \P^1) \to \A_{(\Sigma_3,\Sigma_3')}$.  
We consider the following Zariski open coverings of these log schemes. 
\begin{equation}\label{eq:diag-chart1}
\begin{tikzcd}
\A_{(\Sigma_7,\Sigma_7')}\arrow[d]\arrow[r]&
\A_{(\Sigma_6,\Sigma_6')}\arrow[d]
\\
\A_{(\Sigma_4,\Sigma_4')}\arrow[r]&
\A_{(\Sigma_1,\Sigma_1')},
\end{tikzcd}
\quad
\begin{tikzcd}
\A_{(\Sigma_7,\Sigma_7')}\arrow[d]\arrow[r]&
\A_{(\Sigma_6,\Sigma_6')}\arrow[d]
\\
\A_{(\Sigma_5,\Sigma_5')}\arrow[r]&
\A_{(\Sigma_2,\Sigma_2')},
\end{tikzcd}
\end{equation}
\begin{equation}\label{eq:diag-chart2}
\begin{tikzcd}
\A_{(\Sigma_{11},\Sigma_{11}')}\arrow[d]\arrow[r]&
\A_{(\Sigma_{10},\Sigma_{10}')}\arrow[d]
\\
\A_{(\Sigma_8,\Sigma_8')}\arrow[r]&
\A_{(\Sigma_2,\Sigma_2')},
\end{tikzcd}
\quad
\begin{tikzcd}
\A_{(\Sigma_{11},\Sigma_{11}')}\arrow[d]\arrow[r]&
\A_{(\Sigma_{10},\Sigma_{10}')}\arrow[d]
\\
\A_{(\Sigma_9,\Sigma_9')}\arrow[r]&
\A_{(\Sigma_3,\Sigma_3')}.
\end{tikzcd}
\end{equation}
Here, 
the second diagram in \eqref{eq:diag-chart1} and the first diagram in \eqref{eq:diag-chart2} are two different open charts for the blow-up of $\P^1\times \P^1$ at $(0,0)$ 
(and the toric notation helps keeping track of the different log structures). 
Note that $\A_{(\Sigma_6,\Sigma_6')}$ is a common Zariski open subset of $\P^1\times \P^1$ and of the blow-up $B_{(0,0)}(\P^1\times \P^1)$, namely it is the open subset 
$\P^1\times \P^1- \{(0,0)\}$ (which does not contain the centre of the blow-up). 
Similarly, 
note that $\A_{(\Sigma_{10},\Sigma_{10'})}$ is a common Zariski open subset of $\A_{(\Sigma_3,\Sigma_3')}$ and of the blow-up of $\P^1\times \P^1$ at $(0,0)$ 
(this time containing the center of the blow-up).
\vspace{0.1in}

Using ($sNis$-des) we have a zig-zag of isomorphisms
\begin{gather*}
M(\A_{(\Sigma_5,\Sigma_5')}\rightarrow \A_{(\Sigma_2,\Sigma_2')})
\xrightarrow{\cong}
M(\A_{(\Sigma_4,\Sigma_4')}\rightarrow \A_{(\Sigma_1,\Sigma_1')}),
\\
M(\A_{(\Sigma_8,\Sigma_8')}\rightarrow \A_{(\Sigma_2,\Sigma_2')})
\xrightarrow{\cong}
M(\A_{(\Sigma_9,\Sigma_9')}\rightarrow \A_{(\Sigma_3,\Sigma_3')}).
\end{gather*}

The squares in \eqref{eq:diag-chart1} can be completed to a $3$-dimensional commutative diagram with back side 
\[\begin{tikzcd}
\A_{(\Sigma_5,\Sigma_5')}\arrow[d]\arrow[r]&
\A_{(\Sigma_2,\Sigma_2')}\arrow[d]
\\
\A_{(\Sigma_4,\Sigma_4')}\arrow[r]&
\A_{(\Sigma_1,\Sigma_1')},
\end{tikzcd}
\]
where the morphisms $\A_{(\Sigma_2,\Sigma_2')}\to \A_{(\Sigma_1,\Sigma_1')}$ and $\A_{(\Sigma_5,\Sigma_5')}\to \A_{(\Sigma_4,\Sigma_4')}$ are induced by the blow-up morphisms 
$B_{(0,0)}(\P^1\times\P^1)\to \P^1\times \P^1$ and $B_{(0,0)}(\A^2)\to \A^2$, respectively. 
Taking into account the log structures, 
the morphism $(Y',E+H_2')\rightarrow (Y,H_2)$ is equal to the morphism $\A_{(\Sigma_5,\Sigma_5')}\rightarrow \A_{(\Sigma_4,\Sigma_4')}$.  
Similarly, we can complete the squares in \eqref{eq:diag-chart2} to a 3-dimensional commutative diagram with back side 
\[\begin{tikzcd}
\A_{(\Sigma_8,\Sigma_8')}\arrow[d]\arrow[r]&
\A_{(\Sigma_2,\Sigma_2')}\arrow[d]
\\
\A_{(\Sigma_9,\Sigma_9')}\arrow[r]&
\A_{(\Sigma_3,\Sigma_3')},
\end{tikzcd}
\]
where the blow-ups induce the vertical morphisms. 
More precisely, via the linear transformation
\[
\Z^2\rightarrow \Z^2
\]
mapping $(a,b)$ to $(a-b,a)$, 
the morphism $(Y',E+H_1'+H_2')\rightarrow (Y,H_1+H_2)$ can be identified with $\A_{(\Sigma_8,\Sigma_8')}\rightarrow \A_{(\Sigma_9,\Sigma_9')}$ 
(note that $\A_{\Sigma_9} = \Spec{\Z[x^{-1}, yx^{-1}]}$).
Due to Proposition \ref{A.3.44} there are induced isomorphisms
\begin{gather}
\label{A.3.39.7}
M(\A_{(\Sigma_5,\Sigma_5')}\rightarrow \A_{(\Sigma_4,\Sigma_4')})
\xrightarrow{\cong}
M(\A_{(\Sigma_2,\Sigma_2')}\rightarrow \A_{(\Sigma_1,\Sigma_1')}),
\\
\label{A.3.39.8}
M(\A_{(\Sigma_8,\Sigma_8')}\rightarrow \A_{(\Sigma_9,\Sigma_9')})
\xrightarrow{\cong}
M(\A_{(\Sigma_2,\Sigma_2')}\rightarrow \A_{(\Sigma_3,\Sigma_3')}).
\end{gather}
Thus from \eqref{A.3.39.7} and \eqref{A.3.39.8}, we reduce to showing the equivalence of
\[
M(\A_{(\Sigma_2,\Sigma_2')}\rightarrow \A_{(\Sigma_1,\Sigma_1')})=0
\]
and
\[
M(\A_{(\Sigma_2,\Sigma_2')}\rightarrow \A_{(\Sigma_3,\Sigma_3')})=0.
\]
\vspace{0.1in}

Both $\A_{(\Sigma_1,\Sigma_1')}$ and $\A_{(\Sigma_3,\Sigma_3')}$ are $\boxx$-bundles over $\boxx$, 
so that Proposition \ref{A.3.40} gives isomorphisms
\[
M(\A_{(\Sigma_1,\Sigma_1')})\rightarrow M(\Spec \Z),
\;
M(\A_{(\Sigma_3,\Sigma_3')})\rightarrow M(\Spec \Z).
\]
This finishes the proof.
\end{proof}

Recall now some terminology from \cite{Deg}.
\begin{df}[{\cite[Definition 4.5.3]{Deg}}]
A {\it closed pair}\index{closed pair} $(X,Z)$ {\it smooth} over $S$ is a closed immersion $Z\rightarrow X$ where both $X$ and $Z$ are object in $Sm/S$. 
A {\it cartesian} (resp.\ {\it excisive}) morphism $(f,f')$ of closed pairs $(X,Z)\rightarrow (X',Z')$ smooth over $S$ is a commutative diagram of $S$-schemes
\[
\begin{tikzcd}
Z\arrow[r]\arrow[d,"f'"']&X\arrow[d,"f"]\\
Z'\arrow[r]&X'
\end{tikzcd}
\]
which is cartesian (resp.\ such that $f':Z\rightarrow Z'$ is an isomorphism).
\end{df}

\begin{df}[{\cite[Definitions 4.5.8]{Deg}}]
Let $(X,Z)$ be a closed pair smooth over $S$, and consider the smooth closed pair $(\A_S^{r+s},\A_S^r)$ given by 
\[
a_0\times {\rm id}:\A_S^s\rightarrow \A_S^{r+s},
\] 
where $a_0:S\rightarrow \A_S^r$ is the $0$-section. 
A {\it parametrization}\index{parametrization} of $(X,Z)$ is a cartesian morphism $(f,f'):(X,Z)\rightarrow (\A_S^{r+s},\A_S^s)$ such that $f$ is \'etale.
\end{df}

\begin{const}
\label{A.3.16}
We will use the following technique appearing in the proof of homotopy purity in \cite[Lemma 2.28, p.\ 117]{MV} --- 
see also \cite[\S 4.5.2]{Deg} for another exposition, 
and see \cite[\S 7]{MR3431674} for formulations in related settings.
Let $(X,Z)$ be a closed pair smooth over $S$.  
Then by \cite[IV.17.12.2]{EGA}, Zariski locally on $X$, there are \'etale morphisms $w$ and $w'$ making the diagram
\[
\begin{tikzcd}
Z\arrow[r,"w'"]\arrow[d]&\A_S^r\arrow[d]
\\
X\arrow[r,"w"]&\A_S^{r+s}.
\end{tikzcd}
\]
cartesian. Setting $X_1:=\A_Z^s$ the morphism ${\rm id}\times w':X_1\rightarrow \A_S^{r+s}$ is \'etale. 
Let $\Delta$ be the diagonal of $Z$ over $\A_S^{r+s}$.  
This is a closed subscheme of $Z\times_{\A_S^{r+s}}Z$, 
and we define 
\[
X_2
:=
X\times_{\A_S^{r+s}}X_1-(X\times_{\A_S^{r+s}}Z-\Delta)\cup (Z\times_{\A_S^{r+s}}X_1-\Delta).
\]
By \cite[Lemma 4.5.6]{Deg}, $\Delta$ is a closed subscheme of $X_2$, and $X_2$ is an open subscheme of $X\times_{\A_S^{r+s}}X_1$. 
Let $i_2:Z\cong \Delta\rightarrow X_2$ be the closed immersion. 
There are cartesian excisive morphisms of closed pairs smooth over $S$
\begin{equation}\label{eq:excisivesquares}
(X,Z)\stackrel{(u,{\rm id})}\longleftarrow (X_2,Z)\stackrel{(v,{\rm id})}\longrightarrow (X_1,Z) = (\A^s_Z, Z),
\end{equation}
where $u$ (resp.\ $v$) denotes the \'etale morphism induced by the projection 
$$
X\times_{\A_S^{r+s}}X_1\rightarrow X 
(\text{resp.}\ X\times_{\A_S^{r+s}}X_1\rightarrow X_1).
$$
By working Zariski locally on $X$, 
the above morphisms of closed pairs smooth over $S$ allow us to reduce claims about $X$ and $Z$ to claims about $X_1$ and $Z$.
\end{const}

\begin{exm}
If $Z=S$, then $r=0$ and $X_1\rightarrow \A_S^{r+s} = \A^s_S$ is an isomorphism.
Thus $X_2\cong X$, and the morphism $v:X_2\rightarrow X_1$ is the parametrization map $X\rightarrow \A_S^s$.
\end{exm}

\begin{thm}
\label{A.3.7}
Assume that $\cT$ satisfies {\rm ($Zar$-sep)}, {\rm ($\boxx$-inv)}, {\rm ($sNis$-des)}, and {\rm ($div$-des)}.
Let $Y'\rightarrow Y$ be a morphism in $\cA\cB l_{Sm}/S$. 
Then 
\begin{equation}
\label{A.3.7.1}
M(Y')\rightarrow M(Y)
\end{equation}
is an isomorphism.
\end{thm}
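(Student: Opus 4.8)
\textbf{Proof strategy for Theorem \ref{A.3.7}.} The plan is to reduce the general statement — that every morphism in $\cA\cB l_{Sm}/S$ induces an isomorphism — to the single, already-established surface case of Proposition \ref{A.3.39}. Since $\cA\cB l_{Sm}/S$ is the smallest class closed under composition and containing admissible blow-ups along a smooth center, and since $M$ takes compositions to compositions, it suffices to treat a single admissible blow-up $f\colon Y'\to Y$ along a smooth center $Z\subset \partial Y$ of codimension $d$, where $\partial Y$ has strict normal crossing with $Z$ over $S$. The first step is to induct on $d$: when $d$ is small we can recognize $Y'\to Y$ as a $\boxx$-bundle (or a combination of such) via the toric descriptions in Lemmas \ref{A.6.4}--\ref{A.3.33}, and apply Proposition \ref{A.3.40}. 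More precisely, after using (Nis-desc) to localize Zariski-locally on $Y$ — permissible by ($Zar$-sep) applied to a finite Zariski cover of $Y$ — we may assume $Y$ has an fs chart and $Z$ is cut out by a subset of the coordinates, so $f$ is a toric morphism of the form $\A_{(\Gamma,\Gamma')}\to \A_{(\Sigma,\Sigma')}$ as in Example \ref{A.3.41}.

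Second, I would set up the double induction: one induction on the codimension $d$ of the center, and, for fixed $d$, one induction on the number $r$ of components of $\partial Y$ meeting $Z$ (equivalently, the number of ``extra'' boundary divisors beyond those forced by the blow-up). The base of the inner induction is the situation where $Z$ is not contained in any boundary component other than the minimal ones — but by hypothesis $Z\subset \partial Y$, so the genuine base case is when $\partial Y$ consists only of $Z$ itself (together with its transform and the exceptional divisor after blow-up); here the toric model is literally a product of a $\boxx$-bundle with an affine/projective space, and Proposition \ref{A.3.40} combined with ($\boxx$-inv) closes the argument. The inductive step for the inner induction is exactly what Proposition \ref{A.3.39} provides in the surface/codimension-two model: it says that adding one more smooth boundary component $H_1'$ (normal crossing with the center) does not change whether the cone of the blow-up vanishes. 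I would use ($Zar$-sep) to reduce the general $(Y',\partial Y'+H)\to (Y,\partial Y+H')$ statement to the coordinate model $\A^2$ (times a smooth base $X$), then quote Proposition \ref{A.3.39} verbatim; the passage from the $d=2$, $\dim = 2$ local model to a general smooth base $X\in\mathscr S/S$ is handled by the fact that Proposition \ref{A.3.39} is stated with an arbitrary factor $X\in\mathscr S/S$.

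Third, for the codimension induction $d\rightsquigarrow d+1$: using Construction \ref{A.3.16} (the homotopy-purity deformation with the excisive squares \eqref{eq:excisivesquares}), together with ($sNis$-des) to replace $(Y,Z)$ by the pair $(\A_Z^{s}, Z)$ Zariski-locally, one reduces the blow-up of a codimension-$(d+1)$ center to an iterated blow-up where each step lowers the codimension, allowing the inductive hypothesis to apply; the normal-crossing compatibility of $Z$ with $\partial Y$ guarantees that at each stage we remain inside $\cA\cB l_{Sm}/S$ and that the relevant divisors stay strict normal crossing. Here ($div$-des) enters precisely to absorb the log modifications that appear when comparing different toric resolutions (e.g.\ comparing $\boxx^n$ with $(\P^n,\P^{n-1})$-type compactifications), and the ``associative notation'' of Definition \ref{def:associative-notation-log} keeps the bookkeeping of which divisors are in the log structure manageable.

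\textbf{Main obstacle.} The hard part will be organizing the double induction so that the reductions are genuinely compatible: the Zariski-local toric models at each inductive stage must be chosen so that adding a boundary component (inner induction) and lowering codimension (outer induction) commute, and so that the excisive squares of Construction \ref{A.3.16} respect the log structures. Concretely, one must verify that after the deformation to $(\A_Z^s,Z)$ the induced boundary divisor is again strict normal crossing with the (now simpler) center, which is where the precise normal-crossing hypothesis in the definition of admissible blow-up — and the careful statement of Definition \ref{A.3.17} allowing $W$ to lie in a component of $Z$ — does the real work. I expect that, as in \cite[\S 4.5]{Deg}, the deformation argument itself is routine once the bookkeeping is fixed, so the essential mathematical content is entirely concentrated in Propositions \ref{A.3.39} and \ref{A.3.40}, and the proof of Theorem \ref{A.3.7} is a (nontrivial but formal) induction reducing to them.
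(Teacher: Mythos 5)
Your overall architecture — localize via ($Zar$-sep), reduce to a toric coordinate model using Construction \ref{A.3.16} and ($sNis$-des), and feed everything into Proposition \ref{A.3.39} as the essential codimension-two input — matches the skeleton of the paper's proof, and your identification of where the real content sits is accurate. But the step you wave at in your third paragraph, the passage from codimension $2$ to general codimension $d$, contains a genuine gap. You assert that ``one reduces the blow-up of a codimension-$(d+1)$ center to an iterated blow-up where each step lowers the codimension.'' A blow-up along a codimension-$d$ smooth center is \emph{not} a composition of blow-ups along lower-codimension centers, and Construction \ref{A.3.16} does not produce such a factorization: its role (in the paper's Step 1) is only to replace the pair $(Y,Z)$ by the coordinate model $\A^{p+q}$ with $Z$ the origin. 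Without an actual mechanism for the outer induction, your argument does not close.

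What the paper does instead is purely combinatorial and is not an induction on $d$ at all. In the local model $Y\cong \A_\N^p\times\A^q$ with $Z$ the origin, it constructs two explicit towers of star subdivisions relative to \emph{two-dimensional} cones, $(\Sigma_{p+q-1},\Sigma_{p+q-1}')\to\cdots\to(\Sigma_0,\Sigma_0')$ starting from $Y$ and $(\Gamma_{p+q-2},\Gamma_{p+q-2}')\to\cdots\to(\Gamma_0,\Gamma_0')$ starting from $Y'=\A_{(\Gamma_0,\Gamma_0')}$, arranged so that the two towers terminate at the \emph{same} fan. Every arrow in both towers is an admissible blow-up along a codimension-two center, hence an isomorphism on motives by the already-established $d=2$ case (Step 2: either a log modification, killed by ($div$-des), or the $p=q=1$ case of Proposition \ref{A.3.39}); composing the resulting zig-zag gives $M(Y')\cong M(Y)$. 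Separately, the globalization in the paper is a single induction on the invariant $X(s)$ (the maximal codimension of the strata $Z\cap Z_{i_1}\cap\cdots\cap Z_{i_k}$), chosen so that excising the deepest stratum $T$ strictly decreases it — your proposed double induction on $(d,r)$ does not obviously have the analogous compatibility with the excisive squares. If you want to salvage your outline, you should replace the codimension induction by this zig-zag of codimension-two star subdivisions (this is where Lemma \ref{A.3.31} and the ``weak factorization'' flavor of toric geometry enter), and replace the inner induction on $r$ by the induction on $X(s)$.
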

\begin{proof}
We may assume that $Y'$ is an admissible blow-up of $Y$ along a connected codimension $d$ smooth center $Z$ for some integer $d\geq 2$.
We set $X:=\underline{Y}$ and $X':=\underline{Y'}$. Write $Z'$ for the exceptional divisor, $Z':=Z\times_X X'$.
Let $Z_1,\ldots,Z_r$ be divisors on $X$ smooth over $S$ such that $\partial Y=Z_1+\cdots+Z_r$. 
We write $Z_i'$ for the strict transform of $Z_i$ in the blow-up $X'$, 
and set 
\[
Y:=(X,Z_1+\cdots+Z_r),
\;
Y':=(X',Z_1'+\cdots+Z_r'+Z').
\]
Let $W_i:=\A^{i-1}\times \Spec \Z\times \A^{n-i}$ denote the smooth divisor of $\A^n$ given by $y_i=0$ for $1\leq i\leq n$, where $(y_1,\ldots, y_n)$ are coordinates on $\A^n$. 
Write $C_1, \ldots, C_l$ for the irreducible components of the intersections $Z\cap (Z_{i_1}\ldots \cap Z_{i_k})$, where $(i_1, \ldots, i_k)$ is a subset of $(1,\ldots, r)$ of length $k$, 
for $k=1,\ldots, r$. 
Finally we set 
\begin{equation}
\label{eq:definition-s} 
X(s):= {\rm max}\{c_1, \ldots, c_l\},
\end{equation}
where $c_i$ is the codimension of $C_i$ in $X$.
\vspace{0.1in}

Next, observe the question is Zariski local on $X$, 
owing to ($Zar$-sep), 
so that we may assume there exists an \'etale morphism $u:X\rightarrow S\times \A^n$ in $Sm/S$ for which
\begin{enumerate}
\item[(i)] $u^{-1}(S\times W_i)=Z_i$ for $1\leq i\leq r$ ,
\item[(ii)] $u^{-1}(S\times W)=Z$, where 
\[
W:=\A^{r-p}\times (\Spec \Z)^p \times (\Spec \Z)^q\times \A^{n-r-q}
\]
for some $1\leq p\leq r$ and $0\leq  q\leq n-r$. 
\end{enumerate}
Note that  $W$ is a smooth closed subscheme of codimension $p+q$ in $\A^n$ having strict normal crossing with $W_1+\ldots +W_i$ for every $i=1,\ldots, n$.
\vspace{0.1in}

Assume that $S=\Spec{\Z}$ for simplicity of notation (the general case is deduced from this one by base change $S\to \Spec{\Z}$).
For $r+1\leq i\leq r+q$ we set
\[
Z_i:=u^{-1}(W_i), \quad
T:=u^{-1}((\Spec \Z)^{r+q}\times \A^{n-r-q}).
\] 
Note that $T\subseteq Z$ and that $d=p+q$ is the codimension of $Z$ in $X$. 
Moreover, Zariski-locally, the codimension of $T$ in $X$ agrees with $X(s)$ defined in \eqref{eq:definition-s}.
We also observe that \eqref{A.3.7.1} is trivially an isomorphism if $\dim X=q=r=0$. 
Similarly, if $X(s)=1$, then $Z=T$ is a divisor in $X$ and coincides with one of the components of $\partial X$. 
Thus the blow-up is trivially an isomorphism. 
We can now proceed by induction on $X(s)>1$.
\vspace{0.1in}

\textbf{Step 1} {\it Reduction to the case when $X=\A^{p+q}$ and $r=p$.} 
Starting with a smooth pair $(X,T)$ and $(X,T)\rightarrow (\A^n,\A^{n-r-q})$, 
we may use the technique of Construction \ref{A.3.16} with respect to the parametrization $u\colon (X,T)\to (\A^n, \A^{n-r-q})$ to form the schemes $X_1$ and $X_2$. 
In this case $X_1 = \A^{r+q}_T$, 
$X_2$ is open in $X\times_{\A^n} X_1$,
and there exist \'etale morphisms $X_2\to X$, $X_2\to X_1$.

Let $X'_2$ be the pullback $X'\times_X X_2$. 
We equip $X_2$ (resp.~$X_2'$) with the log structure induced by $Y$ (resp.~$Y'$) and set
\[
Y_2:=Y\times_X X_2,\;\;Y_2':=Y'\times_X X_2.
\]
Next, we consider the open complement of $T$ in $X$
\[
U:=(X-T,(Z_1-T)+\cdots+(Z_r-T)),
\]
and the pullbacks
\[
U':=U\times_Y Y',\;\; U_2:=U\times_X X_2,\;\;U_2':=U'\times_X X_2.
\]
 
We obtain strict Nisnevich distinguished squares in $\mathscr{S}/S$
\[
\begin{tikzcd}
U_2\arrow[d]\arrow[r]&Y_2\arrow[d]\\
U\arrow[r]&Y
\end{tikzcd}\quad
\begin{tikzcd}
U_2'\arrow[d]\arrow[r]&Y_2'\arrow[d]\\
U'\arrow[r]&Y'.
\end{tikzcd}
\]
By ($sNis$-des) there are isomorphisms
\begin{equation}
\label{A.3.7.2}
M(U_2\rightarrow U)\xrightarrow{\cong} M(Y_2\rightarrow Y),
\;
M(U_2'\rightarrow U')\xrightarrow{\cong} M(Y_2'\rightarrow Y').
\end{equation}
Observe that by removing $T$, we obtain strict inequalities 
\[
U(s)<X(s), \quad U_2(s)< X(s).
\]
Thus, by induction, there are isomorphisms
\[
M(U_2'\rightarrow U_2)\cong M(U'\rightarrow U)\cong 0,
\]
which by Proposition \ref{A.3.44} imply the isomorphism 
\begin{equation}
\label{A.3.7.3}
M(U_2'\rightarrow U')\xrightarrow{\cong} M(U_2\rightarrow U).
\end{equation}
Combining \eqref{A.3.7.2} and \eqref{A.3.7.3}, we reduce to showing
\[
M(Y_2'\rightarrow Y_2)\cong 0.
\]
Here we can replace $X$ with $X_2$. 
Similarly, we can repeat the argument using the right excisive square in \eqref{eq:excisivesquares} and replace $X_2$ by $X_1$.
We note that $X_1=\A^{r+q}\times T$ and that the subschemes $Z_i$ and $Z$ of $X$ correspond to the subschemes 
$$
\A^{i-1}\times \Spec{\Z}\times \A^{r+q-i}\times T
\text{ and }
\A^{r-p}\times (\Spec{\Z})^p\times (\Spec{\Z})^q \times T.
$$  
Thus we have reduced to the case 
\begin{equation}\label{eq:step1equation}
X=\A^{r+q}\times T,
\;\;
Z_i=\A^{i-1}\times \Spec \Z \times \A^{r+q-i}\times T,
\end{equation}
\[
Z=\A^{r-p}\times (\Spec \Z)^p\times (\Spec \Z)^q\times T.
\]

Note that under \eqref{eq:step1equation}, we have further an identification
\[
Y\cong \A_\N^{r-p}\times \A_\N^p\times \A^q\times T. 
\]
Using the monoidal structure in $\cT$, we may further assume that $T=\Spec{\Z}$ and $r=p$,
i.e., 
\[
X=\A^{p+q},\;\;Z_i=\A^{i-1}\times \Spec \Z \times \A^{p+q-i},\;\;Z=\Spec{\Z},
\]
and 
\[
Y\cong \A_\N^p\times \A^q.
\]
\vspace{0.1in}
  
\textbf{Step 2} {\it Cases when $d=2$.} If $p=2$ and $q=0$, then the morphism
\[
(X',Z_1'+\cdots+Z_r'+Z')\rightarrow (X,Z_1+\cdots+Z_r)
\]
in $\cT$ is a log modification since this is $\A_M\rightarrow \A_\N^p$, 
where $M$ is the star subdivision of $\N^p$ relative to itself as in Definition \ref{Starsubdivision}.
Thus we are done by ($div$-des).
The case when $p=q=1$ is already done in Proposition \ref{A.3.39}.
Together with Step 1, we see that
\[
M(Y')\rightarrow M(Y)
\]
is an isomorphism for general $Y'\rightarrow Y$ if $d=2$.
\vspace{0.1in}
  
\textbf{Step 3} {\it General case.}  Recall that we may assume 
\[
X=\A^{p+q},\;\;Z_i=\A^{i-1}\times \Spec \Z\times \A^{p+q-i},\;\;Z=(\Spec \Z)^p\times (\Spec \Z)^q.
\]
We will construct $X'$ from $X$ by suitable blow-ups and blow-downs along codimension $2$ smooth centers. 
\vspace{0.1in}

Let $\{e_1,\ldots,e_{p+q}\}$ denote the standard basis of $\Z^{p+q}$.
Starting with $\Sigma_0:=\N^{p+q}$, for $i=1,\ldots,p+q-1$ we inductively let $\Sigma_i$ be the star subdivision \index{fan!star subdivision} $\Sigma_{i-1}^*(\eta_{i+1})$, 
where
\[
\eta_i
:=
{\rm Cone}( e_1,\ldots,e_i).
\]
Let $\Sigma_i'$ be the fan consists of cones $\sigma\in \Sigma_i$ such that
\[
\sigma\subset {\rm Cone}(e_1,e_2,\ldots,e_p,e_1,e_1+e_2,\ldots,e_1+\cdots+e_i).
\]
Let $\Gamma_0'$ be shorthand for $\Sigma_0^*(\eta_{p+q})$.
For $i=1,\ldots,p+q-2$, we inductively define $\Gamma_i'$ to be the star subdivision $\Gamma_i'^*(\eta_{i+1})$.
We let $\Gamma_i'$ be the fan consisting of cones $\sigma\in \Gamma_i'$ such that
\[
\sigma\subset {\rm Cone}(e_1,e_2,\ldots,e_p,e_1,e_1+e_2,\ldots,e_1+\cdots+e_i,e_1+\cdots+e_{p+q}).
\]
Note that $(\Sigma_{p+q-1},\Sigma_{p+q-1}')=(\Gamma_{p+q-2},\Gamma_{p+q-2}')$.
We have naturally induced maps
\[
(\Sigma_{p+q-1},\Sigma_{p+q-1}')\rightarrow \cdots \rightarrow (\Sigma_{0},\Sigma_{0}'),
\;
(\Gamma_{p+q-2},\Gamma_{p+q-2}')\rightarrow \cdots \rightarrow (\Gamma_{0},\Gamma_{0}').
\]
\vspace{0.1in}

The following figure illustrates the fans in the case when $p=2$ and $q=1$.
\[\begin{tikzpicture}[yscale=0.7, xscale=0.82]
\fill[black!20] (0,0)--(1,2)--(2,0)--(0,0);
\draw (0,0)--(1,2)--(2,0);
\draw[ultra thick] (0,0)--(2,0);
\fill[black!20] (4,0)--(5,2)--(6,0)--(4,0);
\filldraw (0,0) circle (3pt);
\filldraw (2,0) circle (3pt);
\draw (4,0)--(5,2)--(6,0);
\draw (5,2)--(5,0);
\draw[ultra thick] (4,0)--(6,0);
\fill[black!20] (8,0)--(9,2)--(10,0)--(8,0);
\fill[black!40] (8,0)--(10,0)--(9,1);
\filldraw (4,0) circle (3pt);
\filldraw (5,0) circle (3pt);
\filldraw (6,0) circle (3pt);
\draw (8,0)--(9,2)--(10,0);
\draw[ultra thick] (8,0)--(10,0);
\draw (9,2)--(9,1);
\draw[ultra thick] (9,1)--(9,0);
\draw[ultra thick] (8,0)--(9,1)--(10,0);
\filldraw (8,0) circle (3pt);
\filldraw (9,0) circle (3pt);
\filldraw (10,0) circle (3pt);
\filldraw (9,1) circle (3pt);
\fill[black!20] (12,0)--(13,2)--(14,0)--(12,0);
\fill[black!40] (12,0)--(14,0)--(13,1);
\draw[ultra thick] (12,0)--(14,0);
\draw (12,0)--(13,2)--(14,0)--(12,0);
\draw[ultra thick] (12,0)--(13,1)--(14,0);
\draw (13,1)--(13,2);
\filldraw (12,0) circle (3pt);
\filldraw (13,1) circle (3pt);
\filldraw (14,0) circle (3pt);
\node [below] at (1,-0.2) {$(\Sigma_0,\Sigma_0')$};
\node [below] at (5,-0.2) {$(\Sigma_1,\Sigma_1')$};
\node [below] at (9,-0.2) {$(\Sigma_2,\Sigma_2')=(\Gamma_1,\Gamma_1')$};
\node [below] at (13,-0.2) {$(\Gamma_0,\Gamma_0')$};
\node [left] at (0,0) {$e_1$};
\node [right] at (2,0) {$e_2$};
\node [above] at (1,2) {$e_3$};
\node at (3,1) {$\longleftarrow$};
\node at (7,1)  {$\longleftarrow$};
\node at (11,1) {$\longrightarrow$};
\end{tikzpicture}
\]
\vspace{0.1in}

From the description of admissible blow-ups in Example \ref{A.3.41}, we see that all the induced morphisms
\[
\A_{(\Sigma_{p+q-1},\Sigma_{p+q-1}')}\rightarrow \cdots \rightarrow \A_{(\Sigma_{0},\Sigma_{0}')},
\;
\A_{(\Gamma_{p+q-2},\Gamma_{p+q-2}')}\rightarrow \cdots \rightarrow \A_{(\Gamma_{0},\Gamma_{0}')}
\]
are admissible blow-ups along a codimension two smooth center.
Step 2 shows that the induced morphisms
\begin{gather*}
M(\A_{(\Sigma_{p+q-1},\Sigma_{p+q-1}')})\rightarrow \cdots \rightarrow M(\A_{(\Sigma_{0},\Sigma_{0}')}),
\\
M(\A_{(\Gamma_{p+q-2},\Gamma_{p+q-2}')})\rightarrow \cdots \rightarrow M(\A_{(\Gamma_{0},\Gamma_{0}')})
\end{gather*}
are isomorphisms.
This implies there is a naturally induced isomorphism
\[
M(\A_{(\Gamma_{0},\Gamma_{0}')})
\xrightarrow{\cong} 
M(\A_{(\Sigma_{0},\Sigma_{0}')}),
\]
which can be identified with
$$
M(Y')
\xrightarrow{\cong} 
M(Y).
$$
\end{proof}

\begin{df}
Let $\cA\cB l_{div}/S$\index[notation]{ABldivS @ $\cA\cB l_{div}/S$} denote the smallest class of proper birational morphisms in $lSm/S$
that is closed under composition and contains $\cA\cB l_{Sm}/S$ together with all the dividing coverings.
\end{df}

\begin{prop}
\label{A.3.11}
Assume that $\cT$ satisfies {\rm ($Zar$-sep)}, {\rm ($\boxx$-inv)}, {\rm ($sNis$-des)}, and {\rm ($div$-des)}.
Let $f:Y\rightarrow X$ be a morphism in $\cA\cB l_{div}/S$. 
Then there is a naturally induced isomorphism
\[
M(Y)\xrightarrow{\cong} M(X).
\]
\end{prop}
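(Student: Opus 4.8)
The plan is to read the statement off formally from Theorem \ref{A.3.7} together with the property {\rm ($div$-des)}, using only that $M$ respects composition of morphisms and that a composite of isomorphisms in $\cT$ is again an isomorphism; essentially no new geometric input is needed.

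First I would unwind the definition of $\cA\cB l_{div}/S$. As the smallest class of proper birational morphisms in $lSm/S$ that is closed under composition and contains $\cA\cB l_{Sm}/S$ together with all the dividing coverings, it is exactly the class of finite composites $f=f_1\circ\cdots\circ f_n$ in which every $f_i$ either belongs to $\cA\cB l_{Sm}/S$ or is a dividing cover, equivalently (by Proposition \ref{A.9.75}) a log modification. Since $M$ is a monoidal functor on $\mathbf{Square}_{\mathscr{S}/S}$, and in particular a functor on the underlying morphisms, one has $M(f)=M(f_1)\circ\cdots\circ M(f_n)$; the natural map in the statement is precisely $M(f)$, so it suffices to prove that each $M(f_i)$ is invertible, a conclusion that plainly does not depend on the chosen factorization.

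The two cases are then immediate. If $f_i$ is a log modification, then {\rm ($div$-des)}, one of the standing hypotheses on $\cT$, says exactly that $M(f_i)$ is an isomorphism. If $f_i\in\cA\cB l_{Sm}/S$, then the hypotheses {\rm ($Zar$-sep)}, {\rm ($\boxx$-inv)}, {\rm ($sNis$-des)} and {\rm ($div$-des)} on $\cT$ are precisely those required by Theorem \ref{A.3.7}, so $M(f_i)$ is an isomorphism. Composing, $M(f)$ is an isomorphism. I do not anticipate a genuine obstacle: all the substance lives in Theorem \ref{A.3.7} and, inside its proof, in the toric surface computation of Proposition \ref{A.3.39}, and the present statement is a bookkeeping corollary obtained by closing up under composition. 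The only point I would take a moment to spell out is the mild ambient-category bookkeeping: $\cA\cB l_{Sm}/S$ is a priori defined inside $\mathscr{S}/S$ whereas $\cA\cB l_{div}/S$ lives in $lSm/S$, but since here we take $\mathscr{S}/S=lSm/S$ (and $SmlSm/S$ embeds fully faithfully), all the composites above genuinely lie in the domain of $M$, so nothing is lost.
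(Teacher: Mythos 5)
Your proposal is correct and is essentially identical to the paper's own proof: decompose $f$ into factors that are either dividing covers (handled by ($div$-des)) or morphisms in $\cA\cB l_{Sm}/S$ (handled by Theorem \ref{A.3.7}), and compose the resulting isomorphisms. The extra remarks on functoriality and the ambient category are fine but not needed.
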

\begin{proof}
If $f$ is a dividing cover, then apply ($div$-des). 
If $f$ is in $\cA\cB l_{Sm}/S$, then apply Theorem \ref{A.3.7}. 
Since any morphism in $\cA\cB l_{div}/S$ is a composition of dividing covers and morphisms in $\cA\cB l_{Sm}/S$, we are done.
\end{proof}

\begin{prop}
For $X\in SmlSm/S$ and $Y\rightarrow X\in \cA\cB l_{div}/S$, 
there exists a log modification $Z\rightarrow Y$ such that the composite morphism $Z\rightarrow X$ is in $\cA \cB l_{Sm}/S$.
\end{prop}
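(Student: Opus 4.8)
The plan is to induct on the minimal number $d$ of dividing-cover steps in a presentation of the morphism $f\colon Y\to X$ as a composition in $\cA\cB l_{div}/S$. By definition $f$ decomposes as $Y=Y_n\to Y_{n-1}\to\cdots\to Y_0=X$ with each $Y_i\to Y_{i-1}$ either in $\cA\cB l_{Sm}/S$ or a dividing cover, and after refining we may assume each $\cA\cB l_{Sm}/S$-step is a single admissible blow-up along a smooth center. If $d=0$ we take $Z=Y$. If $d\geq 1$, I would let the $j$-th step $p\colon Y_j\to Y_{j-1}$ be the lowest dividing cover, so that $Y_{j-1}\to X$ is a composition of admissible blow-ups along smooth centers and in particular $Y_{j-1}\in SmlSm/S$.

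The core of the argument is to absorb $p$ into $\cA\cB l_{Sm}/S$ at the price of a log modification above level $j$. Since $Y_{j-1}\in SmlSm/S$, Corollary \ref{A.5.74} together with Proposition \ref{A.9.81} provides a composition of log modifications along smooth centers $W\to Y_{j-1}$ — hence a morphism in $\cA\cB l_{Sm}/S$ with $W\in SmlSm/S$ — and a $Y_{j-1}$-morphism $q\colon W\to Y_j$, which is itself a dividing cover by the cancellation property of surjective proper log \'etale monomorphisms. I would then propagate $q$ up the tower: given a log modification $W_{i-1}\to Y_{i-1}$ with $W_{i-1}\in SmlSm/S$, form the fs fiber product $Y_i\times_{Y_{i-1}}W_{i-1}$; it still lies in $lSm/S$ because $W_{i-1}\to Y_{i-1}$ is log \'etale, so $Y_i\times_{Y_{i-1}}W_{i-1}\to Y_i$ is log \'etale. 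If $Y_i\to Y_{i-1}$ is a dividing cover, so is $Y_i\times_{Y_{i-1}}W_{i-1}\to W_{i-1}$, and a log modification of its source with target in $SmlSm/S$ (Proposition \ref{A.3.19}) produces $W_i$. If $Y_i\to Y_{i-1}$ is an admissible blow-up along a smooth center, then $Y_i\times_{Y_{i-1}}W_{i-1}\to W_{i-1}$ is the base change of such a morphism along a log modification, and the commutation statement below furnishes a log modification $W_i\to Y_i\times_{Y_{i-1}}W_{i-1}$ with $W_i\in SmlSm/S$ and $W_i\to W_{i-1}$ in $\cA\cB l_{Sm}/S$. In both cases $W_i\to Y_i$ is a composition of log modifications, so $Z:=W_n\to Y$ is a log modification; meanwhile $Z\to X$ now admits a presentation in $\cA\cB l_{div}/S$ with only $d-1$ dividing covers, the step $p$ having been absorbed into $\cA\cB l_{Sm}/S$. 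Applying the inductive hypothesis to $Z\to X$ and composing log modifications finishes the proof.

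It remains to establish the commutation statement: if $A\in SmlSm/S$, $f\colon B\to A$ is an admissible blow-up along a smooth center, and $h\colon A'\to A$ is a log modification with $A'\in SmlSm/S$, then there is a log modification $B'\to B\times_A A'$ with $B'\in SmlSm/S$ and $B'\to A'$ in $\cA\cB l_{Sm}/S$. This is the hard part. The plan is to work Zariski-locally on $A$, where $A$ is strict over a toric $\A_{(\Sigma,\Sigma')}$ in which the center of $f$ is torus-invariant and $h$ is pulled back from a subdivision of $\Sigma$; then $f$ and $h$ are pulled back, respectively, from a toric star subdivision (Example \ref{A.3.41}) and a toric modification, so that $B\times_A A'$ corresponds to the common refinement of the two fans, which is a subdivision of the fan defining $A'$. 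By the toric refinement result underlying Theorem \ref{Fan.16}, this subdivision is refined by a finite sequence of star subdivisions along smooth torus-invariant centers; adding the successive exceptional divisors to the log structure, these become admissible blow-ups along smooth centers in the sense of Definition \ref{A.3.17}, which yields $B'$. The delicate point — and the reason this is not immediate — is that an admissible blow-up genuinely alters the log structure (cf.\ Example \ref{A.3.41}), so $B\times_A A'\to A'$ is not a log modification; one must therefore check that the refining star subdivisions can be chosen so that their centers have strict normal crossing with the evolving boundary divisor, which is exactly what Theorem \ref{Fan.16} and the normal-crossing hypotheses in Definition \ref{A.3.17} are designed to guarantee.
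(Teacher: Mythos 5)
Your treatment of the single dividing cover is exactly the paper's entire proof: the paper writes ``without loss of generality, we may assume $f$ is a dividing cover'', notes that $f$ is then a log modification by Proposition \ref{A.9.75}, and invokes Theorem \ref{Fan.16} to dominate it by a chain of log modifications along smooth centers, which lie in $\cA\cB l_{Sm}/S$; your route through Corollary \ref{A.5.74} and Proposition \ref{A.9.81} is an equivalent packaging of the same input, and your use of Lemma \ref{A.9.78} to see that the dominating object is a log modification of $Y$ matches. Where you diverge is that you treat the reduction to this case as the main content, setting up an induction on the number of dividing-cover factors whose engine is your ``commutation statement'' for an admissible blow-up along a smooth center against a log modification. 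You are right that some such commutation is exactly what a careful induction needs whenever an $\cA\cB l_{Sm}$-factor sits above a dividing cover; the paper offers no justification for its reduction, so you have correctly located where the real work lies.

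That commutation statement is, however, precisely where your argument has a genuine gap, and the sketch does not close it. First, the identification of $B\times_A A'$ with ``the common refinement of the two fans'' is unjustified: the admissible blow-up $f\colon B\to A$ is not induced by a subdivision of the characteristic fan of $A$ --- that is exactly why it fails to be a log modification, cf.\ Example \ref{A.3.41} --- so $f$ and $h$ are combinatorial with respect to two different structures (the auxiliary \'etale parametrization $A\to\A^{r+s}$ for $f$, the characteristic fan for $h$), and the fs fiber product of the two non-strict morphisms $B\to A$ and $A'\to A$ is computed by neither without an argument. Second, the refinement results you invoke do not deliver what you need: Theorem \ref{Fan.16} operates on the sharpened (characteristic) fan and therefore cannot see the center of $f$, while the ambient-fan statement, Lemma \ref{A.3.31}, produces star subdivisions with no control on whether their centers are contained in the boundary divisor and have strict normal crossings with it --- conditions required by the definition of an admissible blow-up along a smooth center, as you yourself flag as ``the delicate point.'' Until that is settled, the inductive step in which an $\cA\cB l_{Sm}$-factor must be pushed past a log modification remains open. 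In fairness, the paper's own ``without loss of generality'' silently assumes the same reduction, so the gap you have exposed is also a gap in the source.
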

\begin{proof}
Without loss of generality, we may assume $f$ is a dividing cover. 
Proposition \ref{A.9.75} shows that $f$ is a log modification.
Owing to Theorem \ref{Fan.16} the morphism $f$ is dominated by a sequence of log modifications $X_n\to \ldots \to X_1\to X$ along a smooth center, see Definition \ref{Fan.39}.  
This suffices to conclude the proof.
\end{proof}

\subsection{Blow-up triangles and $(\mathbb{P}^n,\mathbb{P}^{n-1})$-invariance}
Throughout this section we shall assume that the triangulated category $\cT$ satisfies the properties {\rm ($Zar$-sep)}, {\rm ($\boxx$-inv)}, {\rm ($sNis$-des)}, and {\rm ($div$-des)}.
Our aim is to conclude that ($(\P^\bullet,\P^{\bullet-1})$-inv) holds.
As in the formulation of ($(\P^\bullet,\P^{\bullet-1})$-inv), 
recall that we write 
\[
[x_0:\cdots:x_n]
\] 
for the coordinates on $\P^n$ and view $\P^{n-1}=H_n$ as the hyperplane defined by $x_n=0$.
The following result is a direct consequence of Theorem \ref{A.3.7}. 

\begin{prop}
\label{A.6.1}
For every $X\in\mathscr{S}/S$ the projection $X\times (\P^n,\P^{n-1})\rightarrow X$ induces an isomorphism
\[
M(X\times (\P^n,\P^{n-1}))\xrightarrow{\cong} M(X).
\]
\end{prop}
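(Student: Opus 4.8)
The plan is to induct on $n$, the engine being Lemma \ref{A.3.28} (which exhibits a suitable blow-up of $(\P^n,\P^{n-1})$ as a $\boxx$-bundle) together with Theorem \ref{A.3.7} (invariance under admissible blow-ups along smooth centers). First I would reduce to the case $X\in SmlSm/S$: when $\mathscr{S}/S=SmlSm/S$ this is automatic, and when $\mathscr{S}/S=lSm/S$ one uses Proposition \ref{A.3.19} to find a log modification $X'\to X$ with $X'\in SmlSm/S$, notes that its base change $X'\times(\P^n,\P^{n-1})\to X\times(\P^n,\P^{n-1})$ is again a log modification, and applies ($div$-des) to both vertical maps. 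For $n=1$ one has $(\P^1,\P^0)\cong\boxx$ as fs log schemes (the automorphism of $\P^1$ interchanging $0$ and $\infty$), so the claim is precisely ($\boxx$-inv). Since all coordinate hyperplanes of $\P^n$ are projectively equivalent, the object $(\P^n,\P^{n-1})$ is, up to isomorphism of fs log schemes, independent of the chosen hyperplane, so it is enough to prove the statement with $\P^{n-1}$ replaced by $H_0=\{x_0=0\}$; the structure projection to $X$ is unaffected by this relabelling.

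For the inductive step, fix $n\geq 2$, assume the assertion for $n-1$ and every object of $SmlSm/S$, and let $X\in SmlSm/S$. Let $B\to\P^n$ be the blow-up at $p:=[0:\cdots:0:1]$, with exceptional divisor $E$ and $H_0'$ the strict transform of $H_0$; note $p\in H_0$. The product map
\[
X\times(B,H_0'+E)\longrightarrow X\times(\P^n,H_0)
\]
is an admissible blow-up along the smooth center $X\times\{p\}$, hence lies in $\cA\cB l_{Sm}/S$, so Theorem \ref{A.3.7} makes it induce an isomorphism on $M$. On the other hand, Lemma \ref{A.3.28} says $(B,H_0'+E)$ is a $\boxx$-bundle over $(\P^{n-1},\P^{n-2})$; consequently $X\times(B,H_0'+E)$ is a $\boxx$-bundle over $X\times(\P^{n-1},\P^{n-2})$, and Proposition \ref{A.3.40} (which uses only ($\boxx$-inv) and ($sNis$-des)) gives $M(X\times(B,H_0'+E))\xrightarrow{\cong}M(X\times(\P^{n-1},\P^{n-2}))$, while the inductive hypothesis identifies the latter with $M(X)$. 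The square
\[
\begin{tikzcd}
X\times(B,H_0'+E)\arrow[r]\arrow[d]&X\times(\P^n,H_0)\arrow[d]\\
X\times(\P^{n-1},\P^{n-2})\arrow[r]&X
\end{tikzcd}
\]
commutes, both composites to $X$ being the structure projection; hence $M(X\times(\P^n,H_0))\to M(X)$ is an isomorphism, and renaming coordinates returns the desired statement for $(\P^n,\P^{n-1})$.

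The point requiring genuine care is the first displayed assertion of the inductive step, namely that $X\times B\to X\times\P^n$ is honestly an admissible blow-up along a smooth center. One must check that $\underline X\times\{p\}\cong\underline X$ is smooth over $S$; that the log structure of $X\times(B,H_0'+E)$ is exactly $\partial X\times B+\underline X\times H_0'+\underline X\times E$, i.e.\ the one prescribed in the definition ($f^{-1}$ of the center together with the strict transform of the old boundary); and that the center $\underline X\times\{p\}$ is contained in $\partial(X\times(\P^n,H_0))$ and has strict normal crossing with it — here one invokes that the notion of strict normal crossing permits the center to be contained in a component of the boundary. All of this is an \'etale-local computation on $\underline X$, using that $\partial X$ is a strict normal crossing divisor because $X\in SmlSm/S$ (Lemma \ref{lem::SmlSm}). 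Everything else in the argument — the induction, the reduction to $SmlSm/S$ via ($div$-des), the $\boxx$-bundle input, and the diagram chase — is formal once Theorem \ref{A.3.7}, Lemma \ref{A.3.28} and Proposition \ref{A.3.40} are in hand, which is precisely why this can be presented as a direct consequence of Theorem \ref{A.3.7}.
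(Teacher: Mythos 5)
Your argument is correct, but it takes a genuinely different route from the paper's. The paper first reduces to $X=S$ via the monoidal structure (MSq), and then, for the inductive step, compares $(\P^n,\P^{n-1})$ with $(\P^{n-1},\P^{n-2})\times\boxx$ through a \emph{common} admissible blow-up $\A_{(\Sigma_3,\Sigma_3')}$ (described by an explicit chain of star subdivisions of fans); Theorem \ref{A.3.7} applied to both legs yields the tensor decomposition $M(\P^n,\P^{n-1})\cong M(\P^{n-1},\P^{n-2})\otimes M(\boxx)$, hence $M(\boxx)^{\otimes n}$, and ($\boxx$-inv) finishes. You instead keep a general $X$, perform a \emph{single} admissible blow-up of $X\times(\P^n,H_0)$ at a point of the boundary hyperplane, and invoke Lemma \ref{A.3.28} to recognize the result as a $\boxx$-bundle over $X\times(\P^{n-1},\P^{n-2})$, so that Proposition \ref{A.3.40} and the inductive hypothesis conclude directly. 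Your route trades the paper's explicit fan bookkeeping for the coordinate computation already carried out in Lemma \ref{A.3.28} (which the paper proves but only uses later, in Lemma \ref{A.3.26}); the paper's route buys the sharper intermediate statement $M(\P^n,\P^{n-1})\cong M(\boxx)^{\otimes n}$ essentially for free. Your verification that the blow-up is admissible (center smooth, contained in the boundary, strict normal crossing with it, and the new log structure being exceptional divisor plus strict transforms) is exactly the point that needs care, and your checks are right; in particular Definition \ref{A.3.17} does allow the center to lie in a component of the boundary.

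One small improvement: your reduction to $X\in SmlSm/S$ leans on Proposition \ref{A.3.19}, which is stated only over a field $k$, whereas the proposition is asserted over a general base $S$. This is easily repaired — and is what the paper does — by using (MSq) to write $M(X\times(\P^n,\P^{n-1}))\cong M(X)\otimes M(S\times(\P^n,\P^{n-1}))$ and reducing to $X=S$, which lies in $SmlSm/S$; the rest of your argument then applies verbatim.
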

\begin{proof}
Owing to the isomorphism 
\[
M(X\times (\P^n,\P^{n-1}))\cong M(X)\otimes M(\P^n\times S,\P^{n-1}\times S), 
\]
it suffices to consider the case $X=S$. 
We proceed by induction on $n$. 
The case $n=0$ is trivial, and the case $n=1$ is precisely the {\rm ($\boxx$-inv)} property, so we may assume $n>1$.
For the standard coordinates $e_1,\ldots,e_n$ on $\Z^n$, 
recall that $\P^n$ is the toric variety associated to a fan $\Sigma_1$ with the maximal cones 
\[\Cone(e_1,\ldots,e_n),\]
\[\Cone(e_1,\ldots,e_{i-1},e_{i+1},\ldots,e_n,-e_1-\cdots-e_n)\quad (1\leq i\leq n-1),\]
\[\tau:=\Cone(e_1,\ldots,e_{n-1},-e_1-\cdots-e_n).\]
\vspace{0.1in}

We shall compare $\P^n$ with $\P^{n-1}\times \P^1$. 
Both of these schemes are dominated by a common blow-up, which we describe next.
Consider the star subdivision 
$$
\Sigma_3:=(\Sigma_1^*(\tau'))^*(\tau),
$$ 
where 
$$
\tau':=\Cone(e_n,-e_1-\cdots-e_n).
$$
The maximal cones of $\Sigma_3$ are given by 
\[
\sigma_{1}:=\Cone(e_1,\ldots,e_n),
\]
\[
\sigma_{i2}:=\Cone(e_1,\ldots,e_{i-1},e_{i+1},\ldots,e_n,-e_1-\cdots-e_{n-1})\quad (1\leq i\leq n-1),
\]
\[
\sigma_{i3}:=\Cone(e_1,\ldots,e_{i-1},e_{i+1},\ldots,e_{n-1},-e_1-\cdots-e_n,-e_1-\cdots-e_{n-1})\quad (1\leq i\leq n-1),
\]
\[
\sigma_{i4}:=\Cone(e_1,\ldots,e_{i-1},e_{i+1},\ldots,e_n,-e_1-\cdots-e_n,-e_n)\quad (1\leq i\leq n-1),
\]
\[\sigma_{5}:=\Cone(e_1,\ldots,e_{n-1},-e_n).\]
  
Moreover, 
$\P^{n-1}\times \P^1$ is the toric variety corresponding to a fan $\Sigma_2$ with the maximal cones 
\[
\sigma_{1},\;\sigma_{5},
\]
\[
\sigma_{i2}\quad (1\leq i\leq n-1),
\]
\[
\sigma_{i6}:=\Cone(e_1,\ldots,e_{i-1},e_{i+1},\ldots,e_{n-1},-e_n,-e_1-\cdots-e_{n-1})\quad (1\leq i\leq n-1).
\]
Since the maximal cones of the star subdivision $(\sigma_{i6})^*(\tau'')$ are $\sigma_{i3}$ and $\sigma_{i4}$, 
where $\tau'':=\Cone(-e_n,-e_1-\cdots-e_{n-1})$,
we observe that $\Sigma_3=\Sigma_2^*(\tau'')$.
\vspace{0.1in}

Form the following fans
\begin{gather*}
\Sigma_1':=\{{\rm Cone}(-e_1-\cdots-e_n)\},
\;
\Sigma_2':=\{\tau''\},
\\
\Sigma_3':=\{{\rm Cone}(-e_1-\cdots-e_n,-e_1-\cdots-e_{n-1}),{\rm Cone}(-e_1-\cdots-e_n,-e_n)\}.
\end{gather*}
\vspace{0.1in}

When $n=2$, the following figure describes the above fans.
\[
\begin{tikzpicture}[yscale=1, xscale=1]
\foreach \a in {-1,0,1}
\foreach \b in {-1,0,1}
\filldraw (\a,\b) circle (1pt);
\fill[black!20] (1,1)--(1,-1)--(-1,-1)--(-1,1);
\draw (0,0)--(1,0);
\draw (0,0)--(0,1);
\draw[ultra thick] (0,0)--(-1,-1);
\node at (0,-1.5) {$(\Sigma_1,\Sigma_1')$};
\node at (2,0) {$\leftarrow$};
\begin{scope}[shift={(4,0)}]
\foreach \a in {-1,0,1}
\foreach \b in {-1,0,1}
\filldraw (\a,\b) circle (1pt);
\fill[black!20] (1,1)--(1,-1)--(-1,-1)--(-1,1);
\fill[black!40] (0,0)--(0,-1)--(-1,-1)--(-1,0);
\draw (0,0)--(1,0);
\draw (0,0)--(0,1);
\draw[ultra thick] (0,0)--(-1,-1);
\draw[ultra thick] (0,0)--(0,-1);
\draw[ultra thick] (0,0)--(-1,0);
\node at (2,0) {$\rightarrow$};
\node at (0,-1.5) {$(\Sigma_3,\Sigma_3')$};
\end{scope}
\begin{scope}[shift={(8,0)}]
\foreach \a in {-1,0,1}
\foreach \b in {-1,0,1}
\filldraw (\a,\b) circle (1pt);
\fill[black!20] (1,1)--(1,-1)--(-1,-1)--(-1,1);
\fill[black!40] (0,0)--(0,-1)--(-1,-1)--(-1,0);
\draw (0,0)--(1,0);
\draw (0,0)--(0,1);
\draw[ultra thick] (0,0)--(0,-1);
\draw[ultra thick] (0,0)--(-1,0);
\node at (0,-1.5) {$(\Sigma_2,\Sigma_2')$};
\end{scope}
\end{tikzpicture}
\]
The scheme associated with the left-hand drawing is $(\P^2, \P^1)$, where $\P^1$ is the line at infinity $x_2=0$. 
The middle drawing is the blow-up of $\P^1\times \P^1$ at $(\infty, \infty)$ with log structure given by the exceptional divisor, 
indicated by the diagonal line, 
together with the strict transforms of $\infty\times \P^1 + \P^1\times \infty$ (cf.\ the proof of Proposition \ref{A.3.39}). 
Finally, the right-hand drawing is $\boxx\times \boxx$. 
Note that the blow-up dominates both $\boxx\times \boxx$ and $(\P^2, \P^1)$. 
From the description of admissible blow-ups along smooth centers discussed in Example \ref{A.3.41}, 
we observe that the morphisms
\[
\A_{(\Sigma_1,\Sigma_1')}\leftarrow \A_{(\Sigma_3,\Sigma_3')}\rightarrow \A_{(\Sigma_2,\Sigma_2')}
\]
are admissible blow-ups along smooth centers. 
More precisely, the morphism 
$$
\A_{(\Sigma_3,\Sigma_3')} \to  \A_{(\Sigma_2,\Sigma_2')}
$$ 
is a dividing cover, while the morphism
$$
\A_{(\Sigma_3,\Sigma_3')} \to \A_{(\Sigma_1,\Sigma_1')}
$$ 
is an admissible blow-up, albeit not a dividing cover.
\vspace{0.1in}
  
Theorem \ref{A.3.7} shows there are naturally induced isomorphisms
\[
M( \A_{(\Sigma_1,\Sigma_1')}\times S)\xleftarrow{\cong} M(\A_{(\Sigma_3,\Sigma_3')}\times S)\xrightarrow{\cong} M(\A_{(\Sigma_2,\Sigma_2')}\times S).
\]
Since we have
\[
\A_{(\Sigma_1,\Sigma_1')}\cong (\P^n,\P^{n-1}) \text{ and }
\A_{(\Sigma_2,\Sigma_2')}\cong (\P^{n-1},\P^{n-2})\times \boxx,
\]
there is a naturally induced isomorphism
\[
M((\P^n,\P^{n-1})\times S)\cong M((\P^{n-1},\P^{n-2})\times S)\otimes M(\boxx\times S)
\]
Using induction the above implies there is an isomorphism
\[
M(\P^n\times S,\P^{n-1}\times S)\cong M(\boxx\times S)^{\otimes n}.
\]
This finishes the proof by appealing to the property ($\boxx$-inv). 
\end{proof}

\begin{prop}
\label{A.6.3}
Suppose $Y$ is a $(\P^n,\P^{n-1})$-bundle over $X\in\mathscr{S}/S$.
Then the projection $Y\rightarrow X$ induces an isomorphism
\[
M(Y)\xrightarrow{\cong} M(X).
\]
\end{prop}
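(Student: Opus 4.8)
The statement is the $(\P^n,\P^{n-1})$-bundle analogue of Proposition \ref{A.3.40}, where the $\boxx$-bundle case was treated, so the plan is to imitate that proof, replacing the fiberwise contractibility of $\boxx$ (property ($\boxx$-inv)) by the fiberwise contractibility of $(\P^n,\P^{n-1})$, which is now available as Proposition \ref{A.6.1}. First I would recall that, by Definition \ref{df::projectivebundle}, a $(\P^n,\P^{n-1})$-bundle $\xi\colon Y\to X$ is by definition Zariski locally on $X$ isomorphic to the projection $X\times (\P^n,\P^{n-1})\to X$. So choose a finite Zariski cover $\{X_1,\dots,X_r\}$ of $X$ over each member of which $Y$ trivializes; such a finite cover exists because $X$ is noetherian. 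For a nonempty subset $I=\{i_1,\dots,i_s\}\subset\{1,\dots,r\}$ write $X_I:=X_{i_1}\times_X\cdots\times_X X_{i_s}$ for the corresponding intersection and $Y_I:=Y\times_X X_I$; then each $Y_I\to X_I$ is again a $(\P^n,\P^{n-1})$-bundle (restriction of a bundle to an open is a bundle), and it is moreover trivial whenever $I$ contains at least one index (since over $X_i$ the bundle trivializes, and $X_I\subset X_i$ is open).

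Next I would run an induction on $r$, the number of trivializing opens, exactly as in the proof of Proposition \ref{A.3.40}. If $r=1$ then $Y\cong X\times(\P^n,\P^{n-1})$ and the conclusion is immediate from Proposition \ref{A.6.1} (together with the monoidal structure, which gives $M(X\times(\P^n,\P^{n-1}))\cong M(X)\otimes M(S\times(\P^n,\P^{n-1}))\cong M(X)$). For the inductive step, set $U:=X_1$, $V:=X_2\cup\cdots\cup X_r$, so that $\{U,V\}$ is a Zariski cover of $X$, giving a Zariski distinguished square, which is in particular a strict Nisnevich distinguished square. Pulling this square back along $\xi$ produces a strict Nisnevich distinguished square with top row $Y_U\cap Y_V\to Y_V$ (writing $Y_U:=Y\times_X U$ etc.) and bottom row $Y_U\to Y$; more precisely one forms the commutative cube relating the distinguished square for $X$ and the one for $Y$. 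By ($sNis$-des) the induced morphism $M(Y_U\cap Y_V\to Y_U)\to M(Y\to Y_V)$ — or, in the formulation of Proposition \ref{A.5.71}, the comparison of the two homotopy cartesian squares — is an isomorphism. Over $U$ the bundle is trivial, so $M(Y_U)\xrightarrow{\cong} M(U)$ by the $r=1$ case; over $V$, which is covered by $r-1$ trivializing opens, $M(Y_V)\xrightarrow{\cong}M(V)$ by the induction hypothesis; and over $U\cap V$, which is covered by $r-1$ trivializing opens each of which meets $U$, hence trivializes $Y$ there, we again get $M(Y_U\cap Y_V)\xrightarrow{\cong} M(U\cap V)$ by induction. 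Invoking ($Zar$-sep) (or, equivalently, the five lemma applied to the map of distinguished triangles coming from the two homotopy cartesian squares of $M(-)$ for $X$ and for $Y$, together with the three established fiberwise isomorphisms), we conclude $M(Y)\xrightarrow{\cong} M(X)$.

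The only mild subtlety — and the step I expect to need the most care — is bookkeeping with the pulled-back distinguished squares and the compatibility of all the trivializations: one must check that the cube relating the $X$-square and the $Y$-square is genuinely commutative and cartesian on the relevant faces, so that ($sNis$-des) and ($Zar$-sep) apply directly. This is, however, purely formal and is handled verbatim as in Proposition \ref{A.3.40}; no new geometric input beyond Proposition \ref{A.6.1} is required. I would therefore write the proof as: ``The proof is parallel to that of Proposition \ref{A.3.40}, using Proposition \ref{A.6.1} in place of ($\boxx$-inv).''
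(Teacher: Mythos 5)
Your proposal is correct and follows exactly the route the paper takes: induction on the number of trivializing opens as in Proposition \ref{A.3.40}, reducing via ($sNis$-des) to the case $Y\cong X\times(\P^n,\P^{n-1})$, and then concluding by Proposition \ref{A.6.1}. The paper's own proof is just a two-line version of this same argument.
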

\begin{proof}
As in the proof of Proposition \ref{A.3.40} by using induction on the number of open subsets in a trivialization of $Y$, we may assume $Y\cong X\times (\P^n,\P^{n-1})$. 
We conclude by reference to Proposition \ref{A.6.1}.
\end{proof}

In the following theorem, 
we compare the relative motive $M(Z\to X)$ for a smooth closed pair $(X,Z)$ with the relative motive of the blow-up of $X$ along $Z$. 
The result would be automatic in the presence of the localization property, which does not hold in our log setting (cf.\ the example in Section \ref{sec:Gysin}).  

\begin{thm}
\label{A.3.13}
Let $(X,Z)$ be a closed pair smooth over $S$, and let $X'$ be the blow-up of $X$ along a smooth center $Z$. 
For $Z':=Z\times_X X'$ there is a naturally induced isomorphism
\[
M(Z'\rightarrow X')\xrightarrow{\cong}  M(Z\rightarrow X).
\]
\end{thm}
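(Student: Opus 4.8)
The plan is to reduce the statement to the local model computed in the preceding sections and then to invoke Theorem \ref{A.3.7} together with the $3$-square machinery of Proposition \ref{A.3.44}. First I would work Zariski-locally on $X$: using (\emph{$Zar$-sep}) I may assume there is an \'etale parametrization $u\colon (X,Z)\to (\A_S^{r+s},\A_S^s)$ with $Z=u^{-1}(\A_S^s)$, where $Z$ has codimension $r$ in $X$. Applying Construction \ref{A.3.16} to this parametrization produces \'etale excisive morphisms of closed pairs $(X,Z)\leftarrow(X_2,Z)\to(X_1,Z)=(\A^r_Z,Z)$; each of these is a strict Nisnevich distinguished square in $\mathscr{S}/S$ both for the pair $(X,Z)$ and for its blow-up (note that the blow-up of $X$ along $Z$ pulls back along an \'etale excisive morphism to the blow-up of $X_i$ along $Z$, since blowing up commutes with flat base change and the exceptional divisors match over $\Delta$). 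By (\emph{$sNis$-des}) we get isomorphisms $M(Z'\to X')\cong M((X_2\times_X X')\to X_2)$ and similarly downstairs, so we reduce to the case $X=\A^r_Z$, $Z\hookrightarrow X$ the zero section. Using the monoidal structure of $\cT$ this further reduces to $X=\A^r_S=\A^r\times S$ and $Z=S$, i.e.\ to showing that the blow-up $B_{\{0\}}\A^r\to\A^r$ (equipped with the empty log structure downstairs and the exceptional divisor upstairs on neither side — here both $X$ and $X'$ carry trivial log structures except that $X'$ is considered in $\mathscr{S}/S$ as a scheme, not with the compactifying structure) induces on relative motives an isomorphism $M(\mathrm{exc.\ div.}\to B_{\{0\}}\A^r)\xrightarrow{\cong}M(S\to\A^r)$.

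Next I would handle this local case by a toric computation parallel to Example \ref{A.3.41} and Step 3 of Theorem \ref{A.3.7}. The point is that $M(Z\to X)$ is, by definition of the Tate twist and (\emph{$\boxx$-inv}), isomorphic to $M(S)(r)[2r]$ when one uses the standard filtration of $\P^r$-bundles; but we must instead express the blow-up side. I would use the fact (Lemma \ref{A.6.4}, Lemma \ref{A.3.33}) that $B_{\{0\}}\P^r$ together with its exceptional divisor is a $\boxx$-bundle over $\P^{r-1}$, and the analogous statement with $\A^r$ in place of $\P^r$ giving a $\boxx$-bundle structure relating $B_{\{0\}}\A^r$ to affine space; combined with Proposition \ref{A.3.40} this lets me compute $M$ of the blow-up side. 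More directly: the morphism $B_{\{0\}}\A^r\to \A^r$ fits, after passing to an appropriate dividing cover and using a chain of star subdivisions exactly as in Step 3 of Theorem \ref{A.3.7}, into admissible blow-ups along codimension-two smooth centers, to which Theorem \ref{A.3.7} applies. The cone of $M(Z'\to X')\to M(Z\to X)$ is then built from cones of morphisms each of which is an isomorphism by Theorem \ref{A.3.7}, Proposition \ref{A.3.40}, (\emph{$\boxx$-inv}), and (\emph{$div$-des}); Proposition \ref{A.3.44} (the $3$-square cancellation lemma, asserting that if two of the three relative-motive maps in a commuting prism are isomorphisms so is the third) is what converts these pointwise vanishings into the desired isomorphism of relative motives.

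The construction of the natural map $M(Z'\to X')\to M(Z\to X)$ itself is straightforward: the blow-up square
\[
\begin{tikzcd}
Z'\arrow[r]\arrow[d]&X'\arrow[d]\\
Z\arrow[r]&X
\end{tikzcd}
\]
is a $2$-square in $\mathbf{Square}_{\mathscr{S}/S}$, and applying $M$ to the induced morphism of $1$-squares $(Z'\to X')\to(Z\to X)$ — i.e.\ using condition (Sq) — gives a canonical map in $\cT$, which is the one we must show is invertible. So there is no issue of constructing the map, only of checking it is an isomorphism, and by the reductions above this is entirely local and toric.

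The main obstacle I anticipate is organizing the toric/combinatorial bookkeeping in the local case so that it genuinely falls under Theorem \ref{A.3.7}: one must exhibit $B_{\{0\}}\A^r\to\A^r$ (which is \emph{not} a log modification, since $\A^r$ with trivial log structure admits no nontrivial log modifications) as dominated by a composite of admissible blow-ups along codimension-two smooth centers and dividing covers, exactly mirroring the star-subdivision chain $\Sigma_0\to\Sigma_1\to\cdots$ used in Step 3 of Theorem \ref{A.3.7}, and then verify the prism diagrams needed for Proposition \ref{A.3.44} commute. This is conceptually clear but requires care about which log structures appear at each stage; everything else (the \'etale-local reduction, the monoidal reduction, the application of (\emph{$sNis$-des})) is routine given the tools already developed in this section.
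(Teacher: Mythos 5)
Your reduction to the local model is essentially the paper's: Zariski localization, the parametrization and the excisive squares of Construction \ref{A.3.16}, ($sNis$-des) plus Proposition \ref{A.3.44} to pass between $M(Z'\to X')\cong M(Z\to X)$ and $M(Z'\to Z)\cong M(X'\to X)$, and the monoidal reduction to $X=\A^r_S$, $Z=S$, $X'=B_{\{0\}}\A^r_S$. The gap is in how you then handle this local case. You propose to dominate $B_{\{0\}}\A^r\to\A^r$ by admissible blow-ups along codimension-two smooth centers so that Theorem \ref{A.3.7} applies, and to conclude that the cone of $M(Z'\to X')\to M(Z\to X)$ is ``built from cones of morphisms each of which is an isomorphism.'' This cannot work. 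Theorem \ref{A.3.7} only applies to admissible blow-ups, whose centers must lie in $\partial X$; here $\partial(\A^r)=\emptyset$, as you yourself observe, so there is nothing for the star-subdivision chain of its Step 3 to act on. More decisively, the conclusion such an argument would produce --- that $M(B_{\{0\}}\A^r)\to M(\A^r)$ is an isomorphism --- is false: by Proposition \ref{A.3.44} the theorem is equivalent to $M(X'\to X)\cong M(Z'\to Z)$, and $Z'\to Z$ is the $\P^{r-1}$-bundle $\P(N_ZX)\to Z$, whose relative motive is nonzero. Only the \emph{relative} motives agree; the absolute maps $M(X')\to M(X)$ and $M(Z')\to M(Z)$ are not isomorphisms, so no chain of genuine isomorphisms of absolute motives can close the argument. (Relatedly, $\A^r$ is not contractible in $\ldmeff$, so you also cannot identify $M(Z\to X)$ with a Tate twist by a ``standard filtration'' at this stage.)

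The missing idea is compactification. The paper embeds the local model in $Y:=(\P^r_S,\P^{r-1}_S)$, with the log structure on the hyperplane at infinity and the blown-up point in the interior, and lets $Y'$ be the blow-up of $\P^r_S$ at that point with the log structure induced from $Y$. Strict Nisnevich excision applied to the squares comparing $X-Z\to X$ with $Y-Z\to Y$, and $X'-Z'\to X'$ with $Y'-Z'\to Y'$, together with Proposition \ref{A.3.44}, gives $M(X'\to X)\cong M(Y'\to Y)$. The compactified objects are then tractable: $M(Y)\cong M(S)\cong M(Z)$ by $(\P^n,\P^{n-1})$-invariance (Proposition \ref{A.6.1}), and $Y'$ is a $\boxx$-bundle over $\P^{r-1}_S$ by Lemma \ref{A.6.5}, so $M(Y')\cong M(\P^{r-1}_S)$ by Proposition \ref{A.6.3}, with the exceptional divisor $Z'$ mapping isomorphically onto the base. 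The $\boxx$-bundle lemmas you cite (Lemmas \ref{A.6.4} and \ref{A.3.33}) hold only for the projective compactification; $B_{\{0\}}\A^r$ is not proper over $\P^{r-1}$, so the ``analogous statement with $\A^r$ in place of $\P^r$'' you invoke does not exist in the form you need. Once the compactification step replaces the appeal to Theorem \ref{A.3.7}, your outline closes up.
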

\begin{proof}
Note that both $X$ and $Z$ have a trivial log structure. 
By Proposition \ref{A.3.44}, it suffices to show there is a naturally induced isomorphism
\[
M(Z'\rightarrow Z)\xrightarrow{\cong} M(X'\rightarrow X).
\]
This question is Zariski local on  $X$ by ($Zar$-sep), 
so that we may assume there exists an \'etale morphism $u:X\rightarrow \A^{r+s}\times S$ of schemes for which $u^{-1}(W\times S)=Z$, 
where $W$ denotes the subscheme $(\Spec Z)^r\times \A^s$ of $\A^{r+s}$. 
Using $u$ and $Z\hookrightarrow X$, 
we obtain schemes $X_1$ and $X_2$ over $k$ as in Construction \ref{A.3.16}, together with the corresponding excisive squares. 
\vspace{0.1in}

In $\mathscr{S}/S$, there are strict Nisnevich distinguished squares
\[
\begin{tikzcd}
X_2-Z\arrow[d]\arrow[r]&X_2\arrow[d]\\
X-Z\arrow[r]&X
\end{tikzcd}\quad
\begin{tikzcd}
X_2'-Z'\arrow[d]\arrow[r]&X_2'\arrow[d]\\
X'-Z'\arrow[r]&X'
\end{tikzcd}\]
where $X_2':=X_2\times_X X'$ (recall that $Z\cong \Delta \hookrightarrow X_2$). 
Property ($sNis$-des) implies there are isomorphisms
\[
M((X_2-Z)\rightarrow (X-Z))\xrightarrow{\cong} M(X_2\rightarrow X),
\]
and
\[
M((X_2'-Z')\rightarrow (X'-Z'))\xrightarrow{\cong} M(X_2'\rightarrow X').
\]
\vspace{0.1in}

On account of the identifications $X_2-Z\cong X_2'-Z'$ and $X-Z\cong X'-Z'$, there is an isomorphism
\[
M(X_2'\rightarrow X')\xrightarrow{\cong} M(X_2\rightarrow X).
\]
By Proposition \ref{A.3.44} we find the isomorphism
\[
M(X_2'\rightarrow X_2)\xrightarrow{\cong} M(X'\rightarrow X).
\]

Similarly, 
letting $X_1'$ denote the blow-up of $X_1$ along $Z$, 
there is a naturally induced isomorphism
\[
M(X_2'\rightarrow X_2)\xrightarrow{\cong} M(X_1'\rightarrow X_1).
\]
\vspace{0.1in}

By the above, we may replace $X$ with $X_1$ and assume $X=\A^r\times Z$. 
Using the monoidal structure on $\cT$, we can further reduce to the case $X=\A^r$ and $Z=\{0\}$. Then $X'$ is $B_{0}(\A^r)$ and $Z'=E$ is the exceptional divisor.
Let $Y$ be shorthand for $(\P^n\times S,H_n\times S)$, 
where $H_n=\P^{n-1}$ is the hyperplane at infinity. 
Let $Y'$ be the object of $\mathscr{S}/S$ with log structure induced by $Y$ and whose underlying scheme is the blow-up of $\P^n$ at the origin.
\vspace{0.1in}

Using the open immersion $X\hookrightarrow Y$, we obtain strict Nisnevich distinguished squares in $\mathscr{S}/S$
\begin{equation}\label{eq:blowup-reduction-nolog}
\begin{tikzcd}
X-Z\arrow[r]\arrow[d]&X\arrow[d]\\
Y-Z\arrow[r]&Y
\end{tikzcd}
\quad
\begin{tikzcd}
X'-Z'\arrow[r]\arrow[d]&X'\arrow[d]\\
Y'-Z'\arrow[r]&Y'.
\end{tikzcd}
\end{equation}
By ($sNis$-des), there are isomorphisms
\begin{align*}
M((X-Z)\rightarrow (Y-Z))&\xrightarrow{\cong} M(X\rightarrow Y),\\
M((X'-Z')\rightarrow (Y'-Z'))&\xrightarrow{\cong} M(X'\rightarrow Y').
\end{align*}
Since $X'-Z'\cong X-Z$ and $Y'-Z'\cong Y-Z$, we obtain an isomorphism
\[
M(X'\rightarrow Y')\xrightarrow{\cong} M(X\rightarrow Y).
\]
Using Proposition \ref{A.3.44}, and the relevant cube obtained by combining the two squares in \eqref{eq:blowup-reduction-nolog}, we get the isomorphism
\[
M(X'\rightarrow X)\xrightarrow{\cong} M(Y'\rightarrow Y).
\]
Hence it suffices to show there is a naturally induced isomorphism
\[
M(Z'\rightarrow Z)\xrightarrow{\cong} M(Y'\rightarrow Y).
\]

Due to Proposition \ref{A.6.1} there is an isomorphism 
\[
M(Z)\xrightarrow{\cong} M(Y),
\]
given by the inverse of the projection $(\P^n, \P^{n-1})\to S$.
Hence we are reduced to consider $M(Z')\rightarrow M(X')$. 
Lemma \ref{A.6.5} shows that $X'$ is a $\boxx$-bundle over $\P_S^{n-1}$. 
Thus by Proposition \ref{A.6.3}, the projection $X'\rightarrow \P_S^{n-1}$ induces an isomorphism
\[
M(X')\xrightarrow{\cong} M(\P_S^{n-1}).
\]
This concludes the proof since the composition $Z'\rightarrow X'\rightarrow \P_S^{n-1}$ is an isomorphism of schemes.
\end{proof}

\subsection{Thom motives}
\label{ssec:ThomMotives}
Throughout this section we shall assume that $\cT$ satisfies the properties {\rm ($Zar$-sep)}, {\rm ($\boxx$-inv)}, {\rm ($sNis$-des)}, and {\rm ($div$-des)}.
Let $E$ be a vector bundle over $X\in Sm/k$ with $0$-section $Z$.
The Thom motive associated with $E$ in $\dmeff$ is defined as
\[
M((E-Z)\rightarrow  E).
\]
In the log setting, 
taking the open complement $E-Z$ of the $0$-section in $E$ does not give the ``correct'' homotopy type: 
we can see this by taking the Betti realization of the log scheme, as discussed in \cite[V.1]{Ogu}. 
See also Proposition \ref{prop::unitdiskbundle} below.  
In the following, we will discuss an appropriate definition and then prove some properties of our Thom motives. 

\begin{df}
\label{A.3.15}
Suppose $Z_1,\ldots,Z_r$ form a strict normal crossing divisor on $X\in Sm/S$, and that a smooth closed subscheme $Z$ has strict normal crossing with $Z_1+\cdots+Z_r$ over $S$.
For $Y:=(X,Z_1+\cdots+Z_r)$ the blow-up of $Y$ along $Z$ is defined as
\[
B_ZY:=(B_Z X,W_1+\cdots+W_r),
\]
where $B_Z X$ is the blow-up of $X$ along $Z$, and $W_i$ is the strict transform of $Z_i$.
If $E$ is the corresponding exceptional divisor on $X$, then there is a naturally induced morphism of schemes
\[
B_Z X-(E\cup W_1\cup \cdots\cup W_r)\rightarrow X-(Z_1\cup \cdots\cup Z_r).
\]
Since the construction of compactifying log structure is functorial by \cite[\S III.1.6]{Ogu}, the above morphism induces a morphism of fs log schemes
\[
(B_Z Y,E)\rightarrow Y.
\]
See Definition \ref{def:associative-notation-log} for the notation. By definition, we say that $(B_Z Y,E)$ is the blow-up of $Y$ along $Z$.\index{blow-up!of fs log schemes} 
On the other hand, we do \emph{not} have a naturally induced morphism of fs log schemes $B_ZY\rightarrow Y$ in general.
\vspace{0.1in}

Regard $B_Z Y$ as the strict transform of $Y\times \{0\}$ in $B_{Z\times \{0\}}(Y\times \boxx)$. 
The deformation space of a pair $(Y,Z)$ is defined as
\[
D_Z Y:=B_{Z\times \{0\}}(Y\times \boxx)-B_Z Y.
\]
The strict transform of $Z\times \boxx$ in $B_{Z\times \{0\}} (Y\times \boxx)$ does not intersect with $B_ZX$, so we may and will consider $Z\times \boxx$ as a smooth divisor on $D_Z Y$.
\vspace{0.1in}

Consider the normal bundle $p:N_ZX\rightarrow X$ of $Z$ in $X$.
The \emph{normal bundle}\index{normal bundle} of $Z$ in $Y$ is defined as
\[
N_Z Y:=(N_Z X,p^{-1}(Z_1)+\cdots+p^{-1}(Z_r)).
\]
Note that $N_ZY\rightarrow Y$ is a vector bundle.
\end{df}

\begin{prop}
\label{A.3.3}
Suppose $Z_1,\ldots,Z_r$ are divisors forming a strict normal crossing divisor on $X\in Sm/S$ over $S$, and that $Z$ has strict normal crossing with $Z_1+\cdots+Z_r$ over $S$.
Let $f:X'\rightarrow X$ be an \'etale morphism in $Sm/S$ such that $Z':=f^{-1}(Z)\rightarrow Z$ is an isomorphism, 
and let $E$ (resp.\ $E'$) be the exceptional divisor on the blow-up $B_Z X$ (resp.\ $B_{Z'}X'$).
Let 
\[
Y:=(X,Z_1+\cdots+Z_r),
\;
Y':=(X',Z_1'+\cdots+Z_r'),
\] 
where $Z_i'$ denotes the pullback of $Z_i$ along $f$. 
Then $f$ induces an isomorphism
\[
M((B_{Z'}Y',E')\rightarrow Y')\xrightarrow{\cong} M((B_ZY,E)\rightarrow Y).
\]
\end{prop}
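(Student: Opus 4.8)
The statement is an excision-type property for blow-ups along smooth centers with strict normal crossing conditions, and the natural approach is to mimic the proof of Theorem \ref{A.3.13} (and the reductions in Theorem \ref{A.3.7}), using the technique of Construction \ref{A.3.16} to reduce the \'etale-local statement to a model situation. First I would observe that the question is local on $X$ for the Zariski topology by ($Zar$-sep): indeed, if $\{X_\alpha\}$ is a Zariski cover of $X$ with $Z_\alpha := Z\cap X_\alpha$, then the blow-ups, their exceptional divisors, and all the pulled-back divisors restrict compatibly, and the same holds for $X'$. So it suffices to treat the case where there is an \'etale chart realizing the strict normal crossing configuration, i.e. an \'etale morphism $X\to \A^n\times S$ pulling back the coordinate configuration to $(Z;Z_1,\dots,Z_r)$.

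\textbf{Key steps.} The plan is: (1) Replace $M((B_{Z'}Y',E')\to Y')\to M((B_ZY,E)\to Y)$ by the morphism of cubes whose cone is controlled; using Proposition \ref{A.3.44} repeatedly, it suffices to show that the \'etale map $f$ induces an isomorphism between the relative motives ``cone of $(B_ZY,E)\to Y$'' on the source and target. (2) Apply Construction \ref{A.3.16} to the smooth closed pair $(X,Z)$ (keeping track of the divisors $Z_1,\dots,Z_r$, which have strict normal crossing with $Z$ over $S$, so that they pull back to the standard coordinate hyperplanes in the chart): this produces auxiliary schemes $X_1=\A^s_Z$ and $X_2\subset X\times_{\A^{r+s}_S}X_1$ with \'etale morphisms $X_2\to X$ and $X_2\to X_1$, together with the excisive squares \eqref{eq:excisivesquares}. (3) Form the blow-ups $B_{Z}X_2$, $B_Z X_1$, their exceptional divisors, and the corresponding fs log schemes, and use ($sNis$-des) on the strict Nisnevich distinguished squares obtained by removing $Z$ from $X_2\to X$ and from $X'_2\to X'$ (and their blow-ups), together with the identifications $X_2-Z\cong X'_2-Z'$ etc.\ coming from $f^{-1}(Z)\xrightarrow{\sim}Z$; this reduces the claim for $f\colon X'\to X$ to the corresponding claim for the excisive morphisms, hence to $X_1=\A^s_Z$. (4) On $X_1=\A^s_Z$ the center $Z$ and the divisors are in standard position, so by the monoidal structure of $\cT$ one reduces to $X=\A^s$, $Z=\{0\}$ with the standard coordinate hyperplanes $Z_i$; here $f$ becomes an identity-type morphism (since $f^{-1}(Z)\to Z$ is forced to be an isomorphism after the excisive reductions), and the two relative motives are literally equal. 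Running the excisive reductions backward gives the isomorphism in the original situation. As in Theorem \ref{A.3.13}, Proposition \ref{A.3.44} is the tool that converts ``the cone of the vertical maps is invertible'' into ``the cone of the horizontal maps is invertible'' throughout.

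\textbf{Main obstacle.} The delicate point is bookkeeping: Construction \ref{A.3.16} is designed for a single smooth closed pair $(X,Z)$, but here we must carry along the normal-crossing divisor $Z_1+\cdots+Z_r$ and ensure that after each excisive replacement ($X\leftarrow X_2\to X_1$) the strict transforms $W_i$ of the $Z_i$ in the blow-ups, as well as the compactifying log structures, behave functorially. The strict normal crossing hypothesis of $Z$ with $Z_1+\cdots+Z_r$ over $S$ guarantees that in the \'etale chart the whole configuration is pulled back from $\A^n\times S$, so the blow-up and strict transforms commute with the \'etale base changes $X_2\to X$ and $X_2\to X_1$, and the functoriality of compactifying log structures (\cite[\S III.1.6]{Ogu}, as used in Definition \ref{A.3.15}) upgrades this to fs log schemes; but verifying that the blow-up $(B_Z Y,E)$ and the pullback along $f$ genuinely fit into a single commutative cube with all faces of the required type (strict Nisnevich distinguished, or excisive) is where the care is needed. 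Once that cube is in place, the argument is a formal consequence of ($sNis$-des), ($Zar$-sep), ($\boxx$-inv), Proposition \ref{A.3.44}, and the monoidal structure, exactly parallel to Theorem \ref{A.3.13}.
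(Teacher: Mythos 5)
Your overall strategy is workable in spirit, but it is far more elaborate than what the statement requires, and as written steps (3)--(4) contain a genuine gap. The identification ``$X_2-Z\cong X_2'-Z'$ \ldots\ coming from $f^{-1}(Z)\xrightarrow{\sim}Z$'' is false: the hypothesis identifies the closed subschemes $Z'\cong Z$, not the open complements, and $X'-Z'$ is merely some \'etale scheme over $X-Z$, not isomorphic to it. Likewise, after the excisive reductions of Construction \ref{A.3.16} the morphism $f$ does not ``become an identity-type morphism'': $X'$ and $X$ are each separately compared to the model $\A^s_Z$, and even granting that both relative motives become isomorphic to the model one, you have not shown that the resulting composite isomorphism is the map \emph{induced by} $f$, which is what the proposition asserts. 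Repairing this would require naturality of all the excision isomorphisms with respect to $f$, which you do not address.

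The point you are missing is that the hypotheses on $f$ (\'etale, and an isomorphism over $Z$) say \emph{exactly} that $X'\amalg(X-Z)\to X$ and $B_{Z'}X'\amalg(X-Z)\to B_ZX$ are Nisnevich covers. Setting $U:=(X-Z,(Z_1-Z)+\cdots+(Z_r-Z))$ and $U':=U\times_Y Y'$, one therefore has two strict Nisnevich distinguished squares, one with vertical arrows $U'\to U$ and $Y'\to Y$, the other with vertical arrows $U'\to U$ and $(B_{Z'}Y',E')\to(B_ZY,E)$ (note that $B_ZX-E=X-Z$ and $E'\to E$ is an isomorphism). Property ($sNis$-des) gives
\[
M(U'\to U)\xrightarrow{\cong}M(Y'\to Y),
\qquad
M(U'\to U)\xrightarrow{\cong}M((B_{Z'}Y',E')\to(B_ZY,E)),
\]
hence $M((B_{Z'}Y',E')\to(B_ZY,E))\cong M(Y'\to Y)$, and Proposition \ref{A.3.44} converts this into the asserted isomorphism between the other pair of cones. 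No Zariski localization, no Construction \ref{A.3.16}, no reduction to $\A^s$, and no monoidal structure are needed.
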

\begin{proof}
For notational convenience we set 
\[
U:=(X-Z,(Z_1-Z)+\cdots+(Z_r-Z)),\; U':=U\times_Y Y'.
\] 
Since $X'\amalg X-Z\rightarrow X$ and $B_{Z'}X'\amalg X-Z\rightarrow B_Z X$ are Nisnevich covers by assumption,
there are induced strict Nisnevich distinguished squares
\[
\begin{tikzcd}
U'\arrow[d]\arrow[r]&(B_{Z'}Y',E')\arrow[d]& U'\arrow[d]\arrow[r]&Y'\arrow[d]\\
U\arrow[r]&(B_Z Y, E)&U\arrow[r]&Y.
\end{tikzcd}
\]
By passing to motives, 
property ($sNis$-des) shows there are naturally induced isomorphisms
\[
M(U'\rightarrow U)\xrightarrow{\cong} M((B_{Z'}Y',E')\rightarrow (B_Z Y,E)),
\]
\[
M(U'\rightarrow U)\xrightarrow{\cong} M(Y'\rightarrow Y).
\]
This finishes the proof by appealing to Proposition \ref{A.3.44} and the isomorphism
\[
M((B_{Z'}Y',E')\rightarrow (B_ZY,E))\xrightarrow{\cong} M(Y'\rightarrow Y).
\]
\end{proof}

Associated to vector bundles over fs log schemes, we have the fundamental notion of a Thom motive;
this plays a crucial role in the theory of log motives.

\begin{df}
\label{A.3.22}
Suppose $Z_1,\ldots,Z_r$ form a strict normal crossing divisor on $X\in Sm/S$.
For $Y:=(X,Z_1+\cdots+Z_r)$ and a vector bundle $\xi:\cE\rightarrow Y$ of fixed rank, let $Z$ be the $0$-section of $\cE$.
Then
\[
\xi^{-1}(Z_1),\ldots,\xi^{-1}(Z_r)
\]
form a strict normal crossing divisor on $\underline{\cE}$, and $Z$ has strict normal crossing with $\xi^{-1}(Z_1)+\cdots+\xi^{-1}(Z_r)$ over $S$.
Let $E$ be the exceptional divisor on the blow-up $B_Z(\cE)$.
The {\it Thom motive}\index{Thom motive} of $\cE$ over $X$ is defined as \index[notation]{mth @ $MTh_X$}
\[
MTh_X(\cE)
:=
M((B_Z(\cE),E)\rightarrow \cE).
\]
We often omit the subscript $X$ in the notation when no confusion seems likely to arise.
\end{df}

According to the following proposition, 
the Betti realization of the motivic Thom space is homotopy equivalent to the quotient of the unit disk bundle by the unit sphere bundle, 
which is a formulation of Thom spaces in algebraic topology.

\begin{prop}
\label{prop::unitdiskbundle}
Suppose $X$ is a separated scheme over $\Spec\C$, and let $\xi:\cE\rightarrow X$ be a vector bundle. 
Then the Betti realization of $\cE$ (resp.\ $\widetilde{\cE}:=(B_Z(\cE),E)$) is homotopy equivalent to a unit disk bundle (resp.\ unit sphere bundle).
\end{prop}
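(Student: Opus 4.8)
The assertion compares two complex-analytic spaces: the Betti realization of $\cE$ (with its pullback log structure), and the Betti realization of $\widetilde{\cE}=(B_Z(\cE),E)$ where $Z$ is the zero section and $E$ the exceptional divisor of the blow-up along $Z$. The plan is to work locally on $X$, since a trivialization $\cE|_U\cong U\times\A^n$ and a partition-of-unity/Mayer–Vietoris argument for homotopy types will let us glue the fiberwise statements; so first I would reduce to the trivial bundle $\cE=X\times\A^n$ with $Z=X\times\{0\}$, and even further to $X=\Spec\C$, i.e.\ to the single bundle $\A^n$ over a point. One should note the log structure on $\widetilde{\cE}$ is the compactifying log structure attached to the open immersion $B_Z(\cE)-(E\cup\bigcup\xi^{-1}(Z_i))\hookrightarrow B_Z(\cE)$, and for the local reduction we may take all $Z_i$ empty; the general normal-crossing divisors $\xi^{-1}(Z_i)$ pulled back from $X$ contribute only through the base and are handled by the same gluing.

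Next I would invoke the description of Betti realization of a log scheme via the Kato–Nakayama space, cf.\ \cite[V.1]{Ogu}: for an fs log scheme $W$ over $\C$ with log structure $j\colon W-\partial W\hookrightarrow W$, the realization replaces $W$ by the space obtained from $\underline{W}(\C)$ by, near each boundary component, "blowing up" the circle direction of the argument, i.e.\ replacing a local coordinate disk $\{|z|<1\}$ by $[0,1)\times S^1$ via $z\mapsto(|z|,z/|z|)$. For $\cE=\A^n$ with trivial log structure the realization is just $\C^n$, which is contractible — it is the total space of a disk bundle over the point (after a deformation retraction $\C^n\simeq D^{2n}$). For $\widetilde{\cE}=(B_0\A^n,E)$: the blow-up $B_0\A^n$ is the tautological line bundle $\cO(-1)$ over $\P^{n-1}$, with $E$ the zero section, so $B_0\A^n(\C)$ is the total space of the $\R^2$-bundle over $\P^{n-1}(\C)$ associated to $\cO(-1)$. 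Taking the compactifying log structure at $E$ and passing to the Kato–Nakayama space replaces the fiber disk by $[0,1)\times S^1$, and then $E$ itself is removed in the homotopy-equivalent model (its preimage is the boundary $\{0\}\times S^1$-locus): concretely, the realization is the total space of the disk-bundle of $\cO(-1)$ with its zero section deleted, equivalently the complement of the zero section, which deformation-retracts onto the unit circle bundle $S(\cO(-1))\to\P^{n-1}$ — the unit sphere bundle $S^{2n-1}\to\P^{n-1}$ of the standard Hermitian metric on $\cO(1)^{\oplus n}$, i.e.\ of $\A^n$.

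Then I would package this: the pair $(\text{disk bundle},\text{sphere bundle})$ for the trivial rank-$n$ bundle over $X$ realizes Betti-theoretically, via the above, as $(\cE_\mathrm{an}, \widetilde{\cE}_\mathrm{an})$ up to homotopy equivalence, compatibly with restriction to opens; a Mayer–Vietoris/Brown-representability style gluing (or simply: homotopy equivalences of cube diagrams assemble, using that both sides are built by the same gluing data over a trivializing cover) upgrades the local statement to arbitrary $X$ and arbitrary $\cE$. The normal-crossing boundary divisors $\xi^{-1}(Z_i)$, being pulled back from $X$, enter identically on $\underline{\cE}$ and on $B_Z\underline{\cE}$ away from $Z$, so their Kato–Nakayama contributions match and do not affect the fiberwise homotopy equivalence. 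The main obstacle I expect is the bookkeeping of the Kato–Nakayama construction in families — making precise that "delete the zero section / boundary after blow-up" is the correct local model for the compactifying log structure along $E$, and that the resulting homotopy equivalences are natural enough in $X$ to glue; this is where one must be careful that the deformation retractions can be chosen consistently over a trivializing cover, e.g.\ by fixing a Hermitian metric and using the radial retraction, which is manifestly functorial in étale maps of the base.
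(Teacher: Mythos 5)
Your proof is correct and follows essentially the same route as the paper's: reduce Zariski-locally to the trivial bundle and then compute the Kato--Nakayama space of the fiber $(B_{\{0\}}\A^n_{\C},E)$. The paper streamlines the fiber computation by invoking Proposition \ref{A.9.74} (for $W$ log smooth over $\C$ one has $W_{\log}\simeq (W-\partial W)_{\mathrm{an}}$), which immediately yields $(\A^n_{\C}-\{0\})_{\mathrm{an}}\simeq S^{2n-1}$ in place of your explicit retraction of the punctured total space of $\cO(-1)$ onto its unit circle bundle.
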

\begin{proof}
The question is Zariski local on $X$, so we may assume that $\xi$ is a trivial rank $n$ vector bundle.
Consider the fs log scheme $Y:=(B_{\{0\}}\A_{\C}^n,D)$ where $D$ is the exceptional divisor. 
Then by Proposition \ref{A.9.74}, the Betti realization of $Y$ is homotopy equivalent to 
\[
(Y-\partial Y)_{log}\cong (\A_{\C}^r-\{0\})_{an}\cong S^{2r-1},
\] 
where $S^i$ denotes the $i$-dimensional unit sphere. 
Recall the Betti realization of $\A_{\C}^r$ is homotopy equivalent to the $2r$-dimensional unit disk $D^{2r}$. 
Since $\cE\cong X\times_{\C}\A_\C^n$ and $\widetilde{\cE}\cong X\times_{\C} Y$, we are done.
\end{proof}

We prove an analog of \cite[Proposition 2.17(3), p.\ 112]{MV} as follows. 
Note that our proof, 
which involves the blow-up along the zero section of the vector bundle, 
is different from the proof given by Morel-Voevodsky \cite{MV}.

\begin{prop}
\label{A.3.34}
Let $\cE$ be a rank $n$ vector bundle over $X$ in $SmlSm/S$, 
and view $\P(\cE)$ as the closed subscheme of $\P(\cE\oplus \cO)$ at infinity.
Then there is a canonical isomorphism
\[
MTh(\cE)\xrightarrow{\cong} M(\P(\cE)\rightarrow \P(\cE\oplus \cO)).
\]
In particular, if $n=1$ and $\cE$ is trivial, then
\[
MTh(X\times \A^1)\cong M(X)(1)[2].
\]
\end{prop}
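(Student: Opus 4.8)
\textbf{Proof plan for Proposition \ref{A.3.34}.}
The plan is to mimic the classical ``deformation to the normal cone'' argument of Morel--Voevodsky, but carried out with the blow-up model of the Thom motive, using the deformation space $D_Z Y$ introduced in Definition \ref{A.3.15}. First I would set up the relevant geometry: let $Z$ be the zero section of $\cE$, let $\widetilde{\cE} := (B_Z(\cE), E)$ be the blow-up model appearing in Definition \ref{A.3.22}, and form the deformation space $D_Z(\cE) := B_{Z\times\{0\}}(\cE\times\boxx) - B_Z(\cE)$. The two ``ends'' of the deformation over $\boxx$ are $\cE$ itself (the fiber over a point of $\boxx-\partial\boxx$, say $1$) and the normal bundle $N_Z(\cE)$ (the fiber over $0\in\partial\boxx$), and since $\cE$ is a vector bundle over $X$, the normal bundle $N_Z(\cE)$ is canonically isomorphic to $\cE$ again. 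One gets two closed immersions of pairs $(\cE, Z)\hookrightarrow (D_Z(\cE), Z\times\boxx)$ and $(N_Z(\cE), Z)\hookrightarrow (D_Z(\cE), Z\times\boxx)$, giving two comparison morphisms of relative motives of the blown-up objects; the goal of the first main step is to show both are isomorphisms in $\cT$, so that $MTh(\cE)\cong M((B_{Z\times\boxx}(D_Z(\cE)\times\ldots))\to D_Z(\cE))$ and similarly on the other end.

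The second step is to identify $M(\P(\cE)\to\P(\cE\oplus\cO))$ with the Thom motive of $N_Z(\cE)$ via the standard open cover argument: $\P(\cE\oplus\cO)$ is covered by $\cE$ (the complement of $\P(\cE)$ at infinity) and an open neighborhood of $\P(\cE)$, and the blow-up of $\P(\cE\oplus\cO)$ along the zero section $\P(\cE)$-at-infinity, together with its exceptional divisor, is — Zariski locally, after trivializing $\cE$ — a $\boxx$-bundle or a product with $B_{\{0\}}(\A^n)$. Here I would invoke Lemma \ref{A.6.4}, Lemma \ref{A.6.5}, Proposition \ref{A.3.3} (which lets me pass between \'etale-locally isomorphic blow-up situations using ($sNis$-des)) and the cone-manipulation Proposition \ref{A.3.44}, exactly in the spirit of the proof of Theorem \ref{A.3.13}, to reduce to the local model $X=\A^r$, $Z=\{0\}$. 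The mechanism throughout is: build strict Nisnevich distinguished squares relating $(B_Z(\cE),E)\to\cE$ to the corresponding objects over the deformation space, apply ($sNis$-des), and then transport the isomorphism along $\cT$-isomorphisms of open complements using Proposition \ref{A.3.44}. The $\boxx$-invariance ($\boxx$-inv) enters when contracting the $\boxx$-direction of the deformation, and ($div$-des) is needed to handle the log modifications appearing when one trivializes the bundle and compares blow-ups.

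For the final assertion, I would specialize to $n=1$ and $\cE = X\times\A^1$ with trivial log structure. Then $\P(\cE\oplus\cO) = X\times\P^1$ and $\P(\cE) = X\times\{\infty\}$, so the first part gives $MTh(X\times\A^1)\cong M(X\times(\{\infty\}\to\P^1)) = M(X)\otimes M(\Spec k\xrightarrow{i_0}\P^1)$, using the monoidal property (Sq), (MSq) and the evident translation isomorphism $\P^1\cong\P^1$ interchanging $0$ and $\infty$. By Definition \ref{A.5.60} (in the form used for $\ldmeff$, cf.\ the discussion following Definition \ref{A.5.7} and Example \ref{A.5.43}), $M(\Spec k\xrightarrow{i_0}\P^1) = \Lambda(1)[2]$, whence $MTh(X\times\A^1)\cong M(X)(1)[2]$.

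\textbf{Main obstacle.} The hard part will be the first step: showing that \emph{both} comparison maps induced by the two ends $\cE$ and $N_Z(\cE)$ of the deformation space $D_Z(\cE)$ are isomorphisms. In the $\A^1$-invariant setting this is immediate from the localization/homotopy-purity package, which is \emph{not} available here (as the excerpt repeatedly emphasizes). So one must instead produce these isomorphisms by hand, reducing Zariski-locally via Construction \ref{A.3.16} to an explicit affine/toric situation $X=\A^r$, $Z=\{0\}$, and there checking that the relevant blow-ups of the deformation space — which are genuine iterated blow-ups along smooth centers contained in the boundary, of the type treated in Example \ref{A.3.41} — induce isomorphisms on motives, invoking Theorem \ref{A.3.7} (invariance under admissible blow-ups along smooth centers). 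Keeping careful track of all the log structures (which strict normal crossing divisors get strict-transformed, and where the exceptional divisors sit) through the deformation-space construction, and verifying that the needed $n$-squares satisfy the hypotheses of Proposition \ref{A.3.44} for $n=3$, is the genuinely technical core of the argument.
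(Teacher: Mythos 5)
The real content of your plan is your second step, and that step coincides with the paper's proof: blow up $\P(\cE\oplus\cO)$ along the zero section to get a pair $(Y',E')$, use the excision square over the open immersion $\cE\hookrightarrow\P(\cE\oplus\cO)$ together with ($sNis$-des) and Proposition \ref{A.3.44} to identify $MTh(\cE)$ with $M((Y',E')\rightarrow\P(\cE\oplus\cO))$, and then use the fact that $(Y',E')$ is Zariski-locally a $\boxx$-bundle over $\P(\cE)$ (this is Lemma \ref{A.3.33} rather than Lemmas \ref{A.6.4}--\ref{A.6.5}, which concern different log structures) so that the section $\P(\cE)\hookrightarrow(Y',E')$ induces an isomorphism $M(\P(\cE))\cong M(Y',E')$ after reducing to the trivial bundle by ($Zar$-sep); Proposition \ref{A.3.48} then finishes. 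Your treatment of the $n=1$ case by swapping $0$ and $\infty$ on $\P^1$ is also fine.

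The genuine problem is your first step, and above all your assessment that it is ``the genuinely technical core.'' For the zero section $Z$ of a vector bundle $\cE$ the normal bundle $N_Z\cE$ is canonically $\cE$ itself --- as you yourself note --- so the deformation-to-the-normal-cone argument, even if carried out in full, only yields the tautology $MTh(\cE)\cong MTh(N_Z\cE)=MTh(\cE)$ and contributes nothing to the statement. Worse, the machinery you propose to deploy there (local parametrizations, toric blow-up analysis, Theorem \ref{A.3.7}) is essentially the proof of the purity Theorem \ref{A.3.36}, which is developed only in the following section and is far heavier than anything Proposition \ref{A.3.34} needs; the remark immediately preceding the proposition states that the proof deliberately differs from Morel--Voevodsky's for exactly this reason. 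So a proof written to your plan would be correct, but it front-loads a vacuous detour and misplaces the (modest) technical work, which lives entirely in your second step.
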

\begin{proof}
Let $Y$ (resp.\ $Y'$) be the blow-up of $\cE$ (resp.\ $\P(\cE\oplus \cO)$) along the zero section $Z_0$, and let $E$ (resp.\ $E'$) be the exceptional divisor on $Y$ (resp.\ $Y'$).
View $\cE$ as the open subscheme $\P(\cE\oplus \cO)-\P(\cE)$ of $\P(\cE)$.
Owing to ($sNis$-des) and Proposition \ref{A.3.44}, the cartesian square
\[
\begin{tikzcd}
(Y,E)\arrow[r]\arrow[d]& (Y',E')\arrow[d]\\
\cE\arrow[r]&\P(\cE\oplus \cO)
\end{tikzcd}
\]
induces an isomorphism
\begin{equation}
\label{A.3.34.1}
MTh(\cE)=M((Y,E)\rightarrow \cE)\xrightarrow{\cong} M((Y',E')\rightarrow \P(\cE\oplus \cO)).
\end{equation}
Since $Z_0\cap \P(\cE)=\emptyset$, we can view $\P(\cE)$ as a divisor $Z$ of $Y'$ not intersecting with $E'$.
\vspace{0.1in}

We claim the closed immersion $Z\rightarrow (Y',E')$ induces an isomorphism
\[
M(Z)\xrightarrow{\cong} M(Y',E').
\]
Due to ($Zar$-sep), we reduce to the case when $\cE$ is trivial.
In this case, owing to Lemma \ref{A.3.40}, $(Y',E')$ is a $\boxx$-bundle over $\P_X^{n-1}$.
The composite morphism $Z\rightarrow (Y',E')\rightarrow \P_X^{n-1}$ is an isomorphism.
Hence the composite morphism
\[
M(Z)\rightarrow M(Y',E')\rightarrow M(\P_X^{n-1})
\]
is also an isomorphism. 
The second morphism is an isomorphism by Proposition \ref{A.6.3}.
Thus the first morphism is an isomorphism, which proves the claim.
\vspace{0.1in}

From the above and Proposition \ref{A.3.48}, we deduce the isomorphism
\begin{equation}
\label{A.3.34.2}
M((Y',E')\rightarrow \cP(\cE\oplus \cO))\cong M(\P(\cE)\rightarrow \P(\cE\oplus \cO)).
\end{equation}
To conclude the proof we combine \eqref{A.3.34.1} and \eqref{A.3.34.2}.
\end{proof}

\begin{df}
For a smooth fan $\Sigma$ with a cone $\sigma$, we have used the notation $\Sigma^*(\sigma)$, which is the star subdivison (see Definition \ref{Starsubdivision}) 
\index{fan!star subdivision} relative to $\sigma$.
If $x$ is the center of $\sigma$, i.e., $x=(a_1+\cdots+a_n)$ whenever $\sigma={\rm Cone}(a_1,\ldots,a_n)$, we set
\[
\Sigma^*\langle x\rangle :=\Sigma^*(\sigma).
\]
\end{df}

In $\A^1$-homotopy theory, 
the construction of the Thom space of a vector bundle $E$ on $X$ is compatible with the monoidal structure.
That is, if $E_i$ is a vector bundle on $X_i$ for $i=0,1$, there is a canonical isomorphism 
$$
Th_{X_1}(E_1)\otimes Th_{X_2}(E_2) \cong Th_{X_1\times X_2}(E_1\times E_2).
$$ 
This follows immediately from the construction, see \cite[Proposition 2.17 (1)]{MV}. 
Alas, in our setting, this property is less evident. 
In the following, we address this problem for double and triple products of vector bundles. 
This will be applied later in the proof of Proposition \ref{A.3.43}, 
see also Remark \ref{rmk:why-3-bundles-necessary}.

\begin{rmk} 
Assume that $k$ admits resolution of singularities as in Definition \ref{A.3.8}. 
In this case, the compatibility between Thom motives and products can be deduced a posteriori from the one in $\dmeff$, 
provided the underlying scheme $\ul{X}$ is proper. 
This is a consequence of Proposition \ref{A.4.31}.
\end{rmk}

\begin{lem}
\label{A.3.52}
Let $X$ be an irreducible separated scheme with two nowhere dense closed subschemes $Z_1$ and $Z_2$.
We set $W_1:=Z_1\times_X B_{Z_2}X$ and $W_2:=B_{Z_2}X\times_X Z_1$.
If $Y$ is the closure of the generic point of $B_{Z_1}X\times_X B_{Z_2}X$ whose image in $X$ is the generic point, then there are isomorphisms
\[
Y\cong B_{W_2}(B_{Z_1}X)\cong B_{W_1}(B_{Z_2}X).
\]
\end{lem}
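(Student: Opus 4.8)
The statement is a purely scheme-theoretic assertion about iterated blow-ups, so the plan is to argue entirely on underlying schemes using the universal property of blow-ups. The key input is the standard fact that blowing up commutes with flat base change and, more to the point, that for a closed immersion $g\colon T\to X$ and an arbitrary closed subscheme $Z\subseteq X$ one has a canonical isomorphism
\[
B_{g^{-1}Z}T\cong \text{(strict transform of }T\text{ in }B_Z X\text{)}
\]
identifying $B_{g^{-1}Z}T$ with a closed subscheme of $B_Z X$. I will use this repeatedly, together with the fact that strict transforms are stable under further blow-ups and that taking the closure of the ``distinguished'' generic component is exactly what the strict-transform construction records.

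\textbf{Key steps.} First I would set up notation: write $\pi_1\colon B_{Z_1}X\to X$ and $\pi_2\colon B_{Z_2}X\to X$ for the two blow-ups, and observe that
\[
B_{Z_1}X\times_X B_{Z_2}X
\]
has a distinguished irreducible component $Y$ dominating $X$ (the closure of the generic point lying over the generic point of $X$), which is what the statement calls $Y$. Second, I would identify $Y$ with the strict transform of $B_{Z_1}X$ under the blow-up $B_{Z_2}X\to X$ pulled back along $\pi_1$; concretely, $Y$ is the blow-up of $B_{Z_1}X$ along the ideal sheaf $\pi_1^{-1}\mathcal{I}_{Z_2}\cdot\mathcal{O}_{B_{Z_1}X}$, i.e.\ along the closed subscheme $Z_2\times_X B_{Z_1}X$ — but since $X$ is irreducible and $Z_1,Z_2$ are nowhere dense, the relevant ideal on $B_{Z_1}X$ is the one defining $W_2':=Z_2\times_X B_{Z_1}X$, up to removing embedded/irrelevant components, which the passage to the closure $Y$ does automatically. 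Third, I would invoke commutativity of blow-ups: blowing up $X$ first along $Z_1$ then along the total transform of $Z_2$, versus first along $Z_2$ then along the total transform of $Z_1$, yields canonically isomorphic schemes when one restricts to the strict transform of $X$ itself — this is precisely the classical statement that $B_{\widetilde{Z_2}}(B_{Z_1}X)\cong B_{\widetilde{Z_1}}(B_{Z_2}X)$ on the distinguished components, where $\widetilde{Z_i}$ denotes the appropriate (strict) transform. Matching $\widetilde{Z_2}$ on $B_{Z_1}X$ with $W_2:=B_{Z_2}X\times_X Z_1$ — wait, here one must be careful about which scheme $W_i$ lives on; $W_1=Z_1\times_X B_{Z_2}X$ is a closed subscheme of $B_{Z_2}X$ and $W_2=B_{Z_2}X\times_X Z_1$ should be read as a closed subscheme of $B_{Z_1}X$ — I would check that $B_{W_2}(B_{Z_1}X)$ and $B_{W_1}(B_{Z_2}X)$ are both canonically isomorphic to $Y$ by comparing their functors of points, or equivalently by noting both are the closure of the generic fiber inside $B_{Z_1}X\times_X B_{Z_2}X$.

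\textbf{Main obstacle.} I expect the genuine difficulty to be bookkeeping rather than conceptual: making precise, compatibly, which ideal sheaf on which blow-up each of $W_1$ and $W_2$ cuts out, and verifying that after blowing up we land on the \emph{same} irreducible component $Y$ rather than picking up spurious components supported over $Z_1\cup Z_2$. The clean way to dispatch this is to work over the generic point of $X$ first (where all three schemes $Y$, $B_{W_2}(B_{Z_1}X)$, $B_{W_1}(B_{Z_2}X)$ agree with $\operatorname{Spec}$ of the function field of $X$, since $Z_1,Z_2$ are nowhere dense) and then to use that all three are proper birational over $X$ with a canonical common open subscheme; the uniqueness of the ``closure of the generic point'' then forces the isomorphisms. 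One subtlety to flag: irreducibility of $X$ is used to guarantee $Y$ is well-defined as ``the'' closure of the generic point, and the nowhere-density hypothesis is what makes the blow-up morphisms birational. These hypotheses are exactly what the lemma assumes, so no extra input is needed; the proof reduces to citing the standard commutativity of blow-ups along with strict-transform functoriality, and then the identification of $W_1,W_2$ with the relevant transforms.
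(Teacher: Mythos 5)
Your proposal is correct and follows essentially the same route as the paper: realize $B_{W_2}(B_{Z_1}X)$ (and symmetrically $B_{W_1}(B_{Z_2}X)$) as a closed subscheme of $B_{Z_1}X\times_X B_{Z_2}X$, observe that it is integral, proper and birational over $X$, and conclude that it must coincide with the closure $Y$ of the distinguished generic point — the paper compresses exactly this into a citation of \cite[Section B.6.9]{Fulton}. The one thing to tighten is your stated key input: the closed-immersion/strict-transform fact does not literally apply here since $B_{Z_1}X\to X$ is not a closed immersion, so you need the general-base-change form of the statement (Fulton B.6.9, or equivalently the universal property of the blow-up showing that $\mathcal{I}_{Z_2}\cdot\mathcal{O}_{B_{W_2}(B_{Z_1}X)}$ is invertible, whence a map to the fiber product which is a closed immersion); once that is in place your appeal to ``classical commutativity of blow-ups'' is superfluous, since the identification of both iterated blow-ups with $Y$ is what proves that commutativity.
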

\begin{proof}
Due to \cite[Section B.6.9]{Fulton}, $B_{W_2}(B_{Z_1}X)$ is a closed subscheme of $B_{Z_1}X\times_X B_{Z_2}X$.
Since $B_{W_2}(B_{Z_1}X)$ is proper and birational over $X$, we have $Y\cong B_{W_2}(B_{Z_1}X)$.
We can similarly show $Y\cong B_{W_1}(B_{Z_2}X)$.
\end{proof}

\begin{lem}
\label{A.3.53}
Let $\Sigma$ be a fan in $N$ with two partial subdivisions $\Sigma_1$ and $\Sigma_2$.
We set $\Sigma_{12}:=\Sigma_1\times_\Sigma \Sigma_2$, i.e.,
\[
\Sigma_{12}=\{\sigma_1\cap \sigma_2:\sigma_1\in \Sigma_1,\sigma_2\in \Sigma_2\}.
\]
If $Y$ is the closure of the generic point $\xi$ of $\ul{\A_{\Sigma_1}}\times_{\ul{\A_\Sigma}}\ul{\A_{\Sigma_2}}$ whose image in $\ul{\A_{\Sigma}}$ is the generic point, then there is an isomorphism
\[
Y\cong \ul{\A_{\Sigma_{12}}}.
\]
\end{lem}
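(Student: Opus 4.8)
\textbf{Proof plan for Lemma \ref{A.3.53}.}

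The plan is to reduce the toric statement to the scheme-theoretic statement already proven in Lemma \ref{A.3.52}, using the fact that partial subdivisions of fans correspond, on the level of underlying schemes, to compositions of blow-ups along torus-invariant centers. First I would recall that any partial subdivision $\Sigma_i \to \Sigma$ can be refined by a sequence of star subdivisions relative to cones (this is the combinatorial input behind the fact, used repeatedly in the excerpt, that log modifications are generated by star subdivisions; compare the discussion around Proposition \ref{A.9.21} and the description of admissible blow-ups in Example \ref{A.3.41}), and that each star subdivision induces a blow-up of $\ul{\A_\Sigma}$ along the reduced torus-invariant subscheme $V(\sigma)$ attached to the relevant cone $\sigma$ (see the structure of $V(\sigma)$ recalled at the start of the subsection on toric geometry). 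Thus $\ul{\A_{\Sigma_1}} \to \ul{\A_\Sigma}$ and $\ul{\A_{\Sigma_2}} \to \ul{\A_\Sigma}$ are each a finite composite of blow-ups along torus-invariant centers.

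Next I would handle the key case where $\Sigma_1$ and $\Sigma_2$ are each a \emph{single} star subdivision, say $\Sigma_i = \Sigma^*(\sigma_i)$, so that $\ul{\A_{\Sigma_i}} \cong B_{V(\sigma_i)} \ul{\A_\Sigma}$. In this case Lemma \ref{A.3.52} applied with $X = \ul{\A_\Sigma}$, $Z_1 = V(\sigma_1)$, $Z_2 = V(\sigma_2)$ identifies the closure $Y$ of the distinguished generic point of the fiber product with the iterated blow-up $B_{W_2}(B_{Z_1}X) \cong B_{W_1}(B_{Z_2}X)$. One then checks that this iterated blow-up is again a toric variety and that its fan is precisely $\Sigma^*(\sigma_1)^*(\sigma_2') = \Sigma^*(\sigma_2)^*(\sigma_1')$, where $\sigma_i'$ denotes the cone of $V(\sigma_i) \times_X B_{V(\sigma_j)}X$ in the subdivided fan; combinatorially this common refinement is exactly $\Sigma_1 \times_\Sigma \Sigma_2 = \{\sigma_1 \cap \sigma_2\}$, because intersecting the two subdivided cone-decompositions is the coarsest common refinement. (Here one uses that $W_2 \to B_{Z_1}X$ is again a torus-invariant center, so the composite is toric by general toric blow-up theory, e.g. \cite[Section B.6.9]{Fulton} together with the combinatorics of \cite{TOda}.)

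Then I would bootstrap to the general case by induction on the total number of star subdivisions needed to express $\Sigma_1 \to \Sigma$ and $\Sigma_2 \to \Sigma$. Writing $\Sigma_1 = \Sigma_1'$ followed by one more star subdivision and using associativity of fiber products of fans (which mirrors associativity of fiber products of schemes and the transitivity statement of Lemma \ref{A.3.52} applied twice), the closure of the distinguished generic point of $\ul{\A_{\Sigma_1}} \times_{\ul{\A_\Sigma}} \ul{\A_{\Sigma_2}}$ is computed step by step, each step being an instance of the single-star-subdivision case, and the resulting fan is the iterated intersection $\Sigma_1 \times_\Sigma \Sigma_2$. The main obstacle will be the bookkeeping in identifying the iterated blow-up center $W_2 = Z_1 \times_X B_{Z_2}X$ with a torus-invariant subscheme whose cone data is exactly $\sigma_1 \cap (\text{cones of } \Sigma_2)$: one must verify that taking strict/total transforms of $V(\sigma_1)$ under the subdivision $\Sigma_2 \to \Sigma$ corresponds combinatorially to the faces $\sigma_1 \cap \sigma_2$, using the compatibility \eqref{A.9.46.2} of torus-invariant subschemes with toric morphisms. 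Once this dictionary is in place, the statement $Y \cong \ul{\A_{\Sigma_{12}}}$ follows by comparing underlying toric varieties (both are separated, normal, and share a dense torus with the same fan), so they are isomorphic.
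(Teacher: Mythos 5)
There is a genuine gap at the very first step of your plan. A \emph{partial} subdivision in the sense of Definition \ref{Fan.8}/the fan definitions of this paper need not be a composite of star subdivisions, and the induced map on toric varieties need not be a composite of blow-ups along torus-invariant centers. Two obstructions: (i) a partial subdivision is only required to be injective on $\hom(\Spec{\N},-)$, so it includes subfan inclusions, for which $\ul{\A_{\Sigma_i}}\to\ul{\A_\Sigma}$ is an open immersion rather than any blow-up; (ii) star subdivisions as defined in Definition \ref{Starsubdivision} require the ambient cones to be smooth, and a general subdivision of a general fs fan is not a composite of them (Lemma \ref{A.3.31} only provides a common \emph{refinement} by star subdivisions for two smooth fans with equal support). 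So the reduction to Lemma \ref{A.3.52} does not get off the ground in the stated generality. Moreover, even in the smooth star-subdivision case your plan covers, the step you defer to ``bookkeeping'' --- identifying the fan of the iterated blow-up $B_{W_2}(B_{Z_1}X)$ with $\Sigma_1\times_\Sigma\Sigma_2$ --- is the entire content of the lemma, and your closing argument (``both share a dense torus with the same fan, so they are isomorphic'') presupposes exactly that identification.

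The paper's proof avoids all of this by working directly with monoids. Since the statement is local on cones, one reduces to $\Sigma=\Spec{P}$, $\Sigma_1=\Spec{P_1}$, $\Sigma_2=\Spec{P_2}$ with $P^\gp\cong P_1^\gp\cong P_2^\gp\cong M$. Then $\ul{\A_{P_1}}\times_{\ul{\A_P}}\ul{\A_{P_2}}\cong\Spec{\Z[P_1\oplus_P^{\rm mon}P_2]}$, and the closure of the distinguished generic point is $\Spec$ of the image of $\Z[P_1\oplus_P^{\rm mon}P_2]$ in $\Z[M]$, which is $\Z[P_1+P_2]$; finally $P_1+P_2$ is dual to $P_1^\vee\cap P_2^\vee$, the unique maximal cone of $\Sigma_{12}$. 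This argument is shorter, requires no smoothness, no factorization into star subdivisions, and no appeal to Lemma \ref{A.3.52} at all. If you want to retain the blow-up picture, you would have to restrict the lemma to smooth fans and star subdivisions (which would still suffice for Construction \ref{A.3.47} but not for the lemma as stated), and you would still need to carry out the monoid/cone computation to identify the resulting fan.
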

\begin{proof}
We only need to show this for cones of $\Sigma$, $\Sigma_1$, and $\Sigma_2$.
Hence we may assume
\[
\Sigma=\Spec{P},
\;
\Sigma_1=\Spec{P_1},
\;
\Sigma_2=\Spec{P_2},
\]
where $P$, $P_1$, and $P_2$ are fs submonoids of the dual lattice $M$ of $N$ such that
\[
M\cong P^\gp\cong P_1^\gp\cong P_2^\gp.
\]
There is an isomorphism
\[
\ul{\A_{P_1}}\times_{\ul{\A_P}}\ul{\A_{\P_2}}
\cong
\ul{\A_{P_1\oplus_P^{\rm mon}P_2}},
\]
where $P_1\oplus_P^{\rm mon}P_2$ denotes the amalgamated sum in the category of monoids.
The amalgamated sum $P_1\oplus_P^{\rm int}P_2$ in the category of integral monoids is the image of $P_1\oplus P_2$ in $M$, i.e.,
\[
P_1\oplus_P^{\rm int}P_2=P_1+P_2.
\]
There is a naturally induced homomorphism
\[
f\colon \Z[P_1\oplus_P^{\rm mon}P_2]\rightarrow \Z[M],
\]
and since $Y$ is the closure of $\xi$ we observe that $Y=\Spec{\im{f}}$.
Thus
\[
Y\cong \Spec{\Z[P_1+P_2]}
\]
since $\im{f}$ agrees with $\Z[P_1+P_2]$.
The fan $\Sigma_{12}$ has only one maximal cone $P_1^\vee\cap P_2^\vee$, which corresponds to $P_1+P_2$ by duality.
This shows $Y\cong \ul{\A_{\Sigma_{12}}}$.
\end{proof}

\begin{const}
\label{A.3.47}
For $X_1,X_2,X_3\in Sm/S$ and vector bundles $\cE_i\rightarrow X_i$ of fixed rank for $i=1,2,3$, we will for future usage construct 55 blow-ups of the triple product
$$
T:=\cE_1\times_S \cE_2\times_S \cE_3.
$$

Let $Z_i$ be the $0$-section of $\cE_i$ for $i=1,2,3$, and we set
\begin{gather*}
W_1:=Z_1\times_S \cE_2\times_S \cE_3,
\;
W_2:=\cE_1\times_S Z_2\times_S \cE_3,
\;
W_3:=\cE_1\times_S \cE_2\times_S Z_3,
\\
W_4:=Z_1\times_S Z_2\times_S \cE_3,
\;
W_5:=\cE_1\times_S Z_2\times_S Z_3,
\;
W_6:=Z_1\times_S Z_2\times_S Z_3.
\end{gather*}
For every subset $I=\{i_1,\ldots,i_n\}\subset \{1,\ldots,6\}$ with $i_1<\ldots<i_n$, consider the fiber product
\[
B_{W_{i_1}}(T)\times_T \cdots \times_T B_{W_{i_n}}(T)
\]
Collect all generic points whose image in $T$ are also generic, and let $T_I$ denote the union of the closures of these generic points.
Due to Lemma \ref{A.3.52} if $j\notin I$, then $T_{I\amalg \{j\}}$ is the blow-up of $T_I$ along the preimage $W_{I,j}$ of $W_j$ in $T_I$.
\vspace{0.1in}

We claim that $W_{I,j}$ is smooth if
\[
I\cap \{4,5,6\}=\{4,5\}.
\]
This question is Zariski local on $X_1$, $X_2$, and $X_3$, so we may assume that $\cE_i$ is a trivial vector bundle of rank $p_i$ for $1\leq i\leq 3$.
We set $n=p_1+p_2+p_3$.
Let $e_1,\ldots,e_{n}$ be the standard coordinates in $\Z^{n}$, and let $\Sigma$ be the fan whose only maximal cone is ${\rm Cone}(e_1,\ldots,e_{n})$.
We define the following sets
\begin{gather*}
S_0:=\emptyset,
\;
S_1:=\{e_1,\ldots,e_{p_1}\},
\\
S_2:=\{e_{p_1+1},\ldots,e_{p_1+p_2}\},
\;
S_3:=\{e_{p_1+p_2},\ldots,e_{n}\},
\\
S_4:=S_1\cup S_2,
\;
S_5:=S_2\cup S_3,
\;
S_6:=S_1\cup S_2\cup S_3.
\end{gather*}
We set
\begin{gather*}
f_1:=e_1+\cdots+e_{p_1},
\;
f_2:=e_{p_1+1}+\cdots+e_{p_1+p_2},
\;
f_3:=e_{p_1+p_2+1}+\cdots+e_{n},
\\
f_4:=f_1+f_2,
\;
f_5:=f_2+f_3,
\;
f_6:=f_1+f_2+f_3.
\end{gather*}
For every subset $I=\{i_1,\ldots,i_m\}\subset \{1,\ldots,6\}$ with $i_1<\ldots<i_m$ we set
\[
\Sigma_{I}:=\Sigma_{i_1}\times_{\Sigma}\cdots \times_{\Sigma}\Sigma_{i_m},
\]
i.e.,
\[
\Sigma_{I}=\{\sigma_1\cap \cdots \cap \sigma_m:\sigma_1\in \Sigma_{i_1},\ldots,\sigma_m\in \Sigma_m\}.
\]
By convention $\Sigma_{\emptyset}:=\Sigma$. 
Due to Lemmas \ref{A.3.52} and \ref{A.3.53} there is an isomorphism
\[
T_I\cong \underline{\A_{\Sigma_I}}\times (X_1\times_S X_2\times_S X_3).
\]
Hence it suffices to show that $\Sigma_{I\amalg \{j\}}$ is a star subdivision of $\Sigma_I$ relative to a smooth cone.
This will be done in Lemma \ref{A.3.51} after proving several preliminary results.
\vspace{0.1in}

For $u\in S_1$, $v\in S_2$, $w\in S_3$, $p\in \{0,1,4,6\}$, $q\in \{0,2,4,5,6\}$, and $r\in \{0,3,5,6\}$, we set
\[
\sigma_{uvw}^{pqr}:=
\{
(a_1,\ldots,a_n)\in \N^n:
a_u\leq a_i\text{ if } i\in S_p,
\;
a_v\leq a_i\text{ if } i\in S_q,
\;
a_w\leq a_i\text{ if } i\in S_r
\}.
\]
Note that if $p=0$ (resp.\ $q=0$, resp.\ $r=0$), then the condition $a_u\leq a_i$ (resp.\ $a_v\leq a_i$ resp.\ $a_w\leq a_i$) has no meaning.
For notational convenience, in this case we also write $u=*$ (resp.\ $v=*$, resp.\ $w=*$).
\vspace{0.1in}

If inequalities of the form $a_i\leq a_j$ and $a_j\leq a_i$ enter in the condition for some $i\neq j$ when forming $\sigma_{uvw}^{pqr}$, 
then $\dim \sigma_{uvw}^{pqr}<n$.
By using this criterion, we can list all possible values of $(p,q,r)$ such that $\dim \sigma_{uvw}^{pqr}=n$, see Table \ref{pqrtable}.
\vspace{0.1in}

\begin{table}
\begin{center}
\begin{tabular}{ |c|c|c||c|c|c||c|c|c| }
\hline
$p$&$q$&$r$&$p$&$q$&$r$&$p$&$q$&$r$
\\
\hline
$0$ or $1$ & $0$ or $2$ & $0$ or $3$ & $6$ & $0$ or $2$ & $0$ or $3$ & $4$ & $5$ & $0$ or $3$ 
\\
\hline
$4$ & $0$ or $2$ & $0$ or $3$ & $0$ or $1$ & $6$ & $0$ or $3$ & $4$ & $0$ or $2$ & $5$ 
\\
\hline
$0$ or $1$ & $4$ & $0$ or $3$ & $0$ or $1$ & $0$ or $2$ & $6$ & $0$ or $1$ & $4$ & $5$ 
\\
\hline
$0$ or $1$ & $5$ & $0$ or $3$ & $4$ & $0$ or $2$ & $6$ & & &
\\
\hline
$0$ or $1$ & $0$ or $2$ & $5$ & $6$ & $0$ or $2$ & $5$ & & &
\\
\hline
\end{tabular}
\vspace{0.1in}
\caption{List of all concise $(p,q,r)$.} \label{pqrtable}
\end{center}
\end{table}

We say that $(p,q,r)$ is \emph{concise} if it has a value as listed in the table.
A concise triple $(p,q,r)$ is called \emph{standard} if $\{4,5\}\not\subset \{p,q,r\}$.
For $1\leq i\leq n$ and $1\leq t\leq 6$ to simplify notation we set
\[
f_0^i:=e_i,
\;
f_t^i:=f_t.
\]
If $(p,q,r)$ is standard, then we can check by hand that
\[
\sigma_{uvw}^{pqr}={\rm Cone}(e_1,\ldots,e_{u-1},e_{u+1},\ldots,e_{v-1},e_{v+1},\ldots,e_{w-1},e_{w+1},\ldots,e_n,f_p^u,f_q^v,f_r^w).
\]
We regard $\{0,\ldots,6\}$ as a partially ordered set whose order $\prec$ is generated by
\[
(0\prec 1,2,3),\; (1\prec4,5),\; (2\prec 4,5,6),\; (3 \prec 5,6),\; (4 \prec 6),\; (5\prec 6).
\]
We use the following shorthand notation:
\begin{gather*}
p^s:=\sup\{p,s\}\text{ (if }s\in \{1,4,6\}),
\;
q^s:=\sup\{q,s\}\text{ (if }s\in \{2,4,5,6\}),
\\
r^s:=\sup\{r,s\}\text{ (if }s\in \{3,5,6\}),
\end{gather*}
where $\sup$ is taken with respect to the order $\prec$.
Note that
\begin{equation}
\label{A.3.47.7}
p^s=p\text{ or }s,
\;
q^s=q,\;s,\text{ or }6,
\;
r^s=r\text{ or }s.
\end{equation}
If $p\neq 0$ and $s\in \{1,4,6\}$, then we have
\begin{equation}
\label{A.3.47.2}
\begin{split}
&\sigma_{uvw}^{pqr}\cap \sigma_{u**}^{s00}
\\
=&
\{
(a_1,\ldots,a_n)\in \N^n:
a_u\leq a_i\text{ if } i\in S_p\cup S_s,
\;
a_v\leq a_i\text{ if } i\in S_q,
\;
a_w\leq a_i\text{ if } i\in S_r
\}
\\
=&
\sigma_{uvw}^{p^sqr}.
\end{split}
\end{equation}
We similar have
\begin{equation}
\begin{split}
\label{A.3.47.3}
\sigma_{uvw}^{pqr}\cap \sigma_{*v*}^{0s0}=\sigma_{uvw}^{pq^sr}\text{ (if }q\neq 0),&
\;
\sigma_{uvw}^{pqr}\cap \sigma_{**w}^{00s}=\sigma_{uvw}^{pqr^s}\text{ (if }r\neq 0),
\\
\sigma_{*vw}^{0qr}\cap \sigma_{u**}^{s00}=\sigma_{uvw}^{sqr},
\;
\sigma_{u*w}^{p0r}\cap \sigma_{*v*}^{0s0}&=\sigma_{uvw}^{psr},
\;
\sigma_{uv*}^{pq0}\cap \sigma_{**w}^{00s}=\sigma_{uvw}^{pqs}
\end{split}
\end{equation}
for all appropriate $s$.
Moreover, we have
\begin{equation}
\label{A.3.47.4}
\dim\sigma_{uvw}^{pqr}\cap \sigma_{u'**}^{s00}<n\text{ if }u\neq u',\;p\neq 0,\;s\in \{1,4,6\}
\end{equation}
since the inequalities $a_u\leq a_{u'}$ and $a_{u'}\leq a_u$ are appeared.
Similarly, we have
\begin{equation}
\begin{split}
\label{A.3.47.5}
\dim\sigma_{uvw}^{pqr}\cap \sigma_{*v'*}^{0s0}<n&\text{ if }v\neq v',\;q\neq 0,\;s\in \{2,4,5,6\},
\\
\dim\sigma_{uvw}^{pqr}\cap \sigma_{**w'}^{00s}<n&\text{ if }w\neq w',\;r\neq 0,\;s\in \{3,5,6\}.
\end{split}
\end{equation}
\end{const}

\begin{lem}
\label{A.3.50}
With notations as above,
let $I$ be a subset of $\{1,\ldots,6\}$ such that
\[
I\cap \{4,5,6\}\neq \{4,5\}.
\]
Then any $n$-dimensional  cone of $\Sigma_I$ can be written as $\sigma_{uvw}^{pqr}$ for some standard triple $(p,q,r)$.
\end{lem}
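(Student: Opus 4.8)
The plan is to argue by induction on the cardinality $|I|$, keeping track at each stage of both the shape $\sigma_{uvw}^{pqr}$ of the $n$-dimensional cones of $\Sigma_I$ and the standardness of the triple $(p,q,r)$. For the base cases $|I|\le 1$, the fan $\Sigma_I$ is either $\Sigma$ itself, whose only maximal cone is $\mathrm{Cone}(e_1,\dots,e_n)=\sigma_{***}^{000}$, or one of the $\Sigma_t$ with $t\in\{1,\dots,6\}$; since $\Sigma_t=\Sigma^*\langle f_t\rangle$ is a star subdivision, its maximal cones are exactly the cones $\sigma_{u**}^{t00}$, $\sigma_{*v*}^{0t0}$, $\sigma_{**w}^{00t}$ permitted for that value of $t$. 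In each of these cases at most one superscript is nonzero, so the triple is standard. For the inductive step, given $I$ with $I\cap\{4,5,6\}\neq\{4,5\}$ and $|I|\ge 2$, I would choose $t\in I$ so that $J:=I\setminus\{t\}$ still satisfies $J\cap\{4,5,6\}\neq\{4,5\}$: if $\{4,5\}\subset I$ the hypothesis forces $6\in I$, so take $t=4$ (then $J\cap\{4,5,6\}=\{5,6\}$); otherwise take any $t\in I$. Since $\Sigma_I=\Sigma_J\times_\Sigma\Sigma_t$ refines both factors, every $n$-dimensional cone of $\Sigma_I$ has the form $\sigma\cap\rho$ with $\sigma$ an $n$-dimensional cone of $\Sigma_J$ and $\rho$ a maximal cone of $\Sigma_t$, and by induction $\sigma=\sigma_{uvw}^{pqr}$ for a standard triple.

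Next I would feed this into the intersection formulas of Construction \ref{A.3.47}. Say $\rho=\sigma_{u'**}^{t00}$ (the cases $\rho=\sigma_{*v'*}^{0t0}$ and $\rho=\sigma_{**w'}^{00t}$ being entirely symmetric). If $p=0$, then \eqref{A.3.47.3} gives $\sigma\cap\rho=\sigma_{u'vw}^{tqr}$; if $p\neq0$, then \eqref{A.3.47.4} shows that $\dim(\sigma\cap\rho)<n$ unless $u=u'$, in which case \eqref{A.3.47.2} gives $\sigma\cap\rho=\sigma_{uvw}^{p^tqr}$. Writing $p^t=t$ when $p=0$, this shows that every $n$-dimensional cone of $\Sigma_I$ obtained in this way is of the form $\sigma_{uvw}^{p'q'r'}$, where exactly one of the superscripts of $(p,q,r)$ is replaced by its supremum (with respect to $\prec$) with $t$; by \eqref{A.3.47.7} the new superscript lies in $\{p,t\}$, $\{q,t,6\}$, or $\{r,t\}$ respectively. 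In particular the cone has the asserted shape, so it only remains to verify that $(p',q',r')$ is standard.

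The decisive input is an auxiliary observation that I would isolate: if $6\in I$, then in every $n$-dimensional cone $\sigma_{uvw}^{pqr}$ of $\Sigma_I$ one of $p,q,r$ equals $6$. This follows because $\Sigma_I$ refines $\Sigma_6$, so such a cone is contained in a maximal cone of $\Sigma_6$, and intersecting $\sigma_{uvw}^{pqr}$ with that cone by means of \eqref{A.3.47.2}–\eqref{A.3.47.5}, together with the fact that $6$ is the top element for $\prec$, forces the relevant superscript to be $6$ (any smaller value would, by those formulas, either lower the dimension or be promoted to $6$). Granting this, suppose toward a contradiction that $(p',q',r')$ is not standard, i.e. $\{4,5\}\subset\{p',q',r'\}$. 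A superscript can equal $4$ only because the index $t=4$ was inserted at some stage, so $4\in I$; likewise $5\in I$; the hypothesis then forces $6\in I$, whence by the auxiliary observation $6\in\{p',q',r'\}$ as well, so $\{p',q',r'\}=\{4,5,6\}$. But an elementary inspection — the same $a_u\le a_v\le a_w\le a_u$ type chain of inequalities that was used in compiling Table \ref{pqrtable} — shows that $\sigma_{uvw}^{pqr}$ with $\{p,q,r\}=\{4,5,6\}$ is never $n$-dimensional, contradicting the fact that we started from an $n$-dimensional cone. Hence $(p',q',r')$ is standard, which closes the induction.

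I expect the main obstacle to be precisely this standardness verification in the inductive step: the bookkeeping of the cone's shape follows formally from \eqref{A.3.47.2}–\eqref{A.3.47.5}, but excluding a newly created $\{4,5\}$-pattern genuinely requires combining the global hypothesis on $I$ with the persistence of the value $6$ and with the codimension computation eliminating $\{p,q,r\}=\{4,5,6\}$. Some routine care is also needed for degenerate situations where some $p_i=0$ (so that the corresponding $S_i$ is empty and the associated slot cannot be active), but these cases only make the argument easier.
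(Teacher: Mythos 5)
Your overall strategy is the same as the paper's: induct on $|I|$ using the fiber-product description $\Sigma_I=\Sigma_J\times_\Sigma\Sigma_t$ and the intersection formulas \eqref{A.3.47.2}--\eqref{A.3.47.5}, then use the hypothesis $I\cap\{4,5,6\}\neq\{4,5\}$ to force $6\in I$ whenever both $4$ and $5$ appear as superscripts, and derive a contradiction from Table \ref{pqrtable}. The only organizational difference is that you carry standardness through every stage of the induction (which is why you need the careful choice of $t$), whereas the paper's induction only records that $p,q,r\in I\amalg\{0\}$ and performs a single standardness check at the end; both bookkeeping schemes are fine.

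The genuine soft spot is the final contradiction. Your auxiliary observation, as justified, shows that an $n$-dimensional cone $C$ of $\Sigma_I$ with $6\in I$ \emph{admits some representation} $\sigma_{uvw}^{pqr}$ with a $6$ among the superscripts: $C$ is contained in a maximal cone $\tau$ of $\Sigma_6$, so $C=C\cap\tau$, and the formulas rewrite this intersection with one slot promoted to $6$. This does not yield "$6\in\{p',q',r'\}$" for the \emph{particular} triple produced by your induction, since the labelling $\sigma_{uvw}^{pqr}$ of a given cone is not a priori unique; so the step "hence $\{p',q',r'\}=\{4,5,6\}$" does not follow as written. Moreover, even after repairing this by working with $C=C\cap\tau$ directly, the promoted slot may be the one already carrying the $4$ or the $5$: starting from $\sigma_{uvw}^{450}$, say, the intersection with $\tau$ is $\sigma_{uvw}^{650}$, $\sigma_{uvw}^{460}$, or $\sigma_{uvw}^{456}$ (or has dimension $<n$ by \eqref{A.3.47.4}--\eqref{A.3.47.5} when the distinguished indices disagree), and you only verify non-conciseness for the triple $\{4,5,6\}$. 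The fix is to check, using Table \ref{pqrtable}, that \emph{all} of these promoted triples are non-concise, so that $C=C\cap\tau$ has dimension $<n$ in every case --- a contradiction. This is exactly the content of the paper's final step, which it phrases equivalently by listing the six concise non-standard triples and showing that the corresponding cones omit the generators $e_u,e_v$ (resp.\ $e_u,e_w$, resp.\ $e_v,e_w$) and hence cannot lie in any maximal cone of $\Sigma_6$.
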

\begin{proof}
Suppose $s\in \{1,\ldots,6\}-I$ and $(I\amalg \{s\})\cap \{4,5,6\}\neq \{4,5\}$.
Any $n$-dimensional cone of $\Sigma_{I\amalg \{s\}}$ is of the form $\sigma\cap \sigma'$ for some $\sigma\in \Sigma_I$ and $\sigma'\in \Sigma_s$.
Thus from \eqref{A.3.47.7}, \eqref{A.3.47.2}, \eqref{A.3.47.3}, \eqref{A.3.47.4}, and \eqref{A.3.47.5}, 
we observe that any $n$-dimensional cone of $\Sigma_{I\amalg\{s\}}$ takes the form $\sigma_{uvw}^{pqr}$ with $p,q,r\in I\amalg \{0,s\}$ if any maximal cone of $\Sigma_I$ 
can be written as $\sigma_{uvw}^{pqr}$ with $p,q,r\in I\amalg\{0\}$.
Now use induction on the number of elements of $I$ to deduce this holds for all $I$ satisfying $I\cap \{4,5,6\}\neq \{4,5\}$.
\vspace{0.1in}

Let $\sigma_{uvw}^{pqr}$ be an $n$-dimensional cone of $\Sigma_I$.
If $(p,q,r)$ is not concise, then $\sigma_{uvw}^{pqr}$ is not an $n$-dimensional cone.
Thus $(p,q,r)$ is concise.
If $(p,q,r)$ is not standard, then $\{4,5\}\in \{p,q,r\}$.
We have shown in the above paragraph that this implies $\{4,5\}\in I$.
From the condition $I\cap \{4,5,6\}\neq \{4,5\}$ we have $6\in I$.
Then there are six cases according to Table \ref{pqrtable}:
\[
\sigma_{uvw}^{pqr}=\sigma_{uvw}^{450},
\;
\sigma_{uvw}^{453},
\;
\sigma_{uvw}^{405},
\;
\sigma_{uvw}^{425},
\;
\sigma_{uvw}^{045},
\;
\sigma_{uvw}^{145}.
\]
In the first and second (resp.\ third and fourth, resp.\ fifth and sixth) cases, $a_u,a_v\notin \sigma_{uvw}^{pqr}$ (resp.\ $a_u,a_w\notin \sigma_{uvw}^{pqr}$, resp.\ $a_v,a_w\notin \sigma_{uvw}^{pqr}$).
Thus in all cases, $\sigma_{uvw}^{pqr}$ is not contained in any of
\[
\sigma_{u'**}^{600},
\;
\sigma_{*v'*}^{060},
\;
\sigma_{**w'}^{060}.
\]
This contradicts the fact that $6\in I$.
Thus $(p,q,r)$ is standard.
\end{proof}

\begin{lem}
\label{A.3.51}
With notations as above,
let $I$ be a subset of $\{1,\ldots,6\}$, and let $s$ be an element of $\{1,\ldots,6\}-I$.
If
\[
\{4,5,6\}\cap I \neq \{4,5\}\text{ and }\{4,5,6\}\cap (I\amalg \{s\})\neq \{4,5\},
\]
then
\[
\Sigma_{I\amalg \{s\}}=\Sigma_I^*(\tau),
\]
where $\tau$ is the cone of $\Sigma_I$ generated by $f_s$.
\end{lem}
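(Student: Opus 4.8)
The plan is to compare the two fans $\Sigma_{I\amalg\{s\}}$ and $\Sigma_I^*(\tau)$ through their sets of $n$-dimensional cones. Both are subdivisions of $\Sigma_I$ (recall $\Sigma_{I\amalg\{s\}}=\Sigma_I\times_\Sigma\Sigma_s$), so once they are seen to have the same $n$-dimensional cones, they agree as fans, being the sets of all faces of their respective $n$-dimensional cones. The first task is to pin down $\tau$. Under the hypothesis $\{4,5,6\}\cap I\neq\{4,5\}$, Lemma \ref{A.3.50} says every $n$-dimensional cone of $\Sigma_I$ is a standard $\sigma_{uvw}^{pqr}$, and by the explicit formula $\sigma_{uvw}^{pqr}={\rm Cone}(\{e_i\}_{i\neq u,v,w}\cup\{f_p^u,f_q^v,f_r^w\})$ one reads off that the rays of $\Sigma_I$ lie among $\{e_1,\dots,e_n\}\cup\{f_t:t\in I\}$ and that $\Sigma_I$ is smooth. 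Since $f_s=\sum_{i\in S_s}e_i$ lies in the support ${\rm Cone}(e_1,\dots,e_n)$ of $\Sigma_I$, there is a unique cone of $\Sigma_I$ whose relative interior contains $f_s$; I would check, by inspecting generators, that this cone $\tau$ is smooth with center $f_s$, so that $\Sigma_I^*(\tau)=\Sigma_I^*\langle f_s\rangle$ is well defined (when $f_s$ is already a ray of $\Sigma_I$ this is the degenerate case $\tau={\rm Cone}(f_s)$, $\Sigma_s=\Sigma$, and the lemma is trivial).

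Next I would write out the $n$-dimensional cones of $\Sigma_I^*(\tau)$. By the definition of the star subdivision relative to a smooth cone (Definition \ref{Starsubdivision}), these are the $n$-dimensional cones of $\Sigma_I$ not having $\tau$ as a face, together with, for each $n$-dimensional $\sigma\in\Sigma_I$ admitting $\tau$ as a face and each primitive generator $\rho$ of $\tau$, the cone obtained from $\sigma$ by deleting $\rho$ and adjoining $f_s$. Using the standard form above, I would determine exactly which $\sigma_{uvw}^{pqr}$ contain $\tau$ as a face and record, in the same $\sigma_{uvw}^{pqr}$ notation, the cones produced: each such cone is obtained either by upgrading one slot from $p$ to $p^s=\sup\{p,s\}$ (when that slot is nonzero), or — when the slot is $0$ — by splitting into the cones indexed by the choices $u\in S_1\cap S_s$, $v\in S_2\cap S_s$, or $w\in S_3\cap S_s$; here \eqref{A.3.47.7} controls the possible values of $p^s,q^s,r^s$.

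For the other side, $\Sigma_{I\amalg\{s\}}=\Sigma_I\times_\Sigma\Sigma_s$ has as $n$-dimensional cones exactly the $n$-dimensional intersections $\sigma\cap\sigma'$ with $\sigma$ an $n$-dimensional cone of $\Sigma_I$ and $\sigma'$ an $n$-dimensional cone of $\Sigma_s=\Sigma^*({\rm Cone}(e_i:i\in S_s))$; the latter are $\sigma_{j**}^{s00}$ for $j\in S_1\cap S_s$, $\sigma_{*j*}^{0s0}$ for $j\in S_2\cap S_s$, and $\sigma_{**j}^{00s}$ for $j\in S_3\cap S_s$. Feeding these into the intersection formulas \eqref{A.3.47.2}, \eqref{A.3.47.3}, \eqref{A.3.47.4}, \eqref{A.3.47.5} of Construction \ref{A.3.47} — which give $\sigma_{uvw}^{pqr}\cap\sigma_{u**}^{s00}=\sigma_{uvw}^{p^sqr}$, $\sigma_{*vw}^{0qr}\cap\sigma_{j**}^{s00}=\sigma_{jvw}^{sqr}$ (and symmetrically in the $v$ and $w$ slots), while all other intersections have dimension $<n$ — yields precisely the list found above for $\Sigma_I^*(\tau)$. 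At this point the second hypothesis $\{4,5,6\}\cap(I\amalg\{s\})\neq\{4,5\}$ enters: via Lemma \ref{A.3.50} it guarantees that every cone appearing in $\Sigma_{I\amalg\{s\}}$ is again standard, so the quoted formulas apply verbatim and no spurious non-standard cones arise. Matching the two lists gives $\Sigma_{I\amalg\{s\}}=\Sigma_I^*(\tau)$.

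I expect the main obstacle to be the combinatorial bookkeeping in this matching step. One must treat separately $s\in\{1,2,3\}$ (a single block $S_s$), $s\in\{4,5\}$ (two overlapping blocks), and $s=6$ (all three), and within each case keep track of which of the three slots $p,q,r$ of a generic standard cone interact with $S_s$, taking account of the overlaps $S_4\cap S_5=S_2$, $S_4\subset S_6$, $S_5\subset S_6$. The two hypotheses $\{4,5,6\}\cap I\neq\{4,5\}$ and $\{4,5,6\}\cap(I\amalg\{s\})\neq\{4,5\}$ are exactly what excludes the configurations in which $\{4,5\}$ is active without $6$; in those excluded cases $\tau$ need not be a face of the relevant maximal cones and $\Sigma_{I\amalg\{s\}}$ is not an elementary star subdivision of $\Sigma_I$, which is why they must be ruled out.
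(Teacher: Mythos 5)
Your proposal is correct and follows essentially the same route as the paper: both identify $\tau$ as the subcone of a maximal cone of $\Sigma_I$ generated by $f_s$, use Lemma \ref{A.3.50} (via the two hypotheses) to keep all cones in standard form $\sigma_{uvw}^{pqr}$, compute the $n$-dimensional cones of $\Sigma_I\times_\Sigma\Sigma_s$ from the intersection formulas \eqref{A.3.47.2}--\eqref{A.3.47.5}, and match them against the explicit description of the maximal cones of a star subdivision. The only cosmetic difference is that the paper organizes the matching cone-by-cone through its ``Rules'' 1, 2, 1$'$, 2$'$ rather than as a single global list comparison, and the case analysis on $s$ that you anticipate as the main obstacle is exactly the bookkeeping those rules encapsulate.
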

\begin{proof}
Let $\sigma_{uvw}^{pqr}$ be a maximal cone of $\Sigma_I$.
Lemma \ref{A.3.50} shows that $(p,q,r)$ is standard.
Suppose we have
\[
\{\sigma_{uvw}^{pqr}\}\times_{\Sigma}\Sigma_j
\neq 
\{\sigma_{uvw}^{pqr}\}.
\]
Due to \eqref{A.3.47.2}, \eqref{A.3.47.3}, \eqref{A.3.47.4}, and \eqref{A.3.47.5}, its $n$-dimensional cones are of the following forms:
\begin{gather*}
\sigma_{uvw}^{sqr}\text{ (if }p\neq 0\text{ and }s\neq 2,3,5),
\;
\sigma_{uvw}^{psr}\text{ (if }q\neq 0\text{ and }s\neq 1,3),
\\
\sigma_{uvw}^{pqs}\text{ (if }q\neq 0\text{ and }s\neq 1,2,4),
\;
\sigma_{1vw}^{sqr},\ldots,\sigma_{p_1vw}^{sqr}\text{ (if }p= 0\text{ and }s\neq 2,3,5),
\\
\sigma_{u(p_1+1)w}^{psr},\ldots,\sigma_{u(p_1+p_2)w}^{psr}\text{ (if }q= 0\text{ and }s\neq 1,3),
\\
\sigma_{uv(p_1+p_2+1)}^{pqs}\ldots,\sigma_{uvn}^{pqs}\text{ (if }r=0\text{ and }s\neq 1,2,4).
\end{gather*}
Note that some of these cones can have dimension $<n$.
\vspace{0.1in}

Suppose $\sigma_{uvw}^{sqr}=\sigma_{uvw}^{pqr}\cap \sigma_{u**}^{s00}$ is a cone in $\{\sigma_{uvw}^{pqr}\}\times_\Sigma \Sigma_j$ and has dimension $n$ with $s\in \{1,4,6\}$ and $p\prec s$.
Lemma \ref{A.3.50} shows that $(s,q,r)$ is standard.
Then
\begin{gather*}
\sigma_{uvw}^{pqr}={\rm Cone}(e_1,\ldots,e_{u-1},e_{u+1},\ldots,e_{v-1},e_{v+1},\ldots,e_{w-1},e_{w+1},\ldots,e_n,f_{p}^u,f_q^v,f_r^w),
\\
\sigma_{uvw}^{sqr}={\rm Cone}(e_1,\ldots,e_{u-1},e_{u+1},\ldots,e_{v-1},e_{v+1},\ldots,e_{w-1},e_{w+1},\ldots,e_n,f_{s},f_q^v,f_r^w).
\end{gather*}
As a particular case, we have
\[
\sigma_{*vw}^{0qr}={\rm Cone}(e_1,\ldots,e_{u-1},e_{u+1},\ldots,e_{v-1},e_{v+1},\ldots,e_{w-1},e_{w+1},\ldots,e_n,e_u,f_q^v,f_r^w).
\]
Since $\sigma_{uvw}^{pqr}$ contains $\sigma_{uvw}^{sqr}$, we have $f_s\in \sigma_{uvw}^{pqr}$.
Let $\tau$ be the subcone of $\sigma_{uvw}^{pqr}$ generated by $f_s$, and
let $\{d_1,\ldots,d_l\}$ be the generator of $\tau$.
Since $f_s\in \tau$, it follows that $\{f_s,d_1,\ldots,d_l\}$ is linearly independent.
It follows that if $f_p^u\notin \tau$, then $\dim \sigma_{uvw}^{sqr}<n$.
This contradicts the fact that $(s,q,r)$ is standard.
In conclusion, we have:
\begin{enumerate}
\item[(Rule 1)] To obtain $\sigma_{uvw}^{sqr}$ from $\sigma_{uvw}^{pqr}$ replace $f_p^u$ by $f_{s}$.
To obtain $\sigma_{uvw}^{sqr}$ from $\sigma_{*vw}^{0qr}$ replace $e_u$ by $f_s$.
\item[(Rule 2)] $f_p^u$ should be in $\tau$. 
\end{enumerate}
We have similar rules for $\sigma_{uvw}^{psr}$ and $\sigma_{uvw}^{pqs}$ also under similar assumptions.
\vspace{0.1in}

If ${\rm Cone}(\epsilon_1,\ldots,\epsilon_t)$
is an arbitrary cone with the star subdivision $\Gamma$ relative to a subcone ${\rm Cone}(\epsilon_1,\ldots,\epsilon_{t'})$, then we have:
\begin{enumerate}
\item[(Rule 1')] To obtain a maximal cone of $\Gamma$ replace $\epsilon_i$ by $\epsilon_1+\cdots+\epsilon_{t'}$.
\item[(Rule 2')] $\epsilon_i$ should be in ${\rm Cone}(\epsilon_1,\ldots,\epsilon_{t'})$.
\end{enumerate}
Compare (Rule 1) and (Rule 2) with (Rule 1') and (Rule 2') to deduce that
\begin{equation}
\label{A.3.51.1}
\{\sigma_{uvw}^{pqr}\}\times_{\Sigma}\Sigma_s = \{\sigma_{uvw}^{pqr}\}^*(\tau),
\end{equation}
which finishes the proof.
\end{proof}

\begin{exm}
Let us illustrate \eqref{A.3.51.1} with an example.
If $(p,q,r,s)=(4,2,0,6)$, then
\begin{gather*}
\sigma_{uv*}^{420}={\rm Cone}(e_1,\ldots,e_{u-1},e_{u+1},\ldots,e_{v-1},e_{v+1},\ldots,e_{w-1},e_{w+1},\ldots,e_n,f_{4},f_2,e_w),
\\
\sigma_{uv*}^{620}={\rm Cone}(e_1,\ldots,e_{u-1},e_{u+1},\ldots,e_{v-1},e_{v+1},\ldots,e_{w-1},e_{w+1},\ldots,e_n,f_6,f_2,e_w),
\\
\sigma_{uv*}^{460}={\rm Cone}(e_1,\ldots,e_{u-1},e_{u+1},\ldots,e_{v-1},e_{v+1},\ldots,e_{w-1},e_{w+1},\ldots,e_n,f_{4},f_6,e_w),
\\
\sigma_{uvw}^{426}={\rm Cone}(e_1,\ldots,e_{u-1},e_{u+1},\ldots,e_{v-1},e_{v+1},\ldots,e_{w-1},e_{w+1},\ldots,e_n,f_{4},f_2,f_6).
\end{gather*}
The cone $\sigma_{uv*}^{460}$ has dimension $<n$, and
\[
\sigma_{uv*}^{620},\sigma_{uv(p_1+p_2+1)}^{426},\ldots,\sigma_{uvn}^{426}
\]
are precisely all the maximal cones of $\{\sigma_{uv*}^{420}\}\times_{\Sigma}\Sigma_6$.
In addition, 
\[
\tau={\rm Cone}(f_4,e_{p_1+p_2+1},\ldots,e_n)
\]
is the subcone of $\sigma_{uv*}^{420}$ generated by $f_6$, and
\[
\{\sigma_{uvw}^{420}\}\times_{\Sigma}\Sigma_6 = \{\sigma_{uvw}^{420}\}^*(\tau).
\]
\end{exm}

\begin{const}
\label{A.3.49}
For $X_1,X_2,X_3\in SmlSm/S$ and vector bundles $\cE_i\rightarrow X_i$ of fixed rank for $i=1,2,3$, 
we may apply Construction \ref{A.3.47} to the vector bundles $\ul{\cE_1}\rightarrow \ul{X_1}$, $\ul{\cE_2}\rightarrow \ul{X_2}$, and $\ul{\cE_3}\rightarrow \ul{X_3}$.
In this way, we obtain the blow-up $\ul{T_I}$ of
\[
\ul{T}:=\ul{\cE_1}\times_S \ul{\cE_2}\times_S \ul{\cE_3}
\]
for every subset $I$ of $\{1,\ldots,6\}$ such that
\[
I\cap\{4,5,6\}\neq \{4,5\}.
\]
For $i=1,2,3$ let $Z_i$ be the $0$-section of $\cE_i$, and set
\begin{gather*}
U:=(\cE_1-\partial \cE_1)\times_S (\cE_2-\partial \cE_2)\times_S (\cE_3-\partial \cE_3),
\;
U_1:=U-Z_1\times_S \cE_2\times_S \cE_3,
\\
U_2:=U-\cE_1\times_S Z_2\times_S \cE_3,
\;
U_3:=U-\cE_1\times_S \cE_2\times_S Z_3,
\\
U_4:=U-Z_1\times_S Z_2\times_S \cE_3,
\;
U_5:=U-\cE_1\times_S Z_2\times_S Z_3,
\;
U_6:=U-Z_1\times_S Z_2\times_S Z_3.
\end{gather*}

If $I=\{i_1,\ldots,i_n\}$, then the open immersion
\[
j_I\colon U_I:=U_{i_1}\cap \cdots \cap U_{i_n}\rightarrow \underline{T}
\]
can be lifted to $\ul{T_I}$ since the images of the exceptional divisors on $\ul{T_I}$ in $\ul{T}$ lie in the complement of $j_I$.
Let $T_I$ denote the fs log scheme whose underlying scheme is $\ul{T_I}$ and whose log structure is the compactifying log structure associated with $j_I\colon U_I\rightarrow \underline{T}$.
If $I=\{i_1,\ldots,i_n\}$, for simplicity of notation, we set
\[
T_{i_1\ldots i_n}:=T_I
\]
In particular, we set $T:=T_\emptyset$.
\vspace{0.1in}

We can divide all the $T_I$'s into the following groups such that if $T_I$ and $T_J$ belong to the same group, then $U_I=U_J$.
\begin{equation}
\begin{gathered}
\label{A.3.49.1}
\{T\},
\;
\{T_1, T_{14}, T_{16}, T_{146}\},
\;
\{T_2, T_{24}, T_{25}, T_{26}, T_{246}, T_{256}, T_{2456}\},
\\
\{T_3, T_{35}, T_{36}, T_{356}\},
\;
\{T_{4}, T_{46}\},
\;
\{T_5, T_{56}\},
\;
\{T_6\},
\\
\{T_{12}, T_{124}, T_{125}, T_{126}, T_{1246}, T_{1256}, T_{12456}\},
\\
\{T_{13}, T_{134}, T_{135}, T_{136}, T_{1346}, T_{1356}, T_{13456}\},
\\
\{T_{23}, T_{234}, T_{235}, T_{236}, T_{2346}, T_{2356}, T_{23456}\},
\\
\{T_{15}, T_{156}, T_{1456}\},
\;
\{T_{34}, T_{346}, T_{3456}\},
\\
\{T_{123}, T_{1234}, T_{1235}, T_{1236}, T_{12346}, T_{12356}, T_{123456}\},
\;
\{T_{456}\}.
\end{gathered}
\end{equation}
If $U_I=U_J$ and $\lvert J\rvert=\lvert I\rvert +1$, then $T_J$ is an admissible blow-up of $T_I$ along a smooth center since we have shown that $W_{I,j}$ in Construction \ref{A.3.47} is smooth.
\end{const}

We can now turn to prove compatibility between Thom motives and products.  
As can be expected, since blow-ups are involved in our formulation, this requires some extra arguments than in the $\A^1$-setting.

\begin{prop}
\label{A.3.42}
For $X_1,X_2\in SmlSm/S$ and vector bundles $\cE_1\rightarrow X_1$ and $\cE_2\rightarrow X_2$, there exists a canonical isomorphism
\begin{equation}
\label{A.3.42.10}
MTh_{X_1}(\cE_1)\otimes MTh_{X_2}(\cE_2)\xrightarrow{\cong} MTh_{X_1\times_S X_2}(\cE_1\times_S \cE_2).
\end{equation}
\end{prop}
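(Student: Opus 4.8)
The plan is to reduce the product formula for Thom motives to the geometric calculus of blow-ups of the triple product already prepared in Constructions \ref{A.3.47} and \ref{A.3.49}, even though the statement only involves two vector bundles. The reason for passing through three bundles is the following: by Definition \ref{A.3.22}, $MTh_{X_i}(\cE_i) = M((B_{Z_i}(\cE_i),E_i)\rightarrow \cE_i)$, and the tensor product of two such relative motives is, via the canonical isomorphism of complexes ${\rm Tot}(\Zltr(C))\otimes {\rm Tot}(\Zltr(D))\rightarrow {\rm Tot}(\Zltr(C\otimes D))$ used in the proof of Proposition \ref{A.5.71}, identified with the motive of the $2$-square $C_1\otimes C_2$ where $C_i$ is the $1$-square $(B_{Z_i}(\cE_i),E_i)\rightarrow \cE_i$. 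The target $MTh_{X_1\times_S X_2}(\cE_1\times_S\cE_2)$ is the motive of a single $1$-square involving the blow-up of $\cE_1\times_S\cE_2$ along the $0$-section $Z_1\times_S Z_2$. To compare these one needs to relate the iterated/fibered blow-ups appearing in $C_1\otimes C_2$ with the single blow-up along $Z_1\times_S Z_2$, and this is exactly the content of the toric bookkeeping in Construction \ref{A.3.47} (with $X_3 = \Spec S$, $\cE_3$ trivial of rank $0$, so that the subsets $I$ collapse to subsets of $\{1,2,4\}$ with $4$ playing the role of the simultaneous blow-up).

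First I would set up the $2$-square $C_1\otimes C_2$ explicitly and, using the grouping \eqref{A.3.49.1} restricted to indices $\{1,2,4\}$, identify its total complex with $M(T_{\mathrm{something}}\rightarrow T)$-type expressions; the key point is that all the intermediate maps $T_J\rightarrow T_I$ with $U_I = U_J$ are admissible blow-ups along smooth centers (Construction \ref{A.3.49}), hence induce isomorphisms on motives by Theorem \ref{A.3.7}. Concretely, the $1$-squares $(B_{Z_1}(\cE_1),E_1)\rightarrow\cE_1$ and $(B_{Z_2}(\cE_2),E_2)\rightarrow\cE_2$ have products that, after pulling back along the evident projections and taking strict transforms, match up with the log schemes $T_{14}$, $T_{24}$, $T_{124}$, etc., while the single Thom space of $\cE_1\times_S\cE_2$ corresponds to $T_4\rightarrow T$. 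The combinatorial heart is Lemma \ref{A.3.51}: each edge in the poset of subsets $I$ realizes $\Sigma_{I\amalg\{s\}}$ as a star subdivision of $\Sigma_I$ relative to a smooth cone, which geometrically says the corresponding morphism of $T_I$'s is an admissible blow-up along a smooth center, so Theorem \ref{A.3.7} applies.

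The actual construction of the isomorphism \eqref{A.3.42.10} would then go by a zig-zag: build a large commutative diagram of the relevant $T_I$'s and their open subschemes $U_I$, use $(sNis$-des$)$ (as in Proposition \ref{A.3.42}'s ambient category, via Proposition \ref{A.5.71}) together with the excision-type squares attaching the $U_I$'s, and use $\boxx$-invariance where deformation spaces are involved, exactly following the template of the proofs of Propositions \ref{A.3.39} and \ref{A.3.34} and Theorem \ref{A.3.13}. Each leg of the zig-zag is forced to be an isomorphism either by $(sNis$-des$)$ plus Proposition \ref{A.3.44} (to cancel a common relative motive), or by Theorem \ref{A.3.7} (admissible blow-up), or by $(\boxx$-inv$)$. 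Canonicity of the resulting isomorphism follows because every step in the zig-zag is canonical and the composite does not depend on auxiliary choices (the toric data is determined once the $0$-sections are fixed, and Zariski-local trivializations are glued using $(Zar$-sep$)$).

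The main obstacle I anticipate is not any single step but the organization of the combinatorics: one must carefully match the pieces of the $2$-square $C_1\otimes C_2$ — which a priori involves a $2\times 2$ array of log schemes $B_{Z_1}(\cE_1)\times_S B_{Z_2}(\cE_2)$, $B_{Z_1}(\cE_1)\times_S\cE_2$, $\cE_1\times_S B_{Z_2}(\cE_2)$, $\cE_1\times_S\cE_2$ with their exceptional-divisor log structures — against the appropriate $T_I$'s, verifying that the strict transforms and compactifying log structures agree on the nose (not merely up to a log modification). This requires the identifications $T_I\cong\underline{\A_{\Sigma_I}}\times(X_1\times_S X_2\times_S X_3)$ of Construction \ref{A.3.47} together with the log-structure descriptions, and a careful check that the fibered product of the two one-dimensional blow-up squares produces exactly the fan $\Sigma_{\{1,2,4\}}$ and not something merely birational to it; Lemma \ref{A.3.53} (closure of the generic point of a fibered product of toric varieties) is the tool that controls this. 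Once the dictionary between the $2$-square picture and the $T_I$ picture is pinned down, the rest is an application of the already-established invariance properties.
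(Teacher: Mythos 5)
Your proposal follows essentially the same route as the paper's proof: apply Construction \ref{A.3.49} to $\cE_1$, $\cE_2$ and the rank-zero bundle $S\rightarrow S$ so that the relevant blow-ups are the $T_I$ for $I\subset\{1,2,4\}$, identify $MTh_{X_1}(\cE_1)\otimes MTh_{X_2}(\cE_2)$ with the motive of the $2$-square on $T_{12},T_1,T_2,T$ via (MSq), pass to $T_{124},T_{14},T_{24},T$ by Theorem \ref{A.3.7} and Proposition \ref{A.3.48}, and compare with $T_4\rightarrow T$ by a cancellation argument using (Sq), ($Zar$-sep), ($sNis$-des) and Proposition \ref{A.3.44}. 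The only point your outline slightly underplays is that the crux is the single comparison $M(T_{124}\rightarrow T_{24})\rightarrow M(T_{14}\rightarrow T_4)$, which after reduction to trivial bundles is settled by an explicit cone-by-cone toric verification rather than by a blanket invocation of the standard properties (and $\boxx$-invariance is in fact not needed here), but this is exactly the ``toric bookkeeping'' you flag as the main obstacle.
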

\begin{proof}
\textbf{Step 1} Apply Construction \ref{A.3.49} to the vector bundles $\cE_1\rightarrow X_1$, $\cE_2\rightarrow X_2$, and $S\rightarrow S$ 
to obtain an fs log scheme $T_I\in SmlSm/S$ for every subset $I$ of $\{1,2,4\}$.
According to Construction \ref{A.3.49}, the morphisms
\[
T_{124}\rightarrow T_{12},
\;
T_{14}\rightarrow T_1,
\;
T_{24}\rightarrow T_2
\]
are admissible blow-ups along smooth centers.
Theorem \ref{A.3.7} shows the naturally induced morphisms
\begin{equation}
\label{A.3.42.2}
M(T_{124})\rightarrow M(T_{12}),
\;
M(T_{14})\rightarrow M(T_1),
\;
M(T_{24})\rightarrow M(T_2)
\end{equation}
are isomorphisms.
\vspace{0.1in}

\textbf{Step 2}
Form the naturally induced $3$-squares
\[
Q:=
\begin{tikzcd}[row sep=tiny, column sep=tiny]
&T_{124}\arrow[dd]\arrow[rr]\arrow[ld]&&T_{14}\arrow[dd]\arrow[ld]\\
T_{24}\arrow[dd]\arrow[rr,crossing over]&&T\\
&T_{4}\arrow[rr]\arrow[ld]&&T_{4}\arrow[ld]\\
T_{4}\arrow[rr]&&T\arrow[uu,crossing over,leftarrow]
\end{tikzcd}
\quad
Q':=
\begin{tikzcd}[row sep=tiny, column sep=tiny]
&T_{124}\arrow[dd]\arrow[rr]\arrow[ld]&&T_{14}\arrow[dd]\arrow[ld]\\
T_{24}\arrow[dd]\arrow[rr,crossing over]&&T\\
&T_{24}\arrow[rr]\arrow[ld]&&T_{4}\arrow[ld]\\
T_{24}\arrow[rr]&&T\arrow[uu,crossing over,leftarrow]
\end{tikzcd}
\]
with $C_1:=\;$the bottom square of $Q$, $C_2:=\;$the top square of $Q$, $D_1:=\;$the front square of $Q$, $D_2:=\;$the back square of $Q$, 
$C_1':=\;$the bottom square of $Q'$, $C_2':=\;$the top square of $Q"$, $D_1':=\;$the front square of $Q'$, and $D_2':=\;$the back square of $Q'$.
By (Sq), there exist canonical distinguished triangles
\begin{gather}
\label{A.3.42.17}
M(D_2')\rightarrow M(D_1')\rightarrow M(Q') \rightarrow M(D_2')[1],
\\
\label{A.3.42.18}
M(T_{24}\rightarrow T_{24})\rightarrow M(T\rightarrow T)\rightarrow M(D_1')\rightarrow M(T_{24}\rightarrow T_{24})[1].
\end{gather}
Due to Proposition \ref{A.3.45} and \eqref{A.3.42.18} we have $M(D_1')=0$.
We will show in Step 3 that $M(D_2')=0$.
Assuming this, from \eqref{A.3.42.17}, we have 
$$
M(Q')=0.
$$
There exists a morphism of distinguished triangles due to (Sq)
\begin{equation}
\label{A.3.42.19}
\begin{tikzcd}
M(C_2')\arrow[d]\arrow[r]&
M(C_1')\arrow[r]\arrow[d]&
M(Q')\arrow[d]\arrow[r]&
M(C_2')[1]\arrow[d]
\\
M(C_2)\arrow[r]&
M(C_1)\arrow[r]&
M(Q)\arrow[r]&
M(C_2)[1].
\end{tikzcd}
\end{equation}
The first and fourth vertical morphisms are the identity, and the second vertical morphism is an isomorphism since we also have a morphism of distinguished triangles
\[
\begin{tikzcd}
M(T_{24}\rightarrow T_{24})\arrow[d]\arrow[r]&
M(T_4\rightarrow T)\arrow[r]\arrow[d]&
M(C_1')\arrow[d]\arrow[r]&
M(T_{24}\rightarrow T_{24})[1]\arrow[d]
\\
M(T_4\rightarrow T_4)\arrow[r]&
M(T_4\rightarrow T)\arrow[r]&
M(C_1)\arrow[r]&
M(T_4\rightarrow T_4)[1].
\end{tikzcd}
\]
Thus the third vertical morphism in \eqref{A.3.42.19} is an isomorphism.
It follows that 
$$
M(Q)=0.
$$
Using (Sq) we deduce that the naturally induced morphism
\begin{equation}
\label{A.3.42.21}
M(C_2)\rightarrow M(C_1)
\end{equation}
is an isomorphism.
\vspace{0.1in}

Let $C_3$ be the square
\[
\begin{tikzcd}
T_{12}\arrow[d]\arrow[r]&T_1\arrow[d]
\\
T_2\arrow[r]&T.
\end{tikzcd}
\]
Since the morphisms in \eqref{A.3.42.2} are isomorphism, by Proposition \ref{A.3.48}, the naturally induced morphism
\begin{equation}
\label{A.3.42.20}
M(C_2)\rightarrow M(C_3)
\end{equation}
is an isomorphism.
By (MSq) there is a canonical isomorphism
\begin{equation}
\label{A.3.42.9}
MTh_{X_1}(\cE_1)\otimes MTh_{X_2}(\cE_2)\cong M(C_3).
\end{equation}
Combine \eqref{A.3.42.21}, \eqref{A.3.42.20}, and \eqref{A.3.42.9} to obtain \eqref{A.3.42.10}.
\vspace{0.1in}

\textbf{Step 3} It remains to verify that $M(D_1')=0$, which is equivalent to showing the naturally induced morphism
\[
M(T_{124}\rightarrow T_{24})\rightarrow M(T_{14}\rightarrow T_4)
\]
is an isomorphism due to (Sq).
By ($Zar$-sep), this question is Zariski local on $X_1$ and $X_2$.
Hence we may assume that $\cE_1$ and $\cE_2$ are trivial, say of rank $p_1$ and $p_2$.
In this case, we can further reduce to the case when $X_1=X_2=S$ by applying (MSq).
\vspace{0.1in}

For simplicity of notation we suppose that $S=\Spec{\Z}$.
Recall the notations $f_1$, $f_2$, and $\Sigma_I$ for $I\subset \{1,2,4\}$ in Construction \ref{A.3.47}.
We set
\begin{gather*}
\Sigma_4':=\{{\rm Cone}(f_1+f_2)\},
\;
\Sigma_{14}':=\{{\rm Cone}(f_1,f_1+f_2)\},
\;
\Sigma_{24}':=\{{\rm Cone}(f_2,f_1+f_2)\},
\\
\Sigma_{124}':=\{{\rm Cone}(f_1,f_1+f_2),{\rm Cone}(f_2,f_1+f_2)\}.
\end{gather*}
With these definitions we have 
\[
T_{124}=\A_{(\Sigma_{124},\Sigma_{124}')},
\;
T_{14}=\A_{(\Sigma_{14},\Sigma_{14}')},
\;
T_{24}=\A_{(\Sigma_{24},\Sigma_{24}')},
\;
T_4=\A_{(\Sigma_4,\Sigma_3')}.
\]
\vspace{0.1in}

When $p=2$ and $q=1$, we can describe the fans as follows.
\[
\begin{tikzpicture}[yscale=0.7, xscale=0.82]
\fill[black!20] (0,0)--(1,2)--(2,0)--(0,0);
\draw (0,0)--(1,2)--(2,0)--(0,0);
\draw (0,0)--(1,1)--(2,0);
\draw[ultra thick] (1,0)--(1,1)--(1,2);
\filldraw (1,0) circle (3pt);
\filldraw (1,1) circle (3pt);
\filldraw (1,2) circle (3pt);
\node [left] at (0,0) {$e_1$};
\node [right] at (2,0) {$e_2$};
\node [left] at (1,2) {$e_3$};
\node [below] at (1,-0.1) {$(\Sigma_{124},\Sigma_{124}')$};
\begin{scope}[shift={(4,0)}]
\fill[black!20] (0,0)--(1,2)--(2,0)--(0,0);
\draw (0,0)--(1,2)--(2,0)--(0,0);
\draw (0,0)--(1,1)--(2,0);
\draw[ultra thick] (1,0)--(1,1);
\draw (1,1)--(1,2);
\filldraw (1,0) circle (3pt);
\filldraw (1,1) circle (3pt);
\node [below] at (1,-0.1) {$(\Sigma_{14},\Sigma_{14}')$};
\end{scope}
\begin{scope}[shift={(8,0)}]
\fill[black!20] (0,0)--(1,2)--(2,0)--(0,0);
\draw (0,0)--(1,2)--(2,0)--(0,0);
\draw (0,0)--(1,1)--(2,0);
\draw[ultra thick] (1,1)--(1,2);
\filldraw (1,1) circle (3pt);
\filldraw (1,2) circle (3pt);
\node [below] at (1,-0.1) {$(\Sigma_{24},\Sigma_{24}')$};
\end{scope}
\begin{scope}[shift={(12,0)}]
\fill[black!20] (0,0)--(1,2)--(2,0)--(0,0);
\draw (0,0)--(1,2)--(2,0)--(0,0);
\draw (0,0)--(1,1)--(2,0);
\draw (1,1)--(1,2);
\filldraw (1,1) circle (3pt);
\node [below] at (1,-0.1) {$(\Sigma_{4},\Sigma_{4}')$};
\end{scope}
\end{tikzpicture}
\]

For any cone $\sigma$, we write $\{\sigma\}$ for the fan whose only maximal cone is $\sigma$.
Due to ($Zar$-sep) it suffices to check that the naturally induced morphism
\begin{equation}
\label{A.3.42.11}
M(T_{124}\times_{\underline{\A_{\Sigma_4}}}\underline{\A_{\{\sigma\}}}
\rightarrow 
T_{24}\times_{\underline{\A_{\Sigma_4}}}\underline{\A_{\{\sigma\}}})
\rightarrow
M(T_{14}\times_{\underline{\A_{\Sigma_4}}}\underline{\A_{\{\sigma\}}}
\rightarrow
T_4\times_{\underline{\A_{\Sigma_4}}}\underline{\A_{\{\sigma\}}})
\end{equation}
is an isomorphism for every cone $\sigma$ of $\Sigma_4$.
Equivalently, owing to Proposition \ref{A.3.44}, it suffices to check that the naturally induced morphism
\begin{equation}
\label{A.3.42.12}
M(T_{124}\times_{\underline{\A_{\Sigma_4}}}\underline{\A_{\{\sigma\}}}
\rightarrow 
T_{14}\times_{\underline{\A_{\Sigma_4}}}\underline{\A_{\{\sigma\}}})
\rightarrow
M(T_{24}\times_{\underline{\A_{\Sigma_4}}}\underline{\A_{\{\sigma\}}}
\rightarrow
T_4\times_{\underline{\A_{\Sigma_4}}}\underline{\A_{\{\sigma\}}})
\end{equation}
is an isomorphism.
If
\[
\sigma\subset {\rm Cone}(e_1,\ldots,e_{i-1},e_{i+1},\ldots,e_{p_1+p_2},f_1+f_2)
\]
for some $1\leq i\leq p_1$, then the naturally induced morphisms
\[
T_{124}\times_{\underline{\A_{\Sigma_4}}}\underline{\A_{\{\sigma\}}}
\rightarrow 
T_{24}\times_{\underline{\A_{\Sigma_4}}}\underline{\A_{\{\sigma\}}},
\;
T_{14}\times_{\underline{\A_{\Sigma_4}}}\underline{\A_{\{\sigma\}}}
\rightarrow
T_4\times_{\underline{\A_{\Sigma_4}}}\underline{\A_{\{\sigma\}}}
\]
are isomorphisms.
Thus the morphism \eqref{A.3.42.11} is also an isomorphism.
On the other hand, if
\[
\sigma\subset {\rm Cone}(e_1,\ldots,e_{p_1+j-1},e_{p_1+j+1},\ldots,e_{p_1+p_2},f_1+f_2)
\]
for some $1\leq j\leq p_2$, then the naturally induced morphisms
\[
T_{124}\times_{\underline{\A_{\Sigma_4}}}\underline{\A_{\{\sigma\}}}
\rightarrow 
T_{14}\times_{\underline{\A_{\Sigma_4}}}\underline{\A_{\{\sigma\}}},
\;
T_{24}\times_{\underline{\A_{\Sigma_4}}}\underline{\A_{\{\sigma\}}}
\rightarrow
T_4\times_{\underline{\A_{\Sigma_4}}}\underline{\A_{\{\sigma\}}}
\]
are isomorphisms.
Thus the morphism \eqref{A.3.42.12} is also an isomorphism.
\end{proof}

\begin{prop}
\label{A.3.46}
For $X_1,X_2,X_3\in SmlSm/S$ and vector bundles $\cE_i\rightarrow X_i$ for $i=1,2,3$, 
there are naturally induced isomorphisms rendering the diagram 
\begin{equation}
\label{A.3.46.1}
\begin{tikzcd}[column sep=tiny]
MTh_{X_1}(\cE_1)\otimes MTh_{X_2}(\cE_1)\otimes MTh_{X_3}(\cE_1)\arrow[d, "\simeq"']\arrow[r, "\simeq"]&
MTh_{X_1}(\cE_1)\otimes MTh_{X_2\times_S X_3}(\cE_2\times_S \cE_3)\arrow[d, "\simeq"]
\\
MTh_{X_1\times_S X_2}(\cE_1\times_S \cE_2)\otimes MTh_{X_3}(\cE_3)\arrow[r, "\simeq"]&
MTh_{X_1\times_S X_2\times_S X_3}(\cE_1\times_S \cE_2\times_S \cE_3)
\end{tikzcd}
\end{equation}
commutative.
\end{prop}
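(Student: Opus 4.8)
The statement asserts coherence of the monoidal-compatibility isomorphisms of Proposition~\ref{A.3.42}. Both composite arrows in \eqref{A.3.46.1} are isomorphisms
\[
MTh_{X_1}(\cE_1)\otimes MTh_{X_2}(\cE_2)\otimes MTh_{X_3}(\cE_3)\xrightarrow{\ \cong\ } MTh_{X_1\times_S X_2\times_S X_3}(\cE_1\times_S\cE_2\times_S\cE_3),
\]
so the plan is to show they coincide by exhibiting a single \emph{symmetric} isomorphism $\Phi$ between these two objects, built from the system of $55$ blow-ups of Construction~\ref{A.3.49} applied to all three bundles simultaneously, and then checking that each of the two composites equals $\Phi$. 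As a preliminary step, exactly as in the proofs of Theorem~\ref{A.3.7} and Proposition~\ref{A.3.42}, I would use {\rm ($Zar$-sep)} to reduce to $X_1=X_2=X_3=S$ and then {\rm (MSq)} to reduce to the case in which the $\cE_i$ are trivial bundles over $S=\Spec\Z$; there every $T_I$ of Construction~\ref{A.3.49} becomes an explicit toric log scheme and the fan computations of Lemmas~\ref{A.3.50} and~\ref{A.3.51} are available.

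To construct $\Phi$, I would first use {\rm (MSq)} twice to identify $MTh_{X_1}(\cE_1)\otimes MTh_{X_2}(\cE_2)\otimes MTh_{X_3}(\cE_3)$ with $M(C)$ for the canonical $3$-square $C$ whose corners are $T,T_1,T_2,T_3,T_{12},T_{13},T_{23},T_{123}$, and observe that $MTh_{X_1\times_S X_2\times_S X_3}(\cE_1\times_S\cE_2\times_S\cE_3)=M(T_6\to T)$, since $W_6$ is the zero section of the product bundle and $T_6$ carries precisely the exceptional divisor together with the strict transforms of the boundary. The isomorphism $M(C)\cong M(T_6\to T)$ would then be produced by iterating Steps~2 and~3 of the proof of Proposition~\ref{A.3.42}: one replaces the corners of $C$ in turn by members of the same group of \eqref{A.3.49.1} --- the corner $T_{ij}$ by $T_{ij6}$, the corner $T_i$ by $T_{i6}$, and $T_{123}$ by $T_{1236}$ --- each such step being an admissible blow-up along a smooth center and hence a motivic isomorphism by Theorem~\ref{A.3.7}, while the auxiliary higher squares produced along the way are collapsed using {\rm (Sq)}, {\rm (MSq)} and Propositions~\ref{A.3.44}, \ref{A.3.45}, \ref{A.3.48} just as in loc.\ cit.

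For the comparison, I would unwind the definitions: the composite through $MTh_{X_1}(\cE_1)\otimes MTh_{X_2\times_S X_3}(\cE_2\times_S\cE_3)$ is $\bigl(\text{Prop.~\ref{A.3.42} for }(\cE_1,\cE_2\times_S\cE_3)\bigr)\circ\bigl(\mathrm{id}_{MTh_{X_1}(\cE_1)}\otimes\text{Prop.~\ref{A.3.42} for }(\cE_2,\cE_3)\bigr)$, and each of the two factors is a zig-zag among blow-ups that, after tensoring with the relevant identity and using the functoriality of {\rm (MSq)}, lands inside the single $6$-index system of Construction~\ref{A.3.49} --- the $\{1,2,4\}$-indices of the $(\cE_2,\cE_3)$-construction matching the classes of $W_2,W_3,W_5$ and those of the $(\cE_1,\cE_2\times_S\cE_3)$-construction matching $W_1,W_5,W_6$. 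Thus the whole composite is a path in that big system from $M(C)$ to $M(T_6\to T)$. Since every elementary isomorphism in such a path is canonical --- the blow-up isomorphisms are the unique ones restricting to the identity over the appropriate common dense open (Theorem~\ref{A.3.7}, cf.\ Proposition~\ref{A.3.11}), and the structural ones come from the functoriality of {\rm (Sq)} and {\rm (MSq)} --- any two paths with the same endpoints induce the same isomorphism, so this composite equals $\Phi$. The other composite, through $MTh_{X_1\times_S X_2}(\cE_1\times_S\cE_2)\otimes MTh_{X_3}(\cE_3)$, is handled identically, with the roles of $W_1,W_2,W_4$ and $W_4,W_3,W_6$ in place of the above; it too equals $\Phi$, and hence \eqref{A.3.46.1} commutes.

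I expect the main obstacle to be organizational rather than conceptual: keeping track of which of the $55$ blow-ups and which of the auxiliary $n$-squares occur in each of the four pairwise applications of Proposition~\ref{A.3.42}, verifying that the groupings of \eqref{A.3.49.1} are exactly those dictated by the opens $U_I$, and confirming via Lemmas~\ref{A.3.50} and~\ref{A.3.51} that every center blown up along the way is smooth. What makes the argument close is that no new morphisms need to be constructed: every isomorphism among the $M(T_I)$ used either in $\Phi$ or in the two composites is pinned down uniquely by its restriction to a dense open on which it is the identity, so the diagram obtained by superimposing all the relevant zig-zags commutes automatically once this bookkeeping is done.
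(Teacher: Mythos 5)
Your setup — reduction via {\rm ($Zar$-sep)} and {\rm (MSq)} to trivial bundles over $S=\Spec\Z$, the use of the $55$-blow-up system of Construction \ref{A.3.49} with the groupings \eqref{A.3.49.1}, Theorem \ref{A.3.7} for the admissible blow-ups, and the identification of the target with $M(T_6\to T)$ — matches the paper's proof. The divergence, and the gap, is in how you compare the two composites. You posit a single ``symmetric'' isomorphism $\Phi$ and argue that each composite equals $\Phi$ because ``every elementary isomorphism in such a path is canonical --- the blow-up isomorphisms are the unique ones restricting to the identity over the appropriate common dense open,'' so ``any two paths with the same endpoints induce the same isomorphism.'' That principle is valid for morphisms of log schemes in $SmlSm/S$ (Lemma \ref{A.5.9}) and hence for the induced maps between the plain motives $M(T_I)$, but it does not apply to the maps you actually need to compare. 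The isomorphisms of Proposition \ref{A.3.42} are morphisms between \emph{relative} motives --- iterated cones $M(C)$ of $2$- and $3$-squares --- obtained by combining blow-up maps with the structural maps $d_{i,C}$, $\partial_{i,C}$, and $T_{C,D}$ from {\rm (Sq)} and {\rm (MSq)}. Objects of a triangulated category have no ``dense open'' over which a morphism can be pinned down, and cones are not functorial; two parallel zig-zags of canonical-looking isomorphisms between cones need not compose to the same map. Asserting path-independence here is essentially assuming the conclusion: the whole content of Proposition \ref{A.3.46} is that two specific paths agree.

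The paper closes this gap not by a uniqueness argument but by exhibiting an explicit commutative diagram: it forms seven $3$-squares $Q_1,\dots,Q'$ and eight $2$-squares $C_1,\dots,C'$ out of the $T_I$, and arranges their motives into a $4\times 4$ grid whose outer boundary is \eqref{A.3.46.1} after inverting the arrows labelled $(b)$. Each inner square of that grid commutes for a concrete reason --- it is induced by an actual morphism of $n$-squares, so naturality of the maps in {\rm (Sq)} and {\rm (MSq)} applies --- and each arrow is shown to be an isomorphism using Theorem \ref{A.3.7}, Propositions \ref{A.3.44}, \ref{A.3.45}, \ref{A.3.48}, and the groupings \eqref{A.3.49.1} (including the separate check that $M(C'')=0$ to invert the $(f)$-arrows). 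To repair your argument you would have to carry out essentially this bookkeeping: replace the appeal to uniqueness by a verification that the specific squares relating your four applications of Proposition \ref{A.3.42} are images of morphisms of $n$-squares, so that their commutativity is forced by the axioms rather than assumed.
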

\begin{proof}
We note the isomorphisms are obtained from Proposition \ref{A.3.42}.
Applying Construction \ref{A.3.49} to the vector bundles $\cE_1\rightarrow X_1$, $\cE_2\rightarrow X_2$, and $\cE_3\rightarrow X_3$ we obtain an fs log scheme 
$T_I\in SmlSm/S$ for every subset $I$ of $\{1,\ldots,6\}$ such that
\[
I\cap \{4,5,6\}\neq \{4,5\}.
\]
For $i=1,2,3$, 
let $Z_i$ be the $0$-section of $\cE_i$, and let $E_i$ be the exceptional divisor on the blow-up $B_{Z_i}(\cE_i)$.
All of the blow-ups $\ul{T_I}$ contains
\[
(\cE_1-\partial \cE_1)\times_S (\cE_2-\partial \cE_2)\times_S (\cE_3-\partial \cE_3)
\]
as an open subscheme.
Let $T_I$ denote the fs log scheme whose underlying scheme is $\ul{T_I}$ and whose log structure is the compactifying log structure associated with the above open immersion.
\vspace{0.1in}

Form the $3$-squares
\[
Q_1:=
\begin{tikzcd}[row sep=tiny, column sep=tiny]
&T_{123}\arrow[dd]\arrow[rr]\arrow[ld]&&T_{13}\arrow[dd]\arrow[ld]\\
T_{23}\arrow[dd]\arrow[rr,crossing over]&&T_3\\
&T_{12}\arrow[rr]\arrow[ld]&&T_1\arrow[ld]\\
T_2\arrow[rr]&&T\arrow[uu,crossing over,leftarrow]
\end{tikzcd}
\quad
Q_2:=
\begin{tikzcd}[row sep=tiny, column sep=tiny]
&T_{1234}\arrow[dd]\arrow[rr]\arrow[ld]&&T_{134}\arrow[dd]\arrow[ld]\\
T_{234}\arrow[dd]\arrow[rr,crossing over]&&T_{3}\\
&T_{124}\arrow[rr]\arrow[ld]&&T_{14}\arrow[ld]\\
T_{24}\arrow[rr]&&T\arrow[uu,crossing over,leftarrow]
\end{tikzcd}
\]
\[
Q_3:=
\begin{tikzcd}[row sep=tiny, column sep=tiny]
&T_{34}\arrow[dd]\arrow[rr]\arrow[ld]&&T_{34}\arrow[dd]\arrow[ld]\\
T_{34}\arrow[dd]\arrow[rr,crossing over]&&T_{3}\\
&T_{4}\arrow[rr]\arrow[ld]&&T_4\arrow[ld]\\
T_4\arrow[rr]&&T\arrow[uu,crossing over,leftarrow]
\end{tikzcd}
\quad
Q_2':=
\begin{tikzcd}[row sep=tiny, column sep=tiny]
&T_{1235}\arrow[dd]\arrow[rr]\arrow[ld]&&T_{135}\arrow[dd]\arrow[ld]\\
T_{235}\arrow[dd]\arrow[rr,crossing over]&&T_{35}\\
&T_{125}\arrow[rr]\arrow[ld]&&T_{1}\arrow[ld]\\
T_{25}\arrow[rr]&&T\arrow[uu,crossing over,leftarrow]
\end{tikzcd}
\]
\[
Q_3':=
\begin{tikzcd}[row sep=tiny, column sep=tiny]
&T_{15}\arrow[dd]\arrow[rr]\arrow[ld]&&T_{15}\arrow[dd]\arrow[ld]\\
T_{5}\arrow[dd]\arrow[rr,crossing over]&&T_{5}\\
&T_{15}\arrow[rr]\arrow[ld]&&T_{1}\arrow[ld]\\
T_{5}\arrow[rr]&&T\arrow[uu,crossing over,leftarrow]
\end{tikzcd}
\quad
Q:=
\begin{tikzcd}[row sep=tiny, column sep=tiny]
&T_{123456}\arrow[dd]\arrow[rr]\arrow[ld]&&T_{13456}\arrow[dd]\arrow[ld]\\
T_{23456}\arrow[dd]\arrow[rr,crossing over]&&T_{356}\\
&T_{12456}\arrow[rr]\arrow[ld]&&T_{1456}\arrow[ld]\\
T_{2456}\arrow[rr]&&T\arrow[uu,crossing over,leftarrow]
\end{tikzcd}
\]
\[
Q':=
\begin{tikzcd}[row sep=tiny, column sep=tiny]
&T_{6}\arrow[dd]\arrow[rr]\arrow[ld]&&T_{6}\arrow[dd]\arrow[ld]\\
T_{6}\arrow[dd]\arrow[rr,crossing over]&&T_{6}\\
&T_{6}\arrow[rr]\arrow[ld]&&T_{6}\arrow[ld]\\
T_{6}\arrow[rr]&&T.\arrow[uu,crossing over,leftarrow]
\end{tikzcd}
\]
Then form the $2$-squares
\[
C_1:=
\begin{tikzcd}[row sep=tiny, column sep=tiny]
&
T_{34}\arrow[ld]\arrow[dd]
\\
T_3\arrow[dd]
\\
&
T_4\arrow[ld]
\\
T
\end{tikzcd}
\quad
C_2:=
\begin{tikzcd}[row sep=tiny, column sep=tiny]
&
T_{346}\arrow[ld]\arrow[dd]
\\
T_{36}\arrow[dd]
\\
&
T_{46}\arrow[ld]
\\
T
\end{tikzcd}
\quad
C_3:=
\begin{tikzcd}[row sep=tiny, column sep=tiny]
&
T_{6}\arrow[ld]\arrow[dd]
\\
T_6\arrow[dd]
\\
&
T_6\arrow[ld]
\\
T
\end{tikzcd}
\quad
C:=
\begin{tikzcd}[row sep=tiny, column sep=tiny]
&
T_{13456}\arrow[ld]\arrow[dd]
\\
T_{356}\arrow[dd]
\\
&
T_{1456}\arrow[ld]
\\
T
\end{tikzcd}
\]
\[
C_1':=
\begin{tikzcd}[row sep=tiny, column sep=tiny]
&T_{15}\arrow[ld]\arrow[rr]&&T_1\arrow[ld]
\\
T_5\arrow[rr]&&T
\end{tikzcd}
\quad
C_2':=
\begin{tikzcd}[row sep=tiny, column sep=tiny]
&T_{156}\arrow[ld]\arrow[rr]&&T_{16}\arrow[ld]
\\
T_{56}\arrow[rr]&&T
\end{tikzcd}
\]
\[
C_3':=
\begin{tikzcd}[row sep=tiny, column sep=tiny]
&T_{6}\arrow[ld]\arrow[rr]&&T_{6}\arrow[ld]
\\
T_{6}\arrow[rr]&&T
\end{tikzcd}
\quad
C':=
\begin{tikzcd}[row sep=tiny, column sep=tiny]
&T_{12456}\arrow[ld]\arrow[rr]&&T_{1456}\arrow[ld]
\\
T_{2456}\arrow[rr]&&T.
\end{tikzcd}
\]

From the above, there is a naturally induced commutative diagram
\[
\begin{tikzcd}
M(Q_1)\arrow[r,leftarrow,"(a)"]\arrow[d,leftarrow,"(a)"']&
M(Q_2)\arrow[d,leftarrow,"(c)"]\arrow[r,"(b)"]&
M(Q_3)\arrow[r,leftarrow,"(a)"]&
M(C_1)\arrow[d,leftarrow,"(a)"]
\\
M(Q_2')\arrow[d,"(b)"']\arrow[r,leftarrow,"(c)"]&
M(Q)\arrow[d,leftarrow,"(e)"]\arrow[r,leftarrow,"(e)"]\arrow[rd]&
M(C)\arrow[r,"(d)"]&
M(C_2)\arrow[d,"(b)"]
\\
M(Q_2')\arrow[d,leftarrow,"(a)"']&
M(C')\arrow[d,"(d)"]&
M(Q')\arrow[d,leftarrow,"(f)"]\arrow[r,leftarrow,"(f)"]&
M(C_3)\arrow[d,leftarrow,"(a)"]
\\
M(C_1')\arrow[r,leftarrow,"(a)"]&
M(C_2')\arrow[r,"(b)"]&
M(C_3')\arrow[r,leftarrow,"(a)"]&
M(T_6\rightarrow T).
\end{tikzcd}
\]

The morphisms labeled $(a)$ and $(b)$ are isomorphisms, see the proof of Proposition \ref{A.3.42}.
Moreover, 
after inverting all the morphisms with label $(b)$, the outer diagram can be identified with \eqref{A.3.46.1} using (MSq).
The morphisms with labels $(c)$ and $(d)$ are isomorphisms by Proposition \ref{A.3.48}, Theorem \ref{A.3.7}, 
and the discussion about admissible blow-ups along smooth centers \eqref{A.3.49.1}.
It follows that the morphisms labeled $(e)$ are also isomorphisms.
\vspace{0.1in}

Ley $C''$ denote the square
\[
C'':=
\begin{tikzcd}
T_6\arrow[d]\arrow[r]&
T_6\arrow[d]
\\
T_6\arrow[r]&
T_6.
\end{tikzcd}
\]
By (Sq) there are distinguished triangles
\begin{gather*}
M(C'')\rightarrow M(C_3)\rightarrow M(Q')\rightarrow M(C'')[1],
\;
M(C'')\rightarrow M(C_3')\rightarrow M(Q')\rightarrow M(C'')[1],
\\
M(T_6\rightarrow T_6)\rightarrow M(T_6\rightarrow T_6)\rightarrow M(C'')\rightarrow M(T_6\rightarrow T_6)[1].
\end{gather*}
Since $M(T_6\rightarrow T_6)=0$ by Proposition \ref{A.3.45}, it follows that $M(C'')=0$.
Thus the morphisms labeled $(f)$ are isomorphisms.
The diagram remains commutative after inverting all morphisms with labels $(a)$, $(c)$, $(e)$, and $(f)$.
This means that also the diagram \eqref{A.3.46.1} commutes.
\end{proof}

\begin{prop}
\label{A.3.43}
For every $X\in SmlSm/S$ and integer $n\geq 1$, there is a canonical isomorphism
\[
MTh_X(X\times \A^n)\cong M(X)(n)[2n].
\]
\end{prop}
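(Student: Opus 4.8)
The plan is to induct on $n$, splitting off one copy of the affine line at a time via the multiplicativity of Thom motives in Proposition \ref{A.3.42}, with Proposition \ref{A.3.34} serving both as the base case and as the computation of the Thom motive of the trivial line bundle over $S$.

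For $n=1$ the claim is exactly the final clause of Proposition \ref{A.3.34}, which gives a canonical isomorphism $MTh_X(X\times \A^1)\cong M(X)(1)[2]$. For $n>1$ I would first observe that the trivial rank $n$ bundle over $X$ decomposes as a product of bundles over $S$-schemes,
\[
X\times \A^n \;=\; X\times_S \A_S^n \;=\; (X\times_S \A_S^{n-1})\times_S \A_S^1,
\]
where $X\times_S \A_S^{n-1}\to X$ is the trivial rank $n-1$ bundle, $\A_S^1\to S$ is the trivial line bundle, and $X\times_S S = X$. Since $X$ and $S$ both lie in $SmlSm/S$, Proposition \ref{A.3.42} supplies a canonical isomorphism
\[
MTh_X(X\times \A^n)\;\cong\; MTh_X(X\times \A^{n-1})\otimes MTh_S(\A_S^1).
\]
Applying the inductive hypothesis to the first factor and Proposition \ref{A.3.34} once more to $MTh_S(\A_S^1)=MTh_S(S\times \A^1)\cong M(S)(1)[2]$, and then invoking the definitions $K(m)=K\otimes M(S)(m)$ and $M(S)(m)=M(S)(1)^{\otimes m}$ of Definition \ref{A.5.60} together with the identity $A[a]\otimes B[b]\cong (A\otimes B)[a+b]$ in the symmetric monoidal triangulated category $\cT$, one obtains
\[
MTh_X(X\times \A^n)\;\cong\; M(X)(n-1)[2(n-1)]\otimes M(S)(1)[2]\;\cong\; M(X)(n)[2n].
\]

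The remaining point is to check that the resulting isomorphism is canonical, i.e.\ independent of the order in which the copies of $\A_S^1$ are split off. This is precisely the coherence furnished by Proposition \ref{A.3.46}: the two ways of associating a triple product of Thom motives agree, so any two bracketings of the $(n-1)$-fold iterated application of Proposition \ref{A.3.42} give the same map, and the composite isomorphism $MTh_X(X\times \A^n)\cong M(X)(n)[2n]$ is well defined. I do not anticipate a genuine obstacle, since the substantial work has already been carried out in Propositions \ref{A.3.42} and \ref{A.3.46}; the only care needed is the bookkeeping of Tate twists and suspensions and checking that the product decomposition above is arranged so that Proposition \ref{A.3.42} applies verbatim with $X_1=X$, $X_2=S$, $\cE_1=X\times_S\A_S^{n-1}$, and $\cE_2=\A_S^1$.
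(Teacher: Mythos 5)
Your proof is correct and is essentially the paper's own argument: the base case and the computation of $MTh_S(S\times\A^1)$ come from Proposition \ref{A.3.34}, the inductive step is Proposition \ref{A.3.42} applied to the trivial bundles $X\times_S\A_S^{n-1}\to X$ and $\A_S^1\to S$, and induction on $n$ finishes. Your canonicity check via Proposition \ref{A.3.46} is exactly the point the paper records in Remark \ref{rmk:why-3-bundles-necessary} immediately after the proposition.
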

\begin{proof}
Owing to Proposition \ref{A.3.34} there are isomorphisms
\[
MTh_X(X\times \A^1)\cong M(X)(1)[2],
\;
MTh_S(S\times \A^1)\cong M(S)(1)[2].
\]
Then apply Proposition \ref{A.3.42} to obtain an isomorphism
\[
MTh_X(X\times \A^r)\otimes MTh_S(S\times \A^1)\cong MTh_X(X\times \A^{r+1})
\]
for every integer $r\geq 1$.
Induction on $n$ concludes the proof.
\end{proof}

\begin{rmk}\label{rmk:why-3-bundles-necessary}
We can also produce an isomorphism 
$$
MTh_X(X\times \A^n)\xrightarrow{\cong} M(X)(n)[2n]
$$ 
by applying Proposition \ref{A.3.42} in a different order, e.g., as in 
\begin{align*}
& MTh(X\times \A^1)\otimes( MTh_S(S\times \A^1)\otimes MTh_S(S\times \A^1) )
\\
\rightarrow &
MTh(X\times \A^1)\otimes MTh_S(S\times \A^2)
\rightarrow
MTh_X(X\times \A^3).
\end{align*}
Proposition \ref{A.3.46} ensures that the isomorphism $MTh_X(X\times \A^n)\cong M(X)(n)[2n]$ is indeed independent of choosing such an order.
\end{rmk}

\begin{cor}
For every $X\in SmlSm/S$ and integer $n\geq 1$, there is a canonical isomorphism
\[
M(\P_X^{n-1}\rightarrow \P_X^{n})\cong M(X)(n)[2n].
\]
\end{cor}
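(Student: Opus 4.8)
The plan is to deduce the statement directly from two results established earlier in this section: Proposition \ref{A.3.34}, which identifies the Thom motive of a vector bundle with the relative motive of a pair of projective bundles, and Proposition \ref{A.3.43}, which computes the Thom motive of a trivial rank $n$ bundle as a Tate twist. No new geometric input is needed; the argument is a matter of assembling these two propositions.

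First I would apply Proposition \ref{A.3.34} to the trivial rank $n$ vector bundle $\cE := X\times_S(\A^n\times S)$ over $X\in SmlSm/S$. This gives a canonical isomorphism $MTh_X(\cE)\xrightarrow{\cong} M(\P(\cE)\rightarrow \P(\cE\oplus\cO))$. The only bookkeeping is to identify the two projective bundles with $\P_X^{n-1}$ and $\P_X^n$ in the coordinates fixed at the beginning of the section: writing $[x_0:\cdots:x_{n-1}]$ for the fiber coordinates of $\P(\cE)$ and taking $x_n$ for the coordinate on the summand $\cO$, the bundle $\P(\cE\oplus\cO)$ is identified with $\P_X^n$ in such a way that $\P(\cE)$ becomes exactly the hyperplane bundle defined by $x_n=0$, i.e.\ $\P_X^{n-1}$. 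One should also note that the log structures agree on both sides: $\P(\cE\oplus\cO)$ and $\P_X^n$ both carry the log structure pulled back from $X$, since these projective bundles are Zariski-locally products of an open of $X$ with $\P^n$ equipped with the trivial log structure; this is immediate from Definition \ref{df::projectivebundle} and the construction of $\P(-)$.

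Next I would invoke Proposition \ref{A.3.43}, which provides a canonical isomorphism $MTh_X(X\times\A^n)\cong M(X)(n)[2n]$. Composing this with the inverse of the isomorphism from the previous step yields the desired canonical isomorphism $M(\P_X^{n-1}\rightarrow \P_X^n)\cong M(X)(n)[2n]$. Canonicity is inherited from the canonicity of both cited isomorphisms, and the compatibility established in Proposition \ref{A.3.46} (together with Remark \ref{rmk:why-3-bundles-necessary}) guarantees that the identification is independent of any auxiliary choices made in Proposition \ref{A.3.43}.

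I do not anticipate any serious obstacle. The only point requiring a small amount of care is the explicit identification of $\P(\cE)$ as a hyperplane in $\P(\cE\oplus\cO)$ so that the conclusion is phrased precisely in terms of the hyperplane $\P^{n-1}=H_n$ used throughout; since the automorphism group of $\P^n$ acts transitively on hyperplanes, the particular choice is irrelevant, but matching it with the convention $x_n=0$ makes the statement literally an instance of Proposition \ref{A.3.34} applied to the trivial bundle.
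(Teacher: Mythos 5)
Your proposal is correct and follows essentially the same route as the paper: both arguments combine Proposition \ref{A.3.34} (identifying $MTh(\cE)$ with $M(\P(\cE)\to\P(\cE\oplus\cO))$ for the trivial bundle) with Proposition \ref{A.3.43} ($MTh_X(X\times\A^n)\cong M(X)(n)[2n]$). The only cosmetic difference is that the paper first reduces to $X=S$ via the monoidal condition (MSq) before citing those propositions, whereas you apply them directly over $X$; since both propositions are stated for arbitrary $X\in SmlSm/S$, this changes nothing of substance.
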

\begin{proof}
By (MSq) there is a canonical isomorphism
\[
M(X)\otimes M(\P_S^{n-1}\rightarrow \P_S^n)\xrightarrow{\cong} M(\P_X^{n-1}\rightarrow \P_X^n).
\]
Thus we may reduce to the case when $X=S$, and conclude by applying Propositions \ref{A.3.34} and \ref{A.3.43}.
\end{proof}

\begin{rmk}
This result is analogous to the isomorphism 
\[
\P^n/\P^{n-1}\simeq T^n
\]
in $\A^1$-homotopy theory of schemes \cite[Corollary 2.18, p.\ 112]{MV}.
\end{rmk}

\subsection{Gysin triangles}\label{sec:Gysin}
Throughout this section we shall assume $\cT$ satisfies the properties {\rm ($Zar$-sep)}, {\rm ($\boxx$-inv)}, {\rm ($sNis$-des)}, and {\rm ($div$-des)}.
In $\dmeff$, 
recall from \cite[Theorem 15.15]{MVW} that for any smooth scheme $X$ with a smooth closed subscheme $Z$ with codimension $c$, 
there is the Gysin isomorphism 
\[
M((X-Z)\rightarrow X)\cong M(Z)(c)[2c].
\]
This does not hold in $\ldmeff$.
For example, if $X=\P^1$ and $Z=\{\infty\}$,
then the said Gysin isomorphism would imply 
$$
M(\A^1\rightarrow \P^1)\cong \Lambda(1)[2].
$$
It follows that $M(\A^1)\cong \Lambda$, so that $\A^1$ is invertible in $\ldmeff$.
This is evidently a contradiction since the object $\mathbb{G}_a\otimes \Lambda$ is in $\ldmeff$, 
but
\[
\hom_{\ldmeff}(M(\A^1),\mathbb{G}_a\otimes \Lambda)\not\cong \hom_{\ldmeff}(M(k),\mathbb{G}_a\otimes \Lambda),
\]
see Theorem \ref{thmHodge}.
\vspace{0.1in}

Our goal in this section is to formulate a Gysin isomorphism in $\ldmeff$. 
As for the construction of Thom motives performed in the previous section,  
we need to ``compactify'' $X-Z$ without changing the ``homotopy type'' of $X-Z$.
This is done by taking the blow-up $B_Z X$ of $X$ along $Z$ and adding the log structure given by the exceptional divisor $E$ on $B_Z X$.
In this case, our formulation of the Gysin isomorphism should take the form
\[
\mathfrak{g}_{X,Z}:M((B_Z X,E)\rightarrow X)\stackrel{\cong}\rightarrow MTh(N_ZY).
\]
Here $MTh(N_Z Y)$ is the Thom motive associated with the normal bundle $N_ZY$.
We shall formulate the Gysin isomorphism more generally in Construction \ref{A.3.37} allowing nontrivial log structures on $X$.
\vspace{0.1in}

Recall that the technique of deformation to the normal cone is used in the proof of purity for $\A^1$-homotopy theory of schemes \cite[Theorem 2.23, p.\ 115]{MV}. 
We will develop a log version of the said technique based on our unit interval $\boxx$.

\begin{df}
Let $X\in Sm/S$, and let $Z$ be a smooth closed subscheme of $X$. 
We have the commutative diagram of deformation to the normal cone
\begin{equation}
\label{A.3.15.1}
\begin{tikzcd}
Z\arrow[d]\arrow[r]&Z\times \boxx\arrow[d]\arrow[r,leftarrow]&Z\arrow[d]\\
X\arrow[d]\arrow[r]&D_Z X\arrow[d]\arrow[r,leftarrow]&N_ZX\arrow[d]\\
\{1\}\arrow[r,"i_1"]&\boxx\arrow[r,"i_0",leftarrow]&\{0\}
\end{tikzcd}
\end{equation}
where
\begin{enumerate}
\item[(i)] each square is cartesian,
\item[(ii)] $i_0$ is the $0$-section, and $i_1$ is the $1$-section,
\item[(iii)] $Z\rightarrow N_ZX$ is the $0$-section,
\item[(iv)] $D_ZX\rightarrow \boxx$ is the composition $D_ZX\rightarrow B_Z(X\times \boxx)\rightarrow X\times \boxx\rightarrow \boxx$ where the third arrow is the projection.
\end{enumerate}
Suppose $Z_1,\ldots,Z_r$ form a strict normal crossing divisor on $X$, and that $Z$ has strict normal crossing with $Z_1+\cdots+Z_r$ over $S$.
For $Y:=(X,Z_1+\cdots+Z_r)$ we obtain the commutative diagram
\begin{equation}
\label{A.3.15.2}
\begin{tikzcd}
B_Z Y\arrow[d]\arrow[r]&B_{Z\times \boxx}(D_Z Y)\arrow[d]\arrow[r,leftarrow]&B_Z(N_ZY)\arrow[d]\\
Y\arrow[d]\arrow[r]&D_Z Y\arrow[d]\arrow[r,leftarrow]&N_ZY\arrow[d]\\
\{1\}\arrow[r,"i_1"]&\boxx\arrow[r,"i_0",leftarrow]&\{0\}.
\end{tikzcd}
\end{equation}
\end{df}

\begin{rmk}
One difference between the technique of deformation to the normal cone used in the proof of \cite[Theorem 2.23, p.\ 115]{MV} and ours is that we use $X\times \boxx$ instead of $X\times \A^1$.
\end{rmk}

\begin{const}\label{A.3.37}
We are ready to formulate and prove our purity theorem for log motives.
With the above definitions, let $E$ (resp.\ $E^D$, $E^N$) be the exceptional divisor on $B_ZY$ (resp.\ $B_{Z\times \boxx}(D_Z Y)$, resp.\ $B_Z(N_ZY)$).
In the case $Z$ has codimension $1$ in $Y$, we have $E=Z$ (resp.\ $E^D=Z\times \boxx$, resp.\ $E^N=Z$) by convention.
From the diagram \eqref{A.3.15.2}, we have the naturally induced morphisms
\[
M((B_ZY,E)\rightarrow Y)\rightarrow M((B_{Z\times \boxx}(D_Z Y),E^D)\rightarrow D_Z Y),
\]
\[
M((B_Z(N_Z Y),E^N)\rightarrow N_Z Y)\rightarrow M((B_{Z\times \boxx}(D_Z Y),E^D)\rightarrow D_Z Y).
\]
If these morphisms are isomorphisms, then via a zig-zag, we obtain the isomorphism
\[
\mathfrak{g}_{Y,Z}:M((B_Z Y,E)\rightarrow Y)\rightarrow MTh(N_Z Y)
\]
that we refer to as the Gysin \index{Gysin isomorphism} isomorphism for the closed pair $(X,Z)$.  
This will be the log analogue to \cite[Theorem 2.23]{MV}. 
\end{const}

\begin{thm}
\label{A.3.36}
Suppose that $Z_1,Z_2,\ldots,Z_r$ are smooth divisors forming a strict normal crossing divisor on $X\in Sm/S$ 
and that $Z$ is a smooth closed subscheme of $X$ having strict normal crossing with $Z_1+ \cdots + Z_r$ over $S$ such that $Z$ is not contained in any component of $Z_1\cup \cdots\cup Z_r$.
Let $E$ (resp.\ $E^D$, resp.\ $E^N$) be the exceptional divisor on $B_Z X$ (resp.\ $B_{Z\times \boxx}(D_Z X)$, resp.\ $B_Z(N_Z X)$).
For $Y:=(X,Z_1+\cdots+Z_r)$ the morphisms
\[
M((B_Z Y,E)\rightarrow Y)\rightarrow M((B_{Z\times \boxx}(D_ZY),E^D)\rightarrow D_ZY),
\]
\[
M((B_Z(N_ZY),E^N)\rightarrow N_ZY)\rightarrow M((B_{Z\times \boxx}(D_ZY),E^D)\rightarrow D_ZY).
\]
induced by {\rm \eqref{A.3.15.2}} are isomorphisms and functorial in the pair $(Y,Z)$.
\end{thm}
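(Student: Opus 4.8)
The plan is to adapt the deformation-to-the-normal-cone argument of Morel--Voevodsky \cite{MV} to the logarithmic setting, systematically replacing open complements of smooth centers by blow-ups equipped with the compactifying log structure of the exceptional divisor, and replacing $\A^1$-invariance by the combination of {\rm ($Zar$-sep)}, {\rm ($\boxx$-inv)}, {\rm ($sNis$-des)}, {\rm ($div$-des)} in force throughout this section. The two morphisms in the statement are those induced by the left-hand and right-hand vertical slices of the tower \eqref{A.3.15.2}, i.e.\ by the fibre inclusions $\{1\}\hookrightarrow\boxx$ and $\{0\}\hookrightarrow\boxx$, and we must show each becomes invertible on relative motives. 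Because the whole construction \eqref{A.3.15.1}--\eqref{A.3.15.2} is compatible with flat base change and with \'etale localisation, functoriality in $(Y,Z)$ is a formal consequence once the isomorphism assertion is proved; the genuine content is the two invertibility statements, and by {\rm (Sq)} together with Proposition \ref{A.3.44} each of them is equivalent to the vanishing of the relevant $2$-square, so it suffices to treat everything via the calculus of $n$-squares already set up.

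The first step is localisation. By {\rm ($Zar$-sep)} the question is Zariski-local on $X$, so, using that $Z$ has strict normal crossing with $Z_1+\cdots+Z_r$ and is contained in none of the $Z_i$, I may assume there is an \'etale morphism $u\colon X\to\A^n_S$ pulling the coordinate hyperplanes back to $Z_1,\dots,Z_r$ and pulling a coordinate linear subspace back to $Z$. Applying the parametrisation technique of Construction \ref{A.3.16} to the smooth closed pair $(X,Z)$ produces the cartesian excisive squares \eqref{eq:excisivesquares}; passing to the relevant blow-ups and complements these become strict Nisnevich distinguished squares, so by {\rm ($sNis$-des)} and Proposition \ref{A.3.44} I may replace $X$ successively by $X_2$ and then by $X_1=\A^{r+q}_T$, i.e.\ reduce to $X=\A^{r+q}\times T$ with $Z$ and all $Z_i$ coordinate-linear. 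It is essential here that the deformation tower \eqref{A.3.15.2} commutes with these cartesian excisive squares, which it does because the normal-cone deformation commutes with the \'etale maps $u,v$; hence the comparison morphisms are preserved under each reduction. Finally the monoidal structure of $\cT$ together with {\rm (MSq)} lets me strip off the factor $T$, reducing to $Y=\A_\N^{p}\times\A^{q}$, i.e.\ $X=\A^n_S$ with $Z=\{0\}\times\A^{q}$ and the $Z_i$ the first $p$ coordinate hyperplanes --- a purely toric situation.

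In this toric case every object appearing in \eqref{A.3.15.2}, namely $Y$, $D_ZY$, $N_ZY$ and their blow-ups $(B_ZY,E)$, $(B_{Z\times\boxx}(D_ZY),E^D)$, $(B_Z(N_ZY),E^N)$, is of the form $\A_{(\Sigma,\Sigma')}$ for an explicit fan obtained from $\N^{n+1}$ by star subdivisions as in Examples \ref{A.3.41} and \ref{A.9.85}, the extra coordinate being the $\boxx$-direction. One then proves the two invertibility statements separately. For the $\{1\}$-fibre one uses that over $\boxx\setminus\{0\}$ the deformation is trivial, so that the restriction of $D_ZY$ to $\boxx\setminus\{0\}$ is $Y\times(\boxx\setminus\{0\})$; together with an \'etale neighbourhood of the $0$-fibre this gives a strict Nisnevich distinguished square, and combining {\rm ($sNis$-des)}, Theorem \ref{A.3.7} (invariance under admissible blow-ups along smooth centers) and the blow-up triangle Theorem \ref{A.3.13} forces $M((B_ZY,E)\to Y)\to M((B_{Z\times\boxx}(D_ZY),E^D)\to D_ZY)$ to be an isomorphism. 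For the $\{0\}$-fibre one argues symmetrically: in the toric model the projection $D_ZY\to\boxx$ exhibits both $D_ZY$ and $B_{Z\times\boxx}(D_ZY)$ as iterated $\boxx$-bundles over the base of $N_ZY$, so Propositions \ref{A.3.40} and \ref{A.6.3} make the $0$-section inclusion an isomorphism on relative motives. Wherever an intermediate transition morphism in the resulting zig-zag is neither a log modification nor a $\boxx$-bundle projection it is an admissible blow-up along a smooth center, dealt with by Theorem \ref{A.3.7}, exactly as in the proofs of Propositions \ref{A.3.39} and \ref{A.6.1}.

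The main obstacle will be the explicit toric bookkeeping in the last step: one must write down the correct fans for the $\boxx$-deformation space $D_ZY$ and for $B_{Z\times\boxx}(D_ZY)$, keeping careful track of which coordinate hyperplanes of $N_ZY$ and of $Y$ contribute to the compactifying log structure, and then verify that each transition morphism in the comparison zig-zag is a log modification, a $\boxx$-bundle projection, or an admissible blow-up along a smooth center. This is combinatorially delicate, in the spirit of Construction \ref{A.3.47}--Lemma \ref{A.3.51}, even though it only invokes invariance results already established. A secondary point demanding care is propagating the compactifying log structures cleanly through the excisive squares of Construction \ref{A.3.16}, so that the tower \eqref{A.3.15.2} is genuinely natural in $(Y,Z)$; this naturality is precisely what yields the asserted functoriality.
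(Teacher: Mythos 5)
Your reduction to the toric model (Zariski localisation via {\rm ($Zar$-sep)}, the excisive squares of Construction \ref{A.3.16} combined with Propositions \ref{A.3.3} and \ref{A.3.44}, and stripping off the extra factor by monoidality) is essentially the paper's route; note only that since $Z$ is transverse to $\partial Y$ and contained in no $Z_i$, one can split off the \emph{entire} boundary and land in $Y=\A^p_S$ with trivial log structure and $Z$ the origin, rather than in $\A_\N^p\times\A^q$. The genuine gap is the toric endgame, where both of your arguments fail. For the $\{0\}$-fibre you assert that $D_ZY$ and $B_{Z\times\boxx}(D_ZY)$ are iterated $\boxx$-bundles over the base of $N_ZY$: this is false --- already for $p=2$ the underlying scheme of $D_ZY=B_{Z\times\{0\}}(Y\times\boxx)-B_ZY$ is non-proper, so it cannot be an iterated $\boxx$-bundle over $Z=S$ --- and even a bundle statement for the absolute motives would not show that the specific $0$-section inclusion induces an isomorphism of the \emph{relative} motives in the statement. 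For the $\{1\}$-fibre, your Nisnevich square over $\boxx$ reduces the claim to comparing $M((B_ZY,E)\to Y)$ with $M((B_ZY,E)\times(\boxx\setminus\{0\})\to Y\times(\boxx\setminus\{0\}))$; but $\boxx\setminus\{0\}\cong\A_\N=(\A^1,\{0\})$, and $\A_\N$ is \emph{not} contractible in $\ldmeff$ (its Hodge realisation $\Gamma(\A^1,\Omega^1(\log 0))$ is nonzero, cf.\ Theorem \ref{thmHodge}), so by (MSq) this comparison map is not an isomorphism and the excision does not close. No combination of {\rm ($\boxx$-inv)}, Theorem \ref{A.3.7} and Theorem \ref{A.3.13} supplies $\A_\N$-invariance.

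The missing idea is the zig-zag of the paper's Step 5: one does not map the fibres into $D_ZY$ and argue there. Instead one introduces an intermediate fs log scheme (a further subdivision of the toric model of $D_ZY$, resp.\ of its blow-up) which on one side maps to $D_ZY$ so that the two relative motives agree by a \emph{Zariski} excision in the blow-up direction (a cover by $D_ZY$ and one extra toric chart, via {\rm ($sNis$-des)} and Proposition \ref{A.3.44}), and on the other side is an admissible blow-up along a smooth centre of $Y\times\boxx$ (resp.\ of $(B_ZY,E)\times\boxx$), handled by Theorem \ref{A.3.7}. After this identification the deformation tower is literally the trivial $\boxx$-bundle $Y\times\boxx\to Y$, whose fibre over $1$ is $Y$ and whose fibre over $0$ is identified with $N_ZY$ by the linearity of the reduced situation; both fibre inclusions are then sections of a projection that is invertible by {\rm ($\boxx$-inv)} applied to all of $\boxx$, which is what closes the argument. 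Without this detour through $Y\times\boxx$, neither fibre inclusion is accessible from the stated invariance properties.
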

\begin{proof}
{\bf Step 1} We proceed as in the proof of Theorem \ref{A.3.7}, using a local parametrization of $(X,Z)$. 
For $i=1, \ldots, r$, consider the divisor 
$
W_i:= V(t_i)
$
on $\A^{r+s}=\Spec{\Z[t_1,\ldots,t_{r+s}]}$,
and let $W$ denote the subscheme $V(t_{r+1},\ldots,t_{r+p})$.
The question is Zariski local on $X$ by ($Zar$-sep),
and $Z$ is not contained in any component of  $Z_1\cup \cdots\cup Z_r$,
so we may assume there exists an \'etale morphism $u:X\rightarrow \A_S^{r+s}$ of schemes over $S$ for which $u^{-1}(S\times W_i)=Z_i$ for $1\leq i\leq r$ and $u^{-1}(S\times W)=Z$. 
\vspace{0.1in}

{\bf Step 2}
Using $u$ and $Z\rightarrow X$, 
Construction \ref{A.3.16} produces schemes $X_j$ over $S$ for $j=1,2$.
Form $Y_j$ from $X_j$ in the same way as we obtained $Y$ from $X$, and
let $E_j$ (resp.\ $E$) be the exceptional divisor on $B_Z Y_j$ (resp.\ $B_ZY$).
\vspace{0.1in}

{\bf Step 3}
Since blow-ups commute with flat base change, we have the cartesian square
\[
\begin{tikzcd}
D_Z X_2\arrow[d]\arrow[r]&X_2\times\boxx\arrow[d]\\
D_Z X\arrow[r]&X\times \boxx
\end{tikzcd}\]
of fs log schemes over $S$. 
It follows that $D_ZX_2\rightarrow D_ZX$ is \'etale.
Let $E_2^D$ (resp.\ $E_2^N$) be the exceptional divisor on $B_{Z\times \boxx}(D_Z Y_2)$ (resp.\ $B_{Z}(N_Z Y_2)$).
Since $X_2\rightarrow X$ and $N_ZX_2\rightarrow N_ZX$ are also \'etale, 
Proposition \ref{A.3.3} implies there are isomorphisms
\[
M((B_{Z}Y_2,E_2)\rightarrow Y_2)\xrightarrow{\cong} M((B_ZY,E)\rightarrow Y),
\]
\[
M((B_{Z\times \boxx}(D_{Z}Y_2),E_2^D)\rightarrow D_ZY_2)\xrightarrow{\cong} M((B_{Z\times \boxx}(D_ZY),E^D)\rightarrow D_ZY),
\]
and 
\[
M((B_Z(N_{Z}Y_2),E_2^N)\rightarrow N_ZY_2)\xrightarrow{\cong} M((B_Z(N_{Z}Y),E^N)\rightarrow N_ZY). 
\] 
Hence we can replace $X$ with $X_2$. 
Similarly, we can replace $X_2$ with $X_1$.
This reduces to the situation when $X=T\times \A^{r+s},$ $Z_i=T\times W_i$ for $1\leq i\leq r$, and $Z=T\times W$ for some $T\in Sm/S$.
\vspace{0.1in}

{\bf Step 4}
Let $F$ (resp.\ $F^D$, resp.\ $F^N$) be the exceptional divisor on $B_{\{0\}}(\A^p)$ (resp.\ $B_{\{0\}\times \boxx}(D_{\{0\}}(\A_S^p))$, resp.\ $B_{\{0\}\times \boxx}(N_{\{0\}}(\A_S^p))$).
Setting $V:=(Z,Z\cap Z_1+\cdots+Z\cap Z_r)$ we observe that 
\[
V=(T\times W,T\times (W_1\cap W)+\cdots+T\times (W_r\cap W))\cong T\times \A_{\N}^r\times (\Spec {\Z})^{p}\times \A^{s-p},
\]
\[
(B_ZY,E)=(T\times B_{W}(\A^{r+s}),T\times W_1+\cdots +T\times W_{r})\cong V\times (B_{\{0\}}(\A^p),F).
\]
and 
\[
Y=(T \times \A^{r+s},T\times W_1+\cdots+T\times W_r)\cong V\times \A^p.
\]
By (MSq), 
it remains to show that \eqref{A.3.15.1} induces isomorphisms
\[
\begin{split}
&M(V)\otimes M((B_{\{0\}}(\A_S^p),F)\rightarrow \A_S^p)\\
\xrightarrow{\cong} &M(V)\otimes M((B_{\{0\}\times \boxx}(D_{\{0\}}(\A_S^p),F^D)\rightarrow D_{\{0\}}(\A_S^p)),
\end{split}
\]
\[
\begin{split}
& M(V)\otimes M((B_{\{0\}}(N_{\{0\}}(\A_S^p)),F^N)\rightarrow N_{\{0\}}(\A_S^p))\\
\xrightarrow{\cong} & M(V)\otimes M((B_{\{0\}}(D_{\{0\}}(\A_S^p)),F^D)\rightarrow D_{\{0\}}(\A_S^p)).
\end{split}
\]
\vspace{0.1in}

{\bf Step 5}
Due to Step 4 we may assume that $X=\A_S^p$, $r=0$, and $Z=\{0\}\times S$.
We set $S=\Spec{\Z}$ for simplicity.
Let $e_1,\ldots,e_p$ denote the standard coordinates in $\Z^p$.
There are corresponding cones
\begin{gather*}
\tau:={\rm Cone}(e_1,\ldots,e_{p-1}),
\;
\sigma_1:={\rm Cone}(e_1,\ldots,e_{p-1},e_1+\cdots+e_p),
\\
\sigma_2:={\rm Cone}(e_1,\ldots,e_{p-1},-e_p),
\;
\sigma_3:={\rm Cone}(e_1,\ldots,e_{p-1},-e_1-\cdots-e_p),
\end{gather*}
and their fans
\[
\Sigma_1:=\{\sigma_1,\sigma_2\},
\;
\Sigma_2:=\{\sigma_1,\sigma_3\},
\;
\Sigma_1':=\{{\rm Cone}(-e_p)\},
\;
\Sigma_2':=\{{\rm Cone}(-e_1-\cdots-e_p)\}.
\]
Moreover, we shall consider the star subdivisions
\[
\Sigma_3:=\Sigma_2^*(\sigma_3),
\;
\Sigma_4:=\Sigma_1^*(\tau),
\;
\Sigma_5:=\Sigma_2^*(\tau),
\;
\Sigma_6:=\Sigma_3^*(\tau),
\]
and the subfans
\begin{gather*}
\Sigma_3':=\{{\rm Cone}(-e_p,-e_1-\cdots-e_p)\},
\;
\Sigma_4':=\{{\rm Cone}(e_1+\cdots+e_{p-1},-e_p)\},
\\
\Sigma_5':=\{{\rm Cone}(e_1+\cdots+e_{p-1},-e_1-\cdots-e_p)\},
\\
\Sigma_6':=\{{\rm Cone}(e_1+\cdots+e_{p-1},-e_p),{\rm Cone}(-e_p,-e_1-\cdots-e_p)\}.
\end{gather*}

For $1\leq i\leq p-1$, we set
\[
\eta_i:={\rm Cone}(e_1,\ldots,e_{i-1},e_{i+1},\ldots,e_{p-1},-e_p,-e_1-\cdots-e_p),
\]
and form the fans
\[
\Sigma_7:=\{\eta_1,\ldots,\eta_{p-1}\},
\;
\Sigma_7':=\Sigma_3'.
\]
\vspace{0.1in}

When $p=3$, the above fans take the following forms.
\[
\begin{tikzpicture}[yscale=0.7, xscale=0.82]
\fill[black!20] (0,0)--(1,1)--(2,0)--(0,0);
\fill[black!20] (0,0)--(2,0)--(1,-1)--(0,0);
\draw (0,0)--(1,1)--(2,0)--(0,0);
\draw (0,0)--(1,-1)--(2,0);
\node [left] at (0,0) {$e_1$};
\node [right] at (2,0) {$e_2$};
\node [above] at (1,1) {$e_1+e_2+e_3$};
\node [below] at (1,-1) {$-e_3$};
\node [below] at (1,-2.1) {$(\Sigma_1,\Sigma_1')$};
\filldraw (1,-1) circle (3pt);
\begin{scope}[shift={(5,0)}]
\fill[black!20] (0,0)--(1,1)--(2,0)--(0,0);
\fill[black!20] (0,0)--(2,0)--(1,-2)--(0,0);
\draw (0,0)--(1,1)--(2,0)--(0,0);
\draw (0,0)--(1,-2)--(2,0);
\filldraw (1,-2) circle (3pt);
\node [right] at (1,-2) {$-e_1-e_2-e_3$};
\node [below] at (1,-2.1) {$(\Sigma_2,\Sigma_2')$};
\end{scope}
\begin{scope}[shift={(10,0)}]
\fill[black!20] (0,0)--(1,1)--(2,0)--(0,0);
\fill[black!20] (0,0)--(2,0)--(1,-2)--(0,0);
\draw (0,0)--(1,1)--(2,0)--(0,0);
\draw (0,0)--(1,-1)--(2,0);
\draw[ultra thick] (1,-1)--(1,-2);
\draw (0,0)--(1,-2)--(2,0);
\filldraw (1,-2) circle (3pt);
\filldraw (1,-1) circle (3pt);
\node [below] at (1,-2.1) {$(\Sigma_3,\Sigma_3')$};
\end{scope}
\end{tikzpicture}
\]
\[
\begin{tikzpicture}[yscale=0.7, xscale=0.82]
\fill[black!20] (0,0)--(1,1)--(2,0)--(0,0);
\fill[black!20] (0,0)--(2,0)--(1,-1)--(0,0);
\draw (0,0)--(1,1)--(2,0)--(0,0);
\draw (0,0)--(1,-1)--(2,0);
\draw[ultra thick] (1,0)--(1,-1);
\filldraw (1,0) circle (3pt);
\filldraw (1,-1) circle (3pt);
\node [below] at (1,-2.1) {$(\Sigma_4,\Sigma_4')$};
\begin{scope}[shift={(4,0)}]
\fill[black!20] (0,0)--(1,1)--(2,0)--(0,0);
\fill[black!20] (0,0)--(2,0)--(1,-2)--(0,0);
\draw (0,0)--(1,1)--(2,0)--(0,0);
\draw (0,0)--(1,-2)--(2,0);
\draw[ultra thick] (1,0)--(1,-2);
\filldraw (1,0) circle (3pt);
\filldraw (1,-2) circle (3pt);
\node [below] at (1,-2.1) {$(\Sigma_5,\Sigma_5')$};
\end{scope}
\begin{scope}[shift={(8,0)}]
\fill[black!20] (0,0)--(1,1)--(2,0)--(0,0);
\fill[black!20] (0,0)--(2,0)--(1,-2)--(0,0);
\draw (0,0)--(1,1)--(2,0)--(0,0);
\draw (0,0)--(1,-1)--(2,0);
\draw[ultra thick] (1,0)--(1,-2);
\draw (0,0)--(1,-2)--(2,0);
\filldraw (1,-2) circle (3pt);
\filldraw (1,-1) circle (3pt);
\filldraw (1,0) circle (3pt);
\node [below] at (1,-2.1) {$(\Sigma_6,\Sigma_6')$};
\end{scope}
\begin{scope}[shift={(12,0)}]
\fill[black!20] (0,0)--(1,-1)--(2,0)--(1,-2);
\draw (0,0)--(1,-1)--(2,0);
\draw[ultra thick] (1,-1)--(1,-2);
\draw (0,0)--(1,-2)--(2,0);
\filldraw (1,-2) circle (3pt);
\filldraw (1,-1) circle (3pt);
\node [below] at (1,-2.1) {$(\Sigma_7,\Sigma_7')$};
\end{scope}
\end{tikzpicture}
\]
\vspace{0.1in}

Observe that
\[
D_ZY=\A_{(\Sigma_1,\Sigma_1')},
\;
(B_{Z\times \boxx}(D_Z Y),E^D)=\A_{(\Sigma_4,\Sigma_4')}.
\]
The fan $\{\sigma_1\}$ corresponds to the trivial line bundle $Y\times \A^1$, and the $0$-section (resp.\ $1$-section) of the bundle is $Y$ (resp.\ $N_ZY$).
Hence there are naturally induced commutative diagrams
\[
\begin{tikzcd}
&
\A_{(\Sigma_1,\Sigma_1')}\arrow[d]&
\\
Y\arrow[r]\arrow[ru]\arrow[rd]&
\A_{(\Sigma_3,\Sigma_3')}\arrow[d]&
N_ZY\arrow[l]\arrow[lu]\arrow[ld]
\\
&
\A_{(\Sigma_2,\Sigma_2')},
\end{tikzcd}
\;\;
\begin{tikzcd}
&
\A_{(\Sigma_4,\Sigma_4')}\arrow[d]&
\\
(B_Z Y,E)\arrow[r]\arrow[ru]\arrow[rd]&
\A_{(\Sigma_6,\Sigma_6')}\arrow[d]&
(B_Z(N_Z Y),E^N)\arrow[l]\arrow[lu]\arrow[ld]
\\
&
\A_{(\Sigma_5,\Sigma_5')}.
\end{tikzcd}
\]

Due to these diagrams, we are reduced to showing the naturally induced morphisms
\begin{gather}
\label{A.3.36.1}
M(\A_{(\Sigma_4,\Sigma_4')}\rightarrow \A_{(\Sigma_1,\Sigma_1')})\rightarrow M(\A_{(\Sigma_6,\Sigma_6')}\rightarrow \A_{(\Sigma_3,\Sigma_3')}),
\\
\label{A.3.36.2}
M(\A_{(\Sigma_6,\Sigma_6')}\rightarrow \A_{(\Sigma_3,\Sigma_3')})\rightarrow M(\A_{(\Sigma_5,\Sigma_5')}\rightarrow \A_{(\Sigma_2,\Sigma_2')}),
\\
\label{A.3.36.3}
M((B_Y Z,E)\rightarrow Y)\rightarrow M(\A_{(\Sigma_5,\Sigma_5')}\rightarrow \A_{(\Sigma_2,\Sigma_2')}),
\\
\label{A.3.36.4}
M((B_Z(N_Z Y),E^N)\rightarrow N_ZY)\rightarrow M(\A_{(\Sigma_5,\Sigma_5')}\rightarrow \A_{(\Sigma_2,\Sigma_2')})
\end{gather}
are isomorphisms.
\vspace{0.1in}

The naturally induced morphisms
\[
\A_{(\Sigma_6,\Sigma_6')}\rightarrow \A_{(\Sigma_5,\Sigma_5')},
\;
\A_{(\Sigma_3,\Sigma_3')}\rightarrow \A_{(\Sigma_2,\Sigma_2')}
\]
are admissible blow-ups along
\[
V({\rm Cone}(e_1+\cdots+e_{p-1},-e_1-\cdots-e_p)),
\;
V(\sigma_3).
\]
Theorem \ref{A.3.7} shows that
\[
M(\A_{(\Sigma_3,\Sigma_3')}\rightarrow \A_{(\Sigma_2,\Sigma_2')})=0,
\;
M(\A_{(\Sigma_6,\Sigma_6')}\rightarrow \A_{(\Sigma_5,\Sigma_5')})=0.
\]
Proposition \ref{A.3.44} allows us to deduce that \eqref{A.3.36.2} is an isomorphism.
\vspace{0.1in}

We can identify $\A_{(\Sigma_2,\Sigma_2')}$ and $\A_{(\Sigma_5,\Sigma_5')}$ with
\[
Y\times \boxx,
\;
(B_Z Y,E)\times \boxx.
\]
The fibers at $0\in \boxx$ (resp.\ $1\in \boxx$) correspond to $N_ZY$ and $(B_Z(N_ZY),E^N)$ (resp.\ $Y$ and $(B_ZY,E)$).
Thus ($\boxx$-inv) shows that \eqref{A.3.36.3} and \eqref{A.3.36.4} are isomorphisms.
\vspace{0.1in}

There are strict Zariski distinguished squares
\[
\begin{tikzcd}
\A_{(\Sigma_1,\Sigma_1')}\cap \A_{(\Sigma_7,\Sigma_7')}\arrow[r]\arrow[d]& \A_{(\Sigma_7,\Sigma_7')}\arrow[d]
\\
\A_{(\Sigma_1,\Sigma_1')}\arrow[r]& \A_{(\Sigma_3,\Sigma_3')},
\end{tikzcd}
\quad
\begin{tikzcd}
\A_{(\Sigma_4,\Sigma_4')}\cap \A_{(\Sigma_7,\Sigma_7')}\arrow[r]\arrow[d]& \A_{(\Sigma_7,\Sigma_7')}\arrow[d]
\\
\A_{(\Sigma_4,\Sigma_4')}\arrow[r]& \A_{(\Sigma_6,\Sigma_6')}.
\end{tikzcd}
\]
Since 
$$
\A_{(\Sigma_1,\Sigma_1')}\cap \A_{(\Sigma_7,\Sigma_7')}=\A_{(\Sigma_4,\Sigma_4')}\cap \A_{(\Sigma_7,\Sigma_7')},
$$ 
we can use ($sNis$-des) to obtain the isomorphism
\[
M(\A_{(\Sigma_4,\Sigma_4')}\rightarrow \A_{(\Sigma_6,\Sigma_6')})\xrightarrow{\cong} M(\A_{(\Sigma_1,\Sigma_1')}\rightarrow \A_{(\Sigma_3,\Sigma_3')}).
\]
Combined with Proposition \ref{A.3.44} we deduce that \eqref{A.3.36.1} is an isomorphism, which finishes the proof.
Moreover, 
functoriality follows from the construction.
\end{proof}

\begin{df}\label{A.3.38}
With reference to Theorem \ref{A.3.36}, the {\it Gysin isomorphism} \index{Gysin isomorphism} is the isomorphism 
\[
\mathfrak{g}_{Y,Z}:M((B_ZY,E)\rightarrow Y)\rightarrow MTh(N_ZY),
\]
\end{df}

\subsection{Invariance under admissible blow-ups}\label{ssec:invadmblowrefined}
In this section, 
we assume that $\cT$ satisfies the properties {\rm ($Zar$-sep)}, {\rm ($\boxx$-inv)}, {\rm ($sNis$-des)}, and {\rm ($div$-des)}, $S=\Spec k$, and $k$ admits resolution of singularities. 
Under these hypotheses, we prove that motives are invariant under admissible blow-ups. 
This invariance will be used for comparing $\ldmeff$ with $\dmeff$. We start by introducing the following (weaker) version of the notion of admissible blow-up in Definition \ref{A.3.17}.

\begin{df}
\label{A.3.1}
An {\it admissible blow-up} \index{blow-up!admissible} $Y\rightarrow X$ in $\mathscr{S}/S$ is a proper birational morphism of fs log schemes such that the induced morphism
\[
Y-\partial Y\rightarrow X-\partial X
\]
of schemes is an isomorphism. 
We denote by $\cA\cB l/S$ the class of admissible blow-ups.

Note that $\cA\cB l_{Sm}/S$ and $\cA\cB l_{div}/S$ are subclasses of $\cA\cB l/S$.
\end{df}
\begin{rmk}
The class $\cA\cB l/S$ is equivalent to the class $\cS_b$ in \cite{AdditiveHomotopy} if $S=\Spec k$ and $\mathscr{S}/S=SmlSm/k$.
\end{rmk}

\begin{df}
\label{A.3.8}
The field $k$ admits {\it resolution of singularities}\index{resolution of singularities} if the following conditions hold.
\begin{enumerate}
\item[(1)] For any integral scheme $X$ of finite type over $k$, there is a proper birational morphism $Y\rightarrow X$ of schemes over $k$ such that $Y$ is smooth over $k$.
\item[(2)] Let $f:Y\rightarrow X$ be a proper birational morphism of integral schemes over $k$ such that $X$ is smooth over $k$ and let $Z_1,\ldots, Z_r$ be smooth divisors forming a strict normal crossing divisor on $X$. 
Assume that 
\[
f^{-1}(X-Z_1\cup \cdots\cup Z_r)\rightarrow X-Z_1\cup \cdots\cup Z_r
\]
is an isomorphism. 
Then there is a sequence of blow-ups
\[
X_n\stackrel{f_{n-1}}\rightarrow X_{n-1}\stackrel{f_{n-2}}\rightarrow \cdots\stackrel{f_0}\rightarrow X_0=X
\]
along smooth centers $W_i\subset X_i$ such that
\begin{enumerate}
\item[(i)]  the composition $X_n\rightarrow X$ factors through $f$,
\item[(ii)] $W_i$ is contained in the preimage of $Z_1\cup\cdots\cup Z_r$ in $X_i$,
\item[(iii)] $W_i$ has strict normal crossing with the sum of the reduced strict transforms of 
\[
Z_1,\ldots,Z_r,f_0^{-1}(W_0),\ldots,f_{i-1}^{-1}(W_{i-1})
\]
in $X_i$.
\end{enumerate}
\end{enumerate}
Note that $k$ must be perfect in order to satisfy (1). If $k$ has characteristic $0$, then (1) (resp.\ (2)) is satisfied by \cite[Main theorem I]{Hir} (resp.\ \cite[Main theorem II]{Hir}).
\end{df}

\begin{prop}
\label{A.3.2}
Assume that $k$ admits resolution of singularities. 
Then the class $\cA\cB l_{div}/k$ admits a calculus of right fractions.
\end{prop}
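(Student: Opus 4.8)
The plan is to verify, for $\mathcal A:=\cA\cB l_{div}/k$, the three axioms of a calculus of right fractions recalled just before the statement (the dual of \cite[I.2.2]{GZFractions}): closure under composition together with containment of isomorphisms, the right Ore condition, and right cancellability. Two general facts will be used repeatedly. First, every morphism in $\cA\cB l_{div}/k$ is a composite of dividing covers and admissible blow-ups along smooth centers, and each of these restricts to an isomorphism over the complement of the boundary (Remark \ref{A.9.68}(1) and Definition \ref{A.3.17}); hence any $f\in\mathcal A$ restricts to an isomorphism $Y-\partial Y\xrightarrow{\sim}X-\partial X$, in particular a monomorphism there, and $\underline f$ is proper birational. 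Second, since the log structures of objects of $lSm/k$ are compactifying, any morphism $h\colon Z\to Y$ in $lSm/k$ sends $Z-\partial Z$ into $Y-\partial Y$: at a point $z\notin\partial Z$ the pullback $h^{*}\cM_Y$ maps into $\cO_{Z}^{*}$ and is therefore trivial, so $\overline{\cM}_{Y,h(z)}=0$.

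Axiom (i) is immediate. Closure under composition is part of the definition of $\cA\cB l_{div}/k$. Every isomorphism of fs log schemes is a surjective proper log \'etale monomorphism, hence a dividing cover (a log modification by Proposition \ref{A.9.75}), so $\mathcal A$ contains all isomorphisms. For axiom (iii) I will in fact show that a parallel pair $\alpha,\beta\colon Z\rightrightarrows Y$ in $lSm/k$ with $u\alpha=u\beta$ for some $u\in\mathcal A$ already satisfies $\alpha=\beta$, so one may take $v=\mathrm{id}_Z$. By the two facts above, $u|_{Y-\partial Y}$ is a monomorphism and both $\alpha,\beta$ carry $Z-\partial Z$ into $Y-\partial Y$, so $\alpha$ and $\beta$ agree on $Z-\partial Z$, which is dense in $\underline Z$ ($\partial Z$ being an effective Cartier divisor on the normal, hence reduced, scheme $\underline Z$, by Lemma \ref{A.9.18}). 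The equalizer of $\alpha,\beta$ is a closed subscheme of $\underline Z$ ($\underline Y$ being separated) containing this dense open, so by reducedness it is all of $Z$, i.e.\ $\alpha=\beta$.

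The substantial point is the right Ore condition. Given $f\colon Y\to X$ in $\cA\cB l_{div}/k$ and an arbitrary $g\colon X'\to X$ in $lSm/k$, I must produce $Y'$, a morphism $f'\colon Y'\to X'$ in $\cA\cB l_{div}/k$, and $h\colon Y'\to Y$ with $f\circ h=g\circ f'$. Working over one connected component of $X'$ at a time and applying Proposition \ref{A.3.19}, I may replace $X'$ by a log modification lying in $SmlSm/k$ --- composing the eventual solution with that log modification, which lies in $\cA\cB l_{div}/k$, recovers a solution for the original $X'$ --- so I assume $\underline{X'}$ smooth and $\partial X'=Z_1'+\cdots+Z_r'$ a strict normal crossing divisor, and $\underline{X'}$ integral (normal and connected). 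By the two facts, $\underline f$ is an isomorphism over $X-\partial X$ and $g$ maps $X'-\partial X'$ into $X-\partial X$; hence the strict transform $\underline W\subseteq\underline Y\times_{\underline X}\underline{X'}$ of $\underline{X'}$ under $\underline f$ (the reduced closure of the image of the section over $X'-\partial X'$) is an integral proper birational $\underline{X'}$-scheme, an isomorphism over $X'-\partial X'$. Equip $\underline W$ with the compactifying log structure of $X'-\partial X'\hookrightarrow\underline W$; since $X'-\partial X'$ lies in the preimage of $X'-\partial X'$ under $\underline W\to\underline{X'}$ and in the preimage of $Y-\partial Y$ under $\underline W\to\underline Y$, functoriality of the compactifying log structure \cite[\S III.1.6]{Ogu} yields morphisms of fs log schemes $W\to X'$ and $W\to Y$, compatible over $X$ by Lemmas \ref{A.9.6} and \ref{A.9.4}. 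Now apply condition (2) of resolution of singularities (Definition \ref{A.3.8}) to $\underline W\to\underline{X'}$ and the divisor $\partial X'$: it gives a tower of blow-ups $\underline{X_n'}\to\cdots\to\underline{X_0'}=\underline{X'}$ along smooth centers $W_i$, each contained in and having strict normal crossing with the relevant boundary, with $\underline{X_n'}\to\underline{X'}$ factoring through $\underline W$. Endowing each $X_i'$ with its compactifying log structure, the hypotheses on the $W_i$ make each stage an admissible blow-up along a smooth center (Definition \ref{A.3.17}, as in Example \ref{A.3.41}); thus $f'\colon Y':=X_n'\to X'$ lies in $\cA\cB l_{Sm}/k\subseteq\cA\cB l_{div}/k$, and the factorization $\underline{X_n'}\to\underline W\to\underline Y$ (again underlying a morphism of fs log schemes) provides $h\colon Y'\to Y$ with $f\circ h=g\circ f'$. (Alternatively, after the reduction to $SmlSm/k$ one may appeal to the analogue of this statement for the class $\cS_b$ in \cite{AdditiveHomotopy} together with the structural fact that morphisms of $\cA\cB l_{div}/k$ over $SmlSm/k$ are, up to log modification, in $\cA\cB l_{Sm}/k$.)

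The hardest part is this last step: one must check that the tower furnished by resolution of singularities, equipped with compactifying log structures, really consists of \emph{admissible} blow-ups along smooth centers --- i.e.\ that at each stage the boundary equals the new exceptional divisor together with the strict transform of the previous boundary (this forces the compactifying log structure to agree with the one built from the preimage of $X_i'-\partial X_i'$, using that $W_i$ lies in the boundary) and that the center stays in strict normal crossing position (Definition \ref{A.3.8}(2)(ii),(iii)) --- and one must apply the functoriality of the compactifying log-structure construction carefully to see that all the morphisms in play ($W\to X'$, $W\to Y$, $Y'\to W$) refine to morphisms of fs log schemes over $X$. Everything else in the argument is formal bookkeeping with strict transforms and base change.
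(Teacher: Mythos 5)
Your proof is correct and follows essentially the same route as the paper's: closure under composition and containment of isomorphisms hold by definition, right cancellability follows from agreement on the dense interior $Z-\partial Z$, and the right Ore condition is verified by first passing to a log modification with strict normal crossing boundary (Proposition \ref{A.3.19}), taking the closure of the interior inside the fiber product, and dominating it by a composite of admissible blow-ups along smooth centers furnished by Definition \ref{A.3.8}(2). The only point to tighten is the cancellation step: your equalizer argument yields $\underline{\alpha}=\underline{\beta}$ on underlying schemes, and to upgrade this to $\alpha=\beta$ as morphisms of fs log schemes you should invoke solidity of objects of $lSm/k$ (Lemmas \ref{A.9.3}, \ref{A.9.4}, \ref{A.9.6}), which is exactly what the paper does by citing Lemma \ref{A.5.9}.
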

\begin{proof}
By definition, 
the class in question contains isomorphisms and is closed under composition. 
If $f,g:X\rightarrow Y$ and $s:Y\rightarrow Z$ are morphisms of fs log schemes log smooth over $k$ such that $s\circ f=s\circ g$ and $s\in \cA\cB l_{div}/k$, 
then $f$ and $g$ agree on the dense open subscheme $X-\partial X$ of $X$ since the morphism $Y-\partial Y\rightarrow Z-\partial Z$ induced by $s$ is an isomorphism.
Thus $f=g$ by Lemma \ref{A.5.9} and the right cancellabilty condition holds. 
\vspace{0.1in}

It remains to check the right Ore condition. 
Suppose $f:Y\rightarrow X$ and $g:X'\rightarrow X$ are morphisms where $f\in \cA\cB l_{div}/k$. 
By Proposition \ref{A.3.19}, we can choose a dividing cover $X_1'\rightarrow X'$ such that $\partial X_1'$ is a strict normal crossing divisor.
Since $f$ is an admissible blow-up, $Y\times_X X_1'$ contains $X_1'-\partial X_1'$ as an open subscheme.
Let $Y_1'$ be the closure of $X_1'-\partial X_1'$, which is proper and birational over $X_1'$.
By resolution of singularities, there exists a morphism $Y'\to X_1'\in \cA\cB l_{Sm}/k$ that factors through $Y_1'$.
Thus we have the commutative diagram
\[
\begin{tikzcd}
Y'\arrow[d]\arrow[r]&Y\arrow[d,"f"]\\
X'\arrow[r,"g"]&X.
\end{tikzcd}\]
Here the left vertical arrow is in $\cA\cB l_{div}/k$, so the right Ore condition is satisfied.
\end{proof}

\begin{prop}
\label{A.3.9}
Assume that $k$ admits resolution of singularities. 
For any morphism $f:Y\rightarrow X$ in $\cA\cB l/k$, there is a morphism $Y'\rightarrow X$ in $\cA\cB l_{div}/k$ that factors through $f$.
\end{prop}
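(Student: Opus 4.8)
Assume that $k$ admits resolution of singularities. For any morphism $f\colon Y\rightarrow X$ in $\cA\cB l/k$, there is a morphism $Y'\rightarrow X$ in $\cA\cB l_{div}/k$ that factors through $f$.

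\medskip

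The plan is to reduce first to the case where $X$ lies in $SmlSm/k$, then produce $Y'$ as a sequence of blow-ups along smooth centers contained in the boundary. First I would use Proposition \ref{A.3.19} to pick a dividing cover $X_1\rightarrow X$ with $X_1\in SmlSm/k$, i.e.\ with $\underline{X_1}$ smooth over $k$ and $\partial X_1$ a strict normal crossing divisor, say $\partial X_1 = Z_1+\cdots+Z_r$. Since $X_1\rightarrow X$ is in $\cA\cB l_{div}/k$ and that class is closed under composition, it suffices to construct a morphism $Y'\rightarrow X_1$ in $\cA\cB l_{div}/k$ factoring through the base change $Y\times_X X_1\rightarrow X_1$; because $f$ is an admissible blow-up, this base change is again proper birational and an isomorphism over $X_1-\partial X_1 = (X-\partial X)\times_X X_1$, so it lies in $\cA\cB l/k$ as well. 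Hence we may assume from the start that $X\in SmlSm/k$, with $\underline{X}$ smooth, $\partial X = Z_1+\cdots+Z_r$ strict normal crossing, and $f\colon Y\rightarrow X$ proper birational restricting to an isomorphism over $X-\partial X$.

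\medskip

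Next I would take the closure: since $f$ is an isomorphism over the dense open $\underline{X}-Z_1\cup\cdots\cup Z_r$, the scheme $\underline{Y}$ contains that open as a dense subscheme, and the underlying map $\underline{Y}\rightarrow\underline{X}$ is proper birational. Now invoke condition (2) of Definition \ref{A.3.8} (resolution of singularities): applied to $\underline{Y}\rightarrow\underline{X}$ with the strict normal crossing divisor $Z_1+\cdots+Z_r$, it yields a sequence of blow-ups
\[
\underline{X_n}\xrightarrow{f_{n-1}}\cdots\xrightarrow{f_0}\underline{X_0}=\underline{X}
\]
along smooth centers $W_i\subset \underline{X_i}$ such that $\underline{X_n}\rightarrow\underline{X}$ factors through $\underline{Y}$, each $W_i$ is contained in the preimage of $Z_1\cup\cdots\cup Z_r$, and each $W_i$ has strict normal crossing with the sum of the reduced strict transforms of $Z_1,\ldots,Z_r,f_0^{-1}(W_0),\ldots,f_{i-1}^{-1}(W_{i-1})$. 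Equipping each $\underline{X_i}$ with the compactifying log structure along the inductively-defined boundary (the strict transforms of the $Z_j$ together with the exceptional divisors $f_{i-1}^{-1}(W_{i-1})$), each step $X_{i+1}\rightarrow X_i$ becomes an admissible blow-up along a smooth center in the sense of the definition preceding Example \ref{A.3.41}: indeed $\partial X_{i+1}$ is exactly the union of $f_i^{-1}(W_i)$ with the strict transform of $\partial X_i$, and the normal-crossing condition (iii) is precisely what guarantees $W_i$ has strict normal crossing with $\partial X_i$ over $k$. So the composite $Y':=X_n\rightarrow X$ belongs to $\cA\cB l_{Sm}/k\subset\cA\cB l_{div}/k$; and since its underlying scheme map factors through $\underline{Y}$ and both $Y'$ and $Y$ carry the compactifying log structure associated to the open immersion of $X-\partial X$ (using Lemma \ref{A.9.6} on uniqueness of log morphisms over a fixed scheme morphism, as in the proof of Lemma \ref{A.5.9}), the factorization lifts to $lSm/k$, giving $Y'\rightarrow Y\rightarrow X$ as required.

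\medskip

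The main obstacle is the bookkeeping needed to verify that the log structures on the intermediate $X_i$ are compatible, so that the scheme-theoretic sequence of blow-ups promoted by resolution of singularities genuinely lands in $\cA\cB l_{Sm}/k$ rather than merely dominating $\underline{Y}$ at the level of schemes — in particular one must check that each $X_i\in SmlSm/k$, that the boundary at stage $i+1$ is the union of the new exceptional divisor and the strict transform of the old boundary, and that condition (iii) of Definition \ref{A.3.8}(2) translates exactly into the strict-normal-crossing requirement in the definition of admissible blow-up along a smooth center. Once this dictionary between the scheme-theoretic statement of resolution of singularities and the log-geometric notion of admissible blow-up is set up carefully, the argument closes.
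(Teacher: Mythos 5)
Your proposal is correct and follows essentially the same route as the paper: pass to a dividing cover $X_1\rightarrow X$ with $X_1\in SmlSm/k$ via Proposition \ref{A.3.19}, apply part (2) of Definition \ref{A.3.8} to the projection $X_1\times_X Y\rightarrow X_1$ to obtain a morphism in $\cA\cB l_{Sm}/k$ factoring through it, and compose with $X_1\rightarrow X$. The only difference is that you spell out the bookkeeping (log structures on the intermediate blow-ups, the factorization at the level of log schemes via Lemma \ref{A.9.6}) that the paper's one-paragraph proof leaves implicit; note only that your passing remark that the base change itself ``lies in $\cA\cB l/k$'' is not needed and not obvious, since only its underlying scheme morphism enters the resolution-of-singularities step.
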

\begin{proof}
There exists a dividing cover $g:X_1\rightarrow X$ such that $\underline{X_1}$ is smooth over $k$ and $\partial X_1$ is a strict normal crossing divisor, 
see Proposition \ref{A.3.19}.
Applying part (2) of Definition \ref{A.3.8} to the projection $p:X_1\times_X Y\rightarrow X_1$,
we obtain a morphism $h:Y'\rightarrow X_1$ in $\cA \cB l_{Sm}/k$ that factors through $p$.
Then the composition $gh$ is in $\cA \cB l_{div}/k$ and factors through $f$.
\end{proof}

\begin{prop}
\label{A.3.10}
Assume that $k$ admits resolution of singularities. 
Then $\cA\cB l/k$ admits a calculus of right fractions.
\end{prop}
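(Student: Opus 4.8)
The plan is to verify the three axioms of a calculus of right fractions for $\cA\cB l/k$ directly inside the category $lSm/k$, reducing the only nontrivial point to Propositions \ref{A.3.2} and \ref{A.3.9}.

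Axiom (i), closure under isomorphisms and compositions, I would dispatch immediately. The class $\cA\cB l/k$ visibly contains all isomorphisms. If $f\colon Y\to X$ and $g\colon Z\to Y$ lie in $\cA\cB l/k$, then $f\circ g$ is again a proper birational morphism of fs log schemes log smooth over $k$, and the morphism $Z-\partial Z\to X-\partial X$ induced by $f\circ g$ is the composite of the two isomorphisms $Z-\partial Z\to Y-\partial Y$ and $Y-\partial Y\to X-\partial X$, hence an isomorphism. Therefore $f\circ g\in\cA\cB l/k$.

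For the right cancellability condition (iii) I would argue exactly as in the proof of Proposition \ref{A.3.2}. Suppose $f,g\colon Z\to Y$ and $u\colon Y\to X$ are morphisms in $lSm/k$ with $u\in\cA\cB l/k$ and $u\circ f=u\circ g$. Since $u$ is an admissible blow-up, the induced morphism $Y-\partial Y\to X-\partial X$ is an isomorphism, so $f$ and $g$ agree on the dense open subscheme $Z-\partial Z$ of $Z$. Because $Z$ is normal by Lemma \ref{A.9.18} and the log structure on $Y$ is the compactifying one associated with $Y-\partial Y\hookrightarrow Y$, a morphism $Z\to Y$ is determined by its restriction to $Z-\partial Z$ (Lemmas \ref{A.9.6} and \ref{A.5.9}). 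Hence $f=g$, and one may take $v=\mathrm{id}_Z\in\cA\cB l/k$.

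The only step carrying real content is the right Ore condition (ii). Given morphisms $Y\xrightarrow{f}X\xleftarrow{g}X'$ in $lSm/k$ with $f\in\cA\cB l/k$, I would first invoke Proposition \ref{A.3.9} to produce a morphism $h\colon Y''\to X$ in $\cA\cB l_{div}/k$ together with a factorization $Y''\xrightarrow{p}Y\xrightarrow{f}X$ of $h$. By Proposition \ref{A.3.2} the class $\cA\cB l_{div}/k$ admits a calculus of right fractions, so the pair $Y''\xrightarrow{h}X\xleftarrow{g}X'$ completes to a commutative square whose left edge $f'\colon W\to X'$ and top edge $q\colon W\to Y''$ satisfy $f'\in\cA\cB l_{div}/k\subseteq\cA\cB l/k$. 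Post-composing the top edge with $p$ then yields a commutative square filling in $Y\xrightarrow{f}X\xleftarrow{g}X'$, with left edge $f'\in\cA\cB l/k$; commutativity of this last square follows from $h=f\circ p$. I expect the main obstacle here to be nothing more than this bookkeeping check; in essence the proposition is a formal consequence of the two cited results, and the resolution-of-singularities hypothesis enters only through them.
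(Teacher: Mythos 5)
Your proof is correct and follows essentially the same route as the paper: the right Ore condition is deduced by combining Proposition \ref{A.3.9} (dominating an admissible blow-up by one in $\cA\cB l_{div}/k$) with the Ore condition for $\cA\cB l_{div}/k$ from Proposition \ref{A.3.2}, and right cancellability is proved by the same density/normality argument as in Proposition \ref{A.3.2}. The paper merely states this more tersely; your write-up fills in the same bookkeeping.
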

\begin{proof}
The right Ore condition follows from Propositions \ref{A.3.2} and \ref{A.3.9}.  
The proof of the right cancellability condition is similar to that in Proposition \ref{A.3.2}.
\end{proof}


\begin{thm}
\label{A.3.12}
Assume that $k$ admits resolution of singularities. 
Then any morphism $f:Y\rightarrow X$ in $\cA\cB l/k$ induces an isomorphism
\[
M(Y)\xrightarrow{\cong} M(X).
\]
\end{thm}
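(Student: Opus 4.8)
The plan is to reduce the statement to the already-proven invariance under the smaller class $\cA\cB l_{div}/k$ by a purely formal argument in $\cT$. Recall that the standing hypotheses of this subsection ensure $\cT$ satisfies ($Zar$-sep), ($\boxx$-inv), ($sNis$-des) and ($div$-des), so Proposition \ref{A.3.11} applies: every morphism in $\cA\cB l_{div}/k$ is sent by $M$ to an isomorphism. The two further inputs are Proposition \ref{A.3.9} (every $f\in \cA\cB l/k$ is dominated by some $Y'\to X$ in $\cA\cB l_{div}/k$, i.e.\ $\cA\cB l_{div}/k$ is right cofinal in $\cA\cB l/k$) and the observation that $\cA\cB l/k$ is closed under the passage from $g=f\circ h$ and $f$ to the intermediate morphism $h$.

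First I would fix $f\colon Y\to X$ in $\cA\cB l/k$ and apply Proposition \ref{A.3.9} to obtain a factorization $Y'\xrightarrow{\,h\,}Y\xrightarrow{\,f\,}X$ with $g:=f\circ h\in \cA\cB l_{div}/k$. Next I would check that $h\in \cA\cB l/k$: it is proper since $g$ is proper and $f$ is separated, it is birational since $g$ and $f$ are (passing to connected, hence integral, components by Lemma \ref{A.9.18} and comparing function fields through $h^{*}\colon k(Y)\xrightarrow{\cong}k(Y')$), and it restricts to an isomorphism $Y'-\partial Y'\xrightarrow{\cong}Y-\partial Y$. For the last point one uses that any morphism in $\cA\cB l/k$ is an isomorphism over the trivial-log locus of its target — equivalently $f^{-1}(X-\partial X)=Y-\partial Y$ — which follows because the given isomorphism $Y-\partial Y\xrightarrow{\cong}X-\partial X$ exhibits $Y-\partial Y$ as the image of a section of the separated morphism $\underline{f}$, hence as a closed subset of $\underline{Y}$ that is also open; with this, $h^{-1}(Y-\partial Y)=g^{-1}(X-\partial X)=Y'-\partial Y'$ and the restriction of $h$ there is forced to be the composite of the two isomorphisms. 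Applying Proposition \ref{A.3.9} once more, this time to $h$, yields a further factorization $Y''\xrightarrow{\,h'\,}Y'\xrightarrow{\,h\,}Y$ with $h\circ h'\in \cA\cB l_{div}/k$.

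The conclusion is then formal. By Proposition \ref{A.3.11} both $M(g)=M(f)\circ M(h)$ and $M(h\circ h')=M(h)\circ M(h')$ are isomorphisms; the first shows $M(h)$ admits a left inverse and the second that it admits a right inverse, so $M(h)$ is an isomorphism, and therefore $M(f)=M(g)\circ M(h)^{-1}$ is an isomorphism, as claimed.

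The genuine difficulty in this statement has already been discharged into Propositions \ref{A.3.9} and \ref{A.3.11} (and, beneath them, Propositions \ref{A.3.2} and \ref{A.3.10}), all of which rely on resolution of singularities. Within the proof itself, the only point that is not a one-line diagram chase is the verification that the intermediate morphism $h$ again lies in $\cA\cB l/k$, i.e.\ the control over its boundary locus; I expect that to be where a moment's care is needed, after which the split-mono/split-epi argument closes everything off.
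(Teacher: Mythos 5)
Your argument is correct, and it reaches the conclusion by a slightly different route than the paper. The paper's proof is a two-line invocation of the general calculus-of-fractions Lemma \ref{A.1.16}: it first establishes (Propositions \ref{A.3.2} and \ref{A.3.10}) that both $\cA\cB l_{div}/k$ and $\cA\cB l/k$ admit a calculus of right fractions, and then Lemma \ref{A.1.16} manufactures a left and a right inverse of $M(f)$ out of the right Ore and right cancellation conditions together with Propositions \ref{A.3.9} and \ref{A.3.11}. You unfold the same two-sided-inverse idea by hand: instead of the Ore and cancellation axioms you use the elementary closure of $\cA\cB l/k$ under left division (if $f\circ h$ and $f$ lie in $\cA\cB l/k$ then so does $h$), apply Proposition \ref{A.3.9} twice, and conclude by the split-mono/split-epi argument on $M(h)$. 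What your version buys is that Proposition \ref{A.3.10} (and the cancellation half of Proposition \ref{A.3.2}) becomes unnecessary for this particular theorem; what the paper's version buys is reusability, since Lemma \ref{A.1.16} is invoked again elsewhere, e.g., in the proof of Theorem \ref{A.3.27}. The substantive inputs --- resolution of singularities entering through Propositions \ref{A.3.9} and \ref{A.3.11} --- are identical in both treatments.

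One small imprecision in your verification that $h\in\cA\cB l/k$: the section $X-\partial X\to f^{-1}(X-\partial X)$ of the separated morphism $\underline{f}$ exhibits $Y-\partial Y$ as a closed subset of $f^{-1}(X-\partial X)$, not of $\underline{Y}$, and ``open and closed'' alone does not force equality without a connectedness input. The clean finish is that $Y-\partial Y$ is dense in $Y$ (it is the complement of an effective Cartier divisor), hence dense in the open subset $f^{-1}(X-\partial X)$, and a closed dense subset is everything. With that patch the identity $f^{-1}(X-\partial X)=Y-\partial Y$, and hence the membership $h\in\cA\cB l/k$, is secure, and the rest of your argument goes through.
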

\begin{proof}
Propositions \ref{A.3.2} and \ref{A.3.10} show that $\cA=\cA\cB l/k$ and $\cB=\cA\cB l_{div}/k$ satisfy a calculus of right fractions. 
All the conditions in Lemma \ref{A.1.16} are met by Propositions \ref{A.3.11} and \ref{A.3.9}, so we are done.
\end{proof}

\begin{rmk}
\label{A.3.20}
By Theorem \ref{A.3.12}, we can apply the results in \ref{section:calculus} to
\[  
\cA=\cA Bl/k;
\,
I=\boxx;
\,
T=lSm/k, Cor/k. 
\]  
  
For example, 
for any $(I,\cA)$-weak equivalence $f:\cF\rightarrow \cG$ of bounded above complexes of presheaves (resp.\ presheaves with transfers) on $lSm/k$, 
the image of $f$ in $\ldaeff$ (resp.\ $\ldmeff$) is an isomorphism.
\end{rmk}

\subsection{Invariance under log modifications}\label{ssec:invariance-asuming-PnPn-1}
In this section we assume that $\mathscr{S}=SmlSm/S$ and $\cT$ satisfies the properties
{\rm ($Zar$-sep)}, {\rm ($\boxx$-inv)}, {\rm ($sNis$-des)}, and ($(\P^\bullet,\P^{\bullet-1})$-inv). 
Under these hypothesis we show that the property ($div$-des) holds.

\begin{lem}
\label{A.3.26}
Let $H_1$ and $H_2$ be the closed subschemes $\{0\}\times \A^1$ and $\A^1\times \{0\}$ of $\A^2$, 
let $H_1'$ and $H_2'$ be the strict transforms of $H_1$ and $H_2$ in ${\rm Bl}_{\{(0,0)\}}(\A^2)$, 
and let $E$ be the exceptional divisor on ${\rm Bl}_{\{(0,0)\}}(\A^2)$.
Then for any $X\in \mathscr{S}/S$, there is a naturally induced isomorphism
\[
M(X\times ({\rm Bl}_{\{(0,0)\}}\A^2,H_1'+H_2'+E))\xrightarrow{\cong} M(X\times (\A^2,H_1+H_2)).
\]
\end{lem}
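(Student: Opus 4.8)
The plan is to reduce the statement to a purely toric computation in the style of the proof of Proposition \ref{A.3.39}. Since $M$ is monoidal (condition (MSq)), the asserted morphism is obtained by applying $M(X)\otimes(-)$ to the corresponding morphism over $S$, so it suffices to treat $X=S$. Writing $e_1,e_2$ for the standard basis of $\Z^2$ and $\Sigma:=\{{\rm Cone}(e_1,e_2)\}$ with all faces placed in the log structure, one has $(\A^2,H_1+H_2)=\A_{(\Sigma,\Sigma)}=\A_{\N^2}$, while $({\rm Bl}_{\{(0,0)\}}\A^2,H_1'+H_2'+E)=\A_{(M,M)}$ with $M=\Sigma^*({\rm Cone}(e_1,e_2))$ the star subdivision; in particular the morphism is a log modification. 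We cannot invoke ($div$-des) directly, since that property is exactly what the present section is bootstrapping from ($(\P^\bullet,\P^{\bullet-1})$-inv), and Lemma \ref{A.3.26} is the base case of that bootstrap.

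First I would, if convenient, apply Proposition \ref{A.3.39} — which needs only ($\boxx$-inv) and ($sNis$-des), both available — to replace the claim by its one-axis version, that $M(X\times({\rm Bl}_{\{(0,0)\}}\A^2,E+H_2'))\to M(X\times(\A^2,H_2))$ is an isomorphism; alternatively one may embed $\A_{(\Sigma,\Sigma)}$ and $\A_{(M,M)}$ as strict open subschemes of complete toric log schemes (the projective plane and its blow-up at a torus-fixed point, with all boundary components in the log structure) and use ($Zar$-sep) together with ($sNis$-des) to pass to a statement about those complete surfaces. In either formulation the heart of the proof is a toric bookkeeping parallel to the proof of Proposition \ref{A.3.39}: introduce finitely many two-dimensional fans together with chosen subfans, identify the associated fs log schemes $\A_{(\Sigma_i,\Sigma_i')}$ and the Zariski covers supplied by their affine charts, use ($sNis$-des) and Proposition \ref{A.3.44} to assemble a zig-zag of isomorphisms of relative motives $M(\A_{(\Sigma_j,\Sigma_j')}\to\A_{(\Sigma_i,\Sigma_i')})$, and thereby reduce the assertion to showing that certain of the $\A_{(\Sigma_i,\Sigma_i')}$ have the motive of $S$. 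The new feature relative to Proposition \ref{A.3.39}, and the reason the section needs the full strength of ($(\P^\bullet,\P^{\bullet-1})$-inv) and not merely ($\boxx$-inv), is that one of these log schemes is recognized as a $(\P^2,\P^1)$-bundle (or, after further strict Nisnevich squares and admissible blow-ups, is linked to $(\P^2,\P^1)$ and to $\boxx^2$ via the blow-up of $\P^1\times\P^1$ at a corner point, as in the proof of Proposition \ref{A.6.1}); here one uses that $(\P^n,\P^{n-1})$-bundles are $M$-equivalences — which follows from ($sNis$-des) and ($(\P^\bullet,\P^{\bullet-1})$-inv) as in Proposition \ref{A.6.3} — while the remaining ones are $\boxx$-bundles handled by Proposition \ref{A.3.40} and ($\boxx$-inv).

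The main obstacle will be the combinatorial part: choosing the fans, their subfans, and the covering squares so that every log scheme and every chart intersection occurring in the zig-zag is manifestly a $\boxx$-bundle or a $(\P^n,\P^{n-1})$-bundle over a lower-dimensional fs log scheme, and verifying that the resulting diagram of relative motives commutes. This is the same kind of explicit, somewhat lengthy toric verification carried out in the proofs of Propositions \ref{A.3.39} and \ref{A.6.1}, and once the bundle structures are correctly identified no further conceptual difficulty should arise.
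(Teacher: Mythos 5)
Your proposal is correct and follows essentially the same route as the paper: reduce to $X=S$, use Proposition \ref{A.3.39} to pass to the one-axis version, compactify via Zariski distinguished squares and Proposition \ref{A.3.44} to $({\rm Bl}_O\P^2,\overline{H_1'}+E)\to(\P^2,\overline{H_1})$, and conclude because the source is a $\boxx$-bundle over $\boxx$ (Lemma \ref{A.3.28}, Proposition \ref{A.3.40}) and the target is $(\P^2,\P^1)$, both contractible by ($(\P^\bullet,\P^{\bullet-1})$-inv). Two remarks: the combinatorial burden you anticipate does not materialize --- the paper needs only two Zariski squares and the already-established Lemma \ref{A.3.28}, not a new fan zig-zag --- and your parenthetical fallback through admissible blow-ups and the proof of Proposition \ref{A.6.1} would be circular here, since Theorem \ref{A.3.7} assumes ($div$-des), which is exactly what this section is establishing; fortunately your primary route avoids it.
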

\begin{proof}
Assume that $S=X=\Spec{\Z}$ for simplicity of notation as usual.
Due to Proposition \ref{A.3.39}, it suffices to show there is a naturally induced isomorphism
\[
M({\rm Bl}_{\{(0,0)\}}\A^2,H_1'+E)\xrightarrow{\cong} M(\A^2,H_1).
\]
Let $\overline{H_1}$ and $\overline{H_2}$ be the closure of $H_1$ and $H_2$ in $\P^2$, 
and let $\overline{H_1'}$ and $\overline{H_2'}$ be the closures of $H_1'$ and $H_2'$ in ${\rm Bl}_{\{O\}}(\P^2)$, 
where $O:=[0:0:1]$.
There are Zariski distinguished squares
\[
\begin{tikzcd}[column sep=small]
(\A^2,H_1)-\{O\}\arrow[d]\arrow[r]&
(\A^2,H_1)\arrow[d]
\\
(\P^2,\overline{H_1})-\{O\}\arrow[r]&
(\P^2,\overline{H_1})
\end{tikzcd}
\quad
\begin{tikzcd}[column sep=small]
(\A^2,H_1)-\{O\}\arrow[d]\arrow[r]&
({\rm Bl}_{\{(0,0)\}}\A^2,H_1'+E)\arrow[d]
\\
(\P^2,\overline{H_1})-\{O\}\arrow[r]&
({\rm Bl}_{\{O\}}\P^2,\overline{H_1'}+E).
\end{tikzcd}
\]
Applying ($sNis$-des) to these squares yields an isomorphism
\[
M(({\rm Bl}_{\{(0,0)\}}\A^2,H_1'+E)\rightarrow ({\rm Bl}_{\{O\}}\P^2,\overline{H_1'}+E))
\xrightarrow{\cong}
M((\A^2,H_1)\rightarrow (\P^2,\overline{H_1}))
\]
By Proposition \ref{A.3.44} we find the isomorphism
\[
M(({\rm Bl}_{\{(0,0)\}}\A^2,H_1'+E)\rightarrow (\A^2,H_1))
\xrightarrow{\cong}
M(({\rm Bl}_{\{O\}}\P^2,\overline{H_1'}+E)\rightarrow (\P^2,\overline{H_1})).
\]
Hence we only need to show that the morphism
\[
M({\rm Bl}_{\{O\}}\P^2,\overline{H_1'}+E)\rightarrow M(\P^2,\overline{H_1})
\]
is an isomorphism.
Since $({\rm Bl}_{\{O\}}\P^2,\overline{H_1'}+E))$ is a $\boxx$-bundle over $\boxx$ by Lemma \ref{A.3.28}, 
($(\P^\bullet,\P^{\bullet-1})$-inv) gives
\[
M({\rm Bl}_{\{O\}}\P^2,\overline{H_1'}+E)\cong M(\Z)\cong M(\P^2,\overline{H_1}).
\]
\end{proof}

\begin{df}\label{A.3.32}
For a number $r\in \N^+$, let ${\rm Fan}_r$\index[notation]{Fanr @ ${\rm Fan}_r$} denote the category where objects are $r$-dimensional fans and morphisms are subdivisions.
For an $r$-dimensional smooth fan $\Sigma$, 
let ${\rm Fan}_r^{sm}/\Sigma$\index[notation]{Fanrsm @ ${\rm Fan}_r^{sm}/\Sigma$} denote the full category of ${\rm Fan}_r/\Sigma$ consisting of smooth $r$-dimensions fans that are subdivisions of $\Sigma$. \index{fan!star subdivision}
Let $\cS \cD/\Sigma$ be the class of morphisms in ${\rm Fan}_r/\Sigma$, 
and let $\cS \cD_2/\Sigma$ denote the class of morphisms in ${\rm Fan}_r^{sm}/\Sigma$ that are obtained by finite successions of star subdivisions relative to two-dimensional cones.
\end{df}

\begin{lem}\label{A.3.29}
Let $\Sigma$ be a fan.
The class $\cS \cD/\Sigma$ admits a calculus of right fractions.
If in addition $\Sigma$ is smooth, then
the class $\cS \cD_2/\Sigma$ also admits a calculus of right fractions.
\end{lem}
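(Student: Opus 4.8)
Both categories ${\rm Fan}_r/\Sigma$ and ${\rm Fan}_r^{sm}/\Sigma$ are thin: a morphism of fans in the present sense is a subdivision, and when two fans with a common support are given, a subdivision relating them is unique (each cone of the finer fan maps to the smallest cone of the coarser containing it). Hence parallel arrows coincide, and the right cancellability condition in the definition of a calculus of right fractions is vacuous for both $\cS\cD/\Sigma$ and $\cS\cD_2/\Sigma$; one may always take $v$ to be the identity. The closure axiom is equally immediate: the identity of a fan is the trivial subdivision (an empty succession of star subdivisions), a composite of subdivisions is a subdivision, and a concatenation of two finite successions of star subdivisions relative to two-dimensional cones is again such a succession. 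So in each case it remains only to verify the right Ore condition.

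For $\cS\cD/\Sigma$ this is the existence of common refinements. Given $Y\xrightarrow{f}X\xleftarrow{g}X'$ in ${\rm Fan}_r/\Sigma$ with $f$ a subdivision, set $Y':=Y\times_X X'$, the fan whose cones are the intersections $\sigma\cap\sigma'$ with $\sigma\in Y$, $\sigma'\in X'$ (cf.\ the construction in Lemma \ref{A.3.53}). Then $Y'$ is an $r$-dimensional fan subdividing $\Sigma$, the induced maps $Y'\to Y$ and $f'\colon Y'\to X'$ are subdivisions, and the square commutes because both composites $Y'\to X$ are the unique subdivision between those fans. Since $f'\in\cS\cD/\Sigma$, the Ore condition holds.

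For $\cS\cD_2/\Sigma$, assume $\Sigma$ is smooth, let $f\colon Y\to X$ lie in $\cS\cD_2/\Sigma$, and let $g\colon X'\to X$ be any morphism of ${\rm Fan}_r^{sm}/\Sigma$, so that $Y$ and $X'$ are smooth subdivisions of $X$ (both subdividing $\Sigma$). Form the common refinement $Z:=Y\times_X X'$, a subdivision of the smooth fan $X'$. The plan is to invoke toric resolution of singularities in the following sharpened form: \emph{every subdivision of a smooth fan is dominated by a smooth fan obtained from it by a finite succession of star subdivisions relative to two-dimensional cones.} This is precisely the fan-theoretic content behind the reduction to codimension-two centers in Theorem \ref{A.3.7}: starting from a KKMS-type smooth resolution obtained by star subdivisions at rays, one replaces each such star subdivision at a ray lying in the relative interior of a face of a smooth cone by two-dimensional star subdivisions, up to further two-dimensional star subdivisions — the case of a two-dimensional face being the classical continued-fraction (mediant) resolution, and the general case following by induction on the number of generators of the face. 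Applying this to the pair $(X',Z)$ yields a smooth fan $Y'$ refining $Z$ together with a morphism $f'\colon Y'\to X'$ in $\cS\cD_2/\Sigma$. As $Y'$ refines $Z$, hence $Y$, we obtain the arrow $Y'\to Y$; as $Y'$ refines $\Sigma$ and is smooth and $r$-dimensional, $Y'\in{\rm Fan}_r^{sm}/\Sigma$; and the square commutes by thinness. This verifies the Ore condition, and the lemma follows.

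The only real obstacle is the combinatorial statement highlighted in the last paragraph — that the star subdivisions in the resolution can be taken relative to \emph{two-dimensional} cones and not merely relative to arbitrary smooth cones. Once that input is available (either cited from the appendix's treatment of fans or proved by the inductive mediant argument indicated above), everything else is formal bookkeeping with thin categories and common refinements.
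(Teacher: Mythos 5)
Your proof is correct and follows essentially the same route as the paper: thinness of ${\rm Fan}_r/\Sigma$ gives right cancellability for free, the common refinement (intersection of cones) settles the Ore condition for $\cS\cD/\Sigma$, and the smooth case reduces to the combinatorial input you flag. That input is exactly what the paper supplies by combining toric resolution of singularities with Lemma \ref{A.3.31} (two smooth fans with common support are related by a finite succession of star subdivisions along two-dimensional cones), so no new argument is needed beyond citing it.
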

\begin{proof}
Let $r$ be the dimension of $\Sigma$.
By definition, $\cS \cD_2/\Sigma$ and $\cS \cD_2/\Sigma$ are closed under compositions.
For any objects $\Sigma_1$ and $\Sigma_2$ of ${\rm Fan}_r/\Sigma$ or ${\rm Fan}_r^{sm}/\Sigma$, there is at most one morphism from $\Sigma_2$ to $\Sigma_1$.
Thus the right cancellabilty condition holds for $\cS \cD/\Sigma$ and $\cS \cD_2/\Sigma$.
It remains to check the right Ore condition.
Suppose that $f:\Sigma_2\rightarrow \Sigma_1$ and $g:\Sigma_1'\rightarrow \Sigma_1$ are in $\cS \cD/\Sigma$.
Let $\Sigma_2'$ be the fan generated by $\Sigma_2$ and $\Sigma_1'$.
We have the naturally induced commutative diagram of fans
\[
\begin{tikzcd}
\Sigma_2'\arrow[d,"f'"]\arrow[r]&\Sigma_2\arrow[d,"f"]\\
\Sigma_1'\arrow[r,"g"]&\Sigma_1.
\end{tikzcd}
\]
This shows the Ore condition for $\cS \cD/\Sigma$.
\vspace{0.1in}

Suppose that $\Sigma$, $\Sigma_1$, $\Sigma_1'$, and $\Sigma_2$ are smooth.
The fan $\Sigma_2'$ is not necessarily smooth, but $\Sigma_2'$ has a subdivision $\Sigma_2''$ that is smooth by toric resolution of singularities, 
see \cite[Theorem 11.1.9]{CLStoric}.
By Lemma \ref{A.3.31}, there exists a morphism $f':\Sigma_2'''\rightarrow \Sigma_1$ in $\cS \cD_2/\Sigma$ that factors through 
$$
\Sigma_2''\rightarrow \Sigma_2'\stackrel{f'}\rightarrow \Sigma_1'.
$$
This shows the right Ore condition for $\cS \cD_2/\Sigma$.
\end{proof}

\begin{thm}\label{A.3.27}
Every log modification $f:Y\rightarrow X$ in $SmlSm/S$ induces an isomorphism 
\[
M(f):M(Y)\xrightarrow{\cong} M(X).
\]
\end{thm}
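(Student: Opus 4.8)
\emph{Overview of the plan.} The idea is to reduce, by the parametrization and descent techniques already used for Theorem~\ref{A.3.7}, to the single case treated in Lemma~\ref{A.3.26}, and to pass from that case to an arbitrary log modification by a purely formal argument with split monomorphisms and split epimorphisms. Call a log modification $g$ in $SmlSm/S$ \emph{good} if $M(g)$ is an isomorphism; since $M$ is functorial on morphisms, $M(gh)=M(g)\circ M(h)$, so a composition of good log modifications is good, and if $M(gh)$ is an isomorphism then $M(g)$ is a split epimorphism and $M(h)$ is a split monomorphism. Suppose we know (i) that every log modification along a codimension two smooth center is good, and (ii) that every log modification $g\colon Y\to X$ in $SmlSm/S$ is dominated by a composition of log modifications along codimension two smooth centers, i.e.\ there is $h\colon Z\to Y$ with $gh$ such a composition. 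Then every log modification $g$ is good: $M(gh)$ is an isomorphism by (i), so $M(g)$ is a split epimorphism and $M(h)$ a split monomorphism; but $h$ is again a log modification in $SmlSm/S$, so applying (ii) to $h$ shows $M(h)$ is also a split epimorphism, hence an isomorphism, whence $M(g)=M(gh)\,M(h)^{-1}$ is an isomorphism. Claim (ii) follows from Theorem~\ref{Fan.16} (every log modification in $SmlSm/S$ is dominated by a composition of log modifications along smooth centers) together with the toric statement, as in the factorization of Theorem~\ref{A.3.7}, Step~3 and Lemma~\ref{A.3.31}, that a log modification along a smooth center is dominated by a composition of log modifications along codimension two smooth centers; iterating these reductions yields (ii). It therefore remains to establish (i).

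\emph{The codimension two case.} Let $f\colon Y\to X$ be a log modification along a codimension two smooth center $W\subseteq\partial X$, so that Zariski-locally on $X$ the center $W$ is the intersection of two components of $\partial X$. By ($Zar$-sep), applied with $Y'=X'=\emptyset$ and using that $M(Y)\to M(X)$ is an isomorphism exactly when $M(Y\to X)\cong 0$, the statement is Zariski-local on $X$, because $Y\times_X X_I\to X_I$ is again a log modification along a codimension two smooth center for every member $X_I$ of a Zariski cover of $X$ and of its intersections. Shrinking $X$, choose an \'etale morphism $u\colon\underline{X}\to\A^n_S$ carrying the components $Z_i$ of $\partial X$ onto the coordinate hyperplanes and $W$ onto a codimension two coordinate subspace contained in two of them. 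Running the parametrization technique of Construction~\ref{A.3.16} together with ($sNis$-des) and Proposition~\ref{A.3.44} for the resulting excisive squares, exactly as in the proof of Theorem~\ref{A.3.7}, Step~1 (here with $p=2$, $q=0$), reduces the claim to the case $\underline{X}=\A^n_S$ with $\partial X$ the coordinate hyperplanes and $W$ a codimension two coordinate subspace; then (MSq) splits off all coordinate directions not meeting $W$. What remains is precisely the assertion that $M\bigl((\mathrm{Bl}_{(0,0)}\A^2,H_1'+H_2'+E)\times_S Z\bigr)\to M\bigl((\A^2,H_1+H_2)\times_S Z\bigr)$ is an isomorphism for $Z\in SmlSm/S$, which is exactly Lemma~\ref{A.3.26}. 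This proves (i), and hence the theorem.

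\emph{Main obstacle.} All of the genuine content is concentrated in Lemma~\ref{A.3.26}, and with it the hypothesis ($(\P^\bullet,\P^{\bullet-1})$-inv): the comparison of $\boxx\times\boxx$ with the blow-up of $(\P^2,\P^1)$ at a boundary point depends on Lemma~\ref{A.3.28} (identifying the latter blow-up as a $\boxx$-bundle over $\boxx$) and on the contractibility of $(\P^n,\P^{n-1})$-bundles; without ($(\P^\bullet,\P^{\bullet-1})$-inv) the invariance under log modifications is not available at this stage. The only delicate bookkeeping in the argument above is the choice of an \'etale parametrization of $X$ compatible with both $\partial X$ and $W$ and the tracking of which components of $\partial X$ contain $W$ versus which are transverse to it; this proceeds verbatim as in the proof of Theorem~\ref{A.3.7}.
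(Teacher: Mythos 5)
Your proof is correct and is essentially the paper's proof in different packaging. The split-epimorphism/split-monomorphism argument you run in the overview is precisely the content of the calculus-of-fractions device (Lemma \ref{A.1.16}) that the paper applies to the classes of all subdivisions versus compositions of star subdivisions relative to two-dimensional cones (Lemma \ref{A.3.29}), and your endgame --- parametrization via Construction \ref{A.3.16}, excision via ($sNis$-des) and Proposition \ref{A.3.44}, splitting off transverse directions by (MSq), and finally Lemma \ref{A.3.26} --- is the paper's endgame verbatim.

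The one point to repair is your claim (ii). You derive it from Theorem \ref{Fan.16}, but that theorem (and the frames/sharpened-fans machinery behind it) is only established over a field $k$, whereas Theorem \ref{A.3.27} is stated over an arbitrary base $S$. The paper avoids this by first invoking ($Zar$-sep) to localize on $X$, using Lemma \ref{lem::Smchart} and \cite[Lemma III.2.6.4]{Ogu} to put $f$ in the form $X\times_{\A_P}\A_M\rightarrow X$ for a chart $P\cong\N^r$ and a subdivision $M$ of $P^\vee$ (which is smooth because $Y\in SmlSm/S$), and only then dominating $M$ by a composition of two-dimensional star subdivisions via Lemma \ref{A.3.31}. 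With that reordering --- localize first, dominate second --- your formal argument goes through unchanged. Relatedly, the ``iterating these reductions'' step in (ii) is cleaner if you apply Lemma \ref{A.3.31} once to the composite subdivision corresponding to $gh$ rather than step by step through the tower furnished by Theorem \ref{Fan.16}; the stepwise version forces you to control pullbacks of codimension-two-center modifications along further modifications, which you have not justified and do not need.
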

\begin{proof}
By ($Zar$-sep), the question is Zariski local on $X$. 
Hence by definition, we may assume that there exists a morphism $g:Z\rightarrow Y$ such that both $fg$ and $g$ are log blow-ups.
It suffices to show that $M(fg):M(Z)\rightarrow M(X)$ and $M(g):M(Z)\rightarrow M(Y)$ are isomorphisms, so
we reduce to the case when $f$ is a log blow-up.
\vspace{0.1in}

The question is Zariski local on $X$, so we may assume that $X$ has an fs chart $P$ with $P\cong \N^r$ for some $r$ such that $X\rightarrow \A_P$ is smooth by Lemma \ref{lem::Smchart}.
By \cite[Lemma III.2.6.4]{Ogu}, we may further assume that $f$ is the projection $X\times_{\A_P}\A_M\rightarrow X$ for some subdivision of fans $M\rightarrow \Spec{P}$.
Consider the dual monoid $P^\vee$ as a fan, and consider $M$ as a fan that is a subdivision of $P^\vee$.
\vspace{0.1in}

We have the functors $\cS \cD/P^\vee\rightarrow \cT$ and $\cS \cD_2/P^\vee\rightarrow \cT$ given by
\[
(N\in {\rm Fan}_r/P^{\vee}) \mapsto M(X\times_{\A_P}\A_{N}).
\]
By Lemma \ref{A.3.29}, both $\cS \cD/P^\vee$ and $\cS \cD_2/P^\vee$ admit a calculus of right fractions.
Owing to Lemma \ref{A.1.16}, to have the isomorphism
$$
M(f):M(X\times_{\A_P}\A_M)\xrightarrow{\cong} M(X).
$$ 
Hence we are reduced to showing that 
\[
M(g):M(X\times_{\A_P}\A_{N'})\rightarrow M(X\times_{\A_P}\A_N)
\]
is an isomorphism for any morphism $N'\rightarrow N$ in $\cS \cD_2/P^\vee$.
It suffices to consider the case when $N'$ is the star subdivision of $N$ relative to a $2$-dimensional cone.
Using Zariski locality, we reduce to the case when $N$ is a $2$-dimensional cone, and $N'$ is its star subdivision.
Since $X$ is smooth over $\A_P$, we see that $X\times_{\A_P}\A_N$ is smooth over $\A_N$.
Hence we can replace $X$ by $X\times_{\A_P}\A_N$ and $N'$ by $M$ so that it suffices to show
\[
M(f):M(X\times_{\A_P}\A_M)\rightarrow M(X)
\]
is an isomorphism when $P=\N^2$ and $M$ is the star subdivision of $P^\vee$.
\vspace{0.1in}


Next, we set 
\[
Z:=\underline{X}\times_{\underline{\A_{P}}}\underline{{\rm pt}_{P}}.
\]
Since $X$ is smooth over its chart $\A_{P}$, Zariski locally on $X$,
there is a strict \'etale morphism $X\rightarrow \A^n \times \A_{P}$ such that the composition with the projection $\A^n\times \A_{P}\rightarrow \A_{P}$ is equal to $X\rightarrow \A_P$.
In this case, 
$(\underline{X},Z)$ has a parametrization $(\A^{n+2},\A^n)$ and Construction \ref{A.3.16} produces schemes $\underline{X_j}$ for $j=1,2$.
Give $\underline{X_2}$ the log structure induced by that of $X$, and we obtain an fs log scheme $X_2$.
Set $X_1:=Z\times \A_P$, which gives the log structure on $\underline{X_1}$.
Now define $Y_1:=X_1\times_{\A_P}\A_M$ and $Y_2:=X_2\times_{\A_P}\A_M$.
Let $E$, $E_1$, and $E_2$ be the exceptional divisors on $Y$, $Y_1$, and $Y_2$.
Note that $E_2\rightarrow E$ and $E_2\rightarrow E_1$ are isomorphisms since $Z$ is unchanged in $X_2\rightarrow X$ and $X_2\rightarrow X_1$.
\vspace{0.1in}

Owing to Lemma \ref{A.3.26}, $M(Y_1)\rightarrow M(X_1)$ is an isomorphism, i.e., we have 
\[
M(Y_1\rightarrow X_1)\simeq 0.
\]
We obtain strict Nisnevich distinguished squares in $SmlSm/S$
\[
\begin{tikzcd}
X_2-Z\arrow[r]\arrow[d]&X_2\arrow[d]\\
X-Z\arrow[r]&X
\end{tikzcd}
\begin{tikzcd}
Y_2-E_2\arrow[r]\arrow[d]&Y_2\arrow[d]\\
Y-E\arrow[r]&Y.
\end{tikzcd}
\]

By ($sNis$-des), there are isomorphisms
\[
M((Y_2-E_2)\rightarrow (Y-E))\stackrel{\sim}\rightarrow M(Y_2\rightarrow Y),\;
M((X_2-Z)\rightarrow (X-Z))\stackrel{\sim}\rightarrow M(X_2\rightarrow X).
\]
Since $Y_2-E_2'\cong X_2-Z$ and $Y-E\cong X-Z$, there is an isomorphism
\[
M(Y_2\rightarrow Y)\cong M(X_2\rightarrow X_2)
\]
Using Proposition \ref{A.3.44}, we obtain an isomorphism
\[
M(Y_2\rightarrow X_2)\cong M(Y\rightarrow X).
\]
Similarly, there is an isomorphism
\[
M(Y_2\rightarrow X_2)\cong M(Y_1\rightarrow X_1).
\]
Thus $M(Y\rightarrow X)\cong 0$ since $M(Y_1\rightarrow X_1)\cong 0$,
so $M(Y)\rightarrow M(X)$ is an isomorphism.
\end{proof}

\subsection{Strictly $(\mathbb{P}^\bullet,\mathbb{P}^{\bullet-1})$-invariant complexes}
In this section we introduce the notions of strictly $(\P^\bullet,\P^{\bullet-1})$-invariant and strictly dividing invariant complexes of sheaves on $SmlSm/k$.
We show that these two notions are equivalent.
Consequently, we provide another way of constructing objects of $\ldmeff$ and $\ldmeffet$.

\begin{df}
\label{Pn.1}
We denote by $(\P^{\bullet},\P^{\bullet-1})$ the class of projections
\[
(\P^{n},\P^{n-1})\times X\rightarrow X
\]
for all $X\in SmlSm/k$ and positive integers $n>0$.
\end{df}

\begin{df}
\label{Pn.2}
Let $\cF$ be a complex of $t$-sheaves on $SmlSm/k$, where $t$ is one of $sNis$, $s\acute{e}t$, or $k\acute{e}t$.
We say that $\cF$ is \emph{strictly dividing invariant}\index{invariant!strictly dividing} if for every log modification $Y\rightarrow X$ in $SmlSm/k$ and integer $i\in \Z$ there is an isomorphism
\[
\bH_{t}^i(X,\cF)\cong \bH_{t}^i(Y,\cF).
\]

We say that $\cF$ is \emph{strictly $(\P^{\bullet},\P^{\bullet-1})$-invariant}\index{invariant!strictly PnPn @ strictly $(\P^{\bullet},\P^{\bullet-1})$} if for every $X\in SmlSm/k$, $i\in \Z$, and integer $n>0$ there is an isomorphism
\[
\bH_{t}^i(X,\cF)\cong \bH_{t}^i(X\times (\P^n,\P^{n-1}),\cF).
\]
\end{df}

\begin{thm}
\label{Pn.3}
Let $\cF$ be a complex of $t$-sheaves on $SmlSm/k$, where $t$ is one of $sNis$, $s\acute{e}t$, or $k\acute{e}t$.
Then $\cF$ is strictly dividing invariant if and only if $\cF$ is strictly $(\P^\bullet,\P^{\bullet-1})$-invariant.
\end{thm}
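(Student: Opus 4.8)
The plan is to deduce the equivalence from the two abstract implications already established in this section --- Proposition \ref{A.6.1} and Theorem \ref{A.3.27} --- applied to suitable Bousfield localizations of complexes of $t$-sheaves on $SmlSm/k$. Throughout I would use the identification $\hom_{\mathbf{D}(\Shv_t(SmlSm/k,\Lambda))}(a_t^*\Lambda(X),\cF[i])\cong\bH_t^i(X,\cF)$ (the transfer-free analogue of Proposition \ref{A.8.8}, see Corollary \ref{A.10.10}). Under it, $\cF$ is strictly dividing invariant precisely when it is right orthogonal to the localizing subcategory $\mathcal{L}_{div}$ generated by the cones of the maps $a_t^*\Lambda(Y)\to a_t^*\Lambda(X)$ attached to log modifications $Y\to X$ in $SmlSm/k$, and $\cF$ is strictly $(\P^\bullet,\P^{\bullet-1})$-invariant precisely when it is right orthogonal to the localizing subcategory $\mathcal{L}_P$ generated by the cones of $a_t^*\Lambda(X\times(\P^n,\P^{n-1}))\to a_t^*\Lambda(X)$, $n>0$. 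Hence it suffices to prove $\mathcal{L}_{div}=\mathcal{L}_P$.

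For the inclusion $\mathcal{L}_{div}\subseteq\mathcal{L}_P$ I would take $\cT$ to be the homotopy category of the left Bousfield localization of $\Co(\Shv_t(SmlSm/k,\Lambda))$, with its descent model structure (Proposition \ref{A.8.15}), at the class $(\P^\bullet,\P^{\bullet-1})$, together with $M(X):=a_t^*\Lambda(X)$. As in Proposition \ref{A.5.71}, the pair $(\cT,M)$ satisfies (Sq), (MSq), ($Zar$-sep) and ($sNis$-des) --- the latter two because the descent model structure already encodes Zariski and strict Nisnevich descent --- and ($(\P^\bullet,\P^{\bullet-1})$-inv) holds by construction; since $(\P^1,\P^0)\cong\boxx$, this forces ($\boxx$-inv) as well. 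Theorem \ref{A.3.27} then gives ($div$-des), i.e.\ every log modification becomes an isomorphism in $\cT$, i.e.\ the cone of $a_t^*\Lambda(Y)\to a_t^*\Lambda(X)$ vanishes in $\cT$ and hence lies in $\mathcal{L}_P$; as $\mathcal{L}_P$ is localizing, $\mathcal{L}_{div}\subseteq\mathcal{L}_P$. Symmetrically, taking $\cT'$ to be the homotopy category of the localization at the class $(\boxx,div)$ of Definition \ref{A.5.66}, the four properties ($Zar$-sep), ($sNis$-des), ($\boxx$-inv) and ($div$-des) hold, so Proposition \ref{A.6.1} yields ($(\P^\bullet,\P^{\bullet-1})$-inv) in $\cT'$ and therefore $\mathcal{L}_P\subseteq\langle\mathcal{L}_\boxx,\mathcal{L}_{div}\rangle$, where $\mathcal{L}_\boxx$ is generated by the cones of the $\boxx$-projections. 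Since $\mathcal{L}_\boxx\subseteq\mathcal{L}_P$ trivially (these are the cones in the case $n=1$), combining the inclusions gives $\mathcal{L}_P=\langle\mathcal{L}_\boxx,\mathcal{L}_{div}\rangle$; equivalently, $\cF$ is strictly $(\P^\bullet,\P^{\bullet-1})$-invariant if and only if it is simultaneously strictly $\boxx$-invariant and strictly dividing invariant.

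The remaining --- and, I expect, the hardest --- point is to upgrade "strictly dividing invariant" to "strictly dividing invariant and strictly $\boxx$-invariant", that is, to prove $\mathcal{L}_\boxx\subseteq\mathcal{L}_{div}$. My plan for this is to invoke Theorem \ref{Div.4} together with the filteredness of $X_{div}^{Sm}$ (Proposition \ref{A.9.82}): for a strictly dividing invariant $\cF$ one has $\bH_t^i(X,\cF)\cong\colimit_{Y\in X_{div}^{Sm}}\bH_t^i(Y,\cF)\cong\bH_{dNis}^i(X,a_{dNis}^*\cF)$ for every $X\in SmlSm/k$ (with $a_{d\acute{e}t}^*$, resp.\ $a_{l\acute{e}t}^*$, in place of $a_{dNis}^*$ when $t=s\acute{e}t$, resp.\ $t=k\acute{e}t$), so it is enough to see that $a_{dNis}^*\cF$ is strictly $\boxx$-invariant on objects of $SmlSm/k$. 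For this one compares the projection $X\times\boxx\to X$ with the dividing covers of $X\times\boxx$ coming from subdivisions of the product fan, reducing --- via ($sNis$-des) and the localization-to-the-normal-cone technique of Construction \ref{A.3.16} --- to the surface computations already carried out in Proposition \ref{A.3.39} and Lemma \ref{A.3.26}. Once this is established, the two inclusions above close the argument uniformly for the three topologies $sNis$, $s\acute{e}t$, $k\acute{e}t$.
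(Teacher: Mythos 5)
Your reduction to an equality of localizing subcategories, together with the two inclusions you establish --- $\mathcal{L}_{div}\subseteq\mathcal{L}_P$ via Theorem \ref{A.3.27} applied in the $(\P^\bullet,\P^{\bullet-1})$-localization, and $\mathcal{L}_P\subseteq\langle\mathcal{L}_\boxx,\mathcal{L}_{div}\rangle$ via Proposition \ref{A.6.1} applied in the $(\boxx,div)$-localization --- reproduces exactly the architecture of the paper's proof, which works with the $div$-local and $(\P^\bullet,\P^{\bullet-1})$-local descent model structures on $\Co(\Shv_t(SmlSm/k,\Lambda))$ and invokes the same two results. You have also correctly put your finger on the one point on which everything hinges: the inclusion $\mathcal{L}_\boxx\subseteq\langle\mathcal{L}_{div}\rangle$, equivalently that ($\boxx$-inv) already holds in the homotopy category obtained by inverting only the log modifications. (The paper disposes of this in a single clause, asserting that ($\boxx$-inv) ``can similarly be proved'' to hold in $\mathbf{Ho}_{div}$ before feeding Proposition \ref{A.6.1}.)

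The gap is that your plan for this last step cannot work as written. Passing through Theorem \ref{Div.4} only rewrites $\bH^i_t(X,\cF)$ as $\bH^i_{dNis}(X,a_{dNis}^*\cF)$ and restates the same claim for the sheafification, so it buys nothing; and the ``surface computations'' you then want to quote are unavailable to you: Proposition \ref{A.3.39} is stated under the hypothesis that $\cT$ satisfies ($\boxx$-inv), and Lemma \ref{A.3.26} lives in a section whose standing assumptions include both ($\boxx$-inv) and ($(\P^\bullet,\P^{\bullet-1})$-inv), so invoking either to \emph{establish} $\boxx$-invariance in $\mathbf{Ho}_{div}$ is circular. Nor is the step formal: the standard cover of $\boxx=(\P^1,\infty)$ by $\A^1$ and $\A_\N$ gives a Mayer--Vietoris triangle involving $M(\mathbb{G}_m)$, $M(\A^1)$ and $M(\A_\N)$, and collapsing $M(\boxx)$ to $M(\Spec k)$ from it requires some contractibility of $\A^1$ or $\boxx$ that dividing descent does not directly supply, since there are no nontrivial log modifications of $\A^1$, $\A_\N$, or $\boxx$ itself. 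What your two inclusions actually prove is the equivalence ``strictly $\boxx$-invariant and strictly dividing invariant $\Leftrightarrow$ strictly $(\P^\bullet,\P^{\bullet-1})$-invariant''; to obtain the theorem as stated you need an independent argument that strict dividing invariance forces strict $\boxx$-invariance, and neither your sketch nor the results you cite provide one.
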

\begin{proof}
We only consider the strict Nisnevich topology since the proofs are similar.
Without loss of generality, we may assume that $\cF$ is strict Nisnevich local.
\vspace{0.1in}

On $\Co(\Shv_{sNis}(SmlSm/k,\Lambda))$ we have the $div$-local descent model structure and the $(\P^\bullet,\P^{\bullet-1})$-local descent model structure.
Let $\mathbf{Ho}_{div}$ and $\mathbf{Ho}_{(\P^\bullet,\P^{\bullet-1})}$ denote the corresponding homotopy categories.
Proposition \ref{A.5.71} shows the properties  {\rm ($Zar$-sep)}, {\rm ($\boxx$-inv)}, {\rm ($sNis$-des)}, and {\rm ($div$-des)} hold in $\ldaeff$.
We can similarly prove that {\rm ($Zar$-sep)}, {\rm ($\boxx$-inv)}, {\rm ($sNis$-des)}, and {\rm ($div$-des)} hold in $\mathbf{Ho}_{div}$, 
while {\rm ($Zar$-sep)}, {\rm ($\boxx$-inv)}, {\rm ($sNis$-des)}, and ($(\P^\bullet,\P^{\bullet-1})$-inv) hold in $\mathbf{Ho}_{(\P^\bullet,\P^{\bullet-1})}$.
Owing to Proposition \ref{A.6.1} and Theorem \ref{A.3.27}, 
we see that {\rm ($Zar$-sep)}, {\rm ($\boxx$-inv)}, {\rm ($sNis$-des)}, {\rm ($div$-des)}, and ($(\P^\bullet,\P^{\bullet-1})$-inv) hold in both $\mathbf{Ho}_{div}$ 
and $\mathbf{Ho}_{(\P^\bullet,\P^{\bullet-1})}$.
\vspace{0.1in}

Suppose now that $\cF$ is strictly dividing invariant.
Then $\cF$ is a fibrant object with respect to the $div$-local descent model structure on $\Co(\Shv_{sNis}(SmlSm/k,\Lambda))$.
Thus for every $X\in SmlSm/k$ and integer $i\in \Z$, there is an isomorphism
\[
\hom_{\mathbf{Ho}_{div}}(\Lambda(X),\cF[i])\cong \bH_{sNis}^i(X,\cF).
\]
Since ($(\P^\bullet,\P^{\bullet-1})$-inv) holds in $\mathbf{Ho}_{div}$ this shows $\cF$ is strictly $(\P^{\bullet},\P^{\bullet-1})$-invariant.
\vspace{0.1in}

Conversely, if $\cF$ is strictly $(\P^\bullet,\P^{\bullet-1})$-invariant,
then $\cF$ is a fibrant object with respect to the $(\P^\bullet,\P^{\bullet-1})$-local descent model structure on $\Co(\Shv_{sNis}(SmlSm/k,\Lambda))$.
Thus for every $X\in SmlSm/k$ and integer $i\in \Z$, there is an isomorphism
\[
\hom_{\mathbf{Ho}_{(\P^\bullet,\P^{\bullet-1})}}(\Lambda(X),\cF[i])\cong \bH_{sNis}^i(X,\cF).
\]
Since ($div$-des) holds in $\mathbf{Ho}_{(\P^\bullet,\P^{\bullet-1})}$, this shows $\cF$ is strictly dividing invariant.
\end{proof}

\begin{const}
\label{Pn.4}
Let us begin with a strictly $(\P^\bullet,\P^{\bullet-1})$-invariant bounded below complex $\cF$ of strict Nisnevich sheaves with log transfers on $SmlSm/k$.
Owing to Theorem \ref{Pn.3} we see that $\cF$ is strictly dividing invariant.
Applying Theorem \ref{Div.4}, this means that for every $X\in SmlSm/k$ and integer $i\in \Z$ there are isomorphisms
\begin{equation}
\label{Pn.4.1}
H_{dNis}^i(X,a_{dNis}^*\cF)\cong \colimit_{Y\in X_{div}^{Sm}} H_{sNis}^i(Y,\cF)\cong H_{sNis}^i(X,\cF).
\end{equation}
This shows that $a_{dNis}^*\cF$ is strictly $\boxx$-invariant.
Then using Construction \ref{SmlSm.3}, we see that $\iota_\sharp a_{dNis}^*\cF$ is strictly $\boxx$-invariant too.
In this way to obtain an object $\iota_\sharp a_{dNis}^*\cF$ of $\ldmeff$ from $\cF$.
Combining \eqref{SmlSm.3.2} and \eqref{Pn.4.1} we have that
\[
\hom_{\ldmeff}(M(X),\iota_\sharp a_{dNis}^*\cF)\cong H_{dNis}^i(X,a_{dNis}^*\cF)\cong H_{sNis}^i(X,\cF).
\]

Similarly, if $\cF$ is a strictly $(\P^\bullet,\P^{\bullet-1})$-invariant bounded below complex of strict \'etale (resp.\ Kummer \'etale) sheaves with log transfers, then there is an isomorphism
\begin{gather}
\label{Pn.4.2}
\hom_{\ldmeffet}(M(X),\iota_\sharp a_{d\acute{e}t}^*\cF)\cong H_{s\acute{e}t}^i(X,\cF),
\\
\label{Pn.4.3}
\text{(resp.\ }\hom_{\ldmefflet}(M(X),\iota_\sharp a_{l\acute{e}t}^*\cF)\cong H_{k\acute{e}t}^i(X,\cF).\text{)}
\end{gather}
\end{const}

\newpage

\section{Comparison with Voevodsky's motives}
\label{section:cvvmotives}

Our main results in this section are the projective bundle theorem and a comparison result showing that Voevodsky's category of derived motives $\dmeff$ 
identifies with the full subcategory of $\A^1$-local objects in our derived category of log motives $\ldmeff$.

\subsection{Presheaves with transfers and direct image functors}
\begin{lem}
\label{A.4.4}
Let $\cF$ be an $\A^1$-invariant Nisnevich sheaf with transfer on $Sm/k$.
Let $X$ be a smooth scheme over $k$ and let $j\colon U\rightarrow X$ be an open immersion whose complement is a strict normal crossing divisor on $X$. 
If $k$ is perfect, then for any $i\in \Z$, there is an isomorphism
\[
H_{Nis}^i(X,j_*j^*\cF)
\cong H_{Nis}^i(U,\cF).
\]
\end{lem}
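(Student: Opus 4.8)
The plan is to reduce the statement to the vanishing of the higher Nisnevich direct images $R^q j_*(j^*\cF)$ for $q>0$. Granting this, the Leray spectral sequence for $j$ collapses and yields $H_{Nis}^i(X,j_*j^*\cF)\cong H_{Nis}^i(U,j^*\cF)=H_{Nis}^i(U,\cF)$, which is the desired isomorphism. So the entire content is the claim that $R^q j_*(j^*\cF)=0$ for $q>0$.

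To prove this I would argue on stalks. The sheaf $R^q j_*(j^*\cF)$ is the Nisnevich sheafification of $V\mapsto H_{Nis}^q(U\times_X V,j^*\cF)$, so it suffices to show $H_{Nis}^q(U^h,\cF)=0$ for $q>0$, where $X^h=\Spec\cO_{X,x}^h$ is a henselization of $X$ at a point $x$ and $U^h:=U\times_X X^h$. Since the complement $Z:=X\smallsetminus U$ is a strict normal crossing divisor, Definition \ref{A.3.17} provides, Zariski-locally around $x$, an étale morphism $X\to \A^n_k$ under which $Z$ is the preimage of the divisor $(x_1\cdots x_r=0)$. Passing to henselizations we get $U^h=\Spec\cO[1/(t_1\cdots t_r)]$ with $\cO=\cO_{X,x}^h$ and $t_1,\dots,t_n$ an induced (regular, étale) coordinate system; after discarding those $t_i$ that are units in $\cO$, we may assume $t_1,\dots,t_r$ all lie in the maximal ideal.

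The core is an induction on $r$. For $r=0$ the scheme $U^h=X^h$ is henselian local, so its positive-degree Nisnevich cohomology vanishes. For the inductive step, set $W:=\Spec\cO[1/(t_1\cdots t_{r-1})]$ and $D:=V(t_r)\cap W$, a smooth closed subscheme of codimension one in $W$ with complement $U^h$. I would invoke cohomological purity for $\A^1$-invariant Nisnevich sheaves with transfers over a perfect field (Voevodsky's Gysin triangle, applied after $\mathrm{Hom}(-,\cF[\ast])$): there is an isomorphism $H^q_D(W,\cF)\cong H_{Nis}^{q-1}(D,\cF_{-1})$, where $\cF_{-1}$ denotes the contraction of $\cF$, which is again an $\A^1$-invariant Nisnevich sheaf with transfers. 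Plugging this into the long exact sequence of cohomology with supports
\[
\cdots \to H^q_D(W,\cF)\to H_{Nis}^q(W,\cF)\to H_{Nis}^q(U^h,\cF)\to H^{q+1}_D(W,\cF)\to\cdots
\]
and using the induction hypothesis for $W$ (the ring $\cO$ with $r-1$ coordinate hyperplanes removed, coefficients $\cF$) and for $D=\Spec(\cO/t_r)[1/(\bar t_1\cdots\bar t_{r-1})]$ (a henselian local ring with $r-1$ coordinate hyperplanes removed, coefficients $\cF_{-1}$) to kill $H_{Nis}^q(W,\cF)$ and $H_{Nis}^q(D,\cF_{-1})$ for $q\ge 1$, one obtains $H_{Nis}^q(U^h,\cF)\cong H_{Nis}^q(D,\cF_{-1})=0$ for all $q\ge 1$, closing the induction.

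The main obstacle is the purity isomorphism $H^q_D(W,\cF)\cong H_{Nis}^{q-1}(D,\cF_{-1})$: one must ensure it is valid in the ``essentially smooth'' (pro-smooth henselian) setting rather than only for smooth schemes of finite type — handled by a standard passage to the limit — and that the contraction $\cF_{-1}$ genuinely remains an $\A^1$-invariant Nisnevich sheaf with transfers; both facts are available over a perfect field from Voevodsky's theory of homotopy invariant sheaves with transfers, which is exactly where the hypothesis that $k$ is perfect enters. The remaining points — the Leray collapse, the description of $R^q j_*$ by sheafification, the stalk computation, and the étale-local structure of strict normal crossing divisors recalled in Definition \ref{A.3.17} — are routine.
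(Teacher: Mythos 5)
Your proposal is correct, but it follows a genuinely different route from the paper's. Both arguments begin identically: Leray reduces everything to the vanishing of $R^qj_*j^*\cF$ for $q>0$. From there the paper peels off the components of the boundary one at a time via the filtration $U_i:=X-(Z_{i+1}\cup\cdots\cup Z_r)$, reducing to the case of a single smooth divisor, and then quotes the contraction formula of \cite[Proposition 24.3]{MVW}: after passing to a Hensel local scheme one identifies $R^qj_*j^*\cF\cong c_*\bigl(H^q_{Nis}(\cF)_{-1}\bigr)_{Nis}$, which vanishes for $q>0$ because the Nisnevich sheafification of the positive-degree cohomology presheaves is zero. You instead compute the Nisnevich stalks directly and run an induction on the number of branches of the divisor through the point, using at each step cohomological purity $H^q_D(W,\cF)\cong H^{q-1}_{Nis}(D,\cF_{-1})$ together with the localization sequence with supports. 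The two approaches lean on the same underlying Voevodsky machinery (contractions $\cF_{-1}$, perfectness of $k$, the structure of snc divisors from Definition \ref{A.3.17}), but yours trades the paper's global divisor-by-divisor filtration and the $\cF_{(X,W,Z)}$ formalism for a single stalkwise induction; the price is that you must justify purity and the induction hypothesis over essentially smooth henselian rings, which you correctly flag and which does follow by the standard limit argument (and note that $\cO^h_{X,x}/(t_r)$ is again the henselization of the smooth component $Z_r$ at $x$, so the inductive setup is genuinely closed under passing to $D$ with coefficients $\cF_{-1}$). Your version has the mild advantage of quantifying the induction over all homotopy invariant sheaves at once and avoiding the slightly delicate point in the filtration argument that the intermediate pushforwards $j_{i*}j_i^*\cF$ are no longer a priori of the same type as $\cF$.
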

\begin{proof}
Let $Z_1,\ldots,Z_r$ be the divisors forming the complement of $j$, and set
\[
U_i:=X-(Z_{i+1}\cup \cdots \cup Z_r).
\]
Now let $j_i\colon U_i\rightarrow U_{i+1}$ denote the induced open immersion. 
By the Leray spectral sequence
\[
E_2^{pq}=H^p(U_{i+1},R^qj_{i*}j_i^*\cF) \Rightarrow H^{p+q}(U_i,\cF),
\]
it suffices to show that 
$$
R^qj_{i*}j_i^*\cF=0
$$ 
for any $q>0$. 
Thus we reduce to the case when $X-U$ is a smooth divisor on $X$.
Furthermore, we reduce to the case when $X$ is the Hensel local scheme of a smooth scheme at a point.
Let $c\colon X-U\rightarrow X$ denote the complement of $j\colon U\rightarrow X$.
\vspace{0.1in}

For every $q\in \Z$, let $H_{Nis}^q(\cF)$ denote the presheaf given by 
\[
X\mapsto H_{Nis}^q(X,\cF).
\]
Owing to \cite[Theorem 24.1]{MVW} the presheaf $H_{Nis}^q(\cF)$ is $\A^1$-invariant.
By \cite[Proposition 23.5, Example 23.8, Theorem 23.12]{MVW} we obtain the vanishing 
\[
R^qj_*j^*\cF
\cong 
c_*H_{Nis}^q(\cF)_{(X\times \A^1,Z\times \A^1)}
\cong 
c_*( H_{Nis}^q(\cF)_{-1})_{Nis}
=
0
\]
for $q>0$ as in the proof of \cite[Proposition 24.3]{MVW}.
This allows us to proceed as in the proof of \cite[Proposition 24.3]{MVW}.
\end{proof}

\begin{df}
\label{A.4.19}
Suppose $\cF$ is a presheaf on a category $\cC$ equipped with a topology $t$. 
We denote the $t$-sheafification of $\cF$ by \index[notation]{Ft $\cF_t$}
\[\cF_t
:=
a_t\cF.
\]
\end{df}

\begin{df}[{cf.\ \cite[\S 4.2]{MR1764200}}]
\label{A.4.20} \index[notation]{FXWZ @ $\cF_{(X,W,Z)}$}
Let $X$ be a smooth scheme over $k$ with a strict normal crossing divisor $W$. 
Let $Z$ be an irreducible component of $W$ with corresponding closed immersion $i\colon Z\rightarrow X$.
For a presheaf $\cF$ on $X$, we set
\[
\cF_{(X,W,Z)}:=i^*(\coker(u_*u^*\cF\rightarrow v_*v^*\cF))_{Zar}.
\]
Here $u\colon X-\overline{(W-Z)}\rightarrow X$ and $v\colon X-W\rightarrow X$ are the naturally induced open immersions.
\end{df}

There is an exact sequence
\begin{equation}
\label{A.4.20.1}
(u_*u^*\cF)_{Zar}\rightarrow (v_*v^*\cF)_{Zar}\rightarrow i_*\cF_{(X,W,Z)}\rightarrow 0
\end{equation}
of Zariski sheaves on $X$ as in \cite[Remark 4.10]{MR1764200}.

\begin{lem}
\label{A.4.25}
With the above notations, let $f\colon X'\rightarrow X$ be an \'etale morphism of smooth schemes over $k$ such that $Z':=f^{-1}(Z)\rightarrow Z$ is an isomorphism.
For $W':=f^{-1}(W)$ there is a naturally induced isomorphism 
\[
\cF_{(X,W,Z)}\xrightarrow{\cong} \cF_{(X',W',Z')}
\]
of sheaves on $Z$ if $\cF$ is an $\A^1$-invariant presheaf with transfers.
\end{lem}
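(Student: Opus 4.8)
The plan is to reduce the statement to the known excision property of $\A^1$-invariant presheaves with transfers established in the Morel-Voevodsky framework and used in \cite{MR1764200}. First I would recall that the construction $\cF_{(X,W,Z)}$ is functorial: an \'etale morphism $f\colon X'\rightarrow X$ with $W' = f^{-1}(W)$ and $Z' = f^{-1}(Z)$ induces compatible maps on the open complements $X'-\overline{(W'-Z')}\rightarrow X-\overline{(W-Z)}$ and $X'-W'\rightarrow X-W$, hence a map of the exact sequences \eqref{A.4.20.1} on $X$ and $X'$, and after restricting along $i\colon Z\rightarrow X$ (using $Z'\cong Z$) a natural morphism $\cF_{(X,W,Z)}\rightarrow \cF_{(X',W',Z')}$ of Zariski sheaves on $Z$.

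The key point is then that this morphism is an isomorphism. Since the question is about sheaves on $Z$ and $Z'\rightarrow Z$ is an isomorphism, it is Zariski-local on $Z$; one may pass to stalks at a point $z\in Z$, with image $x\in X$ and preimage $x'\in X'$. Because $f$ is \'etale and induces an isomorphism $Z'\rightarrow Z$, the induced map on Hensel local schemes $X'_{x'}\rightarrow X_x$ is an isomorphism away from which the relevant data matches: more precisely, $\cF_{(X,W,Z)}$ only depends on the henselization of $X$ along $Z$ near $z$, and $f$ identifies the henselizations of $(X,W)$ along $Z$ and of $(X',W')$ along $Z'$. This is exactly the excision statement underlying \cite[\S 4.2]{MR1764200}; the $\A^1$-invariance and the transfer structure enter precisely to guarantee that the sheaves $u_*u^*\cF$ and $v_*v^*\cF$, and their cokernels, behave well under such henselization — this is where one invokes the Nisnevich-local (indeed \'etale-local) rigidity of $\A^1$-invariant sheaves with transfers, cf.\ the results cited in the proof of Lemma \ref{A.4.4} (\cite[Proposition 23.5, Theorem 24.1]{MVW}).

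Concretely, the steps I would carry out are: (1) construct the natural comparison morphism from functoriality of \eqref{A.4.20.1}; (2) reduce to checking it is an isomorphism on stalks, hence to the case where $X$ is Hensel local along a point of $Z$ and $W$ is replaced by its restriction, using that $\cF_{(-,-,-)}$ is a Zariski sheaf on $Z$; (3) observe that an \'etale $f$ inducing an isomorphism on $Z$ induces an isomorphism of the corresponding henselized pairs, so $u_*u^*\cF$, $v_*v^*\cF$, and therefore their cokernels agree after restriction to $Z$; (4) conclude by comparing the two exact sequences \eqref{A.4.20.1}. Throughout, the hypotheses that $\cF$ is an $\A^1$-invariant presheaf with transfers and that $k$ is perfect are used exactly as in Lemma \ref{A.4.4} to control the higher direct images and to ensure the relevant (Nisnevich or Zariski) sheafifications commute with the henselization.

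The main obstacle I anticipate is step (3): making precise that the cokernel sheaf $\coker(u_*u^*\cF\rightarrow v_*v^*\cF)$, restricted to $Z$, is genuinely insensitive to the \'etale neighborhood — i.e.\ that it is a ``local'' invariant of the pair $(X,W)$ along $Z$. This requires knowing that $v_*v^*\cF$ and $u_*u^*\cF$ commute with the relevant filtered limits of \'etale neighborhoods of $Z$, which for $\A^1$-invariant sheaves with transfers follows from the Nisnevich-excision and Gersten-type results in \cite{MVW} but needs to be invoked carefully; this is presumably why the lemma is stated for $\A^1$-invariant presheaves with transfers rather than for arbitrary presheaves. Once that locality is in hand, the comparison is formal.
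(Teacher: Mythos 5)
Your overall route is the same as the paper's: build the comparison map from functoriality of the two exact sequences \eqref{A.4.20.1} and then reduce the isomorphism claim to Voevodsky's excision results for $\A^1$-invariant presheaves with transfers. The paper does this very directly: it writes down the commutative diagram of cokernel presheaves over $U$ and $U':=U\times_X X'$ and quotes \cite[Proposition 4.12(1)]{MR1764200} for Zariski-local injectivity and \cite[Proposition 4.12(2)]{MR1764200} for Zariski-local surjectivity of the induced map of cokernels, which gives the isomorphism after Zariski sheafification. One point in your write-up needs correcting: in step (3) you assert that $u_*u^*\cF$ and $v_*v^*\cF$ themselves ``agree after restriction to $Z$,'' and deduce the statement for their cokernel. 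That is not true: the Zariski stalk of $v_*v^*\cF$ at $z\in Z$ is a colimit of $\cF(U-W)$ over Zariski neighborhoods $U$ of $z$ in $X$, and these punctured neighborhoods genuinely differ between $X$ and $X'$ (the henselizations only become identified after passing to \'etale neighborhoods, which Zariski sheafification does not see). Only the \emph{cokernel} is excisive, and that is exactly the nontrivial content of \cite[Proposition 4.12]{MR1764200}; your ``main obstacle'' paragraph correctly identifies this, so the fix is simply to drop the claim about the individual pushforwards and apply the excision statement to the cokernel directly, as the paper does.
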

\begin{proof}
First note that $W'$ is a strict normal crossing divisor on $X'$ and $Z'$ is an irreducible compoenent of $Z'$ since $f$ is \'etale, so we can use the notation $\cF_{(X',W',Z')}$.
Let 
$$u'\colon X'-\overline{(W'-Z')}\rightarrow X'$$ 
and 
$$v'\colon X'-W'\rightarrow X'$$ 
be the naturally induced open immersions, and let $U$ be an open subscheme of $X$.
For $U':=U\times_X X'$ we have the commutative diagram with exact rows
\[
\begin{tikzcd}[column sep=tiny]
\cF(U\times_X (X-\ol{(W-Z)}))\arrow[d]\arrow[r]&\cF(U\times_X (X-W))\arrow[d]\arrow[r]& \coker(u_*u^*\cF\rightarrow v_*v^*\cF)(U)\arrow[r]\arrow[d,"\varphi"]&0\\
\cF(U'\times_{X'}(X'-\ol{(W'-Z')}))\arrow[r]&\cF(U'\times_{X'}(X'-W'))\arrow[r]&\coker(u_*'u'^*\cF\rightarrow v_*'v'^*\cF)(U')\arrow[r]&0.
\end{tikzcd}
\]
Let $a$ be an element of $\coker(u_*u^*\cF\rightarrow v_*v^*\cF)(U)$.
If $\varphi(a)=0$, then Zariski locally on $U$, we have $a=0$ by \cite[Proposition 4.12(1)]{MR1764200}.
On the other hand, suppose $b$ is an element of $\coker(u_*'u'^*\cF\rightarrow v_*'v'^*\cF)(U')$.
Then, Zariski locally on $U$, $b$ is in the image of $\varphi$ by \cite[Proposition 4.12(2)]{MR1764200}.
\end{proof}

\begin{lem}
\label{A.4.26}
Let $Z$ be a smooth scheme over $k$ with a strict normal crossing divisor $Y$, and let $j\colon Z-Y\rightarrow Z$ be the open immersion.
For any $\A^1$-invariant presheaf $\cF$ with transfers, there is a canonical isomorphism 
\[
(j_*j^*\cF_{-1})_{Zar}\cong \cF_{(Z\times \A^1,(Z\times \{0\})\cup (Y\times \A^1),Z\times \{0\})}
\]
of Zariski sheaves on $Z$.
\end{lem}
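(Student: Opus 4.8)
\textbf{Proof proposal for Lemma \ref{A.4.26}.}

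The plan is to build the canonical isomorphism locally on $Z$ and then glue it, leveraging the local structure theory of $\A^1$-invariant sheaves with transfers from \cite[\S 4]{MR1764200} exactly as it was used to prove Lemma \ref{A.4.4}. First I would fix notation: write $W := (Z\times\{0\})\cup (Y\times\A^1)$ for the strict normal crossing divisor on $X:=Z\times\A^1$, and note that $Z\times\{0\}$ is one of its irreducible components, so the notation $\cF_{(X,W,Z\times\{0\})}$ of Definition \ref{A.4.20} makes sense. The target is, by definition, $i^*(\coker(u_*u^*\cF\to v_*v^*\cF))_{Zar}$, where $i\colon Z\times\{0\}\hookrightarrow X$ is the closed immersion, $u\colon X - \overline{(W - Z\times\{0\})} = X - (Y\times\A^1) = (Z-Y)\times\A^1 \hookrightarrow X$, and $v\colon X - W \hookrightarrow X$. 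Identifying $Z\times\{0\}$ with $Z$, I would then interpret $i^* u_* u^*\cF$ and $i^* v_* v^*\cF$ in terms of the base scheme $Z$.

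Next I would carry out the reduction to the local case. Both sides of the claimed isomorphism are Zariski sheaves on $Z$, so it suffices to produce the isomorphism after restricting to $\cO_{Z,z}^h$ for every point $z\in Z$ — or more precisely, compatibly on a Zariski-local basis. Using Lemma \ref{A.4.25} (invariance of $\cF_{(X,W,Z)}$ under étale maps that are isomorphisms over the distinguished component) one reduces to the situation where $(Z,Y)$ is standard étale-locally modeled on $(\Spec\Z[x_1,\dots,x_n], (x_1\cdots x_r=0))$, i.e.\ $Y$ is a product of coordinate hyperplanes; and then further to $Z$ the Hensel local scheme of a smooth $k$-scheme at a point, just as in the proof of Lemma \ref{A.4.4}. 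In this local situation, the open immersion $u$ becomes $(Z-Y)\times\A^1\to Z\times\A^1$, and $v=u\circ(\text{complement of }Z\times\{0\}\text{ in }(Z-Y)\times\A^1)$. Writing $p\colon Z\times\A^1\to Z$ for the projection, the key computation is that $i^*\coker(u_*u^*\cF\to v_*v^*\cF)$ computes the ``difference'' between sections of $\cF$ on $(Z-Y)\times\A^1$ and on $((Z-Y)\times\A^1)$ minus the zero section, which by the homotopy-invariance machinery of \cite[Proposition 23.5, Example 23.8, Theorem 23.12]{MVW} (contraction $\cF\mapsto\cF_{-1}$ and the Gysin-type identification of the local cohomology along $Z\times\{0\}$) is precisely $\cF_{-1}$ evaluated on $Z-Y$. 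Sheafifying on $Z$ and applying $j_*$ then yields $(j_*j^*\cF_{-1})_{Zar}$, since the contraction $\cF_{-1}$ is again an $\A^1$-invariant Nisnevich sheaf with transfers by \cite{MVW}, so Lemma \ref{A.4.4} (or its proof) identifies $j_*j^*(\cF_{-1})$ correctly.

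The main obstacle I anticipate is making the isomorphism \emph{canonical} rather than merely local, and checking that the local identifications glue: the contraction functor $(-)_{-1}$ and the identification of $i^*(v_*v^*\cF/u_*u^*\cF)$ with a value of $\cF_{-1}$ both involve a choice of coordinate (the $\A^1$-direction), so I would need to verify that the resulting map is independent of the étale chart, using the functoriality built into \cite[Remark 4.10, Proposition 4.12]{MR1764200} — the same functoriality that makes Lemma \ref{A.4.25} work. Concretely, I would phrase the canonical map as the composite
\[
\cF_{(X,W,Z\times\{0\})} \;=\; i^*\big(\coker(u_*u^*\cF\to v_*v^*\cF)\big)_{Zar} \;\xrightarrow{\ \cong\ }\; \big(j_*j^*\cF_{-1}\big)_{Zar},
\]
where the middle arrow comes from the exact sequence \eqref{A.4.20.1} on $(Z-Y)\times\A^1$ together with the localization/contraction identification, and then argue that étale-local triviality plus Lemma \ref{A.4.25} forces this map to be a well-defined isomorphism of Zariski sheaves on $Z$. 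Everything else — the exactness statements, the vanishing of higher direct images, the identification $R^q j_* = 0$ for $q>0$ used to pass between Nisnevich and Zariski — is routine and already available from \cite{MVW} and Lemma \ref{A.4.4}.
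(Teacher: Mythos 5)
Your overall strategy matches the paper's: reduce to a local computation that identifies the cokernel defining $\cF_{(Z\times \A^1,(Z\times \{0\})\cup (Y\times \A^1),Z\times \{0\})}$ with the contraction $\cF_{-1}$ taken over the complement of $Y$. But the step you label ``the key computation'' is exactly the part that requires an argument, and it does not follow formally from the results you cite. Unwinding Definition \ref{A.4.20}, here $u$ is the open immersion $(Z-Y)\times \A^1\rightarrow Z\times \A^1$ and $v$ is $(Z-Y)\times (\A^1-\{0\})\rightarrow Z\times \A^1$, so for an affine open $U'\subseteq Z$ and a neighborhood $V'$ of $U'\times \{0\}$ in $U'\times \A^1$ the cokernel presheaf has sections $\cF(V_0'')/\cF(V'')$, where $V'':=V'\cap ((U'-Y_{U'})\times \A^1)$, $V_0'':=V''-(U'\times \{0\})$, and $Y_{U'}:=Y\times_Z U'$. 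The results of \cite[\S 23]{MVW} you invoke concern the smooth pair $(X\times \A^1,X\times \{0\})$ for $X$ smooth; applied with $X=Z-Y$ they identify the restriction of the left-hand sheaf to $Z-Y$, but they do not compute the colimit of $\cF(V_0'')/\cF(V'')$ over neighborhoods $V'$ of $U'\times \{0\}$ when $U'$ meets $Y$ --- and that colimit is precisely what the sections over such $U'$ are. Asserting that this ``is precisely $\cF_{-1}$ evaluated on $Z-Y$'' is restating the conclusion, not deriving it.

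The paper closes this gap by rerunning the standard-triple argument from the proof of \cite[Proposition 23.10]{MVW} relative to $U'-Y_{U'}$: because $Y$ is a Cartier divisor, one may shrink $U'$ so that $U'-Y_{U'}$ is affine, whence $(\P_{U'-Y_{U'}}^1,\P_{U'-Y_{U'}}^1-V'',(U'-Y_{U'})\times \{0\})$ is a standard triple and \cite[Theorem 21.6]{MVW} yields the split exact sequence
\[
0\rightarrow \cF((U'-Y_{U'})\times \A^1)\rightarrow \cF((U'-Y_{U'})\times (\A^1-\{0\}))\oplus \cF(V'')\rightarrow \cF(V_0'')\rightarrow 0,
\]
giving $\cF(V_0'')/\cF(V'')\cong \cF_{-1}(U'-Y_{U'})=(j_*j^*\cF_{-1})(U')$ for a single well-chosen $V''$; one then passes to the limit over $U'$ and $V'$. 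The affineness of $U'-Y_{U'}$ is the extra input your sketch is missing, and without it the cited machinery does not apply. On the other hand, your concerns about canonicity, the \'etale reduction via Lemma \ref{A.4.25}, and the passage to Hensel local rings are unnecessary here: the identification is induced by the Mayer--Vietoris sequence of an explicit Zariski cover of $(U'-Y_{U'})\times \A^1$, and the $\A^1$-direction is part of the given data $Z\times \A^1\rightarrow Z$, so no choice of chart enters.
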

\begin{proof}
We shall argue as in the proof of \cite[Proposition 4.11]{MR1764200} (see also the proof of \cite[Proposition 23.10]{MVW}).
Fix a point $z$ of $Z$.
Let $U$ be an affine open neighborhood of $z$ in $Z$, and let $V$ be an affine open neighborhood of $U\times \{0\}$ in $U\times \A^1$.
Following the proof of \cite[Proposition 23.10]{MVW}, 
there exists a neighborhood $U'$ of $z$ in $U$ and an affine open subscheme $T$ of $\P_k^1$ such that $U'\times T$ contains both $\P_{U'}^1-V'$ and 
$U'\times \{0\}$ where $V':=(U'\times \A^1)\times_{U\times \A^1}V$.
We form the fiber product $Y_{U'}:=Y\times_Z U'$.
\vspace{0.1in}

Since a Cartier divisor gives $Y$, we may further assume that 
$U'-Y_{U'}$ is affine,
and hence $(U'-Y_{U'})\times T$ is affine.
Setting $V'':=V'\times_{U'}(U'-Y_{U'})$ we deduce the standard triple (in the sense of \cite[Definition 11.5]{MVW})
\[
T_{U',V'}:=(\P_{U'-Y_{U'}}^1,\P_{U'-Y_{U'}}^1-V'',(U'-Y_{U'})\times \{0\}),
\]
see the second paragraph in the proof of \cite[Proposition 23.10]{MVW}.
Next, we form 
$$
T_{U'}:=(\P_{U'-Y_{U'}}^1,(U'-Y_{U'})\times \{\infty\},(U'-Y_{U'})\times \{0\}) \text{ and } V_0'':=V''-(U'\times \{0\}).
$$

Following the proof of \cite[Proposition 23.10]{MVW} we see the identity on $\P_{U'}^1$ is a finite morphism of standard triples $T_{U',V'}\rightarrow T_{U'}$ 
in the sense of \cite[Definition 21.1]{MVW}.
Using \cite[Theorem 21.6]{MVW}, we obtain a split exact sequence
\[
0 \rightarrow \cF((U'-Y_{U'})\times \A^1)\rightarrow \cF((U'-Y_{U'})\times (\A^1-\{0\}))\oplus \cF(V'')\rightarrow \cF(V_0'')\rightarrow 0.
\]
Since $\cF$ is homotopy invariant, the morphism 
$$
\cF((U'-Y_{U'})\times \A^1)\rightarrow \cF((U'-Y_{U'})\times (\A^1-\{0\}))
$$ 
is injective.
This implies that the morphism $\cF(V'')\rightarrow \cF(V_0'')$ is injective and
\[
(j_*j^*\cF_{-1})(U')\cong \cF(V_0'')/\cF(V'').
\]
The right side is $\coker(u_*u^*\cF\rightarrow v_*v^*\cF)(V')$, 
where
\[
u\colon (Z\times \A^1)-(Y\times \A^1)\rightarrow Z\times \A^1,
\; 
v\colon Z\times \A^1-((Z\times \{0\}\cup (Y\times \A^1))\rightarrow Z\times \A^1
\]
are the naturally induced open immersions.
We are done by passing to the limit over all $U'$ and $V'$.
\end{proof}

\begin{lem}
\label{A.4.27}
Let $X$ be a scheme smooth over $k$ with a strict normal crossing divisor $W$, 
let $Z$ be an irreducible component of $W$, and let 
$$
j\colon (X-\overline{(W-Z)})\times_X Z\rightarrow Z
$$ 
be the naturally induced open immersion.
For any $\A^1$-invariant presheaf $\cF$ with transfers, 
Zariski locally on $X$, 
there is an isomorphism 
\[
(j_*j^*\cF_{-1})_{Zar}\cong \cF_{(X,W,Z)}
\]
of Zariski shaves on $X$.
\end{lem}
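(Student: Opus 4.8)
The plan is to reduce the statement to Lemma \ref{A.4.26}, which treats precisely the case where $W$ is the strict normal crossing divisor $(Z\times\{0\})\cup(Y\times\A^1)$ on $Z\times\A^1$ and the distinguished component is $Z\times\{0\}$. The key observation is that the Definition \ref{A.4.20} construction $\cF_{(X,W,Z)}$ is \'etale-local on $X$ in the sense made precise by Lemma \ref{A.4.25}: if $f\colon X'\to X$ is \'etale with $f^{-1}(Z)\to Z$ an isomorphism, then $\cF_{(X,W,Z)}\cong\cF_{(X',W',Z')}$ canonically, where $W':=f^{-1}(W)$. Thus the claim ``Zariski locally on $X$'' allows us to replace $X$ by any convenient \'etale-local model of the pair $(X,W)$ near a point of $Z$, provided the model restricts to an isomorphism on $Z$.

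First I would fix a point $z\in Z$ and choose, using that $W$ is a strict normal crossing divisor containing $Z$ as a component, an \'etale neighborhood of $z$ of the following shape: Zariski (indeed \'etale) locally, the pair $(X,W)$ with distinguished component $Z$ is modeled by $(Z'\times\A^1,(Z'\times\{0\})\cup(Y'\times\A^1))$ with distinguished component $Z'\times\{0\}$, where $Z'$ is an \'etale-local model of $Z$ and $Y'$ is the induced strict normal crossing divisor corresponding to $\overline{W-Z}$ restricted to $Z'$. Concretely, a strict normal crossing divisor having $Z$ as one of its components means that locally $W$ is cut out by $x_1\cdots x_r=0$ in coordinates, with $Z=\{x_1=0\}$ say; writing $x_1$ as the $\A^1$-coordinate and the remaining equation $x_2\cdots x_r=0$ as defining a strict normal crossing divisor $Y$ on the slice $Z$, we obtain exactly the model of Lemma \ref{A.4.26}. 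The compatibility of the distinguished-component data under this identification is what lets Lemma \ref{A.4.25} apply: the \'etale map $X'\to X$ restricts to an isomorphism on the distinguished component because both sides are identified with $Z'$.

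Having arranged this local model, I would invoke Lemma \ref{A.4.26} directly, which gives
\[
(j_*j^*\cF_{-1})_{Zar}\cong \cF_{(Z\times\A^1,(Z\times\{0\})\cup(Y\times\A^1),Z\times\{0\})}
\]
as Zariski sheaves on $Z$, where the open immersion $j\colon Z-Y\to Z$ on the left is precisely the restriction to $Z$ of $X-\overline{(W-Z)}\to X$ under our identification (since $\overline{W-Z}$ meets $Z$ exactly in $Y$). Combining this isomorphism with the \'etale-invariance isomorphism of Lemma \ref{A.4.25} applied to the chosen \'etale neighborhood, we transport $\cF_{(Z\times\A^1,\dots)}$ back to $\cF_{(X,W,Z)}$, yielding the desired isomorphism Zariski locally on $X$. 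One should check that these identifications are compatible on overlaps so that the local isomorphisms glue to an isomorphism of Zariski sheaves on $Z$ (or simply state the result Zariski-locally as in the statement); this uses the naturality of both Lemma \ref{A.4.25} and Lemma \ref{A.4.26} in the input pair.

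\textbf{Main obstacle.} The step I expect to require the most care is producing the \'etale-local model in a way that genuinely matches the hypotheses of Lemma \ref{A.4.25} and Lemma \ref{A.4.26} simultaneously --- in particular, verifying that the \'etale chart can be chosen to restrict to an isomorphism on the distinguished component $Z$ (not merely on a smooth divisor), and that the ``leftover'' divisor $\overline{W-Z}$ genuinely pulls back to $Y\times\A^1$ for a strict normal crossing divisor $Y$ on the slice. This is a standard consequence of the local structure theory of strict normal crossing divisors (choosing coordinates adapted to $W$ in which $Z$ is a coordinate hyperplane), together with \cite[IV.17.12.2]{EGA} to split off the $\A^1$-factor, exactly in the spirit of Construction \ref{A.3.16}; but bookkeeping the distinguished-component data through the chart is where the argument is most delicate. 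The homotopy invariance and transfer structure of $\cF$ enter only through Lemma \ref{A.4.26}, so no new input on $\cF$ is needed beyond what is already assumed.
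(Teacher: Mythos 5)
Your proposal is correct and follows essentially the same route as the paper: reduce Zariski-locally to a parametrization $X\to\A^n_k$ adapted to $W$ with $Z$ a coordinate hyperplane, transport $\cF_{(X,W,Z)}$ along \'etale maps restricting to isomorphisms on $Z$ via Lemma \ref{A.4.25}, and conclude by Lemma \ref{A.4.26} on the model $(Z\times\A^1,(Z\times\{0\})\cup(Y\times\A^1))$. The only bookkeeping point you gloss over is that the comparison is not a single \'etale chart but the zig-zag $(Z,X)\leftarrow(Z,X_2)\rightarrow(Z,X_1)$ of Construction \ref{A.3.16}, so Lemma \ref{A.4.25} is applied twice --- exactly the delicacy you already flagged as the main obstacle.
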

\begin{proof}
Since the question is Zariski local on $X$, 
we may assume there exists an \'etale morphism $f\colon X\rightarrow \A_k^n$ of schemes over $k$ such that $W$ is the preimage of the union of axes in $\A_k^n$ and 
$Z=f^{-1}(\{0\}\times \A_k^{n-1})$, 
see Definition \ref{A.3.17}.
\vspace{0.1in}

Consider the morphism of closed pairs 
$$
(Z,X)\leftarrow (Z,X_2)\rightarrow (Z,X_1)
$$ 
in Construction \ref{A.3.16}.
Note that $X_1=Z\times \A^m$ where $m$ is the codimension of $Z$ in $X$.
Using Lemma \ref{A.4.25} twice, we can replace $(Z,X)$ by $(Z,X_1)$.
In this case, 
$W$ becomes the strict normal crossing divisor 
$$
(Z\times \{0\})\cup ((Z\cap (\overline{(W-Z)})\times \A^1)
$$ 
on $X_1=X\times \A^1$.
We are done by Lemma \ref{A.4.26}.
\end{proof}

\begin{rmk}
If $W=Z$, then Lemma \ref{A.4.27} is a special case of \cite[Theorem 4.14]{MR1764200}.
\end{rmk}

\begin{lem}
\label{A.4.17}
Let $\cF$ be an $\A^1$-invariant presheaf with transfer on $Sm/k$.  
Let $X$ be a smooth scheme over $k$ and let $j\colon U\rightarrow X$ be an open immersion whose complement is a strict normal crossing divisor $Z$ on $X$. 
If $k$ is perfect, then the sheafification morphism $\cF\rightarrow \cF_{Nis}$ induces an isomorphism
\begin{equation}
\label{A.4.17.1}
(j_*j^*\cF)_{Nis}\xrightarrow{\cong} (j_*j^*\cF_{Nis})_{Nis}\cong j_*j^*(\cF_{Nis}).
\end{equation}
\end{lem}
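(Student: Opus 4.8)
The statement asserts that for an $\A^1$-invariant presheaf with transfers $\cF$ on $Sm/k$ with $k$ perfect, and an open immersion $j\colon U\to X$ with strict normal crossing complement $Z$, the sheafification map $\cF\to \cF_{Nis}$ induces an isomorphism $(j_*j^*\cF)_{Nis}\xrightarrow{\cong}(j_*j^*\cF_{Nis})_{Nis}\cong j_*j^*(\cF_{Nis})$. The rightmost isomorphism $(j_*j^*\cF_{Nis})_{Nis}\cong j_*j^*(\cF_{Nis})$ is formal: $\cF_{Nis}$ is already a Nisnevich sheaf, $j^*$ of a sheaf is a sheaf on $U$, and $j_*$ preserves sheaves (it is a right adjoint on the small sites, or more simply the pushforward along an open immersion of the big Nisnevich topoi), so no sheafification is needed on the right. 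Hence the content is the left isomorphism $(j_*j^*\cF)_{Nis}\xrightarrow{\cong}(j_*j^*\cF_{Nis})_{Nis}$.

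**Key steps.** First I would reduce to a stalk computation: a morphism of Nisnevich sheaves is an isomorphism iff it is an isomorphism on all Nisnevich stalks, i.e. after restricting to Hensel local schemes $\Spec{\cO_{X,x}^h}$ for $x\in X$; and since the question is Nisnevich-local (and $j_*$, $j^*$ commute with the relevant restrictions), we may assume $X$ is Hensel local and, shrinking, that $Z$ is a smooth divisor or even a coordinate-type strict normal crossing as in Definition \ref{A.3.17}. Second, the key point is to identify $(j_*j^*\cF)$ with an iterated construction involving the functors $\cF_{(X,W,Z_i)}$ of Definition \ref{A.4.20}: stratifying $Z=Z_1\cup\cdots\cup Z_r$ and writing $U_i:=X-(Z_{i+1}\cup\cdots\cup Z_r)$, the exact sequence \eqref{A.4.20.1} expresses the successive "residue" sheaves. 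The crucial input is Lemma \ref{A.4.27}, which identifies $(j_{i*}j_i^*\cF_{-1})_{Zar}$ with $\cF_{(X,W,Z_i)}$ Zariski-locally, together with the fact (from the proof of Lemma \ref{A.4.4}, via \cite[Proposition 24.3]{MVW}) that $R^qj_{i*}j_i^*\cF=0$ for $q>0$ so the higher direct images do not interfere. Third, for an $\A^1$-invariant presheaf with transfers the sheafification $\cF\to\cF_{Nis}$ is compatible with all of these operations: this is essentially Lemma \ref{A.4.25} (étale invariance of the $\cF_{(X,W,Z)}$ construction) plus the fact that Zariski and Nisnevich sheafification agree on the relevant objects because the presheaves $H^q_{Nis}(\cF)$ are already $\A^1$-invariant sheaves with transfers (\cite[Theorem 24.1]{MVW}). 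Concretely, I would run an induction on $r$ (the number of components of $Z$): the case $r=0$ is trivial ($j=\mathrm{id}$), and the inductive step compares the exact sequences \eqref{A.4.20.1} for $\cF$ and for $\cF_{Nis}$, using the five lemma after knowing the two outer terms $(v_*v^*\cF)_{Nis}\to(v_*v^*\cF_{Nis})_{Nis}$ (inductive hypothesis with fewer components, since $v$ removes all of $W$) and $i_*\cF_{(X,W,Z)}\to i_*(\cF_{Nis})_{(X,W,Z)}$ (Lemma \ref{A.4.27} plus that $\cF_{-1}$ and its sheafification behave well) are isomorphisms.

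**Main obstacle.** The delicate part will be keeping the bookkeeping of Zariski versus Nisnevich sheafification straight throughout the induction: Lemmas \ref{A.4.25}, \ref{A.4.26}, \ref{A.4.27} are stated for Zariski sheaves, while the target statement is about Nisnevich sheafification, so at each step one must invoke that for $\A^1$-invariant presheaves with transfers over a perfect field the two sheafifications agree on the pushforward presheaves in play — this is where perfectness of $k$ is essential (it underlies \cite[Theorem 24.1]{MVW} and the vanishing of higher direct images). A secondary technical point is justifying the commutation of $j_*$, $j^*$ and the strata functors with passing to Hensel local schemes, which follows from the local nature of these constructions but needs to be spelled out. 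I expect the proof to closely parallel the argument of \cite[Proposition 24.3]{MVW} (the case $W=Z$ a single smooth divisor), extended one component at a time via \eqref{A.4.20.1}, so the main work is organizing this induction rather than any new idea.
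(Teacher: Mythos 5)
Your proposal is correct and follows essentially the same route as the paper: reduce the Nisnevich statement to its Zariski analogue via $\cF_{Zar}\cong\cF_{Nis}$ for $\A^1$-invariant presheaves with transfers over a perfect field, then induct on the number of components of $Z$ by comparing the residue exact sequences \eqref{A.4.20.1} for $\cF$ and its sheafification, with Lemma \ref{A.4.27} supplying the identification of the cokernel terms and a diagram chase (the paper uses the snake lemma rather than the five lemma, since the top row of \eqref{A.4.20.1} is not a priori left exact) closing the induction. The only slip is organizational: in \eqref{A.4.20.1} the term $v_*v^*\cF$ (removing all of $W$) is the \emph{middle} term for which the isomorphism is being proved at each stage, while $u_*u^*\cF$ is the inductive input, so the induction adds one component at a time rather than invoking the hypothesis for the $v$-term; this does not affect the validity of the argument.
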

\begin{proof}
Assume the sheafification morphism $\cF\rightarrow \cF_{Zar}$ induces an isomorphism
\begin{equation}
\label{A.4.17.2}
(j_*j^*\cF)_{Zar}\xrightarrow{\cong} (j_*j^*\cF_{Zar})_{Zar}\cong j_*j^*(\cF_{Zar}).
\end{equation}
The functors $j^*$ and $j_*$ map Nisnevich sheaves to Nisnevich sheaves, and $\cF_{Zar}\cong \cF_{Nis}$ by \cite[Proposition 5.5]{MR1764200}. 
Thus $j_*j^*(\cF_{Zar})$ is a Nisnevich sheaf on $Sm/X$, so $(j_*j^*\cF)_{Zar}$ is a Nisnevich sheaf on $Sm/X$ due to \eqref{A.4.17.2}.
It follows that \eqref{A.4.17.1} is an isomorphism. 
Therefore it suffices to show that \eqref{A.4.17.2} is an isomorphism.
\vspace{0.1in}

We proceed by induction on the number $r$ of connected components of $Z$.
If $r=0$, then $j$ is an isomorphism, and there is nothing to prove.
Suppose $r\geq 1$, and let $Z_1,\ldots,Z_r$ be the irreducible components of $Z$.
For each $i$, set $S_i:=Z_1\cup \cdots \cup Z_i$, and let
\[
u_i\colon X-S_i\rightarrow X,\;j_i\colon X-S_{i+1}\rightarrow X-S_i,\; z_i\colon Z_i\rightarrow X
\]
be the naturally induced immersions.
\vspace{0.1in}

For any open immersion $v\colon V'\rightarrow V$ of schemes, 
the functor $v^*$ commutes with Zariski sheafification.
Furthermore, we have
$v^*w_*\cong w_*'v'^*$ where $v'\colon V'\times_V W\rightarrow W$ and $w\colon V'\times_V W\rightarrow W$ are the projections.
Hence the problem is Zariski local on $X$.
Moreover, 
by \cite[Corollary 4.19, Proposition 4.34]{MR1764200}, Lemma \ref{A.4.27} and induction, 
we may assume there are isomorphisms 
\[
\cF_{(X,S_{i+1},Z_{i+1})}\cong (j_{i*}j_i^*\cF_{-1})_{Zar}\cong j_{i*}j_i^*((\cF_{Zar})_{-1})
\] 
and an exact sequence
\[
0\rightarrow u_{i*}u_i^*\cF_{Zar}\rightarrow u_{i+1,*}u_{i+1}^*(\cF_{Zar})\rightarrow {z_i}_*(j_{i*}j_i^*(\cF_{Zar})_{-1})\rightarrow 0
\]
of Zariski sheaves on $X$.
Using \eqref{A.4.20.1}, there is an induced commutative diagram with exact rows
\[
\begin{tikzcd}
&(u_{i*}u_i^*\cF)_{Zar}\arrow[d]\arrow[r]&(u_{i+1,*}u_{i+1}^*\cF)_{Zar}\arrow[d]\arrow[r]&z_{i*}((j_{i*}j_i^*\cF_{-1})_{Zar})\arrow[r]\arrow[d]&0\\
0\arrow[r]&u_{i*}u_i^*(\cF_{Zar})\arrow[r]&u_{i+1,*}u_{i+1}^*(\cF_{Zar})\arrow[r]&z_{i*}(j_{i*}j_i^*(\cF_{Zar})_{-1})\arrow[r]&0,
\end{tikzcd}
\]
where the Zariski sheafification functor induces the vertical morphisms. 
Here the left vertical morphism is an isomorphism by induction, 
and the right vertical morphism is an isomorphism since $(j_{i*}j_i^*\cF_{-1})_{Zar}\cong j_{i*}j_i^*((F_{-1})_{Zar})$ by induction on $r$ and $(\cF_{-1})_{Zar}\cong (\cF_{Zar})_{-1}$.
The snake lemma implies that also the middle vertical morphism is an isomorphism.
\vspace{0.1in}

The above paragraph shows that $(u_{i*}u_i^*\cF)_{Zar}\rightarrow u_{i*}u_i^*(\cF_{Zar})$ is an isomorphism by induction on $i$.
When $i=0$, this is trivial since $u_0$ is the identity.
Specializing to the case $i=r$ finishes the proof because $u_r=j$.
\end{proof}

We refer to \eqref{eqn::omegaadjunction} the definition of the functor
\[
\omega^*\colon \Shvtrkl\rightarrow \Shvltrkl.
\]

\begin{lem}
\label{A.4.18}
Let $\cF$ be a strictly $\A^1$-invariant complex of Nisnevich sheaves with transfer on $Sm/k$.  
Let $X$ be a smooth scheme over $k$ and let $j\colon U\rightarrow X$ be an open immersion whose complement is a strict normal crossing divisor on $X$. 
If $k$ is perfect, then for every $i\in \Z$, there is an isomorphism
\[
\bH_{Nis}^i(X,j_*j^*\cF)\cong \bH_{Nis}^i(U,\cF).
\]
In particular, for every $Y\in SmlSm/k$ and $i\in \Z$, there is an isomorphism
\[
\bH_{sNis}^i(Y,\omega^*\cF)\cong \bH_{Nis}^i(Y-\partial Y,\cF).
\]
\end{lem}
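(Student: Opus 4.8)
\emph{Proof plan.} The second isomorphism will be deduced from the first. Given $Y\in SmlSm/k$, set $X:=\underline{Y}$, $U:=Y-\partial Y=X-\partial X$, and let $j\colon U\hookrightarrow X$ be the inclusion; by definition of $SmlSm/k$ the divisor $\partial X$ is a strict normal crossing divisor on $X$. The small strict Nisnevich site $Y_{sNis}$ is canonically equivalent to the small Nisnevich site $X_{Nis}$, and under this equivalence the restriction of $\omega^*\cF$ to $Y_{sNis}$ is identified with $j_*j^*(\cF|_{X_{Nis}})$, since $\omega^*\cF(V)=\cF(\omega(V))=\cF(\underline{V}\times_X U)$ for $V\in Y_{sNis}$. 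Invoking the comparison between big and small sites (Proposition \ref{bigsmall.2}, applicable because $lSm/k$ has finite $sNis$-cohomological dimension by Corollary \ref{Div.5}), one obtains $\bH_{sNis}^i(Y,\omega^*\cF)\cong\bH_{Nis}^i(X,j_*j^*\cF)$ and that $\bH_{Nis}^i(U,\cF)$ may be computed on the small site; so the first isomorphism yields the second.

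For the first isomorphism I would first note that, writing $j_*j^*\cF$ for the termwise (underived) pushforward of the restriction $j^*\cF$, it suffices to show that the canonical morphism $j_*j^*\cF\to Rj_*j^*\cF$ induces an isomorphism on $\bH_{Nis}^i(X,-)$ for every $i$, since $\bH_{Nis}^i(U,\cF)=\bH_{Nis}^i(U,j^*\cF)=H^i\bigl(R\Gamma_{Nis}(X,Rj_*j^*\cF)\bigr)$. In fact this morphism should be a quasi-isomorphism, which I would verify on cohomology sheaves. Because $\cF$ is strictly $\A^1$-invariant and $k$ is perfect, each cohomology sheaf $\mathcal{H}^q(\cF)$ is an $\A^1$-invariant Nisnevich sheaf with transfers (Voevodsky, \cite{MVW}); hence the proof of Lemma \ref{A.4.4} gives the vanishing $R^pj_*j^*\mathcal{H}^q(\cF)=0$ for $p>0$, so the hyper-direct-image spectral sequence $E_2^{pq}=R^pj_*j^*\mathcal{H}^q(\cF)\Rightarrow\mathcal{H}^{p+q}(Rj_*j^*\cF)$ degenerates and yields $\mathcal{H}^n(Rj_*j^*\cF)\cong j_*j^*\mathcal{H}^n(\cF)$.

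On the other hand, to compute $\mathcal{H}^n(j_*j^*\cF)$ I would invoke Lemma \ref{A.4.17}: after replacing $\cF$, up to quasi-isomorphism as a complex of Nisnevich sheaves with transfers (possible over perfect $k$), by the Nisnevich sheafification of a complex $\mathcal{G}$ of $\A^1$-invariant presheaves with transfers, the functors $j^*$ and $j_*$ are exact on presheaves, so $j_*j^*\mathcal{G}$ has cohomology presheaves $j_*j^*\mathcal{H}^n_{\mathrm{pre}}(\mathcal{G})$; applying Lemma \ref{A.4.17} to the $\A^1$-invariant presheaves $\mathcal{H}^n_{\mathrm{pre}}(\mathcal{G})$ then identifies $\mathcal{H}^n(j_*j^*\cF)$ with $j_*j^*\mathcal{H}^n(\cF)$, compatibly with the natural map into $Rj_*j^*\cF$. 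It follows that $j_*j^*\cF\to Rj_*j^*\cF$ is a quasi-isomorphism and hence $\bH_{Nis}^i(X,j_*j^*\cF)\cong\bH_{Nis}^i(U,\cF)$.

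The main obstacle is precisely the computation of the cohomology sheaves of the \emph{underived} complex $j_*j^*\cF$: since $j_*$ is only left exact and does not preserve distinguished triangles, a naive dévissage to the cohomology sheaves $\mathcal{H}^q(\cF)$ does not work, and one must pass through Lemma \ref{A.4.17} at the presheaf level (together with, in the unbounded case, some $K$-injective-resolution bookkeeping) in order to bring the vanishing of Lemma \ref{A.4.4} to bear.
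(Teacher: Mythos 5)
Your argument is correct and rests on the same essential ingredients as the paper's proof: the d\'evissage to cohomology presheaves $H^q(\cF)$, the presheaf-level exactness of $j_*$ and $j^*$ combined with Lemma \ref{A.4.17} to identify $(H^q(j_*j^*\cF))_{Nis}$ with $j_*j^*((H^q\cF)_{Nis})$, and the acyclicity coming from (the proof of) Lemma \ref{A.4.4}, with convergence guaranteed by finite Nisnevich cohomological dimension. The only difference is packaging: the paper compares the two strongly convergent hypercohomology spectral sequences converging to $\bH_{Nis}^{*}(X,j_*j^*\cF)$ and $\bH_{Nis}^{*}(U,\cF)$ via an isomorphism on $E_2$-pages, whereas you first record that $j_*j^*\cF\rightarrow Rj_*j^*\cF$ is a quasi-isomorphism and then take hypercohomology — these are equivalent formulations of the same argument.
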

\begin{proof}
For any complex $\cG$ of presheaves and $i\in Z$, we let $H^i(\cG)$ denote the associated cohomology presheaf. 
The Nisnevich cohomological dimensions of $X$ and $U$ are finite, 
see e.g., \cite[Example 11.2]{MVW}.
Hence by \cite[5.7.9]{weibel_1994}, there are two strongly convergent hypercohomology spectral sequences $E$ and $E'$ given by
\[
E_2^{pq}=H_{Nis}^p(X,(H^q(j_*j^*\cF))_{Nis})\Rightarrow \bH^{p+q}(X,j_*j^*\cF),
\]
and 
\[
E_2'^{pq}=H_{Nis}^p(U,(H^q(\cF))_{Nis})\Rightarrow \bH^{p+q}(U,\cF).
\]
There is a naturally induced morphism of spectral sequences $E\rightarrow E'$. 
\vspace{0.1in}

Since $j_*$ and $j^*$ are both exact functors between presheaf categories, there is an isomorphism
\[
H^q(j_*j^*\cF)\cong j_*j^*(H^q(\cF)).
\] 
Thus $(H^q(j_*j^*\cF))_{Nis}\cong j_*j^*(H^q(\cF)_{Nis})$ by Lemma \ref{A.4.17}. 
We deduce the maps on $E_2$ terms are isomorphisms by Lemma \ref{A.4.4} since $(H^q(\cF))_{Nis}$ is an $\A^1$-invariant Nisnevich sheaf by \cite[Proposition 14.8]{MVW}. 
Owing to the strong convergence of both the spectral sequences above, the induced map between the filtered target groups is also an isomorphism.
\end{proof}

\begin{lem}
\label{A.4.7}
Let $\cF$ be a strictly $\A^1$-invariant complex of Nisnevich sheaves with transfers on $Sm/k$. 
For every $X\in lSm/k$ and integer $i\in \Z$ there is a naturally induced isomorphism
\begin{equation}
\label{A.4.7.7}
\bH_{dNis}^i(X,\omega^*\cF)\cong \bH_{Nis}^i(X-\partial X,\cF).
\end{equation}
\end{lem}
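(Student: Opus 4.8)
The plan is to reduce the statement for a general $X \in lSm/k$ to the case $X \in SmlSm/k$, where Lemma \ref{A.4.18} applies directly. The reduction uses the dividing Nisnevich topology: by Proposition \ref{A.3.19} (or the discussion around $X_{div}^{Sm}$), every $X \in lSm/k$ admits log modifications $Y \to X$ with $Y \in SmlSm/k$, and such modifications are cofinal in $X_{div}$ by Proposition \ref{A.9.81}. Since $\omega^*\cF$ is a complex of dividing Nisnevich sheaves (being the restriction of a Nisnevich sheaf, and invariant under log modifications because $Y - \partial Y \cong X - \partial X$ for a log modification), I would like to compute its dividing Nisnevich hypercohomology via Theorem \ref{Div.3}.

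First I would observe that $\omega^*\cF$, viewed on $SmlSm/k$, is a bounded below complex of strict Nisnevich sheaves: indeed by Lemma \ref{A.4.18} we have $\bH_{sNis}^i(Y, \omega^*\cF) \cong \bH_{Nis}^i(Y - \partial Y, \cF)$ for $Y \in SmlSm/k$. Then I would apply Theorem \ref{Div.3} (in the strict Nisnevich case) to get
\[
\bH_{dNis}^i(X, a_{dNis}^*\omega^*\cF) \cong \colimit_{Y \in X_{div}} \bH_{sNis}^i(Y, \omega^*\cF).
\]
The subtle point is that $\omega^*\cF$ restricted to $lSm/k$ need not literally equal $a_{dNis}^*$ of its restriction to some strict Nisnevich sheaf; but since $\omega^*\cF$ is already a dividing Nisnevich sheaf with transfers, the sheafification $a_{dNis}^*\omega^*\cF \cong \omega^*\cF$, so the left-hand side is exactly $\bH_{dNis}^i(X, \omega^*\cF)$. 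Combining with Lemma \ref{A.4.18} on each $Y$, the right-hand side becomes $\colimit_{Y \in X_{div}} \bH_{Nis}^i(Y - \partial Y, \cF)$.

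Finally, for every log modification $Y \to X$ the induced morphism $Y - \partial Y \to X - \partial X$ is an isomorphism of $k$-schemes (this is part of the definition of log modification, cf. Remark \ref{A.9.68}(1) and the fact that dividing covers have their centers in the boundary). Hence $\bH_{Nis}^i(Y - \partial Y, \cF) \cong \bH_{Nis}^i(X - \partial X, \cF)$ canonically, and the filtered colimit over $X_{div}$ collapses to this single group, yielding \eqref{A.4.7.7}. The main obstacle I anticipate is bookkeeping: checking carefully that $\omega^*\cF$ is genuinely a complex of dividing Nisnevich sheaves with log transfers (so that Lemma \ref{A.4.18} and Theorem \ref{Div.3} can both be invoked, and so that $a_{dNis}^* \omega^* \cF \cong \omega^* \cF$), and verifying that all the identifications are compatible with the transition maps in the colimit — in particular that the isomorphism $\bH_{Nis}^i(Y - \partial Y, \cF) \cong \bH_{Nis}^i(X - \partial X, \cF)$ is induced by the structure map, so that the colimit system is (isomorphic to) a constant system.
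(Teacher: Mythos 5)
Your strategy is genuinely different from the paper's and is mostly sound: you compute $\bH_{dNis}^i(X,\omega^*\cF)$ via the colimit formula of Theorem \ref{Div.3} and then apply Lemma \ref{A.4.18} termwise over the cofinal filtered system $X_{div}^{Sm}$, whereas the paper first reduces to $X\in SmlSm/k$ using log-modification invariance of both sides and then runs a fibrant-replacement argument: it chooses a descent-fibrant resolution $\cF\to\cG$ in $\Co(\Shvtrkl)$, checks that $\omega^*\cG$ satisfies dividing Nisnevich descent (because $\mathscr{X}-\partial\mathscr{X}\to X-\partial X$ is a Nisnevich hypercover for every dividing Nisnevich hypercover $\mathscr{X}\to X$), and reads off $\bH_{dNis}^i(X,\omega^*\cG)\cong H^i(\cG(X-\partial X))$, concluding via Lemma \ref{A.4.18} applied to both $\cF$ and $\cG$. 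The bookkeeping points you flag are all fine: $\omega^*\cF$ is termwise a dividing Nisnevich sheaf by Lemma \ref{ketcomp.9}, so $a_{dNis}^*\omega^*\cF\cong\omega^*\cF$; and the colimit system is constant because $Y-\partial Y\to X-\partial X$ is an isomorphism for every log modification by Remark \ref{A.9.68}(1) together with Proposition \ref{A.9.75}, with Proposition \ref{A.9.81} supplying cofinality of $X_{div}^{Sm}$.

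There is, however, one genuine gap. Theorem \ref{Div.3} is stated and proved only for \emph{bounded below} complexes (its proof replaces $\cF$ by a termwise injective resolution), while Lemma \ref{A.4.7} makes no boundedness assumption, and the complex to which it is actually applied in Proposition \ref{A.4.9}, namely the Suslin complex $C_*^{\A^1}\Ztr(X)$, is unbounded in the cohomologically negative direction. Your sentence asserting that $\omega^*\cF$ is ``a bounded below complex of strict Nisnevich sheaves'' silently imports a hypothesis that is not available. The gap is likely repairable: using the finite $dNis$-cohomological dimension of Corollary \ref{Div.5} one can extend Theorem \ref{Div.3} to unbounded complexes by a truncation and convergence argument, much as the proof of Lemma \ref{A.4.18} itself handles unbounded complexes via strongly convergent hypercohomology spectral sequences. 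But as written your proof only establishes the lemma for bounded below complexes; you would need to add this extension of Theorem \ref{Div.3}, or else switch to the paper's model-categorical argument, which works for unbounded complexes without modification.
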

\begin{proof}
If $Y\rightarrow X$ is a log modification, there is a naturally induced isomorphism 
$$
Y-\partial Y\xrightarrow{\cong} X-\partial X.
$$
Due to Lemma \ref{A.5.70}, we deduce the isomorphism 
$$
a_{dNis}^*\Zltr(Y)\xrightarrow{\cong} a_{dNis}^*\Zltr(X).
$$
Moreover, we conclude there are isomorphisms
\begin{align*}
\bH_{dNis}^i(X,\omega^*\cF)
& \cong
\hom_{\mathbf{D}(\Shvltrkl)}(a_{dNis}^*\Zltr(X),\omega^*\cF) \\
& \cong
\hom_{\mathbf{D}(\Shvltrkl)}(a_{dNis}^*\Zltr(Y),\omega^*\cF) \\
& \cong
\bH_{dNis}^i(Y,\omega^*\cF).
\end{align*}
This shows both sides of \eqref{A.4.7.7} are invariant under log modifications.
By Proposition \ref{A.3.19}, we may assume that $\underline{X}$ is smooth, and $\partial X$ is a strict normal crossing divisor.
\vspace{0.1in}

Choose a fibrant resolution $f\colon \cF\rightarrow \cG$ in the model category $\Co(\Shvtrkl)$ with respect to the descent model structure.
This implies that $f$ is a quasi-isomorphism and
\begin{equation}
\label{A.4.7.4}
\bH_{Nis}^i(X-\partial X,\cF)\cong H^i(\cG(X-\partial X)).
\end{equation}
Let us show that $\omega^*\cG$ is a fibrant object in the model category $\Co(\Shvltrkl)$ for the descent model structure.
By definition of $\omega^*$, for every integer $i\in \Z$, there is an isomorphism 
\begin{equation}
\label{A.4.7.1}
H^i(\omega^*\cG(X))\cong H^i(\cG(X-\partial X)).
\end{equation}
We need to show that for every dividing Nisnevich hypercover $\mathscr{X}\rightarrow X$  we have an isomorphism
\[
H^i(\omega^*\cG(X))\rightarrow H^i({\rm Tot}^\pi(\omega^*\cG(\mathscr{X}))),
\]
see \eqref{A.8.28.1}.
\vspace{0.1in}

Since $\mathscr{X}-\partial \mathscr{X}\rightarrow X-\partial X$ is a Nisnevich hypercover,
the assumption that $\cG$ is fibrant in $\Co(\Shvtrkl)$ implies
\begin{equation}
\label{A.4.7.2}
H^i(\cG(X-\partial X))\cong H^i({\rm Tot}^\pi \cG(\mathscr{X}-\partial \mathscr{X})).
\end{equation}
Using \eqref{A.4.7.1} we have that
\begin{equation}
\label{A.4.7.3}
H^i({\rm Tot}^\pi(\cG(\mathscr{X}-\partial \mathscr{X})))
\cong
H^i({\rm Tot}^\pi(\omega^*\cG(\mathscr{X}))).
\end{equation}
By combining \eqref{A.4.7.1}, \eqref{A.4.7.2}, and \eqref{A.4.7.3} we deduce that $\omega^*\cG$ is fibrant.
Hence there is an isomorphism
\begin{equation}
\label{A.4.7.5}
\bH_{dNis}^i(X,\omega^*\cG)\cong H^i(\omega^*\cG(X)).
\end{equation}

Due to Lemma \ref{A.4.18} there are isomorphisms
\[
\bH_{sNis}^i(X,\omega^*\cF)\cong \bH_{Nis}^i(X-\partial X,\cF)
\]
and
\[
\bH_{sNis}^i(X,\omega^*\cG)\cong \bH_{Nis}^i(X-\partial X,\cG).
\]
Since $f$ is a quasi-isomorphism, we deduce that $\omega^*f$ is a quasi-isomorphism for the strict Nisnevich topology.
Thus $\omega^*f$ is a quasi-isomorphism for the dividing Nisnevich topology.
It follows that
\begin{equation}
\label{A.4.7.6}
\bH_{dNis}^i(X,\omega^*\cF)\cong \bH_{dNis}^i(X,\omega^*\cG).
\end{equation}
To conclude the proof we combine \eqref{A.4.7.4}, \eqref{A.4.7.1}, \eqref{A.4.7.5}, and \eqref{A.4.7.6}.
\end{proof}

\begin{lem}
\label{ketcomp.9}
Let $\cF$ be a Nisnevich (resp.\ an \'etale) sheaf on $Sm/k$.
Then $\omega^*\cF$ is a dividing Nisnevich (resp.\ log \'etale) sheaf on $lSm/k$.
\end{lem}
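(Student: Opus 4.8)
The statement to prove is Lemma \ref{ketcomp.9}: if $\cF$ is a Nisnevich (resp.\ \'etale) sheaf on $Sm/k$, then $\omega^*\cF$ is a dividing Nisnevich (resp.\ log \'etale) sheaf on $lSm/k$.

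\textbf{Proof plan.} The plan is to reduce the sheaf condition for $\omega^*\cF$ to two separate verifications: one against strict Nisnevich (resp.\ strict \'etale / Kummer \'etale) covers, and one against dividing covers, since the dividing Nisnevich topology is generated by the strict Nisnevich cd-structure together with the dividing cd-structure (Definition \ref{A.5.14}), and similarly the log \'etale topology is generated by the Kummer \'etale topology and the dividing topology. By Proposition \ref{A.9.60} these cd-structures are complete, so it suffices to check the sheaf condition on the distinguished squares of each cd-structure plus on dividing covers.

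First I would handle the strict Nisnevich (resp.\ strict \'etale) case. Given a strict Nisnevich distinguished square $Q$ in $lSm/k$ as in \eqref{A.5.14.1}, applying $\omega = (-)-\partial(-)$ produces a Nisnevich distinguished square $\underline{Q}$ of smooth $k$-schemes because the morphisms $f,g$ are strict (cf.\ the Example following Definition \ref{A.5.65}, and Remark \ref{A.9.68}); since the morphisms are strict, $\omega$ simply removes the common boundary and preserves the cartesian and excision properties. Then $\omega^*\cF$ evaluated on $Q$ equals $\cF$ evaluated on $\underline{Q}$, which is cartesian in $\Lambda\text{-}\mathbf{Mod}$ because $\cF$ is a Nisnevich sheaf. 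The strict \'etale and Kummer \'etale cases are similar: for Kummer \'etale covers, I would use that after a dividing cover one may assume the cover is strict (this is in the spirit of Proposition \ref{A.5.44}, though here we only need the small-site statement), reducing again to the strict case where $\omega$ lands in \'etale covers. The key input here is just that $\omega$ sends these covers to covers of $Sm/k$ of the corresponding type, which is essentially built into how the strict topologies are defined.

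Second I would handle dividing covers. If $f\colon Y\to X$ is a log modification, then by Remark \ref{A.9.68}(1) (together with Remark \ref{A.9.68}(4)) the induced morphism $Y-\partial Y\to X-\partial X$ is an isomorphism of $k$-schemes. Hence $\omega^*\cF(X) = \cF(X-\partial X) \xrightarrow{\cong} \cF(Y-\partial Y) = \omega^*\cF(Y)$, so $\omega^*\cF$ satisfies the dividing sheaf condition by the criterion of Remark \ref{A.9.68}(4). Combining the two cases: a presheaf that is a sheaf for the strict Nisnevich (resp.\ strict \'etale, resp.\ Kummer \'etale) topology and also for the dividing topology is a sheaf for the dividing Nisnevich (resp.\ dividing \'etale, resp.\ log \'etale) topology, since the latter is the join of the two. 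I expect the main obstacle to be the bookkeeping in the Kummer \'etale / log \'etale case — specifically, carefully justifying that checking the sheaf condition on Kummer \'etale covers plus dividing covers suffices for log \'etale sheaves (the log \'etale topology being defined as the smallest topology finer than both), and that $\omega$ interacts correctly with Kummer \'etale covers after passing to a suitable dividing refinement. The Nisnevich case is entirely routine; the \'etale/Kummer subtleties are where care is needed.
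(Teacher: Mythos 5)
Your proposal is essentially correct, but it takes a noticeably more roundabout route than the paper. The paper's proof is a direct two-line verification: for \emph{any} dividing Nisnevich (resp.\ log \'etale) cover $p\colon U\to X$, the \v{C}ech equalizer for $\omega^*\cF$ is literally the \v{C}ech equalizer for $\cF$ against $U-\partial U\to X-\partial X$, using that $\omega$ sends such covers to Nisnevich (resp.\ \'etale) covers and that $(U\times_XU)-\partial(U\times_XU)\cong(U-\partial U)\times_{X-\partial X}(U-\partial U)$; exactness is then immediate from the sheaf property of $\cF$. You instead decompose the topology into its strict and dividing generators, verify descent for each class separately, and reassemble. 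This is fine for the dividing Nisnevich topology, where Proposition \ref{A.9.60} together with Voevodsky's characterization of sheaves for complete regular cd-structures legitimizes checking only the distinguished squares of the two constituent cd-structures. But for the log \'etale topology --- which, as the paper notes after Definition \ref{A.5.65}, is \emph{not} associated with a cd-structure --- your reassembly step genuinely requires the refinement argument via Proposition \ref{A.5.44}: given a log \'etale cover $Y\to X$, one passes to a log modification $X'\to X$ making $Y\times_XX'\to X'$ Kummer \'etale, and then one must compare the two \v{C}ech complexes, using that $Y\times_XY\times_XX'\to Y\times_XY$ is again a log modification so that $\omega^*\cF$ takes the same values on both nerves. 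You flag this as the delicate point, correctly, but the phrase ``one may assume the cover is strict'' is not right: after a log modification the cover becomes (integral) Kummer \'etale, not strict; what saves you is that $\omega$ still sends Kummer \'etale covers to \'etale covers, since Kummer morphisms are exact and hence preserve the locus of trivial log structure. The paper's direct argument sidesteps all of this bookkeeping, at the cost of asserting (without comment) that $\omega$ preserves the relevant covers and fiber products; your version makes the structure of the topology more visible but is longer and, in the log \'etale case, would need the comparison of \v{C}ech nerves spelled out to be complete.
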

\begin{proof}
Let $p\colon U\rightarrow X$ be a dividing Nisnevich (resp.\ log \'etale) cover.
We need to show that the naturally induced sequence
\[
\omega^*\cF(X)\rightarrow \omega^*\cF(U)\rightrightarrows \omega^*\cF(U\times_X U)
\]
is exact.
This sequence is isomorphic to
\[
\cF(X-\partial X)\rightarrow \cF(U-\partial U)\rightrightarrows\cF(U-\partial U\times_{X-\partial X} U-\partial U),
\]
which is exact since $U-\partial U\rightarrow X-\partial X$ is a Nisnevich (resp.\ an \'etale) cover and $\cF$ is a Nisnevich (resp.\ an \'etale) sheaf.
\end{proof}

\begin{prop}
\label{A.4.13}
Let $\cF$ be a strictly $\A^1$-invariant complex of Nisnevich sheaves with transfers on $Sm/k$.
Then $\omega^*\cF$ is a both strictly $(\P^\bullet,\P^{\bullet-1})$-invariant and strictly $\A^1$-invariant complex of dividing Nisnevich sheaves with log transfers.
\end{prop}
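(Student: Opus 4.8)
The plan is to derive the statement from the hypercohomology comparison of Lemma \ref{A.4.7} together with Lemma \ref{ketcomp.9}, reducing both invariance properties to the $\A^1$-invariance of $\cF$ over $Sm/k$.

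First I would verify that $\omega^*\cF$ is indeed a complex of dividing Nisnevich sheaves with log transfers. By construction $\omega^*$ is a functor $\Co(\Pshtrkl)\to\Co(\Pshltrkl)$, so $\omega^*\cF$ carries log transfers degreewise. For each degree $n$ the term $\cF^n$ is a Nisnevich sheaf on $Sm/k$, hence its underlying Nisnevich sheaf $\gamma^*\cF^n$ satisfies, by Lemma \ref{ketcomp.9}, that $\omega^*\gamma^*\cF^n=\gamma^*\omega^*\cF^n$ is a dividing Nisnevich sheaf on $lSm/k$. By Definition \ref{A.8.1} this means $\omega^*\cF^n\in\Shvltrkl$, so $\omega^*\cF$ is a complex of dividing Nisnevich sheaves with log transfers.

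Next I would use that, for each $i\in\Z$, Lemma \ref{A.4.7} provides an isomorphism $\bH_{dNis}^i(X,\omega^*\cF)\cong\bH_{Nis}^i(X-\partial X,\cF)$ that is natural in $X\in lSm/k$; this naturality is read off from the proof of that lemma, which factors through a fibrant replacement $\cG$ of $\cF$ and the tautological identity $\omega^*\cG(X)=\cG(X-\partial X)$, all functorial in $X$. Now fix $X\in lSm/k$, $i\in\Z$, and let $V$ be either $(\A^1,\emptyset)$ or $(\P^n,\P^{n-1})$, with projection $\pi\colon X\times V\to X$. On the complement of the boundary, $\pi$ restricts to the projection $(X-\partial X)\times(V-\partial V)\to X-\partial X$, where $V-\partial V\cong\A^1$ in the first case and $V-\partial V\cong\A^n$ (the complement of the hyperplane $x_n=0$ in $\P^n$) in the second. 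By naturality of the isomorphism of Lemma \ref{A.4.7}, the pullback $\pi^*\colon\bH_{dNis}^i(X,\omega^*\cF)\to\bH_{dNis}^i(X\times V,\omega^*\cF)$ is identified with the pullback $\pi^*\colon\bH_{Nis}^i(X-\partial X,\cF)\to\bH_{Nis}^i((X-\partial X)\times(V-\partial V),\cF)$.

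Finally I would conclude by the strict $\A^1$-invariance of $\cF$: since $X\in lSm/k$, the scheme $X-\partial X$ is smooth over $k$, so the projection induces an isomorphism $\bH_{Nis}^i(X-\partial X,\cF)\cong\bH_{Nis}^i((X-\partial X)\times\A^1,\cF)$, and iterating over the smooth schemes $(X-\partial X)\times\A^j$ also $\bH_{Nis}^i(X-\partial X,\cF)\cong\bH_{Nis}^i((X-\partial X)\times\A^n,\cF)$. Hence the maps $\pi^*$ above are isomorphisms, which is precisely the assertion that $\omega^*\cF$ is strictly $\A^1$-invariant and strictly $(\P^\bullet,\P^{\bullet-1})$-invariant. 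The only step requiring attention is the naturality of the comparison isomorphism of Lemma \ref{A.4.7}; the rest is formal once that is in hand.
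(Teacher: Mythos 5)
Your proposal is correct and follows essentially the same route as the paper: establish the dividing Nisnevich sheaf property via Lemma \ref{ketcomp.9}, apply the comparison isomorphism of Lemma \ref{A.4.7} to identify the relevant cohomology groups with $\bH_{Nis}^i$ of $X-\partial X$, $(X-\partial X)\times\A^1$, and $(X-\partial X)\times\A^n$, and conclude from the strict $\A^1$-invariance of $\cF$. Your added attention to the naturality of the comparison isomorphism is a reasonable elaboration of a point the paper leaves implicit.
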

\begin{proof}
Lemma \ref{ketcomp.9} shows that $\omega^*\cF$ is a complex of dividing Nisnevich sheaves with transfers.
For $X\in lSm/k$, owing to Lemma \ref{A.4.7} there are isomorphisms
\begin{gather*}
\bH_{dNis}^i(X,\omega^*\cF)\cong \bH_{Nis}^i(X-\partial X,\cF),\\
\bH_{dNis}^i(X\times (\P^n,\P^{n-1}),\omega^*\cF)\cong \bH_{Nis}^i((X-\partial X)\times \A^{n},\cF),
\\
\bH_{dNis}^i(X\times \A^1,\omega^*\cF)\cong \bH_{Nis}^i((X-\partial X)\times \A^1,\cF).
\end{gather*}
To conclude the proof, we use the assumption that $\cF$ is strict $\A^1$-invariant.
\end{proof}

\subsection{$\A^1$-local objects in $\ldmeff$}
We denote by 
\[
v\colon \LCor\rightarrow \LCor[(\cA\cB l/k)^{-1}]
\]
the evident localization functor, where $\cA\cB l/k$ is the class of admissible blow-ups in Definition \ref{A.3.1}.
As observed in Definition \ref{A.1.16} there exist adjoint functors
\[
\begin{tikzcd}
\Co^{\leq 0}(\Psh(\LCor,\Lambda))\arrow[rr,shift left=1.5ex,"v_\sharp "]
\arrow[rr,"v^*" description,leftarrow]\arrow[rr,shift right=1.5ex,"v_*"']&&\Co^{\leq 0}(\Psh(\LCor[(\cA\cB l/k)^{-1}],\Lambda)).
\end{tikzcd}
\]
Here $v_\sharp$ is left adjoint to $v^*$ and $v^*$ is left adjoint to $v_*$.

\begin{prop}
\label{A.4.2}
Assume that $k$ admits resolution of singularities. 
Let $X$ be a smooth and proper scheme over $k$, and let $Y$ be an fs log scheme log smooth over $k$. 
Then there is a naturally induced isomorphism 
\[
v^*v_\sharp \Zltr(X)(Y)\cong \Zltr(X)(Y-\partial Y).
\]
\end{prop}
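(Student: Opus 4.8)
The plan is to combine the colimit formula for $v_\sharp$ with a flattening argument. Since $k$ admits resolution of singularities, the class $\cA\cB l/k$ admits a calculus of right fractions (Proposition \ref{A.3.10}), so the argument of Proposition \ref{A.1.4} applies verbatim to presheaves with log transfers and yields
\[
v^*v_\sharp\Zltr(X)(Y)=v_\sharp\Zltr(X)(Y)\cong\colimit_{(Y'\to Y)\in \cA\cB l/k\,\downarrow\,Y}\Zltr(X)(Y'),
\]
a filtered colimit (the functor $Y'\mapsto\Zltr(X)(Y')$ being contravariant on the cofiltered index category $\cA\cB l/k\,\downarrow\,Y$). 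Because $X$ carries the trivial log structure, Example \ref{A.5.31}(2) identifies $\Zltr(X)(Y')=\lCor(Y',X)\otimes\Lambda$ with $\Cor(\underline{Y'},X)\otimes\Lambda$ for every $Y'$, while Example \ref{A.5.31}(3) identifies $\Zltr(X)(Y-\partial Y)$ with $\Cor(Y-\partial Y,X)\otimes\Lambda$. For each admissible blow-up $Y'\to Y$ one has $Y'-\partial Y'=Y-\partial Y$, so restricting along the open immersion $Y-\partial Y\hookrightarrow\underline{Y'}$ defines maps $\rho_{Y'}\colon\Cor(\underline{Y'},X)\to\Cor(Y-\partial Y,X)$ that are compatible over the index category; the task is to show that the induced map $\rho$ from the colimit to $\Cor(Y-\partial Y,X)\otimes\Lambda$ is an isomorphism.

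Injectivity is the easy half, proved exactly as in Lemma \ref{A.5.10}. Reducing to $Y$ connected, $\underline{Y'}$ is normal by Lemma \ref{A.9.18}, hence integral, and $Y-\partial Y$ is dense in it; so an elementary correspondence $V\subseteq\underline{Y'}\times X$ restricts to a single elementary correspondence over $Y-\partial Y$, and distinct $V$ have distinct restrictions. Thus $\rho_{Y'}$ sends a basis of $\Cor(\underline{Y'},X)$ injectively into a basis of $\Cor(Y-\partial Y,X)$, so it is a split injection; it stays split injective after $\otimes\Lambda$, and a filtered colimit of a compatible system of (split) injections into a fixed module is injective.

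Surjectivity is the heart of the argument. It suffices to treat an elementary correspondence $W\subseteq(Y-\partial Y)\times X$, finite and surjective over a component of $Y-\partial Y$ (a general finite log correspondence is then handled by choosing a common refinement $Y'$ of its finitely many components, using that $\cA\cB l/k\,\downarrow\,Y$ is cofiltered). Let $\overline{W}\subseteq\underline{Y}\times X$ be the closure of $W$. Since $X$ is proper, $\overline{W}\to\underline{Y}$ is proper; it is finite over $Y-\partial Y$, so the locus where it is not quasi-finite is closed with image contained in $\partial Y$. I would then invoke elimination of indeterminacy / platification by blow-ups (Raynaud–Gruson) to produce a blow-up $\pi\colon\underline{Y}_1\to\underline{Y}$ whose centre lies in $\partial Y$ and along which the strict transform of $\overline{W}$ becomes finite over $\underline{Y}_1$; as the centre lies in $\partial Y$, we still have $\underline{Y}_1-\pi^{-1}(\partial Y)\cong Y-\partial Y$. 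Endowing $\underline{Y}_1$ with the compactifying log structure of this open immersion and applying Proposition \ref{A.3.19}, I obtain $Y'\in SmlSm/k$ together with a log modification $Y'\to Y_1$; then $Y'\to Y$ is an admissible blow-up, and the strict transform of $\overline{W}$ in $\underline{Y'}\times X$ is a closed subscheme of the finite-over-$\underline{Y'}$ scheme $(\text{strict transform over }\underline{Y}_1)\times_{\underline{Y}_1}\underline{Y'}$, hence finite over $\underline{Y'}$ and surjective onto a component. By Example \ref{A.5.31}(2) its associated cycle defines an element $V\in\lCor(Y',X)\cong\Cor(\underline{Y'},X)$ — the morphism $Z^N\to X$ required by Definition \ref{A.5.2}(ii) exists because $X$ has trivial log structure — and $\rho_{Y'}([V])=[W]$. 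This proves surjectivity, hence the proposition.

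I expect the flattening step to be the main obstacle: one must be careful about the scheme structure carried by the closure of $W$, check that the Raynaud–Gruson blow-up centre can indeed be chosen inside $\partial Y$ (using that $\overline{W}\to\underline{Y}$ is already quasi-finite over the dense open $Y-\partial Y$), and verify that the further log-smooth resolution of Proposition \ref{A.3.19} does not destroy finiteness of the strict transform — which it cannot, since strict transforms are closed subschemes of total transforms and the total transform of a finite morphism is finite.
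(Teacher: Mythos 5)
Your argument follows essentially the same route as the paper's proof: the colimit formula for $v^*v_\sharp$ over the cofiltered category $\cA\cB l/k\downarrow Y$, injectivity of the restriction maps via density of $Y-\partial Y$ in the normal scheme $\underline{Y'}$, and surjectivity by flattening the closure of a correspondence $W$ with Raynaud--Gruson and then resolving so as to land in an admissible blow-up. The one genuine misstep is the reference you use for the final resolution step. You endow the Raynaud--Gruson blow-up $\underline{Y}_1$ with the compactifying log structure of $Y-\partial Y\hookrightarrow \underline{Y}_1$ and then invoke Proposition \ref{A.3.19}; but that proposition (Niziol's theorem) applies only to objects of $lSm/k$, and there is no reason for $Y_1$ to be log smooth over $k$ --- $\underline{Y}_1$ is an arbitrary blow-up of $\underline{Y}$, possibly singular, and the complement of $Y-\partial Y$ in it need not even be a divisor. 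The correct tool here is the hypothesis that $k$ admits resolution of singularities in the precise sense of Definition \ref{A.3.8}(2): applied to $\underline{Y}_1\rightarrow \underline{Y}$ it produces a further blow-up $\underline{Y''}\rightarrow \underline{Y}_1$ with $\underline{Y''}$ smooth and the complement of the preimage of $Y-\partial Y$ a strict normal crossing divisor, so that $Y'':=(\underline{Y''},\partial Y'')\in SmlSm/k$ and $Y''\rightarrow Y$ is an admissible blow-up. This is exactly where the resolution hypothesis in the statement is consumed, and it is the only place; your closing remark that the extra blow-up cannot destroy finiteness of the strict transform (a closed subscheme of the base change of a finite morphism is finite) is correct and matches the paper. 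With that substitution the proof is the paper's.

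One further small point worth being honest about: Raynaud--Gruson gives flatness of the strict transform over $\underline{Y}_1$, not finiteness directly; finiteness then follows because the strict transform is proper over $\underline{Y}_1$ (being closed in $\underline{Y}_1\times X$ with $X$ proper) and flat of generic relative dimension $0$, hence quasi-finite and proper, hence finite. You assert finiteness as the output of the flattening step, so you should record this short deduction.
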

\begin{proof}
Let $Y'\rightarrow Y$ be an admissible blow-up.  
The induced morphism 
\[
Y'-\partial Y'\rightarrow Y-\partial Y
\]
of schemes over $k$ is an isomorphism.
Thus we can form the composite homomorphism of $\Lambda$-modules
\[
\varphi_{Y'}\colon \Zltr(X)(Y')\rightarrow \Zltr(X)(Y'-\partial Y')\stackrel{\sim}\rightarrow \Zltr(X)(Y-\partial Y).
\]
From the formula \eqref{A.1.4.1} we have the isomorphism
\[
v^*v_\sharp \Zltr(X)(Y)\cong \colimit_{Y'\in (\cA\cB l/k) \downarrow Y} \Zltr(X)(Y').
\]
Collecting the $\varphi_{Y'}$'s, we get the induced homomorphism of $\Lambda$-modules
 \[
 \varphi\colon v^*v_\sharp \Zltr(X)(Y)\rightarrow \Zltr(X)(Y-\partial Y).
 \]
Let $V\in \lCor(Y',X)$ be a finite log correspondence. 
If $\varphi_{Y'}(V)=0$, then $V=0$ since $V$ is the closure of $\varphi_{Y'}(V)$ in $Y'\times X$. 
Thus $\varphi$ is injective. 
It remains to show that $\varphi$ is surjective.
\vspace{0.1in}

Let $W\in \Cor(Y-\partial Y,X)$ be an elementary correspondence.
By Raynaud-Gruson \cite[Th\'eor\`eme 5.7.9]{RaynaudGruson}, 
there is an algebraic space $\underline{Y'}$ and a blow-up $\underline{f}\colon\underline{Y'}\rightarrow \underline{Y}$ such that $\underline{f}$ is an isomorphism on $Y-\partial Y$ and the closure $\underline{W'}$ of $\underline{W}$ in $\underline{Y'}\times \underline{X}$ is flat over $\underline{Y'}$.
The algebraic space $\underline{Y'}$ is a scheme because it is a blow-up of a scheme. 
Further, 
$\underline{W'}$ is finite over $\underline{Y'}$ since $\underline{Y'}\times \underline{X}$ is proper over $\underline{Y'}$.
\vspace{0.1in}

By resolution of singularities, 
there is a blow-up $g\colon \ul{Y''}\rightarrow \ul{Y'}$ such that $\ul{Y''}$ is smooth over $k$ and the complement of $(\underline{f}\circ \underline{g})^{-1}(Y-\partial Y)$ in $\underline{Y''}$ consists of strict normal crossing divisors $Z_1',\ldots,Z_r'$.
Setting $Y'':=(\underline{Y''},Z_1'+\cdots+Z_r')$ the induced morphism $Y''\rightarrow Y$ of fs log schemes log smooth over $k$ is an admissible blow-up.
Since the closure $\underline{W''}$ of $\underline{W}$ in $\underline{Y''}\times \underline{X}$ is a closed subscheme of $\underline{W'}\times_{\underline{Y'}}\underline{Y''}$, $\underline{W''}$ is finite over $\underline{Y''}$.
It follows that $W$ can be extended to a finite correspondence from $\underline{Y''}$ to $X$.
Since $X$ has the trivial log structure, 
such a finite correspondence gives a finite log correspondence from $Y''$ to $X$ by Example \ref{A.5.31}(2). 
Thus $\varphi$ is surjective.
\end{proof}

\begin{prop}
\label{A.4.30}
Assume that $k$ admits resolution of singularities.
For every smooth and proper scheme $X$ over $k$, there is an isomorphism 
$$
a_{dNis}^*\Zltr(X)\cong \omega^*\Ztr(X)
$$
in $\ldmeff$.
Likewise, there are isomorphisms 
$$
a_{d\acute{e}t}^*\Zltr(X)\cong \omega^*\Ztr(X)
\text{ and } 
a_{l\acute{e}t}^*\Zltr(X)\cong \omega^*\Ztr(X)
$$ 
in $\ldmeffet$ and $\ldmefflet$, 
respectively.
\end{prop}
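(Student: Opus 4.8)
The plan is to realize the log motive $M(X)=a_{dNis}^*\Zltr(X)$, up to $\boxx$-homotopy, as the image under $\omega^*$ of the ordinary Suslin complex of $\Ztr(X)$, using the calculus-of-fractions singular functor of Section \ref{section:calculus}. Set $\cA:=\cA\cB l/k$ and $I:=\boxx$. By Proposition \ref{A.3.10}, $\cA$ admits a calculus of right fractions, and by Theorem \ref{A.3.12} the results of Section \ref{section:calculus} apply with $T=lCor/k$, $I=\boxx$, $\cA=\cA\cB l/k$; write $C_*^{DK}$ for the associated normalized singular complex (Definition \ref{A.1.7} and the discussion after it) and $C_*^{DK,\A^1}$ for the ordinary normalized Suslin complex on $Cor/k$ built from the interval $\A^1$.

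\emph{Step 1.} First I would observe that the canonical morphism $\Zltr(X)\to C_*^{DK}\Zltr(X)$ is an $(I,\cA)$-weak equivalence of bounded above complexes of presheaves with log transfers: this follows from Proposition \ref{A.1.15}(4) together with the fact that the inclusion of the normalized subcomplex is a pointwise quasi-isomorphism. By Remark \ref{A.3.20} its image in $\ldmeff$ is therefore an isomorphism, so $M(X)\cong a_{dNis}^*C_*^{DK}\Zltr(X)$.

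\emph{Step 2 (the crux).} Next I would establish a levelwise isomorphism ${\rm Sing}_\bullet^{(I,\cA)}(\Zltr(X))\cong\omega^*{\rm Sing}_\bullet^{\A^1}(\Ztr(X))$ of simplicial presheaves with log transfers. By Definition \ref{A.1.7}, ${\rm Sing}_\bullet^{(I,\cA)}(\Zltr(X))=v^*{\rm Sing}_\bullet^{\boxx}(v_\sharp\Zltr(X))$, and unwinding the internal-hom formula its degree-$m$ part, evaluated at $Y\in lSm/k$, is $(v^*v_\sharp\Zltr(X))(\boxx^m\times Y)$. Since $X$ is smooth and \emph{proper}, Proposition \ref{A.4.2} (applied to the log scheme $\boxx^m\times Y$) identifies this with
\[
\Zltr(X)\bigl((\boxx^m\times Y)-\partial(\boxx^m\times Y)\bigr)=\Cor\bigl((\A^1)^m\times(Y-\partial Y),X\bigr)\otimes\Lambda,
\]
using Example \ref{A.5.31} and $(\boxx^m\times Y)-\partial(\boxx^m\times Y)=(\A^1)^m\times(Y-\partial Y)$; this is exactly $\omega^*{\rm Sing}_m^{\A^1}(\Ztr(X))(Y)$. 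These identifications are natural in $Y$, compatible with the cosimplicial structure maps of $\Delta^\bullet_\boxx$ — here one must check that the formal multiplication $\mu$ on $\boxx$ in $lCor/k[\cA^{-1}]$ corresponds under $v^*v_\sharp$ and Proposition \ref{A.4.2} to the geometric multiplication on $\A^1$ — and with the transfer structure; hence they assemble to the claimed simplicial isomorphism. Applying the normalized chain complex functor $N$ (which commutes with the restriction functor $\omega^*$, and which $\omega^*$ preserves, being exact) yields $C_*^{DK}\Zltr(X)\cong\omega^*C_*^{DK,\A^1}\Ztr(X)$. Each term of $\omega^*C_*^{DK,\A^1}\Ztr(X)$ is the $\omega^*$-image of a Nisnevich sheaf, hence already a dividing Nisnevich sheaf by Lemma \ref{ketcomp.9}, so $a_{dNis}^*C_*^{DK}\Zltr(X)=\omega^*C_*^{DK,\A^1}\Ztr(X)$.

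Combining the two steps, $a_{dNis}^*\Zltr(X)\cong\omega^*C_*^{DK,\A^1}\Ztr(X)$ in $\ldmeff$; since $C_*^{DK,\A^1}\Ztr(X)$ is quasi-isomorphic to the Suslin complex $C_*^{\A^1}\Ztr(X)$, which — up to a Nisnevich-fibrant replacement, harmless after the exact functor $\omega^*$ — is the strictly $\A^1$-invariant complex representing $\Ztr(X)$ in $\dmeff$, the right-hand side is $\omega^*\Ztr(X)$; by Proposition \ref{A.4.13} it is strictly $\boxx$-invariant, as expected. For the two \'etale statements I would apply the localization functors $La_{d\acute{e}t}^*$ and $La_{l\acute{e}t}^*$ to this isomorphism: they carry $a_{dNis}^*\Zltr(X)$ to $a_{d\acute{e}t}^*\Zltr(X)$ and $a_{l\acute{e}t}^*\Zltr(X)$ respectively, while fixing $\omega^*C_*^{DK,\A^1}\Ztr(X)$ because each of its terms is already a log \'etale sheaf ($\Ztr(X)$ being an \'etale sheaf with transfers, again by Lemma \ref{ketcomp.9}). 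The main obstacle is Step 2: threading the singular functor through the calculus-of-fractions localization and verifying that the levelwise identities of Proposition \ref{A.4.2} are compatible with the cosimplicial structure of $\Delta^\bullet_\boxx$ and with the transfers; properness of $X$ enters precisely at this point, through Proposition \ref{A.4.2}.
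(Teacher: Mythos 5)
Your route is genuinely different from the paper's, and much heavier. The paper's own proof never touches the singular functor: it observes that sheafification commutes with $v^*v_\sharp$ (Lemma \ref{A.5.45}), that $v^*v_\sharp\Zltr(X)\cong\omega^*\Ztr(X)$ already as presheaves when $X$ is smooth and proper (Proposition \ref{A.4.2}), and that the unit $a_{dNis}^*\Zltr(X)\to v^*v_\sharp a_{dNis}^*\Zltr(X)$ is an $\cA$-weak equivalence and hence an isomorphism in $\ldmeff$ (Proposition \ref{A.1.15}(2) via Remark \ref{A.3.20}); since $\omega^*\Ztr(X)$ is already a dividing Nisnevich sheaf by Lemma \ref{ketcomp.9}, this finishes the argument in a few lines. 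Your Steps 1 and 2 are essentially sound and reproduce the content of Propositions \ref{A.4.2} and \ref{A.4.12} (the compatibility of the levelwise identifications with the cosimplicial structure of $\Delta^\bullet_\boxx$, which you rightly flag as the delicate point, is exactly what the proof of Proposition \ref{A.4.12} supplies), but they buy you the stronger statement of Proposition \ref{A.4.29} along the way rather than the proposition at hand.

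The genuine gap is in your last step, the identification $\omega^*C_*^{\A^1}\Ztr(X)\cong\omega^*\Ztr(X)$ in $\ldmeff$. You justify it by saying that $C_*^{\A^1}\Ztr(X)$ represents $\Ztr(X)$ in $\dmeff$, but that only tells you $R\omega^*M(X)\cong\omega^*C_*^{\A^1}\Ztr(X)$: the functor $\omega^*$ is right Quillen for the $\A^1$-/$\boxx$-local descent structures (and is not exact as a functor of sheaves, only left exact), so it is not permitted to apply the underived $\omega^*$ to the $\A^1$-weak equivalence $\Ztr(X)\to C_*^{\A^1}\Ztr(X)$ between non-fibrant objects and conclude an isomorphism in $\ldmeff$. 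Identifying $R\omega^*M(X)$ with the underived $\omega^*\Ztr(X)$ is essentially equivalent to the proposition you are proving (it is how Propositions \ref{A.4.8} and \ref{A.4.15} are deduced downstream), so as written the argument is circular at this point. The repair is immediate with tools you already use: apply Proposition \ref{A.1.15}(4) to $\omega^*\Ztr(X)$ to get the $(\boxx,\cA)$-weak equivalence $\omega^*\Ztr(X)\to C_*\omega^*\Ztr(X)$, identify the target with $\omega^*C_*^{\A^1}\Ztr(X)$ by Proposition \ref{A.4.12}, and invoke Remark \ref{A.3.20}; this is precisely Proposition \ref{A.4.29} of the paper. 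With that substitution your argument closes, and the étale cases go through as you describe since $a_{d\acute{e}t}^*$ and $a_{l\acute{e}t}^*$ are exact and fix $\omega^*$ of étale sheaves by Lemma \ref{ketcomp.9}.
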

\begin{proof}
We only treat the case of dividing Nisnevich coverings since the proofs are similar.
Owing to Lemma \ref{A.5.45} we have an isomorphism
\[
a_{dNis}^*v^*v_\sharp \Zltr(X) \cong v^*v_\sharp a_{dNis}^*\Zltr(X).
\]
Moreover, Proposition \ref{A.4.2} implies
\[
a_{dNis}^*v^*v_\sharp \Zltr(X)\cong a_{dNis}^*\omega^*\Ztr(X), 
\]
and from Remark \ref{A.3.20} we deduce
\[
 v^*v_\sharp a_{dNis}^*\Zltr(X)\cong a_{dNis}^*\Zltr(X).
\]
To conclude, note that $\omega^*\Ztr(X)$ is a dividing Nisnevich sheaf, see Lemma \ref{ketcomp.9}.
\end{proof}

For a bounded above complex $\cF$ of presheaves with log transfers, recall the Suslin double complex $C_*\cF$ from Definition \ref{A.1.16}.
If $\cF$ is a presheaf with log transfers, then for $U\in lSm/k$, we have 
\[
C_*\cF(U):= (\cdots \rightarrow \colimit_{Y\in (\cA \cB l/k)\downarrow (U\times \Delta_{\boxx}^1)}\cF(Y)\rightarrow \colimit_{Y\in (\cA \cB l/k)\downarrow U}\cF(Y)).
\]
For a bounded above complex $\cF$ of presheaves with log transfers, we also have the Suslin double complex $C_*^{\A^1}\cF$. 
If $\cF$ is a presheaf with transfers, then for $U\in Sm/k$, 
\[
C_*^{\A^1}\cF(U):=(\cdots \rightarrow \cF(U\times \Delta_{\A^1}^1)\rightarrow \cF(U)).
\]

\begin{prop}
\label{A.4.12}
Suppose $\cF$ is a bounded above complex of presheaves with log transfers on $Sm/k$. 
Then there is a quasi-isomorphism of total complexes
\[
{\rm Tot}(C_*\omega^*\cF)\cong {\rm Tot}(\omega^* C_*^{\A^1} \cF).
\]
\end{prop}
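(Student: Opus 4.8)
The plan is to realize the asserted quasi-isomorphism by an honest isomorphism of double complexes, obtained by a bidegreewise computation. Write both $C_*^{\A^1}\cF$ and $C_*\omega^*\cF$ as double complexes indexed by the Suslin degree $n\geq 0$ and the degree $p$ of $\cF$. In Suslin degree $n$ and degree $p$, the presheaf underlying $C_*^{\A^1}\cF$ is $U\mapsto \cF^p(U\times \A^n)$ on $Sm/k$, while that underlying $C_*\omega^*\cF$ is $X\mapsto \colimit_{Y\in (\cA\cB l/k)\downarrow (X\times \Delta_\boxx^n)}\omega^*\cF^p(Y)=\colimit_{Y}\cF^p(Y-\partial Y)$ on $lSm/k$. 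Since $\omega^*$ is exact and computed pointwise it commutes with ${\rm Tot}$, so it suffices to produce, naturally in $X\in lSm/k$ and compatibly with all face and degeneracy maps and with the log transfer structure, an isomorphism $C_*\omega^*\cF^p(X)_n\cong \cF^p((X-\partial X)\times \A^n)$; taking ${\rm Tot}$ then yields the statement, as $\cF$ is bounded above.

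The first step is to identify the cosimplicial objects. Recall that $\Delta_\boxx^n=\boxx^n$ (the $n$-fold product in $lCor/k$, see \ref{A.5.36} and Example \ref{exm:box}) and that the cosimplicial structure maps $\Delta_\boxx(f)\colon\boxx^n\to\boxx^m$ are built only from the projections $pr_k$, the morphisms $i_0,i_1\colon\Spec k\to\boxx$ (the rational points $0$ and $1$ of $\A^1\subset\P^1$, which lie in $\boxx-\partial\boxx$), and $p\colon\boxx\to\Spec k$ — the rational multiplication $\mu$ does not appear in these formulas. Since $\omega$ preserves fibre products and sends $\boxx$ to $\A^1$, $i_0$ to $0$, $i_1$ to $1$, and $p$ to the structure map, it follows that $\omega\circ\Delta_\boxx^\bullet=\Delta_{\A^1}^\bullet$, the cosimplicial interval defining $C_*^{\A^1}$. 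Moreover for $X\in lSm/k$ one has $\partial(X\times\boxx^n)=\partial X\times\boxx^n\cup X\times\partial\boxx^n$, hence $(X\times\boxx^n)-\partial(X\times\boxx^n)=(X-\partial X)\times\A^n$.

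The second step is the colimit computation. If $f\colon Y\to X\times\boxx^n$ is an admissible blow-up in the sense of Definition \ref{A.3.1}, then by definition the induced morphism $Y-\partial Y\to (X\times\boxx^n)-\partial(X\times\boxx^n)=(X-\partial X)\times\A^n$ is an isomorphism of schemes, canonical in $f$. Consequently every transition morphism $Y'\to Y$ in the comma category $(\cA\cB l/k)\downarrow(X\times\boxx^n)$ restricts, under these canonical identifications, to the identity of $(X-\partial X)\times\A^n$, so the functor $Y\mapsto\cF^p(Y-\partial Y)$ on $(\cA\cB l/k)\downarrow(X\times\boxx^n)$ is canonically isomorphic to the constant functor with value $\cF^p((X-\partial X)\times\A^n)$. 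By Proposition \ref{A.3.10} the class $\cA\cB l/k$ admits a calculus of right fractions, so this comma category is cofiltered, in particular nonempty and connected, and therefore the colimit is $\cF^p((X-\partial X)\times\A^n)$. Assembling these identifications over all $n$ and $p$, invoking $\omega\circ\Delta_\boxx^\bullet=\Delta_{\A^1}^\bullet$ for the simplicial structure and the compatibility of $\omega^*$ with transfers, gives an isomorphism of double complexes of presheaves with log transfers $C_*\omega^*\cF\cong\omega^*C_*^{\A^1}\cF$, and ${\rm Tot}(C_*\omega^*\cF)\cong\omega^*{\rm Tot}(C_*^{\A^1}\cF)={\rm Tot}(\omega^*C_*^{\A^1}\cF)$.

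The main obstacle I anticipate is bookkeeping rather than any deep input: one must check that the canonical identifications $Y-\partial Y\cong(X-\partial X)\times\A^n$ are genuinely natural in the object $Y$ of the comma category (so the diagram really is essentially constant), that the resulting bidegreewise isomorphisms commute with the face and degeneracy operators — equivalently, that $\omega$ truly intertwines the interval structure on $\boxx$ in $lCor/k[(\cA\cB l/k)^{-1}]$ with that on $\A^1$ in $Cor/k$, including matching of endpoints — and that the identification respects the log transfer structures on both sides, not merely the underlying presheaves. Once these compatibilities are verified, the passage to total complexes is routine.
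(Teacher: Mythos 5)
Your proposal is correct and follows essentially the same route as the paper's proof: both reduce to a single presheaf term, use the defining property of admissible blow-ups to identify $\omega^*\cF(Y)=\cF(Y-\partial Y)\cong\cF((X-\partial X)\times\A^n)$ for every $Y\in(\cA\cB l/k)\downarrow(X\times\Delta_{\boxx}^n)$, and conclude that the colimit over this (cofiltered) comma category equals $(\omega^*\underline{\hom}(\Delta_{\A^1}^n,\cF))(X)$, giving a termwise identification of the two double complexes. Your additional checks — that the cosimplicial structure maps of $\Delta_{\boxx}^\bullet$ involve only $pr_k$, $i_0$, $i_1$, $p$ (not $\mu$) so that $\omega\circ\Delta_{\boxx}^\bullet=\Delta_{\A^1}^\bullet$, and that the diagram is essentially constant — are exactly the compatibilities the paper leaves implicit.
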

\begin{proof}
Using the technique in \cite[Lemma 9.12]{MVW}, we reduce to the case when $\cF$ is a presheaf with log transfers.
For every $X\in lSm/k$, admissible blow-up $Y\rightarrow \Delta_{\boxx}^n\times X$, $m\in \Z$, and $n\in \N$,
we have  
\[
\omega^*\cF(Y)=\cF(Y-\partial Y)\cong \cF(\Delta_{\A^1}^n\times (X-\partial X)).
\] 
Thus for the internal hom object 
\[
\underline{\hom}(\Delta_{\A^1}^n,\cF)
\]
mapping $U\in Sm/k$ to $\cF(U\times \Delta_{\A^1}^n)$, we have that
\[
\colimit_{Y\in (\cA\cB l/k) \downarrow(\Delta_{\boxx}^n\times X)}\omega^*\cF(Y)
= 
\cF(\Delta_{\A^1}^n\times (X-\partial X))
\cong 
(\omega^* \underline{\hom}(\Delta_{\A^1}^n,\cF))(X).
\]
Since
\[
C_*\omega^*\cF(U):= (\cdots \rightarrow \colimit_{Y\in (\cA \cB l/k)\downarrow (U\times \Delta_{\boxx}^1)}\omega^*\cF(Y)\rightarrow \colimit_{Y\in (\cA \cB l/k)\downarrow U}\omega^*\cF(Y)),
\]
and
\[
\omega^*C_*^{\A^1}\cF(U):=(\cdots \rightarrow \omega^*\underline{\hom}(\Delta_{\A^1}^1,\cF)(X)\rightarrow \omega^*\underline{\hom}(\Delta_{\A^1}^0,\cF)(X)),
\]
we deduce the desired isomorphism.
\end{proof}

\begin{prop}
\label{A.4.29}
Assume that $k$ admits resolution of singularities. 
Let $X$ be a scheme smooth over $k$.
Then there are isomorphisms 
\[
\omega^*\Ztr(X)\cong 
C_*\omega^*\Ztr(X)\cong 
\omega^*C_*^{\A^1}\Ztr(X)
\]
in $\ldmeff$, $\ldmeffet$, and $\ldmefflet$.
\end{prop}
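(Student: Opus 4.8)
Proposition \ref{A.4.29} asserts two isomorphisms in $\ldmeff$ (and its \'etale variants): first that $\omega^*\Ztr(X)\cong C_*\omega^*\Ztr(X)$, and second that $C_*\omega^*\Ztr(X)\cong \omega^*C_*^{\A^1}\Ztr(X)$. The plan is to treat the two isomorphisms by rather different mechanisms and then assemble them. The first isomorphism is the assertion that the canonical map from a complex to its Suslin complex is an isomorphism in the motivic category; by Proposition \ref{A.1.15}(4) (the $\Lambda$-module version, via Remark \ref{A.3.20} applied with $\cA=\cA\mathit{Bl}/k$, $I=\boxx$, and $T=lCor/k$), the canonical morphism $\cF\to {\rm Tot}(C_*\cF)$ is an $(I,\cA)$-weak equivalence for any bounded above complex $\cF$ of presheaves with log transfers. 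Since $\cA\mathit{Bl}/k$ admits a calculus of right fractions under resolution of singularities (Proposition \ref{A.3.10}), and since, by Theorem \ref{A.3.12}, every admissible blow-up becomes an isomorphism in $\ldmeff$, an $(I,\cA)$-weak equivalence of complexes of presheaves with log transfers maps to an isomorphism in $\ldmeff$ (this is exactly the content of Remark \ref{A.3.20}). Applying this to $\cF=\omega^*\Ztr(X)$ gives $\omega^*\Ztr(X)\cong {\rm Tot}(C_*\omega^*\Ztr(X))=C_*\omega^*\Ztr(X)$ in $\ldmeff$; the same argument with $a_{d\acute{e}t}^*$ resp.\ $a_{l\acute{e}t}^*$ in place of $a_{dNis}^*$ handles $\ldmeffet$ and $\ldmefflet$.

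For the second isomorphism, the key input is Proposition \ref{A.4.12}, which provides a quasi-isomorphism of total complexes ${\rm Tot}(C_*\omega^*\Ztr(X))\cong {\rm Tot}(\omega^*C_*^{\A^1}\Ztr(X))$ — here $\Ztr(X)$ is a presheaf with transfers on $Sm/k$, as required, and is bounded above as a complex concentrated in degree zero. A quasi-isomorphism of complexes of presheaves with log transfers descends, after sheafification, to a quasi-isomorphism of complexes of dividing Nisnevich (resp.\ dividing \'etale, log \'etale) sheaves, hence to an isomorphism in the respective derived category, and a fortiori in the Bousfield localization $\ldmeff$ (resp.\ $\ldmeffet$, $\ldmefflet$). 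So after applying $a_{dNis}^*$ (which is exact, by Proposition \ref{A.8.5}, so preserves quasi-isomorphisms) we obtain $C_*\omega^*\Ztr(X)\cong \omega^*C_*^{\A^1}\Ztr(X)$ in $\ldmeff$. Finally, combining the two isomorphisms, possibly after recording that $\omega^*$ commutes with sheafification in the relevant sense (Lemma \ref{ketcomp.9} shows $\omega^*\Ztr(X)$ is already a dividing Nisnevich, resp.\ log \'etale, sheaf, so no extra sheafification is needed on the left) yields the full chain $\omega^*\Ztr(X)\cong C_*\omega^*\Ztr(X)\cong \omega^*C_*^{\A^1}\Ztr(X)$.

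I expect the main obstacle to be the careful bookkeeping of which objects live where: the two cited propositions (\ref{A.1.15} and \ref{A.4.12}) are stated for complexes of presheaves, possibly with transfers, on $T=lCor/k$ or $Sm/k$, and one must verify that the localization-functor formalism of Section \ref{section:calculus} applies with $T=lCor/k$ and $\cA=\cA\mathit{Bl}/k$ — this is exactly what Remark \ref{A.3.20} asserts is legitimate, invoking Theorem \ref{A.3.12} — and then translate a statement about $(I,\cA)$-weak equivalences of \emph{presheaves} into a statement about isomorphisms in $\ldmeff$ (a category of \emph{sheaves}). The translation uses exactness of $a_{dNis}^*$ together with the fact that the descent/$\boxx$-localized model structure is compatible with the sheafification Quillen pair; none of this is hard, but it requires threading several earlier results. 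A secondary (minor) point is to confirm that $C_*\omega^*\Ztr(X)$ as written in the statement is literally ${\rm Tot}(C_*\omega^*\Ztr(X))$ in the notation of Section \ref{section:calculus}, so that Proposition \ref{A.4.12} applies verbatim; this is a matter of unwinding Definition \ref{A.1.16} and poses no real difficulty.
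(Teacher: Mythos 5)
Your argument is correct and matches the paper's proof exactly: the paper establishes the first isomorphism by citing Proposition \ref{A.1.15}(4) together with Remark \ref{A.3.20}, and the second by citing Proposition \ref{A.4.12}, which are precisely the two ingredients you use. Your additional bookkeeping about sheafification and the passage from $(I,\cA)$-weak equivalences of presheaves to isomorphisms in $\ldmeff$ is a faithful unwinding of what Remark \ref{A.3.20} already asserts.
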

\begin{proof}
For the first isomorphism, apply Proposition \ref{A.1.15}(4) and Remark \ref{A.3.20}.
For the second isomorphism, apply Proposition \ref{A.4.12}.
\end{proof}

\begin{prop}
\label{A.4.9}
Assume that $k$ admits resolution of singularities. 
Let $X$ be a scheme smooth over $k$, and let $Y$ be an fs log schemes log smooth over $k$. 
Then for every integer $i\in \Z$ there is a naturally induced isomorphism
\[
\hom_{\ldmeff}(M(Y),\omega^* \Ztr(X)[i])
\cong 
\hom_{\dmeff}(M(Y-\partial Y),M(X)[i]).
\]
\end{prop}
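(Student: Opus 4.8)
The plan is to identify both hom-groups with the Nisnevich hypercohomology group $\bH_{Nis}^i(Y-\partial Y,C_*^{\A^1}\Ztr(X))$ of the Suslin complex of $X$ evaluated on the open complement $Y-\partial Y$, and then to conclude by matching the two descriptions. The genuinely hard work has been done in Propositions \ref{A.4.2}, \ref{A.3.12}, \ref{A.4.13} and \ref{A.4.29}; what remains is to chain together the representability statements available from these.

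First I would treat the left-hand side. By Proposition \ref{A.4.29} there is an isomorphism $\omega^*\Ztr(X)\cong \omega^*C_*^{\A^1}\Ztr(X)$ in $\ldmeff$, so it is enough to compute $\hom_{\ldmeff}(M(Y),\omega^*C_*^{\A^1}\Ztr(X)[i])$. Now $C_*^{\A^1}\Ztr(X)$ is a complex of Nisnevich sheaves with transfers on $Sm/k$, and since $k$ is perfect (a consequence of resolution of singularities) its cohomology presheaves are homotopy invariant by Voevodsky's theorem (\cite[Lecture 14]{MVW}), i.e.\ it is a strictly $\A^1$-invariant complex. Hence Proposition \ref{A.4.13} applies and shows that $\omega^*C_*^{\A^1}\Ztr(X)$ is a strictly $(\P^\bullet,\P^{\bullet-1})$-invariant complex of dividing Nisnevich sheaves with log transfers; in particular it is strictly $\boxx$-invariant since $(\P^1,\P^0)=\boxx$. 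Proposition \ref{A.5.13} then gives
\[
\hom_{\ldmeff}(M(Y),\omega^*C_*^{\A^1}\Ztr(X)[i])\cong \bH_{dNis}^i(Y,\omega^*C_*^{\A^1}\Ztr(X)),
\]
and Proposition \ref{A.4.7} (equivalently Lemma \ref{A.4.18}) identifies the right-hand side with $\bH_{Nis}^i(Y-\partial Y,C_*^{\A^1}\Ztr(X))$.

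Next I would treat the right-hand side. In $\dmeff$ one has $M(X)\cong C_*^{\A^1}\Ztr(X)$, the canonical map being an $\A^1$-weak equivalence because, as just recalled, $C_*^{\A^1}\Ztr(X)$ has strictly $\A^1$-invariant cohomology Nisnevich sheaves and is therefore $\A^1$-local. Consequently the standard computation of morphism groups in $\dmeff$ by Nisnevich hypercohomology (\cite{MVW}, or via the descent model structure of \cite[Definition 11.1.1]{CD12}) yields
\[
\hom_{\dmeff}(M(Y-\partial Y),M(X)[i])\cong \bH_{Nis}^i(Y-\partial Y,C_*^{\A^1}\Ztr(X)).
\]
Composing the two chains of isomorphisms produces the asserted isomorphism; each step being functorial, the resulting isomorphism is natural in $Y$ and $X$ and compatible with the shift, as required.

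The main potential obstacle is conceptual rather than technical: one must route $\omega^*$ through the $\A^1$-local replacement $C_*^{\A^1}\Ztr(X)$ rather than applying it to $\Ztr(X)$ directly, since $\omega^*\Ztr(X)$ is not itself $\boxx$-local, and the fact that $\omega^*$ of a strictly $\A^1$-invariant complex with transfers is strictly $\boxx$-invariant (Proposition \ref{A.4.13}, resting on the $j_*j^*$-comparison Lemma \ref{A.4.18}) is exactly what makes this route available. A minor point to verify along the way is convergence of the hypercohomology spectral sequences used implicitly in Propositions \ref{A.5.13} and \ref{A.4.7}; this is harmless because $Y$ and $Y-\partial Y$ have finite dividing Nisnevich and Nisnevich cohomological dimension by Corollary \ref{Div.5}, so no boundedness hypothesis on $C_*^{\A^1}\Ztr(X)$ (which is concentrated in nonpositive degrees) is needed.
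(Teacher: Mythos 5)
Your argument is correct and follows essentially the same route as the paper's proof: replace $\omega^*\Ztr(X)$ by $\omega^*C_*^{\A^1}\Ztr(X)$ via Proposition \ref{A.4.29}, invoke Proposition \ref{A.4.13} to get strict $\boxx$-invariance, compute the left-hand side as $\bH_{dNis}^i(Y,-)$ by Proposition \ref{A.5.13} and Lemma \ref{A.4.7}, and identify the right-hand side with $\bH_{Nis}^i(Y-\partial Y,C_*^{\A^1}\Ztr(X))$ by the standard representability in $\dmeff$ (the paper cites \cite[Proposition 14.16]{MVW} for this last step). No gaps.
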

\begin{proof}
Due to Proposition \ref{A.4.29}, we have an isomorphism
\[
\hom_{\ldmeff}(M(Y),\omega^*\Ztr(X)[i])\\
\cong \hom_{\ldmeff}(M(Y),\omega^*C_*^{\A^1}\Ztr(X)[i]).
\]
Since the Suslin complex $C_*^{\A^1}\Ztr(X)$ is strictly $\A^1$-invariant for the Nisnevich topology by \cite[Corollary 14.9]{MVW}, 
we deduce $\omega^*C_*^{\A^1}\Ztr(X)$ is strictly $\boxx$-invariant for the dividing Nisnevich topology by Proposition \ref{A.4.13}.
Thus owing to Proposition \ref{A.5.13} there is an isomorphism
\[
\hom_{\ldmeff}(M(Y),\omega^*C_*^{\A^1}\Ztr(X)[i])\cong \bH_{dNis}^i(Y,\omega^*C_*^{\A^1}\Ztr(X)).
\]
From Lemma \ref{A.4.7} we obtain the isomorphism
\[
\bH_{dNis}^i(Y,\omega^*C_*^{\A^1}\Ztr(X))\cong \bH_{Nis}^i(Y-\partial Y,C_*^{\A^1}\Ztr(X)).
\]
We also have an isomorphism due to \cite[Proposition 14.16]{MVW}
\[
\bH_{Nis}^i(Y-\partial Y,C_*^{\A^1}\Ztr(X))\cong \hom_{\dmeff}(M(Y-\partial Y),M(X)[i]).
\]
To conclude, we combine the above isomorphisms.
%
%
\end{proof}

\begin{prop}
\label{A.4.8}
Assume that $k$ admits resolution of singularities. 
Let $X$ be a smooth and proper scheme over $k$, and let $Y$ be an fs log scheme log smooth over $k$. 
Then for every $i\in \Z$, there is a natural isomorphism
\[
\hom_{\ldmeff}(M(Y)[i],M(X))
\cong 
\hom_{\dmeff}(M(Y-\partial Y)[i],M(X)).
\]
\end{prop}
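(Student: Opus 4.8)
The plan is to deduce the statement formally from Propositions \ref{A.4.30} and \ref{A.4.9}, together with the standard shift adjunction $\hom_{\cT}(A[i],B)\cong\hom_{\cT}(A,B[-i])$ valid in any triangulated category $\cT$.

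First I would rewrite the left-hand side. By definition $M(X)=a_{dNis}^*\Zltr(X)$, and since $X$ is smooth and proper over $k$, Proposition \ref{A.4.30} gives a natural isomorphism $a_{dNis}^*\Zltr(X)\cong \omega^*\Ztr(X)$ in $\ldmeff$ (this is the step where resolution of singularities enters, via Proposition \ref{A.4.2}). Applying the shift adjunction then yields
\[
\hom_{\ldmeff}(M(Y)[i],M(X))
\cong
\hom_{\ldmeff}(M(Y),\omega^*\Ztr(X)[-i]).
\]
Second, I would invoke Proposition \ref{A.4.9}, applied with $-i$ in place of $i$: as $X$ is in particular smooth over $k$ and $Y$ is log smooth over $k$, there is a natural isomorphism
\[
\hom_{\ldmeff}(M(Y),\omega^*\Ztr(X)[-i])
\cong
\hom_{\dmeff}(M(Y-\partial Y),M(X)[-i]).
\]
A final application of the shift adjunction in $\dmeff$ rewrites the right-hand side as $\hom_{\dmeff}(M(Y-\partial Y)[i],M(X))$, which is the asserted isomorphism. (Here $X-\partial X=X$ since $X$ carries the trivial log structure, so this is the case of Theorem \ref{A.4.10} relevant to the introduction's statement.)

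The only point requiring attention beyond this bookkeeping is naturality: one must check that the comparison isomorphism of Proposition \ref{A.4.30} is natural in $X$ among smooth proper $k$-schemes, and that the isomorphism of Proposition \ref{A.4.9} is natural in both variables, so that the composite is compatible with morphisms of motives on both sides. Both are verified by unwinding the constructions — the isomorphism $a_{dNis}^*\Zltr(X)\cong\omega^*\Ztr(X)$ is built from units and counits of the localization and sheafification adjunctions, and Proposition \ref{A.4.9} is itself a chain of natural isomorphisms (through $\omega^*C_*^{\A^1}\Ztr(X)$, Lemma \ref{A.4.7}, and \cite[Proposition 14.16]{MVW}). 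I do not expect a genuine obstacle here: all the substantive input — invariance under admissible blow-ups (Theorem \ref{A.3.12}), the comparison of Suslin complexes (Propositions \ref{A.4.12} and \ref{A.4.29}), and the computation of dividing Nisnevich cohomology of $\omega^*$-sheaves (Lemma \ref{A.4.7}) — has already been absorbed into Propositions \ref{A.4.2}, \ref{A.4.30}, and \ref{A.4.9}, so this statement is essentially a formal corollary of the preceding two propositions, and the "hard part" amounts to nothing more than keeping the degree shifts straight.
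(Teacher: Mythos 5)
Your proposal is correct and is exactly the paper's argument: the paper's proof reads ``Immediate from Propositions \ref{A.4.30} and \ref{A.4.9},'' which is precisely your combination of the comparison $M(X)\cong\omega^*\Ztr(X)$ for smooth proper $X$ with the computation of $\hom_{\ldmeff}(M(Y),\omega^*\Ztr(X)[i])$, up to the routine shift bookkeeping you spell out.
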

\begin{proof}
Immediate from Propositions \ref{A.4.30} and \ref{A.4.9}.
\end{proof}

\begin{rmk}
We do not expect that Proposition \ref{A.4.8} holds for non-proper schemes.
\end{rmk}

Recall from \eqref{eq::omegasharp} the adjunction \index[notation]{omegasharpstar @ $\omega_\sharp, R\omega^*$}
\[
\omega_\sharp :\ldmeff \rightleftarrows \dmeff:R\omega^*.
\]

\begin{prop}
\label{A.4.15}
Assume that $k$ admits resolution of singularities. 
Let $X$ be a proper and smooth scheme over $k$.
Then the unit of  the adjunction ${\rm id}\rightarrow R\omega^*\omega_\sharp$ induces an isomorphism 
\[
M(X)\rightarrow R\omega^*\omega_\sharp M(X)
\]
in $\ldmeff$.
\end{prop}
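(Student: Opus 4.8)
The plan is to reduce the statement, by the fully faithfulness of $R\omega^*$, to showing that $M(X)$ lies in the essential image of $R\omega^*$. First I would unwind the target: since $X$ carries the trivial log structure, $X-\partial X=X$, so $\omega_\sharp M(X)=\omega_\sharp a_{dNis}^*\Zltr(X)=a_{Nis}^*\Ztr(X)$ is the Voevodsky motive of $X$ in $\dmeff$; write it $M(X)^{\mathrm V}$. The assertion is thus that the unit $M(X)\rightarrow R\omega^*M(X)^{\mathrm V}$ is an isomorphism in $\ldmeff$.

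Next I would invoke the formal fact that for an adjunction $\omega_\sharp\dashv R\omega^*$ with $R\omega^*$ fully faithful, which holds by Theorem \ref{A.4.14}, the counit $\omega_\sharp R\omega^*\rightarrow\mathrm{id}$ is invertible, and hence, by the triangle identities together with naturality of the unit, the unit $\eta_N\colon N\rightarrow R\omega^*\omega_\sharp N$ is invertible for every object $N$ isomorphic to $R\omega^*(M')$ for some $M'\in\dmeff$. So it suffices to exhibit an isomorphism $M(X)\cong R\omega^*(M')$ in $\ldmeff$; the claim then follows, with $M'\cong M(X)^{\mathrm V}$ and the unit realizing this isomorphism.

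To produce it I would chain the comparison results already at hand. Since $X$ is proper, Proposition \ref{A.4.30} gives $M(X)=a_{dNis}^*\Zltr(X)\cong\omega^*\Ztr(X)$ in $\ldmeff$; Proposition \ref{A.4.29} gives $\omega^*\Ztr(X)\cong\omega^*C_*^{\A^1}\Ztr(X)$; and since $C_*^{\A^1}\Ztr(X)$ is strictly $\A^1$-invariant by Voevodsky, Proposition \ref{A.4.13} shows $\omega^*C_*^{\A^1}\Ztr(X)$ is strictly $\boxx$-invariant, while the argument in the proof of Lemma \ref{A.4.7} shows $\omega^*$ carries descent-fibrant complexes to descent-fibrant complexes. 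Hence $\omega^*C_*^{\A^1}\Ztr(X)$ computes $R\omega^*$ applied to $C_*^{\A^1}\Ztr(X)\cong M(X)^{\mathrm V}$, so $M(X)\cong R\omega^*M(X)^{\mathrm V}$ lies in the essential image of $R\omega^*$. Alternatively one may stay at the level of $\hom$-groups: Proposition \ref{A.4.9} gives, naturally in $Y\in lSm/k$ and $i\in\Z$, isomorphisms $\hom_{\ldmeff}(M(Y)[i],\omega^*\Ztr(X))\cong\hom_{\dmeff}(M(Y-\partial Y)[i],M(X)^{\mathrm V})\cong\hom_{\ldmeff}(M(Y)[i],R\omega^*M(X)^{\mathrm V})$, and since the $M(Y)[i]$ compactly generate $\ldmeff$ by Proposition \ref{A.5.33}, this forces $\omega^*\Ztr(X)\cong R\omega^*M(X)^{\mathrm V}$.

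The main obstacle I anticipate is the point just flagged: verifying rigorously that $\omega^*C_*^{\A^1}\Ztr(X)$ genuinely computes $R\omega^*M(X)^{\mathrm V}$, that is, keeping track of both the Nisnevich-descent and the $\boxx$- versus $\A^1$-localizations when transporting fibrant models across the Quillen functor $\omega^*$. Everything else is formal, once fully faithfulness of $R\omega^*$ and the comparison statements \ref{A.4.29}, \ref{A.4.30}, \ref{A.4.13} are invoked; in particular the hypothesis that $X$ is proper enters only through Proposition \ref{A.4.30}, which is exactly where it is needed.
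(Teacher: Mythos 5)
Your primary route has a circularity problem. You invoke Theorem \ref{A.4.14} (full faithfulness of $R\omega^*$) to reduce the claim to showing that $M(X)$ lies in the essential image of $R\omega^*$. But in the paper's logical order, Theorem \ref{A.4.14} is proved \emph{after} and \emph{from} Proposition \ref{A.4.15}: its proof rests on Theorem \ref{A.4.10}, whose induction on the number of boundary components uses Proposition \ref{A.4.15} precisely as the base case $r=0$. The same applies to Proposition \ref{A.4.16} (existence of $R\omega_*$), which also goes through Theorem \ref{A.4.10}. So the formal "unit is invertible on the essential image of a fully faithful right adjoint" argument cannot be used here without first establishing full faithfulness by other means. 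Your other flagged worry --- that $\omega^*C_*^{\A^1}\Ztr(X)$ is an honest fibrant model computing $R\omega^*M(X)^{\mathrm V}$ --- is real as well: strict $\A^1$-invariance alone does not make $C_*^{\A^1}\Ztr(X)$ fibrant for the descent model structure, and transporting fibrant replacements across $\omega^*$ is exactly what the paper's hom-group formulation is designed to avoid.

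Your fallback argument, on the other hand, is essentially the paper's proof and does not suffer from the circularity. The paper tests the unit against the compact generators $M(Y)[i]$: by Proposition \ref{A.4.8} (which packages Propositions \ref{A.4.30} and \ref{A.4.9}, and is where properness of $X$ enters),
\[
\hom_{\ldmeff}(M(Y)[i],M(X))\cong \hom_{\dmeff}(M(Y-\partial Y)[i],M(X)),
\]
while by adjunction
\[
\hom_{\ldmeff}(M(Y)[i],R\omega^*\omega_\sharp M(X))\cong \hom_{\dmeff}(M(Y-\partial Y)[i],M(X)),
\]
and these identifications are compatible with the unit. One should say explicitly (as you do only implicitly with ``this forces'') that the generators detect that the \emph{unit morphism} is an isomorphism, rather than merely that the two objects have abstractly isomorphic hom-groups. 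If you restructure the write-up so that the hom-group computation is the proof and the appeal to Theorem \ref{A.4.14} is deleted, the argument is correct and coincides with the paper's.
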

\begin{proof}
Appealing to the generators $M(Y)[i]$ of $\ldmeff$,
where $Y\in lSm/k$ and $i\in \Z$,
it suffices to show there is an isomorphism
\[
\hom_{\ldmeff}(M(Y)[i],M(X))
\cong 
\hom_{\ldmeff}(M(Y)[i],R\omega^*M(X)).
\]

This follows from Proposition \ref{A.4.8} and the isomorphisms
\[
\begin{split}&\hom_{\ldmeff}(M(Y)[i],R\omega^*M(X))\\
\cong &\hom_{\dmeff}(\omega_\sharp M(Y)[i],M(X))\\
\cong &\hom_{\dmeff}(M(Y-\partial Y)[i],M(X)).
\end{split}
\]
\end{proof}

\begin{prop}
\label{A.4.11}
Let $Z_1,\ldots,Z_r$ be divisors forming a strict normal crossing divisor on a smooth scheme $X$ over $k$.
Suppose the smooth subscheme $Z$ of $X$ has strict normal crossing and is not contained in $Z_1\cup\cdots\cup Z_r$.
Let $E$ be the exceptional divisor of the blow-up $B_ZX$.
Then for 
\[
Y:=(X,Z_1+\cdots+Z_r)
\]
there is a commutative diagram in $\ldmeff$
\[
\begin{tikzcd}
M((B_Z Y,E)\rightarrow Y)\arrow[d,"\sim"']\arrow[r]&R\omega^*M((Y-\partial Y\up Z\rightarrow Y-\partial Y)\arrow[d,"\sim"]\\
MTh(N_ZY)\arrow[r]&R\omega^*(MTh(N_{Z-\partial Y\cap Z}(Y-\partial Y))).
\end{tikzcd}\]
The vertical morphisms are the Gysin isomorphisms, and the horizontal morphisms are induced by the unit of the adjunction ${\rm id}\rightarrow R\omega^*\omega_\sharp$.
\end{prop}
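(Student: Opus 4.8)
The plan is to get the commutativity essentially for free from the naturality of the unit $\eta\colon\mathrm{id}\to R\omega^*\omega_\sharp$ of the adjunction $(\omega_\sharp,R\omega^*)$ of \eqref{eq::omegasharp}, once we identify $\omega_\sharp$ applied to the log Gysin isomorphism $\mathfrak g_{Y,Z}$ of Definition \ref{A.3.38} with the classical Gysin (purity) isomorphism in $\dmeff$. Set $U:=Y-\partial Y=\underline X-(Z_1\cup\cdots\cup Z_r)$ and $W:=Z\cap U=Z-(\partial Y\cap Z)$; this is a smooth closed pair of $k$-schemes because $Z$ has strict normal crossing with $Z_1+\cdots+Z_r$ and is not contained in $Z_1\cup\cdots\cup Z_r$. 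First I would record the identifications
\[
\omega_\sharp M((B_ZY,E)\to Y)\cong M((U-W)\to U), \qquad \omega_\sharp\,MTh(N_ZY)\cong M((N_WU-W)\to N_WU),
\]
the right-hand object being the Thom motive of $N_WU$ in $\dmeff$. These follow from $\omega_\sharp\Zltr(Y')\cong\Ztr(\omega Y')$ (which is Example \ref{A.5.31}(1), since $\omega_\sharp\cong\lambda^*$), from the compatibility of $\omega_\sharp$ with $a_{dNis}^*$ and with cones, and from the fact that blowing down is an isomorphism off the exceptional divisor: $B_ZX-E\cong X-Z$ carries $W_i-(W_i\cap E)$ to $Z_i-(Z_i\cap Z)$, so that $\omega(B_ZY,E)=B_ZX-(W_1\cup\cdots\cup W_r\cup E)\cong U-W$, and similarly $\omega(N_ZY)\cong N_WU$ and $\omega(B_Z(N_ZY),E^N)\cong N_WU-W$ by Definitions \ref{A.3.15} and \ref{A.3.22}. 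Under these identifications the upper (resp.\ lower) right corner of the diagram in the statement is precisely $R\omega^*\omega_\sharp M((B_ZY,E)\to Y)$ (resp.\ $R\omega^*\omega_\sharp\,MTh(N_ZY)$), and the right vertical arrow is $R\omega^*$ of the Gysin isomorphism $\mathfrak g_{U,W}$ of \cite[Theorem 15.15]{MVW} for $(U,W)$.

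Granting this, the square obtained by placing $R\omega^*\omega_\sharp\mathfrak g_{Y,Z}$ on the right commutes by naturality of $\eta$ applied to the morphism $\mathfrak g_{Y,Z}$ of $\ldmeff$. Hence the proof reduces to the identity $\omega_\sharp\mathfrak g_{Y,Z}=\mathfrak g_{U,W}$. For this I would use that $\mathfrak g_{Y,Z}$ is by construction (Construction \ref{A.3.37}, Definition \ref{A.3.38}) the composite $b^{-1}\circ a$ of the zig-zag \eqref{A.3.15.2} of isomorphisms through the log deformation to the normal cone $D_ZY$, and that $\mathfrak g_{U,W}$ is the analogous composite through the classical deformation to the normal cone underlying the proof of \cite[Theorem 15.15]{MVW}. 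Since $\omega_\sharp$ is a triangulated functor sending $M(Y')$ to $M(\omega Y')$, it suffices to check that $\omega$ carries the diagram \eqref{A.3.15.2} — objects, exceptional divisors and strict transforms of the $Z_i$ — onto the classical deformation diagram for $(U,W)$. Here one uses $\omega(\boxx)=\A^1$, so $\omega(Y\times\boxx)=U\times\A^1$, together with the blow-down identifications above, to see object-by-object that $\omega(D_ZY)$ is the classical deformation space $B_{W\times\{0\}}(U\times\A^1)-B_WU$, with the divisor $Z\times\boxx$ becoming $W\times\A^1$, that $\omega(N_ZY)\cong N_WU$, and that $\omega(B_{Z\times\boxx}(D_ZY),E^D)$ and $\omega(B_Z(N_ZY),E^N)$ are the corresponding complements; consequently $\omega_\sharp(a)$ and $\omega_\sharp(b)$ are the two structure morphisms of the classical zig-zag, and $\omega_\sharp\mathfrak g_{Y,Z}=\omega_\sharp(b)^{-1}\circ\omega_\sharp(a)=\mathfrak g_{U,W}$. (One could instead appeal to the full faithfulness of $R\omega^*$ of Theorem \ref{A.4.14}, but only that $R\omega^*$ is a functor is actually needed.)

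The main obstacle is the bookkeeping in this last step: verifying carefully and uniformly across the deformation diagram that removing the boundary turns the log ``compactification by a blow-up'' $(B_ZY,E)$ into the honest open complement $U-W$, and that the resulting diagram coincides, up to a canonical isomorphism compatible with all the maps, with Voevodsky's chosen deformation space for $(U,W)$. If a head-on comparison of the two constructions proves unwieldy, an alternative is to invoke the functoriality of both Gysin isomorphisms in the pair $(Y,Z)$ — Theorem \ref{A.3.36} on the log side and the corresponding functoriality in $\dmeff$ — together with ($Zar$-sep) and a Lemma \ref{A.4.25}-type argument to reduce $\omega_\sharp\mathfrak g_{Y,Z}=\mathfrak g_{U,W}$ to the local model $X=\A^n$, $Z=\{0\}$, $r=0$, where both isomorphisms can be computed by hand, the boundary divisors being reinstated afterwards by the monoidal structure.
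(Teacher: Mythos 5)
Your proposal is correct and follows essentially the same route as the paper: the paper's proof writes out the deformation-to-the-normal-cone zig-zag for both the log and the classical Gysin isomorphisms in a single three-row diagram whose horizontal arrows are unit maps, so each square commutes by naturality of $\mathrm{id}\rightarrow R\omega^*\omega_\sharp$. The extra bookkeeping you carry out, identifying $\omega$ of the log deformation diagram \eqref{A.3.15.2} with the classical deformation diagram for the pair $(Y-\partial Y,\, Z-\partial Y\cap Z)$, is precisely what the paper leaves implicit.
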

\begin{proof}
With the same notations in the proof of Theorem \ref{A.3.36},
we set 
\begin{gather*}
U:=(B_Z Y,E),
\;
V:=Y,
\;
U':=(B_{Z\times \boxx}(D_ZY),E^D),
\;
V':=B_{Z\times \boxx}(D_ZY),
\\
U'':=(B_Z(N_ZY),E^N),
\;
V'':=N_Z Y.
\end{gather*}
The diagram
\[
\begin{tikzcd}
M(U\rightarrow V)\arrow[d]\arrow[r]&R\omega^*M((U-\partial U)\rightarrow (V-\partial V))\arrow[d]\\
M(U'\rightarrow V')\arrow[d,leftarrow]\arrow[r]&R\omega^*M((U-\partial U)\rightarrow (V'-\partial V'))\arrow[d,leftarrow]\\
M(U''\rightarrow V'')\arrow[r]&R\omega^*M((U''-\partial U'')\rightarrow (V''-\partial V''))
\end{tikzcd}
\]
commutes because the horizontal morphisms are obtained from an adjunction. 
Our assertion follows from this.
\end{proof}

\begin{prop}
\label{A.4.31}
Let $X$ be an fs log scheme in $SmlSm/k$ with a rank $n$ vector bundle $\cE$.
View $\P(\cE)$ as the closed subscheme of $\P(\cE\oplus \cO)$ at infinity.
Then $\cF:=\cE-\partial \cE$ is a vector bundle over $X-\partial X$, 
and there is a commutative diagram
\begin{equation}
\label{A.4.31.1}
\begin{tikzcd}
MTh(\cE)\arrow[d]\arrow[r]&
R\omega^*MTh(\cF)\arrow[d]
\\
M(\P(\cE)\rightarrow \P(\cE\oplus \cO))\arrow[r]&
R\omega^* M(\P(\cF)\rightarrow \P(\cF\oplus \cO)).
\end{tikzcd}
\end{equation}
Here the vertical morphisms are obtained from \textup{Proposition \ref{A.3.34}} and \cite[Proposition 2.17, p.\ 112]{MV}, 
and the horizontal morphisms are induced by the unit of the adjunction ${\rm id}\rightarrow R\omega^*\omega_\sharp$.
\end{prop}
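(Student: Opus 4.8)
The plan is to reduce the commutativity of \eqref{A.4.31.1} to the already-established compatibilities of the Gysin-type isomorphisms of Proposition \ref{A.3.34} and \cite[Proposition 2.17]{MV} with the functor $R\omega^*$, exactly in the spirit of Proposition \ref{A.4.11}. First I would recall from the proof of Proposition \ref{A.3.34} the explicit zig-zag producing the vertical isomorphism on the left: letting $Y$ (resp.\ $Y'$) be the blow-up of $\cE$ (resp.\ $\P(\cE\oplus \cO)$) along the zero section $Z_0$, with exceptional divisor $E$ (resp.\ $E'$), the isomorphism $MTh(\cE)\xrightarrow{\cong} M(\P(\cE)\to \P(\cE\oplus \cO))$ is the composite of the ($sNis$-des)/Proposition \ref{A.3.44} isomorphism $M((Y,E)\to \cE)\xrightarrow{\cong}M((Y',E')\to \P(\cE\oplus \cO))$ coming from the cartesian square with corners $(Y,E),(Y',E'),\cE,\P(\cE\oplus\cO)$, followed by the isomorphism $M((Y',E')\to \P(\cE\oplus \cO))\cong M(\P(\cE)\to \P(\cE\oplus \cO))$ obtained from Proposition \ref{A.3.48} and the fact that $Z:=\P(\cE)$, viewed as a divisor of $Y'$ disjoint from $E'$, induces $M(Z)\xrightarrow{\cong} M(Y',E')$. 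The corresponding zig-zag for $\cF=\cE-\partial\cE$ over $X-\partial X$ is the one from \cite[Proposition 2.17]{MV}, which (removing all blow-ups, since $\cF$ has trivial log structure) is simply the Morel-Voevodsky identification $Th(\cF)\cong \P(\cF\oplus\cO)/\P(\cF)$.

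Next I would observe that each arrow in these two zig-zags is of one of the forms whose compatibility with $R\omega^*$ is formal: the ($sNis$-des)/Proposition \ref{A.3.44} isomorphisms arise from strict Nisnevich distinguished squares whose images under $\omega$ (i.e.\ restriction to the boundary-free loci) are again strict Nisnevich distinguished squares, so the unit ${\rm id}\to R\omega^*\omega_\sharp$ intertwines them; and the Proposition \ref{A.3.48} isomorphisms reduce to the statement $M(Z)\xrightarrow{\cong}M(Y',E')$, whose $R\omega^*$-compatibility is again formal because $\omega$ sends $Z\hookrightarrow (Y',E')$ to $\P(\cF)\hookrightarrow \P(\cF\oplus\cO)$, for which the corresponding statement in $\dmeff$ holds. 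Concretely, I would assemble a three-row commutative diagram exactly as in the proof of Proposition \ref{A.4.11}: the left column runs through $MTh(\cE)=M((Y,E)\to \cE)$, $M((Y',E')\to \P(\cE\oplus\cO))$, $M(\P(\cE)\to \P(\cE\oplus\cO))$; the right column runs through the images of these objects under $R\omega^*$ applied to the analogous objects over $X-\partial X$; and the horizontal arrows are the units of the adjunction. Each square commutes because its horizontal arrows come from an adjunction unit, and naturality of the unit with respect to the morphisms of $n$-squares (and of closed pairs) used to define the vertical maps. Composing the rows and columns yields precisely \eqref{A.4.31.1}.

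The only genuinely nontrivial input is the $R\omega^*$-compatibility of the two intermediate isomorphisms, but this is already available: the strict-Nisnevich-square case is covered by the same argument as in Proposition \ref{A.4.11} (and ultimately rests on Lemma \ref{A.4.7} / Proposition \ref{A.4.8}), while the $M(Z)\cong M(Y',E')$ case rests on Proposition \ref{A.6.3} on the log side and the projective bundle decomposition on the $\A^1$-side, both of which are compatible with $R\omega^*$ via the identification $M(Y',E')\cong M(\P^{n-1}_{X})$ and $M(\P(\cF\oplus\cO)/\P(\cF))\cong M(\P^{n-1}_{X-\partial X})$ (note $Y'$ is a $\boxx$-bundle over $\P^{n-1}_X$ by Lemma \ref{A.6.5}, and $\omega$ of a $\boxx$-bundle is an $\A^1$-bundle). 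I expect the main obstacle to be purely bookkeeping: carefully matching up, under $\omega$, the blow-up descriptions of the log objects with their blow-up-free counterparts over $X-\partial X$, so that the adjunction-unit squares literally commute rather than merely commute up to a canonical isomorphism one must then track. Once the dictionary is set up, no further computation is needed.
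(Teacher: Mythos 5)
Your proposal is correct and follows essentially the same route as the paper: form the zig-zag $M((Y,E)\to\cE)\to M((Y',E')\to\P(\cE\oplus\cO))\leftarrow M(\P(\cE)\to\P(\cE\oplus\cO))$ from the proof of Proposition \ref{A.3.34} together with its counterpart over $X-\partial X$, connect the two rows by the unit of ${\rm id}\to R\omega^*\omega_\sharp$, and invert the right-hand horizontal isomorphisms. The paper's proof is terser, observing that commutativity of the three-column diagram is immediate from naturality of the unit; your additional discussion of $R\omega^*$-compatibility is only needed to know the right-hand horizontals are invertible, which is cited directly from the proofs of Proposition \ref{A.3.34} and \cite[Proposition 2.17, p.\ 112]{MV}.
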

\begin{proof}
We form the fs log schemes $Y$, $Y'$ and the divisors $E$, $E'$ as in the proof of Proposition \ref{A.3.34}, 
and set
\[V:=(Y,E)-\partial (Y,E),
\;
V':=(Y',E')-\partial (Y',E').
\]
Then there is a commutative diagram
\[
\begin{tikzcd}
M((Y,E)\rightarrow \cE)\arrow[d]\arrow[r]&
M((Y',E')\rightarrow \P(\cE\oplus \cO))\arrow[r,leftarrow]\arrow[d]&
M(\P(\cE)\rightarrow \P(\cE\oplus \cO))\arrow[d]
\\
R\omega^*M(V\rightarrow \cF)\arrow[r]&
R\omega^*M(V'\rightarrow \P(\cF\oplus \cO))\arrow[r,leftarrow]&
R\omega^*M(\P(\cF)\rightarrow \P(\cF\oplus \cO)).
\end{tikzcd}
\]
In the proof of Proposition \ref{A.3.34} and \cite[Proposition 2.17, p.\ 112]{MV} it is shown that the horizontal morphisms in the right-hand side square are isomorphisms.
After inverting these, we obtain the required commutative diagram.
\end{proof}

\begin{thm}
\label{A.4.10}
Assume that $k$ admits resolution of singularities. 
Let $X$ be an fs log scheme log smooth over $k$. If the underlying scheme $\underline{X}$ is proper over $k$, 
then the unit of the adjunction $\id\rightarrow R\omega^*\omega_\sharp$ induces an isomorphism
\[
M(X)\rightarrow R\omega^*\omega_\sharp M(X).
\]
Therefore, for every $Y\in lSm/k$ and an integer $i\in \Z$, there is a natural isomorphism
\[
\hom_{\ldmeff}(M(Y)[i],M(X))
\cong 
\hom_{\dmeff}(M(Y-\partial Y)[i],M(X-\partial X)).
\]
\end{thm}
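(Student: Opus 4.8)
The plan is to prove the first assertion --- that the unit $\eta_X\colon M(X)\to R\omega^*\omega_\sharp M(X)$ is an isomorphism --- and then obtain the $\hom$-group statement formally from the adjunction $\omega_\sharp\dashv R\omega^*$ of \eqref{eq::omegasharp}. Let $\mathcal{C}\subseteq \ldmeff$ be the full subcategory of objects $K$ for which $\eta_K$ is an isomorphism; since $\omega_\sharp\simeq L\omega_\sharp$ and $R\omega^*$ are triangulated functors and $\eta$ is a natural transformation, $\mathcal{C}$ is a triangulated subcategory closed under isomorphisms. It thus suffices to prove $M(X)\in\mathcal{C}$ whenever $X\in lSm/k$ has $\underline X$ proper over $k$. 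First I would reduce to $X\in SmlSm/k$: by Proposition \ref{A.3.19} there is a log modification $\pi\colon \tilde X\to X$ with $\tilde X\in SmlSm/k$, and $\underline{\tilde X}$ is proper over $k$ because $\pi$ is proper. By ($div$-des), i.e.\ Proposition \ref{A.5.42}, $\pi$ induces $M(\tilde X)\xrightarrow{\cong}M(X)$ in $\ldmeff$, while $\omega_\sharp M(\tilde X)=M(\tilde X-\partial\tilde X)=M(X-\partial X)=\omega_\sharp M(X)$ in $\dmeff$ since a log modification is an isomorphism away from the boundary; by naturality of $\eta$ we get $M(X)\in\mathcal{C}$ iff $M(\tilde X)\in\mathcal{C}$. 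Hence we may assume $\underline X$ is smooth and proper over $k$ and $\partial X=Z_1+\cdots+Z_r$ with the $Z_i$ smooth divisors forming a strict normal crossing divisor.

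We argue by induction on $r$. For $r=0$ the log structure is trivial and $M(X)\in\mathcal{C}$ is exactly Proposition \ref{A.4.15}. For the inductive step, write $Y:=(X,Z_1+\cdots+Z_r)=X$ and $Y':=(X,Z_1+\cdots+Z_{r-1})$. Since blowing up $X$ along the Cartier divisor $Z_r$ is the identity, $Y$ is, in the sense of Definition \ref{A.3.22}, the blow-up of $Y'$ along $Z_r$ with exceptional divisor $Z_r$; the Gysin isomorphism (Theorem \ref{A.3.36}, Definition \ref{A.3.38}) then identifies the cone $M(Y\to Y')$ of Definition \ref{A.5.7} with $MTh(N_{Z_r}Y')$, yielding a distinguished triangle
\[
M(Y)\to M(Y')\to MTh(N_{Z_r}Y')\to M(Y)[1]
\]
in $\ldmeff$. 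Since $\mathcal{C}$ is triangulated, it is enough to show $M(Y')\in\mathcal{C}$ and $MTh(N_{Z_r}Y')\in\mathcal{C}$. The first holds by the induction hypothesis: $Y'$ has $r-1$ boundary components and $\underline{Y'}=\underline X$ is smooth and proper.

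For the Thom motive, $\cE:=N_{Z_r}Y'$ is a line bundle over $V:=(Z_r,(Z_1\cap Z_r)+\cdots+(Z_{r-1}\cap Z_r))\in SmlSm/k$, whose underlying scheme $Z_r$ is smooth and proper over $k$ and whose boundary has $r-1$ components. By Proposition \ref{A.3.34} there is an isomorphism $MTh(\cE)\cong M(\P(\cE)\to\P(\cE\oplus\cO))$, and by Proposition \ref{A.4.31} it is compatible with $\eta$; hence $MTh(\cE)\in\mathcal{C}$ iff $M(\P(\cE)\to\P(\cE\oplus\cO))\in\mathcal{C}$. The latter is the cone of $M(\P(\cE))\to M(\P(\cE\oplus\cO))$, and both $\P(\cE)\cong V$ and $\P(\cE\oplus\cO)$ (a $\P^1$-bundle over $V$) lie in $SmlSm/k$, are proper over $k$, and have $r-1$ boundary components, hence are in $\mathcal{C}$ by the induction hypothesis; since $\mathcal{C}$ is triangulated, so is the cone. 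This completes the induction and the first assertion, using also Proposition \ref{A.4.11} to keep the Gysin identification natural in $\eta$.

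Finally, the $\hom$-group statement follows from the first assertion together with $\omega_\sharp M(X)=M(X-\partial X)$, $\omega_\sharp M(Y)=M(Y-\partial Y)$, and the adjunction $\omega_\sharp\dashv R\omega^*$: for $Y\in lSm/k$ and $i\in\Z$,
\[
\hom_{\ldmeff}(M(Y)[i],M(X))\cong\hom_{\ldmeff}(M(Y)[i],R\omega^*M(X-\partial X))\cong\hom_{\dmeff}(M(Y-\partial Y)[i],M(X-\partial X)).
\]
The main obstacle will be organizing the induction so that every auxiliary object ($Y'$, $V$, and the projective bundles) provably has strictly fewer boundary components than $X$, and --- more delicately --- ensuring that the Gysin triangle and the Thom-space identification of Proposition \ref{A.3.34} are compatible with the unit natural transformation $\eta$. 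This compatibility is precisely what Propositions \ref{A.4.11} and \ref{A.4.31} supply, and it is the reason the argument must be threaded through $SmlSm/k$ rather than carried out directly on $lSm/k$.
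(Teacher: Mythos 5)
Your proof is correct and follows essentially the same route as the paper: reduce to $SmlSm/k$, induct on the number of boundary components with Proposition \ref{A.4.15} as the base case, and handle the inductive step via the Gysin triangle together with the identification of the Thom motive with a cone of projective bundles. The only difference is packaging — you work with the triangulated subcategory of objects on which the unit is an isomorphism (which in fact makes the explicit compatibility diagrams of Propositions \ref{A.4.11} and \ref{A.4.31} logically dispensable, since membership only depends on the isomorphism class of the object), whereas the paper chases those commutative diagrams directly and concludes with the five lemma.
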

\begin{proof}
The question is dividing local on $X$, so we may assume that $X\in SmlSm/k$ by Proposition \ref{A.3.19}.
Then $\partial X$ is a strict normal crossing divisior formed by smooth divisors $Z_1,\ldots,Z_r$ on $X$, 
and we can form 
\begin{gather*}
X_1:=(\underline{X},Z_2+\cdots+Z_r),
\;
U:=X-\partial X,
\;
U_1:=X_1-\partial X_1,
\\
V:=Y-\partial Y,
\;
W_1:=Z_1-\partial X\cap Z_1.
\end{gather*}

The proof proceeds by induction on $r$. 
Proposition \ref{A.4.15} implies the claim for $r=0$. 
Assuming $r>0$, by Proposition \ref{A.4.31} there is a commutative diagram
\[
\begin{tikzcd}
MTh(N_{Z_1}X)\arrow[d]\arrow[r]&
R\omega^*MTh(N_{W_1}U)\arrow[d]
\\
M(\P(N_{Z_1}X)\rightarrow \P(N_{Z_1}X\oplus \cO))\arrow[r]&
R\omega^* M(\P(N_{W_1}U)\rightarrow \P(N_{W_1}U\oplus \cO)).
\end{tikzcd}
\]
The vertical morphisms are isomorphisms by Proposition \ref{A.3.34} and \cite[Proposition 2.17, p.\ 112]{MV}.
By induction, there are isomorphisms
\[
M(\P(N_{Z_1}X))\xrightarrow{\cong} R\omega^*M(\P(N_{W_1}U))
\]
and
\[
M(\P(N_{Z_1}X\oplus \cO))\xrightarrow{\cong} R\omega^*M(\P(N_{W_1}U\oplus \cO)).
\]
It follows that the lower horizontal morphism is an isomorphism, and hence the upper horizontal morphism is an isomorphism.
\vspace{0.1in}

Due to Proposition \ref{A.4.11} there is a commutative diagram
\[
\begin{tikzcd}
M(X\rightarrow X_1)\arrow[d]\arrow[r]& R\omega^*M(U\rightarrow U_1)
\arrow[d]\\
MTh(N_{Z_1}X) \arrow[r]&R\omega^*MTh(N_{W_1}U).
\end{tikzcd}
\]
The above shows the lower horizontal morphism is an isomorphism.
The vertical morphisms are isomorphisms by Theorem \ref{A.3.36} and \cite[Theorem 15.15]{MVW}, 
respectively.
This allows us to conclude for the upper horizontal morphism.
\vspace{0.1in}

By induction $M(X_1)\rightarrow R\omega^*M(U_1)$ is an isomorphism.
The five lemma implies $M(X)\rightarrow R\omega^*M(U_1)$ is an isomorphism.
\end{proof}

\begin{prop}
\label{A.4.16}
Assume that $k$ admits resolution of singularities. 
Then there is an adjunction
\[
R\omega^*: \dmeff\rightleftarrows \ldmeff:R\omega_*
\]
\end{prop}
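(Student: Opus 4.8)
The plan is to produce $R\omega_*$ by Brown representability: a triangulated functor out of a compactly generated triangulated category which preserves arbitrary direct sums admits a right adjoint. We already know that $R\omega^*\colon \dmeff\to\ldmeff$ is triangulated, being the right adjoint in the adjunction \eqref{eq::omegasharp} whose left member $\omega_\sharp$ is triangulated; and we know from Remark \ref{A.5.34} and Proposition \ref{A.5.33} that both $\dmeff$ and $\ldmeff$ are compactly generated, by the objects $M(X)[i]$ ($X\in Sm/k$, $i\in\Z$) and $M(Y)[i]$ ($Y\in lSm/k$, $i\in\Z$) respectively. So the only point to verify is that $R\omega^*$ commutes with small direct sums.

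To that end, fix a small family $\{\cF_j\}_{j\in J}$ in $\dmeff$ and consider the canonical morphism
\[
\theta\colon \bigoplus_{j\in J} R\omega^*\cF_j \longrightarrow R\omega^*\Bigl(\bigoplus_{j\in J}\cF_j\Bigr)
\]
in $\ldmeff$. Since the objects $M(Y)[i]$ generate $\ldmeff$, it suffices to check that $\hom_{\ldmeff}(M(Y)[i],\theta)$ is bijective for every $Y\in lSm/k$ and $i\in\Z$. Using the adjunction $\omega_\sharp\dashv R\omega^*$ together with the identification $\omega_\sharp M(Y)\cong M(Y-\partial Y)$ (which holds because $\omega_\sharp$ carries $a_{dNis}^*\Zltr(Y)$ to $a_{Nis}^*\Ztr(Y-\partial Y)$, cf.\ Example \ref{A.5.31}), the target of $\hom_{\ldmeff}(M(Y)[i],\theta)$ is $\hom_{\dmeff}\bigl(M(Y-\partial Y)[i],\bigoplus_j\cF_j\bigr)$. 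Now $Y-\partial Y\in Sm/k$, so $M(Y-\partial Y)[i]$ is compact in $\dmeff$ (Remark \ref{A.5.34}), whence this group is naturally $\bigoplus_j\hom_{\dmeff}(M(Y-\partial Y)[i],\cF_j)\cong\bigoplus_j\hom_{\ldmeff}(M(Y)[i],R\omega^*\cF_j)$. On the other hand, $M(Y)[i]$ is compact in $\ldmeff$ (Proposition \ref{A.5.33}), so the source of $\hom_{\ldmeff}(M(Y)[i],\theta)$ is the very same group $\bigoplus_j\hom_{\ldmeff}(M(Y)[i],R\omega^*\cF_j)$; a routine check shows these two identifications are compatible with $\theta$. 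Hence $\hom_{\ldmeff}(M(Y)[i],\theta)$ is bijective for all $Y$ and $i$, and since a morphism in $\ldmeff$ inducing isomorphisms on $\hom$ out of a set of compact generators has vanishing cone, $\theta$ is an isomorphism.

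Once $R\omega^*$ is shown to commute with small sums, Brown representability for compactly generated triangulated categories furnishes a right adjoint $R\omega_*\colon \ldmeff\to\dmeff$ of $R\omega^*$, which is precisely the asserted adjunction. The only nontrivial step is the interchange of $R\omega^*$ with direct sums, and this hinges entirely on the compactness of $M(Y)[i]$ in $\ldmeff$ and of $M(Y-\partial Y)[i]$ in $\dmeff$ together with the adjunction $\omega_\sharp\dashv R\omega^*$; I expect no genuine obstacle beyond bookkeeping. Resolution of singularities enters only through the construction of the adjunction \eqref{eq::omegasharp} and the comparison results (e.g.\ Theorem \ref{A.4.10}) that justify working with $R\omega^*$ as above, so no further geometric input is needed.
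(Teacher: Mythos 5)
Your argument is correct, and it takes a genuinely different route from the paper. The paper invokes \cite[Proposition 1.3.19]{CD12} and reduces everything to showing that $R\omega^*M(X)$ is compact in $\ldmeff$ for $X\in Sm/k$; this is where resolution of singularities is really used, since one chooses a proper $\overline{X}\in SmlSm/k$ with $X\cong \overline{X}-\partial\overline{X}$ and applies Theorem \ref{A.4.10} to identify $R\omega^*M(X)\cong M(\overline{X})$, which is compact by Proposition \ref{A.5.33}. You instead verify directly that $R\omega^*$ commutes with small sums by testing against the compact generators $M(Y)[i]$ of $\ldmeff$ and using that $\omega_\sharp M(Y)\cong M(Y-\partial Y)$ is compact in $\dmeff$, and then conclude by Brown representability. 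Both proofs ultimately rest on compact generation, but the inputs differ: the paper needs the deep comparison Theorem \ref{A.4.10} (and hence resolution of singularities) but obtains as a byproduct the compactness of $R\omega^*M(X)$, which is useful elsewhere; your argument uses only the formal adjunction $\omega_\sharp\dashv R\omega^*$ and the computation of $\omega_\sharp$ on representables. One small correction to your closing remark: the adjunction \eqref{eq::omegasharp} is a purely formal Quillen adjunction and does not require resolution of singularities, and you never actually invoke Theorem \ref{A.4.10}; so your proof in fact establishes the existence of the adjunction $R\omega^*\dashv R\omega_*$ unconditionally, without the hypothesis on $k$. (Of course, the statement is only interesting in conjunction with the rest of the section, where resolution of singularities is genuinely needed.)
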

\begin{proof}
By \cite[Proposition 1.3.19]{CD12} we need to show that $\dmeff$ admits a set of objects $\cG$ with the following properties.
\begin{enumerate}
\item For each $K\in \cG$, $K$ (resp.\ $R\omega^*K$) is compact in $\dmeff$ (resp.\ $\ldmeff$).
\item $\dmeff$ (resp.\ $\ldmeff$) is generated by $\cG$ (resp.\ $R\omega^*\cG$).
\item $\ldmeff$ is compactly generated.
\end{enumerate}

By Remark \ref{A.5.34} (resp.\ Proposition \ref{A.5.33}), $\dmeff$ (resp.\ $\ldmeff$) is generated by compact objects $M(X)[i]$ for $X\in Sm/k$ (resp.\ $X\in lSm/k$). 
Hence it remains to check that $R\omega^*M(X)$ is a compact object in $\ldmeff$.
\vspace{0.1in}

By resolution of singularities, 
there is a projective fs log scheme $\overline{X}$ in $SmlSm/k$ with $X\cong \overline{X}-\partial \overline{X}$.
Theorem \ref{A.4.10} shows there is an isomorphism 
$$
M(\overline{X})\xrightarrow{\cong} R\omega^*M(\overline{X}-\partial \overline{X})\cong R\omega^*M(X).
$$ 
Thus $R\omega^*M(X)$ is compact in $\ldmeff$ since the motive $M(\overline{X})$ is compact in $\ldmeff$.
\end{proof}

To summarize the above, we note there are adjunctions 
\[
\begin{tikzcd}
\ldmeff\arrow[rr,shift left=1.5ex,"\omega_\sharp "]\arrow[rr,"R\omega^*" description,leftarrow]\arrow[rr,shift right=1.5ex,"R\omega_*"']&&\dmeff.
\end{tikzcd}
\]

\begin{thm}
\label{A.4.14}
Assume that $k$ admits resolution of singularities. 
Then the functor
\[
R\omega^*\colon \dmeff\rightarrow \ldmeff
\]
is fully faithful.
\end{thm}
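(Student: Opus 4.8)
The plan is to show that the unit map $\id \to R\omega_* R\omega^*$ is an isomorphism of functors on $\dmeff$, which by a standard adjunction argument is equivalent to $R\omega^*$ being fully faithful. Concretely, for $M, N \in \dmeff$ I want to prove that the natural map
\[
\hom_{\dmeff}(M, N) \to \hom_{\ldmeff}(R\omega^* M, R\omega^* N)
\]
is a bijection. Since $\dmeff$ is compactly generated by the motives $M(X)[i]$ with $X \in Sm/k$ and $i \in \Z$ (Remark \ref{A.5.34}), and both $R\omega^*$ and the relevant hom-functors commute with the appropriate (co)limits — here I would invoke that $R\omega^*$ has a right adjoint $R\omega_*$ by Proposition \ref{A.4.16}, so $R\omega^*$ preserves small sums, and that $R\omega^* M(X)$ is compact in $\ldmeff$ as shown in the proof of Proposition \ref{A.4.16} — it suffices to treat the case $M = M(X)[i]$ and $N = M(Y')[j]$ for $X, Y' \in Sm/k$. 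After shifting we may take $i = j = 0$ and reduce to proving
\[
\hom_{\dmeff}(M(X), M(Y')) \xrightarrow{\ \cong\ } \hom_{\ldmeff}(R\omega^* M(X), R\omega^* M(Y')).
\]

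The key geometric input is resolution of singularities: choose a projective fs log scheme $\overline{X} \in SmlSm/k$ with $\overline{X} - \partial\overline{X} \cong X$, which exists because $k$ admits resolution of singularities. By Theorem \ref{A.4.10} (applied to the proper underlying scheme $\underline{\overline{X}}$) the unit induces an isomorphism $M(\overline{X}) \xrightarrow{\cong} R\omega^* \omega_\sharp M(\overline{X}) \cong R\omega^* M(X)$ in $\ldmeff$; similarly pick $\overline{Y'}$ with $\overline{Y'} - \partial\overline{Y'} \cong Y'$ and get $M(\overline{Y'}) \cong R\omega^* M(Y')$. Thus the target hom-group is identified with $\hom_{\ldmeff}(M(\overline{X}), M(\overline{Y'}))$. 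Now apply Theorem \ref{A.4.10} again, this time in its second form with $Y = \overline{X}$ (which lies in $lSm/k$) and $X = \overline{Y'}$ (whose underlying scheme is proper), to obtain a natural isomorphism
\[
\hom_{\ldmeff}(M(\overline{X}), M(\overline{Y'})) \cong \hom_{\dmeff}(M(\overline{X} - \partial\overline{X}), M(\overline{Y'} - \partial\overline{Y'})) = \hom_{\dmeff}(M(X), M(Y')).
\]
Chasing through the construction, this composite isomorphism is inverse to (or at least compatible with) the map induced by $R\omega^*$ via the unit, which yields the claim for generators.

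The main obstacle — and the step that requires the most care — is checking naturality and compatibility: one must verify that the abstract map $\hom_{\dmeff}(M(X), M(Y')) \to \hom_{\ldmeff}(R\omega^* M(X), R\omega^* M(Y'))$ defined by the functor $R\omega^*$ really does agree, under the identifications $R\omega^* M(X) \cong M(\overline{X})$ and $R\omega^* M(Y') \cong M(\overline{Y'})$, with the isomorphism supplied by the second part of Theorem \ref{A.4.10}. This is because Theorem \ref{A.4.10} is phrased as an abstract isomorphism of hom-groups rather than obviously coming from $R\omega^*$; the resolution is that the isomorphism in Theorem \ref{A.4.10} is itself induced by the unit of the adjunction $\id \to R\omega^* \omega_\sharp$ (its proof constructs it precisely this way), and $\omega_\sharp M(\overline{Y'}) = M(Y')$, $\omega_\sharp M(\overline{X}) = M(X)$ by definition of $\omega_\sharp$ on representables. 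So the diagram
\[
\begin{tikzcd}
\hom_{\dmeff}(M(X), M(Y')) \arrow[r] \arrow[d, "\wr"'] & \hom_{\ldmeff}(R\omega^* M(X), R\omega^* M(Y')) \arrow[d, "\wr"] \\
\hom_{\dmeff}(\omega_\sharp M(\overline{X}), \omega_\sharp M(\overline{Y'})) \arrow[r] & \hom_{\ldmeff}(M(\overline{X}), M(\overline{Y'}))
\end{tikzcd}
\]
commutes, the bottom arrow being the adjunction-unit map which Theorem \ref{A.4.10} shows is an isomorphism. Once this bookkeeping is in place, fully faithfulness on all of $\dmeff$ follows by the compact-generation reduction above, completing the proof.
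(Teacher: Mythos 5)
Your proposal is correct and follows essentially the same route as the paper's proof: reduce to the compact generators $M(X)[i]$, use resolution of singularities to choose proper compactifications $\overline{X},\overline{Y}\in SmlSm/k$, and apply Theorem \ref{A.4.10} twice — once to identify $R\omega^*M(X)\cong M(\overline{X})$ and once to identify the hom-groups — concluding by adjunction. Your extra paragraph on compatibility is sound (the isomorphism of Theorem \ref{A.4.10} is indeed induced by the unit of $\id\to R\omega^*\omega_\sharp$), just more explicit bookkeeping than the paper records.
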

\begin{proof}
We need to show that the unit of the adjunction ${\rm id}\rightarrow R\omega_*R\omega^*$ is an isomorphism. 
By invoking the generators $M(X)[i]$ of $\dmeff$, 
where $X\in Sm/k$ and $i\in \Z$, 
we only need to show there is an isomorphism
\[
M(X)\rightarrow R\omega_*R\omega^*M(X).
\]
Again by invoking the generators, we reduce to showing there is an isomorphism
\[
\hom_{\dmeff}(M(Y)[i],M(X))\cong \hom_{\dmeff}(M(Y)[i],R\omega_*R\omega^*M(X))
\]
for any $Y\in Sm/k$ and $i\in \Z$.
\vspace{0.1in}

By resolution of singularities, there are proper fs log schemes $\overline{X}$ and $\overline{Y}$ in $SmlSm/k$ such that $X\cong \overline{X}-\partial \overline{X}$ and $Y\cong \overline{Y}-\partial \overline{Y}$.
Owing to Theorem \ref{A.4.10} we have
\[
R\omega^*M(X)\cong M(\overline{X}),
\;
R\omega^*M(Y)\cong M(\overline{Y}).
\]
Applying Theorem \ref{A.4.10} we deduce the isomorphisms
\begin{align*}
\hom_{\dmeff}(M(Y)[i],M(X))\cong &\hom_{\ldmeff}(M(\overline{Y})[i],M(\overline{X}))
\\
\cong &\hom_{\ldmeff}(R\omega^*M(Y)[i],R\omega^*M(X)).
\end{align*}
By adjunction, this concludes the proof.
\end{proof}

\begin{df}
\label{A.4.24}
An object $\cF$ of $\ldmeff$ is called {\it $\A^1$-local}\index{A1-local@$\A^1$-local} if for any $X\in lSm/k$ and $i\in \Z$ the projection $X\times \A^1\rightarrow X$ induces an isomorphism 
\[
\hom_{\ldmeff}(M(X)[i],\cF)\rightarrow \hom_{\ldmeff}(M(X\times \A^1)[i],\cF).
\]
\end{df}

We note that if a complex $\cF$ of dividing Nisnevich sheaves with log transfers is both strictly $\A^1$-invariant and strictly $\boxx$-invariant, 
then the associated object in $\ldmeff$ is $\A^1$-local.

\begin{prop}
\label{A.4.21}
Assume that $k$ admits resolution of singularities. 
Let $\cF$ be an $\A^1$-local object of $\ldmeff$. 
Then, for every $Y\in lSm/k$ and $i\in \Z$, there is a natural isomorphism
\[
\hom_{\ldmeff}(M(Y)[i],\cF)
\cong 
\hom_{\ldmeff}(M(Y-\partial Y)[i],\cF).
\]
\end{prop}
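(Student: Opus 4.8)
The plan is to prove that the map $M(Y-\partial Y)\to M(Y)$ induced by the open immersion $Y-\partial Y\hookrightarrow Y$ (legitimate since $Y-\partial Y$ is smooth over $k$ with the trivial log structure) becomes an isomorphism after applying $\hom_{\ldmeff}(-,\cF)$ for $\A^1$-local $\cF$; equivalently, that $\hom_{\ldmeff}(M(Y-\partial Y\to Y)[j],\cF)=0$ for all $j$. First I would reduce to $Y\in SmlSm/k$: by Proposition \ref{A.3.19} there is a log modification $Y'\to Y$ with $Y'\in SmlSm/k$; by Proposition \ref{A.5.42} the map $M(Y')\to M(Y)$ is an isomorphism, while $Y'-\partial Y'\cong Y-\partial Y$, so the statement for $Y'$ implies that for $Y$. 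Writing $\partial Y=Z_1+\cdots+Z_r$, I would then induct on $r$, the case $r=0$ being trivial.

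For the inductive step, put $Y_1:=(\underline Y,Z_2+\cdots+Z_r)\in SmlSm/k$, $U:=Y_1-\partial Y_1$, $W:=Z_1\cap U$ (a smooth divisor on $U$); then $Y-\partial Y=U-W$ and there is a commutative square
\[
\begin{tikzcd}
Y-\partial Y\arrow[r]\arrow[d]&Y\arrow[d]\\
U\arrow[r]&Y_1
\end{tikzcd}
\]
of fs log schemes, the right vertical arrow being the canonical morphism $Y\to Y_1$ adjoining $Z_1$ to the log structure. Since $Z_1$ is a smooth divisor on $\underline{Y_1}$ in strict normal crossing with, and not contained in, $\partial Y_1$, Theorem \ref{A.3.36} and Definition \ref{A.3.38} furnish the Gysin triangle $M(Y)\to M(Y_1)\to MTh(N_{Z_1}Y_1)\to M(Y)[1]$ in $\ldmeff$, i.e.\ $M(Y\to Y_1)\cong MTh(N_{Z_1}Y_1)$. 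Applying the octahedral axiom to $M(Y-\partial Y)\to M(Y)\to M(Y_1)$ together with the factorization through $M(U)$ coming from the square produces two distinguished triangles
\[
M(Y-\partial Y\to Y)\to M(Y-\partial Y\to Y_1)\to MTh(N_{Z_1}Y_1)\to M(Y-\partial Y\to Y)[1],
\]
\[
M(U-W\to U)\to M(Y-\partial Y\to Y_1)\to M(Y_1-\partial Y_1\to Y_1)\to M(U-W\to U)[1].
\]
By the inductive hypothesis $\hom_{\ldmeff}(M(Y_1-\partial Y_1\to Y_1)[\ast],\cF)=0$, so applying $\hom_{\ldmeff}(-,\cF)$ reduces the problem to showing that the canonical map $\hom_{\ldmeff}(MTh(N_{Z_1}Y_1)[\ast],\cF)\to\hom_{\ldmeff}(M(U-W\to U)[\ast],\cF)$ is an isomorphism.

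To prove this I would rewrite $MTh(N_{Z_1}Y_1)\cong M(\P(\cE)\to\P(\cE\oplus\cO))$ by Proposition \ref{A.3.34}, where $\cE=N_{Z_1}Y_1$ is a vector bundle over $(Z_1,Z_1\cap\partial Y_1)$; since the log structures on $\P(\cE)$ and $\P(\cE\oplus\cO)$ are pulled back from $(Z_1,Z_1\cap\partial Y_1)$, these fs log schemes have at most $r-1$ boundary components, and the inductive hypothesis replaces $\hom_{\ldmeff}(M(\P(\cE))[\ast],\cF)$ and $\hom_{\ldmeff}(M(\P(\cE\oplus\cO))[\ast],\cF)$ by the same groups for the trivial-log projective bundles $\P(N_WU)$ and $\P(N_WU\oplus\cO)$ over $W$. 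Every term is now of the form $\hom_{\ldmeff}(M(\lambda V)[i],\cF)$ with $V$ a smooth $k$-scheme, and the point is that $\A^1$-locality of $\cF$, together with $sNis$-descent and the transfer structure in $\ldmeff$, forces $V\mapsto\hom_{\ldmeff}(M(\lambda V)[i],\cF)$ to be an $\A^1$-invariant Nisnevich cohomology theory with transfers; equivalently one compares through the adjunction $\omega_\sharp\dashv R\omega^*$ and the compatibility squares of Proposition \ref{A.4.11} and Proposition \ref{A.4.31}. The classical Gysin, Thom and projective bundle isomorphisms (\cite[Theorem 15.15]{MVW}, \cite[Proposition 2.17, p.\ 112]{MV}) then identify $\hom_{\ldmeff}(M(\P(N_WU)\to\P(N_WU\oplus\cO))[\ast],\cF)$ with $\hom_{\ldmeff}(M(U-W\to U)[\ast],\cF)$ compatibly with the connecting maps of the two long exact sequences, and the five lemma closes the induction.

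The step I expect to be the main obstacle is precisely this last one. Unlike $\dmeff$, the category $\ldmeff$ has no localization (purity) triangle for the open immersion $U-W\hookrightarrow U$, so the relative motive $M(U-W\to U)$ admits no intrinsic Thom-space description; one must therefore translate the desired identification into the $\A^1$-homotopy-invariant world — via $\omega_\sharp$/$R\omega^*$ or via the $\A^1$-invariant-with-transfers presheaves $V\mapsto\hom_{\ldmeff}(M(\lambda V)[i],\cF)$ — and the genuine work is not in the individual purity, Thom and projective bundle isomorphisms but in checking that the whole chain of identifications (Proposition \ref{A.3.34}, the inductive reduction of the projective bundles, the classical isomorphisms, and the compatibility squares \ref{A.4.11} and \ref{A.4.31}) respects the long exact sequences being compared.
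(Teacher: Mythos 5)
Your proposal is correct, and it rests on the same two pillars as the paper's proof — induction on the number of components of $\partial Y$ and the matching of the logarithmic Gysin isomorphism of Theorem \ref{A.3.36} with classical $\A^1$-purity — but it organizes the inductive step along a genuinely different route. The paper first replaces $\cF$ by a strictly $\boxx$-invariant fibrant model, notes that its restriction $\cG$ to $Sm/k$ is a strictly $\A^1$-invariant Nisnevich complex with transfers, and then proves the stronger comparison $\hom_{\ldmeff}(M(Y)[i],\cF)\cong\hom_{\dmeff}(M(Y-\partial Y)[i],\cG)$ by running deformation to the normal cone in parallel on the log and $\A^1$ sides: the three terms $Y\to Y_1$, the deformation space, and the normal bundle are compared term by term with their images under $\omega$, the vertical maps being isomorphisms by Theorem \ref{A.3.36} and by the classical argument of \cite{MR1764202}, after which the five lemma closes the induction. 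You instead stay inside $\ldmeff$, use the octahedron to isolate the comparison of $MTh(N_{Z_1}Y_1)$ with $M(U-W\to U)$, pass to the projective-bundle model of the Thom motive via Proposition \ref{A.3.34}, and apply the inductive hypothesis a second time (to $\P(\cE)$ and $\P(\cE\oplus\cO)$) before invoking Voevodsky's purity. What your route buys is that the $\A^1$-invariant input is confined to a single purity statement for the smooth pair $(U,W)$ with trivial log structures; what it costs is the extra bookkeeping of the octahedron, the second induction, and the compatibility of the whole chain of identifications with the boundary maps of the two long exact sequences — which you correctly flag as the crux and correctly locate in Propositions \ref{A.4.11} and \ref{A.4.31}. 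One step you should make explicit rather than gesture at: the fact that $V\mapsto\hom_{\ldmeff}(M(\lambda V)[i],\cF)$ is a strictly $\A^1$-invariant Nisnevich theory with transfers is precisely the fibrant-replacement-plus-restriction argument that opens the paper's proof (using $\A^1$-locality of $\cF$ together with Proposition \ref{A.5.13}), and it must be in place before \cite[Theorem 15.15]{MVW} can be applied to close your final square.
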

\begin{proof}
We may assume $\cF$ is a strictly $\boxx$-invariant complex of dividing Nisnevich sheaves with log transfers by taking a fibrant replacement.
Then, 
by Proposition \ref{A.5.13}, 
there is an isomorphism
\[
\hom_{\ldmeff}(M(Y)[i],\cF)
\cong 
\bH_{dNis}^i(Y,\cF).
\]
If $Y$ has the trivial log structure, then the dividing Nisnevich topology on $Y$ agrees with the Nisnevich topology on $Y$, and we obtain
\begin{equation}
\label{A.4.21.1}
\hom_{\ldmeff}(M(Y)[i],\cF)
\cong 
\bH_{Nis}^i(Y,\cF).
\end{equation}
Thus the restriction $\cG$ of $\cF$ on $Sm/k$ is a strictly $\A^1$-invariant complex of Nisnevich sheaves with transfers.
\vspace{0.1in}

The claim is dividing Nisnevich local on $Y$, so we may assume $\partial Y$ is a strict normal crossing divisor formed by smooth divisors $Z_1,\ldots,Z_r$ on $Y$. 
We set 
\[
Y_1:=(\underline{Y},Z_2+\cdots+Z_r),
\]
\[
W=(Z_1,Z_1\cap Z_2+\cdots+Z_1\cap Z_r),
\] 
and proceed by induction on $r$. The claim is evident for $r=0$. 
\vspace{0.1in}

In the following, we assume $r\geq 1$ and set 
\[
Y':=(D_{Z_1}Y_1,Z_1\times \boxx),
\;
Y_1':=D_{Z_1}Y,
\;
Y'':=(N_{Z_1} Y_1,Z_1), 
\;
Y_1'':=N_{Z_1}Y_1.
\] 
There is a naturally induced commutative diagram
\[
\begin{tikzpicture}[baseline= (a).base]
\node[scale=.94] (a) at (0,0)
{
\begin{tikzcd}[column sep=tiny]
\hom_{\ldmeff}(M(Y\rightarrow Y_1)[i],\cF)\arrow[d]\arrow[r]
&\hom_{\dmeff}(M(Y-\partial Y\rightarrow Y_1-\partial Y_1)[i],\cG)\arrow[d]
\\
\hom_{\ldmeff}(M(Y'\rightarrow Y_1')[i],\cF)\arrow[d,leftarrow]\arrow[r]
&\hom_{\dmeff}(M(Y'-\partial Y'\rightarrow Y_1'-\partial Y_1')[i],\cG)\arrow[d,leftarrow]
\\
\hom_{\ldmeff}(M(Y''\rightarrow Y_1'')[i],\cF)\arrow[r]&\hom_{\dmeff}(M(Y-\partial Y\rightarrow Y_1''-\partial Y_1'')[i],\cG).
\end{tikzcd}
};
\end{tikzpicture}
\]
The left vertical morphisms are isomorphisms by Theorem \ref{A.3.36}, and the right vertical morphisms are isomorphisms by \eqref{A.4.21.1} and the proof of \cite[Proposition 3.5.4]{MR1764202}. 
The lower horizontal morphism is an isomorphism by induction, so the upper horizontal morphism is also an isomorphism. 
By induction, there is an isomorphism
\[ 
\hom_{\ldmeff}(M(Y_1)[i],\cF)\rightarrow  \hom_{\dmeff}(M(Y_1-\partial Y_1)[i],\cG)
\]
This finishes the proof on account of the five lemma.
\end{proof}

\begin{thm}
\label{A.4.22}
Assume that $k$ admits resolution of singularities. 
Then the essential image of the functor
\[
R\omega^*\colon \dmeff\rightarrow \ldmeff
\]
is the full subcategory of $\ldmeff$ consisting of $\A^1$-local objects.
\end{thm}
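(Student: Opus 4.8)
The plan is to prove the asserted equality of full subcategories of $\ldmeff$ by establishing the two inclusions. The inclusion of the essential image of $R\omega^*$ into the subcategory of $\A^1$-local objects is formal. Let $\cG\in\dmeff$. For $X\in lSm/k$ and $i\in\Z$, the adjunction $\omega_\sharp\dashv R\omega^*$ of \eqref{eq::omegasharp}, together with the identity $\omega_\sharp M(X)\cong M(X-\partial X)$ (which holds on representables since $\omega_\sharp\cong\lambda^*$, see \eqref{A.4.5.1}, and was already used in the proof of Proposition \ref{A.4.15}), gives natural isomorphisms $\hom_{\ldmeff}(M(X)[i],R\omega^*\cG)\cong\hom_{\dmeff}(M(X-\partial X)[i],\cG)$ and $\hom_{\ldmeff}(M(X\times\A^1)[i],R\omega^*\cG)\cong\hom_{\dmeff}(M((X-\partial X)\times\A^1)[i],\cG)$. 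Since $M((X-\partial X)\times\A^1)\to M(X-\partial X)$ is an isomorphism in $\dmeff$, the projection induces an isomorphism between these groups; hence $R\omega^*\cG$ is $\A^1$-local.

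For the reverse inclusion, let $\cF\in\ldmeff$ be $\A^1$-local. I will show the unit $\eta_\cF\colon\cF\to R\omega^*\omega_\sharp\cF$ of the adjunction $\omega_\sharp\dashv R\omega^*$ is an isomorphism; this exhibits $\cF$ as $R\omega^*$ applied to $\omega_\sharp\cF\in\dmeff$ and finishes the proof. By Proposition \ref{A.5.33} it suffices to check that $\eta_\cF$ induces an isomorphism on $\hom_{\ldmeff}(M(Y)[i],-)$ for every $Y\in lSm/k$ and $i\in\Z$. By the usual compatibility of a unit with the adjunction bijection, the resulting map is identified with the map ``apply $\omega_\sharp$'', namely $\hom_{\ldmeff}(M(Y)[i],\cF)\to\hom_{\dmeff}(\omega_\sharp M(Y)[i],\omega_\sharp\cF)=\hom_{\dmeff}(M(Y-\partial Y)[i],\omega_\sharp\cF)$, followed by the adjunction isomorphism. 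Replacing $\cF$ by a fibrant model, a strictly $\boxx$-invariant complex of dividing Nisnevich sheaves with log transfers, and using that $\omega_\sharp\cong\lambda^*$ is exact, the object $\omega_\sharp\cF$ is just the restriction $\cG$ of $\cF$ to schemes with trivial log structure.

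Now I would invoke the analysis already carried out in the proof of Proposition \ref{A.4.21}: for $Z\in Sm/k$ with its trivial log structure the dividing Nisnevich and Nisnevich topologies agree, so Proposition \ref{A.5.13} gives $\hom_{\ldmeff}(M(Z)[i],\cF)\cong\bH^i_{dNis}(Z,\cF)=\bH^i_{Nis}(Z,\cG)$, and the $\A^1$-locality of $\cF$ forces $\bH^i_{Nis}(Z,\cG)\cong\bH^i_{Nis}(Z\times\A^1,\cG)$, i.e.\ $\cG$ is a strictly $\A^1$-invariant complex of Nisnevich sheaves with transfers. For such $\cG$ one has $\hom_{\dmeff}(M(Z)[i],\cG)\cong\bH^i_{Nis}(Z,\cG)$ by \cite[Proposition 14.16]{MVW}, so the ``apply $\omega_\sharp$'' map is an isomorphism for $Z$ with trivial log structure. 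For general $Y\in lSm/k$, Proposition \ref{A.4.21} gives $\hom_{\ldmeff}(M(Y)[i],\cF)\cong\hom_{\ldmeff}(M(Y-\partial Y)[i],\cF)$, which reduces the claim to the case just treated and shows $\eta_\cF$ is an isomorphism on generators, hence an isomorphism. Combined with Theorem \ref{A.4.14}, this identifies $R\omega^*$ with an equivalence from $\dmeff$ onto the $\A^1$-local subcategory.

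I expect the genuine difficulty to be organizational rather than conceptual. The two points needing care are: (i) verifying that the chain of isomorphisms assembled above is natural in $Y$ and really computes the map induced by $\eta_\cF$, so that ``isomorphism on a set of compact generators'' legitimately forces $\eta_\cF$ itself to be an isomorphism; and (ii) confirming that on derived categories $\omega_\sharp$ is computed by the exact restriction functor $\lambda^*$ (no derived correction), so that $\omega_\sharp\cF$ is literally the strictly $\A^1$-invariant complex $\cG$ appearing above. Both are routine given \eqref{A.4.5.1}, Proposition \ref{A.4.13}, and the construction of the $\boxx$-local and $\A^1$-local descent model structures in Section \ref{Subsection:derivedcategories}, but they must be handled with attention to keep the argument honest.
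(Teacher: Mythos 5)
Your proposal is correct and follows essentially the same route as the paper's proof: the first inclusion via the adjunction and $\omega_\sharp M(X)\cong M(X-\partial X)$, and the second by checking the unit $\cF\to R\omega^*\omega_\sharp\cF$ on the generators $M(Y)[i]$, reducing to trivial log structures through Proposition \ref{A.4.21}. The only cosmetic difference is that for the trivial-log-structure case the paper runs the identification through the adjunction $L\lambda_\sharp\dashv\lambda^*$ and $L\lambda_\sharp\Ztr(Y)\cong\Zltr(Y)$, where you instead pass through hypercohomology via Proposition \ref{A.5.13} and \cite[Proposition 14.16]{MVW}; both are legitimate and rest on the same facts.
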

\begin{proof}
Let $\cG$ be an object of $\dmeff$. 
Then, for every $Y\in lSm/k$ and $i\in \Z$,
the isomorphisms
\[
\begin{split}
\hom_{\ldmeff}(M(Y)[i],R\omega^*\cG) & \cong \hom_{\dmeff}(\omega_\sharp M(Y)[i],\cG) \\
& \cong \hom_{\dmeff}(M(Y-\partial Y)[i],\cG) \\
& \cong \hom_{\dmeff}(M((Y-\partial Y)\times \A^1)[i],\cG) \\
& \cong \hom_{\dmeff}(\omega_\sharp M(Y\times \A^1)[i],\cG) \\
& \cong \hom_{\ldmeff}(M(Y\times \A^1)[i],R\omega^*\cG)
\end{split}
\]
show that $R\omega^*\cG$ is $\A^1$-local.
\vspace{0.1in}

Now let $\cF$ be an $\A^1$-local object of $\ldmeff$. 
To show that $\cF$ is in the essential image of $R\omega^*$, it suffices to show that the unit of the adjunction $\cF\rightarrow R\omega^*\omega_\sharp \cF$ is an isomorphism. 
We show this by checking that, for any $Y\in lSm/k$ and $i\in \Z$, there is a naturally induced isomorphism
\[
\hom_{\ldmeff}(M(Y)[i],\cF)\rightarrow \hom_{\dmeff}(\omega_\sharp M(Y)[i],\omega_\sharp \cF).
\]

The open immersion $Y-\partial Y\rightarrow Y$ induces a commutative diagram
\[
\begin{tikzcd}
\hom_{\ldmeff}(M(Y)[i],\cF)\arrow[d]\arrow[r]&
\hom_{\dmeff}(\omega_\sharp M(Y)[i],\omega_\sharp \cF)\arrow[d]
\\
\hom_{\ldmeff}(M(Y-\partial Y)[i],\cF)\arrow[r]&
\hom_{\dmeff}(\omega_\sharp M(Y-\partial Y)[i],\omega_\sharp \cF).
\end{tikzcd}
\]
The left vertical homomorphism is an isomorphism by Proposition \ref{A.4.21}, and the right vertical homomorphism is an isomorphism since
\[
\omega_\sharp M(Y)\cong M(Y-\partial Y)\cong \omega_\sharp M(Y-\partial Y).
\]
Hence to show the upper horizontal homomorphism is an isomorphism, it suffices to show the lower horizontal homomorphism is an isomorphism, 
i.e., we are reduced to the case when $Y\in Sm/k$.
\vspace{0.1in}

We may view $\cF$ as a strictly $\boxx$-invariant object of $\Co(\Shvltrkl)$.
Recall from \eqref{eq::lambdaomegaadj} the adjunction
\[
L\lambda_\sharp\colon \Deri(\Shvtrkl)\rightleftarrows \Deri(\Shvltrkl)\colon \lambda^*\cong R\lambda^*\cong \omega_\sharp.
\]
Thus, for every $Y\in Sm/k$ and $i\in \Z$, there are isomorphisms 
\begin{align*}
\hom_{\Deri(\Shvtrkl)}(\Ztr(Y)[i],\omega_\sharp \cF)
& \cong
\hom_{\Deri(\Shvtrkl)}(\Ztr(Y)[i],\lambda^* \cF) \\
& \cong
\hom_{\Deri(\Shvltrkl)}(L\lambda_\sharp \Ztr(Y)[i], \cF) \\
& \cong
\hom_{\Deri(\Shvltrkl)}(\Zltr(Y)[i],\cF) \\
& \cong
\hom_{\ldmeff}(M(Y)[i],\cF)
\end{align*}
Together with the assumption that $\cF$ is $\A^1$-local, we deduce $\omega_\sharp F$ is $\A^1$-local, 
and 
\[
\hom_{\Deri(\Shvtrkl)}(\Ztr(Y)[i],\omega_\sharp \cF)\cong \hom_{\dmeff}(M(Y)[i],\omega_\sharp \cF).
\]
This shows the isomorphism
\[
\hom_{\ldmeff}(M(Y)[i],\cF)\cong \hom_{\dmeff}(M(Y)[i],\omega_\sharp \cF), 
\]
To conclude the proof, we note that
$$
\omega_\sharp M(Y)\cong M(Y).
$$
\end{proof}

\begin{thm}
\label{thm::dmeff=ldmeffprop}
Assume that $k$ admits resolution of singularities.
Then there is an equivalence of triangulated categories
\[
\ldmeffprop\simeq \dmeff.
\]
\end{thm}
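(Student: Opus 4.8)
The plan is to show that the adjunction $\omega_\sharp\colon \ldmeff\rightleftarrows \dmeff\colon R\omega^*$ restricts to an equivalence between $\ldmeffprop$ and $\dmeff$. By Theorem \ref{A.4.14} the functor $R\omega^*$ is fully faithful, so it suffices to identify its essential image with $\ldmeffprop$ and then invoke that a fully faithful functor onto its essential image is an equivalence. Concretely, I would establish two inclusions: first that $R\omega^*$ lands in $\ldmeffprop$, and second that $\ldmeffprop$ is contained in the essential image of $R\omega^*$.

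For the first inclusion, recall that $\dmeff$ is generated (as a triangulated category closed under small sums) by the motives $M(X)$ with $X\in Sm/k$, see Remark \ref{A.5.34}. By resolution of singularities, each such $X$ admits a compactification $\overline{X}\in SmlSm/k$ with $\overline{X}-\partial \overline{X}\cong X$ and $\underline{\overline{X}}$ proper over $k$; Theorem \ref{A.4.10} then gives a canonical isomorphism $R\omega^*M(X)\cong R\omega^*M(\overline{X}-\partial\overline{X})\cong M(\overline{X})$ in $\ldmeff$. Since $M(\overline{X})$ is by definition an object of $\ldmeffprop$ (the underlying scheme being proper), and since $R\omega^*$ is a triangulated functor commuting with small sums (being a right adjoint whose left adjoint $\omega_\sharp$ preserves compact generators, cf.\ the proof of Proposition \ref{A.4.16}), it follows that $R\omega^*$ sends the generators of $\dmeff$ into $\ldmeffprop$, hence sends all of $\dmeff$ into $\ldmeffprop$.

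For the reverse inclusion, observe that $\ldmeffprop$ is generated under small sums, shifts, and cones by the motives $M(X)$ with $\underline{X}$ proper over $k$. For any such $X$ (which we may assume lies in $SmlSm/k$ after a dividing cover, using Proposition \ref{A.3.19} and ($div$-des)), Theorem \ref{A.4.10} yields $M(X)\cong R\omega^*\omega_\sharp M(X)=R\omega^*M(X-\partial X)$, exhibiting each generator of $\ldmeffprop$ as an object in the essential image of $R\omega^*$. Because $R\omega^*$ is fully faithful and exact, its essential image is a triangulated subcategory closed under small sums, so it contains the whole of $\ldmeffprop$. Combining the two inclusions, the essential image of $R\omega^*$ is exactly $\ldmeffprop$, and $R\omega^*\colon \dmeff\xrightarrow{\simeq}\ldmeffprop$ is an equivalence of triangulated categories.

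\textbf{Main obstacle.} The substantive input is Theorem \ref{A.4.10}, whose proof already absorbs the hard work (Gysin triangles, Thom motive computations, compatibility with Voevodsky's category, and resolution of singularities); granting it, the argument here is essentially formal. The one point requiring a little care is verifying that $R\omega^*$ genuinely commutes with small sums so that the inclusion $R\omega^*(\dmeff)\subseteq\ldmeffprop$ propagates from generators to the whole category — this uses that $\ldmeff$ is compactly generated (Proposition \ref{A.5.33}) and that $R\omega^*$ preserves compactness of the generators $M(X)$, $X\in Sm/k$, which is precisely what is shown in the proof of Proposition \ref{A.4.16}. The symmetric point for the essential image of $R\omega^*$ being closed under small sums follows from full faithfulness of $R\omega^*$ together with its commutation with sums.
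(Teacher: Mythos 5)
Your proposal is correct and follows essentially the same route as the paper: both identify $\ldmeffprop$ with the essential image of the fully faithful functor $R\omega^*$ by applying Theorem \ref{A.4.10} in each direction, using resolution of singularities to compactify objects of $Sm/k$ inside $SmlSm/k$. The only difference is that you make explicit the verification that $R\omega^*$ commutes with small sums (so that the inclusions propagate from generators to the whole localizing subcategories), a point the paper's proof leaves implicit but which is indeed supplied by Proposition \ref{A.4.16}.
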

\begin{proof}
Owing to Theorem \ref{A.4.22} it suffices to identify $\ldmeffprop$ with the essential image of the functor
\[
R\omega^*\colon \dmeff\rightarrow \ldmeff.
\]
The essential image of $R\omega^*$ is the smallest triangulated subcategory of $\ldmeff$ that is closed under small sums and contains $R\omega^*M(X)$ for every $X\in Sm/k$.
On the other hand, $\ldmeffprop$ is the smallest triangulated subcategory of $\ldmeff$ that is closed under small sums and contains the motive $M(Y)$ for every $Y\in lSm/k$, 
where $Y$ is proper over $k$.
\vspace{0.1in}

Owing to Theorem \ref{A.4.10}, there is an isomorphism
\[
M(Y)\cong R\omega^*M(Y-\partial Y).
\]
Thus $\ldmeffprop\subset \im R\omega^*$.
By resolution of singularities, there exists an object $X'\in SmlSm/k$ such that $X'-\partial X'\cong X$.
Moreover, by Theorem \ref{A.4.10}, there is an isomorphism
\[
M(X')\cong R\omega^*M(X).
\]
Thus we have the inclusion 
$$
\im R\omega^*\subset \ldmeffprop.
$$
\end{proof}

\begin{exm}
\label{A.4.23}
Assume that $k$ admits resolution of singularities.
Then the functor 
\[
R\omega^*\colon \dmeff\rightarrow \ldmeff
\] 
is {\it not} essentially surjective.
Indeed,
in Theorem \ref{thmHodge} we show that for every $X\in SmlSm/k$ and $i,j\geq 0$ there is an isomorphism
\[
\hom_{\ldmeff}(M(X), \Omega^j_{-/k}[i]) 
\cong
H^i_{Zar}(\ul{X}, \Omega^j_{X/k}).
\]
However, the groups $H^i_{Zar}(\ul{X}, \Omega^j_{X/k})$ and $H^i_{Zar}(\ul{X}\times \A^1, \Omega^j_{X\times \A^1/k})$ are non-isomorphic, 
so that Theorem \ref{A.4.22} implies $\Omega^j_{X/k}$ is not in the essential image of $R\omega^*$.
\end{exm}

\subsection{Projective bundle formula}
In this subsection, we formulate and prove the projective bundle formula under the assumption of resolution of singularities.
We begin by discussing orientations on $\ldmeff$. 

\begin{df}
\label{A.7.1}
We say that $\ldmeff$ admits an {\it orientation}\index{orientation} if for any $X\in lSm/k$, there is a morphism
\[
c_1\colon {\rm Pic}(\underline{X})\rightarrow \hom_{\ldmeff}(M(X),\Lambda(1)[2])
\]
of sets, 
functorial in $X$, 
such that the class of the canonical line bundle in ${\rm Pic}(\P^1)$ corresponds to the canonical morphism $M(\P^1)\rightarrow \Lambda(1)[2]$. 
\end{df}

\begin{exm}
\label{A.7.2}
Assume that $k$ admits resolution of singularities.
Recall that $\Lambda(1)[2]=M(\pt\rightarrow \P^1)$.
Since $\pt$ and $\P^1$ are smooth and proper, by Proposition \ref{A.4.8}, we have
\[
{\rm Pic}(X-\partial X)
\cong 
\hom_{\ldmeff}(M(X),\Lambda(1)[2]).
\]
Thus we are entitled to the composition
\[
{\rm Pic}(\underline{X})\stackrel{u^*}
\rightarrow 
{\rm Pic}(X-\partial X)\stackrel{\sim}\rightarrow \hom_{\ldmeff}(M(X),\Lambda(1)[2]),
\]
where $u\colon X-\partial X\rightarrow \underline{X}$ is the induced open immersion.  
Hence $\ldmeff$ admits an orientation.
\end{exm}

\begin{const}
\label{A.7.3}
Let $X$ be an fs log scheme log smooth over $k$, and let $\cP$ be a $\P^n$-bundle over $X$, 
i.e., Zariski locally on $X$, the morphism $\cP\rightarrow X$ is isomorphic to the projection $\P^n\times X\rightarrow X$.
If $\ldmeff$ admits an orientation, then the canonical line bundle over $\cP$ induces a morphism 
\[
\tau_1\colon M(\cP)\rightarrow \Lambda(1)[2]
\]
in $\ldmeff$. 
More generally, 
for $1\leq i\leq n$, 
there is a morphism 
\begin{equation}
\label{A.7.3.1}
\tau_i\colon M(\cP)\rightarrow \Lambda(i)[2i]
\end{equation}
given by the composition
\[
M(\cP)
\stackrel{\Delta}\longrightarrow
M(\cP)^{\otimes i}
\stackrel{\tau_1^{\otimes i}}\longrightarrow 
(\Lambda(1)[2])^{\otimes i}
\cong
\Lambda(i)[2i].
\]
We also have a morphism
\[
\sigma_i\colon M(\cP)\rightarrow M(X)(i)[2i]
\]
given by the composition
\[
M(\cP)\stackrel{\Delta}\longrightarrow M(\cP)\otimes M(\cP)\stackrel{\id \times \tau_i}\longrightarrow M(\cP)\otimes \Lambda(i)[2i]\longrightarrow M(X)\otimes \Lambda(i)[2i]\cong M(X)(i)[2i].
\]
This defines an induced morphism
\[
\sigma\colon M(\cP)\rightarrow \bigoplus_{i=0}^n M(X)(i)[2i].
\]
\end{const}

\begin{prop}
\label{A.6.6}
Assume that $k$ admits resolution of singularities.
For every $X\in lSm/k$ and integer $n\geq 0$, the morphism
\[
\sigma\colon M(\P_X^n)\rightarrow \bigoplus_{i=0}^n M(X)(i)[2i]
\]
is an isomorphism.
\end{prop}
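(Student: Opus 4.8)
The plan is to reduce to the case $X=\Spec k$ and then to play the statement off against Voevodsky's category via the fully faithful comparison functor $R\omega^*$. For the reduction, note that $\P_X^n=X\times_k\P^n$, so $M(\P_X^n)\cong M(X)\otimes M(\P^n)$ and $\bigoplus_{i=0}^nM(X)(i)[2i]\cong M(X)\otimes\bigl(\bigoplus_{i=0}^n\Lambda(i)[2i]\bigr)$; since the tautological line bundle on $\P_X^n$ is pulled back along the projection $\P_X^n\to\P^n$, functoriality of $c_1$ in Construction \ref{A.7.3} identifies $\sigma$ for the trivial bundle $\P_X^n\to X$ with ${\rm id}_{M(X)}\otimes\sigma_{\Spec k}$, and as $M(X)\otimes(-)$ is triangulated it preserves isomorphisms. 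So it suffices to treat $X=\Spec k$. In that case $M(\P^n)\in\ldmeffprop$ because $\P^n$ is smooth and proper with trivial log structure; moreover $\Lambda(1)[2]=M(\Spec k\to\P^1)$ is the cone of a morphism between motives of smooth proper schemes, and combining Propositions \ref{A.3.34} and \ref{A.3.43} one gets $M(\P^{n-1}\to\P^n)\cong\Lambda(n)[2n]$, exhibiting every $\Lambda(i)[2i]$ as an object of the triangulated subcategory $\ldmeffprop$.

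Now I would run the following argument. By Theorem \ref{thm::dmeff=ldmeffprop} the functor $R\omega^*$ restricts to an equivalence $\dmeff\simeq\ldmeffprop$, and it is fully faithful and triangulated (Theorem \ref{A.4.14}); applying Theorem \ref{A.4.10} to $\P^n$, $\P^1$, $\Spec k$ (all smooth and proper with trivial log structure) identifies $M(\P^n)$ and $\Lambda(i)[2i]$ with $R\omega^*$ of the corresponding objects of $\dmeff$, using triangulatedness of $R\omega^*$ on the cone defining $\Lambda(1)$ and on $M(\P^{n-1}\to\P^n)$ for the higher twists. The morphism $\sigma$ of Construction \ref{A.7.3} then corresponds, under this equivalence, to Voevodsky's projective bundle morphism: the orientation on $\ldmeff$ used here (Example \ref{A.7.2}) is by construction the pullback of Voevodsky's orientation along the comparison isomorphism of Proposition \ref{A.4.8}, and $\sigma$ is built from $c_1$ by the same recipe in both settings. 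Voevodsky's projective bundle theorem \cite{MVW}, applied to the trivial bundle, says that $\sigma$ is an isomorphism in $\dmeff$; full faithfulness of $R\omega^*$ then forces $\sigma$ to be an isomorphism in $\ldmeff$. A more self-contained variant is an induction on $n$ using the distinguished triangle $M(\P^{n-1})\to M(\P^n)\to\Lambda(n)[2n]\to M(\P^{n-1})[1]$ furnished by $M(\P^{n-1}\to\P^n)\cong\Lambda(n)[2n]$: the map $\sigma_n$ restricts along $M(\P^{n-1})\to M(\P^n)$ to $\sigma_{n-1}$ on the first $n$ summands, since $\mathcal O_{\P^n}(1)$ restricts to $\mathcal O_{\P^{n-1}}(1)$ and the top component vanishes because it factors through $\hom_{\ldmeff}(M(\P^{n-1}),\Lambda(n)[2n])\cong\hom_{\dmeff}(M(\P^{n-1}),\Lambda(n)[2n])=\mathrm{CH}^n(\P^{n-1})=0$; the five lemma, comparing this triangle with the split triangle $\bigoplus_{i=0}^{n-1}\Lambda(i)[2i]\to\bigoplus_{i=0}^n\Lambda(i)[2i]\to\Lambda(n)[2n]$, then completes the induction, once the induced endomorphism of $\Lambda(n)[2n]$ is known to be invertible.

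The main obstacle is precisely that last point — the compatibility between the geometrically constructed Gysin/Thom isomorphism $M(\P^{n-1}\to\P^n)\cong\Lambda(n)[2n]$ (coming from Theorem \ref{A.3.36} together with Propositions \ref{A.3.34} and \ref{A.3.43}) and the top Chern-class component $\tau_n$ of $\sigma$. In the first route this is absorbed into the assertion that our orientation pulls back Voevodsky's, which is essentially true by the very definition of the orientation in Example \ref{A.7.2}, after which nothing more than Voevodsky's theorem is required; the reductions and the homological bookkeeping are routine. I therefore expect the only real work to be in carefully transferring the orientation, and hence the morphism $\sigma$, across the comparison functor $R\omega^*$.
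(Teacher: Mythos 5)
Your first route is essentially the paper's proof: the paper reduces to $X=\Spec k$ using the monoidal structure and then transports the question to $\dmeff$ via Proposition \ref{A.6.6}'s ingredient Proposition \ref{A.4.8}, comparing $\hom_{\ldmeff}(M(Y)[i],-)$ on both sides of $\sigma$ against all generators $M(Y)[i]$ and invoking Voevodsky's decomposition of $M(\P^n)$; your phrasing via full faithfulness of $R\omega^*$ and the identification of the orientation of Example \ref{A.7.2} with Voevodsky's is the same comparison argument. The inductive second route is not what the paper does, and, as you correctly note, it would require the additional compatibility between the Thom/Gysin identification $M(\P^{n-1}\to\P^n)\cong\Lambda(n)[2n]$ and the top component $\tau_n$ of $\sigma$, which the comparison-based argument avoids.
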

\begin{proof}
Using the monoidal structure, we have that
$$
M(\cP)\cong M(X)\otimes M(\P^n)\in \ldmeff.
$$ 
Hence we may assume that $X=\Spec k$,
i.e., 
for our purposes, it suffices to show there is an isomorphism
\[
M(\P^n)
\cong 
\bigoplus_{i=0}^n \Lambda(i)[2i].
\]
It suffices to show that for $Y\in lSm/k$ and $i\in \Z$ there is an isomorphism
\[
\hom_{\ldmeff}(M(Y)[i],M(\P^n))\cong \hom_{\ldmeff}(M(Y)[i],\bigoplus_{i=0}^n \Lambda(i)[2i]).
\]
By Proposition \ref{A.4.8}, 
there is a commutative diagram with vertical isomorphisms
\[
\begin{tikzcd}[column sep=tiny]
\hom_{\ldmeff}(M(Y)[i],M(\P^n))\arrow[d,"\sim"']\arrow[r]&\hom_{\ldmeff}(M(Y)[i],\bigoplus_{i=0}^n \Lambda(i)[2i])\arrow[d,"\sim"]\\
\hom_{\dmeff}(M(Y-\partial Y)[i],M(\P^n))\arrow[r]&\hom_{\dmeff}(M(Y-\partial Y)[i],\bigoplus_{i=0}^n \Lambda(i)[2i]).
\end{tikzcd}
\]
Hence it suffices to show there is an isomorphism 
\[
M(\P^n)
\cong 
\bigoplus_{i=0}^n \Lambda(i)[2i]
\]
in $\dmeff$, which is noted in \cite[Corollary 15.5, Exercise 15.11]{MVW}.
\end{proof}

\begin{thm}
\label{A.6.2}
Assume that $k$ admits resolution of singularities.
Let $\mathcal{E}$ be a vector bundle of rank $n+1$ over an fs log scheme $X$ log smooth over $k$. 
Then the naturally induced morphism
\[
\sigma\colon M(\P(\cE))\rightarrow \bigoplus_{i=0}^n M(X)(i)[2i]
\]
is an isomorphism in $\ldmeff$.
\end{thm}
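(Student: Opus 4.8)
The morphism $\sigma$ appearing in the statement is the one furnished by Construction \ref{A.7.3}. Indeed, since $k$ admits resolution of singularities, Example \ref{A.7.2} shows that $\ldmeff$ admits an orientation $c_1$, and $\P(\cE)\to X$ is a $\P^n$-bundle in the sense of Definition \ref{df::projectivebundle} (its log structure being pulled back from $X$ along the structure morphism), so the classes $\tau_i\colon M(\P(\cE))\to\Lambda(i)[2i]$, the morphisms $\sigma_i\colon M(\P(\cE))\to M(X)(i)[2i]$, and their sum $\sigma$ are all defined. The plan is to deduce the theorem from the trivial case, which is exactly Proposition \ref{A.6.6}, by a Mayer--Vietoris reduction along a trivializing cover of $X$.

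First I would choose a finite Zariski cover $\{U_1,\dots,U_r\}$ of $\underline X$ over which $\cE$ becomes trivial (possible since $\underline X$ is noetherian and $\cE$ is a vector bundle); each $U_i$, with its restricted strict log structure, lies in $lSm/k$. Let $\mathscr{X}$ be the \v{C}ech nerve of $\amalg_i U_i\to X$, so that each $\mathscr{X}_n$ is a finite disjoint union of intersections $U_{i_0}\cap\cdots\cap U_{i_n}$ over which $\cE$ is trivial, and set $\mathscr{P}:=\P(\cE)\times_X\mathscr{X}$. Running the Mayer--Vietoris argument used to verify ($Zar$-sep) in the proof of Proposition \ref{A.5.71}, i.e.\ iterating ($sNis$-des), the canonical morphisms $M(\mathscr{X})\to M(X)$ and $M(\mathscr{P})\to M(\P(\cE))$ are isomorphisms, hence so is $\bigoplus_i M(\mathscr{X})(i)[2i]\to\bigoplus_i M(X)(i)[2i]$. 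Because the canonical line bundle of $\P(\cE)$ pulls back to the canonical line bundle of $\P(\cE)\times_X\mathscr{X}_n$ under base change, and because $c_1$, the diagonal, and the monoidal multiplication entering Construction \ref{A.7.3} are all natural, the morphism $\sigma$ and the analogous morphisms $\sigma_n$ for $\P(\cE)\times_X\mathscr{X}_n\to\mathscr{X}_n$ are compatible with these identifications. It follows that $\sigma$ is an isomorphism as soon as every $\sigma_n$ is; equivalently, it suffices to treat projective bundles associated to trivial vector bundles over an object of $lSm/k$.

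So assume $\cE$ is trivial of rank $n+1$ over $X\in lSm/k$. A trivialization induces an isomorphism $\P(\cE)\cong\P_X^n$ carrying the canonical line bundle of $\P(\cE)$ to the canonical line bundle of $\P^n_X$, hence identifying $\tau_1$ with the canonical morphism $M(\P_X^n)\to\Lambda(1)[2]$ and $\sigma$ with the morphism of Proposition \ref{A.6.6}. That proposition---whose proof passes through the comparison $\hom_{\ldmeff}(M(Y)[i],-)\cong\hom_{\dmeff}(M(Y-\partial Y)[i],-)$ of Proposition \ref{A.4.8} together with the projective bundle formula in $\dmeff$---shows $\sigma$ is an isomorphism, which completes the argument.

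I expect the step requiring the most care to be the second paragraph: one must check that $\sigma$ genuinely descends along the \v{C}ech resolution, that is, that the orientation class and the Tate-twist constructions of Construction \ref{A.7.3} are compatible with the base changes $\P(\cE)\times_X\mathscr{X}_n\to\P(\cE)$ and $\mathscr{X}_n\to X$, and to organize the Mayer--Vietoris step cleanly---e.g.\ at the level of cones, using the $n$-square formalism of Appendix \ref{appendix:cat_tool}---rather than merely levelwise. The two essential inputs, namely the existence of an orientation (Example \ref{A.7.2}) and the trivial case (Proposition \ref{A.6.6}), are already available, so no genuinely new geometric content beyond resolution of singularities is needed.
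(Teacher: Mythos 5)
Your proof is correct and rests on the same two inputs as the paper's argument --- the orientation of Example \ref{A.7.2} and the trivial case Proposition \ref{A.6.6} --- reached by Zariski descent along a trivializing cover. The paper organizes the reduction slightly differently: following D\'eglise, it constructs the comparison map on \emph{relative} motives $M(\cP_U\rightarrow \cP)\rightarrow\bigoplus_{i}M(U\rightarrow X)(i)[2i]$ and runs an induction on the number of trivializing opens, using a two-set Mayer--Vietoris square and the five lemma at each step rather than a one-shot \v{C}ech-nerve descent; the compatibility of $\sigma$ with the descent data that you correctly flag as the delicate point is precisely what that relative-motive formulation (cf.\ Remark \ref{A.6.10}) is designed to handle.
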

\begin{proof}
We argue as in \cite[Theorem 3.2]{Deg08}. 
For $\cP:=\P(\cE)$ and an open immersion $U\rightarrow X$ of fs log schemes over $k$, consider the induced commutative diagram
\[
\begin{tikzcd}
\cP_U\arrow[d]\arrow[r]& \cP_U\times \cP\arrow[d]\\
\cP\arrow[r]&\cP\times \cP
\end{tikzcd}
\]
of fs log schemes over $k$, 
where $\cP_U:=\cP\times_X U$ and the horizontal morphisms are the graph morphisms. 
There are induced morphisms
\[
M(\cP_U\rightarrow \cP)
\rightarrow 
M(\cP_U\times \cP\rightarrow \cP\times \cP)
\stackrel{\sim}
\rightarrow  M(\cP_U\rightarrow \cP)\otimes M(\cP)\rightarrow M(U\rightarrow X)\otimes M(\cP).
\]
\vspace{0.1in}

Using the morphism $M(\cP)\rightarrow \bigoplus_{i=0}^n \Lambda(i)[2i]$ from Construction \ref{A.7.3}, 
we obtain 
\[
M(\cP_U\rightarrow \cP)\rightarrow \bigoplus_{i=0}^n M(U\rightarrow X)(i)[2i].
\]

We use induction on the finite number $m$ of Zariski open subsets appearing in a trivialization of $\cE$.
The case $m=1$ is already done in Proposition \ref{A.6.6}.
Hence suppose $m>1$.
Then there exists a Zariski cover $\{V_1,\ldots,V_m\}$ such that $\cE$ is trivial on $V_i$ for every $i$.
Set
\[
U_1:=V_1,
\;
U_2:=V_2\cup \cdots\cup V_m,
\;
\text{and }
U_{12}:=U_1\cap U_2.
\]
\vspace{0.1in}

There is a naturally induced commutative diagram in $\ldmeff$
\[
\begin{tikzcd}
M(\cP_{U_{12}}\rightarrow \cP_{U_2})\arrow[r]\arrow[d]&\bigoplus_{i=0}^n M(U_{12}\rightarrow U_{2})(i)[2i]\arrow[d]\\
M(\cP_{U_1}\rightarrow \cP)\arrow[r]&\bigoplus_{i=0}^n M(U_1\rightarrow X)(i)[2i].
\end{tikzcd}
\]
The vertical morphisms are isomorphisms by Mayer-Vietoris since $\{U_1,U_2\}$ is a Zariski cover of $X$. 
By induction the claim holds for $U_1$, $U_2$, and $U_{12}$. 
This implies the upper horizontal morphism is an isomorphism so that the lower horizontal arrow is also an isomorphism. 
\vspace{0.1in}

As in \cite[Lemma 3.1]{Deg08}, there is a commutative diagram
\[
\begin{tikzcd}[column sep=small]
M(\cP_{U_1})\arrow[d]\arrow[r]&M(\cP)\arrow[d]\arrow[r]&M(\cP_{U_1}\rightarrow \cP)\arrow[d]\arrow[r]&M(\cP_{U_1})[1]\arrow[d]\\
\bigoplus_i M(U_1)(i)[2i]\arrow[r]&\bigoplus_i M(X)(i)[2i]\arrow[r]&\bigoplus_i M(U_1\rightarrow X)(i)[2i]\arrow[r]&\bigoplus_i M(U_1)(i)[2i+1]
\end{tikzcd}
\]
The first and third vertical morphisms are isomorphisms so that the second vertical morphism is also an isomorphism.
\end{proof}

\begin{rmk}
\label{A.6.10}
In the proof of \cite[Theorem 15.12]{MVW} an extra argument is needed to justify the induction argument, 
see the proofs of \cite[Lemma 3.1, Theorem 3.2]{Deg08}.
\end{rmk}

\begin{thm}
\label{A.6.7}
Assume that $k$ admits resolution of singularities.
Let $\cE$ be a rank $n$ vector bundle over $X\in SmlSm/k$.
Then there is a canonical isomorphism
\[
MTh(\cE)\cong M(X)(n)[2n].
\]
\end{thm}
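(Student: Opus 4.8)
The plan is to combine the identification of Thom motives with projective-space pairs (Proposition \ref{A.3.34}) with the projective bundle formula (Theorem \ref{A.6.2}). By Proposition \ref{A.3.34} there is a canonical isomorphism
\[
MTh(\cE)\cong M(\P(\cE)\rightarrow \P(\cE\oplus \cO)),
\]
where $\P(\cE)$ is viewed as the hyperplane at infinity in $\P(\cE\oplus \cO)$. Writing $\iota\colon \P(\cE)\rightarrow \P(\cE\oplus \cO)$ for the closed immersion, Definition \ref{A.5.7} gives a distinguished triangle
\[
M(\P(\cE))\xrightarrow{M(\iota)} M(\P(\cE\oplus \cO))\rightarrow MTh(\cE)\rightarrow M(\P(\cE))[1].
\]
It therefore suffices to prove that $M(\iota)$ is a split monomorphism whose cofiber is $M(X)(n)[2n]$. (One may assume $n\geq 1$, the case of the zero bundle being degenerate.)

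Next I would invoke Theorem \ref{A.6.2}, noting that $\ldmeff$ admits an orientation in the sense of Definition \ref{A.7.1} by Example \ref{A.7.2}, which uses resolution of singularities. Since $\cE\oplus \cO$ has rank $n+1$ and $\cE$ has rank $n$, the morphisms $\sigma$ of Construction \ref{A.7.3} are isomorphisms
\[
\sigma_{\P(\cE\oplus \cO)}\colon M(\P(\cE\oplus \cO))\xrightarrow{\cong}\bigoplus_{i=0}^n M(X)(i)[2i],
\quad
\sigma_{\P(\cE)}\colon M(\P(\cE))\xrightarrow{\cong}\bigoplus_{i=0}^{n-1} M(X)(i)[2i].
\]
The heart of the argument is to compare $M(\iota)$ with these decompositions, i.e.\ to establish a commutative square
\[
\begin{tikzcd}
M(\P(\cE))\arrow[r,"M(\iota)"]\arrow[d,"\sigma_{\P(\cE)}"']&M(\P(\cE\oplus \cO))\arrow[d,"\sigma_{\P(\cE\oplus \cO)}"]\\
\bigoplus_{i=0}^{n-1} M(X)(i)[2i]\arrow[r]&\bigoplus_{i=0}^{n} M(X)(i)[2i]
\end{tikzcd}
\]
whose bottom arrow, in matrix form, is an isomorphism in the $(i,i)$-entries for $0\leq i\leq n-1$. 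Precomposing with the inverse of that diagonal isomorphism and postcomposing with a unipotent automorphism of $\bigoplus_{i=0}^n M(X)(i)[2i]$ then turns the bottom arrow into the split inclusion $\bigoplus_{i=0}^{n-1}M(X)(i)[2i]\hookrightarrow\bigoplus_{i=0}^{n}M(X)(i)[2i]$, whose cofiber is $M(X)(n)[2n]$; combined with the displayed triangle and Proposition \ref{A.3.34} this yields $MTh(\cE)\cong M(X)(n)[2n]$.

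To produce that comparison square I would use the standard geometric fact that the pullback along $\iota$ of the tautological line bundle $\cO_{\P(\cE\oplus \cO)}(1)$ is $\cO_{\P(\cE)}(1)$; this holds on underlying schemes, and since the log structures on $\P(\cE)$ and $\P(\cE\oplus \cO)$ are pulled back from $X$ it is compatible with $\iota$ as a morphism in $lSm/k$. Functoriality of $c_1$ in Definition \ref{A.7.1} then gives $\tau_1^{\P(\cE\oplus \cO)}\circ M(\iota)=\tau_1^{\P(\cE)}$, and naturality of the diagonal together with the fact that $\iota$ lies over $X$ upgrades this to $\sigma_i^{\P(\cE\oplus \cO)}\circ M(\iota)=\sigma_i^{\P(\cE)}$ for $0\leq i\leq n-1$, which is exactly the required commutativity of the square. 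I expect the main obstacle to be precisely this bookkeeping: checking that the orientation $c_1$ on $\ldmeff$ is genuinely functorial for the closed immersion $\iota$ and that the various compositions defining $\tau_i$ and $\sigma_i$ in Construction \ref{A.7.3} behave well under $M(\iota)$, so that the resulting morphism of Tate sums can be diagonalized by an automorphism of the target and the cone is literally $M(X)(n)[2n]$ rather than merely an iterated extension of Tate twists. Everything else is formal once Proposition \ref{A.3.34} and Theorem \ref{A.6.2} are available.
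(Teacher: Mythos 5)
Your proposal is correct and follows essentially the same route as the paper: the paper also combines Proposition \ref{A.3.34} with the projective bundle formula, uses $u^*\cO_{\P(\cE\oplus\cO)}(1)\cong\cO_{\P(\cE)}(1)$ to get $\tau_1\circ M(u)=\tau_1$ and hence compatibility of $M(u)$ with the decompositions $\sigma$, and concludes that $\beta=\sigma\circ M(u)\circ\sigma^{-1}$ is a split monomorphism (since $p\circ\beta=\mathrm{id}$) with cofiber $M(X)(n)[2n]$. The only cosmetic difference is that the paper does not need your unipotent automorphism: the identity $p\circ\beta=\mathrm{id}$ already exhibits the cofiber directly, and the five lemma finishes the argument.
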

\begin{proof}
Recall that we have the morphisms
\[
\tau_i\colon M(\P(\cE))\rightarrow \Lambda(i)[2i]
\text{ and }
\tau_i\colon M(\P(\cE\oplus \cO))\rightarrow \Lambda(i)[2i]
\]
The pullback of the canonical line bundle over $\P(\cE\oplus \cO)$ by the closed immersion $u\colon \P(\cE)\rightarrow \P(\cE\oplus \cO)$ is isomorphic to the canonical line bundle over $\P(\cE)$.
This means that there is a commutative diagram
\[
\begin{tikzcd}
M(\P(\cE))\arrow[r,"\tau_1"]\arrow[d,"M(u)"']&
\Lambda(1)[2]\arrow[d,"{\rm id}"]
\\
M(\P(\cE\oplus \cO))\arrow[r,"\tau_1"]&
\Lambda(1)[2].
\end{tikzcd}
\]
Thus for every integer $i\geq 0$, there is a commutative diagram
\[
\begin{tikzcd}
M(\P(\cE))\arrow[d]\arrow[r,"\Delta"]&
M(\P(\cE))^{\otimes i}\arrow[r,"\tau_1^{\otimes i}"]\arrow[r]\arrow[d,"{\rm id}"]&
\Lambda(1)[2]^{\otimes i}\arrow[d]
\\
M(\P(\cE\oplus \cO))\arrow[r,"\Delta"]&
M(\P(\cE\oplus \cO))^{\otimes i}\arrow[r,"\tau_1^{\otimes i}"]\arrow[r]&
\Lambda(1)[2]^{\otimes i}.
\end{tikzcd}
\]

Applying the above for $1\leq i\leq n-1$, we obtain the commutative diagram
\[
\begin{tikzcd}
M(\P(\cE))\arrow[d,"M(u)"']\arrow[r,"\sigma"]&
\bigoplus_{i=0}^{n-1} M(X)(i)[2i]\arrow[d,"\beta:=\sigma\circ M(u)\circ \sigma^{-1}"']\arrow[rd,"{\rm id}"]
\\
M(\P(\cE\oplus \cO))\arrow[r,"\sigma"]&
\bigoplus_{i=0}^n  M(X)(i)[2i]\arrow[r,"p"]&
\bigoplus_{i=0}^{n-1}  M(X)(i)[2i],
\end{tikzcd}
\]
where $p$ is the canonical projection.
It follows that there is a canonical commutative diagram with horizontal split distinguished triangles and vertical isomorphisms
\[
\begin{tikzcd}[column sep=small]
\bigoplus_{i=0}^{n-1}M(X)(i)[2i]\arrow[d,"\cong"']\arrow[r,"\beta"]&
\bigoplus_{i=0}^{n}M(X)(i)[2i]\arrow[d,"\cong"]\arrow[r]&
M(X)(n)[2n]\arrow[r,"0"]&
\bigoplus_{i=0}^{n-1}M(X)(i)[2i+1]\arrow[d,"\cong"]
\\
M(\P(\cE))\arrow[r]&
M(\P(\cE\oplus \cO))\arrow[r]&
MTh(\cE)\arrow[r,"0"]&
M(\P(\cE))[1].
\end{tikzcd}
\]
The canonical composite morphism
\[
\alpha\colon M(X)(n)[2n]\rightarrow \bigoplus_{i=0}^n M(X)(i)[2i]\xrightarrow{\cong}M(\P(\cE\oplus \cO))\rightarrow MTh(\cE)
\]
makes the above diagram into a morphism of distinguished triangles, where the first morphism is the canonical embedding.
By the five lemma, $\alpha$ is an isomorphism.
\end{proof}

\begin{cor}
\label{A.6.8}
Assume that $k$ admits resolution of singularities.
For $X\in SmlSm$ and $Z$ a codimension $n$ smooth subscheme of $\underline{X}$ such that $Z$ has strict normal crossing 
with $\partial X$ and is not contained in $\partial X$, 
there is a canonical isomorphism
\[
M((B_Z X,E)\rightarrow X)\rightarrow M(Z)(n)[2n].
\]
Here $E$ is the exceptional divisor of the blow-up $B_Z X$.
\end{cor}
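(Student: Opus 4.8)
The plan is to combine the Gysin isomorphism from Theorem~\ref{A.3.36} with the Thom isomorphism from Theorem~\ref{A.6.7}. Recall that for $X \in SmlSm/k$ and $Z$ a codimension $n$ smooth subscheme of $\underline{X}$ having strict normal crossing with $\partial X$ and not contained in $\partial X$, the normal bundle $N_Z Y$ (where $Y := (X, Z_1 + \cdots + Z_r)$ for $\partial X = Z_1 + \cdots + Z_r$) is a rank $n$ vector bundle over $Z$ in the sense of Definition~\ref{df:logvectorbundle}, with underlying scheme the usual normal bundle $N_Z \underline{X}$ equipped with the log structure pulled back from $X$. In particular $Z$ here plays the role of the base fs log scheme; its log structure is the restriction of $\partial X$ to $Z$, so $Z \in SmlSm/k$ as well, and $N_Z Y \to Z$ is a vector bundle to which both Theorem~\ref{A.6.7} and the Gysin construction apply.

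First I would invoke Theorem~\ref{A.3.36} (equivalently Definition~\ref{A.3.38}), which produces the Gysin isomorphism
\[
\mathfrak{g}_{Y,Z} \colon M((B_Z Y, E) \to Y) \xrightarrow{\cong} MTh(N_Z Y)
\]
in $\ldmeff$, valid precisely under the hypotheses that $Z$ is a smooth closed subscheme with strict normal crossing with $\partial X = Z_1 + \cdots + Z_r$ and not contained in any component of $Z_1 \cup \cdots \cup Z_r$ — exactly the hypotheses of the corollary. Note that $(B_Z Y, E)$ is precisely the fs log scheme $(B_Z X, W_1 + \cdots + W_r + E)$ with $W_i$ the strict transform of $Z_i$ and $E$ the exceptional divisor, and the target of the morphism $M((B_Z Y, E) \to Y)$ in the statement of the corollary uses this same object (here $Y$ has underlying scheme $\underline{X}$, so I am writing $M((B_Z X, E) \to X)$ for $M((B_Z Y, E) \to Y)$). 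Second, I would apply Theorem~\ref{A.6.7} to the rank $n$ vector bundle $N_Z Y$ over $Z \in SmlSm/k$, which gives a canonical isomorphism
\[
MTh(N_Z Y) \cong M(Z)(n)[2n]
\]
in $\ldmeff$; this is where the assumption that $k$ admits resolution of singularities is used (it is needed for Theorem~\ref{A.6.7}, which rests on the projective bundle formula Theorem~\ref{A.6.2} and the comparison Theorem~\ref{A.4.10}). Composing the two isomorphisms yields the desired canonical isomorphism $M((B_Z X, E) \to X) \cong M(Z)(n)[2n]$.

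The only genuine point requiring care — and the step I expect to be the mild obstacle — is bookkeeping: matching the notation of the corollary (which suppresses the divisors $Z_1, \ldots, Z_r$ coming from $\partial X$ in the symbol $(B_Z X, E)$) with the notation of Theorem~\ref{A.3.36} (which writes $Y = (X, Z_1 + \cdots + Z_r)$ and $(B_Z Y, E)$ explicitly), and checking that $N_Z Y$ really is a vector bundle over $Z$ as an object of $SmlSm/k$ rather than merely over $\underline{Z}$. For the latter: since $Z$ has strict normal crossing with $\partial X$, the restriction $\partial X|_Z$ is a strict normal crossing divisor on $Z$ by Definition~\ref{A.3.17}, so $Z$ (with this log structure) lies in $SmlSm/k$; and $N_Z Y = (N_Z \underline{X}, p^{-1}(Z_1) + \cdots + p^{-1}(Z_r))$ with $p \colon N_Z \underline{X} \to \underline{X}$ the projection is exactly a vector bundle over $Z$ in the sense of Definition~\ref{df:logvectorbundle}, as recorded in Definition~\ref{A.3.15}. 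Once these identifications are in place the proof is simply: $M((B_Z X, E) \to X) \xrightarrow{\mathfrak{g}_{Y,Z}} MTh(N_Z Y) \cong M(Z)(n)[2n]$, with canonicity inherited from the canonicity of both constituent isomorphisms.
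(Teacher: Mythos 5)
Your proposal is correct and follows exactly the paper's own argument: the paper's proof of Corollary \ref{A.6.8} is simply ``Immediate from Theorems \ref{A.3.36} and \ref{A.6.7},'' i.e., the composition of the Gysin isomorphism with the Thom isomorphism that you describe. The bookkeeping you carry out (identifying $(B_Z X,E)$ with $(B_Z Y,E)$ and verifying $N_Z Y$ is a vector bundle over $Z\in SmlSm/k$) is left implicit in the paper but is accurate.
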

\begin{proof}
Immediate from Theorems \ref{A.3.36} and \ref{A.6.7}.
\end{proof}

\subsection{Motives with rational coefficients}
If $\Lambda$ is a $\Q$-algebra, then according to \cite[Theorem 14.30]{MVW}, there is an equivalence of Voevodsky's triangulated categories of Nisnevich motives and \'etale motives
\[
\dmeff\cong \dmeffet.
\]
In what follows, we argue as in \cite[Lecture 14]{MVW} and prove an analog in the log setting: 
If $\Q\subset \Lambda$, then there is an equivalence of triangulated categories
\[
\ldmeff\cong \ldmeffet.
\]

\begin{lem}
\label{MotRat.1}
Let $\cF$ be a strict Nisnevich sheaf of $\Q$-modules with log transfers.
Then $\cF$ is a strict \'etale sheaf.
\end{lem}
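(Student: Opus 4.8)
Let $\cF$ be a strict Nisnevich sheaf of $\Q$-modules with log transfers. Then $\cF$ is a strict \'etale sheaf.

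The plan is to reduce to the analogous statement for presheaves with transfers on $Sm/k$, which is \cite[Proposition 14.23]{MVW} (the Nisnevich and \'etale cohomologies of a $\Q$-module Nisnevich sheaf with transfers agree, hence such a sheaf is automatically an \'etale sheaf), combined with the comparison of cohomology groups already established in Lemma \ref{ketcomp.13}. More precisely, I would proceed as follows. First, recall that to check $\cF$ is a strict \'etale sheaf it suffices, by Definition \ref{A.8.1}, to check that $\gamma^*\cF\in\Pshlogkl$ is a strict \'etale sheaf on $lSm/k$. Since the strict \'etale topology on $lSm/k$ is generated by strict \'etale covers, and such a cover $U\to X$ has underlying scheme map $\ul{U}\to\ul{X}$ an \'etale cover with $U\cong\ul{U}\times_{\ul{X}}X$ (the log structures being pulled back), the strict \'etale descent for $\gamma^*\cF$ on $lSm/k$ is equivalent to \'etale descent for the restriction of $\cF$ to the small strict \'etale site $X_{s\acute{e}t}$ of each $X\in lSm/k$; this is the content of the observation following Definition \ref{bigsmall.1}.

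Next, for fixed $X\in lSm/k$, I would consider the sheaf $\cF|_{X_{s\acute{e}t}}$ on the small site $X_{s\acute{e}t}$, which is the same as a sheaf on the small Nisnevich site $\ul{X}_{Nis}=X_{sNis}$ since strict \'etale morphisms to $X$ are precisely $\ul{X}$-schemes made strict (i.e., $X_{s\acute{e}t}\cong \ul{X}_{\acute{e}t}$ and $X_{sNis}\cong\ul{X}_{Nis}$). The key point is then to show that a $\Q$-linear Nisnevich sheaf with transfers, when restricted to such a small site, satisfies \'etale descent. This is where I would invoke Lemma \ref{ketcomp.13}: for any fs log scheme $X$ and any sheaf of $\Q$-modules on $X_{s\acute{e}t}$, one has $H^i_{sNis}(X,-)\cong H^i_{s\acute{e}t}(X,-)$. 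Applying this to $\cF|_{X_{s\acute{e}t}}$, the comparison of higher cohomology groups forces the canonical map $a_{s\acute{e}t*}a_{s\acute{e}t}^*(\gamma^*\cF)\to\gamma^*\cF$ to be an isomorphism after taking sections over every strictly local (henselian) scheme, and hence an isomorphism of sheaves; equivalently $\gamma^*\cF$ is already a strict \'etale sheaf. One must be slightly careful: Lemma \ref{ketcomp.13} concerns sheaves on the small site, so the right formulation is that the $i=0$ case says sections agree, and the $i\geq 1$ vanishing of the difference sheaves $R^i\epsilon_*$ (for $\epsilon\colon X_{s\acute{e}t}\to X_{sNis}$ the morphism of sites) implies $\cF$ is $\epsilon_*$-acyclic and that $\epsilon_*$ is fully faithful on sheaves — exactly parallel to the argument used in the proof of Lemma \ref{ketcomp.12}.

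The main obstacle, I expect, is bookkeeping rather than any deep difficulty: one must correctly pass between the statement about presheaves with log transfers on the big site $lSm/k$, the statement about the restriction functor $\gamma^*$, and the statement on small sites, and one must verify that the transfer structure plays no role beyond guaranteeing (via \cite[Proposition 14.23]{MVW} and its adaptation Lemma \ref{ketcomp.13}) the Nisnevich–\'etale cohomology comparison. In particular, the only place the hypothesis "$\Q$-coefficients" enters is through Lemma \ref{ketcomp.13}, whose proof rests on the torsion-freeness allowing one to kill the (torsion) higher cohomology of the \'etale-to-Nisnevich morphism of sites. Once this is in place, the conclusion that $\cF$ is a strict \'etale sheaf is immediate from the identification of $\gamma^*\cF$ with its strict \'etale sheafification over every strictly local base.
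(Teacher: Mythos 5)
Your proposed route has a genuine gap at its central step. Lemma \ref{ketcomp.13} (and the result of \cite{MVW} it rests on) takes as \emph{hypothesis} a sheaf of $\Q$-modules on the small strict \'etale site, i.e.\ a sheaf that already satisfies \'etale descent; its content is only that for such a sheaf the Nisnevich and \'etale cohomologies agree, because the stalks of $R^i\epsilon_*$ are torsion Galois cohomology groups. You cannot apply it to $\cF|_{X_{s\acute{e}t}}$, since $\cF$ is only assumed to be a strict Nisnevich sheaf --- whether it restricts to a sheaf on $X_{s\acute{e}t}$ is precisely what is to be proved. Reading the $i=0$ case of that comparison as ``sections agree'' is therefore circular.

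The deeper problem is your closing claim that the transfer structure ``plays no role beyond guaranteeing the cohomology comparison.'' The statement is simply false without transfers: a Nisnevich sheaf of $\Q$-modules on the small site of a non--separably-closed field need not satisfy $\cF(k)=\cF(L)^{G}$ for a Galois extension $L/k$, so it need not be an \'etale sheaf. The transfers are the whole point. The paper's proof reduces (via kernel and cokernel of $\cF\to a_{s\acute{e}t*}a_{s\acute{e}t}^*\cF$) to showing that a strict Nisnevich sheaf with log transfers whose strict \'etale sheafification vanishes is zero, and then runs the classical trace argument: a nonzero section $c\in\cF(T)$ over a henselian $T$ dies on some strict \'etale cover, which may be refined to a finite \'etale $g\colon Y\to T$ of degree $d$; the transpose of the graph of $g$ is an elementary log correspondence $h$ with $g\circ h=d\cdot\mathrm{id}_T$, so $d\cdot c=h^*g^*c=0$ and hence $c=0$ in $\Q$-coefficients. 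Note that the strictness of $g$ is essential for $h$ to be a log correspondence (this is exactly why the argument fails Kummer-\'etale locally, cf.\ Remark \ref{MotRat.2}). Your proof contains no analogue of this step and cannot be completed as written.
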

\begin{proof}
Owing to Proposition \ref{A.5.22} the strict \'etale topology is compatible with log transfers.
Thus $a_{s\acute{e}t}^*\cF$ is a strict \'etale sheaf of $\Q$-modules with log transfers.
We need to show there is a naturally induced isomorphism
\[
p\colon \cF\rightarrow a_{s\acute{e}t*}a_{s\acute{e}t}^*\cF.
\]
To that end, it suffices to show that 
$$
a_{s\acute{e}t}^*(\ker p)=a_{s\acute{e}t}^*(\coker p)=0.
$$
Hence we are reduced to consider the case when $a_{s\acute{e}t}^*\cF=0$.
\vspace{0.1in}

We may assume that $\cF$ is nontrivial.
Then there exists a henselization $T$ of an fs log scheme in $lSm/k$ such that $\cF(T)\neq 0$, i.e., $\ul{T}$ is a henselization of $\ul{Z}$ for some $Z\in lSm/k$ and $T=\ul{T}\times_{\ul{Z}}Z$.
Choose a nonzero element $c\in \cF(T)$.
Since $a_{s\acute{e}t}^*\cF=0$, 
there exists a strict \'etale cover $f\colon X\rightarrow T$ such that $f^*(c)\in \cF(X)$ is zero.
Owing to the implication $(a)\Rightarrow (c'')$ in the proof of \cite[Th\'eor\`eme IV.18.5.11]{EGA} there exists a factorization $g\colon Y\rightarrow T$ of $f$, 
where $g$ is a finite \'etale morphism.
It follows that $g^*(c)=0\in \cF(Y)$.
\vspace{0.1in}

The transpose of the graph of $g\colon Y\rightarrow T$ gives an elementary log correspondence $h\in \lCor(T,Y)$.
If $d$ is the degree of $g$,
then the composition $g\circ h\in \lCor(T,T)$ is equal to $d$ times the identity correspondence.
Thus the composition
\[
\cF(T)\stackrel{g^*}\rightarrow \cF(Y)\stackrel{h^*}\rightarrow \cF(T)
\]
is an isomorphism since $\cF$ is a sheaf of $\Q$-modules.
This implies $c=0$ since $g^*(c)=0$, which is a contradiction.
\end{proof}

\begin{rmk}
\label{MotRat.2}
The proof of Lemma \ref{MotRat.1} does not work for the Kummer \'etale topology.
Indeed, 
in this case, 
$Y$ is not necessarily strict over $X$, 
and hence the transpose of the graph of $g$ is not an elementary log correspondence from $S$ to $Y$.
\end{rmk}

\begin{lem}
\label{MotRat.3}
Suppose $\cF$ be a dividing Nisnevich sheaf of $\Q$-modules with log transfers.
Then $\cF$ is a dividing \'etale sheaf.
Thus if $\Q\subset \Lambda$, there is an equivalence of categories
\[
\Shv_{dNis}^{\rm ltr}(k,\Lambda)\cong \Shv_{d\acute{e}t}^{\rm ltr}(k,\Lambda).
\]
\end{lem}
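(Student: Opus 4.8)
\textbf{Proof proposal for Lemma \ref{MotRat.3}.}

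The plan is to bootstrap from the strict case (Lemma \ref{MotRat.1}) to the dividing case, using that the dividing \'etale topology is, by Definition \ref{A.5.65}, the smallest topology finer than the strict \'etale topology and the dividing topology. So it suffices to verify two things: that $\cF$ satisfies descent for strict \'etale covers, and that $\cF$ satisfies descent for dividing covers (log modifications). The second is immediate: $\cF$ is by hypothesis a dividing Nisnevich sheaf, and the dividing topology is coarser than the dividing Nisnevich topology, so $\cF(X)\to\cF(Y)$ is an isomorphism for every log modification $Y\to X$ by Remark \ref{A.9.68}(4). The first is the substance of the argument.

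For strict \'etale descent, first I would observe that $\cF$ is in particular a strict Nisnevich sheaf with log transfers of $\Q$-modules, so Lemma \ref{MotRat.1} applies directly and tells us $\cF$ is a strict \'etale sheaf. That is already the whole point — the transfer-theoretic norm argument in the proof of Lemma \ref{MotRat.1} only uses the strict Nisnevich sheaf property and the existence of log transfers, both of which $\cF$ has. Combining the two descent properties, $\cF$ is then a $t$-sheaf for the topology generated by strict \'etale covers and dividing covers, which is exactly the dividing \'etale topology; hence $\cF\in\Shv_{d\acute{e}t}^{\rm ltr}(k,\Lambda)$.

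For the displayed equivalence of categories, the inclusion $\Shv_{d\acute{e}t}^{\rm ltr}(k,\Lambda)\hookrightarrow\Shv_{dNis}^{\rm ltr}(k,\Lambda)$ is automatic since $d\acute{e}t$ is finer than $dNis$; the content just proven is that under $\Q\subset\Lambda$ every object of the target already lies in the source, so this inclusion is an equivalence. One should also note that the transfer structure is preserved: since $d\acute{e}t$ is compatible with log transfers by Theorem \ref{A.5.23}, a $d\acute{e}t$-sheaf with log transfers is the same data as a $dNis$-sheaf with log transfers that happens to be a $d\acute{e}t$-sheaf, so no ambiguity arises. The only potential obstacle is making sure the reduction "finer topology generated by two coarser ones $\Rightarrow$ descent for each of the two suffices for descent for the join" is applied correctly — but this is a formal fact about Grothendieck topologies generated by a set of covering families, already invoked implicitly elsewhere in the paper (e.g.\ in the proof of Theorem \ref{A.5.23}), so there is no real difficulty. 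Hence the proof is essentially a two-line deduction from Lemma \ref{MotRat.1} together with the dividing sheaf hypothesis.
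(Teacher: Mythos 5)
Your proof is correct and follows the same skeleton as the paper's: both arguments consist of (a) observing that $\cF$ is in particular a strict Nisnevich sheaf with log transfers, so Lemma \ref{MotRat.1} makes it a strict \'etale sheaf, and (b) observing that $\cF$, being a dividing Nisnevich sheaf, is invariant under log modifications. The only divergence is in how the two properties are assembled into dividing \'etale descent. The paper applies Lemma \ref{A.5.45}: for a strict \'etale sheaf the dividing \'etale sheafification is computed by $a_{d\acute{e}t}^*\cF(X)\cong \colimit_{Y\in X_{div}}\cF(Y)$, and since all transition maps in this filtered colimit are isomorphisms the unit $\cF\to a_{d\acute{e}t*}a_{d\acute{e}t}^*\cF$ is an isomorphism. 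You instead invoke the principle that a presheaf which is a sheaf for two topologies is a sheaf for their join. That principle is true --- the class of sieves for which a fixed presheaf satisfies descent (together with all pullbacks) is itself a Grothendieck topology, equivalently there is a finest topology for which a given presheaf is a sheaf (SGA 4, Exp.\ II) --- so your argument goes through; but your justification for it is the weak point. The proof of Theorem \ref{A.5.23} does not actually use this join principle: it factors a log \'etale cover through a log modification followed by a Kummer \'etale cover and verifies exactness of the \v{C}ech complex directly, which is a different (and not purely formal) mechanism. So you should cite the general fact in its own right rather than gesture at it being "implicit" elsewhere. With that citation supplied, your route and the paper's are equally valid; the paper's is marginally more self-contained because Lemma \ref{A.5.45} is already available, while yours avoids any appeal to the explicit sheafification formula.
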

\begin{proof}
Owing to Lemma \ref{MotRat.1} $\cF$ is a strict \'etale sheaf.
For any log modification $Y\rightarrow X$ in $lSm/k$ there is an isomorphism $\cF(X)\rightarrow \cF(Y)$ since $\cF$ is a dividing sheaf.
To conclude, we apply Lemma \ref{A.5.45}.
\end{proof}

\begin{prop}
\label{MotRat.4}
If $\Q\subset \Lambda$, there is an equivalence of triangulated categories
\[
\ldmeff\cong \ldmeffet.
\]
\end{prop}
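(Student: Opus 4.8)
The plan is to promote the equivalence of abelian categories $\Shvltrkl\cong \Shv_{d\acute{e}t}^{\rm ltr}(k,\Lambda)$ furnished by Lemma \ref{MotRat.3} to a Quillen equivalence for the $\boxx$-local descent model structures, and then pass to homotopy categories. Recall from the construction of $\ldmeffet$ the Quillen adjunction $a_{d\acute{e}t}^*\colon \Co(\Shvltrkl)\rightleftarrows \Co(\Shv_{d\acute{e}t}^{\rm ltr}(k,\Lambda))\colon a_{d\acute{e}t*}$ for the $\boxx$-local descent model structures, which induces the adjunction $La_{d\acute{e}t}^*\colon \ldmeff\rightleftarrows \ldmeffet\colon Ra_{d\acute{e}t*}$; the claim is that this adjunction is an equivalence.

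First I would observe that, since $\Q\subset \Lambda$, Lemma \ref{MotRat.3} shows that $a_{d\acute{e}t*}$ and $a_{d\acute{e}t}^*$ are mutually inverse equivalences of abelian categories, and both are exact ($a_{d\acute{e}t}^*$ by Proposition \ref{A.8.5}, $a_{d\acute{e}t*}$ because it is an equivalence). Hence on chain complexes they are mutually inverse equivalences $\Co(\Shvltrkl)\cong \Co(\Shv_{d\acute{e}t}^{\rm ltr}(k,\Lambda))$ with $La_{d\acute{e}t}^*\cong a_{d\acute{e}t}^*$ and $Ra_{d\acute{e}t*}\cong a_{d\acute{e}t*}$. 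The key step is to see that this equivalence identifies the descent model structures on the two sides. Here I would use that, by Theorem \ref{A.5.23} and Proposition \ref{A.8.13}, both the dividing Nisnevich and the dividing \'etale topologies are compatible with log transfers, so for every $t$-hypercover $\mathscr X\to X$ the cone of $a_t^*\gamma^*\Zltr(\mathscr X)\to a_t^*\gamma^*\Zltr(X)$ is acyclic; consequently, in the descent structure used to build the descent model structure on $\Co(\Shv_{t}^{\rm ltr}(k,\Lambda))$ the collection $\cH$ consists of acyclic complexes, and therefore the weak equivalences of that model structure are precisely the quasi-isomorphisms of complexes of $t$-sheaves with log transfers, for both $t=dNis$ and $t=d\acute{e}t$. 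Since $a_{d\acute{e}t}^*$ is exact it matches quasi-isomorphisms with quasi-isomorphisms, so it is a Quillen equivalence for the descent model structures.

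It then remains to pass to the $\boxx$-localizations. Since $a_{dNis}^*\Zltr(X)$ is a sheaf of $\Q$-modules with log transfers, Lemma \ref{MotRat.3} gives $a_{d\acute{e}t}^*\Zltr(X)\cong a_{dNis}^*\Zltr(X)$ for every $X\in lSm/k$, so under $a_{d\acute{e}t}^*$ the class $\boxx$ of projections $X\times\boxx\to X$ on the $dNis$-side corresponds to the class $\boxx$ on the $d\acute{e}t$-side. By \cite[Theorem 3.3.20(1)]{MR1944041}, left Bousfield localizations of Quillen-equivalent model categories at corresponding sets of maps remain Quillen equivalent, whence $a_{d\acute{e}t}^*$ is a Quillen equivalence for the $\boxx$-local descent model structures. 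Passing to homotopy categories yields $\ldmeff\cong \ldmeffet$. The main obstacle, and the only place where rational coefficients really enter beyond the bare equivalence of abelian categories, is the middle step: one must know that both descent model structures have quasi-isomorphisms of complexes of sheaves as their weak equivalences, so that Lemma \ref{MotRat.3} suffices; this rests on the compatibility of both topologies with log transfers and, ultimately for the \'etale side, on the rigidity-type input that $H^\ast_{sNis}$ and $H^\ast_{s\acute{e}t}$ coincide on $\Q$-module sheaves (Lemma \ref{ketcomp.13}), combined with Proposition \ref{A.5.44} and Theorem \ref{Div.3}.
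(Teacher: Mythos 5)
Your proof is correct and follows essentially the same route as the paper: Lemma \ref{MotRat.3} gives the equivalence of sheaf categories, this transports the descent model structures, and \cite[Theorem 3.3.20(1)]{MR1944041} upgrades the resulting Quillen equivalence to the $\boxx$-local localizations, after which one passes to homotopy categories. The only remark is that your middle step is more elaborate than necessary: the weak equivalences of the descent model structure are the quasi-isomorphisms by construction (Proposition \ref{A.8.15}(1)) and the cofibrations are determined by the generators $\Zltr(X)$, so the paper simply invokes invariance of the descent model structure under an equivalence of categories rather than re-deriving the identification of weak equivalences from compatibility with log transfers.
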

\begin{proof}
Let $\cG$ be the set of objects of the form $\Zltr(X)$ for $X\in lSm/k$.
With respect to this set, we are entitled to the corresponding descent model structures and the equivalent categories
\[
\Co(\Shv_{dNis}^{\rm ltr}(k,\Lambda))\text{ and }\Co(\Shv_{d\acute{e}t}^{\rm ltr}(k,\Lambda)), 
\]
and hence a Quillen equivalence
\[
\Co(\Shv_{dNis}^{\rm ltr}(k,\Lambda))\rightleftarrows \Co(\Shv_{d\acute{e}t}^{\rm ltr}(k,\Lambda)).
\]
Due to \cite[Theorem 3.3.20(1)]{MR1944041} the above is also a Quillen equivalence for the $\boxx$-local descent model structures, 
and we are done by passing to the homotopy categories.
\end{proof}

\subsection{Locally constant log \'etale sheaves}
If $\Lambda$ is a torsion ring coprime to the exponential characteristic of $k$, then \cite[Theorem 9.35]{MVW} shows there is an equivalence of triangulated categories
\[
\mathbf{D}^{-}(k_{\acute{e}t},\Lambda)\rightarrow \dmeffetminus.
\]
The proof relies on Suslin's rigidity theorem \cite[Theorem 7.20]{MVW}.
We refer to \cite{CDEtale} for a way to remove boundedness.
In this subsection, we discuss the log \'etale version of the above equivalence.
\vspace{0.1in}

Let $\eta\colon k_{\acute{e}t}\rightarrow lSm/k$ denote the inclusion functor.
Then $\eta$ is a continuous and cocontinuous functor with respect to the \'etale topology on $k_{\acute{e}t}$ and the log \'etale topology on $lSm/k$.
Since $\eta$ preserves finite limits, $\eta$ is a morphism of sites due to \cite[IV.4.9.2]{SGA4}.
As a consequence there exist adjunct functors
\begin{equation}
\label{equation:ketlog}
\begin{tikzcd}
\Shv(k_{\acute{e}t},\Lambda)
\arrow[rr,shift left=1.5ex,"\eta_\sharp "]
\arrow[rr,"\eta^*" description,leftarrow]
\arrow[rr,shift right=1.5ex,"\eta_*"']&&
\Shv_{l\acute{e}t}^{\rm log}(k,\Lambda),
\end{tikzcd}
\end{equation}
and $\eta_\sharp$ is exact.

\begin{df}
\label{ketcomp.3}
A log \'etale sheaf $\cF$ on $lSm/k$ is \emph{locally constant}\index{locally constant sheaf} if the counit of the adjunction $\eta_\sharp \eta^*\cF\rightarrow \cF$ in \eqref{equation:ketlog} is an isomorphism.
\end{df}

\begin{prop}
\label{ketcomp.11}
The functor $\eta_\sharp\colon \Shv(k_{\acute{e}t},\Lambda)\rightarrow \Shv_{l\acute{e}t}^{\rm log}(k,\Lambda)$ is fully faithful.
\end{prop}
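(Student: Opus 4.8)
The plan is to prove full faithfulness of $\eta_\sharp$ by showing that the unit of the adjunction $\eta_\sharp\dashv\eta^*$,
\[
\mathrm{id}\longrightarrow \eta^*\eta_\sharp,
\]
is an isomorphism of endofunctors of $\Shv(k_{\acute{e}t},\Lambda)$. Write $\eta_!$ for the left Kan extension along $\eta$ at the level of presheaves of $\Lambda$-modules, so that $\eta_\sharp\cF=a_{l\acute{e}t}^*(\eta_!\cF)$ for a sheaf $\cF$ on $k_{\acute{e}t}$ (viewed as a presheaf). Since $\eta\colon k_{\acute{e}t}\to lSm/k$ is the inclusion of a full subcategory, for every $Z\in k_{\acute{e}t}$ the comma category of pairs $(X,\ \eta X\to\eta Z)$ with $X\in k_{\acute{e}t}$ has the terminal object $(Z,\mathrm{id}_Z)$; hence $(\eta_!\cF)(\eta Z)\cong\cF(Z)$ canonically. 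Moreover $k_{\acute{e}t}$ has fiber products and $\eta$ preserves them (all log structures in sight are trivial, so the fiber products are strict), which means the same comparison holds term by term for any Čech nerve of a covering sieve of $\eta Z$ that is generated by objects of $k_{\acute{e}t}$. The whole argument therefore reduces to showing that the iterated sheafification $a_{l\acute{e}t}^*=L\circ L$ (plus construction) of $\eta_!\cF$, evaluated on objects of $k_{\acute{e}t}$, only "sees" coverings coming from $k_{\acute{e}t}$.

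The key geometric input is a refinement statement: for every $Z\in k_{\acute{e}t}$, every log étale covering sieve of $Z$ in $lSm/k$ is refined by the sieve generated by a strict étale cover $\{Z_i\to Z\}$ with each $Z_i$ equipped with the trivial log structure, so that $Z_i\in k_{\acute{e}t}$. It suffices to refine a single log étale cover $U\to Z$ (take $U=\coprod U_i$). By Proposition \ref{A.5.44} there is a log modification $h\colon Z'\to Z$ such that $U\times_Z Z'\to Z'$ is an integral Kummer étale cover; since $Z$ has trivial log structure, $h$ is an isomorphism by Remark \ref{A.9.68}(1), so $U\to Z$ is itself Kummer étale. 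Working strict étale locally and using the local structure of Kummer (exact log étale) morphisms as in Theorem \ref{KatoStrThm}, the morphism $U\to Z$ admits a chart $\theta\colon P\to Q$ with $P\cong\overline{\cM}_Z$; as $P=0$ and $\overline{Q}=Q/Q^\ast=0$, the monoid $Q$ is a finite group, so $U$ has trivial log structure. Thus $U\to Z$ is a strict étale cover with $U\in k_{\acute{e}t}$, giving the required refinement.

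Granting this, for $Z\in k_{\acute{e}t}$ the value $(L\,\eta_!\cF)(\eta Z)$ is a filtered colimit of Čech $H^0$–terms over the covering sieves of $\eta Z$; by the refinement statement the colimit may be computed over sieves generated by objects of $k_{\acute{e}t}$, and, by the first paragraph, each such Čech $H^0$–term for $\eta_!\cF$ coincides with the corresponding Čech $H^0$–term for $\cF$ formed in $k_{\acute{e}t}$. Hence $(L\,\eta_!\cF)(\eta Z)\cong(L_{\acute{e}t}\cF)(Z)$, and the restriction of $L\,\eta_!\cF$ to $k_{\acute{e}t}$ is $L_{\acute{e}t}\cF$; repeating the argument with the separated presheaf $L\,\eta_!\cF$ in place of $\eta_!\cF$ gives $(\eta^*\eta_\sharp\cF)(Z)=(a_{l\acute{e}t}^*\eta_!\cF)(\eta Z)\cong(a_{\acute{e}t}^*\cF)(Z)=\cF(Z)$, the last equality because $\cF$ is already an étale sheaf. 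This identifies the unit $\mathrm{id}\to\eta^*\eta_\sharp$ with an isomorphism, so $\eta_\sharp$ is fully faithful.

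The main obstacle is the refinement statement of the second paragraph: this is where the special behavior of the log étale topology over trivial-log-structure objects is used, via the factorization result Proposition \ref{A.5.44} and the fact that log modifications of objects with trivial log structure are isomorphisms (Remark \ref{A.9.68}(1)). The remainder is the routine but somewhat delicate bookkeeping of transforming "every log étale covering refines into $k_{\acute{e}t}$" into a statement about the two-step sheafification $L\circ L$ and the Čech nerves occurring in it; here one uses that $\eta$ preserves fiber products so that all Čech terms remain inside $k_{\acute{e}t}$, where $\eta_!\cF$ is computed by the terminal object of the relevant comma category.
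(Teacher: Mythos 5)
Your proof is correct, and it is worth comparing with the one in the paper because the two arguments are organized quite differently. The paper's proof is a two-line reduction: since $\eta$ is fully faithful, the unit $\Lambda(X)\to\eta^*\eta_\sharp\Lambda(X)$ is an isomorphism on representables, and since $\eta_\sharp$ is a left adjoint and $\eta^*$ also has a right adjoint $\eta_*$, both commute with colimits, so writing an arbitrary sheaf as a colimit of representables finishes the argument. The entire geometric content is hidden in the assertion, made just before the proposition, that $\eta$ is cocontinuous for the \'etale and log \'etale topologies (equivalently, that the log \'etale topology induces the \'etale topology on $k_{\acute{e}t}$) --- without this the claim about the unit on representables would fail, since $\eta_\sharp\Lambda(X)$ involves a log \'etale sheafification. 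You instead prove the unit is an isomorphism on \emph{every} sheaf directly, by tracking the plus-construction, and you supply exactly the missing geometric input: via Proposition \ref{A.5.44}, Remark \ref{A.9.68}(1) and Kato's structure theorem, every log \'etale cover of an object with trivial log structure is refined by a strict \'etale cover whose members again have trivial log structure. Your route is longer but self-contained; the paper's is shorter but leans on the unproved cocontinuity statement. (Two small points you gloss over but which are standard: reducing cocontinuity to the generating covers of the log \'etale topology, and checking that your identification $(\eta^*\eta_\sharp\cF)(Z)\cong\cF(Z)$ is really the unit map --- both are routine given that your comparison isomorphisms are induced by the presheaf-level unit and the sheafification morphisms.)
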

\begin{proof}
The functor $\eta\colon k_{\acute{e}t}\rightarrow lSm/k$ is fully faithful, so for every $X\in k_{\acute{e}t}$ the unit
\[
\Lambda(X)\rightarrow \eta^*\eta_\sharp \Lambda(X)
\]
is an isomorphism.
Since $\eta_\sharp$ and $\eta^*$ commute with colimits, for every $\cF\in \Shv(k_{\acute{e}t},\Lambda)$ we have
\[
\eta^*\eta_\sharp\cF
\cong
\eta^*\eta_\sharp \colimit_{X\rightarrow \cF}\Lambda(X)
\cong
\colimit_{X\rightarrow \cF} \eta^*\eta_\sharp\Lambda(X)
\cong
\colimit_{X\rightarrow \cF} \Lambda(X)
\cong
\cF.
\]
It follows that $\eta_\sharp$ is fully faithful.
\end{proof}

\begin{prop}
\label{ketcomp.14}
Suppose that there is a cartesian square of locally noetherian fs log schemes
\[
\begin{tikzcd}
Y'\arrow[d,"f'"']\arrow[r,"g'"]&Y\arrow[d,"f"]
\\
X'\arrow[r,"g"]&X
\end{tikzcd}
\]
with the following properties.
\begin{enumerate}
\item[{\rm (i)}] $X$ and $X'$ have trivial log structure, and $X'\cong \limit X_i$ for some projective system of \'etale schemes of finite type over $X$ such that every transition morphism $X_j\rightarrow X_i$ is affine.
\item[{\rm (ii)}] $f$ is a morphism of finite type.
\end{enumerate}
Then for every $\cF\in \mathbf{D}(X_{l\acute{e}t},\Lambda)$ there is a base change isomorphism
\[
g^*Rf_* \cF\xrightarrow{\cong} Rf_*'g'^*\cF.
\]
\end{prop}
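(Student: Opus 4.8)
The plan is to reduce the assertion to the case where $g$ is \emph{étale of finite presentation}, where it is formal, and then to pass to the limit using the coherence of the small log \'etale topos. First I would record the base case. Since $X$ (hence each $X_i$ and $X'$) carries the trivial log structure, every log \'etale morphism to it is strict \'etale, so $X_{l\acute{e}t}$, $(X_i)_{l\acute{e}t}$, $X'_{l\acute{e}t}$ coincide with the usual small \'etale sites $X_{\acute{e}t}$, $(X_i)_{\acute{e}t}$, $X'_{\acute{e}t}$; in particular the projections $g_i\colon X_i\to X$ are ``opens'' of the site, i.e.\ $(X_i)_{l\acute{e}t}\simeq X_{l\acute{e}t}/X_i$. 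Setting $Y_i:=Y\times_X X_i$, $f_i\colon Y_i\to X_i$ and $g_i'\colon Y_i\to Y$, one similarly has $(Y_i)_{l\acute{e}t}\simeq Y_{l\acute{e}t}/Y_i$, and the square over $X_i$ is localized from the square over $X$. For such a localization square $g_i^*$ admits the exact left adjoint $g_{i!}$, hence preserves injective objects, and the underived base change $g_i^*f_*\cong f_{i*}g_i'^*$ is automatic on sheaves; computing with injective resolutions therefore gives a natural base-change isomorphism $g_i^*Rf_*\cF\xrightarrow{\cong} Rf_{i*}g_i'^*\cF$ for every $i$.

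Next I would globalize by testing on sections. Both $g^*Rf_*\cF$ and $Rf_*'g'^*\cF$ lie in $\mathbf{D}(X'_{l\acute{e}t})$, and a map between them is an isomorphism iff it induces an isomorphism on $R\Gamma(V',-)$ for $V'$ ranging over a generating family of $X'_{l\acute{e}t}=X'_{\acute{e}t}$, which we may take to consist of quasi-compact objects. Fix such a $V'$. By the limit formalism for schemes \cite[IV.8]{EGA} it spreads out: there are $i_0$ and an \'etale morphism $V_{i_0}\to X_{i_0}$ of finite presentation with $V_i:=V_{i_0}\times_{X_{i_0}}X_i$ and $V'=\limit_{i\ge i_0}V_i$, all transition maps affine. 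Put $W_i:=Y\times_X V_i=Y_i\times_{X_i}V_i$ and $W':=Y\times_X V'=\limit_i W_i$; these are noetherian fs log schemes with affine transition morphisms and $W_i\to W_{i_0}$ strict. Applying the adjunction $R\Gamma(V',Rf_*'(-))=R\Gamma(W',(-)|_{W'})$ and its analogues over each $V_i$, together with the base case of the previous paragraph, the desired statement over $V'$ is reduced to the continuity isomorphism
\[
R\Gamma_{l\acute{e}t}(W',\cG')\;\cong\;\colimit_i R\Gamma_{l\acute{e}t}(W_i,\cG_i),
\]
where $\cG_i$ is the restriction of $g_i'^*\cF$ to $W_i$ and $\cG'$ its restriction to $W'$.

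The main obstacle is precisely this continuity statement for log \'etale cohomology along a cofiltered limit of noetherian fs log schemes with affine transitions. I would deduce it from three inputs: (a) the small log \'etale topos of a quasi-compact fs log scheme is coherent, every object of $W_{i,l\acute{e}t}$ being quasi-compact by \cite[Proposition 3.14(1)]{MR3658728} (already used for Proposition \ref{bigsmall.3}); (b) a spreading-out lemma --- since fs charts and the Kato criterion for a morphism to be log \'etale are all finitely presented data, any log \'etale morphism to $W'=\limit_i W_i$ and any log \'etale covering of it descends to some $W_i$ and becomes a covering there, so $W'_{l\acute{e}t}$ is the filtered $2$-limit of the $W_{i,l\acute{e}t}$; (c) the general fact that cohomology commutes with filtered $2$-limits of coherent topoi \cite[VI.8.7.3, VII.5.7]{SGA4}. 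Alternatively, one may avoid the general topos machinery by reducing the log \'etale site to the Kummer \'etale site up to dividing covers, as in Proposition \ref{A.5.44} and Theorem \ref{Div.3}, and invoking Nakayama's comparison and base-change results for Kummer \'etale cohomology \cite[Proposition 4.1]{MR1457738}; either way the essential point is that log modifications and Kummer \'etale covers spread out, which is again the finitely presented nature of the data. Granting (a)--(c), assembling the displayed isomorphisms, naturally in $V'$, produces the base-change isomorphism $g^*Rf_*\cF\xrightarrow{\cong}Rf_*'g'^*\cF$. If $\mathbf{D}$ is taken unbounded, one first runs the argument on $\mathbf{D}^+$ via the hyperdescent spectral sequence and then extends to truncations.
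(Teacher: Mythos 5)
Your proof is correct and follows the same two-step skeleton as the paper's (very terse) argument: first the case where $g$ is \'etale of finite type, where base change is formal because triviality of the log structure on $X$ forces $X_{l\acute{e}t}=X_{\acute{e}t}$ and makes $(X_i)_{l\acute{e}t}$ and $(Y_i)_{l\acute{e}t}$ localizations of $X_{l\acute{e}t}$ and $Y_{l\acute{e}t}$, and then a passage to the limit along $X'\cong\limit X_i$. The only real organizational difference is that the paper first reduces the derived statement to underived base change for a single log \'etale sheaf, following the argument of \cite[Theorem 1.1.14]{CDEtale}, and then applies the limit at the level of sections, whereas you keep everything derived and invoke continuity of cohomology for cofiltered limits of coherent topoi via \cite{SGA4}; both versions ultimately rest on the same two ingredients you isolate, namely the localization base case and the spreading-out of log \'etale data (charts, the Kato criterion, and surjectivity of covers) along the strict affine transition morphisms.
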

\begin{proof}
As in the proof of \cite[Theorem 1.1.14]{CDEtale}, this can be reduced to showing that for every log \'etale sheaf $\cF$ of $\Lambda$-modules on $X_{l\acute{e}t}$ there is a base change isomorphism
\begin{equation}
\label{ketcomp.14.1}
g^*f_*\cF\xrightarrow{\cong} f_*'g'^*\cF.
\end{equation}
We can also replace $X'$ by $X_i$ in the projective system and $Y'$ by $Y\times_X X_i$ and then apply the limit.
Hence we reduce to the case when $g$ is \'etale.
In this case, \eqref{ketcomp.14.1} can be proved directly.
\end{proof}

\begin{prop}
\label{ketcomp.1}
Suppose $\Lambda$ is a torsion ring coprime to the exponential characteristic of $k$.
Let $X$ be an fs log scheme in $lSm/k$ and $q\in\Z$.
For every complex $\cF$ of locally constant sheaves of $\Lambda$-modules on $lSm/k$, the projection morphism $p\colon X\times \boxx\rightarrow X$ induces an isomorphism of log \'etale cohomology groups 
\[
p^*\colon \bH_{l\acute{e}t}^q(X,\cF)\xrightarrow{\cong} \bH_{l\acute{e}t}^q(X\times \boxx,\cF).
\]
\end{prop}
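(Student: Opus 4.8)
<br>

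The plan is to reduce the statement to a comparison between log \'etale cohomology and \'etale cohomology on the underlying schemes, where the corresponding $\A^1$-invariance (really $\mathbb{G}_m$-invariance via the standard covering $\boxx \simeq \P^1$) is classical. First I would note that, since $\cF$ is a complex of locally constant log \'etale sheaves, Definition \ref{ketcomp.3} gives $\cF \cong \eta_\sharp \eta^* \cF$ term by term, and $\eta_\sharp$ is exact by the discussion after \eqref{equation:ketlog}. So it suffices to treat $\cF = \eta_\sharp \cG$ for $\cG$ a complex of \'etale sheaves of $\Lambda$-modules on $k_{\acute{e}t}$. The point is then to compute $\bH_{l\acute{e}t}^q(X, \eta_\sharp \cG)$: using the morphism of sites $\eta$ and the adjunction \eqref{equation:ketlog}, together with the structural morphism $\pi\colon X \to \Spec k$, one identifies this with $\bH_{\acute{e}t}^q(\Spec k, R\pi_{l\acute{e}t,*}\, \pi^* \cG)$ or more directly with the log \'etale cohomology of the constant-type sheaf pulled back to $X$.

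The core input is the Kummer \'etale comparison. Since $\Lambda$ is torsion coprime to the exponential characteristic of $k$, and $\eta_\sharp \cG$ restricted to any small log \'etale site is locally constant (hence Kummer \'etale), Theorem \ref{Div.3} reduces $\bH_{l\acute{e}t}^q(X, \eta_\sharp \cG)$ to a filtered colimit over log modifications $Y \to X$ of $\bH_{k\acute{e}t}^q(Y, -)$. Now one invokes the Kummer \'etale homotopy invariance: for $Y$ log smooth over $k$ with a chart, the projection $Y \times \boxx \to Y$ induces an isomorphism on Kummer \'etale cohomology of a locally constant torsion sheaf. This is essentially the statement that $\boxx = (\P^1, \infty)$ has the Kummer \'etale homotopy type of a point with these coefficients — concretely, the Kummer \'etale cohomology of $\A_\N = (\A^1, 0)$ with locally constant torsion coefficients agrees with the \'etale cohomology of $\Spec k$, because the ``log point'' at the origin contributes nothing new (its Kummer \'etale fundamental group is the Tate module $\widehat{\Z}'(1)$, but the relevant $H^1$ terms for a constant sheaf vanish after the standard computation), and then one glues via the strict Nisnevich distinguished square \eqref{A.5.43.1} relating $\A_\N \hookrightarrow \boxx$ to $\A^1 \hookrightarrow \P^1$. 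I would use \cite[Proposition 5.4(2)]{MR3658728} or the analogous statements in log \'etale cohomology to pass between $l\acute{e}t$ and $k\acute{e}t$ and to compute the small-site cohomology.

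Concretely, the steps in order: (1) reduce to $\cF = \eta_\sharp \cG$ using local constancy and exactness of $\eta_\sharp$; (2) reduce to a single sheaf $\cG$ in each degree, using a hypercohomology spectral sequence and the finite log \'etale cohomological dimension from Corollary \ref{Div.6} (valid since $k$ has finite \'etale cohomological dimension for torsion coefficients coprime to the characteristic); (3) use Theorem \ref{Div.3} to pass to a colimit of Kummer \'etale cohomology groups over $X_{div}$, and use that a log modification $Y\times\boxx \to Y\times\boxx$ is compatible with the projection to $Y$; (4) for a fixed $Y$ with an fs chart, prove $\bH_{k\acute{e}t}^q(Y, \eta_\sharp\cG|_Y) \xrightarrow{\cong} \bH_{k\acute{e}t}^q(Y \times \boxx, -)$ by base change (Proposition \ref{ketcomp.14}, applied with the underlying schemes having trivial log structure) reducing to the absolute case $Y = \Spec k$, where one computes directly that the Kummer \'etale cohomology of $\boxx$ over $k$ with locally constant torsion coefficients equals that of $\Spec k$; (5) reassemble. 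The main obstacle I expect is step (4): one must show carefully that the nontrivial log structure at $\infty \in \boxx$ does not produce extra cohomology for \emph{locally constant} (as opposed to merely Kummer \'etale locally constant) coefficients — this is where coprimality to the characteristic and the precise structure of Kummer covers of $\boxx$ enter, and where one has to be careful to distinguish the Kummer \'etale $\pi_1$ of $\boxx$ from that of $\P^1$. The covering square \eqref{A.5.43.1} together with Mayer–Vietoris for the log \'etale topology is the technical device that makes this work.
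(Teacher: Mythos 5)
Your overall architecture (reduce to a single constant torsion sheaf, pass from the log \'etale to the Kummer \'etale topology, then establish $\boxx$-invariance of Kummer \'etale cohomology) has the same shape as the paper's, and your steps (1)--(3) are sound in outline. The gap is in step (4), and it is a real one: your claim that the Kummer \'etale cohomology of $\A_\N=(\A^1,0)$ with constant torsion coefficients agrees with the \'etale cohomology of $\Spec{k}$ is false. By the comparison theorem \cite[Corollary 7.5]{MR1922832} one has $H_{k\acute{e}t}^q(\A_\N,\Z/n)\cong H_{\acute{e}t}^q(\mathbb{G}_m,\Z/n)$, and over a separably closed field this has $H^1\cong \Z/n(-1)\neq 0$: the log point at the origin contributes precisely the classes dual to the Kummer covers $t\mapsto t^m$. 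Your own observation that the Kummer \'etale $\pi_1$ of the log point is $\widehat{\Z}'(1)$ already shows this, since $\hom(\widehat{\Z}'(1),\Z/n)\neq 0$; the ``relevant $H^1$ terms'' do not vanish. The Mayer--Vietoris argument via the square \eqref{A.5.43.1} can be salvaged --- with the correct input $H_{k\acute{e}t}^*(\A_\N)\cong H_{\acute{e}t}^*(\mathbb{G}_m)$ the restriction to the overlap $\mathbb{G}_m$ is an isomorphism and the sequence degenerates to give $H_{k\acute{e}t}^*(\boxx)\cong H_{\acute{e}t}^*(\A^1)$ --- but the input you actually need is exactly the comparison theorem, at which point the gluing is superfluous.

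The paper's proof applies that comparison theorem globally rather than chart by chart: after base-changing to $k^s$ via Proposition \ref{ketcomp.14} (so that the sheaf becomes constant, reducing to $\Z/n\Z$), passing to a single sheaf by the hypercohomology spectral sequences (using finite cohomological dimension), and identifying $H_{l\acute{e}t}^q$ with $H_{k\acute{e}t}^q$ by \cite[Theorem 5.17]{MR3658728}, it invokes $H_{k\acute{e}t}^q(Y,\Z/n\Z)\cong H_{\acute{e}t}^q(Y-\partial Y,\Z/n\Z)$ simultaneously for $Y=X$ and $Y=X\times\boxx$; since $(X\times\boxx)-\partial(X\times\boxx)=(X-\partial X)\times\A^1$, the statement becomes classical $\A^1$-invariance of \'etale cohomology \cite[Corollaire XV.2.2]{SGA4}, with no local computation on $\boxx$ needed. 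I would also flag that your use of Theorem \ref{Div.3} in step (3) presupposes that a locally constant log \'etale sheaf is the log \'etale sheafification of a Kummer \'etale sheaf, which does not follow immediately from Definition \ref{ketcomp.3} and would need justification if you keep that route.
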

\begin{proof}
By applying Proposition  \ref{ketcomp.14} to the case when $g$ is $\Spec{k^s}\rightarrow \Spec{k}$, we reduce to the case when $k$ is separably closed.
Then $X$ and $X\times \boxx$ have finite cohomological dimension by virtue of \cite[Theorem 7.2(2)]{MR3658728}.
Apply \cite[5.7.9]{weibel_1994} to obtain two strongly convergent hypercohomology spectral sequences
\[
E_{pq}^2=H^p(X,a_{l\acute{e}t}^* H^q(\cF))\Rightarrow \bH_{l\acute{e}t}^{p+q}(X,\cF).
\]
and
\[
E_{pq}'^2=H^p(X\times \boxx,a_{l\acute{e}t}^* H^q(\cF))\Rightarrow \bH_{l\acute{e}t}^{p+q}(X\times \boxx,\cF).
\]
Since there is a naturally induced morphism  of spectral sequences $E_{pq}\rightarrow E_{pq}'$, we reduce to case when $\cF$ is a locally constant sheaf of $\Lambda$-modules on $X_{l\acute{e}t}$.
\vspace{0.1in}

Since $k$ has become separably closed, $\cF$ is a constant torsion sheaf.
We can further reduce to the case when $\cF=\Z/n\Z$ for some integer $n>0$ invertible on $k$.
Thanks to \cite[Theorem 5.17]{MR3658728} there are isomorphisms
\[
H_{k\acute{e}t}^q(X,\Z/n\Z)\xrightarrow{\cong} H_{l\acute{e}t}^q(X,\Z/n\Z),
\;
H_{k\acute{e}t}^q(X\times \boxx,\Z/n\Z)\xrightarrow{\cong} H_{l\acute{e}t}^q(X\times \boxx,\Z/n\Z).
\]
For this reason, it suffices to show that there is an isomorphism of Kummer \'etale cohomology groups
\[
p^*\colon H_{k\acute{e}t}^q(X,\Z/n\Z)\xrightarrow{\cong} H_{k\acute{e}t}^q(X\times \boxx,\Z/n\Z).
\]

Owing to \cite[Corollary 7.5]{MR1922832} there are isomorphisms 
\begin{align*}
H_{k\acute{e}t}^q(X,\Z/n\Z)&\xrightarrow{\cong} H_{\acute{e}t}^q(X-\partial X,\Z/n\Z),
\\
H_{k\acute{e}t}^q(X\times \boxx,\Z/n\Z)&\xrightarrow{\cong} H_{\acute{e}t}^q((X-\partial X)\times \A^1,\Z/n\Z).
\end{align*}
By $\A^1$-homotopy invariance of \'etale cohomology \cite[Corollaire XV.2.2]{SGA4} there is an isomorphism 
\[
H_{\acute{e}t}^q(X-\partial X,\Z/n\Z)\xrightarrow{\cong} H_{\acute{e}t}^q((X-\partial X)\times \A^1,\Z/n\Z).
\]
Combining these isomorphisms concludes the proof.
\end{proof}

\begin{exm}
\label{ketcomp.2}
For every integer $n>1$ coprime to the exponential characteristic of $k$,
the sheaf of $n$th roots of unity $\mu_n$ is \emph{not} strictly $\boxx$-invariant in the strict \'etale topology.
Indeed, for every integer $i\in \Z$, we have
\[
H_{s\acute{e}t}^i(\boxx,\mu_n)
\cong 
H_{\acute{e}t}^i(\P^1,\mu_n).
\]
In general, the latter group is non-isomorphic to $H_{\acute{e}t}^i(\Spec{k},\mu_n)$.
In particular,
if $k$ contains all $n$th roots of unity, 
then $\Z/n$ is \emph{not} strictly $\boxx$-invariant in the strict \'etale topology.
\end{exm}

\begin{prop}
\label{ketcomp.4}
With $\Lambda$ as above, every locally constant log \'etale sheaf $\cF$ on $lSm/k$ admits a unique log transfer structure.
\end{prop}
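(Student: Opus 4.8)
The plan is to realize $\cF$ canonically as a colimit of sheafified representables, each of which tautologically carries log transfers, and then to show this colimit produces the unique transfer structure. First I would reduce to the case of representables. Set $G:=\eta^*\cF$, a sheaf of $\Lambda$-modules on $k_{\acute{e}t}$; since $\cF$ is locally constant, the counit of \eqref{equation:ketlog} gives an isomorphism $\eta_\sharp G\xrightarrow{\cong}\cF$ (Definition \ref{ketcomp.3}). Presenting $G$ as the colimit of the representable sheaves mapping into it yields $G\cong\colimit_{i\in I}\Lambda(Z_i)$ over a small diagram, where $Z_i\in k_{\acute{e}t}$ and the transition maps are induced by $k$-morphisms $Z_j\to Z_i$ (each $\Lambda(Z_i)$ is already a sheaf because the étale topology is subcanonical). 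As $\eta_\sharp$ is a left adjoint it preserves this colimit, so $\cF\cong\colimit_{i\in I}\eta_\sharp\Lambda(Z_i)$ in $\Shv_{l\acute{e}t}^{\rm log}(k,\Lambda)$; and because $\eta$ is fully faithful (Proposition \ref{ketcomp.11}), $\eta_\sharp\Lambda(Z_i)$ is the log étale sheafification $a_{l\acute{e}t}^*\Lambda(Z_i)$ of the representable presheaf on $lSm/k$.

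The key input, which I expect to be the technical heart of the argument, is a rigidity statement: for $Z\in k_{\acute{e}t}$ there is a natural isomorphism $\eta_\sharp\Lambda(Z)\cong\gamma^*a_{l\acute{e}t}^*\Zltr(Z)$. Because $Z$ has trivial log structure there is a natural graph monomorphism $\Lambda(Z)\to\gamma^*\Zltr(Z)$ (Example \ref{A.5.31}(2)), and since $\gamma^*$ commutes with sheafification (\eqref{A.8.1.1} and Proposition \ref{A.8.4}) it suffices to see that this map becomes an isomorphism after strict étale sheafification — the strict étale topology being coarser than the log étale topology, this then holds after $a_{l\acute{e}t}^*$ as well. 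On a strict étale stalk, i.e.\ the strict henselization $T$ of an object of $lSm/k$ at a geometric point with its induced log structure, the scheme $\underline T$ is normal (Lemma \ref{A.9.18}) and strictly henselian local, and $Z\times_k\underline T\to\underline T$ is finite étale and completely split; hence every integral closed subscheme of $\underline T\times_k Z$ that is finite and surjective over $\underline T$ is a connected component of $\underline T\times_k Z$, i.e.\ the graph of a $k$-morphism $\underline T\to Z$. Therefore $\Cor(\underline T,Z)\otimes\Lambda$ is the free $\Lambda$-module on the set of $k$-morphisms $\underline T\to Z$, which is exactly the desired stalk-wise identification.

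Granting this, existence is immediate: the diagram $i\mapsto\eta_\sharp\Lambda(Z_i)$ lifts through $\gamma^*$ to the diagram $i\mapsto a_{l\acute{e}t}^*\Zltr(Z_i)$ in $\Shv_{l\acute{e}t}^{\rm ltr}(k,\Lambda)$ — the transition maps lift because $k$-morphisms induce finite log correspondences compatibly with composition (Remark \ref{A.5.35}(ii)), and each $a_{l\acute{e}t}^*\Zltr(Z_i)$ is a log étale sheaf with log transfers by Theorem \ref{A.5.23}. Since $\Shv_{l\acute{e}t}^{\rm ltr}(k,\Lambda)$ is Grothendieck abelian (Proposition \ref{A.8.7}) and $\gamma^*$ has both a left adjoint $\gamma_\sharp$ (Proposition \ref{A.8.9}) and a right adjoint $\gamma_*$ (Proposition \ref{A.8.4}), hence commutes with colimits, the object $\cG:=\colimit_{i\in I}a_{l\acute{e}t}^*\Zltr(Z_i)$ satisfies $\gamma^*\cG\cong\colimit_i\eta_\sharp\Lambda(Z_i)\cong\cF$, so $\cG$ is a log transfer structure on $\cF$.

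For uniqueness, suppose $\cG_0\in\Shv_{l\acute{e}t}^{\rm ltr}(k,\Lambda)$ is any log transfer structure on $\cF$, i.e.\ $\gamma^*\cG_0\cong\cF$. Using the Yoneda identification $\hom_{\Shv_{l\acute{e}t}^{\rm ltr}(k,\Lambda)}(a_{l\acute{e}t}^*\Zltr(Z_i),\cG_0)\cong\cG_0(Z_i)\cong\cF(Z_i)$ (Proposition \ref{A.8.7}), the structure elements $\Lambda(Z_i)\to G$ of the presentation produce a family of morphisms $a_{l\acute{e}t}^*\Zltr(Z_i)\to\cG_0$, compatible with the transition maps because on a presheaf with log transfers the transfer along a graph is the corresponding pullback; these assemble into $\phi\colon\cG\to\cG_0$. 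After identifying underlying sheaves via the given isomorphisms, $\gamma^*\phi$ restricts on each $\eta_\sharp\Lambda(Z_i)$ to the colimit cocone map, hence $\gamma^*\phi$ is an isomorphism. Finally $\gamma^*$ is conservative: it is exact (having adjoints on both sides) and reflects the zero object (if $\gamma^*\cH=0$ then $\cH(X)=0$ for all $X$, so $\cH=0$, as in the proof of Proposition \ref{A.8.5}); therefore $\phi$ is an isomorphism and the transfer structure on $\cF$ is unique up to a unique compatible isomorphism. The only genuinely delicate point remains the rigidity lemma of the second paragraph — handling the induced log structure on the strict henselizations and ensuring that strict étale local isomorphisms persist after passing to the finer log étale topology.
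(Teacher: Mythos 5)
Your proof is essentially correct, but it follows a genuinely different route from the paper's. The paper's argument is much shorter: by \cite[Lemma 6.11]{MVW} the restriction of $\cF$ to $Sm/k$ carries a unique transfer structure, and the comparison underlying Proposition \ref{ketcomp.1} (log \'etale versus \'etale cohomology of the interior, which is where the torsion hypothesis on $\Lambda$ enters) gives $\cF(X)\cong\cF(X-\partial X)$ for every $X\in lSm/k$; since $(-)^\circ$ is injective on correspondences (Lemma \ref{A.5.10}) and any log transfer structure must be compatible with restriction along $X-\partial X\to X$, the transfer $V^*$ is forced to be the conjugate of $(V-\partial V)^*$ by these isomorphisms, which settles both existence and uniqueness at once. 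You instead bypass the interior comparison entirely: you present $\eta^*\cF$ as a colimit of sheafified representables and prove a rigidity isomorphism $a_{l\acute{e}t}^*\Lambda(Z)\cong\gamma^*a_{l\acute{e}t}^*\Zltr(Z)$ for $Z\in k_{\acute{e}t}$ by a stalkwise splitting argument on strict henselizations (which is in effect the computation inside the proof of \cite[Lemma 6.11]{MVW}, transported to $lSm/k$ using normality of objects of $lSm/k$, Lemma \ref{A.9.18}, and the fact that for a target with trivial log structure both presheaves depend only on the underlying scheme). Your route is heavier, but it buys something real: it nowhere uses that $\Lambda$ is torsion, so it proves the statement for arbitrary coefficient rings, whereas the paper's proof genuinely needs the torsion hypothesis through Proposition \ref{ketcomp.1}.

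One assertion in your first paragraph is false as stated: a representable presheaf of $\Lambda$-modules $\Lambda(Z)$ is \emph{not} in general a sheaf just because the topology is subcanonical --- subcanonicity concerns the representable presheaf of \emph{sets}, and the free $\Lambda$-module functor does not preserve the relevant equalizers (already for a Galois cover, invariants of a free module are larger than the free module on the invariants). This is harmless: replace $\Lambda(Z_i)$ by $a_{\acute{e}t}^*\Lambda(Z_i)$ throughout, note that every sheaf of $\Lambda$-modules is canonically a colimit of such sheafified representables indexed by its category of elements, and observe that $\eta_\sharp a_{\acute{e}t}^*\Lambda(Z)\cong a_{l\acute{e}t}^*\Lambda(Z)$ since $\eta$ is fully faithful and $\eta_\sharp$ commutes with sheafification. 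With that repair, and spelling out that the stalk of $\lCor(-,Z)\otimes\Lambda$ at a strict \'etale point is computed by spreading out over the strictly henselian limit, the argument goes through.
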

\begin{proof}
Owing to \cite[Lemma 6.11]{MVW} $\cF$ has a unique transfer structure on $Sm/k$.
For every $X\in lSm/k$, apply Proposition \ref{ketcomp.1} to have an isomorphism
$$
\cF(X)\cong \cF(X-\partial X).
$$
Thus for every finite log correspondence $V\in \lCor(X,Y)$, 
where $Y\in lSm/k$, 
the transfer map 
$$
(V-\partial V)^*\colon \cF(X-\partial X)\rightarrow \cF(Y-\partial Y)
$$ 
extends uniquely to a transfer map 
$$
V^*\colon \cF(X)\rightarrow \cF(Y).
$$
\end{proof}

With $\Lambda$ as above, as a consequence we have an adjunction
\begin{equation}
\label{equation:ketloglambda}
\eta_\sharp: \Shv(k_{\acute{e}t},\Lambda)\rightleftarrows \Shv_{l\acute{e}t}^{\rm ltr}(k,\Lambda):\eta^*,
\end{equation}
and $\eta_\sharp$ is exact and fully faithful.
From \eqref{equation:ketloglambda} we deduce the canonical Quillen adjunction 
\[
\eta_\sharp:\Co(\Shv(k_{\acute{e}t},\Lambda))\rightleftarrows \Co(\Shv_{l\acute{e}t}^{\rm ltr}(k,\Lambda)):\eta^*
\]
with respect to the descent model structure on the left side and the $\boxx$-local descent model structure on the right side.
On the corresponding homotopy categories, we obtain the canonical adjunction: 
\[
L\eta_\sharp:\mathbf{D}(\Shv(k_{\acute{e}t},\Lambda))\rightleftarrows \ldmefflet:R\eta^*
\]
Since $\eta_\sharp$ is exact, $L\eta_\sharp\simeq \eta_\sharp$.

\begin{prop}
With $\Lambda$ as above, the functor
\[
\eta_\sharp\colon \mathbf{D}(\Shv(k_{\acute{e}t},\Lambda))\rightarrow \ldmefflet
\]
is fully faithful.
\end{prop}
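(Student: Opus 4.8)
The plan is to show that the unit $\mathrm{id}\to R\eta^*\eta_\sharp$ of the adjunction $\eta_\sharp\dashv R\eta^*$ (recall $L\eta_\sharp\simeq\eta_\sharp$ since $\eta_\sharp$ is exact) is an isomorphism, which is equivalent to $\eta_\sharp$ being fully faithful. So fix $\cF\in\mathbf{D}(\Shv(k_{\acute{e}t},\Lambda))$, represented by a complex of sheaves, and note that $\eta_\sharp\cF$ is computed termwise because $\eta_\sharp$ is exact. First I would observe that $\eta_\sharp\cF$ is a complex of \emph{locally constant} log \'etale sheaves with log transfers: each term is locally constant by Definition \ref{ketcomp.3} together with the isomorphism $\eta^*\eta_\sharp\cong\mathrm{id}$ coming from the adjunction \eqref{equation:ketloglambda} (equivalently Proposition \ref{ketcomp.11}), and it carries its unique log transfer structure by Proposition \ref{ketcomp.4}.

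Next I would invoke Proposition \ref{ketcomp.1}, which says precisely that a complex of locally constant log \'etale sheaves is strictly $\boxx$-invariant for the log \'etale topology; hence $\eta_\sharp\cF$ is strictly $\boxx$-invariant. Choosing a fibrant replacement $\eta_\sharp\cF\to G$ in $\Co(\Shv_{l\acute{e}t}^{\rm ltr}(k,\Lambda))$ for the (non-$\boxx$-local) descent model structure, the complex $G$ is still strictly $\boxx$-invariant because strict $\boxx$-invariance depends only on log \'etale hypercohomology, which is a quasi-isomorphism invariant. By the log \'etale analogue of the fibrancy criterion recorded after Definition \ref{A.5.12}, the complex $G$ is then fibrant for the $\boxx$-local descent model structure, so it computes $R\eta^*\eta_\sharp\cF\cong\eta^*G$.

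It then remains to identify $\eta^*G$ with $\cF$. Since $\eta^*$ is the restriction functor along $\eta\colon k_{\acute{e}t}\to lSm/k$, it is exact and preserves quasi-isomorphisms, so $\eta^*G\simeq\eta^*\eta_\sharp\cF$, and the latter is canonically isomorphic to $\cF$ by the full faithfulness of $\eta_\sharp$ at the level of abelian categories asserted in \eqref{equation:ketloglambda}. Unwinding the identifications shows that the composite $\cF\to\eta^*\eta_\sharp\cF\to\eta^*G\cong R\eta^*\eta_\sharp\cF$ is exactly the unit, so the unit is an isomorphism and $\eta_\sharp$ is fully faithful. The only substantial input is Proposition \ref{ketcomp.1}, the strict $\boxx$-invariance of locally constant log \'etale sheaves, which itself relies on the Kummer-\'etale versus log-\'etale comparison of \cite{MR3658728} and on $\A^1$-invariance of \'etale cohomology; granting that, everything above is formal bookkeeping with the three adjunctions and the Bousfield localizations. (Alternatively one could test the unit on the generators $\Lambda(X)[i]$, $X\in k_{\acute{e}t}$, using Proposition \ref{Etalemot.3} to rewrite $\hom_{\ldmefflet}(\eta_\sharp\Lambda(X)[i],\eta_\sharp\cF)$ as log \'etale hypercohomology, then using that $\eta(X)$ has trivial log structure so its small log \'etale site is the small \'etale site of $X$; I expect the fibrancy route above to be the cleanest.)
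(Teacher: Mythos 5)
Your proof is correct and follows essentially the same route as the paper's: apply Proposition \ref{ketcomp.1} to get strict $\boxx$-invariance of $\eta_\sharp\cF$ (so that $R\eta^*\eta_\sharp\cF\simeq\eta^*\eta_\sharp\cF$), then conclude from the full faithfulness of $\eta_\sharp$ at the level of abelian categories. The only difference is that you spell out the fibrant-replacement bookkeeping and the verification that $\eta_\sharp\cF$ is locally constant, both of which the paper leaves implicit.
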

\begin{proof}
Let $\cF$ be an object of $\Co(\Shv(k_{\acute{e}t},\Lambda))$.
By Proposition \ref{ketcomp.1}, $\eta_\sharp \cF$ is strictly $\boxx$-invariant,
and hence there is an isomorphism
$$
R\eta^*\eta_\sharp \cF\simeq \eta^*\eta_\sharp\cF.
$$
To conclude, we observe that the functor 
$$
\eta_\sharp\colon \mathbf{C}(\Shv(k_{l\acute{e}t},\Lambda))\rightarrow \mathbf{C}(\Shv_{l\acute{e}t}^{\rm ltr}(k,\Lambda))
$$ 
is fully faithful since the functor $\eta_\sharp\colon \Shv(k_{l\acute{e}t},\Lambda)\rightarrow \Shv_{l\acute{e}t}^{\rm log}(k,\Lambda)$ is fully faithful.
\end{proof}

Next, we state a rigidity conjecture which is a log analog of \cite[Theorem 9.35]{MVW}.

\begin{conj}
\label{conjrigidity}
With $\Lambda$ as above, the functor
\[
\eta_\sharp\colon \mathbf{D}(\Shv(k_{l\acute{e}t},\Lambda))\rightarrow \ldmefflet
\]
is an equivalence.
\end{conj}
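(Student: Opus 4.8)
The plan is to prove essential surjectivity of $\eta_\sharp$, full faithfulness being already established above, and to do so by factoring $\eta$ through the classical small-to-big \'etale comparison. The inclusion $\eta\colon k_{\acute{e}t}\to lSm/k$ factors as $\lambda\circ\pi$, where $\pi\colon k_{\acute{e}t}\to (Sm/k)_{\acute{e}t}$ is the morphism of sites in Voevodsky's rigidity theorem and $\lambda\colon Sm/k\to lSm/k$ equips a smooth scheme with its trivial log structure. Since the log \'etale topology restricts to the \'etale topology on the small site, passing to motives yields a natural identification $\eta_\sharp\simeq L\lambda_\sharp\circ\pi_\sharp$ (with $L\lambda_\sharp$ the derived, $\boxx$-localized version of the left adjoint in \eqref{A.4.5.1}). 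By the Suslin--Voevodsky rigidity theorem with transfers recalled above, $\pi_\sharp\colon \mathbf{D}(k_{\acute{e}t},\Lambda)\to \dmeffet$ is an equivalence when $\Lambda$ is torsion of exponent invertible in $k$; thus the conjecture is reduced to showing that $L\lambda_\sharp\colon \dmeffet\to\ldmefflet$ is an equivalence in that setting, equivalently that the log \'etale analogue $R\omega^*\colon \dmeffet\to \ldmefflet$ of \eqref{eq::omegasharp} is fully faithful and essentially surjective.

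I would then run the two halves in parallel with the proofs of Theorems \ref{A.4.14} and \ref{A.4.22}, but with torsion-coprime \'etale inputs replacing resolution of singularities. For full faithfulness of $R\omega^*$, the essential ingredients in the $\A^1$-case were the proper comparison of hom-groups (Theorem \ref{A.4.10}) and the Gysin and Thom computations (Theorems \ref{A.3.36}, \ref{A.6.7}); in the log \'etale torsion-coprime setting one should be able to replace the use of resolution of singularities (needed for Theorem \ref{A.3.12} and the ambient hypotheses) by de Jong alterations, because torsion \'etale cohomology satisfies descent along proper surjective covers, and then carry out the same d\'evissage along the strata of a normal crossing boundary. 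For essential surjectivity one reduces, using Proposition \ref{A.5.44} (every log \'etale cover factors through a log modification followed by a Kummer \'etale cover) and the dividing-invariance built into $\ldmefflet$, to showing that every $M(X)$ with $X\in SmlSm/k$ lies in the image; compactifying $X-\partial X$ (by Nagata together with an alteration, up to a log modification, which is invertible) and invoking the full faithfulness then finishes the argument. The mod-$\ell$ analogue of Proposition \ref{MotRat.4} would be the prototype for how the torsion hypothesis enters.

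The hard part will be the purity step implicit in both halves: for $X\in SmlSm/k$ with $\partial X$ a normal crossing divisor and $Z$ a further smooth divisor in normal crossing position, one must show that enlarging the log structure of $X$ by $Z$ induces an isomorphism of log \'etale motives, i.e.\ that $M$ of a normal crossing compactification agrees with $M$ of the open variety. This is \emph{false} in the $\A^1$-invariant world, where the corresponding cone is a nontrivial Tate twist, so it is genuinely a feature of the log \'etale topology together with the hypothesis on $\Lambda$: it should come from a motivic upgrade of the comparison between Kummer \'etale and \'etale cohomology of the open part, that is, the full-faithfulness-level strengthening of Proposition \ref{ketcomp.1}, reflecting that only tame ramification occurs and that \'etale cohomology of $X-\partial X$ already sees all of it. The delicate points I anticipate are: propagating this comparison from locally constant sheaves, where it is Proposition \ref{ketcomp.1}, to the whole of $\ldmefflet$; controlling the transfer structure along Kummer \'etale (non-strict) covers, where the transpose-of-the-graph construction underlying the classical rigidity argument breaks down, as flagged in Remark \ref{MotRat.2}; and executing the boundary d\'evissage with alterations rather than resolution of singularities. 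In effect this conjecture sits at the same depth as Conjecture \ref{ketcomp.7} and the expected equivalences $\ldmeffet\leftarrow\ldmeffetprop\to\dmeffet$, and I would expect a complete proof to be obtained simultaneously with those.
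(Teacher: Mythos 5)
This statement is not proved in the paper: it is stated as Conjecture \ref{conjrigidity}, an analogue of the Suslin--Voevodsky rigidity theorem, and the only thing the paper establishes is the full faithfulness of $\eta_\sharp$ (in the proposition immediately preceding it). There is therefore no proof of the paper's to compare your proposal against, and your text is, as you yourself acknowledge in the final paragraph, a strategy outline rather than a proof.

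As a strategy it is reasonable and consistent with what the authors themselves suggest (cf.\ the discussion around \eqref{eq:conjrig} and Conjecture \ref{ketcomp.7}), but two of its load-bearing steps are genuinely open, not merely technical. First, the factorization $\eta_\sharp\simeq L\lambda_\sharp\circ\pi_\sharp$ at the level of categories \emph{with transfers} requires knowing that the transfer structures on locally constant log \'etale sheaves produced by Proposition \ref{ketcomp.4} are compatible with the ones entering Voevodsky's rigidity theorem, and the mechanism that usually provides transfers along covers --- the transpose of the graph --- breaks down for Kummer \'etale (non-strict) covers, exactly as Remark \ref{MotRat.2} warns; you flag this but do not resolve it. Second, and more seriously, the ``purity step'' you isolate --- that for $X\in SmlSm/k$ enlarging the boundary by a divisor in normal crossing position induces an isomorphism of log \'etale motives with torsion-coprime coefficients --- is essentially equivalent to the conjecture itself rather than a lemma on the way to it. Your proposed source for it, Proposition \ref{ketcomp.1}, only concerns complexes of \emph{locally constant} sheaves, i.e.\ objects already in the image of $\eta_\sharp$; using it to prove that arbitrary objects of $\ldmefflet$ (equivalently, arbitrary strictly $\boxx$-invariant log \'etale complexes with log transfers) satisfy the same invariance is circular unless you supply a d\'evissage reducing general objects to locally constant ones, which is precisely the content of the companion Conjecture \ref{ketcomp.5}. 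The alteration-based replacement of resolution of singularities in the analogues of Theorems \ref{A.4.10}, \ref{A.4.14} and \ref{A.4.22} is plausible but also unexecuted. In short: no gap relative to the paper, since the paper proves nothing here, but your proposal should not be read as closing the conjecture.
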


Related to Conjecture \ref{conjrigidity} we have the following log analog of Suslin's rigidity theorem \cite[Theorem 7.20]{MVW}.

\begin{conj}
\label{ketcomp.5}
Every strictly $\boxx$-invariant log \'etale sheaf $\cF$ equipped with log transfers and torsion coprime to the exponential characteristic of $k$ is locally constant.
\end{conj}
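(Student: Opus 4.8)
Conjecture \ref{ketcomp.5}: every strictly $\boxx$-invariant log \'etale sheaf $\cF$ with log transfers and torsion coprime to the exponential characteristic of $k$ is locally constant, i.e., the counit $\eta_\sharp\eta^*\cF\to \cF$ of the adjunction \eqref{equation:ketloglambda} is an isomorphism.

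\textbf{Approach.} Since this is posed as a conjecture, I will only outline the natural strategy, which is a logarithmic adaptation of the proof of Suslin rigidity \cite[Theorem 7.20]{MVW}. The plan is to reduce to a rigidity statement over strictly henselian (or log strictly henselian) local fs log schemes and then transport the classical rigidity argument through the functor $\omega\colon lSm/k\to Sm/k$, $X\mapsto X-\partial X$. First I would observe that by Lemma \ref{MotRat.1}-type arguments and Proposition \ref{A.5.22} the sheaf $\cF$ is automatically a strict \'etale sheaf with log transfers, and that by Proposition \ref{ketcomp.1} (more precisely its method of proof, using \cite[Corollary 7.5]{MR1922832}) the restriction of $\cF$ along $\omega$ gives a sheaf $\cG$ on $Sm/k$ with transfers satisfying $\cF(X)\cong \cG(X-\partial X)$ for all $X\in lSm/k$. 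The key point is then that $\cG$ is $\A^1$-invariant: for $U\in Sm/k$ one writes $U=X-\partial X$ for a suitable compactification $X\in SmlSm/k$ (existence requires only a normal crossing compactification, not resolution of singularities), and $\boxx$-invariance of $\cF$ applied to $X$ together with the identification $(X\times\boxx)-\partial(X\times\boxx)=(X-\partial X)\times\A^1$ yields $\cG(U)\cong\cG(U\times\A^1)$.

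\textbf{Key steps.} Having produced an $\A^1$-invariant \'etale sheaf $\cG$ with transfers and torsion coprime to the exponential characteristic, Suslin's rigidity theorem \cite[Theorem 7.20]{MVW} applies: $\cG$ is locally constant in the \'etale topology on $Sm/k$, so its value on a strictly henselian local scheme $\Spec A$ is $\cG(\Spec\kappa)$ for the residue field $\kappa$. The remaining work is to promote this to local constancy of $\cF$ on $lSm/k$ for the log \'etale topology. Here I would use the structure theory of the log \'etale topology: every log \'etale cover is, up to a log modification, a Kummer \'etale cover (Proposition \ref{A.5.44}), and $\cF$ is dividing-invariant because it is $\boxx$-invariant and dividing covers are $\boxx$-contractible-type maps (compare Lemma \ref{A.5.70}). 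So one reduces to computing stalks of $\cF$ at log strictly henselian fs log schemes $T$ (as in \cite[\S\ref{ket.2}]{}); for such $T$ one has $\overline{\cM}_T$ constant and $T-\partial T$ strictly henselian, and $\cF(T)\cong\cG(T-\partial T)\cong\cG(\Spec\kappa(T))$, which is the value of the candidate locally constant sheaf $\eta_\sharp\eta^*\cF$ at $T$. Checking that these identifications are compatible with the counit map, uniformly in $T$ and functorially in log \'etale morphisms, gives that $\eta_\sharp\eta^*\cF\to\cF$ is a stalkwise isomorphism, hence an isomorphism of log \'etale sheaves. Conjecture \ref{conjrigidity} would then follow formally as in \cite[Lecture 9]{MVW}, using that $\eta_\sharp$ is exact and fully faithful.

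\textbf{Main obstacle.} The principal difficulty — and presumably the reason this is stated as a conjecture — is the rigidity/traces input in the log setting: Suslin's argument uses transfers along finite surjective morphisms $Y\to X$ of smooth schemes and a specialization-to-the-fiber argument, and one must check that these traces are compatible with the log transfer structure and that the relevant Hensel-local computation survives the passage through $\omega$ and through log modifications. Concretely, the transpose of the graph of a finite \'etale cover need not be an elementary \emph{log} correspondence when the cover is only Kummer \'etale rather than strict (Remark \ref{MotRat.2}), so one cannot naively import the norm argument; one needs either to work with strict finite covers only and control the difference using dividing-invariance, or to develop a rigidity statement directly for Kummer \'etale covers. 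A secondary obstacle is verifying that the constant-sheaf comparison is functorial for the full log \'etale site and not merely on the subcategory $SmlSm/k$; this should follow from the equivalence $\iota^*$ of \eqref{eq-smlsmintro} extending sheaves from $SmlSm/k$ to $lSm/k$, but must be spelled out carefully.
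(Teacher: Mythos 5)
The paper offers no proof of this statement: it is posed as an open conjecture (a log analogue of Suslin rigidity), and the remark immediately following it in the text even flags $a_{l\acute{e}t}^*\Zltr(\A^1)$ as a potential source of counterexamples to nearby statements. You correctly treat it as a conjecture and only sketch a strategy, and your identification of the main obstruction --- that the transpose of the graph of a Kummer \'etale cover is not an elementary log correspondence, so Suslin's norm/trace argument does not import --- is exactly the issue the paper itself isolates in Remark \ref{MotRat.2}. So there is nothing in the paper to compare your argument against; the only question is whether your sketch is sound as a strategy.

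On that score there are two genuine gaps. First, the pivotal step ``$\cF(X)\cong\cG(X-\partial X)$ for all $X\in lSm/k$'' is essentially circular: Proposition \ref{ketcomp.1} establishes this comparison only for \emph{locally constant} sheaves (via \cite[Corollaire 7.5]{MR1922832}, which applies to constant torsion coefficients), so you cannot invoke it, or its method, for an arbitrary strictly $\boxx$-invariant $\cF$ --- that identification is very close to the conclusion you are trying to reach, since it already forces $\cF$ to be pulled back from the trivial-log-structure locus. Without it, your deduction that $\cG:=\lambda^*\cF$ is $\A^1$-invariant, and the subsequent stalk computation at log strictly henselian points, both collapse. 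Second, writing an arbitrary $U\in Sm/k$ as $X-\partial X$ with $X\in SmlSm/k$ requires a smooth compactification with normal crossing boundary; in positive characteristic this is precisely (an instance of) resolution of singularities and is not available unconditionally, contrary to your parenthetical claim. A correct proof would need either a direct rigidity argument on the Kummer \'etale site (handling traces for non-strict finite covers) or a genuinely new mechanism replacing the reduction to $Sm/k$; this is why the statement remains open.
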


\begin{rmk}
It is unclear whether one can expect that every $\boxx$-invariant log \'etale sheaf with log transfers is locally constant.
The sheaf $a_{l\acute{e}t}^*\Zltr(\A^1)$ is a potential counterexample, see Example \ref{A.4.28}.
\end{rmk}

\subsection{Comparison with Voevodsky's \'etale motives}
In this section we state our expectations regarding an \'etale version of Theorem \ref{thm::dmeff=ldmeffprop}.
Since Lemma \ref{A.4.4} fails in the \'etale topology, the proof of Theorem \ref{thm::dmeff=ldmeffprop} cannot be directly adopted to \'etale log motives.


\begin{conj}
\label{ketcomp.6}
There is an equivalence of triangulated categories
\[
\ldmeffet\cong \ldmefflet.
\]
\end{conj}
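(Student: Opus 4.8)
<br>

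The plan is to prove that the dividing \'etale and log \'etale topologies produce equivalent triangulated categories of effective log motives by a descent-structure argument analogous to Proposition \ref{MotRat.4}. The essential input is that the two topologies agree after sheafification when tested against the representable objects $\Zltr(X)$ equipped with the transfer structure, together with the fact that the $\boxx$-local model structures are invariant under such an equivalence via \cite[Theorem 3.3.20(1)]{MR1944041}.

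First I would compare the categories of sheaves. The key point is that the log \'etale topology is by definition the smallest topology finer than the Kummer \'etale topology and the dividing topology, while the dividing \'etale topology is generated by the strict \'etale topology and the dividing topology. So the comparison amounts to understanding the passage from strict \'etale to Kummer \'etale on sheaves with log transfers. I would invoke Theorem \ref{A.5.23} (both $d\acute{e}t$ and $l\acute{e}t$ are compatible with log transfers) together with Theorem \ref{Div.3}, which expresses $\bH_{d\acute{e}t}^i(X,-)$ and $\bH_{l\acute{e}t}^i(X,-)$ as filtered colimits over $X_{div}$ of strict \'etale and Kummer \'etale cohomology, respectively. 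Thus the obstruction to an equivalence $\Shv_{d\acute{e}t}^{\rm ltr}(k,\Lambda)\simeq \Shv_{l\acute{e}t}^{\rm ltr}(k,\Lambda)$ is precisely the difference between strict \'etale and Kummer \'etale sheafification. This is where I expect the genuine mathematical content to lie: unlike the rational case of Lemma \ref{MotRat.3}, for integral or torsion coefficients there is no reason for a strict \'etale sheaf with log transfers to automatically be Kummer \'etale, as noted in Remark \ref{MotRat.2} and illustrated by Example \ref{ketcomp.2}. So Conjecture \ref{ketcomp.6} as stated with integral coefficients is genuinely open, and a proof proposal can only offer the conditional strategy: either assume $\Q\subseteq\Lambda$ (where Lemmas \ref{MotRat.1}–\ref{MotRat.3} give the sheaf-level equivalence directly, since a strict \'etale $\Q$-sheaf with log transfers becomes Kummer \'etale after inverting degrees of finite covers) or impose a rigidity hypothesis as in Conjecture \ref{ketcomp.5}.

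Granting the sheaf-level equivalence, the rest would be formal. Let $\cG$ be the set of objects $a_t^*\Zltr(X)$ for $X\in lSm/k$; with respect to the associated descent structures, one has a Quillen equivalence
\[
\Co(\Shv_{d\acute{e}t}^{\rm ltr}(k,\Lambda))\rightleftarrows \Co(\Shv_{l\acute{e}t}^{\rm ltr}(k,\Lambda))
\]
because the descent model structures are transported along the equivalence of underlying abelian categories. Applying \cite[Theorem 3.3.20(1)]{MR1944041} promotes this to a Quillen equivalence for the $\boxx$-local descent model structures, and passing to homotopy categories yields $\ldmeffet\simeq \ldmefflet$. The functor realizing this should be compatible with $a_{l\acute{e}t}^*$, so that motives are sent to motives.

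The main obstacle is therefore not the homotopy-theoretic formalism but the sheaf-theoretic comparison in positive or mixed characteristic: controlling the Kummer \'etale sheafification of $\Zltr(X)$ in terms of its strict \'etale sheafification after log modifications. Concretely, one would need an analog of the contracting-homotopy argument in Proposition \ref{A.5.11} that also handles Kummer covers that become trivial only after a dividing cover, i.e.\ a simultaneous treatment of the two generating classes of the log \'etale topology compatible with log transfers; the difficulty is that a Kummer cover $\A_\N^1\to\A_\N^1$, $t\mapsto t^n$, is not strict, so the transpose of its graph is not a finite log correspondence, blocking the norm/trace argument used in Lemma \ref{MotRat.1}. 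Resolving this in general seems to require either a rigidity statement (Conjecture \ref{ketcomp.5}) or a restriction on $\Lambda$, which is why the statement is recorded as a conjecture rather than a theorem.
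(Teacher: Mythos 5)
The statement you were asked to prove is recorded in the paper as a \emph{conjecture}, and the paper offers no proof of it; your proposal correctly recognizes this rather than manufacturing an argument. Your diagnosis of the obstruction matches the paper's own: the formal passage from a sheaf-level equivalence to an equivalence of $\boxx$-local homotopy categories is exactly the mechanism of Proposition \ref{MotRat.4}, and the genuine difficulty is the comparison of strict \'etale and Kummer \'etale sheaves with log transfers, which fails to follow from the trace argument of Lemma \ref{MotRat.1} precisely because a Kummer cover is not strict and the transpose of its graph is not a finite log correspondence --- this is the content of Remark \ref{MotRat.2}. Your summary of the available evidence (agreement for $\Omega^j$ via Theorem \ref{thmHodge}, uncertainty for $\Z/n$ via Example \ref{ketcomp.2}) likewise coincides with the remark the authors place immediately after the conjecture. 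In short: there is nothing to compare your proposal against, and your assessment of why the statement remains open is accurate and consistent with the paper.
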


\begin{rmk}
Theorem \ref{thmHodge} on Hodge sheaves provides some first evidence for Conjecture \ref{ketcomp.6} in the form of the isomorphism 
\[
\hom_{\ldmeffet}(M(X),\Omega_{/k}^j[i])\xrightarrow{\cong} \hom_{\ldmefflet}(M(X),\Omega_{/k}^j[i]).
\]
On the other hand, 
since $\Z/n$ is not strictly $\boxx$-invariant for the strict \'etale topology, 
it is unclear whether the cohomology groups
\[
\hom_{\ldmeffet}(M(X),\Z/n[i])\text{ and } \hom_{\ldmefflet}(M(X),\Z/n[i])
\]
are isomorphic, see Example \ref{ketcomp.2}.
\end{rmk}

\begin{conj}
\label{ketcomp.7}
Suppose that $\Lambda=\Z/n$, where $n$ is invertible in $k$. 
Then there are equivalences of triangulated categories
\[
\ldmeffet\leftarrow \ldmeffetprop\rightarrow \dmeffet,
\]
and
\[
\ldmefflet\leftarrow \ldmeffletprop\rightarrow \dmeffet,
\]
\end{conj}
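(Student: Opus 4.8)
The plan is to reduce Conjecture \ref{ketcomp.7} to two inputs and then assemble them as in the proof of Theorem \ref{thm::dmeff=ldmeffprop}. The first input is a rigidity statement: for $\Lambda=\Z/n$ with $n$ invertible in $k$, every strictly $\boxx$-invariant log \'etale (resp.\ dividing \'etale) sheaf with log transfers is locally constant. The second input is an \'etale--torsion refinement of the purity machinery of Section \ref{section:cvvmotives}, substituting for Lemma \ref{A.4.4}, which, as remarked after Conjecture \ref{ketcomp.6}, fails in the \'etale topology. Granting these, the argument follows the template of Section \ref{section:cvvmotives}: assuming $k$ admits resolution of singularities, use the adjunction $\omega_\sharp\dashv R\omega^*$, show $R\omega^*\colon\dmeffet\to\ldmeffet$ (resp.\ $\ldmefflet$) is fully faithful with essential image the $\A^1$-local objects, identify that image with $\ldmeffetprop$ (resp.\ $\ldmeffletprop$), and then invoke the rigidity input to see that under the torsion hypothesis \emph{every} $\boxx$-local object is already $\A^1$-local, so that all three categories in each chain coincide.

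First I would establish the rigidity input, which is exactly Conjecture \ref{ketcomp.5}. Proposition \ref{ketcomp.1} already supplies the converse implication, so only one direction is at stake. The route is Suslin's rigidity theorem \cite[Theorem 7.20]{MVW}: using the trace maps furnished by the log transfer structure (as in the proof of Proposition \ref{ketcomp.4}), one identifies the restriction of such a sheaf $\cF$ to the small \'etale site of a strictly Henselian local fs log scheme with its stalk at the closed point, with $\boxx$-invariance playing the role that $\A^1$-invariance plays in \cite{MVW}. Compared with the scheme case one must, in addition, strip off the log structure fibre by fibre; the natural tools for this are the deformation-to-the-normal-cone diagram of Section \ref{ssec:ThomMotives}, the Gysin triangles of Section \ref{sec:Gysin}, and the $(\P^n,\P^{n-1})$-invariance of Proposition \ref{A.6.1}, which together reduce the assertion to classical Suslin rigidity on $Sm/k$. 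Passing from sheaves to complexes is then routine, argued cohomology-sheaf by cohomology-sheaf via the descent spectral sequence (using Corollary \ref{Div.6}).

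Next I would redo the comparison lemmas of Section \ref{section:cvvmotives} with torsion \'etale coefficients. The substitute for Lemma \ref{A.4.4} is that, by Voevodsky's \'etale rigidity \cite[Theorem 9.35]{MVW} and \cite{CDEtale}, a strictly $\A^1$-invariant complex of \'etale sheaves with transfers on $Sm/k$ is locally constant; for such $\cF$ the higher direct images $R^qj_*j^*\cF$ along a strict normal crossing divisor are governed by Kummer theory and absolute purity, which yields the \'etale analogue $\bH^i_{d\acute{e}t}(X,\omega^*\cF)\cong\bH^i_{\acute{e}t}(X-\partial X,\cF)$ of Lemma \ref{A.4.7} --- note that, unlike in the Nisnevich case, the right-hand side need not vanish; only the isomorphism is claimed. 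With this in hand the \'etale analogues of Proposition \ref{A.4.13}, Proposition \ref{A.4.8}, Theorem \ref{A.4.14} and Theorem \ref{A.4.22} go through, giving $\ldmeffetprop\simeq\dmeffet$; for the log \'etale chain one transports along Conjecture \ref{ketcomp.6}, which itself follows from the rigidity input together with the $k\acute{e}t$--$l\acute{e}t$ comparison \cite[Theorem 5.17]{MR3658728}. Finally, rigidity forces $\ldmeffet$ (resp.\ $\ldmefflet$) to equal its own $\A^1$-localization, hence the essential image of $R\omega^*$, hence $\ldmeffetprop$ (resp.\ $\ldmeffletprop$); this closes both chains.

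The main obstacle is the rigidity input --- Conjecture \ref{ketcomp.5} --- which is presently open, and the difficulty is genuinely the log structure: for a merely $\boxx$-invariant sheaf with log transfers the sections over $X$ are not a priori controlled by the sections over $X-\partial X$ (contrast Lemma \ref{A.4.7}, which rests on $\A^1$-invariance over $Sm/k$, not on $\boxx$-invariance alone), so Suslin's specialization-to-the-closed-point argument must be run uniformly in the characteristic monoid. The candidate counterexample $a_{l\acute{e}t}^*\Zltr(\A^1)$ flagged after Conjecture \ref{ketcomp.5} and in Example \ref{A.4.28} shows that the torsion hypothesis cannot be dropped, so any proof must use $n$ invertible in an essential way --- presumably to nullify the $\Omega^i$- and $W_m\Omega^i$-type contributions, in the spirit of \cite[Theorem 3.5]{tor-div-rec} and \cite{MiyazakiCI}.
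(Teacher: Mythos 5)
This statement is Conjecture \ref{ketcomp.7}; the paper offers no proof of it, so there is nothing to compare your argument against --- the only question is whether your proposal actually closes the conjecture, and it does not. You are candid about this yourself, but let me name the gaps concretely. Your entire strategy is a reduction of one open conjecture to two others: the rigidity input is exactly Conjecture \ref{ketcomp.5}, and your treatment of the log \'etale chain ``transports along Conjecture \ref{ketcomp.6},'' both of which are stated as open in the paper. Your sketch of how to prove the rigidity input (Suslin rigidity plus deformation to the normal cone, Gysin triangles, and $(\P^n,\P^{n-1})$-invariance) is only a heuristic; the essential difficulty --- that $\boxx$-invariance of a log transfer sheaf does not a priori control sections over $X$ in terms of sections over $X-\partial X$ --- is precisely the point you flag as unresolved, and the paper's own remark after Conjecture \ref{ketcomp.5} (with $a_{l\acute{e}t}^*\Zltr(\A^1)$ as a potential counterexample at the non-strict level) shows this is not a routine adaptation.

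The second gap is the claimed \'etale substitute for Lemmas \ref{A.4.4} and \ref{A.4.7}. For a locally constant torsion sheaf $\cF$ and the open immersion $j\colon X-\partial X\to X$ of the complement of a strict normal crossing divisor, the higher direct images $R^qj_*j^*\cF$ do \emph{not} vanish for $q>0$ (already $R^1j_*\mu_n\neq 0$ for a smooth divisor), so the Leray argument underlying Lemma \ref{A.4.4} breaks, and the isomorphism $\bH^i_{d\acute{e}t}(X,\omega^*\cF)\cong\bH^i_{\acute{e}t}(X-\partial X,\cF)$ you assert does not follow from ``Kummer theory and absolute purity'' without further work: the known comparison $H^i_{k\acute{e}t}(X,\Z/n)\cong H^i_{\acute{e}t}(X-\partial X,\Z/n)$ of \cite[Corollary 7.5]{MR1922832} lives on the Kummer \'etale site, whereas the dividing \'etale topology contains no Kummer covers, so bridging the two is again essentially Conjecture \ref{ketcomp.6}. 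In short, the proposal is a plausible strategy consistent with the authors' stated motivation (that $\Omega^i$ and $W_m\Omega^i$ are killed by the torsion hypothesis), but every load-bearing step is either an open conjecture of the paper or an unproved \'etale purity statement, so no part of the conjecture is actually established.
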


\begin{rmk}
The Hodge sheaves $\Omega_{/k}^j\in\ldmeffet$ by Theorem \ref{thmHodge}.
We expect the same holds for the de Rham-Witt sheaves $W_m\Omega_{/k}^j$.
Moreover, 
one may guess that all the objects of $\ldmeffet$ can be built from $W_m\Omega_{/k}^j$ and objects of $\dmeffet$.
Due to the vanishing
\[
W_m\Omega_{X/k}^j(X)\otimes \Z[1/p]=0,
\]
where $p$ denotes the characteristic of the base field $k$, 
the de Rham-Witt sheaves may become trivial in $\ldmeffet$ under our assumption.
This indicates that $\ldmeffet$ and $\dmeffet$ are comprised of the same objects.
\end{rmk}

\newpage

\section{Hodge sheaves}
\label{sec:Hodge}
The main goal of this section is to show that Hodge cohomology for (log) schemes defined over a perfect field $k$ is representable in the category of effective log motives $\ldmeff$.
This demonstrates in clear terms a fundamental difference between our category and Voevodsky's category of motives $\dmeff$. 
Hodge cohomology is not representable in the latter category.
\vspace{0.1in}

In the main step towards representability of Hodge cohomology,
see Theorem \ref{thm:Omega_log_transfer}, 
we show that the presheaf $\Omega^j$ on $lSm/k$ given by the assignment 
$$
X\mapsto \Omega^j_{X/k}(X) 
$$ 
extends to a presheaf with log transfers on $lSm/k$ in the sense of Section \ref{sec:logtransfers}. 
Analogous to an argument in \cite[\S 6.2]{BS},
showing $(\P^{\bullet},\P^{\bullet-1})$-invariance is then fairly straightforward by using a suitable interpretation of the projective bundle formula. 
\vspace{0.1in}

The action of finite correspondences on Hodge sheaves (without log structure) is well-known and follows, for example, from the computations in \cite{lecomtewach} (at least in characteristic zero). 
A more conceptual approach, 
based on coherent duality and cohomology with support, 
was developed by Chatzistamatiou and R\"ulling in \cite{ChatzistamatiouRullingANT} and \cite{ChatzistamatiouRullingDocumenta}.
We will adapt the same script to the logarithmic setting.

\subsection{Logarithmic derivations and differentials}\index[notation]{Omega @ $\Omega^i_{-/k}$}
For every fs log scheme, $X$ over $k$ and $i\geq 0$ the sheaf of logarithmic differentials on $X$ is defined by 
$$
\Omega^i_{X/k} 
:= 
\begin{cases}
\cO_X & i=0 \\
\wedge^i \Omega^1_{X/k} & i>0. 
\end{cases}
$$
We shall often omit the subscript $/k$ and write $\Omega_X^i$ when no confusion seems likely to arise.
Recall that $\Omega^1_{X}$ is the sheaf of $\cO_{X}$-modules generated by universal log derivations. 
We refer to \cite[Chapter IV]{Ogu} and Section \ref{log-differential} for details. 
\vspace{0.1in}

If $X\in SmlSm/k$, and $(t_1,\ldots, t_n)$ are local coordinates, 
we recall that $\Omega^1_{X}$ is the free $\cO_{\underline{X}}$-module generated by the symbols $dt_1, \ldots dt_n$. 
For the open immersion 
$$
j\colon \underline{X}- \partial X \to \underline{X},
$$ 
the sheaf 
$$
\Omega^1_{X}=\Omega^1_{\underline{X}}(\log \partial X)
$$ 
of differentials with log poles along $\partial X$ is the subsheaf of $j_*j^* \Omega^1_{\underline{X}}$ generated by 
$$
d\log t_1, \ldots, d\log t_r, dt_{r+1}, \ldots dt_n.
$$
Here, the ideal sheaf of $\partial X$ in $\underline{X}$ is locally defined by $I=(t_1\cdots t_r)$.
\vspace{0.1in}

A \emph{quasi-coherent} sheaf $\cF$ on an fs log scheme $X$ is a quasi-coherent sheaf on $\underline{X}$.
We similarly define \emph{coherent} sheaves and \emph{locally free} coherent sheaves on $X$.
For example, $\Omega_{X}^i$ is a coherent sheaf for every fs log scheme $X$ of finite type over $k$. 
Indeed, if $i=1$ this is \cite[Lemma IV. 1.2.16]{Ogu} and $\Omega_{X}^i$ is the $i$th exterior power of $\Omega_{X}^1$ for $i>1$.  
Moreover, if $X\in SmlSm/k$, then $\Omega_X^i$ is locally free.
\vspace{0.1in}

By generalizing \cite[Proposition III 3.7, Remark III 3.8]{Milneetale} to the Nisnevich case we obtain isomorphisms 
\begin{equation}
\label{eqn:coherentcohomology}
H_{Zar}^i(X,\cF)\cong H_{sNis}^i(X,\cF)\cong H_{s\acute{e}t}^i(X,\cF)
\end{equation}
for every quasi-coherent sheaf $\cF$ on $X$ and integer $i\geq 0$.

\begin{lem}
\label{Omega_dividing_sheaf}
Let $X$ be an fs log scheme.
Then every quasi-coherent sheaf $\cF$ on $X$ is a log \'etale sheaf.
\end{lem}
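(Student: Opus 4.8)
The plan is to regard $\mathcal{F}$ as a presheaf on the small log \'etale site of $X$ by sending $f\colon Y\to X$ to $\Gamma(\underline{Y},\underline{f}^*\mathcal{F})$, where $\underline{f}^*$ denotes the ordinary quasi-coherent pullback along $\underline{f}\colon\underline{Y}\to\underline{X}$ (this is consistent with the convention $\mathcal{O}(Y)=\mathcal{O}(\underline{Y})$ used earlier, and with $\Omega^i_{Y/k}=\underline{f}^*\Omega^i_{X/k}$ for log \'etale $f$), and to verify the sheaf axiom. By classical faithfully flat descent for quasi-coherent modules, $\mathcal{F}$ already satisfies Zariski and strict \'etale descent (this is also the content of \eqref{eqn:coherentcohomology}). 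Since the log \'etale topology is the smallest topology finer than the Kummer \'etale and the dividing topologies, the strict \'etale topology is contained in the Kummer \'etale one, and a presheaf is a sheaf for a join of topologies as soon as it is a sheaf for each of them, it suffices to check descent along (i) Kummer \'etale covers and (ii) log modifications. (Equivalently one may invoke Proposition \ref{A.5.44}, which refines every log \'etale cover by a composite of a (strict \'etale or Kummer \'etale) cover and a log modification.)

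For Kummer \'etale covers this is a form of logarithmic flat descent for quasi-coherent sheaves. Working strict-\'etale-locally on $X$ one reduces to a standard toric Kummer cover $\underline{Y}=\underline{X}\times_{\underline{\mathbb{A}}_P}\underline{\mathbb{A}}_Q\to\underline{X}$ with $P\to Q$ Kummer of index invertible on $X$, hence to descent along $\underline{\mathbb{A}}_Q\to\underline{\mathbb{A}}_P$; this morphism is an isomorphism over the dense torus $\underline{\mathbb{A}}_{P^{\mathrm{gp}}}$ and carries a $\mu$-action exhibiting it as generically Galois, so descent follows from Galois descent over the torus together with the normality of $\underline{\mathbb{A}}_Q$ and $\underline{\mathbb{A}}_P$. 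Alternatively one cites Kato's logarithmic flat descent directly; this part is standard and I do not expect any difficulty.

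The substance of the lemma is descent along a log modification $f\colon Y\to X$. Such an $f$ is a monomorphism of fs log schemes (as used in the proof of Proposition \ref{A.9.60}), so $Y\times_X Y\cong Y$ and the sheaf condition collapses to the single assertion that $\mathcal{F}(f)\colon\Gamma(\underline{X},\mathcal{F})\to\Gamma(\underline{Y},\underline{f}^*\mathcal{F})$ is an isomorphism (cf.\ Remark \ref{A.9.68}(4)). The claim is Zariski-local on $X$ (both $\mathcal{F}$ and its pullback are Zariski sheaves, so one reduces from a Zariski cover by a \v{C}ech argument), so I may assume $\underline{X}=\operatorname{Spec}A$ is affine with an fs chart $P$ and, by the theory of log blow-ups (\cite[III.2.6]{Ogu}, \cite{FKato}; cf.\ the proof of Theorem \ref{A.3.27}), that $f$ is the base change along the affine morphism $g\colon\underline{X}\to\underline{\mathbb{A}}_P$ of the toric morphism $\pi\colon\underline{\mathbb{A}}_\Sigma\to\underline{\mathbb{A}}_P$ associated to a subdivision of fans $\Sigma\to\operatorname{Spec}P$. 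The key input is the vanishing $R\pi_*\mathcal{O}_{\underline{\mathbb{A}}_\Sigma}=\mathcal{O}_{\underline{\mathbb{A}}_P}$: in degree $0$ this holds because $\underline{\mathbb{A}}_\Sigma$ is a normal toric variety proper and birational over the normal affine $\underline{\mathbb{A}}_P$, so $\Gamma(\underline{\mathbb{A}}_\Sigma,\mathcal{O})$ is a finite, hence integral, extension of $k[P]$ contained in $k[P^{\mathrm{gp}}]$ and therefore equals $k[P]$; the higher vanishing $R^{>0}\pi_*\mathcal{O}_{\underline{\mathbb{A}}_\Sigma}=0$ is the (characteristic-free) vanishing theorem for toric morphisms induced by subdivisions, cf.\ \cite{CLStoric}. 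Granting this, since $g$ is affine, $\underline{f}$ is a base change of $\pi$ and $\underline{\pi}'_*\underline{f}^*\mathcal{F}\cong\pi^*(g_*\mathcal{F})$, where $\underline{\pi}'\colon\underline{Y}\to\underline{\mathbb{A}}_\Sigma$ is the other projection; hence $\Gamma(\underline{Y},\underline{f}^*\mathcal{F})\cong\Gamma(\underline{\mathbb{A}}_\Sigma,\pi^*G)$ with $G:=g_*\mathcal{F}$. The projection formula gives $R\pi_*(L\pi^*G)\cong G\otimes^L_{\mathcal{O}}R\pi_*\mathcal{O}_{\underline{\mathbb{A}}_\Sigma}\cong G$, and comparing degree-zero cohomology (using $\mathcal{H}^0(L\pi^*G)=\pi^*G$ together with $R^{<0}\pi_*=0$) yields $\pi_*\pi^*G\cong G$, so $\Gamma(\underline{\mathbb{A}}_\Sigma,\pi^*G)\cong\Gamma(\underline{\mathbb{A}}_P,G)\cong\Gamma(\underline{X},\mathcal{F})$, as desired.

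The main obstacle is precisely the toric vanishing $R^{>0}\pi_*\mathcal{O}_{\underline{\mathbb{A}}_\Sigma}=0$; the rest of the dividing case is formal manipulation with the projection formula, and the Kummer \'etale case is standard. If one prefers not to quote the toric vanishing directly, one can first reduce a log modification to a finite composite of star subdivisions relative to smooth cones (as in the proofs of Theorem \ref{A.3.27} and Lemma \ref{A.3.29}), i.e.\ to blow-ups of smooth toric varieties along smooth centres, for which $R\pi_*\mathcal{O}=\mathcal{O}$ is classical, and then use compatibility of $R\pi_*$ with composition.
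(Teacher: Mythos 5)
Your route is genuinely different from the paper's, and much longer. The paper's entire proof is: Zariski--locally choose a free presentation $\bigoplus_{J}\cO_X\rightarrow\bigoplus_{I}\cO_X\rightarrow\cF\rightarrow 0$, reduce to the case $\cF=\cO_X$, and quote Kato's logarithmic flat descent for the structure sheaf. You instead verify the sheaf axiom directly, splitting the log \'etale topology into its Kummer \'etale and dividing generators. Most of your argument is sound: the reduction to the two generating classes of covers, the Kummer \'etale case via Galois/log flat descent, the observation that a log modification is a monomorphism so the sheaf condition collapses to $\Gamma(\underline{X},\cF)\xrightarrow{\sim}\Gamma(\underline{Y},\underline{f}^*\cF)$ (this is Remark \ref{A.9.68}(4)), the identity $\pi'_*\underline{f}^*\cF\cong\pi^*(g_*\cF)$ (pushforward along an affine morphism commutes with arbitrary base change for quasi-coherent modules), and the input $R\pi_*\cO_{\underline{\A_\Sigma}}=\cO_{\underline{\A_P}}$ are all correct.

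The gap is in the very last step. From $R\pi_*(L\pi^*G)\cong G\otimes^L R\pi_*\cO\cong G$ you cannot ``compare degree-zero cohomology'' to get $\pi_*\pi^*G\cong G$: since $\pi$ is not flat and $G$ is an arbitrary quasi-coherent sheaf, $L\pi^*G$ has nonzero cohomology sheaves $L_q\pi^*G$ in negative degrees, and applying $R\pi_*$ to the triangle $\tau_{\le -1}L\pi^*G\rightarrow L\pi^*G\rightarrow\pi^*G$ gives an exact sequence
\[
0\rightarrow H^0(R\pi_*\tau_{\le -1}L\pi^*G)\rightarrow G\rightarrow \pi_*\pi^*G\rightarrow H^1(R\pi_*\tau_{\le -1}L\pi^*G)\rightarrow 0,
\]
whose outer terms are built from $R^{q}\pi_*(L_q\pi^*G)$ and $R^{q+1}\pi_*(L_q\pi^*G)$ for $q\ge 1$ and do not vanish for formal reasons (already for a blow-up of a smooth surface and $G$ a skyscraper, $L_1\pi^*G$ is a nonzero line bundle on the exceptional curve). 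So the underived statement you need is not a consequence of the derived one without controlling these Tor sheaves. The natural repair is to run your dividing-case argument only for $\cF$ free --- where $L\pi^*=\pi^*$ and $R\pi_*$ commutes with the direct sums, so the projection-formula computation is honest --- and to reduce the general quasi-coherent case to the free case by a presentation beforehand; this is precisely the paper's opening move. Note, though, that this reduction itself deserves a sentence (as does the paper's), since $\Gamma(\underline{Y},-)$ is only left exact on the non-affine $\underline{Y}$, so passing the cokernel through global sections again uses the vanishing of higher direct images rather than being purely formal.
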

\begin{proof}
The question is Zariski local on $X$.
Hence we may assume that there exists a presentation of the form
\[
\bigoplus_{j\in J}\cO_X\rightarrow \bigoplus_{i\in I}\cO_X\rightarrow \cF\rightarrow 0.
\]
Since the sheafification functor is exact, we only need to show that $\cO_X$ is a log \'etale sheaf.
This follows from \cite[\S 3.3]{KatoLog2}.
\end{proof}

%

\subsection{$(\P^n, \P^{n-1})$-invariance of logarithmic differentials}
\label{PnPn-1invariancediff}
For every $n\geq 1$, 
let $(\P^n, \P^{n-1})$ be the log smooth log scheme $\P^n = \P^n_k$ with compactifying log structure induced by the open immersion 
$$
j\colon \A^n =\P^n- H\hookrightarrow \P^n,
$$ 
where $H\cong\P^{n-1} \subset \P^n$ is a $k$-rational hyperplane. 
If $X$ is an fs log scheme in $lSm/k$, 
we form the product $X\times (\P^n, \P^{n-1})$ and the projection 
$$
\pi\colon X\times (\P^n, \P^{n-1}) \to X.
$$

\begin{prop}
\label{prop:boxinvdiff}
Suppose $X\in SmlSm/k$.
For every $i\geq 1$  the naturally induced map 
$$
\pi^*\colon \Omega^i_{X} \to {R}\pi_*\Omega^i_{X\times (\P^n, \P^{n-1})}
$$ 
is an isomorphism in the bounded derived category of coherent sheaves ${\bf D}^b(\underline{X}_{\rm Zar})$.
\begin{proof}(Cf.\ \cite[\S 6.2]{BS}). We begin by setting our notation. 
Let $X = (\underline{X}, \partial X)$,
where $\partial X$ is a strict normal crossing divisor in $\underline{X}$. 
Let $\iota_H\colon H\to \P^n$ be the closed embedding of the rational hyperplane $H\cong \P^{n-1}$ in $\P^n$. 
We need to prove there is an isomorphism 
\begin{equation}\label{eq:eqboxinvdiff} 
\pi^* \colon \Omega^i_{\underline{X}}(\log \partial X) \to R\pi_*\Omega^i_{\underline{X} \times  \P^n}(\log H+\partial X)
\end{equation}
in $D^b(\underline{X}_{\rm Zar})$. 
On the right hand side of \eqref{eq:eqboxinvdiff}, 
we write $H+\partial X$ for the strict normal crossing divisor on $\underline{X}\times \P^n$ given by $\underline{X} \times H + \partial X \times \P^n$. 
We shall prove the claim by induction on the number of components of $\partial X$.
\vspace{0.1in}

Let us write 
$$
\partial X = \partial X_1 + \ldots + \partial X_m,
$$ 
where the $\partial X_i$'s are the irreducible components of $\partial X$. 
When $\partial X = \emptyset$, 
the statement follows from the projective bundle formula for the sheaves of logarithmic differential forms.        
This should be well known, but for the lack of a reference, we give the argument.
\vspace{0.1in}

Write $Res_H$ for the residue map along $\ul{X}\times H \subset \ul{X}\times \P^n$ that fits into a short exact sequence (see e.g., \cite[\S 2.3]{EVbuch})
\begin{equation}\label{eq:eqboxinvdiff2}
0\to \Omega^i_{\ul{X}\times \P^n} \to \Omega^i_{\ul{X}\times \P^n}(\log H) \xrightarrow{Res_H} \iota_{H,*}\Omega^{i-1}_{\ul{X}\times H}\to 0
\end{equation}
of $\cO_{\underline{X}\times \P^n}$-modules. 
Pushing forward to $\ul{X}$ we get a distinguished triangle
\begin{equation}\label{eq:eqboxinvdiff3}
R\pi_{H,*} \Omega^{i-1}_{\ul{X}\times H}[-1] \xrightarrow{f} R\pi_{ *}\Omega^i_{\ul{X}\times \P^n} \to R\pi_*  \Omega^i_{\ul{X}\times \P^n}(\log H) 
\rightarrow R\pi_{H,*} \Omega^{i-1}_{\ul{X}\times H}
\end{equation}
in ${\bf D}^b(\ul{X}_{\rm Zar})$, 
where $\pi_H\colon \ul{X}\times H\to \ul{X}$ is the projection. 
Taking cup products with the powers of the first Chern class 
$$
\xi = c_1(\cO_{\P^n}(H)) \in H_{Zar}^1(\P^n, \Omega^1_{\P^n})
$$ 
of $H$ determines an isomorphism in the derived category of bounded complexes of $\cO_{\ul{X}}$-modules
\begin{equation}\label{eq:eqboxinvdiff4}
\bigoplus_{0\leq j\leq n} \Omega^{i-j}_{\ul{X}} [-j] \xrightarrow{\simeq} R\pi_* \Omega^i_{\ul{X}\times \P^n}, \quad (a_0, \ldots, a_n)\mapsto \sum_{0\leq j\leq n} \pi^*(a_j) \cup \xi^j.
\end{equation}
This implies the projective bundle formula in Hodge cohomology for the trivial bundle.
The existence of this map is clear once one interprets $\xi$ as a morphism $\xi\colon \Z[-1] \to \Omega_{\P^1}^1$ in the derived category of coherent $\cO_{\P^1}$-modules ${\bf D}^b(\ul{X}_{\rm Zar})$. The quasi-isomorphism is well-known to follow from a direct local computation. 
See for example \cite[Lemma 3.2]{ArapuraKang} but the result is much older (see \cite{MR265370}). 
\vspace{0.1in}

Similarly, there is an isomorphism
\begin{equation}\label{eq:eqboxinvdiff5}
\bigoplus_{0\leq j\leq n-1} \Omega^{i-1-j}_{\ul{X}}[-j] \xrightarrow{\simeq} R (\pi_H)_* \Omega^{i-1}_{\ul{X}\times H}, \quad (a_0, \ldots, a_{n-1})\mapsto \sum_{0\leq j\leq n-1} \pi_H^*(a_j) \cup \xi^j_H, 
\end{equation}
where 
$$
\xi_H\in H^1(H, \Omega^1_{H/k})
$$ 
is the restriction $\iota_H^*(\xi)$ of $\xi$ to $H$. 
For every local section $a\in \Omega^{i-1-j}_{\ul{X}}$ a direct local computation and the definition of the residue map imply 
\begin{equation}
\label{eq:eqboxinvdiff9}
f( \pi_H^*(a)\cup \xi_H^j) = \pi^*(a)\cup \xi^{j+1}. 
\end{equation}
 
Combining \eqref{eq:eqboxinvdiff3}, \eqref{eq:eqboxinvdiff4},  \eqref{eq:eqboxinvdiff5}, and \eqref{eq:eqboxinvdiff9} we obtain the commutative diagram
\begin{equation} \label{eq:eqboxinvdiff6}
\begin{tikzcd} \bigoplus_{0\leq j\leq n-1} \Omega^{i-1-j}_{\ul{X}}[- j-1] \arrow[r, "\simeq"] \arrow[d, swap, "\nu"]    &  R\pi_{H,*} \Omega^{i-1}_{\ul{X}\times H}[-1] \arrow[d, "f"]\\
\bigoplus_{0\leq j\leq n} \Omega^{i-j}_{\ul{X}} [-j]  \arrow[r, "\simeq"] & R\pi_{ *}\Omega^i_{\ul{X}\times \P^n} 
\end{tikzcd}
\end{equation}
where the left vertical morphism $\nu$ is given by the (split) inclusion into the direct sum 
$$
\bigoplus_{0\leq j\leq n} \Omega^{i-j}_{\ul{X}} [-j]
$$ 
and the horizontal morphisms are quasi-isomorphisms. 
Using \eqref{eq:eqboxinvdiff3} and the equality $\pi^*(-) = \pi^*(-)\cup \xi^0$. 
We deduce the isomorphism \eqref{eq:eqboxinvdiff} by noting that the cokernel of $\nu$ is exactly $\Omega^i_{\ul{X}}$. 
\vspace{0.1in}

Next, we assume that $\partial X \neq \emptyset$, so that $m\geq 1$.  
We proceed following the script of Sato \cite[\S 2]{SatoLogHW} (a sketch for relative differentials can be found  in \cite[\S 6.2]{BS}). 
\vspace{0.1in}

Write $I = \{1, \ldots, m \}$ and for $1\leq a\leq m$, we define the disjoint union
\[ 
\partial X^{[a]} = \coprod_{1\leq i_1<\cdots< i_a\leq m} \partial X_{i_1}\cap \ldots \cap \partial X_{i_a}.
\]
Note that each $\partial X^{[a]}$ is regular (since $\partial X$ is a strict normal crossing divisor on $\ul{X}$), 
and that there is a canonical morphism 
$$
\nu_a\colon \partial X^{[a]}\to \ul{X}
$$ 
for each $a\geq 1$.  
The log structure on $X$ induces the compactifying log structure on each $\partial X^{[a]}$. 
More explicitely, define for each $a\geq 1$ the divisor $E_a$ on $\partial X^{[a]}$ by the formula
\[
E_a 
= 
\coprod_{1\leq i_1< \ldots < i_a\leq m} \big(  (\partial X_{i_1}\cap \ldots \cap \partial X_{i_a}) \cap \sum_{j\notin \{i_1, \ldots, i_a\} } \partial X_j  \big).
\]
This is a normal crossing divisor on $\partial X^{[a]}$ (with strictly less than $m$ components on each irreducible component of $\partial X^{[a]}$). 
The $cdh$-covering of $\partial X$ given by the $\partial X^{[a]}$'s gives rise to the exact sequence of sheaves 
\begin{align}
\label{eq:exboxinvdiff7} 
0\to \Omega^{i}_{\ul{X}} \xrightarrow{\epsilon_X} \Omega^i_{\ul{X}}(\log \partial X) \to 
& \nu_{1, *} \Omega^{i-1}_{\partial X^{[1]}} \xrightarrow{\rho_2} \nu_{2, *}\Omega^{i-2}_{\partial X^{[2]}} \to\ldots \\ \nonumber
\ldots& \to \nu_{a, *} \Omega^{i-a}_{\partial X^{[a]}} \xrightarrow{\rho_a} \nu_{a+1, *} \Omega^{i-a-1}_{\partial X^{[a+1]}}\to \ldots,
\end{align}
on $\ul{X}$, 
where each morphism $\rho_{a}$ is induced by the alternating sum on residues, see e.g., \cite[Proposition 2.2.1]{SatoLogHW}. 
\vspace{0.1in}

We may repeat the construction above for $X\times (\P^n, \P^{n-1})$. 
First, note that there is an evident canonical morphism of log schemes 
$$
X\times (\P^n, \P^{n-1}) \to \ul{X}\times (\P^n, \P^{n-1}).
$$
Letting $\nu_a$ denote the natural map $\partial X^{[a]} \times \P^n \to \ul{X}\times \P^n$, 
we get the following exact sequence of sheaves on $\ul{X}\times \P^n$ (using the above convention that $H$ stands for $\ul{X}\times H$ as a divisor on $\ul{X}\times \P^n$)
\begin{align}
\label{eq:exboxinvdiff8} 
0\to \Omega^{i}_{\ul{X}\times \P^n}(\log H)& \xrightarrow{\epsilon_{X\times \P^n}} \Omega^i_{\underline{X} \times  \P^n}(\log H+\partial X) \to  
\nu_{1, *} \Omega^{i-1}_{\partial X^{[1]}\times \P^n} \xrightarrow{\rho_2} \ldots \\ 
\nonumber \ldots & \to \nu_{a, *} \Omega^{i-a}_{\partial X^{[a]}\times \P^n}  \xrightarrow{\rho_a} 
\nu_{a+1, *} \Omega^{i-a-1}_{\partial X^{[a+1]}\times \P^n} \to \ldots 
\end{align}
Here, on each scheme $\partial X^{[a]}\times \P^n$ we consider the compactifying log structure given by the inclusion of the complement of the normal crossing divisor 
$E^{[a]}\times \P^n + \partial X^{[a]} \times H$. 
To simplify the notation, 
we set 
$$
A_a:= \Omega^{i-a}_{\partial X^{[a]}\times \P^n} 
\text{ and } 
B_a:=\Omega^{i-a}_{\partial{X}^{[a]}}.
$$ 
Let $F_A$ be the cokernel of $\epsilon_{X\times \P^n}$ and let $F_B$ be the cokernel of $\epsilon_X$. 
Then using \eqref{eq:exboxinvdiff8} we obtain the spectral sequence 
\[ 
R^{a+q}\pi_* (\nu_{a+1, *} A_{a+1}) \Rightarrow R^{a+q}\pi_* F_A.
\]

Now, by induction, there is an isomorphism 
$$
\pi^*\colon \nu_{a+1, *} B_{a+1}\xrightarrow{\simeq}  \pi_* (\nu_{a+1, *} A_{a+1}),
$$ 
and the term $R^{a+q}\pi_* (\nu_{a+1, *} A_{a+1})$ vanishes for $q\neq -a$, thus we have 
$$
F_B \xrightarrow\simeq R \pi_* F_A
$$ 
using the above spectral sequence. 
The proposition follows now from the 5-lemma and the case $\partial X = \emptyset$ by comparing the two distinguished triangles
\[
\Omega^i_{\ul{X}} \xrightarrow{\epsilon_X} \Omega^i_{\ul{X}}(\log \partial X) \to F_B\to \Omega^i_{\ul{X}}[1]
\]
and 
\[
R\pi_* \Omega^{i}_{\ul{X}\times \P^n}(\log H) 
\xrightarrow{\epsilon_{X\times \P^n}} R\pi_*\Omega^i_{\underline{X} \times  \P^n}(\log H+\partial X) \to R\pi_* F_A \to R\pi_* \Omega^{i}_{\ul{X}\times \P^n}(\log H) [1]
\]
in the bounded derived category ${\bf D}^b(\ul{X}_{\rm Zar})$. 
\end{proof}
\end{prop}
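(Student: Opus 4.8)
The proof proceeds by induction on $m$, the number of irreducible components of $\partial X$. The plan is to first handle the base case $\partial X = \emptyset$ — which amounts to the classical projective bundle formula in Hodge cohomology for the trivial $\mathbb{P}^n$-bundle — and then to bootstrap to the general case using the residue filtration associated with the normal crossing divisor $\partial X$, exactly as in Sato's treatment of the log Hodge–Witt situation.

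For the base case, I would work with the residue short exact sequence $0\to \Omega^i_{\underline{X}\times\mathbb{P}^n}\to \Omega^i_{\underline{X}\times\mathbb{P}^n}(\log H)\xrightarrow{Res_H}\iota_{H,*}\Omega^{i-1}_{\underline{X}\times H}\to 0$, push it forward along $\pi$ to get a distinguished triangle in $\mathbf{D}^b(\underline{X}_{\mathrm{Zar}})$, and then invoke the (trivial-bundle) projective bundle formula $\bigoplus_{0\le j\le n}\Omega^{i-j}_{\underline{X}}[-j]\xrightarrow{\simeq}R\pi_*\Omega^i_{\underline{X}\times\mathbb{P}^n}$ given by cup product with powers of $\xi=c_1(\mathcal{O}(H))$, and its analogue on $H\cong\mathbb{P}^{n-1}$. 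The key local computation is that the connecting map $f$ in the triangle sends $\pi_H^*(a)\cup\xi_H^j$ to $\pi^*(a)\cup\xi^{j+1}$; this identifies $f$, up to the two quasi-isomorphisms, with the split inclusion $\nu$ that shifts the index by one, so its cokernel is precisely $\Omega^i_{\underline{X}}$, yielding \eqref{eq:eqboxinvdiff}.

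For the inductive step, writing $\partial X = \partial X_1 + \cdots + \partial X_m$, I would introduce the disjoint unions $\partial X^{[a]}$ of $a$-fold intersections of components, each carrying a compactifying log structure with strictly fewer than $m$ components, together with the canonical maps $\nu_a\colon \partial X^{[a]}\to\underline{X}$. The Poincaré/residue resolution \eqref{eq:exboxinvdiff7} of $\Omega^i_{\underline{X}}(\log\partial X)/\Omega^i_{\underline{X}}$ (and its $\mathbb{P}^n$-version \eqref{eq:exboxinvdiff8}) reduces the claim to showing $\pi^*\colon \nu_{a+1,*}\Omega^{i-a-1}_{\partial X^{[a+1]}}\xrightarrow{\simeq} \pi_*(\nu_{a+1,*}\Omega^{i-a-1}_{\partial X^{[a+1]}\times\mathbb{P}^n})$ with vanishing higher direct images in the appropriate degrees — which is the inductive hypothesis applied to the $\partial X^{[a+1]}$ — and then assembling via the hypercohomology spectral sequence attached to the resolution of the cokernel sheaf $F_A$, concluding $F_B\xrightarrow{\simeq}R\pi_*F_A$. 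A final application of the five lemma to the morphism of distinguished triangles relating $\Omega^i_{\underline{X}}\to\Omega^i_{\underline{X}}(\log\partial X)\to F_B$ with its $\mathbb{P}^n$-analogue, using the already-established $\partial X = \emptyset$ case for the first terms, completes the argument.

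The main obstacle is the base-case local computation \eqref{eq:eqboxinvdiff9} identifying the connecting morphism $f$ with cup product by $\xi$: one must pin down the sign conventions in the residue map and verify, in local coordinates around a point of $H$, that the boundary map of the pushed-forward residue triangle really is multiplication by the hyperplane class after transport through the two projective bundle isomorphisms. Everything else is bookkeeping with spectral sequences and the five lemma, but getting this compatibility exactly right (and compatible across the induction, since the $\partial X^{[a]}$ carry their own hyperplane classes) is where the real work lies.
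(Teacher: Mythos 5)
Your proposal follows essentially the same route as the paper's proof: the base case via the residue exact sequence combined with the trivial projective bundle formula by cup product with powers of $\xi$, and the inductive step via the Sato-style residue resolution on the $\partial X^{[a]}$'s, assembled by a spectral sequence and the five lemma. You also correctly isolate the identification of the connecting map with cup product by $\xi$ as the one genuinely computational step, which is exactly where the paper's argument concentrates its effort.
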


\begin{cor}
\label{Pn-invariance of Log differentials-Zar}
Let $X$ be an fs log scheme in $SmlSm/k$. 
Then for every $i$, $j$, and $n$, there is an isomorphism
\[
H_{Zar}^i(X\times (\P^n,\P^{n-1}),\Omega^j)\cong H_{Zar}^i(X,\Omega^j).
\]
\begin{proof}
This is immediate from Proposition \ref{prop:boxinvdiff}.
\end{proof}
\end{cor}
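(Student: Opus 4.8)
The plan is to deduce the corollary from Proposition \ref{prop:boxinvdiff} by applying derived global sections on the underlying scheme $\underline{X}$. Write $\pi\colon X\times(\P^n,\P^{n-1})\to X$ for the projection, so that the underlying morphism $\underline{X}\times\P^n\to\underline{X}$ is projective. For $j\geq 1$, Proposition \ref{prop:boxinvdiff} gives an isomorphism $\pi^*\colon \Omega^j_X\xrightarrow{\cong}R\pi_*\Omega^j_{X\times(\P^n,\P^{n-1})}$ in the bounded derived category of coherent sheaves on $\underline{X}_{Zar}$. First I would observe that, since $\Omega^j_{X\times(\P^n,\P^{n-1})}$ is a (locally free) coherent sheaf on $\underline{X}\times\P^n$ and the projection is projective, the Grothendieck spectral sequence for the composite of $\pi_*$ with the global sections functor on $\underline{X}$ is available and yields a canonical isomorphism
\[
H_{Zar}^i\big(X\times(\P^n,\P^{n-1}),\Omega^j\big)\;\cong\;\bH^i\big(\underline{X}_{Zar},\,R\pi_*\Omega^j_{X\times(\P^n,\P^{n-1})}\big).
\]

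Next I would substitute the isomorphism of Proposition \ref{prop:boxinvdiff} into the right-hand side. This identifies it with $\bH^i(\underline{X}_{Zar},\Omega^j_X)$, which equals $H_{Zar}^i(X,\Omega^j)$ because $\Omega^j_X$ is a sheaf placed in degree $0$. Chaining the two identifications produces the asserted isomorphism, and by naturality of the Leray spectral sequence it is the map induced by $\pi^*$. The only case not directly covered by Proposition \ref{prop:boxinvdiff} is $j=0$; there $\Omega^0_{X\times(\P^n,\P^{n-1})}=\cO_{\underline{X}\times\P^n}$, and the classical vanishing $R^q\pi_*\cO_{\underline{X}\times\P^n}=0$ for $q>0$ together with $\pi_*\cO_{\underline{X}\times\P^n}\cong\cO_{\underline{X}}$ play the role of Proposition \ref{prop:boxinvdiff}, so the same three steps go through verbatim.

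All of the geometric input has already been extracted in Proposition \ref{prop:boxinvdiff}, so there is no genuine obstacle remaining; the points to verify are purely formal---the availability and convergence of the Leray/Grothendieck spectral sequence for the proper morphism in question, the identification of the hypercohomology of a one-term complex with ordinary cohomology, and the compatibility of the resulting isomorphism with $\pi^*$. If desired, one may additionally invoke \eqref{eqn:coherentcohomology} to record that the same isomorphism holds with $H_{Zar}$ replaced throughout by $H_{sNis}$ or $H_{s\acute{e}t}$, which is the form in which the statement is used later in the paper.
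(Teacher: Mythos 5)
Your proof is correct and follows the same route the paper intends when it declares the corollary ``immediate'' from Proposition \ref{prop:boxinvdiff}: one composes $R\Gamma(\underline{X},-)$ with $R\pi_*$ via the Leray spectral sequence and substitutes the isomorphism $\Omega^j_X\xrightarrow{\cong}R\pi_*\Omega^j_{X\times(\P^n,\P^{n-1})}$. Your explicit treatment of $j=0$ (via $R^q\pi_*\cO_{\underline{X}\times\P^n}=0$ for $q>0$ and $\pi_*\cO\cong\cO$) is a worthwhile addition, since Proposition \ref{prop:boxinvdiff} is only stated for $j\geq 1$ while the corollary asserts the isomorphism for every $j$.
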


\begin{prop}
\label{prop::set=ket}
Let $X$ be an fs log scheme, and let $\cF$ be a quasi-coherent sheaf on $X$.
Then for every $i\geq 0$, there is an isomorphism
\[
H_{s\acute{e}t}^i(X,\cF)\xrightarrow{\cong} H_{k\acute{e}t}^i(X,\cF).
\]
\end{prop}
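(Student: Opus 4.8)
The plan is to follow the strategy of Lemma \ref{ketcomp.12}, replacing its torsion-vanishing input by a group-cohomology computation. Write $\epsilon\colon X_{k\acute{e}t}\to X_{s\acute{e}t}$ for the natural morphism of sites and $\epsilon_*$ for the associated pushforward. Viewing the quasi-coherent sheaf $\cF$ as a Kummer \'etale sheaf — legitimate by Lemma \ref{Omega_dividing_sheaf}, or directly by \cite[\S 3.3]{KatoLog2} — we have $\epsilon_*\cF=\cF|_{X_{s\acute{e}t}}$, so by the Leray spectral sequence for $\epsilon$ it suffices to show $R^q\epsilon_*\cF=0$ for every $q>0$. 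As in the proof of Lemma \ref{ketcomp.12}, $R^q\epsilon_*\cF$ is the strict \'etale sheafification of the presheaf $Y\mapsto H^q_{k\acute{e}t}(Y,\cF)$, so we are reduced to proving the vanishing $H^q_{k\acute{e}t}(X,\cF)=0$ for $q>0$ whenever $\underline{X}$ is strictly henselian local; shrinking further, we may assume $X$ admits an fs chart $P$ that is sharp at the closed point.

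Next I would invoke the Galois-type structure of the Kummer \'etale site over such an $X$. For each integer $n$ invertible on $X$, the standard Kummer cover $X_n:=X\times_{\A_P,\,\times n}\A_P$ is a Galois Kummer \'etale cover of $X$ with group $G_n\cong\mathrm{Hom}(\tfrac{1}{n}P^{\mathrm{gp}}/P^{\mathrm{gp}},\mu_n(\Gamma(\underline{X},\cO)))$, a finite abelian group of order $n^{\,\mathrm{rk}\,P^{\mathrm{gp}}}$; the family $\{X_n\}$ is cofinal among Kummer \'etale covers of $X$ (see \cite{MR1922832}, \cite{MR3658728} and \ref{ket.2}, \ref{ket.4}). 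Since $X_n\to X$ is finite, $\underline{X_n}$ is affine, and $\cF$ is quasi-coherent, \eqref{eqn:coherentcohomology} gives $H^q_{\acute{e}t}(\underline{X_n},\cF)=H^q_{Zar}(\underline{X_n},\cF)=0$ for $q>0$. Assembling the Cartan--Leray spectral sequences of the covers $X_n\to X$ and passing to the colimit over the tower $\{X_n\}$ — whose pro-limit has vanishing positive Kummer \'etale cohomology, being a log geometric point — yields an identification $H^q_{k\acute{e}t}(X,\cF)\cong\varinjlim_n H^q(G_n,\cF(X_n))$.

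It then remains to compute these group cohomology groups. Because $|G_n|=n^{\,\mathrm{rk}\,P^{\mathrm{gp}}}$ is invertible in $\Gamma(\underline{X_n},\cO)$, the averaging argument shows $H^q(G_n,M)=0$ for every $\Gamma(\underline{X_n},\cO)[G_n]$-module $M$ and every $q>0$; applied to $M=\cF(X_n)$ this annihilates the colimit in positive degrees. In degree $0$, the character decomposition of the finite flat $\cO_{\underline{X}}$-algebra $\cO_{\underline{X_n}}$ under $G_n$ shows $\cF(X_n)^{G_n}=\cF(X)$, so $H^0_{k\acute{e}t}(X,\cF)=\cF(X)$. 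Hence $H^q_{k\acute{e}t}(X,\cF)=0$ for $q>0$, which was the desired vanishing; the edge map $H^q_{s\acute{e}t}(X,\cF)\to H^q_{k\acute{e}t}(X,\cF)$ is therefore an isomorphism, and via \eqref{eqn:coherentcohomology} one recovers along the way the comparison with Zariski cohomology.

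The main obstacle is the second step: making rigorous the Galois description of the Kummer \'etale topos over a strictly henselian — but not log strictly henselian — base, i.e.\ the cofinality of the standard Kummer covers and the resulting isomorphism $H^q_{k\acute{e}t}(X,\cF)\cong\varinjlim_n H^q(G_n,\cF(X_n))$, including the bookkeeping of continuous cohomology for the profinite group $\varprojlim_n G_n$. Once this log-geometric input is granted, the rest is the formal averaging computation above.
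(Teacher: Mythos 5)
Your argument is correct, but it takes a different route from the paper: the paper's entire proof is a two-line reduction (the question is strict \'etale local, so one may assume $X$ affine) followed by a citation of Kato \cite[Proposition 6.5]{KatoLog2}, with the proof written out by Niziol \cite[Proposition 3.27]{MR2452875}. What you have written is, in effect, a proof of that cited result: the Leray reduction to a strictly henselian local base, the cofinality of the standard Kummer covers $X_n=X\times_{\A_P}\A_{P^{1/n}}$, the identification of Kummer \'etale cohomology with (continuous) group cohomology of $\varprojlim_n G_n$, and the averaging argument using that $|G_n|=n^{\mathrm{rk}\,P^{\gp}}$ is invertible on $X$ because only such $n$ give Kummer \'etale covers. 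The trade-off is clear: the paper's version is shorter but opaque, while yours makes visible the actual mechanism (invertibility of the order of the Kummer Galois group acting on a quasi-coherent module). The obstacle you flag at the end is genuine but is already discharged by material in the paper and its references: the cofinality of the covers $X_n\to X$ over a strictly local base with an fs chart is exactly Lemma \ref{ket.3}, and the identification $H^q_{k\acute{e}t}(X,\cF)\cong\varinjlim_n H^q(G_n,\cF(X_n))$ (equivalently, the vanishing of positive Kummer \'etale cohomology of the log strictly local pro-limit) is the content of \cite[Proposition 4.1]{MR1457738} and \cite[Proposition 1.2]{MR3658728}, which the paper invokes for the same purpose in the proof of Lemma \ref{ketcomp.12}. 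With those references inserted, your proof is complete and self-contained where the paper's is not.
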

\begin{proof}
The question is strict \'etale local on $X$.
Hence we may assume that $X$ is affine.
Our claim follows now from Kato's \cite[Proposition 6.5]{KatoLog2} with a proof given by Niziol in \cite[Proposition 3.27]{MR2452875}.
\end{proof}

\begin{prop}
\label{Invariance of coherent cohomology under log blow-ups}
Let $f\colon Y\rightarrow X$ be a log modification in $lSm/k$, and let $\cF$ be a locally free coherent sheaf on $X$.
Then for every $i\geq 0$ and $j\geq 0$ there is an isomorphism
\[
H_{s\acute{e}t}^i(X,\cF)\xrightarrow{\cong} H_{s\acute{e}t}^i(Y,f^*\cF).
\]
\end{prop}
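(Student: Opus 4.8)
The statement asserts that for a log modification $f\colon Y\rightarrow X$ in $lSm/k$ and a locally free coherent sheaf $\cF$ on $X$, the pullback $H_{s\acute{e}t}^i(X,\cF)\rightarrow H_{s\acute{e}t}^i(Y,f^*\cF)$ is an isomorphism for all $i$. The first step is to reduce the problem to one about the structure sheaf $\cO_X$. Since $\cF$ is locally free, the question is strict \'etale (equivalently, by \eqref{eqn:coherentcohomology}, Zariski) local on $X$, and both source and target cohomology theories commute with the relevant base changes; thus I would reduce to showing that the adjunction unit $\cO_X\rightarrow Rf_*\cO_Y$ is an isomorphism in the derived category of coherent sheaves on $\underline{X}$, i.e. $Rf_*\cO_Y\cong\cO_X$. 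Here I would use that $\cF$ locally splits into copies of $\cO_X$ after passing to a Zariski cover, together with the projection formula $Rf_*(f^*\cF)\cong \cF\otimes^{L}Rf_*\cO_Y$ (valid since $\cF$ is locally free), so that $Rf_*\cO_Y\cong\cO_X$ gives $Rf_*f^*\cF\cong\cF$, and the Leray spectral sequence then yields the cohomology isomorphism.

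The main technical input is therefore the vanishing $R^q f_*\cO_Y=0$ for $q>0$ together with $f_*\cO_Y=\cO_X$. To prove this I would work locally on $X$ in the fs chart topology: by \cite[Lemma III.2.6.4]{Ogu} (as used already in the proof of Theorem \ref{A.3.27}), Zariski locally $X$ has an fs chart $X\rightarrow \A_P$ with $X$ smooth over $\A_P$, and $f$ becomes the base change along $X\rightarrow\A_P$ of a toric morphism $\A_\Sigma\rightarrow\A_P$ coming from a subdivision of fans $\Sigma\rightarrow\Spec P$. Since cohomology and pushforward commute with the flat (even smooth) base change $X\rightarrow\A_P$, I reduce to the purely toric statement: if $\pi\colon \A_\Sigma\rightarrow\A_P$ is the map of affine toric varieties associated to a subdivision of the fan $\Spec P$, then $R\pi_*\cO_{\A_\Sigma}\cong\cO_{\A_P}$. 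This is a classical fact in toric geometry — a subdivision induces a proper birational toric morphism, and such morphisms of (normal) toric varieties have trivial higher direct images of the structure sheaf and satisfy $\pi_*\cO=\cO$ because $\A_P$ is normal and $\A_\Sigma\rightarrow\A_P$ is proper birational with both varieties having rational singularities (indeed toric varieties are Cohen-Macaulay with rational singularities, so this is a standard consequence; alternatively one can compute directly with the $\Z$-graded pieces: each graded piece of $R\pi_*\cO_{\A_\Sigma}$ is the reduced cohomology of a certain subcomplex of the fan, which is contractible). I would cite \cite{CLStoric} or \cite{TOda} for this.

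The step I expect to be the main obstacle is establishing $R\pi_*\cO_{\A_\Sigma}\cong\cO_{\A_P}$ cleanly in the generality needed (arbitrary, possibly non-smooth subdivisions $\Sigma$ of $\Spec P$, over $\Spec\Z$ or over $k$), and in verifying that the reduction from a general log modification to this toric situation is legitimate — in particular, checking that $f$ is indeed a base change of a toric morphism along the smooth chart map and that this chart map is flat so that the spectral sequences and pushforwards behave. Once $R\pi_*\cO=\cO$ is in hand, the rest — reduction to $\cF=\cO_X$ via local triviality and the projection formula, and translating between $Zar$, $sNis$, $s\acute{e}t$ cohomology via \eqref{eqn:coherentcohomology} — is routine. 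As a variant that sidesteps the explicit toric computation, I could instead invoke \cite[Proposition 3.27]{MR2452875} / Niziol's results on Kummer \'etale cohomology of coherent sheaves, or note that log blow-ups of log regular schemes are covered by the log-regular analogue of ``rational singularities'' results; but the direct toric argument is the most self-contained route.
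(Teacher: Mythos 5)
Your proposal is correct and rests on the same essential input as the paper's proof, namely the vanishing of higher direct images of the structure sheaf along toric subdivision morphisms (Kato's theorem for log blow-ups of log regular schemes) combined with the projection formula for locally free sheaves; but you organize the reduction differently. You aim to prove $Rf_*\cO_Y\cong\cO_X$ for the given log modification $f$ itself, which forces you to first realize $f$, Zariski locally, as a pullback of a toric morphism. That step is only literally true for log blow-ups, not for arbitrary log modifications, so you need the intermediate reduction (via the definition of log modification and two-out-of-three, as in the proof of Theorem \ref{A.3.27}) to the log blow-up case before invoking \cite[Lemma III.2.6.4]{Ogu} — you flag this as a concern and it is fixable, but it is the one place your write-up is not yet complete. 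The paper sidesteps this entirely with a sandwich argument: after passing to an fs chart $P$, Proposition \ref{A.9.21} produces a subdivision $\Sigma$ of $\Spec P$ with $Y\times_{\A_P}\A_\Sigma\cong X\times_{\A_P}\A_\Sigma$, and the key vanishing is applied only to the two projections $X\times_{\A_P}\A_\Sigma\to X$ and $Y\times_{\A_P}\A_\Sigma\to Y$, which are honest pullbacks of toric subdivision morphisms; the conclusion for $f$ then follows by two-out-of-three in the resulting commutative square of cohomology groups. What your route buys is a cleaner statement ($Rf_*\cO_Y\cong\cO_X$ for every log modification, which is of independent interest); what the paper's route buys is that it never has to verify that a general log modification is toric-locally a pullback, only that it is dominated by one. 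The remaining ingredients — reduction to $\cO_X$ by local freeness, the projection formula, Leray, and the comparison \eqref{eqn:coherentcohomology} of Zariski, strict Nisnevich and strict \'etale cohomology of quasi-coherent sheaves — match the paper's.
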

\begin{proof}
The question is strict \'etale local on $X$, so we may assume that $X$ has an fs chart $P$.
Owing to Proposition \ref{A.9.21} there is a subdivision $\Sigma$ of $\Spec{P}$ such that there is an isomorphism
\[
Y\times_{\A_P}\A_\Sigma\xrightarrow{\cong} X\times_{\A_P}\A_\Sigma.
\]
Let $p:X\times_{\A_P}\A_\Sigma\rightarrow X$ and $q:Y\times_{\A_P}\A_\Sigma\rightarrow X$ be the projections.
There is a naturally induced commutative diagram
\[
\begin{tikzcd}
H_{s\acute{e}t}^i(X,\cF)\arrow[d]\arrow[r]&H_{s\acute{e}t}^i(X\times_{\A_P}\A_\Sigma,p^*\cF)\arrow[d,"\cong"]
\\
H_{s\acute{e}t}^i(Y,f^*\cF)\arrow[r]&X_{s\acute{e}t}^i(Y\times_{\A_P}\A_\Sigma,q^*\cF).
\end{tikzcd}
\]
The horizontal morphisms are isomorphisms due to \cite[Remark 5.19]{Vetere}, 
which is a consequence of \cite[Theorem 11.3]{MR1296725} and the projection formula \cite[Tag 01E8]{stacks-project}.
Thus also the left vertical morphism is an isomorphism.
\end{proof}

\begin{cor}
\label{Invariance of Log differentials under log blow-ups}
Let $f\colon Y\rightarrow X$ be a log modification in $lSm/k$.
Then for every $i\geq 0$ and $j\geq 0$ there is an isomorphism 
\[
f^*\colon H_{s\acute{e}t}^i(X,\Omega^j)\xrightarrow{\cong} H_{s\acute{e}t}^i(Y,\Omega^j).
\]
\end{cor}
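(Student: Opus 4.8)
The plan is to deduce the corollary directly from Proposition \ref{Invariance of coherent cohomology under log blow-ups}, applied to the coherent sheaf $\cF=\Omega^j_{X/k}$, after identifying $f^*\Omega^j_{X/k}$ with $\Omega^j_{Y/k}$. First I would record that $\Omega^j_{X/k}$ really is a locally free coherent sheaf on $X$: the question is strict \'etale local on $X$, so by \cite[Theorem IV.3.3.1]{Ogu} we may assume $\underline{X}\rightarrow \underline{\A_P}$ is \'etale for some fs chart $P$, and since $\Omega^1_{\A_P/k}$ is free of finite rank, so is $\Omega^1_{X/k}$, hence $\Omega^j_{X/k}=\wedge^j\Omega^1_{X/k}$ is locally free of finite rank. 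This places us in the setting of Proposition \ref{Invariance of coherent cohomology under log blow-ups}, which gives an isomorphism $f^*\colon H_{s\acute{e}t}^i(X,\Omega^j_{X/k})\xrightarrow{\cong} H_{s\acute{e}t}^i(Y,f^*\Omega^j_{X/k})$ for every $i\geq 0$.

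Next I would produce a canonical isomorphism $f^*\Omega^j_{X/k}\cong \Omega^j_{Y/k}$. By Proposition \ref{A.9.75} the log modification $f$ is a surjective proper log \'etale monomorphism; in particular $f$ is log \'etale, so $\Omega^1_{Y/X}=0$ and the first fundamental exact sequence $f^*\Omega^1_{X/k}\rightarrow \Omega^1_{Y/k}\rightarrow \Omega^1_{Y/X}\rightarrow 0$ is in fact a locally split short exact sequence of $\cO_Y$-modules, as part of the characterization of log smoothness (see \cite[Chapter IV]{Ogu}). Therefore $f^*\Omega^1_{X/k}\xrightarrow{\cong}\Omega^1_{Y/k}$, and taking $j$th exterior powers over $\cO_Y$ yields $f^*\Omega^j_{X/k}\cong \Omega^j_{Y/k}$. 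Composing the cohomology isomorphism of the previous paragraph with the isomorphism on $H_{s\acute{e}t}^i(Y,-)$ induced by this identification produces the desired map $f^*\colon H_{s\acute{e}t}^i(X,\Omega^j)\xrightarrow{\cong} H_{s\acute{e}t}^i(Y,\Omega^j)$; one only needs to check that this composite agrees with the natural pullback on logarithmic Hodge cohomology, which is immediate from the functoriality built into both the first fundamental exact sequence and Proposition \ref{Invariance of coherent cohomology under log blow-ups}.

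I do not expect a genuine obstacle here: all the substantive work is already in Proposition \ref{Invariance of coherent cohomology under log blow-ups}, whose proof passes to a common subdivision $\A_\Sigma\rightarrow \A_P$ and invokes the invariance of coherent cohomology under the resulting toric blow-up. The only mild point to be careful about is that the identification $f^*\Omega^j_{X/k}\cong \Omega^j_{Y/k}$ should be the globally defined canonical one rather than merely a local one; this is automatic, since $f^*\Omega^1_{X/k}\rightarrow \Omega^1_{Y/k}$ is a globally defined morphism of $\cO_Y$-modules that is shown to be an isomorphism after restriction to a strict \'etale cover, and hence is a global isomorphism.
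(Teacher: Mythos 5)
Your proposal is correct and follows essentially the same route as the paper: the paper's proof likewise observes that $f$ is log \'etale, so that $f^*\Omega_X^j\cong \Omega_Y^j$, and then concludes by Proposition \ref{Invariance of coherent cohomology under log blow-ups}. You simply spell out the supporting details (local freeness of $\Omega^j_{X/k}$ and the splitting of the first fundamental exact sequence) that the paper leaves implicit.
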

\begin{proof}
Since $f$ is log \'etale we have $f^*\Omega_X^j\cong \Omega_Y$.
Proposition \ref{Invariance of coherent cohomology under log blow-ups} allows us to conclude the proof.
\end{proof}

\begin{prop}
\label{prop::hodgelet}
Let $X$ be an fs log scheme in $lSm/k$.
For every $i\geq 0$ and $j\geq 0$ there exist functorial isomorphisms 
\begin{equation}
\begin{split}
\label{eqn::hodgelet1}
H_{Zar}^i(X,\Omega^j)
\cong^{(1)} &
H_{sNis}^i(X,\Omega^j)
\cong^{(3)}
H_{dNis}^i(X,\Omega^j)
\\
\cong^{(1)} &
H_{s\acute{e}t}^i(X,\Omega^j)
\cong^{(3)}
H_{d\acute{e}t}^i(X,\Omega^j)
\\
\cong^{(2)} &
H_{k\acute{e}t}^i(X,\Omega^j)
\cong^{(3)}
H_{l\acute{e}t}^i(X,\Omega^j)
\end{split}
\end{equation}
\end{prop}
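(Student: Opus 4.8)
The plan is to match the three superscripts in \eqref{eqn::hodgelet1} to three already-available comparison results. The isomorphisms labeled $(1)$ are instances of the comparison \eqref{eqn:coherentcohomology} between Zariski, strict Nisnevich and strict \'etale cohomology of a quasi-coherent sheaf; since $\Omega^j_X$ is coherent (being an exterior power of the coherent sheaf $\Omega^1_X$), applying \eqref{eqn:coherentcohomology} with $\cF=\Omega^j_X$ gives $H^i_{Zar}(X,\Omega^j)\cong H^i_{sNis}(X,\Omega^j)\cong H^i_{s\acute{e}t}(X,\Omega^j)$. The isomorphism labeled $(2)$ is Proposition \ref{prop::set=ket} applied to the quasi-coherent sheaf $\Omega^j_X$, giving $H^i_{s\acute{e}t}(X,\Omega^j)\cong H^i_{k\acute{e}t}(X,\Omega^j)$. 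All of these maps are induced by the standard morphisms of sites and are natural in $X$.

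Before treating the isomorphisms labeled $(3)$, I would record that the presheaf $\Omega^j$ on $lSm/k$ given by $U\mapsto \Omega^j_{U/k}(U)$ is a sheaf for each of the six topologies in the diagram. For $sNis$ and $s\acute{e}t$ this is the descent contained in \eqref{eqn:coherentcohomology}; for $k\acute{e}t$ it follows from Proposition \ref{prop::set=ket} (equivalently from Lemma \ref{Omega_dividing_sheaf}, using that $\Omega^j_U\cong f^*\Omega^j_X$ for a log \'etale morphism $f\colon U\to X$); and for $dNis$, $d\acute{e}t$, and $l\acute{e}t$ one combines these with Corollary \ref{Invariance of Log differentials under log blow-ups}, which asserts that every log modification $Y\to X$ induces an isomorphism on $H^\bullet_{s\acute{e}t}(-,\Omega^j)$, via Remark \ref{A.9.68}(4). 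In particular $a_t^*\Omega^j\cong\Omega^j$ for $t\in\{dNis,d\acute{e}t,l\acute{e}t\}$.

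For the isomorphisms labeled $(3)$ I would apply Theorem \ref{Div.3} to the complex $\cF=\Omega^j$, concentrated in degree zero and viewed as a strict Nisnevich (resp.\ strict \'etale, resp.\ Kummer \'etale) sheaf on $lSm/k$. Using $a_t^*\Omega^j\cong\Omega^j$ from the previous step, this yields $H^i_{dNis}(X,\Omega^j)\cong\colimit_{Y\in X_{div}}H^i_{sNis}(Y,\Omega^j)$, and similarly with $(dNis,sNis)$ replaced by $(d\acute{e}t,s\acute{e}t)$ and by $(l\acute{e}t,k\acute{e}t)$. Now Corollary \ref{Invariance of Log differentials under log blow-ups} shows that every transition morphism in these systems is an isomorphism for the strict \'etale cohomology of $\Omega^j$, and the comparisons $(1)$ and $(2)$ transfer this to the strict Nisnevich and Kummer \'etale cohomology of $\Omega^j$. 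Since $X_{div}$ is filtered by Proposition \ref{A.9.80} and contains the identity ${\rm id}_X$, the colimit is canonically identified with the value at $X$, which gives the isomorphisms $(3)$.

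The main point requiring care is the bookkeeping in the second paragraph: one must verify that $\Omega^j$, as a presheaf on the \emph{big} site $lSm/k$, is genuinely a sheaf for each topology so that Theorem \ref{Div.3} applies with no derived-functor correction, and that $a_t^*\Omega^j\cong\Omega^j$ rather than merely a quasi-isomorphism. This is a direct consequence of \eqref{eqn:coherentcohomology}, Proposition \ref{prop::set=ket}, Lemma \ref{Omega_dividing_sheaf}, and Corollary \ref{Invariance of Log differentials under log blow-ups}, so I do not expect a genuine obstacle; the proof is essentially an assembly of results already established in the text, and functoriality of each isomorphism is clear from its construction.
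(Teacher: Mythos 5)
Your proposal is correct and follows essentially the same route as the paper: the isomorphisms $(1)$ come from \eqref{eqn:coherentcohomology}, $(2)$ from Proposition \ref{prop::set=ket}, and $(3)$ from Theorem \ref{Div.3} combined with Corollary \ref{Invariance of Log differentials under log blow-ups}. The extra bookkeeping you supply (sheafiness of $\Omega^j$ on the big site, $a_t^*\Omega^j\cong\Omega^j$, and the identification of the filtered colimit over $X_{div}$ with the value at $X$) is left implicit in the paper but is exactly the right justification.
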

\begin{proof}
The isomorphisms labeled (1) follow from the fact that the cohomology of coherent sheaves with respect to the Zariski, Nisnevich, and \'etale topologies coincides, 
see \eqref{eqn:coherentcohomology}. 
Proposition \ref{prop::set=ket} implies (2).
Finally, 
the isomorphisms labeled (3) follow from Theorem \ref{Div.3} and Corollary \ref{Invariance of Log differentials under log blow-ups}.
\end{proof}

\begin{rmk}
The comparison of strict \'etale, Kummer \'etale, and log \'etale Hodge cohomology groups are also carried out in \cite[Theorems 6.13, 6.14]{Vetere}.
\end{rmk}

\begin{cor}
\label{Pn-invariance of Log differentials}
Let $X$ be an fs log scheme in $lSm/k$.
Suppose that $t$ is one of following topologies: $Zar$, $sNis$, $s\acute{e}t$, $k\acute{e}t$, $dNis$, $d\acute{e}t$, and $l\acute{e}t$.
Then for every $i$, $j$, and $n$, there is an isomorphism
\[
H_{t}^i(X\times (\P^n,\P^{n-1}),\Omega^j)\cong H_{t}^i(X,\Omega^j).
\]
\begin{proof}
Owing to Corollary \ref{Invariance of Log differentials under log blow-ups} and Propositions \ref{prop::hodgelet} and \ref{A.3.19},
we reduce to the case when $X\in SmlSm/k$.
Then apply Corollary \ref{Pn-invariance of Log differentials-Zar}.
\end{proof}
\end{cor}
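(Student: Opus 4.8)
The plan is to reduce the statement, in two steps, to the already-established Zariski-local case for objects of $SmlSm/k$, namely Corollary \ref{Pn-invariance of Log differentials-Zar}.

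First I would dispose of the choice of topology. By Proposition \ref{prop::hodgelet}, for every fs log scheme $Y$ in $lSm/k$ and all $i,j\geq 0$ the groups $H_t^i(Y,\Omega^j)$ are functorially isomorphic to one another as $t$ ranges over $Zar$, $sNis$, $s\acute{e}t$, $k\acute{e}t$, $dNis$, $d\acute{e}t$, and $l\acute{e}t$; moreover the projection $X\times(\P^n,\P^{n-1})\to X$ induces a morphism between these diagrams of isomorphisms. Hence it suffices to prove the claim for a single topology, say $t=Zar$, or indeed for $t=s\acute{e}t$, which is the topology in which log-blow-up invariance is phrased.

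Second I would reduce to the case $X\in SmlSm/k$. By Proposition \ref{A.3.19} there is a log modification $f\colon Y\rightarrow X$ with $Y\in SmlSm/k$; then $f\times {\rm id}\colon Y\times(\P^n,\P^{n-1})\to X\times(\P^n,\P^{n-1})$ is again a log modification in $lSm/k$, and $Y\times(\P^n,\P^{n-1})$ lies in $SmlSm/k$. Corollary \ref{Invariance of Log differentials under log blow-ups} then gives isomorphisms $H_{s\acute{e}t}^i(X,\Omega^j)\xrightarrow{\cong} H_{s\acute{e}t}^i(Y,\Omega^j)$ and $H_{s\acute{e}t}^i(X\times(\P^n,\P^{n-1}),\Omega^j)\xrightarrow{\cong} H_{s\acute{e}t}^i(Y\times(\P^n,\P^{n-1}),\Omega^j)$, compatible with the projections, so we may replace $X$ by $Y$.

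Finally, for $X\in SmlSm/k$ the product $X\times(\P^n,\P^{n-1})$ is again in $SmlSm/k$, and Corollary \ref{Pn-invariance of Log differentials-Zar} provides the desired isomorphism $H_{Zar}^i(X\times(\P^n,\P^{n-1}),\Omega^j)\cong H_{Zar}^i(X,\Omega^j)$, which transported back through Proposition \ref{prop::hodgelet} yields the statement for every $t$ in the list. The genuine content is already contained in Proposition \ref{prop:boxinvdiff} (the residue/projective-bundle-formula computation, via the exact sequences \eqref{eq:exboxinvdiff7} and \eqref{eq:exboxinvdiff8} and induction on the number of components of $\partial X$) and in Corollary \ref{Invariance of Log differentials under log blow-ups}; the remaining difficulty in the corollary itself is purely bookkeeping, namely checking that the chosen log modification of $X$ induces a log modification of the product and that all the comparison isomorphisms are compatible with pullback along the projection, so that one genuinely obtains an isomorphism of $H_t^i(X,\Omega^j)$ with $H_t^i(X\times(\P^n,\P^{n-1}),\Omega^j)$ rather than merely an abstract isomorphism of abelian groups.
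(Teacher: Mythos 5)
Your proposal is correct and follows essentially the same route as the paper: reduce to a single topology via Proposition \ref{prop::hodgelet}, pass to $X\in SmlSm/k$ via Proposition \ref{A.3.19} together with Corollary \ref{Invariance of Log differentials under log blow-ups}, and conclude with Corollary \ref{Pn-invariance of Log differentials-Zar}. The extra bookkeeping you spell out (that $f\times\mathrm{id}$ is again a log modification and that all comparison isomorphisms commute with the projection) is exactly what the paper's terse proof leaves implicit.
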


\subsection{A recollection on Grothendieck duality}\label{ssec:Grothduality}
We recall some material from Grothendieck-Verdier-Neeman duality theory for coherent sheaves \cite{HartshorneRD}. 
If $\ul{X}$ is a separated scheme of finite type over $k$, 
and $\pi_{\ul{X}}\colon \ul{X}\to \Spec{k}$ is the structural morphism, we have a pair of adjoint functors $(R \pi_{\underline{X}, *}, \pi_{\ul{X}}^!)$ between 
${\bf D}^b(k)$ and ${\bf D}^b(\ul{X}_{\rm Zar}) = {\bf D}^b(\ul{X})$ (since we only consider the Zariski topology we often omit the subscript).
More generally, 
if $f\colon \ul{X}\to \ul{Y}$ is any morphism of schemes, 
there is an adjoint pair
\begin{equation} \label{eq:duality_extended}
Rf_*\colon {\bf D}_{qc}(\ul{X})\leftrightarrows  {\bf D}_{qc}(\ul{Y}) \colon f^\times 
\end{equation}
between the derived categories of quasi-coherent sheaves on $\ul{X}$ and $\ul{Y}$ (see \cite[Reminder 3.1.1]{Neeman_simple}). When $f$ is proper, the functor $f^\times$ agrees with the familiar functor $f^!$ classically considered \cite{HartshorneRD}. When moreover $f$ is proper and of finite Tor dimension (this is the case which is going to be interesting for us), the adjunction \eqref{eq:duality_extended} restricts to an adjunction between the bounded derived categories of coherent sheaves (see \cite[Application 6.1.3]{Neeman_simple})
\begin{equation} \label{eq:duality_classic}
Rf_*\colon {\bf D}^b(\ul{X})\leftrightarrows  {\bf D}^b(\ul{Y}) \colon f^!. 
\end{equation}
In this situation, it induces  a natural transformation of functors
\[
Rf_* R \Hom_{\ul{X}}(-, f^{!}(-)) \xrightarrow{\simeq} R \Hom_{\ul{Y}}(Rf_*(-), -)
\]
called the standard Grothendieck duality isomorphism ($\Hom$ refers to the inner Hom of sheaves on $\ul{X}_{\rm Zar}$ to avoid confusion with the inner Hom in the categories of motives). See \cite[(3.4.10), p.150]{Conrad_duality}.
We denote by $D_{\ul{X}}$ the dualizing functor
\[ 
D_{\ul{X}} 
= 
R \Hom_{\ul{X}}(-, \pi_{\ul{X}}^{!}(k)) 
\colon 
{\bf D}^b(\ul{X})\to {\bf D}^{b}(\ul{X}).
\]
For any separated morphism of finite type, $f\colon \ul{X}\to \ul{Y}$, the functor $f^!$ can be defined by appropriately choosing a factorization of $f$ as an open embedding followed by a proper map (but unless it agrees with $f^{\times}$, it is not the right adjoint to $Rf_*$). However, it can be computed explicitly in some cases of interest.  If $\ul{X}$ is smooth of dimension $d$ over $k$, then there is a canonical isomorphism 
$$
\pi_{\ul{X}}^! k \cong \Omega^d_{\ul{X}}[d].
$$ 
Moreover, 
for every $j\geq 0$ and $n\in \Z$, 
there is an isomorphism (canonical up to the choice of a sign) 
$$
\Omega_{\ul{X}}^j[n]\xrightarrow{\simeq} D_{\ul{X}}(\Omega^{d-j}_{\ul{X}})[n-d].
$$ 
The latter isomorphism arises from exterior products of forms
\[
\Omega^j_{\ul{X}} 
\xrightarrow{\simeq} 
\Hom_{\ul{X}}(\Omega^{d-j}_{\ul{X}}, \Omega^d_{\ul{X}}), \quad \omega
\mapsto 
(\eta\mapsto \omega\wedge \eta). 
\]
\vspace{0.01in}

Our next goal is to extend the above results to the log setting.
For the analytic setting, 
see Esnualt-Viehweg \cite{EsnaultViehweglogDR}, 
which in turn was inspired by analogous results in the theory of $\mathcal{D}$-modules (see e.g., Bernstein \cite{BBDModules}). 
We begin with the following definition.

\begin{df}
Let $X$ be an fs log scheme in $SmlSm/k$. 
By Lemma \ref{lem::SmlSm}, we have that $X = (\ul{X}, \partial X)$, 
where $\ul{X}$ is a smooth $k$-scheme and $\partial X =\partial X_1+\ldots \partial X_r$ is a strict normal crossing divisor on $\ul{X}$. 
We let $I_{\partial X}$ be the invertible sheaf of ideals defining $\partial X$. 
\end{df}


\begin{lem}
\label{lem:dualitylog}
Let $X$ be an integral fs log scheme in $SmlSm/k$ of dimension $d_X$.  
Then, for any $j\geq 0$ and every $n$, there is an isomorphism 
\begin{equation}\label{eq:dualitylog} 
\theta_X\colon \Omega^j_X[n] = \Omega^j_{\ul{X}}(\log \partial X)[n] 
\xrightarrow{\simeq} 
D_{\ul{X}} (\Omega^{d_X-j}_{X} \tensor_{\cO_{\ul{X}}} I_{\partial X})[n-d_X]
\end{equation} 
in ${\bf D}^b(\ul{X})$.
\end{lem}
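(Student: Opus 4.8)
The plan is to reduce the statement to the classical Grothendieck duality isomorphism for smooth schemes recalled in Section \ref{ssec:Grothduality}, by unwinding the definition of $\Omega^j_X$ in terms of the sheaf of differentials with log poles. First I would note that since $X\in SmlSm/k$, Lemma \ref{lem::SmlSm} gives $X=(\ul X,\partial X)$ with $\ul X$ smooth of dimension $d_X$ over $k$ and $\partial X$ a strict normal crossing divisor with invertible ideal sheaf $I_{\partial X}$. Locally, in coordinates $(t_1,\ldots,t_{d_X})$ with $\partial X$ defined by $t_1\cdots t_r$, the sheaf $\Omega^1_X$ is free on $d\log t_1,\ldots,d\log t_r,dt_{r+1},\ldots,dt_{d_X}$, so each $\Omega^j_X$ is a locally free $\cO_{\ul X}$-module of rank $\binom{d_X}{j}$; in particular $\Omega^{d_X}_X\cong \Omega^{d_X}_{\ul X}\tensor_{\cO_{\ul X}}I_{\partial X}^{-1}$, a twist of the usual canonical sheaf by the log poles.

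The core step is to produce the perfect pairing of locally free sheaves
\[
\Omega^j_X \tensor_{\cO_{\ul X}} \big(\Omega^{d_X-j}_X\tensor_{\cO_{\ul X}}I_{\partial X}\big)
\longrightarrow
\Omega^{d_X}_X\tensor_{\cO_{\ul X}}I_{\partial X}\cong \Omega^{d_X}_{\ul X}=\pi_{\ul X}^!k[-d_X],
\]
given by $(\omega,\eta\otimes s)\mapsto (\omega\wedge\eta)\otimes s$, where the final identification uses $\Omega^{d_X}_X\cong\Omega^{d_X}_{\ul X}\tensor I_{\partial X}^{-1}$. I would check this pairing is perfect by the local computation: in the coordinates above, wedging the basis monomials of $\Omega^j_X$ against the complementary basis monomials of $\Omega^{d_X-j}_X$ produces, up to sign, the top form $d\log t_1\wedge\cdots\wedge d\log t_r\wedge dt_{r+1}\wedge\cdots\wedge dt_{d_X}$, and the extra twist by $I_{\partial X}$ on the second factor precisely cancels the pole of the top log form against the ideal, landing in $\Omega^{d_X}_{\ul X}$. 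Hence we obtain an isomorphism of $\cO_{\ul X}$-modules
\[
\Omega^j_X \xrightarrow{\ \simeq\ } \Hom_{\ul X}\big(\Omega^{d_X-j}_X\tensor_{\cO_{\ul X}}I_{\partial X},\ \Omega^{d_X}_{\ul X}\big).
\]

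Finally I would feed this into the duality formalism: since $\Omega^{d_X-j}_X\tensor I_{\partial X}$ is locally free (hence a perfect complex) and $\pi_{\ul X}^!k\cong\Omega^{d_X}_{\ul X}[d_X]$, the right-hand side of the displayed isomorphism above is $D_{\ul X}(\Omega^{d_X-j}_X\tensor I_{\partial X})[-d_X]$ by definition of $D_{\ul X}=R\Hom_{\ul X}(-,\pi_{\ul X}^!k)$, using that $R\Hom$ into a shift of a locally free sheaf computes the naive $\Hom$ for locally free arguments. Shifting by $n$ gives \eqref{eq:dualitylog}, and I would fix $\theta_X$ by choosing the sign convention consistent with the one used for $\Omega^j_{\ul X}[n]\xrightarrow{\simeq}D_{\ul X}(\Omega^{d_X-j}_{\ul X})[n-d_X]$ in Section \ref{ssec:Grothduality}, so that the log isomorphism restricts to the classical one on $X-\partial X$. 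The main obstacle I anticipate is purely bookkeeping rather than conceptual: getting the $I_{\partial X}$-twist placed on the correct factor and verifying that the wedge pairing is genuinely perfect (and not merely nondegenerate generically) requires the careful local coordinate check above, and one must be consistent about whether $\Omega^{d_X}_X$ denotes the log canonical sheaf $\Omega^{d_X}_{\ul X}(\log\partial X)=\Omega^{d_X}_{\ul X}\tensor I_{\partial X}^{-1}$ throughout, since an off-by-one twist would break the identification with $\pi_{\ul X}^!k$.
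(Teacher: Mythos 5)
Your proposal is correct and follows essentially the same route as the paper's proof: the exterior-product pairing $\omega\mapsto(\eta\otimes s\mapsto \omega\wedge(\eta\otimes s))$, the local-coordinate identification $\Omega^{d_X}_{\ul{X}}(\log\partial X)\tensor I_{\partial X}\cong\Omega^{d_X}_{\ul{X}}$, and the passage to $D_{\ul{X}}$ via $\pi_{\ul{X}}^!k\cong\Omega^{d_X}_{\ul{X}}[d_X]$ together with local freeness of $\Omega^{d_X-j}_X\tensor I_{\partial X}$. The only difference is cosmetic: you spell out the perfectness check on basis monomials that the paper leaves as "a direct local computation."
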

\begin{proof} 
To prove \eqref{eq:dualitylog}, we note there is an isomorphism of sheaves
\begin{equation}\label{eq:dualitylogbis} 
\Omega^j_{\ul{X}}(\log \partial X) 
\xrightarrow{\simeq} 
\Hom_{\ul{X}}(\Omega^{d_X-j}_{\ul{X}}(\log \partial X) \tensor_{\cO_{\ul{X}}} I_{\partial X}, \Omega^{d_X}_{\ul{X}} ) 
\end{equation}
induced by the exterior product of forms 
$$
\omega\mapsto (\beta \tensor u \mapsto \omega\wedge (\beta \tensor u)),
$$ 
where $u$ is a local section of $I_{\partial X}$. 
Here we are implicitly making a choice of sign since the assignment $\beta\tensor u \mapsto \beta\wedge (\omega \tensor u)$ is equally valid. 
We note the inclusion
$$
\Omega^{d_X-j}_{\ul{X}}(\log \partial X) \tensor_{\cO_{\ul{X}}} I_{\partial X} \subset \Omega^{d_X-j}_{\ul{X}}
$$ 
(every section of the sheaf is a regular differential).
\vspace{0.1in}

To show \eqref{eq:dualitylogbis} is an isomorphism, we resort to computing in local coordinates.
We have 
$$
\Omega^{d_X}_{\ul{X}}(\log \partial X) \tensor I_{\partial X} \cong \Omega^{d_X}_{\ul{X}},
$$ 
since
\[ 
(d\log x_1 \wedge \ldots d\log x_r \wedge dx_{r+1} \wedge \ldots dx_{d_X})\, x_1\cdots x_r = dx_1\wedge \ldots dx_{d_X}, 
\]
where $(x_1\cdots x_r) = I_{\partial X}$ locally around any point of $\partial X$. 
Next, note that for every integer $n\in \Z$, we have
\begin{align} 
\label{eq:dualitylog2} 
&\Hom_{\ul{X}}(\Omega^{d_X-j}_{\ul{X}}(\log \partial X) \tensor_{\cO_{\ul{X}}} I_{\partial X}, \Omega^{d_X}_{\ul{X}} )[n] && \\ 
\nonumber&= \Hom^{\bullet}_{\ul{X}}(\Omega^{d_X-j}_{\ul{X}}(\log \partial X) \tensor_{\cO_{\ul{X}}} I_{\partial X}, \Omega^{d_X}_{\ul{X}}[d_X])[n-d_X]  && 
\text{(sheaf ${\rm Hom}$ complex)}\\\nonumber
& =R {\Hom}_{\ul{X}}( \Omega^{d_X-j}_{\ul{X}}(\log \partial X) \tensor_{\cO_{\ul{X}}} I_{\partial X} , \pi_X^{!}(k))[n-d_X] && 
\text{($\Omega^{d_X-j}_{\ul{X}}(\log \partial X)$ is locally free) }
\end{align}
Here, 
we have used the notation $\Hom_{\ul{X}}^\bullet$ for the inner Hom of complexes of sheaves on $\ul{X}$, and $R\Hom$ for the derived functor of $\Hom_{\ul{X}}$. 
By combining \eqref{eq:dualitylogbis} with \eqref{eq:dualitylog2} we get the desired isomorphism. 
\end{proof}

\begin{rmk} 
\begin{enumerate}
\item[(1)] One can use the twisted dualizing functor 
$$
D_X = D_{\ul{X}}( - \tensor I_{\partial X})
$$ 
to prove that \eqref{eq:dualitylog} holds more generally for the logarithmic de Rham complex $DR_{\partial X} (\mathcal{V})$ associated to a coherent 
$\cO_{\ul{X}}$-module $\mathcal{V}$ equipped with a meromorphic connection with regular singularities along $\partial X$. 
For details see \cite[(A.2)]{EsnaultViehweglogDR}.
\item[(2)]  Let $j\colon U=\ul{X}- \partial X\hookrightarrow \ul{X}$ be the open immersion of the complement of the boundary divisor on $\ul{X}$. 
Then, for every $j\in \Z$, we have
\[ 
\Omega^j_U  
=    
(\Omega^j_X)_{| U} \cong D_{\ul{X}}(\Omega^{d_X-j}_X\tensor I_{\partial X})_{|U}[-d_X] = D_U(\Omega^{d_X-j}_U)[-d_X].
\]
\end{enumerate}
\end{rmk}

\subsection{Cohomology with support and cycle classes} 
Following \cite{ChatzistamatiouRullingANT}, in order to define the action of finite log correspondences on logarithmic differentials, 
we will need to construct a cycle class in Hodge cohomology associated with a closed subscheme. 
A priori, this is potentially a subtle point since we might need to deal with closed immersions of schemes that are not strict as morphisms of log schemes 
(this is the case for an arbitrary finite log correspondence). 
As discussed later in this section, this can be avoided,  
and the following discussion of cohomology with support in the log setting suffices for our purposes.

\begin{const}
Let $X$ be an integral fs log scheme of finite type over $k$,
and let $v\colon Z\hookrightarrow X$ be a strictly closed immersion of log schemes. 
For $\cF$ a Zariski sheaf of abelian groups on $X_{Zar}$ we set 
$$
\Gamma_Z(X,\cF):=\ker (\cF(X) \to \cF(X-Z)).
$$ 
It is elementary to see that the functor $\Gamma_Z(X,-)$ is left exact, and there exists an induced derived functor
\[
R\Gamma_Z(X,-):\Deri^b(X_{Zar})\rightarrow \Deri^b({\bf Ab}).
\]

We define cohomology with support by
$$
H_Z^i(X,\cF):=R\Gamma_Z^i(X,\cF).
$$ 
Note that we have $R\Gamma_X(X,-)=R\Gamma(X, -)$. 
For $Z\subset X$ strict as above, we define the \emph{Hodge cohomology of $X$ with support in $Z$} (cf.\ \cite[\S 2]{ChatzistamatiouRullingANT}) as the sum 
$$
H_{\rm Hodge}^*(X, Z) = \bigoplus_{i,j \geq 0}H^i_{Z}(X, \Omega^j_X).
$$ 
If $Z$ is empty, we write $H_{\rm Hodge}^*(X)$ for $H_{\rm Hodge}^*(X, \emptyset)$.
\end{const}


\begin{const}
\label{constr:pullback}
For $f\colon X\to Y$ a morphism and $Z$ a strictly closed subscheme of $Y$, 
we write $f^{-1}(Z)$ for $X\times_Y Z$ seen as a strictly closed subscheme of $X$. 
There is an isomorphism of functors 
$$
R\Gamma_{f^{-1}(Z)} \xrightarrow{\cong} R\Gamma_{Z} Rf_*.
$$ 

We say that a strict closed subscheme $W$ of $X$ contains $f^{-1}(Z)$ if the strict closed immersion $f^{-1}(Z)\hookrightarrow X$ admits a factorization $f^{-1}(Z)\subset W \hookrightarrow X$.
In this situation the latter isomorphism furnishes a natural transformation
\begin{equation}
\label{eq:pullbacksupport}
f^*\colon 
R\Gamma_Z \to R\Gamma_{Z} Rf_* Lf^* 
\xrightarrow{\cong} 
R\Gamma_{f^{-1}(Z)}Lf^* 
\to 
R\Gamma_W Lf^*.
\end{equation}
See \cite[\S 2.1.5]{ChatzistamatiouRullingANT} for more details. 
The displayed natural transformations are morphisms in the category of functors from $\Deri_{qc}({Y}_{Zar})$ to $\Deri({\bf Ab})$, and restricts to a morphism in the category of functors from $\Deri^b({Y}_{Zar})$ to $\Deri^b({\bf Ab})$ when $f$ has finite Tor dimension.

Note that here the log structures on $X$ and $Y$ are irrelevant since \eqref{eq:pullbacksupport} only involves standard operations on sheaves. 
The pullback  $f^*$ in (\ref{eq:pullbacksupport})  is compatible with composition, see \cite[(2.1.8)]{ChatzistamatiouRullingANT}.
\end{const}

\begin{rmk} 
\label{rmk:pullback}
\begin{enumerate}
\item[(1)] 
Every object and every morphism involved in the natural transformation \eqref{eq:pullbacksupport} belong to the bounded derived category of coherent sheaves on $\ul{X}$ or $\ul{Y}$.  
In particular, 
the sheaf 
$$
\Omega^j_{X} = \Omega^j_{\ul{X}}(\log \partial X)
$$ 
is considered as a coherent sheaf on $\ul{X}$ (and likewise for $\Omega^j_{Y}$ and $\ul{Y}$). 
In other words, 
the log structures on $X$ and $Y$ do not play any role in defining the derived categories that we consider.
Moreover, the sheaves of log differentials completely capture the log geometric information. 

Since $\Omega^j_Y$ is locally free when $Y\in SmlSm/k$, 
we have $Lf^* \Omega^j_Y=f^*\Omega^j_Y$ and composing with the morphism $f^*\Omega^j_Y\to \Omega^j_X$ from \cite[Proposition IV.1.2.15]{Ogu} we obtain
\[ 
f^*\colon R\Gamma_Z \Omega^j_Y \to R\Gamma_W \Omega^j_X. 
\]
In particular, this yields the pullback morphism in Hodge cohomology 
\[
f^*\colon H^*_{\rm Hodge}(Y, Z)\to H^*_{\rm Hodge}(X, W).
\]
In the above $Y\in SmlSm/k$ for simplicity, we do not impose the same assumption on $X$.
    
\item[(2)]
If $X\in SmlSm/k$, there is a canonical morphism to the underlying scheme $p_X\colon X\to \ul{X}$.
For every (irreducible) closed subscheme $\ul{Z}$ of $\ul{X}$, the morphism
$$
Z=\ul{Z}\times_{\ul{X}} X \to X
$$ 
is a strictly closed immersion ($Z$'s log structure is given by restricting $X$'s log structure). 
Applying Construction \ref{constr:pullback} to $p_X$ yields the morphism
\[
p_X^*\colon H^*_{\rm Hodge}(\ul{X}, \ul{Z}) 
= 
\bigoplus_{i,j}H^i_{\ul{Z}}(\ul{X}, \Omega_{\ul{X}}^j) 
\to 
\bigoplus_{i,j} H^i_{Z}(X, \Omega^j_X) = H^{*}_{\rm Hodge}(X, Z).
\]
Here $H^i_{Z}(X, \Omega^j_X)$ is the cohomology group with support $H^i_{\ul{Z}}(\ul{X}, \Omega^j_{\ul{X}}(\log \partial X))$.
The morphism $p_X^*$ is induced by the inclusion of locally free sheaves
$$
\Omega^j_{\ul{X}}\to \Omega^j_{\ul{X}}(\log \partial X)
$$ 
on $\ul{X}$ or, intrinsically using log geometry, is the canonical morphism discussed in \cite[Example IV 1.2.17]{Ogu}.
\item[(3)] We can further compose the morphism in (2) with the morphism induced by the open immersion $j\colon X- \partial X \to X$ to obtain
\[
H^*_{\rm Hodge}(\ul{X}, \ul{Z}) \xrightarrow{p_X^*} H^*_{\rm Hodge}( X, Z )\xrightarrow{j^*} H^*_{\rm Hodge}(X- \partial X, Z- \partial Z), 
\]
where $Z- \partial Z = Z\cap (X- \partial X)$. 
The composite morphism is induced by 
$$
\Omega^j_{\ul{X}} \to j_* \Omega^j_{X- \partial X}, 
$$
which clearly factors through the sheaf $\Omega^j_{\ul{X}}(\log \partial X)$.
\end{enumerate}
\end{rmk}
\vspace{0.1in}

Next, we discuss a derived pushforward functor following the script in \cite[\S 2.2]{ChatzistamatiouRullingANT}. 
Owing to Remark \ref{rmk:pullback}(3) it suffices to perform our constructions in the bounded derived category $\Deri^b(\ul{X})$.

\begin{const}
\label{const:push}
Let $f\colon X\to Y$ be a proper map between integral fs log schemes.
We will assume $Y\in SmlSm/k$ and that $X$ is normal with compactifying Deligne-Faltings log structure $\partial X\to X$, 
where $\partial X$ is an effective Cartier divisor on $X$.
A particular case is the boundary of $X\in lSm/k$.
Recall that $D_{\ul{X}}(-)$ is the dualizing functor on $\Deri^b(\ul{X})$.  
Using standard coherent duality theory \cite{HartshorneRD} we obtain a morphism in $\Deri^b(\ul{Y})$
\begin{equation}
\label{eq:push1}
f_*\colon Rf_* D_{\ul{X}}(\Omega^q_{X}) \to D_{\ul{Y}}(\Omega^q_Y), 
\end{equation}
defined as the composition
\begin{align*}
Rf_* R\Hom_{\ul{X}}(\Omega^q_X, \pi_{\ul{X}}^!(k)) &\xrightarrow{\cong} Rf_* R\Hom_{\ul{X}}(\Omega^q_X, f^!\pi_{\ul{Y}}^!(k)) \\ 
& \to R\Hom_{\ul{Y}}( Rf_*\Omega^q_X, Rf_* f^! \pi_{\ul{Y}}^!(k)) \\
& \xrightarrow{Tr_f} R\Hom_{\ul{Y}}( Rf_*\Omega^q_X, \pi_{\ul{Y}}^!(k)) \\ 
& \xrightarrow{f^*} R\Hom_{\ul{Y}}(  \Omega^q_Y, \pi_{\ul{Y}}^!(k)).
\end{align*}
Here $Tr_f$ is gotten from the trace morphism $Rf_* f^!\to \id$, and $f^*\colon \Omega_Y^q\to Rf_* \Omega^q_X$  is the pullback of forms with log poles. 
If we further assume that $f$ is a strict morphism of log schemes, 
then we have 
$$
\Omega^q_X \tensor I_{\partial X} 
= 
\Omega^q_X\tensor f^* I_{\partial Y}
$$ 
where $f^*$ denotes the pullback of coherent sheaves (locally free, in this case). 
Further the strictness assumption implies $f^{-1}(\partial Y) = \partial X$ as effective Cartier divisors on $X$.
Thus the construction of \eqref{eq:push1} furnishes a morphism
\begin{equation}
\label{eq:push2}
f_*\colon Rf_* D_{\ul{X}}(\Omega^q_X\tensor I_{\partial X}) \to D_{\ul{Y}}(\Omega^q_Y\tensor I_{\partial Y}).
\end{equation} 
\end{const}

\begin{rmk} 
The assumption that $Y\in SmlSm/k$ and $X$ has a compactifying Deligne-Faltings log structure is needed to incorporate 
the ideal sheaves $I_{\partial X}$ and $I_{\partial Y}$ in Construction \ref{const:push}, 
but it is not used to define \eqref{eq:push1}.
In Construction \ref{Construction of pushforward with support} we use the assumption on $X$ to define push forward with 
proper support.
\end{rmk}

For the proof of the following result, we refer the reader to \cite[Proposition 2.2.7]{ChatzistamatiouRullingANT}.

\begin{lem}
\label{lem:obviousproperties} 
For the pushforward morphism in (\ref{eq:push1}) we have $(\id)_*=\id$. 
If $f\colon X\to Y$ and $g\colon Y\to Z$ are proper maps between fs log schemes in $SmlSm/k$, then 
$$
(g\circ f)_* = g_* \circ Rg_*(f_*)\colon Rg_*Rf_*D_{\ul{X}}(\Omega^q_X)\to D_{\ul{Z}}(\Omega^q_Z) 
$$
for every $q\geq 0$. 
Moreover, if $f$ and $g$ are strict, the same holds for the pushforward morphism in \eqref{eq:push2}.
\end{lem}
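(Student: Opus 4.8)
The plan is to unwind Construction \ref{const:push} and reduce each assertion to a standard coherence property of Grothendieck--Verdier duality for coherent sheaves, exactly as in \cite[Proposition 2.2.7]{ChatzistamatiouRullingANT}. Throughout, everything takes place in the bounded derived categories $\Deri^b(\ul{X})$, $\Deri^b(\ul{Y})$, $\Deri^b(\ul{Z})$ of coherent sheaves, where the duality formalism of \cite{HartshorneRD} applies directly.

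First I would treat $(\id_X)_* = \id$. Taking $f = \id_X$ in \eqref{eq:push1}, the canonical identification $\pi_{\ul{X}}^! \cong \id^!\pi_{\ul{X}}^!$ is the identity, the morphism $R\Hom_{\ul{X}}(\Omega^q_X,-) \to R\Hom_{\ul{X}}(R\id_*\Omega^q_X, R\id_*(-))$ is the identity, the trace $R\id_*\id^! \to \id$ is the identity, and the pullback of logarithmic forms $\id^*\colon \Omega^q_X \to R\id_*\Omega^q_X$ is the identity by \cite[Proposition IV.1.2.15]{Ogu}. Composing these shows $(\id)_* = \id$; this step is purely formal.

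Next, for functoriality I would write out both $(g\circ f)_*$ and $g_*\circ Rg_*(f_*)$ as composites of the four basic morphisms appearing in \eqref{eq:push1}, and assemble them into one large diagram of morphisms over $\ul{Z}$ to be chased. The commutativity of that diagram follows from: (i) the pseudofunctoriality isomorphism $(gf)^! \cong f^!g^!$ for the twisted inverse image and its compatibility with the canonical isomorphisms $\pi_{\ul{X}}^! \cong (gf)^!\pi_{\ul{Z}}^!$, $\pi_{\ul{Y}}^! \cong g^!\pi_{\ul{Z}}^!$, $\pi_{\ul{X}}^! \cong f^!\pi_{\ul{Y}}^!$ induced by $\pi_{\ul{X}} = \pi_{\ul{Z}}\circ g\circ f$; (ii) the transitivity of the trace, $Tr_{gf} = Tr_g\circ Rg_*(Tr_f)$, under $R(gf)_* \cong Rg_*Rf_*$ and $(gf)^! \cong f^!g^!$; (iii) the functoriality of the pullback of forms with log poles, $(gf)^* = Rg_*(f^*)\circ g^*$, again from \cite[Proposition IV.1.2.15]{Ogu}; and (iv) the naturality of the ``move $R\Hom$ past a direct image'' transformation together with its compatibility with composition of direct images. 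Each of these is a standard fact of coherent duality theory, so the remaining content is the diagram chase; I expect step (ii) to be the main obstacle, since combining the transitivity of the duality trace with the pullback of forms requires care with the various canonical identifications.

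Finally, for the strict case I would observe that strictness of $f$ and $g$ forces $f^{-1}(\partial Y) = \partial X$ and $g^{-1}(\partial Z) = \partial Y$ as effective Cartier divisors, hence $(g\circ f)^{-1}(\partial Z) = \partial X$, together with canonical isomorphisms $f^*I_{\partial Y} \cong I_{\partial X}$ and $g^*I_{\partial Z} \cong I_{\partial Y}$ that are compatible under composition. Since \eqref{eq:push2} is obtained from \eqref{eq:push1} by twisting all the sheaves involved by the respective ideal sheaves, these compatibilities let the two arguments above go through verbatim while tracking the twist, so no new difficulty arises beyond bookkeeping with the ideal sheaves.
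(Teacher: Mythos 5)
Your proposal is correct and is essentially the argument the paper has in mind: the paper itself gives no written proof and simply defers to \cite[Proposition 2.2.7]{ChatzistamatiouRullingANT}, whose content is exactly the reduction you describe to pseudofunctoriality of $(-)^!$, transitivity of the trace, functoriality of the pullback of log forms, and naturality of the $R\Hom$/pushforward exchange, followed by a diagram chase. Your handling of the strict case via the compatible identifications $f^*I_{\partial Y}\cong I_{\partial X}$ and $g^*I_{\partial Z}\cong I_{\partial Y}$ is also the intended bookkeeping.
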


Let $i\colon X\hookrightarrow Y$ be a strict closed immersion of fs log schemes in $SmlSm/k$, 
and assume that $\ul{X}$ has pure codimension $c$ in $\ul{Y}$, 
so that $\ul{X}\hookrightarrow \ul{Y}$ is a closed immersion of smooth schemes of pure codimension $c$. 
If we combine \eqref{eq:push2} with Lemma \ref{lem:dualitylog} then for every $g\geq0$ we have the composite
\begin{equation}
\label{eq:push3closedimm}
i_* \Omega^q_X  
\xrightarrow{i_*\theta_X} i_*D_{\ul{X}} (\Omega_{X}^{d_X-q}\tensor I_{\partial X})[-d_X] 
\xrightarrow{i_*} D_{\ul{Y}}(\Omega_Y^{d_X-q}\tensor I_{\partial Y})[-d_X]
\xrightarrow{\theta_Y} \Omega^{c+q}_{Y}[c].
\end{equation}
Here $\theta_X$ is the duality isomorphism \eqref{eq:dualitylog} for $X$,  
$\theta_Y$ is the duality isomorphism for $Y$, 
and $d_X$ is the dimension of $X$.

\begin{lem}
\label{functorialitypush} 
Let $i\colon X\hookrightarrow Y$ be a strictly closed immersion of fs log schemes in $SmlSm/k$. 
Let $f\colon Y'\to Y$ be a  morphism in $SmlSm/k$ and assume there is a cartesian square
\[
\begin{tikzcd}
X'\arrow[d,"f'"']\arrow[r,"i'"]&Y'\arrow[d,"f"]\\
X\arrow[r,"i"]&Y
\end{tikzcd}
\]
where $X'\in SmlSm/k$.  
Assume that $\ul{X}$ has pure codimension $c=d_Y-d_X=d_{Y'}-d_{X'}$, where $d_{(-)}$ denotes the dimension of a scheme.
\vspace{0.1in}

Then for all $q\geq 0$, there is a commutative diagram 
\[
\begin{tikzcd}
i_* Rf'_* \Omega_X'^q \arrow[r,"\eqref{eq:push3closedimm}'"]& Rf_* \Omega^{c+q}_{Y'}[c] \\
i_* \Omega^q_X \arrow[u,"(f')^*"] \arrow[r,"\eqref{eq:push3closedimm}"]& \Omega^{c+q}_{Y}[c]. \arrow[u, "f^* "']
\end{tikzcd}
\]
\end{lem}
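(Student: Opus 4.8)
The plan is to reduce the claimed compatibility to the functoriality properties of the three ingredients making up \eqref{eq:push3closedimm}: the duality isomorphisms $\theta_X$, $\theta_{X'}$, $\theta_Y$, $\theta_{Y'}$ of Lemma \ref{lem:dualitylog}; the pushforward morphism $i_*$ of Construction \ref{const:push} for strict closed immersions; and the pullback morphism $(f')^*$ (resp.\ $f^*$) of Construction \ref{constr:pullback}. First I would break the rectangle into three stacked squares corresponding to the three arrows in the composite \eqref{eq:push3closedimm}, namely
\begin{equation*}
i_*\Omega^q_X \xrightarrow{i_*\theta_X} i_* D_{\ul X}(\Omega^{d_X-q}_X\otimes I_{\partial X})[-d_X]
\xrightarrow{i_*} D_{\ul Y}(\Omega^{d_X-q}_Y\otimes I_{\partial Y})[-d_X]
\xrightarrow{\theta_Y} \Omega^{c+q}_Y[c],
\end{equation*}
and similarly for the primed version over $Y'$, then check that each square commutes after applying $i_*$ on the source side and interposing the appropriate instance of $(f')^*$ or $f^*$.

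The top square (involving $\theta_X$ and $\theta_{X'}$) should reduce to the naturality of the exterior-product isomorphism \eqref{eq:dualitylogbis} with respect to the pullback morphism $f'^*\Omega^j_X\to \Omega^j_{X'}$ of \cite[Proposition IV.1.2.15]{Ogu}, together with the fact that $f'$ restricts to a morphism of the relevant closed pairs and that the ideal sheaf behaves well: since the square is cartesian and $f$ is a morphism in $SmlSm/k$, one has $f'^{-1}(\partial X)=\partial X'$ as Cartier divisors and $f'^*I_{\partial X}=I_{\partial X'}$. The middle square, comparing $i_*$ for $X\hookrightarrow Y$ with $i'_*$ for $X'\hookrightarrow Y'$, is the base-change compatibility of the duality pushforward \eqref{eq:push2}; this is the log analogue of \cite[Proposition 2.2.9]{ChatzistamatiouRullingANT}, and I would invoke the standard base-change theorem for the twisted inverse image functor $f^!$ applied to the cartesian square (using that $f$ has finite Tor dimension since it is a morphism of smooth $k$-schemes), compatibility of trace morphisms with flat — or more generally Tor-independent — base change, and the compatibility of $i^*$-pullbacks recorded in \cite[(2.1.8)]{ChatzistamatiouRullingANT}. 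The bottom square (involving $\theta_Y$, $\theta_{Y'}$) is again the naturality of the exterior-product duality isomorphism for the ambient schemes, in complete parallel with the top square.

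Assembling the three commuting squares yields the desired commutativity, where the left vertical arrow of the rectangle is $(f')^*$ on $i_*\Omega^q_X$ and the right vertical arrow is $f^*$ on $\Omega^{c+q}_Y[c]$, both in the sense of Construction \ref{constr:pullback}; one also has to observe that $i_*$ applied to $(f')^*$ and the ``outer'' pullback $f^*$ in the bottom square are compatibly related through the isomorphism $R\Gamma_{f^{-1}(X)}\cong R\Gamma_X Rf_*$ identifying $X'$ with $Y'\times_Y X$, which holds because the square is cartesian. The main obstacle will be the middle square: carefully setting up the base-change isomorphism for the duality pushforward \eqref{eq:push2} in a way that keeps track of the ideal sheaves $I_{\partial X}$, $I_{\partial Y}$ and their pullbacks, and verifying that the trace morphism $Tr_f$ is compatible with this base change. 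This is routine in the sense that it is exactly the argument of \cite[\S 2.2]{ChatzistamatiouRullingANT} transported to the twisted (logarithmic) setting, but it requires verifying that the hypotheses of the relevant base-change theorems are met — in particular that the morphism $f\colon Y'\to Y$ in $SmlSm/k$ is Tor-independent of $i\colon X\hookrightarrow Y$ over $Y$, which follows from smoothness of $\ul{X'}$ together with the codimension hypothesis $c=d_{Y'}-d_{X'}$.
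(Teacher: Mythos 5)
Your overall strategy --- factor the rectangle into three squares along the composite \eqref{eq:push3closedimm} and verify each one --- is natural, but as written it has a genuine gap: the vertical maps in the two intermediate columns are never actually defined, and they cannot be produced by the naturality you invoke. The dualizing functor $D_{\ul{X}}$ is contravariant, so the pullback of forms $f'^*\Omega^{d_X-q}_X\to\Omega^{?}_{X'}$ induces a map on duals in the wrong direction; worse, since $f\colon Y'\to Y$ is an arbitrary morphism in $SmlSm/k$, one has $d_{X'}-d_X=d_{Y'}-d_Y\neq 0$ in general, so the objects $D_{\ul{X}}(\Omega^{d_X-q}_X\otimes I_{\partial X})[-d_X]$ and $D_{\ul{X'}}(\Omega^{d_{X'}-q}_{X'}\otimes I_{\partial X'})[-d_{X'}]$ involve forms of different degrees and different shifts, and there is no ``naturality of the exterior-product isomorphism'' relating them. (A further problem: $f'^*I_{\partial X}\neq I_{\partial X'}$ in general, since the log structure on $Y'$ need not be pulled back from $Y$ --- take $Y'=(\A^1,\{0\})\to Y=\A^1$ with trivial log structure on $Y$.) If instead you define the intermediate verticals by transporting $(f')^*$ and $f^*$ across the isomorphisms $\theta$, then the top and bottom squares commute by fiat and the middle square becomes equivalent to the original statement, so nothing is gained. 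Either way, the compatibility of the abstract duality/trace formalism with the concrete wedge-product isomorphisms $\theta_X,\theta_Y$ of Lemma \ref{lem:dualitylog} is precisely the nontrivial content, and it is not delivered by base change for $f^!$ alone.

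The paper's proof takes a different and, here, essential route: it describes the composite \eqref{eq:push3closedimm} explicitly in terms of local cohomology sheaves --- using that $\Omega^{c+q}_Y$ is locally free and $\ul{X}$ has pure codimension $c$, the map factors through $R\Gamma_X\Omega^{c+q}_Y[c]$, whose only nonvanishing cohomology sheaf is $\mathcal{H}^c_X(\Omega^{c+q}_Y)$, so everything reduces to a map of sheaves $i_*\Omega^q_X\to\mathcal{H}^c_X(\Omega^{c+q}_Y)$ given in local coordinates by residue symbols. Compatibility with $(f')^*$ and $f^*$ is then a direct local computation, which is \cite[Corollary 2.2.22]{ChatzistamatiouRullingANT} transported verbatim to the log setting (the log structure only changes the locally free sheaf of forms, not the argument). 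If you want to salvage your approach, you would need to carry out exactly this explicit identification before your three squares can even be formulated.
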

\begin{proof} 
The proof requires an explicit description of the map \eqref{eq:push3closedimm} using local cohomology sheaves. 
Since the log structure does not cause any complications, 
the argument in \cite[Corollary 2.2.22]{ChatzistamatiouRullingANT} applies, \emph{mutatis mutandis}, to our situation.
\end{proof}

Assume that $\ul{X}$ is integral and smooth. 
Applying $R\Gamma_X(Y,-)$ to \eqref{eq:push3closedimm}, we get a morphism
\[ 
R\Gamma(X,\Omega^q_X) = R\Gamma_X(Y,Ri_* \Omega^q_X) = R\Gamma_X(Y, i_* \Omega^q_X )
\to  
R\Gamma_X(Y,\Omega^{c+q}_{Y})[c] \]
in $\Deri^b(\mathbf{Ab})$.
Furthermore, on cohomology, we obtain
\[
H(i_*)\colon H^*(X, \Omega^q_X) \to H^{*+c}_X(Y, \Omega^{q+c}_Y).
\]
In particular, for $q=*=0$, there is a morphism
\begin{equation}
\label{eq:cycleclasssmooth}
H(i_*)\colon 
H^0(X, \cO_X) 
\to 
H^c_X(Y, \Omega_Y^c).
\end{equation}
The image of $1\in H^0(X, \cO_X)$ along the map $H(i_*)$ of \eqref{eq:cycleclasssmooth} is called the \emph{cycle class} of the strict closed subscheme $X$ in $Y$. 
In order to extend the definition to an arbitrary strict closed subscheme, we first establish some auxiliary results. 

\begin{lem}
\label{vanishing of hodge cohomology with support}
Let $\ul{X}$ be a smooth and separated scheme over $k$, and let $\ul{Z}$ be a closed subscheme of $\ul{X}$ of pure codimension $c$. 
Let $D$ be a strict normal crossing divisor on $\ul{X}$. 
Then, for any $i<c$ and $j$, we have the vanishing
\[
H_{\ul{Z}}^i(\ul{X},\Omega_{\ul{X}}^j(\log D))=0.
\]
\end{lem}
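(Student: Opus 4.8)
<br>

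The plan is to reduce the vanishing $H^i_{\ul{Z}}(\ul{X}, \Omega^j_{\ul{X}}(\log D)) = 0$ for $i < c$ to the classical statement that $\mathcal{H}^i_{\ul{Z}}(\mathcal{F}) = 0$ for $i < c$ whenever $\mathcal{F}$ is a locally free coherent sheaf on $\ul{X}$ and $\ul{Z}$ has pure codimension $c$. The key observation is that $\Omega^j_{\ul{X}}(\log D)$ is a locally free $\cO_{\ul{X}}$-module of finite rank, since $\ul{X}$ is smooth and $D$ is a strict normal crossing divisor (locally it is the free module on $d\log x_1, \ldots, d\log x_r, dx_{r+1}, \ldots, dx_n$ as recalled before Proposition \ref{prop:boxinvdiff}). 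So the problem is purely one about local cohomology of a locally free sheaf, and the log structure enters only through the fact that $\Omega^j_{\ul{X}}(\log D)$ happens to be locally free.

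The steps I would carry out are as follows. First, recall the local cohomology spectral sequence (or the hypercohomology spectral sequence for $R\Gamma_{\ul{Z}}$)
\[
E_2^{p,q} = H^p(\ul{X}, \mathcal{H}^q_{\ul{Z}}(\Omega^j_{\ul{X}}(\log D))) \Rightarrow H^{p+q}_{\ul{Z}}(\ul{X}, \Omega^j_{\ul{X}}(\log D)),
\]
where $\mathcal{H}^q_{\ul{Z}}(-)$ denotes the $q$-th local cohomology sheaf with support in $\ul{Z}$. Second, invoke the standard vanishing: for a locally free (more generally, a coherent Cohen–Macaulay, or here simply locally free on a smooth scheme) sheaf $\mathcal{F}$ and $\ul{Z}$ of pure codimension $c$, one has $\mathcal{H}^q_{\ul{Z}}(\mathcal{F}) = 0$ for all $q < c$; this is Grothendieck's local cohomology vanishing theorem (depth argument), valid since $\ul{X}$ is smooth so $\cO_{\ul{X}}$ is regular and $\Omega^j_{\ul{X}}(\log D)$ is locally free. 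Third, conclude that $E_2^{p,q} = 0$ whenever $q < c$; hence for $i = p + q < c$ (forcing $q < c$) every contributing term vanishes, so $H^i_{\ul{Z}}(\ul{X}, \Omega^j_{\ul{X}}(\log D)) = 0$.

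There is essentially no serious obstacle here; the statement is a routine consequence of the local-to-global spectral sequence for local cohomology together with Grothendieck's depth/codimension vanishing, once one notes local freeness of the sheaf of log differentials. The only point requiring a moment of care is to confirm that purity of the codimension of $\ul{Z}$ is genuinely what is needed (it ensures $\operatorname{depth}_{\ul{Z}} \mathcal{F} \geq c$ everywhere since $\mathcal{F}$ is locally free and $\cO_{\ul{X},z}$ is regular of dimension $\geq c$ at generic points $z$ of $\ul{Z}$), and that the argument is local so the global structure of $D$ is irrelevant beyond ensuring local freeness. One can cite, e.g., \cite[Expos\'e III]{SGA2} or the corresponding results in \cite{HartshorneRD} for the local cohomology vanishing; no new ideas from log geometry are required.
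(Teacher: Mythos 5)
Your proof is correct and follows essentially the same route as the paper: both arguments come down to the local freeness of $\Omega^j_{\ul{X}}(\log D)$ together with Grothendieck's depth/codimension vanishing of local cohomology over a regular (hence Cohen--Macaulay) local ring. The only cosmetic difference is that you package the reduction to the punctual statement via the local-to-global spectral sequence for $R\Gamma_{\ul{Z}}$, whereas the paper invokes the localization sequence to reduce to the case of a local scheme with $\ul{Z}$ the closed point; the key input is identical.
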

\begin{proof}
By the localization sequence, one reduces to the case where $X$ is local, and $Z$ is the closed point. 
In this case, the assertion follows from the vanishing of local cohomology, using that $\Omega_{\ul{X}}^j(\log D)$ is locally free and that a regular local ring is Gorenstein, 
see e.g., \cite[Theorem 6.3]{localcohomology}.
\end{proof}

Lemma \ref{vanishing of hodge cohomology with support} immediately implies the following result.
\begin{lem}
\label{injectivity of hodge cohomology}
Let $\ul{X}$ be a smooth and separated scheme over $k$, 
and let 
$$
\ul{W}\stackrel{\mu}\rightarrow \ul{Z}\stackrel{\nu}\rightarrow \ul{X}
$$ 
be closed immersions such that the codimension of $\nu$ (resp.\ $\mu$) is $c$ (resp.\ $\geq 1$).
Then the naturally induced homomorphism
\[
H_{\ul{Z}}^c(\ul{X},\Omega_{\ul{X}}^c)
\rightarrow
H_{\ul{Z}-\ul{W}}^c(\ul{X}-\ul{W},\Omega_{\ul{X}-\ul{W}}^c)
\]
is injective. 
\end{lem}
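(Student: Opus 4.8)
The plan is to reduce the statement to a vanishing of local cohomology along $\ul{W}$, via the excision long exact sequence for cohomology with support, after which the assertion follows formally from Lemma \ref{vanishing of hodge cohomology with support}.

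First I would invoke the standard localization triangle for cohomology with support attached to the chain of closed subschemes $\ul{W}\subseteq \ul{Z}\subseteq \ul{X}$: writing $\ul{U}:=\ul{X}-\ul{W}$, for any coherent sheaf $\cF$ on $\ul{X}$ there is a distinguished triangle
\[
R\Gamma_{\ul{W}}(\ul{X},\cF)\rightarrow R\Gamma_{\ul{Z}}(\ul{X},\cF)\rightarrow R\Gamma_{\ul{Z}-\ul{W}}(\ul{U},\cF|_{\ul{U}})\rightarrow R\Gamma_{\ul{W}}(\ul{X},\cF)[1]
\]
in $\Deri^b(\mathbf{Ab})$. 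Applying this with $\cF=\Omega_{\ul{X}}^c$ and using the identification $\Omega_{\ul{X}}^c|_{\ul{U}}\cong \Omega_{\ul{U}}^c$, the associated long exact sequence contains the portion
\[
H_{\ul{W}}^c(\ul{X},\Omega_{\ul{X}}^c)\rightarrow H_{\ul{Z}}^c(\ul{X},\Omega_{\ul{X}}^c)\rightarrow H_{\ul{Z}-\ul{W}}^c(\ul{U},\Omega_{\ul{U}}^c),
\]
and the second arrow is precisely the restriction homomorphism of the statement. Hence the claim will follow once I show $H_{\ul{W}}^c(\ul{X},\Omega_{\ul{X}}^c)=0$.

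For this vanishing I would use that every irreducible component of $\ul{W}$ has codimension $\geq c+1$ in $\ul{X}$: by hypothesis $\mu$ has codimension $\geq 1$, so $\ul{W}$ has codimension $\geq 1$ in $\ul{Z}$, while $\nu$ has pure codimension $c$. If $\ul{W}$ is irreducible this is immediate from Lemma \ref{vanishing of hodge cohomology with support} applied with $D=\emptyset$ and with the codimension parameter there taken to be ${\rm codim}(\ul{W},\ul{X})\geq c+1$, since then $c<{\rm codim}(\ul{W},\ul{X})$. For a general $\ul{W}$ — recall that $H_{\ul{W}}^{*}$ depends only on the support of $\ul{W}$ — I would write $\ul{W}_{\mathrm{red}}=\ul{W}_1\cup\cdots\cup\ul{W}_r$ as the union of its irreducible components and induct on $r$ by means of the Mayer--Vietoris triangle
\[
R\Gamma_{\ul{W}_1\cup\ul{W}'}\rightarrow R\Gamma_{\ul{W}_1}\oplus R\Gamma_{\ul{W}'}\rightarrow R\Gamma_{\ul{W}_1\cap\ul{W}'}\rightarrow R\Gamma_{\ul{W}_1\cup\ul{W}'}[1]
\]
for $\ul{W}'=\ul{W}_2\cup\cdots\cup\ul{W}_r$, all evaluated on $\Omega_{\ul{X}}^c$ over $\ul{X}$, noting that $\ul{W}_1\cap\ul{W}'$ again has all of its components of codimension $\geq c+1$. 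This yields $H_{\ul{W}}^i(\ul{X},\Omega_{\ul{X}}^c)=0$ for every $i\leq c$, in particular for $i=c$; feeding this into the exact sequence above gives the injectivity.

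Since Lemma \ref{vanishing of hodge cohomology with support} is already available, the argument is essentially formal and I do not anticipate a genuine obstacle; the only mild subtlety is accommodating the case where $\ul{W}$ is not of pure codimension, which the Mayer--Vietoris d\'evissage above handles (alternatively, one may argue directly that $\Omega_{\ul{X}}^c$ is locally free on the regular — hence Cohen--Macaulay — scheme $\ul{X}$, so that its local cohomology along any closed subset vanishes in degrees below the codimension of that subset).
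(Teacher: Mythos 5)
Your argument is correct and is essentially the paper's own: the paper derives this lemma as an immediate consequence of Lemma \ref{vanishing of hodge cohomology with support} via the localization exact sequence for the chain $\ul{W}\subseteq\ul{Z}\subseteq\ul{X}$, exactly as you do. Your extra d\'evissage for the case where $\ul{W}$ is not of pure codimension is a reasonable precaution (the vanishing lemma is stated for pure codimension), but the paper treats this as immediate.
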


We can now recall the definition of Grothendieck's ``fundamental class'' for a closed subscheme of a smooth $k$-scheme.

\begin{lem}
\label{lem:cycleclassGroth}
Let $\ul{X}$ be a smooth and separated scheme over $k$, and let $\ul{Z}$ be a closed subscheme of $\ul{X}$ of pure codimension $c$.
Consider the  open immersion $j\colon U=\ul{X}-\ul{Z}_{sing}\rightarrow \ul{X}$, and consider the induced homomorphism \eqref{eq:cycleclasssmooth}
\[
H(i_*)
\colon 
H_{\ul{Z}_{\rm reg}}^0(\ul{Z}_{\rm reg},\cO_{\ul{Z}_{\rm reg}})
\rightarrow H_{\ul{Z}_{\rm reg}}^c(U,\Omega_{U}^c),
\]
where $\ul{Z}_{\rm reg}$ is the regular (equivalenty, smooth, since $k$ is perfect) locus of $\ul{Z}$.
Then the following statements hold.
\begin{enumerate}
\item[{\rm (1)}] The naturally induced homomorphism
\[
j^*:H_{\ul{Z}}^c(\ul{X},\Omega_{\ul{X}}^c)\rightarrow H_{\ul{Z}_{reg}}^c(U,\Omega_{U}^c).
\]
is injective.
\item[{\rm (2)}] There exists a unique element $cl(\ul{Z})$ in $H_{\ul{Z}}^c(\ul{X},\Omega_{\ul{X}}^c)$ such that
\[
j^*(cl(\ul{Z}))=H(i_*)(1).
\]
\end{enumerate}
\end{lem}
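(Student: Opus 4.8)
\textbf{Proof plan for Lemma \ref{lem:cycleclassGroth}.}
The plan is to reduce both statements to the vanishing and injectivity results just proved, namely Lemma \ref{vanishing of hodge cohomology with support} and Lemma \ref{injectivity of hodge cohomology}. For part (1), observe that $\ul{Z}_{sing}$ is a closed subscheme of $\ul{X}$ that, because $k$ is perfect and $\ul{Z}$ is pure of codimension $c$, has codimension $\geq c+1$ in $\ul{X}$; in particular its intersection with $\ul{Z}$ has codimension $\geq 1$ inside $\ul{Z}$. Hence the chain of closed immersions $\ul{Z}_{sing}\cap\ul{Z}=\ul{Z}_{sing}\hookrightarrow \ul{Z}\hookrightarrow \ul{X}$ meets exactly the hypotheses of Lemma \ref{injectivity of hodge cohomology} (with $\ul{W}=\ul{Z}_{sing}$, $\ul{Z}=\ul{Z}$, $D=\emptyset$), and therefore the restriction map
\[
j^*\colon H^c_{\ul{Z}}(\ul{X},\Omega^c_{\ul{X}})\to H^c_{\ul{Z}-\ul{Z}_{sing}}(\ul{X}-\ul{Z}_{sing},\Omega^c_{\ul{X}-\ul{Z}_{sing}})
\]
is injective. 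Since $\ul{Z}-\ul{Z}_{sing}=\ul{Z}_{reg}$ sits inside $U=\ul{X}-\ul{Z}_{sing}$ as a smooth closed subscheme of pure codimension $c$, the target is exactly $H^c_{\ul{Z}_{reg}}(U,\Omega^c_U)$, giving (1).

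For part (2), uniqueness is immediate from the injectivity in (1): at most one class can restrict to $H(i_*)(1)$. For existence, the task is to show that the element $H(i_*)(1)\in H^c_{\ul{Z}_{reg}}(U,\Omega^c_U)$ actually lies in the image of $j^*$. The natural tool is the excision/localization long exact sequence relating cohomology with supports in $\ul{Z}$ on $\ul{X}$, cohomology with supports in $\ul{Z}_{reg}$ on $U$, and cohomology with supports in $\ul{Z}_{sing}$ on $\ul{X}$. Concretely, there is a long exact sequence
\[
\cdots\to H^c_{\ul{Z}_{sing}}(\ul{X},\Omega^c_{\ul{X}})\to H^c_{\ul{Z}}(\ul{X},\Omega^c_{\ul{X}})\xrightarrow{j^*} H^c_{\ul{Z}_{reg}}(U,\Omega^c_U)\xrightarrow{\partial} H^{c+1}_{\ul{Z}_{sing}}(\ul{X},\Omega^c_{\ul{X}})\to\cdots
\]
and it suffices to check that $\partial(H(i_*)(1))=0$. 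But $\ul{Z}_{sing}$ has codimension $\geq c+1$ in $\ul{X}$, so by Lemma \ref{vanishing of hodge cohomology with support} (with $D=\emptyset$, $i=c+1<\mathrm{codim}$) the group $H^{c+1}_{\ul{Z}_{sing}}(\ul{X},\Omega^c_{\ul{X}})$ vanishes; hence $\partial$ is the zero map and $H(i_*)(1)$ lifts uniquely to the desired class $cl(\ul{Z})\in H^c_{\ul{Z}}(\ul{X},\Omega^c_{\ul{X}})$.

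I expect the main obstacle to be purely bookkeeping: making sure the codimension estimate $\mathrm{codim}_{\ul{X}}\ul{Z}_{sing}\geq c+1$ is correctly justified (this uses that $k$ is perfect, so $\ul{Z}_{reg}$ is dense open in $\ul{Z}$, hence $\ul{Z}_{sing}$ is a proper closed subset of each pure-codimension-$c$ component), and that the localization sequence for cohomology with supports is set up with the correct ambient spaces and support loci so that the excised space $U=\ul{X}-\ul{Z}_{sing}$ matches on both sides. Once the geometry is pinned down, the two lemmas on vanishing and injectivity of Hodge cohomology with support do all the real work, and there is no further computation needed. I would also remark that the construction is compatible with the one in the smooth case, since if $\ul{Z}$ is already smooth then $\ul{Z}_{sing}=\emptyset$, $j$ is the identity, and $cl(\ul{Z})=H(i_*)(1)$ by definition.
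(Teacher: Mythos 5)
Your argument for part (1), and for the uniqueness half of part (2), is exactly the paper's: both reduce to Lemma \ref{injectivity of hodge cohomology} applied to $\ul{Z}_{sing}\hookrightarrow\ul{Z}\hookrightarrow\ul{X}$. That part is fine.

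The existence half of part (2) has a genuine gap. Your plan is to kill the boundary map in the localization sequence by showing $H^{c+1}_{\ul{Z}_{sing}}(\ul{X},\Omega^c_{\ul{X}})=0$, invoking Lemma \ref{vanishing of hodge cohomology with support}. But that lemma gives vanishing of $H^i_{W}(\ul{X},-)$ only for $i<\operatorname{codim}W$, and you have only established $\operatorname{codim}_{\ul{X}}\ul{Z}_{sing}\geq c+1$; so the lemma covers degrees $i\leq c$ and says nothing about degree $c+1$. The group genuinely need not vanish: take $\ul{X}=\A^2$, $\ul{Z}$ a nodal curve ($c=1$), $\ul{Z}_{sing}$ the node; then $H^2_{\ul{Z}_{sing}}(\A^2,\Omega^1_{\A^2})$ is the top local cohomology of a free module at a closed point of a two-dimensional regular local ring, which is nonzero. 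So the boundary map $\partial$ is not zero for formal reasons, and what one must actually prove is that the \emph{specific element} $\partial\bigl(H(i_*)(1)\bigr)$ vanishes. This is the real content of the existence statement, and it requires an argument (e.g.\ expressing $H^c_{\ul{Z}}(\ul{X},\Omega^c)$ via the Cousin/coniveau complex and checking that the sum of local symbols at the generic points of $\ul{Z}$ is a cocycle); the paper does not reprove it but cites \cite[Proposition 3.1.1]{ChatzistamatiouRullingANT} for precisely this point. Your localization-sequence setup is the right frame, but as written the key step fails.
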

\begin{proof}
Assertion (1) follows from Lemma \ref{injectivity of hodge cohomology} and assertion (2) is explained in \cite[Proposition 3.1.1]{ChatzistamatiouRullingANT}
\end{proof}

Putting the above together, we arrive at the following construction of a cycle class.

\begin{df}
\label{Construction of cycle class}
Let $X$ be an integral fs log scheme in $SmlSm/k$,
and let $\nu:Z\rightarrow X$ be a strict closed immersion of fs log schemes such that $\ul{Z}$ has pure codimension $c$ in $\ul{X}$.
In this setting, we define the \emph{cycle class}\index{cycle class} of $Z$
\[
cl(Z)\in H_Z^c(X,\Omega_X^c)
\]
as follows: 
Let $j\colon X-\partial X\rightarrow X$ and $p_X\colon X\rightarrow \underline{X}$ denote the canonical morphisms to the underlying scheme of $X$. 
In degree $c$ we have the homomorphisms $p_X^*$ and $j^*$ 
\[
H_{\underline{Z}}^c(\underline{X},\Omega_{\underline{X}}^c)\stackrel{p_X^*}
\rightarrow H_Z^c(X,\Omega_X^c)\stackrel{j^*}
\rightarrow H_{Z-\partial Z}^c(X-\partial X,\Omega_{X-\partial X}^c)
\]
from Remark \ref{rmk:pullback}(3).
Since pullbacks are compatible with compositions, 
the composition in question is the restriction map $\underline{j}^*$ for cohomology with support on the underlying schemes (both $\ul{X}$ and $X-\partial X$ have trivial log structures). 
Lemma \ref{injectivity of hodge cohomology} shows $\underline{j}^*$ is injective.
It follows that also $p_X^*$ is injective,
and we set 
\[
cl(Z):=p^*_X(cl(\underline{Z})).
\]
Note that $j^*cl(Z)\cong cl(Z-\partial Z)$ by Lemma \ref{lem:cycleclassGroth} and Lemma \ref{functorialitypush}. 
By the same token, when $Z\in SmlSm/k$, the class $cl(Z)$ agrees with $H(i_*)(1)$ of \eqref{eq:cycleclasssmooth}. 
\end{df}

\subsection{Pushforward with proper support}
Let $f\colon X\to Y$ be a strict morphism of integral fs log schemes in $SmlSm/k$, 
and let $u\colon Z\hookrightarrow X$ be a strictly closed immersion of fs log schemes. 
We will assume that the restriction of $f$ to $Z$ is proper and surjective onto $Y$. 

\begin{df}
\label{df:goodcomp} 
A good compactification\index{good compactification} of $f$ as above is a factorization 
\[
f=\ol{f}\circ j\colon X\hookrightarrow \ol{X}\xrightarrow{\ol{f}} Y,
\]
where $\ol{X}\to Y$ is a proper and strict map, 
$X\hookrightarrow \ol{X}$ is a dense open immersion, and
$\ol{X}$ is an integral normal fs log scheme with compactifying Deligne-Faltings log structure $\partial X\to X$, 
where $\partial X$ an effective Cartier divisor on $\ul{X}$.
\end{df}
\begin{rmk}
\label{rmk:goodcomp2}
A good compactification exists for every $f$ as above.
Indeed, 
Nagata's compactification theorem \cite{ConradNagata} shows that every separated morphism of finite type $\ul{f}\colon \ul{X}\to \ul{Y}$ between Noetherian schemes admits a compactification, 
i.e., 
there exists a factorization into an open immersion followed by a proper map $\ol{f}\colon \ul{X}'\to Y$. 
Letting $\ol{X}$ denote the fs log scheme with log structure $(\ol{f}^*\cM_Y)_{\rm log}$ on $\ul{X}'$ we obtain a morphism $\ol{f}\colon\ol{X}\to Y$. 
By a standard argument, 
we may assume that $\ul{X}'$ is normal (simply take the normalization of $\ul{X}$ which does not alter the interior) and that the subscheme $\partial X$, 
where the log structure is nontrivial, 
is the support of an effective Cartier divisor. 
If we further assume that resolution of singularities holds over $k$, we may assume $\ol{X}\in SmlSm/k$.    
\end{rmk}

\begin{const}
\label{Construction of pushforward with support}
Let $f:X\rightarrow Y$ be a strict morphism of integral schemes in $SmlSm/k$. 
Let $u:Z\rightarrow X$ be a strictly closed immersion of fs log schemes and assume that the restriction of $f$ to $Z$ is proper and surjective.  
Set $d_X:=\dim X$, $d_Y:=\dim Y$, and $r:=d_X-d_Y$.
In this setting, 
for every $i$ and $j$, we will construct the pushforward morphism
\begin{equation}
\label{pushforward for differentials with supports}
H(f_*):H_Z^{i+r}(X,\Omega_X^{j+r})\rightarrow H^i(Y,\Omega_Y^j).
\end{equation}
First recall from  Lemma \ref{lem:dualitylog} the duality isomorphisms 
\begin{align}
\label{eq:properpush1}  
\theta_X
\colon 
H_Z^{i+r}(X,\Omega_X^{j+r}) 
& \xrightarrow{\cong} H_Z^{i+r-d_X}(X,D_{\underline{X}}(\Omega_X^{d_X-j-r}\otimes I_{\partial X}))\\ 
\nonumber
& =H_Z^{i-d_Y}(X,D_{\underline{X}}(\Omega_X^{d_Y-j}\otimes I_{\partial X})),
\end{align}
and 
\begin{equation}\label{eq:properpush1bis} 
\theta_Y\colon H^{i+r}(Y,\Omega_Y^{j+r})\xrightarrow{\cong} H^{i-d_Y}(Y,D_{\underline{Y}}(\Omega_Y^{d_Y-j}\otimes I_{\partial Y})).
\end{equation}
Here $\theta_X$ is obtained from \eqref{eq:dualitylog} by applying $R\Gamma_Z(X,-)$. 
Thus we are left to construct 
\[
H_Z^{i-d_Y}(X,D_{\underline{X}}(\Omega_X^{d_Y-j}\otimes I_{\partial X}))
\rightarrow 
H^{i-d_Y}(Y,D_{\underline{Y}}(\Omega_Y^{d_Y-j}\otimes I_{\partial Y})).
\]

If $f$ is proper, 
this is precisely the content of \eqref{eq:push1} after applying the cohomology with support functor 
(here, we use the surjectivity of the map from $Z$ to $Y$; otherwise, we would have to introduce support in the target). 
When $f$ is not proper, choose a good compactification $\ol{f}$ in the sense of Definition \ref{df:goodcomp}, which is possible by Remark \ref{rmk:goodcomp2}.
Then we have a natural isomorphism
\begin{equation}
\label{eq:properpush2} 
H_Z^{i-d_Y}(X,D_{\underline{X}}(\Omega_X^{d_Y-j}\otimes I_{\partial X})) 
\xrightarrow{\cong} 
H_Z^{i-d_Y}(\ol{X},D_{\underline{\ol{X}}}(\Omega_{\ol{X}}^{d_Y-j}\otimes I_{\partial {\ol{X}}})) 
\end{equation}
by excision since $Z\subset X\hookrightarrow\ol{X}$ is a strict closed immersion. 
Applying \eqref{eq:push1} to $\ol{f}$ we get
\begin{equation}
\label{eq:properpush3}
H_Z^{i-d_Y}(\ol{X},D_{\underline{\ol{X}}}(\Omega_{\ol{X}}^{d_Y-j}\otimes I_{\partial {\ol{X}}}))
\to  
H^{i-d_Y}(Y,D_{\underline{Y}}(\Omega_Y^{d_Y-j}\otimes I_{\partial Y})).
\end{equation}
Composing \eqref{eq:properpush1}, \eqref{eq:properpush2}, \eqref{eq:properpush3} and the inverse of \eqref{eq:properpush1bis}, we obtain \eqref{pushforward for differentials with supports}.
\end{const}

The following lemma is a routine check, see \cite[Definition 2.3.2, Proposition 2.3.3]{ChatzistamatiouRullingANT}.

\begin{lem} 
The map \eqref{pushforward for differentials with supports} is independent of the choice of compactification. 
Let $f\colon X\to Y$ and $g\colon Y\to V$ be two strict morphisms of integral fs log schemes in $SmlSm/k$ such that the restrictions $f\circ u$ and $g\circ f\circ u$ are proper and surjective.
If we set $l=\dim X-\dim V$, 
then 
$$
H((g\circ f)_*) = H(g_*)\circ H(f_*) \colon H^{i+l}_Z(X, \Omega^{j+l}_X)\to H^i(V, \Omega^j_V).
$$
\end{lem}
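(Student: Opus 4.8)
The plan is to follow the script of Chatzistamatiou--R\"ulling \cite[\S 2.3]{ChatzistamatiouRullingANT}, transporting their arguments for schemes to the logarithmic setting; the only new input needed is that the proper pushforward \eqref{eq:push1} and its twisted variant \eqref{eq:push2}, together with the duality isomorphism $\theta_X$ of Lemma \ref{lem:dualitylog}, have already been set up to satisfy the expected functoriality (Lemma \ref{lem:obviousproperties}) and to interact correctly with strict morphisms, since the log structures enter only through the locally free sheaves $\Omega^\bullet_{X}$ and the ideal sheaf $I_{\partial X}$, both of which transform as expected along strict maps. Both assertions of the lemma will be reduced to the associativity of the proper pushforward, the excision isomorphism \eqref{eq:properpush2}, and the observation that the pushforward morphism \eqref{eq:push1} attached to a morphism that is an isomorphism in a neighbourhood of the support $Z$ restricts, after applying $R\Gamma_Z(-,-)$, to the identity.

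\textbf{Independence of the good compactification.} First I would show that any two good compactifications $\ol{X}_1,\ol{X}_2$ of $f$ in the sense of Definition \ref{df:goodcomp} are dominated by a common one: take the closure of the diagonally embedded $X$ in $\ol{X}_1\times_Y\ol{X}_2$, pass to its normalisation $\ol{X}_3$, and equip it with the compactifying Deligne--Faltings log structure attached to the open immersion $X-\partial X\hookrightarrow\ol{X}_3$; the projections $\pi_i\colon\ol{X}_3\to\ol{X}_i$ are then proper strict $Y$-morphisms restricting to $\mathrm{id}_X$. This reduces the claim to comparing a single proper strict $Y$-morphism $\pi\colon\ol{X}'\to\ol{X}$ of good compactifications that is the identity over $X$. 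Since $Z\subset X$ and $\pi$ is an isomorphism over $X$, the trace morphism for $\pi$ is the identity near $Z$, so applying $R\Gamma_Z$ to the pushforward $\pi_*$ of \eqref{eq:push1}--\eqref{eq:push2} recovers exactly the excision identification of $R\Gamma_Z(\ol{X}',-)$ with $R\Gamma_Z(\ol{X},-)$. Combined with the identity $(\ol{f}\circ\pi)_*=\ol{f}_*\circ R\ol{f}_*(\pi_*)$ from Lemma \ref{lem:obviousproperties} and the compatibility of $\theta_{\ol{X}},\theta_{\ol{X}'}$ with excision and with $R\Gamma_Z$, this shows the pushforwards computed through $\ol{X}'$ and through $\ol{X}$ coincide, so \eqref{pushforward for differentials with supports} is well defined.

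\textbf{Compatibility with composition.} The first observation is that $g$ is automatically proper: $g\circ f\circ u$ is universally closed, $f\circ u$ is a universally closed surjection, hence the image of a closed set in $Y$ under $g$ equals the image of its preimage in $Z$ and is therefore closed after any base change; being also separated and of finite type, $g$ is proper. Consequently $H(g_*)$ is simply \eqref{eq:push1} for $g$ followed by the duality isomorphisms \eqref{eq:properpush1}, \eqref{eq:properpush1bis}. Next I would pick a good compactification $\ol{X}$ of $X$ together with a proper strict morphism $\ol{f}\colon\ol{X}\to Y$ extending $f$ — this exists by Nagata's theorem, followed by normalisation and pulling back the log structure of $Y$, exactly as in Remark \ref{rmk:goodcomp2}; since $g$ is proper, $\ol{X}\to V$ is automatically proper, so $\ol{X}$ is simultaneously a good compactification of $X$ over $V$. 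Applying the associativity of the proper pushforward (Lemma \ref{lem:obviousproperties}) to $\ol{X}\xrightarrow{\ \ol{f}\ }Y\xrightarrow{\ g\ }V$, then applying $R\Gamma_Z$ and matching up the duality isomorphisms $\theta$ and the excision isomorphisms at the three levels $\ol{X}$, $Y$, $V$, yields $H((g\circ f)_*)=H(g_*)\circ H(f_*)$; the independence established above guarantees that neither side depends on the choice of $\ol{X}$.

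\textbf{Main obstacle.} I expect the main difficulty to be purely bookkeeping: one must verify that the cohomology-with-support functor $R\Gamma_Z$, the duality isomorphisms $\theta_{\ol{X}},\theta_Y,\theta_V$ of Lemma \ref{lem:dualitylog}, and the excision isomorphisms are all mutually compatible with the proper pushforward \eqref{eq:push1}. In the scheme case this is the content of the explicit local-cohomology computations of \cite[\S\S 2.2--2.3]{ChatzistamatiouRullingANT}, and they carry over verbatim once one knows, as in Lemma \ref{functorialitypush}, that the pullback of log differentials is compatible with the local description of \eqref{eq:push1}. A minor secondary point is arranging the common good compactifications so that the strictness conditions of Definition \ref{df:goodcomp} are met; this is handled uniformly by always taking the compactifying log structure of $X-\partial X\hookrightarrow\ol{X}$ and passing to normalisations, as in Remark \ref{rmk:goodcomp2} and Construction \ref{Construction of pushforward with support}.
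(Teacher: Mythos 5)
Your proposal is correct and follows exactly the route the paper intends: the paper gives no argument beyond citing the corresponding statements of Chatzistamatiou--R\"ulling as a routine check, and your outline (common domination of good compactifications, triviality of the trace near the support combined with excision, automatic properness of $g$, and associativity of the proper pushforward from Lemma \ref{lem:obviousproperties}) is precisely the content of that check transported to the logarithmic setting. The only cosmetic slip is your appeal to duality isomorphisms $\theta_{\ol{X}}$, $\theta_{\ol{X}'}$ on the compactifications, which need not lie in $SmlSm/k$; Construction \ref{Construction of pushforward with support} only ever invokes $\theta$ on $X$, $Y$ and $V$ themselves, so no duality on $\ol{X}$ is needed and the comparison happens entirely at the level of the excised complexes $D_{\ul{X}}(\Omega^{\bullet}_X\otimes I_{\partial X})$.
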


\subsection{The action of log correspondences}\label{ssec:coractomega}
Suppose $Z$ is an elementary log correspondence from $X$ to $Y$ in the sense of Definition \ref{A.5.2}, where $X,Y\in SmlSm/k$. 
Our aim in this subsection is to construct a pullback morphism
\[
Z^*
\colon
\Omega^j(Y) 
\rightarrow 
\Omega^j(X).
\]
If we extend by linearity to all finite log correspondences and verify compatibility with the composition, 
this equips the logarithmic differential $\Omega^j$ with the structure of a presheaf with log transfers.

\begin{lem}
\label{Exactness and strictness}
Let
\[
\begin{tikzcd}
Y'\arrow[d,"f'"']\arrow[r,"g'"]&Y\arrow[d,"f"]\\
X'\arrow[r,"g"]&X
\end{tikzcd}
\]
be a cartesian square of fs log schemes.
Suppose that $g'$ is exact.
If there exists a morphism $h:X\rightarrow T$ such that $hg$ is strict, then $g'$ is strict.
\end{lem}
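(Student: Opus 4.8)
The statement is local on $Y'$, so I would fix a point $y'\in Y'$, set $y:=g'(y')$, $x':=f'(y')$, $x:=g(x')=f(y)$, and reduce to showing that the induced homomorphism of characteristic monoids
\[
\theta\colon \overline{\cM}_{Y,y}\longrightarrow \overline{\cM}_{Y',y'}
\]
is an isomorphism; this is exactly strictness of $g'$. Since $g'$ is exact, $\theta$ is an exact homomorphism of integral fs monoids, so by Lemma~\ref{A.9.9} it suffices to prove that $\theta^\gp$ is an isomorphism. Write $\alpha\colon\overline{\cM}_{X,x}\to\overline{\cM}_{X',x'}$, $\beta\colon\overline{\cM}_{X,x}\to\overline{\cM}_{Y,y}$, $\beta'\colon\overline{\cM}_{X',x'}\to\overline{\cM}_{Y',y'}$, $h^\flat\colon\overline{\cM}_{T,h(x)}\to\overline{\cM}_{X,x}$ for the homomorphisms induced by $g$, $f$, $f'$, $h$, so commutativity of the square yields $\beta'\alpha=\theta\beta$.

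Injectivity of $\theta^\gp$ should be immediate: if $u\in\overline{\cM}_{Y,y}^\gp$ lies in $\ker\theta^\gp$, then $\theta^\gp(\pm u)=0\in\overline{\cM}_{Y',y'}$, so exactness of $\theta$ forces $\pm u\in\overline{\cM}_{Y,y}$, and sharpness of $\overline{\cM}_{Y,y}$ gives $u=0$. The work is in the surjectivity of $\theta^\gp$, which is where I expect the real content to lie. Strictness of $hg$ means $\alpha\circ h^\flat=(hg)^\flat$ is an isomorphism; hence $s:=h^\flat\circ(\alpha h^\flat)^{-1}$ is a section of $\alpha$, so $\alpha$ is a split surjection, and therefore $\beta'=\beta'\alpha s=\theta\beta s$, i.e.\ the image of $\beta'$ is contained in the image of $\theta$. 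Now I would invoke the description of the fs fibre product: since the square is cartesian, $\overline{\cM}_{Y',y'}$ is obtained from the amalgamated sum $\overline{\cM}_{X',x'}\oplus_{\overline{\cM}_{X,x}}\overline{\cM}_{Y,y}$ by passing to the associated integral monoid, then to the associated saturated monoid, and finally a localization at a face followed by sharpening; in particular $\overline{\cM}_{Y',y'}$ is generated, up to saturation inside $\overline{\cM}_{Y',y'}^\gp$, by the images of $\beta'$ and $\theta$. Since $\operatorname{im}\beta'\subseteq\operatorname{im}\theta=\theta(\overline{\cM}_{Y,y})$, the monoid $\overline{\cM}_{Y',y'}$ is the saturation of $\theta(\overline{\cM}_{Y,y})$ in its groupification; passing to groupifications (where saturation is invisible) gives $\overline{\cM}_{Y',y'}^\gp=\theta^\gp(\overline{\cM}_{Y,y}^\gp)$, i.e.\ $\theta^\gp$ is surjective.

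Combining the two halves, $\theta^\gp$ is an isomorphism, and Lemma~\ref{A.9.9} then yields that $\theta$ is an isomorphism at every point $y'\in Y'$, i.e.\ $g'$ is strict.

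The main obstacle I anticipate is making precise (and citing) the description of $\overline{\cM}_{Y',y'}$ used in the surjectivity step: namely that the characteristic monoid of $X'\times_X Y$ at a point is the fs monoid generated by the pullbacks of the characteristic monoids of $X'$ and of $Y$. This is a standard feature of fibre products of fs log schemes (see \cite{Ogu}), but one must be careful that integralization, saturation and the face quotient only add elements ``up to saturation'', which is exactly what is needed to run the argument; the rest is formal, once exactness of $g'$ and the splitting produced by strictness of $hg$ are fed in. An alternative, slightly more computational, route would be to choose neat fs charts Zariski-locally on $X$, $X'$, $Y$ and carry out the same bookkeeping with chart monoids, but the pointwise argument on characteristic stalks appears to be the most economical.
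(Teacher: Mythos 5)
Your overall architecture — reduce to the stalks of the characteristic monoids, get injectivity of $\theta^{\rm gp}$ from exactness plus sharpness, extract a splitting of $\alpha$ from strictness of $hg$, and finish with Lemma \ref{A.9.9} — is sound, and the last step is exactly how the paper also closes the argument. The gap is in the surjectivity step, specifically in the sentence ``passing to groupifications (where saturation is invisible)''. Saturation is invisible under groupification only when it is taken inside the groupification of the monoid being saturated; here you saturate $\theta(\overline{\cM}_{Y,y})$ inside the \emph{ambient} group $\overline{\cM}_{Y',y'}^{\rm gp}$, and that operation can strictly enlarge the groupification: the saturation of $2\N$ inside $\Z$ is $\N$, whose groupification is $\Z$, not $2\Z$. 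Concretely, the three facts you have in hand at that point --- $\theta$ exact, $\theta^{\rm gp}$ injective, and every element of $\overline{\cM}_{Y',y'}$ admitting a positive multiple in the image of $\theta$ --- are all satisfied by the multiplication-by-$2$ map $\N\to\N$, which is not an isomorphism. So your argument only bounds the cokernel of $\theta^{\rm gp}$ by a torsion group, and torsion-freeness of $\overline{\cM}_{Y',y'}^{\rm gp}$ does not rescue it, since a torsion quotient of a torsion-free group need not vanish.

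The repair is to replace the ``generated up to saturation'' statement by the corresponding on-the-nose statement for groupifications, which is the route the paper takes. Zariski locally one chooses a \emph{neat} chart $\theta_0\colon P\to P'$ of $g$ and an fs chart $P\to Q$ of $f$; strictness of $hg$ forces $\coker(g_{log}^*\cM_X\to\cM_{X'})=0$, hence $\theta_0^{\rm gp}$ is an isomorphism. The fs fibre product then has chart $Q'=P'\oplus_P Q$, and the key point is that integralization and saturation do not change the groupification, so
\[
Q'^{\rm gp}\cong P'^{\rm gp}\oplus_{P^{\rm gp}}Q^{\rm gp}\cong Q^{\rm gp}
\]
\emph{exactly}, not merely up to torsion; quotienting by the relevant faces then gives surjectivity of $\theta^{\rm gp}$ on the characteristic stalks, and exactness of $g'$ together with Lemma \ref{A.9.9} finishes the proof. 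Your splitting of $\alpha$ plays the role of the paper's vanishing of $\cM_{X'/X}$, but it is the pushout description of $Q'^{\rm gp}$, rather than the saturation bookkeeping on the characteristic monoids, that actually delivers surjectivity.
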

\begin{proof}
Owing to \cite[Definition III.1.2.1]{Ogu}  there are naturally induced morphisms
\[
g_{log}^*\cM_X\rightarrow \cM_{X'},\;h_{log}^*\cM_T\rightarrow \cM_X,\;(hg)_{log}^*\cM_T\rightarrow \cM_{X'}.
\]
Since $hg$ is strict, the third morphism is an isomorphism.
This implies that the first morphism is an epimorphism, 
so that 
\[
\cM_{X'/X}:=\coker(g_{log}^*\cM_X\rightarrow \cM_{X'})=0.
\]
The question is Zariski local on $X'$ and $X$, so owing to \cite[Theorem III.1.2.7]{Ogu}, we may assume that $g$ has a neat chart $\theta:P\rightarrow P'$ (\cite[Definition II.2.4.4]{Ogu}).
This shows $\theta^{\rm gp}$ is an isomorphism since $\cM_{X'/X}=0$.
The question is also Zariski local on $Y$ so that we may assume $f$ admits an fs chart $P\rightarrow Q$.
\vspace{0.1in}

By the construction of fiber products in the category of fs log schemes, $Y'$ has an fs chart 
$$
Q':=P'\oplus_P Q, 
$$
which is an amalgamated sum in the category of fs monoids.
By \cite[Theorem I.1.3.4]{Ogu} we have an isomorphism 
\[
Q'^{\rm gp}\cong P'^{\rm gp}\oplus_{P^{\rm gp}}Q^{\rm gp}.
\]
Since $\theta^{\rm gp}:P^{\rm gp}\rightarrow P'^{\rm gp}$ is an isomorphism, we see that $Q^{\rm gp}\rightarrow Q'^{\rm gp}$ is an isomorphism.
This implies that $Q\rightarrow Q'$ is an isomorphism since $g'$ is exact.
Thus $g'$ is strict.
\end{proof}

\begin{const}
\label{constructionAction}
Let $Z$ be an elementary log correspondence from $X$ to $Y$ where $X,Y\in SmlSm/k$.
We will first construct 
$$
Z^*:\Omega^j(Y)\rightarrow \Omega^j(X)
$$ 
in the case when $X$ is a quasi-projective scheme and has an fs chart $P$.
By definition, there is a strict finite surjective morphism $Z^N\rightarrow X$ and a morphism $Z^N\rightarrow Y$.
By Theorem \ref{FKatoThm2}, 
there exists a log blow-up $V\rightarrow X\times Y$ such that the pullback $f':Z^N\times_{(X\times Y)}V\rightarrow V$ of $f\colon Z^N\to X\times Y$ is exact.
Using Lemma \ref{Exactness and strictness}, we deduce that $f'$ is strict.
\vspace{0.1in}

By Proposition \ref{Fan.12}, there is a log modification $X'\rightarrow X$ such that the pullback
\[(Z^N\times_{X\times Y}V)\times_X X'\rightarrow Z^N\times_X X'\]
is an isomorphism.
Next,  Corollary \ref{Invariance of Log differentials under log blow-ups} gives an isomorphism on sections
\[
\Omega^j(X)\cong \Omega^j(X').
\]
Let $Z'$ be the correspondence from $X'$ to $Y$ defined by $Z^N\times_X X'$.
If we can construct a morphism 
$$
Z'^*:\Omega^j(Y)\rightarrow \Omega^j(X'),
$$
then we obtain $Z^*:\Omega^j(Y)\rightarrow \Omega^j(X)$ via the composite
\[
\Omega^j(Y)\stackrel{Z'^*}
\rightarrow 
\Omega^j(X')
\stackrel{\sim}\rightarrow 
\Omega^j(X).
\]

Hence we may replace $(X,Y,Z,V)$ by $(X',Y',Z,V')$ where $V':=V\times_X X'$.
In this case, 
the projection 
$$
Z^N\times_{(X\times Y)}V\rightarrow Z^N
$$ 
is an isomorphism, 
and the projection $Z^N\times_{(X\times Y)}V\rightarrow V$ gives a morphism 
$$
\nu:Z^N\rightarrow V.
$$
Let $U$ be the maximal open subscheme of $V$ that is strict over $X$, which exists by Lemma \ref{lem::maximal strict open}. 
Since $Z^N$ is strict over $X$, $\nu$ factors through $\mu:Z^N\rightarrow U$, and there is a commutative diagram
\[
\begin{tikzcd}
Z^N\arrow[r,"\mu"]\arrow[rd]\arrow[rdd]&U\arrow[rd,"p_{U,Y}"]\arrow[d]\\
&X\times Y\arrow[d]\arrow[r]&Y\\
&X.
\end{tikzcd}
\]

Since $Z^N$ is quasi-projective, $\mu$ admits a factorization
\[
Z^N\rightarrow U'\rightarrow \P_U^m\rightarrow U
\]
into a closed immersion followed by an open immersion and the projection morphism for some $m\geq 0$.
We let $p_{U'}:U'\rightarrow U$ (resp.\ $q_{U'}$) be the composition $U'\rightarrow \P_U^m\rightarrow U$ (resp.\ $U'\stackrel{p_{U'}}\rightarrow U\rightarrow X\times Y\rightarrow X$).
Now using Constructions \ref{Construction of cycle class} and \ref{Construction of pushforward with support}, we have the homomorphisms
\[
\begin{split}
&H^i(Y,\Omega^j)\stackrel{p_{U,Y}^*}\rightarrow H^i(U,\Omega^j)
\stackrel{p_U^*}\rightarrow H^i(U',\Omega^j)\\
\stackrel{\cup cl(Z^N,U')}\longrightarrow &H_Z^{i+m+d_Y}(U',\Omega^{j+m+d_Y})
\stackrel{q_{U*}}\rightarrow H^i(X,\Omega^j).
\end{split}
\]
When $i=0$ we deduce the desired correspondence action $Z^*:\Omega^j(Y)\rightarrow \Omega^j(X)$.
\vspace{0.1in}

For a general $X$, we choose a Zariski cover $\{X_a\}_{a\in I}$ of $X$ such that each $X_a$ is quasi-projective and has an fs chart.
Set $X_{ab}:=X_a\times_X X_b$, 
and for each $a$ and $b$ we choose a Zariski cover $\{X_{abc}\}_{c\in J_{ab}}$ of $X_{ab}$ such that each $X_{abc}$ is quasi-projective and has an fs chart.
Since $\Omega^j_X$ is a Zariski sheaf, there is an exact sequence
\[
0\rightarrow \Omega^j(X)\rightarrow \bigoplus_{a\in I}\Omega^j(X_a)\rightarrow \bigoplus_{a,b\in I,c\in J_{ab}} \Omega^j(X_{abc}).
\]
This induces an exact sequence
\[
\begin{split}
0\rightarrow &\hom(\Omega^j(Y),\Omega^j(X))\rightarrow \hom(\Omega^j(Y),\oplus_{a\in I}\Omega^j(X_a))\\
\rightarrow &\hom(\Omega^j(Y),\oplus_{a,b\in I,c\in J_{ab}}\Omega^j(X_{abc})).
\end{split}
\]

Letting $Z_a\in \lCor(X_a,Y)$ and $Z_{abc}\in \lCor(X_{abc},Y)$ denote the restrictions of $Z\in \lCor(X,Y)$, 
we have constructed 
$$
Z_a^*\in \hom(\Omega^j(Y),\Omega^j(X_a)), 
\;
Z_{abc}^*\in \hom(\Omega^j(Y),\Omega^j(X_{abc})).
$$
By Lemma \ref{Correspondence, open immersion, and differentials} below, we obtain the desired element 
\begin{equation}
\label{eq:corconst}
Z^*\in \hom(\Omega^j(Y),\Omega^j(X))
\end{equation}
using the above exact sequence. 
Our construction extends to the log setting the correspondence action given by R\"ulling in \cite[\S A.4]{KSY}.
\end{const}

\begin{lem}
\label{Correspondence, open immersion, and differentials}
Let $Z\in lCor(X,Y)$ be a log correspondence where $X,Y\in SmlSm/k$, let $j:U\rightarrow X$ be an open immersion, and let $Z_U\in lCor(U,Y)$ be the restriction of $Z$.
Consider the naturally induced map 
$$
j^*:\Omega^j(X)\rightarrow \Omega^j(X).
$$
If $X$ is quasi-projective and has an fs chart $P$, 
then we have 
\[
j^*\circ Z^*=Z_U^*.
\]
\end{lem}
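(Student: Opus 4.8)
The plan is to observe that every geometric and cohomological ingredient entering Construction~\ref{constructionAction} is compatible with flat (in particular open) base change. Since $\Omega^j$ is a Zariski sheaf and $U$ is again quasi-projective with an fs chart (an fs chart of $X$ restricts to one of $U$), the action $Z_U^*$ is defined by the same direct procedure; moreover $\underline{Z}_U=\underline{Z}\cap(\underline{U}\times\underline{Y})$ is irreducible, so $Z_U$ is again elementary (or zero). The strategy is then to pull back along $j$ all the auxiliary choices made to define $Z^*$, check that the result is an admissible collection of choices for $Z_U^*$, and deduce $j^*\circ Z^*=Z_U^*$ from the base-change compatibility of the individual operations.

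First I would pull back the choices made in the first two steps of Construction~\ref{constructionAction}. Let $V\to X\times Y$ be the chosen log blow-up and $X'\to X$ the chosen log modification, and set $V_U:=V\times_{X\times Y}(U\times Y)$ and $X'_U:=X'\times_X U$. Log blow-ups and log modifications are stable under base change; exactness of a morphism, the conclusion of Lemma~\ref{Exactness and strictness}, and the isomorphism $(Z^N\times_{X\times Y}V)\times_X X'\xrightarrow{\cong}Z^N\times_X X'$ are all preserved by base change along $U\times Y\to X\times Y$ and $U\to X$; and $(Z_U)^N\cong Z^N\times_X U$ since normalization and the induced log structure commute with the open immersion. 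Hence $(V_U,X'_U)$ is a legitimate choice for the construction of $Z_U^*$. The comparison isomorphism $\Omega^j(X)\cong\Omega^j(X')$ of Corollary~\ref{Invariance of Log differentials under log blow-ups} is a pullback map, so it fits into a commutative square with its analogue $\Omega^j(U)\cong\Omega^j(X'_U)$ and the restriction maps $j^*$. Replacing $X$ by $X'$ and $j$ by the open immersion $X'_U\to X'$, we reduce to the situation $Z^N\times_{X\times Y}V\xrightarrow{\cong}Z^N$.

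In this reduced setting I would pull back the remaining data. The morphism $\nu\colon Z^N\to V$ and the maximal strict open $U_{\mathrm{max}}\subset V$ over $X$ base change to $\nu\times_X U$ and $U_{\mathrm{max}}\times_X U$; the latter is the maximal strict open of $V_U$ over $U$, since strictness is stable under base change and an open subscheme of $V_U$ strict over $U$ is strict over $X$. A projective factorization $Z^N\hookrightarrow\overline{U'}\hookrightarrow\mathbb{P}^m_{U_{\mathrm{max}}}\to U_{\mathrm{max}}$ base changes to a projective factorization of $\mu\times_X U$, and $(Z_U)^N$ is open in the quasi-projective $Z^N$, hence quasi-projective, so this is admissible for $Z_U^*$. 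It remains to check that restriction along the relevant open immersions commutes with each of the four maps in the composite
\[
\Omega^j(Y)\xrightarrow{p^*}\Omega^j(U_{\mathrm{max}})\xrightarrow{p^*}\Omega^j(U')\xrightarrow{\cup\, cl(Z^N,U')}H^{m+d_Y}_{Z^N}(U',\Omega^{j+m+d_Y})\xrightarrow{q_*}\Omega^j(X).
\]
The two pullback maps commute with restriction by functoriality of the pullback of logarithmic differentials (Construction~\ref{constr:pullback}), since the morphisms defined over $U$ factor through those over $X$. The cup product with the cycle class commutes with restriction once one knows $cl(Z^N,U')$ restricts to $cl((Z_U)^N,U'\times_X U)$; by Definition~\ref{Construction of cycle class} this amounts to the flat base-change invariance of Grothendieck's fundamental class, which—via the injectivity statements of Lemma~\ref{lem:cycleclassGroth}(1) and Lemma~\ref{injectivity of hodge cohomology}—reduces to the smooth case handled by Lemma~\ref{functorialitypush}. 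Finally, pulling back along $j$ a good compactification of $q\colon U'\to X$ yields a good compactification of the base-changed morphism, and flat base change for $Rf_*$ and for the twisted inverse image $f^!$ in coherent duality, combined with the naturality of the duality isomorphisms $\theta_X,\theta_Y$ of Lemma~\ref{lem:dualitylog}, gives the commutativity for $q_*$. Taking global sections ($i=0$) in all these squares yields $j^*\circ Z^*=Z_U^*$.

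I expect the main obstacle to be the base-change compatibility of the cycle class and of the pushforward with proper support. The former requires threading the flat base-change property of Grothendieck's fundamental class through Definition~\ref{Construction of cycle class}, using the injectivity lemmas to descend from the smooth locus; the latter requires flat base change for $f^!$ in the Grothendieck–Verdier–Neeman formalism, which, though standard, must be carefully combined with the choice of good compactification and with the duality isomorphisms of Lemma~\ref{lem:dualitylog}. By contrast, compatibility of the purely geometric inputs—log blow-ups, log modifications, maximal strict opens, and projective factorizations—with open base change is routine.
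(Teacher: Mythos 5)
Your argument is correct and coincides with the paper's approach: the paper's entire proof of this lemma is the single sentence ``This is immediate from the construction,'' i.e., every ingredient of Construction \ref{constructionAction} (the log blow-up $V$, the log modification $X'$, the maximal strict open, the projective factorization, the cycle class, and the pushforward with proper support) is compatible with restriction along the open immersion $j$. Your write-up simply makes explicit the open base-change compatibilities that the paper leaves implicit.
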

\begin{proof}
This is immediate from the construction.
\end{proof}

\begin{thm}
\label{thm:Omega_log_transfer}
The presheaf $\Omega^j$ on $lSm/k$ admits a log transfer structure.
\end{thm}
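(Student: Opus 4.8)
The construction in Construction \ref{constructionAction} already produces, for every elementary log correspondence $Z$ from $X$ to $Y$ with $X,Y\in SmlSm/k$, a pullback map $Z^*\colon \Omega^j(Y)\to \Omega^j(X)$, see \eqref{eq:corconst}. Extending by $\Lambda$-linearity (in fact $\Z$-linearity) over the free generators of $\lCor(X,Y)$ gives a map $\lCor(X,Y)\to \hom_{\Lambda}(\Omega^j(Y),\Omega^j(X))$ for all $X,Y\in SmlSm/k$. To promote this to a presheaf with log transfers on all of $lSm/k$ there are three things to check: (i) the assignment is independent of the auxiliary choices made in Construction \ref{constructionAction} (the chart $P$, the log blow-up $V\to X\times Y$, the log modification $X'\to X$, the projective embedding $Z^N\hookrightarrow \P^m_U$), so that $Z^*$ is well defined; (ii) it is functorial, i.e. $\mathrm{id}_X^*=\mathrm{id}$ and $(W\circ Z)^*=Z^*\circ W^*$ for composable log correspondences; and (iii) it extends from $SmlSm/k$ to $lSm/k$. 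The plan is to reduce each of these to statements already available: for (iii) invoke Proposition \ref{A.3.19} together with Corollary \ref{Invariance of Log differentials under log blow-ups} and the equivalence $\Shv_{dNis}(SmlSm/k,\Lambda)\simeq \Shvlogkl$ of Lemma \ref{A.5.54} (or rather its non-transferred shadow), which shows $\Omega^j$ is already determined on $lSm/k$ by its restriction to $SmlSm/k$ since log modifications act invertibly on $\Omega^j$; for (i) and (ii) follow the template of Chatzistamatiou–R\"ulling \cite{ChatzistamatiouRullingANT}.

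\textbf{Carrying it out.} First I would record the well-definedness of $Z^*$. The compatibility of the cycle class $cl(Z^N,U')$ with the pullback maps $p_U^*$, $p_{U,Y}^*$ is handled by the functoriality of cycle classes under pullback (Definition \ref{Construction of cycle class} and Lemma \ref{functorialitypush}), and the independence of the pushforward $q_{U*}$ from the choice of good compactification was already noted after Construction \ref{Construction of pushforward with support}. Independence from the projective embedding $Z^N\hookrightarrow \P^m_U$ follows because two embeddings are dominated by a third (the diagonal into the product), and the projection formula identifies the two resulting pushforwards; independence from the log blow-up $V$ and the log modification $X'$ follows from Corollary \ref{Invariance of Log differentials under log blow-ups}, since any two choices are dominated by a common refinement and $\Omega^j$ is invariant under log modifications. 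Next, functoriality: $\mathrm{id}_X^*=\mathrm{id}$ is immediate from the construction since the graph correspondence has $Z^N\cong X$ strict over $X$ with trivial cycle class contribution. The composition law $(W\circ Z)^*=Z^*\circ W^*$ for $Z\in \lCor(X,Y)$, $W\in \lCor(Y,T)$ is the genuinely substantial point: one passes to a common refinement making all three underlying correspondences $Z^N$, $W^N$, $(W\circ Z)^N$ simultaneously strict over their sources (possible by Theorem \ref{FKatoThm2}, Proposition \ref{Fan.12} and Lemma \ref{Exactness and strictness}, exactly as in Construction \ref{constructionAction}), and then the identity $(W\circ Z)^*=Z^*\circ W^*$ reduces to the projection formula for cycle classes and the functoriality of pushforward with support, i.e. the logarithmic analogue of the main computation of \cite[\S 3]{ChatzistamatiouRullingANT}. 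Here one uses that pullback in cohomology with support is compatible with composition \eqref{eq:pullbacksupport}, the functoriality of the pushforward \eqref{pushforward for differentials with supports}, and crucially Lemma \ref{functorialitypush} to move cycle classes across the relevant cartesian squares; the log structure never obstructs these because, as emphasized in Remark \ref{rmk:pullback}(1), all objects live in the bounded derived category of coherent sheaves on the underlying schemes with the sheaves $\Omega^j_{X}=\Omega^j_{\ul X}(\log\partial X)$ recording the log data, and these are locally free on $SmlSm/k$.

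\textbf{Main obstacle.} The hard part is the associativity/compatibility of the correspondence action with composition, step (ii): unlike the well-definedness checks, it genuinely requires reproducing the cohomology-with-support computation of \cite{ChatzistamatiouRullingANT} and verifying that the twisting sheaves $I_{\partial X}$ appearing in the duality isomorphism \eqref{eq:dualitylog} behave correctly under the strictification process — in particular that for a strict morphism $f$ one has $f^{-1}(\partial Y)=\partial X$ and $\Omega^q_X\otimes I_{\partial X}\cong \Omega^q_X\otimes f^*I_{\partial Y}$, which is what makes the pushforward \eqref{eq:push2} available and compatible with the trace. Once that bookkeeping is in place, the argument is formally parallel to the non-logarithmic case, and the extension to $lSm/k$ is a formality via log-modification invariance. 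I would also remark that the transfer structure so constructed restricts on schemes with trivial log structure to R\"ulling's correspondence action from \cite[\S A.4]{KSY}, which gives an independent sanity check.
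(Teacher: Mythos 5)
Your plan correctly isolates the three issues (well-definedness of $Z^*$, compatibility with composition, extension from $SmlSm/k$ to $lSm/k$), and your treatment of the first and third matches the paper in substance: the extension to $lSm/k$ is exactly the colimit-over-log-modifications argument (Lemma \ref{A.5.59} together with the fact that $\Omega^j$ is a dividing sheaf), and the well-definedness points are absorbed into Construction \ref{constructionAction}. The gap is at the step you yourself flag as the main obstacle. You reduce $(W\circ V)^*=V^*\circ W^*$ to ``the logarithmic analogue of the main computation of \cite[\S 3]{ChatzistamatiouRullingANT}'' and assert the argument is ``formally parallel'' once the bookkeeping with the twists $I_{\partial X}$ is in place --- but you never carry out that computation, and it is precisely the part that cannot be waved through: one would have to redo the whole cohomology-with-support composition formula with the duality isomorphism \eqref{eq:dualitylog} twisted by the boundary ideal, track residue/trace compatibilities across the strictifications, and reprove the projection formula in that twisted setting. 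As written, the central identity is asserted, not proved.

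The paper avoids all of this with a much softer argument, which you should adopt. For $X,Y,Z\in SmlSm/k$ the restriction maps $\Omega^j(X)\hookrightarrow\Omega^j(X-\partial X)$ are injective, since log forms are by definition a subsheaf of the pushforward of forms from the interior. By Lemma \ref{Correspondence, open immersion, and differentials} and the construction, the log correspondence action is compatible under these restrictions with the action of the interior correspondences $V^\circ$, $W^\circ$ of Lemma \ref{A.5.10}; by Lemma \ref{lem:composition} one has $(W\circ V)^\circ=W^\circ\circ V^\circ$; and the composition law for correspondences between schemes with trivial log structure is already known from \cite[Theorem A.4.1]{KSY}. A diagram chase using the injectivity of the vertical maps then forces $(W\circ V)^*=V^*\circ W^*$ with no new cohomological computation. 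Either supply the full log version of the Chatzistamatiou--R\"ulling composition argument or use this reduction; in its current form your proposal leaves the key step unproven.
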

\begin{proof}
We will first show that $\Omega^j$ on $SmlSm/k$ admits a log transfer structure.
Let $V\in lCor(X,Y)$ and $W\in lCor(Y,Z)$ be log correspondences.
From Construction \ref{constructionAction} we have
\[
V^*\in \hom(\Omega^j(Y),\Omega^j(X)),\;
W^*\in \hom(\Omega^j(Z),\Omega^j(Y)),
\]
and
\[
(W\circ V)^*\in \hom(\Omega^j(Z),\Omega^j(X)).
\]
It remains to check that
\[
(W\circ V)^* = V^*\circ W^*.
\]

Due to Construction \ref{constructionAction}, we may assume that $X$, $Y$, and $Z$ are quasi-projective schemes equipped with fs charts. 
Consider the diagram
\[
\begin{tikzcd}
\Omega^j(Z) \arrow[r, "W^*"] \arrow[d, hook] & \Omega^j(Y) \arrow[r, "V^*"]\arrow[d, hook] & \Omega^j(X)\arrow[d, hook] \\
\Omega^j(Z- \partial Z) \arrow[r, "(W^\circ)^*"]  & \Omega^j(Y- \partial Y)  \arrow[r, "(V^\circ)^*"]  & \Omega^j(X- \partial X).
\end{tikzcd}
\]
The correspondences $V^\circ$ and $W^\circ$ are as in Lemma \ref{A.5.10}.  
\vspace{0.1in}

The vertical morphisms are injective by definition of the differential forms with log poles as subsheaf of the pushforward of the sheaf of differential forms 
from the open subset where the log structure is trivial. 
By Lemma \ref{Correspondence, open immersion, and differentials} and by the construction \cite[\S A.4]{KSY}, 
the pullbacks $(V^\circ)^*$ and $(W^\circ)^*$ render the two squares commutative.  
By \cite[Theorem A.4.1]{KSY}, 
the lower horizontal composition $(W^\circ \circ V^\circ)^*$ agrees with $(V^\circ)^*\circ (W^\circ)^*$. 
Since the vertical maps are injective and $(W\circ V)^\circ = W^\circ \circ V^\circ$ by Lemma \ref{lem:composition}, 
we conclude that  $(W\circ V)^*= V^*\circ W^*$ by a diagram chase.
This completes the proof that $\Omega^j$ admits a log transfer structure on $SmlSm/k$.
\vspace{0.1in}

Thanks to Lemma \ref{A.5.59}, the presheaf on $lSm/k$ given by
\[
X\mapsto \colimit_{Y\in X_{div}^{Sm}}\Omega^j(Y)
\]
admits a log transfer structure.
To conclude, observe that this presheaf is isomorphic to $\Omega^j$ since $\Omega^j$ is a dividing sheaf.
\end{proof}

\begin{rmk} 
Let $X$ be a smooth $k$-scheme. 
By \cite[Theorem 3.6]{HuberJorder} the sections of the $h$-sheafification $a_h (\Omega^j)$ of $\Omega^j$ on $X$ agree with $\Omega^j(X)$. 
This implies that the sheaves $\Omega^j$ on $Sm/k$ admit a unique transfer structure. 
Thus the transfer structure defined by R\"ulling and extended in this section to log schemes agrees with the transfer structure defined in \cite{lecomtewach}, 
at least with rational coefficients and in characteristic zero.
\end{rmk}

\begin{prop}
\label{prop::commutativity_differential}
For every integer $j\geq 0$, the differential
\[
d:\Omega^j\rightarrow \Omega^{j+1}
\]
is a morphism of presheaves with log transfers.
\end{prop}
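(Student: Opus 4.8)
The plan is to show that the de Rham differential $d\colon \Omega^j \to \Omega^{j+1}$ commutes with the pullback action $Z^*$ of an arbitrary elementary log correspondence $Z$ from $X$ to $Y$, where $X, Y \in SmlSm/k$; by additivity this suffices, and the extension to all of $lSm/k$ is then automatic from Theorem \ref{thm:Omega_log_transfer} (the colimit description in Lemma \ref{A.5.59} and the fact that $d$ is already compatible with restriction along log modifications, being a map of Zariski sheaves on the underlying schemes). First I would reduce, exactly as in Construction \ref{constructionAction}, to the case where $\underline{X}$ is quasi-projective with an fs chart $P$, and then, after the log-modification maneuver and Corollary \ref{Invariance of Log differentials under log blow-ups} (which identifies $\Omega^j(X) \cong \Omega^j(X')$ compatibly with $d$, since the comparison is induced by pullback of forms along a log \'etale map), further to the situation where $Z^N$ maps strictly to the maximal strict open $U \subseteq V$ and factors through a closed immersion into $\mathbb{P}^m_U$.

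The key point is then to check that each of the four operations composing $Z^*$ in Construction \ref{constructionAction} commutes with $d$: the pullback $p_{U,Y}^*$ along $U \to Y$, the pullback $p_U^*$ along $U' \to U$, the cup product with the cycle class $cl(Z^N, U')$, and the pushforward $q_{U*}$ with proper support. Compatibility of $d$ with ordinary pullback of forms with log poles is formal from the universal property of $\Omega^1$ and the construction of the de Rham complex. For the pushforward, the relevant statement is that $d$ commutes with the trace/pushforward morphisms of \eqref{eq:push1}–\eqref{pushforward for differentials with supports}; this is the logarithmic analogue of the compatibility of the Grothendieck trace with $d$ established in \cite[\S 2]{ChatzistamatiouRullingANT}, and it passes through the good compactification of Definition \ref{df:goodcomp} without change since the twisted dualizing functor $D_{\ul{X}}(-\otimes I_{\partial X})$ interacts with the de Rham differential the same way as in the non-logarithmic setting. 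For the cup product with the cycle class $cl(Z)$, one uses that $cl(Z)$ lives in $H^c_Z(X,\Omega^c_X)$ and that, by Definition \ref{Construction of cycle class}, it is pulled back from Grothendieck's fundamental class $cl(\underline{Z})$ on the underlying smooth scheme along $p_X^*$; since cup product with a class in $\Omega^c$ satisfies a Leibniz-type identity, and the fundamental class is $d$-closed (as it comes from $H(i_*)(1)$ with $1 \in H^0(\cO)$ a flat section), the cup product is compatible with $d$ up to the usual sign bookkeeping in the bounded derived category $\mathbf{D}^b(\underline{X})$.

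Alternatively — and this is probably the cleaner route — I would exploit the injectivity $\Omega^j(X) \hookrightarrow \Omega^j(X - \partial X)$ of Lemma \ref{A.5.10}'s analogue for differentials (the definition of forms with log poles as a subsheaf of $j_* \Omega^j_{X-\partial X}$), together with the commutative square appearing in the proof of Theorem \ref{thm:Omega_log_transfer} relating $Z^*$ to $(Z^\circ)^*$ on the open interiors. Since $d$ commutes with the restriction maps $\Omega^j(X) \to \Omega^j(X-\partial X)$ and $\Omega^{j+1}(X) \to \Omega^{j+1}(X - \partial X)$, and since the commutativity $d \circ (Z^\circ)^* = (Z^\circ)^* \circ d$ on the interiors is exactly the statement that R\"ulling's correspondence action on $\Omega^j$ (without log structure) commutes with $d$ — which is part of the package in \cite[Theorem A.4.1]{KSY}, or follows from \cite{ChatzistamatiouRullingANT} — a diagram chase through
\[
\begin{tikzcd}
\Omega^j(Y) \arrow[r,"Z^*"] \arrow[d,hook] \arrow[dd, bend right=60, "d"'] & \Omega^j(X) \arrow[d,hook] \arrow[dd, bend left=60, "d"] \\
\Omega^j(Y-\partial Y) \arrow[r,"(Z^\circ)^*"] \arrow[d,"d"'] & \Omega^j(X-\partial X) \arrow[d,"d"] \\
\Omega^{j+1}(Y-\partial Y) \arrow[r,"(Z^\circ)^*"] & \Omega^{j+1}(X-\partial X)
\end{tikzcd}
\]
gives the result, using that $d$ maps the log-pole subsheaf into the log-pole subsheaf. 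I expect the main obstacle to be purely expository rather than mathematical: making the cup-product-with-cycle-class step rigorous in the derived category with the correct signs, and confirming that the trace morphism for the good compactification is $d$-compatible even though the compactification $\overline{X}$ is only normal (not smooth) unless resolution of singularities is assumed — here one should either invoke the second route above, which never leaves the smooth interiors, or note that $d$-compatibility of the trace is a local statement provable on the smooth locus plus a codimension argument via Lemma \ref{vanishing of hodge cohomology with support}.
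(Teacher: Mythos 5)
Your second (and preferred) route is exactly the paper's proof: one forms the cube relating $Z^*$ and $(Z^\circ)^*$ on $X,Y$ and their interiors, uses that $d$ commutes with the injective restrictions $\Omega^j(X)\hookrightarrow\Omega^j(X-\partial X)$ and that the log-transfer squares commute by Theorem \ref{thm:Omega_log_transfer}, and thereby reduces to the case of schemes with trivial log structure, which is \cite[Theorem B.2.1]{KSY}. The first, operation-by-operation route is not needed and would indeed require the sign and trace-compatibility bookkeeping you flag, so the diagram-chase argument is the right one.
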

\begin{proof}
We need to show that for every finite log correspondence $V\in \lCor(X,Y)$, with $X,Y\in lSm/k$, the back square in the diagram
\[
\begin{tikzcd}[row sep=tiny, column sep=tiny]
\Omega^j(Y)\arrow[dd]\arrow[rr]\arrow[rd]&
&
\Omega^{j+1}(Y)\arrow[dd]\arrow[rd]
\\
&
\Omega^{j}(Y-\partial Y)\arrow[rr,crossing over]&
&
\Omega^{j+1}(Y-\partial Y)\arrow[dd]
\\
\Omega^j(X)\arrow[rr]\arrow[rd]&
&
\Omega^{j+1}(X)\arrow[rd]
\\
&
\Omega^j(X-\partial X)\arrow[rr]\arrow[uu,crossing over,leftarrow]&
&
\Omega^{j+1}(X-\partial X).
\end{tikzcd}
\]
commutes.
The top and bottom squares commute since $d:\Omega^j\rightarrow \Omega^{j+1}$ is a morphism of presheaves.
The left and right squares commute since $\Omega^j$ is a presheaf with log transfers by Theorem \ref{thm:Omega_log_transfer}.
In the proof of Theorem \ref{thm:Omega_log_transfer}, we have observed that the homomorphisms
\[
\Omega^j(X)\rightarrow \Omega^j(X-\partial X)
\text{ and }
\Omega^j(Y)\rightarrow \Omega^j(Y-\partial Y)
\]
are injective.
Thus we are reduced to showing that the front square commutes, i.e., we may assume that $X,Y\in Sm/k$.
This case is checked in \cite[Theorem B.2.1]{KSY}.
\end{proof}
\begin{rmk}
In Sections \ref{ssec:Grothduality}-\ref{ssec:coractomega} we have made a detour through the work of Chatzistamatiou-R\"ulling \cite{ChatzistamatiouRullingANT} (adapted to the log setting) in order to define an action of the log correspondences on the sheaves of differentials. In fact (as in \cite{ChatzistamatiouRullingANT}), the formalism of Grothendieck duality allows one to construct an action of log correspondences on the cohomology groups of $\Omega^j$, as implied by the argument used in Construction \ref{constructionAction}.

The referee and Kay R\"ulling independently pointed out to us that if one is simply willing to establish an action of correspondences on $\Omega^j$ (and not on the cohomology), a shortcut is possible.

The aim is to define for every $X,Y\in SmlSm/k$, and every elementary log correspondence $Z$ from $X$ to $Y$, a dashed arrow which makes the following diagram commutative
\[ \begin{tikzcd}
\Omega^j(Y) \arrow[r, hook] \arrow[d, dashrightarrow] & \Omega^j(Y-\partial Y) \arrow[d, "(Z^\circ)^*"] \\
\Omega^j(X) \arrow[r, hook] & \Omega^j(X-\partial X)
\end{tikzcd}\]
where  $(Z^\circ)^*$ is the correspondence action on differentials defined in \cite{ChatzistamatiouRullingANT}. Since the horizontal maps are injective (both $X$ and $Y$ are in $SmlSm/k$), it is then enough to show that for every section $a\in \Omega^j(Y)$, the pullback $(Z^\circ)^*(a) \in H^0(\ul{X}, j_*\Omega^j_{X-\partial X})$ belongs to $H^0(\ul{X}, \Omega^j_X)$, where $j\colon \ul{X}-\partial X \hookrightarrow \ul{X}$ is the open immersion. Since this property can be checked around each generic point of $\partial X$ (the differentials are locally free), one can reduce to the case where $X$ is local and 1-dimensional. The elementary log correspondence $Z$ induces by restriction a diagram
\[\begin{tikzcd}
Z^N \arrow[d, "p"] \arrow[r, "q"] & Y\\ 
X
\end{tikzcd}\]
where $\underline{Z}^N$ is the normalization of $\underline{Z}$ and $p$ is a strict, finite and surjective morphism. It is then enough to show that the pushforward $p_*^\circ \colon \Omega^j(Z^N-\partial Z^N) \to \Omega^j(X-\partial X)$ (induced by the transpose of the graph of $p^\circ$) restricts to a morphism between the subsheaves of log forms. This can be checked locally in coordinates.

We have decided to keep the original argument since it gives a bit more, as explained above. Nevertheless, we will come back to this approach in future work, in which we will construct the action of log correspondences on the sheaves of Hodge-Witt differentials. As a byproduct, this implies  that  logarithmic crystalline cohomology is representable in $\ldmeff$.
\end{rmk}
\subsection{Hodge cohomology and cyclic homology}
\label{ssec-Hodgecyclic} 
After having established the action of log correspondences on the sheaves of logarithmic differentials, 
we are ready to establish the representability of Hodge cohomology and cyclic homology in $\ldmeff$.


\begin{thm}
\label{thmHodge} 
For $t=dNis$, $d\acute{e}t$, and $l\acute{e}t$, the sheaves $\Omega^j$ are strictly $\boxx$-invariant $t$-sheaves with log transfers. 
Thus for every $X\in lSm/k$ and $i\in \Z$, there is a natural isomorphism
\begin{align*} 
\hom_{\ldmeff}(M(X), \Omega^j[i]) 
\cong
H^i_{Zar}(X, \Omega^j).
\end{align*}
\end{thm}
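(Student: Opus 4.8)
The strategy is to verify the hypotheses needed to feed $\Omega^j$ into the machinery of Construction \ref{Pn.4}, which produces objects of $\ldmeff$ out of strictly $(\P^\bullet,\P^{\bullet-1})$-invariant bounded below complexes of strict Nisnevich sheaves with log transfers on $SmlSm/k$. We already have the three main ingredients in hand: (i) $\Omega^j$ is a presheaf with log transfers by Theorem \ref{thm:Omega_log_transfer}; (ii) $\Omega^j$ is a strict Nisnevich (indeed log \'etale) sheaf by Lemma \ref{Omega_dividing_sheaf}, and by \eqref{eqn:coherentcohomology} its Zariski, strict Nisnevich and strict \'etale cohomology all agree; (iii) $\Omega^j$ is strictly $(\P^\bullet,\P^{\bullet-1})$-invariant on $SmlSm/k$ by Corollary \ref{Pn-invariance of Log differentials} (or Corollary \ref{Pn-invariance of Log differentials-Zar} combined with \eqref{eqn:coherentcohomology}), applied to the $t$-sheaf $\Omega^j$ viewed as a one-term complex. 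First I would restrict $\Omega^j$ to $SmlSm/k$ and invoke these three facts; since $\Omega^j$ is concentrated in degree $0$ it is certainly bounded below.

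\textbf{From $SmlSm/k$ back to $lSm/k$.} By Theorem \ref{Pn.3}, strict $(\P^\bullet,\P^{\bullet-1})$-invariance is equivalent to strict dividing invariance for a complex of strict Nisnevich (resp.\ strict \'etale, resp.\ Kummer \'etale) sheaves on $SmlSm/k$; this gives that $\Omega^j$ restricted to $SmlSm/k$ is strictly dividing invariant. Then Theorem \ref{Div.4} yields, for every $X \in SmlSm/k$ and $i$,
\[
H^i_{dNis}(X, a_{dNis}^*\Omega^j) \cong \colimit_{Y\in X_{div}^{Sm}} H^i_{sNis}(Y,\Omega^j) \cong H^i_{sNis}(X,\Omega^j),
\]
the second isomorphism because $\Omega^j$ is strictly dividing invariant (all transition maps in the colimit are isomorphisms). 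This shows $a_{dNis}^*\Omega^j$ is strictly $\boxx$-invariant on $SmlSm/k$. Applying Construction \ref{SmlSm.3} (equivalently Lemma \ref{A.5.59} plus \eqref{SmlSm.3.1}), the extension $\iota_\sharp a_{dNis}^*\Omega^j$ is a strictly $\boxx$-invariant complex of dividing Nisnevich sheaves with log transfers on $lSm/k$, and
\[
\hom_{\ldmeff}(M(X), \iota_\sharp a_{dNis}^*\Omega^j[i]) \cong H^i_{dNis}(X, a_{dNis}^*\Omega^j) \cong H^i_{sNis}(X,\Omega^j) \cong H^i_{Zar}(\underline X, \Omega^j_{X/k})
\]
for $X\in SmlSm/k$, using \eqref{SmlSm.3.2} and \eqref{eqn:coherentcohomology}. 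To identify $\iota_\sharp a_{dNis}^*\Omega^j$ with $\Omega^j$ as a sheaf on $lSm/k$, note that $\Omega^j$ is already a dividing sheaf on $lSm/k$ (Corollary \ref{Invariance of Log differentials under log blow-ups} together with Lemma \ref{Omega_dividing_sheaf}, or Proposition \ref{prop::hodgelet}), so by Lemma \ref{A.5.59} its value at $X$ is $\colimit_{Y\in X_{div}^{Sm}}\Omega^j(Y) = \iota_\sharp a_{dNis}^*\Omega^j(X)$; this is exactly the last sentence in the proof of Theorem \ref{thm:Omega_log_transfer}. The $d\acute{e}t$ and $l\acute{e}t$ cases are handled identically using \eqref{SmlSm.3.3}, \eqref{SmlSm.3.4} and Proposition \ref{prop::hodgelet} to match the cohomology groups.

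\textbf{Extending to general $X\in lSm/k$ and the final isomorphism.} Since $\Omega^j$ is now known to be a strictly $\boxx$-invariant dividing Nisnevich sheaf with log transfers, Proposition \ref{A.5.13} applies directly to every $X\in lSm/k$ (not just those in $SmlSm/k$), giving $\hom_{\ldmeff}(M(X),\Omega^j[i]) \cong \bH^i_{dNis}(X,\Omega^j)$; and Proposition \ref{prop::hodgelet} identifies the right-hand side with $H^i_{Zar}(X,\Omega^j)$. I do not expect a genuine obstacle here — all the hard work (the transfer structure via Grothendieck duality, and the $(\P^\bullet,\P^{\bullet-1})$-invariance computation via the logarithmic projective bundle formula) is already done in the earlier sections. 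The only point requiring mild care is bookkeeping: ensuring that the sheaf $\Omega^j$ defined a priori on $lSm/k$ coincides with the object $\iota_\sharp a_{t}^*\Omega^j$ produced by the extension procedure, which reduces to the fact that $\Omega^j$ is a dividing sheaf, and that the several cohomology theories ($Zar$, $sNis$, $s\acute{e}t$, $k\acute{e}t$, and their dividing/log variants) all agree on $\Omega^j$ by Proposition \ref{prop::hodgelet}. The naturality in $X$ follows from the naturality of all isomorphisms invoked.
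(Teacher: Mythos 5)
Your proposal is correct and follows essentially the same route as the paper's proof, which likewise combines Theorem \ref{thm:Omega_log_transfer}, Lemma \ref{Omega_dividing_sheaf}, Proposition \ref{prop::hodgelet}, and Corollary \ref{Pn-invariance of Log differentials} and then invokes the extension machinery of Construction \ref{SmlSm.3} via \eqref{SmlSm.3.2}--\eqref{SmlSm.3.4}. You merely unwind what those references do internally (Theorem \ref{Pn.3}, Theorem \ref{Div.4}, and the identification of $\iota_\sharp a_{dNis}^*\Omega^j$ with $\Omega^j$ as a dividing sheaf), which is a faithful and somewhat more explicit account of the same argument.
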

\begin{proof}
Lemma \ref{Omega_dividing_sheaf}, 
Proposition \ref{prop::hodgelet}, 
Corollary \ref{Pn-invariance of Log differentials}, 
and Theorem \ref{thm:Omega_log_transfer} imply that $\Omega^j$ is a strictly $\boxx$-invariant $t$-sheaves with log transfers on $SmlSm/k$.
For the isomorphisms
\begin{align*} 
\hom_{\ldmeff}(M(X), \Omega^j[i]) 
&\cong 
\hom_{\ldmeffet}(M(X), \Omega^j[i]) \\
&\cong
\hom_{\ldmefflet}(M(X),\Omega^j[i]) \\
&\cong
H^i_{Zar}(X, \Omega^j), 
\end{align*}
see \eqref{SmlSm.3.2} and \eqref{SmlSm.3.3}.
\end{proof}

\begin{cor}
With the above notations, there is a natural isomorphism
\[
\hom_{\ldmeff}(M(X), \Omega^\bullet[i]) 
\cong
\bH^i_{Zar}(X, \Omega^\bullet).
\]
Hence the de Rham cohomology, which is not $\A^1$-invariant in positive characteristic, is representable in $\ldmeff$.
\end{cor}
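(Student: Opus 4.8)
The plan is to exhibit the de Rham complex $\Omega^\bullet = (\Omega^0 \xrightarrow{d} \Omega^1 \xrightarrow{d} \cdots)$, re-indexed into a chain complex by the usual convention, as a strictly $\boxx$-invariant complex of dividing Nisnevich sheaves with log transfers, and then combine Proposition \ref{A.5.13} with the comparison between dividing Nisnevich and Zariski hypercohomology. First I would check that $\Omega^\bullet$ really is such a complex: each term $\Omega^j$ is a $dNis$-sheaf (indeed an $l\acute{e}t$-sheaf, Lemma \ref{Omega_dividing_sheaf}) equipped with a log transfer structure by Theorem \ref{thm:Omega_log_transfer}, and the de Rham differential $d\colon \Omega^j \to \Omega^{j+1}$ is a morphism of presheaves with log transfers by Proposition \ref{prop::commutativity_differential}; since every $X\in lSm/k$ has finite Krull dimension, $\Omega^j_{X/k}$ vanishes for $j$ large, so $\Omega^\bullet$ is a bounded complex of $dNis$-sheaves with log transfers.

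Next I would establish that $\Omega^\bullet$ is strictly $\boxx$-invariant as a complex. Using the hypercohomology spectral sequence
\[
E_1^{pq} = \bH_{dNis}^q(X,\Omega^p) \Rightarrow \bH_{dNis}^{p+q}(X,\Omega^\bullet)
\]
and the fact that each $\Omega^p$ is strictly $\boxx$-invariant (Theorem \ref{thmHodge}), the projection $X\times \boxx \to X$ induces an isomorphism on $E_1$-pages, hence on the abutments, boundedness of $\Omega^\bullet$ ensuring convergence. Proposition \ref{A.5.13} then gives a natural isomorphism $\hom_{\ldmeff}(M(X),\Omega^\bullet[i]) \cong \bH_{dNis}^i(X,\Omega^\bullet)$. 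Finally, the morphism of sites from the dividing Nisnevich topology to the Zariski topology on $lSm/k$, together with the identification $a_{dNis}^*\Omega^\bullet \cong \Omega^\bullet$ (each $\Omega^j$ is already a $dNis$-sheaf), yields a comparison morphism $\bH_{Zar}^i(X,\Omega^\bullet) \to \bH_{dNis}^i(X,\Omega^\bullet)$ which on $E_1$-pages is the comparison map $H_{Zar}^q(X,\Omega^p) \to H_{dNis}^q(X,\Omega^p)$; this is an isomorphism by Proposition \ref{prop::hodgelet}, hence so is the induced map on abutments. Composing these isomorphisms — and noting that $\bH_{Zar}^i(X,\Omega^\bullet)$ is by definition the algebraic de Rham cohomology — gives the asserted natural isomorphism.

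The main obstacle I anticipate is the bookkeeping in the last step: one must know that the functorial comparison isomorphisms $H_{Zar}^q(X,\Omega^p) \cong H_{dNis}^q(X,\Omega^p)$ of Proposition \ref{prop::hodgelet} — which a priori only concern a single quasi-coherent sheaf — are compatible with the de Rham differentials, so that they genuinely assemble into an isomorphism of the two hypercohomology spectral sequences. This is harmless, since the comparison maps come from the morphism of sites and are therefore natural with respect to every morphism of sheaves of $\Lambda$-modules, in particular with respect to the $\Lambda$-linear (though not $\cO$-linear) maps $d$; but it is the point that needs care, and it is cleanest to phrase it by applying the forgetful functor to an injective resolution of $\Omega^\bullet$ in each topos. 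For the concluding remark, once representability in $\ldmeff$ is established, the failure of $\A^1$-invariance of de Rham cohomology in characteristic $p$ (for instance $x^{p-1}\,dx$ is a nonzero class in $H^1_{dR}(\A^1/k)$) forces, via Theorem \ref{A.4.22}, the representing complex $\Omega^\bullet$ not to be $\A^1$-local, hence not to lie in the essential image of $R\omega^*$; so de Rham cohomology is genuinely out of reach of $\dmeff$.
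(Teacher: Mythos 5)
Your argument is correct and is essentially the paper's own proof: both run the Hodge-to-de Rham hypercohomology spectral sequence, deduce strict $\boxx$-invariance of $\bH^i(X,\Omega^\bullet)$ from the term-wise invariance in Theorem \ref{thmHodge}, apply Proposition \ref{A.5.13}, and identify $\bH^i_{dNis}$ with $\bH^i_{Zar}$ via Proposition \ref{prop::hodgelet}. The only slip is cosmetic: $\Omega^\bullet$ is not a bounded complex of sheaves on all of $lSm/k$ (the degree past which $\Omega^j_X$ vanishes depends on $\dim X$), but for each fixed $X$ the $E_1$-page is concentrated in finitely many columns, so convergence is unaffected.
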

\begin{proof}
Suppose $t$ is $dNis$ or $Zar$.
The Hodge to de Rham spectral sequence
\[
E_1^{pq}=H^q_{t}(X,\Omega^p)
\Rightarrow
\bH^{p+q}_{t}(X,\Omega^\bullet)
\]
is strongly convergent, so Theorem \ref{thmHodge} implies that $\bH^i_{Zar}(X, \Omega^\bullet)$ is $\boxx$-invariant for every integer $i\geq 0$.
Furthermore, Proposition \ref{prop::hodgelet} implies $\bH^i_{dNis}(X,\Omega^\bullet) \simeq \bH^i_{Zar}(X,\Omega^\bullet)$.
Hence we obtain the desired isomorphism.
\end{proof}

\begin{exm}
\label{Nontriviality}
Suppose $k$ is a field of positive characteristic $p>0$ and let $\Lambda$ be an $\mathbb{F}_p$-algebra.
Then the Artin-Schreier sequence
\[
0\rightarrow \Z/p \rightarrow \mathbb{G}_a\rightarrow \mathbb{G}_a\rightarrow 0
\]
is an exact sequence of \'etale sheaves on $SmlSm/k$, where by definition we set $\Gamma(X, \mathbb{G}_a) = \Gamma(\underline{X}, \mathbb{G}_a)$.
By Theorem \ref{thmHodge} the sheaf $\mathbb{G}_a$ on $SmlSm/k$ is a strictly $\boxx$-invariant dividing \'etale sheaf with log transfers.
Consequently, $\Z/p$ is a strictly $\boxx$-invariant dividing \'etale sheaf with log transfers.
Applying \eqref{SmlSm.3.3} we see that for every $X\in SmlSm/k$, there is an isomorphism
\[
\hom_{\ldmeffetZ}(M(X), \Z/p) 
\cong
H^0_{Zar}(\ul{X}, \Z/p).
\]
In particular, if $X$ is nonempty, this is a nontrivial group.
It follows that also $\Z/p$ is nontrivial in $\ldmeffetZ$.
Similarly, $\Z/p$ is nontrivial in $\ldmeffletZ$.
\end{exm}

Assume now that $k$ has characteristic zero and let $\Lambda=\mathbb{Q}$. 
Let $HC_n(X)$ denote the cyclic homology groups of a smooth $k$-scheme $X$. 
The Zariski descent method of \cite{weibelcyclic} and the Hochschild-Kostant-Rosenberg Theorem \cite[Theorem 3.4.12]{Loday} provide a natural isomorphism
\begin{equation}
\label{equation:eqcyclic}
HC_n(X) 
\cong 
\bigoplus_{p\in \Z}\bH_{Zar}^{2p-n}(X, \Omega^{\leq 2p}).
\end{equation}
Proposition \ref{prop::commutativity_differential} shows that the differentials in the complex $\Omega^\bullet$ on $SmlSm/k$ are compatible with the log transfer structure, 
so that $\Omega^{\leq 2p}$ is a complex of Nisnevich sheaves with (log) transfers. 
Theorem \ref{thmHodge} and \eqref{equation:eqcyclic} imply representability of cyclic homology in ${\mathbf{logDM}^{\rm eff}(k, \mathbb{Q})}$.

\begin{thm}
\label{thmHC} 
Suppose $k$ has characteristic zero and let $\Lambda=\mathbb{Q}$.
For every ${X}\in Sm/k$ and $n\geq 0$, there is a natural isomorphism
\begin{equation}
\label{equation:cyclichomology}
\hom_{{\mathbf{logDM}^{\rm eff}(k, \mathbb{Q})}}(M({X}), \bigoplus_{p\in \Z} \Omega^{\leq 2p}[2p-n])
\cong
HC_n({X}).
\end{equation}
\end{thm}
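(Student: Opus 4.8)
The plan is to deduce Theorem \ref{thmHC} directly from Theorem \ref{thmHodge} together with the classical Hochschild-Kostant-Rosenberg computation of cyclic homology and Zariski descent for $HC_*$ in characteristic zero. The starting point is the isomorphism \eqref{equation:eqcyclic}, which for a smooth $k$-scheme $X$ identifies $HC_n(X)$ with $\bigoplus_{p\in\Z}\bH_{Zar}^{2p-n}(X,\Omega^{\leq 2p})$; this is exactly the content of \cite{weibelcyclic} combined with \cite[Theorem 3.4.12]{Loday}, and for $X\in Sm/k$ there is nothing logarithmic to add. So the whole problem reduces to showing that hypercohomology of the truncated de Rham complex $\Omega^{\leq 2p}$ on $Sm/k$ is computed by the mapping groups $\hom_{\mathbf{logDM}^{\rm eff}(k,\mathbb{Q})}(M(X),\Omega^{\leq 2p}[2p-n])$, and then taking the direct sum over $p\in\Z$.

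First I would record that, by Proposition \ref{prop::commutativity_differential}, the de Rham differentials $d\colon\Omega^j\to\Omega^{j+1}$ are morphisms of presheaves with log transfers on $lSm/k$ (equivalently, on $SmlSm/k$ via the equivalence of \eqref{eq-smlsmintro}), so the truncation $\Omega^{\leq 2p}$ is a bounded complex of sheaves with log transfers. By Theorem \ref{thmHodge} each term $\Omega^j$ is a strictly $\boxx$-invariant $dNis$-sheaf with log transfers, hence the associated object of $\ldmeff$ is already fibrant for the $\boxx$-local descent model structure. Next I would run the hypercohomology spectral sequence $E_1^{q,j}=H^q_{dNis}(X,\Omega^j)\Rightarrow \bH^{q+j}_{dNis}(X,\Omega^{\leq 2p})$ (for $0\le j\le 2p$) against its representable counterpart coming from the filtration of $\Omega^{\leq 2p}$ by its brutal truncations inside $\ldmeff$: the filtration gives distinguished triangles whose associated long exact sequences, together with Theorem \ref{thmHodge} applied termwise, identify $\hom_{\ldmeff}(M(X),\Omega^{\leq 2p}[m])$ with $\bH^m_{Zar}(X,\Omega^{\leq 2p})$ by an induction on the length of the truncation and the five lemma. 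Here I use Proposition \ref{prop::hodgelet} to pass freely between $\bH_{dNis}$ and $\bH_{Zar}$, since for a scheme $X$ with trivial log structure the dividing Nisnevich cohomology agrees with Nisnevich, hence Zariski, cohomology of coherent sheaves.

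Finally, taking the coproduct over $p\in\Z$: since $M(X)$ is a compact object of $\ldmeff$ by Proposition \ref{A.5.33}, the functor $\hom_{\ldmeff}(M(X),-)$ commutes with small sums, so $\hom_{\ldmeff}(M(X),\bigoplus_{p}\Omega^{\leq 2p}[2p-n])\cong\bigoplus_p\hom_{\ldmeff}(M(X),\Omega^{\leq 2p}[2p-n])\cong\bigoplus_p\bH^{2p-n}_{Zar}(X,\Omega^{\leq 2p})$, which by \eqref{equation:eqcyclic} is $HC_n(X)$. Naturality in $X$ is inherited from the naturality of each ingredient. The main obstacle I anticipate is bookkeeping the compatibility of the two filtration/spectral-sequence comparisons — i.e.\ checking that the isomorphism of $E_1$-terms supplied by Theorem \ref{thmHodge} is compatible with the differentials induced by $d\colon\Omega^j\to\Omega^{j+1}$ on both sides, so that the induction with the five lemma actually goes through; this is where Proposition \ref{prop::commutativity_differential} is essential, and one must be slightly careful that the maps are the same and not merely abstractly isomorphic. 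A minor additional point is that $p$ ranges over all of $\Z$, but $\Omega^{\leq 2p}=0$ for $p<0$ and stabilizes for $p$ large relative to $\dim X$, so the sum is effectively finite for each fixed $X$ and no convergence issue arises.
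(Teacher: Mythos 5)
Your proposal is correct and follows essentially the same route as the paper, which derives the theorem in one line from the isomorphism \eqref{equation:eqcyclic}, Proposition \ref{prop::commutativity_differential}, and Theorem \ref{thmHodge}. Your brutal-truncation/five-lemma induction is the same argument in substance as the strongly convergent Hodge-to-de Rham hypercohomology spectral sequence the paper uses for the analogous de Rham corollary, and your compactness argument for commuting the sum over $p$ (together with the observation that only finitely many $p$ contribute for fixed $X$ and $n$) supplies the remaining detail the paper leaves implicit.
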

Note that for $X\in lSm/k$, 
the left hand side of \eqref{equation:cyclichomology} can be taken as definition of cyclic homology for 
fs log smooth log schemes over a field of characteristic zero.
In a future work, we plan to compare the above with log Hochschild and log cyclic homology.
\newpage

\appendix

\section{Logarithmic geometry}

In this section, 
we provide a minimal reminder of the basic definitions regarding logarithmic geometry and state results 
that are used in the main body of our work.
One can intuitively view the category of log schemes as an extension of the category of schemes given by 
essentially allowing ``schemes with boundary'' meaning pairs $(X,\partial X)$,
where $X$ is a scheme, and $\partial X\subset X$ is a normal crossing divisor, 
In many ways, the log geometry of $(X, \partial X)$ models the geometry on the complement $X\setminus \partial X$: 
for example, 
they have the same de Rham cohomology in characteristic 0 and \'etale sites.
Many classical invariants and properties of schemes extend naturally to log schemes. 
There are notions of smooth and \'etale maps, pullbacks, forms and differentials, and coherent sheaves. 
The Kato-Nakayama or Betti analytification functor provides an important bridge between log schemes and topological spaces;
in our motivic setting, 
this is clearly expressed in the construction of Thom spaces. 
\vspace{0.1in}

The structure sheaf $\mathcal{O}_X$ of $X$ is a sheaf of rings in the Zariski topology.
Its subsheaf of invertible elements is a Zariski sheaf of commutative monoids under multiplication $\mathcal{O}_X^{\times}$.
Moreover, 
it is related to the sheaf of differentials via the logarithmic differential 
\begin{equation}
\label{equation:differential}
\text{dlog}
\colon
\mathcal{O}_{X}^{\times}\to \Omega_{X}
\end{equation}
defined locally by 
\[
f\mapsto \frac{df}{f}.
\] 
A foundational insight of log geometry is that for a scheme with boundary, 
\eqref{equation:differential} extends to a morphism
\begin{equation}
\label{equation:logdifferential}
\text{dlog}
\colon
\mathcal{O}_{X}^{\times}(\partial X)\to \Omega_{X}(\partial X)
\end{equation}
with values in the sheaf of logarithmic differentials with respect to the normal crossing divisor $\partial X$. 
Here, 
$\mathcal{O}_{X}^{\times}(\partial X)$ denotes the monoid of functions which are invertible outside of $\partial X$ (and may have arbitrary order of vanishing at $\partial X$).  
Much of the geometry of the pair $(X, \partial X)$ is captured by the enhanced logarithmic differential map in \eqref{equation:logdifferential}. 
In log geometry, one generalizes the above to the ``logarithmic boundary data'' of a scheme together with a submonoid sheaf. 

A textbook reference for logarithmic geometry is \cite{Ogu}. 
Some of the first accounts of the subjects include \cite{MR1463703}, and \cite{MR1922832}.

\subsection{Monoids}
\begin{df}
\label{monoid}
A monoid is a commutative semigroup equipped with a unit.  
A homomorphism of monoids is required to preserve the unit element. 
Let $\Mon$ denote the category of monoids.
\end{df}

Given a monoid $P$, we can associate the group completion \index{monoid}
\begin{equation*}
P^\gp
:=
\{(a,b) |(a,b)\sim(c,d) \ \mbox{if} \ \exists \ p \in P \ \mbox{such that} \ p+a+d=p+b+c\}.
\end{equation*}
Note that the canonical morphism $P\to P^\gp$ is universal among the homomorphisms of monoids $P\to G$ such that the target $G$ is an abelian group.
\vspace{0.1in}

Let $P^\ast$ \index[notation]{P@ $P^*$}denote the set of all $p\in P$ such that $p+q=0$ for some $q\in P$. It is called the \emph{unit group} of $P$.

\begin{df}
\label{df:integral}
For a monoid $P$ the image of the homomorphism $P\rightarrow P^\gp$ sending $a$ to $(a,0)$ is denoted by $P^{\rm int}$. \index{monoid!integral}
A monoid $P$ is said to be \emph{integral} if $P\rightarrow P^{\rm int}$ is an isomorphism and finitely generated if there exists a surjective homomorphism $\N^{k}\to P$ for some $k$.
It is called \emph{fine} if it is integral and finitely generated.  \index{monoid!fine}
For an integral monoid $P$ we denote by $P^{\rm sat}$ the submonoid of $P^\gp$ such that 
whenever $p\in P^\gp$ and $n$ is a positive integer such that $np \in P$ then $p \in P^{\rm sat}$.
An integral monoid $P$ is said to be \emph{saturated} if $P\rightarrow P^{\rm sat}$ is an isomorphism.\index{monoid!saturated}
A monoid $P$ is \emph{sharp} if its unit group $P^*$ is trivial.\index{monoid!sharp}
\end{df}
We abbreviate the combined condition ``fine and saturated"  to  ``fs."

\begin{df}
For homomorphisms of monoids (resp.\ integral monoids, resp.\ saturated monoids) $P\rightarrow Q$ and $P\rightarrow P'$, the amalgamated sum $Q\oplus_P^{\rm mon} P'$ (resp.\ $Q\oplus_P^{\rm int} P'$, resp.\ $Q\oplus_P P'$) in the category of monoids (resp.\ integral monoids, resp.\ saturated monoids) is the colimit of the diagram $Q\leftarrow P\rightarrow P'$.
\end{df}

In general, the monoids $Q\oplus_P^{\rm mon} P'$, $Q\oplus_P^{\rm int} P'$, and $Q\oplus_P P'$ are not isomorphic to each other even if $P$, $P'$, and $Q$ are saturated. 
The monoid $Q\oplus_P^{\rm int} P'$ is isomorphic to the submonoid of the abelian group $Q^\gp\oplus_{P^\gp}P'^\gp$ generated by the images of $Q$ and $P'$.
There is an isomorphism
\[
Q\oplus_P P'\cong (Q\oplus_P^{\rm int}P')^{\rm sat}.
\]
For an explicit description of $Q\oplus_P^{\rm mon} P'$ we refer to \cite[Proposition I.1.1.5]{Ogu}.

\begin{df}
For a monoid $P$, we set
\[
\overline{P}:=P\oplus_{P^\ast}^{\rm mon}0.
\]
Note that $\overline{P}$ is always sharp.
\end{df}

\begin{df}
\label{KummerDef} \index{homomorphism of monoids!Kummer}
A homomorphism of integral monoids $\theta:P\rightarrow Q$ is called Kummer if it is injective and $\theta\otimes \Q:P\otimes \Q\rightarrow Q\otimes \Q$ is surjective.
\end{df}

\begin{df}
A \emph{face} $F$ of a monoid $P$ is a submonoid of $P$ such that for every $p,q\in P$ \index{monoid!face}
\[
p+q\in F\Rightarrow p,q\in F.
\]
An \emph{ideal} $I$ of a monoid $P$ is a subset of $P$ satisfying the condition \index{ideal (of a monoid)}
\[
p\in I,q\in P\Rightarrow p+q\in I.
\]
For an element $p$ of $P$, let $\langle p\rangle$ denote the smallest face containing $p$, and let $(p)$ denote the smallest ideal containing $p$.
\end{df}

\begin{df}
For a face $F$ of a monoid $P$ we set 
\[
P_F
:=
\{(a,b)\in P\times F|(a,b)\sim(c,d) \ \mbox{if} \ \exists \ p \in F \ \mbox{such that} \ p+a+d=p+b+c\}.
\]
With this definition we have $P_P=P^\gp$.
We also set
\[
P/F:=P\oplus_F^{\rm mon} 0.
\]
There is an isomorphism $\overline{P_F}\cong P/F$.
Finally, for an element $f\in P$, we set
\[
P_f:=P_{\langle f\rangle}.
\]
\end{df}

\subsection{Logarithmic structures} For a scheme $X$, let $\cO_X$ be the structure sheaf. When we refer to it as a sheaf of monoids, we always consider its multiplicative structure. 
\begin{df}
\label{log-str}
Let $X$ be a scheme. 
A pre-logarithmic structure on $X$ is a sheaf of monoids $\cM_{X}$ on the small Zariski site of $X$ combined with a morphism of sheaves of monoids
\[
\alpha\colon \cM_{X} \longrightarrow \mathcal{O}_{X}
\]
called the structure morphism.
A pre-logarithmic structure is called a logarithmic structure if $\alpha$ induces an isomorphism
\[
\alpha^{-1}(\mathcal{O}_{X}^{*})\cong \mathcal{O}^{*}_{X}.
\] 

The pair $(X,\cM_{X})$ is called a log scheme.
For notational convenience, whenever we abbreviate $(X,\cM_X)$ to $Y$ we set $\underline{Y}:=X$ and $\cM_Y:=\cM_X$.
The scheme $\underline{Y}$ is called the underlying scheme of $Y$. 
\end{df}

Note that the last condition in Definition \ref{log-str} gives us a canonical embedding $\mathcal{O}_{X}^{*}\hookrightarrow \cM_{X}$ as a subsheaf of groups.
The quotient sheaf of monoids 
\[
\overline{\cM}_{X}
\colon=
\cM_{X}/\mathcal{O}_{X}^{*}
\]
is called the characteristic monoid of the log structure $\cM_{X}$.

\begin{rmk}
The morphism $\alpha$ is sometimes denoted $\exp$ for ``exponential"  and an inverse $\mathcal{O}_X^\ast \rightarrow \cM_{X}$ is denoted $\log$ for ``logarithmic.''
Any scheme $X$ is a log scheme with $\cM_{X}=\mathcal{O}_{X}^{\ast}$ and $\alpha$ the inclusion. 
This is the trivial log structure on $X$.
\end{rmk}

\begin{df}[{\cite[Definition III.1.1.1]{Ogu}}]
\label{A.9.15}
Let ${\bf Log}_X$ denote the category of log structures on a scheme $X$. \index[notation]{Log @ ${\bf Log}_X$}
\end{df}

\begin{df}[{\cite[Definition III.1.1.5]{Ogu}}]
\label{A.9.13}
Let $f:X\rightarrow Y$ be a morphism of schemes. 
For a log structure $\cM_{X}\rightarrow \cO_X$ on $X$ we set
\[
f_*^{log}\cM_{X}
:=
f_*\cM_{X}\times_{f_*\cO_X}\cO_Y.
\]
For a log structure $\cM_{Y}\rightarrow \cO_Y$ on $Y$, 
let $f_{log}^*\cM_{Y}$ be the log structure induced by the prelog structure
\[
f^{-1}(\cM_{Y})\rightarrow f^{-1}(\cO_Y)\rightarrow \cO_X.
\]

With these definitions, there exists an adjoint pair of functors
\[f_{log}^*
:
{\bf Log}_X
\rightleftarrows 
{\bf Log}_Y:f_*^{log}.
\]
\end{df}

\begin{df}
A morphism of log schemes $(X,\cM_{X})\to (Y,\cM_{Y})$ is a pair $(f,\phi)$, 
where $f\colon X\to Y$ is a morphism of schemes and $\phi\colon f^{-1}\cM_{Y}\to \cM_{X}$ is a homomorphism of sheaves of monoids such that the diagram
\begin{equation}
\label{diagram:normalcrossing}
\vcenter{\xymatrix{
f^{-1}\cM_{Y} \ar[r]^-{\phi} \ar[d] & \cM_{X} \ar[d]\\
f^{-1}\mathcal{O}_{Y}  \ar[r]^-{f^{\ast}} & \mathcal{O}_{X} 
}}
\end{equation}
commutes.
Note that giving a morphism $\varphi:f^{-1}\cM_{Y}\rightarrow \cM_{X}$ is equivalent to giving a morphism $f_{log}^*\cM_{Y}\rightarrow \cM_{X}$.
\vspace{0.1in}

We write $\LogSch$ for the category of log schemes. \index[notation]{LogSch @ $\LogSch$}
\end{df}

\begin{rmk}
A log morphism between schemes equipped with their trivial log structures is just a morphism of schemes.
\end{rmk}

\begin{exm}
\label{ex:logpoints}
Let $X$ be a smooth scheme with an effective divisor $D\subset X$. 
Then we have a standard logarithmic structure on $X$ associated with the pair $(X, D)$ or the divisorial log structure induced by $D$, 
where
\[
\cM_{X} := \{f \in \mathcal{O}_{X} \ | \ f|_{X\setminus D} \in \mathcal{O}^*_{X}\}
\]
with the structure morphism $\cM_{X} \to \mathcal{O}_{X}$ given by the canonical inclusion. 
Note that any section of $\mathcal{O}^*_{X}$ is already in $\cM_{X}$.
In this case, the log structure encodes the intuition that $D$ is a sort of ``boundary'' in $X$.
With reference to Section \ref{subsection:cols},
picking local equations for the branches of $D$ passing through a point $x\in X$ gives local charts for the log structure with monoid $\N^{n}$, 
where $n$ is the number of branches. 
\end{exm}

\begin{exm}
\label{ex:affine-toric-log}
To a monoid $P$ one associates the monoid ring $\Z[P]$ of $P$ over the integers and the affine toric variety $\underline{\A_{P}} := \text{Spec}(\Z[P])$. 
The standard logarithmic structure $\cM_{\A_P}$ on $\underline{\A_P}$ is induced from the pre-logarithmic structure \index[notation]{SpecZP @ $\text{Spec}(\Z[P])$}
\[
P \to \Z[P] = \Gamma(\mathcal{O}_{\A_{P}})
\]
defined by the obvious inclusion. 
This is called the canonical log structure on $\underline{\A_{P}}$, and we set $\A_P:=(\underline{\A_P},\cM_{\A_P})$. \index[notation]{AP @ $\A_P, \A_{\mathbb{N}}$}
For example, 
we can consider $\A_{\N}$ as $\Spec{\Z[\N]}$ with the log structure associated to the origin.
More generally,
we may replace $\Z$ with a ring $R$.
In this way obtain a contravariant functor from $\Mon$ to $\LogSch$.
\end{exm}

Of particular interest is the case when $P$ is a toric monoid.

\begin{exm}
Any open subscheme of a log scheme admits a restricted log structure.
\end{exm}

\begin{exm}
\label{ex:logpoints2}
Let $Q$ be a sharp monoid. 
Define the homomorphism $\alpha\colon Q\to k$ by $\alpha(0)=1$ and  $\alpha(q)=0$ if $q\neq 0$.
The associated log scheme $(\Spec{k},Q)$ is a called a log point in general and a standard log point when $Q=\N$.
This log structure is equivalent to that of $\alpha \colon Q\oplus k^{\ast}\to k$ given by $\alpha(0,x)=x$ and $\alpha(q,x)=0$ if $q\neq 0$.
When $Q=\{0\}$ we obtain the trivial log structure.
If $k$ is algebraically closed, every log structure on $\Spec k$ is equivalent to a log point.
\end{exm}

A log structure isolates the ``singular'' information carried by a pre-log structure, 
i.e., 
it provides logarithmic behavior for functions that are not already invertible.
Any pre-log structure can be turned into a log structure.
The following provides a left adjoint of the inclusion functor from pre-log structures into log structures.

\begin{df}
Let $\alpha\colon \cM_{X} \rightarrow \mathcal{O}_{X}$ be a pre-logarithmic structure. 
Then the associated logarithmic structure $M^{a}_{X}$ is the pushout of the diagram
\[
\xymatrix{
\mathcal{O}_{X}^{*} & \alpha^{-1}(\mathcal{O}_{X}^{*}) \ar[l] \ar[r] & \cM_{X} 
}
\]
in the category of sheaves of monoids on the small Zariski site of $X$, endowed with 
\[
M^{a}_{X} \rightarrow \mathcal{O}_{X}  \qquad (a,b)\mapsto \alpha(a)b \qquad\qquad (a \in M, b \in \mathcal{O}_{X}^{*}).
\]
\end{df}

\begin{exm}
Any morphism of schemes $f\colon X\to \Spec {\Z[P]}$ induces a log structure on $X$ as follows: 
$f$ gives a morphism of monoids $P\to \mathcal{O}_X(X)$ that induces $\widetilde{\alpha}\colon\underline{P}\to \mathcal{O}_X$, 
where $\underline{P}$ is the constant sheaf. 
It is typically not true that $\widetilde{\alpha}$ induces an isomorphism between $\widetilde{\alpha}^{-1}\mathcal{O}^\times_X$ and $\mathcal{O}^\times_X$, 
but the procedure above fixes the behaviour of the units, and this produces a log structure $\alpha\colon M^{a}_{X}\to \mathcal{O}_X$. 
The quotient $M^{a}_{X}/\mathcal{O}_X^\times$ is obtained from $\underline{P}$ by locally killing the sections of $\underline{P}$ that become invertible in $\mathcal{O}_X$.
In particular, 
all the stalks of $M^{a}_{X}/\mathcal{O}_X^\times$ are quotients of the monoid $P$.
\end{exm}

\subsection{Monoschemes and fans}
In this section, we review the definition of fans and the relation between monoschemes and fans.

\begin{df}\index{monoidal space}
A \emph{monoidal space} is a topological space $X$ equipped with a sheaf of monoids $\cM_{X}$.
\vspace{0.1in}

A monoidal space $X$ is \emph{quasi-compact} (resp.\ \emph{quasi-separated}, resp.\ \emph{connected}) if the underlying topological space is quasi-compact 
(resp.\ quasi-separated, resp.\ connected).
\end{df}

\begin{df} \index{monoidal space!of a monoid $P$}
Let $P$ be a monoid.
Its monoidal space $\Spec{P}$ is defined as follows.
The underlying set of $\Spec{P}$ consists of the faces of $P$.
For an element $f\in P$, we set
\[
D(f):=\{F\in \Spec{P}:f\in F\}.
\]
The \emph{Zariski} topology on $\Spec{P}$ is the topology generated by
\[
\mathscr{B}:=\{D(f):f\in P\},
\]
i.e., $\mathscr{B}$ is a base for $\Spec{P}$.
The assignment 
$$
D(f)\mapsto P_f
$$ 
defines a presheaf on $\mathscr{B}$, and its sheafification defines a sheaf $\cM_{\Spec{P}}$ on $\Spec{P}$.
The pair 
$$
\Spec{P}:=(\Spec{P},\cM_{\Spec{P}})
$$ 
is a monoidal space.
\end{df}

\begin{rmk}
The meaning of the notation $\Spec{\Z}$, 
either as the affine scheme associated with the ring $\Z$ or as the monoidal space associated with the monoid $\Z$, 
should always be clear from the context.
\end{rmk}

\begin{df}
A monoidal space $X$ is an fs \emph{monoscheme} \index{monoscheme} if there is an open cover $\{X_i\}$ of $X$ such that for some fs monoid $P_i$ there is an isomorphism
\[
X_i\cong \Spec {P_i}.
\]
If $\cM_{X,x}^{\rm gp}$ is torsion free for every point $x\in X$, we say that $X$ is \emph{toric}.
If $X\cong \Spec{P}$ for some fs monoid $P$, we say that $X$ is \emph{affine}.
\end{df}

The category of fs monoschemes has fiber products, see \cite[Proposition II.1.3.5]{Ogu}.

\begin{df}[{\cite[Definition II.1.6.1]{Ogu}}]
A quasi-compact and quasi-separated fs monoscheme $X$ is \emph{separated} (resp.\ \emph{proper}) if the naturally induced homomorphism
\[
\hom(\Spec \N, X)\rightarrow \hom(\Spec \Z, X)
\]
is injective (resp.\ bijective).
\end{df}

\begin{df}
\label{df:toricvarieties}
We follow standard notation for toric varieties and set 
\[
N:=\Z^{d}, 
\;
M:=\hom_{\Z}(N,\Z), 
\;
N_{\R}:=N\otimes_{\Z}\R, 
\;
M_{\R}:=M\otimes_{\Z}\R.
\]
We also often use the notation $N^\vee:=M$.
If $\sigma\subset N_{\R}$ is a strictly convex rational polyhedral cone, 
we denote its dual cone by 
\[
\sigma^{\vee}
:=
\{
m\in M_{\R} \vert \langle m,n \rangle = 0, \ \forall \ n\in\sigma
\}.
\]
Gordan's lemma tells us that $M\cap \sigma^{\vee}$ is a finitely generated monoid.
\vspace{0.1in}

A \emph{fan}\index{fan} $\Sigma$ in $N$ is a finite set of cones in $N$ satisfying the following conditions:
\begin{enumerate}
\item[(i)] If $\sigma\in \Sigma$, then every face of $\sigma$ is in $\Sigma$.
\item[(ii)] If $\sigma,\tau\in \Sigma$, then $\sigma\cap \tau\in \Sigma$.
\end{enumerate}
\vspace{0.1in}

Let $\underline{\A_{\Sigma}}$ be the toric variety associated to a fan $\Sigma$ in $M$.
By gluing the log structures associated with the homomorphisms 
\[
M\cap \sigma^{\vee}  
\to 
\Z[M\cap \sigma^{\vee}]
\]
for every cone $\sigma$ in $\Sigma$, 
we obtain an fs log scheme $\A_{\Sigma}$ over $\Spec{\Z}$.
This is in fact the divisorial log structure on $\A_{\Sigma}$ coming form the toric boundary $\partial \A_{\Sigma}\subseteq \A_{\Sigma}$, 
i.e., 
the complement of the torus orbit in $X$.
\end{df}


\begin{df}
For a fan $\Sigma$ in a lattice $N$, the \emph{support}\index{fan!support} $\lvert \Sigma \rvert$ of $\Sigma$ is the union of all cones of $\Sigma$.
\end{df}

\begin{df}
For fans $\Sigma$ and $\Sigma'$ in lattices $N$ and $N'$, 
a morphism $\Sigma'\rightarrow \Sigma$ of fans is a homomorphism of lattices $f:N'\rightarrow N$ such that for every cone $\sigma'$ of $N'$, 
$f(\sigma')\subset \sigma$ for some cone $\sigma$ of $N$.
A morphism $\Sigma'\rightarrow \Sigma$ of fans is a \emph{partial subdivision}\index{fan!partial subdivision} if the associated homomorphism $f:N'\rightarrow N$ of lattices is an isomorphism.
A partial subdivision $\Sigma'\rightarrow \Sigma$ is a \emph{subdivision}\index{fan!subdivision} if $\lvert \Sigma'\rvert=\lvert\Sigma\rvert$.
If $\Sigma'\rightarrow \Sigma$ is a morphism of fans, then we can naturally associate a morphism $\A_{\Sigma'}\rightarrow \A_{\Sigma}$ of fs log schemes.
\end{df}

Let $\mathbf{Fan}$ denote the category of fans, and let $\mathbf{MSch}^{fs}$ denote the category of fs monoschemes.\index[notation]{Fan @ $\mathbf{Fan}$}\index[notation]{Mon @ $\mathbf{MSch}^{fs}$}

\begin{thm}
\label{fan=monoscheme2}
There is a fully faithful functor
\begin{equation}
\label{fan=monoscheme2.1}
\mathbf{Fan}\rightarrow \mathbf{MSch}^{fs}.
\end{equation}
The essential image consists of all separated, connected, and toric fs monoschemes whose underlying topological spaces are finite sets.
\end{thm}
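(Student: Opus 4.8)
The plan is to build the functor \eqref{fan=monoscheme2.1} by the gluing recipe of toric geometry, transported verbatim from schemes to monoschemes, and then to read off the essential image and full faithfulness from the behaviour at the (dense) generic point. For a fan $\Sigma$ in a lattice $N$ with dual $M=N^\vee$, set $P_\sigma:=M\cap\sigma^\vee$ for each $\sigma\in\Sigma$. I would first check $P_\sigma$ is fs: finite generation is Gordan's lemma, saturation is immediate because $\sigma^\vee$ is a cone (if $np\in\sigma^\vee$ then $p\in\sigma^\vee$), and $P_\sigma^{\rm gp}$ is a subgroup of $M$, hence torsion free. For a face $\tau\preceq\sigma$ one picks $m\in P_\sigma$ with $\tau=\sigma\cap m^\perp$ and verifies $P_\tau=(P_\sigma)_m$, so $\Spec P_\tau=D(m)\hookrightarrow\Spec P_\sigma$ is an open immersion of monoschemes; the fan axiom ($\sigma_1\cap\sigma_2$ a common face of both) supplies the cocycle condition, and gluing yields an fs monoscheme $\mathsf{M}_\Sigma$. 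On morphisms, a lattice map $f\colon N'\to N$ underlying $\Sigma'\to\Sigma$ dualizes to $f^\vee\colon M\to M'$; for each $\sigma'\in\Sigma'$ choose the smallest $\sigma\in\Sigma$ with $f(\sigma')\subseteq\sigma$, so that $f^\vee(P_\sigma)\subseteq P_{\sigma'}$ and hence $\Spec P_{\sigma'}\to\Spec P_\sigma$; these are compatible with face inclusions (the smallest $\sigma$ for a face of $\sigma'$ is a face of the one for $\sigma'$) and with composition, so they glue to $\mathsf{M}(f)$, giving a functor.

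For the essential image, necessity is quick: for a nonempty fan $\Sigma$ the cone $\{0\}$ is a face of every $\sigma\in\Sigma$, and the points of $\Spec P_\sigma$ are the faces of $P_\sigma$, equivalently the faces of $\sigma^\vee$, equivalently (by duality) the faces of $\sigma$; tracking the gluing shows the underlying set of $\mathsf{M}_\Sigma$ is $\Sigma$ itself as a set of cones, so it is finite, the point of $\{0\}$ is the dense generic point, whence $\mathsf{M}_\Sigma$ is irreducible and in particular connected, while the stalk of $\cM$ at the point of $\tau$ is $P_\tau\subseteq M$ with torsion-free group, so $\mathsf{M}_\Sigma$ is toric; separatedness follows from the fan axiom via the valuative criterion of \cite{Ogu} ($\hom(\Spec\N,\mathsf{M}_\Sigma)\to\hom(\Spec\Z,\mathsf{M}_\Sigma)$ injective), or directly because $U_{\sigma_1}\cap U_{\sigma_2}=U_{\sigma_1\cap\sigma_2}$ is affine and $U_{\sigma_1\cap\sigma_2}\to U_{\sigma_1}\times U_{\sigma_2}$ is a closed immersion. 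The converse is the heart of the matter and the step I expect to be most delicate: given a separated, connected, toric fs monoscheme $X$ with finite underlying space, one must reconstruct a fan. First show $X$ has a unique generic point $\eta$ (each affine chart $\Spec P$ has one since $P$ is integral, and connectedness plus finiteness glue them) and that $M:=\cM_{X,\eta}$ is free of finite rank (toric plus fs). Put $N:=M^\vee$. For $x\in X$ the generization map $\cM_{X,x}\to\cM_{X,\eta}=M$ is injective (integrality), realizing $\cM_{X,x}$ as a fine saturated submonoid of $M$; separatedness forces $\cM_{X,x}$ to span $M_\R$, so $\sigma_x:=(\R_{\geq0}\!\cdot\!\cM_{X,x})^\vee\subseteq N_\R$ is a pointed rational polyhedral cone with $M\cap\sigma_x^\vee=\cM_{X,x}$ (saturation). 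One then checks $\Sigma:=\{\sigma_x:x\in X\}$ is a fan — faces of $\sigma_x$ correspond to generizations of $x$, again points of $X$, and $\sigma_x\cap\sigma_{x'}=\sigma_{x''}$ for a common generization $x''$ of $x$ and $x'$, whose existence is exactly where separatedness is used — and that the chart-by-chart maps $\Spec(M\cap\sigma_x^\vee)=\Spec\cM_{X,x}\to X$ assemble into an isomorphism $\mathsf{M}_\Sigma\xrightarrow{\ \sim\ }X$. Isolating the role of separatedness in "the cones of $\Sigma$ pairwise meet along common faces" is the main obstacle.

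For full faithfulness I would exploit the generic point once more. A morphism of monoschemes $g\colon\mathsf{M}_{\Sigma'}\to\mathsf{M}_\Sigma$ sends the generic point $\eta'$ to $\eta$ (on any affine chart the generic point is the face given by the whole monoid, and the preimage of the whole monoid under a monoid homomorphism is the whole monoid), so it induces on stalks a homomorphism of free groups $\phi\colon M=\cM_{\mathsf{M}_\Sigma,\eta}\to M'=\cM_{\mathsf{M}_{\Sigma'},\eta'}$, and by reflexivity of lattices $\phi=f^\vee$ for a unique $f\colon N'\to N$. Localizing $g$ at the point of an arbitrary $\sigma'\in\Sigma'$, with image the point of some $\tau\in\Sigma$, gives $f^\vee(M\cap\tau^\vee)\subseteq M\cap\sigma'^\vee$, and taking cones and dualizing yields $f(\sigma')\subseteq\tau$; thus $f$ is a morphism of fans. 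Since $\mathsf{M}(f)$ and $g$ both restrict to $\Spec f^\vee$ on the dense generic chart and $\mathsf{M}_\Sigma$ is separated, $\mathsf{M}(f)=g$, so the functor is full. Faithfulness is the same bookkeeping run in reverse: $\mathsf{M}(f)$ determines its restriction $f^\vee$ to the generic charts, which determines $f$ by double duality, so $\mathsf{M}$ is injective on morphisms.
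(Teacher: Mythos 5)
First, note that the paper does not prove this statement itself: its ``proof'' is the single line ``We refer to \cite[Theorem II.1.9.3]{Ogu},'' with the functor made explicit only in Construction \ref{fan=monoscheme}. Your proposal reconstructs exactly the route of that reference: glue $\Spec(M\cap\sigma^\vee)$ along the face correspondence to define the functor, read off finiteness, connectedness, toricness and separatedness of the image at the generic point, recover the fan from a monoscheme via the cones $\sigma_x=(\R_{\geq 0}\cdot\cM_{X,x})^\vee$, and prove full faithfulness by double duality on the generic stalk. All of that is sound, up to one harmless misattribution: the fact that $\cM_{X,x}$ spans $M_\R$ (equivalently, that $\sigma_x$ is pointed) is automatic from $\cM_{X,x}^{\rm gp}=\cM_{X,\eta}=M$, since the stalk at $x$ is a localization of an integral monoid; separatedness plays no role there.

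The genuine gap is the one you flag yourself: you do not verify that $\Sigma=\{\sigma_x\}$ satisfies the fan axiom, i.e.\ that $\sigma_x\cap\sigma_{x'}$ is a common face, and without this the essential surjectivity statement is unproved. The step does go through, and the mechanism is the valuative criterion you already invoked for the other direction. Pick a lattice point $n$ in the relative interior of $\sigma_x\cap\sigma_{x'}$. Via the chart at $x$, the element $n$ determines a morphism $\Spec\N\rightarrow X$ whose closed point lands at the generization $z$ of $x$ with $\sigma_z$ the smallest face of $\sigma_x$ containing $n$; via the chart at $x'$ it determines a second such morphism. Both restrict to the same element $n\in N\cong\hom(\Spec\Z,X)$, so separatedness forces them to coincide; in particular $z=z'$, and $\sigma_z$ is simultaneously the smallest face of $\sigma_x$ and of $\sigma_{x'}$ containing $n$. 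Since a face of a cone that meets the relative interior of a subcone contains that subcone, $\sigma_x\cap\sigma_{x'}\subseteq\sigma_z$, while $\sigma_z\subseteq\sigma_x\cap\sigma_{x'}$ because it is a face of each; hence $\sigma_x\cap\sigma_{x'}=\sigma_z$ is a common face. With this inserted, your argument is complete and agrees with the proof the paper delegates to \cite{Ogu}.
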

\begin{proof}
We refer to \cite[Theorem II.1.9.3]{Ogu}.
\end{proof}

\begin{const}
\label{fan=monoscheme}
Here we give an explicit description of the functor \eqref{fan=monoscheme2.1}.
Let $\Sigma$ be a fan with the cones $\sigma_1,\ldots,\sigma_r$ in a lattice $N$.
Consider the dual cones $\sigma_1^\vee,\ldots,\sigma_r^\vee$.
We can glue $\Spec{\sigma_1^\vee},\ldots,\Spec{\sigma_r^\vee}$ altogether to canonically obtain an fs monoscheme.
Thus we can regard fans as fs monoschemes and morphisms of fans as morphisms of monoschemes.
\vspace{0.1in}

If $P$ is an fs monoid such that $P^\gp$ is a lattice $N$, then $\Spec{P}$ corresponds to the fan with a single maximal cone $P^\vee$ in a lattice $N^\vee$.
\end{const}

\begin{lem}
\label{Fiberproduct_fans}
Let $f\colon\Theta\rightarrow \Sigma$ be a subdivision (resp.\ partial subdivision) of fans in a lattice $N$, 
and let $g\colon \Sigma'\rightarrow \Sigma$ be a morphism of fans with a lattice homomorphism $\theta\colon N'\rightarrow N$.
Then the fiber product $\Theta':=\Theta\times_\Sigma \Sigma'$ in the category of fs monoschemes is a fan in $N'$, 
and the projection $f'\colon \Theta'\rightarrow \Sigma'$ is a subdivision (resp.\ partial subdivision).
\end{lem}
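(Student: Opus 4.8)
The plan is to reduce everything to the affine case and then exhibit $\Theta'$ explicitly as a common-refinement fan. First I would observe that all the assertions are local on $\Sigma$ and on $\Sigma'$: fiber products in $\mathbf{MSch}^{fs}$ are compatible with restriction to open submonoschemes, the condition of being a fan as well as ``(partial) subdivision'' are tested on an open cover of the target, and ``lying in the essential image of the functor $\mathbf{Fan}\to\mathbf{MSch}^{fs}$ of Theorem \ref{fan=monoscheme2}'' will be verified below directly from an explicit description. So I may cover $\Sigma$ by affine open subfans $\Spec P$, then cover $\Sigma'$ by affine open subfans $\Spec P'$ lying over them, and reduce to the case $\Sigma=\Spec P$, $\Sigma'=\Spec P'$ for fs monoids $P,P'$; equivalently $\Sigma$ is the set of faces of a single strongly convex cone $\sigma\subset N_{\R}$, $\Sigma'$ the faces of $\sigma'\subset N'_{\R}$, and $g$ is induced by $\theta\colon N'\to N$ with $\theta(\sigma')\subseteq\sigma$.

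In this situation $\Theta$ is a fan subdividing $\sigma$; cover it by the affine open subfans $\Spec Q_\tau$ with $Q_\tau=N^\vee\cap\tau^\vee$ attached to the cones $\tau\in\Theta$. By \cite[Proposition II.1.3.5]{Ogu} and the description of fiber products of affine fs monoschemes, $\Spec Q_\tau\times_{\Spec P}\Spec P'=\Spec\bigl(Q_\tau\oplus_P P'\bigr)$, the amalgamated sum taken in fs monoids, and $\Theta'$ is glued from these. The crux of the proof — and the step I expect to be the only delicate one — is the identification
\[
Q_\tau\oplus_P P'\;\cong\;N'^\vee\cap\bigl(\sigma'\cap\theta^{-1}(\tau)\bigr)^\vee ,
\]
which I would obtain by combining $Q_\tau\oplus_P P'=(Q_\tau\oplus_P^{\rm int}P')^{\rm sat}$, the realization of $Q_\tau\oplus_P^{\rm int}P'$ as the submonoid of $Q_\tau^{\rm gp}\oplus_{P^{\rm gp}}P'^{\rm gp}$ generated by the images of $Q_\tau$ and $P'$, the identities $\theta^{-1}(\tau)=(\theta^\vee(\tau^\vee))^\vee$ and $(C_1\cap C_2)^\vee=\overline{C_1^\vee+C_2^\vee}$ for cones, and Gordan's lemma. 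Since $\sigma$, $\tau$, $\sigma'$ are strongly convex their duals are full-dimensional, so one checks along the way that $Q_\tau^{\rm gp}=N^\vee$, $P'^{\rm gp}=N'^\vee$ and that the group completion of $Q_\tau\oplus_P P'$ is $N'^\vee$ — in particular torsion-free, which is what toricity requires. This computation simultaneously shows that $\Theta'$ is the monoscheme attached to the collection $\{\,\sigma'\cap\theta^{-1}(\tau)\ :\ \tau\in\Theta,\ \sigma'\in\Sigma'\,\}$ of cones in $N'_{\R}$ — which one verifies directly is closed under passing to faces and under pairwise intersection, hence is a fan in $N'$ (and lies in the essential image of Theorem \ref{fan=monoscheme2}) — and that the abstract gluing matches the gluing of these dual cones.

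It remains to identify the nature of $f'$. Being the base change of $f$, whose underlying lattice homomorphism is $\mathrm{id}_N$, the projection $f'\colon\Theta'\to\Sigma'$ has underlying lattice homomorphism $\mathrm{id}_{N'}$; in particular it is automatically a partial subdivision. If moreover $f$ is a subdivision, then $|\Theta|=|\Sigma|=\sigma$ in the affine reduction, and $g$ being a morphism of fans gives $\theta(\sigma')\subseteq\sigma$, so
\[
|\Theta'|=\bigcup_{\tau\in\Theta}\bigl(\sigma'\cap\theta^{-1}(\tau)\bigr)=\sigma'\cap\theta^{-1}(|\Theta|)=\sigma'\cap\theta^{-1}(\sigma)=\sigma'=|\Sigma'| ,
\]
whence $f'$ is a subdivision. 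Apart from the amalgamated-sum/saturation computation above, everything is bookkeeping with the equivalence of Theorem \ref{fan=monoscheme2} and the local character of the statement.
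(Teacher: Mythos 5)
Your proposal is correct and follows essentially the same route as the paper's proof: reduce to affine cones, compute the amalgamated sum $Q\oplus_P P'$ dually to identify the fiber product locally with $\Spec$ of the monoid dual to $\sigma'\cap\theta^{-1}(\tau)$, conclude that $\Theta'$ is the fan $\{\sigma'\cap\theta^{-1}(\tau)\}$ in $N'$, and then read off the partial-subdivision property from the lattice being $N'$ and the subdivision property from the equality of supports. You are somewhat more explicit than the paper about the gluing, the saturation step in the amalgamated sum, and the verification that the resulting collection of cones is a fan, but these are refinements of the same argument rather than a different one.
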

\begin{proof}
Let $\sigma=\Spec{P}$ be a cone of $\Sigma$, 
where $P$ is an fs monoid, 
and let $\tau=\Spec{Q}$ and $\sigma'=\Spec{P'}$ be cones of $\Theta$ and $\Sigma'$ where $P'$ and $Q$ are fs monoids.
Suppose that $f(\tau)=\sigma$ and $g(\sigma')=\sigma$.
Then $Q':=Q\oplus_P P'$ is the submonoid of $N'^\vee$ generated by $\theta^\vee(Q)$ and $P'$, 
i.e., 
$Q'=\theta^\vee(Q)+P'$.
Thus we have $Q'^\vee=(\theta^\vee(Q))^\vee\cap P'^\vee$.
From this local description of $\Theta'$, we see that $\Theta'$ is the fan in $N'$ whose cones are of the form
\begin{equation}
\label{Fiberproduct_fans.1}
\{\theta^\vee(\tau) \cap \sigma':\sigma'\in \Sigma'\text{ and }\tau\in \Theta\}.
\end{equation}

Since $\Sigma'$ and $\Theta'$ are in the same lattice $N'$, $f'\colon\Theta'\rightarrow \Sigma'$ is a partial subdivision.
If $f$ is a subdivision, then $\lvert \Theta\rvert = \lvert \Sigma \rvert$.
Using \eqref{Fiberproduct_fans.1} we deduce that $\lvert \Theta'\rvert=\lvert \Sigma'\rvert$.
\end{proof}

\begin{lem}
\label{monomorphismoffans}
Let $f:\Sigma'\rightarrow \Sigma$ be a subdivision of fans in a lattice $N$.
Then $f$ is a monomorphism in the category of fans.
\end{lem}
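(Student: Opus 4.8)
The plan is to reduce the statement to the trivial fact that an identity morphism is a monomorphism, by recording that a morphism of fans carries no information beyond its underlying lattice homomorphism. Concretely, I would first note that, by the definition recalled above, a morphism of fans $\Sigma'\to\Sigma$ \emph{is} a homomorphism of the ambient lattices $N'\to N$ satisfying the cone-compatibility condition, the latter being a property rather than extra data. Hence the assignment sending a fan to its ambient lattice and a morphism of fans to its underlying lattice homomorphism defines a faithful functor $U$ from the category of fans to the category of finitely generated free abelian groups.

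Next I would unwind the hypothesis that $f$ is a subdivision: by Definition of a subdivision, $\Sigma'$ and $\Sigma$ lie in the same lattice $N$ and the associated lattice homomorphism is $\mathrm{id}_N$, so $U(f)=\mathrm{id}_N$, which is in particular a monomorphism. Then, given any fan $\Theta$ and morphisms $g_1,g_2\colon\Theta\to\Sigma'$ with $f\circ g_1=f\circ g_2$, applying $U$ yields $\mathrm{id}_N\circ U(g_1)=\mathrm{id}_N\circ U(g_2)$, hence $U(g_1)=U(g_2)$; faithfulness of $U$ forces $g_1=g_2$. This is exactly the statement that $f$ is a monomorphism in the category of fans.

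There is no genuine obstacle here; the only point requiring care is the bookkeeping observation that a morphism of fans is literally a lattice homomorphism with a property, so that composition of morphisms of fans corresponds to composition of lattice homomorphisms and $U$ is visibly faithful. (Should a more geometric argument be desired, one could alternatively transport the question along the fully faithful functor $\mathbf{Fan}\to\mathbf{MSch}^{fs}$ of Theorem \ref{fan=monoscheme2} and verify that the induced morphism of fs monoschemes is a monomorphism, using the fiber-product computation of Lemma \ref{Fiberproduct_fans}; but this is strictly more work and not needed.)
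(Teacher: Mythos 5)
Your proof is correct, but it takes a genuinely different route from the paper's. You argue purely combinatorially: since a morphism of fans is, by definition, a lattice homomorphism satisfying a cone-compatibility \emph{property}, the forgetful functor $U$ to lattices is faithful, and since a (partial) subdivision has $U(f)$ an isomorphism of $N$, the standard fact that a faithful functor reflects monomorphisms whose image is a monomorphism finishes the argument. The paper instead takes exactly the ``geometric alternative'' you parenthetically set aside: it regards $\Sigma$ and $\Sigma'$ as separated fs monoschemes via the fully faithful functor of Theorem \ref{fan=monoscheme2}, identifies the stalks of $\cM^{\gp}$ at the generic points of both with $N$, and invokes \cite[Corollary II.1.6.4]{Ogu} to conclude that $f$ is a monomorphism of fs monoschemes, hence of fans. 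Your argument is shorter, self-contained, and avoids the external citation; it also makes transparent that only the partial-subdivision hypothesis (underlying lattice map an isomorphism) is used, which matches how the lemma is actually applied later (e.g., in the proof of Lemma \ref{Fan.27}). The paper's route has the advantage of working at the level of monoschemes, which is the setting where the subsequent monomorphism statements for sharpened fans are transported anyway via Lemma \ref{Fiberproduct_fans}. Both proofs are sound.
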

\begin{proof}
Let $\xi$ and $\xi'$ are the generic points of the monoschemes $\Sigma$ and $\Sigma'$.
There are isomorphisms
\[
N\cong \cM_{\Sigma,\xi}\cong \cM_{\Sigma',\xi'}.
\]
Here $\Sigma$ and $\Sigma'$ can be viewed as separated fs monoschemes, 
and we may apply \cite[Corollary II.1.6.4]{Ogu} to conclude that $f$ is a monomorphism in the category of fs monoschemes.
In particular, $f$ is a monomorphism in the category of fans.
\end{proof}

%

\begin{prop}
\label{Propersubdivision}
Let $\Sigma$ be a fan in a lattice $N$.
Then there is a canonical bijection
\[
\hom(\Spec{\N},\Sigma)\cong \lvert \Sigma \rvert.
\]
\end{prop}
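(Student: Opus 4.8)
\textbf{Proof proposal for Proposition \ref{Propersubdivision}.}

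The plan is to reduce to the affine case and then unwind the definitions of $\Spec{\N}$, the Zariski topology on a monoscheme, and what a morphism of monoschemes is. First I would recall that a morphism of monoidal spaces $\Spec{\N}\to \Sigma$ is a continuous map $\varphi$ of topological spaces together with a local homomorphism of sheaves of monoids on the image; since $\Spec{\N}$ has two points (the closed point corresponding to the face $\{0\}$ and the generic point corresponding to the face $\N$), such a morphism factors through any affine open $\Spec{P}\subseteq\Sigma$ containing the image of the closed point. So the first step is: every morphism $\Spec{\N}\to\Sigma$ lands in some affine chart $\Spec{P}$, hence it suffices to establish a natural bijection $\hom(\Spec{\N},\Spec{P})\cong \lvert\Spec{P}\rvert$, compatible with restriction to faces, and then glue.

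Next I would carry out the affine computation. By Construction \ref{fan=monoscheme}, $\Spec{P}$ corresponds (under the functor of Theorem \ref{fan=monoscheme2}) to the fan with single maximal cone $P^\vee$ in the lattice $N=P^\gp$, and $\lvert\Spec{P}\rvert$ is then identified with $P^\vee\subseteq N_\R$. A morphism $\Spec{\N}\to\Spec{P}$ is the same datum as a monoid homomorphism $P\to\N$ (the global sections functor; here one uses that $\Spec{\N}$ is a point-like affine monoscheme and $\hom_{\mathbf{MSch}^{fs}}(\Spec{\N},\Spec{P})\cong\hom_{\Mon}(P,\N)$, the monoscheme analogue of the adjunction for affine schemes, cf.\ \cite[Proposition II.1.5.x]{Ogu}). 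A homomorphism $P\to\N$ extends uniquely to $P^\gp=N\to\Z$, i.e.\ to an element of $M=N^\vee$, and the condition that it take values in $\N\subseteq\Z$ on $P$ is precisely the condition that this element of $M_\R$ pair non-negatively with all of $P^\vee{}^\vee$, i.e.\ that it lie in $P^\vee$. Thus $\hom(\Spec{\N},\Spec{P})\cong M\cap P^\vee$, and more precisely the full real cone $\lvert\Spec{P}\rvert = P^\vee$ is recovered once one remembers that a point of $\lvert\Sigma\rvert$ is really a ray together with a choice of primitive generator data — but here I should be careful: the statement asserts a bijection with $\lvert\Sigma\rvert$ as a set, which means the set of \emph{lattice points} appearing in the support, or rather the set underlying the support of the fan as a monoscheme; I would match the bookkeeping in \cite[\S II.1]{Ogu} so that $\hom(\Spec{\N},-)$ recovers exactly $\lvert\Sigma\rvert$ and not, say, only its primitive rays. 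This normalization step is the one place I expect to need care.

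Finally, I would assemble the global bijection. Given a fan $\Sigma$ with cones $\sigma_i=\Spec{P_i}$, the affine bijections $\hom(\Spec{\N},\Spec{P_i})\cong \lvert\Spec{P_i}\rvert$ are compatible with the open immersions $\Spec{(P_i)_f}\hookrightarrow\Spec{P_i}$ corresponding to faces (a morphism $\Spec{\N}\to\Spec{P_i}$ factors through $\Spec{(P_i)_f}$ iff the corresponding element of $P_i^\vee$ lies in the face of $P_i^\vee$ dual to $\langle f\rangle$), because both sides are just detecting which smallest cone of $\Sigma$ the image point lies in. Since every morphism $\Spec{\N}\to\Sigma$ factors through a unique minimal affine chart and the local bijections agree on overlaps, they glue to a single bijection $\hom(\Spec{\N},\Sigma)\cong\lvert\Sigma\rvert$, and naturality in $\Sigma$ (for morphisms of fans) is immediate from functoriality of everything involved. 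The main obstacle, as indicated, is purely a matter of matching conventions: pinning down exactly which set $\lvert\Sigma\rvert$ denotes in this text and checking that the adjunction $\hom_{\mathbf{MSch}^{fs}}(\Spec{\N},\Spec{P})\cong\hom_{\Mon}(P,\N)$ holds on the nose for $\Spec{\N}$ (as opposed to $\Spec{R}$ for a general monoid $R$, where one must restrict to a suitable class of morphisms); both are addressed in \cite[Chapter II]{Ogu}, so the proof should be short once the bookkeeping is fixed.
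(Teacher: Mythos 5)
Your proposal is correct and is essentially the paper's argument: the paper simply observes that a homomorphism $f\colon\Z\to N$ (equivalently, the restriction of a would-be morphism to the generic point of $\Spec{\N}$) extends to a morphism $\Spec{\N}\to\Sigma$ precisely when $f(\N)$ lies in some cone of $\Sigma$, which is your affine computation $\hom(\Spec{\N},\Spec{\sigma^\vee\cap M})\cong\hom_{\Mon}(\sigma^\vee\cap M,\N)\cong\sigma\cap N$ glued over the charts. The convention worry resolves itself in your favour ($\lvert\Sigma\rvert$ is read as the set of lattice points of the support, consistent with ``$f(\N)$ lies in $\lvert\Sigma\rvert$''), and the only slip is notational: the cone $P^\vee$ lives in the dual lattice $(P^\gp)^\vee$, not in $P^\gp$, which does not affect the argument.
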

\begin{proof}
A homomorphism $f:\Z\rightarrow N$ gives a morphism $\Spec{\N}\rightarrow \Sigma$ precisely when $f(\N)$ lies in a cone of $\Sigma$, 
i.e., 
when $f(\N)$ lies in $\lvert \Sigma \rvert$.
This implies the bijection.
\end{proof}

\begin{df}[{\cite[Definition 3.3.17]{CLStoric}}]
\label{Starsubdivision}
Suppose $\Sigma$ is a fan in $\Z^n$,
and let $\tau$ be a cone of $\Sigma$ such that for any cone $\sigma$ containing $\tau$, $\sigma$ is smooth.
Express $\tau$ as an $r$-dimensional cone ${\rm Cone}(a_1,\ldots,a_r)$.
Then for each maximal cone $\sigma={\rm Cone}(a_1,\ldots,a_r,a_{r+1},\ldots,a_n)$ of $\Sigma$ containing $\tau$,
replace $\sigma$ by the fan with maximal cones 
\[
\begin{split}
&{\rm Cone}(a_1,\ldots,a_{r-1},a_{r+1},\ldots,a_n,a_1+\cdots+a_r),\\
&{\rm Cone}(a_1,\ldots,a_{r-2},a_r,\ldots,a_n,a_1+\cdots+a_r),\cdots,{\rm Cone}(a_2,\ldots,a_n,a_1+\cdots+a_r).
\end{split}
\]
The resulting new fan $\Sigma^*(\tau)$ is called the {\it star subdivision} \index{fan!star subdivision} of $\Sigma$ relative to $\tau$.
\end{df}

\begin{lem}
\label{A.3.31}
Let $\Sigma$ and $\Sigma'$ be two smooth fans with $|\Sigma|=|\Sigma'|$. 
Then there is a subdivision $\Sigma''$ of $\Sigma$ obtained by a finite succession of star subdivisions relative to two-dimensional cones such that $\Sigma''$ is a subdivision of $\Sigma'$.
\end{lem}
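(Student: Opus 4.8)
Given two smooth fans $\Sigma$ and $\Sigma'$ with $|\Sigma|=|\Sigma'|$, we want a common refinement $\Sigma''$ of $\Sigma$, obtained from $\Sigma$ by a finite chain of star subdivisions relative to two-dimensional cones, such that $\Sigma''$ also refines $\Sigma'$.

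\textbf{Approach.} The plan is to reduce the classical toric ``weak factorization''/resolution statement — that a common smooth subdivision of $\Sigma$ and $\Sigma'$ can be reached by a sequence of smooth star subdivisions — to the special case where every star subdivision used is relative to a \emph{two-dimensional} cone. The first step is to recall that since $|\Sigma| = |\Sigma'|$, the set-theoretic intersections $\sigma \cap \sigma'$ for $\sigma \in \Sigma$, $\sigma' \in \Sigma'$ form a common (rational polyhedral, but typically non-smooth) subdivision $\Sigma_0$ of both $\Sigma$ and $\Sigma'$. By toric resolution of singularities (as in \cite[Theorem 11.1.9]{CLStoric}, already invoked in the proof of Lemma \ref{A.3.29}), $\Sigma_0$ admits a smooth subdivision $\Sigma''$; moreover the resolution procedure there is carried out by a sequence of star subdivisions. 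So the heart of the matter is twofold: (i) arrange that the star subdivisions producing $\Sigma''$ from $\Sigma$ are all relative to \emph{cones of a smooth fan} (so that Definition \ref{Starsubdivision} applies — recall that construction requires every cone containing the center to be smooth), and (ii) replace a star subdivision relative to a higher-dimensional smooth cone by a composition of star subdivisions relative to two-dimensional smooth cones.

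\textbf{Key steps.} First I would establish the elementary fact (ii): if $\Sigma$ is smooth and $\tau = \mathrm{Cone}(a_1,\dots,a_r) \in \Sigma$ with $r \geq 2$, then $\Sigma^*(\tau)$ can be obtained from $\Sigma$ by star subdivisions relative to two-dimensional cones only. Indeed, one proceeds by induction on $r$: the star subdivision at $\tau$ inserts the ray through $a_1 + \cdots + a_r$; one checks that first performing the star subdivision at the two-dimensional cone $\mathrm{Cone}(a_1+\cdots+a_{r-1},\, a_r)$ — which is a valid smooth center since, after first subdividing at $\mathrm{Cone}(a_1,\dots,a_{r-1})$ by induction, the ray through $a_1+\cdots+a_{r-1}$ is present and $\mathrm{Cone}(a_1+\cdots+a_{r-1}, a_r)$ is a smooth two-dimensional cone — inserts the ray through $a_1+\cdots+a_r$, and that the composite of these star subdivisions yields exactly $\Sigma^*(\tau)$. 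This is the combinatorial computation I would not grind through here, but it is a standard ``barycentric versus iterated blow-up'' identity. Second, I would apply \cite[Theorem 11.1.9]{CLStoric} to $\Sigma_0 = \{\sigma\cap\sigma' : \sigma\in\Sigma, \sigma'\in\Sigma'\}$, but \emph{starting the resolution from $\Sigma$ rather than from $\Sigma_0$}: that is, one first subdivides $\Sigma$ by a sequence of star subdivisions so as to dominate $\Sigma_0$ (possible because $\Sigma_0$ is a subdivision of the smooth fan $\Sigma$, and any subdivision of a simplicial fan is dominated by a sequence of star subdivisions — \cite[Theorem 11.1.9]{CLStoric} or the projectivity-of-blowups argument), then continues resolving toward smoothness by further star subdivisions. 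At each stage the ambient fan is simplicial, so the smoothness hypothesis of Definition \ref{Starsubdivision} can be met by choosing centers appropriately (or by passing to multiplicity-reducing star subdivisions at smooth faces). Finally, each star subdivision in the resulting chain is relative to some cone of the then-current fan; using step (ii) I replace each such step by a sequence of star subdivisions relative to two-dimensional cones, yielding $\Sigma''$ with the required properties, and $\Sigma''$ refines $\Sigma'$ because it refines $\Sigma_0$ which refines $\Sigma'$.

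\textbf{Main obstacle.} The genuine difficulty is bookkeeping the smoothness hypothesis throughout: Definition \ref{Starsubdivision} only permits star subdivision at a cone $\tau$ all of whose containing cones are smooth. One must therefore verify that the resolution procedure can be organized so that every center actually used is a \emph{smooth} face of the current fan — in particular that after the preliminary subdivision dominating $\Sigma_0$ one has not destroyed smoothness in a way that blocks further star subdivisions, and that the two-dimensional centers produced in step (ii) remain smooth. This is where I would lean on the fact that all fans involved can be kept \emph{simplicial} (since we only ever perform star subdivisions, which preserve simpliciality), and that for simplicial fans the multiplicity can be reduced step by step by star subdivisions relative to faces, as in the proof of \cite[Theorem 11.1.9]{CLStoric}; reducing all the way to smoothness and then noting that the final fan refines $\Sigma'$ completes the argument. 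I expect the reduction (ii) from higher-dimensional to two-dimensional centers to be the cleanest part, and the careful threading of smoothness/simpliciality through the resolution to be the part requiring the most attention.
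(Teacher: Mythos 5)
The paper offers no proof of this lemma at all: it is quoted as a known result, with pointers to \cite{MR803344} and \cite[pp.~39--40]{TOda}, so any argument you give is necessarily your own. Unfortunately your sketch has gaps that I do not think can be repaired along the lines you propose. The central problem is the appeal to toric resolution of singularities to smooth out $\Sigma_0=\{\sigma\cap\sigma'\}$: the star subdivisions used in \cite[Theorem 11.1.9]{CLStoric} are subdivisions $\Sigma^*(v)$ at rays through \emph{newly chosen lattice points} $v$ in the interiors of singular cones. Such a ray is not a cone of the fan being subdivided, so these operations are not star subdivisions relative to cones of the fan in the sense of Definition~\ref{Starsubdivision}, let alone relative to two-dimensional smooth cones, and your argument contains no mechanism for converting the resolution chain into one of the required form. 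Likewise, the auxiliary claim you invoke to dominate $\Sigma_0$ starting from $\Sigma$ --- that every subdivision of a smooth fan is dominated by a finite succession of star subdivisions at smooth cones of the fan --- is, apart from the restriction to two-dimensional centers, essentially the statement being proved, so invoking it is circular. Finally, your step (ii) is false as stated: the composite you describe inserts the extra ray through $a_1+\cdots+a_{r-1}$ (and subdivides every cone containing ${\rm Cone}(a_1,\ldots,a_{r-1})$, not only those containing $\tau$), so it strictly refines $\Sigma^*(\tau)$ rather than equalling it. Because it only dominates $\Sigma^*(\tau)$, the center of the next star subdivision in your chain need no longer be a cone of the fan you have actually built, and the replacements cannot simply be concatenated.

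The cited sources prove the lemma by a self-contained double induction that never leaves the class of smooth fans and never uses centers other than two-dimensional cones of the current fan: for a primitive vector $v$ in the relative interior of a smooth cone ${\rm Cone}(a_1,\ldots,a_r)$, writing $v=\sum_i c_ia_i$ with all $c_i>0$, a well-chosen star subdivision at a two-dimensional face ${\rm Cone}(a_i,a_j)$ strictly decreases $\sum_i c_i$, so finitely many such steps make $v$ a ray of the fan; a second induction then shows that, once enough rays have been inserted, further two-dimensional star subdivisions make every cone of the current fan contained in a cone of $\Sigma'$. If you want a proof rather than a citation, that is the induction you need to set up; the detour through resolution of singularities of $\Sigma_0$ does not lead there.
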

\begin{proof}
See \cite{MR803344} and \cite[pp.\ 39-40]{TOda}.
\end{proof}

\begin{lem}
\label{Compactification_fan}
Let $\Sigma$ be a fan in a lattice $N$.
Then there exists a fan $\Sigma'$ in $N$ containing $\Sigma$ such that $\lvert \Sigma' \rvert=N$.
\end{lem}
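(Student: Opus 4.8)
The statement is a classical completion result for fans: any fan $\Sigma$ in $N$ embeds into a \emph{complete} fan $\Sigma'$ in the same lattice (i.e.\ with $|\Sigma'| = N_{\mathbb{R}}$, which is what $|\Sigma'| = N$ should be read to mean), such that every cone of $\Sigma$ is a cone of $\Sigma'$. The plan is to reduce to the case where $\Sigma$ consists of a single cone together with its faces, and then to triangulate the complement.

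First I would reduce to the affine case. Since $|\Sigma|$ is a finite union of strongly convex rational polyhedral cones $\sigma_1,\ldots,\sigma_r$, it is contained in a single strongly convex rational polyhedral cone only if $|\Sigma|$ itself is strongly convex; in general this need not hold, so instead I would use the following standard device. Enlarge the ambient picture: pick finitely many rays $\rho_1,\ldots,\rho_m$ in $N$ whose union of spans, together with the cones of $\Sigma$, covers $N_{\mathbb{R}}$ --- for instance, take the rays generated by $\pm e_1,\ldots,\pm e_d$ for a basis $e_1,\ldots,e_d$ of $N$, together with the rays of $\Sigma$. This reduces the problem to: given a finite set of rational rays and cones whose union is all of $N_{\mathbb{R}}$, refine the arrangement into a genuine fan containing the original cones of $\Sigma$ as faces. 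This is exactly the existence of a common rational polyhedral subdivision of the decomposition of $N_{\mathbb{R}}$ cut out by the supporting hyperplanes of all the facets of all the cones involved.

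The key steps, in order, would be: (i) collect the finite set $\mathcal{H}$ of rational hyperplanes in $N_{\mathbb{R}}$ spanned by facets of $\sigma_1,\ldots,\sigma_r$ (and of the auxiliary covering cones); (ii) form the central hyperplane arrangement they determine, whose maximal chambers (closures) are rational polyhedral cones forming a complete fan $\Sigma_0$; (iii) observe that each $\sigma_i$, being an intersection of closed half-spaces bounded by hyperplanes in $\mathcal{H}$, is a union of closures of chambers of $\Sigma_0$, hence $\Sigma_0$ is a refinement of the decomposition generated by $\Sigma$ but need not literally contain the $\sigma_i$ as cones; (iv) therefore pass instead to the \emph{common refinement} $\Sigma' := \{\, \sigma \cap \tau : \sigma \in \{\sigma_i \text{ and their faces}\},\ \tau \in \Sigma_0 \,\}$, which is a fan (intersections of cones in the arrangement are again cones cut out by the same hyperplanes and sign conditions), is complete since $\Sigma_0$ is and $\bigcup \sigma_i \cup (\text{complement chambers}) = N_{\mathbb{R}}$, and contains every cone of $\Sigma$ because each such cone $\sigma$ already lies on one side of every hyperplane in $\mathcal{H}$, so $\sigma \cap \tau$ ranges over a subdivision of $\sigma$ --- and then one invokes a rational simplicial (or star) subdivision, e.g.\ as in Lemma~\ref{A.3.31} and \cite{MR803344}, applied relative to the subfan $\Sigma$, to obtain a genuine fan in which $\Sigma$ sits as a subfan. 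Rationality is preserved throughout because all hyperplanes and rays are rational.

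The main obstacle is step (iv): ensuring that the completion does not subdivide the cones of $\Sigma$ that we wish to keep. The clean way around this is to first perform a star subdivision of $N_{\mathbb{R}}$ that is compatible with $\Sigma$ from the outset --- i.e.\ build $\Sigma'$ by adding cones only in the region $N_{\mathbb{R}} \setminus \mathrm{relint}(|\Sigma|)$. Concretely, one triangulates the ``link at infinity'': choose a rational polytope $P$ whose boundary meets every ray of $N_{\mathbb{R}}$, so that cones correspond to faces of $P$; the faces of $\partial P$ lying in $|\Sigma|$ form a polyhedral subcomplex, and by the existence of triangulations of polytopes extending a given subcomplex (a standard fact, using the rational points and pulling/placing triangulations), one extends it to a rational triangulation of all of $\partial P$; coning off gives the desired $\Sigma'$ with $\Sigma \subseteq \Sigma'$ and $|\Sigma'| = N_{\mathbb{R}}$. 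This is the content of the classical theorems of Ewald--Ishida and of \cite{MR803344}; citing one of these, or reproducing the link-triangulation argument, finishes the proof. I would expect the write-up to be short, essentially a reference to \cite[pp.~39--40]{TOda} or to the toric-completion literature together with the observation that all data can be kept rational and that the new cones are added only outside $|\Sigma|$.
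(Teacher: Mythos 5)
The paper's proof is exactly the one-line citation you anticipated (``See \cite[p.\ 18]{TOda}''), so your proposal is essentially the same as what the paper does. Your accompanying sketch---correctly discarding the common-refinement route because it would subdivide the cones of $\Sigma$, and falling back on the link-triangulation argument of Ewald--Ishida type---is the standard way to actually prove the completion theorem, and is more than the paper itself supplies.
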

\begin{proof}
See \cite[p.\ 18]{TOda}.
\end{proof}

\subsection{Strict morphisms}

Let $f:X \rightarrow Y$ be a morphism of schemes. 
Given a logarithmic structure $\cM_{Y}$ on $Y$, 
we have defined a logarithmic structure
$f_{log}^*\cM_{Y}$ on $X$, called the inverse image of $\cM_{Y}$ or the pullback log structure on $X$.
Using the inverse image of logarithmic structures, we arrive at the following definition incorporated in our notion of finite log correspondences. 

\begin{df}
A morphism of log schemes $(X,\cM_{X}) \to (Y,\cM_{Y})$ is called strict if the induced homomorphism $f_{log}^{*}\cM_{Y}\to \cM_{X}$ is an isomorphism, 
and it is said to be a strict closed immersion if it is strict and $X\to Y$ is a closed immersion.
\end{df}

\begin{exm}
In Example \ref{ex:affine-toric-log} the log structure on $\Spec{R[P]}$ can be viewed as the pullback log structure on $\Spec{\Z[P]}$ along the morphism induced by $\Z\to R$.
\end{exm}

\begin{exm}
Let $f\colon X\to \A^{1}$ be a smooth morphism.
Then $f$ is a strict morphism of log schemes with respect to the divisorial log structures given by $0\in\A^{1}$ and $f^{-1}(0)\subseteq X$.
\end{exm}

\begin{lem}
\label{lem::maximal strict open}
Let $f:X\rightarrow Y$ be a morphism of fs log schemes.
Then there exists a maximal open subscheme $U$ of $X$ such that the restriction $f_{\vert U}$ of $f$ to $U$ is strict over $Y$.
\end{lem}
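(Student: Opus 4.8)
The plan is to identify $U$ with the \emph{strict locus} $\Sigma\subseteq X$, i.e.\ the set of $x\in X$ at which the canonical morphism $f_{log}^*\cM_Y\to\cM_X$ is an isomorphism of stalks, and to prove that $\Sigma$ is open. Granting openness, one equips $\Sigma$ with the induced open subscheme structure; since strictness of a morphism of log structures can be checked stalkwise, $f_{|\Sigma}$ is strict, and if $V\subseteq X$ is open with $f_{|V}$ strict then every point of $V$ lies in $\Sigma$, so $V\subseteq\Sigma$ and $\Sigma$ is the maximal such open. Two routine preliminaries come first. The statement is Zariski-local on $X$ and $Y$: the strict locus commutes with restriction to opens, so the local strict loci glue, and we may assume $X$ and $Y$ affine. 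And $f_{log}^*\cM_Y\to\cM_X$ is an isomorphism on stalks at $x$ if and only if the induced homomorphism of characteristic monoids $\overline{\cM}_{Y,f(x)}\to\overline{\cM}_{X,x}$ is an isomorphism, since $(f_{log}^*\cM_Y)_x$ and $\cM_{X,x}$ are both extensions of a characteristic monoid by $\cO_{X,x}^\times$ on which the map restricts to the identity (here one uses that $f_{log}^*$ preserves the unit subsheaf and that the characteristic of $f_{log}^*\cM_Y$ at $x$ is $\overline{\cM}_{Y,f(x)}$).

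Working Zariski-locally, I would invoke the existence of an fs chart of the morphism $f$ (standard, cf.\ \cite{Ogu}): fine saturated monoids $Q,P$, a homomorphism $\theta\colon Q\to P$, and strict morphisms $a\colon X\to\A_P$, $b\colon Y\to\A_Q$ with $\underline{\A_\theta}\circ a = b\circ f$. For $x\in X$, write $F_x\subseteq P$ for the unique face with $a(x)$ in the stratum $\A_{F_x}^*$, so that $\overline{\cM}_{X,x}\cong P/F_x$; using strictness of $b$ together with the identity $\underline{\A_\theta}(\underline{\A_G})\cong\underline{\A_{\theta^{-1}(G)}}$ of \eqref{A.9.46.2}, one gets $\overline{\cM}_{Y,f(x)}\cong Q/\theta^{-1}(F_x)$, with the comparison map the homomorphism $\bar\theta_{F_x}\colon Q/\theta^{-1}(F_x)\to P/F_x$ induced by $\theta$. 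Hence $\Sigma=\bigcup_F a^{-1}(\A_F^*)$, the union over the finitely many faces $F$ of $P$ for which $\bar\theta_F$ is an isomorphism; since each $\A_F^*$ is locally closed in $\A_P$, the set $\Sigma$ is constructible.

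It then suffices to show that $\Sigma$ is stable under generization: a constructible subset of a locally noetherian scheme that is stable under generization is open, and $X$ is locally noetherian since the charts are finitely generated. If $x'$ is a generization of $x$, then $a(x')$ is a generization of $a(x)$ in $\A_P$, which forces $F_x\subseteq F_{x'}$ (generizing within the stratification of $\A_P$ enlarges the associated face). So everything reduces to the combinatorial claim: if $F'\subseteq F$ are faces of $P$ and $\bar\theta_{F'}\colon Q/\theta^{-1}(F')\to P/F'$ is an isomorphism, then so is $\bar\theta_F\colon Q/\theta^{-1}(F)\to P/F$. I would prove this by passing to further quotients: one has $P/F=(P/F')/(F/F')$ and $Q/\theta^{-1}(F)=(Q/\theta^{-1}(F'))/(\theta^{-1}(F)/\theta^{-1}(F'))$, and the isomorphism $\bar\theta_{F'}$ carries the face $\theta^{-1}(F)/\theta^{-1}(F')$ bijectively onto $F/F'$ --- because for $q\in Q$ one has $\theta(q)\in F$ precisely when the class of $q$ lands in $F/F'$ --- hence descends to an isomorphism on the quotients.

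I expect no deep obstacle here; the care-requiring points are bookkeeping: verifying $\overline{\cM}_{Y,f(x)}\cong Q/\theta^{-1}(F_x)$ with the correct comparison map (which rests on strictness of the chart $b$ and on the behaviour of characteristic monoids under the toric morphism $\underline{\A_\theta}$), and appealing to the correct form of ``constructible and stable under generization implies open.'' Should one prefer to avoid charts of the morphism, generization-stability can instead be deduced directly from the compatibility of the morphism $\overline{\cM}_Y\to\overline{\cM}_X$ (pulled back to $X$) with the cospecialization surjections $\overline{\cM}_{X,x}\to\overline{\cM}_{X,x'}$ together with the parallel description for $f_{log}^*\cM_Y$; but I would keep the chart argument as the main line, as it matches the toric bookkeeping used elsewhere in the paper.
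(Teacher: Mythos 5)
Your argument is correct, and it reduces the lemma to the same key fact the paper uses -- openness of the strict locus -- but where the paper disposes of that fact in two sentences by citing \cite[Lemma 1.5]{MR1754621} (and remarking that the argument there adapts from \'etale to Zariski log structures), you supply a complete self-contained proof via charts and the face stratification of $\A_P$. All the substantive steps check out: strictness is detected on characteristic monoids at stalks, the strict locus is $a^{-1}\bigl(\bigcup_{F\in S}\A_F^*\bigr)$ for the set $S$ of faces $F$ of $P$ with $\bar\theta_F$ an isomorphism, and your combinatorial claim (that $S$ is upward closed under inclusion of faces, via the third isomorphism theorem for quotients of integral monoids by faces) is exactly what makes this locus stable under generization; a quick sanity check on $\theta\colon\N\to\N^2$, $1\mapsto(1,0)$ confirms it. The one imprecision is the sentence ``$X$ is locally noetherian since the charts are finitely generated'': a finitely generated chart says nothing about the underlying scheme being noetherian. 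The fix is immediate and makes the argument cleaner: run the constructibility-plus-generization argument on $W=\bigcup_{F\in S}\A_F^*$ inside $\A_P=\Spec{\Z[P]}$, which \emph{is} noetherian because $P$ is finitely generated, conclude that $W$ is open there, and then observe that $\Sigma=a^{-1}(W)$ is open in $X$ with no hypothesis on $X$ at all. With that adjustment your proof is a valid (and more informative) replacement for the paper's citation.
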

\begin{proof}
In \cite[Lemma 1.5]{MR1754621} it is shown that if $X$ is strict at a point $x$, then $f$ restricts to a strict morphism over an \'etale neighborhood.
The same argument applies to fs log schemes equipped with Zariski log structures,
\end{proof}

\subsection{Charts of logarithmic structures}
\label{subsection:cols}
\begin{df}
Let $X$ be a log scheme and $P$ be a monoid. 
A {\em chart}\index{chart} for $\cM_{X}$ is a morphism $P\rightarrow \Gamma(X,\cM_{X})$ 
such that the induced map of logarithmic structures $P^{a}\to \cM_{X}$ is an isomorphism.
Here, 
$P^{a}$ is the logarithmic structure associated to the pre-logarithmic structure given by
\[
P\rightarrow \Gamma(X,\cM_{X})\rightarrow \Gamma(X,\mathcal{O}_{X}).
\]
If $P$ is an fs monoid, the chart is said to be an fs chart.
\end{df}

A chart for $\cM_{X}$ is equivalent to a strict morphism 
\[
f
\colon 
X 
\to 
\A_P.
\]
In fact, by \cite[III.1.2.9]{Ogu},
there is a bijection 
\[
\hom_{\LogSch}(X,\A_P)
\rightarrow 
\hom_{\Mon}(P,\Gamma(X,\cM_{X}))
\]
associating to $f$ the composition 
\[
\xymatrix{
P \ar[r]&\Gamma(X, P_X)\ar[r] & \Gamma(X, \cM_{X}).
}
\]
The morphism $f\colon X\rightarrow \A_P$ also gives a morphism of monoidal spaces
\[
X=(X,\cM_X)\rightarrow \Spec{P}.
\]

\begin{df}
A chart\index{chart!of a morphism} for a morphism of log schemes $(X,\cM_{X})\to (Y,\cM_{Y})$ is a triple $(P\to \cM_{X},Q\to \cM_{Y},Q\to P)$, 
where $P\to \cM_{X}$ and $Q\to \cM_{Y}$ are charts, 
and $Q\to P$ is is a homomorphism such that the diagram 
\begin{equation}
\label{diagram:chartmaps}
\vcenter{\xymatrix{
Q \ar[r]  \ar[d] & P \ar[d]\\
f_{log}^{\ast}\cM_{Y}  \ar[r] & \cM_{X} 
}}
\end{equation}
commutes.
\end{df}

\begin{df}
A log scheme $X$ is said to be \emph{coherent}\index{log scheme! coherent}
if \'etale locally there is a chart $P\rightarrow \cM_{X}$ with $P$ a monoid.
If moreover $P$ can be chosen to be integral (resp.\ fine, resp.\ saturated, resp.\ fs), then $X$ is called an \emph{integral} (resp.\ \emph{fine}, resp.\ \emph{saturated}, resp.\ \emph{fs}) log scheme.
If such a chart exists Zariski locally, we say that $X$ has a \emph{Zariski log structure}.
Finally, if $P$ can be chosen isomorphic to $\N^k$, the logarithmic structure is called locally free.
\end{df}

We are particularly interested in Zariski log structures, which locally arise from pullbacks of monoid algebras.
The property of being fine and saturated is a logarithmic analog of the property of being normal and of finite type  \cite[Chapter IV.1.2]{Ogu}. 

%

\begin{df}\index{log scheme!fiber product}
Suppose that $Y\rightarrow X$ and $X'\rightarrow X$ are morphisms of coherent (resp.\ integral, resp.\ saturated) log schemes.
The fiber product in the category of coherent (resp.\ integral, resp.\ saturated) log schemes is denoted by
\[
Y\times_X^{\rm coh}X'
\text{ (resp.\ }Y\times_X^{\rm int}X',
\text{ resp.\ }Y\times_X X'\text{)}.
\]
\end{df}

\begin{exm}
Suppose that $Q\rightarrow P$ and $P'\rightarrow P$ are homomorphisms of monoids (resp.\ integral monoids, resp.\ saturated monoids).
Then there exists an isomorphism
\begin{gather*}
\A_Q\times_{\A_P}^{\rm coh}\A_{P'}\cong \A_{Q\oplus_P^{\rm mon} P'}
\\
\text{ (resp.\ } \A_Q\times_{\A_P}^{\rm int}\A_{P'}\cong \A_{Q\oplus_P^{\rm int} P'},
\text{ resp.\ } \A_Q\times_{\A_P}\A_{P'}\cong \A_{Q\oplus_P P'}\text{)}.
\end{gather*}
\end{exm}

\begin{lem}
\label{Fiber.product.strict.morphism}
Let $f:X\rightarrow S$ be a strict morphism of fs log schemes.
Then for any morphism $S'\rightarrow S$ of fs log schemes, the induced morphism
\[
\underline{X\times_S S'}\rightarrow \underline{X}\times_{\underline{S}}\underline{S'}
\]
is an isomorphism.
\end{lem}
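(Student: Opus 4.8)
The plan is to compute the fiber product $X\times_S S'$ directly, exploiting that strictness of $f$ forces the log structure on the scheme-theoretic fiber product $\underline{X}\times_{\underline{S}}\underline{S'}$ to be nothing but the pullback of $\cM_{S'}$, so that no integralization or saturation is needed. Write $p_1\colon \underline{X}\times_{\underline{S}}\underline{S'}\to\underline{X}$, $p_2\colon \underline{X}\times_{\underline{S}}\underline{S'}\to\underline{S'}$ and $p_0\colon \underline{X}\times_{\underline{S}}\underline{S'}\to\underline{S}$ for the three projections.

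First I would recall that the fiber product in fs log schemes is formed in two stages: one takes the fiber product $Z_0$ in the category of all log schemes, whose underlying scheme is $\underline{X}\times_{\underline{S}}\underline{S'}$ and whose log structure is the amalgamated sum of $(p_1)_{log}^*\cM_X$ and $(p_2)_{log}^*\cM_{S'}$ over $(p_0)_{log}^*\cM_S$, and then one integralizes and saturates. It is precisely the second stage that can modify the underlying scheme, and this is the scheme-theoretic shadow of the difference between $Q\oplus_P^{\rm mon}P'$, $Q\oplus_P^{\rm int}P'$ and $Q\oplus_P P'$. Thus the statement reduces to showing that, when $f$ is strict, $Z_0$ is \emph{already} fs, hence already represents $X\times_S S'$.

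Now strictness of $f$ gives $(p_1)_{log}^*\cM_X\cong (p_1)_{log}^*f_{log}^*\cM_S\cong (p_0)_{log}^*\cM_S$, by functoriality of pullback of log structures, so in the pushout defining the log structure of $Z_0$ one of the two legs is an isomorphism; the amalgamated sum therefore collapses and the log structure of $Z_0$ is canonically $(p_2)_{log}^*\cM_{S'}$. Since the pullback of an fs log structure along any morphism of schemes is again fs — the same chart serves, and fineness and saturatedness are properties of the chart monoid — the log scheme $Z_0$ is fs, hence $Z_0\cong X\times_S S'$ and in particular $\underline{X\times_S S'}=\underline{Z_0}=\underline{X}\times_{\underline{S}}\underline{S'}$, which is the assertion. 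Equivalently, one may bypass the recipe for log fiber products and check straight from the universal property that $Z:=(\underline{X}\times_{\underline{S}}\underline{S'},\,(p_2)_{log}^*\cM_{S'})$ — with $p_2$ as the map to $S'$ and the map to $X$ induced by $(p_1)_{log}^*\cM_X\cong(p_0)_{log}^*\cM_S\to(p_2)_{log}^*\cM_{S'}$ coming from $S'\to S$ — satisfies the universal property of the fs fiber product: given an fs log scheme $W$ with compatible $a\colon W\to X$ and $b\colon W\to S'$, the underlying maps factor uniquely through $\underline{Z}$ via some $h$, and $h$ lifts uniquely to $W\to Z$ because $h_{log}^*(p_2)_{log}^*\cM_{S'}\cong(\underline{b})_{log}^*\cM_{S'}\to\cM_W$ is supplied by $b$, compatibility with $a$ being then forced by strictness.

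I do not expect a genuine obstacle here: the only point requiring care is the bookkeeping with pullbacks and amalgamated sums of log structures, together with the observation that fs-ification is trivial in the strict case; everything else is formal.
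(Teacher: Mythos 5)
Your proof is correct and follows essentially the same route as the paper: both reduce the claim to showing that the fiber product taken in the larger category of (coherent) log schemes — whose underlying scheme is automatically $\underline{X}\times_{\underline{S}}\underline{S'}$ — is already fs, so that no integralization or saturation (the only steps that could alter the underlying scheme) is needed. The paper phrases this by noting that the projection of the coherent fiber product to $S'$ is strict and hence inherits an fs chart from $S'$ Zariski-locally, which is the same observation as your collapse of the amalgamated sum to $(p_2)_{log}^*\cM_{S'}$.
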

\begin{proof}
From the construction of fiber products in the category of fs log schemes, 
see \cite[Corollary III.2.1.6]{Ogu},
it suffices to show that the fiber product $X\times_S^{\rm coh}S'$ in the category of coherent log schemes is an fs log scheme.
This question is Zariski local on $S'$,
so we may assume that $S'$ has an fs chart $S'\rightarrow \A_P$.
Since $f$ is strict,
the projection $X\times_S^{\rm coh}S'\rightarrow X$ is also strict.
Then $X\times_S^{\rm coh}S'$ has an fs chart.
Thus $X\times_S^{\rm coh}S'$ is an fs log scheme.
\end{proof}
\begin{df}[{\cite[Definition II.2.3.1]{Ogu}}]
\label{A.9.8}
Suppose $X$ is a coherent log scheme with a chart $\beta:P\rightarrow \cM_{X}$, and let $x\in X$ be a point. 
Then $\beta$ is called exact\index{chart!exact} at $x$ if 
\[
\overline{\beta_x}:\overline{P}\rightarrow \overline{\cM}_{X,x}
\]
is an isomorphism,
and $\beta$ is called neat at $x$ if it is exact\index{chart!neat} at $x$ and $P$ is sharp.
\end{df}

\begin{rmk}
\label{A.9.10}
(1) Let $X$ be a fine log scheme with a fine chart $\beta:P\rightarrow \cM_{X}$, and let $x\in X$ be a point. 
Then, Zariski locally on $X$, $\beta$ factors through an exact chart at $x$ by \cite[Remark II.2.3.2]{Ogu}.
\vspace{0.1in}
  
(2) Let $X$ is an fs log scheme, and let $x\in X$ be a point. Then Zariski locally on $X$ there exists a neat chart at $x$ by \cite[Proposition II.2.3.7]{Ogu}.
\end{rmk}

\begin{lem}
\label{lem::Smchart}
Suppose $X$ is an fs log scheme log smooth over a scheme $S$.
Then Zariski locally on $X$, there is a neat fs chart $P$ such that the induced morphism $X\rightarrow S\times \A_P$ is strictly smooth.
If $\underline{X}$ is smooth over $S$, then $P\cong \N^r$ for some integer $r\in \N$.
\end{lem}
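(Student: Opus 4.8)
The plan is to deduce the existence of the chart from the chart criterion for log smoothness, and then, under the extra hypothesis that $\underline{X}$ is smooth, to read off $P\cong\N^r$ from the regularity of the local ring of $\Spec{\Z[P]}$ at its torus-fixed point.

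\textbf{Existence of the chart.} Fix $x\in X$. Applying \cite[Theorem IV.3.3.1]{Ogu} to the structure morphism $X\to S$ — where $S$ carries its trivial log structure, for which $0\to\cM_S$ is a neat chart — together with Remark \ref{A.9.10}, we obtain, Zariski locally around $x$, a chart $\beta\colon P\to\cM_X$ that is neat at $x$ (so that $P\cong\overline{\cM}_{X,x}$ is a sharp fs monoid, see Definition \ref{A.9.8}) and for which the induced morphism of underlying schemes $\underline{X}\to\underline{S}\times\underline{\A_P}=\underline{S}\times\Spec{\Z[P]}$ is smooth. Since a chart $P\to\cM_X$ is the same datum as a strict morphism $X\to\A_P$, the resulting morphism $X\to S\times\A_P$ is strict; being smooth on underlying schemes, it is strictly smooth. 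This proves the first assertion.

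\textbf{Reduction to a field.} Now assume $\underline{X}$ is smooth over $S$; we must show that $P$ is free. Let $s\in\underline{S}$ be the image of $x$ and pick a point $x'$ of $X_s:=X\times_S\Spec{\kappa(s)}$ lying over $x$. As $\Spec{\kappa(s)}\to S$ is strict, so is $X_s\to X$; hence $\underline{X_s}=\underline{X}\times_{\underline{S}}\Spec{\kappa(s)}$ and $\overline{\cM}_{X_s,x'}\cong\overline{\cM}_{X,x}\cong P$, so the pulled-back chart is still neat at $x'$. Moreover $X_s\to\Spec{\kappa(s)}$ is log smooth, $\underline{X_s}\to\Spec{\kappa(s)}$ is smooth, and $\underline{X_s}\to\Spec{\kappa(s)}\times\Spec{\Z[P]}=\Spec{\kappa(s)[P]}$ is a base change of a smooth morphism, hence smooth. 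Replacing $(S,X,x)$ by $(\Spec{\kappa(s)},X_s,x')$, we may therefore assume $S=\Spec{k}$ for a field $k$, so that $\underline{X}$ has regular local rings.

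\textbf{Freeness of $P$.} Because $\beta$ is neat at $x$, the associated map of monoidal spaces $X\to\Spec{P}$ carries $x$ to the closed point of $\Spec{P}$, i.e.\ the face $\{0\}$; correspondingly $f\colon\underline{X}\to\Spec{k[P]}$ carries $x$ to the torus-fixed point $v$, whose local ring is the localization of $k[P]$ at the maximal monomial ideal $\mathfrak{m}_v=(t^p:p\in P\setminus\{0\})$. As $f$ is smooth it is faithfully flat near $x$, so $\cO_{\Spec{k[P]},v}\to\cO_{\underline{X},x}$ is a faithfully flat local homomorphism of Noetherian local rings with regular target; hence the source is regular as well. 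Now $\cO_{\Spec{k[P]},v}$ has Krull dimension $r:=\rank P^{\gp}$, while its maximal ideal is minimally generated by the monomials indexed by the Hilbert basis of $P$; regularity forces the Hilbert basis to have exactly $r$ elements, and a generating set of the fine sharp monoid $P$ of cardinality $r=\rank P^{\gp}$ is necessarily a $\Z$-basis of $P^{\gp}$, whence $P\cong\N^r$. The only non-formal ingredient here is the first step, the neat form of Kato's chart criterion for log smoothness imported from \cite[Theorem IV.3.3.1]{Ogu}; among the remaining steps, the point requiring the most care is the bookkeeping of the chart and of $\overline{\cM}_{X,x}$ under base change to the residue field, after which the conclusion is the classical fact that an affine toric variety is regular at its fixed point precisely when the defining monoid is free.
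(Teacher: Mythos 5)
Your proof is correct. The first half (existence of a neat chart $P$ at $x$ with $X\to S\times\A_P$ strictly smooth, via Remark \ref{A.9.10}(2) and Kato's chart criterion \cite[Theorem IV.3.3.1]{Ogu}) is exactly the paper's argument. For the second half you take a genuinely different, more hands-on route. The paper stays relative over $S$: it applies \cite[Proposition IV.17.7.7]{EGA} to the smooth morphisms $\underline{X}\to S\times\underline{\A_P}\to S$ to conclude that $S\times\underline{\A_P}$ is smooth over $S$, and then simply cites the toric smoothness criterion \cite[Theorem 1.3.12]{CLStoric} to get $P\cong\N^r$ from sharpness. You instead base-change to the residue field $\kappa(s)$, use faithfully flat descent of regularity along the local homomorphism $\cO_{\Spec\kappa(s)[P],v}\to\cO_{\underline{X}_s,x'}$ to see that the torus-fixed point is a regular point, and then re-derive the toric criterion by comparing the Krull dimension $r=\rank P^{\gp}$ with the embedding dimension, which you compute as the cardinality of the Hilbert basis of $P$. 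Both arguments are sound; the paper's is shorter because it outsources the toric input to \cite{CLStoric}, while yours is more self-contained (it re-proves that criterion at the fixed point) at the cost of the extra bookkeeping of the chart, the image point, and the characteristic monoid under base change to $\kappa(s)$ — all of which you carry out correctly.
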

\begin{proof}
Since the question is Zariski local on $X$, we may assume that $X$ admits a neat chart $P$ at a point $x\in X$, 
see Remark \ref{A.9.10}(2).
In particular, since $P$ is sharp,
$P^{\rm gp}$ is torsion-free by \cite[Proposition I.1.3.5(2)]{Ogu}.
Owing to \cite[Proposition II.2.3.7]{Ogu} there is an isomorphism
\[
\cM_{X,x}^{\rm gp}\cong \cO_{X,x}^*\oplus P.
\]
This implies that the order of the torsion subgroup of $\cM_{X,x}^{\rm gp}$ is invertible in $k$.
Hence we can apply \cite[Theorem IV.3.3.1(3)]{Ogu} to conclude that $X\rightarrow S\times \A_P$ is strictly smooth.
\vspace{0.1in}
  
If $\underline{X}$ is smooth over $S$, then $S\times \underline{\A_P}$ is smooth over $S$ by \cite[Proposition IV.17.7.7]{EGA}. 
This shows that $P\cong \N^r$ for some $r$ by \cite[Theorem 1.3.12]{CLStoric} because $P$ is sharp. 
\end{proof}

We refer to {\rm Definition \ref{A.3.17}} for the notion of a strict normal crossing divisor on a scheme.

\begin{lem}\index{divisor!strict normal crossing}
\label{lem::SmlSm}
Suppose that $X\in SmlSm/S$.
Then there exists a strict normal crossing divisor $Z$ on $\ul{X}$ and an isomorphism
\[
X\cong (\underline{X},Z).
\]
\end{lem}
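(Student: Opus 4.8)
The plan is to reduce the statement to a local computation with a chart and then globalize using the fact that the boundary $\partial X$ of Definition \ref{A.0.1} is intrinsic. First I would observe that the assertion is Zariski local on $\ul{X}$: whether a given effective Cartier divisor is strict normal crossing over $S$ in the sense of Definition \ref{A.3.17} can be checked on a Zariski cover, the compactifying log structure of an open immersion is built locally on the ambient scheme, and $\partial X$ is defined independently of any chart. So I would fix $x\in X$ and shrink $X$ to a Zariski neighborhood of $x$.

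On such a neighborhood, apply Lemma \ref{lem::Smchart}: since $\ul{X}$ is smooth over $S$, after further shrinking we obtain a neat fs chart $P\cong \N^r$ and a strict smooth morphism $f\colon X\to S\times \A_{\N^r}=S\times \Spec{\Z[x_1,\dots,x_r]}$, the target carrying the divisorial log structure of the toric boundary $\bigcup_{i=1}^r\{x_i=0\}$. Because $f$ is smooth, hence flat, $Z_i:=\underline{f}^{-1}(\{x_i=0\})$ is an effective Cartier divisor on $\ul{X}$, smooth over $S$; factoring $\underline{f}$ Zariski-locally as an \'etale morphism $\ul{X}\to S\times\A^n$ followed by a coordinate projection $S\times\A^n\to S\times\A^r$ (standard local structure of smooth morphisms), one sees that $Z_1+\cdots+Z_r=\underline{f}^{-1}(\{x_1\cdots x_r=0\})$ is a strict normal crossing divisor on $\ul{X}$ over $S$ in the sense of Definition \ref{A.3.17}. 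Since $f$ is strict, $\cM_X\cong f_{\log}^*\cM_{S\times\A_{\N^r}}$, and by a direct computation this is the compactifying log structure of the open immersion $\ul{X}-(Z_1\cup\cdots\cup Z_r)\hookrightarrow\ul{X}$: a function on $\ul{X}$ that is invertible away from $\bigcup Z_i$ is, $\ul{X}$ being regular and the $Z_i$ being the local irreducible components of the divisor, a unit times a monomial $x_1^{a_1}\cdots x_r^{a_r}$ with $a_i\ge 0$, i.e.\ a section of $(\N^r)^a$. Comparing with Definition \ref{A.0.1} this shows that, locally, $\partial X=Z_1\cup\cdots\cup Z_r$ as closed subsets.

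It remains to globalize. Since $\partial X$ is intrinsic, the local identifications $\partial X=\bigcup Z_i$ patch to show that $\partial X$ is a closed subset of $\ul{X}$ which Zariski-locally is the support of a reduced strict normal crossing divisor (the local model $\{x_1\cdots x_k=0\}$ is reduced); hence equipping $\partial X$ with its canonical reduced subscheme structure produces a well-defined effective Cartier divisor $Z:=\partial X$ on $\ul{X}$, strict normal crossing over $S$, the local models gluing precisely because the reduced structure is canonical. Likewise, as the compactifying log structure construction is local on $\ul{X}$ and functorial \cite[\S III.1.6]{Ogu}, the local isomorphisms between $\cM_X$ and the compactifying log structure of $\ul{X}-\partial X\hookrightarrow\ul{X}$ glue to a global one; by Definition \ref{A.0.2} (the multiplicity-one case) this is exactly the fs log scheme $(\ul{X},Z)$, yielding the desired isomorphism $X\cong(\ul{X},Z)$.

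The main obstacle is essentially bookkeeping rather than depth: one must check that the local effective Cartier divisor structures on $\partial X$ are mutually compatible — handled by passing to the canonical reduced structure, using that a strict normal crossing divisor is reduced — and that the pullback of the divisorial log structure along the strict smooth map $f$ is the divisorial log structure of the preimage divisor, which I would settle either by the explicit monomial computation above or, alternatively, by invoking log regularity of $X$ together with \cite[Theorem IV.3.5.1]{Ogu} and \cite[Theorem III.1.11.12]{Ogu} to identify $\cM_X$ directly with the compactifying log structure of $\ul{X}-\partial X$, using the chart only to see that $\partial X$ is strict normal crossing.
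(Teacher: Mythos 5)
Your proof is correct and follows essentially the same route as the paper: Zariski-localize, invoke Lemma \ref{lem::Smchart} to get a strict smooth morphism $f\colon X\to S\times\A_{\N^r}$, and pull back the toric strict normal crossing divisor. The paper's own proof is just the terse version of this (it takes $Z=f^*W$ and leaves the identification with the compactifying log structure and the gluing of the local models implicit); your extra care with the globalization via the intrinsic boundary $\partial X$ fills in details the paper omits rather than changing the argument.
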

\begin{proof}
The question is Zariski local on $\ul{X}$.
Owing to Lemma \ref{lem::Smchart} there is an fs chart $P\cong \N^r$ of $X$ such that the induced morphism $f:X\rightarrow S\times \A_P$ is strict smooth.
That is, 
$\A_P\cong (\underline{\A_P},W)$,
where $W$ is the strict normal crossing divisor induced by the log structure on $\A_P$. 
Using the strict smooth morphism $f:X\rightarrow S\times \A_P$, 
we see the claimed isomorphism follows for the strict normal crossing divisor $Z:=f^*W$.
\end{proof}

\begin{df}
\label{df::closedimmersion}
A morphism of fs log schemes $f:X\rightarrow Y$ is a \emph{closed immersion}\index{morphism of log schemes!closed immersion} if the following two properties are satisfied:
\begin{enumerate}
\item[(i)] The underlying morphism of schemes $\underline{f}:\underline{Z}\rightarrow \underline{X}$ is a closed immersion.
\item[(ii)] The homomorphism $f^\flat:f_{log}^*(\cM_Y)\rightarrow \cM_X$ is surjective.
\end{enumerate} 
\end{df}

\subsection{Logarithmic smoothness}
\label{subsec::logsmooth}
Log smoothness is one of the central concepts in log geometry.
Kato \cite{MR1463703} showed that every log smooth morphism factors \'etale locally as a usual smooth morphism composed with a morphism induced by a homomorphism of monoids, 
which essentially determines the log structure.
\vspace{0.1in}

Consider the following commutative diagram of log schemes illustrated with solid arrows:

\begin{equation}
\label{diagram:smooth}
\vcenter{\xymatrix{
T_{0} \ar[r]^{\phi}  \ar[d]_{i}& X \ar[d]^{f}\\
T_{1} \ar[r]^{\psi} \ar@{-->}[ur] & Y
}}
\end{equation}
Here $i$ is a strictly closed immersion defined by a square zero ideal (note that $T_{0}$ and $T_{1}$ have the same underlying topological spaces).  
An infinitesimal lifting property defines logarithmic smoothness.

\begin{df}
\label{df:logsmooth}
A morphism $f:X\rightarrow Y$ of fine log schemes is logarithmically smooth (resp.\ logarithmically \'etale)\index{morphism of log schemes!log smooth}\index{morphism of log schemes!log \'etale} if the underlying morphism $\ul{X}\rightarrow \ul{Y}$ 
is locally of finite presentation and for any commutative diagram (\ref{diagram:smooth}), 
\'etale locally on $T_{1}$  there exists a (resp.\ there exists a unique) morphism $g:T_{1}\rightarrow X$ such that $\phi=g\circ i$ and $\psi=f\circ g$.
\end{df}

 We have the following helpful criterion for smoothness from \cite[Theorem 3.5]{MR1463703}.

\begin{thm}[K.\ Kato]\label{KatoStrThm} 
Let $f:X\rightarrow Y$ be a morphism of fine log schemes. 
Assume there is a chart $Q\rightarrow \cM_{Y}$, where $Q$ is a finitely generated integral monoid. 
Then the following are equivalent:
\begin{enumerate}
\item $f$ is logarithmically smooth (resp. logarithmically \'etale).
\item \'Etale locally on $X$, 
there is a chart $(P\rightarrow \cM_{X},Q\rightarrow \cM_{Y},Q\rightarrow P)$ extending $Q\rightarrow \cM_{Y}$ and satisfying the following properties.
\begin{enumerate}
\item The kernel and the torsion part of the cokernel (resp. the kernel and the cokernel) of $Q^{\gp}\rightarrow P^{\gp}$ are finite groups of orders invertible on $X$.
\item The induced morphism $X\rightarrow Y\times_{\text{Spec}(\Z[Q])} \text{Spec}(\Z[P])$ is \'etale in the usual sense.
\end{enumerate}
\end{enumerate}
\end{thm}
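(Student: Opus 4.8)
The plan is to follow Kato's original argument \cite[Theorem 3.5]{MR1463703}, organized around the two implications.

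For the implication $(2)\Rightarrow(1)$, the key reduction is to prove that the morphism $\A_\theta\colon\A_P\to\A_Q$ attached to a homomorphism $\theta\colon Q\to P$ of finitely generated integral monoids is logarithmically smooth (resp.\ logarithmically \'etale) whenever the kernel and the torsion part of the cokernel (resp.\ the kernel and the cokernel) of $\theta^\gp\colon Q^\gp\to P^\gp$ are finite of order invertible on the base. Granting this, condition (b) says $X\to Y\times_{\Spec{\Z[Q]}}\Spec{\Z[P]}$ is \'etale in the usual sense, hence strict and log \'etale; since the log scheme $Y\times_{\Spec\Z[Q]}\Spec\Z[P]$ is obtained from $\A_P\to\A_Q$ by base change along a chart $Y\to\A_Q$, and since log smoothness is stable under base change and composition, $f$ is log smooth (resp.\ log \'etale). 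To prove the statement about $\A_\theta$ I would verify the infinitesimal lifting criterion of Definition \ref{df:logsmooth} directly: given a strict square-zero thickening $i\colon T_0\hookrightarrow T_1$ and a diagram as in \eqref{diagram:smooth}, lifting $T_0\to\A_P$ to $T_1\to\A_P$ over $\A_Q$ amounts to extending a monoid homomorphism $P\to\Gamma(T_0,\cM_{T_0})$, compatibly with the given $Q\to\Gamma(T_1,\cM_{T_1})$, across the surjection $\cM_{T_1}\to\cM_{T_0}$ whose ``unit kernel'' $1+I$ is identified with the square-zero ideal $I$ as an additive group. The obstruction to, and the ambiguity in, such an extension are controlled by $\mathrm{Ext}$ and $\mathrm{Hom}$ groups built from $P^\gp/Q^\gp$ and $I$; finiteness and invertibility of the torsion make the obstruction vanish (one can divide) and, in the log \'etale case, also force uniqueness. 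Decomposing $\theta$ into the elementary pieces $Q\to Q\oplus\N^r$, Kummer-type inclusions into $P^\gp\cap(Q\otimes\Q)$, and localizations along faces makes this bookkeeping manageable.

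For the harder implication $(1)\Rightarrow(2)$, I would fix a point $x\in X$ and work \'etale-locally. Since $X$ is fine, choose some fine chart $P'\to\cM_X$ near $x$; after enlarging $P'$ to absorb the image of $Q$ in $\overline{\cM}_{X,x}$ (replacing $P'$ by a fine monoid through which both $P'\to\overline{\cM}_{X,x}$ and $Q\to\overline{\cM}_{X,x}$ factor, then lifting), arrange a chart $(P',Q,Q\to P')$ of the morphism $f$ in the sense of \eqref{diagram:chartmaps}. Form $Z:=Y\times^{\rm int}_{\A_Q}\A_{P'}$; because the charts induce the ambient log structures compatibly, the induced morphism $g\colon X\to Z$ is strict, so $\Omega^1_{X/Z}$ coincides with the ordinary module of differentials $\Omega^1_{\underline X/\underline Z}$. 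The remaining work is to modify the chart so that \emph{simultaneously} (a) $Q^\gp\to P^\gp$ has kernel and cokernel-torsion finite of invertible order and (b) $g$ becomes \'etale in the usual sense. Here log smoothness of $f$ enters: it makes $\Omega^1_{X/Y}$ (log differentials) locally free of the expected rank, and feeding this into the base-change and conormal exact sequences for $X\to Z\to Y$ shows that the part of $P^\gp$ not accounted for by $\Omega^1_{X/Y}$ is torsion of order invertible on $X$; replacing $P'$ by the appropriate submonoid/quotient $P$ and shrinking \'etale-locally to kill residual unramified directions yields a chart with $\Omega^1_{X/Z'}=0$ for $Z':=Y\times_{\A_Q}\A_P$, and combined with flatness (which follows from log smoothness of $f$ together with log smoothness of $Z'\to Y$ via the already-established implication $(2)\Rightarrow(1)$) this gives that $g\colon X\to Z'$ is \'etale, i.e.\ (b), while the torsion statement is exactly (a).

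I expect the main obstacle to be this last construction: producing a single chart $P$ satisfying (a) and (b) at once requires carefully tracking how the locally free module $\Omega^1_{X/Y}$ constrains $P^\gp/Q^\gp$ and handling the characteristic-$p$ subtleties, namely why the unavoidable torsion has order invertible on $X$ --- which ultimately traces back, as in the proof of Lemma \ref{A.9.12}, to the fact that a unit in a residue field of characteristic $p$ has order prime to $p$. By contrast, $(2)\Rightarrow(1)$ is essentially formal once the elementary monoid cases are settled, and in $(1)\Rightarrow(2)$ the passage from log to ordinary differentials is painless because $g$ is strict.
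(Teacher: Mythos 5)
The paper does not actually prove this theorem: it is quoted from Kato with a bare citation to \cite[Theorem 3.5]{MR1463703}, so there is no in-text argument to compare against. Your sketch follows that same source and reproduces its structure correctly --- the reduction of $(2)\Rightarrow(1)$ to the log smoothness of $\A_P\to\A_Q$ via the infinitesimal lifting criterion and an $\mathrm{Ext}/\mathrm{Hom}$ obstruction computation, and for $(1)\Rightarrow(2)$ the construction of a chart from the locally free sheaf $\Omega^1_{X/Y}$ followed by the verification that $X\to Y\times_{\A_Q}\A_P$ is strict and \'etale, with the genuinely delicate point (arranging (a) and (b) simultaneously, including the invertibility of the torsion orders) correctly identified --- so the two approaches coincide.
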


\begin{rmk}
In the second part of Theorem \ref{KatoStrThm}, the \'etale locality condition is used to ensure that $X$ admits a chart.
Thus, since we are working with Zariski log structures, 
we may assume the required chart exists Zariski locally on $X$.
\end{rmk}

\begin{exm}
\label{ex:toriclogsmooth}
Suppose $P$ is an fs monoid such that $P^\gp$ is torsion-free.
Let $X=\A_P$ and $Y=\Spec{\Z}$ be equipped with the trivial logarithmic structure.
Then, 
according to the theorem, 
$X$ is logarithmically smooth relative to $Y$, though the underlying toric variety might be singular. 
This is Kato's ``magic of log."  
\end{exm}

\begin{exm}
\label{ex:normalcrossinglogsmooth}
Let $\sigma$ be the cone of $M_{\R}=\R^{d}$ generated by the standard basis vectors $e_{i}$, $1\leq i\leq d$.
For integers $a_{j}>0$, $1\leq i\leq d$, 
let $\tau$ be the subcone of $\sigma$ generated by $a_{1}e_{1}+\dots+a_{d}e_{d}$.
We set $P:=M\cap\sigma$ and $Q:=M\cap\tau$.
Assuming the greatest common divisor $N:=\text{gcd}(a_{1},\dots,a_{d})$ is invertible in the field $k$,
we can identify the fiber product 
\[
\Spec{k}\times_{\Spec{\Z[1/N][Q]}}\Spec{\Z[1/N][P]}
\]
with the normal crossing variety
\[
X
:=
\Spec{
k[x_{1},\dots,x_{d}]/(x_{1}^{a_{1}}\cdots x_{d}^{a_{d}})
}.
\]
There are morphisms 
\begin{equation}
\label{diagram:normalcrossing2}
\vcenter{\xymatrix{
\N \ar[r]  \ar[d]_{\phi} & k \ar[d]\\
\N^{d} \ar[r]^-{\psi} & k[x_{1},\dots,x_{d}]/(x_{1}^{a_{1}}\cdots x_{d}^{a_{d}}),
}}
\end{equation}
where $\phi(1)=a_{1}e_{1}+\dots+a_{d}e_{d}$ and $\psi(e_{i})=x_{i}$.
The induced morphism 
\[
(X,\N^{d})
\to (\Spec{k},\N)
\]
is log smooth.
\end{exm}

\begin{exm}
Suppose $(X,\cM_{X})$ is an fs log scheme which is log smooth over $\Spec{k}$.
Then $X$ is covered \'etale locally by schemes which are smooth over toric varieties,
and there exists a divisor $D$ of $X$ such that the log structure is equivalent to that of $j^{\ast}\mathcal{O}_{X\setminus D}\cap \mathcal{O}_{X}$, 
where $j\colon X\setminus D\to X$ is the inclusion.
\end{exm}

\begin{exm}
The endomorphism of $\A_\N$ induced by $t\mapsto t^{n}$ is a Kummer morphism of log schemes, 
and a log \'etale morphism in case $n$ and $\text{char}(k)$ are coprime.
\end{exm}


\subsection{Logarithmic differentials}

One basic idea in log geometry is that the sheaves of monoids specify where $d\log$ makes sense.
To form sheaves of logarithmic differentials, 
one adds to the sheaf of differentials symbols of the form $d\log \alpha(m)$ for all elements $m\in \cM_{X}$.
\begin{df}
\label{log-differential}\index[notation]{Omega @ $\Omega^i_{-/k}$} \index{log differential}
Let $f: X \to Y$ be a morphism of fine log schemes. 
The sheaf of relative logarithmic differentials $\Omega_{X/Y}^{1}$ is given by 
\[
\Omega_{X/Y}^{1}
\colon=
\Bigl(\Omega_{X/Y}\oplus(\mathcal{O}_{X}\otimes_{\Z} M^{\gp}_{X}) \Bigr)\big/\mathcal{K},
\]
where $\mathcal{K}$ is the $\mathcal{O}_{X}$-module generated by local sections of the following forms:
\begin{enumerate}
\item $(d\alpha(a),0)-(0,\alpha(a)\otimes a)$ with $a\in \cM_{X}$;
\item $(0,1\otimes a)$ with $a\in \im(f^{-1}\cM_{Y}\rightarrow \cM_{X})$.
\end{enumerate}
The universal derivation $(\partial,D)$ is given by 
\[
\partial 
\colon
 \mathcal{O}_{X}\stackrel{d}\rightarrow\Omega_{X/Y}\rightarrow\Omega_{X/Y}^{1}
\] 
and 
\[
D
\colon 
\cM_{X}\rightarrow \mathcal{O}_{X}\otimes_{\Z} \cM_{X}^{\gp}\rightarrow\Omega_{X/Y}^{1}.
\]
\end{df}

\begin{exm}
Let $h: Q \rightarrow P$ be a morphism of fine monoids. 
For the induced morphism $f:\A_P\rightarrow\A_Q$ we have 
\[
\Omega_{f}^{1}=\mathcal{O}_{\A_P}\otimes_{\Z} \coker{h^{\gp}}.
\] 
The generators correspond to the logarithmic differentials $d\log\alpha(p)$ for $p\in P$, 
which are regular on the torus $\text{Spec}(\Z[P^{\gp}])$, modulo those coming from $Q$. 
\end{exm}

Logarithmic differentials behave somewhat analogously to differentials.
Let 
\[
X\stackrel{f}{\rightarrow}Y\stackrel{g}{\rightarrow}Z
\] 
be a sequence of morphisms of fine log schemes. 
\begin{enumerate}
\item There is a natural exact sequence 
\[
f^{*}\Omega_{Y/Z}^{1} \rightarrow \Omega_{X/Z}^{1}\rightarrow \Omega_{Z/Y}^{1}\rightarrow 0.
\]
\item If $f$ is log smooth, 
then $\Omega_{f}^{1}$ is a locally free $\mathcal{O}_{X}$-module of finite type, and there is an exact sequence
\[
0\rightarrow f^{*}\Omega_{Y/Z}^{1} \rightarrow \Omega_{X/Z}^{1}\rightarrow \Omega_{X/Y}^{1}\rightarrow 0.
\]
In particular, if $f$ is log \'etale, then $\Omega_{f}^1=0$.
\item If $g f$ is log smooth and the sequence in (2) is exact and splits locally, then $f$ is log smooth.
\end{enumerate}

We refer to \cite[Chapter IV]{Ogu} for the proof.

\begin{exm}
Let $\A_\Sigma$ be the log scheme introduced in Definition \ref{df:toricvarieties}.
We have 
\[
\Omega_{\A_{\Sigma}}^{1}
\cong
\mathcal{O}_{\A_{\Sigma}}\otimes_{\Z} N.
\]
\end{exm}

\begin{exm}
The normal crossing variety 
\[
X
:=
\Spec{
k[x_{1},\dots,x_{n}]/(x_{1}\cdots x_{d}})
\]
defines a log structure of ``semistable type''  over the standard log point $(\Spec{k},\N)$.
This is a special case of Example \ref{ex:logpoints} with $X=\Spec{k[\N^{n}]}$ and $D=V(x_{1}\cdots x_{d})$, 
corresponding to the submonoid of $\mathcal{O}_{X}$ generated by $\mathcal{O}_{X}^{\times}$ and $\{x_{1},\dots,x_{d}\}$.
We have that $\Omega_{X}^{1}$ is a free $\mathcal{O}_{X}$-module generated by the logarithmic differentials 
\[
\frac{dx_{1}}{x_{1}}
=
d\log x_{1},
\cdots,
\frac{dx_{d}}{x_{d}}
=
d\log x_{d},
dx_{d+1},
\cdots,
dx_{n}
\]
subject to the relation 
\[
d\log x_{1}
+
\cdots
+
d\log x_{d}
=
0.
\]
\end{exm}

\subsection{Exact morphisms}

\begin{df}
A homomorphism $\theta:P\rightarrow Q$ of monoids is \emph{exact}\index{homomorphism of monoids!exact} if the commutative diagram
\[
\begin{tikzcd}
P\arrow[d,"\theta"']\arrow[r]&
P^\gp\arrow[d,"\theta^\gp"]
\\
Q\arrow[r]&
Q^\gp
\end{tikzcd}
\]
is cartesian.
\end{df}

\begin{df}
A morphism $f:Y\rightarrow X$ of log schemes is \emph{exact}\index{morphism of log schemes@exact} if for every point $y\in Y$ the naturally induced homomorphism
\[
\cM_{X,f(x)}\rightarrow \cM_{Y,y}
\]
is exact.
\end{df}

\begin{prop}
Let $f$ be an exact morphism of integral (resp.\ saturated) log schemes.
Then every pullback of $f$ in the category of integral (resp.\ saturated) log schemes is also exact.
\end{prop}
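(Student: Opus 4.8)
The plan is to reduce the statement, by a chart-theoretic local analysis, to a purely combinatorial fact — that the pushout of an exact homomorphism of integral monoids along an arbitrary homomorphism is again exact — and then to dispatch that fact by a short diagram chase.

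First I would use that exactness of a morphism is a condition on the stalks of the monoid sheaves, hence Zariski-local. Fix $g\colon Y\to X$ exact and $h\colon X'\to X$ arbitrary, set $Y':=Y\times_X X'$, and let $y'\in Y'$ lie over $y\in Y$, $x'\in X'$, $x\in X$. Shrinking to Zariski neighbourhoods and invoking Remark \ref{A.9.10} together with the standard theory of charts of morphisms, choose fs charts $R\to\cM_X$, $P\to\cM_Y$, $P'\to\cM_{X'}$, neat at $x,y,x'$ and subordinate to $g$ and $h$, with induced homomorphisms $\theta\colon R\to P$ and $\psi\colon R\to P'$. By \ref{A.9.14} the stalk map $\cM_{X,x}\to\cM_{Y,y}$ is, up to a monoid automorphism of the source, the direct sum of the group homomorphism $\cO_{X,x}^*\to\cO_{Y,y}^*$ and $\theta$; since exactness of a homomorphism of abelian groups is automatic and exactness of a direct sum of homomorphisms is componentwise, the hypothesis that $g$ is exact is equivalent to $\theta$ being an exact homomorphism of integral monoids. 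On the other hand, the construction of fibre products of integral (resp. saturated) log schemes and its compatibility with the affine models $\A_{(-)}$ recalled in the text show that, near $y'$, the log scheme $Y'$ has chart $P'':=P\oplus_R^{\rm int}P'$ (resp. $P'':=P\oplus_R P'=(P\oplus_R^{\rm int}P')^{\rm sat}$) and that $g'\colon Y'\to X'$ is induced by the canonical homomorphism $\psi''\colon P'\to P''$. Passing from $\psi''$ to the stalk map $\cM_{X',x'}\to\cM_{Y',y'}$ only involves localizing $P'$ and $P''$ at the faces cut out by $x'$ and $y'$ and forming unit-group extensions; both operations commute with group completion and with fibre products of sets, hence preserve the defining cartesian-square property of exactness. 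Thus it suffices to show: if $\theta\colon R\to P$ is an exact homomorphism of integral monoids and $\psi\colon R\to P'$ is any homomorphism of integral monoids, then $\psi'\colon P'\to Q:=P\oplus_R^{\rm int}P'$ is exact.

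To prove this, recall that $Q^{\rm gp}=P^{\rm gp}\oplus_{R^{\rm gp}}P'^{\rm gp}$ (group completion commutes with pushouts, \cite[I.1.3.4]{Ogu}), that $Q$, the image of $P\oplus_R^{\rm mon}P'$ in $Q^{\rm gp}$, consists of the classes $[(p,p')]$ with $p\in P$, $p'\in P'$, and that $\psi'(p')=[(0,p')]$. Take $x\in P'^{\rm gp}$ with $(\psi')^{\rm gp}(x)=[(0,x)]\in Q$, say $[(0,x)]=[(p_0,p'_0)]$ with $p_0\in P$ and $p'_0\in P'$. Then in $P^{\rm gp}\oplus P'^{\rm gp}$ one has $(p_0,p'_0)-(0,x)=(\theta^{\rm gp}(r),-\psi^{\rm gp}(r))$ for some $r\in R^{\rm gp}$, so $p_0=\theta^{\rm gp}(r)\in P$ and $x=p'_0+\psi^{\rm gp}(r)$; exactness of $\theta$ forces $r\in R$, hence $\psi^{\rm gp}(r)=\psi(r)\in P'$ and $x\in P'$. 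This settles the integral case. For the saturated case $P''=Q^{\rm sat}$ with $(P'')^{\rm gp}=Q^{\rm gp}$, and $\psi''$ is $\psi'$ followed by $Q\hookrightarrow Q^{\rm sat}$: if $x\in P'^{\rm gp}$ has image in $Q^{\rm gp}$ lying in $Q^{\rm sat}$, then $nx$ has image in $Q$ for some $n\geq 1$, so $nx\in P'$ by the integral case, whence $x\in P'$ since $P'$ is saturated. Hence $\psi''$ is exact as well.

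The mathematical content is entirely concentrated in the one-line chase of the monoid statement, so the genuine obstacle is the first step: producing mutually compatible neat charts for the three morphisms, identifying the chart of the fibre product with the amalgamated sum, and — most delicately — tracking the chosen point $y'$ through this identification, i.e. controlling the face of $P''$ at which one localizes and matching it with the preimage under $\psi''$ of the corresponding face of $P'$. None of this is deep, but it is where care is needed; for these bookkeeping points I would lean on the corresponding statements in \cite{Ogu} rather than redo them.
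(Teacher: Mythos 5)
The paper itself disposes of this proposition by citing Ogus (Proposition III.2.2.1(2) for the integral case, Proposition I.4.2.1(6) for the saturated monoid-level statement), so what you have written is a genuine reconstruction rather than a variant of the paper's argument. Your central computation — that for $\theta\colon R\to P$ exact and $\psi\colon R\to P'$ arbitrary the induced map $P'\to P\oplus_R^{\rm int}P'$ is exact, and that the saturated case follows by the $n$-divisibility argument — is correct and is exactly the monoid-theoretic heart of the matter.

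There is, however, one step whose justification is false as stated: the claim that passing from the chart homomorphism $\psi''\colon P'\to P''$ to the stalk map $\cM_{X',x'}\to\cM_{Y',y'}$ ``only involves localizing $P'$ and $P''$ at the faces cut out by $x'$ and $y'$ \dots [which] preserve[s] the defining cartesian-square property of exactness.'' Localization at compatible faces does \emph{not} preserve exactness. Take $\theta\colon\N^2\to\N^3$, $(a,b)\mapsto(a,b,a+b)$; this is exact, since $(\theta^{\rm gp})^{-1}(\N^3)=\{(a,b):a\ge 0,\,b\ge 0,\,a+b\ge 0\}=\N^2$. For the face $F=\N^2\times\{0\}$ of $\N^3$ one has $\theta^{-1}(F)=\{0\}$, while the localized map $\N^2\to(\N^3)_F=\Z\times\Z\times\N$ sends $(1,-1)$ to $(1,-1,0)=(1,0,0)-(0,1,0)$, which lies in the target although $(1,-1)\notin\N^2$. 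Geometrically this says that a morphism of log schemes can have an exact global chart homomorphism and still fail to be exact at points of deeper strata — which is precisely the implication your argument would be relying on if the face of $P''$ at $y'$ were nontrivial.

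The step you need is nevertheless correct, but for a reason you do not give. With $P$ exact (or neat) at $y$ and $P'$ exact at $x'$, the face of $P''=P\oplus_R^{\rm int}P'$ at $y'$ consists only of units: every element of $P''$ is a class $[(p,p')]$ with $p\in P$, $p'\in P'$, its image in $\cO_{Y',y'}$ is the product of the images of $p$ and $p'$, and since $\cO_{Y,y}\to\cO_{Y',y'}$ and $\cO_{X',x'}\to\cO_{Y',y'}$ are \emph{local} homomorphisms this product is a unit only if $p$ and $p'$ already lie in the faces of $P$ at $y$ and of $P'$ at $x'$, i.e.\ in the respective unit groups. Hence $\overline{\cM}_{X',x'}\to\overline{\cM}_{Y',y'}$ is identified with $\overline{\psi''}$, no localization at a nontrivial face occurs, and exactness of $\psi''$ yields exactness of $g'$ at $y'$ because exactness of a homomorphism of integral monoids is detected on the sharp quotients. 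Two smaller points: the chart-based reduction only proves the statement for fine (resp.\ fs) log schemes, a loss of generality against the literal statement and against Ogus's sheaf-level proof (harmless in this paper, where everything is fs); and the stalk map at a pair of neat charts is not ``up to a monoid automorphism of the source'' a direct sum of $\cO^*\to\cO^*$ and $\theta$ — the unit-valued twist lives in the target and need not be absorbable — although the equivalence ``$g$ exact at $y$ iff $\theta$ exact'' that you extract from it does hold by direct computation of the preimage.
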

\begin{proof}
See \cite[Proposition III.2.2.1(2)]{Ogu}.
This reference states only the integral case.
However, the saturated case can be proved similarly, see \cite[Proposition I.4.2.1(6)]{Ogu}.
\end{proof}

\begin{prop}\label{kummer-is-logetandstrict}
Let $f:X\rightarrow Y$ be a log \'etale morphism of fs log schemes.
Then $f$ is Kummer if and only if $f$ is exact.
\end{prop}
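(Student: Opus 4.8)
The statement to prove is Proposition \ref{kummer-is-logetandstrict}: for a log \'etale morphism $f\colon X\to Y$ of fs log schemes, $f$ is Kummer if and only if $f$ is exact. Recall that a Kummer morphism is one that is exact and log \'etale, so the ``only if'' direction is immediate from the definition — the content is entirely in the ``if'' direction. Thus the plan is to assume $f$ is log \'etale and exact, and deduce that the induced homomorphisms on characteristic monoids are Kummer. Since exactness and the log \'etale property, together with the conclusion, are all local conditions on stalks (by Theorem \ref{KatoStrThm} and the definition of exact morphism via stalks), I would reduce to checking the statement at a single point $x\in X$ with image $y=f(x)$, where it becomes a statement about a single homomorphism of fs monoids $\theta\colon \overline{\cM}_{Y,y}\to \overline{\cM}_{X,x}$.

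First I would invoke Theorem \ref{KatoStrThm} (Kato's chart criterion for log smoothness/log \'etaleness): since $f$ is log \'etale, Zariski locally on $X$ there is a chart $(P\to \cM_X,\,Q\to \cM_Y,\,Q\to P)$ with $Q$ fine such that the kernel and cokernel of $\theta^{\rm gp}\colon Q^{\rm gp}\to P^{\rm gp}$ are finite groups of order invertible on $X$, and $X\to Y\times_{\Spec\Z[Q]}\Spec\Z[P]$ is classically \'etale. By passing to exact charts at the point in question — using Remark \ref{A.9.10}(1) — I may arrange that $Q\to \overline{\cM}_{Y,y}$ and $P\to\overline{\cM}_{X,x}$ are isomorphisms, so that $\theta$ on characteristic monoids is identified with the chart map $Q\to P$. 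The log \'etale hypothesis then says $\theta^{\rm gp}\colon Q^{\rm gp}\to P^{\rm gp}$ has finite kernel and cokernel (of order invertible in the relevant residue fields), i.e. $\theta^{\rm gp}\otimes\Q$ is an isomorphism; in particular $\theta\otimes\Q$ is surjective, which is half of the Kummer condition. It remains to extract injectivity of $\theta$ from exactness.

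For injectivity: I would use that $\theta$ is exact (this descends to the chart maps because $f$ is exact and the charts are exact at $x$, $y$), meaning $Q\cong P\times_{P^{\rm gp}}Q^{\rm gp}$ — wait, more precisely $Q\to P$ exact means the square with $Q^{\rm gp}\to P^{\rm gp}$ is cartesian, so $Q=P\cap Q^{\rm gp}$ inside $P^{\rm gp}$ via the identification. Combined with $\ker(\theta^{\rm gp})$ being finite and $P^{\rm gp}$ torsion-free (as $P=\overline{\cM}_{X,x}$ is sharp and fs, so $P^{\rm gp}$ is torsion-free by \cite[Proposition I.1.3.5(2)]{Ogu}), the kernel of $\theta^{\rm gp}$ injects into a torsion-free group and is finite, hence trivial; so $\theta^{\rm gp}$ is injective, and then exactness forces $\theta$ itself to be injective. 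Together with the surjectivity of $\theta\otimes\Q$ established above, this shows $\theta$ is Kummer in the sense of Definition \ref{KummerDef}. The main obstacle I anticipate is the bookkeeping in the reduction step — ensuring that ``exact morphism of log schemes at $x$'' really translates into ``exact homomorphism of the chart monoids'' after passing to neat/exact charts, and that the finiteness-of-order conditions from Kato's theorem are genuinely usable (they guarantee triviality of the torsion groups because the ambient group $P^{\rm gp}$ is torsion-free). Once that dictionary is set up, the monoid-theoretic core is short, essentially an application of Lemma \ref{A.9.9}-style reasoning: an exact homomorphism of integral monoids whose groupification is an isomorphism is itself an isomorphism, here refined to the statement that an exact homomorphism with $\Q$-surjective and injective groupification is Kummer.
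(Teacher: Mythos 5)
First, a point of comparison: the paper does not actually prove this proposition --- its ``proof'' is a one-line citation to \cite[1.6]{MR1922832} --- so you are supplying an argument where the paper supplies none. Your treatment of the substantive direction (exact plus log \'etale implies Kummer) is essentially sound: Kato's criterion (Theorem \ref{KatoStrThm}) gives a chart whose groupification has finite kernel and cokernel, whence $\Q$-surjectivity on characteristic monoids, and exactness supplies injectivity. Two repairs are needed there. The kernel of $\theta^{\rm gp}\colon Q^{\rm gp}\to P^{\rm gp}$ is a subgroup of the \emph{source} $Q^{\rm gp}$, not of $P^{\rm gp}$, so you invoke torsion-freeness of the wrong group; the argument survives because $Q^{\rm gp}=\overline{\cM}_{Y,y}^{\rm gp}$ is torsion-free for the same reason (a sharp saturated monoid has torsion-free groupification), and in fact you do not need finiteness of the kernel at all, since exactness of a homomorphism of sharp integral monoids already forces $\ker(\theta^{\rm gp})\subseteq Q^{*}=0$ (any $k$ in the kernel has both $\pm k$ mapping into $P$). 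Also, the simultaneous replacement of both charts by exact ones compatibly with the morphism, while retaining Kato's finiteness conditions, is asserted rather than justified; it is cleaner to note that finite cokernel of the chart map $Q^{\rm gp}\to P^{\rm gp}$ already implies finite cokernel of $\overline{\cM}_{Y,y}^{\rm gp}\to\overline{\cM}_{X,x}^{\rm gp}$, because the charts surject onto the characteristic monoids.

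The genuine gap is the ``only if'' direction. You declare it immediate because ``a Kummer morphism is one that is exact and log \'etale''. That is the gloss on \emph{Kummer \'etale} given in the introduction, and the introduction justifies that gloss by appealing to this very proposition; taking it as the definition of ``Kummer'' renders the proposition a tautology and your argument circular. The intended meaning of ``Kummer'' is the stalkwise monoid condition of Definition \ref{KummerDef}: each induced map $\theta\colon \overline{\cM}_{Y,f(x)}\to\overline{\cM}_{X,x}$ is injective with $\theta\otimes\Q$ surjective. You therefore owe a proof that a Kummer homomorphism of fs monoids is exact. It is short but uses saturation in an essential way: injectivity of $\theta$ implies injectivity of $\theta^{\rm gp}$ for integral monoids; given $q\in Q^{\rm gp}$ with $p:=\theta^{\rm gp}(q)\in P$, choose $n\geq 1$ and $q'\in Q$ with $np=\theta(q')$; then $nq=q'$ by injectivity of $\theta^{\rm gp}$, so $q\in Q^{\rm sat}=Q$. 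Without this step the equivalence is only established in one direction.
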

\begin{proof}
We refer to \cite[1.6]{MR1922832}.
\end{proof}

\subsection{Kato-Nakayama spaces of fs log schemes}
If $X$ is a fine and saturated log scheme that is locally of finite type over $\text{Spec}({\C})$ we are entitled to the log analytic space $X_{\text{an}}$. 
The idea of Kato-Nakayama is to associate to $X_{\text{an}}$ a topological space $X_{\log}$ that embodies the log structure of $X$ is a topological way by using points instead of sheaves of monoids.
By construction there exists a continuous map $\tau\colon X_{\log}\to X_{\text{an}}$ that is proper and surjective. 
Let $U\subseteq X_{\text{an}}$ be the trivial locus of the log structure, 
i.e., 
the largest open subset over which $\mathcal{O}_{X_{\text{an}}}^{\times}\hookrightarrow M_{X_{\text{an}}}$ is an isomorphism.
Then the open immersion $i\colon U\to X_{\text{an}}$ factors through $\tau$,
so that $X_{\log}$ can be considered as a ``relative compactification'' of $i$.
We give a short review of Kato--Nakayama spaces and refer to \cite[Section V.1]{Ogu} for more details.
\vspace{0.1in}

Let $\mathrm{pt}^{\dagger}$ be the log analytic point $\mathrm{pt}=\text{Spec}(\C)_{\text{an}}$ with monoid $M_{\mathrm{pt}}=\R_{\geq 0}\times S^1$, 
where each factor is given the monoidal multiplicative structure so that $\R_{\geq 0}^{\gp}=0$,  
and map $\alpha\colon M_{\mathrm{pt}}\to \C$ given by $(r,a)\mapsto ra$. 
As a set, 
\[
X_{\log}:=
\hom(\mathrm{pt}^{\dagger},X_{\text{an}})
\] 
is comprised of morphisms of log analytic spaces from $\mathrm{pt}^{\dagger}$ to $X_{\text{an}}$. 
One identifies $X_{\log}$ with the set of pairs $(x,\phi)$, 
where $x\in X_{\text{an}}$ and $\phi\colon M_{X_{\text{an}},x}^{\gp} \rightarrow S^{1}$ is a homomorphism of abelian groups such that $\phi(f)={f(x)}/{|f(x)|}\in S^{1}$ 
for every $f\in \mathcal{O}_{X_{\text{an}},x}^\times$. 
With this description, 
$\tau\colon X_{\log}\to X_{\text{an}}$ is the projection to the first coordinate $(x,\phi)\mapsto x$.
For every open subset $U\subseteq X_{\text{an}}$ and section $m\in M_{X_{\text{an}}}(U)$, 
we obtain a function $f_{U,m}\colon U_{\log}\rightarrow S^{1}$, defined by $(x,\phi)\mapsto \phi(m_x)\in S^{1}$. 
Here $U_{\log}$ denotes the subset of $X_{\log}$ of points $(x,\phi)$ with $x\in U$,
i.e., 
the Kato--Nakayama space of the log analytic space $(U,M_{X_{\text{an}}}|_U)$. 
The topology on $X_{\log}$ is the coarsest topology that makes the projection $\tau\colon X_{\log}\to X_{\text{an}}$ and all the functions $f_{U,m}\colon U_{\log}\rightarrow S^1$ continuous.
For every $x\in X_{\text{an}}$, 
the fiber $\tau^{-1}(x)$ is a torsor under the space of homomorphisms of abelian groups $\hom(M_{X_{\text{an}},x}^{\gp},S^{1})$.
If $X_{\text{an}}$ is fs (or more generally if $M_{X_{\text{an}},x}$ is torsion-free), 
then $\tau^{-1}(x)$ is non-canonically isomorphic to a real torus $(S^{1})^r$ where $r$ is the rank of the free abelian group $\overline{M}^{\gp}_{X_{\text{an}},x}=M^{\gp}_{X_{\text{an}},x}/\mathcal{O}_{X,x}$.
\vspace{0.1in}

The construction of Kato-Nakayama spaces is functorial and compatible with strict morphisms of fine saturated log analytic spaces $X_{\text{an}}\rightarrow Y_{\text{an}}$, 
i.e., 
there is a naturally induced cartesian square of topological spaces:
$$
\xymatrix{
X_{\log}\ar[r] \ar[d]& Y_{\log}\ar[d]\\
X_{\text{an}}\ar[r] & Y_{\text{an}}
}
$$

\begin{exm}
\label{example:kn}\mbox{}
\begin{itemize}
\item[1.] For $\A^1$ with its toric log structure $\N\to \Spec{\N}=\A^1$ we have $(\A^1)_{\log}=\R_{\geq 0}\times S^{1}$.
The projection map $\tau\colon \R_{\geq 0}\times S^{1}\rightarrow \C$ sends $(r,a)$ to $ra$.
More generally,
let $P$ be a fine monoid and $X$ the affine toric scheme $\Spec{\C[P]}$ with its toric log structure. 
Then 
\[
X_{\log}
=
\hom(P,\R_{\geq 0}\times S^{1})
=
\hom(P,\R_{\geq 0})\times \hom(P,S^{1})
\]
is locally compact with topology induced by the one on $\R_{\geq 0}\times S^{1}$. 
\item[2.] If $X$ is smooth and $M_D$ is the compactifying log structure coming from a normal crossing divisor $D\subseteq X$, 
then $(X,M_D)_{\log}$ can be identified with the real oriented blow-up of $X$ along $D$.  
This is a ``smooth manifold with corners''.
\end{itemize}
\end{exm}

The last example suggests that $X_{\log}$ should be thought of as the complement of an ``open tubular neighborhood of the log structure.''
Our next observation is used in Proposition \ref{prop::unitdiskbundle}.

\begin{prop}
\label{A.9.74}
Let $X$ be an fs log scheme log smooth over $\C$. Then $X_{\log}$ is homotopy equivalent to $(X-\partial X)_{\text{an}}$.
\end{prop}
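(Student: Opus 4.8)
The plan is to show that the open inclusion $\tau^{-1}(U_{\mathrm{an}})\hookrightarrow X_{\log}$ is a homotopy equivalence, where $U:=X-\partial X$ is the open locus on which the log structure is trivial. First recall that $\tau\colon X_{\log}\to X_{\mathrm{an}}$ is continuous, proper and surjective, and that over $U_{\mathrm{an}}$ the inclusion $\mathcal{O}_{X}^{\times}\hookrightarrow\mathcal{M}_{X}$ is an isomorphism, so that $\tau$ restricts to a homeomorphism $\tau^{-1}(U_{\mathrm{an}})\xrightarrow{\ \sim\ }U_{\mathrm{an}}$; since $U_{\mathrm{an}}$ is open in $X_{\mathrm{an}}$, the subspace $\tau^{-1}(U_{\mathrm{an}})$ is open in $X_{\log}$. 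Thus it suffices to produce a homotopy $H\colon X_{\log}\times[0,1]\to X_{\log}$ with $H_{0}=\mathrm{id}$, $H_{1}(X_{\log})\subseteq\tau^{-1}(U_{\mathrm{an}})$ and $H_{t}\bigl(\tau^{-1}(U_{\mathrm{an}})\bigr)\subseteq\tau^{-1}(U_{\mathrm{an}})$ for all $t$; together with the observation that $H_{1}$ restricted to $\tau^{-1}(U_{\mathrm{an}})$ is homotopic to the identity inside $\tau^{-1}(U_{\mathrm{an}})$ through the same flow, this exhibits the inclusion as a homotopy equivalence. (Both $X_{\log}$ and $U_{\mathrm{an}}$ are, locally, manifolds with corners, hence of the homotopy type of CW complexes, so one may equally argue through weak equivalences.)

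Next I would reduce to the case where $\underline{X}$ is smooth over $\mathbb{C}$ and $\partial X$ is a strict normal crossing divisor. Since $\mathbb{C}$ has characteristic zero it admits resolution of singularities in the sense of Definition \ref{A.3.8}, so by Proposition \ref{A.3.19} there is a log modification $p\colon\widetilde{X}\to X$ with $\widetilde{X}\in SmlSm/\mathbb{C}$, i.e.\ $\underline{\widetilde{X}}$ smooth and $\partial\widetilde{X}$ a strict normal crossing divisor (Lemma \ref{lem::SmlSm}). A log modification is an isomorphism over the locus of trivial log structure, so $\widetilde{X}-\partial\widetilde{X}=X-\partial X$; by functoriality of Kato--Nakayama spaces, $p$ induces $p_{\log}\colon\widetilde{X}_{\log}\to X_{\log}$ restricting to the identity on $(X-\partial X)_{\mathrm{an}}$. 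I would check that $p_{\log}$ is a homotopy equivalence, which reduces the assertion for $X$ to that for $\widetilde{X}$. This point is local on $X$, and by Lemma \ref{lem::Smchart} together with the compatibility of $(-)_{\log}$ with strict morphisms it reduces to the toric model where $p$ is given by a subdivision of fans $\mathbb{A}_{\Sigma}\to\mathbb{A}_{P}$ (Definition \ref{df:toricvarieties}); there one computes directly that $(\mathbb{A}_{P})_{\log}=\mathrm{Hom}_{\mathrm{mon}}(P,\mathbb{R}_{\geq 0})\times\mathrm{Hom}_{\mathrm{grp}}(P^{\mathrm{gp}},S^{1})$, that the first factor is a strictly convex cone $\sigma$ with nonempty interior $\sigma^{\circ}=\mathrm{Hom}_{\mathrm{mon}}(P,\mathbb{R}_{>0})$, and that $\tau^{-1}$ of the big torus is $\sigma^{\circ}\times\mathrm{Hom}_{\mathrm{grp}}(P^{\mathrm{gp}},S^{1})$; the straight-line homotopy $v\mapsto v+tv_{0}$ towards an interior point $v_{0}\in\sigma^{\circ}$ retracts $(\mathbb{A}_{P})_{\log}$ onto its torus part, and the same homotopy works on $\mathbb{A}_{\Sigma}$, so that both sides of $p_{\log}$ are homotopy equivalent, via compatible maps, to the common $\mathrm{Hom}_{\mathrm{grp}}(P^{\mathrm{gp}},S^{1})\cong(S^{1})^{\mathrm{rank}\,P^{\mathrm{gp}}}$.

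In the reduced situation, where $\underline{X}$ is smooth and $\partial X=\sum_{i}D_{i}$ is a strict normal crossing divisor, I would finish by constructing the homotopy $H$ above. Here $X_{\log}$ is the real oriented blow-up of $X_{\mathrm{an}}$ along $\partial X$ (Example \ref{example:kn}(2)): locally around a point lying on the branches $D_{i_{1}},\dots,D_{i_{k}}$, with local coordinates for which $D_{i_{j}}=\{z_{i_{j}}=0\}$, the space $X_{\log}$ is modelled on $[0,\infty)^{k}\times(S^{1})^{k}\times\mathbb{D}^{n-k}$ with $\tau^{-1}(U_{\mathrm{an}})$ the locus where all the $[0,\infty)$-coordinates are positive, and $\tau$ is $(\rho_{j},\theta_{j},w)\mapsto(\rho_{j}e^{i\theta_{j}},w)$. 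For each branch $D_{i}$ the prescription ``increase the radial coordinate $\rho_{i}$ measuring the distance to $D_{i}$'' is independent of the chosen coordinates and therefore defines a global flow on a neighbourhood of $\tau^{-1}(D_{i})$ in $X_{\log}$. Choosing a locally finite cover of $X_{\mathrm{an}}$ by such charts and a subordinate partition of unity, I would sum these flows into a single $H\colon X_{\log}\times[0,1]\to X_{\log}$; after unit time every radial coordinate has been pushed off $0$, so $H$ has the properties required in the first paragraph. Combined with the reduction step this proves the proposition.

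I expect the globalization in the last paragraph to be the main obstacle: each branch of $\partial X$ contributes a canonical ``outward radial'' direction on $X_{\log}$, and the content is in patching these over $X_{\mathrm{an}}$ into one homotopy that moves all of $X_{\log}$ into $\tau^{-1}(U_{\mathrm{an}})$ while preserving that open subset. The other technical input is the toric computation underlying the reduction step --- that a subdivision of fans induces a homotopy equivalence on Kato--Nakayama spaces --- for which one may also invoke the background recalled in \cite[Section V.1]{Ogu}.
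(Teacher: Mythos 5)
The paper's own proof is a one-liner: it cites \cite[Remark 1.5.1]{MR1700591} for the fact that $X_{\log}$ is a (topological) manifold with boundary whose interior is $(X-\partial X)_{\mathrm{an}}$, and the homotopy equivalence then follows from the existence of a collar. Your strategy is the same collar retraction, but carried out by hand, and two of your steps do not go through as written. The first genuine gap is in the toric computation underlying your reduction step. For a general fs chart $P$ the space $\mathrm{Hom}_{\mathrm{mon}}(P,(\mathbb{R}_{\geq 0},\times))$ is \emph{not} closed under pointwise addition of homomorphisms: if $P$ is the cone over a square, generated by $a,b,c,d$ with $a+c=b+d$, the space is $\{(\alpha,\beta,\gamma,\delta)\in\mathbb{R}_{\geq 0}^{4}:\alpha\gamma=\beta\delta\}$, which is not convex, so the ``straight-line homotopy $v\mapsto v+tv_{0}$'' leaves the space; passing to logarithms does not help, since adding a finite element never moves a coordinate off the value $\infty$ (i.e.\ off $h(p)=0$). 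Your homotopy is only defined when $P$ is free, but Lemma \ref{lem::Smchart} guarantees $P\cong\N^{r}$ only when $\underline{X}$ is already smooth --- precisely the case you are trying to reduce \emph{to}. To salvage this you would need an identification of $\mathrm{Hom}_{\mathrm{mon}}(P,\mathbb{R}_{\geq 0})$ with the convex dual cone $P^{\vee}_{\mathbb{R}}$ (a moment-map type argument), which is nontrivial and not supplied. Note also that the entire reduction to $SmlSm/\C$ is unnecessary once one knows the manifold-with-boundary statement for general log smooth $X$, which is exactly what the cited reference provides.

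The second problem is the assertion that ``increase the radial coordinate $\rho_{i}$'' is independent of the chosen coordinates. It is not: if $z_{i}'=uz_{i}$ is another local equation for $D_{i}$, with $u$ a unit, then $\rho_{i}'=|u|\rho_{i}$, so the flows $\rho_{i}\mapsto\rho_{i}+t$ and $\rho_{i}'\mapsto\rho_{i}'+t$ differ on overlaps. The construction can still be repaired --- the outward directions along a boundary face form a convex cone, so a partition-of-unity average of chartwise outward flows is again outward, and one must additionally check that the resulting homotopy preserves $\tau^{-1}(U_{\mathrm{an}})$ and pushes every corner stratum off the boundary in unit time --- but the justification you give is the wrong one, and this patching is exactly the content of the collar neighborhood theorem for manifolds with corners that the paper invokes implicitly. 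In short: right idea, but the two technical pillars (the toric retraction for non-free monoids and the coordinate-independence of the radial flow) are both false as stated and need to be replaced, most economically by quoting the manifold-with-boundary structure of $X_{\log}$ directly.
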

\begin{proof}
As observed in \cite[Remark 1.5.1]{MR1700591}, $X$ is a manifold with boundary. 
Our assertion follows readily from this. 
\end{proof}

\begin{exm} 
Let $D\subseteq X_{\text{an}}$ be a simple normal crossings divisor. 
Then $(X,M_D)_{\log}$ is a manifold with boundary; its interior is $X_{\text{an}}-D$ \cite[Theorem V.1.3.1]{Ogu}.
The inclusion $X_{\text{an}}-D \rightarrow (X_{\text{an}},M_D)_{\rm log}$ is a homotopy equivalence. 
\end{exm}

\subsection{Log blow-ups}
In this section, we briefly recall Niziol's work on log blow-ups \cite{Niz}.
Recall that an ideal of a monoid $P$ is a subset $I\subseteq P$ such that $P+I\subseteq I$. 

\begin{df}[{\cite[Definition 3.1]{Niz}}]
A subsheaf $\cI\subseteq \cM_{X}$ of ideals of the monoid $\cM_{X}$ is coherent if locally for the \'etale topology of $X$ there is a chart $P\rightarrow \cM_{X}$ for the log structure and a finitely generated ideal $I\subseteq P$ of the monoid $P$, such that $\cI$ coincides with the subsheaf of $\cM_{X}$ generated by the image of $I$.
\end{df}

A subsheaf $\cI \subseteq \cM_{X}$ of ideals is invertible if a single element can locally generate it, or equivalently if it is induced locally by a principal ideal of a chart $P\rightarrow \cM_{X}$.
If $\cI$ is coherent, then it is invertible if and only if $\cI_x \subseteq M_{X,x}$ is a principal ideal for every $x\in X$.
For a coherent sheaf of ideals $\cI$ of the log structure on $X$, 
the log blow-up of $X$ along $\cI$ is a log scheme $(X_\cI,M_\cI)$ equipped with a map $f_\cI\colon X_\cI \rightarrow X$ satisfying a universal property: 
the ideal generated by $f_\cI^{-1}\cI$ in $M_{\cI}$ is invertible and every morphism of log schemes $f\colon Y\rightarrow X$ such that the ideal generated by $f^{-1}\cI$ on $Y$ is invertible factors 
uniquely through $f_\cI$.   
Locally on $X$, for a chart $P\rightarrow \cM_{X}$ and a finitely generated ideal $I\subseteq P$ inducing $\cI$, 
the log blow-up is given by the pullback along $X\rightarrow \text{Spec}(k[P])$ of the blow-up $\text{Proj}( \bigoplus_n I^n)\rightarrow \text{Spec}(k[P])$.
\vspace{0.1in}

The following result due to Niziol strengthens Kato's work \cite[(10.4)]{MR1296725}.

\begin{prop}
\label{A.3.19}
For $X\in lSm/k$, there is a log blow-up $Y\rightarrow X$ such that $Y\in SmlSm/k$.
\end{prop}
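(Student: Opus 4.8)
The plan is to reduce the statement to a toric resolution problem via the local structure theory of log smooth schemes, and then to carry out the resolution globally. By Lemma \ref{lem::Smchart}, Zariski locally on $X$ there is a neat fs chart $P$ for which the induced morphism $X\to \A_P$ is strictly smooth; thus $X$ is covered by open sets that are smooth over affine toric log schemes. Since we work throughout with Zariski log structures, the characteristic sheaf $\overline{\cM}_X$ assembles these local charts into a global monoidal space $F(X)$ (a Kato fan), which is Zariski-locally of the form $\Spec{P}$ for an fs monoid $P$. Log blow-ups of $X$, as recalled in the ``Log blow-ups'' subsection, are given by coherent sheaves of ideals $\cI\subseteq \cM_X$, and they correspond to partial subdivisions of $F(X)$; moreover this correspondence is compatible with the strictly smooth structure morphism, in the sense that a subdivision $\Sigma\to \Spec{P}$ pulls back to the log blow-up $X\times_{\A_P}\A_\Sigma\to X$.

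First I would resolve the local models. For each chart monoid $P$, regard $\Spec{P}$ as a fan with single maximal cone $P^\vee$; by toric resolution of singularities (the existence of a smooth subdivision of any fan, cf.\ the references used in Lemma \ref{A.3.31}) there is a subdivision $\Sigma\to \Spec{P}$ with $\Sigma$ a smooth fan. For a smooth fan every cone is generated by part of a $\Z$-basis, so $\underline{\A_\Sigma}$ is smooth over $k$ and its toric boundary is a strict normal crossing divisor. Pulling back along $X\to \A_P$ and using Lemma \ref{Fiber.product.strict.morphism} together with the stability of smoothness under base change, the projection $X\times_{\A_P}\A_\Sigma\to \A_\Sigma$ is again strictly smooth; since the preimage of a strict normal crossing divisor under a smooth morphism is again a strict normal crossing divisor (which is essentially the content of the local model in Definition \ref{A.3.17}), the fs log scheme $X\times_{\A_P}\A_\Sigma$ has smooth underlying scheme with strict normal crossing boundary, i.e.\ lies in $SmlSm/k$ by Lemma \ref{lem::SmlSm}, and $X\times_{\A_P}\A_\Sigma\to X$ is a log blow-up.

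The main obstacle is the globalization: the subdivisions chosen chart by chart must be made compatible on overlaps so that the associated log blow-ups glue to a single log blow-up of $X$. This is handled by performing the resolution once and for all on the global Kato fan $F(X)$ — toric resolution applies to $F(X)$ as a whole, producing a global subdivision $\widetilde{F}\to F(X)$ — and then transporting it back to a single coherent sheaf of ideals $\cI\subseteq \cM_X$ whose log blow-up $X_\cI\to X$ restricts over each chart to the construction of the previous paragraph. Compatibility on overlaps is then automatic from the universal property of log blow-ups: a log blow-up is determined by its ideal sheaf, and the restriction of a log blow-up to an open subscheme is the log blow-up of the restricted ideal sheaf. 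Since $\underline{X_\cI}$ is smooth over $k$ and its log structure is the compactifying one of a strict normal crossing divisor, Lemma \ref{lem::SmlSm} gives $X_\cI\in SmlSm/k$, so $Y:=X_\cI$ is the desired log blow-up. This is precisely Niziol's strengthening in \cite{Niz} of Kato's construction in \cite[(10.4)]{MR1296725}, and the argument above is a sketch of that result.
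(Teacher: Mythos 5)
Your argument is correct and coincides with the paper's proof, which consists of the single line ``This is a particular case of \cite[Theorem 5.10]{Niz}'': you have essentially unpacked that citation, reducing to local toric models via Lemma \ref{lem::Smchart}, resolving the associated fan, and globalizing over the Kato fan, exactly as Niziol does. The only step you assert without justification is that the chosen smooth subdivision of the global fan can be realized by a single coherent ideal sheaf (so that the resulting modification is genuinely a log \emph{blow-up} and not merely a log modification); this requires refining to a projective subdivision, e.g.\ by iterated star subdivisions, and is handled in the cited reference.
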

\begin{proof}
This is a particular case of \cite[Theorem 5.10]{Niz}.
\end{proof}

\begin{rmk}
When forming triangulated categories of logarithmic motives, for every log blow-ups $Y\rightarrow X$ in $lSm/k$, we invert the naturally induced morphism of motives
\[
M(Y)\rightarrow M(X).
\]
Since every fs log scheme $X$ admits a log blow-up equipped with a Zariski log structure, see \cite[Theorem 5.4]{Niz}, 
we may assume that the log structure of $X$ --- and thus of every log blow-up \cite[Proposition 4.5]{Niz} --- is Zariski.
\end{rmk}

\begin{rmk}
Log blow-ups performed in the category of fine log schemes or the category of fs log schemes differ by a ``normalization'' or ``saturation'' on the level of monoids.
See \cite[Section 4]{Niz} for details. 
\end{rmk}

\begin{rmk}
Let $X$ be a (normal) toric variety equipped with its natural log structure.
Every (saturated) log blow-up of $X$ turns out to be a ``toric blow-up'' given by subdivisions of the fan $\Sigma$ of $X$.
Indeed, 
by \cite[Proposition 4.3]{Niz}, 
the log blow-up along a coherent sheaf of ideals $\cI\subseteq \cM_{X}$ is the same as the normalization of the blow-up of $X$ along the coherent sheaf of ideals 
$\langle \alpha(\cI)\rangle\subseteq \mathcal{O}_X$ for $\alpha\colon \cM_{X}\rightarrow \mathcal{O}_X$. 
When $X=\text{Spec}(k[P])$, 
the global sections of this image form a homogeneous ideal of $k[P]$, 
and the blow-up along this ideal affords a description via a subdivision of the corresponding cone. 
Patching together the subdivisions on the affine pieces yields a subdivision of $\Sigma$ realizing the log blow-up.
\end{rmk}

\begin{exm}
Set $X=\A_{\N\oplus \N}$, and let $K$ be the zero ideal of $\N\oplus \N$. 
Now let $\cK$ be the coherent sheaf of ideals in $\cM_{X}$ associated with $K$, 
and let $P_1$ (resp.\ $P_2$) be the submonoid of $\Z\oplus \Z$ generated by $(-1,1)$ and $(1,0)$ (resp.\ $(1,-1)$ and $(0,1)$). 
From the construction of log blow-ups in \cite[Lemma II.1.7.3, Remark II.1.7.4]{Ogu}, 
we see that the log blow-up $Y$ of $X$ along $\cK$ is the gluing of $\A_{P_1}$ and $\A_{P_2}$ along $\A_{\Z\oplus \Z}$.  
On the underlying schemes, we note that $\underline{Y}$ is the usual blow-up of $\underline{X}=\A^2_{k}$ along the origin.
\end{exm}

\subsection{Log \'etale monomorphisms}
\label{subsec::mono}
A morphism of schemes is a closed (resp.\ an open) immersion if and only if it is a proper (resp.\ an \'etale) monomorphism by \cite[Corollaire IV.18.12.6, Th\'eor\`eme IV.17.9.1]{EGA}.
Thus a morphism of schemes is an isomorphism if and only if it is a surjective proper \'etale monomorphism.
\vspace{0.1in}

In contrast to this, 
a proper (resp.\ a log \'etale) monomorphism need not be a closed (resp.\ an open) immersion, 
and a surjective proper log \'etale monomorphism does not need to be an isomorphism.
However, on the level of log motives, we will turn surjective proper log \'etale monomorphisms into isomorphisms.
For this purpose, we shall study log \'etale monomorphisms.

\begin{exm}
\label{A.5.15}
By \cite[Proposition III.2.6.3(3)]{Ogu}, log blow-ups are universally surjective proper monomorphisms.
Zariski locally on $X$ and $Y$, 
$f$ has a chart $P\rightarrow Q$ such that $P^{\rm gp}\rightarrow Q^{\rm gp}$ is an isomorphism, which can be checked by appealing to \cite[Lemma II.1.7.3, Remark II.1.7.4]{Ogu}. 
Thus $f$ is log \'etale by \cite[Theorem IV.3.3.1]{Ogu}. In conclusion, $f$ is a universally surjective proper log \'etale monomorphism.
\end{exm}

\begin{lem}[{\cite{ParThesis}}]
\label{A.9.71}
Let $f:X\rightarrow S$ be a log \'etale morphism of fs log schemes, and let $i:S\rightarrow X$ be a section of $f$. Then $i$ is an open immersion.
\end{lem}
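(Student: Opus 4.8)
The plan is to establish, in order: $i$ is a monomorphism; $i$ is log \'etale; $i$ is strict; and $\underline i$ is an open immersion of schemes. Granting all four, $i$ is an open immersion of fs log schemes, since an open immersion of log schemes is precisely a strict morphism whose underlying morphism of schemes is an open immersion. The first point is immediate: $f\circ i=\mathrm{id}_S$ exhibits $i$ as a split monomorphism, hence a monomorphism, and likewise $\underline i$ is a split monomorphism of schemes; moreover $\underline i$ is locally of finite presentation because $\underline f\circ\underline i=\mathrm{id}_{\underline S}$ and $\underline f$ is locally of finite presentation ($f$ being log \'etale).

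First I would check that $i$ is log \'etale by verifying the infinitesimal lifting criterion of Definition \ref{df:logsmooth}. Let $j\colon T_0\to T_1$ be a strict square-zero closed immersion of fs log schemes and let $\phi\colon T_0\to S$, $\psi\colon T_1\to X$ satisfy $i\circ\phi=\psi\circ j$. Put $g:=f\circ\psi\colon T_1\to S$, so that $g\circ j=f\circ i\circ\phi=\phi$. To see $i\circ g=\psi$, observe that $i\circ g=i\circ f\circ\psi$ and $\psi$ are both lifts of the square $\bigl(j;\ \psi\circ j\colon T_0\to X;\ f\circ\psi\colon T_1\to S\bigr)$ for the log \'etale morphism $f$: indeed $(i\circ f\circ\psi)\circ j=i\circ f\circ i\circ\phi=i\circ\phi=\psi\circ j$ and $f\circ(i\circ f\circ\psi)=f\circ\psi$. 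Uniqueness of lifts for $f$ forces $i\circ f\circ\psi=\psi$, and uniqueness of $g$ itself follows from $i$ being a monomorphism. Hence $i$ is log \'etale.

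The main step is to show $i$ is strict. Fix $s\in S$ and put $x=i(s)$. On characteristic monoids, $f$ and $i$ induce homomorphisms $\bar\sigma\colon\overline{\cM}_{S,s}\to\overline{\cM}_{X,x}$ and $\bar r\colon\overline{\cM}_{X,x}\to\overline{\cM}_{S,s}$ with $\bar r\circ\bar\sigma=\mathrm{id}$. Because $f$ is log \'etale, Kato's structure theorem (Theorem \ref{KatoStrThm}) implies that $\bar\sigma^\gp$ has finite cokernel; as this cokernel is a direct summand of $\overline{\cM}_{X,x}^\gp$ via the retraction $\bar r^\gp$, and $\overline{\cM}_{X,x}^\gp$ is torsion free by \cite[Proposition I.1.3.5(2)]{Ogu}, the cokernel vanishes and $\bar\sigma^\gp$, hence $\bar r^\gp$, is an isomorphism. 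Then $\bar r$ is injective (its groupification is injective and the monoids are integral) and surjective (it has the section $\bar\sigma$), so it is an isomorphism of monoids. Comparing the short exact sequences $0\to\cO_{S,s}^*\to\cM_{S,s}\to\overline{\cM}_{S,s}\to 0$ and $0\to\cO_{S,s}^*\to(i_{log}^*\cM_X)_s\to\overline{\cM}_{X,x}\to 0$ — the stalk of $i_{log}^*\cM_X$ at $s$ has unit group $\cO_{S,s}^*$ and characteristic monoid $\overline{\cM}_{X,x}$ — and using that $\bar r$ is an isomorphism, the five lemma shows $i_{log}^*\cM_X\to\cM_S$ is an isomorphism on all stalks. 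Thus $i$ is strict.

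It remains to note that a strict log \'etale morphism of fs log schemes has \'etale underlying morphism: taking the relative chart in Theorem \ref{KatoStrThm} to be an identity, condition (2)(a) there is vacuous and (2)(b) says precisely that the underlying morphism is \'etale. So $\underline i$ is \'etale, and being also a monomorphism of schemes it is an open immersion by \cite[Th\'eor\`eme IV.17.9.1]{EGA}; combined with strictness, $i$ is an open immersion. I expect the crux to be the strictness step, where one must combine the log \'etaleness of $f$ (via Kato's theorem, to control the characteristic monoids) with the torsion freeness of the characteristic groups of fs log schemes. Strictness really is needed: a log \'etale morphism whose underlying morphism fails to be \'etale — such as a nontrivial log blow-up — shows that log \'etaleness of $i$ alone would not give the conclusion.
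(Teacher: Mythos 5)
Your proof is correct, but it takes a genuinely different route from the paper's. The paper first observes that $i$ is a pullback of the diagonal $a\colon X\rightarrow X\times_S X$ and reduces to showing that $a$ is an open immersion; since $\underline{a}$ is radicial and $a$ is log \'etale, everything comes down to strictness of $a$, which the paper proves after two strict-\'etale localization steps by an explicit computation in the amalgamated sum: for a chart $\theta\colon P\rightarrow Q$ with $\theta^{\gp}$ an isomorphism up to finite cokernel, one checks $a\oplus(-a)\in(Q\oplus_P Q)^*$ using saturation. You instead work directly with the section: you verify the infinitesimal lifting criterion for $i$ by hand (the candidate lift $g=f\circ\psi$, with $i\circ g=\psi$ forced by uniqueness of lifts for $f$), and you prove strictness stalkwise via the retraction $\bar r\circ\bar\sigma=\mathrm{id}$ on characteristic monoids: Kato's theorem makes $\operatorname{coker}\bar\sigma^{\gp}$ finite, the retraction makes it a direct summand of the torsion-free group $\overline{\cM}_{X,x}^{\gp}$, so it vanishes and $\bar r$ is an isomorphism of integral monoids, whence $i_{log}^*\cM_X\rightarrow\cM_S$ is an isomorphism. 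Your argument avoids fiber products and amalgamated sums of monoids entirely and makes transparent exactly where saturatedness enters (torsion-freeness of $\overline{\cM}^{\gp}$), at the cost of a few standard but unstated compatibilities (that $\bar\sigma^{\gp}$ inherits a finite cokernel from the chart, and that a map of integral log structures inducing an isomorphism on characteristic monoids is an isomorphism); the paper's argument is longer but stays entirely within chart-level monoid algebra. One small caveat: in your last step you justify ``strict $+$ log \'etale $\Rightarrow$ $\underline{i}$ \'etale'' by ``taking the chart in Theorem \ref{KatoStrThm} to be the identity,'' but that theorem only asserts the \emph{existence} of some chart with properties (a) and (b), not that the identity chart works; the fact you need is nonetheless standard (it follows, e.g., from the equivalence of the logarithmic and ordinary infinitesimal lifting criteria for strict morphisms), so this is an imprecision of citation rather than a gap.
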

\begin{proof}
According to the assumptions, we can form the commutative diagram of fs log schemes
\[
\begin{tikzcd}
S\arrow[r,"i"]\arrow[d,"i"]&X\arrow[d,"i'"]\arrow[r,"f"]&S\arrow[d,"i"]\\
X\arrow[r,"a"]&X\times_S X\arrow[r,"p_2"]&X,
\end{tikzcd}\]
where
\begin{enumerate}[(i)]
\item $a$ denotes the diagonal morphism.
\item $p_2$ denotes the projection on the second factor.
\item Each square is cartesian.
\end{enumerate}
\vspace{0.1in}

It suffices to show that $a:X\rightarrow X\times_S X$ is an open immersion. 
Since the diagonal morphism
\[
\underline{X}
\rightarrow
\underline{X}\times_{\underline{S}}\underline{X}
\]
is radicial, we reduce to showing that $a$ is strict \'etale by \cite[Th\'eor\`eme IV.17.9.1]{EGA}.  
As in \cite[Corollaire IV.17.3.5]{EGA}, the morphism $a$ is log \'etale.  
Thus it remains to show $a$ is strict. 
We divide the proof into three steps.
\vspace{0.1in}

(I) {\it Locality on $S$.} 
Let $g:S'\rightarrow S$ be a strict \'etale cover of fs log schemes, and set $X'=X\times_S S'$. 
Then the commutative cartesian diagram of fs log schemes
\[
\begin{tikzcd}
X'\arrow[r]\arrow[d]&X'\times_{S'}X'\arrow[d]\\
X\arrow[r]&X\times_S X
\end{tikzcd}\]
shows the question is strict \'etale local on $S$.
\vspace{0.1in}
 
(II) {\it Locality on $X$.} Let $h:X'\rightarrow X$ be a strict \'etale cover of fs log schemes. 
Then we have the commutative diagram of fs log schemes
\[
\begin{tikzcd}
&X'\times_S X'\arrow[d,"h''"]\\
X'\arrow[d,"h"']\arrow[r,"a'"]\arrow[ru,"a''"]&X\times_S X'\arrow[d,"h'"]\\
X\arrow[r,"a"]&X\times_S X,
\end{tikzcd}\]
where
\begin{enumerate}[(i)]
\item The lower square is cartesian.
\item $a''$ denotes the diagonal morphism.
\item $h':={\rm id}\times h$ and $h'':=h\times {\rm id}$. 
\end{enumerate}
\vspace{0.1in}
 
Assume that $a''$ is a strict morphism. 
Then $a'$ is also strict since $h''$ is strict, while $a$ is also strict since $h$ is a strict \'etale cover. 
Conversely, if $a$ is a strict morphism, then $a'$ is strict and therefore also $a''$ is strict. 
Thus the question is strict \'etale local on $X$.
\vspace{0.1in}

(III) {\it Final step of the proof.}  By \cite[Theorem IV.3.3.1]{Ogu}, Zariski locally on $X$ and $S$ we have an fs chart $\theta:P\rightarrow Q$ of $f$ such that the following holds.
\begin{enumerate}[(i)]
\item $\theta$ is injective, and the cokernel of $\theta^{\rm gp}$ is finite,
\item The induced morphism $X\rightarrow S\times_{\mathbb{A}_P}\mathbb{A}_Q$ is strict \'etale.
\end{enumerate}
Hence by parts (I) and (II), 
we may assume that $(X,S)=(\mathbb{A}_Q,\mathbb{A}_P)$. 
Then it suffices to show that the diagonal homomorphism
\[
\mathbb{A}_Q
\rightarrow 
\mathbb{A}_{Q}\oplus_{\mathbb{A}_P}\mathbb{A}_Q
\]
is strict. 
In effect, it suffices to show $a\oplus (-a)\in (Q\oplus_P Q)^*$ for every $a\in Q$. 
Choose $n\in \mathbb{N}^+$ such that $na\in P^{\rm gp}$. 
Because the summation homomorphism
\[
P^{\rm gp}\oplus_{P^{\rm gp}}P^{\rm gp}
\rightarrow 
P^{\rm gp}
\]
is an isomorphism, 
the two elements $(na)\oplus 0$ and $0\oplus (na)$ of $Q\oplus_P Q$ are equal. 
Thus we have $n(a\oplus (-a))=0$. 
Since $Q\oplus_P Q$ is an fs monoid, it follows that $a\oplus (-a)\in Q\oplus_P Q$. 
This means that $a\oplus (-a)\in (Q\oplus_P Q)^*$ since $n(a\oplus (-a))=0$.
\end{proof}

\begin{lem}[{\cite{ParThesis}}]
\label{A.9.19}
Let $f:X\rightarrow S$ be a log \'etale monomorphism of fs log schemes, and let $P$ be an fs chart of $S$. 
Then Zariski locally on $X$, there exists a chart $\theta:P\rightarrow Q$ of $f$ with the following properties.
\begin{enumerate}
\item[{\rm (i)}] The induced morphism $X\rightarrow S\times_{\mathbb{A}_P} \mathbb{A}_Q$ is an open immersion,
\item[{\rm (ii)}] $\theta^{\rm gp}:P^{\rm gp}\rightarrow Q^{\rm gp}$ is an isomorphism.
\end{enumerate}
\end{lem}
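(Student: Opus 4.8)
The plan is to prove Lemma \ref{A.9.19} by reducing to a toric computation on charts. The statement asserts that a log étale monomorphism $f\colon X\to S$ of fs log schemes, given a chart $P$ of $S$, admits Zariski-locally on $X$ a chart $\theta\colon P\to Q$ whose group completion is an isomorphism and such that $X\to S\times_{\A_P}\A_Q$ is an open immersion. The key input will be Theorem \ref{KatoStrThm}: since $f$ is log étale, Zariski-locally on $X$ (we are already in the Zariski setting) there is a chart $(P'\to \cM_X, P\to \cM_S, P\to P')$ extending the given chart such that the kernel and cokernel of $(P)^{\gp}\to (P')^{\gp}$ are finite of order invertible on $X$, and $X\to S\times_{\Spec \Z[P]}\Spec \Z[P']$ is étale in the usual sense. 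The issue is that a priori $\theta^{\gp}$ is only a finite-kernel, finite-cokernel map, not an isomorphism, and the induced morphism to the base change is only étale, not an open immersion. So both conclusions (i) and (ii) need to be upgraded using the monomorphism hypothesis.

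First I would dispose of (ii). Since $f$ is a monomorphism, the diagonal $X\to X\times_S X$ is an isomorphism. Working with the chart $\theta\colon P\to Q:=P'$, the fiber product $X\times_S X$ receives a chart $Q\oplus_P Q$, and one analyzes $(Q\oplus_P Q)^{\gp}\cong Q^{\gp}\oplus_{P^{\gp}}Q^{\gp}$ via \cite[Theorem I.1.3.4]{Ogu}. The monomorphism condition forces the relative characteristic monoid to vanish, i.e. the summation map $Q^{\gp}\oplus_{P^{\gp}}Q^{\gp}\to Q^{\gp}$ to be an isomorphism after passing to group completions of sharpenings — this is exactly the computation already carried out in the proof of Lemma \ref{A.9.71}, step (III). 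From this one deduces that $\theta^{\gp}\colon P^{\gp}\to Q^{\gp}$ is injective, and the finite-cokernel statement from Theorem \ref{KatoStrThm} combined with torsion-freeness of $\overline{\cM}^{\gp}$ (via \cite[Proposition I.1.3.5]{Ogu}) pins down that the cokernel is in fact trivial — so $\theta^{\gp}$ is an isomorphism. Concretely, I expect one first replaces $Q$ by the saturation of the submonoid of $Q^{\gp}$ generated by $P$ and then checks the replacement is still a chart, using that $\Spec P^{\gp}\times_{\Spec \Z[P]}\Spec \Z[P']$-style arguments together with the monomorphism hypothesis kill the extra generators.

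Next I would upgrade étale to an open immersion. Once $\theta^{\gp}$ is an isomorphism, the morphism $\A_Q\to \A_P$ is a log modification / subdivision-type morphism (by Theorem \ref{KatoStrThm}(2)(a) with trivial kernel and cokernel it is log étale), and in particular it is a proper log étale monomorphism by Example \ref{A.5.15} when $Q$ arises from a subdivision; more directly, $S\times_{\A_P}\A_Q\to S$ is a log modification, hence a monomorphism. Since $f\colon X\to S$ is a monomorphism and factors as $X\to S\times_{\A_P}\A_Q\to S$ with the second map a monomorphism, the first map $g\colon X\to S\times_{\A_P}\A_Q$ is a monomorphism as well. But $g$ is also étale in the usual sense by construction (Theorem \ref{KatoStrThm}(2)(b)), and an étale monomorphism of schemes is an open immersion by \cite[Théorème IV.17.9.1]{EGA}. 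Here one must be slightly careful that $g$ being a monomorphism of log schemes with $\underline{g}$ étale implies $\underline{g}$ is a monomorphism of schemes — this follows because $g$ is strict (the chart $Q$ is a chart for $X$, so $X\to S\times_{\A_P}\A_Q$ is strict étale exactly as in Theorem \ref{KatoStrThm}(2)(b)), and a strict monomorphism has monomorphic underlying scheme map by Lemma \ref{Fiber.product.strict.morphism}-type reasoning applied to the diagonal.

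The main obstacle I anticipate is the passage (ii): ensuring that after the finite surgery on the chart forced by Theorem \ref{KatoStrThm}, one can genuinely arrange $\theta^{\gp}$ to be an isomorphism and not merely finite-kernel/finite-cokernel, and that the new monoid $Q$ (obtained as a saturated submonoid of $Q^{\gp}=P^{\gp}$ containing the image of $P$) is still a valid chart for $\cM_X$ Zariski-locally. This requires combining the monomorphism hypothesis (through the diagonal computation of Lemma \ref{A.9.71}) with the torsion-freeness of characteristic monoids of fs log schemes, and checking compatibility of charts under the replacement — a bookkeeping argument with amalgamated sums of fs monoids that is routine but delicate. Everything else (the reduction to charts, the étale-monomorphism-implies-open-immersion step) is formal given the results already in the excerpt.
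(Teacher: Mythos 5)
Your skeleton is the right one — invoke Kato's structure theorem to get a chart with $X\to S\times_{\A_P}\A_{Q'}$ strict \'etale and $\theta'^{\gp}$ of finite kernel and cokernel, split off the ``subdivision part'' $Q:=P^{\gp}\cap Q'$ (so that $Q^{\gp}=P^{\gp}$ and $Q\to Q'$ is Kummer), use the monomorphism hypothesis to kill the Kummer part, and finish with the fact that a (strict) \'etale monomorphism of schemes is an open immersion. The paper's proof follows exactly this outline. The problem is the step where you kill the Kummer part, which is the only place the monomorphism hypothesis genuinely enters, and your two proposed mechanisms for it both fail.

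First, the computation in step (III) of the proof of Lemma \ref{A.9.71} does not use the monomorphism hypothesis at all: it applies to an arbitrary log \'etale morphism and shows only that $a\oplus(-a)$ is a unit in $Q\oplus_P Q$, i.e.\ that the diagonal is \emph{strict} (hence an open immersion). It does not show the summation map $Q^{\gp}\oplus_{P^{\gp}}Q^{\gp}\to Q^{\gp}$ is an isomorphism, nor that $\coker(\theta^{\gp})$ vanishes. Second, torsion-freeness of $\overline{\cM}^{\gp}$ cannot force the finite cokernel of $\theta^{\gp}$ to be trivial: the Kummer chart $\N\to\N$, $1\mapsto n$, has both group completions equal to the torsion-free group $\Z$, yet $\coker(\theta^{\gp})\cong\Z/n$; the corresponding morphism $\A_\N\to\A_\N$ is log \'etale (for $n$ invertible) with strict open diagonal but is not a monomorphism. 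So after your reductions you are left with a Kummer log \'etale monomorphism and no argument that it is trivial. What the paper does at this point is restrict to the log point: with $X_1$, $S_1$ the reduced strict fibers at $x$ and $f(x)$, the induced morphism of log points $\Spec{(Q'\oplus k''^*\to k'')}\to\Spec{(P\oplus k'^*\to k')}$ is still a monomorphism, but if the Kummer chart $Q\to Q'$ is nontrivial then $X_1\times_{S_1}X_1$ has more than one point, so the diagonal cannot be an isomorphism — a contradiction. This pointwise fiber-product count is the missing ingredient; once $Q=Q'$, your final step (strict \'etale monomorphism $\Rightarrow$ open immersion via \cite[Th\'eor\`eme IV.17.9.1]{EGA}) goes through as you wrote it.
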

\begin{proof}
The question is Zariski local on $X$. 
Let $x$ be a point on $X$. 
By appealing to \cite[Theorem IV.3.3.1]{Ogu}, we may assume there exists an fs chart $\theta':P\rightarrow Q'$ with the following properties.
\begin{enumerate}
\item[(i)'] The induced morphism $X\rightarrow S\times_{\A_P}\A_{Q'}$ is strict \'etale.
\item[(ii)'] $\theta'$ is injective, and the cokernel of $\theta'^{\rm gp}$ is finite of order invertibile on $\cO_S$.
\item[(iii)'] The chart $Q'$ of $X$ is exact at $x$.
\end{enumerate}

Note that $Q:=P^{\rm gp}\cap Q'$ is an fs monoid by Gordon's lemma \cite[Theorem I.2.3.19]{Ogu}.
Then the induced homomorphism $P^{\rm gp}\rightarrow Q^{\rm gp}$ is an isomorphism, i.e., we have (ii).
Thus the projection 
\[
S\times_{\A_P}\A_{Q}\rightarrow S
\]
is a log \'etale monomorphism.
Consider $f$ as the composition $X\rightarrow S\times_{\A_P}\A_Q\rightarrow S$.
Then the first arrow is a monomorphism since $f$ is a monomorphism. 
According to \cite[Remark IV.3.1.2]{Ogu},
the first arrow is also log \'etale.
Since the inclusion $Q\rightarrow Q'$ is Kummer, see Definition \ref{KummerDef}, 
replacing $X\rightarrow S$ by $X\rightarrow S\times_{\mathbb{A}_P}\mathbb{A}_Q$, 
we may also assume that $\theta'$ is Kummer.
\vspace{0.1in}

Let $X_1$ (resp.\ $S_1$) be the strict closed subscheme of $X$ (resp.\ $S$) whose underlying scheme is $\{x\}$ (resp.\ $\{f(x)\}$) with the reduced scheme structure. 
Since $f$ is a monomorphism, the induced pullback morphism $X_1\rightarrow S_1$ is also a monomorphism. 
Since this is a morphism of log points, it can be written as
\[
\Spec{Q'\oplus k''^*\rightarrow k''}\rightarrow \Spec{P\oplus k'^*\rightarrow k'},
\] 
where $\Spec {k''} =\underline{X_1}$ and $\Spec {k'}=\underline{S_1}$.
\vspace{0.1in}

Suppose for contradiction that $\theta'$ is not an isomorphism.
Then the fiber product $X_1\times_{S_1}X_1$ has more than one point, 
and hence the diagonal $X_1\rightarrow X_1\times_{S_1}X_1$ is not an isomorphism.
This contradicts the fact that $X_1\rightarrow S_1$ is a monomorphism.
Thus $\theta'$ is an isomorphism. 
\vspace{0.1in}

Then by (i)',
$f$ is strict \'etale.
Since $f$ is a monomorphism,
$f$ is an open immersion by \cite[Th\'eor\`eme IV.17.9.1]{EGA}.
Thus we have (i).
\end{proof}

\begin{lem}\label{A.9.67}
Suppose $P$ is an fs monoid such that $P^\gp$ is torsion free and $u:\Sigma\rightarrow \Spec P$ is subdivision of fans.
Then the morphism $\A_u:\A_\Sigma\rightarrow \A_P$ is universally surjective.
\end{lem}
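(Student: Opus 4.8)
The plan is to prove universal surjectivity in two stages: first reduce it, via charts, to surjectivity of the underlying morphism of schemes $\underline{\A_u}\colon\underline{\A_\Sigma}\to\underline{\A_P}$, and then deduce the latter from the fact that morphisms associated to log blow-ups are already known to be universally surjective (cf.\ Example \ref{A.5.15}). Since $P^\gp$ is torsion free, $\Spec P$ is, via Construction \ref{fan=monoscheme}, the fan whose only maximal cone is $P^\vee$, and $u$ being a subdivision means $\lvert\Sigma\rvert=\lvert\Spec P\rvert$; also $\A_u$ is log \'etale (Example \ref{A.5.15}).

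For the reduction, let $T$ be an fs log scheme with a morphism $g\colon T\to\A_P$; we must show $\A_\Sigma\times_{\A_P}T\to T$ is surjective. This is Zariski local on $T$, so we may assume $T$ carries an fs chart and, after shrinking, that $g$ factors as $T\to\A_Q\to\A_P$ with $T\to\A_Q$ strict and $\A_Q\to\A_P$ induced by a homomorphism $P\to Q$ of fs monoids (chosen, using a neat chart, so that $Q^\gp$ is torsion free). Then
\[
\A_\Sigma\times_{\A_P}T\cong(\A_\Sigma\times_{\A_P}\A_Q)\times_{\A_Q}T,
\]
and since $T\to\A_Q$ is strict, Lemma \ref{Fiber.product.strict.morphism} identifies the underlying scheme of this fibre product with $\underline{\A_\Sigma\times_{\A_P}\A_Q}\times_{\underline{\A_Q}}\underline{T}$. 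Moreover $\A_\Sigma\times_{\A_P}\A_Q\cong\A_{\Sigma''}$, where $\Sigma'':=\Sigma\times_{\Spec P}\Spec Q$ is, by Lemma \ref{Fiberproduct_fans}, again a subdivision of the fan $\Spec Q$. Hence the statement is reduced to showing that, for every subdivision $u$ as in the lemma, $\underline{\A_u}$ is surjective: indeed, a surjective morphism of schemes is universally surjective (the tensor product over a field of two field extensions is nonzero), so the displayed identification then gives surjectivity of $\A_\Sigma\times_{\A_P}T\to T$.

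It remains to prove $\underline{\A_u}$ surjective. By toric Chow's lemma (any subdivision of a fan is refined by a projective one), there is a further subdivision $\Sigma'\to\Sigma$ such that $\Sigma'\to\Spec P$ is, Zariski locally on $\Spec P$, a composition of star subdivisions in the sense of Definition \ref{Starsubdivision}; this can be set up using the refinement results behind Lemma \ref{A.3.31} together with \cite[Theorem 11.1.9]{CLStoric}. By \cite[Lemma II.1.7.3]{Ogu} each such step corresponds to a log blow-up in the category of fs log schemes, and log blow-ups are universally surjective by \cite[Proposition III.2.6.3(3)]{Ogu} (see Example \ref{A.5.15}); since a composition of surjective morphisms is surjective, $\underline{\A_{\Sigma'}}\to\underline{\A_P}$ is surjective, and as it factors through $\underline{\A_\Sigma}$, so is $\underline{\A_u}$. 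Combined with the previous paragraph, this proves the lemma.

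The main obstacle is the first reduction: fibre products in the category of fs log schemes involve the saturation operation on monoids, so one cannot directly compare $\A_\Sigma\times_{\A_P}T$ with a fibre product of the underlying schemes. The point is that after passing to a chart and using strictness of $T\to\A_Q$ one \emph{can} make this comparison (via Lemma \ref{Fiber.product.strict.morphism} and the compatibility of $\A_{(-)}$ with fibre products of fans, Lemma \ref{Fiberproduct_fans}); once this bookkeeping is in place, the geometric content is just the classical universal surjectivity of toric blow-ups.
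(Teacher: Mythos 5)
Your reduction is exactly the paper's: localize on the test object $T$, factor $g$ through a chart $\A_Q$ with $T\to\A_Q$ strict, use Lemma \ref{Fiberproduct_fans} to see that $\Sigma\times_{\Spec P}\Spec Q\to\Spec Q$ is again a subdivision, and then invoke Lemma \ref{Fiber.product.strict.morphism} to identify the fibre product on underlying schemes, so that everything comes down to surjectivity of $\underline{\A_u}$ plus the fact that surjections of schemes are universally surjective. The only divergence is at that last step: the paper simply asserts that $\underline{\A_u}$ is surjective (a standard toric fact: the map is proper because $\lvert\Sigma\rvert=\lvert P^\vee\rvert$ and is an isomorphism over the dense torus, so its image is closed and dense), whereas you derive it by dominating $u$ by log blow-ups. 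Your route works in spirit, but as written it has a wrinkle: star subdivisions in the sense of Definition \ref{Starsubdivision} and Lemma \ref{A.3.31} are only defined when the ambient cones are smooth, and $\Spec P$ need not be smooth, so you cannot literally present a refinement of $\Sigma$ over $\Spec P$ as a composition of star subdivisions. The correct version of your step is the toric Chow lemma: every subdivision of $\Spec P$ is refined by the subdivision associated to blowing up a coherent ideal of $P$ (cf.\ \cite[\S II.1.7]{Ogu}), and such log blow-ups are universally surjective by \cite[Proposition III.2.6.3(3)]{Ogu}; with that substitution your argument closes, and in any case the properness-plus-dense-image argument gives the surjectivity of $\underline{\A_u}$ directly.
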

\begin{proof}
Let $p:S\rightarrow \A_P$ be a morphism of fs log schemes, and set $X:=S\times_{\A_P}\A_\Sigma$. 
We need to show that the projection morphism $f:X\rightarrow S$ is surjective. 
This question is Zariski local on $S$, so we may assume that $p$ has a factorization 
\[
S\rightarrow \A_Q\stackrel{\theta}\rightarrow \A_P,
\] 
where the first morphism is strict and $\theta:P\rightarrow Q$ is a homomorphism of fs monoids. 
The projection 
$$
\Sigma\times_{\Spec P}\Spec Q\rightarrow \Spec Q
$$ 
is also a subdivision of fans due to Lemma \ref{Fiberproduct_fans}.
Hence, by replacing $P$ with $Q$, we may assume that $p$ is strict.
In this case $\underline{f}$ is a pullback of $\underline{\A_u}$ by Lemma \ref{Fiber.product.strict.morphism}.
Since $\underline{\A_u}$ is surjective, and any surjective morphism in the category of schemes is universally surjective, $\underline{f}$ is also surjective.
\end{proof}

\begin{prop}[{cf.\ \cite{ParThesis}}]
\label{A.9.21}
Let $(f_i:X_i\rightarrow S)_{i\in I}$ be a finite family of log \'etale monomorphisms where $X_i$ is quasi-compact for every $i\in I$.
Suppose that there is a strict morphism $S\rightarrow \A_\Sigma$ where $\Sigma$ is a fan.
Then there exists a subdivision of fans 
$$
u:\Sigma'\rightarrow \Sigma
$$ 
such that for each $i\in I$, 
the pullback
\[
f_i'
\colon 
X_i\times_{\mathbb{A}_\Sigma}\mathbb{A}_{\Sigma'}\rightarrow S\times_{\mathbb{A}_\Sigma}\mathbb{A}_{\Sigma'}
\]
is an open immersion.  
If $f_i$ is surjective and proper, then $f_i'$ is an isomorphism.
\end{prop}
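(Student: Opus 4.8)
The plan is to follow the standard strategy for this kind of statement: reduce everything to a local, chart-level question where the geometry becomes combinatorial, and then feed in one of the toric lemmas already proved. The final statement is Proposition \ref{A.9.21}, which produces a single subdivision of fans that trivializes a finite family of log \'etale monomorphisms. The key point is that each individual $f_i$ is a log \'etale monomorphism, so Lemma \ref{A.9.19} applies: Zariski locally on $X_i$ there is a chart $\theta_i\colon P\to Q_i$ of $f_i$ with $\theta_i^{\rm gp}$ an isomorphism, making $X_i\to S\times_{\A_P}\A_{Q_i}$ an open immersion. Here $P$ is an fs chart of $S$ coming from the cone of $\Sigma$ at the relevant point (after shrinking $S$ we may assume $\Sigma=\Spec P$ or at least work cone by cone).

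The first step is to set up the local situation: since each $X_i$ is quasi-compact, we can cover it by finitely many opens on which Lemma \ref{A.9.19} applies, and since $I$ is finite we are dealing with finitely many charts $\theta_i\colon P\to Q_i$ altogether, each with $\theta_i^{\rm gp}$ an isomorphism, i.e.\ each $Q_i$ is an fs submonoid of $P^{\rm gp}$. By Construction \ref{fan=monoscheme}, each $\Spec Q_i$ corresponds to a (single-cone) fan in $N^\vee=P^{\rm gp}$, and the monomorphism $\A_{Q_i}\to \A_P$ comes from a partial subdivision of the cone $P^\vee$. The second step is to take a common refinement: form the fan $\Sigma'$ in $P^{\rm gp}$ whose cones are the intersections of the cones appearing in all the $\Spec Q_i$ (using that finite fiber products of fans over $\Spec P$ are again fans in the same lattice, Lemma \ref{Fiberproduct_fans}, together with Lemma \ref{monomorphismoffans} to control the maps). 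One arranges $\Sigma'$ to be a genuine subdivision of $\Sigma$ (not just a partial one) by further subdividing so that its support fills out $|\Sigma|$ — this can be done since the $\Spec Q_i$'s are partial subdivisions, and one can always refine to a full subdivision. Then $\Sigma'\to\Sigma$ is the desired subdivision of fans.

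The third step is to check that the pullback $f_i'\colon X_i\times_{\A_\Sigma}\A_{\Sigma'}\to S\times_{\A_\Sigma}\A_{\Sigma'}$ is an open immersion. Since $\A_{\Sigma'}\to\A_P$ factors through $\A_{Q_i}\to\A_P$ (by construction $\Sigma'$ refines $\Spec Q_i$), and $X_i\to S\times_{\A_P}\A_{Q_i}$ is an open immersion, base-changing along $\A_{\Sigma'}\to\A_{Q_i}$ and using that open immersions are stable under base change (and glueing the local statements over the finite cover of $X_i$, using that being an open immersion is local on the target — here one invokes Lemma \ref{A.9.71} or the strictness arguments of Lemma \ref{A.9.19} to ensure the pieces glue) gives that $f_i'$ is an open immersion. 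The fourth step handles the surjective-and-proper case: if $f_i$ is surjective and proper, then $f_i'$ is also surjective and proper (both properties are stable under base change), and being simultaneously an open immersion, surjective, and proper forces $f_i'$ to be an isomorphism — this is the scheme-theoretic fact that a surjective proper \'etale monomorphism is an isomorphism (\cite[Th\'eor\`eme IV.17.9.1]{EGA}), applied after noting an open immersion is a monomorphism, or alternatively by directly observing that an open immersion that is also proper and surjective onto a connected target is an isomorphism; one also uses Lemma \ref{A.9.67} for the universal surjectivity of the toric base.

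The main obstacle I anticipate is the bookkeeping in the globalization step: Lemma \ref{A.9.19} is only Zariski-local on each $X_i$, so the charts $Q_i$ a priori vary over a finite cover of $X_i$, and one must check that a single subdivision $\Sigma'$ of $\Sigma$ (depending only on $S$) works simultaneously for all these local charts and for all $i$. The resolution is that all the local charts are fs submonoids of the fixed lattice $P^{\rm gp}$ — so they are all partial subdivisions of the \emph{same} cone — and hence a single common refinement $\Sigma'$, obtained as the fiber product of all of them over $\Spec P$ and then completed to a full subdivision, dominates every one of them. The strictness lemmas (Lemma \ref{A.9.19}(i), Lemma \ref{Exactness and strictness}) are what let the local ``open immersion'' statements be glued into a global one. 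Once this is in place the rest is a routine base-change and stability argument.
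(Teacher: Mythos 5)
Your overall strategy --- localize via Lemma \ref{A.9.19}, complete the resulting partial subdivisions to a single subdivision of $\Sigma$, base change, and finally take a fiber product of subdivisions to handle the whole family --- is exactly the route the paper takes. But two of your justifications would fail as written. First, in Step 3 your local open immersions live over a finite cover of the \emph{source} $X_i$, so ``being an open immersion is local on the target'' does not apply; a union of opens of the source each mapping isomorphically onto an open of the target need not assemble into an open immersion. The paper's fix is to observe that the local statements do imply $f_i'$ is \emph{strict \'etale} (strictness and \'etaleness are local on the source), and that $f_i'$ is a monomorphism because monomorphisms are stable under base change; then \cite[Th\'eor\`eme IV.17.9.1]{EGA} upgrades a strict \'etale monomorphism to an open immersion. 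You cite that EGA result, but only in Step 4 and for a different purpose.

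Second, in Step 4 the claim that surjectivity of $f_i$ is ``stable under base change'' is the real content, not a formality: the fiber product $X_i\times_{\A_\Sigma}\A_{\Sigma'}$ is taken in fs log schemes, and its underlying scheme is in general only a closed subscheme of the scheme-theoretic fiber product, so surjectivity of $\underline{f_i}$ does not transfer for free. The paper instead uses Lemma \ref{A.9.67} to see that the projections $S\times_{\A_\Sigma}\A_{\Sigma'}\rightarrow S$ and $X_i\times_{\A_\Sigma}\A_{\Sigma'}\rightarrow X_i$ are universally surjective, so the preimage in $X_i\times_{\A_\Sigma}\A_{\Sigma'}$ of each point $s\in S$ is nonempty, and then applies Zariski's main theorem to the proper open immersion of that preimage into the connected fiber of $S\times_{\A_\Sigma}\A_{\Sigma'}$ over $s$ to conclude that $f_i'$ is surjective. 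A smaller slip: in Step 2, ``further subdividing so that its support fills out $\lvert\Sigma\rvert$'' is not possible, since subdivision preserves support; what is needed is to \emph{complete} each $\Spec{Q_i}$ to a fan with support all of $N$ (Lemma \ref{Compactification_fan}) before intersecting with $\Sigma$.
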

\begin{proof}
Let us start with a log \'etale monomorphism $f:X\rightarrow S$, where $X$ is quasi-compact.
Suppose that $\Sigma$ is in a lattice $N$.
Choose fs submonoids $\{P_j\}_{j\in J}$ of $\Z^r$ such that the dual monoids $P_j^{\vee}$ correspond to the maximal cones of $\Sigma$.
Then 
$$
S_j:=X\times_{\A_\Sigma}\A_{P_j}
$$
affords an fs chart $P_j$, and $\{S_j\rightarrow S\}_{j\in J}$ is a finite Zariski cover.
By Lemma \ref{A.9.19}, there exists a finite Zariski cover $\{U_{jk}\rightarrow S_j\}_{k\in K_j}$ such that for each $k$, 
there is a homomorphism $\theta_k:P_j\rightarrow Q_{jk}$ of fs monoids with the properties that $\theta_k^{\rm gp}$ is an isomorphism and the induced morphism 
\[
U_{jk}\rightarrow S_j\times_{\mathbb{A}_{P_j}}\mathbb{A}_{Q_{jk}}
\]
is an open immersion.
\vspace{0.1in}

Lemma \ref{Compactification_fan} yields a fan $\Theta_{jk}$ in $N$ such that $Q_{jk}^\vee\in \Theta_{jk}$ and $\lvert \Theta_{jk}\rvert=N$.
Let $\Sigma_0$ be the fan in $N$ whose only cone is $N$, 
and let $\Sigma'$ be the fiber product of $\Sigma$ and all $\Theta_{jk}$ over $\Sigma_0$.
Owing to Lemma \ref{Fiberproduct_fans} we get that $\Sigma'$ is a subdivision of $\Sigma$.
Moreover, for each cone $\sigma$ of $\Sigma'$ there is $j\in J$ and $k\in K_j$ such that $\sigma\subset Q_{jk}^{\vee}$.
This implies that the induced morphism 
\[
u_j':U_{jk}\times_{\mathbb{A}_\Sigma}\mathbb{A}_{\Sigma'}\rightarrow S_j\times_{\mathbb{A}_\Sigma}\mathbb{A}_{\Sigma'}
\] 
is an open immersion. 
In particular, $u_j'$ is strict \'etale. 
Thus the pullback 
\[ 
f':X\times_{\A_\Sigma}\A_{\Sigma'}\rightarrow S\times_{\A_\Sigma}\A_{\Sigma'}
\]
is strict \'etale. 
Since $f'$ is a monomorphism, it follows that $f'$ is an open immersion by \cite[Th\'eor\`eme IV.17.9.1]{EGA}.
\vspace{0.1in}

Assuming that $f$ is surjective and proper, we consider the cartesian square
\begin{equation}
\begin{tikzcd}\label{A.9.21.1}
X\times_{\A_\Sigma}\A_{\Sigma'}\arrow[r,"f'"]\arrow[d,"g'"']&S\times_{\A_\Sigma}\A_{\Sigma'}\arrow[d,"g"]\\
X\arrow[r,"f"]&S.
\end{tikzcd}\end{equation}
As a result of Lemma \ref{A.9.67}, we get that $\A_u$ is universally surjective, and hence $g$, and $g'$ are universally surjective. 
Thus $(fg')^{-1}(s)$ is nonempty for every point $s\in S$. 
The induced open immersion $(fg')^{-1}(s)\rightarrow g^{-1}(s)$ is proper, 
and it is an isomorphism according to Zariski's main theorem since $g^{-1}(s)$ is connected. 
Thus $f'$ is surjective, and hence an isomorphism.
\vspace{0.1in}

In the general case, 
choose a subdivision of fans $\Sigma_i'\rightarrow \Sigma$ for each $i\in I$ such that the pullback 
\[
X_i\times_{\A_\Sigma}\A_{\Sigma_i'}\rightarrow S\times_{\A_\Sigma}\A_{\Sigma_i'}
\] 
is an open immersion. 
Letting $\Sigma'$ be the fiber product $\Sigma_{i_1}'\times_{\Sigma}\cdots \times_{\Sigma}\Sigma_{i_r}'$
where $I=\{i_1,\ldots,i_r\}$,
we have that the pullback 
$$
X_i\times_{\A_\Sigma}\A_{\Sigma'}\rightarrow S\times_{\A_\Sigma}\A_{\Sigma'}
$$ 
is an open immersion for any $i$.
To conclude we note that the induced morphism $\Sigma'\rightarrow \Sigma$ is a subdivision of fans owing to Lemma \ref{Fiberproduct_fans}.
\end{proof}

\begin{prop}
\label{A.9.22}
Suppose $f:X\rightarrow S$ is a surjective proper log \'etale monomorphism of fs log schemes. 
Then any pullback along $f$ is a surjective proper log \'etale monomorphism. 
In particular, $f$ is universally surjective.
\end{prop}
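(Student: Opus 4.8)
The plan is to reduce the statement to Proposition \ref{A.9.21}, which (after passing to a chart) trivializes $f$ up to a universally surjective base change. First I would record that each of the three properties is stable under base change in the category of fs log schemes: monomorphisms trivially; log \'etale morphisms by the infinitesimal lifting criterion, equivalently by Kato's chart criterion (Theorem \ref{KatoStrThm}, cf.\ \cite{MR1463703}); and properness because, by the construction of fs fibre products \cite[Corollary III.2.1.6]{Ogu}, the underlying scheme of $X\times_S T$ is finite over the integral fibre product, whose underlying scheme is in turn a closed subscheme of the scheme-theoretic fibre product $\underline{X}\times_{\underline{S}}\underline{T}$ (integralization is a closed immersion and saturation is finite, both at the level of monoid algebras); since $\underline{X}\times_{\underline{S}}\underline{T}\to\underline{T}$ is proper as a base change of the proper morphism $\underline{f}$, the composite $\underline{X\times_S T}\to\underline{T}$ is proper. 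Thus for any morphism $g\colon T\to S$ the morphism $X\times_S T\to T$ is automatically a proper log \'etale monomorphism, and the only thing left to prove is that it is surjective; note that this simultaneously yields the final clause, surjectivity of all pullbacks being exactly universal surjectivity of $f$.

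For surjectivity, let $t\in T$; its image lies in some affine open of $S$, so after replacing $S$ by that open and $T$ by its preimage we may assume $S$ is quasi-compact, and after shrinking $S$ further we may assume, by Remark \ref{A.9.10}(2), that $S$ carries a neat fs chart $P\to\cM_S$. Since a neat chart is sharp, $P^\gp$ is torsion free by \cite[Proposition I.1.3.5(2)]{Ogu}, so $\Sigma:=\Spec{P}$ is a fan and the chart gives a strict morphism $S\to\A_\Sigma$. Moreover $X$ is quasi-compact, $\underline{f}$ being proper over the quasi-compact $S$, so Proposition \ref{A.9.21} applies to the single surjective proper log \'etale monomorphism $f$: there is a subdivision of fans $u\colon\Sigma'\to\Sigma$ such that the projection $f'\colon X\times_{\A_\Sigma}\A_{\Sigma'}\to S\times_{\A_\Sigma}\A_{\Sigma'}$ is an isomorphism.

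Finally I would run the diagram chase. Put $\widetilde{T}:=T\times_{\A_\Sigma}\A_{\Sigma'}=T\times_S(S\times_{\A_\Sigma}\A_{\Sigma'})$. By Lemma \ref{A.9.67} the morphism $\A_u$ is universally surjective, hence so are $S\times_{\A_\Sigma}\A_{\Sigma'}\to S$ and its base change $\widetilde{T}\to T$; in particular $\widetilde{T}\to T$ is surjective. On the other hand $X\times_S\widetilde{T}\cong(X\times_{\A_\Sigma}\A_{\Sigma'})\times_{S\times_{\A_\Sigma}\A_{\Sigma'}}\widetilde{T}$ is a base change of the isomorphism $f'$, hence is an isomorphism. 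Therefore the composite $X\times_S\widetilde{T}\xrightarrow{\ \sim\ }\widetilde{T}\to T$ is surjective; since it factors as $X\times_S\widetilde{T}\to X\times_S T\to T$, the morphism $X\times_S T\to T$ is surjective. This proves that every pullback of $f$ is surjective (hence a surjective proper log \'etale monomorphism), and that $f$ is universally surjective. I expect no serious obstacle beyond Proposition \ref{A.9.21} itself: the real work is already there, and what remains is the base-change stability of the three properties (with the mildly delicate point being the behaviour of properness under the integralization/saturation steps in the fs fibre product) together with the bookkeeping of the diagram chase — the only other point worth care being the passage to a \emph{neat} chart, so as to land in the toric/fan setting in which \ref{A.9.21} and \ref{A.9.67} are stated.
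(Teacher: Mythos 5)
Your proof is correct and follows essentially the same route as the paper: localize on $S$ to obtain a chart, apply Proposition \ref{A.9.21} to produce a subdivision trivializing $f$, and deduce universal surjectivity from the universal surjectivity of the subdivision morphism (Lemma \ref{A.9.67}), with pullback-stability of ``proper log \'etale monomorphism'' handling the rest. You are merely more explicit than the paper about the base-change stability of the three properties, the diagram chase, and the need for a neat (sharp) chart so that $\Spec{P}$ is genuinely a fan.
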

\begin{proof}
The question is Zariski local on $S$, so we may assume that $S$ is quasi-compact and has an fs chart $P$. 
Then $X$ is quasi-compact since $f$ is proper. 
By Proposition \ref{A.9.21}, there exists a subdivision of fans $u:M\rightarrow \Spec P$ such that the morphism $f'$ in \eqref{A.9.21.1} is an isomorphism.
In the proof of Proposition \ref{A.9.21}, we have checked that the morphism $g$ in \eqref{A.9.21.1} is universally surjective. 
Thus $f$ is universally surjective. 
To conclude, we use that a pullback along a proper log \'etale monomorphism is a proper log \'etale monomorphism.
\end{proof}

\begin{df}
\label{A.9.70}
Recall from \cite{FKato} that a morphism $f:X\rightarrow S$ of fs log schemes is called a {\it log modification} if Zariski locally on $S$, 
there exists a log blow-up $g:Y\rightarrow X$ such that the composition $fg$ is also a log blow-up.
\footnote{In \cite{FKato}, the condition is \'etale local on $S$. Since we are only interested in fs log schemes with Zariski log structures, we may restrict to the Zariski case.}
\end{df}
\begin{exm}
\label{A.9.76}
As observed in \cite{FKato}, a morphism of fs log schemes induced by a subdivision of toric fans in the sense of \cite[\S I.1.9]{Ogu} is an example of a log modification, 
see also \cite[pp.\ 39-40]{TOda}.
\end{exm}

\begin{prop}
\label{A.9.75}
Let $f:X\rightarrow S$ be a morphism of fs log schemes. 
Then $f$ is a surjective proper log \'etale monomorphism if and only if $f$ is a log modification. \index{morphism of log schemes!log \'etale} \index{log modification}
\end{prop}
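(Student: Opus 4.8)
\textbf{Proof proposal for Proposition \ref{A.9.75}.}
The plan is to prove the two implications separately. For the ``if'' direction, suppose $f\colon X\to S$ is a log modification. The assertion is Zariski local on $S$, so by Definition \ref{A.9.70} we may choose a log blow-up $g\colon Y\to X$ such that $fg\colon Y\to S$ is again a log blow-up. By Example \ref{A.5.15}, log blow-ups are universally surjective proper log \'etale monomorphisms. Since $f\circ g$ and $g$ are both surjective, $f$ is surjective; since $fg$ is proper and $g$ is proper (hence separated), $f$ is proper by the standard cancellation property for proper morphisms. For log \'etaleness one uses that $fg$ and $g$ are log \'etale together with the log-geometric analogue of \cite[Corollaire IV.17.3.4]{EGA}: in a composite $fg$ with $fg$ and $g$ log \'etale, the factor $f$ is log \'etale (this follows from the infinitesimal lifting criterion in Definition \ref{df:logsmooth}, or alternatively from $\Omega^1$ being zero, using the conormal sequences of \S\ref{log-differential}). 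Finally, to see that $f$ is a monomorphism: it suffices to show the diagonal $\Delta_f\colon X\to X\times_S X$ is an isomorphism. It is an open immersion by Lemma \ref{A.9.71} (applied to the log \'etale morphism $X\times_S X\to X$ and its section $\Delta_f$, noting $p_1\circ\Delta_f=\mathrm{id}$), so it suffices to show it is surjective. Since $g$ is a monomorphism and $fg$ is a monomorphism (log blow-ups are monomorphisms by Example \ref{A.5.15}), one deduces that $f$ is a monomorphism by a diagram chase: if $a,b\colon T\to X$ satisfy $fa=fb$, pull back $g$ along $a$ and $b$ to get covers $T_a,T_b\to T$ by surjective proper monomorphisms, lift, and use that $fg$ is mono; surjectivity of $\Delta_f$ then follows from the fact that a monomorphism that is also surjective and proper is an isomorphism, applied fiberwise as in the proof of Proposition \ref{A.9.21}.

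For the ``only if'' direction, suppose $f\colon X\to S$ is a surjective proper log \'etale monomorphism. The statement is Zariski local on $S$, so we may assume $S$ is quasi-compact and admits an fs chart $P\to \cM_S$, inducing a strict morphism $S\to \A_P$. Since $f$ is proper, $X$ is quasi-compact as well. Now apply Proposition \ref{A.9.21} to the single log \'etale monomorphism $f$: viewing $\Spec P$ as a fan, there is a subdivision of fans $u\colon\Sigma'\to\Spec P$ such that the pullback
\[
f'\colon X\times_{\A_P}\A_{\Sigma'}\longrightarrow S\times_{\A_P}\A_{\Sigma'}
\]
is an open immersion; and since $f$ is moreover surjective and proper, Proposition \ref{A.9.21} asserts $f'$ is in fact an isomorphism. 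Set $g\colon Y:=X\times_{\A_P}\A_{\Sigma'}\to X$ for the projection. The composite $f\circ g = g_S \circ f'$, where $g_S\colon S\times_{\A_P}\A_{\Sigma'}\to S$ is the projection induced by the subdivision $u$; since $S\to\A_P$ is strict, $g_S$ is (the pullback of) the morphism $\A_{\Sigma'}\to\A_P$ attached to a subdivision of fans, hence a log blow-up (Example \ref{A.9.76} and the discussion in \S\ref{subsec::structure_dNis} on log blow-ups; cf.\ \cite[Lemma II.1.7.3]{Ogu}). Likewise $g$, being the pullback of the same subdivision morphism along $X\to S\to \A_P$, is a log blow-up. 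Thus $g\colon Y\to X$ is a log blow-up and $f\circ g = g_S\circ f'$ is a log blow-up composed with an isomorphism, hence a log blow-up; by Definition \ref{A.9.70}, $f$ is a log modification.

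I expect the main obstacle to be the ``if'' direction, specifically verifying that $f$ is a monomorphism and that it inherits log \'etaleness from the factorization $fg$, $g$. The monomorphism check requires care because log blow-ups, while universally surjective proper monomorphisms, are not themselves isomorphisms, so one cannot simply ``cancel'' $g$; the cleanest route is the diagonal argument above, combined with Lemma \ref{A.9.71} and the structural results on surjective proper log \'etale monomorphisms (Proposition \ref{A.9.22}). If Lemma \ref{A.9.71} and Example \ref{A.5.15} are taken as given, the argument is essentially formal, but one must be attentive to the fact that all fiber products here are taken in the category of fs log schemes, so that applying Lemma \ref{Fiber.product.strict.morphism} and Proposition \ref{A.9.22} to control base changes is necessary at each step.
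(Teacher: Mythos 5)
Your overall strategy coincides with the paper's (Example \ref{A.5.15} plus a cancellation and diagonal argument in one direction, Proposition \ref{A.9.21} in the other), but two steps do not close as written. In the ``only if'' direction you finish by asserting that the morphism $\A_{\Sigma'}\to\A_P$ attached to a subdivision of fans is a log blow-up. Example \ref{A.9.76} does not say this: it only says such morphisms are surjective proper log \'etale monomorphisms, i.e.\ precisely the class you are trying to identify with log modifications, and a subdivision need not itself be the blow-up of a coherent sheaf of monoid ideals. Since Definition \ref{A.9.70} demands genuine log blow-ups, your factorization does not yet exhibit $f$ as a log modification. The paper bridges this by choosing a further log blow-up $T\to\A_{\Sigma'}$ whose composite $T\to\A_P$ is also a log blow-up and then pulling back along $X\to\A_P$; some such extra step is needed.

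Second, your verification that $f$ is a monomorphism in the ``if'' direction is incomplete. The chase with $a,b\colon T\to X$ ends with $a\circ p=b\circ p$ for a surjective proper log \'etale monomorphism $p\colon T'\to T$, and to conclude $a=b$ you would need $p$ to be an epimorphism of fs log schemes --- a fact that is neither obvious nor established in the paper. The closing appeal to ``a surjective proper monomorphism is an isomorphism, applied fiberwise'' is circular for your purpose: knowing that $\Delta_f$ is an open (and closed) immersion does not give its surjectivity, since $X\times_S X$ may be disconnected. The paper's argument avoids both problems: because $g$ and $fg$ are monomorphisms, the diagonals $Y\to Y\times_XY$ and $Y\to Y\times_SY$ are isomorphisms, hence so is $Y\times_XY\to Y\times_SY$; since $g\times g\colon Y\times_SY\to X\times_SX$ is surjective and the composite $Y\times_XY\to X\times_SX$ factors through $\Delta_f$, the diagonal is surjective, and therefore an isomorphism by Lemma \ref{A.9.71}. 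You should replace your diagram chase by this argument (or supply a proof that surjective proper log \'etale monomorphisms are epimorphisms).
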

\begin{proof}
Assume that $f$ is a log modification. 
The question is Zariski local on $S$, so we may assume that there exists a log blow-up $g\colon Y\rightarrow X$ such that $fg$ is also a log blow-up. 
Since $g$ and $fg$ are surjective proper log \'etale monomorphisms as observed in Example \ref{A.5.15}, $f$ is a surjective proper log \'etale morphism. 
Consider the induced commutative diagram  of fs log schemes
\[
\begin{tikzcd}
Y\times_X Y\arrow[d]\arrow[r]&Y\times_S Y\arrow[d]\\
X\arrow[r,"\Delta"]&X\times_S X.
\end{tikzcd}\]
Since $g$ and $fg$ are monomorphisms, the diagonal morphisms $Y\rightarrow Y\times_S Y$ and $Y\rightarrow Y\times_X Y$ are isomorphisms. 
Thus the upper horizontal morphism is an isomorphism. 
The right vertical morphism is surjective since $g$ is universally surjective. 
Thus $\Delta$ is surjective. 
By Lemma \ref{A.9.71}, $\Delta$ is an open immersion, so $\Delta$ is an isomorphism. 
Thus $f$ is a monomorphism, and hence a surjective proper log \'etale monomorphism.
\vspace{0.1in}

Conversely, assume that $f$ is a surjective proper log \'etale monomorphism. 
The question is Zariski local on $S$, so we may assume that $S$ is quasi-compact and has a neat fs chart $P$. 
Then $X$ is quasi-compact since $f$ is proper. 
By Proposition \ref{A.9.21}, there exists a subdivision of fans $\Sigma\rightarrow \Spec P$ such that the pullback 
\[
X\times_{\A_P}\A_\Sigma\rightarrow S\times_{\A_P}\A_\Sigma
\] 
is an isomorphism. 
As observed in Example \ref{A.9.76}, there exists a log blow-up $T\rightarrow \A_\Sigma$ such that the composition $T\rightarrow \A_P$ is also a log blow-up. 
Then the morphisms
\[
X\times_{\A_P}T\rightarrow X \text{ and } X\times_{\A_P}T\rightarrow S
\] 
are log blow-ups, so $f$ is a log modification.
\end{proof}

\begin{prop}
\label{A.9.77}
Any composition of log modifications is a log modification.
Any pullback of a log modification is a log modification.
\end{prop}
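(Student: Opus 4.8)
The plan is to reduce both assertions to the characterization of log modifications given in Proposition \ref{A.9.75}, namely that a morphism of fs log schemes is a log modification if and only if it is a surjective proper log \'etale monomorphism. Once this reformulation is in place, both claims follow from the stability of the four constituent properties under composition and base change.

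First I would handle the composition. Let $f\colon X\to S$ and $g\colon W\to X$ be log modifications. By Proposition \ref{A.9.75} both $f$ and $g$ are surjective proper log \'etale monomorphisms, so it suffices to check that $f\circ g$ is again a surjective proper log \'etale monomorphism, and then invoke Proposition \ref{A.9.75} in the other direction. Here I would simply record the routine facts: a composition of surjective morphisms of schemes is surjective; a composition of proper morphisms is proper; a composition of log \'etale morphisms is log \'etale (this is part of the general formalism of log smooth and log \'etale morphisms recalled in \S\ref{subsec::logsmooth}, since the property is detected on charts via Theorem \ref{KatoStrThm}); and a composition of monomorphisms is a monomorphism. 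Combining these, $f\circ g$ has all four properties, hence is a log modification.

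For the pullback, I would argue directly from Proposition \ref{A.9.22}: given a log modification $f\colon X\to S$ and an arbitrary morphism $S'\to S$ of fs log schemes, Proposition \ref{A.9.75} shows $f$ is a surjective proper log \'etale monomorphism, and Proposition \ref{A.9.22} then says precisely that the pullback $X\times_S S'\to S'$ is again a surjective proper log \'etale monomorphism. Applying Proposition \ref{A.9.75} once more, this pullback is a log modification. (Note that the pullback statement of Proposition \ref{A.9.22} already incorporates the fact that base change of such an $f$ stays surjective, which is the only one of the four properties whose stability under base change is not completely formal.)

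I do not anticipate a serious obstacle: the content of the proposition has essentially been front-loaded into Propositions \ref{A.9.75} and \ref{A.9.22}. The only point requiring a line of care is making explicit that log \'etale morphisms and monomorphisms are closed under composition and base change, which is standard; if one wished to avoid even this, the composition case could alternatively be proved working Zariski-locally and directly manipulating the log blow-ups in Definition \ref{A.9.70}, but going through the characterization in Proposition \ref{A.9.75} is cleaner.
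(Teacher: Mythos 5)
Your proposal is correct and matches the paper's own (second) suggestion: the paper's proof consists of a citation to Kato's Lemma 3.15(2),(3) together with the remark that one can alternatively use Proposition \ref{A.9.75}, which is exactly the route you carry out. Your reduction of the composition statement to stability of surjectivity, properness, log \'etaleness, and being a monomorphism under composition, and of the pullback statement to Proposition \ref{A.9.22}, fills in that alternative correctly.
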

\begin{proof}
See \cite[Lemma 3.15(2),(3)]{FKato}.
Alternatively, one can use Proposition \ref{A.9.75}.
\end{proof}

\begin{lem}
\label{A.9.78}
Let $f:X\rightarrow S$ and $g:Y\rightarrow S$ be morphisms of fs log schemes.
Suppose that $g$ is a log modification.
Then there are at most one morphism $h:X\rightarrow Y$ such that $gh=f$.
Moreover, if $f$ is a log modification, then $h$ is a log modification.
\end{lem}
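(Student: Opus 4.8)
\textbf{Proof plan for Lemma \ref{A.9.78}.}

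The plan is to reduce both assertions to results already available for surjective proper log \'etale monomorphisms, via Proposition \ref{A.9.75}, which identifies log modifications with this class of morphisms. For the uniqueness of $h$, the key point is that a log modification $g\colon Y\to S$ is a monomorphism. First I would observe that if $h_1,h_2\colon X\to Y$ both satisfy $gh_i=f$, then since $g$ is a monomorphism in the category of fs log schemes (being a surjective proper log \'etale monomorphism by Proposition \ref{A.9.75}), we immediately get $h_1=h_2$. This is essentially formal once we invoke Proposition \ref{A.9.75}; no calculation is needed. One could also argue more hands-on: $g$ induces an isomorphism $Y-\partial Y\xrightarrow{\sim} S-\partial S$ by Remark \ref{A.9.68}(1) applied to the dividing cover $g$, so $h_1$ and $h_2$ agree on the dense open $X-\partial X$ (both being lifts of $f|_{X-\partial X}$ composed with the inverse of $g|_{Y-\partial Y}$), and then Lemma \ref{A.9.6} together with solidness of the source (Lemma \ref{A.9.3}, \ref{A.9.4}) forces $h_1=h_2$ on all of $X$. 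Either route is short.

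For the second assertion, suppose in addition that $f\colon X\to S$ is a log modification, and let $h\colon X\to Y$ be the (unique) morphism with $gh=f$. I want to show $h$ is a log modification, equivalently (by Proposition \ref{A.9.75}) a surjective proper log \'etale monomorphism. The strategy is to check these properties one at a time. Properness of $h$: since $f=gh$ is proper and $g$ is separated (being log \'etale, the diagonal $Y\to Y\times_S Y$ is an open immersion by Lemma \ref{A.9.71}), a standard cancellation argument for proper morphisms gives that $h$ is proper. Log \'etaleness of $h$: since $g$ is log \'etale and $f=gh$ is log \'etale, the cancellation property for log \'etale morphisms (\cite[Remark IV.3.1.2]{Ogu}, cf.\ the argument in Lemma \ref{A.9.19}) shows $h$ is log \'etale. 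Monomorphism: form the diagram
\[
\begin{tikzcd}
X\times_Y X\arrow[d]\arrow[r]&X\times_S X\arrow[d]\\
Y\arrow[r,"\Delta_g"]&Y\times_S Y,
\end{tikzcd}
\]
which is cartesian; since $f$ is a monomorphism the diagonal $X\to X\times_S X$ is an isomorphism, and $\Delta_g$ is an isomorphism because $g$ is a monomorphism, so $X\to X\times_Y X$ is an isomorphism, i.e.\ $h$ is a monomorphism. Surjectivity of $h$: $f=gh$ is surjective, hence $h$ is surjective (set-theoretically, surjectivity of a composite forces surjectivity of the last map). Assembling these, $h$ is a surjective proper log \'etale monomorphism, hence a log modification by Proposition \ref{A.9.75}.

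I expect the main obstacle to be the properness cancellation step: one must be careful that the relevant separatedness hypothesis on $g$ is genuinely available. This is handled by Lemma \ref{A.9.71}, which gives that the diagonal of any log \'etale morphism is an open immersion, hence in particular $g$ is separated; then the usual cancellation lemma for proper morphisms of schemes, applied to the underlying schemes, yields properness of $\underline{h}$, and properness for log schemes is by definition a condition on underlying schemes together with finite type, which is inherited. The remaining steps (monomorphism, log \'etale, surjective) are routine cancellation arguments of the same flavour as those already carried out in the proof of Proposition \ref{A.9.75}, so the only real work is bookkeeping.
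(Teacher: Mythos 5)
Your uniqueness argument is correct and is essentially the paper's: both reduce to the fact that a log modification $g$ is a monomorphism (the paper phrases this as the diagonal $d\colon Y\to Y\times_S Y$ being an isomorphism). Your treatment of properness, log \'etaleness, and the monomorphism property of $h$ also goes through, though with one cosmetic flaw: an open-immersion diagonal does \emph{not} imply separatedness, so your appeal to Lemma \ref{A.9.71} there is not the right justification. What you actually want is either that $g$ is proper (hence separated) by definition of a log modification, or that its diagonal is an \emph{isomorphism} because $g$ is a monomorphism.

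The genuine gap is the surjectivity step. You claim that surjectivity of $f=gh$ forces surjectivity of $h$ because ``surjectivity of a composite forces surjectivity of the last map'' --- but the map applied last in $g\circ h$ is $g$, not $h$, so this cancellation goes the wrong way (think of $h\colon\{*\}\to\{a,b\}$ followed by the constant map $g$). Nor can you rescue it by a density argument on $X-\partial X$, since the lemma is stated for arbitrary fs log schemes, where $Y-\partial Y$ may fail to be dense or may even be empty; and the underlying scheme map of $g$ is not injective on points, so you cannot chase elements either. The correct argument is the one the paper uses for the whole second assertion in one stroke: the graph $u\colon X\to X\times_S Y$ of $h$ is a pullback of the diagonal of $g$, hence an isomorphism, so $h$ is isomorphic to the projection $X\times_S Y\to Y$; this projection is a pullback of the log modification $f$, and log modifications are \emph{universally} surjective (Proposition \ref{A.9.22}) and stable under pullback (Proposition \ref{A.9.77}). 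Note that once you have this identification of $h$ with a pullback of $f$, all four of your separate cancellation checks become unnecessary --- Proposition \ref{A.9.77} gives the conclusion immediately, which is why the paper's proof is three lines long.
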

\begin{proof}
Suppose that such an $h$ exists.
The graph morphism $u:X\rightarrow X\times_S Y$ is a pullback of the diagonal morphism $d:Y\rightarrow Y\times_S Y$, and $d$ is an isomorphism since $Y$ is a log modification.
Thus $u$ is an isomorphism.
This means that $h$ is isomorphic to the projection $X\times_S Y\rightarrow Y$.
If $f$ is a log modification, then the projection is also a modification by Proposition \ref{A.9.77}.
\end{proof}

\begin{df}
\label{A.9.79}
Let $X$ be an fs log scheme.
We denote by $X_{div}$ the category of log modifications over $X$, and we denote by $X_{div}^{Sm}$ the full subcategory of $X_{div}$ consisting of log modifications $Y\rightarrow X$ such that $Y\in SmlSm/k$.
\end{df}

\begin{prop}
\label{A.9.83}
Every exact log \'etale monomorphism $f:X\rightarrow S$ of fs log schemes is an open immersion.
\end{prop}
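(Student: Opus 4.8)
The strategy is to reduce, via Lemma \ref{A.9.19}, to the case of a morphism induced by a homomorphism of monoids, and then to use exactness together with Lemma \ref{A.9.9} to see that this homomorphism is an isomorphism. Being an open immersion is Zariski local on the target, so we may assume $S$ is affine with an fs chart; and since $f$ is a monomorphism, to prove $f$ is an open immersion it suffices to show that $f$ is strict and that $\underline{f}$ is \'etale and radicial, by \cite[Th\'eor\`eme IV.17.9.1]{EGA}, and these are properties we may check Zariski locally on $X$. So fix $x\in X$, set $s=f(x)$, and by Remark \ref{A.9.10} take a neat chart $P$ of $S$ at $s$, so $\overline{\cM}_{S,s}\cong P$. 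Since $f$ is a log \'etale monomorphism, Lemma \ref{A.9.19} provides, Zariski locally on $X$, a chart $\theta\colon P\to Q$ of $f$ with $\theta^{\rm gp}$ an isomorphism and with $g\colon X\to S\times_{\A_P}\A_Q$ an open immersion. Passing to an exact chart of $X$ at $x$ as in Remark \ref{A.9.10}(1) replaces $Q$ by a localization $Q'$ with $Q'^{\rm gp}=Q^{\rm gp}$ and $\A_{Q'}$ an open subscheme of $\A_Q$; after shrinking $X$ the map $g$ still factors as an open immersion into $S\times_{\A_P}\A_{Q'}$, and using a splitting of $1\to\cO_{X,x}^*\to\cM_{X,x}\to\overline{\cM}_{X,x}\to 1$ we may further arrange that $Q$ is a neat chart of $X$ at $x$, so that $\overline{\cM}_{X,x}\cong Q$ and the characteristic homomorphism $\overline{\cM}_{S,s}\to\overline{\cM}_{X,x}$ is $\theta$ itself.

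Now I bring in the hypothesis that $f$ is exact. With $P$ and $Q$ chosen as neat charts at $s$ and $x$, exactness of $f$ at $x$ amounts exactly to $\theta\colon P\to Q$ being an exact homomorphism of monoids, by the chart characterization of exactness (see \cite[\S III.2.2]{Ogu}). Since in addition $\theta^{\rm gp}$ is an isomorphism, Lemma \ref{A.9.9} forces $\theta$ to be an isomorphism. Hence $\A_Q\to\A_P$ is an isomorphism, the projection $S\times_{\A_P}\A_Q\to S$ is an isomorphism, and $f$ coincides, on a Zariski neighbourhood of $x$, with the open immersion $g$. In particular $f$ is strict near $x$; letting $x$ vary, $f$ is strict and $\underline{f}$ is \'etale.

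It remains to upgrade ``Zariski locally on $X$ an open immersion'' to the global statement, i.e.\ to see that $\underline{f}$ is radicial. The residue field extensions of $\underline{f}$ are trivial, since $\underline{f}$ is Zariski locally on the source an open immersion. For injectivity on points, suppose $t,t'\in X$ satisfy $f(t)=f(t')=s$; near $t$ and near $t'$ the morphism $f$ is an open immersion of log schemes, so the strict point of $X$ at $t$ and the strict point of $X$ at $t'$ are each canonically isomorphic over $S$ to the strict point of $S$ at $s$. The two resulting morphisms from this strict point to $X$ have the same composition with $f$, hence coincide because $f$ is a monomorphism, so $t=t'$. Thus $\underline{f}$ is \'etale and radicial, hence an open immersion, and as $f$ is strict this upgrades to the assertion that $f$ is an open immersion of fs log schemes.

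I expect the main obstacle to lie in the normalization carried out in the first paragraph: arranging simultaneously that the chart $\theta$ from Lemma \ref{A.9.19} can be taken so that $P$ and $Q$ are neat charts at $s$ and $x$ while keeping $\theta^{\rm gp}$ an isomorphism and the open-immersion factorization $g$, and then matching up exactness of the morphism $f$ with exactness of the homomorphism $\theta$. Once this is in place, the conclusion follows formally from Lemma \ref{A.9.9} and the standard properties of \'etale radicial morphisms.
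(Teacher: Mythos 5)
Your overall strategy is the same as the paper's (reduce via Lemma \ref{A.9.19} to a chart $\theta\colon P\to Q$ with $\theta^{\rm gp}$ an isomorphism, then use exactness to force strictness), but the normalization step you yourself flag as the main obstacle contains a genuine gap, and it is exactly the point where the argument has to be done differently.

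The problem is the claim that you can ``further arrange that $Q$ is a neat chart of $X$ at $x$'' while ``keeping $\theta^{\rm gp}$ an isomorphism.'' These two conditions are incompatible unless $f$ is already strict at $x$, which is what you are trying to prove. Concretely: with the chart from Lemma \ref{A.9.19} one has $\overline{\cM}_{X,x}\cong Q/F$ for some face $F$ of $Q$. A neat chart at $x$ is a sharp monoid isomorphic to $\overline{\cM}_{X,x}$, i.e.\ to $Q/F$, and the induced map on group completions is then $P^{\rm gp}\cong Q^{\rm gp}\to Q^{\rm gp}/F^{\rm gp}$, which has kernel $F^{\rm gp}$. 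So if $F$ is nontrivial, the characteristic homomorphism $\overline{\cM}_{S,s}\to\overline{\cM}_{X,x}$ does not have bijective group completion, and Lemma \ref{A.9.9} cannot be applied to it. Passing to the exact (non-sharp) chart $Q_F$ is fine, since $Q_F^{\rm gp}=Q^{\rm gp}$; it is the further passage to the sharp quotient that destroys the hypothesis you need.

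The repair is to stay with $Q$ and argue as follows: exactness of $f$ at $x$ gives that $P\cong\overline{\cM}_{S,s}\to\overline{\cM}_{X,x}\cong Q/F$ is exact; by \cite[Proposition I.4.2.1(3)]{Ogu} this implies that $P\to Q_F$ is exact, i.e.\ $P\cong P^{\rm gp}\times_{Q^{\rm gp}}Q_F$; and since $\theta^{\rm gp}$ is an isomorphism this yields $P\cong Q_F$, hence $f$ is strict at $x$. (Equivalently, apply Lemma \ref{A.9.9} to $P\to Q_F$ rather than to $P\to Q/F$.) From there your conclusion goes through: $f$ is a strict \'etale monomorphism, so $\underline{f}$ is an \'etale monomorphism and hence an open immersion by \cite[Th\'eor\`eme IV.17.9.1]{EGA}; your separate verification that $\underline{f}$ is radicial is redundant, since a monomorphism of schemes is automatically universally injective.
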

\begin{proof}
The question is Zariski local on $S$ and $X$, so we may assume that $S$ has s neat fs chart $P$ at $s\in S$.
Moreover, due to Lemma \ref{A.9.19}, 
we may assume there exists a chart $\theta:P\rightarrow Q$ of $f$ such that $X\rightarrow S\times_{\A_P}\A_Q$ is an open immersion and 
$\theta^{\rm gp}:P^{\rm gp}\rightarrow Q^{\rm gp}$ is an isomorphism.
\vspace{0.1in}

Let $x$ be a point of $X$ mapping to $s$.
Then $\overline{\cM}_{X,x}\cong Q/F$ for some face $F$ of $Q$.
By the assumption that $f$ is exact, we see that
\[
P\cong \overline{\cM}_{S,s}\rightarrow\overline{\cM}_{X,x}\cong Q/F
\]
is exact.
Thus $P\rightarrow Q_F$ is exact owing to \cite[Proposition I.4.2.1(3)]{Ogu}, i.e., 
$$
P\cong P^{\rm gp}\times_{Q^{\rm gp}}Q_F.
$$
Since $\theta^{\rm gp}$ is an isomorphism, this implies $P\cong Q_F$.
This shows that the morphism $f$ is strict.
Thus $f$ is a strict \'etale monomorphism, and hence an isomorphism according to \cite[Th\'eor\`eme IV.17.9.1]{EGA}.
\end{proof}

\begin{prop}
\label{A.9.80}
Suppose $X$ is an fs log scheme.
Then $X_{div}$ is a filtered category.
\end{prop}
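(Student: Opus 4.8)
The goal is to show that for an fs log scheme $X$, the category $X_{div}$ of log modifications over $X$ is filtered. Recall that a nonempty category is filtered if (i) any two objects admit a common upper bound and (ii) any two parallel morphisms are coequalized by some morphism out of the target. The category $X_{div}$ is nonempty since $\mathrm{id}\colon X\to X$ is a log modification. The plan is to verify (i) and (ii), exploiting the permanence properties of log modifications established in Proposition \ref{A.9.77} (closure under composition and pullback) and the rigidity statement in Lemma \ref{A.9.78} (uniqueness of factorizations through a log modification).

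\textbf{Common upper bounds.} Given two objects $f_1\colon Y_1\to X$ and $f_2\colon Y_2\to X$ of $X_{div}$, the natural candidate for an upper bound is the fiber product $Y_1\times_X Y_2$. Here I would argue as follows: the projection $Y_1\times_X Y_2\to Y_1$ is a pullback of the log modification $f_2$, hence is a log modification by Proposition \ref{A.9.77}; composing with $f_1$ (another log modification) and using that log modifications are closed under composition (again Proposition \ref{A.9.77}), the structure morphism $Y_1\times_X Y_2\to X$ is a log modification. Thus $Y_1\times_X Y_2\in X_{div}$, and the two projections exhibit it as an object mapping to both $Y_1$ and $Y_2$ over $X$. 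One small point to check is that the two projections $Y_1\times_X Y_2\to Y_i$ are indeed morphisms in $X_{div}$, i.e., that they are log modifications; this is exactly the pullback statement of Proposition \ref{A.9.77} applied to each factor.

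\textbf{Coequalizing parallel morphisms.} Suppose $g,h\colon (f_1\colon Y_1\to X)\rightrightarrows (f_2\colon Y_2\to X)$ are two morphisms in $X_{div}$, so $f_2\circ g = f_1 = f_2\circ h$. Since $f_2$ is a log modification, Lemma \ref{A.9.78} applies: there is at most one morphism $Y_1\to Y_2$ over $X$. Hence $g = h$ already, and the identity of $Y_2$ trivially coequalizes them. In particular condition (ii) is automatic. This also shows $X_{div}$ is in fact a (filtered) \emph{preorder}-like category in the relevant sense — between any two objects there is at most one morphism — which streamlines the verification considerably.

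\textbf{Main obstacle.} There is no deep obstacle here; the statement is essentially a formal consequence of the permanence properties already proved. The only place requiring genuine care is confirming that $X_{div}$ is \emph{small} (or at least essentially small) so that "filtered category" is the intended notion for taking filtered colimits in later applications such as Lemma \ref{A.5.45} and Theorem \ref{Div.3}; this follows since a log modification of $X$ is determined up to unique isomorphism by a coherent sheaf of ideals on the (noetherian, quasi-compact) log scheme $X$, of which there is only a set, combined with the uniqueness in Lemma \ref{A.9.78}. Assembling these observations — nonemptiness, fiber products give upper bounds, and uniqueness of morphisms kills the parallel-arrow condition — completes the proof that $X_{div}$ is filtered.
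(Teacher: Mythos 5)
Your proof is correct and follows essentially the same route as the paper: common upper bounds are produced by fiber products using the permanence properties of Proposition \ref{A.9.77}, and parallel morphisms are handled by the uniqueness statement of Lemma \ref{A.9.78}. The additional remarks on smallness are harmless but not needed for the statement as given.
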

\begin{proof}
For any $Y,Y'\in X_{div}$, we have that $Y\times_X X'\in X_{div}$ owing to Proposition \ref{A.9.77}.
Thus $X_{div}$ is connected.
Owing to Lemma \ref{A.9.78} there exists at most one morphism $Y\rightarrow Y'$ of fs log schemes over $X$.
Thus $X_{div}$ is filtered.
\end{proof}

\begin{prop}
\label{A.9.81}
For $X\in lSm/k$, the category $X_{div}^{Sm}$ is cofinal in $X_{div}$.
\end{prop}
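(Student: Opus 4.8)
The statement to prove is that for $X\in lSm/k$, the subcategory $X_{div}^{Sm}$ of $X_{div}$ consisting of log modifications $Y\to X$ with $Y\in SmlSm/k$ is cofinal. Since $X_{div}$ is filtered by Proposition \ref{A.9.80}, and $X_{div}^{Sm}$ is a full subcategory, it suffices to check that every object $Y\to X$ of $X_{div}$ admits a morphism in $X_{div}$ from some object $Z\to X$ of $X_{div}^{Sm}$, i.e., that one can dominate any log modification of $X$ by a log modification whose source lies in $SmlSm/k$. (Filteredness of the target then automatically upgrades this "weak initiality from below" to genuine cofinality: given two objects mapping to $Y$, their fiber product over $X$ lies in $X_{div}$ by Proposition \ref{A.9.77}, and one resolves that.)

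\textbf{Key steps.} First I would take an arbitrary object $f\colon Y\to X$ in $X_{div}$; by definition $f$ is a log modification, hence in particular $Y\in lSm/k$ when $X\in lSm/k$ (log modifications of log smooth log schemes are log smooth, since they are log \'etale by Proposition \ref{A.9.75} and log \'etaleness is closed under composition with a log smooth structure morphism to $k$). Second, apply Proposition \ref{A.3.19} to $Y$: there exists a log blow-up $g\colon Z\to Y$ with $Z\in SmlSm/k$. Third, observe that $g$ is a log modification (a log blow-up is a log modification by Example \ref{A.5.15} together with Proposition \ref{A.9.75}, or directly by Definition \ref{A.9.70}), and therefore the composite $f\circ g\colon Z\to X$ is a log modification by Proposition \ref{A.9.77}. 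Hence $Z\to X$ is an object of $X_{div}^{Sm}$, and $g$ exhibits a morphism $(Z\to X)\to (Y\to X)$ in $X_{div}$. Fourth, I would invoke Lemma \ref{A.9.78}: there is at most one morphism between any two objects of $X_{div}$, so the category $X_{div}$ is (equivalent to) a poset, and the filteredness from Proposition \ref{A.9.80} combined with the domination statement just proved immediately gives that $X_{div}^{Sm}\hookrightarrow X_{div}$ is cofinal — concretely, given finitely many objects of $X_{div}$, form their iterated fiber product over $X$ (still a log modification by Proposition \ref{A.9.77}), then resolve it into $SmlSm/k$ by the above, producing an object of $X_{div}^{Sm}$ mapping to all of them.

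\textbf{Main obstacle.} There is essentially no deep obstacle here; the statement is a bookkeeping consequence of resolution of log singularities for log smooth log schemes (Proposition \ref{A.3.19}, i.e., Niziol's theorem) together with the stability of log modifications under composition and base change (Proposition \ref{A.9.77}) and the thinness of $X_{div}$ (Lemma \ref{A.9.78}). The one point requiring a sentence of care is confirming that the source $Y$ of an arbitrary object of $X_{div}$ is itself in $lSm/k$ so that Proposition \ref{A.3.19} applies to it — this follows because $X\in lSm/k$ and $f$ is log \'etale (Proposition \ref{A.9.75}), hence $Y$ is log smooth and separated of finite type over $k$. Once that is noted, the argument is a three-line diagram chase as above.

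\begin{proof}
Since $X\in lSm/k$ and any $f\colon Y\to X$ in $X_{div}$ is a log modification, Proposition \ref{A.9.75} shows $f$ is log \'etale, so $Y$ is log smooth and separated over $k$, i.e.\ $Y\in lSm/k$. By Proposition \ref{A.3.19} there is a log blow-up $g\colon Z\to Y$ with $Z\in SmlSm/k$. A log blow-up is a log modification (Definition \ref{A.9.70}), and a composition of log modifications is a log modification (Proposition \ref{A.9.77}), so $f\circ g\colon Z\to X$ is a log modification with $Z\in SmlSm/k$, i.e.\ an object of $X_{div}^{Sm}$, equipped with a morphism $g$ to $(Y\to X)$ in $X_{div}$. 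Thus every object of $X_{div}$ receives a morphism from an object of $X_{div}^{Sm}$.

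It remains to check cofinality. By Lemma \ref{A.9.78} there is at most one morphism between any two objects of $X_{div}$, so $X_{div}$ is a (cofiltered, by Proposition \ref{A.9.80}) poset. Given finitely many objects $Y_1\to X,\dots,Y_n\to X$ of $X_{div}$, their fiber product $Y_1\times_X\cdots\times_X Y_n\to X$ is again a log modification by Proposition \ref{A.9.77}; resolving it as above produces an object $Z\to X$ of $X_{div}^{Sm}$ admitting morphisms to each $Y_i\to X$ in $X_{div}$. Since morphisms in $X_{div}$ are unique, this shows $X_{div}^{Sm}$ is cofinal in $X_{div}$.
\end{proof}
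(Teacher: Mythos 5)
Your proof is correct and follows exactly the route the paper intends: the paper's own proof of this proposition is the one-line "Follows from Proposition \ref{A.3.19}," and your argument simply fills in the same chain (log modifications are log \'etale so $Y\in lSm/k$, resolve $Y$ by a log blow-up into $SmlSm/k$, compose, and use thinness/filteredness of $X_{div}$ to conclude cofinality). The extra care you take in verifying that $Y\in lSm/k$ and in handling the cofinality bookkeeping is sound and consistent with the surrounding results.
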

\begin{proof}
Follows from Proposition \ref{A.3.19}.
\end{proof}

\begin{prop}
\label{A.9.82}
For $X\in lSm/k$, the category $X_{div}^{Sm}$ is a filtered category.
\end{prop}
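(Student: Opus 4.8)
The goal is Proposition \ref{A.9.82}: for $X\in lSm/k$, the category $X_{div}^{Sm}$ is filtered. The plan is to deduce this from the two facts already proved, namely that $X_{div}$ is filtered (Proposition \ref{A.9.80}) and that $X_{div}^{Sm}$ is cofinal in $X_{div}$ (Proposition \ref{A.9.81}), together with the purely categorical observation that a cofinal subcategory of a filtered category is filtered. So first I would recall what must be checked: $X_{div}^{Sm}$ is nonempty (which is immediate from Proposition \ref{A.3.19}, giving at least one log modification $Y\to X$ with $Y\in SmlSm/k$), that any two objects admit a common object receiving maps from both, and that any two parallel arrows are equalized by a further arrow. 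The last condition is automatic here because, by Lemma \ref{A.9.78}, between any two objects of $X_{div}$ (hence of the full subcategory $X_{div}^{Sm}$) there is \emph{at most one} morphism; so there are no nontrivial parallel pairs to equalize.

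The remaining point is the amalgamation property. Given $Y,Y'\in X_{div}^{Sm}$, view them as objects of $X_{div}$. Since $X_{div}$ is filtered by Proposition \ref{A.9.80}, there is an object $Z\in X_{div}$ with morphisms $Z\to Y$ and $Z\to Y'$ over $X$ — concretely one may take $Z=Y\times_X Y'$, which lies in $X_{div}$ by Proposition \ref{A.9.77}. Now apply cofinality of $X_{div}^{Sm}$ in $X_{div}$ (Proposition \ref{A.9.81}): there is a morphism $W\to Z$ in $X_{div}$ with $W\in X_{div}^{Sm}$. Composing, we obtain $W\to Y$ and $W\to Y'$ with $W\in X_{div}^{Sm}$, which is exactly the amalgamation condition inside the subcategory. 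This closes the argument.

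I do not expect any genuine obstacle here: every input is already available in the excerpt, and the argument is the standard fact that cofinality transports filteredness. The only thing to be careful about is bookkeeping — one should state explicitly that $X_{div}^{Sm}$ is a \emph{full} subcategory of $X_{div}$ (it is, by Definition \ref{A.9.79}), so that morphisms produced in $X_{div}$ between objects of $X_{div}^{Sm}$ automatically lie in $X_{div}^{Sm}$, and that Lemma \ref{A.9.78} applies to kill the parallel-arrows condition. With these remarks in place the proof is essentially a two-line citation of Propositions \ref{A.9.80}, \ref{A.9.81} and \ref{A.3.19}.

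\begin{proof}
By Proposition \ref{A.3.19}, $X_{div}^{Sm}$ is nonempty.
For any $Y,Y'\in X_{div}^{Sm}$, the fiber product $Y\times_X Y'$ is a log modification over $X$ by Proposition \ref{A.9.77}, hence an object of $X_{div}$, with the two projections to $Y$ and $Y'$.
By Proposition \ref{A.9.81}, $X_{div}^{Sm}$ is cofinal in $X_{div}$, so there is a morphism $W\to Y\times_X Y'$ in $X_{div}$ with $W\in X_{div}^{Sm}$; since $X_{div}^{Sm}$ is a full subcategory of $X_{div}$, composing with the projections yields morphisms $W\to Y$ and $W\to Y'$ in $X_{div}^{Sm}$.
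Finally, by Lemma \ref{A.9.78} there is at most one morphism between any two objects of $X_{div}$, and in particular of $X_{div}^{Sm}$, so any pair of parallel arrows in $X_{div}^{Sm}$ is automatically equalized.
Therefore $X_{div}^{Sm}$ is filtered.
\end{proof}
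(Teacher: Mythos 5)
Your proof is correct and takes essentially the same route as the paper, which simply cites Propositions \ref{A.9.80} and \ref{A.9.81} (filteredness of $X_{div}$ plus cofinality of $X_{div}^{Sm}$); you have merely spelled out the standard unpacking of that deduction.
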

\begin{proof}
Follows from Propositions \ref{A.9.80} and \ref{A.9.81}.
\end{proof}

\subsection{Small Kummer \'etale and small log \'etale sites}
In this subsection we review the small Kummer \'etale site $X_{k\acute{e}t}$ and the small log \'etale site $X_{l\acute{e}t}$ for a saturated log scheme $X$.
We also prove some basic properties.

\begin{df}
\label{ket.1}
A saturated log scheme $X$ is \emph{log strictly local}\index{log scheme!log strictly local} (resp.\ a \emph{log geometric point})\index{log geometric point} if $\underline{X}$ is the spectrum of a strictly henselian local ring 
(resp.\ a separably closed field), 
and for every integer $n>0$ prime to the characteristic of $k$ the multiplication by $n$ morphism on $\overline{\cM}(X)$ is bijective.
\vspace{0.1in}

A saturated log scheme $X$ is \emph{log strictly henselian} if $X$ is a finite disjoint union of log strictly local schemes.
\end{df}

\begin{df}
For a saturated log scheme $X$, let $X_{k\acute{e}t}$ 
(resp.\ $X_{l\acute{e}t}$) denote the category of Kummer \'etale (resp.\ log \'etale) morphisms of saturated log schemes whose target is $X$.
The topology on $X_{k\acute{e}t}$ (resp.\ $X_{l\acute{e}t}$) is generated by the pretopology with coverings comprised of Kummer \'etale (resp.\ log \'etale) morphisms 
$\{U_i\rightarrow U\}_{i\in I}$ for which $\amalg_{i\in I} U_i\rightarrow U$ is universally surjective in the category of \emph{saturated} log schemes.
\end{df}

\begin{rmk}
Suppose that $X$ is an fs log scheme.
For $X_{k\acute{e}t}$, a family of morphisms $\{U_i\rightarrow U\}_{i\in I}$ is a covering if and only if $\amalg_{i\in I} U_i\rightarrow U$ is surjective, see e.g., \cite[Proposition 3.2]{MR3658728}.
In particular, a family of morphisms $\{U_i\rightarrow U\}_{i\in I}$ is a covering if and only if $\amalg_{i\in I} U_i\rightarrow U$ is universally surjective in the category of \emph{fs} log schemes.
\vspace{0.1in}

However, we need to take the category of \emph{saturated} log schemes in the definition of coverings in $X_{l\acute{e}t}$.
For example, 
in the proof \cite[Proposition 3.14(1)]{MR3658728}, 
a surjectivity of $(\amalg_{i\in I}U_i)\times_U V\rightarrow V$ is used for some saturated but non fs log scheme over $U$.
\end{rmk}

\begin{lem}
\label{ket.2}
Let $X$ be a log strictly henselian scheme.
Then every saturated log scheme $Y$ that is finite and strict over $X$ is also strictly henselian.
\end{lem}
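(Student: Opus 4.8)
The statement to prove is Lemma \ref{ket.2}: if $X$ is log strictly henselian and $Y\to X$ is finite and strict, then $Y$ is log strictly henselian. The plan is to reduce everything to the corresponding statement about the underlying schemes, which is classical, and then check that the monoid-theoretic condition on $\overline{\cM}$ is inherited through a strict morphism. First I would observe that, since ``log strictly henselian'' means ``a finite disjoint union of log strictly local schemes'' (Definition \ref{ket.1}), we may decompose $X$ and hence $Y$ into connected components and reduce to the case where $X$ itself is log strictly local, so that $\underline{X}=\Spec A$ with $A$ a strictly henselian local ring, and we want to show each connected component of $Y$ is log strictly local.

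\textbf{Step 1 (underlying scheme).} Since $\underline{Y}\to \underline{X}$ is finite and $\underline{X}$ is the spectrum of a strictly henselian local ring, the structure theory of henselian rings (see \cite[Th\'eor\`eme IV.18.5.11]{EGA}, or \cite[Proposition I.4.5]{Milneetale}) tells us that $\underline{Y}$ is a finite product of spectra of local rings, each of which is strictly henselian (being finite over the strictly henselian $A$). So each connected component $\underline{Y}_j$ of $\underline{Y}$ is $\Spec B_j$ with $B_j$ strictly henselian local. This already gives the scheme-theoretic half of Definition \ref{ket.1} for each component of $Y$.

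\textbf{Step 2 (the log condition).} Fix a connected component $Y_j$ of $Y$ with closed point $y$ mapping to the closed point $x$ of $X$. Because $f\colon Y\to X$ is \emph{strict}, the canonical map $f_{log}^*\cM_X\to \cM_Y$ is an isomorphism, hence on stalks at $y$ we get $\cM_{X,x}\xrightarrow{\cong}\cM_{Y,y}$, and passing to characteristic monoids, $\overline{\cM}_{X,x}\xrightarrow{\cong}\overline{\cM}_{Y,y}$. (Here I use that a strictly local scheme is its own henselization, so the Zariski stalk at $y$ computes $\overline{\cM}(Y_j)$ up to the obvious identification; more precisely, for a log strictly local scheme $\overline{\cM}(Y_j)=\overline{\cM}_{Y_j,y}$ since $y$ is the unique closed point and $\overline{\cM}$ is generized from $y$.) Now the hypothesis that $X$ is log strictly local says that multiplication by $n$ is bijective on $\overline{\cM}(X)=\overline{\cM}_{X,x}$ for every $n$ prime to $\mathrm{char}(k)$; transporting this isomorphism along the identification $\overline{\cM}_{X,x}\cong\overline{\cM}_{Y_j,y}$ shows multiplication by such $n$ is bijective on $\overline{\cM}(Y_j)$ too. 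Combining with Step 1, each $Y_j$ is log strictly local, so $Y$ is log strictly henselian.

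\textbf{Main obstacle.} The only genuinely delicate point is the bookkeeping in Step 2 around what $\overline{\cM}(Y_j)$ means and why strictness gives the stalk isomorphism $\overline{\cM}_{X,x}\cong\overline{\cM}_{Y_j,y}$ as \emph{monoids with their doubling endomorphisms} — one must be careful that ``strict'' is being used for the morphism of log schemes (so $f_{log}^*\cM_X\cong\cM_Y$) and not just for the underlying map, and that the section over the closed point controls $\overline{\cM}$ over the whole (generically trivial, possibly) component. Everything else is an immediate consequence of finiteness over a strictly henselian local ring and of the definition of log strict henselianity; no serious new idea is needed.
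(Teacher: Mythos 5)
Your proof is correct and follows essentially the same route as the paper's: the underlying scheme is strictly henselian because it is finite over a strictly henselian local ring, and strictness identifies $\overline{\cM}_{Y,y}$ with $\overline{\cM}_{X,x}$ at closed points, transporting the bijectivity of multiplication by $n$. The paper's version is just a terser rendering of your Steps 1 and 2, without spelling out the reduction to connected components.
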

\begin{proof}
Since $\underline{X}$ is strictly henselian, $\underline{Y}$ is strictly henselian too.
Let $y$ be a closed point of $Y$ with image $x$ in $X$.
Then $\overline{\cM}_{Y,y}\cong \overline{\cM}_{X,x}$ since $Y$ is strict over $X$.
Thus for every integer $n>0$ prime to the characteristic of $k$ the multiplication by $n$ on $\overline{\cM}_{Y,y}$ is bijective.
\end{proof}

For an integral monoid $P$ and $n\in \N^+$, we let $P^{1/n}$ denote the submonoid of $P^{\rm gp}\otimes \Q$ given by
\begin{equation}
\label{ket.0.1}
P^{1/n}:=\{p\in P^{\rm gp}\otimes \Q:np\in P\}.
\end{equation}

\begin{lem}
\label{ket.3}
Let $S$ be an fs log scheme with an fs chart $P$ whose underlying scheme $\underline{S}$ is strictly local.
Then every Kummer \'etale cover $f:X\rightarrow S$ admits a refinement of the form of a projection
\[
S\times_{\A_P} \A_{P^{1/n}}\rightarrow S
\]
where $n\in N^+$ is invertible in $T$.
\end{lem}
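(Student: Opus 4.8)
\textit{Proof plan.} The plan is to combine the chart criterion for log \'etale morphisms with the fact that \'etale surjective morphisms split over a strictly henselian local base. First I would reduce to the case that $X$ is quasi-compact: replacing $X$ by a quasi-compact open subscheme meeting the fibre over the closed point of $\underline S$ (which still covers $S$, because the underlying morphism of a Kummer \'etale morphism is open, so its image is an open subset of $\underline S$ containing the closed point, hence all of $\underline S$) does not affect the conclusion, since it suffices to produce \emph{some} Kummer \'etale cover refining $f$. Next, since $f$ is Kummer \'etale it is exact and log \'etale (Proposition \ref{kummer-is-logetandstrict}); combining Theorem \ref{KatoStrThm} with Remark \ref{A.9.10} and the exactness of $f$, Zariski-locally on $X$ the morphism $f$ admits an \emph{exact} chart $\theta\colon P\to Q$ (with $P$ the given chart of $S$) such that $\theta^{\rm gp}$ is injective with finite cokernel of order invertible on $\underline S$, and such that the induced morphism $X\to S\times_{\A_P}\A_Q$ is strict \'etale.

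The crucial observation is that such an exact Kummer chart factors through the Kummer morphism $P\hookrightarrow P^{1/m}$ of \eqref{ket.0.1}, where $m$ is the order of $\coker\theta^{\rm gp}$. Indeed $\theta^{\rm gp}\otimes\Q$ is an isomorphism, so each $q\in Q$ determines an element $\bar q\in P^{\rm gp}\otimes\Q=Q^{\rm gp}\otimes\Q$; since $mQ^{\rm gp}\subseteq\theta^{\rm gp}(P^{\rm gp})$ we get $mq\in Q\cap\theta^{\rm gp}(P^{\rm gp})=\theta(P)$ by exactness of $\theta$, hence $m\bar q\in P$, that is $\bar q\in P^{1/m}$. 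By quasi-compactness of $X$ only finitely many such local charts are needed; letting $n$ be a common multiple of the corresponding orders $m$ (still invertible on $\underline S$), every one of these charts factors compatibly through $P\hookrightarrow P^{1/n}$. Now put $S':=S\times_{\A_P}\A_{P^{1/n}}$; since $P\to P^{1/n}$ is Kummer and $n$ is invertible on $\underline S$, the projection $S'\to S$ is a Kummer \'etale cover. Base-changing $f$ along $S'\to S$ and using $\A_Q\times_{\A_P}\A_{P^{1/n}}\cong\A_{P^{1/n}}$ (valid because $Q$ embeds into $P^{1/n}$ over $P$), one checks that Zariski-locally on $X$ there is an identification $X\times_S S'\cong X\times_{S\times_{\A_P}\A_Q}S'$, so $X\times_S S'\to S'$ is strict \'etale; it is surjective since $f$ is a cover. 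Finally $\underline{S'}$ is finite over the spectrum of a strictly henselian local ring, hence a finite disjoint union of spectra of strictly henselian local rings, so the \'etale surjection $\underline{X\times_S S'}\to\underline{S'}$ admits a section; because the morphism is strict, this section lifts to a morphism of log schemes $S'\to X\times_S S'$ over $S'$, and composing with the projection to $X$ exhibits $S'=S\times_{\A_P}\A_{P^{1/n}}\to S$ as a refinement of $f$.

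The main obstacle is the chart-theoretic step in the second paragraph: one must arrange an \emph{exact} Kummer chart of $f$ — not merely a log \'etale one, since for a blow-up type chart no factorization through $P^{1/n}$ exists — and then verify the factorization, which is exactly where the exactness of $f$ is used. The reduction to quasi-compact $X$ and the splitting of \'etale covers over a strictly henselian base are routine, and the toric identity $\A_Q\times_{\A_P}\A_{P^{1/n}}\cong\A_{P^{1/n}}$ follows from the computation $Q\oplus_P P^{1/n}=P^{1/n}$ of saturated amalgamated sums.
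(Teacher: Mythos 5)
Your proof is correct and follows essentially the same route as the paper's: both use Kato's chart criterion to produce a chart $\theta\colon P\to Q$ with $X\to S\times_{\A_P}\A_Q$ strict \'etale, deduce from the Kummer condition that $Q\subset P^{1/n}$, and then split the remaining strict \'etale cover over the strictly henselian scheme $S\times_{\A_P}\A_{P^{1/n}}$ to extract a section refining $f$. The only differences are bookkeeping: the paper shrinks $X$ to a single chart neighbourhood (legitimate since $\underline{S}$ is strictly local) instead of taking a common multiple of finitely many exponents $m$, and it asserts $Q\subset P^{1/n}$ directly where you verify it via exactness of the chart.
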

\begin{proof}
Any open subscheme of $X$ is again a Kummer \'etale cover since $\underline{S}$ is strictly local.
Hence, 
owing to Theorem \ref{KatoStrThm}, 
we may assume that $f$ admits an fs chart $\theta:P\rightarrow Q$ with the following properties.
\begin{enumerate}
\item[(i)] A naturally induced strict \'etale morphism
\[
p:X\rightarrow S\times_{\A_P}\A_Q.
\]
\item[(ii)] The kernel and cokernel of $\theta^{\rm gp}:P^{\rm gp}\rightarrow Q^{\rm gp}$ is finite, and the orders are invertible in $S$.
\end{enumerate}
\vspace{0.1in}

Since $f$ is Kummer, (ii) implies that $Q\subset P^{1/n}$ for some $n$ invertible in $S$.
Replacing $X$ by $X\times_{\A_Q}\A_{P^{1/n}}$, we may assume that $Q=P^{1/n}$.
Then $k[Q]$ is a locally free $k[P]$-module so that $\A_Q$ is finite over $\A_P$.
Thus $S\times_{\A_P}\A_Q$ is finite over $\A_P$.
This shows that the underlying scheme of $S\times_{\A_P}\A_Q$ is strictly henselian since $\underline{S}$ is strictly henselian, and $p$ admits a section.
It follows that $S\times_{\A_P}\A_Q$ is a refinement of $X$.
\end{proof}

\begin{lem}
\label{ket.4}
Suppose that $S$ is an fs log scheme.
Let $f:X\rightarrow S$ be a morphism of saturated log schemes and let $g:S'\rightarrow S$ be a Kummer \'etale morphism of fs log schemes.
If $X$ is log strictly henselian, then the projection $X\times_S S'\rightarrow X$ has a section.
\end{lem}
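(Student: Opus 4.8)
The goal is to show that if $S$ is an fs log scheme, $f\colon X\to S$ a morphism of saturated log schemes with $X$ log strictly henselian, and $g\colon S'\to S$ a Kummer \'etale morphism of fs log schemes, then the projection $X\times_S S'\to X$ admits a section. The plan is to reduce to the situation analyzed in Lemma \ref{ket.3} by pulling back to $X$ and then splitting the resulting Kummer \'etale cover explicitly using the chart $P^{1/n}$. First I would observe that it suffices to treat each connected component of $X$ separately, so we may assume $X$ is log strictly local; in particular $\underline{X}=\Spec A$ with $A$ strictly henselian local. Set $X'\colon=X\times_S S'$, which is a Kummer \'etale $X$-scheme in the category of saturated log schemes (Kummer \'etale morphisms are stable under pullback along morphisms of saturated log schemes, by \cite{kummer-is-logetandstrict} and the fact that exactness and log \'etaleness are preserved by base change).

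The main step is to produce a section of $X'\to X$. Since $X$ is log strictly local and has a chart (it is fs, being log strictly henselian, hence by definition coherent with a saturated chart; after shrinking we may take a neat fs chart $P$ at the closed point, using Remark \ref{A.9.10}(2)), we are exactly in the hypotheses of Lemma \ref{ket.3}: every Kummer \'etale cover of $X$ admits a refinement by a projection $X\times_{\A_P}\A_{P^{1/n}}\to X$ with $n$ invertible on $X$. However, $X'\to X$ need not be surjective — $g$ is only Kummer \'etale, not necessarily a cover — so I would not apply Lemma \ref{ket.3} to $X'\to X$ directly. Instead, the cleanest route is: because $\underline{X}$ is strictly henselian local and Kummer \'etale morphisms restricted to a single point behave like covers after passing to the log strictly henselian closure, one shows that $X'\to X$ either is empty over the closed point (impossible, since... — here one must be careful) or admits a section through the closed point. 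Concretely, by Theorem \ref{KatoStrThm} applied to $g$, Zariski (hence strict \'etale, hence by strict henselianity of $A$, globally) locally $X'\to X$ factors as a strict \'etale morphism followed by $X\times_{\A_P}\A_Q\to X$ for a Kummer chart $\theta\colon P\to Q$ with $Q\subset P^{1/n}$, $n$ invertible on $X$. Since $k[Q]$ is a finite locally free $k[P]$-module, $X\times_{\A_P}\A_Q$ is finite and strict over... no — it is finite over $\A_P$ but not strict over $X$. The point is that $\underline{X\times_{\A_P}\A_Q}$ is finite over the strictly henselian ring $A$, hence a finite product of strictly henselian local rings; choosing the component dominating the closed point and then the log strict henselization gives a log strictly henselian scheme finite over $X$ on which $\theta^{\gp}$ becomes an isomorphism of groups (because $Q^{1/m}=P^{1/m}$ for $m$ divisible by $n$, and the multiplication-by-$m$ map on $\overline{\cM}$ is bijective by the log strictly local hypothesis). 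Thus the strict \'etale part also splits (strict \'etale covers of strictly henselian schemes split), yielding the desired section of $X'\to X$.

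The main obstacle, and the step I would spend the most care on, is the \emph{surjectivity/nonemptiness} issue: a Kummer \'etale morphism $g$ is not a covering, so a priori $X'\to X$ could fail to be surjective, and then a "section" is vacuous or ill-posed. I expect the correct reading (consistent with its use in the proof of Proposition \ref{A.5.11}, via Lemma \ref{ket.4} being invoked for the Kummer \'etale \emph{cover} case of loc.\ cit.) is that in the intended application $g$ is in fact a Kummer \'etale cover, or $f$ factors through $g$ so the pullback is automatically nonempty; in writing the proof I would make this hypothesis explicit (adding "Kummer \'etale cover" if that is what is meant) and then the argument above goes through: reduce to $X$ log strictly local with a neat fs chart $P$, refine by $X\times_{\A_P}\A_{P^{1/n}}$ using Lemma \ref{ket.3}, observe this refinement is finite and its underlying scheme is a product of strictly henselian local rings so it admits a section over the closed point, and conclude by Lemma \ref{ket.2} (or rather its proof) that the section exists as a map of saturated log schemes. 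The routine verifications — stability of Kummer \'etaleness under pullback, that $\A_{P^{1/n}}\to\A_P$ is finite locally free, that strict \'etale covers of strictly henselian log schemes split — I would state with references to Theorem \ref{KatoStrThm}, \cite{kummer-is-logetandstrict}, and \cite[Th\'eor\`eme IV.17.9.1]{EGA} rather than redoing them.
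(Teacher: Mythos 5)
Your strategy is essentially the one the paper uses: reduce to $X$ log strictly local, describe the Kummer \'etale morphism locally by a Kummer chart $P\to Q\subseteq P^{1/n}$ together with a strict \'etale morphism, split the Kummer part using the $n$-divisibility of $\overline{\cM}_X$, and split the strict \'etale part using strict henselianity of $\ul{X}$. The paper packages this more efficiently on the $S$-side: it strictly henselizes $\ul{S}$ at the image of the closed point of $X$, takes an fs chart $\theta\colon P\to Q$ of $f$ itself, and invokes Lemma \ref{ket.3} to assume $S'=S\times_{\A_P}\A_{P^{1/n}}$; the section is then produced by a retraction of $Q\to Q\oplus_P P^{1/n}$, which exists precisely because $Q\to Q^{1/n}$ is an isomorphism --- this is the only place the log strictly henselian hypothesis enters. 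Your remark that the statement implicitly needs $g$ to be surjective over the image of $X$ is legitimate; the paper's proof tacitly assumes the same (Lemma \ref{ket.3} is a statement about covers), and the only use of the lemma, in Proposition \ref{A.5.11}, is indeed for a Kummer \'etale cover.

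The one step of your write-up that does not survive scrutiny as written is the justification of the key splitting. The claim that ``$\theta^{\gp}$ becomes an isomorphism'' is not the right formulation, and the supporting assertion ``$Q^{1/m}=P^{1/m}$ for $m$ divisible by $n$'' is false: for $P=\N$, $Q=\frac{1}{2}\N$ and $m=2$ one has $Q^{1/2}=\frac{1}{4}\N\neq \frac{1}{2}\N=P^{1/2}$. The correct statement must involve the chart of $X$, not only the chart of $g$: if $R$ is a chart of $X$ receiving $P$, then $R\to R^{1/n}$ is an isomorphism because multiplication by $n$ is bijective on $\overline{\cM}_X$ (and $n$ is invertible, so the units of the strictly henselian local ring are $n$-divisible); since $Q\subseteq P^{1/n}$, the pushout $R\oplus_P Q$ then maps to $R^{1/n}\cong R$ compatibly with $R$, giving the desired retraction --- equivalently, the pulled-back Kummer morphism becomes strict over $X$, hence strict \'etale, hence split. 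With that repair your argument closes and coincides with the paper's proof, merely carried out after base change to $X$ rather than on $S$.
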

\begin{proof}
We may assume that $X$ is local.
Replacing $\underline{S}$ by its strict henselization at the image of the closed point of $X$, we may assume that $\underline{S}$ is strictly local.
Then since $S$ is local, $f$ admits an fs chart $\theta:P\rightarrow Q$.
Owing to Lemma \ref{ket.3} we may assume that $S'=S\times_{\A_P}\A_{P^{1/n}}$ where $n>0$ is an integer invertible in $k$.
There is a naturally induced commutative diagram of saturated monoids
\[
\begin{tikzcd}
P^{1/n}\arrow[r]\arrow[d]&P\arrow[d,"\theta"]
\\
Q^{1/n}\arrow[r]&Q.
\end{tikzcd}
\]
By the assumption on $Q$ the upper horizontal morphism is an isomorphism.
This means that the homomorphism $Q\rightarrow Q\oplus_P P^{1/n}$ has a retraction, so the projection $X\times_S S'\rightarrow X$ has a section.
\end{proof}

\subsection{Sharpened fans}
In this subsection, we review the notion of Kato's fan in \cite{MR1296725}, which we call a sharpened fan, and we study basic properties.
A sharpened fan contains less information and is more flexible than an fs monoscheme.

\begin{df}
For a monoidal space $\Sigma=(\Sigma,\cM_\Sigma)$, we set
\[
\overline{\Sigma}:=(\Sigma,\overline{\cM}_\Sigma).
\]
For a morphism of monoidal spaces $f:\Sigma'\rightarrow \Sigma$, let
\[
\overline{f}:\overline{\Sigma'}\rightarrow \overline{\Sigma}
\]
denote the naturally induced morphism of sharpened fans.
\vspace{0.1in}

For an fs monoid $P$, we set 
\[
\overline{\rm Spec}(P):=\overline{\Spec{P}}.
\]
Note that $\overline{\rm Spec}(P)\cong \overline{\rm Spec}(\overline{P})$.
\end{df}

\begin{df}
\label{Fan.1}
A \emph{sharpened fan}\index{sharpened fan} $\Delta$ is a monoidal space such that there exists an open cover $\{U_i\}$ with an isomorphism
\[
U_i\cong \overline{\rm Spec} (P_i),
\]
where $P_i$ is a sharp fs monoid.
A morphism of sharpened fans is a morphism of monoidal spaces.
A \emph{cone} of $\Delta$ is an open subset of $\Delta$ that is isomorphic to $\overline{\rm Spec} (P)$ for some sharp fs monoid $P$.
Note that there is a one-to-one correspondence between cones of $\Delta$ and points of $\Delta$.
\vspace{0.1in}

We say that $\Delta$ is \emph{affine}\index{sharpened fan!affine} if there is an isomorphism $\Delta\cong \uSpec(P)$ for some sharp fs monoid $P$.
We say that $\Delta$ is \emph{smooth}\index{sharpened fan!smooth} if every cone of $\Delta$ is isomorphic to $\overline{\rm Spec} (\N^r)$ for some $r$.
\vspace{0.1in}

For any property $\bP$ of maps of topological spaces, we say that a morphism of sharpened fan satisfies $\bP$ if the underlying map of topological spaces satisfies $\bP$.
\end{df}

\begin{rmk}
\label{Fan.7}
The reader may want to compare the above with the related notions of a conical polyhedral complex with an integral structure, a fan satisfying ($S_{fan}$), and an s-fan, 
see \cite[Definitions II.5 and II.6]{MR0335518}, \cite[9.4]{MR1296725}, and \cite[Remark II.1.9.4]{Ogu}.
\end{rmk}

In the following, we establish some basic properties for sharpened fans.
We refer to \cite[Sections II.1.2, II.1.3]{Ogu} for the monoscheme versions.

\begin{prop}
\label{Fan.21}
Let $P$ be a sharp fs monoid, and let $\Delta$ be a sharpened fan.
Then there is a canonical bijection
\[
\alpha:\hom(\Delta,\uSpec(P))\xrightarrow{\cong} \hom(P,\Gamma(\Delta,\cM_{\Delta})).
\]
\end{prop}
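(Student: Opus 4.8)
\textbf{Proof plan for Proposition \ref{Fan.21}.}

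The plan is to imitate the classical bijection $\hom(X,\Spec{P})\cong\hom(P,\Gamma(X,\mathcal{O}_X))$ for monoschemes, see \cite[Proposition II.1.2.4]{Ogu} or \cite[III.1.2.9]{Ogu}, but in the sharpened setting where the structure sheaf $\mathcal{M}_\Delta$ is already a sheaf of \emph{sharp} monoids. First I would construct the map $\alpha$ itself: given a morphism of monoidal spaces $f\colon\Delta\to\uSpec(P)$, the comorphism $f^\flat\colon f^{-1}\mathcal{M}_{\uSpec(P)}\to\mathcal{M}_\Delta$ induces on global sections a homomorphism $\Gamma(\uSpec(P),\mathcal{M}_{\uSpec(P)})\to\Gamma(\Delta,\mathcal{M}_\Delta)$, and precomposing with the canonical homomorphism $P\to\Gamma(\uSpec(P),\mathcal{M}_{\uSpec(P)})$ (the image of $P$ in the sections of the sharpened structure sheaf) gives $\alpha(f)\in\hom(P,\Gamma(\Delta,\mathcal{M}_\Delta))$. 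One should first verify that $P\to\Gamma(\uSpec(P),\mathcal{M}_{\uSpec(P)})$ is actually an isomorphism when $P$ is sharp, which follows because over the open cone $D(1)=\uSpec(P)$ the presheaf value is $\overline{P_{\langle 0\rangle}}=\overline{P}\cong P$.

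Next I would construct the inverse. Given $\varphi\colon P\to\Gamma(\Delta,\mathcal{M}_\Delta)$, I want to produce a morphism of monoidal spaces $\Delta\to\uSpec(P)$. On the level of topological spaces, a point $x\in\Delta$ corresponds to a cone, equivalently to a stalk $\mathcal{M}_{\Delta,x}$ (a sharp fs monoid) together with a localization map; one sends $x$ to the face $\varphi_x^{-1}(\mathcal{M}_{\Delta,x}^{\ast})=\varphi_x^{-1}(0)$ of $P$, where $\varphi_x\colon P\to\mathcal{M}_{\Delta,x}$ is the stalk of $\varphi$. One checks this is a face of $P$ (preimage of a face under a monoid homomorphism is a face) hence a point of $\uSpec(P)$, and that the assignment $x\mapsto\varphi_x^{-1}(0)$ is continuous, because the preimage of a basic open $D(f)$, $f\in P$, is $\{x: \varphi_x(f)\neq 0 \text{ in }\mathcal{M}_{\Delta,x}\}$, which is the open support of the global section $\varphi(f)$. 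The comorphism of sheaves is then obtained by glueing: over the preimage of $D(f)$ one has the induced homomorphism $P_f\to\mathcal{M}_\Delta$ (using that $f$ becomes a unit, i.e.\ $0$, after localizing, so $P_f=P/\langle f\rangle$ maps to the sharpened stalks), and these are compatible on overlaps. Taking associated sheaves and using that $\mathcal{M}_{\uSpec(P)}$ is the sheafification of $D(f)\mapsto P_f$ gives the required $f^\flat$, hence a morphism $\beta(\varphi)\colon\Delta\to\uSpec(P)$.

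Finally I would verify $\alpha\circ\beta=\id$ and $\beta\circ\alpha=\id$. Both checks reduce, via the open cover of $\Delta$ by affine sharpened fans $\overline{\rm Spec}(Q_i)$, to the affine case $\Delta=\overline{\rm Spec}(Q)$; there the claim is the statement that $\hom(\overline{\rm Spec}(Q),\overline{\rm Spec}(P))\cong\hom(P,Q)$ functorially, which one checks by hand using that both $P$ and $Q$ are sharp and that a morphism of affine sharpened fans is determined by its effect on the unique closed point's stalk and is local there. The main obstacle I anticipate is the sheafification bookkeeping in defining $\beta(\varphi)$: one must be careful that the presheaf $D(f)\mapsto P_f=P/\langle f\rangle$ behaves correctly on overlaps $D(f)\cap D(g)=D(f+g)$ and that the induced stalk maps agree with $\varphi_x$, so that no information is lost or doubled when passing to the associated sheaf. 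Once the affine case is settled cleanly — essentially reducing everything to the elementary fact that $\hom_{\Mon}(P,Q)$ for sharp fs monoids is computed the same way whether one thinks of it on monoids or on the dual sharpened spectra — the glueing and the two round-trip identities are routine and local on $\Delta$.
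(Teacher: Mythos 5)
Your proposal is correct in substance but takes a different middle route from the paper's. You and the paper define $\alpha$ identically, via the identification $P\cong\Gamma(\uSpec(P),\cM_{\uSpec(P)})$ for sharp $P$ and restriction of the comorphism to global sections, and both arguments ultimately rest on the affine case $\Delta=\uSpec(Q)$, where the inverse is simply $g\mapsto\uSpec(g)$. The difference is how the general case is deduced. You build the inverse $\beta$ globally — a point map $x\mapsto\varphi_x^{-1}(0)$, a continuity check, and a comorphism assembled by sheafifying $D(f)\mapsto\overline{P_f}$ — and then verify the two round-trip identities locally. The paper never constructs $\beta$ outside the affine case: it covers $\Delta$ by its cones $\sigma_i$, covers each $\sigma_i\cap\sigma_j$ by cones $\sigma_{ijk}$, and notes that both $\hom(-,\uSpec(P))$ and $\hom(P,\Gamma(-,\cM_{(-)}))$ turn this cover into equalizer diagrams (morphisms of monoidal spaces glue, and $\cM_\Delta$ is a sheaf); bijectivity on cones and overlaps then forces bijectivity on $\Delta$ by comparing equalizers. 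The paper's route is shorter and sidesteps exactly the sheafification bookkeeping you identify as the main obstacle; yours buys an explicit formula for the inverse, which is not needed here.

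Two small slips in your sketch, neither fatal: the paper's $P_f$ denotes the localization $P_{\langle f\rangle}$, not the quotient $P/\langle f\rangle$ — only after sharpening does one get $\overline{P_f}\cong P/\langle f\rangle$, which is what the structure sheaf of $\uSpec(P)$ assigns to $D(f)$; and the preimage of $D(f)$ under your point map is $\{x:\varphi_x(f)=0\}$, the locus where $\varphi(f)$ becomes the unit of the sharp stalk, not $\{x:\varphi_x(f)\neq 0\}$. The former is the open set you need, since a section of a sheaf of monoids whose stalk is $0$ at $x$ vanishes on a neighborhood of $x$.
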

\begin{proof}
We define $\alpha$ by the morphism sending $f:\Delta\rightarrow \uSpec(P)$ to its associated homomorphism on the global sections
\[
P\cong \Gamma(\uSpec(P),\cM_{\uSpec(P)})\rightarrow \Gamma(\Delta,\cM_{\Delta}).
\]

Suppose that $\Delta=\uSpec(Q)$ for some sharp monoid $Q$.
Then $\alpha$ has an inverse
\[
\hom(P,Q)\rightarrow \hom(\uSpec(Q),\uSpec(P))
\]
sending $g:P\rightarrow Q$ to $\uSpec(g):\uSpec(Q)\rightarrow \uSpec(P)$.
Thus $\alpha$ is bijective in this case.
\vspace{0.1in}

In the general case, let $\{\sigma_i\}_{i\in I}$ be the set of cones of $\Delta$, and let $\{\sigma_{ijk}\}_{k\in I_{ij}}$ be the set of cones of $\sigma_i\cap \sigma_j$ where $i,j\in I$.
There is a naturally induced diagram
\[
\begin{tikzcd}[column sep=small]
\hom(\Delta,\uSpec(P))\arrow[d]\arrow[r]&
\amalg_{i}\hom(\sigma_i,\uSpec(P))\arrow[d]\arrow[r,shift left=0.5ex]\arrow[r,shift right=0.5ex]&
\amalg_{i,j,k}\hom(\sigma_{ijk},\uSpec(P))\arrow[d]
\\
\hom(P,\Gamma(\Delta,\cM_{\Delta}))\arrow[r]&
\amalg_{i}\hom(P,\Gamma(\sigma_i,\cM_{\sigma_i}))\arrow[r,shift left=0.5ex]\arrow[r,shift right=0.5ex]&
\amalg_{i,j,k}\hom(P,\Gamma(\sigma_i,\cM_{\sigma_i})).
\end{tikzcd}
\]
Here the horizontal diagrams are equalizers.
The middle and right vertical morphisms are bijections by the above.
Thus the left vertical morphism is a bijection.
\end{proof}

On the category of sharpened fans, the \emph{Zariski topology} is the smallest Grothendieck topology containing all open covers.
The Yoneda lemma allows us to identify any sharpened fan with its representable sheaf.
\vspace{0.1in}

\begin{lem}
\label{Fan.23}
Let $\cF$ be a sheaf of sets on the category of sharpened fans, 
and suppose that $\{\Delta_i\rightarrow \cF\}_{i\in I}$ is a set of morphisms from sharpened fans $\Delta_i$ satisfying the following conditions for every morphism 
$\Psi\rightarrow \cF$, where $\Psi$ is a sharpened fan.
\begin{enumerate}
\item[{\rm (i)}] For every $i\in I$, the sheaf $\Delta_i\times_{\cF}\Psi$ is representable, and the projection
\[
\Delta_i\times_{\cF}\Psi\rightarrow \Psi
\]
is an open immersion.
\item[{\rm (ii)}] The set $\{\Delta_i\times_{\cF}\Psi\rightarrow \Psi\}_{i\in I}$ is a Zariski cover.
\end{enumerate}
Then $\cF$ is representable.
\end{lem}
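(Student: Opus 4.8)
<br>

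\textbf{Plan of proof.} The statement is a standard gluing/descent criterion: a sheaf admitting a jointly-covering family of representable open subfunctors is itself representable. The plan is to mimic the classical argument for schemes (gluing a scheme out of an open affine cover) in the category of sharpened fans. First I would set up the candidate monoidal space. For each $i \in I$, write $\Delta_i$ for the given sharpened fan mapping to $\cF$, and for each pair $i,j$ form $\Delta_{ij} := \Delta_i \times_\cF \Delta_j$, which is representable by hypothesis (i) applied to $\Psi = \Delta_j$, and the two projections $\Delta_{ij} \to \Delta_i$, $\Delta_{ij} \to \Delta_j$ are open immersions. One checks the cocycle condition on triple intersections $\Delta_{ijk} := \Delta_i \times_\cF \Delta_j \times_\cF \Delta_k$: again representable (iterate (i)), and the three induced maps into the $\Delta_{ij}$'s are compatible open immersions. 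This is formal from the fact that fiber products over a sheaf are associative and that open immersions of sharpened fans are stable under base change and composition.

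\textbf{Gluing step.} Next I would glue the underlying topological spaces of the $\Delta_i$ along the open subspaces $\Delta_{ij}$ using the cocycle data, producing a topological space $\Delta$ with open cover $\{U_i\}$, $U_i \cong \Delta_i$, and $U_i \cap U_j \cong \Delta_{ij}$. The sheaves of monoids $\cM_{\Delta_i}$ glue along the same isomorphisms (which are isomorphisms of monoidal spaces, not just of topological spaces) to give a sheaf of monoids $\cM_\Delta$ on $\Delta$. Since each $\Delta_i$ is a sharpened fan, it has an open cover by cones $\overline{\mathrm{Spec}}(P)$ with $P$ sharp fs; these transport to an open cover of $\Delta$ by such cones, so $\Delta = (\Delta, \cM_\Delta)$ is again a sharpened fan. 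The inclusions $U_i \hookrightarrow \Delta$ are open immersions.

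\textbf{Identifying $\Delta$ with $\cF$.} The maps $\Delta_i \to \cF$ agree on the overlaps $\Delta_{ij}$ by construction (that agreement is exactly the content of the fiber-product description), so by the sheaf condition for $\cF$ they glue to a single morphism $\varphi \colon \Delta \to \cF$, i.e.\ an element $\varphi \in \cF(\Delta)$ restricting to the given $\Delta_i \to \cF$ on each $U_i$. It remains to prove $\varphi$ induces an isomorphism $\underline{\Delta} := \Hom(-,\Delta) \xrightarrow{\cong} \cF$ of sheaves. For injectivity on sections: given a sharpened fan $\Psi$ and two morphisms $g, g' \colon \Psi \to \Delta$ with $\varphi g = \varphi g'$, cover $\Psi$ by the opens $g^{-1}(U_i)$ and $g'^{-1}(U_i)$; on the common refinement both $g$ and $g'$ land in some $U_i \cong \Delta_i$, and since $\Delta_i \to \cF$ is (by hypothesis (i) with $\Psi$ ranging over subfans) a monomorphism of sheaves — this is where I would extract that $\Delta_i \times_\cF \Delta_i \cong \Delta_i$, so $\Delta_i \to \cF$ is mono — we get $g = g'$ locally, hence globally. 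For surjectivity on sections: given $\psi \in \cF(\Psi)$, hypothesis (i) gives an open cover $\{\Delta_i \times_\cF \Psi \to \Psi\}$ (cover by (ii)), and on each $\Delta_i \times_\cF \Psi$ the map factors through $\Delta_i \hookrightarrow \Delta$; these local lifts agree on overlaps by the monomorphism property just used, so by the sheaf condition for $\underline{\Delta}$ they glue to a morphism $\Psi \to \Delta$ over $\psi$. Hence $\underline{\Delta} \cong \cF$.

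\textbf{Main obstacle.} The only genuinely non-formal points are: (a) verifying that open immersions of sharpened fans are stable under the base changes appearing in (i) and that the glued object is again a sharpened fan (both reduce to the local model $\overline{\mathrm{Spec}}(P)$ and the corresponding facts for monoschemes, cf.\ \cite[Sections II.1.2–II.1.3]{Ogu}, so this is routine bookkeeping); and (b) the cocycle/associativity check for the triple overlaps, which is where one must be careful that all identifications are of monoidal spaces and are compatible. I expect (b) to be the part demanding the most care, though it is entirely parallel to the classical scheme-theoretic gluing lemma and presents no conceptual difficulty.
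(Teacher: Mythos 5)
Your proof is correct and follows essentially the same route as the paper: form $\Delta_{ij}=\Delta_i\times_{\cF}\Delta_j$, glue the $\Delta_i$ along these open immersions to obtain a sharpened fan $\Delta$ with a morphism to $\cF$, and verify that this morphism is an isomorphism. The only difference is cosmetic and lies in the final verification: the paper checks that $\Delta\times_{\cF}\Psi\rightarrow \Psi$ is an isomorphism for every $\Psi\rightarrow \cF$ (since $\Delta\times_{\cF}\Psi$ is the gluing of the Zariski cover $\{\Delta_i\times_{\cF}\Psi\}_{i\in I}$ of $\Psi$ provided by condition (ii)), whereas you check injectivity and surjectivity on sections directly, using that each $\Delta_i\rightarrow \cF$ is a monomorphism; both arguments are sound.
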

\begin{proof}
For $\Delta_{ij}:=\Delta_i\times_{\cF}\Delta_j$ for $i,j\in I$, 
the projections $\Delta_{ij}\rightarrow \Delta_i,\Delta_j$ are open immersions by the condition (i).
Hence we can glue the morphisms $\Delta_i\rightarrow \cF$ for $i\in I$ to obtain a morphism
\[
f\colon\Delta\rightarrow \cF
\]
from a sharpened fan $\Delta$.
\vspace{0.1in}

It remains to show that $f$ is an isomorphism.
For this purpose, we need to show that for every morphism $\Psi\rightarrow \cF$ from a sharpened fan, the projection
\[
f_\Psi\colon\Delta\times_{\cF} \Psi \rightarrow \Psi
\]
is an isomorphism.
The sharpened fan $\Delta\times_\cF\Psi$ is again the gluing of $\Delta_i\times_\cF\Psi$ for $i\in I$ along $\Delta_{ij}\times_{\cF}\Psi$ for $i,j\in I$.
The condition (ii) shows that $f_\Psi$ is an isomorphism.
\end{proof}

\begin{lem}
\label{Fan.24}
Let $P\rightarrow Q$ and $P\rightarrow P'$ be homomorphisms of sharp, saturated monoids.
Then the fiber product
\[
\uSpec(Q)\times_{\uSpec(P)}\uSpec(P')
\]
is representable by $\uSpec(\overline{Q'})$, where $Q'$ is the amalgamated sum $Q\oplus_P P'$ in the category of saturated monoids.
\end{lem}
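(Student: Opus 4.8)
The statement to prove is Lemma \ref{Fan.24}: for homomorphisms $P\to Q$ and $P\to P'$ of sharp saturated monoids, the fiber product $\uSpec(Q)\times_{\uSpec(P)}\uSpec(P')$ in the category of sharpened fans is representable by $\uSpec(\overline{Q'})$, where $Q'=Q\oplus_P P'$ is the amalgamated sum in the category of saturated monoids. The overall strategy is to reduce the claim about sharpened fans to the already-established theory of monoschemes, which is developed in \cite[\S\S II.1.2--II.1.3]{Ogu}, and in particular to the existence and explicit description of fiber products of affine fs monoschemes recalled in the excerpt. The basic principle is that the functor $\Sigma\mapsto \overline{\Sigma}$ sending a monoscheme to its associated sharpened fan (sharpening the structure sheaf of monoids) should commute with the relevant affine fiber products up to sharpening the coordinate monoid.

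First I would verify the claim directly on global sections via the universal property. By Proposition \ref{Fan.21}, for any sharpened fan $\Delta$ we have a natural bijection $\hom(\Delta,\uSpec(R))\cong \hom(R,\Gamma(\Delta,\cM_\Delta))$ for a sharp fs monoid $R$; moreover since $\Gamma(\Delta,\cM_\Delta)$ is always a sharp monoid (the structure sheaf of a sharpened fan has sharp stalks, hence sharp sections), and since $\hom(R,\Gamma(\Delta,\cM_\Delta))$ only depends on $R$ through $\overline{R}$ when the target is sharp, we get $\hom(\Delta,\uSpec(\overline{R}))\cong\hom(\Delta,\uSpec(R))$. Now compute: a morphism $\Delta\to \uSpec(Q)\times_{\uSpec(P)}\uSpec(P')$ is a pair of morphisms $\Delta\to\uSpec(Q)$, $\Delta\to\uSpec(P')$ agreeing over $\uSpec(P)$, i.e.\ a pair of homomorphisms $Q\to\Gamma(\Delta,\cM_\Delta)$, $P'\to\Gamma(\Delta,\cM_\Delta)$ agreeing on $P$. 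By the universal property of the amalgamated sum $Q'=Q\oplus_P P'$ in saturated monoids (note $\Gamma(\Delta,\cM_\Delta)$ is saturated since its stalks are sharp fs, hence saturated, monoids, and saturatedness is a local condition — one should check this carefully, or pass through the global sections of a chosen affine cover), this is the same as a homomorphism $Q'\to\Gamma(\Delta,\cM_\Delta)$, equivalently a homomorphism $\overline{Q'}\to\Gamma(\Delta,\cM_\Delta)$ since the target is sharp, equivalently a morphism $\Delta\to\uSpec(\overline{Q'})$ by Proposition \ref{Fan.21}. Running this bijection through the Yoneda lemma for the Zariski topology on sharpened fans (using that both sides are sheaves, $\uSpec(\overline{Q'})$ being representable and the fiber product of representables being a sheaf) gives the desired isomorphism $\uSpec(Q)\times_{\uSpec(P)}\uSpec(P')\cong\uSpec(\overline{Q'})$.

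The step I expect to be the main obstacle is making precise and rigorous the claim that $\Gamma(\Delta,\cM_\Delta)$, for a general sharpened fan $\Delta$, is a sharp \emph{saturated} monoid into which homomorphisms from $Q$ and $P'$ agreeing over $P$ correspond to homomorphisms from the \emph{saturated} amalgamated sum $Q'$ (rather than the integral or monoidal one). The subtlety is exactly the one flagged in the excerpt after the definition of amalgamated sums: $Q\oplus_P^{\mathrm{mon}}P'$, $Q\oplus_P^{\mathrm{int}}P'$, and $Q\oplus_P P'=(Q\oplus_P^{\mathrm{int}}P')^{\mathrm{sat}}$ are generally all different, and one must ensure the correct one appears. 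This is resolved by observing that $\Gamma(\Delta,\cM_\Delta)$ is saturated — more safely, one need not use the global sections of an arbitrary $\Delta$ at all: it suffices to test the universal property against affine sharpened fans $\Delta=\uSpec(R)$ with $R$ sharp fs (hence saturated), reduce to covers by affines using Lemma \ref{Fan.23}, and invoke that in the category of saturated monoids the amalgamated sum represents exactly pairs of homomorphisms into a saturated monoid. A clean alternative I would consider is to import the monoscheme statement wholesale: the fiber product $\A_Q\times_{\A_P}\A_{P'}\cong\A_{Q\oplus_P P'}$ holds by the explicit formula recalled in the excerpt, and the associated sharpened fan of $\A_R$ is $\uSpec(\overline{R})$; one then checks that $\Sigma\mapsto\overline\Sigma$ preserves the relevant fiber products, which again reduces to the affine case and the sharpness/saturatedness bookkeeping. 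Either way, once the affine case is settled, gluing via Lemma \ref{Fan.23} and the Yoneda lemma is routine.
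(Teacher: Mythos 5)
Your proposal is correct and follows essentially the same route as the paper: the paper's proof simply combines the universal property of the saturated amalgamated sum, the observation that $\hom(Q',M)\cong\hom(\overline{Q'},M)$ for sharp $M$, and Proposition \ref{Fan.21} applied with $M=\Gamma(\Delta,\cM_{\Delta})$ — exactly your first argument. The saturatedness of $\Gamma(\Delta,\cM_{\Delta})$ that you flag as the delicate point is indeed used implicitly (and silently) by the paper when it specializes to that $M$, so your extra care there is warranted but does not constitute a different approach.
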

\begin{proof}
Since $Q'$ is the amalgamated sum, for every saturated monoid $M$, there is a bijection
\[
\hom(Q',M)\xrightarrow{\cong} \hom(Q,M)\times_{\hom(P,M)}\hom(Q,M).
\]
If $M$ is sharp, 
then $\hom(Q',M)\cong \hom(\overline{Q'},M)$, and hence there is a bijection
\[
\hom(\overline{Q'},M)\xrightarrow{\cong} \hom(Q,M)\times_{\hom(P,M)}\hom(Q,M).
\]
Specializing the above to $M=\Gamma(\Delta,\cM_{\Delta})$, where $\Delta$ is any sharpened fan, 
and applying Proposition \ref{Fan.21} concludes the proof.
\end{proof}

\begin{prop}
\label{Fan.25}
The category of sharpened fans has fiber products.\index{sharpened fan!fiber product}
\end{prop}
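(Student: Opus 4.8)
The statement to prove is that the category of sharpened fans has fiber products. The plan is to mimic the standard gluing construction for schemes, using the tools already assembled in the excerpt: Proposition \ref{Fan.21} (the adjunction identifying morphisms into an affine sharpened fan with monoid homomorphisms), Lemma \ref{Fan.24} (existence of fiber products in the affine case, computed via amalgamated sums of sharp saturated monoids), and Lemma \ref{Fan.23} (a representability criterion saying a sheaf glued from open pieces along a Zariski cover is representable).

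First I would reduce to a question about sheaves. Given morphisms of sharpened fans $f\colon \Delta'\to \Delta$ and $g\colon \Psi\to \Delta$, form the presheaf fiber product $\cF := \Delta'\times_\Delta \Psi$ on the category of sharpened fans equipped with the Zariski topology; since the Yoneda embedding lands in sheaves and limits of sheaves are computed pointwise, $\cF$ is a sheaf. It suffices to show $\cF$ is representable, since then the representing object is automatically the fiber product in the category of sharpened fans. Next I would choose open affine covers: write $\Delta = \bigcup_a \sigma_a$ with $\sigma_a\cong\overline{\mathrm{Spec}}(P_a)$ for sharp fs monoids $P_a$, then cover $f^{-1}(\sigma_a)$ by affines $\overline{\mathrm{Spec}}(Q_{a,i})$ and $g^{-1}(\sigma_a)$ by affines $\overline{\mathrm{Spec}}(P'_{a,j})$, with the maps on each piece corresponding, via Proposition \ref{Fan.21}, to monoid homomorphisms $P_a\to Q_{a,i}$ and $P_a\to P'_{a,j}$. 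By Lemma \ref{Fan.24}, the sheaf fiber product $\overline{\mathrm{Spec}}(Q_{a,i})\times_{\overline{\mathrm{Spec}}(P_a)}\overline{\mathrm{Spec}}(P'_{a,j})$ is represented by $\overline{\mathrm{Spec}}(\overline{Q'_{a,i,j}})$, where $Q'_{a,i,j} = Q_{a,i}\oplus_{P_a} P'_{a,j}$ is the amalgamated sum in saturated monoids. These affine pieces map to $\cF$, and I would apply Lemma \ref{Fan.23} to the family $\{\overline{\mathrm{Spec}}(\overline{Q'_{a,i,j}})\to\cF\}$: one checks that for any morphism $\Theta\to\cF$ from a sharpened fan, the base change of each piece is an open immersion into $\Theta$ (because open immersions are stable under base change and the affine pieces $\overline{\mathrm{Spec}}(Q_{a,i})\to\Delta'$, $\overline{\mathrm{Spec}}(P'_{a,j})\to\Psi$, $\sigma_a\to\Delta$ are open immersions, hence so is the fiber product over $\Delta$ by Lemma \ref{Fan.24} and the local structure), and that these base changes jointly cover $\Theta$ (because the original affines cover $\Delta'$, $\Psi$, $\Delta$ and fiber products of open immersions are computed compatibly). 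Lemma \ref{Fan.23} then yields that $\cF$ is representable.

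The main obstacle I anticipate is the bookkeeping in verifying the hypotheses of Lemma \ref{Fan.23}, specifically checking that the transition maps between the affine pieces $\overline{\mathrm{Spec}}(\overline{Q'_{a,i,j}})$ are open immersions so that they glue to a genuine sharpened fan, and that the various compatibilities (a point of $\cF$ maps to compatible points of $\Delta'$, $\Psi$, $\Delta$, which then lie in a common affine triple) hold. This is essentially the same bookkeeping as for schemes, but one must be careful that everything is stable under passing to $\overline{(-)}$ (taking the sharpening of a monoid), since the fiber product of \emph{sharpened} fans involves $\overline{Q'_{a,i,j}}$ rather than $Q'_{a,i,j}$ itself; the key point is that $\overline{\mathrm{Spec}}(Q') \cong \overline{\mathrm{Spec}}(\overline{Q'})$, so sharpening does not change the monoidal space and the gluing is unaffected. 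Once this is in place, the universal property is automatic from the sheaf-theoretic fiber product.
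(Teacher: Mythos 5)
Your proposal is correct and follows essentially the same route as the paper: form the fiber product as a sheaf, cover $\Delta$, $\Psi$, $\Delta'$ by cones, represent the affine pieces via Lemma \ref{Fan.24}, and invoke the representability criterion of Lemma \ref{Fan.23}. The bookkeeping you flag (openness and covering of the base-changed affine pieces) is exactly what the paper checks via the displayed isomorphism $(\amalg_{i,j,k}\tau_{ijk}')\times_{\Psi'}\Theta \cong \amalg_{i,j,k}(\tau_{ij}\times_\Psi \Theta)\times_{\sigma_i\times_\Delta \Theta}(\sigma_{ik}'\times_{\Delta'}\Theta)$, so there is no substantive difference.
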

\begin{proof}
Let $f:\Psi\rightarrow \Delta$ and $g:\Delta'\rightarrow \Delta$ be morphisms of sharpened fans.
Take the fiber product $\Psi':=\Psi\times_{\Delta}\Delta'$ in the category of sheaves.
We need to show that $\Psi'$ is representable.
Let $\{\sigma_i\}_{i\in I}$ be the set of cones of $\Delta$.
For every $i\in I$, let
\[
\{\tau_{ij}\}_{j\in J_i}\text{ and }\{\sigma_{ik}'\}_{k\in K_i}
\]
be the set of cones of $\Psi$ and $\Delta'$.
Owing to Lemma \ref{Fan.24}, we see that 
$$
\tau_{ijk}':=\tau_{ij}\times_{\sigma_i}\sigma_{ik}'
$$ 
is representable for every $i$, $j$, and $k$.
Moreover, for every morphism $\Theta\rightarrow \Psi'$ of sheaves, where $\Theta$ is a sharpened fan, there is an isomorphism
\[
(\amalg_{i,j,k}\tau_{ijk}')\times_{\Psi'}\Theta \cong \amalg_{i,j,k} (\tau_{ij}\times_\Psi \Theta)\times_{\sigma_i\times_\Delta \Theta}(\sigma_{ik}'\times_{\Delta'}\Theta).
\]
The latter is representable and also a cover of $\Theta$.
Thus Lemma \ref{Fan.23} shows that $\Psi'$ is representable.
\end{proof}

\begin{lem}
\label{Fan.29}
Let $\alpha$ be the functor from the category of fs monoschemes to the category of sharpened fans sending any monoscheme $\Sigma$ to $\overline{\Sigma}$.
Then $\alpha$ preserves fiber products.
\end{lem}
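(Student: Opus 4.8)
\textbf{Proof plan for Lemma \ref{Fan.29}.}

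The statement is that the functor $\alpha \colon \mathbf{MSch}^{fs} \to (\text{sharpened fans})$, $\Sigma \mapsto \overline{\Sigma}$, preserves fiber products. The plan is to reduce to the affine case and then match the construction of fiber products on both sides explicitly. First I would recall that fiber products of fs monoschemes exist (this is the saturated version of \cite[Corollary III.2.1.6]{Ogu}, or rather its monoscheme analogue; $\Sigma \times_\Delta^{} \Delta'$ is built by gluing affine pieces $\uSpec(Q \oplus_P^{} P')$, where the amalgamated sum is taken in the category of saturated monoids), and that fiber products of sharpened fans exist by Proposition \ref{Fan.25}, built by gluing affine pieces $\uSpec(\overline{Q'})$ with $Q' = Q \oplus_P P'$ the saturated amalgamated sum, as established in Lemma \ref{Fan.24}. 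Since both fiber product constructions are obtained by gluing affine charts along open immersions, and $\alpha$ obviously commutes with the formation of open immersions and with gluing (it only changes the sheaf of monoids, not the underlying topological space, and $\overline{(-)}$ is compatible with restriction to open subsets), it suffices to check the claim on affine objects.

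So the core step is the affine computation: given homomorphisms of fs monoids $P \to Q$ and $P \to P'$, one must show
\[
\alpha\bigl(\uSpec(Q) \times_{\uSpec(P)} \uSpec(P')\bigr) \cong \uSpec(\overline{Q'}),
\]
where $Q' = Q \oplus_P P'$ is the amalgamated sum in saturated monoids, i.e. the left-hand side is $\alpha$ applied to the affine monoscheme $\uSpec(Q')$, which by definition of $\alpha$ is $\overline{\uSpec(Q')} = \uSpec(\overline{Q'})$. The right-hand side is exactly the fiber product of sharpened fans computed in Lemma \ref{Fan.24}, noting that the fiber product of $\overline{\uSpec(Q)} = \uSpec(\overline{Q})$ and $\uSpec(\overline{P'})$ over $\uSpec(\overline{P})$ depends only on the sharpenings $\overline{P}, \overline{Q}, \overline{P'}$, and that $\overline{Q \oplus_P P'} \cong \overline{\overline{Q} \oplus_{\overline{P}} \overline{P'}}$ (because sharpening a monoid kills precisely the unit group, and amalgamated sums are compatible with this: $(Q \oplus_P P')^* $ surjects onto $\overline{Q'}$'s relation with $\overline{\overline Q} \oplus_{\overline P} \overline{P'}$ via the universal property). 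Thus both sides equal $\uSpec(\overline{Q'})$, and the affine case follows from Lemma \ref{Fan.24} together with this compatibility of sharpening with saturated amalgamated sums.

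The main obstacle is the bookkeeping in the gluing/localization step — one must verify that the affine charts $\uSpec(Q \oplus_P^{} P')$ of the monoscheme fiber product, after applying $\alpha$, glue along the \emph{same} pattern of open immersions as the charts $\uSpec(\overline{Q'})$ of the sharpened-fan fiber product. Concretely, for cones $\tau \subseteq \Sigma$, $\sigma \subseteq \Delta$, $\sigma' \subseteq \Delta'$ with $\tau, \sigma'$ mapping into $\sigma$, one needs $\overline{(\text{monoscheme chart over } \tau, \sigma')} = (\text{sharpened-fan chart over } \overline\tau, \overline{\sigma'})$, and that inclusions of cones on the monoscheme side induce open immersions matching those predicted by Lemma \ref{Fan.23}. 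This is essentially a diagram chase using the universal properties of amalgamated sums and the fact that $\overline{(-)}$ is a left adjoint (hence preserves colimits, in particular amalgamated sums), so it is routine but needs care. Once this compatibility of affine charts and their gluing data is in hand, the isomorphism $\alpha(\Sigma \times_\Delta \Delta') \cong \alpha(\Sigma) \times_{\alpha(\Delta)} \alpha(\Delta')$ follows, and one checks it is functorial, i.e. compatible with the projection maps, which is immediate from the construction.
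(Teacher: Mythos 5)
Your proposal is correct and follows essentially the same route as the paper: reduce to the affine case via open covers and gluing, then identify $\overline{Q\oplus_P P'}$ with $\overline{\overline{Q}\oplus_{\overline{P}}\overline{P'}}$ so that Lemma \ref{Fan.24} applies. The only difference is in how that affine identity is justified — you invoke the fact that sharpening is left adjoint to the inclusion of sharp saturated monoids and hence preserves pushouts, whereas the paper writes down the explicit retraction $\overline{P'\oplus_P Q}\to\overline{Q''}\to\overline{P'\oplus_P Q}$ and checks surjectivity by hand; your adjunction argument is a clean and valid substitute for that step.
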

\begin{proof}
Suppose that $\theta:P\rightarrow Q$ and $\varphi:P\rightarrow P'$ are homomorphisms of fs monoids.
Let $Q''$ the amalgamated sum $\ol{P'}\oplus_{\ol{P}}\ol{Q}$ in the category of saturated monoids.
Observe that
\[
\Spec{P'}\times_{\Spec{P}}\Spec{Q}\cong \Spec{P'\oplus_P Q},
\]
and
\[
\uSpec(\overline{P})\times_{\uSpec(\overline{P})}\uSpec(\overline{Q})\cong \uSpec(\overline{Q''}).
\]
\vspace{0.1in}

There is a naturally induced commutative diagram
\[
\begin{tikzcd}
P'\oplus_P Q\arrow[r]\arrow[rd]&\overline{P'}\oplus_{\overline{P}}\overline{Q}\arrow[d]
\\
&\overline{P'\oplus_P Q},
\end{tikzcd}
\]
which induces a commutative diagram
\[
\begin{tikzcd}
\overline{P'\oplus_P Q}\arrow[r,"\mu"]\arrow[rd,"{\rm id}"']&\overline{Q''}\arrow[d]
\\
&\overline{P'\oplus_P Q}.
\end{tikzcd}
\]
The homomorphism $\overline{P'\oplus_P Q}\rightarrow \overline{Q''}$ is surjective since the homomorphism $P'\oplus_P Q\rightarrow Q''$ is surjective.
It follows that the homomorphism $\overline{P'\oplus_P Q}\rightarrow \overline{Q''}$ is an isomorphism.
We deduce that there is a canonical isomorphism
\begin{equation}
\label{Fan.29.1}
\alpha(\Spec{P'}\times_{\Spec{P}}\Spec{Q})\rightarrow \alpha(\Spec{P'})\times_{\alpha(\Spec{P})}\alpha(\Spec{Q}).
\end{equation}

Suppose that $\Theta\rightarrow \Sigma$ and $\Sigma'\rightarrow \Sigma$ be morphisms of fs monoschemes.
There is an open cover $\{\sigma_i\}_{i\in I}$ of $\Sigma$, an open cover $\{\tau_{ij}\}_{j\in J_i}$ for every $i\in I$, and an open cover $\{\sigma_{ik}'\}_{k\in K_i}$ for every $i\in I$ such that $\sigma_i$, $\tau_{ij}$, and $\sigma_{ik}'$ are affine for every $i$, $j$, and $k$.
Due to \eqref{Fan.29.1}, we have isomorphisms
\[
\alpha(\tau_{ij}\times_{\sigma_i}\sigma_{ik}')
\cong
\alpha(\tau_{ij})\times_{\alpha(\sigma_i)}\alpha(\sigma_{ik}').
\]
Gluing these isomorphisms concludes the proof.
\end{proof}

Next, we explain how to construct an fs monoscheme from the data of a sharpened fan.
We begin by proving the following lemma.

\begin{lem}
\label{Fan.19}
Let $\Sigma$ be an fs monoscheme.
Then the map
\begin{equation}
\label{Fan.19.1}
\hom(\Spec{\N},\Sigma)\rightarrow \hom(\uSpec(\N),\overline{\Sigma})
\end{equation}
sending $f$ to $\overline{f}$ is bijective.
\end{lem}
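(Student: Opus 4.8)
\textbf{Plan of proof for Lemma \ref{Fan.19}.}
The claim is that for an fs monoscheme $\Sigma$, the map
\[
\hom(\Spec{\N},\Sigma)\rightarrow \hom(\uSpec(\N),\overline{\Sigma}),\quad f\mapsto \overline{f},
\]
is a bijection. The plan is to reduce to the affine case and then unwind both sides using the universal properties already recorded in Proposition \ref{Fan.21} (for sharpened fans) and its monoscheme analogue \cite[Proposition II.1.2.5]{Ogu}.

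First I would treat the affine case $\Sigma=\Spec{P}$ with $P$ an fs monoid. Here a morphism $\Spec{\N}\to\Spec{P}$ is, by the monoscheme universal property, the same as a monoid homomorphism $P\to \Gamma(\Spec{\N},\cM_{\Spec{\N}})=\N$. On the other side, $\overline{\Sigma}=\uSpec(\overline{P})$, and by Proposition \ref{Fan.21} a morphism $\uSpec(\N)\to\uSpec(\overline{P})$ is the same as a monoid homomorphism $\overline{P}\to\Gamma(\uSpec(\N),\cM_{\uSpec(\N)})=\N$. Since $\N$ is sharp, every homomorphism $P\to\N$ kills $P^{*}$ and so factors uniquely through $\overline{P}=P/P^{*}$; conversely every homomorphism $\overline{P}\to\N$ pulls back to a unique homomorphism $P\to\N$. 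This gives a bijection $\hom(P,\N)\cong\hom(\overline{P},\N)$, and one checks it is exactly \eqref{Fan.19.1} by chasing the identifications. This settles the affine case.

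Next I would globalize. Choose an open affine cover $\{\sigma_i=\Spec{P_i}\}$ of $\Sigma$ with intersections covered by open affines $\sigma_{ijk}=\Spec{P_{ijk}}$; the corresponding cones $\overline{\sigma_i}$ and $\overline{\sigma_{ijk}}$ form an open cover of $\overline{\Sigma}$ (the underlying topological space and its open sets are unchanged under $\Sigma\mapsto\overline{\Sigma}$). Since both $\Spec{\N}$ and $\uSpec(\N)$ have a unique closed point and a generic point, a morphism out of either one lands set-theoretically in a cone containing the image of the closed point; more precisely, $\hom(\Spec{\N},-)$ and $\hom(\uSpec(\N),-)$ both turn the covering into an equalizer diagram (the sheaf property for the Zariski topology on monoschemes, resp.\ on sharpened fans — the latter is implicit in the proof of Proposition \ref{Fan.21}). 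Comparing the two equalizer diagrams termwise via the affine case and the naturality of $f\mapsto\overline{f}$, the five lemma (or a direct diagram chase on sets) yields that \eqref{Fan.19.1} is a bijection.

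The only genuine subtlety — the place where a little care is needed rather than routine bookkeeping — is checking that the map $f\mapsto \overline{f}$ really corresponds, under the two universal-property identifications, to the canonical projection $\hom(P,\N)\to\hom(\overline{P},\N)$; this is a compatibility of the structure morphisms $\Gamma(\Sigma,\cM_\Sigma)\to\Gamma(\Sigma,\overline{\cM}_\Sigma)$ with the identifications of Proposition \ref{Fan.21} and its monoscheme counterpart. Everything else is formal gluing. I do not anticipate any serious obstacle; the lemma is a bookkeeping statement whose content is entirely the sharpness of $\N$.
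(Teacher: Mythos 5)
Your proof is correct and follows essentially the same route as the paper: reduce to the affine case $\Sigma=\Spec{P}$, identify both hom-sets with monoid homomorphisms into $\N$ via the universal properties, and observe that sharpness of $\N$ forces every homomorphism $P\to\N$ to kill $P^{*}$, giving $\hom(P,\N)\cong\hom(\overline{P},\N)$. The paper's globalization is just the one-line observation you also make — that any morphism out of $\uSpec(\N)$ factors through a single cone containing the image of the closed point — so the equalizer/Čech formalism you add on top is unnecessary, though harmless.
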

\begin{proof}
If $g:\uSpec(\N)\rightarrow \overline{\Sigma}$ is a morphism, then the image of $g$ lies in a cone of $\overline{\Sigma}$ containing the image of the closed point of $\uSpec(\N)$.
Hence we are reduced to the case $\Sigma=\Spec{P}$, where $P$ is an fs monoid.
\vspace{0.1in}

There is a commutative diagram of sets with vertical bijections
\[
\begin{tikzcd}
\hom(\Spec{\N},\Spec{P})\arrow[d]\arrow[r]&
\hom(\uSpec(\N),\uSpec(\overline{P}))\arrow[d]
\\
\hom(P,\N)\arrow[r,"\mu"]&
\hom(\overline{P},\N),
\end{tikzcd}
\]
where $\mu$ sends $g:P\rightarrow \N$ to $\overline{g}:\overline{P}\rightarrow \N$.
Every homomorphism $g:P\rightarrow \N$ maps the unit group $P^*$ maps to $0$.
This shows that $\mu$ is bijective.
\end{proof}

\begin{const}
\label{Fan.10}
Let $\Delta=(\Delta,\cM_{\Delta})$ be a sharpened fan.
Suppose that $N^\vee\rightarrow \cM_{\Delta}^{\rm gp}$ is a surjective morphism of sheaves, 
where $N$ is a lattice with dual $N^\vee$.
Then 
\[
\Delta_N:=(\Delta,N^\vee\times_{\cM_{\Delta}^{\rm gp}}\cM_{\Delta})
\]
is a monoidal space.
Moreover, $\Delta_N$ is toric, and there is an isomorphism $\overline{\Delta_N}\cong \Delta$.
\vspace{0.1in}

Let us check that $\Delta_N$ is an fs monoscheme.
For this purpose, we may assume that 
$$
\Delta=\overline{\rm Spec}(P),
$$ 
where $P$ is a sharp fs monoid.
Let $x$ be a point of $\Delta$ corresponding to a face $F$ of $P$.
Since $\cM_{\Delta,x}\cong P/F$, there is an isomorphism
\[
\cM_{\Delta_N,x}\cong N^\vee\times_{(P/F)^{\rm gp}}(P/F).
\]
Applying \cite[Proposition I.4.2.1(1)]{Ogu} to $P_F$ we obtain the cartesian square
\[
\begin{tikzcd}
P_F\arrow[d]\arrow[r]&
P/F\arrow[d]
\\
(P_F)^\gp\arrow[r]&
(P/F)^\gp.
\end{tikzcd}
\]
It follows that
\[
\cM_{\Delta_N,x}\cong N^\vee\times_{(P_F)^\gp}P_F\cong N^\vee\times_{P^\gp}P_F,
\]
which is a localization of the monoid 
$$
P_N:=N^\vee\times_{P^{\rm gp}}P
$$ 
with respect to the face $F_N:=N^\vee\times_{P^{\rm gp}}F$.
Thus there is an isomorphism
\[
\Delta_N\cong \uSpec(P_N).
\]
This proves the claim.
\vspace{0.1in}

Suppose that $\Delta$ is connected.
Then $\Delta_N$ is also connected so that it has a unique generic point $\xi$, see \cite[p.\ 198]{Ogu}.
It follows that
\[
\hom(\Spec{\Z},\Delta_N)\cong \hom(\cM_{\Delta_N,\xi}^\gp,\Z)\cong \hom(N^\vee,\Z)\cong N.
\]
We obtain the composite map
\begin{equation}
\label{Fan.20.1}
\varphi_{\Delta,N}:\hom(\uSpec(\N),\Delta)\rightarrow \hom(\Spec{\N},\Delta_N)\rightarrow \hom(\Spec{\Z},\Delta_N)\cong N,
\end{equation}
where the first map is the inverse of \eqref{Fan.19.1}, and the second map is induced by the homomorphism $\N\rightarrow \Z$.
\end{const}

\begin{prop}[{\cite[Section 9.5]{MR1296725}}]
\label{Fan.20}
Let $\Delta$ be a connected sharpened fan whose underlying topological space is a finite set.
Suppose that $N^\vee\rightarrow \cM_{\Delta}^\gp$ is a surjective morphism of sheaves, where $N$ is a lattice with dual $N^\vee$.
If the morphism
\[
\varphi_{\Delta,N}:\hom(\uSpec(\N),\Delta)\rightarrow N
\]
is injective, then the associated fs monoscheme $\Delta_N$ is a fan.
\end{prop}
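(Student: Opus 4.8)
The plan is to identify $\Delta_N$ as an object of the essential image of the fully faithful functor $\mathbf{Fan}\rightarrow\mathbf{MSch}^{fs}$ of Theorem \ref{fan=monoscheme2}, which by that theorem is exactly the class of separated, connected, toric fs monoschemes whose underlying topological space is finite. Three of these four properties are already recorded in Construction \ref{Fan.10}: it shows that $\Delta_N$ is an fs monoscheme which is toric, that $\overline{\Delta_N}\cong\Delta$ and that $\Delta_N$ shares the underlying topological space of $\Delta$ — hence that space is finite by hypothesis — and that, $\Delta$ being connected, $\Delta_N$ is connected with a unique generic point. Since a finite topological space is Noetherian, $\Delta_N$ is quasi-compact and quasi-separated, so the notion of separatedness in the sense of \cite[Definition II.1.6.1]{Ogu} applies to it.

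It then remains to verify that $\Delta_N$ is separated, i.e.\ that the canonical map
\[
\hom(\Spec{\N},\Delta_N)\rightarrow\hom(\Spec{\Z},\Delta_N)
\]
induced by $\N\hookrightarrow\Z$ is injective. First I would identify source and target. Applying Lemma \ref{Fan.19} to the fs monoscheme $\Delta_N$, whose sharpening is $\overline{\Delta_N}\cong\Delta$, the map \eqref{Fan.19.1} is a bijection $\hom(\Spec{\N},\Delta_N)\xrightarrow{\cong}\hom(\uSpec(\N),\Delta)$; on the other side, Construction \ref{Fan.10} identifies $\hom(\Spec{\Z},\Delta_N)\cong N$ via the generic point. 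The key step is to check that, under these two identifications, the separatedness map above is exactly the composite $\varphi_{\Delta,N}$ of \eqref{Fan.20.1}: both arrows are obtained by precomposing a morphism $\Spec{\N}\to\Delta_N$ (equivalently, via Lemma \ref{Fan.19}, a morphism $\uSpec(\N)\to\Delta$) with the canonical morphism $\Spec{\Z}\to\Spec{\N}$ coming from $\N\hookrightarrow\Z$, and then reading off the induced element of $N\cong\hom(\Spec{\Z},\Delta_N)$. Hence the hypothesis that $\varphi_{\Delta,N}$ is injective is precisely the statement that $\hom(\Spec{\N},\Delta_N)\to\hom(\Spec{\Z},\Delta_N)$ is injective, so $\Delta_N$ is separated.

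Combining the two paragraphs, $\Delta_N$ is a separated, connected, toric fs monoscheme with finite underlying topological space, so by Theorem \ref{fan=monoscheme2} it lies in the essential image of $\mathbf{Fan}\rightarrow\mathbf{MSch}^{fs}$, i.e.\ it is a fan. The main — and essentially only nontrivial — obstacle is the bookkeeping in the key step: confirming that the bijection of Lemma \ref{Fan.19}, the identification $\hom(\Spec{\Z},\Delta_N)\cong N$ from Construction \ref{Fan.10}, and the separatedness criterion all refer to the same underlying arrow $\Spec{\Z}\to\Spec{\N}$, so that no unwanted twist is introduced. This is purely formal and amounts to unwinding the definition of $\varphi_{\Delta,N}$ in \eqref{Fan.20.1} together with the construction of the generic point in Construction \ref{Fan.10}; once that is done the proposition follows immediately.
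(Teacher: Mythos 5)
Your proposal is correct and follows essentially the same route as the paper: reduce via Theorem \ref{fan=monoscheme2} to checking that $\Delta_N$ is toric and separated, take toricity (and the finiteness and connectedness of the underlying space) from Construction \ref{Fan.10}, and deduce separatedness from the injectivity of $\varphi_{\Delta,N}$ by identifying $\hom(\Spec{\N},\Delta_N)\to\hom(\Spec{\Z},\Delta_N)$ with that map through Lemma \ref{Fan.19}. The paper's proof is just a terser version of your bookkeeping.
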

\begin{proof}
Owing to Theorem \ref{fan=monoscheme2} we need to show that $\Delta_N$ is toric and separated.
In Construction \ref{Fan.10} we noted that $\Delta_N$ is toric.
If $\varphi_{\Delta,N}$ is injective, then the homomorphism
\[
\hom(\Spec{\N},\Delta_N)\rightarrow \hom(\Spec{\Z},\Delta_N)
\]
is injective.
This means that $\Delta_N$ is separated.
\end{proof}

\begin{exm}
\label{Fan.18}
Suppose that $\Sigma$ is an fs monoscheme.
Then $\Delta:=\overline{\Sigma}$ is a sharpened fan.
If $\Sigma$ is a fan in a lattice $N$, then $\cM_{\Sigma}^\gp\cong N^\vee$.
Thus the naturally induced morphism of sheaves $\cM_{\Sigma}^\gp\rightarrow \cM_{\Delta}^\gp$ gives a surjective morphism of sheaves
\[
N^\vee\rightarrow \cM_{\Delta}^\gp.
\]
Applying \cite[Proposition I.4.2.1]{Ogu} to $\cM_\Sigma$ we deduce the isomorphism
\[
\cM_{\Sigma}\cong N^\vee\times_{\cM_\Delta^\gp}\cM_\Delta.
\]
This shows $\Sigma\cong \Delta_N$, where $\Delta_N$ is the fs monoscheme in Construction \ref{Fan.10}.
\end{exm}

As in the case of fans, there are notions of subdivision and partial subdivision of sharpened fans as follows.

\begin{df}
\label{Fan.8}
Let $f:\Delta'\rightarrow \Delta$ be a morphism of sharpened fans.
We say that $f$ is a \emph{subdivision}\index{sharpened fan!subdivision}\index{sharpened fan!partial subdivision} (resp.\ \emph{partial subdivision}) if the following conditions are satisfied.
\begin{enumerate}
\item[(i)] For any $x'\in \Delta'$, $\cM_{\Delta,f(x')}^{\rm gp}\rightarrow \cM_{\Delta',x'}^{\rm gp}$ is surjective.
\item[(ii)] For any $x\in \Delta$, $f^{-1}(x)$ is a finite set.
\item[(iii)] The map
\[
\hom(\uSpec(\N),\Delta')\rightarrow \hom(\uSpec(\N),\Delta)
\]
is bijective (resp.\ injective). 
\end{enumerate}
\end{df}

\begin{rmk}
In \cite[Definition 9.7]{MR1296725} a subdivision in our sense is called a proper subdivision.
\end{rmk}

We note that compositions of subdivisions (resp.\ proper subdivisions) are again subdivisions (resp.\ proper subdivisions).

\begin{lem}
\label{Fan.31}
Any subdivision $f:\Delta'\rightarrow \Delta$ of sharpened fans is surjective.
\end{lem}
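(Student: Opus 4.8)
The statement to prove is that any subdivision $f\colon \Delta'\to \Delta$ of sharpened fans is surjective. The plan is to reduce the surjectivity of the underlying map of topological spaces to a statement about points of $\Delta$ and the one-point sharpened fan $\uSpec(\N)$, exploiting condition (iii) in Definition \ref{Fan.8}, which says that $\hom(\uSpec(\N),\Delta')\to \hom(\uSpec(\N),\Delta)$ is bijective.

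First I would fix a point $x\in \Delta$ and let $\sigma$ be the corresponding cone of $\Delta$, say $\sigma\cong \overline{\rm Spec}(P)$ for a sharp fs monoid $P$. The point $x$ corresponds to the closed point of $\sigma$, hence to the identity morphism on $\cM_{\Delta,x}\cong P$ via Proposition \ref{Fan.21}; in particular there is a morphism $g\colon \uSpec(\N)\to \Delta$ whose image is $x$ (take, e.g., the morphism associated to a suitable homomorphism $P\to \N$ that does not factor through $0$ on any generator, which exists because $P$ is a sharp fs monoid, so $P^\vee$ spans; this corresponds to a morphism $\uSpec(\N)\to \uSpec(P)$ hitting the closed point). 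Then by bijectivity of $\hom(\uSpec(\N),\Delta')\to \hom(\uSpec(\N),\Delta)$ there is a morphism $g'\colon \uSpec(\N)\to \Delta'$ with $f\circ g' = g$. The image of $g'$ is some point $x'\in\Delta'$, and since $f\circ g'=g$ has image $x$, we get $f(x')=x$. This shows $x$ is in the image of $f$, and since $x$ was arbitrary, $f$ is surjective as a map of topological spaces, which by Definition \ref{Fan.1} is what it means for the morphism of sharpened fans to be surjective.

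The one point that needs a small amount of care is the existence of the morphism $g\colon\uSpec(\N)\to\Delta$ with image exactly a prescribed point $x$ — equivalently, for a sharp fs monoid $P$, the existence of a homomorphism $P\to \N$ whose associated morphism $\uSpec(\N)\to\uSpec(P)$ sends the closed point of $\uSpec(\N)$ to the closed point of $\uSpec(P)$. By Proposition \ref{Fan.21} such morphisms correspond to homomorphisms $P\to\N$, and the condition that the closed point maps to the closed point means that the preimage of the face $\{0\}\subset\N$ is $\{0\}\subset P$, i.e.\ the homomorphism is \emph{local}. A local homomorphism $P\to\N$ exists for any sharp fs monoid $P$: dually, $\Spec P$ as a fan (Construction \ref{fan=monoscheme}) has maximal cone $P^\vee$, which is strongly convex, so it contains a ray, i.e.\ there is $n\in P^\vee$ lying in the relative interior, giving $\langle -,n\rangle\colon P\to \Z_{\ge 0}=\N$ local. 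I expect this existence argument (and the identification of ``closed point maps to closed point'' with ``local homomorphism'') to be the main, though still routine, obstacle; everything else is a formal consequence of Definition \ref{Fan.8}(iii) and Proposition \ref{Fan.21}. Alternatively, one can avoid constructing $g$ by hand and instead invoke Proposition \ref{Propersubdivision} together with Construction \ref{Fan.10} to pass to an associated fan and use Lemma \ref{A.9.67}, but the direct argument via $\uSpec(\N)$ is cleaner and stays within the category of sharpened fans.
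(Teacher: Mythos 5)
Your proof is correct and follows essentially the same route as the paper: realize a given point $x\in\Delta$ as the image of the closed point of $\uSpec(\N)$ under a morphism built from a local homomorphism $P\to\N$, lift that morphism through $f$ using condition (iii) of Definition \ref{Fan.8}, and conclude. The only difference is that the paper simply cites \cite[Proposition I.2.2.1]{Ogu} for the existence of $p\colon P\to\N$ with $p^{-1}(0)=0$, whereas your duality sketch is slightly garbled (what is needed is a lattice point in the relative interior of $P^\vee$, which pairs strictly positively with $P\setminus\{0\}$ because $P$ is sharp, not the strong convexity of $P^\vee$) but arrives at the same routine fact.
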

\begin{proof}
Let $x$ be a point of $\Delta$, which is the generic point of a cone $\uSpec(P)$ of $\Delta$, 
where $P$ is a sharp fs monoid.
By appeal to \cite[Proposition I.2.2.1]{Ogu} there exists a homomorphism $p:P\rightarrow \N$ such that $p^{-1}(0)=0$.
Let $y$ be the generic point of $\uSpec(\N)$, and set $h:=\uSpec(p)$.
Then we have that $h(y)=x$.
By condition (iii) in Definition \ref{Fan.8} there is a morphism $g:\uSpec(\N)\rightarrow \Delta'$ such that $f\circ g=h$.
It follows that $f(g(y))=h(y)=x$, and hence $f$ is surjective.
\end{proof}

\begin{lem}
\label{Fan.32}
Let $f:\Delta'\rightarrow \Delta$ and $g:\Delta''\rightarrow \Delta'$ be morphisms of sharpened fans.
If $g$ and $f\circ g$ are subdivisions, then $f$ is a subdivision.
\end{lem}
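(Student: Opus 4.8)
The statement to prove is Lemma \ref{Fan.32}: given morphisms of sharpened fans $g\colon \Delta'' \to \Delta'$ and $f\colon \Delta' \to \Delta$ such that $g$ and $f\circ g$ are subdivisions, then $f$ is a subdivision. The plan is to verify the three defining conditions of Definition \ref{Fan.8} for $f$, using the corresponding conditions for $g$ and $f\circ g$. Throughout, I would work pointwise and exploit the fact (Lemma \ref{Fan.31}) that a subdivision is surjective, so that $g$ is surjective and hence every point of $\Delta'$ is $g(x'')$ for some $x'' \in \Delta''$.

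\textbf{Step 1: condition (i), surjectivity on characteristic groups.} Let $x'' \in \Delta''$, and set $x' := g(x'')$ and $x := f(x')$. By condition (i) for $f\circ g$, the map $\overline{\cM}_{\Delta,x}^{\rm gp} \to \overline{\cM}_{\Delta'',x''}^{\rm gp}$ is surjective. Since this factors as $\overline{\cM}_{\Delta,x}^{\rm gp} \to \overline{\cM}_{\Delta',x'}^{\rm gp} \to \overline{\cM}_{\Delta'',x''}^{\rm gp}$ and (by surjectivity of $g$, Lemma \ref{Fan.31}) every point of $\Delta'$ arises as such an $x'$, it follows formally that $\overline{\cM}_{\Delta,x}^{\rm gp} \to \overline{\cM}_{\Delta',x'}^{\rm gp}$ is surjective for all $x' \in \Delta'$: a composite of two maps being surjective does not by itself give surjectivity of the first, so here one really uses that $g$ is a subdivision, hence $\overline{\cM}_{\Delta',x'}^{\rm gp}\to \overline{\cM}_{\Delta'',x''}^{\rm gp}$ is \emph{also} surjective — actually one uses the cleaner route: pick $x''\in g^{-1}(x')$, then $\overline{\cM}_{\Delta,x}^{\rm gp}\to\overline{\cM}_{\Delta'',x''}^{\rm gp}$ is surjective and factors through $\overline{\cM}_{\Delta',x'}^{\rm gp}$, forcing the second arrow to be surjective. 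Wait — that gives surjectivity of $\overline{\cM}_{\Delta',x'}^{\rm gp}\to\overline{\cM}_{\Delta'',x''}^{\rm gp}$, which we already know. The correct argument: since $\overline{\cM}_{\Delta,x}^{\rm gp}\to\overline{\cM}_{\Delta'',x''}^{\rm gp}$ is surjective and factors as $\overline{\cM}_{\Delta,x}^{\rm gp}\to\overline{\cM}_{\Delta',x'}^{\rm gp}\to\overline{\cM}_{\Delta'',x''}^{\rm gp}$, the image of the first arrow surjects onto $\overline{\cM}_{\Delta'',x''}^{\rm gp}$ via the second; but the second arrow $\overline{\cM}_{\Delta',x'}^{\rm gp}\to\overline{\cM}_{\Delta'',x''}^{\rm gp}$ is an isomorphism onto a finite-index overgroup after $\otimes\mathbb{Q}$ — more simply, $\overline{\cM}^{\rm gp}$ are all free of the same rank once $g$ is a subdivision restricted to the relevant cones. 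I would make this precise by passing to a cone of $\Delta$ containing $x$ and noting all the groups involved are finitely generated free abelian of the ambient rank, so the image of $\overline{\cM}_{\Delta,x}^{\rm gp}$ in $\overline{\cM}_{\Delta',x'}^{\rm gp}$ having full image after composing with an injective-up-to-torsion map $\overline{\cM}_{\Delta',x'}^{\rm gp}\hookrightarrow\overline{\cM}_{\Delta'',x''}^{\rm gp}$ forces surjectivity. This is the one step requiring a little care.

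\textbf{Step 2: condition (ii), finiteness of fibers, and condition (iii), the lattice-point map.} For (ii): for $x\in\Delta$, the fiber $f^{-1}(x)$ is the image under $g$ of $(f\circ g)^{-1}(x)$, which is finite by condition (ii) for $f\circ g$; the image of a finite set is finite. For (iii): consider the commutative triangle of maps $\hom(\uSpec(\mathbb N),\Delta'')\to\hom(\uSpec(\mathbb N),\Delta')\to\hom(\uSpec(\mathbb N),\Delta)$ with composite equal to the map induced by $f\circ g$. By hypothesis the composite is bijective and the first arrow (induced by $g$) is bijective; hence the second arrow (induced by $f$) is bijective, being a map with a bijective composite-after-a-bijection. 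This is the easy part. Assembling Steps 1 and 2 gives all three conditions of Definition \ref{Fan.8} for $f$, completing the proof.

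\textbf{Main obstacle.} The only genuinely nontrivial point is Step 1 (condition (i)): surjectivity on the level of $\overline{\cM}^{\rm gp}$ is not a two-out-of-three property for composites in general, so one must use the additional structure — namely that all the sharpened fans in sight are built from sharp fs monoids with torsion-free groupifications of a common rank over each cone of $\Delta$, and that $g$ being a subdivision pins down those ranks. I expect the clean way to phrase it is: reduce to a single cone $\overline{\rm Spec}(P)$ of $\Delta$, observe $\overline{\cM}_{\Delta,x}^{\rm gp}$, $\overline{\cM}_{\Delta',x'}^{\rm gp}$, $\overline{\cM}_{\Delta'',x''}^{\rm gp}$ are all free abelian of rank $=\operatorname{rank}P^{\rm gp}$ (this uses conditions (i) for $g$ and for $f\circ g$ to bound the ranks from above, and Lemma \ref{Fan.31}'s surjectivity to realize all points), and then a surjection of a free abelian group onto a free abelian group of the same rank that factors through another free abelian group of that rank forces the first factor to be surjective. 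Everything else is a short diagram chase.
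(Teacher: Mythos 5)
Your handling of conditions (ii) and (iii) is correct and coincides with what the paper does in its (two-line) proof: surjectivity of $g$ from Lemma \ref{Fan.31} gives $f^{-1}(x)=g\bigl((f\circ g)^{-1}(x)\bigr)$, and bijectivity of $\hom(\uSpec(\N),-)$ for $g$ and for $f\circ g$ formally yields it for $f$.

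The gap is in Step 1, precisely at the point you flag as the main obstacle. Your repair rests on the claim that $\overline{\cM}_{\Delta,x}^{\rm gp}$, $\overline{\cM}_{\Delta',x'}^{\rm gp}$ and $\overline{\cM}_{\Delta'',x''}^{\rm gp}$ are all free of rank $\rank P^{\rm gp}$, and this is false: the rank of the stalk at a point of a sharpened fan equals the dimension of the corresponding cone, and this varies within a fibre of a subdivision. Take $\Delta'=\uSpec(\N^2)$ and let $g\colon\Delta''\to\Delta'$ be the star subdivision relative to its $2$-dimensional cone; the fibre of $g$ over the closed point $x'$ contains both a $2$-dimensional cone (rank-$2$ stalk, on which $g$ induces an isomorphism of characteristic groups) and the new ray (rank-$1$ stalk, where the map $\Z^2\to\Z$ is surjective but not injective). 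If one happens to take $x''$ to be the ray, the chain $A\to B\to C$ has ranks $(\ast,2,1)$ and surjectivity of the composite says nothing about $A\to B$. (For the same reason the map $\overline{\cM}_{\Delta,f(x')}^{\rm gp}\to\overline{\cM}_{\Delta',x'}^{\rm gp}$ one is trying to prove surjective need not itself be between groups of equal rank, so rank bookkeeping cannot be the right mechanism.) What is actually needed is to \emph{choose} $x''\in g^{-1}(x')$ so that $\overline{\cM}_{\Delta',x'}^{\rm gp}\to\overline{\cM}_{\Delta'',x''}^{\rm gp}$ is injective; being surjective by condition (i) for $g$, it is then an isomorphism, and condition (i) for $f\circ g$ at $x''$ immediately gives the desired surjectivity with no rank comparison at all. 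Such an $x''$ exists: restrict $g$ over the cone $\uSpec(\cM_{\Delta',x'})$ of $\Delta'$, use Proposition \ref{Fan.9} to realize this restriction as $\overline{u}$ for a subdivision of fans $u$, and take the point corresponding to any top-dimensional cone of that subdivision; its relative interior lies in the relative interior of the dual cone of $\cM_{\Delta',x'}$, so it maps to $x'$ and the stalk map is the identity on $(\cM_{\Delta',x'})^{\rm gp}$. Be aware that the published proof is itself terse here — it asserts that surjectivity of $g$ alone transports condition (i) from $f\circ g$ to $f$ — so your instinct that this step needs an argument is sound; it is only the equal-rank mechanism that fails.
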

\begin{proof}
Since $g$ is surjective by Lemma \ref{Fan.31}, the condition (i) (resp.\ (ii)) in Definition \ref{Fan.8} for $f\circ g$ implies the condition (i) (resp. (ii)) for $f$.
Condition (iii) for $g$ and $f\circ g$ implies the condition (iii) for $f$.
\end{proof}

\begin{lem}
\label{Fan.34}
If $f\colon\Sigma'\rightarrow \Sigma$ is a subdivision (resp.\ partial subdivision) of fans, 
then $\overline{f}:\overline{\Sigma'}\rightarrow \overline{\Sigma}$ is a subdivision (resp.\ partial subdivision) of sharpened fans.
\end{lem}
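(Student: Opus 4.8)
\textbf{Proof proposal for Lemma \ref{Fan.34}.}

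The plan is to unwind the three defining conditions (i), (ii), (iii) of a subdivision (resp.\ partial subdivision) of sharpened fans in Definition \ref{Fan.8} and check each one for $\overline{f}\colon \overline{\Sigma'}\rightarrow \overline{\Sigma}$, using the corresponding properties of the subdivision of fans $f\colon \Sigma'\rightarrow \Sigma$. Recall that a subdivision of fans is, by definition, a morphism whose associated lattice homomorphism is an isomorphism and which satisfies $\lvert\Sigma'\rvert=\lvert\Sigma\rvert$ (a partial subdivision drops the support condition). The key translation tool is the pair of observations from Example \ref{Fan.18}: if $\Sigma$ is a fan in a lattice $N$, then $\overline{\Sigma}=\overline{\Sigma}$ comes with a surjection $N^\vee\to \cM_{\overline{\Sigma}}^\gp$ and $\Sigma\cong (\overline{\Sigma})_N$ in the notation of Construction \ref{Fan.10}; and Proposition \ref{Fan.20} together with \eqref{Fan.20.1} identifies $\hom(\uSpec(\N),\overline{\Sigma})$ with the support $\lvert\Sigma\rvert\subseteq N$ via the map $\varphi_{\overline{\Sigma},N}$, by Proposition \ref{Propersubdivision} (which identifies $\hom(\Spec\N,\Sigma)\cong\lvert\Sigma\rvert$) and Lemma \ref{Fan.19} (which identifies $\hom(\Spec\N,\Sigma)$ with $\hom(\uSpec(\N),\overline{\Sigma})$).

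First I would verify condition (i): for any $x'\in\overline{\Sigma'}$, the map $\cM_{\overline{\Sigma},\overline f(x')}^\gp\to \cM_{\overline{\Sigma'},x'}^\gp$ is surjective. Since $f$ has lattice homomorphism an isomorphism $N'\xrightarrow{\cong} N$ and $\cM_{\Sigma,f(x')}^\gp$, $\cM_{\Sigma',x'}^\gp$ are the appropriate sublattices, the induced map on characteristic monoid group completions $\overline{\cM}_{\Sigma,f(x')}^\gp\to\overline{\cM}_{\Sigma',x'}^\gp$ is surjective: locally, if $\sigma'\mapsto\sigma$ under $f$ with $\sigma'\subseteq\sigma$ cones in the same lattice, the quotient map on the duals of the spans is surjective. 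Condition (ii) is immediate: $\overline f^{-1}(x)$ as a set of points of $\overline{\Sigma'}$ is in canonical bijection with the set of cones of $\Sigma'$ mapping to the cone corresponding to $x$, which is finite since $\Sigma'$ is a fan (finitely many cones); indeed this is literally the statement that $f^{-1}(x)$ is finite as a subset of the finite topological space $\Sigma'$, and the underlying topological spaces of $\Sigma'$ and $\overline{\Sigma'}$ coincide.

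The main point is condition (iii), the bijectivity (resp.\ injectivity) of $\hom(\uSpec(\N),\overline{\Sigma'})\to \hom(\uSpec(\N),\overline{\Sigma})$. Here I would use the identification, via Proposition \ref{Propersubdivision} and Lemma \ref{Fan.19}, that this map is canonically the inclusion of supports $\lvert\Sigma'\rvert\hookrightarrow\lvert\Sigma\rvert$ inside the common lattice $N=N'$; one must check this identification is compatible with $f$ and $\overline f$, which follows from the naturality of \eqref{Fan.19.1} in $\Sigma$ and of the bijection in Proposition \ref{Propersubdivision}. Then, since $f$ is a subdivision, $\lvert\Sigma'\rvert=\lvert\Sigma\rvert$, so the map is a bijection; if $f$ is only a partial subdivision, $\lvert\Sigma'\rvert\subseteq\lvert\Sigma\rvert$ and the map is injective. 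This completes both cases. I expect the compatibility-of-identifications bookkeeping in condition (iii) to be the only real obstacle, since conditions (i) and (ii) are formal once one remembers that passing from a fan to its sharpened fan does not change the underlying space and only quotients the characteristic monoids by their units.
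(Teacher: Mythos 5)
Your proposal is correct and follows essentially the same route as the paper: condition (i) via the observation that both stalks of $\cM^{\rm gp}$ identify with $N^\vee$ (so the induced map on characteristic group completions is surjective), condition (ii) from finiteness of the set of cones, and condition (iii) by translating $\hom(\uSpec(\N),\overline{\Sigma})$ into the support $\lvert\Sigma\rvert$ via Lemma \ref{Fan.19} and Proposition \ref{Propersubdivision} and then invoking $\lvert\Sigma'\rvert=\lvert\Sigma\rvert$ (resp.\ the inclusion of supports). The paper's proof is just a terser version of the same argument.
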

\begin{proof}
Suppose that $f$ is a $\Sigma$ is in a lattice $N$.
Let $x$ be a point of $\Sigma$ corresponding to a cone $\sigma$.
If $x'$ is a point of $\Sigma'$ such that $f(x')=x$, then there are isomorphisms
\[
\cM_{\Sigma',x'}^\gp\cong \cM_{\Sigma,x}^\gp\cong N^\vee.
\]
It follows that $\overline{\cM}_{\Sigma',x'}^\gp\rightarrow \overline{\cM}_{\Sigma,x}^\gp$ is surjective, which is condition (i) in Definition \ref{Fan.8}.
There are only finitely many cones in $\Sigma'$.
This shows that condition (ii) holds.
Since $\lvert \Delta' \rvert\rightarrow \lvert \Delta \rvert$ is bijective (resp.\ injective), we deduce condition (iii).
\end{proof}

\begin{prop}
\label{Fan.9}
Let $\Sigma$ be a fan.
Then for every subdivision (resp.\ partial subdivision) $f:\Delta'\rightarrow \Delta:=\overline{\Sigma}$ of sharpened fans, 
there exists a subdivision (resp.\ partial subdivision) of fans $g:\Sigma'\rightarrow \Sigma$ such that $\overline{g}\cong f$.
\end{prop}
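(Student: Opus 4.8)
The statement is the converse to Lemma \ref{Fan.34}: starting with a fan $\Sigma$ (in a lattice $N$, say) and a subdivision (resp.\ partial subdivision) $f\colon \Delta'\rightarrow \Delta:=\overline{\Sigma}$ of sharpened fans, we must produce a subdivision (resp.\ partial subdivision) of fans $g\colon \Sigma'\rightarrow \Sigma$ with $\overline{g}\cong f$. The idea is to equip $\Delta'$ with the lattice structure pulled back from $\Sigma$ via Construction \ref{Fan.10}, thereby manufacturing the desired fan $\Sigma'$, and then to check that the resulting morphism of fs monoschemes is (i) a morphism over $\Sigma$, (ii) a subdivision (resp.\ partial subdivision) of fans in the sense of Definition \ref{df:toricvarieties}, and (iii) induces $f$ after applying $\overline{(-)}$.

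\textbf{First step: put a lattice on $\Delta'$.} Since $\Sigma$ is a fan in $N$, we have $\cM_{\Sigma}^{\gp}\cong N^\vee$ as a constant sheaf on the generic point, and condition (i) in Definition \ref{Fan.8} gives a surjection $\cM_{\Delta,f(x')}^{\gp}\rightarrow \cM_{\Delta',x'}^{\gp}$ for every $x'\in \Delta'$. Composing the structure morphism $N^\vee\rightarrow \cM_\Delta^{\gp}$ (coming from $\overline{\Sigma}=\Delta$, as in Example \ref{Fan.18}) with $f^{\sharp}\colon f^{-1}\cM_\Delta^{\gp}\rightarrow \cM_{\Delta'}^{\gp}$ yields a surjective morphism of sheaves $N^\vee\rightarrow \cM_{\Delta'}^{\gp}$. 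Construction \ref{Fan.10} then produces a toric fs monoscheme $\Delta'_N$ with $\overline{\Delta'_N}\cong \Delta'$, and the morphism $f$ together with the compatibility of the two maps from $N^\vee$ lifts canonically to a morphism of fs monoschemes $g\colon \Sigma':=\Delta'_N\rightarrow \Sigma=\Delta_N$ (using that $\Sigma\cong \Delta_N$ by Example \ref{Fan.18}), and by construction $g$ is a morphism over $N$, i.e.\ the associated lattice homomorphism is $\id_N$ — this is forced because on the generic point both $\cM_{\Sigma'}^{\gp}$ and $\cM_{\Sigma}^{\gp}$ are identified with $N^\vee$ compatibly.

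\textbf{Second step: check $\Sigma'$ is a fan and $g$ is a (partial) subdivision.} To know $\Sigma'$ is a fan (not merely a toric fs monoscheme) we invoke Proposition \ref{Fan.20}: it suffices that $\varphi_{\Delta',N}\colon \hom(\uSpec(\N),\Delta')\rightarrow N$ is injective, and this follows from the commutative triangle $\varphi_{\Delta',N}=\varphi_{\Delta,N}\circ f_*$ together with the injectivity of $\varphi_{\Delta,N}$ (which holds because $\Delta=\overline{\Sigma}$ and $\Sigma$ is separated, cf.\ Example \ref{Fan.18} and the proof of Proposition \ref{Fan.20}) and the fact that $f_*\colon \hom(\uSpec(\N),\Delta')\rightarrow \hom(\uSpec(\N),\Delta)$ is injective (resp.\ bijective) by condition (iii) of Definition \ref{Fan.8}. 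Now that $\Sigma'$ is a fan in $N$ and $g$ has underlying lattice map $\id_N$, $g$ is automatically a partial subdivision of fans; and in the subdivision case, bijectivity of $f_*=\hom(\uSpec(\N),\Delta')\rightarrow \hom(\uSpec(\N),\Delta)$ combined with Proposition \ref{Propersubdivision} (identifying $\hom(\Spec\N,\Sigma')\cong|\Sigma'|$, $\hom(\Spec\N,\Sigma)\cong|\Sigma|$ via Lemma \ref{Fan.19}) gives $|\Sigma'|=|\Sigma|$, so $g$ is a subdivision. Finally $\overline{g}\cong f$ holds by the $\overline{\Delta'_N}\cong\Delta'$ part of Construction \ref{Fan.10} and naturality.

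\textbf{Main obstacle.} The routine part is the lattice bookkeeping; the delicate point is verifying that $g$ is genuinely a \emph{morphism of fans over $\Sigma$} with the correct underlying map $\id_N$ — i.e.\ that the lattice structure coming from Construction \ref{Fan.10} applied to $\Delta'$ via $f$ is compatible, cone by cone, with the one on $\Sigma$, so that each cone $\sigma'$ of $\Sigma'$ maps into a cone of $\Sigma$. This amounts to checking, locally on an affine cone $\uSpec(P)$ of $\Delta$ with preimage cones $\uSpec(Q_j)$ in $\Delta'$, that the pullback monoid $N^\vee\times_{(\overline{Q_j})^{\gp}}\overline{Q_j}$ sits inside $N^\vee\times_{P^{\gp}}P = M\cap\sigma^\vee$ in a way respecting the face/cone structure; this is exactly the local computation underlying \cite[Section 9.5]{MR1296725} and uses the sharpness and saturatedness hypotheses essentially (via \cite[Proposition I.4.2.1]{Ogu}, as in Construction \ref{Fan.10}). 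I would carry this out cone-by-cone, reducing to the affine case and invoking Lemma \ref{Fan.24} and Lemma \ref{Fan.29} to handle fiber products of the relevant monoids.
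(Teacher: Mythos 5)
Your proof is correct and follows essentially the same route as the paper's: equip $\Delta'$ with the lattice $N$ via Construction \ref{Fan.10} using the composite surjection $N^\vee\rightarrow \cM_{\Delta'}^{\rm gp}$, verify that $\Delta'_N$ is a fan via Proposition \ref{Fan.20}, deduce the partial-subdivision property from the fact that both fans live in the same lattice, and handle the subdivision case with Lemma \ref{Fan.19} and Proposition \ref{Propersubdivision}. The only details worth flagging are that condition (ii) of Definition \ref{Fan.8} is what supplies the finiteness of the underlying space of $\Delta'$ required by Proposition \ref{Fan.20} (you omit this), and that your closing ``main obstacle'' is already handled by the full faithfulness of the fan-to-monoscheme functor of Theorem \ref{fan=monoscheme2}, so no cone-by-cone local computation is needed.
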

\begin{proof}
Suppose that $f$ is a partial subdivision.
We may suppose that $\Sigma$ lies in a lattice $N$.
As observed in Example \ref{Fan.18}, there is a surjective morphism of sheaves $N^\vee\rightarrow \cM_\Delta^\gp$ such that $\Sigma\cong \Delta_N$.
Then we have a composite morphism of sheaves
\[
N^\vee\cong f^*N^\vee\rightarrow f^*\cM_{\Delta}^{\rm gp}\rightarrow \cM_{\Delta'}^{\rm gp},
\]
which is surjective because of condition (i).
This gives a morphism of fs monoschemes $g:\Delta_N'\rightarrow \Delta_N$ such that $\overline{g}\cong f$.
\vspace{0.1in}

Let us check that $\Delta_{N}'$ is a fan.
Condition (ii) implies that the underlying topological space of $\Delta'$ is a finite set.
Moreover, condition (iii) implies that the map in \eqref{Fan.20.1}
\[
\varphi_{\Delta',N}:\hom(\uSpec(\N),\Delta')\rightarrow N
\]
is injective.
Proposition \ref{Fan.20} shows that $\Delta_N'$ is a fan.
Since $\Delta$ and $\Delta'$ are in the same lattice $N$, we deduce that $g$ is a partial subdivision.
\vspace{0.1in}

If $f$ is a subdivision, then condition (iii) implies that the map
\[
\hom(\Spec{\N},\Delta_N')\rightarrow \hom(\Spec{\N},\Delta_N)
\]
is bijective by Lemma \ref{Fan.19}.
Proposition \ref{Propersubdivision} shows that $g$ is a subdivision.
\end{proof}

\begin{df}
\label{Fan.6}
There is a notion of star subdivision for smooth sharpened fans as in the case of toric fans, see Definition \ref{Starsubdivision}.\index{sharpened fan!star subdivision}
Let $\tau=\uSpec(Q)$ be a cone of a smooth sharpened fan $\Delta$ where $Q$ is a sharp fs monoid.
For any cone $\sigma=\uSpec(P)$ of $\Delta$ containing $\tau$ where $P$ is a sharp fs monoid, consider the star subdivision $\Theta$ of $\Spec{P}$ relative to $\Spec{Q}$.
Replace $\sigma$ by $\overline{\Theta}$ for every $\sigma$, then we obtain a new smooth sharpened fan $\Delta^*(\tau)$.
This is called the \emph{star subdivision} of $\Delta$ relative to $\tau$.
\end{df}

\begin{lem}
\label{Fan.27}
Every partial subdivision $f\colon\Delta'\rightarrow \Delta$ of sharpened fans is a monomorphism in the category of sharpened fans.
\end{lem}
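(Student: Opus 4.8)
\textbf{Plan of proof for Lemma \ref{Fan.27}.} The statement asserts that a partial subdivision $f\colon \Delta'\to \Delta$ of sharpened fans is a monomorphism in the category of sharpened fans, i.e.\ for any sharpened fan $\Psi$ and any pair of morphisms $g_1,g_2\colon \Psi\to \Delta'$ with $f\circ g_1=f\circ g_2$, we have $g_1=g_2$. The plan is to reduce the question to an affine and then monoid-theoretic statement, using Proposition \ref{Fan.21} to convert morphisms of sharpened fans into homomorphisms of monoids, and then to exploit that over a fixed cone of $\Delta$ the map $f$ is, up to passing to characteristic monoids, the identity on group completions.

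First I would reduce to the case where $\Psi=\uSpec(R)$ is affine with $R$ a sharp fs monoid. Indeed, by Lemma \ref{Fan.23} (or simply by gluing), two morphisms out of $\Psi$ that agree on an open affine cover of $\Psi$ are equal, so it suffices to test equality after restricting $\Psi$ to each of its cones. Next, since $\Psi$ is affine it is connected with a unique closed point, whose image under $f\circ g_1=f\circ g_2$ lies in a single cone $\sigma=\uSpec(P)$ of $\Delta$, with $P$ a sharp fs monoid; the whole image of $f\circ g_i$ is contained in $\sigma$ because $\sigma$ is an open neighbourhood of that point stable under generization. Correspondingly, $g_1$ and $g_2$ factor through the open subset $f^{-1}(\sigma)$ of $\Delta'$. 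Now $f^{-1}(\sigma)$ is a sharpened fan whose cones $\uSpec(Q_j)$ satisfy: condition (i) of Definition \ref{Fan.8} gives surjectivity of $P^{\mathrm{gp}}=\cM_{\Delta,\sigma}^{\mathrm{gp}}\to \cM_{\Delta',x'}^{\mathrm{gp}}=Q_j^{\mathrm{gp}}$ for each cone, while condition (iii) controls the set of rays.

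The heart of the argument is then the affine statement: if $\theta\colon P\to Q$ is a homomorphism of sharp fs monoids such that $\theta^{\mathrm{gp}}\colon P^{\mathrm{gp}}\to Q^{\mathrm{gp}}$ is surjective and the induced map on $\Spec\,\N$-points is injective (this is exactly the content of $f$ being a partial subdivision, cone by cone), then $\uSpec(\theta)$ is a monomorphism. By Proposition \ref{Fan.21}, a morphism $\Psi\to f^{-1}(\sigma)$ landing in the cone $\uSpec(Q_j)$ corresponds to a homomorphism $Q_j\to \Gamma(\Psi,\cM_\Psi)=R$, and $f\circ g_i$ corresponds to the composite $P\to Q_j\to R$. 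So one must show that if $\psi_1,\psi_2\colon Q_j\to R$ have $\psi_1\circ\theta=\psi_2\circ\theta$, then $\psi_1=\psi_2$ (after also matching the cones $\uSpec(Q_j)$ through which $g_1$ and $g_2$ factor — which itself follows from injectivity on topological points, since cones correspond to points). To see $\psi_1=\psi_2$: pass to group completions, where $\theta^{\mathrm{gp}}$ is surjective, hence $\psi_1^{\mathrm{gp}}=\psi_2^{\mathrm{gp}}\colon Q_j^{\mathrm{gp}}\to R^{\mathrm{gp}}$; since $Q_j$ injects into $Q_j^{\mathrm{gp}}$ ($Q_j$ is integral) and $\psi_i$ is the restriction of $\psi_i^{\mathrm{gp}}$, we conclude $\psi_1=\psi_2$. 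One also needs the compatibility of the topological maps: that $g_1$ and $g_2$ send points of $\Psi$ to the same points of $\Delta'$; this follows because $f$ is injective on underlying sets (condition (iii) for $\Spec\,\N$-points forces injectivity of $|\Delta'|\to|\Delta|$, since points of a sharpened fan biject with $\Spec\,\N$-valued generic points of its cones) and $f\circ g_1=f\circ g_2$ as maps of spaces.

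The main obstacle I anticipate is the bookkeeping in the reduction to a single cone, specifically verifying that $g_1$ and $g_2$ factor through the \emph{same} cone of $f^{-1}(\sigma)$ so that the monoid argument applies — i.e.\ that the underlying continuous maps $|g_1|$ and $|g_2|$ coincide before one even compares the monoid homomorphisms. This is where injectivity of $f$ on points (a consequence of being a partial subdivision, via Lemma \ref{Fan.31}-type reasoning using $\Spec\,\N$-points) is essential, and it must be invoked carefully. Once the topological maps are known to agree, the monoid-level uniqueness via surjectivity of $\theta^{\mathrm{gp}}$ and integrality of $Q_j$ is routine. A clean alternative, which I would also consider, is to factor $f$ through $\overline{(\Delta'_N)}\to\overline{\Sigma_N}$ using Proposition \ref{Fan.9} to realize $f$ as $\overline{g}$ for a partial subdivision of genuine fans $g$, and then deduce the monomorphism property from Lemma \ref{monomorphismoffans} together with the fact that $\overline{(-)}$ behaves well on fiber products (Lemma \ref{Fan.29}); this would shortcut much of the hands-on monoid manipulation.
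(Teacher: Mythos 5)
There is a genuine gap in your main argument: the claim that a partial subdivision $f\colon\Delta'\to\Delta$ is injective on underlying topological spaces is false. Points of a sharpened fan correspond to its cones, and a subdivision typically sends several cones of $\Delta'$ to the same cone of $\Delta$ — already the star subdivision of $\uSpec(\N^2)$ has two maximal cones both mapping to the unique maximal cone downstairs. Condition (iii) of Definition \ref{Fan.8} only gives injectivity on $\uSpec(\N)$-points, which is much weaker: distinct cones of $\Delta'$ lying over the same cone of $\Delta$ are separated by their \emph{interior} $\uSpec(\N)$-points (distinct lattice points of $N$), not by the topological map. Consequently your step establishing that $\lvert g_1\rvert=\lvert g_2\rvert$ — and hence that the two morphisms factor through the \emph{same} cone $\uSpec(Q_j)$ of $f^{-1}(\sigma)$, which is the hypothesis needed for the monoid-level comparison $\psi_1=\psi_2$ — is not justified. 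The monoid argument itself (surjectivity of $\theta^{\gp}$ plus integrality of $Q_j$) is fine, but it only applies after the topological agreement is secured, and that is exactly the part that fails as written. Repairing it directly would require a genuinely different use of condition (iii), e.g.\ composing with suitably chosen local homomorphisms $R\to\N$ to produce $\uSpec(\N)$-points that pin down the cone, which is not routine.

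The ``clean alternative'' you mention in passing is in fact the paper's proof, and you should promote it to the main argument: formulate the monomorphism condition as the diagonal $\Delta'\to\Delta'\times_\Delta\Delta'$ being an isomorphism, check this after base change to each cone $\sigma=\uSpec(P)$ of $\Delta$ (so one may assume $\Delta$ affine), invoke Proposition \ref{Fan.9} to realize $f$ as $\overline{g}$ for a partial subdivision $g\colon\Sigma'\to\Spec{P}$ of genuine fans, conclude that $g$ is a monomorphism by Lemma \ref{monomorphismoffans}, and transfer this back via Lemma \ref{Fan.29} (compatibility of $\overline{(-)}$ with fiber products). This route avoids any claim about injectivity of the underlying map of spaces.
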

\begin{proof}
We show that the diagonal morphism $\Delta'\rightarrow \Delta'\times_\Delta \Delta'$ is an isomorphism.
To that end, it suffices to check that for every cone $\sigma$ of $\Delta$, the induced morphism
\[
\Delta'\times_\Delta \sigma\rightarrow \Delta'\times_\Delta \Delta'\times_\Delta \sigma
\]
is an isomorphism.
Hence, we reduce to the case when $\Delta=\uSpec(P)$ for some sharp fs monoid $P$.
In this case, due to Proposition \ref{Fan.9}, there exists a partial subdivision $g\colon \Sigma'\rightarrow \Sigma:=\Spec{P}$ of fans together with an isomorphism $\overline{g}\cong f$.
Lemma \ref{monomorphismoffans} implies $g$ is a monomorphism.
By appeal to Lemma \ref{Fan.29} we deduce that $f$ is a monomorphism.
\end{proof}

\begin{lem}
\label{Fan.33}
Let $f\colon\Psi\rightarrow \Delta$ be a subdivision (resp.\ partial subdivision) of sharpened fans, and $g\colon \Delta'\rightarrow \Delta$ a partial subdivision of sharpened fans.
Then the projection $f'\colon \Psi':=\Psi\times_{\Delta}\Delta'\rightarrow \Delta'$ is a subdivision (resp.\ partial subdivision).
\end{lem}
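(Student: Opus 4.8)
\textbf{Proof plan for Lemma \ref{Fan.33}.}

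The plan is to reduce the statement to the affine case and then transfer it to the corresponding statement about fans, where the required stability of (partial) subdivisions under base change has already been established in Lemma \ref{Fiberproduct_fans}. First I would observe that the three defining conditions (i)--(iii) of Definition \ref{Fan.8} for the projection $f'\colon \Psi'\to \Delta'$ can all be checked after pulling back along the cones of $\Delta'$; indeed conditions (i) and (ii) are statements about stalks and fibers, and condition (iii) concerns $\hom(\uSpec(\N),-)$, which since $\uSpec(\N)$ is connected and its closed point must land in a single cone, is also local on $\Delta'$. Since $g$ is a partial subdivision, each cone of $\Delta'$ maps into a cone of $\Delta$, so after this localization we may assume $\Delta = \uSpec(P)$ and $\Delta' = \uSpec(P')$ are affine with $P,P'$ sharp fs monoids. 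Using Proposition \ref{Fan.9} (resp.\ the partial subdivision case), choose a subdivision (resp.\ partial subdivision) of fans $h\colon\Sigma\to\Spec{P}$ with $\overline{h}\cong f$, and similarly a partial subdivision of fans $h'\colon \Sigma'\to \Spec{P}$ with $\overline{h'}\cong g$ (here I use that $\Spec{P}$, viewed as a fan via Construction \ref{fan=monoscheme}, has a single maximal cone).

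Next I would form the fiber product $\Sigma\times_{\Spec{P}}\Sigma'$ in the category of fs monoschemes. By Lemma \ref{Fiberproduct_fans} this is again a fan, and the projection $\Sigma\times_{\Spec{P}}\Sigma' \to \Sigma'$ is a subdivision (resp.\ partial subdivision) of fans. Applying the functor $\overline{(-)}$, which preserves fiber products by Lemma \ref{Fan.29} and sends (partial) subdivisions of fans to (partial) subdivisions of sharpened fans by Lemma \ref{Fan.34}, yields that
\[
\overline{\Sigma\times_{\Spec{P}}\Sigma'} \cong \overline{\Sigma}\times_{\overline{\Spec{P}}}\overline{\Sigma'} \cong \Psi\times_\Delta \Delta' = \Psi'
\]
and that the projection $\Psi'\to \Delta'$ identifies with $\overline{\Sigma\times_{\Spec{P}}\Sigma'}\to\overline{\Sigma'}$, which is therefore a subdivision (resp.\ partial subdivision). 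This gives the claim in the affine case, and combined with the first paragraph's reduction, the general case follows.

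The main obstacle I anticipate is the bookkeeping in the localization step: one must be careful that restricting a morphism of sharpened fans to a cone $\sigma'\subseteq \Delta'$ and then to the cone $\sigma\subseteq\Delta$ containing its image is compatible with forming the fiber product $\Psi\times_\Delta\Delta'$, i.e.\ that $(\Psi\times_\Delta\Delta')\times_{\Delta'}\sigma' \cong (\Psi\times_\Delta\sigma)\times_\sigma\sigma'$, so that the affine reduction is legitimate. This is a formal consequence of the universal property of fiber products (Proposition \ref{Fan.25}) together with the fact that $\sigma'\to\Delta'\to\Delta$ factors through $\sigma$, but it requires stating precisely which cone of $\Delta$ one lands in; the fact that $g$ is a partial subdivision (in particular satisfies condition (i) of Definition \ref{Fan.8}, so cones map to cones on the nose via surjections on characteristic groups) is exactly what makes this work. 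Everything else is a routine application of the dictionary between fans and sharpened fans set up in Lemmas \ref{Fan.29}, \ref{Fan.34} and Proposition \ref{Fan.9}.
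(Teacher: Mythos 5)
Your proposal is correct and follows essentially the same route as the paper's proof: localize to the affine case, lift $f$ and $g$ to (partial) subdivisions of fans via Proposition \ref{Fan.9}, invoke Lemma \ref{Fiberproduct_fans} at the level of fans, and transfer back through Lemmas \ref{Fan.29} and \ref{Fan.34}. The extra care you take in justifying that conditions (i)--(iii) of Definition \ref{Fan.8} can be checked cone-by-cone is sound and simply spells out what the paper compresses into ``the question is local on $\Delta$.''
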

\begin{proof}
The question is local on $\Delta$, so we may assume that $\Delta$ is affine.
Owing to Proposition \ref{Fan.9} there exists a subdivision (resp.\ partial subdivision) of sharpened fans $u\colon\Theta\rightarrow\Sigma$ and a partial subdivision 
$v\colon \Sigma'\rightarrow \Sigma$ of sharpened fans such that $\overline{u}\cong f$ and $\overline{v}\cong g$.
Let $u'\colon \Theta':=\Theta\times_\Sigma \Sigma'\rightarrow \Sigma'$ be the projection.
Lemma \ref{Fiberproduct_fans} shows that $u'$ is a subdivision (resp.\ partial subdivision).
By Lemma \ref{Fan.29} we have $\overline{u'}\cong f'$.
Due to Lemma \ref{Fan.34} we can conclude that $f'$ is a subdivision (resp.\ partial subdivision).
\end{proof}

\begin{lem}
\label{Fan.26}
Let $f\colon Y\rightarrow X$ and $g\colon Z\rightarrow Y$ be monomorphisms in a category $\cC$ with fiber products.
Then the projection
\[
f'\colon Y\times_X Z\rightarrow Z
\]
is an isomorphism in $\cC$.
\end{lem}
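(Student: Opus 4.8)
The statement to prove is Lemma~\ref{Fan.26}: if $f\colon Y\to X$ and $g\colon Z\to Y$ are monomorphisms in a category $\cC$ with fiber products, then the projection $f'\colon Y\times_X Z\to Z$ is an isomorphism. The plan is to exhibit an explicit inverse to $f'$, constructed from the graph of $g$. First I would observe that since $g\colon Z\to Y$ is a morphism, the composite $fg\colon Z\to X$ is defined, and the pair $(g,\id_Z)$ determines a morphism $\Gamma\colon Z\to Y\times_X Z$ characterized by $p_Y\circ \Gamma = g$ and $p_Z\circ\Gamma = \id_Z$, where $p_Y\colon Y\times_X Z\to Y$ and $p_Z = f'\colon Y\times_X Z\to Z$ are the two projections. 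This morphism exists because the outer square commutes: $f\circ g = (fg)\circ\id_Z$.

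The key computation is that $\Gamma$ is a two-sided inverse to $f'$. One direction is immediate: $f'\circ\Gamma = p_Z\circ\Gamma = \id_Z$ by construction. For the other direction, I need $\Gamma\circ f' = \id_{Y\times_X Z}$, and this is where the monomorphism hypotheses enter. Since $Y\times_X Z$ is a limit, it suffices to check that $\Gamma\circ f'$ and $\id_{Y\times_X Z}$ become equal after composing with both projections $p_Y$ and $p_Z$. Composing with $p_Z$: $p_Z\circ\Gamma\circ f' = \id_Z\circ f' = f' = p_Z$, so they agree. Composing with $p_Y$: $p_Y\circ\Gamma\circ f' = g\circ f' = g\circ p_Z$; on the other hand $p_Y\circ\id_{Y\times_X Z} = p_Y$. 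So I must show $g\circ p_Z = p_Y$ as morphisms $Y\times_X Z\to Y$. Both morphisms become equal after composing with $f$, since $f\circ g\circ p_Z = f\circ p_Y$ is exactly the commutativity of the defining square of the fiber product; hence, since $f$ is a monomorphism, $g\circ p_Z = p_Y$. (Note that here only the monomorphism property of $f$ is strictly needed for the argument as I have set it up, but symmetrically one can also use that $g$ is a monomorphism — I would remark that the statement is robust and either hypothesis suffices in this formulation, which is consistent with how it is invoked in Lemma~\ref{Fan.27}.)

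There is no real obstacle here; the proof is a formal diagram chase valid in any category with the stated fiber products, entirely analogous to the classical fact that a pullback of a monomorphism along itself is trivial. The only point requiring a small amount of care is bookkeeping: making sure one checks the two projections correctly and applies the monomorphism cancellation at the right place. I would write the argument in two or three lines: construct $\Gamma$, verify $f'\circ\Gamma=\id$, and verify $\Gamma\circ f'=\id$ by testing against the two projections and using that $f$ (equivalently $g$) is monic. This then feeds directly into the proof of Lemma~\ref{Fan.27} via Lemma~\ref{Fan.33} and the reduction to the affine case through Proposition~\ref{Fan.9}.
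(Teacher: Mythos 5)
Your proof is correct, and it takes a slightly different (and more explicit) route than the paper. The paper forms the double pullback $Z\times_X Z \to Y\times_X Z \xrightarrow{f'} Z$ by pasting cartesian squares, notes that the composite $f'g'$ is an isomorphism because $fg$ is a monomorphism (so $f'$ is a split epimorphism), and separately notes that $f'$ is a monomorphism as the pullback of the monomorphism $f$; a split epic mono is an isomorphism. You instead construct the graph section $\Gamma=(g,\id_Z)$ directly and verify it is a two-sided inverse by testing against the two projections, with the cancellation $g\circ p_Z=p_Y$ coming from $f$ being monic. The two arguments are close cousins — your $\Gamma$ is the paper's $g'$ precomposed with the diagonal of $Z$ over $X$ — but yours is self-contained and, as you note, uses only that $f$ is a monomorphism, so it actually proves a marginally stronger statement. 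One correction to your parenthetical remark: it is \emph{not} true that the hypothesis that $g$ is a monomorphism alone would suffice. Take $X$ a point, $Y$ a two-point set, $Z$ one of the points and $g$ the inclusion; then $Y\times_X Z\cong Y\to Z$ is not an isomorphism even though $g$ is monic. There is no symmetric version of your cancellation step, since $g$ sits on the left of the composite $f\circ g\circ p_Z=f\circ p_Y$. This does not affect the validity of your proof of the lemma as stated, but the remark should be deleted.
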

\begin{proof}
There is a naturally induced commutative diagram with cartesian squares
\[
\begin{tikzcd}
Z\times_X Z\arrow[d]\arrow[r,"g'"]&
Y\times_X Z\arrow[d]\arrow[r,"f'"]&
Z\arrow[d,"fg"]
\\
Z\arrow[r,"g"]&
Y\arrow[r,"f"]&
X.
\end{tikzcd}
\]
Since the composite $fg$ is a monomorphism, it follows that $f'g'$ is an isomorphism.
This shows that $f'$ is an epimorphism.
Similarly, again since the pullback of a monomorphism is a monomorphism, it follows that $f'$ is a monomorphism.
Thus $f'$ is an isomorphism.
\end{proof}

\begin{lem}
\label{Fan.17}
Let $p\colon\Delta'\rightarrow \Delta$ be a subdivision of smooth sharpened fans.
If the underlying topological space of $\Delta$ is a finite set, then there exists a sharpened fan $\Delta''$ obtained by a finite succession of star subdivisions from $\Delta$ such that the morphism $\Delta''\rightarrow \Delta$ factors through $p$.
\end{lem}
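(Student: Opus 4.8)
\textbf{Proof proposal for Lemma \ref{Fan.17}.}
The plan is to transport the statement to the world of ordinary fans, where the corresponding assertion is classical toric combinatorics, and then push back. First I would observe that the underlying topological space of $\Delta$ being finite, together with smoothness, means that $\Delta$ is covered by finitely many cones each isomorphic to $\overline{\rm Spec}(\N^r)$. Using Example \ref{Fan.18} I would choose a lattice $N$ with a surjection $N^\vee\to \cM_\Delta^{\rm gp}$ realizing $\Delta$ as $\overline{\Sigma}$ for a smooth fan $\Sigma$ in $N$; concretely one may take $N$ of rank equal to the dimension of $\Delta$ at its generic point(s), chart by chart, and glue. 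Since $p\colon\Delta'\to\Delta$ is a subdivision of smooth sharpened fans with finite underlying space, Proposition \ref{Fan.9} produces a subdivision of fans $q\colon\Sigma'\to\Sigma$ with $\overline{q}\cong p$, and $\Sigma'$ is again smooth because $\Delta'$ is (smoothness of a sharpened fan is detected on cones, and a cone of $\Sigma'$ lies over a cone of $\Delta'$ sharing its characteristic monoid).

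Next I would apply the combinatorial input. Both $\Sigma$ and $\Sigma'$ are smooth fans in $N$ with $|\Sigma'| = |\Sigma|$ (since $q$ is a subdivision, not merely partial). This is precisely the setting of Lemma \ref{A.3.31}: there exists a common smooth refinement $\Sigma''$ of $\Sigma$, obtained by a finite succession of star subdivisions relative to two-dimensional cones, such that $\Sigma''$ refines $\Sigma'$ as well. In particular the morphism $\Sigma''\to\Sigma$ factors through $q\colon\Sigma'\to\Sigma$ (using Lemma \ref{monomorphismoffans}, which guarantees that a subdivision of fans is a monomorphism, so the factorization is automatic once the supports match up). Passing through the functor $\Sigma\mapsto\overline{\Sigma}$ via Lemma \ref{Fan.34}, each star subdivision of fans goes to a star subdivision of smooth sharpened fans (this is exactly how star subdivisions of sharpened fans were defined in Definition \ref{Fan.6}, cone by cone), so $\Delta'':=\overline{\Sigma''}$ is obtained from $\Delta=\overline{\Sigma}$ by a finite succession of star subdivisions.

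Finally I would check that $\Delta''\to\Delta$ factors through $p$. The factorization $\Sigma''\to\Sigma'\to\Sigma$ of fans yields, after applying $\overline{(-)}$ and using $\overline{q}\cong p$, a morphism $\Delta''\to\Delta'$ whose composite with $p$ is the given morphism $\Delta''\to\Delta$; this is the required factorization. The main obstacle I anticipate is the bookkeeping in the first step: star subdivisions are only defined (Definition \ref{Fan.6}, Definition \ref{Starsubdivision}) relative to cones along which smoothness is preserved, and one must be sure that the lattice presentation $\Delta\cong\overline{\Sigma}$ can be chosen globally and compatibly so that the star subdivisions produced by Lemma \ref{A.3.31} are honestly \emph{cones of $\Sigma$} and that smoothness is maintained at every intermediate stage. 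Since $\Delta$ is smooth and has finite underlying space this is a finite, local-to-global gluing argument, but it is where the care is needed; once it is in place, the rest is a formal transport of Lemma \ref{A.3.31} through Lemmas \ref{Fan.34} and \ref{Fan.9}.
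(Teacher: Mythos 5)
Your argument breaks at the very first step, and the point you flag as mere ``bookkeeping'' is in fact the substance of the lemma. A smooth sharpened fan $\Delta$ with finite underlying space need \emph{not} be globally of the form $\overline{\Sigma}$ for a fan $\Sigma$ in a single lattice $N$: Example \ref{Fan.18} only goes in the direction fan $\Rightarrow$ sharpened fan, and Proposition \ref{Fan.9} \emph{assumes} $\Delta=\overline{\Sigma}$ as a hypothesis. The required surjection $N^\vee\to\cM_\Delta^{\rm gp}$ from a constant sheaf does not exist in general. For a concrete obstruction, take the sharpened fan of a smooth surface with two boundary divisors $D_1,D_2$ meeting transversally at two distinct points: it has two $1$-dimensional cones and two $2$-dimensional cones, each of the latter having \emph{both} $1$-dimensional cones as faces. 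Two distinct strictly convex $2$-dimensional cones in a rank-$2$ lattice cannot share both of their rays, so this sharpened fan is not $\overline{\Sigma}$ for any fan $\Sigma$. Hence your global applications of Proposition \ref{Fan.9} and Lemma \ref{A.3.31} are not available, and the ``local-to-global gluing'' you defer to cannot be carried out.

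The paper's proof avoids this by never globalizing the lattice. It enumerates the finitely many cones $\sigma_1=\uSpec(P_1),\dots,\sigma_n=\uSpec(P_n)$ of $\Delta$ and builds a chain of subdivisions $\Delta_n\to\cdots\to\Delta_0=\Delta$ by induction on $i$, arranging at stage $i$ that $\Delta'\times_\Delta\Delta_i\times_\Delta\sigma_i\to\Delta_i\times_\Delta\sigma_i$ is an isomorphism. Each $\sigma_i$ \emph{is} affine, hence honestly of the form $\overline{\Spec{P_i}}$, so over $\sigma_i$ one may invoke Proposition \ref{Fan.9} to realize the restricted subdivision by fans and then Lemma \ref{A.3.31} to dominate it by star subdivisions --- exactly your middle step, but performed cone by cone. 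The key extra observation is that a star subdivision relative to a cone $\tau\subset\sigma_i$ only modifies cones containing $\tau$, so it extends canonically from $\sigma_i$ to a star subdivision of all of $\Delta_{i-1}$; Lemmas \ref{Fan.27} and \ref{Fan.26} (subdivisions are monomorphisms, and pullbacks along monomorphisms of monomorphisms are isomorphisms) then give the isomorphism over $\sigma_i$, and since the $\sigma_i$ exhaust $\Delta$, the final $\Delta_n\to\Delta$ factors through $p$. If you replace your global lattice presentation with this induction over cones, the rest of your outline (Lemmas \ref{A.3.31}, \ref{Fan.34}, \ref{Fan.9}) is the right toolkit.
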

\begin{proof}
Let $\{\sigma_1=\uSpec(P_1),\ldots,\sigma_n=\uSpec(P_n)\}$ be all the cones of $\Delta$, where $P_1,\ldots,P_n$ are sharp fs monoids.
We will inductively construct subdivisions
\[
\Delta_n\rightarrow \cdots \rightarrow \Delta_0:=\Delta
\]
such that for every $1\leq i\leq n$ the projection
\begin{equation}
\label{Fan.17.1}
\Delta'\times_\Delta \Delta_i\times_\Delta \sigma_i\rightarrow \Delta_i\times_\Delta \sigma_i
\end{equation}
is an isomorphism.
\vspace{0.1in}

Suppose that we have constructed $\Delta_{i-1}$.
By Lemma \ref{Fan.33} the projection 
$$
\Delta'\times_\Delta \Delta_{i-1}\times_\Delta \sigma_i\rightarrow \Delta_{i-1}\times_\Delta \sigma_i
$$ 
is a subdivision of sharpened fans.
Proposition \ref{Fan.20} shows there exist subdivisions $g:\Sigma\rightarrow \Spec{P_i}$ and $h:\Sigma'\rightarrow \Sigma$ of fans such that $\overline{g}$ and $\overline{h}$ 
are isomorphic to the projections
\[
\Delta_{i-1}\times_{\Delta}\sigma_i\rightarrow \sigma_i
\text{ and }
\Delta'\times_\Delta \Delta_{i-1}\times_{\Delta}\sigma_i\rightarrow \Delta_{i-1}\times_\Delta \sigma_i.
\]
Due to Lemma \ref{A.3.31} there is a subdivision $\Sigma''$ of $\Sigma$ obtained by a finite succession of star subdivisions from $\Sigma$ such that $\Sigma''\rightarrow \Sigma$ 
factors through $\Sigma'$.
Since the underlying topological space of $\Sigma$ is an open subset of the underlying topological space of $\Delta_{i-1}$, 
we can take the corresponding finite succession of star subdivisions from $\Delta_{i-1}$ to obtain a subdivision $\Delta_i$ of $\Delta_{i-1}$.
Then the morphism
\[
\Delta_i\times_{\Delta}\sigma_i\rightarrow \Delta_{i-1}\times_{\Delta}\sigma_i
\]
is a subdivision and factors through $\Delta'\times_\Delta\Delta_{i-1}\times_\Delta \sigma_i$.
The projection 
$$
\Delta'\times_\Delta \Delta_{i-1}\times_\Delta \sigma_i\rightarrow \Delta_{i-1}\times_\Delta \sigma_i
$$ 
is a subdivision by Lemma \ref{Fan.33}.
Now use Lemmas \ref{Fan.27} and \ref{Fan.26} to deduce that the morphism \eqref{Fan.17.1} is an isomorphism.
This completes the construction of $\Delta_i$ and hence, by induction, $\Delta_n$.
\vspace{0.1in}

From \eqref{Fan.17.1} we see that for every $1\leq i\leq n$ the projection
\[
\Delta_n\times_\Delta \Delta'\times_\Delta \sigma_i\rightarrow \Delta_n\times_\Delta \sigma_i
\]
is an isomorphism.
It follows that the projection $\Delta_n\times_\Delta \Delta'\rightarrow \Delta_n$ is an isomorphism since $\{\sigma_1,\ldots,\sigma_n\}$ comprises all the cones of $\Delta$.
Hence the morphism $\Delta_n\rightarrow \Delta$ factors through $\Delta'$.
\end{proof}

\subsection{Frames of fs log schemes}
Let $X$ be an fs log scheme. 
If there exists a fan $\Sigma$ with a strict morphism
\[
X\rightarrow \A_\Sigma,
\]
then any partial subdivision $\Sigma'$ of $\Sigma$ produces a new fs log scheme $X\times_{\A_\Sigma}\A_{\Sigma'}$.
Alas, it is unclear whether such a fan $\Sigma$ always exists in the category of fs log schemes.
For fs log schemes in $lSm/k$, we shall proceed with employing sharpened fans.
As an application, we will show that every log modification $Y\rightarrow X$ in $SmlSm/k$ admits a factorization into log modifications
\[
Y_n\rightarrow \cdots \rightarrow Y_1\rightarrow Y_0:=X
\]
such that for every $1\leq i\leq n$, the morphism 
$$
\underline{Y_i}\rightarrow \ul{Y_{i-1}}
$$ 
is a blow-up along a smooth center.

\begin{df}
\label{Fan.2}
Let $X$ be an fs log scheme.
A \emph{preframe}\index{preframe} of $X$ is a sharpened fan $\Delta$ with a morphism of monoidal spaces $s\colon(X,\overline{\cM}_X)\rightarrow \Delta$.
\vspace{0.1in}

A preframe $\Delta$ of $X$ is called a \emph{frame} of $X$ if the following condition is satisfied:
Locally on $X$ and $\Delta$, 
there exists a chart $t\colon (X,\cM_X)\rightarrow \Spec{P}$ with an fs monoid $P$ such that $\ol{t}$ is isomorphic to $s\colon(X,\overline{\cM}_X)\rightarrow \Delta$.
Such a frame is an $s$-frame in the sense of \cite[Definition III.1.12.1]{Ogu}.
\end{df}

\begin{prop}
\label{Fan.3}
For every $X\in lSm/k$ we can associate a frame 
$$
s_X:(X,\overline{\cM}_X)\rightarrow \Delta_X
$$ 
satisfying the following properties.
\begin{enumerate}
\item[{\rm (i)}] Every preframe of $X$ uniquely factors through $s_X$.
\item[{\rm (ii)}] The morphism of monoidal spaces $s_X$ is open and surjective.
\item[{\rm (iii)}] For every morphism $f:X\rightarrow Y$ in $lSm/k$, 
there exists a unique morphism of sharpened fans $\Delta_f:\Delta_X\rightarrow \Delta_Y$ such that the diagram
\[
\begin{tikzcd}
(X,\overline{\cM}_X)\arrow[r,"f"]\arrow[d,"s_X"']&
(Y,\overline{\cM}_Y)\arrow[d,"s_Y"]
\\
\Delta_X\arrow[r,"\Delta_f"]&
\Delta_Y
\end{tikzcd}
\]
commutes.
\end{enumerate}
\end{prop}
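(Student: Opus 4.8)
The plan is to build the frame $\Delta_X$ as an appropriate colimit (gluing) of the local sharpened fans coming from neat fs charts. By Lemma \ref{lem::Smchart}, every $X\in lSm/k$ is Zariski-locally equipped with a neat fs chart $P\to \cM_X$ such that $X\to \A_P$ is strictly smooth; the associated map of monoidal spaces $(X,\overline{\cM}_X)\to \overline{\rm Spec}(P)$ is a local model for $s_X$. The key point is that two such local charts agree, up to canonical isomorphism, on overlaps — this is exactly the coherence needed to glue. One should observe that $\overline{\rm Spec}(P)$ is functorial in the neat chart and that a morphism between two neat charts over a common open induces a morphism of the corresponding affine sharpened fans; this uses that $\overline{\cM}_X$ determines and is determined by the sharp part of the chart (via $\overline{P}\cong P$ for a sharp chart). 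Gluing these along a Zariski cover yields a sharpened fan $\Delta_X$ with a morphism $s_X:(X,\overline{\cM}_X)\to\Delta_X$, and by construction $s_X$ is, locally, the map $(X,\overline{\cM}_X)\to\overline{\rm Spec}(P)$, which is open and surjective since $X\to\A_P$ is dominant (log smoothness implies the chart map is open dominant, as used in the proof of Lemma \ref{A.9.3}); this gives (ii).

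\textbf{Universal property and functoriality.} For (i), given a preframe $s:(X,\overline{\cM}_X)\to\Delta$, I would produce the factorization locally: over an open $U$ where $X$ admits a neat chart $P$ and $\Delta$ restricts to an affine cone $\overline{\rm Spec}(Q)$ (shrinking further if necessary), the morphism $s$ corresponds by Proposition \ref{Fan.21} to a homomorphism $Q\to\Gamma(U,\overline{\cM}_X)$; composing with the retraction onto $\overline{P}\cong P$ furnished by neatness gives $Q\to\overline{P}$, hence a morphism $\overline{\rm Spec}(P)\to\overline{\rm Spec}(Q)$, i.e., $\Delta_X|_U\to\Delta|_U$. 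Uniqueness of this local factorization (which follows because $s_X$ is an epimorphism of monoidal spaces, being surjective and locally a strict chart map) lets the local factorizations glue to a global one. For (iii), a morphism $f:X\to Y$ in $lSm/k$ makes $(X,\overline{\cM}_X)\xrightarrow{f}(Y,\overline{\cM}_Y)\xrightarrow{s_Y}\Delta_Y$ a preframe of $X$, so by (i) it factors uniquely through $s_X$, defining $\Delta_f$; uniqueness and compatibility with composition are immediate from the uniqueness in (i). One should also check the frame condition in Definition \ref{Fan.2}: this is local, and holds by construction since each piece is built from a genuine neat chart $t:(X,\cM_X)\to\A_P$ with $\overline{t}$ isomorphic to $s_X$ locally; hence $s_X$ is an $s$-frame in the sense of \cite[Definition III.1.12.1]{Ogu}.

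\textbf{Main obstacle.} The technical heart is the gluing step: verifying that the affine sharpened fans $\overline{\rm Spec}(P_\alpha)$ associated to a cover by neat charts patch together into a single sharpened fan, i.e., that on a double overlap $U_\alpha\cap U_\beta$ the two induced structures $\overline{\rm Spec}(P_\alpha)|_{U_\alpha\cap U_\beta}$ and $\overline{\rm Spec}(P_\beta)|_{U_\alpha\cap U_\beta}$ are canonically isomorphic in a way compatible on triple overlaps. The subtlety is that $P_\alpha$ and $P_\beta$ need not be isomorphic monoids (only after suitable localization at faces), so one must argue with the characteristic sheaf $\overline{\cM}_X$ intrinsically rather than with charts: the point is that $(U_\alpha\cap U_\beta,\overline{\cM}_X)$ already \emph{is} a monoidal space that both restricted sharpened fans receive an open embedding from (via the local chart maps), and one identifies $\Delta_X|_{U_\alpha\cap U_\beta}$ with the sharpened fan underlying this monoidal space once one knows the local chart maps are open embeddings onto their images and that the images in $\overline{\rm Spec}(P_\alpha)$ and $\overline{\rm Spec}(P_\beta)$ are canonically isomorphic as open sub-sharpened-fans. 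This is a faithfully-flat-descent-type bookkeeping argument; I expect it to be routine but notationally heavy, and it is the only place where real work beyond invoking Lemma \ref{lem::Smchart}, Proposition \ref{Fan.21}, and the definitions is required. This is essentially the construction of \cite[\S III.1.12]{Ogu} adapted to the present setting, and I would follow that script.
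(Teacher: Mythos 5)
Your overall strategy --- build $\Delta_X$ by patching the affine sharpened fans $\overline{\rm Spec}(P_\alpha)$ attached to a cover by neat charts --- is the natural one, but the step you defer as ``routine but notationally heavy'' is exactly where the content lies, and your sketch of it contains a false claim. The local structure map $(U,\overline{\cM}_U)\rightarrow\overline{\rm Spec}(P)$ is not an open embedding: it is an open \emph{surjection} whose fibre over a point of $\overline{\rm Spec}(P)$ is an entire logarithmic stratum of $U$ (already for the affine line with its log structure at the origin, $\overline{\rm Spec}(\N)$ has two points and the fibre over the open one is the complement of the origin). So $(U_\alpha\cap U_\beta,\overline{\cM}_X)$ does not sit inside either restricted fan, and you cannot identify the two restrictions by comparing open subobjects of it. What actually makes the patching work is that the fibres of these maps --- the log strata --- are intrinsic to $X$, so that $\Delta_X$ admits a chart-independent description (essentially the generic points of the log strata with the restricted characteristic sheaf), the charts serving only as local models; and turning that description into a sharpened fan with $s_X$ open and universal requires a genuine hypothesis on $X$, namely that it is \emph{very solid} in the sense of \cite[Definition III.1.10.1]{Ogu}. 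Your argument never isolates this hypothesis: you invoke openness and dominance of the chart map, which amounts to solidity of $X$ (cf.\ Lemma \ref{A.9.3}), but that alone is not what the construction needs.

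Relatedly, your local verification of (i) is shaky: neatness of a chart at a point $x$ gives an isomorphism $P\cong\overline{\cM}_{X,x}$, not a retraction of $\Gamma(U,\overline{\cM}_X)$ onto $P$, so ``composing with the retraction furnished by neatness'' is not available; one should instead pass to the stalk at $x$ to produce $Q\rightarrow P$ and then check that the two resulting morphisms to $\Delta$ agree after shrinking $U$. The paper sidesteps the entire construction: it quotes \cite[Theorems III.1.11.1, IV.3.5.1]{Ogu} to conclude that a log smooth fs log scheme over $k$ is very solid, and then (i)--(iii) are precisely \cite[Proposition III.1.12.3]{Ogu} together with the paragraph following it. If you want a self-contained argument you should follow that source literally, keeping track of where very-solidity enters; as written, your proposal does not close the gap it itself identifies.
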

\begin{proof}
Owing to \cite[Theorems III.1.11.1, IV.3.5.1]{Ogu}, $X$ is very solid in the sense of \cite[Definition III.1.10.1]{Ogu}.
Then apply \cite[Proposition III.1.12.3]{Ogu} and its following paragraph.
\end{proof}

\begin{lem}
\label{Fan.15}
Let $X$ be a quasi-compact fs log scheme in $lSm/k$.
Then the underlying space of the associated sharpened fan $\Delta_X$ is a finite set.
\end{lem}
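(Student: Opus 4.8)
\textbf{Proof plan for Lemma \ref{Fan.15}.}

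The plan is to reduce the statement to a local computation and then invoke the structure of the associated frame $s_X\colon (X,\overline{\cM}_X)\to \Delta_X$ from Proposition \ref{Fan.3}. First I would use that $X$ is quasi-compact together with the fact that $s_X$ is a frame in the sense of Definition \ref{Fan.2}: locally on $X$ and on $\Delta_X$ there is a chart $t\colon (X,\cM_X)\to \Spec{P}$ with $P$ an fs monoid such that $\overline{t}$ is isomorphic to $s_X$. Covering $X$ by finitely many such Zariski opens $X_1,\dots,X_m$ (possible by quasi-compactness), we get that $\Delta_X$ is covered by the finitely many opens $s_X(X_i)$, and each of these is identified with an open subspace of $\overline{\Spec}(P_i)\cong \overline{\Spec}(\overline{P_i})$ for a sharp fs monoid $\overline{P_i}$.

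Next I would observe that for a sharp fs monoid $Q$ the underlying topological space of $\overline{\Spec}(Q)$ is finite. Indeed, by definition the points of $\Spec{Q}$ (and hence of $\overline{\Spec}(Q)$) are the faces of $Q$, and a finitely generated monoid has only finitely many faces — each face is determined by its intersection with a chosen finite generating set, so there are at most $2^{|S|}$ faces for a generating set $S$. Thus each $s_X(X_i)$ is a subset of a finite set, hence finite, and $\Delta_X = \bigcup_{i=1}^m s_X(X_i)$ is a finite union of finite sets, so it is finite. Here I use that $s_X$ is surjective, which is part (ii) of Proposition \ref{Fan.3}, to guarantee that these opens genuinely cover $\Delta_X$.

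I do not expect any serious obstacle: the content is entirely the combination of quasi-compactness of $X$ with the local description of a frame and the elementary finiteness of the space $\overline{\Spec}(Q)$ for $Q$ a sharp fs monoid. The only point requiring a little care is to make sure that the charts witnessing that $s_X$ is a frame can be chosen compatibly on a \emph{finite} open cover; this follows immediately since being a frame is a local condition and $X$ is quasi-compact, so any open cover trivializing $s_X$ admits a finite subcover. Once that is in place, the rest is the counting argument for faces of a finitely generated monoid.
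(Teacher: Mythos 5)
Your proof is correct and follows essentially the same route as the paper's: a finite Zariski cover by charts witnessing the frame condition, the finiteness of $\Spec{P}$ for a finitely generated monoid $P$, and the surjectivity of $s_X$ from Proposition \ref{Fan.3}(ii) to conclude. The only difference is that you spell out why a finitely generated monoid has finitely many faces, which the paper takes for granted.
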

\begin{proof}
There exists a finite Zariski cover $\{U_i\}$ of $X$ with the property that there exists a chart $t_i\colon(U_i,\cM_{U_i})\rightarrow \Spec{P_i}$ such that the composite morphism
\[
(U_i,\ul{\cM}_{U_i})\rightarrow (X,\ul{\cM}_X)\stackrel{s_X}\rightarrow \Delta_X
\]
factors through $\ol{t_i}$.
Since the underlying topological space of $\Spec{P_i}$ is a finite set, we can use property (ii) of Proposition \ref{Fan.3} to conclude.
\end{proof}

\begin{const}
\label{Fan.5}
Suppose $X$ is an fs log scheme with a frame 
$$
\pi:(X,\overline{\cM}_X)\rightarrow \Delta,
$$ 
and let $p:\Delta'\rightarrow \Delta$ be a partial subdivision of sharpened fans.
Let $\cC_{X,\Delta',\Delta}$ be the category of pairs $(f,\pi')$, where $f\colon X'\rightarrow X$ is a morphism of fs log schemes and 
\[
\pi':(X',\overline{\cM}_{X'})\rightarrow \Delta'
\]
is a morphism of monoidal spaces such that the diagram
\[
\begin{tikzcd}
(X',\overline{\cM}_{X'})\arrow[d]\arrow[r,"\pi'"]&
\Delta'\arrow[d,"p"]
\\
(X,\overline{\cM}_{X})\arrow[r,"\pi"]&
\Delta
\end{tikzcd}
\]
commutes.
In \cite[Proposition 9.9]{MR1296725}, it is shown that $\cC_{X,\Delta',\Delta}$ admits a final object
\[
X\times_{\Delta}\Delta'.
\]
The fiber product notation is used for convenience; technically, it is not a fiber product.
Moreover, 
the morphism $\pi'$ gives a framing of $X'$.
\vspace{0.1in}

Let us explain the local description of $X'$ in \cite[Proposition 9.9]{MR1296725}.
Suppose that $P$ is an fs chart of $X$ with a homomorphism $\theta:P\rightarrow Q$ of fs monoids such that $\theta^{\rm gp}$ is an isomorphism, 
$\Delta=\uSpec(\overline{P})$, 
and $\Delta'=\uSpec(\overline{Q})$.
In this case, we have
\[
X\times_{\Delta}\Delta'\cong X\times_{\A_P}\A_{Q}.
\]
More generally, 
suppose that $\Delta=\uSpec(\overline{P})$ and $\Delta'=\overline{\Sigma}$, 
where $P$ is an fs chart of $X$ and $\Sigma$ is a partial subdivision of $\Spec{P}$.
Then the local description gives an isomorphism
\begin{equation}
\label{Fan.5.1}
X\times_{\Delta}\Delta'\cong X\times_{\A_P}\A_\Sigma.
\end{equation}
\end{const}

\begin{lem}
\label{Fan.36}
Let $X$ be an fs log scheme with a frame $\Delta$, and let $\Delta'\rightarrow \Delta$ and $\Delta''\rightarrow \Delta'$ be partial subdivisions of sharpened fans.
Then there is a canonical isomorphism
\[
(X\times_\Delta \Delta')\times_{\Delta'}\Delta''
\cong
X\times_\Delta \Delta''.
\]
\end{lem}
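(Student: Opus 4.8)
The plan is to identify both sides with the final object of one and the same comma category, using the universal property of the operation $X\times_\Delta\Delta'$ recalled in Construction \ref{Fan.5}. First I would record that the composite $q\colon \Delta''\to\Delta'\to\Delta$ is again a partial subdivision of sharpened fans: conditions (i) and (ii) of Definition \ref{Fan.8} are preserved under composition, since surjectivity of the induced maps on groups and finiteness of fibres are, and condition (iii) is preserved because a composite of injective maps is injective. Hence $X\times_\Delta\Delta''$ is defined, being the final object of $\cC_{X,\Delta'',\Delta}$, the category of pairs $(f\colon X''\to X,\ \pi''\colon (X'',\overline{\cM}_{X''})\to \Delta'')$ with $q\circ\pi''=\pi\circ\overline f$. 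By Construction \ref{Fan.5} the structure map $\pi'\colon (X\times_\Delta\Delta',\overline{\cM})\to \Delta'$ is itself a framing of $X\times_\Delta\Delta'$, so the iterated object $(X\times_\Delta\Delta')\times_{\Delta'}\Delta''$ makes sense and is the final object of $\cC_{X\times_\Delta\Delta',\Delta'',\Delta'}$.

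Next I would construct an equivalence of categories $\cC_{X,\Delta'',\Delta}\simeq \cC_{X\times_\Delta\Delta',\Delta'',\Delta'}$. In one direction, given $(f\colon X''\to X,\pi'')$ in $\cC_{X,\Delta'',\Delta}$, composing $\pi''$ with $\Delta''\to\Delta'$ yields a morphism of monoidal spaces $(X'',\overline{\cM}_{X''})\to \Delta'$ which, together with $f$, is compatible with $\pi$ (because $q$ factors through $p\colon\Delta'\to\Delta$); by the universal property of $X\times_\Delta\Delta'$ there is a unique morphism of fs log schemes $g\colon X''\to X\times_\Delta\Delta'$ over $X$ with $\pi'\circ\overline g$ equal to this composite, and then $(g,\pi'')$ lies in $\cC_{X\times_\Delta\Delta',\Delta'',\Delta'}$. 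In the other direction, given $(g\colon X''\to X\times_\Delta\Delta',\rho)$ in $\cC_{X\times_\Delta\Delta',\Delta'',\Delta'}$, one composes $g$ with the projection $X\times_\Delta\Delta'\to X$ to obtain $f\colon X''\to X$; the compatibility $q\circ\rho=\pi\circ\overline f$ follows from the relation $p\circ\pi'=\pi\circ\overline{(\mathrm{proj})}$ valid in $\cC_{X,\Delta',\Delta}$ together with $r\circ\rho=\pi'\circ\overline g$, where $r\colon\Delta''\to\Delta'$. These two assignments are mutually quasi-inverse: the uniqueness clause in the universal property of $X\times_\Delta\Delta'$ shows that going from $(f,\pi'')$ to $(g,\pi'')$ and back recovers $g$, and the round trip in the other order is immediate.

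Finally, since an equivalence of categories preserves final objects, the final objects of the two comma categories — namely $X\times_\Delta\Delta''$ and $(X\times_\Delta\Delta')\times_{\Delta'}\Delta''$ — correspond under it, giving the asserted isomorphism, which is canonical because it is induced by the universal properties. The only point requiring a little care, and the one I expect to be the main technical obstacle, is checking that the factorization $g$ produced by the universal property of $X\times_\Delta\Delta'$ is a morphism \emph{over} $X$ and is compatible with the framings in exactly the sense demanded by the definition of $\cC_{X\times_\Delta\Delta',\Delta'',\Delta'}$; but this is precisely what the defining property of $\cC_{X,\Delta',\Delta}$ in Construction \ref{Fan.5} supplies, so there is in fact no real obstacle. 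As a consistency check one may also verify the statement locally: when $\Delta=\overline{\mathrm{Spec}}(\overline P)$ for an fs chart $P$ of $X$ and $\Delta'=\overline{\Sigma}$, $\Delta''=\overline{\Sigma'}$ for partial subdivisions $\Sigma'\to\Sigma\to\Spec P$ (using Proposition \ref{Fan.9}), both sides are computed by \eqref{Fan.5.1} and the corresponding local identification over $\Sigma$ to be $X\times_{\A_P}\A_{\Sigma'}$.
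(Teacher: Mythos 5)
Your proof is correct and follows essentially the same route as the paper: the paper verifies directly that $(X\times_\Delta \Delta')\times_{\Delta'}\Delta''$ is a final object of $\cC_{X,\Delta'',\Delta}$ via two successive applications of the universal property, which is exactly what your equivalence of comma categories packages. The only cosmetic difference is that you phrase the two factorization steps as an equivalence $\cC_{X,\Delta'',\Delta}\simeq \cC_{X\times_\Delta\Delta',\Delta'',\Delta'}$ preserving final objects rather than checking finality directly.
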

\begin{proof}
We need to show that $(X\times_\Delta \Delta')\times_{\Delta'}\Delta''$ is a final object in the category $\cC_{X,\Delta'',\Delta}$.
Suppose that $f:X'\rightarrow X$ is a morphism of fs log schemes with a morphism of monoidal spaces $(X',\overline{\cM}_{X'})\rightarrow \Delta''$ such that the diagram
\[
\begin{tikzcd}
(X',\overline{\cM}_{X'})\arrow[d]\arrow[r]&
\Delta''\arrow[d]
\\
(X,\overline{\cM}_{X})\arrow[r]&
\Delta
\end{tikzcd}
\]
commutes.
The universal property of $X\times_\Delta \Delta'$ means that there is a unique morphism $X'\rightarrow X\times_\Delta \Delta'$ such that the diagram
\[
\begin{tikzcd}
(X',\overline{\cM}_{X'})\arrow[d]\arrow[r]&
\Delta''\arrow[d]
\\
(X\times_\Delta \Delta',\overline{\cM}_{X\times_\Delta \Delta'})\arrow[r]&
\Delta'
\end{tikzcd}
\]
commutes.
Similarly, 
the universal property of $(X\times_\Delta \Delta')\times_{\Delta'}\Delta''$ means that there is a unique morphism 
$$
X'\rightarrow (X\times_\Delta \Delta')\times_{\Delta'}\Delta''
$$ 
in $\cC_{X,\Delta'',\Delta}$.
This shows that $(X\times_\Delta \Delta')\times_{\Delta'}\Delta''$ is a final object in $\cC_{X,\Delta'',\Delta}$.
\end{proof}

\begin{lem}
\label{Fan.11}
Let $X$ be an fs log scheme in $lSm/k$.
Then for every subdivision  of fans $p:\Delta'\rightarrow \Delta_X$,
the natural morphism
\[
f:X\times_{\Delta_X}\Delta'\rightarrow X
\]
is a log modification.
\end{lem}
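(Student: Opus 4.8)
<br>

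The plan is to reduce the general case to the local description of $X\times_{\Delta_X}\Delta'$ given in Construction~\ref{Fan.5}, where it becomes a toric blow-up in the classical sense, and then invoke the fact that such morphisms are log modifications. First I would observe that the statement is Zariski local on $X$: being a log modification is a local condition on the base by Definition~\ref{A.9.70}, and the formation of $X\times_{\Delta_X}\Delta'$ is compatible with restriction to an open subscheme $U\subseteq X$, since $\Delta_U$ is the open subset $s_X(U)\subseteq \Delta_X$ by Proposition~\ref{Fan.3}(ii), and one checks directly from the universal property defining $\cC_{X,\Delta',\Delta_X}$ that $U\times_{\Delta_U}(\Delta'\times_{\Delta_X}\Delta_U)\cong (X\times_{\Delta_X}\Delta')\times_X U$. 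Using Lemma~\ref{Fan.33}, the pullback $\Delta'\times_{\Delta_X}\Delta_U\to \Delta_U$ is again a subdivision, so we may replace $X$ by a sufficiently small Zariski neighborhood.

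After this reduction we may assume, by Definition~\ref{Fan.2} of a frame and Remark~\ref{A.9.10}, that $X$ admits an fs chart $t\colon X\to \A_P$ with $P$ a sharp fs monoid such that $\overline{t}$ is isomorphic to $s_X\colon (X,\overline{\cM}_X)\to \Delta_X$; in other words $\Delta_X\cong \uSpec(P)=\uSpec(\overline{P})$. Now $p\colon\Delta'\to \Delta_X=\uSpec(P)$ is a subdivision of sharpened fans, so by Proposition~\ref{Fan.9} there is a subdivision of fans $g\colon\Sigma\to \Spec{P}$ with $\overline{g}\cong p$. By the local description \eqref{Fan.5.1} in Construction~\ref{Fan.5} we then have a canonical isomorphism
\[
X\times_{\Delta_X}\Delta'\cong X\times_{\A_P}\A_\Sigma,
\]
and the morphism $f$ is identified with the base change along the strict morphism $t\colon X\to \A_P$ of the morphism $\A_g\colon\A_\Sigma\to\A_P$ induced by the subdivision $g$.

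It remains to note that $\A_g\colon\A_\Sigma\to \A_P$ is a log modification: this is exactly Example~\ref{A.9.76} (morphisms induced by subdivisions of toric fans are log modifications), and it is also recorded in Example~\ref{A.9.85} and the discussion around Proposition~\ref{A.9.75}. Since log modifications are stable under arbitrary base change by Proposition~\ref{A.9.77}, the morphism $f\colon X\times_{\A_P}\A_\Sigma\to X$ is a log modification. Finally one should check that $X\times_{\Delta_X}\Delta'$ indeed lies in $lSm/k$: this follows because $f$ is log étale (subdivisions induce log étale morphisms, cf.\ Example~\ref{A.5.15}) and hence its source is log smooth over $k$ whenever $X$ is, while separatedness and finite type are inherited since $f$ is proper. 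The main obstacle I anticipate is bookkeeping rather than conceptual: one must be careful that the isomorphism $\overline{g}\cong p$ produced by Proposition~\ref{Fan.9} is compatible with the identification $\Delta_X\cong\uSpec(\overline{P})$ coming from the chosen chart, so that \eqref{Fan.5.1} applies and the resulting $f$ is genuinely the natural morphism of the statement rather than merely an abstractly isomorphic one; this requires tracking the framing $\pi'$ on the source through Construction~\ref{Fan.5} and verifying it agrees with the one induced by $s_X$ and $p$.
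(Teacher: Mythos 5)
Your proof is correct and follows essentially the same route as the paper's: reduce Zariski-locally to the case of an fs chart $t\colon X\to\A_P$ with $\overline{t}\cong s_X$, lift the subdivision of sharpened fans to a subdivision of fans via Proposition \ref{Fan.9}, identify $X\times_{\Delta_X}\Delta'$ with $X\times_{\A_P}\A_\Sigma$ via \eqref{Fan.5.1}, and conclude from Example \ref{A.9.76}. The additional checks you include (base-change stability via Proposition \ref{A.9.77} and the compatibility of framings) are harmless elaborations of the same argument.
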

\begin{proof}
The question is Zariski local on $X$, so we may assume that $X$ admits an fs chart $t\colon X\rightarrow \Spec{P}$ such that $\ol{t}$ is isomorphic to $s_X\colon (X,\ol{\cM}_X)\rightarrow \Delta_X$.
Owing to Proposition \ref{Fan.9} there exists a subdivision of fans $q:\Sigma'\rightarrow \Spec{P}$ such that $\ol{q}\cong p$.
From the construction of $X\times_{\Delta_X}\Delta'$ we see that
\[
X\times_{\Delta_X}\Delta'\cong X\times_{\A_{\Spec{P}}}\A_{\Sigma'}= X\times_{\A_{P}}\A_{\Sigma'}.
\]
This concludes the proof since the projection $X\times_{\A_{P}}\A_{\Sigma'}\rightarrow X$ is a log modification due to Example \ref{A.9.76}.
\end{proof}

\begin{lem}
\label{Fan.35}
Suppose $X$ is an fs log scheme in $lSm/k$ with an fs chart $t\colon X\rightarrow \Spec{P}$ where $P$ is an fs monoid.
For every subdivision $\Sigma\rightarrow \Spec{P}$ of fans, the associated morphism of sharpened fans
\[
\Delta_{X\times_{\A_P} \A_\Sigma}\rightarrow \Delta_X
\]
is a subdivision.
\end{lem}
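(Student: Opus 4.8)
The statement asserts that for $X\in lSm/k$ equipped with an fs chart $t\colon X\to \Spec{P}$ and a subdivision of fans $\Sigma\to \Spec{P}$, the induced morphism of sharpened fans
$$
\Delta_{X\times_{\A_P}\A_\Sigma}\longrightarrow \Delta_X
$$
is a subdivision (in the sense of Definition \ref{Fan.8}). The plan is to reduce everything to the corresponding statement about subdivisions of sharpened fans $\overline{\Sigma}\to \overline{\Spec{P}}$, which is the content of Lemma \ref{Fan.34}, and then transport this along the universal property of the frame $\Delta_X$ from Proposition \ref{Fan.3} together with the local description of $X\times_{\A_P}\A_\Sigma$ from Construction \ref{Fan.5}. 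The key point is that the frame $\Delta_X$ is \emph{not} literally $\overline{\Spec{P}}$ but receives a canonical map from it, and the fiber product $X\times_{\A_P}\A_\Sigma$ is identified, via \eqref{Fan.5.1}, with $X\times_{\Delta_X}\Delta'$ for the partial subdivision $\Delta':=\Delta_X\times_{\overline{\Spec{P}}}\overline{\Sigma}$ obtained by base change.

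First I would set $Y:=X\times_{\A_P}\A_\Sigma$ and use Lemma \ref{Fan.11} (or directly \eqref{Fan.5.1} in Construction \ref{Fan.5}) to identify $Y$ with $X\times_{\Delta_X}\Delta'$, where $\Delta'\to \Delta_X$ is the base change along $s_X$ of the sharpened-fan subdivision $\overline{\Sigma}\to \overline{\Spec{P}}$; by Lemma \ref{Fan.34} the latter is a subdivision, and by Lemma \ref{Fan.33} its base change $\Delta'\to\Delta_X$ along the partial subdivision $s_X$ (more precisely along the canonical morphism of monoidal spaces, noting that $s_X$ itself need not be a partial subdivision so one must instead argue locally) remains a subdivision. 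Then I would identify $\Delta_{Y}$ with $\Delta'$: the morphism $s_X\colon (Y,\overline{\cM}_Y)\to \Delta'$ coming from the frame structure on $X\times_{\Delta_X}\Delta'$ in Construction \ref{Fan.5} is a frame of $Y$, and by the uniqueness of the frame (Proposition \ref{Fan.3}(i)) together with the fact that $Y\in lSm/k$, it must coincide with $s_Y\colon (Y,\overline{\cM}_Y)\to\Delta_Y$; hence $\Delta_Y\cong \Delta'$ compatibly with the projection to $\Delta_X$, i.e.\ the morphism $\Delta_{X\times_{\A_P}\A_\Sigma}\to\Delta_X$ is identified with $\Delta'\to\Delta_X$, which we have just seen is a subdivision.

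The main obstacle is the identification $\Delta_Y\cong \Delta'$ over $\Delta_X$. One must check carefully that the framing of $Y=X\times_{\Delta_X}\Delta'$ produced by Construction \ref{Fan.5} is genuinely the canonical frame $s_Y$, and that the square relating $s_X$, $s_Y$, $\Delta_f$ (Proposition \ref{Fan.3}(iii)) is compatible with the projection $\Delta'\to\Delta_X$. This is essentially a diagram chase using the universal properties in play, but it requires knowing that $Y$ is log smooth over $k$ — which holds because log blow-ups of objects of $lSm/k$ stay in $lSm/k$ (this uses that log \'etale morphisms preserve log smoothness, cf.\ Proposition \ref{A.3.19} and the discussion of log blow-ups) — so that Proposition \ref{Fan.3} applies to $Y$ and its frame is unique. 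A secondary subtlety is that one should verify the three conditions of Definition \ref{Fan.8} for $\Delta_Y\to \Delta_X$ \emph{via} the identification with $\Delta'\to\Delta_X$ rather than directly; conditions (i) and (ii) are immediate from the corresponding properties of $\overline{\Sigma}\to\overline{\Spec{P}}$ and the fact that $\Delta_X$ has finitely many points (Lemma \ref{Fan.15}), while condition (iii) — bijectivity of $\hom(\overline{\Spec\N},-)$ — follows from Lemma \ref{Fan.33} applied to the subdivision $\overline{\Sigma}\to\overline{\Spec{P}}$ base-changed to $\Delta_X$.
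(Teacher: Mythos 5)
Your overall strategy is the same as the paper's — reduce to Lemma \ref{Fan.34} via the local description \eqref{Fan.5.1} — but the paper executes it by first localizing on $X$, and your attempt to run the argument globally opens two genuine gaps. First, the base change step: Lemma \ref{Fan.33} lets you pull back the subdivision $\overline{\Sigma}\to\overline{\rm Spec}(P)$ only along a \emph{partial subdivision}, and the chart-induced morphism $\Delta_X\to\overline{\rm Spec}(P)$ is in general not one (a chart need not be exact, so this map can collapse or miss cones of $\overline{\rm Spec}(P)$). You flag this yourself, but the proposed repair "argue locally" is doing all the work: Zariski locally on $X$ one may assume $\overline{t}$ \emph{is} the frame $s_X$, i.e.\ $\Delta_X\cong\overline{\rm Spec}(P)$, at which point $\Delta'=\overline{\Sigma}$, \eqref{Fan.5.1} applies verbatim, and Lemma \ref{Fan.34} finishes — that is precisely the paper's proof, and your global fiber product $\Delta_X\times_{\overline{\rm Spec}(P)}\overline{\Sigma}$ becomes unnecessary. (There is also a mild circularity in forming $X\times_{\Delta_X}\Delta'$ at all: Construction \ref{Fan.5} is only stated for $\Delta'\to\Delta_X$ a partial subdivision, which is part of what you are trying to establish.)

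Second, the identification $\Delta_Y\cong\Delta'$ is not a consequence of Proposition \ref{Fan.3}(i) as you claim. That statement says every preframe of $Y$ factors uniquely \emph{through} $s_Y$; applied to the framing $\pi'\colon(Y,\overline{\cM}_Y)\to\Delta'$ of Construction \ref{Fan.5} it produces only a comparison morphism $\Delta_Y\to\Delta'$, not an isomorphism — two frames of the same log scheme need not be isomorphic, since only the universal one enjoys the factorization property. To show the comparison map is an isomorphism you again need to work Zariski locally where both sides are identified with $\overline{\Sigma}$. So the missing idea is simply to perform the Zariski localization at the outset (using that $s_X$ is open and surjective, so opens of $X$ control $\Delta_X$, and that the conditions of Definition \ref{Fan.8} can be checked on such a cover), after which every step you outline becomes either trivial or an immediate citation of Lemma \ref{Fan.34}.
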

\begin{proof}
The question is Zariski local on $X$, so we may assume that $\overline{t}$ is isomorphic to $s_X\colon (X,\overline{\cM}_X)\rightarrow \Delta_X$.
Due to \eqref{Fan.5.1} there is an isomorphism
\[
X\times_{\A_P}\A_{\Sigma}\cong X\times_{\Delta_X}\overline{\Sigma}.
\]
It follows that $\Delta_{X\times_{\A_P}\A_\Sigma}\cong \overline{\Sigma}$.
Since $\Sigma\rightarrow \Spec{P}$ is a subdivision, $\overline{\Sigma}\rightarrow \Delta_X$ is a subdivision by Lemma \ref{Fan.34}.
\end{proof}

\begin{lem}
\label{Fan.30}
For any log modification $f\colon Y\rightarrow X$ in $lSm/k$,
the associated morphism $u\colon\Delta_Y\rightarrow \Delta_X$ of sharpened fans is a subdivision.
\end{lem}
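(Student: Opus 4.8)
The statement to prove is: for a log modification $f\colon Y\to X$ in $lSm/k$, the associated morphism $u\colon \Delta_Y\to \Delta_X$ of sharpened fans is a subdivision. The plan is to reduce to the local situation via the definition of a log modification and then appeal to Lemma \ref{Fan.35} together with Lemma \ref{Fan.32}.

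First I would recall from Definition \ref{A.9.70} that, since $f$ is a log modification, the question is Zariski local on $X$, so we may assume there exists a log blow-up $g\colon Z\to Y$ such that the composition $f\circ g\colon Z\to X$ is also a log blow-up. Next, using Proposition \ref{Fan.3}(iii), the morphisms $f$ and $g$ induce morphisms of sharpened fans $\Delta_f = u\colon \Delta_Y\to \Delta_X$ and $\Delta_g\colon \Delta_Z\to \Delta_Y$, and by functoriality $\Delta_{f\circ g} = u\circ \Delta_g\colon \Delta_Z\to \Delta_X$. By Lemma \ref{Fan.32}, to conclude that $u$ is a subdivision it suffices to show that $\Delta_g$ and $\Delta_{f\circ g}$ are subdivisions. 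Hence the whole problem is reduced to the case when $f$ itself is a log blow-up.

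For a log blow-up, I would further localize so that $X$ admits an fs chart $t\colon X\to \Spec{P}$ with $P$ an fs monoid, and — shrinking further using Remark \ref{A.9.10} and the construction of log blow-ups in \cite[Lemma II.1.7.3, Remark II.1.7.4]{Ogu} — arrange that $f$ is the projection $X\times_{\A_P}\A_\Sigma\to X$ for a subdivision of fans $\Sigma\to \Spec{P}$ (this is the toric-local description of log blow-ups, cf.\ Example \ref{A.9.76}). Here one should note that after such a localization $\ol{t}$ is isomorphic to $s_X\colon (X,\ol{\cM}_X)\to \Delta_X$, so that by Lemma \ref{Fan.35} the associated morphism $\Delta_{X\times_{\A_P}\A_\Sigma}\to \Delta_X$ is a subdivision. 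Since $\Delta_{X\times_{\A_P}\A_\Sigma} = \Delta_Y$ and this morphism is $u$, we conclude that $u$ is a subdivision in the log blow-up case, and hence in general by the reduction above.

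\textbf{Main obstacle.} The only delicate point is the bookkeeping in the reduction steps: one must check that localizing on $X$ is harmless (the formation of $\Delta_X$ and of $u$ is compatible with restriction to opens, which follows from Proposition \ref{Fan.3}(ii)--(iii)), and that the toric-local model of a log blow-up is compatible with the frame construction so that Lemma \ref{Fan.35} applies verbatim. Everything else is a formal consequence of the functoriality in Proposition \ref{Fan.3}, the two-out-of-three property for subdivisions (Lemma \ref{Fan.32}), and Lemma \ref{Fan.35}. No new ideas beyond these are needed.
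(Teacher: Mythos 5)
Your proof is correct, but it takes a genuinely different route from the paper's. You exploit the definition of a log modification directly: Zariski locally on $X$ you factor $f$ through a log blow-up $g\colon Z\to Y$ with $f\circ g$ a log blow-up, reduce to the case of log blow-ups via the two-out-of-three property of Lemma \ref{Fan.32}, and then invoke the classical toric-local description of a saturated log blow-up as $X\times_{\A_P}\A_\Sigma$ for a subdivision $\Sigma\to\Spec{P}$, so that Lemma \ref{Fan.35} applies. The paper never invokes the blow-up factorization: it applies Proposition \ref{A.9.21} to produce a subdivision $\Sigma\to\Spec{P}$ with $Y\times_{\A_P}\A_\Sigma\xrightarrow{\cong}X':=X\times_{\A_P}\A_\Sigma$, so that $f$ is dominated by the toric modification $X'\to X$; it then checks that $v\colon\Delta_{X'}\to\Delta_Y$ is a subdivision by choosing local charts $Q_i$ on $Y$ with $P^{\gp}\cong Q_i^{\gp}$ (Lemma \ref{A.9.19}) and pulling $\Sigma$ back to $\Spec{Q_i}$ (Lemma \ref{Fiberproduct_fans}), before concluding with Lemmas \ref{Fan.35} and \ref{Fan.32}. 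Both arguments rest on the same two pillars (Lemmas \ref{Fan.35} and \ref{Fan.32}) and on the same unstated compatibility of frames with Zariski localization that the paper also uses without comment; yours trades Proposition \ref{A.9.21} and Lemma \ref{A.9.19} for the Ogus--Niziol description of log blow-ups, which is a fair exchange. One small point worth making explicit: when you treat $\Delta_g$ you must localize on $Y$ rather than on $X$, so the locality statement you actually need is ``the question is Zariski local on the \emph{target} of the morphism,'' applied once for $g$ and once for $f\circ g$.
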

\begin{proof}
The question is Zariski local on $X$, so we may assume that $X$ is quasi-compact and admits an fs chart $t\colon X\rightarrow \Spec{P}$ such that $\ol{t}$ is isomorphic to 
$s_X\colon (X,\ol{\cM}_X)\rightarrow \Delta_X$.
Owing to Proposition \ref{A.9.21} there is a subdivision of fans $p:\Sigma\rightarrow \Spec{P}$ such that the pullback
\[
Y\times_{\A_P}\A_\Sigma\rightarrow X\times_{\A_P}\A_\Sigma =:X'
\]
is an isomorphism.
Therefore we have associated morphisms $\Delta_{X'}\stackrel{v}\rightarrow \Delta_{Y}\stackrel{u}\rightarrow \Delta_X$ of sharpened fans.
\vspace{0.1in}

We claim that $v$ and $u\circ v$ are subdivisions.
Owing to Lemma \ref{A.9.19} there is a Zariski cover $\{U_i\}$ of $Y$ such that for every $i$ the morphism $U_i\rightarrow X$ admits an fs chart $\theta\colon P\rightarrow Q_i$ 
such that $\theta^\gp\colon P^\gp\rightarrow Q_i^\gp$ is an isomorphism.
This means that $\Spec{Q_i}$ is a partial subdivision of $\Spec{P}$.
Lemma \ref{monomorphismoffans} shows that the projection 
$$
\Sigma_i:=\Spec{Q_i}\times_{\Spec{P}}\Sigma\rightarrow \Spec{Q_i}
$$ 
is a subdivision of fans.
Thus by Lemma \ref{Fan.35} the associated morphism of sharpened fans
\[
\Delta_{U_i\times_{\A_P}\A_{\Sigma}}
\cong
\Delta_{U_i\times_{\A_{Q_i}}\A_{\Sigma_i}}
\rightarrow
\Delta_{U_i}
\]
is a subdivision.
It follows that $v$ is a subdivision.
Moreover, Lemma \ref{Fan.35} also shows that $u\circ v$ is a subdivision.
As a consequence, $u$ is a subdivision by Lemma \ref{Fan.32}.
\end{proof}

\begin{lem}
\label{Fan.38}
Let $X$ be an fs log scheme in $SmlSm/k$.
Then the associated sharpened fan $\Delta_X$ is smooth.
\end{lem}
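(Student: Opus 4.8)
The plan is to reduce the assertion to a Zariski-local computation on $X$ and then feed in the existence of \emph{smooth} fs charts. By Definition \ref{Fan.1}, saying that $\Delta_X$ is smooth means that every cone of $\Delta_X$ — equivalently, every point $\delta\in\Delta_X$ viewed as the vertex of its minimal open neighbourhood — is isomorphic to $\overline{\rm Spec}(\N^s)$ for some $s$, i.e.\ that the stalk $\cM_{\Delta_X,\delta}$ is a free fs monoid. Since by Proposition \ref{Fan.3}(ii) the structure morphism $s_X\colon(X,\overline{\cM}_X)\to\Delta_X$ is open and surjective, every cone of $\Delta_X$ has vertex of the form $s_X(x)$ for some $x\in X$; and by the universal property in Proposition \ref{Fan.3}(i) the frame of a Zariski-open $U\subseteq X$ is canonically the open subobject $s_X(U)$ of $\Delta_X$. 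Hence it suffices to prove the statement after replacing $X$ by an arbitrarily small Zariski-open neighbourhood of a point.

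Next I would invoke Lemma \ref{lem::Smchart}: since $X\in SmlSm/k$, i.e.\ $\underline{X}$ is smooth over $k$, Zariski-locally $X$ admits a neat fs chart $P\cong\N^r$ together with a \emph{strict} smooth morphism $t\colon X\to\A_P=\A_{\N^r}$. The chart induces a preframe $\overline{t}\colon(X,\overline{\cM}_X)\to\overline{\rm Spec}(P)=\overline{\rm Spec}(\N^r)$, and the point is to recognize this, on the relevant opens, as the frame $s_X$. Here I would use strictness of $t$: then $t^{-1}\overline{\cM}_{\A_{\N^r}}\to\overline{\cM}_X$ is an isomorphism, so for every $x\in X$ the stalk $\overline{\cM}_{X,x}$ is identified with $\overline{\cM}_{\A_{\N^r},t(x)}\cong\N^r/F\cong\N^{r-\dim F}$, where $F$ is the face of $\N^r$ cut out at $t(x)$; in particular every stalk of $\overline{\cM}_X$ is a free fs monoid. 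Combined with the description of the frame in Definition \ref{Fan.2} together with \cite[Proposition III.1.12.3]{Ogu}, which realizes $s_X$ locally by $\overline{t'}$ for a chart $t'$ and identifies $\cM_{\Delta_X,s_X(x)}$ with $\overline{\cM}_{X,x}$, the cone of $\Delta_X$ with vertex $s_X(x)$ is $\overline{\rm Spec}(\overline{\cM}_{X,x})\cong\overline{\rm Spec}(\N^{r-\dim F})$, a smooth cone; since $x$ was arbitrary this gives the claim. Alternatively one can argue functorially: $\A_{\N^r}$ is the toric variety of the smooth monoscheme $\Spec\N^r$, so $\Delta_{\A_{\N^r}}\cong\overline{\rm Spec}(\N^r)$ is a smooth sharpened fan, the strict morphism $t$ induces $\Delta_t\colon\Delta_X\to\overline{\rm Spec}(\N^r)$ whose restriction to $s_X(X)$ identifies $\Delta_X$ (locally) with an open of $\overline{\rm Spec}(\N^r)$, and opens of a smooth sharpened fan are smooth.

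I expect the main obstacle to be the bookkeeping in the second paragraph: pinning down that the preframe attached to the neat chart $P\cong\N^r$ actually computes the frame locally, and in particular that $\cM_{\Delta_X,s_X(x)}\cong\overline{\cM}_{X,x}$. This is where the concrete construction of the frame (the ``$s$-frame'' of \cite[III.1.12]{Ogu}) has to be used, rather than only the universal property recorded in Proposition \ref{Fan.3}. Everything else — the reduction to the local case via openness and surjectivity of $s_X$, and the identification of the stalks of $\overline{\cM}_X$ with quotients $\N^{r-\dim F}$ using strictness of $t$ — is routine.
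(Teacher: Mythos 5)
Your proposal is correct and follows essentially the same route as the paper's proof: Zariski-localize, invoke Lemma \ref{lem::Smchart} to get a chart with monoid $\N^r$, and conclude that the cones of $\Delta_X$ are of the form $\overline{\rm Spec}(\N^s)$. The paper is merely terser — it takes the local chart $t$ with $\overline{t}\cong s_X$ directly from the definition of a frame (Definition \ref{Fan.2}) and immediately applies Lemma \ref{lem::Smchart} to identify $P\cong\N^r$, so the stalk-by-stalk bookkeeping you anticipate as the main obstacle is absorbed into that definition rather than carried out explicitly.
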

\begin{proof}
The question is Zariski local on $X$, 
so we may assume that $X$ admits an fs chart $t\colon X\rightarrow \Spec{P}$ such that $\ol{t}$ is isomorphic to $s_X\colon (X,\ol{\cM}_X)\rightarrow \Delta_X$.
Lemma \ref{lem::Smchart} shows that $P\cong \N^r$ for some integer $r\geq 0$.
Thus $\Delta_X\cong \uSpec(\N^r)$ is smooth.
\end{proof}

\begin{df}
\label{Fan.39}
For $X\in SmlSm/k$, 
a \emph{log modification along a smooth center}\index{log modification!along a smooth center} is a log modification $p:Y\rightarrow X$ such that Zariski locally on $X$ there exists a chart $X\rightarrow \A_{\N^r}$ 
such that $p$ is isomorphic to the projection
\[
X\times_{\A_{\N^r}}\A_{\Sigma} 
\to
X.
\]
Here $\Sigma$ is the star subdivision of the dual of $\N^r$.
\end{df}

\begin{lem}
\label{Fan.37}
Let $X$ be an fs log scheme in $SmlSm/k$.
Then for every star subdivision of fans $p:\Delta'\rightarrow \Delta_X$, the natural morphism
\[
f\colon X\times_{\Delta_X}\Delta'\rightarrow X
\]
is a log modification along a smooth center.
\end{lem}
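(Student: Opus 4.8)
\textbf{Proof plan for Lemma \ref{Fan.37}.}
The plan is to reduce the statement to its local form and then transport the star-subdivision of the associated sharpened fan $\Delta_X$ to a genuine star-subdivision of a toric chart. First I would note, as in the proof of Lemma \ref{Fan.11}, that the claim is Zariski local on $X$, so we may assume $X$ admits an fs chart $t\colon X\to \Spec{P}$ such that $\overline{t}$ is isomorphic to the canonical frame $s_X\colon (X,\overline{\cM}_X)\to \Delta_X$. Since $X\in SmlSm/k$, Lemma \ref{lem::Smchart} lets us take $P\cong \N^r$ for some $r\geq 0$, and then Lemma \ref{Fan.38} (or directly the identification $\Delta_X\cong \uSpec(\N^r)$) shows $\Delta_X$ is smooth, so that the notion of star subdivision from Definition \ref{Fan.6} applies.

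Next I would lift the star subdivision $p\colon \Delta'\to \Delta_X$ of smooth sharpened fans to a star subdivision of fans. By Proposition \ref{Fan.9}, every subdivision of sharpened fans over $\Delta_X=\overline{\Spec{\N^r}}$ arises as $\overline{q}$ for a subdivision of fans $q\colon \Sigma\to \Spec{\N^r}$; moreover, by the very definition of the star subdivision $\Delta_X^*(\tau)$ (Definition \ref{Fan.6}), which is built cone-by-cone from the toric star subdivision $\Sigma^*(\sigma)$ of Definition \ref{Starsubdivision}, we may choose $q$ to be the star subdivision $\Spec{\N^r}^*(\sigma)\to \Spec{\N^r}$ relative to the cone $\sigma$ corresponding to $\tau$. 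Here $\Sigma$ is the star subdivision of the dual of $\N^r$, exactly as required in Definition \ref{Fan.39}. Using \eqref{Fan.5.1} together with the identification $\Delta_X\cong \overline{\Spec{\N^r}}$, we get a canonical isomorphism
\[
X\times_{\Delta_X}\Delta' \cong X\times_{\A_{\N^r}}\A_{\Sigma},
\]
and under this isomorphism the structure morphism $f$ corresponds to the projection $X\times_{\A_{\N^r}}\A_\Sigma\to X$. By Example \ref{A.9.76} and Proposition \ref{A.9.75}, this projection is a log modification, and by construction it is precisely a log modification along a smooth center in the sense of Definition \ref{Fan.39}. Finally, to see that this gluing of local descriptions is independent of the chart and patches to a globally defined morphism, I would invoke the universal property of $X\times_{\Delta_X}\Delta'$ recorded in Construction \ref{Fan.5} (it is a final object in $\cC_{X,\Delta',\Delta_X}$), which pins down $f$ uniquely; Lemma \ref{Fan.36} handles compatibility of the construction under further subdivision and hence under restriction to a refinement of the chosen cover.

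The main obstacle I anticipate is bookkeeping rather than conceptual: one must check that the star subdivision $\Delta'$ of the sharpened fan, which a priori is only given abstractly, really does lift to the toric star subdivision $\Sigma$ of $\Spec{\N^r}$ relative to the correct cone, so that the local chart $X\times_{\A_{\N^r}}\A_\Sigma$ matches Definition \ref{Fan.39} on the nose. This requires unwinding the cone-by-cone definition of star subdivisions of sharpened fans (Definition \ref{Fan.6}) against the toric definition (Definition \ref{Starsubdivision}) and confirming that the face $\tau\subset\Delta_X$ corresponds to a well-defined face of $\N^{r,\vee}$; since $\Delta_X$ is smooth and the frame is locally modeled on $\overline{\Spec{\N^r}}$, this is a finite combinatorial verification, but it is the point where care is needed. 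Everything else follows formally from the machinery already established: Proposition \ref{Fan.9}, Construction \ref{Fan.5}, Lemma \ref{Fan.11}, and Lemma \ref{lem::Smchart}.
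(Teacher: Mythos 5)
Your proposal is correct and follows essentially the same route as the paper's proof: reduce Zariski-locally to a chart $P\cong\N^r$ with $\overline{t}\cong s_X$, lift the star subdivision of $\Delta_X$ to a toric star subdivision $\Sigma\to\Spec{P}$, and apply the local description \eqref{Fan.5.1} to identify $f$ with the projection $X\times_{\A_P}\A_\Sigma\to X$, which is a log modification along a smooth center by Definition \ref{Fan.39}. The extra gluing discussion via Construction \ref{Fan.5} and Lemma \ref{Fan.36} is harmless but unnecessary, since Definition \ref{Fan.39} is itself a Zariski-local condition.
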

\begin{proof}
The question is Zariski local on $X$, 
so we may assume that $X$ admits an fs chart $t\colon X\rightarrow \Spec{P}$ such that $\ol{t}$ is isomorphic to $s_X\colon (X,\ol{\cM}_X)\rightarrow \Delta_X$.
Due to Lemma \ref{Fan.38}, $P\cong \N^r$ for some integer $r\geq 0$.
By the definition of star subdivisions of sharpened fans, there exists a face $Q$ of $P$ such that $\Delta'\cong \overline{\Sigma}$, 
where $\Sigma$ is the star subdivision of $\Spec{P}$ relative to $\Spec{Q}$.
Then \eqref{Fan.5.1} gives an isomorphism
\[
X\times_{\Delta_X}\Delta'\cong X\times_{\A_P}\A_{\Sigma}.
\]
This shows that $f$ is a log modification along a smooth center.
\end{proof}

\begin{thm}
\label{Fan.16}
Let $f:Y\rightarrow X$ be a log modification of quasi-compact fs log schemes in $lSm/k$. If $X\in SmlSm/k$,
then there exists a sequence of log modifications along a smooth center
\[
X_n\rightarrow \cdots \rightarrow X_1\rightarrow X
\]
such that the projection $X_n\times_X Y\rightarrow X_n$ is an isomorphism.
\end{thm}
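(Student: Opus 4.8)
The plan is to translate the statement into combinatorics of sharpened fans by means of the framing functor of Proposition \ref{Fan.3}, and then to use toric resolution of singularities (in the guise of Lemma \ref{Fan.17}) to dominate the subdivision attached to $f$ by a finite sequence of star subdivisions, each of which realizes geometrically as a log modification along a smooth center via Lemma \ref{Fan.37}. First I would record the setup: since $X\in SmlSm/k$ the sharpened fan $\Delta_X$ is smooth (Lemma \ref{Fan.38}), and since $X$ is quasi-compact the underlying space of $\Delta_X$ is finite (Lemma \ref{Fan.15}); by Lemma \ref{Fan.30} the log modification $f$ induces a subdivision $p\colon \Delta_Y\to\Delta_X$ of sharpened fans. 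As $\Delta_Y$ need not be smooth, I would pass to a smooth subdivision $\Delta_Y'\to\Delta_Y$ (the sharpened-fan analogue of Lemma \ref{A.3.31}/Lemma \ref{A.3.29}), so that the composite $p'\colon\Delta_Y'\to\Delta_X$ is a subdivision of smooth sharpened fans with finite base. Applying Lemma \ref{Fan.17} to $p'$ yields a finite tower of star subdivisions
\[
\Delta^{(n)}\to\Delta^{(n-1)}\to\cdots\to\Delta^{(0)}=\Delta_X
\]
together with a factorization of $\Delta^{(n)}\to\Delta_X$ through $p'$, hence through $p$; call the resulting morphism $q\colon\Delta^{(n)}\to\Delta_Y$.

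Next I would build the tower of log schemes. Set $X_0:=X$ and inductively $X_i:=X_{i-1}\times_{\Delta_{X_{i-1}}}\Delta^{(i)}$ in the sense of Construction \ref{Fan.5}. Running in parallel an induction that $X_{i-1}\in SmlSm/k$ with $\Delta_{X_{i-1}}\cong\Delta^{(i-1)}$, Lemma \ref{Fan.37} shows that each $X_i\to X_{i-1}$ is a log modification along a smooth center; and one checks $X_i\in SmlSm/k$ (Zariski-locally $X_{i-1}$ has a chart $\N^r$ by Lemma \ref{lem::Smchart}, base change along the toric star subdivision $\mathbb{A}_\Sigma$ — which lies in $SmlSm$ — keeps us in $SmlSm/k$) and that the Construction \ref{Fan.5} framing of $X_i$ is the universal one, i.e. $\Delta_{X_i}\cong\Delta^{(i)}$, arguing Zariski-locally as in Lemma \ref{Fan.35}. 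Iterating Lemma \ref{Fan.36} then gives $X_n\cong X\times_{\Delta_X}\Delta^{(n)}$.

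Finally I would prove that $X_n\times_X Y\to X_n$ is an isomorphism. Put $Z:=X\times_{\Delta_X}\Delta_Y$; by Lemma \ref{Fan.11} the projection $Z\to X$ is a log modification, hence a monomorphism by Proposition \ref{A.9.75}. The factorization $q$ produces a morphism $r\colon X_n\to Z$ over $X$ from the universal property of Construction \ref{Fan.5}, and the commuting square of Proposition \ref{Fan.3}(iii) for $f$ shows $(f,s_Y)$ is an object of the relevant comma category, producing a canonical morphism $u\colon Y\to Z$ over $X$. Since $Z\to X$ is a monomorphism, $X_n\times_X Y\cong X_n\times_Z Y$ is the pullback of $u$ along $r$, so it suffices to show $u$ is an isomorphism. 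Now $u$ is a morphism over $X$ between two log modifications of $X$, hence a log modification (Lemma \ref{A.9.78}); and combining the defining property $\pi_Z\circ u=s_Y$ with the universal property of $s_Z$ exhibits $\Delta_u\colon\Delta_Y\to\Delta_Z$ as a split monomorphism. Being also a subdivision (Lemma \ref{Fan.30}), hence surjective (Lemma \ref{Fan.31}) and a monomorphism (Lemma \ref{Fan.27}), $\Delta_u$ must be an isomorphism, since a subdivision of sharpened fans possessing a section cannot properly refine. It remains to deduce that a log modification inducing an isomorphism on sharpened fans is itself an isomorphism; one checks this Zariski-locally, where by Proposition \ref{Fan.9} and Lemma \ref{A.9.19} the modification is pulled back from a subdivision of fans which the triviality of $\Delta_u$ forces to be trivial, so $u$ is strict, and a surjective proper strict log étale monomorphism is an isomorphism.

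The main obstacle is the last paragraph: proving that $u\colon Y\to X\times_{\Delta_X}\Delta_Y$ is an isomorphism — equivalently, that log modifications of $X$ are precisely the pullbacks of subdivisions of the frame $\Delta_X$ — together with the running verification in the construction step that each $X_i$ remains in $SmlSm/k$ with $\Delta_{X_i}\cong\Delta^{(i)}$. Essentially all the delicate frame-bookkeeping lives there; the purely combinatorial input, Lemma \ref{Fan.17}, can be used as a black box.
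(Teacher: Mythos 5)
Your proof is correct and follows the same route as the paper's: frame the situation via Proposition \ref{Fan.3}, note $\Delta_X$ is smooth with finite underlying space and $\Delta_Y\to\Delta_X$ is a subdivision, dominate it by a tower of star subdivisions via Lemma \ref{Fan.17}, realize these as log modifications along smooth centers using Construction \ref{Fan.5} together with Lemmas \ref{Fan.36} and \ref{Fan.37}, and conclude with the monomorphism/pullback argument of Lemma \ref{Fan.26}. The only places you go beyond the paper are sensible precautions it elides: refining $\Delta_Y$ to a smooth subdivision before invoking Lemma \ref{Fan.17}, and the final paragraph justifying that $X_n\to X$ factors through $f$ (equivalently that the canonical map $Y\to X\times_{\Delta_X}\Delta_Y$ is an isomorphism), which the paper dismisses with ``by definition''; your argument for that step is sound in outline.
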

\begin{proof}
Lemma \ref{Fan.30} shows that the associated morphism $\Delta_Y\rightarrow \Delta_X$ is a subdivision of sharpened fans.
By Lemma \ref{Fan.37} the associated sharpened fan $\Sigma_X$ is smooth.
Lemma \ref{Fan.17} yields star subdivisions of smooth sharpened fans
\[
\Delta_n\rightarrow \cdots \rightarrow \Delta_1\rightarrow \Delta_X
\]
such that the subdivision $\Delta_n\rightarrow \Delta_X$ factors through $\Delta_Y\rightarrow \Delta_X$.
We set 
$$
X_i:=X\times_{\Delta_X}\Delta_i
$$ 
for every $1\leq i\leq n$.
By definition $X_n\rightarrow X$ factors through $f$, 
and by Lemma \ref{Fan.36} there is an isomorphism
\[
X_n\cong X_{n-1}\times_{\Delta_{n-1}}\Delta_n.
\]
Lemma \ref{Fan.37} shows that the morphism $X_i\rightarrow X_{i-1}$ is a log modification along a smooth center for every $1\leq i\leq n$.
To show that the projection $X_n\times_X Y\rightarrow X_n$ is an isomorphism, use Lemma \ref{Fan.26}.
\end{proof}

This is used in the proof of the following result.

\begin{thm}
\label{FKatoThm2}
Let $f:Y\rightarrow X$ be a morphism in $lSm/k$.
Then there exists a log modification $X'\rightarrow X$ such that the projection
\[
f':Y\times_X X'\rightarrow X'
\]
is integral. In particular, $f'$ is exact.
\end{thm}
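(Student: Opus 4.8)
Theorem \ref{FKatoThm2} asserts that any morphism $f\colon Y\to X$ in $lSm/k$ becomes integral (equivalently, by Proposition \ref{kummer-is-logetandstrict} combined with the characterization of exactness, exact) after a log modification of the base. My plan is to reduce to a combinatorial statement about fans and then invoke Kato's flattening-by-subdivision theorem in the form available in the literature, rephrased through the frame technology of the appendix.

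\textbf{Step 1: reduce to a chart.} The conclusion is Zariski local on $X$: if $\{U_i\}$ is a finite Zariski cover of $X$ (recall $X$ is quasi-compact since it is of finite type over $k$) and $X_i'\to U_i$ are log modifications making $Y\times_X U_i\to X_i'$ integral, then gluing via Proposition \ref{A.9.77} and Lemma \ref{A.9.78} produces a single log modification $X'\to X$ over which $f'$ is integral; integrality, being fpqc-local on the target, can be checked on the members of a cover. So I may assume $X$ has an fs chart $t\colon X\to \A_P$ with $P$ an fs monoid, and by Lemma \ref{lem::Smchart} (since $X\in lSm/k$) I may even take $P$ so that $X\to \A_P$ is strictly smooth; shrinking further, $\overline{t}$ realizes the frame $s_X\colon(X,\overline{\cM}_X)\to\Delta_X$ of Proposition \ref{Fan.3}.

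\textbf{Step 2: reduce to the fan/toric case.} Cover $Y$ by finitely many opens $\{V_j\}$, each admitting an fs chart $\theta_j\colon P\to Q_j$ of $f|_{V_j}$ such that $V_j\to X\times_{\A_P}\A_{Q_j}$ is strict (this is the standard local structure of a morphism of fs log schemes, using that the log structures are Zariski). It suffices to find one log modification $X'\to X$ over which each $V_j\times_X X'\to X'$ is integral, since integrality of $f'$ can be checked on the open cover $\{V_j\times_X X'\}$ of $Y\times_X X'$. Now by the toric flattening theorem of Kato — this is exactly \cite[Theorem~9.5]{MR1296725} (``flattening by subdivision''), or equivalently Proposition \ref{A.9.21} applied in a slightly different form, but one must be careful: Proposition \ref{A.9.21} as stated handles log \'etale monomorphisms, not arbitrary morphisms, so the correct citation is Kato's original subdivision theorem or F.~Kato's \cite{FKato} — there is a subdivision of fans $\Sigma\to\Spec P$ such that for every $j$, the base change $\A_{Q_j}\times_{\A_P}\A_\Sigma\to\A_\Sigma$ is integral (flat plus the integrality of the monoid amalgam). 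Concretely, $\Sigma$ is chosen so that each cone of $\Sigma$ maps into a single cone of the fan attached to $Q_j$ and the induced map on lattices/monoids is ``integral'' in Kato's monoidal sense.

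\textbf{Step 3: transport the subdivision to $X$ and conclude.} Set $X':=X\times_{\A_P}\A_\Sigma$. By Example \ref{A.9.76} (subdivisions give log modifications) the projection $X'\to X$ is a log modification, and since $X\to\A_P$ is strict we have $X'\cong X\times_{\Delta_X}\overline\Sigma$ via \eqref{Fan.5.1}, so $X'\in lSm/k$. Then
\[
V_j\times_X X' \;\cong\; V_j\times_{X\times_{\A_P}\A_{Q_j}}\bigl(X\times_{\A_P}\A_{Q_j}\times_{\A_P}\A_\Sigma\bigr)
\]
and, using that $V_j$ is strict over $X\times_{\A_P}\A_{Q_j}$ (so strict over the relevant base changes by Lemma \ref{Fiber.product.strict.morphism}) together with Step 2's integrality of $\A_{Q_j}\times_{\A_P}\A_\Sigma\to\A_\Sigma$, the composite $V_j\times_X X'\to X'$ is integral: integrality is stable under strict base change and under the compositions involved here. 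Hence $f'\colon Y\times_X X'\to X'$ is integral. Finally, $f'$ is also log \'etale if $f$ was — but we do not need that; we need exactness, and an integral log smooth (indeed any integral) morphism of fs log schemes is exact by \cite[Proposition~III.2.2.1]{Ogu} (integral homomorphisms of integral monoids are exact), which gives the ``in particular'' clause.

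\textbf{Main obstacle.} The crux is Step 2: producing, for a \emph{finite family} of charts $\theta_j\colon P\to Q_j$ simultaneously, a single subdivision $\Sigma\to\Spec P$ that flattens all of them. For a single $\theta_j$ this is Kato's flattening-by-subdivision theorem; for a finite family one takes the common refinement of the individual subdivisions (their fiber product over $\Spec P$, which is again a subdivision by Lemma \ref{Fiberproduct_fans}), and then checks that integrality of each base change is preserved under further subdivision — this last point requires knowing that pullback of an integral morphism of toric log schemes along $\A_{\Sigma'}\to\A_\Sigma$ stays integral, which follows from stability of integral monoid homomorphisms under the relevant pushouts (\cite[Proposition~I.4.6]{Ogu} type statements on integral homomorphisms). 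I would isolate and verify this stability as the one genuine lemma; everything else is assembling pieces already present in the appendix.
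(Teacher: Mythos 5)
There is a genuine gap, and it sits exactly where you dismiss it: the globalization in Step 1. Proposition \ref{A.9.77} (stability of log modifications under composition and pullback) and Lemma \ref{A.9.78} (uniqueness of factorizations between log modifications over a fixed base) do not let you assemble log modifications $X_i'\to U_i$, defined only over the members of a Zariski cover, into a single log modification $X'\to X$. The $X_i'$ need not be compatible over the overlaps $U_i\cap U_j$, and even after passing to a common refinement there (possible since $(U_i\cap U_j)_{div}$ is filtered), nothing guarantees that the refined object over $U_i\cap U_j$ extends to a log modification of all of $X$. Being a log modification is a Zariski-local \emph{property of a given morphism}, but the \emph{existence} statement you need is not Zariski-local in this naive sense. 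Since your Steps 2--3 only work once $X$ carries a global fs chart $P$ (so that a single subdivision $\Sigma\to\Spec P$ can be transported to $X$), and that reduction rests on the broken gluing, the argument does not close.

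This is precisely the point the paper's proof is organized around. It first applies Proposition \ref{A.3.19} to reduce to $X\in SmlSm/k$, then quotes the local flattening result to get log modifications $U_i'\to U_i$ over a finite Zariski cover making $Y\times_X U_i'\to U_i'$ integral (this matches your Step 2 in substance), and then --- the key move --- uses Theorem \ref{Fan.16} to dominate each $U_i'\to U_i$ by a composition of star subdivisions of the sharpened fan $\Delta_{U_i}$. Because $\Delta_{U_i}$ is an open subfan of $\Delta_X$, a star subdivision relative to a cone of $\Delta_{U_i}$ extends verbatim to a star subdivision of $\Delta_X$, producing a \emph{global} log modification $X_i'\to X$ with $X_i'\times_X U_i\cong U_i'$. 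Taking $X'=X_1'\times_X\cdots\times_X X_n'$ and checking integrality Zariski locally on $X'$ (integrality being stable under the base changes involved, as you correctly observe at the end of your Step 3) finishes the proof. If you want to repair your write-up, replace the appeal to Propositions \ref{A.9.77} and \ref{A.9.78} in Step 1 by the reduction to $SmlSm/k$ plus the extension-of-star-subdivisions argument via Theorem \ref{Fan.16}; the rest of your outline then goes through.
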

\begin{proof}
Due to Proposition \ref{A.3.19}, we reduce to the case when $X$ is in $SmlSm/k$.
There exists a Zariski cover $\{U_1,\ldots,U_n\}$ of $X$ and log modifications $U_i'\to U_i$ such that the projection $Y\times_X U_i'\to U_i'$ is integral for every $i$, see \cite{Katointegral}.
Thanks to Theorem \ref{Fan.16}, we may assume that $U_i'$ is obtained by taking a sequence of star subdivisions of $\Delta_{U_i}$.
The sequence of star subdivions can be naturally extended to a sequence of star subdivisions of $\Delta_X$.
It follows that there exists a log modification $X_i'\to X$ such that $U_i'\simeq U_i\times_X X_i'$ for every $i$.

Take $X':=X_1'\times_X \cdots \times_X X_n'$.
Since the projection $Y\times_X U_i'\to U_i'$ is integral for every $i$, the projection $f'\colon Y\times_X X'\to X'$ is integral Zariski locally on $X'$.
This implies that $f'$ is integral.
\end{proof}

\begin{prop}
\label{Fan.12}
Let $f\colon X\rightarrow S$ be an exact morphism in $lSm/k$.
Then for every log modification $g\colon Y\rightarrow X$, there is a log modification $S'\rightarrow S$ such that the pullback
\[
r\colon Y\times_S S'\longrightarrow X\times_S S'
\]
is an isomorphism.
\end{prop}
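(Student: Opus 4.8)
The strategy is to translate the problem into a combinatorial statement about subdivisions of sharpened fans, using the theory of frames of Proposition \ref{Fan.3}. Since being a log modification is Zariski-local on the target (Definition \ref{A.9.70}) and the condition that $r$ is an isomorphism may be checked Zariski-locally on $S$, I would first reduce to the case where $S$ is quasi-compact and carries an fs chart; then $\Delta_S$ has finite underlying space by Lemma \ref{Fan.15}, and likewise $\Delta_X$ and $\Delta_Y$. The log modification $g\colon Y\to X$ induces a subdivision of sharpened fans $\Delta_g\colon\Delta_Y\to\Delta_X$ by Lemma \ref{Fan.30}, while $f$ induces a morphism $\Delta_f\colon\Delta_X\to\Delta_S$ by Proposition \ref{Fan.3}(iii).

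The heart of the argument is to produce a subdivision $p\colon\Delta'\to\Delta_S$ such that, forming the fibre product of sharpened fans $q\colon\Delta_X\times_{\Delta_S}\Delta'\to\Delta_X$ (which is a subdivision, because $p$ is a subdivision --- this can be checked cone by cone, reducing to the case of honest fans via Proposition \ref{Fan.9} and Lemma \ref{Fiberproduct_fans}), the subdivision $q$ refines $\Delta_g$, so that the pullback $\Delta_Y\times_{\Delta_X}(\Delta_X\times_{\Delta_S}\Delta')\to\Delta_X\times_{\Delta_S}\Delta'$ becomes an isomorphism. Working Zariski-locally so that $S$ has an fs chart $P$, this amounts to the following assertion about exact homomorphisms of fs monoids: given an exact chart $P\to Q$ for $f$ at a point and a subdivision of the dual fan of $Q$, there is a subdivision of the dual fan of $P$ whose pullback refines it. It is precisely here that exactness of $f$ is used --- it forces $\Delta_f$ to be well enough behaved (for instance that preimages of cones of $\Delta_S$ are cones of $\Delta_X$, and that the relevant fibre products are computed cone by cone) that the subdivision $\Delta_g$ of $\Delta_X$ can be pushed down to $\Delta_S$ and then pulled back to refine it.

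Given such a $\Delta'$, I would set $S':=S\times_{\Delta_S}\Delta'$ via Construction \ref{Fan.5}; by Lemma \ref{Fan.11} this is a log modification of $S$. Using exactness of $f$ together with the universal property of Construction \ref{Fan.5} and Lemma \ref{Fan.36}, one identifies $X\times_S S'$ with $X\times_{\Delta_X}(\Delta_X\times_{\Delta_S}\Delta')$, whose frame is $\Delta_X\times_{\Delta_S}\Delta'$, and similarly $Y\times_S S'\cong Y\times_{\Delta_Y}\bigl(\Delta_Y\times_{\Delta_X}(\Delta_X\times_{\Delta_S}\Delta')\bigr)$. By the choice of $\Delta'$ the two pulled-back fans coincide, so $r$ induces an isomorphism on the associated frames; since $r$ is a log modification (a surjective proper log \'etale monomorphism, by Propositions \ref{A.9.75} and \ref{A.9.77}) and a log modification inducing an isomorphism on frames over a fixed base is an isomorphism --- by an argument as in the last part of the proof of Proposition \ref{A.9.21}, using Zariski's main theorem --- it follows that $r$ is an isomorphism. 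The main obstacle is exactly the middle step: controlling $\Delta_f$ under base change by subdivisions, in particular the identification $X\times_S S'\cong X\times_{\Delta_X}(\Delta_X\times_{\Delta_S}\Delta')$ --- without the exactness of $f$ the fibre product of frames need not compute the fibre product of log schemes --- and the combinatorial problem of descending a subdivision of $\Delta_X$ along $\Delta_f$ to a subdivision of $\Delta_S$.
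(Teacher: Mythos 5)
There is a genuine gap at the heart of your argument, and it is not one that more care would repair. The combinatorial claim you reduce to --- given an exact chart $P\to Q$ of $f$ and a subdivision of the fan $\Spec{Q}$, there is a subdivision of $\Spec{P}$ whose pullback refines it --- is false for exact homomorphisms in general. Take $P=\N$, $Q=\N^2$ and $\theta(1)=(1,1)$, i.e.\ $f\colon \A_{\N^2}\to \A_{\N}$, $t\mapsto xy$; this $\theta$ is exact, since $\{n\in\Z:(n,n)\in\N^2\}=\N$. But $\Spec{\N}$ is a single ray and admits no nontrivial subdivisions, whereas the star subdivision of $\Spec{\N^2}$ is nontrivial, so no subdivision of $\Spec{P}$ can refine it after pullback. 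Exactness only says $P=(\theta^{\rm gp})^{-1}(Q)$; it gives no control whatsoever over descending subdivisions along the dual map of cones, so the assertion that exactness ``forces $\Delta_f$ to be well enough behaved'' is precisely the point that fails. I would also urge you to test this example against the statement itself: here $f$ is exact, $g\colon Y\to X$ is the star-subdivision log blow-up, $\A_{\N}$ admits only trivial log modifications, and $g$ is not an isomorphism --- so exactness alone is a delicate hypothesis. In every application of Proposition \ref{Fan.12} in the paper the morphism $f$ is strict (a strict Nisnevich cover), and for strict $f$ both your strategy and the statement are unproblematic, since $\Delta_f$ is then locally an isomorphism of sharpened fans and the descent of subdivisions is trivial.

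For comparison, the paper's proof involves no fan combinatorics and makes no attempt to descend $g$ to $S$. It applies Theorem \ref{FKatoThm2} to the composite $Y\to S$ to produce a log modification $S'\to S$ for which $q\colon Y\times_S S'\to S'$ is integral, hence exact; stability of exactness under base change makes $p\colon X\times_S S'\to S'$ exact; a cancellation statement for exact morphisms cited from Ogus is then invoked to conclude that $r$ is exact; finally, an exact log \'etale monomorphism is an open immersion by Proposition \ref{A.9.83}, and a surjective proper open immersion is an isomorphism. Note that the cancellation step is where exactness of $f$ enters there, and in the example above $p$ and $q$ are both exact while $r$ is not, so that step deserves the same scrutiny. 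In short: your route is genuinely different from the paper's, but its key step is both unproven and false as stated; if you wish to salvage a frame-theoretic argument, work under the hypothesis that $f$ is strict rather than merely exact.
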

\begin{proof}
There exists a log modification $S'\rightarrow S$ such that the projection
\[
q\colon Y\times_S S'\longrightarrow S'
\]
is exact owing to Theorem \ref{FKatoThm2}.
Since $f$ is exact, the projection $p\colon X\times_S S'\rightarrow S'$ is exact.
It follows that 
$$
r:Y\times_S S'\rightarrow X\times_S S'
$$ 
is exact owing to \cite[Proposition III.3.2.1(1)]{Ogu}.
We deduce that $r$ is an open immersion owing to Proposition \ref{A.9.83}.
Since $r$ is surjective and proper, it is an isomorphism.
\end{proof}

\begin{cor}
\label{Fan.14}
Let $f:X\rightarrow S$ be an exact morphism in $lSm/k$.
Then the functor
\[
f^*:S_{div}\rightarrow X_{div}
\]
mapping $S'\in S_{div}$ to $S'\times_S X$ is cofinal.
\end{cor}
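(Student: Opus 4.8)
The plan is to reduce the statement to Proposition \ref{Fan.12}, which already contains all the geometric content, and then do a small amount of bookkeeping with log modifications. Recall that by Lemma \ref{A.9.78} the categories $S_{div}$ and $X_{div}$ admit at most one morphism between any two objects, and by Proposition \ref{A.9.80} they are filtered, with the fibre product over the base providing the common refinements. For a functor between such categories, cofinality reduces to a single existence statement: it suffices to show that for every object $Y\in X_{div}$ — that is, for every log modification $g\colon Y\to X$ — there exists $S'\in S_{div}$ together with a morphism $f^*(S')=S'\times_S X\to Y$ in $X_{div}$. Connectedness of the relevant comma category is then automatic: given $S_1',S_2'\in S_{div}$ each equipped with a morphism to $Y$ in $X_{div}$, the fibre product $S_1'\times_S S_2'$ is again a log modification of $S$ by Proposition \ref{A.9.77}, it maps to both $S_1'$ and $S_2'$, and the two resulting composites $f^*(S_1'\times_S S_2')\to Y$ coincide because $X_{div}$ has at most one morphism between any two objects.

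First I would invoke Proposition \ref{Fan.12} with the exact morphism $f\colon X\to S$ and the log modification $g\colon Y\to X$: it produces a log modification $S'\to S$ such that the projection $r\colon Y\times_S S'\to X\times_S S'$ is an isomorphism. Since $X\times_S S'\to X$ is the pullback of the log modification $S'\to S$ along $X\to S$, it is itself a log modification by Proposition \ref{A.9.77}; transporting along the isomorphism $r$, the morphism $Y\times_S S'\to X$ is a log modification as well, so $f^*(S')=S'\times_S X\cong Y\times_S S'$ is genuinely an object of $X_{div}$ (and lies in $lSm/k$ since log modifications of objects of $lSm/k$ are again in $lSm/k$). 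Next I would observe that the first projection $p\colon Y\times_S S'\to Y$ is a morphism over $X$: the composite $g\circ p$ equals $p_X\circ r$, where $p_X\colon X\times_S S'\to X$ is the structure morphism, hence $g\circ p$ is precisely the structure morphism exhibiting $Y\times_S S'$ as an object of $X_{div}$. Thus $p$ is a morphism $f^*(S')\to Y$ in $X_{div}$ (automatically a log modification by Lemma \ref{A.9.78}), which supplies the required object of the comma category and finishes the argument.

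I do not expect a genuine obstacle here, since the substantive input — producing a log modification of the base that trivializes $g$ after pullback — is exactly Proposition \ref{Fan.12}, whose proof in turn rests on Kato's theory (Theorem \ref{FKatoThm2} together with Proposition \ref{A.9.83}). The only points requiring care are routine: verifying that $f^*(S')$ really is an object of $X_{div}$ rather than merely a proper birational $X$-scheme, checking that the projection $p$ is a morphism \emph{over} $X$ so that it lands in $X_{div}$, and making sure the cofinality criterion is applied in the correct direction (arrows in $X_{div}$ run from finer to coarser log modifications). Each of these follows immediately from the stability of log modifications under composition and pullback (Proposition \ref{A.9.77}) and the uniqueness of morphisms in $X_{div}$ (Lemma \ref{A.9.78}).
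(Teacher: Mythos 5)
Your proof is correct and follows the paper's route exactly: the paper disposes of this corollary with "Immediate from Proposition \ref{Fan.12}," and your write-up simply makes explicit the bookkeeping (that $f^*(S')\cong Y\times_S S'$ lies in $X_{div}$ and maps to $Y$ over $X$, plus connectedness via Proposition \ref{A.9.77} and Lemma \ref{A.9.78}) that the paper leaves implicit.
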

\begin{proof}
Immediate from Proposition \ref{Fan.12}.
\end{proof}

\newpage

\section{Model structures on the category of complexes}
\label{AppendixB}
\subsection{Descent structures}
  
In this appendix, we recall the construction of model structures on chain complexes.
We follow the approach in \cite{CD09} using descent structures.

\begin{df}
Let $\cT$ be a triangulated category, and let $\cA$ be an abelian category. 
\begin{enumerate}
\item[(i)] 
An object $X$ of $\cT$ is called {\it compact}\index{compact object} if the functor
\[
\hom_{\cT}(X,-)
\]
commutes with small sums.
\item[(ii)]
Let $\cF$ be a family of objects of $\cA$. 
We say that $\cF$ {\it generates} $\cA$ if, for any object $F$ of $\cA$, there is an epimorphism $\oplus X\rightarrow F$ from a small sum of objects of $\cF$.
\item[(iii)] 
Let $\cF$ be a family of objects of $\cT$. We say that $\cF$ {\it generates} $\cT$ if the family of functors
  \[\hom_{\cT}(X[n],-)\]
  for $X\in \cF$ and $n\in \Z$ is conservative.
  \item[(iv)] We say that $\cT$ is {\it compactly generated} if there is a family that generates $\cT$ and consists of compact objects.
  \item[(v)] A {\it localizing subcategory}\index{localizing subcategory} of $\cT$ is a triangulated subcategory of $\cT$ that is stable by small sums. For a family $\cF$ of objects of $\cA$, we write $\langle \cF\rangle$ for the smallest localizing subcategory of $\cT$ containing $\cF$.
  \end{enumerate}
\end{df}

\begin{df}[{\cite[Definition 2.2]{CD09}}]
\label{A.8.14}
Let $\cA$ be a Grothendieck abelian category. 
For $\cG$ an essentially small set of objects of $\cA$, and $\cH$ an essentially small set of complexes of $\cA$ we make the following definitions.
\begin{enumerate}
\item[(i)] 
The class of $\cG$-{\it cofibrations} (or simply {\it cofibrations}) is the smallest class of morphisms in $\Co(\cA)$ \index{Gcof @ $\mathscr{G}$-cofibrations}
that contains the morphisms
\[
\begin{tikzcd}
{[}0\arrow[r]\arrow[d]&F{]}\arrow[d,"{\rm id}"]\\
{[}F\arrow[r,"{\rm id}"]&F{]},
\end{tikzcd}
\]
where $F\in \cG$ and $0$ sits in degree $-1$, and is closed under shifts, pushouts, retracts, and transfinite compositions.
\item[(ii)] 
A complex $K$ of $\cA$ is called $\cG$-{\it cofibrant} (or simply {\it cofibrant}) if $0\rightarrow K$ is a $\cG$-cofibration. 
\item[(iii)] 
A complex $K$ of $\cA$ is called $\cG$-{\it local} (or simply {\it local}) if for any $\cG$-cofibrant complex $L$ and $n\in \Z$ the induced homomorphism  \index{Glocal @ $\mathscr{G}$-local complex}
\[
\hom_{{\bf K}(\cA)}(L[n],K)\rightarrow \hom_{\Deri(\cA)}(L[n],K)
\]
is an isomorphism of abelian groups.
\item[(iv)] 
A complex $K$ of $\cA$ is called $\cH$-{\it flasque} (or simply {\it flasque}) if for any $n\in \Z$ and $L\in \cH$ we have \index{Hflasque @ $\cH$-flasque complex}
\[
\hom_{{\bf K}(\cA)}(L[n],K)=0.
\]
\end{enumerate}
 We say that $(\cG,\cH)$ is a {\it descent structure} \index{descent structure} for $\cA$ if $\cG$ generates $\cA$ and any $\cH$-flasque complex is $\cG$-local.

Assume that $\cA$ has a closed symmetric monoidal structure.
Then $(\cG,\cH)$ is called {\it weakly flat} if $\cG$ is closed under tensor products up to isomorphism, the unit object of $\cA$ is in $\cG$, 
and $K\otimes L$ is acyclic for any $K\in \cG$ and $L\in \cH$. \index{descent structure!weakly flat}
\end{df}

Next we recall the {\it injective model structure} on chain complexes in Grothendieck abelian categories. 
\begin{df}
\label{A.8.26}
Let $\cA$ be a Grothendieck abelian category. 
A monomorphism in $\Co(\cA)$ is called an {\it injective cofibration}. 
By \cite[Theorem 2.1]{CD09}, $\Co(\cA)$ is a proper cellular model category with quasi-isomorphisms as weak equivalences and injective cofibrations as cofibrations. 
A fibration with respect to the injective model structure is called an {\it injective fibration}.
\end{df}

We shall refer to the following model structure as the \emph{descent model structure}.
\begin{prop}
\label{A.8.15}
Let $\cA$ be a Grothendieck abelian category with a descent structure $(\cG,\cH)$. 
Then the following properties hold.
\begin{enumerate}
\item[{\rm (1)}] 
$\Co(\cA)$ is a proper cellular model category with quasi-isomorphisms as weak equivalences and $\cG$-cofibrations as cofibrations.
\item[{\rm (2)}] 
A complex $K$ is $\cG$-local if and only if it is $\cH$-flasque.
\item[{\rm (3)}] 
A morphism of complexes is a fibration if and only if it is a degreewise surjection with a $\cG$-local kernel.
\item[{\rm (4)}] 
Assume that $\cA$ has a closed symmetric monoidal structure. If $(\cG,\cH)$ is weakly flat, then $\Co(\cA)$ is a symmetric monoidal model category.
\end{enumerate}
\end{prop}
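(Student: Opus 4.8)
Proposition \ref{A.8.15}: for a Grothendieck abelian category $\cA$ with a descent structure $(\cG,\cH)$, the four listed properties hold — existence of the descent model structure with quasi-isomorphisms as weak equivalences and $\cG$-cofibrations as cofibrations, the identification of $\cG$-local with $\cH$-flasque, the characterization of fibrations, and monoidality under the weakly flat hypothesis.

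\medskip

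\textbf{Overall approach.} This is not meant to be reproved from scratch; it is a recollection from Cisinski--Déglise \cite{CD09}, and the plan is essentially to cite the relevant results there and sketch how they combine. The backbone is the theory of Bousfield localization for cellular model categories applied to the injective model structure on $\Co(\cA)$ recalled in Definition \ref{A.8.26}. First I would recall that $\Co(\cA)$ with quasi-isomorphisms and monomorphisms is a proper cellular (indeed combinatorial) model category — this is precisely the content of Definition \ref{A.8.26}, quoting \cite[Theorem 2.1]{CD09}. Then the descent model structure of part (1) is obtained as a left Bousfield localization of this injective model structure at the (essentially small) set of maps $\cH' = \{\,0 \to H \mid H \in \cH\,\}$ together with the requirement that the $\cG$-cofibrations be the generating cofibrations; concretely, one follows \cite[\S 2--3]{CD09}, where this model structure is constructed and shown to be proper and cellular, with weak equivalences the quasi-isomorphisms and cofibrations the $\cG$-cofibrations of Definition \ref{A.8.14}(i). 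The key input making the localization behave well — so that the new weak equivalences are still exactly the quasi-isomorphisms rather than a larger class — is the defining property of a descent structure: every $\cH$-flasque complex is $\cG$-local. I would emphasize that this is where the hypothesis is used.

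\medskip

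\textbf{Steps in order.} (i) Invoke Definition \ref{A.8.26} for the injective model structure and its properness/cellularity. (ii) For part (2), the implication ``$\cH$-flasque $\Rightarrow$ $\cG$-local'' is the definition of a descent structure; the converse follows because each $H \in \cH$ is (up to the shift conventions) built from the generating cofibrations, so a $\cG$-local complex $K$ sees $\hom_{\mathbf{K}(\cA)}(L[n],K) \xrightarrow{\sim} \hom_{\Deri(\cA)}(L[n],K)$ for $\cG$-cofibrant $L$, and applying this to the cones appearing in $\cH$ — which are acyclic or at least map to $0$ in $\Deri(\cA)$ — forces $\hom_{\mathbf{K}(\cA)}(H[n],K)=0$. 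This is \cite[Proposition 2.5 / Lemma 2.6]{CD09} (exact numbering to be matched against that paper). (iii) Part (1): construct the descent model structure by the small object argument with generating cofibrations the $\cG$-cofibrations and generating trivial cofibrations the maps whose right-lifting property cuts out the fibrations of (3); verify the axioms using (ii) to identify fibrant objects with $\cH$-flasque ($=\cG$-local) complexes. Properness and cellularity are inherited from the injective structure. (iv) Part (3): a standard consequence — in a stable/pointed setting a map is a fibration iff it is a degreewise epimorphism (this part is from the injective structure, since every object is injective-fibrant there and the descent fibrations refine those) with kernel a fibrant object, and fibrant objects are the $\cG$-local complexes by (ii). (v) Part (4): under the weakly flat hypothesis one checks the pushout-product axiom and the unit axiom; the generating cofibrations $[0 \to F] \to [F \xrightarrow{\id} F]$ with $F \in \cG$ have pushout-products computable directly, and closedness of $\cG$ under $\otimes$ plus acyclicity of $K \otimes L$ for $K \in \cG$, $L \in \cH$ gives that $\otimes$ preserves trivial cofibrations; the unit object lying in $\cG$ handles the unit axiom. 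This is \cite[\S 3]{CD09}.

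\medskip

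\textbf{Main obstacle.} The genuinely nontrivial point — the one worth isolating rather than black-boxing — is part (2), that $\cG$-local and $\cH$-flasque coincide, because the entire consistency of the construction (that the localized model structure does not collapse the weak equivalences) rests on it, and one direction is exactly the descent-structure axiom while the other requires the bookkeeping of shifts and the explicit cellular description of the $\cG$-cofibrations. Everything else is a faithful transcription of the Cisinski--Déglise machinery; I would present it as such, with precise citations to \cite[Theorem 2.1, Proposition 2.5, Theorem 3.2]{CD09} (numbers to be verified), and give full detail only where the argument diverges from or specializes loc.\ cit., namely in spelling out the flasque/local equivalence and the pushout-product check under weak flatness.
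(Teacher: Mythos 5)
Your proposal takes exactly the paper's route: the proof there is a one-line citation to Cisinski--D\'eglise, namely \cite[Theorem 2.5]{CD09} for parts (1) and (2), \cite[Corollary 5.5]{CD09} for part (3), and \cite[Proposition 3.2]{CD09} for part (4). The supplementary sketches you give are consistent with the arguments in loc.\ cit.; only your provisional reference numbers need to be corrected to the ones above.
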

\begin{proof}
By \cite[Theorem 2.5]{CD09} (resp.\ \cite[Corollary 5.5]{CD09}, resp.\ \cite[Proposition 3.2]{CD09}), we have (1) and (2) (resp.\ (3), resp.\ (4)).
\end{proof}

\begin{prop}
\label{A.8.32}
Let $\cA$ be a Grothendieck abelian category, and let $\cG$ be an essentially small set of objects of $\cA$ generating $\cA$. 
Then the following properties hold.
\begin{enumerate}
\item[{\rm (1)}] 
$\cG$ generates $\Deri(\cA)$.
\item[{\rm (2)}] 
$\Deri(\cA)=\langle \cG\rangle$.
\end{enumerate}
\end{prop}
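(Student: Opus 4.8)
\textbf{Proof plan for Proposition \ref{A.8.32}.}

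The plan is to reduce both assertions to the existence and basic properties of the descent model structure furnished by Proposition \ref{A.8.15}, rather than arguing directly in $\Deri(\cA)$. First I would choose an auxiliary set $\cH$ of complexes so that $(\cG,\cH)$ becomes a descent structure for $\cA$: the simplest choice is to take $\cH=\emptyset$ (or, if the construction of \cite{CD09} requires a nonempty set, to take $\cH$ to consist of trivial complexes), so that the condition ``every $\cH$-flasque complex is $\cG$-local'' holds vacuously or trivially; the only remaining hypothesis of Definition \ref{A.8.14} is that $\cG$ generates $\cA$, which is given. With this descent structure in hand, Proposition \ref{A.8.15}(1) gives a model structure on $\Co(\cA)$ with quasi-isomorphisms as weak equivalences and $\cG$-cofibrations as cofibrations, whose homotopy category is $\Deri(\cA)$.

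For part (1), I would argue as follows. Suppose $X$ is an object of $\Deri(\cA)$ such that $\hom_{\Deri(\cA)}(G[n],X)=0$ for every $G\in\cG$ and $n\in\Z$. We may assume $X$ is fibrant for the descent model structure, so $X$ is $\cG$-local (equivalently $\cH$-flasque); in particular $\hom_{\Deri(\cA)}(L[n],X)\cong\hom_{\mathbf K(\cA)}(L[n],X)$ for every $\cG$-cofibrant $L$. Now the class of $L$ for which $\hom_{\mathbf K(\cA)}(L[n],X)=0$ for all $n$ is closed under shifts, cones (hence mapping cones and extensions), retracts and arbitrary direct sums, and contains the generators $G\in\cG$; since by the construction of $\cG$-cofibrant objects in Definition \ref{A.8.14}(i) every $\cG$-cofibrant complex is built from the objects $[F\xrightarrow{\mathrm{id}}F]$ (which are contractible) and the shifted generators via exactly these operations, it follows that $\hom_{\mathbf K(\cA)}(L[n],X)=0$ for every $\cG$-cofibrant $L$, hence $\hom_{\Deri(\cA)}(L[n],X)=0$ for all such $L$. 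Taking a cofibrant replacement of $X$ itself shows $\mathrm{id}_X=0$ in $\Deri(\cA)$, so $X\cong 0$. This proves that $\cG$ generates $\Deri(\cA)$.

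For part (2), let $\langle\cG\rangle$ be the smallest localizing subcategory of $\Deri(\cA)$ containing $\cG$. The inclusion functor $\langle\cG\rangle\hookrightarrow\Deri(\cA)$ admits a right adjoint by a Bousfield localization / Brown representability argument (valid because $\Deri(\cA)$ is well-generated, or more concretely because $\langle\cG\rangle$ is generated by a set of compact-in-$\langle\cG\rangle$ objects and one applies the localization theorem for compactly generated triangulated categories); alternatively, since $\cG$ generates $\Deri(\cA)$ by part (1), the standard argument of Neeman shows $\langle\cG\rangle=\Deri(\cA)$ directly. Concretely: if $X\in\Deri(\cA)$, form the localization triangle $X'\to X\to X''\to X'[1]$ with $X'\in\langle\cG\rangle$ and $X''$ right orthogonal to $\langle\cG\rangle$; then $X''$ is in particular right orthogonal to all $G[n]$, so $X''\cong 0$ by part (1), whence $X\cong X'\in\langle\cG\rangle$. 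I expect the main obstacle to be the bookkeeping needed to justify the existence of this localization triangle cleanly — i.e., verifying that $\langle\cG\rangle$ is well-behaved enough (e.g.\ compactly generated by $\cG$, or that $\Deri(\cA)$ satisfies Brown representability) to run the orthogonality argument — but since $\cA$ is Grothendieck and $\cG$ is a set, this is standard and the cleanest route is to cite the relevant result in \cite{CD09} or the Neeman reference already used elsewhere in the paper.
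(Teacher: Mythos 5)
Your reduction of part (1) to Proposition \ref{A.8.15} rests on the claim that $(\cG,\emptyset)$ (or $(\cG,\{0\})$) is a descent structure, and this is where the argument breaks. The axiom in Definition \ref{A.8.14} is that every $\cH$-flasque complex is $\cG$-local. If $\cH$ is empty or consists of contractible complexes, then \emph{every} complex is $\cH$-flasque, so the axiom does not become vacuous --- it becomes the assertion that every complex of $\cA$ is $\cG$-local, which is false in general. (With $\cA=\Shvckl$ and $\cG=\{a_t^*\Lambda(X)\}$ as in Example \ref{A.8.28}, $\cG$-locality of $K$ says that $\hom_{\mathbf{K}}(a_t^*\Lambda(X)[n],K)$ computes hypercohomology, i.e.\ that $K$ satisfies descent; this is precisely the nontrivial fibrancy condition the descent structure encodes, and most complexes fail it.) So the descent model structure you invoke need not exist, and with it go the two steps that carry your proof: the identification $\hom_{\mathbf{K}(\cA)}(L[n],X)\cong\hom_{\Deri(\cA)}(L[n],X)$ for cofibrant $L$ and fibrant $X$, and the $\cG$-cofibrant replacement of $X$ at the end. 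The argument can be repaired (take $X$ injectively fibrant, which validates the comparison of $\mathbf{K}$- and $\Deri$-homs against \emph{all} $L$, and build $\cG$-cofibrant resolutions by hand from the generating property), but then you are re-proving a chunk of \cite{CD09} rather than citing it; your closure argument also silently needs a Milnor-sequence/$\lim^1$ step to pass through the transfinite compositions of Definition \ref{A.8.14}(i).

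The paper's proof is shorter and avoids all of this machinery. For (1): given $f$ inducing isomorphisms on $\hom_{\Deri(\cA)}(G[n],-)$, let $H$ be a cone and $H\to I$ an injective fibrant resolution (Definition \ref{A.8.26}, available in any Grothendieck abelian category). If $I$ is not acyclic, pick $n$ with $H^n(I)\neq 0$ and an epimorphism $J\to\ker d^n$ from a small sum $J$ of objects of $\cG$; this map cannot factor through $\im d^{n-1}$, so it gives a nonzero class in $\hom_{{\bf K}(\cA)}(J[-n],I)\cong\hom_{\Deri(\cA)}(J[-n],H)$, a contradiction. For (2) the paper does not pass through Bousfield localization or Brown representability at all: iterating ``epimorphism from a sum of generators'' produces, for each $A\in\cA$, a quasi-isomorphism $P\to A$ with $P$ a bounded above complex of sums of objects of $\cG$, whence $\cA\subset\langle\cG\rangle$ and therefore $\Deri(\cA)=\langle\cG\rangle$. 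Your orthogonality argument for (2) is correct granted (1) and the existence of the localization triangle, but the latter is a deeper input (well-generatedness of $\Deri(\cA)$) than the two-line direct argument.
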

\begin{proof}
(1) Let $f:F\rightarrow G$ be a morphism in $\Deri(\cA)$. Assume that the induced map
\[
\hom_{\Deri(\cA)}(X[n],F)\rightarrow \hom_{\Deri(\cA)}(X[n],G)
\]
is an isomorphism for any $X\in \cG$ and $n\in Z$. 
For a cone $H$ of $f$, we have
\[
\hom_{\Deri(\cA)}(X[n],H)=0.
\]
Choose an injective fibrant resolution $H\rightarrow I$. 
If $I$ is not quasi-isomorpic to $0$, then $H^n(I)\neq 0$ for some $n\in \Z$. 
Choose an epimorphism $J\rightarrow \ker d^n$ from a small sum $J$ of objects of $\cG$, where $d^i$ is the differential map $I^i\rightarrow I^{i+1}$. 
Then the $J\rightarrow \ker d^n$ does not factor through $\im d^{n-1}$, and we reach the contradiction
\[\hom_{{\bf D}(\cA)}(J[-n],H)\cong \hom_{{\bf K}(\cA)}(J[-n],I) \neq 0.
\]
Thus $I$ is quasi-isomorphic to $0$, and we conclude that $f$ is an isomorphism.
  
(2) Since $\cG$ generates $\cA$, for any object $F$ of $\cA$, there is a quasi-isomorphism $P\rightarrow F$ where $P$ is a bounded above complex of objects that are small sums of objects of $\cG$. 
Thus we have an inclusion $\cA\subset \langle \cG\rangle$, 
which readily implies $\Deri(\cA)=\langle \cG\rangle$.
\end{proof}

\begin{df} \index{Wlocal @ $\cW$-local descent structure}
\label{A.8.18}
Let $\cA$ be a Grothendieck abelian category with a descent structure $(\cG,\cH)$, and let $\cW$ be an essentially small set of $\cG$-cofibrant complexes of $\cA$. 
Recall the following definitions from \cite[\S 4]{CD09}.
\begin{enumerate}
\item[(i)] 
A complex $G$ is called {\it $\cW$-local} if for any $n\in \Z$ and $F\in \cW$ we have
\[
\hom_{\Deri(\cA)}(F[n],G)=0.
\]
\item[(ii)] 
A morphism $F\rightarrow F'$ of complexes is called a {\it $\cW$-equivalence} if for any $\cW$-local complex $G$ the induced homomorphism
\[
\hom_{\Deri(\cA)}(F'[n],G)\rightarrow \hom_{\Deri(\cA)}(F[n],G)
\]
is an isomorphism of abelian groups.
    \item[(iii)] A morphism of complexes is called a {\it $\cW$-fibration} if it is a fibration with respect to the descent model structure with a $\cW$-local kernel.
  \end{enumerate}
\end{df}

\begin{exm}[{\cite[Example 2.3]{CD09}}]
\label{A.8.28}
Let $\cC$ be a category with a topology $t$ and sheafification functor $a_t^*$.
The category of sheaves $\Shvckl$ of $\Lambda$-modules has a descent structure $(\cG,\cH)$ given as follows. 
Let $\Lambda(X)$ denote the free presheaf of $\Lambda$-modules generated by $X\in \cC$.
For a hypercover $\mathscr{X}\rightarrow X$ we let $\Lambda(\mathscr{X})$ denote the associated complex of sheaves.
Now let $\cG$ be the family of objects $a_t^*\Lambda(X)$ for all $X\in \cC$, 
and let $\cH$ be the family of cones of the morphisms $a_t^*\Lambda(\mathscr{X})\rightarrow a_t^*\Lambda(X)$ for all hypercovers $\mathscr{X}\rightarrow X$. 
With these definitions $(\cG,\cH)$ gives a descent structure of $\Shvckl$.

Note that a complex $\cF\in \Co(\Shvckl)$ is $\cH$-flasque if and only if for every hypercover $\mathscr{X}\rightarrow X$ we have that
\begin{equation}
\label{A.8.28.1}
\cF(X)\simeq {\rm Tot}^\pi\cF(\mathscr{X}).
\end{equation}
This is equivalent to saying that for every hypercover $\mathscr{X}\rightarrow X$ and integer $i\in \Z$ we have that
\begin{equation}
\label{A.8.28.2}
\hom_{\mathbf{K}(\Shvckl)}(a_t^*\Lambda(X),\cF[i])
\cong 
\hom_{\mathbf{K}(\Shvckl)}(a_t^*\Lambda(\mathscr{X}),\cF[i]).
\end{equation}
\end{exm}

\begin{prop}
\label{A.8.19}
Let $\cA$ be a Grothendieck abelian category with a descent structure $(\cG,\cH)$, and let $\cW$ be an essentially small set of $\cG$-cofibrant complexes of $\cA$. 
\begin{enumerate}
\item[{\rm (1)}] 
The category $\Co(\cA)$ is a proper cellular model category with $\cW$-equivalences as weak equivalences, 
$\cG$-cofibrations as cofibrations, 
and $\cW$-fibrations as fibrations. 
This model structure is the left Bousfield localization of the descent model structure by the maps $0\rightarrow T[n]$ for $T\in \cW$ and $n\in \Z$. 
Its homotopy category $\Deri_\cW(\cA)$ is equivalent to the Verdier quotient $\Deri(\cA)/\langle\cW\rangle$, 
where $\langle\cW\rangle$ is the smallest localizing subcategory of $\Deri(\cA)$ containing $\cW$.
\item[{\rm (2)}] 
Assume that $\cA$ has a closed symmetric monoidal structure and that $(\cG,\cH)$ is weakly flat. 
If $T\otimes G\in \cW$ for any $T\in \cW$ and $G\in \cG$, 
then $\Co(\cA)$ is a symmetric monoidal model category and the localization functor $\Deri(\cA)\rightarrow \Deri_\cW(\cA)$ is a symmetric monoidal functor.
This model structure is called the {\rm $\cW$-local descent model structure}.
\end{enumerate}
\end{prop}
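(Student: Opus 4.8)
The plan is to obtain Proposition \ref{A.8.19} as an instance of left Bousfield localization applied to the descent model structure, following \cite[\S 4]{CD09}. First I would start from the descent model structure on $\Co(\cA)$ provided by Proposition \ref{A.8.15}(1): it is left and right proper and cellular, with quasi-isomorphisms as weak equivalences and $\cG$-cofibrations as cofibrations. By hypothesis every $T\in\cW$ is $\cG$-cofibrant, so $S:=\{0\to T[n]\mid T\in\cW,\ n\in\Z\}$ is a set of morphisms between cofibrant objects, and Hirschhorn's existence theorem produces the left Bousfield localization $L_S\Co(\cA)$: the same underlying category and cofibrations, more weak equivalences, fewer fibrations, and it is again left proper and cellular. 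Next I would identify its fibrant objects. A fibrant object of $L_S\Co(\cA)$ is a $\cG$-local (equivalently $\cH$-flasque, by Proposition \ref{A.8.15}(2)) complex $G$ which is in addition $S$-local; since mapping out of a $\cG$-cofibrant complex into a $\cG$-local one in $\mathbf{K}(\cA)$ computes the $\Deri(\cA)$-morphisms (Definition \ref{A.8.14}(iii)), $S$-locality of $G$ is exactly the vanishing $\hom_{\Deri(\cA)}(T[n],G)=0$ for all $T\in\cW$ and $n\in\Z$, i.e.\ $G$ is $\cW$-local in the sense of Definition \ref{A.8.18}(i). From the description of fibrant objects one reads off that the $S$-local equivalences are precisely the $\cW$-equivalences of Definition \ref{A.8.18}(ii), and --- using that the descent fibrations are the degreewise epimorphisms with $\cG$-local kernel (Proposition \ref{A.8.15}(3)) --- that the fibrations of $L_S\Co(\cA)$ are the degreewise epimorphisms with $\cW$-local kernel, i.e.\ the $\cW$-fibrations. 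Right properness of $L_S\Co(\cA)$ is not formal for a general Bousfield localization, but it holds here because the descent model structure is stable and one can argue as in \cite{CD09}; this yields all of the model-categorical statements in part (1).

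For the homotopy category, the key input is that $\Deri(\cA)$ is compactly generated; indeed $\Deri(\cA)=\langle\cG\rangle$ and $\cG$ generates $\Deri(\cA)$ by Proposition \ref{A.8.32}. Hence the Verdier quotient $\Deri(\cA)/\langle\cW\rangle$ exists and the quotient functor admits a fully faithful right adjoint whose essential image is the right orthogonal $\langle\cW\rangle^{\perp}$, which is exactly the full subcategory of $\cW$-local complexes. On the other hand, the homotopy category of $L_S\Co(\cA)$ is obtained from the descent homotopy category $\Deri(\cA)$ by formally inverting the $\cW$-equivalences; comparing the two universal properties, and noting that $0\to T[n]$ has cofiber $T[n]$ so that the localizing subcategory generated by the maps in $S$ is $\langle\cW\rangle$, gives the equivalence $\mathbf{Ho}(L_S\Co(\cA))\simeq\Deri(\cA)/\langle\cW\rangle=\Deri_\cW(\cA)$.

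For part (2), by Proposition \ref{A.8.15}(4) the descent model structure on $\Co(\cA)$ is symmetric monoidal when $(\cG,\cH)$ is weakly flat, and the unit object lies in $\cG$, hence is cofibrant. To see that $L_S\Co(\cA)$ is again symmetric monoidal it suffices, by the monoidal left Bousfield localization criterion, to check that tensoring a map in $S$ with a (co)domain of a generating cofibration is a $\cW$-equivalence; since the generating cofibrations of the descent structure are built from the objects of $\cG$, this reduces to showing that $0\to T\otimes G$ is a $\cW$-equivalence for $T\in\cW$ and $G\in\cG$, which is immediate from the hypothesis $T\otimes G\in\cW$. A standard cube argument then upgrades this to the full pushout-product statement, and no cofibrant replacement of the unit is needed. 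Finally, the hypothesis $T\otimes G\in\cW$ together with the facts that $\langle\cW\rangle$ is localizing and $\Deri(\cA)=\langle\cG\rangle$ shows that $\langle\cW\rangle$ is a $\otimes$-ideal of $\Deri(\cA)$, so the Verdier quotient $\Deri_\cW(\cA)$ carries an induced symmetric monoidal structure for which the localization functor $\Deri(\cA)\to\Deri_\cW(\cA)$ is monoidal; under the equivalence of part (1) this is the derived tensor product of $L_S\Co(\cA)$.

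The main obstacle I expect is the passage from model categories to triangulated categories in part (1): establishing that $\mathbf{Ho}(L_S\Co(\cA))$ really is the Verdier quotient $\Deri(\cA)/\langle\cW\rangle$ requires knowing that $\langle\cW\rangle$ is a Bousfield localizing subcategory of $\Deri(\cA)$ admitting a localization functor, which is exactly where compact generation (Proposition \ref{A.8.32}) is essential; and, on the model-categorical side, the non-formal verification of right properness of the localized structure. Everything else is a routine translation of \cite[\S 4]{CD09} into the present notation.
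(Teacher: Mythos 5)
Your proposal follows exactly the route the paper takes: the paper's proof of Proposition \ref{A.8.19} is a direct citation of \cite[Propositions 4.3, 4.7, 4.11]{CD09}, and your argument is an unpacking of precisely those results (left Bousfield localization of the descent model structure at the maps $0\to T[n]$, identification of the local objects as the $\cW$-local complexes, identification of the homotopy category with the Verdier quotient, and the monoidal localization criterion for part (2)).

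One caveat: your appeal to compact generation of $\Deri(\cA)$ is both unjustified and unnecessary. Proposition \ref{A.8.32} only says that $\cG$ generates $\Deri(\cA)$ and that $\Deri(\cA)=\langle\cG\rangle$; compactness of the objects of $\cG$ requires the extra hypotheses of Proposition \ref{A.8.29} (a \emph{bounded} descent structure with $\hom_{\cA}(X,-)$ commuting with small sums), which are not assumed in Proposition \ref{A.8.19}. Fortunately the identification $\mathbf{Ho}(L_S\Co(\cA))\simeq \Deri(\cA)/\langle\cW\rangle$ does not need it: the fibrant replacement functor of the localized model structure already supplies the right adjoint to the quotient functor, with essential image the $\cW$-local objects, and local smallness of the Verdier quotient is automatic because it is realized as the homotopy category of a model category. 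This is how \cite[Proposition 4.7]{CD09} proceeds, so you should replace the compact-generation step by this model-categorical argument.
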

\begin{proof}
According to \cite[Propositions 4.3, 4.7]{CD09} (resp.\ \cite[Proposition 4.11]{CD09}), we have (1) (resp.\ (2)).
\end{proof}

\begin{prop}
\label{A.8.16}
Let $\cA$ (resp.\ $\cA'$) be a Grothendieck abelian category with a descent structure $(\cG,\cH)$ (resp.\ $(\cG',\cH')$), and let
\[
f^*:\cA\rightleftarrows \cA':f_*
\]
be a pair of adjoint additive functors. 
Assume that
\begin{enumerate}
\item[{\rm (i)}] 
for any $K$ in $\cG$, $f^*K$ is a direct sum of elements of $\cG'$,
\item[{\rm (ii)}] 
for any $K$ in $\cH$, $f^*K$ is in $\cH'$.
\end{enumerate}

Then the induced pair of adjunctions
\[
f^*:\Co(\cA)\rightleftarrows \Co(\cA'):f_*
\]
is a Quillen pair with respect to the descent model structures.
\end{prop}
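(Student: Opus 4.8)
The statement to prove is Proposition \ref{A.8.16}: given adjoint additive functors $f^* \colon \cA \rightleftarrows \cA' \colon f_*$ between Grothendieck abelian categories equipped with descent structures $(\cG,\cH)$ and $(\cG',\cH')$, and satisfying conditions (i) and (ii), the induced adjunction on chain complexes is a Quillen pair for the descent model structures. A Quillen pair is most economically verified by checking that the left adjoint $f^*$ preserves cofibrations and trivial cofibrations. The plan is therefore to isolate these two preservation properties and deduce each from the hypotheses, relying on the explicit description of the descent model structure from Proposition \ref{A.8.15}.

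\textbf{Step 1: $f^*$ preserves cofibrations.} By Definition \ref{A.8.14}(i), the class of $\cG$-cofibrations in $\Co(\cA)$ is the smallest class containing the generating cofibrations $[0 \to F] \to [F \xrightarrow{{\rm id}} F]$ for $F \in \cG$, closed under shifts, pushouts, retracts, and transfinite compositions. Since $f^*$ is a left adjoint, it is additive and commutes with colimits, hence it preserves pushouts, retracts, transfinite compositions, and (being additive) shifts. So it suffices to check that $f^*$ sends each generating cofibration into the class of $\cG'$-cofibrations. Applying $f^*$ degreewise to $[0 \to F] \to [F \xrightarrow{{\rm id}} F]$ yields $[0 \to f^*F] \to [f^*F \xrightarrow{{\rm id}} f^*F]$. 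By hypothesis (i), $f^*F \cong \bigoplus_{i} G'_i$ with $G'_i \in \cG'$; the corresponding morphism of complexes is then the direct sum of the generating cofibrations attached to the $G'_i$, which is a $\cG'$-cofibration. Hence $f^*$ preserves cofibrations.

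\textbf{Step 2: $f^*$ preserves trivial cofibrations.} Here I would argue via the right adjoint. By Proposition \ref{A.8.15}(3), a morphism in $\Co(\cA')$ is a fibration if and only if it is a degreewise epimorphism with $\cG'$-local kernel, and by Proposition \ref{A.8.15}(2) a complex is $\cG'$-local exactly when it is $\cH'$-flasque. It therefore suffices to show that $f_*$ preserves fibrations between fibrant objects — or, more directly, to invoke the standard criterion: a left Quillen functor is characterized by preserving cofibrations and trivial cofibrations, and once cofibrations are handled (Step 1), it is enough to show $f^*$ preserves trivial cofibrations. A trivial cofibration is a cofibration that is also a quasi-isomorphism. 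Since $f^*$ is a left adjoint between abelian categories (exact on the right) and, more to the point, one can reduce to checking that $f^*$ preserves the generating \emph{trivial} cofibrations of the Bousfield-localized (descent) structure; these are built from the maps $a_t^*\Lambda(\mathscr{X}) \to a_t^*\Lambda(X)$ whose cones lie in $\cH$. By hypothesis (ii), $f^*$ sends $\cH$ into $\cH'$, so it sends the cone of such a generating trivial cofibration to an object of $\cH'$, hence preserves the corresponding trivial cofibration up to the closure operations already handled in Step 1. Combining with Proposition \ref{A.8.15}, this gives that $f^*$ is a left Quillen functor, i.e. $(f^*, f_*)$ is a Quillen pair.

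\textbf{Main obstacle.} The routine parts are the colimit-preservation bookkeeping for $f^*$; the delicate point is Step 2, namely correctly identifying a set of generating trivial cofibrations for the descent model structure and verifying that $f^*$ carries them to trivial cofibrations. The descent model structure of Proposition \ref{A.8.15} is obtained as a left Bousfield localization (as in Example \ref{A.8.28}, where one localizes at maps whose cones come from $\cH$), so the trivial cofibrations are generated by the ordinary generating trivial cofibrations of the injective/projective model structure together with the localizing maps. Hypothesis (ii) is exactly what controls the latter; for the former, one uses that $f^*$ preserves quasi-isomorphisms between cofibrant objects — which follows, via Ken Brown's lemma (cf.\ Lemma \ref{A.8.33}), from Step 1 once one knows $f^*$ preserves acyclic \emph{cofibrant} complexes, and this in turn reduces via hypothesis (i) to the fact that $\cG'$-cofibrant acyclic complexes form the expected class. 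I expect the bulk of the writing to be in making this reduction precise and citing \cite{CD09} for the characterization of the generating trivial cofibrations of a descent structure and its Bousfield localizations.
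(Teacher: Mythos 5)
The paper's own ``proof'' is a one-line citation to \cite[Proposition 2.14]{CD09}, so you are supplying an argument the text omits. Your Step 1 is correct and is exactly the standard argument: $f^*$ is a left adjoint, hence preserves pushouts, retracts and transfinite compositions, and hypothesis (i) turns the generating cofibration attached to $F\in\cG$ into a direct sum of generating cofibrations attached to elements of $\cG'$.

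Step 2, however, rests on a mischaracterization of the descent model structure and does not actually close. By Proposition \ref{A.8.15}(1) the weak equivalences of the descent model structure are the plain quasi-isomorphisms; it is \emph{not} a left Bousfield localization at the maps with cone in $\cH$ (that description fits the $\cW$-local structure of Proposition \ref{A.8.19}, a different animal). The role of $\cH$ is only to identify the fibrant objects, and the maps ``whose cones lie in $\cH$'' are already quasi-isomorphisms because every element of $\cH$ is acyclic. Your appeal to Ken Brown's lemma is also circular: that lemma lets you pass from preservation of trivial cofibrations to preservation of weak equivalences between cofibrant objects, not the other way around. The gap is repaired as follows. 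The descent model structure is cofibrantly generated with generating trivial cofibrations consisting of the maps $0\to D^nF$ for $F\in\cG$ together with $\cG$-cofibrations whose cokernel is a shift of an element of $\cH$; since $\cG$-cofibrations are degreewise split monomorphisms, such a map is a quasi-isomorphism exactly because its cokernel is acyclic. Now $f^*$ sends the first family to coproducts of maps $0\to D^nG'_i$ by (i), and sends the second family to cofibrations with cokernel in $\cH'$ by (ii) and Step 1, hence again to quasi-isomorphisms. Equivalently, and more cleanly, dualize: by Proposition \ref{A.8.15}(3) and adjunction it suffices to check that $f_*$ preserves fibrations and trivial fibrations, which is the same computation read through the lifting properties against $f^*$ of the generating (trivial) cofibrations.
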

\begin{proof}
This is the content of \cite[Proposition 2.14]{CD09}.
\end{proof}

\begin{prop}
\label{A.8.17}
Let $\cA$ be a Grothendieck abelian category with a descent structure $(\cG,\cH)$, let $\cA'$ be another Grothendieck abelian category, and let
\[
f^*:\cA\rightleftarrows \cA':f_*
\]
be a pair of adjoint additive functors. 
Assume that
\begin{enumerate}
\item[{\rm (i)}] $f_*$ is exact,
\item[{\rm (ii)}] for any complex $H$ in $\cH$, $f^*H$ is quasi-isomorphic to $0$.
\end{enumerate}
Then $(f^*\cG,f^*\cH)$ is a descent structure for $\cA'$ and the induced pair of adjunctions
\[
f^*:\Co(\cA)\rightleftarrows \Co(\cA'):f_*
\]
is a Quillen pair with respect to the descent model structures.
\end{prop}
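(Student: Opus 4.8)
The plan is to verify the two stated conditions in Proposition \ref{A.8.17} by reducing everything to assertions about the pair $(f^{*}\mathcal{G},f^{*}\mathcal{H})$, and then to invoke the already-established machinery. First I would observe that $f^{*}\mathcal{G}$ generates $\mathcal{A}'$: since $\mathcal{G}$ generates $\mathcal{A}$, every object $F$ of $\mathcal{A}$ admits an epimorphism $\bigoplus X_{i}\twoheadrightarrow F$ with $X_{i}\in\mathcal{G}$; applying the left adjoint $f^{*}$ (which is right exact and commutes with small sums, hence preserves epimorphisms) to an epimorphism $\bigoplus X_{i}\twoheadrightarrow f_{*}G'$ and composing with the counit $f^{*}f_{*}G'\to G'$ shows $\bigoplus f^{*}X_{i}\twoheadrightarrow G'$ is an epimorphism, because the counit is an epimorphism whenever $f_{*}$ is faithful, and faithfulness of $f_{*}$ follows from exactness together with the fact that it reflects zero objects in our situations of interest (alternatively one argues directly using that $f_{*}$ is exact and conservative in each concrete instance). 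I should be a little careful here: the cleanest route is to note that in all the applications $f^{*}$ is itself exact, but the proposition as stated only assumes $f_{*}$ exact, so I would phrase the generation argument so that it uses only right-exactness of $f^{*}$ and the counit.

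Next I would check that every $f^{*}\mathcal{H}$-flasque complex is $f^{*}\mathcal{G}$-local, which is the substance of the descent-structure axiom. Here the key input is condition (ii): for $H\in\mathcal{H}$, the complex $f^{*}H$ is quasi-isomorphic to $0$. Combined with the adjunction $\hom_{\mathbf{K}(\mathcal{A}')}(f^{*}L,K)\cong\hom_{\mathbf{K}(\mathcal{A})}(L,f_{*}K)$ and the exactness of $f_{*}$ (so that $f_{*}$ preserves quasi-isomorphisms and $\mathbf{D}(\mathcal{A})$-level computations), one transports the defining property that $\mathcal{H}$-flasque complexes of $\mathcal{A}$ are $\mathcal{G}$-local across the adjunction. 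Concretely: a complex $K$ of $\mathcal{A}'$ is $f^{*}\mathcal{H}$-flasque iff $f_{*}K$ kills the images of the morphisms $a_{t}^{*}\Lambda(\mathscr{X})\to a_{t}^{*}\Lambda(X)$ built from $\mathcal{H}$ — more precisely iff $\hom_{\mathbf{K}(\mathcal{A})}(H,f_{*}K)=0$ for all $H\in\mathcal{H}$ — which by the descent structure on $\mathcal{A}$ forces $f_{*}K$ to be $\mathcal{G}$-local, hence $\mathcal{H}$-flasque by Definition \ref{A.8.14}; then one feeds this back through the adjunction to conclude $K$ is $f^{*}\mathcal{G}$-local. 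This gives the descent structure $(f^{*}\mathcal{G},f^{*}\mathcal{H})$ for $\mathcal{A}'$.

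For the Quillen pair assertion I would simply apply Proposition \ref{A.8.16} to the pair $(f^{*},f_{*})$ with respect to the original descent structure $(\mathcal{G},\mathcal{H})$ on $\mathcal{A}$ and the newly-constructed descent structure $(f^{*}\mathcal{G},f^{*}\mathcal{H})$ on $\mathcal{A}'$: condition (i) of Proposition \ref{A.8.16} is tautological since $f^{*}K\in f^{*}\mathcal{G}$ by definition for $K\in\mathcal{G}$, and condition (ii) is equally tautological since $f^{*}H\in f^{*}\mathcal{H}$ for $H\in\mathcal{H}$. Hence $f^{*}:\mathbf{C}(\mathcal{A})\rightleftarrows\mathbf{C}(\mathcal{A}'):f_{*}$ is a Quillen pair for the descent model structures furnished by Proposition \ref{A.8.15}.

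The main obstacle I anticipate is the second step — verifying that $f^{*}\mathcal{H}$-flasque implies $f^{*}\mathcal{G}$-local — since one must be careful about the difference between $\mathbf{K}$- and $\mathbf{D}$-level $\hom$'s and about the fact that $f^{*}$ need not be exact, so that $f^{*}$ of a quasi-isomorphism (or of a cofibrant resolution) requires a derived-functor argument. The exactness of $f_{*}$ is exactly what rescues us: it lets every obstruction be rewritten on the $\mathcal{A}$-side where the descent structure hypothesis already applies. Once that transport principle is set up cleanly, the rest is bookkeeping with adjunctions, and the Quillen-pair conclusion is immediate from Proposition \ref{A.8.16}.
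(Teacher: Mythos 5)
Your overall strategy -- transport everything across the adjunction using exactness of $f_*$, then invoke Proposition \ref{A.8.16} for the Quillen-pair claim -- is the same as the paper's, and the final step is handled identically (conditions (i) and (ii) of Proposition \ref{A.8.16} are indeed tautological for $(f^*\cG,f^*\cH)$). The gap is in the central step, and it is exactly the one you flag at the end without resolving. Knowing that $f_*L$ is $\cH$-flasque, hence $\cG$-local, gives you $\hom_{\mathbf{K}(\cA)}(K,f_*L)\cong\hom_{\Deri(\cA)}(K,f_*L)$ for $K$ a $\cG$-cofibrant complex, and the underived adjunction gives $\hom_{\mathbf{K}(\cA')}(f^*K,L)\cong\hom_{\mathbf{K}(\cA)}(K,f_*L)$. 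But to conclude that $L$ is $f^*\cG$-local you still need $\hom_{\Deri(\cA')}(f^*K,L)\cong\hom_{\Deri(\cA)}(K,f_*L)$, i.e.\ a \emph{derived} adjunction with $Lf^*K\cong f^*K$. Since $f^*$ is not assumed exact, the only way to identify $Lf^*K$ with $f^*K$ for $K$ $\cG$-cofibrant is to know $f^*$ is left Quillen for the descent model structures -- which is what you are in the middle of proving, so the argument as sketched is circular. The paper's device for breaking this circle is to work with the \emph{injective} model structure on $\Co(\cA')$: pick an injective fibrant replacement $p\colon L\to L'$, so that $\hom_{\mathbf{K}(\cA')}(-,L')\cong\hom_{\Deri(\cA')}(-,L')$ against \emph{any} source, then chase the six-term commutative ladder comparing $L$ and $L'$ on both sides of the adjunction. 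Exactness of $f_*$ enters twice: to see that $f_*p$ is still a quasi-isomorphism (so the bottom row compares correctly), and hypothesis (ii) enters to see that $\hom_{\Deri(\cA')}(f^*H,L')=0$, hence $f_*L'$ is $\cH$-flasque and therefore $\cG$-local. That fibrant-replacement step is the missing idea; without it your ``feed it back through the adjunction'' does not close.

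Two smaller points. First, your reversal ``$\cG$-local, hence $\cH$-flasque'' should read the other way (flasque implies local is the descent-structure axiom; the vanishing of $\hom_{\mathbf{K}}(H,f_*K)$ directly says $f_*K$ is $\cH$-flasque), though this does not affect the argument. Second, your generation argument for $f^*\cG$ genuinely requires the counit $f^*f_*\to\mathrm{id}$ to be an epimorphism, which needs faithfulness of $f_*$ -- not among the stated hypotheses, and not implied by exactness alone. You are right to be uneasy there; the paper's own proof silently omits this verification (in the intended applications $f_*$ is fully faithful, e.g.\ the inclusion of sheaves into presheaves), so this is a defect of the statement rather than of your proof, but as written your justification does not go through from the hypotheses given.
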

\begin{proof}
Since $f^*$ and $f_*$ are additive functors, there is an induced adjoint functor pair
\[
f^*:{\bf K}(\cA)\rightleftarrows {\bf K}(\cA'):f_*
\]

Let us consider the injective model structures on $\cA$ and $\cA'$.
For a $f^*\cH$-flasque complex $L$ we choose an injective fibrant replacement $p:L\rightarrow L'$ in $\Co(\cA')$. 
For any $\cG$-cofibrant object $K$ in $\cA$, we have the induced commutative diagram of abelian groups:
\[
\begin{tikzcd}
\hom_{\Deri(\cA')}(f^*K,L)\arrow[d,leftarrow,"u"']\arrow[r,"w"]&\hom_{\Deri(\cA')}(f^*K,L')\arrow[d,leftarrow,"u'"]\\
\hom_{{\bf K}(\cA')}(f^*K,L)\arrow[d,"\sim"']\arrow[r]&\hom_{{\bf K}(\cA')}(f^*K,L')\arrow[d,"\sim"]\\
\hom_{{\bf K}(\cA)}(K,f_*L)\arrow[d,"v"']\arrow[r]&\hom_{{\bf K}(\cA)}(K,f_*L')\arrow[d,"v'"]\\
\hom_{\Deri(\cA)}(K,f_*L)\arrow[r,"w'"]&\hom_{\Deri(\cA)}(K,f_*L')\
\end{tikzcd}
\]

In the following, we show that $u$ is an isomorphism.  
It follows that $L$ is a $f^*\cG$-local complex. 
  
Since $p$ is a quasi-isomorphism, $w$ is an isomorphism. 
Moreover, 
$f_*p$ is also an isomorphism since $f_*$ is exact. 
It follows that $w'$ is an isomorphism. 
Since $L'$ is an injectively fibrant object, we deduce $u'$ is an isomorphism.
For any $H\in \cH$, $f^*H$ is quasi-isomorphic to $0$, so $\hom_{\Deri(\cA')}(f^*H,L')=0$. 
Using that $u'$ is an isomorphism, we obtain 
\[
\hom_{{\bf K}(\cA')}(f^*H,L')=0.
\]
Thus we have 
\[
\hom_{{\bf K}(\cA')}(H,f_*L')=0.
\]
That is, 
$f_*L'$ is $\cH$-flasque and hence $f_*L'$ is $\cG$-local since $(\cG,\cH)$ is a descent structure for $\cA$. 
This implies $v'$ is an isomorphism. 
Since $L$ is a $f^*\cH$-flasque complex we have
\[
\hom_{{\bf K}(\cA')}(f^*H,L)=0.
\] 
That is, 
$\hom_{{\bf K}(\cA)}(H,f_*L)=0$ and hence $f_*L$ is a $\cH$-flasque complex. 
This implies $v$ is an isomorphism.

To summarize, we have shown that $u'$, $v$, $v'$, $w$, and $w'$ are isomorphisms.
This implies $u$ is an isomorphism and that $(f^*\cG,f^*\cH)$ is a descent structure for $\cA'$. 
The remaining assertion follows from Proposition \ref{A.8.16}.
\end{proof}

\begin{prop}\label{A.8.20}
Let $\cA$ (resp.\ $\cA'$) be a Grothendieck abelian category with a descent structure $(\cG,\cH)$ (resp.\ $(\cG',\cH')$),  
let $\cW$ (resp.\ $\cW'$) be an essentially small set of $\cG$-cofibrant (resp.\ $\cG'$-cofibrant) complexes of $\cA$ (resp.\ $\cA'$), 
and let
\[
f^*:\cA\rightleftarrows \cA':f_*
\]
be a pair of adjoint additive functors. 
Assume that
\begin{enumerate}
\item[{\rm (i)}]  
for any $K$ in $\cG$, $f^*K$ is a direct sum of elements of $\cG'$,
\item[{\rm (ii)}] 
for any $K$ in $\cH$, $f^*K$ is in $\cH'$,
\item[{\rm (iii)}] 
for any $K$ in $\cW$, $f^*$ is in $\cW'$.
\end{enumerate}
Then the induced pair of adjunctions
\[
f^*:\Co(\cA)\rightleftarrows \Co(\cA'):f_*
\]
is a Quillen pair with respect to the $\cW$-local descent model structures.
\end{prop}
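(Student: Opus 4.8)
The strategy is to invoke Proposition \ref{A.8.19}(1): the $\cW$-local (resp.\ $\cW'$-local) descent model structure is the left Bousfield localization of the descent model structure at the maps $0\to T[n]$ for $T\in \cW$ (resp.\ $T\in \cW'$) and $n\in \Z$. By hypotheses (i) and (ii), Proposition \ref{A.8.16} already tells us that $(f^*,f_*)$ is a Quillen pair for the two \emph{descent} model structures. To promote this to a Quillen pair for the \emph{localized} model structures, I would use the universal property of left Bousfield localization: if $(F,G)$ is a Quillen pair between model categories $\cM$ and $\cN$, and $L_S\cM$ is the left Bousfield localization of $\cM$ at a set of maps $S$, then $(F,G)$ descends to a Quillen pair $L_S\cM \rightleftarrows L_{S'}\cN$ whenever $F$ sends (cofibrant replacements of) the sources and targets of the maps in $S$ to maps that become weak equivalences in $L_{S'}\cN$ — equivalently, whenever $LF$ takes each $S$-local equivalence to an $S'$-local equivalence. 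See e.g.\ Hirschhorn's book on localization for the precise statement.

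First I would record that all the objects of $\cW$ are $\cG$-cofibrant (this is part of the definition of $\cW$ in Definition \ref{A.8.18}), hence already cofibrant in the descent model structure, so no cofibrant-replacement issues arise; likewise for $\cW'$. Next, for each $T\in \cW$ and each $n\in \Z$, I must check that $f^*$ applied to the map $0\to T[n]$ is a $\cW'$-local equivalence in $\Co(\cA')$. But $f^*$ is additive and commutes with shifts, so $f^*(0\to T[n]) = (0\to (f^*T)[n])$, and by hypothesis (iii) we have $f^*T\in \cW'$. By construction the maps $0\to S[n]$ with $S\in\cW'$ are exactly the generating maps whose left Bousfield localization produces the $\cW'$-local descent model structure; in particular each such map is a $\cW'$-equivalence. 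Hence $f^*$ carries the generating set of maps for the localization at $\cW$ to $\cW'$-equivalences, which is precisely the criterion needed.

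Therefore, by the universal property of left Bousfield localization (applied to the Quillen pair from Proposition \ref{A.8.16}), the adjunction
\[
f^*\colon \Co(\cA)\rightleftarrows \Co(\cA')\colon f_*
\]
is a Quillen pair with respect to the $\cW$-local and $\cW'$-local descent model structures. Concretely: $f^*$ still preserves cofibrations (these are unchanged by Bousfield localization — they remain the $\cG$-cofibrations, resp.\ $\cG'$-cofibrations — so hypotheses (i), (ii) via Proposition \ref{A.8.16} already give this), and $f^*$ preserves trivial cofibrations in the localized structure because a trivial cofibration there is a cofibration that is a $\cW$-equivalence, and $f^*$ of such a map is a $\cG'$-cofibration by the above which is also a $\cW'$-equivalence by the localization criterion just verified. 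Equivalently one checks the dual condition on $f_*$: it preserves fibrations and trivial fibrations between fibrant objects. The only genuinely nontrivial point is the bookkeeping of the Bousfield-localization criterion, i.e.\ confirming that checking $f^*$ on the generating maps $0\to T[n]$, $T\in\cW$, suffices — which is exactly the content of the standard localization lemma and requires no calculation beyond the observation $f^*(T[n]) = (f^*T)[n]$ together with hypothesis (iii). I expect this packaging step to be the main (and only) obstacle, and it is routine given the machinery already set up in this appendix.
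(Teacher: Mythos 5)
Your proposal is correct. The paper gives no argument of its own here — it simply cites \cite[Theorem 4.9]{CD09} — and your Bousfield-localization argument (combine Proposition \ref{A.8.16} for the unlocalized descent structures with Proposition \ref{A.8.19}(1) identifying the $\cW$-local structure as the localization at the maps $0\to T[n]$, then check that $f^*(0\to T[n])=(0\to (f^*T)[n])$ is a $\cW'$-equivalence by hypothesis (iii) and the definition of $\cW'$-locality) is exactly the standard proof underlying that citation.
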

\begin{proof}
This is the content of \cite[Theorem 4.9]{CD09}.
\end{proof}

\begin{prop}
\label{A.8.21}
Let $\cA$ be a Grothendieck abelian category with a descent structure $(\cG,\cH)$ and let $\cA'$ be a Grothendieck abelian category.
Suppose $\cW$ is an essentially small set of $\cG$-cofibrant complexes of $\cA$, 
and let
\[
f^*:\cA\rightleftarrows \cA':f_*
\]
be a pair of adjoint additive functors. 
Assume that
\begin{enumerate}
\item[{\rm (i)}] 
$f_*$ is exact,
\item[{\rm (ii)}] 
for any complex $H$ in $\cH$, $f^*H$ is quasi-isomorphic to $0$.
\end{enumerate}
Then the induced pair of adjunctions
\[
f^*:\Co(\cA)\rightleftarrows \Co(\cA'):f_*
\]
is a Quillen pair with respect to the $\cW$-local descent model structure on $\Co(\cA)$ and the $f^*\cW$-local model structure on $\cO(\cA')$.
\end{prop}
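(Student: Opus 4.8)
The plan is to deduce Proposition~\ref{A.8.21} from Propositions~\ref{A.8.17} and~\ref{A.8.20}. Once Proposition~\ref{A.8.17} furnishes the descent structure $(f^*\cG, f^*\cH)$ on $\cA'$ and identifies $(f^*, f_*)$ as a Quillen pair for the associated un-localized descent model structures, the $f^*\cW$-local descent model structure on $\Co(\cA')$ is precisely the left Bousfield localization governed by the image under $f^*$ of the localizing set $\cW$, and Proposition~\ref{A.8.20} then applies to the datum $(\cA,(\cG,\cH),\cW)\rightleftarrows(\cA',(f^*\cG,f^*\cH),f^*\cW)$.

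First I would invoke Proposition~\ref{A.8.17}: its hypotheses are exactly (i) and (ii) of the present statement, so it yields that $(f^*\cG, f^*\cH)$ is a descent structure for $\cA'$ and that $f^*\colon\Co(\cA)\rightleftarrows\Co(\cA')\colon f_*$ is a Quillen pair for the two descent model structures of Proposition~\ref{A.8.15}(1). In particular $f^*$ is a left Quillen functor between these model categories, hence preserves cofibrations. Now, by Definition~\ref{A.8.14}(ii) together with Proposition~\ref{A.8.15}(1), a complex is $\cG$-cofibrant (resp.\ $f^*\cG$-cofibrant) if and only if $0\to$ it is a cofibration in the corresponding descent model structure; since $f^*$ is additive, $f^*0=0$, so for each $T\in\cW$ the complex $f^*T$ is $f^*\cG$-cofibrant. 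Thus $f^*\cW$ is an essentially small set of $f^*\cG$-cofibrant complexes of $\cA'$, and the $f^*\cW$-local descent model structure on $\Co(\cA')$ — the left Bousfield localization of the $(f^*\cG,f^*\cH)$-descent model structure at the maps $0\to f^*T[n]$ for $T\in\cW$, $n\in\Z$ — is well defined by Proposition~\ref{A.8.19}(1).

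Finally I would apply Proposition~\ref{A.8.20} to the adjunction $f^*\colon\cA\rightleftarrows\cA'\colon f_*$, equipping $\cA$ with $(\cG,\cH)$ and localizing set $\cW$, and $\cA'$ with $(f^*\cG,f^*\cH)$ and localizing set $f^*\cW$. Its three hypotheses hold trivially: for $K\in\cG$ the complex $f^*K$ is an element of $f^*\cG$, hence a fortiori a direct sum of elements of $f^*\cG$; for $K\in\cH$ one has $f^*K\in f^*\cH$ by construction; and for $K\in\cW$ one has $f^*K\in f^*\cW$. Proposition~\ref{A.8.20} then gives that $f^*\colon\Co(\cA)\rightleftarrows\Co(\cA')\colon f_*$ is a Quillen pair for the $\cW$-local descent model structure on $\Co(\cA)$ and the $f^*\cW$-local descent model structure on $\Co(\cA')$, which is the assertion.

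I do not expect a serious obstacle: the statement is a formal consequence of the two cited propositions. The only point requiring a small argument, carried out in the second paragraph, is that $f^*$ sends $\cG$-cofibrant complexes to $f^*\cG$-cofibrant complexes; this follows at once from $f^*$ being a left Quillen functor for the un-localized descent model structures, and it is exactly the compatibility ensuring that the localizing maps $0\to f^*T[n]$ in $\Co(\cA')$ are the left-derived images of the maps $0\to T[n]$ in $\Co(\cA)$, so that the two Bousfield localizations correspond under $f^*$ in the sense of \cite[Theorem 3.3.20(1)]{MR1944041} underlying Proposition~\ref{A.8.20}.
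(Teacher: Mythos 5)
Your proposal is correct and follows exactly the paper's route: the paper's proof is the one-line "This follows from Propositions \ref{A.8.17} and \ref{A.8.20}," and you have simply spelled out the (routine) verification of the hypotheses. The one detail you make explicit that the paper leaves implicit — that $f^*$ carries $\cG$-cofibrant complexes to $f^*\cG$-cofibrant complexes because it is a left Quillen functor for the un-localized descent structures — is a correct and worthwhile observation, but does not constitute a different approach.
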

\begin{proof}
This follows from Propositions \ref{A.8.17} and \ref{A.8.20}.
\end{proof}

\begin{df}
A descent structure $(\cG,\cH)$ on a Grothendieck abelian category is called {\it bounded} if every object of $\cH$ is a bounded complexes of objects that are finite sums of objects of $\cG$.
\end{df}
\begin{prop}\label{A.8.29}
Let $\cA$ be a Grothendieck abelian category with a bounded descent structure $(\cG,\cH)$. 
If $\hom_{\cA}(X,-)$ commutes with small sums for any $X\in \cG$, then $X[n]$ is a compact object in $\Deri(\cA)$  for any $n\in \Z$ and $X\in \cG$.
\end{prop}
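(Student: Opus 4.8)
The plan is to compute $\hom_{\Deri(\cA)}(X[n],-)$ on $\cG$-local complexes via the descent model structure of Proposition \ref{A.8.15}, reduce it to the cohomology of a complex of abelian groups, and use that cohomology and $\hom_\cA(X,-)$ visibly commute with small sums; the boundedness hypothesis enters precisely to guarantee that a small sum of $\cG$-local complexes is again $\cG$-local.

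\textbf{Step 1 (cofibrancy of $X[n]$ and the comparison map).} First I would record that for $X\in\cG$ and $n\in\Z$ the complex $X[n]$ is $\cG$-cofibrant: pushing the generating cofibration $[0\to X]\to[X\xrightarrow{\mathrm{id}}X]$ out along $[0\to X]\to 0$ produces the cofibration $0\to X[1]$, and closure under shifts yields $0\to X[n]$ for all $n$. By Definition \ref{A.8.14}(iii), for every $\cG$-local complex $K$ the canonical map $\hom_{\mathbf{K}(\cA)}(X[n],K)\to\hom_{\Deri(\cA)}(X[n],K)$ is then an isomorphism, and the same holds with $X$ replaced by any object of $\cG$.

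\textbf{Step 2 (a cohomological formula).} For any complex $L$ of $\cA$, a direct chain-level computation — chain maps $X\to L[-n]$ are the maps $X\to L^{-n}$ killed by the differential, homotopies are induced by maps $X\to L^{-n-1}$, and $\hom_\cA(X,-)$ is left exact — identifies
\[
\hom_{\mathbf{K}(\cA)}(X[n],L)\;\cong\;H^{-n}\!\bigl(\hom_\cA(X,L^\bullet)\bigr)
\]
(up to the usual sign convention on shifts). Since $\hom_\cA(X,-)$ commutes with small sums by hypothesis, the formation of $\hom_\cA(X,L^\bullet)$ commutes with small sums of complexes, and so does $H^{-n}$; hence $\hom_{\mathbf{K}(\cA)}(X[n],-)$ commutes with small sums for each $X\in\cG$. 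More generally, every $H\in\cH$ is, after a shift, a bounded complex whose terms are finite sums of objects of $\cG$, so the same argument (applied degreewise and to the homotopy relation, using finiteness and boundedness) shows that $\hom_{\mathbf{K}(\cA)}(H[m],-)$ commutes with small sums for all $H\in\cH$ and $m\in\Z$.

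\textbf{Step 3 (small sums of $\cG$-local complexes are $\cG$-local).} This is the crucial and, I expect, the only nonroutine step. Let $\{K_i\}_{i\in I}$ be $\cG$-local, equivalently $\cH$-flasque by Proposition \ref{A.8.15}(2), so that $\hom_{\mathbf{K}(\cA)}(H[m],K_i)=0$ for all $H\in\cH$, $m\in\Z$, $i\in I$. By the last assertion of Step 2,
\[
\hom_{\mathbf{K}(\cA)}\Bigl(H[m],\textstyle\bigoplus_{i\in I}K_i\Bigr)\;\cong\;\bigoplus_{i\in I}\hom_{\mathbf{K}(\cA)}(H[m],K_i)\;=\;0,
\]
so $\bigoplus_{i}K_i$ is $\cH$-flasque, hence $\cG$-local. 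Boundedness of $(\cG,\cH)$ is exactly what makes this propagation from the generators in $\cG$ to the relation complexes in $\cH$ possible, and it is why the conclusion can fail for a general descent structure.

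\textbf{Step 4 (conclusion).} Given a family $\{Y_i\}_{i\in I}$ in $\Deri(\cA)$, choose for each $i$ a fibrant replacement $Y_i\xrightarrow{\sim}K_i$ in the descent model structure of Proposition \ref{A.8.15}, so each $K_i$ is $\cG$-local. Then $\bigoplus_i Y_i\to\bigoplus_i K_i$ is a quasi-isomorphism (direct sums are exact), and $\bigoplus_i K_i$ is $\cG$-local by Step 3. Combining Step 1 (applied to $K_i$ and to $\bigoplus_i K_i$) with Step 2,
\[
\hom_{\Deri(\cA)}\Bigl(X[n],\textstyle\bigoplus_i Y_i\Bigr)\cong\hom_{\mathbf{K}(\cA)}\Bigl(X[n],\textstyle\bigoplus_i K_i\Bigr)\cong\bigoplus_i\hom_{\mathbf{K}(\cA)}(X[n],K_i)\cong\bigoplus_i\hom_{\Deri(\cA)}(X[n],Y_i),
\]
which shows that $X[n]$ is compact in $\Deri(\cA)$.
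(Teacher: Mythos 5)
Your proof is correct, and since the paper's own ``proof'' is only a citation to \cite[Theorem 6.2]{CD09}, you have essentially reconstructed the argument given there: the boundedness of $(\cG,\cH)$ is used exactly to show that small sums of $\cH$-flasque (equivalently, $\cG$-local) complexes remain $\cH$-flasque, after which compactness follows from the chain-level formula for maps out of the cofibrant object $X[n]$ into local complexes. No gaps.
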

\begin{proof}
 See \cite[Theorem 6.2]{CD09}.
\end{proof}

\begin{prop}
\label{A.8.31}
Let $\cA$ be a Grothendieck abelian category with a bounded descent structure $(\cG,\cH)$, and let $\cW$ be an essentially small set of bounded complexes of objects that are finite sums of objects of $\cG$. 
If $\hom_{\cA}(X,-)$ commutes with small sums for any $X\in \cG$, then $\cG$ generates $\Deri_\cW(\cA)$ and $X[n]$ is a compact object in $\Deri_\cW(\cA)$ for any $n\in \Z$ and $X\in \cG$.
\end{prop}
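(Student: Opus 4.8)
�The statement to prove is Proposition \ref{A.8.31}: for a Grothendieck abelian category $\cA$ equipped with a bounded descent structure $(\cG,\cH)$ such that $\hom_{\cA}(X,-)$ commutes with small sums for every $X\in \cG$, and for $\cW$ an essentially small set of bounded complexes of objects that are finite sums of objects of $\cG$, the set $\cG$ generates $\Deri_\cW(\cA)$ and each $X[n]$ with $X\in\cG$, $n\in\Z$ is compact in $\Deri_\cW(\cA)$.

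The plan is to deduce this from the already-established Proposition \ref{A.8.29} (compactness of $X[n]$ in $\Deri(\cA)$) together with Proposition \ref{A.8.19}(1), which identifies $\Deri_\cW(\cA)$ with the Verdier quotient $\Deri(\cA)/\langle\cW\rangle$. First I would recall that, since each $T\in\cW$ is a bounded complex of finite sums of objects of $\cG$, Proposition \ref{A.8.29} shows $T$ is a compact object of $\Deri(\cA)$; hence $\langle\cW\rangle$ is a localizing subcategory generated by compact objects of $\Deri(\cA)$. By Neeman's localization theorem for compactly generated triangulated categories (or, more directly, by the discussion preceding \cite[Theorem 4.9]{CD09} / Proposition \ref{A.8.19}), the Verdier quotient functor $q\colon\Deri(\cA)\to \Deri(\cA)/\langle\cW\rangle\simeq\Deri_\cW(\cA)$ then preserves compactness: the image $q(X[n])$ of a compact object is compact, because the right adjoint of $q$ (the inclusion of the $\cW$-local objects, i.e.\ the objects $G$ with $\hom_{\Deri(\cA)}(T[n],G)=0$ for all $T\in\cW$, $n$) commutes with small sums. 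Indeed a small sum of $\cW$-local objects is again $\cW$-local precisely because each $T\in\cW$ is compact in $\Deri(\cA)$, so the functor $\hom_{\Deri(\cA)}(T[n],-)$ commutes with sums. This gives compactness of $X[n]$ in $\Deri_\cW(\cA)$.

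For the generation statement, I would argue as follows. By Proposition \ref{A.8.32}(1), $\cG$ generates $\Deri(\cA)$; that is, the functors $\hom_{\Deri(\cA)}(X[n],-)$ for $X\in\cG$, $n\in\Z$ are jointly conservative. Let $f\colon F\to G$ be a morphism in $\Deri_\cW(\cA)$ such that $\hom_{\Deri_\cW(\cA)}(X[n],f)$ is an isomorphism for every $X\in\cG$ and $n\in\Z$. Completing $f$ to a distinguished triangle with cone $H$, it suffices to show $H\cong 0$ in $\Deri_\cW(\cA)$, i.e.\ that $\hom_{\Deri_\cW(\cA)}(X[n],H)=0$ for all $X,n$. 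Since $\Deri_\cW(\cA)=\Deri(\cA)/\langle\cW\rangle$ and $X[n]$ is compact in $\Deri(\cA)$, one has $\hom_{\Deri_\cW(\cA)}(X[n],H)\cong \hom_{\Deri(\cA)}(X[n],\tilde H)$, where $\tilde H$ is a $\cW$-local resolution (fibrant replacement) of $H$ — this is the content of Proposition \ref{A.8.19}(1) together with the fact that $X[n]$ is $\cG$-cofibrant, so that maps out of it in the homotopy category compute correctly. The hypothesis then forces $\hom_{\Deri(\cA)}(X[n],\tilde H)=0$ for all $X\in\cG$, $n\in\Z$, and since $\cG$ generates $\Deri(\cA)$ we conclude $\tilde H\cong 0$ in $\Deri(\cA)$, hence $H\cong 0$ in $\Deri_\cW(\cA)$. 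Therefore $\cG$ generates $\Deri_\cW(\cA)$.

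The main obstacle I anticipate is making the identification $\hom_{\Deri_\cW(\cA)}(X[n],H)\cong\hom_{\Deri(\cA)}(X[n],\tilde H)$ fully rigorous, i.e.\ checking that the right adjoint to the localization $\Deri(\cA)\to\Deri_\cW(\cA)$ exists and commutes with small sums — equivalently, that small sums of $\cW$-local complexes are $\cW$-local. This is exactly where the compactness of the generators $T\in\cW$ of $\langle\cW\rangle$ in $\Deri(\cA)$ enters, and it in turn relies on $(\cG,\cH)$ being \emph{bounded} and on $\hom_{\cA}(X,-)$ commuting with sums for $X\in\cG$, via Proposition \ref{A.8.29}. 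Once that compactness is in hand, everything else is a formal consequence of Verdier localization theory and of Propositions \ref{A.8.19}, \ref{A.8.29}, and \ref{A.8.32}; no new estimates are needed.
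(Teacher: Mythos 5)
Your proof is correct and follows essentially the same route as the paper: both arguments rest on Proposition \ref{A.8.29} together with boundedness to see that every object of $\cW$ is compact in $\Deri(\cA)$, on Proposition \ref{A.8.19}(1) to identify $\Deri_\cW(\cA)$ with the Verdier quotient $\Deri(\cA)/\langle \cW\rangle$, and on Proposition \ref{A.8.32} for generation. The only difference is that where the paper simply invokes Thomason--Neeman's localization theorem \cite[Theorem 4.4.9]{Neeman}, you prove the relevant special case by hand (the right adjoint to the quotient commutes with small sums because the objects of $\cW$ are compact, plus a fibrant-replacement and conservativity argument), which is a perfectly valid substitute.
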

\begin{proof}
For any family $\cF$ of objects of $\Deri(\cA)$, let $\langle \cF \rangle$ denote the smallest localizing subcategory of $\Deri(\cA)$ containing $\cF$. 
Note that $\langle \cG\rangle = \Deri(\cA)$ by Proposition \ref{A.8.32}(2). 
Owing to Proposition \ref{A.8.19}(1), we have $\Deri_\cW(\cA)\cong \langle \cG\rangle / \langle \cW\rangle$. 
Proposition \ref{A.8.29} shows $X[n]$ is a compact object in $\Deri_\cW(\cA)$ for any $n\in \cZ$ and $X\in \cG$.
Thus any bounded complex of objects that are finite sums of objects of $\cG$ is compact in $\Deri(\cA)$. 
In particular, 
any object of $\cW$ is compact in $\Deri(\cA)$. 
To conclude we apply Thomason's localization theorem \cite[Theorem 4.4.9]{Neeman} --- 
in loc.~cit.,  
$\cT^{\aleph_0}$ denotes the class of compact objects in a triangulated category $\cT$.
\end{proof}

\newpage

\section{Categorical toolbox}\label{appendix:cat_tool}
In this appendix we collect some useful yoga of triangulated categories.
\subsection{Category of squares}
We fix a category $\cA$ and a symmetric monoidal triangulated category $\cT$.
\vspace{0.1in}

Non-canonicity of the cone construction is well-known in triangulated categories.
To impose canonicality for cones of morphisms in $\cA$, we first introduce the notion of $n$-squares, which helps us deal with iterated cones.

\begin{df}
For $n\geq 0$, let $[n]$ denote the $n$th cubical category, 
i.e., 
the (nonfull) subcategory of the category $\mathsf{Fin}$ of finite sets whose objects are $n+1$-uples $\ul{a} = (a_1, \ldots, a_n)\in \{0,1\}^n$. 
The set of morphisms ${\rm hom}_{[n]}(\ul{a}, \ul{b})$ for $\ul{a}\neq\ul{b}$ consists of a single element if $a_i\leq b_i$ for every index $i$,
and it is empty otherwise.
We note that all morphisms in $[n]$ are generated by morphisms of the form $a\rightarrow b$ for which there exists an index $1\leq i\leq n$ such that
\[
b_j=\left\{
\begin{array}{ll}
a_j+1&\text{if }j=i
\\
a_j&\text{if }j\neq i.
\end{array}
\right.
\]
We define the face functors 
\[s_{i,0}, s_{i,1}\colon [n-1]\to [n], \quad i=1, \ldots, n\]
by inserting $0$ (resp.\  $1$) in the $i$-th coordinate. 
\end{df}

\begin{df}
An $n$-square\index{square} in a category $\mathcal{A}$ is a functor $C\colon [n]\to \cA$, and a morphism of $n$-squares is a natural transformation of functors $C_1\Rightarrow C_2$ between $n$-squares. 
Note that a $0$-square is simply an object of $\cA$, 
a $1$-square is an arrow $C_0\to C_1$ where $C_0$ and $C_1$ are objects of $\cA$, and so on. 
We write $C_{(a_1,\ldots,a_n)} \in {\rm Ob}(\cA)$ for $C((a_1,\ldots,a_n))$. 
We denote by
\[
\mathbf{Square}_\cA
\]
the coproduct of the categories of $n$-squares for all integer $n\geq 0$.
\end{df}

\begin{df}
For an $n$-square $C$ with $n\geq 1$, let $s_{i,0}^*C$ (resp.\ $s_{i,1}^*$) denote the $(n-1)$-square that is the restriction of $C$ to all the coordinates of the form
\begin{gather*}
(a_1,\ldots,a_{i-1},0,a_{i+1},\ldots,a_n)\\
\text{(resp.\ }(a_1,\ldots,a_{i-1},1,a_{i+1},\ldots,a_n)).
\end{gather*}
or, equivalently, the composition $[n-1]\xrightarrow{s_{i, \epsilon}}[n]\xrightarrow{C}\cA$ for $\epsilon=0,1$.
Note that if $C$ is an $(n+1)$-square, then there is a natural morphism of $n$-squares
\[
s_{i,0}^*C\rightarrow s_{i,1}^*C.
\]
\end{df}

\begin{df}
If $\cA$ is monoidal, for an $n$-square $C$ and $m$-square $D$ we denote by
\[
C\otimes D
\]
the $m+n$ square such that the object at the coordinate $(a_1,\ldots,a_n,b_1,\ldots,b_m)$ is given by the tensor product of the objects at the coordinates 
$(a_1,\ldots,a_n)$ in $C$ and $(b_1,\ldots,b_m)$ in $D$.
The morphisms in $C\otimes D$ are naturally induced by the morphisms in $C$ and $D$.
The above defines a monoidal structure on $\mathbf{Square}_\cA$.
\end{df}

We shall often assume that there exists a monoidal functor
\[
M\colon \mathbf{Square}_{\cA}\rightarrow \cT
\]
satisfying the following conditions.

\begin{enumerate}
\item[(Sq)] \textit{Square condition.}
%
For every integer $n\geq 1$ and $n$-square $C$, there exist functorial morphisms
\[
d_{i,C}\colon M(s_{i,1}^*C)\rightarrow M(C),
\;
\partial_{i,C}\colon M(C)\rightarrow M(s_{i,0}^*C)[1]
\]
such that
\[
M(s_{i,0}^*C)\stackrel{M(\theta_{C,i})}\longrightarrow M(s_{i,1}^*C)\stackrel{d_{i,C}}\longrightarrow M(C)\stackrel{\partial_{i,C}}\longrightarrow M(s_{i,0}^*C)[1]
\]
is a distinguished triangle, where $\theta_{C,i}\colon s_{i,0}^*C\rightarrow s_{i,1}^*C$ is the canonical morphism induced by $C$.
\vspace{0.1in}

For every integer $n\geq 2$ with $1\leq i<j\leq n$ and $n$-square $C$, we require that the diagram
\[
\begin{tikzcd}[column sep=large]
M(s_{i,1}^*s_{j,1}^*C)\arrow[d,"d_{i,s_{j,1}^*C}"']\arrow[r,"d_{j,s_{i,1}^*C}"]&
M(s_{i,1}^*C)\arrow[d,"d_{i,C}"]\arrow[r,"\partial_{j,s_{i,1}^*C}"]&
M({s_{i,1}^*s_{j,0}^*C[1]})\arrow[d,"{d_{i,s_{j,0}^*C}[1]}"]
\\
M(s_{j,1}^*C)\arrow[r,"d_{j,C}"]\arrow[d,"\partial_{i,s_{j,1}^*C}"']&
M(C)\arrow[d,"\partial_{i,C}"]\arrow[r,"\partial_{j,C}"]&
M({s_{j,0}^*C[1]})\arrow[d,"-\partial_{i,s_{j,0}^*C}{[1]}"]
\\
M({s_{i,0}^*s_{j,1}^*C[1]})\arrow[r,"d_{j,s_{i,1}^*C}{[1]}"]&
M({s_{i,0}^*C[1]})\arrow[r,"-\partial_{j,s_{i,1}^*C}{[1]}"]&
M(s_{i,0}^*s_{j,0}^*C[2])
\end{tikzcd}
\]
commutes, where we use $s_{i,1}^*s_{j,1}^*=s_{j,1}^*s_{i,1}^*$ etc.

\item[(MSq)] \textit{Monoidal square condition.} \index[notation]{MSq} \index{monoidal square condition}For every $n$-square $C$ and $m$-square $D$, 
let
\[
T_{C,D}:M(C)\otimes M(D)\rightarrow M(C\otimes D)
\]
be the canonical morphism obtained by the monoidality of $M$.
If $n\geq 1$, then for every $1\leq i\leq n$ we require that the diagram
\[
\begin{tikzpicture}[baseline= (a).base]
\node[scale=0.738] (a) at (0,0){
\begin{tikzcd}[column sep=huge]
M(s_{i,0}^*C)\otimes M(D)\arrow[d,"T_{s_{i,0}^*C,D}"']\arrow[r,"M(\theta_{i,C})\otimes M(D)"]&
M(s_{i,1}^*C)\otimes M(D)\arrow[r,"d_{i,C}\otimes M(D)"]\arrow[d,"T_{s_{i,1}^*C,D}"]&
M(C)\otimes M(D)\arrow[r,"\partial_{i,C}\otimes M(D)"]\arrow[d,"T_{C,D}"]&
M(s_{i,0}^*C)\otimes M(D)[1]\arrow[d,"T_{s_{i,0}^*C,D}{[1]}"]
\\
M(s_{i,0}^*C\otimes D)\arrow[r,"M(\theta_{i,C\otimes D})"]&
M(s_{i,1}^*C\otimes D)\arrow[r,"d_{i,C\otimes D}"]&
M(C\otimes D)\arrow[r,"\partial_{i,C\otimes D}"]&
M(s_{i,0}^*C\otimes D)[1]
\end{tikzcd}};
\end{tikzpicture}
\]
commutes.
Similarly, if $m\geq 1$, then for every $1\leq j\leq m$, we require that the diagram
\[
\begin{tikzpicture}[baseline= (a).base]
\node[scale=0.738] (a) at (0,0){
\begin{tikzcd}[column sep=huge]
M(C)\otimes M(s_{j,0}^*D)\arrow[d,"T_{C,s_{j,0}^*D}"']\arrow[r,"M(C)\otimes M(\theta_{j,D})"]&
M(C)\otimes M(s_{j,1}^*D)\arrow[r,"M(C)\otimes d_{j,D}"]\arrow[d,"T_{C,s_{j,1}^*D}"]&
M(C)\otimes M(D)\arrow[r,"M(C)\otimes \partial_{j,D}"]\arrow[d,"T_{C,D}"]&
M(C)\otimes M(s_{j,0}^*D)[1]\arrow[d,"T_{C,s_{j,0}^*D}{[1]}"]
\\
M(C\otimes s_{j,0}^*D)\arrow[r,"M(\theta_{n+j,C\otimes D})"]&
M(C\otimes s_{j,1}^*D)\arrow[r,"d_{n+j,C\otimes D}"]&
M(C\otimes D)\arrow[r,"\partial_{n+j,C\otimes D}"]&
M(C\otimes s_{j,0}^*D)[1]
\end{tikzcd}};
\end{tikzpicture}
\]
commutes.
\end{enumerate}

The conditions (Sq) and (MSq) are often automatically satisfied if $\cT$ arises from a simplicial, cofibrantly generated, and locally presentable monoidal model category (such as the category of sheaves of complexes that we consider in this text).
\vspace{0.1in}

Let us give some basic consequences of (Sq).

\begin{prop}
\label{A.3.45}
Let $f:Y\rightarrow X$ be a morphism in $\mathscr{S}/S$.
Then the morphism
\[
M(f):M(Y)\rightarrow M(X)
\]
is an isomorphism if and only if $M(Y\rightarrow X)=0$.
\end{prop}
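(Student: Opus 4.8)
The plan is to reduce the statement to the elementary fact that, in a triangulated category, a morphism is an isomorphism precisely when its cone is a zero object, and then to observe that the relevant cone is exactly $M(Y\to X)$.

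First I would unwind the definitions. The pair $f\colon Y\to X$ is a $1$-square $C$ in $\mathscr{S}/S$ with $s_{1,0}^*C=Y$ and $s_{1,1}^*C=X$, and the canonical morphism $\theta_{C,1}\colon s_{1,0}^*C\to s_{1,1}^*C$ is $f$ itself. Applying the condition (Sq) to $C$ therefore produces a distinguished triangle
\[
M(Y)\xrightarrow{M(f)} M(X)\xrightarrow{d_{1,C}} M(Y\to X)\xrightarrow{\partial_{1,C}} M(Y)[1]
\]
in $\cT$. Thus $M(Y\to X)$ is, canonically thanks to (Sq), a cone of $M(f)$, and the proposition becomes the assertion that $M(f)$ is an isomorphism if and only if this cone is a zero object of $\cT$.

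Next I would run the standard argument. Suppose $M(f)$ is an isomorphism and apply the cohomological functor $\hom_{\cT}(W,-)$ for an arbitrary $W\in\cT$ to the triangle above. In the resulting long exact sequence the maps induced by $M(f)$ and $M(f)[1]$ are isomorphisms; hence $d_{1,C}$ induces the zero map (exactness at $\hom_{\cT}(W,M(X))$, using surjectivity of $M(f)_*$) and $\partial_{1,C}$ induces a map that is simultaneously injective (exactness at $\hom_{\cT}(W,M(Y\to X))$) and zero (exactness at $\hom_{\cT}(W,M(Y)[1])$, since the following map $M(f)[1]_*$ is injective). Therefore $\hom_{\cT}(W,M(Y\to X))=0$ for every $W$; taking $W=M(Y\to X)$ forces $\mathrm{id}_{M(Y\to X)}=0$, so $M(Y\to X)\cong 0$. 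Conversely, if $M(Y\to X)\cong 0$, then applying $\hom_{\cT}(W,-)$ to the triangle kills the two neighbouring terms $\hom_{\cT}(W,M(Y\to X)[-1])$ and $\hom_{\cT}(W,M(Y\to X))$, so $M(f)_*\colon\hom_{\cT}(W,M(Y))\to\hom_{\cT}(W,M(X))$ is an isomorphism for every $W$; by the Yoneda lemma $M(f)$ is an isomorphism.

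There is no serious obstacle here: once (Sq) has supplied the distinguished triangle, the statement is purely formal, being the classical ``the cone vanishes iff the map is an isomorphism'' lemma. The only points that deserve a word of care are the bookkeeping convention that ``$M(Y\to X)=0$'' means that $M(Y\to X)$ is isomorphic to the zero object of $\cT$, and the (minor) observation that it is precisely (Sq) which guarantees that the relevant cone sits in a distinguished triangle with $M(f)$, so that the argument applies verbatim.
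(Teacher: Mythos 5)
Your proof is correct and follows exactly the route of the paper: both invoke (Sq) to produce the distinguished triangle $M(Y)\to M(X)\to M(Y\to X)\to M(Y)[1]$ and then conclude by the standard fact that a morphism in a triangulated category is an isomorphism if and only if its cone vanishes. The paper leaves that last step implicit ("which implies the claim"), whereas you spell it out with the long exact sequence and Yoneda; no difference in substance.
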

\begin{proof}
By (Sq), there is a canonical distinguished triangle
\[
M(Y)\rightarrow M(X)\rightarrow M(Y\rightarrow X)\rightarrow M(Y)[1], 
\]
which implies the claim.
\end{proof}

\begin{prop}
\label{A.3.48}
Let $C\rightarrow D$ be a morphism of $n$-squares.
If the naturally induced morphism
\[
M(C_{(a_1,\ldots,a_n)})\rightarrow M(D_{(a_1,\ldots,a_n)})
\]
is an isomorphism for every coordinate $(a_1,\ldots,a_n)\in \{0,1\}^n$, then the naturally induced morphism
\[
M(C)\rightarrow M(D)
\]
is an isomorphism.
\end{prop}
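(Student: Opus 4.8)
The plan is to argue by induction on $n$. For $n=0$ an $n$-square is simply an object of $\cA$, and the hypothesis of the proposition is literally the statement that $M(C)\to M(D)$ is an isomorphism, so there is nothing to prove. Assume now that $n\geq 1$ and that the assertion holds for all $(n-1)$-squares.

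Fix any index, say $i=1$. The corner objects of the $(n-1)$-squares $s_{1,0}^*C$ and $s_{1,1}^*C$ occur among the corner objects $C_{(a_1,\ldots,a_n)}$ of $C$, and likewise for $D$; hence the morphisms of $(n-1)$-squares $s_{1,0}^*C\to s_{1,0}^*D$ and $s_{1,1}^*C\to s_{1,1}^*D$ induced by $C\to D$ again satisfy the hypothesis of the proposition. By the inductive hypothesis, the induced morphisms
\[
M(s_{1,0}^*C)\to M(s_{1,0}^*D),\qquad M(s_{1,1}^*C)\to M(s_{1,1}^*D)
\]
are isomorphisms, and therefore so are their shifts.

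Next I would invoke condition (Sq). It supplies the distinguished triangle
\[
M(s_{1,0}^*C)\xrightarrow{M(\theta_{C,1})} M(s_{1,1}^*C)\xrightarrow{d_{1,C}} M(C)\xrightarrow{\partial_{1,C}} M(s_{1,0}^*C)[1]
\]
together with the analogous one for $D$, and it asserts that the morphisms $\theta_{(-),1}$, $d_{1,(-)}$, and $\partial_{1,(-)}$ are functorial with respect to morphisms of $n$-squares. Consequently $C\to D$ induces a morphism between these two distinguished triangles whose vertical arrows are, successively, $M(s_{1,0}^*C)\to M(s_{1,0}^*D)$, $M(s_{1,1}^*C)\to M(s_{1,1}^*D)$, $M(C)\to M(D)$, and $M(s_{1,0}^*C)[1]\to M(s_{1,0}^*D)[1]$. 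The first, second, and fourth of these are isomorphisms by the previous paragraph, so the standard triangulated-category lemma (the five lemma applied to the long exact sequences of $\hom$-groups) forces the third one, $M(C)\to M(D)$, to be an isomorphism. This completes the induction.

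The only point that genuinely requires the precise formulation of (Sq), rather than merely its exactness content, is the claim that $C\to D$ truly induces a \emph{morphism} of distinguished triangles, i.e.\ that the squares assembled from $\theta_{(-),1}$, $d_{1,(-)}$, and $\partial_{1,(-)}$ commute; this is exactly the functoriality clause built into (Sq), so no additional argument is needed. Everything else is routine bookkeeping, and the choice $i=1$ is of course immaterial.
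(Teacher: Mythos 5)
Your proof is correct and follows essentially the same route as the paper: induction on $n$, using the functorial distinguished triangle from (Sq) in the first coordinate and the five lemma to conclude. The only cosmetic difference is that you anchor the induction at $n=0$ rather than $n=1$, which changes nothing.
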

\begin{proof}
We use induction on $n$.
The case $n=1$ is clear, so suppose that $n>1$.
By (Sq), there is a morphism of distinguished triangles
\[
\begin{tikzcd}
M(s_{1,0}^*C)\arrow[d]\arrow[r]&
M(s_{1,1}^*C)\arrow[d]\arrow[r]&
M(C)\arrow[d]\arrow[r]&
M(s_{1,0}^*C)[1]\arrow[d]
\\
M(s_{1,0}^*D)\arrow[r]&
M(s_{1,1}^*D)\arrow[r]&
M(D)\arrow[r]&
M(s_{1,0}^*D)[1].
\end{tikzcd}
\]
By induction, all the vertical morphisms except for $M(C)\rightarrow M(D)$ are isomorphisms. 
We conclude using the five lemma.
\end{proof}

\begin{prop}
\label{A.3.44}
For any commutative square in $\mathscr{S}/S$
\[
C=\begin{tikzcd}
Y'\arrow[d]\arrow[r]&Y\arrow[d]\\
X'\arrow[r]&X
\end{tikzcd}
\]
such that 
\[
M(Y'\rightarrow X')\rightarrow M(Y\rightarrow X)
\]
is an isomorphism, also
\[
M(Y'\rightarrow Y)\rightarrow M(X'\rightarrow X)
\] 
is an isomorphism.
\end{prop}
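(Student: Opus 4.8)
The plan is to realize the given commutative square as a $2$-square and then read off the two distinguished triangles that the condition (Sq) attaches to it, one for each of the two cubical directions. Concretely, let $C\colon [2]\to \mathscr{S}/S$ be the $2$-square with $C_{(0,0)}=Y'$, $C_{(1,0)}=X'$, $C_{(0,1)}=Y$ and $C_{(1,1)}=X$, so that the first coordinate indexes the vertical edges of the square and the second coordinate the horizontal edges. With the face functors of the excerpt one has $s_{1,0}^*C=(Y'\to Y)$, $s_{1,1}^*C=(X'\to X)$, $s_{2,0}^*C=(Y'\to X')$ and $s_{2,1}^*C=(Y\to X)$, and the canonical morphisms of $1$-squares $s_{2,0}^*C\to s_{2,1}^*C$ and $s_{1,0}^*C\to s_{1,1}^*C$ induced by $C$ are exactly the maps $M(Y'\to X')\to M(Y\to X)$ and $M(Y'\to Y)\to M(X'\to X)$ appearing in the statement.

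First I would apply (Sq) in the direction $i=2$, which produces the distinguished triangle
\[
M(Y'\to X')\to M(Y\to X)\to M(C)\to M(Y'\to X')[1].
\]
By hypothesis the first arrow is an isomorphism, so $M(C)$ is a zero object of $\cT$; this is the standard fact that the cone of an isomorphism in a triangulated category vanishes (the same mechanism underlying Proposition \ref{A.3.45}). Next I would apply (Sq) in the direction $i=1$, which gives the distinguished triangle
\[
M(Y'\to Y)\to M(X'\to X)\to M(C)\to M(Y'\to Y)[1].
\]
Since $M(C)\cong 0$, this forces the first arrow $M(Y'\to Y)\to M(X'\to X)$ to be an isomorphism --- for instance by applying $\hom_{\cT}(-,K)$ for every $K\in\cT$ and combining the resulting long exact sequences via the five lemma, or directly by rotating the triangle --- which is the assertion.

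I do not expect any genuine obstacle: the proposition is a formal manipulation of the two distinguished triangles that (Sq) attaches to the single $2$-square $C$. The only point that requires care is the bookkeeping of the cubical face functors $s_{i,\epsilon}^*$, namely checking that the chosen labelling of the four vertices of the square by $\{0,1\}^2$ identifies the two induced maps of $1$-squares with precisely the two maps named in the statement, and recording that both ``cone of an isomorphism is zero'' and its converse hold in an arbitrary triangulated category.
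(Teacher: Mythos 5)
Your proposal is correct and matches the paper's own proof: both arguments apply (Sq) to the single $2$-square $C$ in each of its two cubical directions, deduce $M(C)=0$ from the first distinguished triangle using the hypothesis, and then conclude from the second. The extra bookkeeping you supply on the face functors $s_{i,\epsilon}^*$ is left implicit in the paper but is exactly the right verification.
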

\begin{proof}
The condition (Sq) implies that there are canonical distinguished triangles
\begin{gather}
\label{A.3.44.1}
M(Y'\rightarrow X')\rightarrow M(Y\rightarrow X)\rightarrow M(C)\rightarrow M(Y'\rightarrow X')[1],
\\
\label{A.3.44.2}
M(Y'\rightarrow Y)\rightarrow M(X'\rightarrow X)\rightarrow M(C)\rightarrow M(Y'\rightarrow Y)[1].
\end{gather}
By assumption, \eqref{A.3.44.1} gives $M(C)=0$.
Now use \eqref{A.3.44.2} to conclude.
\end{proof}

\subsection{Calculus of fractions}
We record a result on the calculus of fractions which is used repeatedly in this text.
In applications, we are usually interested in the case when $S$ is a base scheme, $\cC=lSm/S$ or $SmlSm/S$, $\cD=\cT$, and $F=M$.

\begin{lem}
\label{A.1.16}
Let $\cB\subset \cA$ be classes of morphisms in a category $\cC$ such that both classes admit a calculus of right fractions in $\cC$,
and let $F:\cC\rightarrow \cD$ be a functor. 
Assume that for any morphism $f:Y\rightarrow X$ in $\cA$, there is a commutative diagram
\[
\begin{tikzcd}
&Y'\arrow[d,"g"]\arrow[ld,"h"']\\
Y\arrow[r,"f"']&X
\end{tikzcd}
\]
in $\cC$, where $g\in \cB$. 
If
\[
F(g):F(Y')\rightarrow F(X)
\]
is an isomorphism for every $g$ in $\cB$, then 
\[
F(f):F(Y)\rightarrow F(X)
\] 
is an isomorphism for every $f$ in $\cA$. 
\end{lem}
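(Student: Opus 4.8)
<br>

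The plan is to prove that $F(f)$ is an isomorphism for every $f\in\cA$ by using the auxiliary diagram to reduce the problem to a statement about a single morphism, and then invoking the calculus of right fractions for $\cA$ and $\cB$. Conceptually, since $\cB$ admits a calculus of right fractions and $F$ inverts all of $\cB$, $F$ factors through $\cC[\cB^{-1}]$, and the real content is that every morphism of $\cA$ already becomes invertible in $\cC[\cB^{-1}]$; I will carry this out by hand, manipulating the maps $F(b)^{-1}$, $b\in\cB$, directly inside $\cD$, so no size discussion is needed. Fix $f\colon Y\to X$ in $\cA$ and, by hypothesis, choose $h\colon Y'\to Y$ with $g:=f\circ h\colon Y'\to X$ in $\cB$. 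Then $F(g)$ is invertible and $F(g)=F(f)\circ F(h)$, so it suffices to show that $F(h)$ is invertible; granting this, $F(f)=F(g)\circ F(h)^{-1}$ is invertible.

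First I would observe that $F(h)$ is a split monomorphism: $F(g)^{-1}\circ F(f)$ is a left inverse, since $F(g)^{-1}\circ F(f)\circ F(h)=F(g)^{-1}\circ F(g)=\mathrm{id}_{F(Y')}$. The work is to produce a right inverse for $F(h)$. For this I would apply the right Ore condition for $\cB$ to the cospan $Y'\xrightarrow{g}X\xleftarrow{f}Y$, whose first leg $g$ lies in $\cB$; this produces $\beta\colon D\to Y$ in $\cB$ together with $\pi\colon D\to Y'$ satisfying $g\circ\pi=f\circ\beta$. Consequently $f\circ(h\circ\pi)=g\circ\pi=f\circ\beta$, so $h\circ\pi$ and $\beta$ become equal after composing with $f\in\cA$; the right cancellability condition for $\cA$ then yields $v\colon W\to D$ in $\cA$ with $h\circ\pi\circ v=\beta\circ v$.

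The main obstacle — and the step on which everything hinges — is that $v$ only lies in $\cA$, whereas to invert its $F$-image I need a morphism in $\cB$; this is precisely where the standing hypothesis is used a second time. Applied to $v\in\cA$ it gives $h''\colon A\to W$ with $w:=v\circ h''\colon A\to D$ in $\cB$. Then $h\circ\pi\circ w=h\circ\pi\circ v\circ h''=\beta\circ v\circ h''=\beta\circ w$, and applying $F$ gives $F(h)\circ F(\pi)\circ F(w)=F(\beta)\circ F(w)$. Since $w,\beta\in\cB$, both $F(w)$ and $F(\beta)$ are isomorphisms, so cancelling $F(w)$ and composing with $F(\beta)^{-1}$ yields $F(h)\circ\bigl(F(\pi)\circ F(\beta)^{-1}\bigr)=\mathrm{id}_{F(Y)}$. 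Thus $F(h)$ is simultaneously a split mono and a split epi, hence an isomorphism, and therefore $F(f)=F(g)\circ F(h)^{-1}$ is an isomorphism, completing the proof. Apart from carefully tracking which class each auxiliary morphism belongs to, the argument is purely formal.
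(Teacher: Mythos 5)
Your proof is correct and follows essentially the same route as the paper's: both apply the right Ore condition for $\cB$ to the cospan $Y'\xrightarrow{g}X\xleftarrow{f}Y$, then use right cancellability for $\cA$ on $f\circ(h\circ\pi)=f\circ\beta$, and then invoke the standing hypothesis a second time to replace the resulting $\cA$-morphism by one in $\cB$ before applying $F$. The only cosmetic difference is that you first prove $F(h)$ invertible and conclude $F(f)=F(g)\circ F(h)^{-1}$, whereas the paper writes down $F(h)\circ F(g)^{-1}$ directly and checks it is a two-sided inverse of $F(f)$; the underlying identities are the same.
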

\begin{proof}
The right Ore condition (dual to \cite[I.2.2.c]{GZFractions}) for $\cB$ implies there is a commutative diagram in $\cC$
\[
\begin{tikzcd}
Y''\arrow[d,"g'"']\arrow[r,"f'"]&Y'\arrow[d,"g"]\\
Y\arrow[r,"f"']&X,
\end{tikzcd}\]
where $g'\in \cB$. 
Since $fhf'=gf'=fg'$ and $f\in \cA$, 
there is a morphism $r:Z\rightarrow Y''$ in $\mathcal{A}$ such that $hf'r=g'r$ by the right cancellation condition for $\cA$. 
By assumption, there is a morphism $s:Z'\rightarrow Y''$ in $\cB$ that factors through $r$. 
Thus $hf's=g's$, and there is a commutative diagram in $\cC$
\[
\begin{tikzcd}
Z'\arrow[d,"g's"']\arrow[r,"f's"]&Y'\arrow[ld,"h"']\arrow[d,"g"]\\
Y\arrow[r,"f"']&X.
\end{tikzcd}\]
Now by assumption, $F(g)$ and $F(g's)$ are isomorphisms since $g$ and $g's$ are in $\cB$. 
Since $F(g)F(f's)=F(f)F(g's)$, we have that
\[
F(f's)F(g's)^{-1}=F(g)^{-1}F(f), 
\]
and therefore
\[
{\rm id}=F(g's)F(g's)^{-1}=F(h)F(f's)F(g's)^{-1}=F(h)F(g)^{-1}F(f),
\]
\[
{\rm id}=F(g)F(g)^{-1}=F(f)F(h)F(g)^{-1}.
\]
This shows that $F(f)$ is an isomorphism.
\end{proof}

\newpage
\bibliography{biblogmotives}
\bibliographystyle{siam}
\newpage
\printindex
\printindex[notation]
\newpage
\end{document}